\newcolumntype{t}{>{$}c<{$}}
\tikzset{zigzag/.style={decorate,decoration=zigzag}}
\begin{document}

\title[A constructive approach to Zauner's conjecture via the Stark conjectures]{A constructive approach to Zauner's conjecture\\ via the Stark conjectures}
\author{Marcus Appleby}
\address{School of Physics, University of Sydney, Sydney Australia}
\email{marcus.appleby@gmail.com}
\author{Steven T. Flammia}
\address{Department of Computer Science, Virginia Tech, Alexandria, VA, USA \& \newline \hphantom{ni}~Phasecraft Inc., Washington DC, USA}
\email{stf@vt.edu}
\author{Gene S. Kopp}
\address{Department of Mathematics, Louisiana State University, Baton Rouge, LA, USA}
\email{kopp@math.lsu.edu}

\subjclass[2020]{11R37, %
11R42, %
42C15, %
52C35, %
81P15, %
81R05. %
} 

\keywords{SIC-POVM, complex equiangular lines, quantum measurement, equichordal tight fusion frame, Stark conjectures, Shintani--Faddeev modular cocycle, partial zeta function, class field theory, real quadratic field, Hilbert's twelfth problem, Weyl--Heisenberg group, Clifford group}

\date{\today}

\thanks{MA acknowledges the support of NSF PHY grant 2210495 and GSK acknowledges the support of NSF DMS grant 2302514. 
We thank Markus Grassl and Māris Ozols for helpful comments on an earlier draft.}

\begin{abstract} 
We propose a construction of $d^2$ complex equiangular lines in $\mathbb{C}^d$, also known as SICs or SIC-POVMs, which were conjectured by Zauner to exist for all $d$. The construction gives a putatively complete list of SICs with Weyl--Heisenberg symmetry in all dimensions $d > 3$. Specifically, we give an explicit expression for an object that we call a ghost SIC, which is constructed from the real multiplication values of a special function and which is Galois conjugate to a SIC. The special function, the Shintani--Faddeev modular cocycle, is more precisely a tuple of meromorphic functions indexed by a congruence subgroup of ${\rm SL}_2(\mathbb{Z})$. We prove that our construction gives a valid SIC in every case assuming two conjectures: the order 1 abelian Stark conjecture for real quadratic fields and a special value identity for the Shintani--Faddeev modular cocycle. The former allows us to prove that the ghost and the SIC are Galois conjugate over an extension of $\mathbb{Q}(\sqrt{\Delta})$ where $\Delta = (d+1)(d-3)$, while the latter allows us to prove idempotency of the presumptive fiducial projector. We provide computational tests of our SIC construction by cross-validating it with known solutions, particularly the extensive work of Scott and Grassl, and by constructing four numerical examples of nonequivalent SICs in $d=100$, three of which are new. We further consider rank-$r$ generalizations called $r$-SICs given by maximal equichordal configurations of $r$-dimensional complex subspaces. We give similar conditional constructions for $r$-SICs for all $r, d$ such that $r(d-r)$ divides $(d^2-1)$. Finally, we study the structure of the field extensions conjecturally generated by the $r$-SICs. If $K$ is any real quadratic field, then either every abelian Galois extension of $K$, or else every abelian extension for which 2 is unramified, is generated by our construction; the former holds for a positive density of field discriminants. 
\end{abstract}

\maketitle

\setcounter{tocdepth}{3} %
\tableofcontents

\allowdisplaybreaks

\section{Introduction}\label{sec:intro}

SICs (symmetric informationally complete positive operator valued measures, or SIC-POVMs) are complex equiangular tight frames for which the upper bound~\cite{Delsarte:1975} of $d^2$ vectors in dimension $d$ is achieved~\cite[Ch.~14]{Waldron:2018}. 
They have applications to quantum information~\cite{Zauner1999,Renes2004,Scott:2006,Chen:2015,Graydon:2016a,Shang:2018,Dai:2022,Feng:2022,Fuchs:2017,Cuffaro:2024,Szymusiak:2016,Tavakoli:2020,Horodecki:2022}, compressed sensing in radar~\cite{Herman:2009}, classical phase retrieval~\cite{Fannjiang:2020}, and the QBist approach to quantum foundations~\cite{Fuchs:2013,DeBrota:2020}. 
The Stark conjectures~\cite{Stark1,Stark2,Stark3,Stark4,Tate:1981}, by contrast, concern the properties of special values of derivatives of zeta functions in algebraic number theory. They are closely related to Hilbert's twelfth problem~\cite{Hilbert:1900}. 
It turns out that there are some connections between SICs and the Stark conjectures. 
A conjectured construction of SICs in terms of Stark units in odd prime dimensions congruent to $2 \Mod{3}$ is described in \cite{Kopp2019}, while ~\cite{Appleby:2022,Bengtsson:2024} gave a different such construction for dimensions of the form $n^2+3$ that are either prime or $4$ times a prime. 
In this paper, we extend these observations to arbitrary dimensions greater than $3$. 
In particular we show that the Stark conjectures together with a conjectural special function identity imply SIC existence in every finite dimension. 
We describe a practical method for constructing SICs numerically. 
We also describe a larger class of objects called $r$-SICs.

Let $\mcl{L}(\C^d)$ denote the $\C$-algebra of linear operators on a $d$-dimensional complex vector space $\C^d$. 
We say $\Pi \in \mcl{L}(\C^d)$ is a \textit{projector} if $\Pi^2=\Pi$. 
We will often need to contrast Hermitian and certain non-Hermitian projectors, so we introduce the following shorthand. 
\begin{defn}[\hprj]\label{dfn:Hprojector}
An \textit{\hprj} is a Hermitian projector. 
\end{defn}
A set of $n$ distinct rank-$1$ H-projectors $\{\Pi_j\}_{j=1}^n$ is called \textit{equiangular} if the Hilbert--Schmidt inner product is constant on all distinct pairs, $\Tr(\Pi_j\Pi_k) = \alpha$, for some $\alpha$ independent of $j\not=k$ but possibly depending on $d$ and $n$. 
It can be shown~\cite{Delsarte:1975} that $n\le d^2$, with a SIC being the case when $n=d^2$. 
If we drop the the rank-$1$ requirement, we obtain what we will call an $r$-SIC.
\begin{defn}[$r$-SIC]
    \label{def:equiangularcond}
    An \emph{$r$-SIC} is a set of $d^2$ distinct rank-$r$ \hprj s $\{\Pi_j\}_{j=1}^{d^2}$ in $\mcl{L}(\C^d)$ such that for all $j\neq k$ and some fixed constant $\alpha$ we have $\Tr(\Pi_j\Pi_k) = \alpha$.
\end{defn}
\begin{rmkb}
    The terminology \emph{$r$-SIC} is new, but related concepts have appeared in the literature in other contexts under different names; we review this below.
\end{rmkb}

In 1999, Zauner~\cite{Zauner1999} made the following conjecture regarding $1$-SICs. 
\begin{conj}[Zauner's Conjecture]
\label{conj:zauner}
    $1$-SICs exist for all $d$.
\end{conj}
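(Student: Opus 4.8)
The plan is to establish the conjecture for $d > 3$ by exhibiting an explicit Weyl--Heisenberg covariant $1$-SIC; the cases $d \le 3$ are already settled by direct construction. Fix $d > 3$, put $\Delta = (d+1)(d-3)$, and work relative to the real quadratic field $\mathbb{Q}(\sqrt{\Delta})$. The first step is to write down a candidate fiducial. Rather than attempting this directly in $\C^d$, I would first build a \emph{ghost SIC}: a configuration of $d^2$ (not necessarily Hermitian) rank-$1$ idempotents whose matrix entries are assembled from special values of the Shintani--Faddeev modular cocycle --- the tuple of meromorphic functions indexed by a congruence subgroup of $\mathrm{SL}_2(\Z)$ --- evaluated at real quadratic arguments (``real multiplication values''). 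Class field theory predicts that these values lie in a specific abelian extension $E$ of $\mathbb{Q}(\sqrt{\Delta})$; producing the explicit closed form of the ghost in terms of the cocycle is the main structural content of the construction, and the Weyl--Heisenberg covariance should be built in from the transformation law of the cocycle under its defining congruence subgroup.

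The second step is to verify that the presumptive ghost fiducial projector $\Pi$ is genuinely idempotent, $\Pi^2 = \Pi$. Expanding $\Pi^2 - \Pi$ in the Weyl--Heisenberg operator basis reduces this to a family of polynomial relations among the real multiplication values of the Shintani--Faddeev cocycle. I would package these as a single \emph{special value identity} for the cocycle and adopt it as one of two standing conjectures. Granting it, $\Pi$ is a rank-$1$ idempotent, and the equiangularity $\Tr(\Pi_j\Pi_k) = \alpha$ for $j \neq k$, together with distinctness of the $d^2$ projectors, follows from covariance plus the same special values, so the ghost is a bona fide (non-Hermitian) ``SIC over $E$''.

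The third step passes from the ghost to an honest H-projector. Here I would invoke the order $1$ abelian Stark conjecture for real quadratic fields: it identifies the relevant special values with (logarithms/exponentials of) Stark units and thereby pins down the action of $\mathrm{Gal}(E/\mathbb{Q}(\sqrt{\Delta}))$ on the entries of the ghost. Choosing a suitable automorphism $\sigma$ in this Galois group and applying it entrywise carries $\Pi$ to a rank-$1$ idempotent $\widetilde\Pi = \sigma(\Pi)$ whose entries now satisfy the complex-conjugation relations forcing $\widetilde\Pi$ to be Hermitian; a rank-$1$ Hermitian idempotent is automatically an H-projector (trace $1$, positive semidefinite). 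The $\sigma$-images of the full ghost configuration then form a genuine $1$-SIC in $\C^d$, with Weyl--Heisenberg symmetry inherited through the Galois-equivariance of the construction.

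The principal obstacle is that both inputs --- the order $1$ abelian Stark conjecture over real quadratic fields and the special value identity for the Shintani--Faddeev modular cocycle --- are themselves open, so the argument yields SIC existence only \emph{conditionally on these two statements}. Of the two, the Stark conjecture is the deeper gap, being tied to Hilbert's twelfth problem, whereas the cocycle identity is a concrete analytic assertion amenable to high-precision numerical verification in any fixed dimension. A secondary difficulty, which I expect to handle uniformly in $d$ rather than case by case, is the bookkeeping needed to confirm that the Galois-twisted configuration consists of \emph{exactly} $d^2$ distinct projectors with the correct overlap $\alpha = 1/(d+1)$, which amounts to controlling the orbit structure of the Weyl--Heisenberg and Clifford groups acting on the construction.
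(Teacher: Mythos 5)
Your proposal is correct as a conditional argument and follows essentially the same route as the paper: build a ghost fiducial from real multiplication values of the Shintani--Faddeev modular cocycle, prove idempotency via a special value identity (the paper's Twisted Convolution Conjecture), and pass to a Hermitian fiducial by a $\sqrt{\Delta}$-sign-switching Galois automorphism whose existence and effect on absolute values is controlled by the order~1 abelian Stark conjecture. Like the paper, this yields Zauner's conjecture only conditionally on those two open statements, and your identification of the two inputs and of which one is the deeper obstruction matches the paper's own assessment.
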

Zauner further conjectured that $1$-SICs should have certain symmetries related to a finite-order Weyl--Heisenberg group (see \Cref{def:WHGroup}), an important point to which we will return. 

Prior work on Zauner's conjecture has proven the existence of $1$-SICs in only a finite number of dimensions $d$. 
Prior authors have constructed $1$-SICs exactly in every dimension $\le 53$ and in many further dimensions up to a maximum of $5\,799$. 
High precision numerical solutions have been calculated in every dimension $\le 193$ and in many further dimensions up to a maximum of $39\,604$. 
These results are the work of many people obtained over a period of $25$ years, starting with the original work of Hoggar~\cite{Hoggar1998} and Zauner~\cite{Zauner1999}. 
For more on the current state of knowledge, and a review of the history, see~\cite{Fuchs:2017,Grassl:2021,Grassl:2021a,Appleby:2022,Bengtsson:2024}. 

As noted above, there seem to be some intimate connections between~\Cref{conj:zauner} and an important open problem in number theory, related to Hilbert's twelfth problem, known as the Stark conjectures~\cite{Stark1,Stark2,Stark3,Stark4}. The Stark conjectures posit the existence of special algebraic units, now called \textit{Stark units}, arising from zeta functions.
In a sequence of papers~\cite{Appleby:2013,Appleby:2020,Kopp2019,Appleby:2022,Bengtsson:2024}, it was shown that, in a variety of special cases of $1$-SICs so far constructed, to very high precision, the expansion coefficients of the elements of a $1$-SIC in a natural matrix basis are proportional to powers of Stark units. 

Based on the close connections documented in prior work, one might ask the question of whether~\Cref{conj:zauner} actually follows from the Stark conjectures, or perhaps a refinement thereof. 
We partially answer that question, by showing that~\Cref{conj:zauner} (Zauner's Conjecture) follows from one of the Stark conjectures together with a related conjectural identity. 
We also show that the signed half-integral powers of the (generalized) Stark units that are needed to calculate $r$-SICs are naturally expressed in terms of a complex analytic function introduced in \cite{Kopp2020d} and defined below (see \Cref{def:shin}), which we term the \SFKFull{} modular cocycle (a generalization of a function originally introduced by Shintani in the context of algebraic number theory~\cite{Shintani1976,Shintani1977b,Shintani1980} and rediscovered by Faddeev and Kashaev in the context of high energy physics~\cite{Faddeev:1994,Faddeev:1995,Kashaev:1997,Woronowicz:2000,Faddeev:2001,Zagier:2007a,Dimofte:2010,Dimofte:2015,Murakami:2018,Closset:2019,Garoufalidis:2023}).

Specifically, we consider four conjectures that we refer to by name throughout the paper. 
The \textit{Stark Conjecture} (\Cref{conj:stark}) is a special case (for real quadratic base field and abelian $L$-functions vanishing to order $1$ at $s=0$) of the conjectures that can be extracted strictly from Stark's original series of papers \cite{Stark1,Stark2,Stark3,Stark4}.
The \textit{Stark--Tate Conjecture} (\Cref{conj:stc}) is a standard refinement of the Stark Conjecture due to Tate~\cite{Tate:1981}, 
stated in the special case we require.\footnote{Tate himself attributes that special case to Stark, but the claim that the square root of the Stark unit is in an abelian extension does not appear as a conjecture in Stark's published work.} 
The \textit{Monoid Stark Conjecture} (\Cref{conj:msc}) is a further mild refinement, involving less-studied zeta functions attached to elements of a certain monoid, not known to follow from the Stark--Tate conjecture.
The fourth conjecture is a new (and rather mysterious) identity involving special values of the \SFKFull{} modular cocycle that we call the \textit{Twisted Convolution Conjecture} (\Cref{cnj:tci}). Together, these conjectures give a remarkably precise refinement of the Stark conjectures as applied to real quadratic fields. 
We establish the following theorem. 
\begin{theorem}
\label{thm:TCCSCimpliesZauner}
The Stark Conjecture and the Twisted Convolution Conjecture together imply Zauner's conjecture. 
\end{theorem}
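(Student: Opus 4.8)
The plan is to construct, for each $d > 3$, a Weyl--Heisenberg covariant $1$-SIC; the cases $d \le 3$ are classical. I would work with the standard dictionary between a Weyl--Heisenberg SIC fiducial and its \emph{overlaps}: writing $D_{\mathbf{p}}$ for the displacement operators of \Cref{def:WHGroup} and $\Pi$ for a normalized rank-$1$ fiducial projector, the scalars $\nu_{\mathbf{p}} = \Tr(\Pi D_{\mathbf{p}})$ determine $\Pi$, and $\Pi$ is the fiducial of a genuine $1$-SIC precisely when (i) $\nu_{\mathbf{0}} = 1$; (ii) the $\nu_{\mathbf{p}}$ satisfy the twisted-convolution identity equivalent to $\Pi^2 = \Pi$ (with root-of-unity structure constants, the Heisenberg phases); (iii) $|\nu_{\mathbf{p}}|^2 = (d+1)^{-1}$ for $\mathbf{p} \ne \mathbf{0}$ (equiangularity); and (iv) $\nu_{-\mathbf{p}} = \overline{\nu_{\mathbf{p}}}$ (Hermiticity). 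The structural point to exploit is that (i)--(iv) and the reconstruction $(\nu_{\mathbf{p}}) \mapsto \Pi$ are $\mathrm{Gal}(\overline{\mathbb{Q}}/\mathbb{Q})$-equivariant once the cyclotomic action of a given automorphism is absorbed into a Clifford relabeling $\mathbf{p} \mapsto M\mathbf{p}$; so it is enough to exhibit candidate overlap data over $\overline{\mathbb{Q}}$ and conjugate by one well-chosen automorphism.

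The candidate is the \emph{ghost}: the explicit tuple $\tilde\nu_{\mathbf{p}}$ built from the real-multiplication special values of the \SFKFull{} modular cocycle of \Cref{def:shin}, exactly as in the body of the paper. By construction the $\tilde\nu_{\mathbf{p}}$ lie in a specific finite extension $H$ of the real quadratic field $\mathbb{Q}(\sqrt{\Delta})$, $\Delta = (d+1)(d-3) > 0$; the normalization gives (i); and the unconditional functional and cocycle equations of the Shintani--Faddeev cocycle yield a symmetry $\tilde\nu_{-\mathbf{p}} = \tau(\tilde\nu_{\mathbf{p}})$, for an explicit order-two automorphism $\tau$ restricting to the nontrivial automorphism of $\mathbb{Q}(\sqrt{\Delta})/\mathbb{Q}$, together with a ``norm'' identity $\tilde\nu_{\mathbf{p}}\,\tau(\tilde\nu_{\mathbf{p}}) = (d+1)^{-1}$ for $\mathbf{p} \ne \mathbf{0}$. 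The remaining condition, idempotency (ii), written out for the $\tilde\nu_{\mathbf{p}}$, is exactly an identity among products of special values of the cocycle taken along a coset of a congruence subgroup of $\mathrm{SL}_2(\mathbb{Z})$: this is the content of the Twisted Convolution Conjecture (\Cref{cnj:tci}). Granting \Cref{cnj:tci}, the ghost projector $\tilde\Pi$ reconstructed from $(\tilde\nu_{\mathbf{p}})$ is a genuine rank-$1$ idempotent (though not Hermitian).

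Now the Galois descent. I would invoke the Stark Conjecture (\Cref{conj:stark}), which furnishes genuine units whose logarithms are the ray-class $L$-derivative values $L'(0,\chi)$ entering the cocycle's real-multiplication values; this shows $H/\mathbb{Q}$ is Galois with the expected Galois group and, decisively, identifies the conjugacy class of complex conjugation $c \in \mathrm{Gal}(H/\mathbb{Q})$, in particular showing $c$ is conjugate to $\tau$. Picking $g \in \mathrm{Gal}(\overline{\mathbb{Q}}/\mathbb{Q})$ with $g\tau g^{-1} = c$ on $H$ and setting $\nu_{\mathbf{p}} := g(\tilde\nu_{\mathbf{p}})$: condition (i) persists as $\nu_{\mathbf{0}} = 1$; applying $g$ to the ghost idempotency identity and using the equivariance of (ii) gives $\Pi^2 = \Pi$ for the reconstructed $\Pi$; applying $g$ to the symmetry identity and to the norm identity, and using $g(\tau(\tilde\nu_{\mathbf{p}})) = (g\tau g^{-1})(\nu_{\mathbf{p}}) = \overline{\nu_{\mathbf{p}}}$, gives $\nu_{-\mathbf{p}} = \overline{\nu_{\mathbf{p}}}$ and $\nu_{\mathbf{p}}\overline{\nu_{\mathbf{p}}} = (d+1)^{-1}$, which are (iv) and (iii). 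Hence $(\nu_{\mathbf{p}})$ is the overlap data of a Weyl--Heisenberg $1$-SIC in dimension $d$, Galois conjugate to the ghost, and \Cref{conj:zauner} follows.

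I expect the descent step to be the crux. The ghost is \emph{non}-Hermitian, and everything turns on a single, explicitly described Galois automorphism simultaneously converting the norm identity into equiangularity and the $\mathbf{p} \mapsto -\mathbf{p}$ symmetry into Hermiticity; this forces one to control the conjugacy class of complex conjugation inside $\mathrm{Gal}(H/\mathbb{Q})$ --- precisely the arithmetic input that only the Stark Conjecture provides --- and to track carefully how Galois automorphisms interact with the Weyl--Heisenberg/Clifford structure constants so that the SIC conditions really are equivariant. By comparison, recognizing the ghost's idempotency identity as \Cref{cnj:tci} and extracting the symmetry and norm identities from the cocycle's functional equations are lengthy but essentially mechanical.
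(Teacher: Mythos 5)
Your overall architecture coincides with the paper's: the ghost overlaps are Shintani--Faddeev RM values times an explicit root-of-unity phase, the Twisted Convolution Conjecture supplies idempotency of the ghost projector (\Cref{thm:ghstExist}), and a sign-switching Galois automorphism converts the ghost into a Hermitian equiangular fiducial (\Cref{thm:rayclassfieldrsicgen}, of which the theorem at hand is the $r=1$, $m=1$ corollary). The reality of the $\tilde\nu_{\mathbf p}$ and the reciprocity $\tilde\nu_{\mathbf p}\tilde\nu_{-\mathbf p}=1$ are indeed unconditional (\Cref{thm:nupnumpeq1}, via \Cref{thm:qpochmain} and \Cref{cor:funchar}), and the reconstruction is Galois-equivariant up to the relabeling $\mathbf p\mapsto H_g\mathbf p$, exactly as you say.

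The descent step, however, is internally inconsistent as written. You posit $\tilde\nu_{-\mathbf p}=\tau(\tilde\nu_{\mathbf p})$ for an involution $\tau$ \emph{restricting to the nontrivial automorphism of} $\Q(\sqrt{\Delta})$, and then seek $g$ with $g\tau g^{-1}=c$, where $c$ is complex conjugation. Since $\Q(\sqrt{\Delta})/\Q$ is Galois, restriction to it is invariant under conjugation, so $(g\tau g^{-1})|_{\Q(\sqrt{\Delta})}=\tau|_{\Q(\sqrt{\Delta})}$; but $c$ fixes $\Q(\sqrt{\Delta})\subset\R$ pointwise, so no such $g$ can exist if $\tau$ switches the sign of $\sqrt{\Delta}$. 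The automorphism that genuinely implements $\mathbf p\mapsto-\mathbf p$ on ghost overlaps is $\Art(\sR)\in\Gal(H/K)$ (it inverts the Stark units, $\su_{\sR\A}=\su_\A^{-1}$), and it \emph{fixes} $K$. With $\tau$ so corrected, your conjugacy requirement is automatic rather than a delicate choice: the Stark Conjecture's assertion $\abs{g(\su_\A)}=1$ for sign-switching $g$ is literally the statement that $g^{-1}cg$ inverts the Stark units, i.e.\ agrees with $\Art(\sR)$ on them. The paper sidesteps the conjugacy bookkeeping altogether: \Cref{conj:mrmvc}(2) (implied by \Cref{conj:stark}) gives $\abs{\nu_{\mathbf p}}=1$ for the normalized live overlaps directly, and combined with the descended algebraic identity $\nu_{\mathbf p}\nu_{-\mathbf p}=1$ this yields Hermiticity for free, since $\overline{\nu_{\mathbf p}}=\abs{\nu_{\mathbf p}}^2/\nu_{\mathbf p}=1/\nu_{\mathbf p}=\nu_{-\mathbf p}$. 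So your conditions (iii) and (iv) are not independent targets needing a specially tuned $g$; any sign-switching automorphism works.
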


This result follows as a corollary of a much more precise and stronger theorem stated below, \Cref{thm:rayclassfieldrsicgen}. 
To understand the ideas behind the proof and to see how it extends to certain families of $r$-SICs for $r>1$, we need to establish a few more notions. 
The $r$-SICs we consider all carry a transitive action of the following Weyl--Heisenberg group. 
\begin{defn}[Weyl--Heisenberg group, standard basis, $\rtus_d$, $\rtu_d$, $\db$, displacement operators]
\label{def:WHGroup}
Let $|0\ra, \dots , |d-1\ra$ be the orthonormal \emph{standard basis} for $\C^d$. 
Let $X$, $Z$ be unitary operators acting as
\eag{
X |j\ra &= |j+1\ra, & Z |j\ra &= \rtus^j_d |j\ra, & \rtus_d &= e^{\frac{2\pi i}{d}},
}
where addition of indices in the first equation is performed modulo $d$. 
Also let $\rtu_d = -e^{\frac{\pi i}{d}}$ and $\db = \tfrac{d}{2}\bigl(3+(-1)^d\bigr)$, so that $\db = d$ when $d$ is odd and $\db = 2d$ when $d$ is even, and $\rtu_d$ is a $\db$-th root of unity. 
Then the \emph{Weyl--Heisenberg group} in dimension $d$, denoted $\WH(d)$, is the set of $d^2\db$ operators 
\eag{
\{\rtu_d^{p_0} X^{p_1} Z^{p_2}\colon 0 \le p_0 <\db, 0\le p_1,p_2< d\}.
}
The \emph{displacement operators} are the $d^2$ coset representatives of $\WH(d)/\la \rtu_d I\ra$
\eag{
\label{eq:displacementops}
D_{\mbf{p}} &= \rtu_d^{p_1p_2}X^{p_1}Z^{p_2}, & \mbf{p} &= \bmt p_1 \\ p_2\emt \in \mbb{Z}^2.
}
\end{defn}
\begin{rmkb}
    In the SIC literature, the root of unity we are calling $\rtu_d$ is usually denoted $\tau$. 
    This conflicts with the way $\tau$ is used in the theory of modular forms, to which we make essential appeal.
\end{rmkb}
\begin{defn}[WH-covariant, fiducial]
\label{dfn:whcovrsic}
An $r$-SIC is \emph{WH-covariant} if it is of the form $\{\Pi_{\mbf{p}}\colon 0 \le p_1,p_2< d\}$, where 
\eag{\label{eq:WHCovariantSICTermsFiducial}
\Pi\vpu{\dagger}_{\mbf{p}} &= D\vpu{\dagger}_{\mbf{p}}\Pi D^{\dagger}_{\mbf{p}}
}
for some fixed \hprj{} $\Pi$, called the \emph{fiducial} projector.
\end{defn}

\begin{rmkb}
    In this paper, we are exclusively concerned with WH-covariant $r$-SICs, and we further specialize to $d>3$, henceforth without comment. 
    The known $1$-SICs thus excluded are all in some ways exceptional and are called \emph{sporadic SICs} by Stacey~\cite{Stacey:2021}.
    It is open whether more such examples exist.
\end{rmkb}

With the above restrictions, $r$-SICs split naturally into equivalence classes via an action of the \textit{extended Clifford group}~\cite{Appleby2005}, defined later in~\Cref{ssc:ecdgp}. 
A long-standing problem has been to understand the structure of these classes for the case of $1$-SICs. 
The classes exhibit rather complicated phenomenology, as can be seen from the data tables in, e.g., 
~\cite{Scott2010,Scott:2017,Appleby:2018}. 
We summarize these empirical observations in~\Cref{subsc:sicphenomenology}.
In~\Cref{sec:sicphenexplain} we show that~\Cref{thm:rayclassfieldrsicgen} together with two additional conjectures implies that this phenomenology arises from the class structure of certain integral binary quadratic forms. 
The result is illustrated by the data tables in \Cref{ap:sicdata}. 
Also see the examples in\Cref{sc:ClassificationECdOrbits}, where, among other things, we plot the number of SIC equivalence classes in each dimension up to $d=10^6$. 
In
~\Cref{sec:sicphenexplain} we give proofs for various other aspects of the currently observed phenomenology. 

Our results also show that $r$-SICs can answer questions in number theory and explicit class field theory. 
For example, we show that, under conditional assumptions, every abelian Galois extension of $\Q(\sqrt{5})$ is contained in a field generated by the overlaps of an $r$-SIC and roots of unity. 
The field $\Q(\sqrt{5})$ can be replaced by any real quadratic field with an odd trace unit (see \Cref{thm:cofinal}), and such real quadratic fields make up a positive proportion of all real quadratic fields in the sense of asymptotic density (see \Cref{thm:oddtracecount}).

Our classification scheme and conjectures suggest a new direction to approach the Stark conjectures and Hilbert's twelfth problem for real quadratic fields. Numerical evidence suggests that the polynomial equations defining a WH-covariant $r$-SIC (when $r < \frac{d-1}{2}$) define an algebraic variety of dimension zero. 
A proof of the Twisted Convolution Conjecture would reduce many cases of the Stark conjecture to a claim about the properties of the algebraic variety of WH-covariant $r$-SICs.

\subsection{Generalizing to \texorpdfstring{$r$}{r}-SICs}\label{ssec:GeneralizingTorSICs}

We wish to generalize prior work from $1$-SICs to $r$-SICs, both because this is crucial for the construction of a large family of abelian extensions, and because the richness of the class of $r$-SICs for $r>1$ has been heretofore unappreciated. 
Although general $r$-SICs have received much less attention, they have been studied in other contexts under different names. 
They are also called maximal equichordal tight fusion frames~\cite{Casazza:2011,Fickus:2017,King:2021},
maximal symmetric tight fusion frames~\cite{Appleby:2019b}, or regular quantum designs of degree $1$ and cardinality $d^2$~\cite{Zauner1999}. 
They are instances of structures which have been variously described as SI-POVMs~\cite{Appleby:2007a}, general SIC-POVMs~\cite{Gour:2014}, and SIMs~\cite{Graydon:2016}, and they are special cases of conical designs~\cite{Graydon:2016}. 

Unlike $1$-SICs, there are some known cases where $r$-SICs are proven to exist in infinitely many dimensions. 
Firstly, it has been shown~\cite{Appleby:2007a} that in every odd dimension $d$ there exists an $r$-SIC with $r=(d-1)/2$.
Secondly, it has been shown~\cite{Appleby:2019b} that, to every $1$-SIC in odd dimension $d$ of the kind described in \Cref{dfn:whcovrsic} below, there is a corresponding $r$-SIC with $r=(d-1)/2$ (different from the one constructed in \cite{Appleby:2007a}). 
These constructions described in \cite{Appleby:2007a,Appleby:2019b} are very different from the constructions in this paper, and we do not consider them further.

The connection to the Stark conjectures is via the so-called \textit{\normalizedOverlapsText{}}, which we define below. 
To motivate their definition, we first see that the geometry of an $r$-SIC constrains the value of $\alpha$ in~\Cref{def:equiangularcond} to certain specific values.

\begin{thm}\label{thm:rsicbsc}
Let $\Pi_1, \dots , \Pi_{d^2}$ be an $r$-SIC. 
Then for all $j$, $k$,
\eag{
\Tr(\Pi_j\Pi_k) &= \left(\frac{rd(d-r)}{d^2-1}\right)\delta_{jk} + \frac{r(rd-1)}{d^2-1}.
\label{eq:rsicbscgenolp}
}
Furthermore, the $\Pi_j$ are a basis for $\mcl{L}(\C^d)$, and up to a scale factor the $\Pi_j$ form a resolution of the identity:
\eag{
\sum_{j=1}^{d^2} \Pi_j &= r d I.
\label{eq:rsicbscresid}
}
\end{thm}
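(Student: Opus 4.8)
The plan is to work entirely inside the real inner product space $\mcl{H}$ of Hermitian operators on $\C^d$, equipped with the Hilbert--Schmidt product $\langle A,B\rangle=\Tr(AB)$; this has real dimension $d^2$, and the traceless Hermitian operators form a subspace $\mcl{H}_0$ of dimension $d^2-1$. Everything is driven by the observation that the $d^2$ traceless parts of the $\Pi_j$ lie in $\mcl{H}_0$ and hence are linearly dependent. First I would record the elementary facts about a rank-$r$ Hermitian projector: $\Pi_j^2=\Pi_j$, $\Tr\Pi_j=r$, and $\Pi_j\succeq0$; since $I$ is the only rank-$d$ projector on $\C^d$, distinctness of the $\Pi_j$ forces $1\le r\le d-1$, so $\Tr(\Pi_j^2)=r>0$. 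By Cauchy--Schwarz in $\mcl{H}$, $\Tr(\Pi_j\Pi_k)\le\sqrt{\Tr\Pi_j^2}\,\sqrt{\Tr\Pi_k^2}=r$ for all $j,k$, with equality forcing $\Pi_j=c\,\Pi_k$ with $c>0$ and then, on taking traces, $c=1$; as the $\Pi_j$ are distinct, the constant $\alpha:=\Tr(\Pi_j\Pi_k)$ $(j\ne k)$ of \Cref{def:equiangularcond} therefore satisfies $\alpha<r$.

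Next I would set $\widetilde\Pi_j:=\Pi_j-\tfrac{r}{d}I\in\mcl{H}_0$ and compute their Gram matrix $\widetilde G$, which has entries $\widetilde G_{jk}=\Tr(\widetilde\Pi_j\widetilde\Pi_k)=\Tr(\Pi_j\Pi_k)-\tfrac{r^2}{d}$ and hence equals $(r-\alpha)\,I_{d^2}+\bigl(\alpha-\tfrac{r^2}{d}\bigr)\,J_{d^2}$, where $J_{d^2}$ is the all-ones matrix. Its eigenvalues are $r-\alpha$ with multiplicity $d^2-1$ (on the orthocomplement of the all-ones vector $\mathbf 1$) and $\mu:=r+(d^2-1)\alpha-dr^2$ with multiplicity $1$ (eigenvector $\mathbf 1$). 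Since the $d^2$ operators $\widetilde\Pi_j$ lie in the $(d^2-1)$-dimensional space $\mcl{H}_0$, their Gram matrix is singular, and because $r-\alpha>0$ this forces $\mu=0$, i.e.\ $\alpha=\tfrac{r(rd-1)}{d^2-1}$. Combined with $\Tr(\Pi_j^2)=r$, which is what the displayed formula reduces to when $j=k$, this proves \eqref{eq:rsicbscgenolp}.

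Finally, everything else drops out of the value of $\alpha$. The rank of $\widetilde G$ equals the dimension of the span of the $\widetilde\Pi_j$; since $r-\alpha>0$ and $\mu=0$, this rank is exactly $d^2-1$, so the $\widetilde\Pi_j$ span all of $\mcl{H}_0$. But $\mathbf 1\in\ker\widetilde G$ says exactly that $\langle\widetilde\Pi_k,\sum_j\widetilde\Pi_j\rangle=0$ for every $k$, so $\sum_j\widetilde\Pi_j\in\mcl{H}_0$ is orthogonal to a spanning set of $\mcl{H}_0$ and is therefore $0$; thus $\sum_j\Pi_j=d^2\cdot\tfrac{r}{d}I=rdI$, establishing \eqref{eq:rsicbscresid}. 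For the basis statement, the Gram matrix $G$ of the $\Pi_j$ themselves is $(r-\alpha)I_{d^2}+\alpha J_{d^2}$, whose eigenvalues $r-\alpha$ and $r+(d^2-1)\alpha=r^2d$ are both positive, so $G$ is invertible; hence $\sum_jc_j\Pi_j=0$ with $c_j\in\C$ forces $Gc=0$ and so $c=0$, and the $d^2$ operators $\Pi_j$ are a basis of $\mcl{L}(\C^d)$. The only point requiring real care is the strict inequality $\alpha<r$: it is precisely what keeps the multiplicity-$(d^2-1)$ eigenvalue of $\widetilde G$ from vanishing, and hence what lets the dimension count pin $\alpha$ down uniquely rather than merely constrain it.
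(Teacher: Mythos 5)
Your proof is correct, and at its core it is the same argument as the paper's: both rest on the observation that the $d^2$ traceless parts of the $\Pi_j$ sit inside the $(d^2-1)$-dimensional space of traceless (Hermitian) operators and are therefore linearly dependent, and both extract $\alpha$ from the resulting degeneracy. The paper normalizes the traceless parts as $B_j=\sqrt{\tfrac{d}{r(d-r)}}\Pi_j-\sqrt{\tfrac{r}{d(d-r)}}I$, writes down a nontrivial relation $\sum_j c_jB_j=0$, and pairs it against each $B_k$ to force $c_1=\cdots=c_{d^2}$; you instead diagonalize the Gram matrix $\widetilde G=(r-\alpha)I_{d^2}+(\alpha-\tfrac{r^2}{d})J_{d^2}$ and read off $\alpha$ from the vanishing of the rank-one eigenvalue. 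These are two packagings of the same computation, and the downstream steps (kernel spanned by $\mathbf 1$ gives $\sum_j\Pi_j=rdI$; rank $d^2-1$ gives the spanning of the traceless subspace; positivity of the full Gram matrix gives the basis claim) match the paper's.

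The one substantive point in your favor is the explicit Cauchy--Schwarz argument that $\alpha<r$. The paper's deduction that all $c_k$ are equal from $0=-\tfrac{d(\alpha-r)}{r(d-r)}c_k+\tfrac{\alpha d-r^2}{r(d-r)}\sum_j c_j$ silently uses $\alpha\neq r$, and your step is exactly what makes that (equivalently, the nonvanishing of the multiplicity-$(d^2-1)$ eigenvalue $r-\alpha$) rigorous: equality in Cauchy--Schwarz would force $\Pi_j=\Pi_k$, contradicting distinctness. So your write-up is slightly more careful than the paper's at this one point; otherwise the two proofs are interchangeable.
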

\begin{proof}
This result can be proven without assuming WH-covariance or $d>3$; see \Cref{sssec:proofofrsicoverlap}. 
\end{proof}

\begin{thm}\label{thm:sicfidcond}
Let $\Pi$ be an \hprj{} in dimension $d$. 
Then $\Pi$ is a fiducial projector for an $r$-SIC if and only if
\eag{
 \bigl\lvert\Tr\bigl(\Pi D^{\dagger}_{\mbf{p}}\bigr)\bigr\rvert
 &=
 \sqrt{\frac{r(d-r)}{d^2-1}}
\label{eq:olphsedf}
}
for all $\mbf{p}\neq \zero \Mod{d}$. 
\end{thm}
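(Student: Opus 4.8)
\noindent\emph{Proof strategy.} The plan is to expand $\Pi$ in the orthogonal operator basis of displacement operators and to recognize~\eqref{eq:olphsedf} as the statement that a certain discrete Fourier transform is flat. Write $\Pi = \tfrac1d\sum_{\mbf{k}} c_{\mbf{k}}D_{\mbf{k}}$, the sum ranging over a fixed set of representatives of $(\mbb{Z}/d\mbb{Z})^{2}$ and $c_{\mbf{k}} = \Tr\bigl(\Pi D_{\mbf{k}}^{\dagger}\bigr)$: this is legitimate because $\{d^{-1/2}D_{\mbf{k}}\}$ is an orthonormal basis of $\mcl{L}(\C^{d})$ for the Hilbert--Schmidt inner product, a standard consequence of the relations in \Cref{def:WHGroup}. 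Conjugating traces and using $\Pi=\Pi^{\dagger}$ gives $\overline{c_{\mbf{k}}} = \Tr\bigl(D_{\mbf{k}}\Pi\bigr)$; and since $\Pi$ is a rank-$r$ projector, $c_{\zero} = \Tr\Pi = r$ and, by Parseval, $\tfrac1d\sum_{\mbf{k}}|c_{\mbf{k}}|^{2} = \Tr(\Pi^{2}) = r$, so $\sum_{\mbf{k}\neq\zero}|c_{\mbf{k}}|^{2} = r(d-r)$ for any \hprj{} $\Pi$.

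The main step is to compute the Gram matrix of the Weyl--Heisenberg orbit of $\Pi$. Each $\Pi_{\mbf{p}}=D_{\mbf{p}}\Pi D_{\mbf{p}}^{\dagger}$ is automatically a rank-$r$ \hprj{}, so the one thing to check for the $r$-SIC property is that $\Tr(\Pi_{\mbf{p}}\Pi_{\mbf{q}})$ is constant over $\mbf{p}\neq\mbf{q}$. By Weyl--Heisenberg covariance this overlap depends only on $\mbf{p}-\mbf{q}$, so it suffices to analyze $\Tr(\Pi_{\mbf{m}}\Pi)$; applying the conjugation rule $D_{\mbf{m}}D_{\mbf{k}}D_{\mbf{m}}^{\dagger} = \rtus_d^{\,m_2k_1-m_1k_2}D_{\mbf{k}}$ (which uses $\rtu_d^{2}=\rtus_d$) term-by-term to the expansion of $\Pi$ and then pairing against $\Pi$ through $\Tr(D_{\mbf{k}}\Pi)=\overline{c_{\mbf{k}}}$, I expect to obtain
\[
\Tr\bigl(\Pi_{\mbf{p}}\Pi_{\mbf{q}}\bigr) \;=\; \Tr\bigl(\Pi_{\mbf{p}-\mbf{q}}\,\Pi\bigr) \;=\; \frac1d\sum_{\mbf{k}} |c_{\mbf{k}}|^{2}\,\rtus_d^{\,(p_2-q_2)k_1-(p_1-q_1)k_2}.
\]
Organized this way the computation never expands a product $D_{\mbf{k}}D_{\mbf{l}}$, so the even-$d$ subtleties (there $D_{\mbf{p}}$ depends on $\mbf{p}$ only modulo $\db=2d$) never intrude. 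The right-hand side is, up to the factor $1/d$, the discrete Fourier transform over $(\mbb{Z}/d\mbb{Z})^{2}$ of the power spectrum $\mbf{k}\mapsto|c_{\mbf{k}}|^{2}$; the characters $\mbf{k}\mapsto\rtus_d^{\,m_2k_1-m_1k_2}$ run over the whole character group of $(\mbb{Z}/d\mbb{Z})^{2}$ since the alternating pairing $m_2k_1-m_1k_2$ is nondegenerate, which gives the orthogonality $\sum_{\mbf{k}}\rtus_d^{\,m_2k_1-m_1k_2} = d^{2}\,\delta_{\mbf{m},\zero}$ that drives the rest of the argument.

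Both implications then follow. For the ``if'' direction, assume $|c_{\mbf{p}}|^{2} = r(d-r)/(d^{2}-1)$ for all $\mbf{p}\neq\zero$. Then $\sum_{\mbf{k}}|c_{\mbf{k}}|^{2} = c_{\zero}^{2}+r(d-r)$ must equal $d\,\Tr\Pi = d\,c_{\zero}$, forcing the rank of $\Pi$ into $\{r,d-r\}$; and substituting into the formula above and using character orthogonality, every overlap with $\mbf{p}\neq\mbf{q}$ collapses to the single value $\tfrac1d\bigl(c_{\zero}^{2} - r(d-r)/(d^{2}-1)\bigr)$, which differs from the diagonal value $c_{\zero}$ whenever $1\le r\le d-1$, so the $\Pi_{\mbf{p}}$ are $d^{2}$ distinct \hprj s and form a SIC of rank $c_{\zero}$ --- an $r$-SIC when $\Pi$ has rank $r$ --- with common overlap $\alpha = r(rd-1)/(d^{2}-1)$ as in \Cref{thm:rsicbsc}. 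For the ``only if'' direction, if $\{\Pi_{\mbf{p}}\}$ is an $r$-SIC then \Cref{thm:rsicbsc} gives $\Tr(\Pi_{\mbf{p}}\Pi_{\mbf{q}})=\alpha$ for all $\mbf{p}\neq\mbf{q}$; substituting this together with $\tfrac1d\sum_{\mbf{k}}|c_{\mbf{k}}|^{2}=r$ into Fourier inversion on $(\mbb{Z}/d\mbb{Z})^{2}$ pins down $|c_{\mbf{k}}|^{2} = (r-\alpha)/d = r(d-r)/(d^{2}-1)$ for every $\mbf{k}\neq\zero$, which --- since $|\Tr(\Pi D_{\mbf{p}}^{\dagger})| = |c_{\mbf{p}}|$ --- is exactly~\eqref{eq:olphsedf}.

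I do not anticipate a genuine obstacle: the content is a short Fourier computation, classical for $r=1$ (the standard characterization of SIC fiducials), the only new feature being to carry the rank $r$ through and match the constant $r(d-r)/(d^{2}-1)$ against \Cref{thm:rsicbsc}. What demands care is purely the phase bookkeeping for the displacement operators --- the identities $\rtu_d^{2}=\rtus_d$ and $D_{-\mbf{p}}=D_{\mbf{p}}^{\dagger}$ and the sign in the symplectic pairing --- which the route above keeps to a minimum. As a cross-check, and as an alternative to Fourier inversion in the ``only if'' direction, one may instead combine the Parseval identity $\sum_{\mbf{m}}\bigl|\Tr(\Pi_{\zero}\Pi_{\mbf{m}})\bigr|^{2} = \sum_{\mbf{k}}|c_{\mbf{k}}|^{4}$ with two applications of Cauchy--Schwarz --- one bounding the left side below via the twirl identity $\sum_{\mbf{m}}D_{\mbf{m}}\Pi D_{\mbf{m}}^{\dagger} = rd\,I$ (whence $\sum_{\mbf{m}\neq\zero}\Tr(\Pi_{\zero}\Pi_{\mbf{m}}) = r(rd-1)$), the other bounding the right side below via $\sum_{\mbf{k}\neq\zero}|c_{\mbf{k}}|^{2} = r(d-r)$ --- and observe that the two lower bounds coincide, so equality must hold in both at once; equality in the second Cauchy--Schwarz is precisely the flatness condition~\eqref{eq:olphsedf}, and equality in the first is, given the automatic distinctness, the $r$-SIC property.
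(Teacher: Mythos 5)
Your proof is correct and follows essentially the same route as the paper's: expand $\Pi$ in the displacement-operator basis, derive the key identity $\Tr(\Pi_{\mbf{p}}\Pi_{\mbf{q}}) = \tfrac{1}{d}\sum_{\mbf{k}}\bigl|\Tr(\Pi D^{\dagger}_{\mbf{k}})\bigr|^{2}\rtus_d^{\la\mbf{p}-\mbf{q},\mbf{k}\ra}$, and apply Fourier inversion over $(\mbb{Z}/d\mbb{Z})^{2}$ to identify the equiangularity condition of \Cref{thm:rsicbsc} with flatness of the power spectrum. Your derivation via conjugation rather than products of displacement operators, your explicit treatment of the rank ambiguity $\Tr\Pi\in\{r,d-r\}$, and the optional Cauchy--Schwarz cross-check are cosmetic additions; the core argument is identical.
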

\begin{proof}
    See \Cref{sssec:proofofmodulusofoverlapphase}.
\end{proof}

\begin{defn}[\overlapsText{}; \normalizedOverlapsText{}]
\label{dfn:sicovlp}
Let $\Pi$ be an $r$-SIC fiducial projector.
The numbers $\overlap_{\mbf{p}} = \Tr\bigl(\Pi D^{\dagger}_{\mbf{p}}\bigr)$ are called the \emph{\overlapsText{}}. If $\mbf{p}\not\equiv \zero \Mod{d}$, from the polar decomposition $\overlap_{\mbf{p}} = |\overlap_{\mbf{p}}| e^{i\theta_{\mbf{p}}}$ we define the \emph{\normalizedOverlapsText{}} to be the phases $\normalizedOverlap_{\mbf{p}} = e^{i\theta_{\mbf{p}}}$. 
\end{defn}
It follows from \Cref{thm:sicfidcond} that
\eag{
  \label{eq:normalizedOverlapExpression}
  \normalizedOverlap_{\mbf{p}} &= \sqrt{\frac{d^2-1}{r(d-r)}} \overlap_{\mbf{p}}
}
for $\mbf{p}\not\equiv \zero \Mod{d}$.
    The fact that $\Pi$ is Hermitian means
    \eag{
    \label{eq:normalizedOverlapIdentity}
    \normalizedOverlap_{\mbf{p}} \normalizedOverlap_{-\mbf{p}} &= 1
    }
for all $\mbf{p}$.
Since the displacement operators form a basis for $\mcl{L}(\C^d)$, a fiducial can be recovered from its \normalizedOverlapsText{} using the formula
\eag{
\Pi &= \frac{r}{d} I + \sqrt{\frac{r(d-r)}{d^2(d^2-1)}}\sum_{\mbf{p}\notin d\mbb{Z}^2} \normalizedOverlap_{\mbf{p}} D_{\mbf{p}},
\label{eq:sicfidexpn}
}
where the sum is over any set of coset representatives of $\mbb{Z}^2/d\mbb{Z}^2$ with the representative of $d\mbb{Z}^2$ excluded. 
This means one could equivalently define an $r$-SIC fiducial to be a $\Pi \in \mcl{L}(\C^d)$ that is a rank-$r$ \hprj{} where the $\normalizedOverlap_{\mbf{p}}$ in the representation of \eqref{eq:sicfidexpn} are unit complex numbers. 

The \overlapsText{} and \normalizedOverlapsText{} of known $1$-SIC solutions have been studied in great detail to extract insights that might lead to a resolution of Zauner's conjecture. 
It was realized quickly (see, e.g.,~\cite{Scott2010}) that all known \overlapsText{} are algebraic numbers (excluding, as usual, the case of $d = 3$). 
We define a field generated by these numbers, adjoining a root of unity as well to ensure independence of the choice of fiducial.
\begin{defn}[SIC field]\label{defn:sicfield}
    For a fiducial $r$-SIC projector $\Pi$, the \textit{extended projector SIC field}, or simply the \textit{SIC field}, is the field generated by the entries of $\Pi$ and the $\db$-th root of unity $\rtu_d$, or equivalently, the field generated by the overlaps along with $\rtu_d$:
    \begin{equation}
        E = E_\Pi = \Q(\rtu_d, \Pi_{ij} : 0 \leq i,j < d) = \Q(\rtu_d, \Tr(\Pi D_\p) : \p \in (\Z/d\Z)^2).
    \end{equation}
\end{defn}

In \cite{Appleby:2013} it was found (among other things) that, for known $1$-SICs, the SIC field is an abelian extension of the real quadratic field $ \mbb{Q}\bigl(\sqrt{(d+1)(d-3)}\bigr)$. 
Refs.~\cite{Appleby:2017a,Appleby:2020} made an empirical study of the minimal SIC fields for a large number of dimensions where a full set of exact $1$-SICs had been calculated.
They showed (among other things) that for these examples:
\begin{enumerate}
    \item the minimal SIC field in dimension $d$ is the ray class field over $\mbb{Q}\bigl(\sqrt{(d+1)(d-3)}\bigr)$ with modulus $\db$ and ramification at both infinite places;
    \item the \normalizedOverlapsText{} $\normalizedOverlap_{\mbf{p}} = e^{i\theta_{\mbf{p}}}$ are in fact algebraic units. 
\end{enumerate}
The $1$-SICs generating a ray class field have been explicitly related to Stark units in several examples. 
In \cite{Kopp2019}, the normalized overlaps of the four lowest lying prime dimensions congruent to $5 \Mod{6}$ were shown to be Galois conjugates of square roots of Stark units. 
In \cite{Appleby:2022,Bengtsson:2024}, a different construction was used, in which the components of the fiducial vector were directly related to Stark units for dimensions of the form $n^2+3$ which are either prime~\cite{Appleby:2022} or equal to $4$ times a prime~\cite{Bengtsson:2024}, thereby pushing up the highest dimension in which $1$-SICs have been calculated by an order of magnitude. 

The approach taken in this paper generalizes the method used in ~\cite{Kopp2019} to every $r$-SIC in every dimension. 
We hope to examine the connection with the method used in ~\cite{Appleby:2022,Bengtsson:2024} in a future publication.

SICs are constructed in \cite{Kopp2019} by taking Galois conjugates of half-integral powers of Stark units.
This motivates mimicking the expression~\eqref{eq:sicfidexpn}, but with \textit{real} numbers that we hope to relate to Stark units in place of the \normalizedOverlapsText{}. 
Thus we define a \textit{ghost $r$-SIC} in terms of certain \textit{\normalizedGhostOverlapsText{}} which we expect to be algebraic units:
\begin{defn}[Ghost fiducial, \normalizedGhostOverlapsText{}, \ghostOverlapsText{}, twist]\label{dfn:ghostFiducial}
    A \emph{ghost $r$-SIC fiducial}, or \emph{ghost fiducial} for short, is a rank-$r$ projector $\tilde{\Pi}$ given by
    \eag{
        \tilde{\Pi} &= \frac{r}{d} I + \sqrt{\frac{r(d-r)}{d^2(d^2-1)}}\sum_{\mbf{p}\notin d\mbb{Z}^2}\normalizedGhostOverlap_{G\mbf{p}} D_{\mbf{p}}
        \label{eq:ghstSIC}
    }
    where the sum is over any set of coset representatives of $\mbb{Z}^2/d\mbb{Z}^2$ with the representative of $d\mbb{Z}^2$ excluded, where $G$ is a $\GLtwo{\mbb{Z}/\db\mbb{Z}}$ matrix called the \emph{twist}, and where the $\normalizedGhostOverlap_{\mbf{p}}$, called the \emph{\normalizedGhostOverlapsText{}}, are real numbers satisfying
    \eag{
        \label{eq:ghostanalogofhermiticity}
        \normalizedGhostOverlap_{\mbf{p}} \normalizedGhostOverlap_{-\mbf{p}}
        &=
        1
    }
    for all $\mbf{p}\not\equiv \zero\Mod{\db}$, and which are such that $\normalizedGhostOverlap_{\mbf{p}'}=\normalizedGhostOverlap_{\mbf{p}}$ whenever $\mbf{p}'\equiv \mbf{p} \not\equiv \zero \Mod{\db}$ . Paralleling \eqref{eq:normalizedOverlapExpression} we also define the \emph{\ghostOverlapsText{}}
  \eag{
\ghostOverlap_{\mbf{p}} &=
\begin{cases}
r \qquad & \mbf{p}\equiv \zero \Mod{d},
 \\
    \sqrt{\frac{r(d-r)}{d^2-1}} \normalizedGhostOverlap_{\mbf{p}} \qquad & \text{otherwise.}
\end{cases}
}
(So $\Tr\bigl(\tilde{\Pi} D^{\dagger}_{\mbf{p}}\bigr)=\ghostOverlap_{G\mbf{p}}.)$ 
\end{defn}

\begin{rmkb}
A few observations are in order here. Firstly, we only introduce the matrix $G$ at this stage for the sake of consistency with later discussion. When we subsequently give explicit formulae, it will be found that there is a natural way to define the $\normalizedGhostOverlap_{\mbf{p}}$ to which we want to give special prominence (see \eqref{eq:ghostoverlapformula} below). 
Using this natural definition, we will be able to take $G=I$ when $r=1$, but we will want to take $G \neq I$ for $r>1$.

Secondly, in the case of $r$-SICs we start with a family of $d^2$ projectors, then introduce their overlaps, and finally define the corresponding normalized overlaps. 
In our definition of ghost fiducials we reverse that order and start with the normalized ghost overlaps. 
The reason is that the function of the ghost fiducial, at least for present purposes, is to make a bridge between $r$-SICs and the Stark conjectures. 
The normalized overlaps of the $r$-SICs considered in this paper are conjecturally units in an algebraic number field having absolute value $1$ and satisfying 
~\eqref{eq:normalizedOverlapIdentity}. Conjecturally, they are are also Galois conjugates of a set of real units satisfying 
~\eqref{eq:ghostanalogofhermiticity}. It is these numbers, what we call the normalized ghost overlaps, which provide the connection with the Stark conjectures, and which are thus the objects of primary importance for the purposes of this paper. 
One can then use them in 
~\eqref{eq:ghstSIC} to define a corresponding ghost fiducial. 
For present purposes the latter is only of secondary importance. 

Note that, although one is free to define a family of $d^2$ projectors in analogy with 
~\eqref{eq:WHCovariantSICTermsFiducial}, by defining $\tilde{\Pi}\vpu{\dagger}_{\mbf{p}} = D\vpu{\dagger}_{\mbf{p}} \tilde{\Pi}D^{\dagger}_{\mbf{p}}$, the overlaps of this family are typically not real and do not have constant modulus. 
This construction will therefore play no role in this paper.

Conjecturally, there is a Galois automorphism $g$ acting on a suitable number field such that \eqref{eq:sicfidexpn} and \eqref{eq:ghstSIC} are related by $\tilde{\Pi} = g\bigl(\Pi\bigr)$. 
We therefore use a \textit{tilde} to distinguish ghost objects from their ``live'' counterparts, though this notation does not presume any functional relationship between $\Pi$ and $\tilde{\Pi}$. 
In view of \eqref{eq:normalizedOverlapIdentity}, the condition \eqref{eq:ghostanalogofhermiticity} is implied by such a relationship. 
\end{rmkb}
\begin{defn}[Live fiducial]\label{dfn:liveFiducial}
    To contrast them with the ghost fiducials specified by \Cref{dfn:ghostFiducial}, $r$-SIC fiducials as specified by \eqref{eq:sicfidexpn} will sometimes be referred to as \emph{live} fiducials.
\end{defn}

A ghost fiducial is typically not an \hprj{}. 
It is however a \emph{\pprj{}}, short for parity-Hermitian projector, which we now define. 
\begin{defn}[Parity operator, parity-Hermitian, \pprj{}]\label{def:pProjector}
    The \emph{parity operator} is the unitary matrix $U_P$ acting on the standard basis for $\C^d$ as $U_P|j\ra = |-j\ra$, where arithmetic inside the ket is modulo $d$ (see also \Cref{dfn:parityMatrix}). 
    A matrix $M$ is \emph{parity-Hermitian} if it equals its Hermitian conjugate when conjugated by the parity operator:
    \eag{
    M^{\dagger} &= U\vpu{\dagger}_P M U^{\dagger}_P \,.
    }
    A \emph{\pprj{}} is a parity-Hermitian projection operator. 
\end{defn}
\begin{rmkb}
    The subscript $P$ in the notation $U_P$ stands for the $2 \times 2$ negative-identity matrix $P = \smmattwo{-1}{0}{0}{-1} \in \SLtwo{\Z/\db\Z}$. The unitary matrix $U_P \in \U(d)$ comes from a certain function $(A \mapsto U_A) : \SLtwo{\Z/\db\Z} \to \U(d)$ defining a projective representation $\SLtwo{\Z/\db\Z} \to \U(d)/(\C^\times I)$. The representation is described in \Cref{ssc:ecdgp}.
\end{rmkb}
As examples, observe that the displacement operators are parity-Hermitian: 
\eag{
D^{\dagger}_{\mbf{p}}&= U\vpu{\dagger}_P D\vpu{\dagger}_{\mbf{p}} U^{\dagger}_P\,.
}
The fact that the expansion coefficients on the RHS of \eqref{eq:ghstSIC} are all real means that a ghost fiducial $\tilde{\Pi}$ is a \pprj{}.

\subsection{Refining Stark units}\label{ssec:RefiningStarkUnits}

Informally, the Stark conjectures give concrete formulas relating certain analytic functions with associated algebraic data. 
More specifically, they relate the values of the derivatives of certain partial zeta functions at $s=0$ to the logarithms of absolute values of units in an algebraic number field. 

Our goal is to construct $r$-SICs by first constructing the corresponding \normalizedGhostOverlapsText{} using (conjectural) Stark units. 
What we will actually need are not the Stark units themselves, but rather, certain \textit{square roots} of generalized Stark units. 
This presents a difficulty in that the sign of the square root is \textit{a priori} ambiguous. 
To get around this problem, instead of working with zeta functions as is done in 
~\cite{Kopp2019,Appleby:2022,Bengtsson:2024}, we work with a function we call the \textit{\SFKFull{} modular cocycle}, introduced in 
~\cite{Kopp2020d} based on the approach pioneered by Shintani~\cite{Shintani1976,Shintani1977,Shintani1977b,Shintani1980}. 
We will also need to resolve an ambiguity in roots of unity that arises in this process, and for this we also define the \textit{\SFKFull{} phase}. 
We require some notation and several other definitions before we are ready to define these functions. 

Let $\mbb{H}$ be the \textit{upper half plane}
\eag{\label{eq:upperHalfPlane}
\mbb{H} &= \{z\in \mbb{C}\colon \im(z)>0\}.
}
We require two variants of the $q$-Pochhammer symbol, which in its usual variants is denoted $(a;q)_n$ or $(a;q)_\infty$. 
We will find it convenient to write $q=e^{2\pi i \tau}$ and $a=e^{2\pi i z}$ and to treat $\tau$ and $z$ as the fundamental variables.
\begin{defn}[variant $q$-Pochhammer symbols]\label{dfn:variantqPochhammer}
    The \emph{finite variant $q$-Pochhammer symbol} is defined by
    \eag{\label{eq:finiteqpoch}
    \qp_n (z,\tau) &= 
    \begin{cases}
        \prod_{j=0}^{n-1}\left(1-e^{2\pi i(z+j \tau)}\right)
        \qquad & n>0
        \\
        1 \qquad & n =0 
        \\
        \prod_{j=n}^{-1}\left(1-e^{2\pi i(z+j \tau)}\right)^{-1} 
        \qquad & n<0
    \end{cases}
    }
for  $n\in \Z$, $z, \tau \in \mbb{C}$.  
The (infinite) \emph{variant $q$-Pochhammer symbol} is
\eag{
\qp (z,\tau) &= \prod_{j=0}^{\infty}\left(1-e^{2\pi i(z+j \tau)}\right)
\label{eq:qpochinf}
}
for $z\in \mbb{C}$, $\tau\in \mbb{H}$.
\end{defn}

For $\tau\in \mbb{C}$, $M=\smt{\ma & \mb\\ \mc &\md}\in \GLtwo{\mbb{Z}}$, define the \textit{fractional linear transform} $M\cdot\tau$
and denote the denominator $j_M(\tau)$ respectively by
\begin{align}\label{eq:fractionalineartransform}
    M\cdot\tau = \frac{\ma \tau + \mb}{\mc\tau + \md }\,, 
    \qquad \text{and}\qquad 
    j_M(\tau) = \mc \tau + \md \,.
\end{align}
We say $\tau$ is a \textit{fixed point} of $M$ if $M\cdot\tau = \tau$.  

\begin{defn}[The domain $\DD_M$]
\label{def:sl2ldmndf}
For $M=\smt{\ma & \mb\\ \mc &\md}\in \GLtwo{\mbb{Z}}$ define $\DD_M$ to be the set $\mbb{C}\setminus \{\tau\in \mbb{R}\colon \det(M) j_M(\tau) \le 0\}$, illustrated in the complex plane below for the case $\mc>0$ and $\det(M)=1$.

\spc

\begin{center}
\begin{tikzpicture}[scale=.85]
  \draw[very thick,black,zigzag] (-4,0) -- (-1,0);
  \draw[ultra thin] (-1,0) -- (4,0);
  \draw (0,-1.6) -- (0,1.6);
  \draw (-1,0.5) node {$-\frac{\md}{\mc}$};
\end{tikzpicture}
\end{center}
\end{defn}
\begin{rmkb}
    Note that for reasons of technical convenience we define $\DD_M$ for an arbitrary matrix $M\in \GLtwo{\mbb{Z}}$, although in its main application, to the definition of the Shintani--Faddeev modular cocycle (see below), we only need it for matrices $M\in \SLtwo{\mbb{Z}}$.
\end{rmkb}
\begin{defn}[\SFKShort{} Jacobi cocycle]
    \label{df:shinfadjacocycle}
    For $M\in \SLtwo{\mbb{Z}}$ the \emph{\SFKFull{} (\SFKShort{}) Jacobi cocycle}   is a meromorphic function $\sigma_M$ on $\mbb{C}\times \DD_M$
whose restriction to $\mbb{C}\times \mbb{H}$ is given by
    \eag{
   \sfjM{M}{z}{\tau} &= \frac{\qp\!\left(\frac{z}{j_M(\tau)},M\cdot\tau\right)}{\qp(z,\tau)}.
   \label{eq:sfjfrm}
    }
\end{defn}  

\begin{rmkb}
See \cite{Kopp2020d} and \Cref{sec:SFJCocycleAppendix} for the continuation to $\mbb{C}\times \DD_M$.
\end{rmkb} 
It is shown in \Cref{sec:SFJCocycleAppendix} that   $\sigma_M$ satisfies the cocycle condition 
\eag{
\sfjM{MM'}{z}{\tau} &= \sfjM{M}{\frac{z}{j_{M'}(\tau)}}{M'\cdot\tau} \sfjM{M'}{z}{\tau}.
\label{eq:sfjcocyclerelInt}
}
for all values of $z,\tau$ such that both sides of the equation are defined.

Up to a scale factor, $\sfjM{S}{z}{\tau}$ with $S=\smt{0&-1\\1&0}$ is the \textit{double sine function} or \textit{noncompact quantum dilogarithm}. 
The name \SFKFull{} acknowledges Shintani's original introduction of the double sine function in connection with his work on Kronecker-type limit formulas and the Stark conjectures~\cite{Shintani1976,Shintani1977,Shintani1977b,Shintani1980}, and its subsequent rediscovery under the name quantum dilogarithm by Faddeev in connection with his work on discrete Liouville theory~\cite{Faddeev:1994,Faddeev:1995}. 
Subsequently it has also featured in quantum Teichm{\"{u}}ller theory, three-dimensional supersymmetric gauge theory, complex Chern--Simons theory, quantum group theory,  and quantum knot theory (see ~\cite{Woronowicz:2000,Faddeev:2001,Dimofte:2010,Dimofte:2015,Closset:2019,Garoufalidis:2021} and references cited therein). 

For $\mbf{p}=\smt{p_1 \\ p_2}$ and $\mbf{q}=\smt{q_1 \\ q_2} \in \mbb{C}^2$, define the nondegenerate \textit{symplectic form}
\eag{\label{eq:SymplecticForm}
    \symp{\mbf{p}}{\mbf{q}} &= -\det{\!\smmattwo{p_1}{q_1}{p_2}{q_2}} = p_2 q_1-p_1 q_2\,.
}
When the second argument is a complex number $\tau \in \mbb{C}$, we also use the notation
\eag{\label{eq:FractionalSymplecticForm}
    \pt{\mbf{p}}{\tau} &= \symp{\p}{\smcoltwo{\tau}{1}} = p_2\tau-p_1\,.
}
It is easily verified that
\eag{
\la M\mbf{p},M\mbf{q}\ra &= (\det M) \la \mbf{p},\mbf{q}\ra, & 
\pt{M\mbf{p}}{M\cdot\tau} &= \frac{(\det M) \pt{\mbf{p}}{\tau}}{j_{\!M}(\tau)}
\label{eq:LActOnSymplecticInnerProduct}
}
for all $L\in \GLtwo{\mbb{Z}}$, $\mbf{p},\mbf{q}\in \mbb{Z}^2$, and $\tau \in \mbb{C}$.

For $d\in \mbb{N}$ define $\Gamma(d)$ to be the \textit{principal congruence subgroup of level $d$} consisting of matrices $A \in \SLtwo{\mbb{Z}}$ such that $A\equiv I \Mod{d}$. 
We will also need a particular family of non-principal congruence subgroups, which we define now.

\begin{defn}[$\Gamma_\r$]\label{dfn:gammarDef}
    For $\r \in \Q^2$, let $\Gamma_\r$ be the subgroup of $\SLtwo{\mbb{Z}}$ consisting of matrices $A$ such that $(A-I)\r \in \mbb{Z}^2$. 
\end{defn}
\begin{rmkb}
    Note that if  $\r \in \frac{1}{d}\Z^2$, then $\Gamma(d)$ is a subgroup of $\Gamma_\r$.
\end{rmkb}

\begin{defn}[\SFKShort{} modular cocycle]
    \label{def:shin}
    If $\r\in \Q^2$ and $M \in \Gamma_\r$, then the \emph{\SFKFull{} (\SFKShort{}) modular cocycle} is defined by
    \eag{
    \sfc{\r}{M}{\tau} &=  
    \frac{\sfjM{M}{\pt{\r}{\tau}}{\tau}}{\qp_{((I-M)\mbf{r})_2}\!\left(\frac{\pt{\r}{\tau}}{j_M(\tau)},M\cdot\tau\right)}
    \label{eq:shindf}
    }
    for all $\tau\in \DD_M$. (This is equivalent to the definition given in \cite[Defn.~4.18]{Kopp2020d} by \cite[Prop.~4.19]{Kopp2020d}.)
\end{defn}

\begin{rmkb}
    The symbol $\shin$ is the Hebrew letter ``shin.'' 
    For instructions on how to typeset this character in \LaTeX, see \cite[Sec.\ 9]{Kopp2020d}.

\end{rmkb}

For $\tau \in \HH$ and $\r \nin \Z^2$, a straightforward calculation shows that
\begin{equation}\label{eq:coboundary}
    \sfc{\r}{M}{\tau} = \frac{\varpi(\pt{\r}{M\cdot\tau},M\cdot\tau)}{\varpi(\pt{\r}{\tau},\tau)},
\end{equation}
and thus that the multiplicative group-cohomological cocycle condition 
\begin{equation}
    \sfc{\r}{MN}{\tau} = \sfc{\r}{M}{N\cdot\tau}\sfc{\r}{N}{\tau}
\end{equation}
holds for $M, N\in\Gamma_\r$. By meromorphic continuation (see also~\Cref{sec:SFJCocycleAppendix}), the cocycle condition holds for $\tau \in \mathcal{D}_M$, while the ``coboundary'' 
expression \eqref{eq:coboundary} does not make sense outside the upper half plane (although a similar expression may be given on the lower half plane, but not on the real line). 
The map $M \mapsto \shin^\r_{\!M}$ from $\Gamma_\r$ to the multiplicative group of meromorphic functions defines a cohomologically nontrivial class in a certain cohomology group, which is somewhat tricky to define correctly and is discussed in more detail in \cite[Sec.~4.1 and Sec.~5]{Kopp2020d}. 
In this paper, we will primarily be concerned with $M \in \Gamma(d)$ rather than in the larger group $\Gamma_\r$, because we will be fixing $d$ and varying $\r$. 
We abuse terminology slightly by referring to the meromorphic function $\sfc{\r}{M}{\tau}$ itself (rather than the map from $\Gamma_\r$ or $\Gamma(d)$) as a ``cocycle.''

The importance of the function $\sfc{\r}{M}{\tau}$ for us is that, as we will see, it provides a bridge between the geometric construct of an $r$-SIC with algebraic number theory. 
On the one hand we use special values of the function to construct \ghostOverlapsText{}, while on the other hand these same special values are related to Stark units. In particular, under the assumption of 
the Stark Conjecture, %
(see \Cref{ssec:TheConjectures}), they are algebraic integers, and indeed units.
For abelian extensions of a large set of real quadratic fields, they play an analogous role to the one that roots of unity play in connection with abelian extensions of $\mbb{Q}$, and that elliptic functions and modular forms (more precisely, Siegel units \cite{Kubert:1981}) play in connection with abelian extensions of imaginary quadratic fields. 
These algebraic properties are essential if we wish to take  Galois conjugates of our constructed \ghostOverlapsText{}, as we do to construct live $r$-SIC fiducials.

\subsection{Quadratic fields and quadratic forms}
\label{ssec:quadraticfieldsforms}

We next briefly review some definitions associated to the theory of quadratic forms and quadratic fields, mainly to fix notation. 

Let $D$ be a square-free positive integer, and let $K=\mbb{Q}(\sqrt{D})$ be the corresponding real quadratic field. 
Then the \textit{discriminant} of $K$ is 
\eag{\label{eq:discriminant}
    \Delta_0
    &=\begin{cases}
        D \qquad & \text{if $D\equiv 1 \Mod{4}$}, \\
        4 D \qquad & \text{otherwise}.
    \end{cases}
}
The \textit{ring of integers} in $K$ is denoted by $\mcl{O}_K$ and the \textit{unit group} by $\mcl{O}^{\times}_K$. 

A \textit{binary quadratic form} is a bivariate polynomial $Q(x,y) = a x^2+bxy+cy^2$. 
Unless stated explicitly to the contrary, we will simply say \textit{form} to mean an integral, primitive, irreducible, and indefinite binary quadratic form. 
That is, $a,b,c$ are coprime integers, the roots of $Q(x,1)$ are in $\mbb{R}$ but not in $\mbb{Q}$, and $Q$ takes both positive and negative values. 
We will employ the shorthand $Q=\la a, b , c\ra$, and where there is no risk of confusion we will use the same symbol $Q$ to denote the Hessian matrix of $Q$ scaled by a factor of $\foh$:
\eag{
    Q&= \bmt a & \frac{b}{2} \\ \frac{b}{2} & c\emt.
}
If $\mbf{p}$ is the vector $\smt{p_1\\p_2}$, we will also write $Q(\mbf{p})  = Q(p_1,p_2)$.

Let $Q$ be a form and let $M\in \GLtwo{\mbb{Z}}$. 
Then we denote by $\lOnQ{M}{Q}$ the form 
\eag{
    \lOnQ{M}{Q} &= \Det(M) M^{\rm{T}}Q M.
    \label{eq:afrmtrans}
}
We say two forms $Q, Q'$ are \textit{equivalent} and write $Q\sim Q'$ if
\eag{
    Q'&= \lOnQ{M}{Q}
}
for some $M\in\GLtwo{\mbb{Z}}$.

Let $Q=\la a, b , c\ra$ be a form; then $\Delta = -4\det(Q) = b^2-4ac$ is its \textit{discriminant}. 
Let $D$ be the square-free part of $\Delta$. 
Then the \textit{fundamental discriminant} of $Q$ is
\eag{
    \Delta_0 &=\begin{cases} 
            D \quad & \text{if $D\equiv1 \Mod{4}$}\\
            4D \quad &\text{otherwise}\,,
        \end{cases}
}
and the \textit{conductor} of $Q$ is the integer
\eag{\label{eq:ConductorOfQ}
    f&= \sqrt{\frac{\Delta}{\Delta_0}}\,.
}
We say that the form $Q$ is \emph{associated} to the real quadratic field $K$ if the discriminant of $K$ is the fundamental discriminant of $Q$.
We define the \textit{roots} of $Q$ to be 
\eag{
    \label{dfn:qrtdf}
    \qrt_{Q,\pm}&=\frac{-b \pm \sqrt{\Delta}}{2a}\,.
}

We will find it convenient to introduce a notion of sign to both forms and elements of $\GLtwo{\mbb{Z}}$. 
\begin{defn}[Sign]\label{dfn:sign}
    Let $Q=\la a, b, c\ra$ be a form, and let $M=\smt{\ma & \mb \\ \mc &\md}$ be an element of $\GLtwo{\mbb{Z}}$. 
    We define: 
    \begin{enumerate}
        \item The \textit{sign} of $Q$, denoted $\sgn(Q)$, to be the sign of $a$, $\sgn(a)$;
        \item The \textit{sign} of $M$, denoted $\sgn(M)$, to be the sign of $\mc$, with the convention that if $\mc=0$, then $\sgn(M)=\sgn(\md)$.
    \end{enumerate}
    (Note that $\sgn(Q)$ is not the same as the sign of the scaled Hessian matrix of $Q$, which we also denote by $Q$. 
    However, it will always be clear from context whether $\sgn$ denotes the sign of a form or a matrix.)
\end{defn}

We will also need the usual definition of the stability group of a form as well as one variant. 
\begin{defn}[Stability group of a quadratic form]\label{dfn:stbgpqdf}
    Let $Q$ be a form and $d$ a positive integer. We define:
    \begin{enumerate}
        \item $\mcl{S}(Q)$ to be the set of all $M\in \GLtwo{\mbb{Z}}$ such that $Q_M = Q$;
        \item $\mcl{S}_d(Q) = \mcl{S}(Q)\cap \Gamma(d)$.
    \end{enumerate}
    We refer to $\mcl{S}(Q)$ as the \textit{stability group} of $Q$.
\end{defn}

\subsection{Admissible tuples, the Shintani--Faddeev phase, and normalized ghost overlaps}\label{ssec:admissibletuples}

We now introduce the data necessary to define the set of \normalizedGhostOverlapsText{} corresponding to a ghost fiducial. 
This is encapsulated in the notion of an \textit{admissible tuple}.

It will turn out to be convenient to have two equivalent notions of admissible tuples: the first notion starts with the dimension and the rank and has a more geometric flavor; the second starts with the real quadratic field and has a more number-theoretic flavor. 
We will describe the geometric definition first and then describe the equivalence with the number-theoretic definition. 
The equivalence of these data is stated below in \Cref{thm:bijectionofadmissibletuples}. 

The dimension $d$ and rank $r$ of the $r$-SICs we consider in this paper must satisfy the following conditions. 
\begin{defn}[Admissible pair, associated field]\label{dfn:admissiblePair}
    A pair of integers $(d,r)$ is called \emph{admissible} if there exists an integer $n > 4$ such that
    \eag{
        \label{eq:drnconditions}
        0 &< r < \frac{d-1}{2}\,, & nr(d-r) &=d^2-1\,.
    }
    For each such admissible pair, we define a real quadratic \emph{associated field} $K = \mbb{Q}\!\left(\sqrt{n(n-4)}\right)$. 
\end{defn}

The condition on $r$ immediately implies $d>3$. 
The reason for the requirement $n>4$ is to ensure that $K$ is real quadratic. 
The reasons for the restriction $r < (d-1)/2$ are: Firstly, the transformation $r \to d-r$ can be used to swap between a projector $\Pi$ and its complement $I-\Pi$, and these give essentially equivalent objects; secondly, the cases\footnote{For the Diophantine equation $nr(d-r) = d^2-1$, the case $r = d/2$ reduces to the single solution $(d,r,n)=(2,1,3)$ in positive integers, whereas the case $r=(d-1)/2$ produces the family of solutions $(d,r,n) = (2k+1,k,4)$. The former case leads to the $1$-SIC in dimension $d=2$, whereas the latter case leads to continuous families of $r$-SICs of a very different nature to those described herein, including examples with elementary descriptions.} $r = d/2$ and $r=(d-1)/2$ are inconsistent with the requirement $n>4$, and so with the requirement that $K$ be real quadratic. 

There are infinitely many admissible pairs $(d,r)$. 
For example, for a given dimension $d$ there is always the $r=1$ solution $(d,1)$ leading to a $1$-SIC with associated field $\mbb{Q}\bigl(\sqrt{(d+1)(d-3)}\bigr)$. 
There is also a solution for arbitrary $r>2$ given by $(r^2+r-1,r)$ corresponding to an $r$-SIC with associated field $\mbb{Q}\bigl(\sqrt{r^2-4}\bigr)$. 
However it is easily seen that there is no solution for $r=2$. 
These are only a few of the possible solutions, as we discuss below. 

Rather than starting with an admissible pair $(d,r)$, the number-theoretic approach starts with a given field $K$ and then characterizes the admissible pairs $(d,r)$ associated to that field. 
One finds that they form two-index grids $d_{j,m}$, $r_{j,m}$ called the \textit{dimension grid} and the \textit{rank grid} respectively, defined below in \Cref{dfn:fjrjmdjm}. 
The dimension and rank grids are defined in terms of powers of a unit $\vn$ defined as follows: 
\begin{defn}[Fundamental totally positive unit $\vn$]
\label{dfn:fundamentalTotallyPositiveUnit}
    For $K$ a real quadratic field, define the \textit{fundamental totally positive unit} $\vn$ to be the smallest positive-norm unit greater than 1.
\end{defn}
\begin{rmkb}
    To avoid cluttering the notation, we do not indicate the field $K$ explicitly. 
    The field will always be clear from context.
\end{rmkb}
\begin{rmkb}
As we will see, the properties of $\vn$ are intimately related to the Zauner symmetry of an $r$-SIC.
\end{rmkb}

Before defining the dimension grid, we define the \textit{sequence of conductors}, which is independently important, as it plays a central role in the classification of $r$-SICs. 
\begin{defn}[Sequence of conductors $f_j$]
\label{dfn:sequenceofconductors}
    Let $K$ be a real quadratic field, let $\Delta_0$ be its discriminant, and let $\vn$ be the unit from \Cref{dfn:fundamentalTotallyPositiveUnit}. 
    For positive integers $j$, define 
    \eag{
        f_j&= \frac{\vn^j-\vn^{-j}}{\sqrt{\Delta_0}}
    }
    to be the \emph{sequence of conductors}. 
\end{defn}
\begin{rmkb}
    Note that the $f_j$ are positive integers.
\end{rmkb}
\begin{rmkb}
    As with $\vn$, we do not indicate the field $K$ explicitly. 
    This will always be clear from context.
\end{rmkb}
We are now ready to define the dimension and rank grids.
\begin{defn}[Dimension grid, rank grid, dimension tower, root dimension, admissible triple]
\label{dfn:fjrjmdjm}
    Let $K$ be a real quadratic field and $f_j$ its sequence of conductors. 
    For all positive integers $j, m$, define 
    \eag{
        r_{j,m} & = \frac{f_{jm}}{f_j}\,, &
        d_{j,m} &= r_{j,m+1}+r_{j,m}\,, &
        d_j &= d_{j,1}.
    }
    We define 
    \begin{enumerate}
        \item the two-index sequence $d_{j,m}$ as the \emph{dimension grid} associated to $K$ and $r_{j,m}$ as the \emph{rank grid},
        \item the sequence $d_j$ as the \emph{dimension tower} associated to $K$, and
        \item the integer $d_1$ as the \emph{root dimension} of $K$.
    \end{enumerate}
    For all real quadratic $K$ and positive integers $j,m$ we say $(K,j,m)$ is an \emph{admissible triple}. 
\end{defn}
\begin{rmkb}
    As with $\vn$ and $f_j$, we do not indicate the field $K$ explicitly in the definitions of $r_{j,m}$, $d_{j,m}$, $d_j$. 
    This will always be clear from context.
\end{rmkb}
The next theorem establishes a bijection between the set of admissible pairs and the set of admissible triples.

\begin{thm}
    \label{thm:bijectionofadmissibletuples}
    For each admissible triple $(K,j,m)$, the pair $(d_{j,m},r_{j,m})$ is admissible. 
    Conversely, for each admissible pair, $(d,r)$ there is a unique admissible triple $(K,j,m)$ such that $(d_{j,m},r_{j,m})=(d,r)$.
\end{thm}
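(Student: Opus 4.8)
The plan is to attach to each $j\ge 1$ the integer $t_j := \vn^j+\vn^{-j}$, to show that $(r_{j,m})_{m\ge0}$ is the Chebyshev-type integer sequence determined by $t_j$, and then to recognize the admissibility equation $nr(d-r)=d^2-1$ as the Vieta-jumping invariant of that sequence. I would begin by recording the arithmetic inputs. Because $\vn$ is the fundamental \emph{totally positive} unit it has norm $+1$, so $\vn^{-j}=\overline{\vn^j}$ and $t_j=\Tr_{K/\Q}(\vn^j)\in\Z$; moreover $t_j>2$ and $j\mapsto t_j$ is strictly increasing. From $f_{j(m+1)}+f_{j(m-1)}=(\vn^j+\vn^{-j})f_{jm}$ one reads off the recurrence $r_{j,m+1}=t_j r_{j,m}-r_{j,m-1}$ with $r_{j,0}=0$, $r_{j,1}=1$, so $(r_{j,m})_{m\ge1}$ is a strictly increasing sequence of positive integers; since $t_j\ge3$ and $r_{j,m-1}\le r_{j,m}-1$ for $m\ge1$, in fact $r_{j,m+1}-r_{j,m}\ge2$. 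A short induction via the recurrence gives the invariant $r_{j,m}^2+r_{j,m+1}^2-t_j r_{j,m}r_{j,m+1}=1$ for all $m\ge0$, and directly $t_j^2-4=(\vn^j-\vn^{-j})^2=f_j^2\Delta_0$.

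For the forward direction, given an admissible triple $(K,j,m)$ I set $n=t_j+2>4$. Since $d_{j,m}-r_{j,m}=r_{j,m+1}$, expanding $d_{j,m}^2-1=(r_{j,m}+r_{j,m+1})^2-1$ and substituting the invariant identity gives exactly $n\,r_{j,m}(d_{j,m}-r_{j,m})=d_{j,m}^2-1$; the bound $r_{j,m+1}-r_{j,m}\ge2$ yields $2r_{j,m}\le d_{j,m}-2$, hence $0<r_{j,m}<\tfrac{d_{j,m}-1}{2}$; and $n(n-4)=t_j^2-4=f_j^2\Delta_0$ shows $\Q(\sqrt{n(n-4)})=\Q(\sqrt{\Delta_0})=K$. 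Thus $(d_{j,m},r_{j,m})$ is admissible with associated field $K$.

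For the converse, start from an admissible pair $(d,r)$ with $n\,r(d-r)=d^2-1$. The associated field $K=\Q(\sqrt{n(n-4)})$ is forced, and is genuinely real quadratic since $n>4$ makes $n(n-4)=(n-2)^2-4$ a positive non-square. Put $t=n-2>2$ and $s=d-r$; then $r^2+s^2-t\,rs=1$ with $0<r<s$ (the bound $r<(d-1)/2$ forces $r<s$). The element $\eta=\tfrac{t+\sqrt{t^2-4}}{2}$ is a root of $X^2-tX+1$, hence a totally positive unit of $K$ exceeding $1$, so $\eta=\vn^j$ for a unique $j\ge1$, and then $t_j=\eta+\eta^{-1}=t$. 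Now run a Vieta-jumping descent on $r^2+s^2-t_j\,rs=1$: the map $(r,s)\mapsto(t_j r-s,\,r)$ preserves the equation, and $t_j r-s=(r^2-1)/s$ is a nonnegative integer $<r$ that vanishes precisely when $r=1$ (in which case necessarily $s=t_j$). Iterating, the descent terminates at $(r,s)=(1,t_j)=(r_{j,1},r_{j,2})$; reversing the steps via $(a,b)\mapsto(b,\,t_jb-a)$, which generates $(r_{j,m},r_{j,m+1})_{m\ge0}$ starting from $(0,1)$, identifies $(r,s)=(r_{j,m},r_{j,m+1})$ for a unique $m\ge1$. Hence $(d,r)=(d_{j,m},r_{j,m})$, and the triple is unique since $K$ is forced, $j$ is pinned down by strict monotonicity of $j\mapsto t_j$, and $m$ by strict monotonicity of $m\mapsto r_{j,m}$.

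I expect the main obstacle to be making the converse airtight: checking that $t_j r-s$ is a nonnegative integer strictly smaller than $r$ when $r>1$ and equals $0$ exactly at the base case $r=1$ (so the descent cannot overshoot or stall prematurely), confirming that the ascending orbit of $(0,1)$ really is the sequence $(r_{j,m},r_{j,m+1})_{m\ge0}$, and handling the bookkeeping — via the monotonicity of $j\mapsto t_j$ and $m\mapsto r_{j,m}$ — that promotes existence to uniqueness of $(K,j,m)$.
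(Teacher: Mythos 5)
Your proof is correct, but it reaches the conclusion by a genuinely different route from the paper's. The paper derives this theorem from Theorem~\ref{thm:nrddjmrjm}, whose part (A) establishes the identity $(d_j+1)r_{j,m}(d_{j,m}-r_{j,m})=d_{j,m}^2-1$ by hyperbolic-function manipulations with $\theta=\log\vn$, and whose part (B) recovers $(K,j,m)$ from $(d,r)$ by solving the quadratic for $d$, forcing $r^2f_j^2\Delta_0+4=p^2$, building the unit $w_{+}=\tfrac{p+rf_j\sqrt{\Delta_0}}{2}=\vn^{k}$, and then invoking the divisibility structure of the conductor sequence (Theorem~\ref{thm:UnitsConductorsIndicesProperties}, $f_j\div f_k\iff j\div k$) to write $k=jm$ and conclude $r=r_{j,m}$. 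You instead package everything through the three-term recurrence $r_{j,m+1}=t_jr_{j,m}-r_{j,m-1}$ and its conserved quantity $r^2+s^2-t_jrs=1$: the forward identity becomes a two-line algebraic consequence of the invariant rather than a hyperbolic computation, and in the converse you extract $j$ from $n$ alone (via $\eta=\tfrac{t+\sqrt{t^2-4}}{2}=\vn^j$) and then locate $m$ by Vieta-jumping descent on the quadratic form, which entirely replaces the conductor-divisibility lemma. The trade-off: your argument is more elementary and self-contained (the descent bounds $0\le t_jr-s=(r^2-1)/s<r$ with equality to zero exactly at $r=1$ are all verified correctly, as is the bijectivity of the up/down maps), whereas the paper's route produces as by-products the explicit relations $rf_j=f_{jm}$ and $p=d_{jm}-1$ and reuses machinery (the sequence-of-conductors lemmas) that is needed elsewhere in the paper anyway. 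Both handle uniqueness the same way in the end, via strict monotonicity of $j\mapsto d_j$ and $m\mapsto r_{j,m}$ once $n$, and hence $K$, is pinned down by the equation $nr(d-r)=d^2-1$.
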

\begin{proof}
    This is an immediate consequence of \Cref{thm:nrddjmrjm}. 
\end{proof}

\begin{defn}[Admissible tuple equivalence $\sim$]
\label{def:tupleequiv}
    We write $(d,r)\sim(K,j,m)$ if $(d,r)$ is associated to $(K,j,m)$ under the bijection just described---that is, if $d=d_{j,m}$ and $r=r_{j,m}$. 
\end{defn}

The dimension grid $d_{j,m}$ consists of the dimensions in which, conditional on our conjectures, there exist $r$-SICs generating abelian extensions of the number field $K$, and the corresponding ranks of the $r$-SICs are $r_{j,m}$. 
When $m=1$, the rank $r_{j,1}$ is always $1$, and the sequence of dimensions $d_{j,1}$ is distinguished as the dimensions where there occur $1$-SICs. 
From a physics and geometric point of view the $1$-SICs have a special significance, which motivates picking out the dimensions in which they occur by defining $d_j = d_{j,1}$. 

\begin{example}
    Consider $K=\mbb{Q}(\sqrt{5})$. 
    The fundamental unit greater than $1$ is $\tfrac{1}{2}\bigl(1+\sqrt{5}\bigr)$, but this has norm $-1$. 
    The unit $\vn$ is therefore the square of this unit, $\vn = \tfrac{1}{2}\bigl(3+\sqrt{5}\bigr)$. 
    The sequence of conductors $f_j$ begins $1, 3, 8, 21, 55, 144, 377, 987,\ldots$ and is given by the $(2n)^{\text{th}}$ Fibonacci number. 
    Consequently the dimension grid and rank grid for this case are 
    \begin{align}
        \label{eq:exampledimgrid}
        d_{j,m} = \begin{array}{lllll}
            \vdots & \vdots & \vdots & \vdots & \\
            48 & 2\,255 & 105\,937 & 4\,976\,784 & \dots \\
            19 & 341 & 6\,119 & 109\,801 & \dots \\
            8 & 55 & 377 & 2\,584 & \dots \\
            4 & 11 & 29 & 76 & \dots
        \end{array}\,,
        \quad
        r_{j,m} = \begin{array}{lllll}
            \vdots & \vdots & \vdots & \vdots & \\
            1 & 47 & 2\,208 & 103\,729 & \dots \\
            1 & 18 & 323 & 5\,796 & \dots \\
            1 & 7 & 48 & 329 & \dots \\
            1 & 3 & 8 & 21 & \dots
        \end{array}
    \end{align}
    In these grids the lower-left entries are $d_{1,1}$ and $r_{1,1}$ respectively; $j$ increases from bottom to top and $m$ increases from left to right. 
    The left-hand column of the first grid gives $d_j$, the sequence of dimensions in which one finds $1$-SICs associated to the field $\mbb{Q}(\sqrt{5})$.
\end{example}

We now extend the notion of admissible to include a form as part of an admissible tuple. 
\begin{defn}[Admissible tuple with form]
\label{def:admissibleform}
    Let $Q$ be a form, and let $(d,r) \sim (K,j,m)$ be admissible tuples. 
    We say that  $(d,r,Q) \sim (K,j,m,Q)$ are \textit{admissible} if the fundamental discriminant of $Q$ is the discriminant of $K$ and the conductor of $Q$ is a divisor of $f_j$. 
\end{defn}

\begin{example}
    Consider again $K=\mbb{Q}\bigl(\sqrt{5}\bigr)$. 
    The form $Q = \la 1,-3,1\ra$ has fundamental discriminant $5$ and conductor $1$, so $(K,j,m,Q)$ is always admissible for positive integers $j,m$. 
    When $j=1,m=1$, we see that $d_{1,1} = 4$ and $r_{1,1} = 1$, so $(K,1,1,Q) \sim (4,1,Q)$ and $(4,1,Q)$ is also admissible. 
    
    The form $Q' = \la 5, -20, 4\ra$ also has fundamental discriminant $5$, but it has conductor $8$. 
    From the sequence of conductors $f_j = 1, 3, 8, 21, 55, 144,\ldots$ we see that $(K,j,m,Q')$ is only admissible for $j=3,6,\ldots$ and in fact when $3\div j$. 
    A corresponding admissible tuple for $j=3, m=1$ is given by $(K,3,1,Q') \sim (19,1,Q')$. 
\end{example}

Admissible tuples contain all of the data necessary to define a corresponding set of \normalizedGhostOverlapsText{}. 
However, before giving the explicit formula, it is convenient to introduce a few more definitions. 
Recall from \Cref{dfn:stbgpqdf} that $\mcl{S}(Q)$ is the stability group of $Q$ and $\mcl{S}_d(Q)$ is the intersection of $\mcl{S}(Q)$ with $\Gamma(d)$. 
The following definition relies on \Cref{tm:symgp}, which proves basic properties about the structure of the groups $\mcl{S}(Q)$ and $\mcl{S}_d(Q)$. 
The group $\mcl{S}_d(Q)$ is infinite cyclic, whereas $\mcl{S}(Q)$ has nontrivial $2$-torsion with $\mcl{S}(Q)/\{\pm I\}$ being infinite cyclic.

\begin{defn}[Associated stabilizers, $\stabQGen{t}, A_t$, $\posGen{t}, \zaunerGen{t}$]\label{dfn:AssociatedStabilizers}
    Let $t=(d,r,Q) \sim (K,j,m,Q)$ be an admissible tuple, and let $f$ be the conductor of $Q$. 
    Define the \emph{associated stabilizer} for $\mcl{S}(Q)$, denoted $\stabQGen{t}$, to be the positive-trace element of $\mcl{S}(Q)$ such that $\sgn(L_t) = \sgn(Q)$ and $\stabQGen{t}$ generates $\mcl{S}(Q)/\{\pm I\}$,
    and define the \emph{associated stabilizer} for $\mcl{S}_d(Q)$, denoted
    $A_t$, to be the generator of $\mcl{S}_d(Q)$ 
    such that $\sgn(A_t) = \sgn(Q)$.
    
    Also define
    \eag{
    \posGen{t}&= 
    \begin{cases}
        \stabQGen{t} \qquad & \Det(\stabQGen{t}) = +1,
        \\
        \stabQGen{t}^2 \qquad & \Det(\stabQGen{t}) = -1,
    \end{cases}
    \\
    \zaunerGen{t}&= \frac{d_j-1}{2}I + \frac{f_j}{f}SQ
    }
    where $S=\smt{0 & -1 \\ 1 & 0}$.
\end{defn}
\begin{rmkb}
    We require that $\stabQGen{t}$ is positive trace, and that $\stabQGen{t}$, $A_t$ have the same sign as $Q$ mainly for the sake of definiteness. 
    Note, however, that other choices, though possible, might complicate the statements of some of our results.

    The significance of the matrix $\posGen{t}$ is that $\mcl{S}(Q)\cap \SLtwo{\mbb{Z}}$ is generated by $\posGen{t}$ and $-I$ (see \Cref{tm:symgp}). 
    The significance of the matrix $\zaunerGen{t}$ is that it is the unique element of $\mcl{S}(Q)$ such that $\zaunerGen{t}^{2m+1} = A_t$ (see \Cref{tm:symgp}). 
    In the case of a $1$-SIC, it is related to the canonical order 3 symmetry which is a prominent feature of SIC phenomenology (see \Cref{thm:FzAndFaSymmetries}).
\end{rmkb}
\begin{example}
    Consider again $K=\mbb{Q}\bigl(\sqrt{5}\bigr)$. 
    We have seen that the tuple $t=(d,r,Q)=(4,1,\la 1, -3, 1\ra)$ is admissible. 
    It is easily checked that the matrices $\pm\smt{3&-1\\1&0}$ and $\pm\smt{0&1\\-1&3}$ are in $\mcl{S}(Q)$. 
    In fact they are the generators, and accordingly we choose $\stabQGen{t} = +\smt{3&-1\\1&0}$ as the associated stabilizer since this has the same sign as $Q$ and positive trace. One then finds $\posGen{t} = \zaunerGen{t} = \stabQGen{t}$.
    The matrix $A_t = \stabQGen{t}^3$ is easily seen to be an element of $\mcl{S}_d(Q)$ and is in fact a generator with the same sign as $Q$. 

    We have seen that the tuple $t=(19,1,\la 5,-20,4\ra)$ is another admissible tuple corresponding to the same field.
    The associated stabilizers are $\stabQGen{t} = \posGen{t}=\zaunerGen{t} = \smt{19&-4\\5&-1}$ and $A_t = \stabQGen{t}^3$. 
\end{example}

There are two more functions that we need to define. 
While the definitions are rather technical, the role these functions play is easy to motivate. 

In general, the \SFKShort{} modular cocycle is complex, but the \normalizedGhostOverlapsText{} $\normalizedGhostOverlap_{\mbf{p}}$ are by definition real. 
The \textit{\SFKFull{} (\SFKShort{}) phase}, defined below, is a complex unit which multiplies the \SFKShort{} modular cocycle so that the product is always a real number. 
This requirement alone could of course be achieved by simply multiplying by the complex unit having the conjugate argument, but for the result to be a \ghostOverlapSingularText{} requires certain additional structure in the pattern of signs as $\mbf{p}$ varies. 
The \SFKShort{} phase achieves the desired sign structure. 
Moreover, it is simply a root of unity with a quadratic dependence on $\mbf{p}$ in the exponent. 

It is worth noting that the approach here, using the \SFKShort{} modular cocycle, has an important advantage over the $L$-function approach in ~\cite{Kopp2019,Appleby:2022,Bengtsson:2024} in that it provides a simple way to resolve the sign ambiguity in the definition of the \normalizedGhostOverlapsText{}. 

The definition of the \SFKShort{} phase depends on a certain integral class function on $\SLtwo{\mbb{Z}}$ known as the \textit{Rademacher class invariant}~\cite{Rademacher:1972}. 

\begin{defn}[Rademacher class invariant]
\label{df:meyinv} 
    For all $M \in \smt{\ma & \mb \\ \mc & \md} \in \SLtwo{\mbb{Z}}$
    the \emph{Rademacher class invariant} is given by
    \eag{
    \rade(M) &= 
      \begin{cases}
        \tfrac{\Tr(M)}{\mc} -3\sgn\bigl(\mc \Tr(M)\bigr)-12 \sgn(\mc)\mathfrak{s}(\ma,\mc) \qquad & \mc\neq 0\,,
        \\
        \frac{\mb}{\md} \qquad & \mc = 0\,,
      \end{cases}
    \label{eq:phiMdf}
    }
    where $\mathfrak{s}(a,b)$ with $a,b\in\Z$ and $b\not=0$ is the Dedekind sum
    \eag{
        \mathfrak{s}(a,b) &= \sum_{n=1}^{|b|-1} \left(\!\left(\frac{n}{b}\right)\!\right) \left(\!\left(\frac{n a}{b}\right)\!\right)\,, 
        \quad & \text{with\ \ } 
        (\!(x)\!) &=\begin{cases} 0 \qquad & x\in \mbb{Z}
        \\
        x-\lfloor x \rfloor -\frac{1}{2} \qquad & x\notin \mbb{Z}
        \end{cases}\,,
        \label{eq:dedekindsum}
    }
    and where we adopt the convention $\sgn(0)=0$.
\end{defn}

\begin{defn}[\SFKShort{} phase]\label{dfn:SFKPhase}
    Let $t=(d,r,Q) \sim (K,j,m,Q)$ be an admissible tuple, and let $\mbf{p} \in \mbb{Z}^2$. 
    The \emph{\SFKShort{} phase}, denoted
    $\SFPhase{t}{\mbf{p}}$,
    is 
    \eag{
        \SFPhase{t}{\mbf{p}}= (-1)^{s_d(\mbf{p})} e^{-\tfrac{\pi i}{12} \rade(A_t)} \rtu_d^{- \tfrac{f_{jm}}{f} Q(\mbf{p})}  \,,
    }
    where $s_d(\mbf{p}) =d+ (1+d)(1+p_1)(1+p_2)$ and $f$ is the conductor of $Q$.
\end{defn}
\begin{rmkb}
    In the even dimensional case other choices for the sign $(-1)^{s_d(\mbf{p})}$ are possible. 
    However, it is shown in \Cref{ap:alternativeFiducialData} that they do not lead to new equivalence classes of $r$-SICs.
\end{rmkb}
\begin{thm}\label{thm:ghostWellDefinedCondition}
    Let $t=(d,r,Q)$ be an admissible tuple. 
    Then
    \eag{
     \qrt_{Q,\pm} \in \DD_{A\vpu{-1}_t}\cap\DD_{A^{-1}_t}.
    }
\end{thm}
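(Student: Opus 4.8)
The plan is to reduce the statement to two strict inequalities for the cocycle denominators $j_{A_t}$ and $j_{A_t^{-1}}$, and then to establish these using the fact that the stabilizer element $A_t\in\mcl{S}(Q)$ fixes the two roots of $Q$.

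Since $A_t\in\mcl{S}_d(Q)\subseteq\Gamma(d)\subseteq\SLtwo{\mbb{Z}}$, we have $\det A_t=1$, so by \Cref{def:sl2ldmndf} the set removed from $\mbb{C}$ to form $\DD_{A_t}$ is $\{\tau\in\mbb{R}:j_{A_t}(\tau)\le 0\}$, and likewise for $\DD_{A_t^{-1}}$. Because $Q$ is an indefinite (hence irreducible) form, $\Delta=b^2-4ac>0$ and $a\neq 0$, so the roots $\qrt_{Q,\pm}$ are real (and irrational); it therefore suffices to prove the strict inequalities $j_{A_t}(\qrt_{Q,\pm})>0$ and $j_{A_t^{-1}}(\qrt_{Q,\pm})>0$.

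The key step is to recognize the columns $\mbf{w}_\pm=(\qrt_{Q,\pm},1)^{\rm{T}}$ as eigenvectors of $A_t$. Writing $Q$ also for its scaled Hessian matrix, the condition $A_t\in\mcl{S}(Q)$ combined with $\det A_t=1$ reads $A_t^{\rm{T}}QA_t=Q$, so $(A_t\mbf{v})^{\rm{T}}Q(A_t\mbf{v})=\mbf{v}^{\rm{T}}Q\mbf{v}$ for all $\mbf{v}$; thus $A_t$ maps $Q$-isotropic vectors to $Q$-isotropic vectors. The nonzero $Q$-isotropic vectors are exactly the nonzero scalar multiples of $\mbf{w}_+$ and of $\mbf{w}_-$, since $\mbf{w}_\pm^{\rm{T}}Q\mbf{w}_\pm=a\qrt_{Q,\pm}^2+b\qrt_{Q,\pm}+c=0$ and $Q(x,0)=ax^2$ has no nonzero root. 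Being linear and invertible, $A_t$ permutes the two lines $\mbb{R}\mbf{w}_+$ and $\mbb{R}\mbf{w}_-$; it cannot interchange them, for then its matrix in the basis $(\mbf{w}_+,\mbf{w}_-)$ would be antidiagonal, forcing $\Tr A_t=0$. But $\Tr A_t>0$: by \Cref{tm:symgp} we have $A_t=\zaunerGen{t}^{2m+1}$ with $\zaunerGen{t}=\tfrac{d_j-1}{2}I+\tfrac{f_j}{f}SQ\in\SLtwo{\mbb{Z}}$, and since $\Tr(SQ)=0$ this gives $\Tr\zaunerGen{t}=d_j-1\ge 3$, so $\zaunerGen{t}$ is hyperbolic with eigenvalues $\mu>1>\mu^{-1}>0$ and $\Tr A_t=\mu^{2m+1}+\mu^{-(2m+1)}>2$. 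Hence $A_t\mbf{w}_\pm=\lambda_\pm\mbf{w}_\pm$ for some $\lambda_\pm\in\mbb{R}^\times$, and reading off the bottom coordinate gives $j_{A_t}(\qrt_{Q,\pm})=\lambda_\pm$, while applying $A_t^{-1}$ gives $j_{A_t^{-1}}(\qrt_{Q,\pm})=\lambda_\pm^{-1}$.

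It remains only to fix signs: $\lambda_+\lambda_-=\det A_t=1$ and $\lambda_++\lambda_-=\Tr A_t>0$ force $\lambda_+,\lambda_->0$, so $j_{A_t}(\qrt_{Q,\pm})=\lambda_\pm>0$ and $j_{A_t^{-1}}(\qrt_{Q,\pm})=\lambda_\pm^{-1}>0$; by the reduction in the first step this gives $\qrt_{Q,\pm}\in\DD_{A_t}\cap\DD_{A_t^{-1}}$. The only genuinely delicate point is the sign bookkeeping — in particular, ensuring $\Tr A_t>0$ rather than $<0$ — for which the explicit form of $A_t$ via $\zaunerGen{t}$ from \Cref{tm:symgp} is used; everything else is elementary linear algebra over $\mbb{R}$.
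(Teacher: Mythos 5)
Your proof is correct and follows essentially the same route as the paper's: the paper's one-line argument combines Lemma~\ref{lm:lactqrt} (the roots are fixed points of $A_t$), Lemma~\ref{lem:fixedinda} (a real fixed point of $M\in\SLtwo{\mbb{Z}}$ lies in $\DD_M$ iff $\Tr(M)>0$, proved exactly by your eigenvalue observation $j_M(\qrt)=\lambda$ together with $\det M=1$ forcing the eigenvalues to share a sign), and Theorem~\ref{tm:symgp} ($\Tr(A_t^{\pm1})>0$). You have simply re-derived the content of those two lemmas inline — via the isotropic-cone argument for the fixed-point claim and the explicit form of $\zaunerGen{t}$ for the trace positivity — so the substance is identical.
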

\begin{proof}
    See \cpageref{ssc:ghostWellDefinedProof}
    in \Cref{ssc:ghostWellDefinedProof}.
\end{proof}
We now have all of the ingredients to state our formula for the \normalizedGhostOverlapsText{}. 
\begin{defn}[\candidateNormalizedGhostOverlapsText{} corresponding to an admissible tuple]\label{dfn:GhostOverlaps}
 Let $t=(d,r,Q)\sim(K,j,m,Q)$ be an admissible tuple, and let $\mbf{p} \in \mbb{Z}^2$. 
    The corresponding \textit{\candidateNormalizedGhostOverlapsText{}} are defined by
 \eag{
    \label{eq:ghostoverlapformula}
    \normalizedGhostOverlapC{t}{\mbf{p}} = 
    \SFPhase{t}{\mbf{p}}
    \,\sfc{d^{-1} \mbf{p}}{A_t}{\qrt_{t}}.
}
for all $\mbf{p}$, 
where the $\SFPhase{t}{\mbf{p}}$ are defined by \Cref{dfn:SFKPhase} and 
$
\qrt_{t} = \qrt_{Q,+}.
$
The corresponding \textit{\candidateGhostOverlapsText{}} are defined by
\eag{\label{eq:CandidateGhostOverlapDefinition}
\ghostOverlapC{t}{\mbf{p}}
&= \begin{cases}
    r \qquad & \mbf{p}\equiv \zero\Mod{d},
    \\
    \sqrt{\frac{r(d-r)}{d^2-1}} \, \normalizedGhostOverlapC{t}{\mbf{p}}\qquad & \text{otherwise.}
\end{cases}
}
\end{defn}
\begin{rmkb}
Note that this definition relies on \Cref{thm:ghostWellDefinedCondition}, since otherwise the RHS would not be well-defined. 
Note also that instead of defining $\qrt_t = \qrt_{Q,+}$, we could equally well define $\qrt_t =\qrt_{Q,-}$. 
Specifically, it is shown in \Cref{ap:alternativeFiducialData} that \ghostOverlapsText{} calculated using $\qrt_{Q,-}$ with the form $Q$ coincide with those calculated using $\qrt_{Q',-}$ with a different form $Q'$.
Finally, note that in \Cref{cor:AlternativeDefinitionOfCandidateGhostOverlaps} we derive a simpler expression for the scaling factor $\sqrt{\frac{r(d-r)}{d^2-1}}$ in \eqref{eq:CandidateGhostOverlapDefinition}.
\end{rmkb}

\subsection{The main conjectures}\label{ssec:TheConjectures}

Our goal is to show that the \candidateNormalizedGhostOverlapsText{} defined by \eqref{eq:ghostoverlapformula}, when inserted on the right hand side of \eqref{eq:ghstSIC} give, with a suitable choice of twist $G$, ghost fiducials from which live fiducials can then be constructed by applying a suitable Galois conjugation. 
The validity of the construction depends on the Twisted Convolution Conjecture and Stark Conjecture.

To motivate the Twisted Convolution Conjecture, observe that if the \candidateNormalizedGhostOverlapsText{} $\normalizedGhostOverlapC{t}{\mbf{p}}$ specified by \eqref{eq:ghostoverlapformula} are to give rise to a ghost fiducial when substituted into \eqref{eq:ghstSIC}, then we must have
\begin{enumerate}
    \item The numbers $\normalizedGhostOverlapC{t}{\mbf{p}}$ are real for all $\mbf{p}\not\equiv \zero \Mod{d}$,
    \item 
    $\normalizedGhostOverlapC{t}{\mbf{p}} \normalizedGhostOverlapC{t}{-\mbf{p}} = 1$ for all $\mbf{p}\not\equiv \zero \Mod{d}$),
    \item $\tilde{\Pi}^2 = \tilde{\Pi}$.
\end{enumerate}
The first two conditions are proved in \Cref{thm:nupnumpeq1}. 
However, we have so far been unable to prove the last condition, which must therefore be posited as an additional conjecture. 
Before stating it we need some definitions.
\begin{defn}\label{dfn:modulardeltafunction}
    Let $\mbf{p},\mbf{q}\in \Q^2$ and let $n$ be a positive integer. 
    Then we define 
    \eag{
    \delta^{(n)}_{\mbf{p},\mbf{q}} &= 
    \begin{cases}
        1 \qquad & \mbf{p} - \mbf{q} \in n\Z^2,
        \\
        0 \qquad & \text{otherwise}.
    \end{cases}
    }
\end{defn}
\begin{defn}[Shift]\label{dfn:shift}
Let $t=(d,r,Q)\sim(K,j,m,Q)$ be an admissible tuple. 
We say that  $\shift\in \mbb{Z}/d\mbb{Z}$ is a \emph{shift} for $t$ if
\begin{enumerate}
    \item $2\shift+d_j-1$ is coprime to $d$,
    \item $\shift$ satisfies 
        \eag{
        \sum_{\mbf{q} \in \mcl{I}_{\mbf{p}}} \rtus_d^{ r\la \mbf{p},(\shift I + \zaunerGen{t})\mbf{q}\ra} \sfc{d^{-1}\q}{A\vpu{-1}_t}{\qrt_t }\sfc{d^{-1}(\mbf{q}-\mbf{p})}{A^{-1}_t}{\qrt_t} &= d^2\delta^{(d)}_{\mbf{p},\zero}
        \label{eq:tcc}
    }
 for all $\mbf{p}\in \mbb{Z}^2$, where the index set $\mcl{I}_{\mbf{p}}$ is any complete set of coset representatives for $\mbb{Z}^2/d\mbb{Z}^2$ containing $\zero$ and $\mbf{p}$.
 The set of all shifts for $t$ is denoted $\mcl{Z}_t$.
\end{enumerate} 
\end{defn}
We are now ready to state our additional conjecture:
\begin{conj}[Twisted Convolution Conjecture]\label{cnj:tci}
For every admissible tuple the set of shifts $\mcl{Z}_t$ includes the values $\lambda = 0, 1$.  Moreover, if $t=(d,r,Q)$ and $t'=(d,r,Q')$ are admissible tuples such that $Q$ and $Q'$ have the same discriminant, then $\mcl{Z}_t = \mcl{Z}_{t'}$.
\end{conj}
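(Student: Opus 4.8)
The identity \eqref{eq:tcc} is posited as a conjecture, so what follows is the strategy I would attempt together with an indication of where it breaks down. The first step is to see precisely what \eqref{eq:tcc} encodes: for the singled-out values $\shift = 0, 1$ the arithmetic condition~(1) of \Cref{dfn:shift} (that $2\shift + d_j - 1$ be coprime to $d$) is a routine check using the recurrences of \Cref{dfn:fjrjmdjm}, while condition~(2)---the vanishing identity itself---is equivalent to the idempotency $\tilde{\Pi}^2 = \tilde{\Pi}$ of the ghost fiducial \eqref{eq:ghstSIC} built from the candidate normalized ghost overlaps $\normalizedGhostOverlapC{t}{\mbf{p}}$ of \Cref{dfn:GhostOverlaps}, for a twist $G$ depending on $\shift$. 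To obtain this equivalence I would substitute \eqref{eq:ghostoverlapformula} into \eqref{eq:ghstSIC}, expand $\tilde{\Pi}^2$ in the displacement basis using the Weyl relation $D_{\mbf{p}}D_{\mbf{q}} = \rtu_d^{\symp{\mbf{p}}{\mbf{q}}} D_{\mbf{p}+\mbf{q}}$, and collect the coefficient of each $D_{\mbf{p}}$. Using that the candidate overlaps are already known to be real and to satisfy $\normalizedGhostOverlapC{t}{\mbf{p}} \normalizedGhostOverlapC{t}{-\mbf{p}} = 1$ (\Cref{thm:nupnumpeq1}), the coefficient of $I$ matches automatically, and for $\mbf{p} \not\equiv \zero \Mod{d}$ the coefficient of $D_{\mbf{p}}$ collapses---once one absorbs the Weyl phase, the twist $G$, and the quadratic and Rademacher phases carried by $\SFPhase{t}{\cdot}$ (\Cref{dfn:SFKPhase})---to precisely \eqref{eq:tcc}. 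This is a careful but mechanical computation in the displacement-operator algebra, of the kind already performed in \Cref{thm:nupnumpeq1}, and it is not where the real difficulty lies.

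Next I would dispatch the easy half of \eqref{eq:tcc}. When $\mbf{p} \equiv \zero \Mod{d}$ the identity is immediate: since $A_t$ fixes $\qrt_t$ and the cocycle values there are defined by \Cref{thm:ghostWellDefinedCondition}, evaluating the multiplicative cocycle relation $\sfc{\mbf{r}}{MN}{\tau} = \sfc{\mbf{r}}{M}{N\cdot\tau}\,\sfc{\mbf{r}}{N}{\tau}$ along $A_t A_t^{-1} = I$ gives $\sfc{d^{-1}\mbf{q}}{A_t}{\qrt_t}\,\sfc{d^{-1}\mbf{q}}{A_t^{-1}}{\qrt_t} = \sfc{d^{-1}\mbf{q}}{I}{\qrt_t} = 1$, so the sum over a complete set of coset representatives equals $d^2$, as required. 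The entire content of the conjecture is therefore the \emph{off-diagonal vanishing}: for $\mbf{p} \not\equiv \zero \Mod{d}$ the twisted sum in \eqref{eq:tcc} must be zero. To attack this I would (i) use the quasi-periodicity of $\sfc{\mbf{r}}{A_t}{\tau}$ under $\mbf{r} \mapsto \mbf{r} + \mbf{n}$---a product of finite variant $q$-Pochhammer factors---to replace the sum over $\mcl{I}_{\mbf{p}}$ by one with explicit closed-form weights; (ii) use the cocycle relation along the powers of $A_t$ together with $A_t \cdot \qrt_t = \qrt_t$ to reexpress $\sfc{d^{-1}(\mbf{q}-\mbf{p})}{A_t^{-1}}{\qrt_t}$ in terms of values of $\sfc{\cdot}{A_t}{\qrt_t}$; (iii) invoke an inversion/reciprocity relation for the double sine function $\sfjM{S}{z}{\tau}$ at the real quadratic argument $\qrt_t$, of the kind supplied by Shintani- and Kronecker-limit-type closed forms; and (iv) hope that the resulting finite exponential sum, weighted by the quadratic-exponent root of unity $\rtus_d^{r\la\mbf{p},(\shift I + \zaunerGen{t})\mbf{q}\ra}$, exhibits total Gauss-sum-type cancellation, forced by the admissibility relation $nr(d-r) = d^2 - 1$ and by the coprimality of $2\shift + d_j - 1$ with $d$.

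For the $Q$-independence clause I would argue by equivariance. If $Q' = \lOnQ{M}{Q}$ with $M \in \GLtwo{\mbb{Z}}$, then $\mcl{S}(Q') = M^{-1}\mcl{S}(Q)M$ and $\Gamma(d)$ is normal in $\GLtwo{\mbb{Z}}$, so $A_{t'} = M^{-1}A_t M$ and $\qrt_{t'} = M^{-1}\cdot\qrt_t$; combining the $\SLtwo{\mbb{Z}}$-equivariance of $\shin$ (simultaneous conjugation of $M$, transport of $\tau$, and the substitution $\mbf{r} \mapsto M^{-1}\mbf{r}$) with the identities in \eqref{eq:LActOnSymplecticInnerProduct}, the change of variable $\mbf{q} \mapsto M\mbf{q}$, $\mbf{p} \mapsto M\mbf{p}$ carries \eqref{eq:tcc} for $t'$ into \eqref{eq:tcc} for $t$, so $\mcl{Z}_t = \mcl{Z}_{t'}$ whenever $Q$ and $Q'$ are $\GLtwo{\mbb{Z}}$-equivalent. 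For forms of the same discriminant lying in distinct $\GLtwo{\mbb{Z}}$-classes, $\mcl{Z}_t = \mcl{Z}_{t'}$ would instead have to follow from the fact that the special values entering \eqref{eq:tcc} depend only on the discriminant and not on the class; under the Stark Conjecture I would try to deduce this from the algebraicity of the ghost overlaps together with the Galois action permuting them across the class group, using that the right-hand side $d^2 \delta^{(d)}_{\mbf{p},\zero}$ is rational and hence Galois-invariant.

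The main obstacle is step~(iv). The off-diagonal identity in \eqref{eq:tcc} is a genuinely transcendental relation among special values of the noncompact quantum dilogarithm at quadratic-irrational points, and it does not reduce to any of the identities the theory currently provides---not the modular transformation, not the five-term (pentagon) relation, not the infinite-product expansion. It can be checked numerically in many dimensions, but there is no structural proof available, which is precisely why it is isolated here as the Twisted Convolution Conjecture rather than established as a theorem.
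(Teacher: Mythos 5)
This statement is a conjecture; the paper offers no proof of it, and your account accurately reflects its status. The partial reductions you describe all coincide with what the paper actually establishes: the equivalence of condition \eqref{eq:tcc} to idempotency of the ghost fiducial is precisely the content of the proof of \Cref{thm:ghstExist} in \Cref{sbsc:proofofghosttheorem}; the diagonal case $\mbf{p}\equiv\zero\Mod{d}$ is handled there exactly as you do, via \Cref{lm:sfam1sfaeq1}; and the invariance of $\mcl{Z}_t$ under $\GL_2(\Z)$-equivalence of forms is proved in \Cref{thm:MtransformedTuples} by the same equivariance argument you sketch. Your identification of the genuinely open content --- the off-diagonal vanishing for $\mbf{p}\not\equiv\zero\Mod{d}$, and the equality $\mcl{Z}_t=\mcl{Z}_{t'}$ across inequivalent forms of the same discriminant --- is correct; neither is established in the paper, which is why the statement is isolated as a conjecture.
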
 
As a matter of empirical observation it appears that  $0$, $1$ are the only shifts when $r=1$, but that when $r>1$ there are others.

The set of shifts for a given tuple $t= (d,r,Q)\sim(K,j,m,Q)$ also determines the possible choices of the twist  in \Cref{dfn:ghostFiducial}. 
Specifically, it can be seen from the proof of \Cref{thm:ghstExist} that if the Twisted Convolution Conjecture is valid, then a matrix $G$ is a possible choice of twist if and only if
\eag{
    \Det(G) r(2\shift+d_j-1+d) &\equiv 1 \Mod{\db}
}
for some $\shift \in \mcl{Z}_t$.

The Twisted Convolution Conjecture guarantees the existence of ghost fiducials in every dimension. 
To get from there to the existence of live fiducials in every dimension we need a guarantee that
\begin{enumerate}
    \item the matrix entries of the ghost fiducial are algebraic numbers,
    \item there exists a Galois automorphism with the properties needed to convert the ghost fiducial into a live fiducial.
\end{enumerate}

These guarantees are provided directly by a conjectures about special values of the Shintani--Faddeev modular cocycle (called \textit{real multiplication (RM) values} in \cite{Kopp2020d}). We first state a ``minimalist''
such conjecture, which will be sufficient (together with the Twisted Convolution Conjecture) to prove SIC existence.

\begin{conj}[Minimalist\footnote{Strictly, we could prove SIC existence from an even more ``minimalist'' conjecture by only assuming the existence of one Galois automorphism satisfying property (2).} Real Multiplication Values Conjecture]\label{conj:mrmvc}
    Let $\qrt \in \R$ such that $a\qrt^2 + b\qrt + c = 0$ with $a,b,c \in \Z$ and $\Delta = b^2-4ac$ is not a square. 
    Let $\r \in \Q^2 \setminus \Z^2$ and $A \in \Gamma_\r$ such that $A \cdot \qrt = \qrt$.
    Then:
    \begin{itemize}
        \item[(1)] $\sfc{\r}{\!A}{\qrt}$ is an algebraic number.
        \item[(2)] If $g \in \Gal(\ol{\Q}/\Q)$ such that $g(\sqrt{\Delta}) = -\sqrt{\Delta}$, then $\abs{g(\sfc{\r}{\!A}{\qrt})}=1$.
    \end{itemize}
\end{conj}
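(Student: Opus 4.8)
The plan is to derive both assertions from \Cref{conj:stark} (the Stark Conjecture), by way of a Kronecker-type limit formula for the \SFKShort{} modular cocycle in the spirit of \cite{Shintani1976,Shintani1980} and \cite{Kopp2020d}. First I would unwind the hypotheses: since $A\in\Gamma_\r$ is hyperbolic and fixes $\qrt$, it generates (modulo $\pm I$ and powers) the stabilizer of $\qrt$ in $\SLtwo{\Z}$, hence corresponds to a totally positive unit $\eta_A$ of the quadratic order of discriminant $\Delta$; the ray datum $\r\in\Q^2\setminus\Z^2$ supplies a modulus $\mathfrak m$ (essentially the common denominator of $\r$) together with both archimedean places of $K=\Q(\qrt)=\Q(\sqrt{\Delta})$. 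Using the Shintani cone decomposition of the first quadrant adapted to the $A$-action on $\HH$ (equivalently, the periodic continued fraction of $\qrt$), the cocycle relation \eqref{eq:sfjcocyclerelInt}, the coboundary description \eqref{eq:coboundary}, and the Barnes double-gamma/double-sine integral representation of the $q$-Pochhammer product, one rewrites $\sfc{\r}{\!A}{\qrt}$ as $u_{\!*}\cdot\exp\!\bigl(-\sum_{\mathfrak c}n_{\mathfrak c}\,\zeta'_{\mathfrak c}(0)\bigr)$, where $\mathfrak c$ runs over ray classes of $K$ modulo $\mathfrak m$, the $n_{\mathfrak c}\in\Q$ are explicit, $\zeta_{\mathfrak c}(s)$ is the corresponding partial zeta function, and $u_{\!*}$ is a root of unity carrying all the boundary/``corner'' contributions --- precisely the combinatorial datum already isolated by the Rademacher invariant in \Cref{dfn:SFKPhase}.

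Given this identity, \Cref{conj:stark} supplies, for each ray class $\mathfrak c$, a Stark unit $\epsilon_{\mathfrak c}$ --- a genuine unit in the ray class field $H=H_{\mathfrak m}$ of $K$ --- with $\log\lvert\epsilon_{\mathfrak c}^{\,\sigma}\rvert_{w}=-\zeta'_{\mathfrak c\sigma}(0)$ at a distinguished archimedean place $w$ of $H$. Substituting into the formula from the first step exhibits $\sfc{\r}{\!A}{\qrt}$ as a product of rational powers of the $\epsilon_{\mathfrak c}$ times a root of unity, hence as an algebraic number: this is part (1). I emphasize that only the ``minimalist'' input is needed here --- existence of algebraic quantities with the correct archimedean absolute values --- so that the sharper \Cref{conj:stc} and \Cref{conj:msc} (which would pin down that the requisite half-integral powers lie in a prescribed abelian extension of $K$) can be avoided; they enter only in the finer statements elsewhere in the paper.

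For part (2), fix the embedding $\ol{\Q}\hookrightarrow\C$ implicit in the statement and take $g\in\Gal(\ol{\Q}/\Q)$ with $g(\sqrt{\Delta})=-\sqrt{\Delta}$; then $g|_K$ is the nontrivial element of $\Gal(K/\Q)$, which interchanges the two archimedean places $\infty_1\leftrightarrow\infty_2$ of $K$ and fixes the finite part of $\mathfrak m$. Applying $g$ to the product from the previous paragraph, $\lvert g(\sfc{\r}{\!A}{\qrt})\rvert$ becomes a product of archimedean absolute values $\lvert g(\epsilon_{\mathfrak c})\rvert$; because $g$ carries the distinguished place $w\mid\infty_1$ to a place above $\infty_2$, these are governed by the partial zeta \emph{derivatives attached to $\infty_2$}, and the place $\infty_2$ --- being ramified in $H$ rather than split completely --- contributes $0$ to the order of vanishing of the relevant equivariant $L$-function at $s=0$, so the corresponding $\Q$-linear combination of $\zeta$-derivatives vanishes. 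Equivalently, and perhaps more transparently, this is the reflection (inversion) identity for the double sine function --- the same identity that renders the noncompact quantum dilogarithm unimodular on the real axis --- now applied after the Shimura-reciprocity-type Galois action sends the value at $\qrt=\qrt_{Q,+}$ to the value associated to the conjugate root $\qrt_{Q,-}$ (via the continuation of the cocycle off $\HH$ to $\DD_A$). Either route yields $\lvert g(\sfc{\r}{\!A}{\qrt})\rvert=1$; conceptually, $g$ is exactly the automorphism carrying the real ``ghost'' value to a unimodular ``live'' value, matching \eqref{eq:ghostanalogofhermiticity} against \eqref{eq:normalizedOverlapIdentity}.

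The main obstacle is the first step: making the Kronecker limit formula for $\sfc{\r}{\!A}{\qrt}$ precise and uniform over all admissible $(\qrt,\r,A)$ --- identifying the exact modulus $\mathfrak m$, the rational coefficients $n_{\mathfrak c}$, and the root-of-unity correction $u_{\!*}$, and reconciling them with the \SFKShort{} phase of \Cref{dfn:SFKPhase} and with the normalizations in \eqref{eq:ghostoverlapformula}. The deeper obstacle, of course, is that \Cref{conj:stark} is itself open, so the argument is necessarily conditional --- which is why the assertion is recorded here as a conjecture, deliberately formulated to demand as little of the Stark machinery as possible.
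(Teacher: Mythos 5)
Your main route is the one the paper takes: \Cref{thm:starkimplications}(1) is proven via \Cref{thm:field0}, which combines the zeta-value formula for RM values of the cocycle (\Cref{thm:qpochmain}, i.e.\ the main theorem of \cite{Kopp2020d} — exactly the ``Kronecker-type limit formula'' you describe, except that it is naturally a formula for $\sfc{\r}{\!A}{\qrt}^{2}$, with the root-of-unity correction being $(\psi^{-2}\chi_\r^{-1})(A)$) with the properties of the Stark unit $\su_\A$ asserted in \Cref{conj:stark}: algebraicity gives (1), and $\abs{g(\su_\A)}=1$ for $g$ not fixing $K$ gives (2). So far, so good.

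There is, however, a genuine gap: your sketch silently assumes the order attached to $\qrt$ is maximal. The hypothesis only requires $\Delta=b^2-4ac$ to be a non-square, so $\Delta$ may be $f^2\Delta_0$ with $f>1$, and then the ray classes your limit formula produces live in a ray class monoid of the \emph{non-maximal} order $\Z[\tfrac{-b+\sqrt{\Delta}}{2}]$ — a setting where \Cref{conj:stark} as formulated says nothing, and which is precisely why the paper introduces the Monoid Stark Conjecture for the stronger conclusions. To get the minimalist statement from the classical Stark Conjecture alone, the paper first reduces to conductor one: it writes $\qrt = B\cdot\alpha$ for an integral matrix $B$ of determinant $f$ and $\alpha$ of conductor $1$ (\cite[Lem.~4.42]{Kopp2020d}), chooses $n$ so that $C=B^{-1}A^nB$ lies in the relevant intersection of $\Gamma_{\mbf s}$'s, and uses the product formula $\sfc{\r}{\!A}{\qrt}^n=\prod_{\mbf s}\sfc{\mbf s}{C}{\alpha}$ (\cite[Thm.~4.46]{Kopp2020d}); this is why \Cref{thm:field0}(1) only asserts that some \emph{power} of $\sfc{\r}{\!A}{\qrt}$ is a unit in an abelian extension, which still suffices for algebraicity and for $\abs{g(\cdot)}=1$. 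Your proposal needs this step (or an equivalent) to cover non-fundamental $\Delta$. Separately, your ``more transparent'' alternative for (2) — having $g$ act by sending $\qrt_{Q,+}$ to $\qrt_{Q,-}$ and then invoking the reflection identity — is not available: a Galois automorphism of $\ol{\Q}$ does not act on the analytic variable of the cocycle, and such a Shimura-reciprocity statement is exactly what is conjectural here. Stick with the Stark-unit route for (2).
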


We also state here two stronger conjectures. These are identical except that the former is restricted to fundamental discriminants, while the latter is for non-square discriminants.%

\begin{conj}[Fundamental Real Multiplication Values Conjecture]\label{conj:frmvc}
    Let $\qrt \in \R$ such that $a\qrt^2 + b\qrt + c = 0$ with $a,b,c \in \Z$ and $\Delta = b^2-4ac$ is a fundamental discriminant.
    Let $\r \in \Q^2 \setminus \Z^2$ and $A \in \Gamma_\r$ such that $A \cdot \qrt = \qrt$.
    Then:
    \begin{itemize}
        \item[(1)] $\sfc{\r}{\!A}{\qrt}$ is an algebraic unit in an abelian Galois extension of $\Q(\sqrt{\Delta})$.
        \item[(2)] If $g \in \Gal(\ol{\Q}/\Q)$ such that $g(\sqrt{\Delta}) = -\sqrt{\Delta}$, then $\abs{g(\sfc{\r}{\!A}{\qrt})}=1$.
    \end{itemize}
\end{conj}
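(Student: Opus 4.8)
The plan is to deduce the Fundamental Real Multiplication Values Conjecture from the Monoid Stark Conjecture (\Cref{conj:msc})---itself a refinement of the Stark--Tate Conjecture (\Cref{conj:stc}) and hence of the Stark Conjecture (\Cref{conj:stark})---by way of a Kronecker-limit-type identity for the real-multiplication values of the \SFKShort{} modular cocycle of \Cref{def:shin}, in the spirit of \cite{Kopp2020d,Tate:1981,Shintani1976}. The first step is to set up the arithmetic dictionary translating the hypotheses into ideal-theoretic data over $K = \Q(\sqrt{\Delta})$. Because $\Delta$ is a fundamental discriminant, a suitable rescaling of $\Z + \Z\qrt$ is a fractional ideal $\mathfrak{a}$ of the \emph{maximal} order $\mcl{O}_K$ (the classical Gauss--Dedekind correspondence); the hyperbolic matrix $A$, having $\qrt$ as a fixed point, acts on $\mathfrak{a}$ as multiplication by a totally positive unit $\epsilon \in \mcl{O}_K^\times$ (a power of $\vn$); and $\r \in \Q^2 \setminus \Z^2$ singles out a nonzero torsion coset in $\tfrac{1}{N}\mathfrak{a}/\mathfrak{a}$, where $N$ is the exact denominator of $\r$. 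Together these assemble into the data of a ``monoid element'' in the sense of \Cref{conj:msc}, carrying a modulus divisible by $N$ and ramified at the distinguished archimedean place $v$ of $K$ picked out by the sign convention in $\qrt_{Q,+}$, namely the real embedding sending $\sqrt{\Delta}$ to $+\sqrt{\Delta} \in \R_{>0}$.

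The technical heart is an analytic identity of the shape
\[
  \sfc{\r}{\!A}{\qrt} \;=\; \zeta_{(\r,A)} \cdot \exp\bigl(-Z'_{(\r,A)}(0)\bigr),
\]
where $\zeta_{(\r,A)}$ is an explicit root of unity and $Z_{(\r,A)}(s)$ is the monoid (partial) zeta function attached to the data above. Such a formula should follow by decomposing $Z_{(\r,A)}(s)$ via Shintani's cone decomposition into pieces each of which is a logarithmic derivative of a double sine function, then re-expressing the double-sine factors through the cocycle relations \eqref{eq:sfjcocyclerelInt}--\eqref{eq:coboundary}; this is precisely the ``RM value'' computation of \cite{Kopp2020d}. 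The essential point is that the identity tracks $\sfc{\r}{\!A}{\qrt}$ itself up to an \emph{explicit} root of unity, not merely $\bigl|\sfc{\r}{\!A}{\qrt}\bigr|$, which is what is needed to pin down a unit rather than just its absolute value. This is also where the main difficulty lies: nailing down $\zeta_{(\r,A)}$, verifying that the zeta function produced by the cone decomposition is exactly the monoid zeta function governed by \Cref{conj:msc} (rather than something outside its scope, given that $A$ need not generate the full stabilizer $\mcl{S}(Q)$ and $\r$ is an arbitrary non-integral rational point), and checking that the ramified place is the $v$ dictated by $\qrt$.

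Granting this identity, part (1) is a short deduction. The Monoid Stark Conjecture asserts that $\exp\bigl(-Z'_{(\r,A)}(0)\bigr)$ is a root of unity times a generalized Stark unit, so after multiplying by $\zeta_{(\r,A)}$ the value $\sfc{\r}{\!A}{\qrt}$ is an algebraic number. Some fixed integer power of it equals a root of unity times a genuine algebraic unit; since roots of algebraic integers are again algebraic integers, both $\sfc{\r}{\!A}{\qrt}$ and its inverse are algebraic integers, hence $\sfc{\r}{\!A}{\qrt}$ is an algebraic unit. The remaining claim---that it lies in an \emph{abelian} extension of $\Q(\sqrt{\Delta})$---is exactly the extra content of the Stark--Tate refinement (that the relevant root of the Stark unit generates an abelian extension); for a fundamental discriminant this is the clean ray-class-field case, with no order-conductor complications, and it is subsumed by \Cref{conj:msc}.

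For part (2), use that a rank-one Stark unit attached to a modulus ramified at a single archimedean place has absolute value $1$ at every archimedean place not lying above that distinguished place, and that this persists for any root of such a unit. If $g \in \Gal(\ol{\Q}/\Q)$ with $g(\sqrt{\Delta}) = -\sqrt{\Delta}$, then $g$ restricts to the nontrivial automorphism of $K$, so $g\bigl(\sfc{\r}{\!A}{\qrt}\bigr)$ is (a root of) the Stark unit attached to the $g$-conjugate monoid datum, whose distinguished place is now $v' = g(v) \neq v$. Since the reference embedding $\ol{\Q} \hookrightarrow \C$ used to form $\bigl|\cdot\bigr|$ restricts on $K$ to $v$ (again by the sign convention in $\qrt$), it lies above $v$ and hence not above $v'$; therefore $\bigl|g(\sfc{\r}{\!A}{\qrt})\bigr| = 1$. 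Once the identity of the second paragraph is in hand with the correct bookkeeping, parts (1) and (2) follow formally from the named conjectures.
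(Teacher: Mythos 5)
Your proposal is correct and follows essentially the same route as the paper: \Cref{thm:starkimplications}(2) derives this conjecture from the Stark--Tate Conjecture by first passing to the Monoid Stark Conjecture for the maximal order via \Cref{prop:starkimp} (legitimate here precisely because $\Delta$ is fundamental, so there are no order-conductor complications), and then applying \Cref{thm:field2}, whose proof uses the RM-value identity of \Cref{thm:qpochmain} together with the cocycle relation, exactly as you outline. The one imprecision is that the Kronecker-limit identity pins down the \emph{square} of the cocycle value, $\su_{\A}^{-kn} = (\psi^{-2}\chi_{\r}^{-1})(A)\,\sfc{\r}{\!A}{\qrt}^{2}$, not the first power as your displayed formula suggests; this is harmless because, as you yourself note, the abelianness and unit-ness of $\sfc{\r}{\!A}{\qrt}$ itself then rest on the square-root clause of the Stark--Tate/Monoid Stark Conjecture.
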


\begin{conj}[General Real Multiplication Values Conjecture]\label{conj:grmvc}
    Let $\qrt \in \R$ such that $a\qrt^2 + b\qrt + c = 0$ with $a,b,c \in \Z$ and $\Delta = b^2-4ac$ is not a square.
    Let $\r \in \Q^2 \setminus \Z^2$ and $A \in \Gamma_\r$ such that $A \cdot \qrt = \qrt$.
    Then:
    \begin{itemize}
        \item[(1)] $\sfc{\r}{\!A}{\qrt}$ is an algebraic unit in an abelian Galois extension of $\Q(\sqrt{\Delta})$.
        \item[(2)] If $g \in \Gal(\ol{\Q}/\Q)$ such that $g(\sqrt{\Delta}) = -\sqrt{\Delta}$, then $\abs{g(\sfc{\r}{\!A}{\qrt})}=1$.
    \end{itemize}
\end{conj}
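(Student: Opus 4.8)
The plan is to deduce \Cref{conj:grmvc} from the Monoid Stark Conjecture (\Cref{conj:msc}) by way of an explicit Kronecker--Shintani limit formula for the real multiplication values of the Shintani--Faddeev modular cocycle of \Cref{def:shin}; this is in fact why \Cref{conj:msc} is phrased in terms of the less-standard monoid zeta functions. Fix $\qrt\in\R$ with $a\qrt^2+b\qrt+c=0$ and $\Delta=b^2-4ac$ not a square (so $\Delta>0$ and $K=\Q(\sqrt\Delta)$ is real quadratic), write $\Delta_0$ for the fundamental discriminant, $f=\sqrt{\Delta/\Delta_0}$ for the conductor, and $\mcl{O}_\Delta\subseteq\mcl{O}_K$ for the order of discriminant $\Delta$. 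Applying the structure theory of \Cref{tm:symgp} to the form $\langle a,b,c\rangle$, the hypothesis $A\cdot\qrt=\qrt$ says that $A$ (up to sign and powers) corresponds to a totally positive unit $\epsilon\in\mcl{O}_\Delta^{\times}$ with $j_A(\qrt)=\epsilon^{\pm1}$, and the hypothesis $A\in\Gamma_\r$, i.e.\ $(A-I)\r\in\Z^2$, says that $\r$ represents a ray-class datum for $\mcl{O}_\Delta$ modulo a modulus controlled by the denominator of $\r$ and by $f$. The first step is therefore bookkeeping: translate the triple $(\r,A,\qrt)$ into a ray class group of $\mcl{O}_\Delta$ with a distinguished class, and single out the distinguished archimedean place $v$ of $K$ --- one of its two real places --- at which the associated abelian $L$-function vanishes to order exactly $1$.

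Second, I would establish --- refining the treatment of real multiplication values in \cite{Kopp2020d}, itself a descendant of Shintani's Kronecker limit formula --- an identity of the shape
\eag{
\sfc{\r}{\!A}{\qrt} &= \omega \cdot \exp\!\bigl( \textstyle\sum_{\mathfrak{c}} n_{\mathfrak{c}}\, \zeta'_{\mcl{O}_\Delta}(0,\mathfrak{c}) \bigr),
}
where the $\zeta_{\mcl{O}_\Delta}(s,\mathfrak{c})$ are the partial monoid zeta functions attached to the ray classes $\mathfrak{c}$ of $\mcl{O}_\Delta$ for the modulus found in Step~1 --- exactly the zeta functions of \Cref{conj:msc} --- the sum runs over the $\epsilon$-orbit of classes meeting $\r+\mcl{O}_\Delta$, the rational numbers $n_{\mathfrak{c}}$ are in general \emph{half}-integral (the denominator $2$ recording that SIC overlaps are square roots of Stark units), and $\omega$ is an explicit root of unity whose argument is controlled by the Rademacher invariant $\rade(A)$ of \Cref{df:meyinv}. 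Proving this identity, in a form precise enough for the next step, is the technical core: it requires the meromorphic continuation and cocycle relation of $\sigma_M$ (\Cref{sec:SFJCocycleAppendix}), Shintani's decomposition of the zeta function of the simplicial cone swept out by $\epsilon$ acting on $\r+\mcl{O}_\Delta$, and a careful analysis of the variant $q$-Pochhammer asymptotics as the modular variable approaches the quadratic point $\qrt$ from the upper half plane. Working with the cocycle $\shin$ rather than directly with the zeta functions is exactly what lets one fix the sign of the half-integral powers, since the cocycle relation rigidifies the phase $\omega$.

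Third, invoke \Cref{conj:msc}: it asserts precisely that the exponential factor above is an algebraic unit lying in a ray class field of $\mcl{O}_\Delta$ over $K$ --- hence in an abelian Galois extension of $K$ --- and it prescribes that this unit has archimedean absolute value $1$ at every place not lying above $v$. Since $\omega$ is a root of unity (a fortiori an algebraic unit lying in a cyclotomic, hence abelian over $\Q$, extension), combining with Step~2 gives part (1): $\sfc{\r}{\!A}{\qrt}$ is an algebraic unit in an abelian Galois extension of $K$. For part (2), let $g\in\Gal(\ol\Q/\Q)$ with $g(\sqrt\Delta)=-\sqrt\Delta$; then $g|_K$ is the nontrivial element of $\Gal(K/\Q)$, so it swaps the two real places of $K$ and carries $v$ to the complementary real place $v^{c}$. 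Thus $g$ sends the Stark unit of Step~2 to (a conjugate of) a value lying over $v^{c}$, where the absolute value is $1$ by the normalization just quoted; since $\abs{g(\omega)}=1$ automatically, we conclude $\abs{g(\sfc{\r}{\!A}{\qrt})}=1$.

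The main obstacle is the one the paper is candid about: the whole argument is conditional on \Cref{conj:msc}, a Stark-type statement that is open. Even granting it, the genuinely new work lies in Step~2 --- one needs the limit formula with enough precision to control the \emph{field of definition} and the \emph{unit} property, not merely the archimedean absolute value that a cruder Kronecker limit formula already delivers --- which is exactly the point at which the Shintani--Faddeev cocycle formalism, the Rademacher normalization, and (when one assembles ghost overlaps) the Shintani--Faddeev phase of \Cref{dfn:SFKPhase} must earn their keep. The second new difficulty is the conductor: for $f>1$ the classical ray class zeta functions of $\mcl{O}_K$ no longer suffice and one is forced onto the monoid zeta functions of the non-maximal order $\mcl{O}_\Delta$. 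This is precisely why \Cref{conj:grmvc} is separated from the weaker \Cref{conj:frmvc} (restricted to fundamental discriminants), which should follow from the more standard Stark--Tate Conjecture (\Cref{conj:stc}), and from the still weaker \Cref{conj:mrmvc}, which demands only algebraicity and the single absolute-value normalization and should follow from the Stark Conjecture (\Cref{conj:stark}) alone.
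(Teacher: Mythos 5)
Your proposal follows essentially the same route as the paper's conditional proof (\Cref{thm:starkimplications}(3) via \Cref{thm:field2}): translate $(\r,A,\qrt)$ into a ray class of the possibly non-maximal order, reduce $A$ to a power of the stabilizer generator $A_0$ via the cocycle relation, invoke the already-proven limit formula of \Cref{thm:qpochmain} (i.e.\ \cite[Thm.~1.1]{Kopp2020d}) identifying $(\psi^{-2}\chi_\r^{-1})(A)\,\sfc{\r}{\!A}{\qrt}^2$ with $\exp\!\left(knZ_{\mm\infty_2}'(0,\A)\right)$, and then apply the Monoid Stark Conjecture for the unit property, the abelianness of $H(\su_\A^{1/2})/K$, and the absolute-value normalization under a $\sqrt{\Delta}$-sign-switching automorphism. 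The one divergence is that your Step~2 proposes to establish an identity for the \emph{unsquared} cocycle with a phase $\omega$ ``rigidified'' by the cocycle relation, which is both harder than what is available and unnecessary: the paper works throughout with the square, writes $\sfc{\r}{\!A}{\qrt}=\pm\sqrt{(\psi^2\chi_\r)(A)}\,\su_\A^{-kn/2}$, and observes that the undetermined sign is harmless for both conclusions (and note that $\su_\A^{1/2}$ lies in the abelian extension $H(\su_\A^{1/2})$ asserted by \Cref{conj:msc}, not necessarily in the ray class field $H$ itself).
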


\Cref{conj:mrmvc} is implied by the Stark Conjecture (\Cref{conj:stark}), that is, the version of Stark's conjecture on special values of derivatives of partial zeta functions attached to real quadratic fields that is conjectured in Stark's original work. \Cref{conj:mrmvc} is indeed considerably weaker than the Stark Conjecture.

\Cref{conj:frmvc} is implied by the Stark--Tate Conjecture (\Cref{conj:stc}), which includes a small refinement of the Stark Conjecture due to Tate. Of course, \Cref{conj:frmvc} is also implied by \Cref{conj:grmvc}.

\Cref{conj:grmvc} is implied by the Monoid Stark Conjecture (\Cref{conj:msc}), a Stark-type conjecture for special values of derivatives of more general partial zeta functions attached to classes in ray class monoids. The Monoid Stark Conjecture is technically due to the third author (as it is equivalent to \cite[Conj.~1.4]{Kopp2020d}) and is not currently known to follow from the Stark-Tate Conjecture. %
The original form of the Stark Conjecture does imply that some integral power of $\sfc{\r}{\!A}{\qrt}$ is in an abelian extension of $\Q(\sqrt{\Delta})$; see \Cref{thm:field0}.

The conditional implications between the Stark-type conjectures and the RM values conjectures are summarized in the following theorem.
\begin{thm}\label{thm:starkimplications}
The following implications hold.
\begin{itemize}
    \item[(1)] \Cref{conj:stark} (the Stark Conjecture) implies \Cref{conj:mrmvc}.
    \item[(2)] %
    \Cref{conj:stc} (the Stark--Tate Conjecture) implies \Cref{conj:frmvc}.
    \item[(3)] %
    \Cref{conj:msc} (the Monoid Stark Conjecture) implies \Cref{conj:grmvc}.
\end{itemize}
\end{thm}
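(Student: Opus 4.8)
The plan is to funnel all three implications through a single Kronecker-limit-type formula for the real multiplication (RM) values of the Shintani--Faddeev modular cocycle, established in \cite{Kopp2020d}, and then feed the three Stark-type hypotheses into it in turn. First I would recall that formula: for $\qrt\in\R$ with $a\qrt^2+b\qrt+c=0$ ($a,b,c\in\Z$, $\Delta=b^2-4ac$ non-square), $\r\in\Q^2\setminus\Z^2$, and $A\in\Gamma_\r$ with $A\cdot\qrt=\qrt$, the RM value decomposes as
\[
  \sfc{\r}{\!A}{\qrt}\;=\;\zeta_0\prod_{\mathfrak c}\exp\!\bigl(-c_{\mathfrak c}\,Z_{\mathfrak c}'(0)\bigr),
\]
where $\zeta_0$ is an explicit root of unity, the product is finite, the $c_{\mathfrak c}\in\Q$ (with, in general, a $2$ in a denominator --- this is the whole reason for working with the cocycle rather than directly with $L$-functions), and the $Z_{\mathfrak c}$ are partial zeta functions attached to classes $\mathfrak c$ in the ray class monoid of the order $\mcl{O}_\Delta$ of discriminant $\Delta$ in $K=\Q(\sqrt\Delta)$, with a conductor determined by $\r$. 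Concretely this comes from writing the double-sine factor inside $\sfc{\r}{\!A}{\qrt}$ through Shintani's integral representation and summing the underlying Shintani zeta function over the cyclic group $\langle A\rangle$, which collapses it to a partial zeta function of $\mcl{O}_\Delta$; the root of unity $\zeta_0$ collects the elementary correction terms from the double-sine asymptotics and from the normalization appearing in \eqref{eq:shindf}, and the cocycle identity \eqref{eq:sfjcocyclerelInt} is what makes the orbit-sum manipulation go through. I would quote this rather than reprove it.

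Granting the formula, implication (1) runs as follows. Via the standard relation between ray class monoid data of $\mcl{O}_\Delta$ and ray class (group) data of the maximal order $\mcl{O}_K$, each $Z_{\mathfrak c}'(0)$ is a $\Q$-linear combination of derivatives at $s=0$ of partial zeta functions to which \Cref{conj:stark} applies, and that conjecture supplies for each a Stark unit: an algebraic number $\varepsilon$ (in fact a unit) in an abelian extension of $K$ whose logarithm at the distinguished archimedean place $v$ of $K$ (the place singled out by the order-$1$ vanishing) is the prescribed partial zeta derivative up to a rational factor, and with $|\varepsilon^\sigma|=1$ at every archimedean place lying over the other real place of $K$. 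Substituting, $\sfc{\r}{\!A}{\qrt}$ becomes $\zeta_0$ times a product of rational powers of algebraic numbers; rational powers of algebraic numbers are algebraic, which is part~(1) of \Cref{conj:mrmvc}. For part~(2), let $g\in\Gal(\ol\Q/\Q)$ with $g(\sqrt\Delta)=-\sqrt\Delta$; then $g|_K$ is the nontrivial automorphism of $K$, which swaps its two real places, so composing the standard absolute value with $g$ yields an archimedean place lying over the non-distinguished real place of $K$, at which every Stark-unit factor has absolute value $1$. Since $|\zeta_0|=1$, this forces $|g(\sfc{\r}{\!A}{\qrt})|=1$.

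For implication (2) I would specialize to $\Delta$ a fundamental discriminant: then $\mcl{O}_\Delta=\mcl{O}_K$, so the $Z_{\mathfrak c}$ are genuine ray class partial zeta functions of $K$ and the half-integral exponents $c_{\mathfrak c}$ responsible for the original square-root ambiguity are exactly what the Stark--Tate refinement \Cref{conj:stc} controls --- it asserts that the pertinent square roots of Stark units are themselves units in abelian extensions of $K$, not merely algebraic numbers. Hence $\sfc{\r}{\!A}{\qrt}$ is a product of a root of unity and units in abelian extensions of $K$, so it is such a unit, giving part~(1) of \Cref{conj:frmvc}; part~(2) is the same place-swap argument. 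For implication (3) there is no restriction on $\Delta$, and one keeps the ray class monoid of the possibly non-maximal order $\mcl{O}_\Delta$ intact instead of reducing to $\mcl{O}_K$; the Monoid Stark Conjecture \Cref{conj:msc}, equivalent to \cite[Conj.~1.4]{Kopp2020d}, is precisely the statement providing, for each monoid-class partial zeta function $Z_{\mathfrak c}$, a Stark-type unit in an abelian extension of $K$ realizing $Z_{\mathfrak c}'(0)$ together with the half-integral refinement. Substituting into the displayed formula yields part~(1) of \Cref{conj:grmvc}, and part~(2) follows as before; indeed parts~(1)--(2) of \Cref{conj:grmvc} are essentially a repackaging of \cite[Conj.~1.4]{Kopp2020d} through the limit formula.

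The substantive work is entirely in the first step: pinning down the exact set of monoid classes $\mathfrak c$, the conductor, the rational exponents $c_{\mathfrak c}$, and --- most delicately --- the root of unity $\zeta_0$, via the cocycle identity, the orbit-sum collapse of the Shintani zeta function into a partial zeta, and the double-sine asymptotics; this is the content of \cite{Kopp2020d}. The remaining care is twofold: (a) in part~(2), verifying that an automorphism with $g(\sqrt\Delta)=-\sqrt\Delta$ really does carry the archimedean support of the Stark units onto the trivial place --- this is where the order-$1$ vanishing hypothesis is essential, since it is exactly what concentrates that support at a single archimedean place; and (b) keeping straight which Stark-type hypothesis is just strong enough for each conclusion, namely that ordinary Stark secures mere algebraicity even at non-fundamental $\Delta$, that Stark--Tate is what disposes of the square roots at fundamental $\Delta$, and that the monoid partial zeta functions at non-fundamental $\Delta$ lie outside the reach of \Cref{conj:stc} and genuinely require \Cref{conj:msc}. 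I expect (b) --- the accounting of the square-root and non-maximal-order subtleties --- to be the main obstacle to a clean writeup.
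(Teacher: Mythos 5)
Your overall strategy --- routing all three implications through the Kronecker-limit-type formula of \cite{Kopp2020d} that expresses RM values of the Shintani--Faddeev cocycle in terms of derivatives of partial zeta functions, and then feeding in the respective Stark hypotheses --- is exactly the paper's strategy (the formula is \Cref{thm:qpochmain}, and the three implications are carried by \Cref{thm:field0}, \Cref{thm:field2}, and \Cref{prop:starkimp}). But your implication (1) has a genuine gap. You assert that for a non-maximal order each monoid-class derivative $Z_{\mathfrak c}'(0)$ is ``a $\Q$-linear combination of derivatives at $s=0$ of partial zeta functions to which \Cref{conj:stark} applies.'' No such class-by-class decomposition is known: partial zeta functions attached to individual classes of the ray class monoid of a non-maximal order do not break up into maximal-order partial zeta functions, and if they did, much of the Monoid Stark Conjecture would already follow from the ordinary Stark Conjecture --- precisely the reduction the paper says is unavailable, and the reason \Cref{conj:msc} must be assumed separately for non-fundamental discriminants. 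The paper sidesteps this by working at the level of the cocycle rather than the zeta functions: by the distribution relation \cite[Thm.~4.46]{Kopp2020d} (after moving to the fundamental discriminant via \cite[Lem.~4.42]{Kopp2020d}), a suitable integer power $\sfc{\r}{\!A}{\qrt}^{\,n}$ factors as a product of RM values attached to the \emph{maximal} order, to each of which \Cref{thm:qpochmain} and \Cref{conj:stark} apply. This yields algebraicity of $\sfc{\r}{\!A}{\qrt}^{\,n}$ and hence of $\sfc{\r}{\!A}{\qrt}$ itself --- which is exactly why \Cref{conj:mrmvc}(1) asserts only ``algebraic number'' rather than ``unit in an abelian extension.'' Your argument, were it valid, would prove more than the minimalist conjecture claims.

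Two smaller points. For implication (2) you must also handle the imprimitive classes of the ray class monoid of $\OO_K$ (ideals not coprime to $\mm$), whose differenced zeta functions enter \Cref{thm:qpochmain} on the same footing as the primitive ones; the paper does this in \Cref{prop:starkimp}(2), showing that Stark--Tate at all divisor moduli $\mm'\mid\mm$ yields $\MS(\OO_K,\mm)$, so that implication (2) is just implication (3) specialized to the maximal order rather than a separate square-root argument at fundamental discriminant. Finally, \Cref{thm:qpochmain} determines only $\sfc{\r}{\!A}{\qrt}^{2}$, so your displayed formula for $\sfc{\r}{\!A}{\qrt}$ itself carries an unresolved sign $\pm$; this is harmless for algebraicity, unit-ness, and the absolute-value claim, but it should be tracked rather than suppressed.
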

\begin{proof}
    See \Cref{ssec:conjcond}.
\end{proof}

\subsection{The main theorems: existence}\label{ssc:MainTheorems1}
We now state the main theorems on the existence of ghost $r$-SICs and live $r$-SICs, conditional on the Twisted Convolution Conjecture and the Stark Conjecture. These theorems are proven in \Cref{sec:existence}.

It will be helpful to attach a field $E_t$ to an admissible tuple $t$ in an unconditional manner independent of the connection to SICs. Conditionally, this field will be identical to the (extended projection) SIC field of any $r$-SIC fiducial associated to $t$.
\begin{defn}[Fields associated to an admissible tuple]\label{dfn:SICfield}
Let $t=(d,r,Q)\sim(K,j,m,Q)$ be an admissible tuple. 
\begin{itemize}
    \item[(1)]
    We define the \textit{field associated to $t$}, denoted $\sicField_t$, to be the field generated over $\Q$ by the numbers $\{\ghostOverlapC{t}{\mbf{p}}\colon 0\le p_1, p_2 < d, \, \mbf{p}\neq \zero\}$ together with $\rtu_d$.
    \item[(2)]
    We define the \textit{Galois-closed field associated to $t$}, denoted $\hat\sicField_t$, to be the Galois closure (within $\C$) of the compositum of $K$ and $E_t$.
\end{itemize}
\end{defn}
\begin{rmkb}
    It will be shown in \Cref{thm:RayClassField2} that, under the assumption of the Stark--Tate Conjecture, the field $\sicField_t$ associated to $t$ actually depends only on the pair $(d,r)$. Under the same assumption, this then also holds for $\hat\sicField_t$.
\end{rmkb}

The construction of $r$-SICs from admissible tuples requires some non-canonical choices. 
We bundle two additional pieces of data, a $2 \times 2$ matrix modulo $\db$ and a Galois automorphism, with an admissible tuple to form a \textit{fiducial datum}, from which an $r$-SIC fiducial will be constructed.
\begin{defn}[Fiducial datum]
\label{def:fiducialdata}
A \emph{fiducial datum} is a tuple $(d,r,Q,G,g)\sim (K,j,m,Q,G,g)$ such that $t=(d,r,Q)\sim(K,j,m,Q)$ is an admissible tuple, $G$ is an element of $\GLtwo{\mbb{Z}}$ whose determinant satisfies
\eag{
    \Det(G) r(2\shift+d_j-1+d) &\equiv 1 \Mod{\db}
    \label{eq:TwistCondition}
}
for some $\shift\in \mcl{Z}_t$, and $g$ is any element of $\Gal(\hat\sicField_t/\mbb{Q})$ such that $g(\sqrt{\Delta_0}) = -\sqrt{\Delta_0}$, where $\Delta_0$ is the fundamental discriminant of $Q$. 

We will sometimes write $s=(t,G,g)$, and say that the datum $s$ \emph{contains} or \emph{extends} the tuple $t$.
\end{defn}

\begin{rmkb}
    If $E_t$ contains transcendentals, then we make sense of the above definitions as follows: The field $\hat{E}_t$ is all of $\C$, and $\Gal(\hat{E}_t/\Q)$ is the full automorphism group of $\C$ over $\Q$.
    This will not matter in practice, because the Stark Conjecture will imply that $E_t$ is a finite Galois extension of $\Q$ and $\hat{E}_t = E_t$.
\end{rmkb}

It is not the case that $\normalizedGhostOverlapC{t}{\mbf{p}}$, $\ghostOverlapC{t}{\mbf{p}}$, considered as functions of $\mbf{p}$, have period $d$. 
It is, however, true that the products $\normalizedGhostOverlapC{t}{\mbf{p}}D_{\mbf{p}}$, $\ghostOverlapC{t}{\mbf{p}}D_{\mbf{p}}$ have period $d$ provided one excludes the case $\mbf{p}\equiv 0 \Mod{d}$.  
More generally, we have the following result:
\begin{lem}\label{lem:GhostFiducialIndependenceOfTransversal}
    Let $s=(t,G,g)$ be a fiducial datum, and let $\mbf{p},\mbf{p}'\in \mbb{Z}^2/d\mbb{Z}^2$ be such that $\mbf{p}'\equiv \mbf{p}\Mod{d}$ and $\mbf{p}$, $\mbf{p}'\notin d\mbb{Z}^2$. 
    Then
    \eag{
    \normalizedGhostOverlapC{t}{G\mbf{p}'}D_{\mbf{p}'} &= 
    \normalizedGhostOverlapC{t}{G\mbf{p}}D_{\mbf{p}}, & \ghostOverlapC{t}{G\mbf{p}'}D_{\mbf{p}'} &= 
    \ghostOverlapC{t}{G\mbf{p}}D_{\mbf{p}}.
    }
\end{lem}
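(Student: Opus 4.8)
The plan is to track how the three factors making up $\normalizedGhostOverlapC{t}{G\mbf{p}}$ (via \eqref{eq:ghostoverlapformula}: the sign $(-1)^{s_d(G\mbf{p})}$, the root-of-unity factor $\rtu_d^{-\frac{f_{jm}}{f}Q(G\mbf{p})}$ from the \SFKShort{} phase, and the \SFKShort{} modular cocycle value $\sfc{d^{-1}G\mbf{p}}{A_t}{\qrt_t}$), and separately how $D_{\mbf{p}}$, change when $\mbf{p}$ is replaced by $\mbf{p}'=\mbf{p}+d\mbf{n}$ for some $\mbf{n}\in\mbb{Z}^2$. Since $G$ is fixed, $G\mbf{p}'=G\mbf{p}+dG\mbf{n}$, so it suffices to understand the effect of adding an element of $d\mbb{Z}^2$ to the argument of each factor. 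First I would record the displacement-operator identity: from \eqref{eq:displacementops} and the relation $XZ=\rtus_d^{-1}ZX$, one gets $D_{\mbf{p}+d\mbf{n}}=\rtu_d^{e}D_{\mbf{p}}$ for an explicit exponent $e$ depending on $\mbf{n},\mbf{p},d$ (and on the parity of $d$ through $\db$); this is standard and I would cite the Weyl--Heisenberg commutation relations established with \Cref{def:WHGroup}.

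Next I would compute the shift in the \SFKShort{} modular cocycle. Writing $\r=d^{-1}G\mbf{p}$ and $\r'=d^{-1}G\mbf{p}'=\r+\mbf{m}$ with $\mbf{m}=G\mbf{n}\in\mbb{Z}^2$, I need the behavior of $\sfc{\r}{A_t}{\qrt_t}$ under integer translation of $\r$. This is a quasi-periodicity property of the \SFKShort{} modular cocycle; from \Cref{def:shin} and the corresponding property of the \SFKShort{} Jacobi cocycle (the $q$-Pochhammer shift $\qp(z+1,\tau)=\qp(z,\tau)$ together with the finite-product corrections indexed by $((I-A_t)\mbf{m})_2$), one obtains $\sfc{\r+\mbf{m}}{A_t}{\qrt_t}=\zeta\cdot\sfc{\r}{A_t}{\qrt_t}$ for an explicit root of unity $\zeta$ that is a product of evaluations of $1-e^{2\pi i(\cdots)}$-type factors; crucially $A_t\in\mcl{S}_d(Q)\subseteq\Gamma(d)$, so $(I-A_t)\mbf{m}\in d\mbb{Z}^2$ and the correction terms are controlled. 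I expect $\zeta$ to be expressible as $\rtus_d$ or $\rtu_d$ to an exponent quadratic in $\mbf{m}$ and linear in $\mbf{p}$, matching the quadratic-in-$\mbf{p}$ structure of the phase. This computation — pinning down $\zeta$ exactly using the continuation results of \Cref{sec:SFJCocycleAppendix} rather than just up to an ambiguous root of unity — is where I expect the main difficulty to lie.

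Finally I would compute the change in the \SFKShort{} phase factors: $s_d(G\mbf{p}+dG\mbf{n})-s_d(G\mbf{p})\pmod 2$ reduces, since $s_d(\mbf{q})=d+(1+d)(1+q_1)(1+q_2)$ and we are shifting coordinates by multiples of $d$, to an explicit parity depending on the parity of $d$ (when $d$ is odd the $(1+d)$ prefactor kills everything mod $2$ and this contributes trivially; when $d$ is even a short case check is needed). Likewise $Q(G\mbf{p}')-Q(G\mbf{p})$ expands as $2d\,(G\mbf{n})^{\mathrm T}Q(G\mbf{p})+d^2 Q(G\mbf{n})$ using the bilinearity of the Hessian form $Q$, so $\rtu_d^{-\frac{f_{jm}}{f}(Q(G\mbf{p}')-Q(G\mbf{p}))}$ is again an explicit root of unity of the same shape as $\zeta$ and the $D$-operator phase $\rtu_d^{e}$. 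The proof then concludes by assembling the three root-of-unity contributions — the cocycle shift $\zeta$, the phase-factor shift, and the displacement-operator shift $\rtu_d^e$ — and verifying they cancel, so that $\normalizedGhostOverlapC{t}{G\mbf{p}'}D_{\mbf{p}'}=\normalizedGhostOverlapC{t}{G\mbf{p}}D_{\mbf{p}}$; the second identity for $\ghostOverlapC{t}{\cdot}$ follows immediately from \eqref{eq:CandidateGhostOverlapDefinition} since the scalar $\sqrt{r(d-r)/(d^2-1)}$ is unchanged and $\mbf{p}\notin d\mbb{Z}^2$ ensures $\mbf{p}'\notin d\mbb{Z}^2$ as well, so we stay in the ``otherwise'' branch. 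The bookkeeping is the substance here; the conceptual content is just that every piece transforms by a root of unity quadratic in the shift, and the SFK phase in \Cref{dfn:SFKPhase} was designed precisely so that these cancel.
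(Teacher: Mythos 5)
Your proposal is correct and follows essentially the same route as the paper, which factors the claim through a periodicity lemma for $\normalizedGhostOverlapC{t}{\mbf{p}}$ (tracking exactly the three factors you name) together with the displacement-operator identity $D_{\mbf{p}+d\mbf{q}}=(-1)^{(d+1)\la \mbf{p},\mbf{q}\ra}D_{\mbf{p}}$. Two small points: the cocycle shift you flag as the main difficulty is in fact trivial, since $\sfc{\r+\mbf{s}}{M}{\qrt}=\sfc{\r}{M}{\qrt}$ holds exactly for $\mbf{s}\in\Z^2$ at a fixed point $\qrt$ (\Cref{lm:shinperiodicity}), so your $\zeta=1$; and the final cancellation for even $d$ is not automatic bookkeeping — it requires $\det G$ to be odd, which you must extract from the twist condition \eqref{eq:TwistCondition} in the definition of a fiducial datum.
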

 \begin{proof}
     The proof is given in \Cref{sbsc:ghostprops}, following \Cref{lem:nupperiodicity}.
 \end{proof}
\begin{defn}[Candidate ghost $r$-SIC fiducial $\tilde{\Pi}_s$, Candidate $r$-SIC fiducial $\Pi_s$, candidate normalized overlap]\label{dfn:CandidateGhostAndSICFiducials}
    Let $s=(d,r,Q,G,g)\sim(K,j,m,Q,G,g)$ be a fiducial datum, and let $t$ be the corresponding admissible tuple $(d,r,Q)\sim (K,j,m,Q)$. 
    We define the corresponding candidate ghost $r$-SIC fiducial by
    \eag{
    \label{eq:ghostProjectorDef}
\tilde{\Pi}_s &= \frac{1}{d}\sum_{\mbf{p}}
\ghostOverlapC{t}{G\mbf{p}}
D\vpu{t}_{\mbf{p}}
    }
    where the sum is over any complete set of coset representatives for $\mbb{Z}^2/d\mbb{Z}^2$, and 
    where $\ghostOverlapC{t}{\mbf{p}}$ is as defined in \Cref{dfn:GhostOverlaps}.

    We define the corresponding candidate $r$-SIC fiducial by
    \eag{
    \label{eq:sicProjectorDef}
     \Pi_s &= g(\tilde{\Pi}_s),
     \\
     \intertext{the candidate overlaps by}
          \overlapC{s}{\mbf{p}}&= \Tr\!\left(\Pi\vpu{\dagger}_sD^{\dagger}_{G^{-1}\mbf{p}}\right),
          \label{eq:CandidateOverlaps}
    \\
    \intertext{and, for $\mbf{p}\not \equiv \zero \Mod{d}$, the normalized candidate overlaps by}
      \normalizedOverlapC{s}{\mbf{p}} &= \sqrt{\frac{d^2-1}{r(d-r)}} \overlapC{s}{\mbf{p}}.
    }
\end{defn}
\begin{rmkb}
Note that this definition tacitly relies on \Cref{lem:GhostFiducialIndependenceOfTransversal}, which shows that that the summand on the RHS of \eqref{eq:ghostProjectorDef} is independent of the set of coset representatives chosen.
\end{rmkb}
The candidate overlaps can be expressed directly in terms of their ghost counterparts via:
\begin{lem}\label{lm:OverlapTermsGhostOverlap}
     Let $s=(d,r,Q,G,g)\sim(K,j,m,Q,G,g)$ be a fiducial datum, and let $t$ be the corresponding admissible tuple $(d,r,Q)\sim (K,j,m,Q)$.  Then
    \eag{
    \overlapC{s}{\mbf{p}}&=
      g\!\left(
    \ghostOverlapC{t}{GH_g^{-1}G^{-1}\mbf{p}}\right)
    }
    for all $\mbf{p}$,
    where $H_g$ is the matrix specified in \Cref{dfn:HgMatrixDefinition}.
\end{lem}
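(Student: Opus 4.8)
The plan is to reduce the identity to the entrywise action of the Galois automorphism $g$ on the displacement operators, which is precisely the data encoded by the matrix $H_g$ of \Cref{dfn:HgMatrixDefinition}. Three facts do all the work: (i) the identity $D^{\dagger}_{\mbf{q}} = D_{-\mbf{q}}$, immediate from \Cref{def:WHGroup}; (ii) the displacement-operator orthogonality relation $\Tr\bigl(D_{\mbf{q}}D^{\dagger}_{\mbf{q}'}\bigr) = d\,\delta^{(d)}_{\mbf{q},\mbf{q}'}$ for $\mbf{q},\mbf{q}'$ ranging over a common transversal of $\mbb{Z}^2/d\mbb{Z}^2$, which applied to \eqref{eq:ghostProjectorDef} gives $\Tr\bigl(\tilde{\Pi}_s D^{\dagger}_{\mbf{q}}\bigr) = \ghostOverlapC{t}{G\mbf{q}}$ whenever $\mbf{q}$ belongs to that transversal; and (iii) the relation $g(D_{\mbf{q}}) = D_{H_g\mbf{q}}$, which holds because $g$ acts entrywise and the entries of $D_{\mbf{q}}$ are monomials in $\rtu_d$, so that also $g^{-1}\bigl(D^{\dagger}_{\mbf{q}}\bigr) = D^{\dagger}_{H_g^{-1}\mbf{q}}$ after combining with (i).

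Given these, the computation is short. I would start from \eqref{eq:CandidateOverlaps} and \eqref{eq:sicProjectorDef}, writing $\overlapC{s}{\mbf{p}} = \Tr\bigl(g(\tilde{\Pi}_s)\,D^{\dagger}_{G^{-1}\mbf{p}}\bigr)$; rewrite $D^{\dagger}_{G^{-1}\mbf{p}} = g\bigl(D^{\dagger}_{H_g^{-1}G^{-1}\mbf{p}}\bigr)$ using (iii); pull $g$ through the trace, which is legitimate because $g$ applied entrywise commutes with matrix multiplication and with $\Tr$ (all entries lying in $\hat{E}_t$), obtaining $\overlapC{s}{\mbf{p}} = g\bigl(\Tr\bigl(\tilde{\Pi}_s D^{\dagger}_{H_g^{-1}G^{-1}\mbf{p}}\bigr)\bigr)$; and conclude with (ii), applied with $\mbf{q} = H_g^{-1}G^{-1}\mbf{p}$, which yields $\Tr\bigl(\tilde{\Pi}_s D^{\dagger}_{H_g^{-1}G^{-1}\mbf{p}}\bigr) = \ghostOverlapC{t}{GH_g^{-1}G^{-1}\mbf{p}}$.

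The only place any real content enters is (iii): correctly matching the entrywise Galois action on the displacement operators with the matrix $H_g$ of \Cref{dfn:HgMatrixDefinition}; everything else is formal manipulation. The one subtlety to be careful about in the write-up is that $\ghostOverlapC{t}{\mbf{p}}$ is not literally $d$-periodic in $\mbf{p}$, so in step (ii) one cannot simply reduce $H_g^{-1}G^{-1}\mbf{p}$ modulo $d$; instead one must invoke \Cref{lem:GhostFiducialIndependenceOfTransversal} to evaluate $\tilde{\Pi}_s$ using a transversal that contains the particular vector $H_g^{-1}G^{-1}\mbf{p}$ at hand, after which the orthogonality relation applies verbatim.
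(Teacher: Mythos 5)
Your proposal is correct and uses essentially the same two ingredients as the paper's proof: the Galois action $g(D_{\mbf{q}})=D_{H_g\mbf{q}}$ from \Cref{thm:GalActOnClifford} and the orthogonality/expansion identity $\Tr\bigl(\tilde{\Pi}_s D^{\dagger}_{\mbf{q}}\bigr)=\ghostOverlapC{t}{G\mbf{q}}$; the paper merely applies $g$ to the whole expansion of $\tilde{\Pi}_s$ and reindexes, whereas you push $g$ onto the displacement operator inside the trace, which is the same computation read in the other direction. Your remark about invoking \Cref{lem:GhostFiducialIndependenceOfTransversal} to handle the non-periodicity of $\ghostOverlapC{t}{\cdot}$ is a point the paper leaves implicit, and is a correct and worthwhile precaution.
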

\begin{rmkb}
    Note that, unlike $\ghostOverlapC{t}{\mbf{p}}$, the candidate  overlaps $\overlapC{s}{\mbf{p}}$ depend on $G$ and $g$ as well as $t$. 
\end{rmkb}
\begin{proof}
    See \Cref{ssc:ecdgp}, following \Cref{thm:GalActOnClifford}.
\end{proof}
\begin{thm}\label{thm:ghstExist}
Assume \Cref{cnj:tci} (the Twisted Convolution Conjecture).  
Then, for every fiducial datum $s$, the corresponding operator $\tilde{\Pi}_s$ given in \Cref{dfn:CandidateGhostAndSICFiducials} is a ghost $r$-SIC fiducial.
\end{thm}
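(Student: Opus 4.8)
The plan is to check the three conditions of \Cref{dfn:ghostFiducial} one at a time: that $\tilde{\Pi}_s$ has the shape \eqref{eq:ghstSIC} with real normalized ghost overlaps obeying \eqref{eq:ghostanalogofhermiticity} and the stated $\db$-periodicity, and that it is a rank-$r$ projector. Since $\tilde{\Pi}_s$ of \Cref{dfn:CandidateGhostAndSICFiducials} depends only on the admissible tuple $t$ and the twist $G$, and not on the Galois automorphism $g$, the Stark-type conjectures play no role here; the sole input is \Cref{cnj:tci}.

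For the shape, I would choose in \eqref{eq:ghostProjectorDef} a transversal for $\Z^2/d\Z^2$ containing $\zero$. As $G$ is invertible modulo $d$, one has $G\mbf{p}\equiv\zero\Mod{d}$ exactly when $\mbf{p}\equiv\zero\Mod{d}$; with $D_{\zero}=I$ and $\ghostOverlapC{t}{\zero}=r$, the $\mbf{p}\equiv\zero$ term contributes $\tfrac{r}{d}I$, and \eqref{eq:CandidateGhostOverlapDefinition} collects the rest into $\sqrt{\tfrac{r(d-r)}{d^2(d^2-1)}}\sum_{\mbf{p}\notin d\Z^2}\normalizedGhostOverlapC{t}{G\mbf{p}}D_{\mbf{p}}$, which is transversal-independent by \Cref{lem:GhostFiducialIndependenceOfTransversal}. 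This is exactly \eqref{eq:ghstSIC} with twist $G$ and with the candidate normalized ghost overlaps \eqref{eq:ghostoverlapformula} in the role of the $\normalizedGhostOverlap_{\mbf{p}}$. That these are real with $\normalizedGhostOverlapC{t}{\mbf{p}}\normalizedGhostOverlapC{t}{-\mbf{p}}=1$ for $\mbf{p}\not\equiv\zero\Mod{d}$ is \Cref{thm:nupnumpeq1}, and the $\db$-periodicity away from $\zero\Mod{\db}$ follows from that of the Shintani--Faddeev phase \Cref{dfn:SFKPhase} together with \Cref{lem:nupperiodicity}. Finally $\Tr D_{\mbf{p}}=0$ for $\mbf{p}\notin d\Z^2$ while $\Tr I=d$, so $\Tr\tilde{\Pi}_s=r$; since an idempotent is diagonalizable with eigenvalues in $\{0,1\}$, this forces $\tilde{\Pi}_s$ to have rank $r$ once idempotency is established.

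The heart of the matter, and the place \Cref{cnj:tci} enters, is idempotency $\tilde{\Pi}_s^2=\tilde{\Pi}_s$. I would expand $\tilde{\Pi}_s^2=\tfrac{1}{d^2}\sum_{\mbf{p},\mbf{q}}\ghostOverlapC{t}{G\mbf{p}}\ghostOverlapC{t}{G\mbf{q}}D_{\mbf{p}}D_{\mbf{q}}$ using the Weyl composition law $D_{\mbf{p}}D_{\mbf{q}}=\rtu_d^{\la\mbf{p},\mbf{q}\ra}D_{\mbf{p}+\mbf{q}}$, test the operator identity $\tilde{\Pi}_s^2=\tilde{\Pi}_s$ against the displacement operators using $\Tr D_{\mbf{m}}=d\,\delta^{(d)}_{\mbf{m},\zero}$, and carefully track the non-periodicity of $\ghostOverlapC{t}{\cdot}$ (governed by \Cref{lem:GhostFiducialIndependenceOfTransversal}, i.e.\ the $d$-periodicity of the products $\ghostOverlapC{t}{G\mbf{p}}D_{\mbf{p}}$, and by the $\db$-periodicity of the phase). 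This turns $\tilde{\Pi}_s^2=\tilde{\Pi}_s$ into a family of scalar identities among the special values $\sfc{d^{-1}\mbf{q}}{A_t}{\qrt_t}$, which I would massage into the form \eqref{eq:tcc} in three moves: (i) change variables to absorb the twist $G$ into the scalar $\Det G$ (using $\Det G=\pm1$ and $\la G^{-1}\mbf{u},G^{-1}\mbf{v}\ra=\Det(G)\la\mbf{u},\mbf{v}\ra$); (ii) combine the product of Shintani--Faddeev phases with $\rtu_d^{\Det(G)\la\mbf{p},\mbf{q}\ra}$, using that the phase is a root of unity quadratic in its argument and that $\zaunerGen{t}=\tfrac{d_j-1}{2}I+\tfrac{f_j}{f}SQ$ bilinearizes $Q$ (via $Q(\mbf{p})+Q(\mbf{q})-Q(\mbf{p}+\mbf{q})=-2\mbf{p}^{\mathrm{T}}Q\mbf{q}$ and $r=r_{j,m}=f_{jm}/f_j$), producing the factor $\rtus_d^{r\la\mbf{p},(\shift I+\zaunerGen{t})\mbf{q}\ra}$ with $\shift$ pinned down by the twist condition \eqref{eq:TwistCondition}, $\Det(G)\,r(2\shift+d_j-1+d)\equiv1\Mod{\db}$; and (iii) use that $\qrt_t$ is a fixed point of $A_t$, so the cocycle relation gives $\sfc{\r}{A_t^{-1}}{\qrt_t}=1/\sfc{\r}{A_t}{\qrt_t}$, which together with $\normalizedGhostOverlapC{t}{\mbf{p}}\normalizedGhostOverlapC{t}{-\mbf{p}}=1$ (\Cref{thm:nupnumpeq1}) turns one of the two cocycle factors from an $A_t$ into an $A_t^{-1}$, so that they appear as $\sfc{d^{-1}\mbf{q}}{A_t}{\qrt_t}\sfc{d^{-1}(\mbf{q}-\mbf{p})}{A_t^{-1}}{\qrt_t}$. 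The outcome is that $\tilde{\Pi}_s^2=\tilde{\Pi}_s$ is equivalent to \eqref{eq:tcc} for precisely the shift $\shift$ attached to $G$ by \eqref{eq:TwistCondition}; consistently, the $\mbf{p}\equiv\zero$ case of \eqref{eq:tcc} is automatic, being $\sum_{\mbf{q}\in\mcl{I}_{\zero}}\sfc{d^{-1}\mbf{q}}{A_t}{\qrt_t}\sfc{d^{-1}\mbf{q}}{A_t^{-1}}{\qrt_t}=\sum_{\mbf{q}}1=d^2$. As $s$ is a fiducial datum, \Cref{def:fiducialdata} gives $\shift\in\mcl{Z}_t$, and \eqref{eq:tcc} for that shift is exactly \Cref{cnj:tci}, so idempotency holds and the proof is complete.

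The hardest step will be the phase bookkeeping in move (ii): keeping the $\db$-th root $\rtu_d$ distinct from the $d$-th root $\rtus_d=\rtu_d^2$; correctly tracking the non-periodicity of $\ghostOverlapC{t}{\cdot}$ and the attendant roots of unity when passing between transversals and when reducing $D_{\mbf{p}+\mbf{q}}$ (in particular the sign corrections in the even-$d$ case); and verifying that the Shintani--Faddeev phases, with their $(-1)^{s_d(\mbf{p})}$ signs and the constant $e^{-\frac{\pi i}{12}\rade(A_t)}$ of \Cref{dfn:SFKPhase}, combine with $\rtu_d^{\Det(G)\la\cdot,\cdot\ra}$ to give the exponent $r\la\mbf{p},(\shift I+\zaunerGen{t})\mbf{q}\ra$ on the nose modulo $d$, with no leftover term linear in $\mbf{p}$ or $\mbf{q}$. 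I expect the exact normalizations of \Cref{dfn:SFKPhase} and the twist condition \eqref{eq:TwistCondition} to be precisely what make these cancel; confirming it is the bulk of the work.
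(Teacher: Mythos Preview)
Your proposal is correct and follows essentially the same route as the paper: the first three requirements come from \Cref{lem:GhostFiducialIndependenceOfTransversal} and \Cref{thm:nupnumpeq1}, and idempotency is reduced to \eqref{eq:tcc} by expanding $\tilde{\Pi}_s^2-\tilde{\Pi}_s$, combining the phase ratio $\SFPhase{t}{\q}/\SFPhase{t}{\q-\p}$ with the Weyl exponent via the bilinearization of $Q$ through $\zaunerGen{t}$, and converting one cocycle factor to $A_t^{-1}$ by the cocycle relation at the fixed point.

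One correction: you should not assume $\Det G=\pm1$. For general fiducial data this is false; what is true is that $\Det(G^{-1})\equiv \fn_t(\shift)\Mod{\db}$ by \eqref{eq:TwistCondition} and \Cref{dfn:functionht}, and the paper uses precisely this identification (together with \Cref{lem:fdf} and congruences such as $dd_j\equiv0\Mod{\db}$) to collapse $\rtu_d^{(\fn_t(\shift)+d-r(d_j-1))\la\p,\q\ra}$ into $\rtus_d^{r\shift\la\p,\q\ra}$. Also note that in extending the $\q$-sum from $\q\notin d\Z^2,\,\q\notin\p+d\Z^2$ to the full index set $\mcl{I}_{\p}$, the boundary terms at $\q=\zero$ and $\q=\p$ are handled via \Cref{lem:nu01overnu0val}, which you did not name explicitly; this is the identity $\normalizedGhostOverlapC{t}{\zero}+\normalizedGhostOverlapC{t}{\zero}^{-1}=-(d-2r)\sqrt{d_j+1}$ that absorbs the linear-in-$\tilde{\Pi}_s$ term into the convolution.
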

\begin{proof}
    See \Cref{sbsc:proofofghosttheorem}.
\end{proof}
\begin{thm}\label{thm:rayclassfieldrsicgen}
    Assume \Cref{cnj:tci} (the Twisted Convolution Conjecture), and also assume \Cref{conj:mrmvc} (as implied by \Cref{conj:stark}, the Stark Conjecture).
    Let $s=(d,r,Q,G,g)$ be a fiducial datum. 
    Then the operator $\Pi_s$ given in \Cref{dfn:CandidateGhostAndSICFiducials} is an $r$-SIC fiducial. 
\end{thm}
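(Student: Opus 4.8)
The plan is to verify the three defining properties of an $r$-SIC fiducial for $\Pi_s=g(\tilde{\Pi}_s)$: that it is a projector of rank $r$, that it is Hermitian, and that $\lvert\Tr(\Pi_s D^{\dagger}_{\mbf p})\rvert=\sqrt{r(d-r)/(d^2-1)}$ for every $\mbf p\not\equiv\zero\Mod{d}$; then \Cref{thm:sicfidcond} finishes the argument. By \Cref{thm:ghstExist} the Twisted Convolution Conjecture makes $\tilde{\Pi}_s$ a ghost $r$-SIC fiducial, so $\tilde{\Pi}_s^2=\tilde{\Pi}_s$ and $\Tr(\tilde{\Pi}_s)=r$; also the candidate normalized ghost overlaps satisfy $\normalizedGhostOverlapC{t}{\mbf q}\normalizedGhostOverlapC{t}{-\mbf q}=1$ for $\mbf q\not\equiv\zero\Mod{\db}$ by the unconditional \Cref{thm:nupnumpeq1}. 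The special values $\sfc{d^{-1}\mbf q}{A_t}{\qrt_t}$ entering \Cref{dfn:GhostOverlaps} (for $\mbf q\not\equiv\zero\Mod{d}$) are well defined by \Cref{thm:ghostWellDefinedCondition}, and their data satisfy the hypotheses of \Cref{conj:mrmvc}: $\qrt_t=\qrt_{Q,+}$ is a real quadratic irrational fixed by $A_t$, $d^{-1}\mbf q\notin\Z^2$, and $A_t\in\mcl{S}_d(Q)\subseteq\Gamma(d)\subseteq\Gamma_{d^{-1}\mbf q}$ (using the remark after \Cref{dfn:gammarDef}). So part (1) of \Cref{conj:mrmvc} makes these values algebraic; as the \SFKShort{} phases are roots of unity, the entries of $\tilde{\Pi}_s$ are then algebraic, $E_t$ is a number field, the transcendental-case convention of \Cref{def:fiducialdata} is vacuous, and $\Pi_s=g(\tilde{\Pi}_s)$ is a well-defined matrix with algebraic entries.

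First I would check that $\Pi_s$ is a rank-$r$ projector. Applied entrywise, $g$ acts as a field automorphism of the number field generated by the entries of $\tilde{\Pi}_s$, so it commutes with matrix products and with the trace; hence $\Pi_s^2=g(\tilde{\Pi}_s^2)=g(\tilde{\Pi}_s)=\Pi_s$, and $\Tr(\Pi_s)=g(\Tr(\tilde{\Pi}_s))=g(r)=r$ since $r\in\Q$. A projector has rank equal to its trace, so $\Pi_s$ has rank $r$.

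Next I would analyze the overlaps and deduce Hermiticity. In the displacement-operator basis $\Pi_s=\tfrac{1}{d}\sum_{\mbf p}\Tr(\Pi_s D^{\dagger}_{\mbf p})\,D_{\mbf p}$, and \Cref{lm:OverlapTermsGhostOverlap} together with \Cref{dfn:CandidateGhostAndSICFiducials} gives $\Tr(\Pi_s D^{\dagger}_{\mbf p})=g\!\bigl(\ghostOverlapC{t}{GH_g^{-1}\mbf p}\bigr)$; since $G$ and $H_g$ are invertible modulo $\db$, the substitution $\mbf p\mapsto\mbf q:=GH_g^{-1}\mbf p$ permutes the nonzero residues modulo $d$. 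For $\mbf q\not\equiv\zero\Mod{d}$ we have $\ghostOverlapC{t}{\mbf q}=\sqrt{r(d-r)/(d^2-1)}\,\SFPhase{t}{\mbf q}\,\sfc{d^{-1}\mbf q}{A_t}{\qrt_t}$; applying $g$ and taking moduli, the rational prefactor contributes $\sqrt{r(d-r)/(d^2-1)}$, the root of unity $\SFPhase{t}{\mbf q}$ contributes $1$, and part (2) of \Cref{conj:mrmvc}---applicable because $g(\sqrt{\Delta_0})=-\sqrt{\Delta_0}$ and $\sqrt{\Delta}=f\sqrt{\Delta_0}$ force $g(\sqrt{\Delta})=-\sqrt{\Delta}$---gives $\bigl\lvert g(\sfc{d^{-1}\mbf q}{A_t}{\qrt_t})\bigr\rvert=1$; hence $\lvert\Tr(\Pi_s D^{\dagger}_{\mbf p})\rvert=\sqrt{r(d-r)/(d^2-1)}$ for all $\mbf p\not\equiv\zero\Mod{d}$, which is \eqref{eq:olphsedf}. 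On the other hand $\ghostOverlapC{t}{\mbf q}\,\ghostOverlapC{t}{-\mbf q}=r(d-r)/(d^2-1)$ is rational (using $\normalizedGhostOverlapC{t}{\mbf q}\normalizedGhostOverlapC{t}{-\mbf q}=1$), so applying the homomorphism $g$ gives $\Tr(\Pi_s D^{\dagger}_{\mbf p})\,\Tr(\Pi_s D^{\dagger}_{-\mbf p})=r(d-r)/(d^2-1)$. Comparing the two identities yields $\overline{\Tr(\Pi_s D^{\dagger}_{-\mbf p})}=\Tr(\Pi_s D^{\dagger}_{\mbf p})$ for every $\mbf p$ (trivially when $\mbf p\equiv\zero$); since $D^{\dagger}_{\mbf p}=D_{-\mbf p}$, taking the adjoint of the basis expansion and reindexing gives $\Pi_s^{\dagger}=\Pi_s$. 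Thus $\Pi_s$ is a rank-$r$ Hermitian projector satisfying \eqref{eq:olphsedf}, and \Cref{thm:sicfidcond} shows it is an $r$-SIC fiducial.

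The obstacle I anticipate is organizational rather than conceptual. The proof cleanly splits the substantive work between the two conjectural inputs---the Twisted Convolution Conjecture supplies idempotency of $\tilde{\Pi}_s$, while part (2) of \Cref{conj:mrmvc} (the ``$\lvert g(\cdot)\rvert=1$'' clause) is exactly what upgrades the parity-Hermiticity of the ghost to Hermiticity of the live fiducial---but fusing them requires care with the Galois action on the Weyl--Heisenberg group. The point to check by hand is that applying $g$ to the expansion \eqref{eq:ghstSIC} of $\tilde{\Pi}_s$ produces an expansion of the same shape \eqref{eq:sicfidexpn} for $\Pi_s$; this is the content of the extended Clifford representation of \Cref{ssc:ecdgp} and of the matrix $H_g$ of \Cref{lm:OverlapTermsGhostOverlap}, and one must confirm that the index substitution $\mbf p\mapsto GH_g^{-1}\mbf p$ fixes the residue class $\zero\Mod{d}$ and carries nonzero classes to nonzero classes, so that the $\mbf p\equiv\zero$ term of \eqref{eq:sicfidexpn} stays the scalar $\tfrac{r}{d}I$.
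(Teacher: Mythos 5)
Your proposal is correct and follows essentially the same route as the paper's own proof: idempotency of $\Pi_s$ comes from applying $g$ to $\tilde{\Pi}_s^2=\tilde{\Pi}_s$ (Theorem~\ref{thm:ghstExist}), the unimodularity clause of Conjecture~\ref{conj:mrmvc} gives $\lvert\Tr(\Pi_s D^{\dagger}_{\mbf p})\rvert=\sqrt{r(d-r)/(d^2-1)}$, and the Galois image of the product identity $\normalizedGhostOverlapC{t}{\mbf q}\normalizedGhostOverlapC{t}{-\mbf q}=1$ supplies the remaining relation. Your write-up is in fact slightly more explicit than the paper's in spelling out how the modulus and product identities combine to give Hermiticity and in invoking Theorem~\ref{thm:sicfidcond} to close the argument, but this is a matter of exposition rather than a different method.
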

\begin{proof}
    See \Cref{sbsc:proofofghosttorsic}.
\end{proof}

\subsection{The main theorems: class fields attained}\label{ssc:MainTheorems2}

We now state our main conditional results about the abelian extensions generated by SICs. 
These results are based on unconditional results in pure algebraic number theory giving containment of certain class fields. 
They are proven in \Cref{sec:classfieldsattained}.

We use the following notation for orders of real quadratic fields.
\begin{defn}\label{dfn:orderConductorf}
    Given a real quadratic field $K$ and positive integer $f$, we denote the order with conductor $f$ in $K$ by $\mcl{O}_f$.  That is,
    \eag{
           \mcl{O}_f &= \left\{m+n f \left(\frac{\Delta_0+\sqrt{\Delta_0}}{2}\right)\colon m, n\in \mbb{Z}\right\},
     }
     where $\Delta_0$ is the discriminant of $K$.
\end{defn}
\begin{rmkb}
      Note that $K$ will always be clear from context. 
      In particular, the ring of integers $\mcl{O}_K$ may alternatively be written $\mcl{O}_1$. 
\end{rmkb}

The following two results give properties of the field $E_t$ associated to an admissible tuple $t$ within our framework of conjectures. 
Together, they show conditionally that $E_t$ is an abelian extension of the real quadratic field $K$ containing a particular ray class field. 
The latter theorem is restricted to the case when $Q$ has conductor $1$, i.e., $\disc(Q)$ is a fundamental discriminant.
\begin{thm}\label{thm:RayClassField}
    Let $t=(d,r,Q)\sim(K,j,m,Q)$ be an admissible tuple. Make the following conditional assumptions:
    \begin{itemize}
        \item If $\disc(Q)$ is fundamental, assume \Cref{conj:frmvc} (as implied by \Cref{conj:stc}, the Stark--Tate Conjecture).
        \item If $\disc(Q)$ is not fundamental, assume \Cref{conj:grmvc} (as implied by \Cref{conj:msc}, the Monoid Stark Conjecture).
    \end{itemize}
    Then the field $E_t$ is an abelian extension of $K$. 
\end{thm}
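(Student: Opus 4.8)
The plan is to deduce that $E_t$ is abelian over $K$ from the Real Multiplication Values Conjecture (in the form appropriate to whether $\disc(Q)$ is fundamental), applied to each special value $\sfc{d^{-1}\mbf{p}}{A_t}{\qrt_t}$ appearing in the definition of $\ghostOverlapC{t}{\mbf{p}}$. First I would note that $E_t$ is generated over $\Q$ by $\rtu_d$ together with the candidate ghost overlaps $\ghostOverlapC{t}{\mbf{p}}$ for $\mbf{p}\not\equiv\zero\Mod d$, and by \Cref{dfn:GhostOverlaps} each such overlap is, up to the algebraic scalar $\sqrt{r(d-r)/(d^2-1)}$, the product of the \SFKShort{} phase $\SFPhase{t}{\mbf{p}}$ with $\sfc{d^{-1}\mbf{p}}{A_t}{\qrt_t}$. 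The phase $\SFPhase{t}{\mbf{p}}$ is a root of unity times $e^{-\pi i\rade(A_t)/12}$ times a power of $\rtu_d$; since $\rade(A_t)$ is rational (\Cref{df:meyinv}), $e^{-\pi i\rade(A_t)/12}$ is a root of unity, so every phase lies in a cyclotomic field, hence in an abelian extension of $\Q$ and a fortiori of $K$. So it suffices to show that each $\sfc{d^{-1}\mbf{p}}{A_t}{\qrt_t}$ lies in an abelian extension of $K$.

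For this I would apply the RM values conjecture with $\r = d^{-1}\mbf{p}$ and $A = A_t$. One must check its hypotheses: $\qrt_t = \qrt_{Q,+}$ satisfies $a\qrt_t^2 + b\qrt_t + c = 0$ where $Q=\la a,b,c\ra$, with $\Delta = b^2-4ac$ not a square since $Q$ is irreducible; we have $\r = d^{-1}\mbf{p}\in\tfrac1d\Z^2\setminus\Z^2$ since $\mbf{p}\not\equiv\zero\Mod d$; and $A_t\in\mcl{S}_d(Q)\subseteq\Gamma(d)\subseteq\Gamma_\r$ with $A_t\cdot\qrt_t = \qrt_t$ because $A_t$ stabilizes $Q$ and hence fixes its roots. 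Thus part (1) of \Cref{conj:frmvc} (if $\disc(Q)$ is fundamental) or of \Cref{conj:grmvc} (otherwise) applies and gives that $\sfc{d^{-1}\mbf{p}}{A_t}{\qrt_t}$ is an algebraic unit lying in an abelian Galois extension of $\Q(\sqrt{\Delta})$. Here $\Q(\sqrt{\Delta}) = \Q(\sqrt{\Delta_0}) = K$ because $\Delta$ and the fundamental discriminant $\Delta_0$ of $Q$ differ by the square factor $f^2$, and $\Delta_0 = \disc(K)$ by the admissibility of the tuple $(d,r,Q)$.

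Finally I would assemble the pieces: $E_t$ is the compositum over $\Q$ of $\Q(\rtu_d)$ and finitely many fields $F_{\mbf{p}}$, where each $F_{\mbf{p}}$ is an abelian extension of $K$ (containing $\sfc{d^{-1}\mbf{p}}{A_t}{\qrt_t}$ and the relevant phase). The compositum of abelian extensions of $K$ is abelian over $K$, and $\Q(\rtu_d)\cdot K$ is abelian over $K$ since $\Q(\rtu_d)/\Q$ is abelian; hence $E_t = K\cdot\Q(\rtu_d)\cdot\prod_{\mbf{p}}F_{\mbf{p}}$ is abelian over $K$, as claimed. The only genuinely nontrivial input is the RM values conjecture itself; everything else is bookkeeping about roots of unity, the identification $\Q(\sqrt{\Delta})=K$, and the stability of abelian-ness under compositum. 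The main point requiring a little care—and thus the step I would expect to scrutinize most—is verifying that the hypotheses of the RM values conjecture are met for \emph{every} $\mbf{p}\not\equiv\zero\Mod d$, in particular that $\r=d^{-1}\mbf{p}\notin\Z^2$ and that $A_t\in\Gamma_\r$ fixes $\qrt_t$; these follow from \Cref{dfn:AssociatedStabilizers}, \Cref{dfn:stbgpqdf}, and the remark after \Cref{dfn:gammarDef}, but should be spelled out.
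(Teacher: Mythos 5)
Your proposal is correct and follows essentially the same route as the paper's proof: decompose each candidate ghost overlap into the normalization factor (a square root of a rational, hence in an abelian extension of $\Q$), the \SFKShort{} phase (a root of unity), and the RM value $\sfc{d^{-1}\mbf{p}}{A_t}{\qrt_t}$, then invoke the appropriate RM values conjecture and close under compositum. Your extra verification that the hypotheses of the conjecture are met (that $d^{-1}\mbf{p}\notin\Z^2$, that $A_t\in\Gamma_{d^{-1}\mbf{p}}$ fixes $\qrt_t$, and that $\Q(\sqrt{\Delta})=K$) is a welcome addition that the paper leaves implicit.
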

\begin{proof}
    See \Cref{sbsc:rayclassproof}.
\end{proof}

\begin{thm}\label{thm:RayClassField2}
    Let $t=(d,r,Q)\sim(K,j,m,Q)$ be an admissible tuple for which $\disc(Q)$ is a fundamental, and let $d=d_{j,m}$.
    Assume \Cref{conj:stc} (the Stark--Tate Conjecture).
    Let $E = H^{\OO_1}_{\db\infty_1\infty_2}$ be the ray class field with level datum $(\mcl{O}_1; \db \mcl{O}_1,\{\infty_1,\infty_2\})$, as defined by \Cref{thm:rayclassfield}.
    Then, $E$ is equal to the field extension of $K$ generated by the numbers $\{\ghostOverlapC{t}{\mbf{p}}^2\colon 0\le p_1, p_2 < d, \, \mbf{p}\neq \zero\}$ together with $\rtu_d$.
    The field $E_t \supseteq E \supseteq K$, the extension $E_t/K$ is ramified at both infinite places of $K$, and field $E_t$ depends only on the pair $(d,r)$.
\end{thm}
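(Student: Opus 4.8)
The plan is to prove the four assertions of the theorem --- the identity $E=H^{\OO_1}_{\db\infty_1\infty_2}$, the containment $E_t\supseteq E$, ramification of $E_t/K$ at both infinite places, and dependence of $E_t$ on $(d,r)$ alone --- by combining \Cref{thm:RayClassField} with the precise (field-level) content of the Stark--Tate Conjecture and a Shimura-reciprocity-type description of the Galois action on the real multiplication values $\sfc{d^{-1}\mbf p}{A_t}{\qrt_t}$. As a preliminary: since $\disc(Q)$ is fundamental, \Cref{conj:stc} implies \Cref{conj:frmvc} by \Cref{thm:starkimplications}, so \Cref{thm:RayClassField} applies and $E_t/K$ is abelian; in particular $K\subseteq E_t$, and likewise $K\subseteq E$ for the subfield $E$ generated over $K$ by the squares $\ghostOverlapC{t}{\mbf p}^2$ together with $\rtu_d$. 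Note also that $H^{\OO_1}_{\db\infty_1\infty_2}$ contains $K$ and contains $\rtu_d$ (as $K(\rtu_d)/K$ has conductor dividing $\db\OO_1$), so $K(\rtu_d)\subseteq E\cap H^{\OO_1}_{\db\infty_1\infty_2}$.

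For the identity $E=H^{\OO_1}_{\db\infty_1\infty_2}$ I would argue by two inclusions. \emph{($\subseteq$):} By \eqref{eq:CandidateGhostOverlapDefinition} and \eqref{eq:ghostoverlapformula}, for $\mbf p\not\equiv\zero\Mod d$ the number $\ghostOverlapC{t}{\mbf p}^2$ is a rational multiple of $\SFPhase{t}{\mbf p}^2\,\sfc{d^{-1}\mbf p}{A_t}{\qrt_t}^2$; the \SFKShort{} phase is a root of unity (by \Cref{dfn:SFKPhase} and integrality of $\rade$), chosen precisely so that $\normalizedGhostOverlapC{t}{\mbf p}^2=\SFPhase{t}{\mbf p}^2\,\sfc{d^{-1}\mbf p}{A_t}{\qrt_t}^2$ is, up to a power of $\rtu_d$, the order-$1$ Stark unit attached to the relevant ray ideal class with modulus $\db\infty_1\infty_2$. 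The Stark--Tate refinement of Stark's conjecture puts that unit in $H^{\OO_1}_{\db\infty_1\infty_2}$ (this is the field statement underlying \Cref{conj:frmvc}, sharpening \Cref{thm:field0} from ``some integral power'' to the square), and since $K,\rtu_d\in H^{\OO_1}_{\db\infty_1\infty_2}$ this gives $E\subseteq H^{\OO_1}_{\db\infty_1\infty_2}$. \emph{($\supseteq$):} It remains to see $\Gal(H^{\OO_1}_{\db\infty_1\infty_2}/E)=1$. Any $\sigma$ in this group fixes $K(\rtu_d)$ and every $\ghostOverlapC{t}{\mbf p}^2$; identifying $\Gal(H^{\OO_1}_{\db\infty_1\infty_2}/K(\rtu_d))$ with the corresponding ray class group via Artin reciprocity, the explicit action on the squared RM values (available here under Stark--Tate, cf.~\cite{Kopp2020d}) translates the index $\mbf p$ by the class of $\sigma$, so fixing all $\ghostOverlapC{t}{\mbf p}^2$ as $\mbf p$ ranges over $(\Z/d\Z)^2\setminus\{\zero\}$ --- which spans enough classes, by the admissibility relation $nr(d-r)=d^2-1$ --- forces $\sigma=1$. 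Hence $E=H^{\OO_1}_{\db\infty_1\infty_2}$.

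The remaining assertions are then short. \emph{$E_t\supseteq E$:} $\ghostOverlapC{t}{\mbf p}^2\in\Q(\ghostOverlapC{t}{\mbf p})\subseteq E_t$, while $\rtu_d\in E_t$ and $K\subseteq E_t$, so $E\subseteq E_t$. \emph{Ramification:} since $d>3$ we have $\db\geq 4$, so $\rtu_d\in E_t$ is a primitive $\db$-th root of unity; every embedding $E_t\hookrightarrow\C$ sends it to a non-real primitive $\db$-th root of unity, so $E_t$ is totally imaginary, and a totally imaginary extension of the totally real field $K$ ramifies at every infinite place of $K$ --- in particular at $\infty_1$ and $\infty_2$ --- which is also why the modulus defining $E$ carries $\infty_1\infty_2$. \emph{Dependence on $(d,r)$:} by \Cref{thm:bijectionofadmissibletuples}, $(d,r)$ determines $(K,j,m)$, hence $K$ and $\db$, hence $E=H^{\OO_1}_{\db\infty_1\infty_2}$; and $E_t=E\bigl(\ghostOverlapC{t}{\mbf p}:\mbf p\bigr)$ is obtained from $E$ by adjoining square roots of the $\ghostOverlapC{t}{\mbf p}^2\in E$. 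If $t=(d,r,Q)$ and $t'=(d,r,Q')$ are admissible with $\disc(Q)=\disc(Q')=\Delta_0$, pick an ideal $\mathfrak b$ of $\OO_1$ coprime to $\db$ representing the class relating $Q$ to $Q'$; the same reciprocity shows its Artin automorphism $\sigma_{\mathfrak b}\in\Gal(\,\cdot\,/K)$ carries $\{\ghostOverlapC{t}{\mbf p}\}$ onto $\{\pm\ghostOverlapC{t'}{\mbf p}\}$ (reindexing the $\mbf p$) and $\rtu_d$ onto another primitive $\db$-th root of unity, so $\sigma_{\mathfrak b}(E_t)=E_{t'}$; as $E_t/K$ is Galois (being abelian) and $\sigma_{\mathfrak b}$ fixes $K$, we get $\sigma_{\mathfrak b}(E_t)=E_t$, hence $E_{t'}=E_t$.

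I expect the crux to be the inclusion $H^{\OO_1}_{\db\infty_1\infty_2}\subseteq E$: one must extract from the Stark--Tate Conjecture a Galois-action formula for the squared RM values explicit enough to track both the index $\mbf p$ and the dependence on the form $Q$, and then verify this action is faithful (no nontrivial ray class fixes all $\ghostOverlapC{t}{\mbf p}^2$). The same explicit reciprocity powers the $Q$-independence step, so pinning it down precisely is really the heart of the argument; by contrast the $\subseteq$ inclusion is bookkeeping around the refined Stark conjecture, and the ramification statement is immediate from the presence of $\rtu_d$.
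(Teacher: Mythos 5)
Your overall architecture matches the paper's: identify $\ghostOverlapC{t}{\mbf{p}}^2$ (up to the rational factor $d_j+1$ and a root-of-unity phase) with a Stark unit attached to a ray class of modulus $d\infty_2$, invoke the Stark--Tate Conjecture to place these units in the ray class field, and use Artin equivariance for the ramification and $(d,r)$-dependence claims. The inclusion $E\subseteq H^{\OO_1}_{\db\infty_1\infty_2}$ and the last two assertions are handled essentially as in the paper.

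The gap is in the reverse inclusion $H^{\OO_1}_{\db\infty_1\infty_2}\subseteq E$, which you correctly flag as the crux but do not close. Two problems. First, your claim that the Galois action merely ``translates the index $\mbf{p}$'' fails when $\Cl(\OO_1)$ is nontrivial: by the equivariance $(\Art(\B))(\su_{\A})=\su_{\A\B}$, an automorphism generally carries the squared overlaps of $t=(d,r,Q)$ to squared overlaps of a \emph{different} tuple $t'=(d,r,Q')$ with $Q'$ inequivalent to $Q$ --- this is precisely why the paper's $(d,r)$-dependence statement quantifies over all forms of the given fundamental discriminant. So $\{\ghostOverlapC{t}{\mbf{p}}^2\}$ for a single $Q$ need not be Galois-stable, and ``fixes all of them'' is not ``acts trivially on a stable generating set.'' Second, and more fundamentally, even granting a good indexing, the implication that $\sigma$ fixing every $\su_{\A_{\mbf{p}}}$ forces $\sigma=1$ requires that distinct ray classes yield distinct Stark units, i.e.\ that a single Stark unit generates the ray class field. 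That is not a formal consequence of the Stark--Tate Conjecture; it is Theorem 1 of \cite{Stark3}, whose hypothesis is a non-vanishing condition on $\zeta'_{d\infty_2}(0,\A)$. The paper verifies this hypothesis by showing the four ray class groups $\Cl_{d}(\OO_1)$, $\Cl_{d\infty_1}(\OO_1)$, $\Cl_{d\infty_2}(\OO_1)$, $\Cl_{d\infty_1\infty_2}(\OO_1)$ are genuinely distinct (\Cref{lem:diamond}), which rests on the global-unit computation $\U_{d\OO_1,\rS}(\OO_1)=\langle\vn_{d_j}^{2m+1}\rangle$ of \Cref{lem:ugroupgen}; the same lemma also gives $n_{\p_1}=1$ for $\p_1=\smcoltwo{-1}{0}$, so that $(\normalizedGhostOverlapC{t}{\p_1})^2$ is the Stark unit itself rather than a proper power of it, after which $\Q(\su_{\A_{\p_1}})=H^{\OO_1}_{d\infty_2}$ follows from Stark's theorem and one adjoins $\rtu_d$ to finish. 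Your proposal never engages with this unit-group input, and without it the identity $E=H^{\OO_1}_{\db\infty_1\infty_2}$ remains unproved.
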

\begin{proof}
    See \Cref{sbsc:rayclassproof}.
\end{proof}

Empirically, it seems that $E_t$ is actually equal to the ray class field $E$ in \Cref{thm:RayClassField2}, and indeed a similar statement may be made when $Q$ is not fundamental. 
As we do not know how to prove this from any form of the Stark conjectures in the literature, we state it as a separate conjecture.
\begin{conj}\label{conj:RayClassField3}
    Let $t=(d,r,Q)\sim(K,j,m,Q)$ be an admissible tuple, let $d=d_{j,m}$, and let $f$ be the conductor of $Q$.
    Let $E= H^{\OO_f}_{\db\infty_1\infty_2}$ be the ray class field with level datum $(\mcl{O}_f; \db \mcl{O}_f,(\infty_1,\infty_2))$, as defined by \Cref{thm:rayclassfield}. 
    Then $\hat{E}_t = E_t = E$.
\end{conj}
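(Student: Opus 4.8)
The plan is to prove the two containments $E\subseteq E_t$ and $E_t\subseteq E$ separately, and then deduce $\hat E_t=E_t$ formally. The containment $E\subseteq E_t$ is, when $\disc(Q)$ is fundamental (so $f=1$), already part of \Cref{thm:RayClassField2}: assuming the Stark--Tate Conjecture, $H^{\OO_1}_{\db\infty_1\infty_2}$ is generated over $K$ by $\rtu_d$ and the squares $\ghostOverlapC{t}{\mbf{p}}^2$, all of which lie in $E_t$. For general conductor $f\mid f_j$ I would re-run the special-value computation behind \Cref{thm:RayClassField2}, replacing the partial zeta functions of the order $\mcl{O}_f$ by the zeta functions of the relevant class in the ray class monoid of $\mcl{O}_f$ and the Stark--Tate Conjecture by the Monoid Stark Conjecture (\Cref{conj:msc}); the conclusion becomes $H^{\OO_f}_{\db\infty_1\infty_2}\subseteq E_t$. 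In particular $\ghostOverlapC{t}{\mbf{p}}^2\in E$ for all $\mbf{p}$, and $\rtu_d\in E$, since $E$, having modulus $\db$, contains the $\db$-ray class field $\Q(\zeta_{\db})$ of $\Q$ and $\rtu_d$ is a primitive $\db$-th root of unity. Since $K\subseteq E$ as well, it follows that $E_t=E\bigl(\ghostOverlapC{t}{\mbf{p}}\colon \mbf{p}\in\Z^2\bigr)$.

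The substance of the conjecture is the reverse containment $E_t\subseteq E$. As each $\ghostOverlapC{t}{\mbf{p}}^2\in E$, the group $\Gal(E_t/E)$ is an elementary abelian $2$-group, and the claim is that it is trivial, i.e.\ that every $\ghostOverlapC{t}{\mbf{p}}$ is a square root already lying in $E$. Writing $n$ for the integer with $nr(d-r)=d^2-1$, we have $\normalizedGhostOverlapC{t}{\mbf{p}}=\sqrt{n}\,\ghostOverlapC{t}{\mbf{p}}$ for $\mbf{p}\not\equiv\zero\Mod{d}$, and, conditional on \Cref{conj:grmvc} (the General Real Multiplication Values Conjecture), $\normalizedGhostOverlapC{t}{\mbf{p}}$ is an algebraic unit; hence $\ghostOverlapC{t}{\mbf{p}}^2$ equals $n^{-1}$ times a unit, and the conjecture reduces to the assertion that $n$ times this particular Stark unit is a square in $E$. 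I would attack this by proving a reciprocity law for the real-multiplication values $\sfc{d^{-1}\mbf{p}}{A_t}{\qrt_t}$ that computes the action of $\Gal(\ol{\Q}/\Q)$ on them explicitly in ideal-theoretic terms, in the spirit of Shimura reciprocity for Siegel units over imaginary quadratic fields: the \SFKShort{} phase of \Cref{dfn:SFKPhase}, with its Rademacher-invariant factor and its quadratic-in-$\mbf{p}$ root-of-unity factor, is built precisely so that the attendant root-of-unity and sign bookkeeping closes up, and the task is to establish this closure in every dimension rather than only in the prime cases of \cite{Kopp2019}. A complementary, possibly shorter, route is a rigidity argument: any $\sigma\in\Gal(E_t/E)$ fixes $\rtu_d$ and hence the displacement operators, so it carries the live fiducial $\Pi_s$ to another WH-covariant $r$-SIC fiducial whose overlaps are Galois conjugates of the ghost overlaps (\Cref{lm:OverlapTermsGhostOverlap}) and therefore differ from those of $\Pi_s$ only by signs $\epsilon_\sigma(\mbf{p})\in\{\pm1\}$ with $\epsilon_\sigma(\mbf{p})=\epsilon_\sigma(-\mbf{p})$; one would then use the idempotency $\tilde{\Pi}_s^2=\tilde{\Pi}_s$ supplied by the Twisted Convolution Conjecture together with the $\SLtwo{\Z/\db\Z}$-equivariance of the construction to show that the only sign pattern compatible with all the constraints is $\epsilon_\sigma\equiv 1$.

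Granting $E_t=E$, the identity $\hat E_t=E_t$ follows at once: the level datum $(\mcl{O}_f;\db\mcl{O}_f,\{\infty_1,\infty_2\})$ is stable under $\Gal(K/\Q)$ — the conductor $f$ and the modulus $\db$ are rational integers, and complex conjugation merely interchanges the two infinite places — so $H^{\OO_f}_{\db\infty_1\infty_2}$ is Galois over $\Q$; since $K\subseteq E_t=E$ by \Cref{thm:RayClassField}, the compositum of $K$ and $E_t$ is $E_t$ itself and is already Galois over $\Q$, whence its Galois closure is $E_t$. (This Galois-stability of $E$ over $\Q$ is unconditional and does not presuppose $E_t=E$, so it may also be used freely in the rigidity argument above.)

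The main obstacle is squarely the reverse containment. The Stark-type conjectures in the literature place $\normalizedGhostOverlapC{t}{\mbf{p}}$ in \emph{some} abelian extension of $K$ and, in their refined forms, make it a unit, but they do not by themselves bound the conductor of $E_t/K$ or control whether the relevant Stark unit is $n$ times a square in the ray class field; pinning down this last quadratic ambiguity is exactly the gap that forces \Cref{conj:RayClassField3} to be stated as a conjecture, and a proof would require a genuine reciprocity law for the RM values of the \SFKShort{} modular cocycle, going beyond what is available even conditionally at present.
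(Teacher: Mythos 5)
This statement is stated in the paper as a \emph{conjecture}, not a theorem: the authors explicitly say they ``do not know how to prove this from any form of the Stark conjectures in the literature,'' so there is no paper proof to compare against. Your proposal, to its credit, does not pretend otherwise. The parts you do establish line up with what the paper actually proves: the containment $E\subseteq E_t$ in the fundamental-discriminant case is \Cref{thm:RayClassField2plus} (conditional on Stark--Tate), the extension to general conductor via the Monoid Stark Conjecture is the natural analogue, and your derivation of $\hat E_t=E_t$ from $E_t=E$ (via the observation that the level datum $(\mcl{O}_f;\db\mcl{O}_f,\{\infty_1,\infty_2\})$ is Galois-stable over $\Q$, so $E$ is Galois over $\Q$) is word-for-word the argument the paper gives in \Cref{prop:RayClassField3plus}. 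You also correctly locate the genuine open problem in the reverse containment $E_t\subseteq E$: the Stark-type conjectures put $\su_\A^{1/2}$ in \emph{some} abelian extension of $K$ but give no control over whether that quadratic ambiguity is resolved inside $H^{\OO_f}_{\db\infty_1\infty_2}$ itself.

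That said, neither of your two proposed routes for closing the gap is a proof, and the second one has a concrete weakness worth flagging. The ``rigidity'' argument assumes that the only sign pattern $\epsilon_\sigma(\mbf{p})\in\{\pm1\}$ compatible with idempotency and equivariance is the trivial one, but the paper's own phenomenology (\Cref{sssc:FieldsMultipletsGhosts}, \Cref{thm:GaloisActionOnFiducial}) shows that Galois automorphisms fixing $\rtu_d$ can permute the overlaps by the action of a matrix in $\GLtwo{\mbb{Z}/\db\mbb{Z}}$, carrying $\Pi_s$ to a \emph{different} fiducial on the same or another $\EC(d)$-orbit; a sign-flip $\sigma\in\Gal(E_t/E)$ would produce exactly such a valid fiducial, so idempotency alone cannot rule it out. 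You would need the finer input that the resulting fiducial coincides with $\Pi_s$ (equivalently, that $\sigma$ acts through the stabilizer $\StabPiOverlap{\Pi_s}$), which is precisely the unproven reciprocity-law content. So the proposal is an accurate roadmap of the obstruction rather than a proof, which is the appropriate outcome for a statement the paper itself leaves open.
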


Our results suggest that $r$-SICs provide a geometric interpretation of class field theory over a real quadratic field $K$. 
Thus, we'd like to realize arbitrary abelian extensions of $K$ using $r$-SICs. 
We show conditionally that this is possible when the trace of the fundamental unit is odd.

\begin{thm}\label{thm:cofinal}
    Assume the \Cref{conj:stc} (the Stark--Tate Conjecture).
    Let $K$ be a real quadratic field of discriminant $\Delta_0$, and let $\vn$ be a fundamental totally positive unit in $K$ (as in \Cref{dfn:fundamentalTotallyPositiveUnit}).
    \begin{itemize}
        \item[(1)]
        If $\Tr(\vn)$ is odd, then every abelian extension of $K$ is contained in $E_t$ for some admissible tuple $t \sim (d,r,Q)$ with $\disc(Q) = \Delta_0$.
        \item[(2)]
        If $\Tr(\vn)$ is even, then every abelian extension of $K$ that is unramified at the primes of $K$ lying over $2$ is contained in $E_t$ for some admissible tuple $t \sim (d,r,Q)$ with $\disc(Q) = \Delta_0$.
    \end{itemize}
\end{thm}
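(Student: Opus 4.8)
The plan is to derive \Cref{thm:cofinal} from \Cref{thm:RayClassField2} together with standard class field theory, the entire content boiling down to an elementary divisibility statement about the dimension grid $d_{j,m}$. First I would record the reduction. By class field theory every finite abelian extension $L/K$ is contained in a ray class field $H^{\OO_1}_{N\infty_1\infty_2}$ (as in \Cref{thm:rayclassfield}) for some positive integer $N$: take $N$ to be a generator of $\mathfrak f_0 \cap \Z$, where $\mathfrak f_0$ is the finite part of the conductor of $L/K$, the infinite part being automatically dominated by $\infty_1\infty_2$; and if $L/K$ is unramified at the primes above $2$, then $\mathfrak f_0$ is prime to $2$ and $N$ may be taken odd. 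On the other side, for any $j,m \geq 1$ let $Q_0$ be the principal form of discriminant $\Delta_0$; since $\mathrm{cond}(Q_0) = 1$ divides $f_j$, the tuple $t = (d_{j,m}, r_{j,m}, Q_0) \sim (K,j,m,Q_0)$ is admissible (\Cref{thm:bijectionofadmissibletuples}) with $\disc(Q_0) = \Delta_0$, and under \Cref{conj:stc} \Cref{thm:RayClassField2} yields $E_t \supseteq H^{\OO_1}_{\db\infty_1\infty_2}$, where $\db$ is formed from $d = d_{j,m}$, so $\db = d_{j,m}$ if $d_{j,m}$ is odd and $\db = 2d_{j,m}$ if it is even. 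Since ray class fields are nested according to divisibility of their moduli, the theorem reduces to a cofinality statement: if $\Tr(\vn)$ is odd, then every positive integer $N$ divides $\db$ for some $j,m$; if $\Tr(\vn)$ is even, the same holds for every odd $N$.

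The main tool is the closed form obtained by unwinding \Cref{dfn:fjrjmdjm} and the identity $f_k = (\vn^k - \vn^{-k})/\sqrt{\Delta_0}$, valid in $\mcl{O}_K$:
\eag{
  d_{j,m}\,\vn^{jm} &= \sum_{k=0}^{2m}\vn^{jk} = \frac{\vn^{j(2m+1)}-1}{\vn^{j}-1}.
}
When $N$ is odd (the case $N \leq 1$ being trivial), let $e$ be the order of $\vn$ in $(\mcl{O}_K/N\mcl{O}_K)^\times$ and take $j = e$, $m = (N-1)/2$; then $\vn^{jm}$ is a unit and $d_{j,m}\vn^{jm} \equiv 2m+1 = N \equiv 0 \pmod{N\mcl{O}_K}$, whence $N \mid d_{j,m} \mid \db$. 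This disposes of part~(2) and of the odd-$N$ part of~(1). For the parity bookkeeping, the recursion $\Tr(\vn^{j+1}) = \Tr(\vn)\Tr(\vn^{j}) - \Tr(\vn^{j-1})$ shows modulo $2$ that $d_{j,m} = r_{j,m} + r_{j,m+1}$ is odd unless $\Tr(\vn)$ is odd, $3 \nmid j$, and $m \equiv 1 \pmod 3$, in which case it is even; in particular, when $\Tr(\vn)$ is even every $\db = d_{j,m}$ is odd, which is exactly why part~(2) must exclude ramification at $2$.

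The hard part will be the remaining case: $N = 2^{a}u$ with $u$ odd, $a \geq 1$, and $\Tr(\vn)$ odd. The naive trick now fails, because when $\Tr(\vn)$ is odd the minimal polynomial $x^{2} - \Tr(\vn)x + 1$ of $\vn$ reduces to the irreducible $x^{2}+x+1$ modulo $2$, so $2$ is inert in $K$ and $\vn$ reduces to a primitive cube root of unity modulo $2$; hence the order of $\vn$ modulo every power of $2$ is divisible by $3$, and there is no $j$ with $\vn^{j} \equiv 1$ modulo a power of $2$ and $3 \nmid j$. Instead I would control the $2$-adic and the odd-adic valuations of $d_{j,m}$ separately, keeping $3 \nmid j$ and $m \equiv 1 \pmod 3$. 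For such $j,m$ one has $3 \mid 2m+1$ and $\vn^{j}-1$ a unit modulo every power of $2$, so $v_{2}(d_{j,m}) = v_{2}(\vn^{j(2m+1)}-1)$, which by the $2$-adic lifting-the-exponent lemma in the unramified quadratic extension $\mcl{O}_K \otimes \Z_{2}$ grows with $v_{2}(j)$. Choosing $j$ to be a sufficiently large power of $2$ simultaneously forces $v_{2}(d_{j,m}) \geq a-1$ and makes the order of $\vn^{j}$ modulo every prime-power divisor of $u\mcl{O}_K$ odd; then taking $2m+1$ to be a suitable odd multiple of $u$ and of those orders, multiplied if necessary by $3$ so that $m \equiv 1 \pmod 3$, the lifting-the-exponent lemma at the odd primes forces $u \mid d_{j,m}$ as well. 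Therefore $N = 2^{a}u \mid 2 d_{j,m} = \db$, and feeding this $(j,m)$ back through the reduction, with $t = (d_{j,m}, r_{j,m}, Q_0)$, gives $L \subseteq E_t$. The principal obstacle is precisely this last step, where one must simultaneously meet the constraints $3 \nmid j$, $m \equiv 1 \pmod 3$, a prescribed power of $2$ dividing $d_{j,m}$, and a prescribed odd number dividing $d_{j,m}$.
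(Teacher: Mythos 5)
Your proposal is correct and follows essentially the same route as the paper: the paper reduces the theorem to the statement that every positive integer $N$ (resp.\ every odd $N$) divides some $d_{j,m}$, and its \Cref{thm:cofinprelim} imposes exactly the constraints you identify in your ``hard case'' --- $j$ divisible by a large power of $2$ but coprime to $3$, the odd part of $N$ and of the relevant multiplicative order dividing $2m+1$, and $3 \mid 2m+1$ --- handling the $2$-adic growth via binomial-coefficient valuations (\Cref{lm:binpdiv}) where you invoke lifting-the-exponent in the inert completion $\OO_K \otimes \Z_2$. The ``principal obstacle'' you flag at the end is not actually an obstacle: taking $j$ a sufficiently large power of $2$ (automatically coprime to $3$) and $2m+1$ any odd multiple of $3u$ times the odd order of $\vn^{j}$ modulo $u\OO_K$ meets all the constraints simultaneously, and your geometric-sum congruence $d_{j,m}\vn^{jm} = \sum_{k=0}^{2m}\vn^{jk} \equiv 2m+1 \pmod{N\OO_K}$ for the odd-modulus case is in fact cleaner than the paper's prime-by-prime treatment.
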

\begin{proof}
    See \Cref{ssec:cofinal}.
\end{proof}

The condition that $\Tr(\e)$ is odd is common among real quadratic fields. When ordered by discriminant, the condition holds for at least $7.4\%$ of real quadratic $K$, in the sense of asymptotic density, by \Cref{thm:oddtracecount}. 
(The true density looks empirically like $22.2\%$.) 
When ordered by root dimension $d_1$, $\Tr(\e)$ is odd if and only if $d_1$ is even, so instead $50\%$ of real quadratic $K$ satisfy the condition. 
For those real quadratic fields for which $\Tr(\e)$ is even, \Cref{thm:cofinal} still says that ``many'' abelian extensions are contained in some $E_t$.

\Cref{thm:cofinal} makes clear the relevance of $r$-SICs to Hilbert's twelfth problem of generating abelian extensions from special values of explicit complex-analytic functions. Specifically, a proof of the Stark--Tate Conjecture and the Twisted Convolution Conjecture would give a solution to Hilbert's twelfth problem that is both complex-analytic and geometric, for a positive proportion of real quadratic fields. Our construction is complex-analytic because the $\shin$-function is a complex analytic function. It is geometric both in the sense that $r$-SICs are described by sets of pairwise equichordal subspaces, and in the sense that the algebraic equations for a Weyl--Heisenberg $r$-SIC projector cut out an algebraic variety.

\clearpage

\subsection{Table of notation}\label{sec:tableofnotation}
For the convenience of the reader we include the following summary of the notation and terminology used in this paper.

\begin{longtable}{p{3.9 cm} p{9.79 cm} p{1.8 cm}}
    \toprule
        Notation & Terminology & Definition\\
    \midrule
    $\mcl{L}(\C^d)$ & space of linear operators on $\C^d$ & --
    \\
    -- & \hprj & \ref{dfn:Hprojector}
    \\
    -- & \pprj{} & \ref{def:pProjector}
    \\
      --  & $r$-SIC &\ref{def:equiangularcond}
    \\
      --   & Zauner's Conjecture  & Conj.~\ref{conj:zauner}
    \\
    $\WH(d)$    & Weyl--Heisenberg group in dimension $d$ & \ref{def:WHGroup}
    \\
        $\bar{d}$  & $d$ (resp.,~$2d$) if $d$ is odd (resp.,~even) & \ref{def:WHGroup}
    \\
        $\rtus_d$ & $e^{2 \pi i/d}$ & \ref{def:WHGroup}
    \\
        $\rtu_d$ & $-e^{\pi i/d}$ & \ref{def:WHGroup}
    \\
       $X$, $Z$, $D_{\mbf{p}}$ & $\WH(d)$ displacement operators & \ref{def:WHGroup}
    \\
        $U_P$ & parity operator & \ref{def:pProjector}, \ref{dfn:parityMatrix}
    \\
      $P$ & parity matrix & \ref{dfn:parityMatrix}
    \\
    --    & WH covariant $r$-SIC & \ref{dfn:whcovrsic}
    \\
        $\Pi$ & fiducial \hprj{}  & \ref{dfn:whcovrsic}
     \\
        $\Pi$ & live fiducial (alternative name for fiducial \hprj{})  & \ref{dfn:liveFiducial}
     \\
        $\overlap_{\mbf{p}}$ & \overlapSingularText{} & \ref{dfn:sicovlp}
    \\
      $\normalizedOverlap_{\mbf{p}}$ &\normalizedOverlapSingularText{} & \ref{dfn:sicovlp}
     \\
        $\tilde{\Pi}$ & ghost fiducial \pprj{} &\ref{dfn:ghostFiducial}
    \\
       $ \ghostOverlap_{\mbf{p}}$ & \ghostOverlapSingularText{} &   \ref{dfn:ghostFiducial}
    \\
       $ \ghostOverlapC{t}{\mbf{p}}$ & \candidateGhostOverlapSingularText{} for admissible tuple $t$&   \ref{dfn:ghostFiducial}
    \\
       $\normalizedGhostOverlap_{\mbf{p}}$ & \normalizedGhostOverlapSingularText{} &   \ref{dfn:ghostFiducial}
       \\
       $\normalizedGhostOverlapC{t}{\mbf{p}}$ & \candidateNormalizedGhostOverlapSingularText{} for admissible tuple $t$ & \ref{dfn:GhostOverlaps}
        \\
        $G$ & twist (of the ghost overlap index $\p$) & \ref{dfn:ghostFiducial}
    \\
        $\mbb{H}$ & upper half-plane & \eqref{eq:upperHalfPlane}
    \\
        $\varpi$, $\varpi_n$ & variant $q$-Pochhammer symbols  & \ref{dfn:variantqPochhammer}
    \\
    $\symp{\mbf{p}}{\mbf{q}}$ & symplectic form & \eqref{eq:SymplecticForm}
    \\
    $\pt{\mbf{p}}{\tau}$ & fractional symplectic form & \eqref{eq:FractionalSymplecticForm} 
    \\
        $\eta(\tau)$ & Dedekind $\eta$-function & \eqref{eq:DedekindEta}
    \\
       $M\cdot\tau$, $j_M(\tau)$ & fractional linear transformation and its denominator & \eqref{eq:fractionalineartransform}
    \\
    $\DD_M$ & domain of $\tau$ in  \SFKShort{} Jacobi and  \SFKShort{}  modular cocycles & \ref{def:sl2ldmndf}
    \\
        $\Gamma(d)$, $\Gamma_\r$ & principal congruence subgroup and a variant & \ref{dfn:gammarDef}
    \\
    $\sfjM{M}{z}{\tau}$ & \SFKShort{} Jacobi cocycle & \ref{df:shinfadjacocycle}
    \\
    $\sfc{\r}{A}{\tau}$ & \SFKShort{} modular cocycle & \ref{def:shin}
    \\
        $K$, $\Delta_0$ & real quadratic field and its discriminant &\eqref{eq:discriminant}
    \\
    $Q$ & integral, primitive, irreducible, indefinite  quadratic form & Sec.~\ref{ssec:quadraticfieldsforms}
    \\
    $\lOnQ{M}{Q}$ & $M$-transform of $Q$ & \eqref{eq:afrmtrans}
    \\
        $\Delta_0$, $f$ & fundamental discriminant and conductor of a form & \eqref{eq:ConductorOfQ}
    \\
    $\qrt_{Q,\pm}$ & roots of $Q$ & \ref{dfn:qrtdf}
    \\
    $\qrt_t$ & root corresponding to admissible tuple $t$ & \ref{dfn:GhostOverlaps}
    \\
        $\sgn(Q)$, $\sgn(M)$ & signs of $Q$, $M$ & \ref{dfn:sign}
    \\
        $\mcl{S}(Q)$, $\mcl{S}_d(Q)$  & stability group of $Q$, and a variant
        & \ref{dfn:stbgpqdf}
    \\
    $\vn$ & fundamental totally positive  unit $> 1$ & \ref{dfn:fundamentalTotallyPositiveUnit}
    \\
    $f_j$ & sequence of conductors of a real quadratic field & \ref{dfn:sequenceofconductors}
    \\
        $d_{j,m}$, $r_{j,m}$ & dimension and rank grids of a real quadratic field & \ref{dfn:fjrjmdjm}
    \\
        $(d,r)$ & admissible pair & \ref{dfn:admissiblePair}
    \\
        $d_j$, $d_1$ & dimension tower, root dimension of a real quadratic field & \ref{dfn:fjrjmdjm}
    \\
        $(K,j,m)$ & admissible triple & \ref{dfn:fjrjmdjm}
    \\
        $(d,r)\sim(K,j,m)\hspace{-1em}$ & admissible tuple equivalence & \ref{def:tupleequiv}
    \\
        $\rade(M)$ & Rademacher class invariant & \ref{df:meyinv}
    \\
        $(d,r,Q)$  & admissible tuple with form & \ref{def:admissibleform}
    \\
       $(K,j,m,Q)$ & admissible tuple with form & \ref{def:admissibleform}
    \\
       $\stabQGen{t}$, $\posGen{t}$, $\zaunerGen{t}$, $A_t$ & stabilizers associated to admissible tuple $t$ & \ref{dfn:AssociatedStabilizers}
    \\
        $\SFPhase{t}{\mbf{p}}$
        & \SFKShort{} phase for admissible tuple $t$ & \ref{dfn:SFKPhase}
    \\
$\delta^{(n)}_{\mbf{p},\mbf{q}}$ & modular $\delta$-function & \ref{dfn:modulardeltafunction}
    \\
        $\shift$, $\mcl{Z}_t$ & shift and set of shifts for admissible tuple $t$ & \ref{dfn:shift}
    \\
--         & Stark--Tate Conjecture & Conj.~\ref{conj:stc} 
    \\
   --      & Twisted Convolution Conjecture & Conj.~\ref{cnj:tci}
    \\
        $\sicField_t$, $\hat{\sicField_t}$ & field \& Galois closed field associated to $t$ & \ref{dfn:SICfield}
    \\
        $\mcl{O}_f$ & order with conductor $f$ & \ref{dfn:orderConductorf}
    \\
        $g$ & standard notation for a Galois automorphism 
    \\
        $s=(d,r,Q,G,g)$ & fiducial datum & \ref{def:fiducialdata}
    \\
        $s=(K,j,m,Q,G,g)$ & fiducial datum & \ref{def:fiducialdata}
     \\
        $s=(t,G,g)$ & fiducial datum extending admissible tuple $t$ & \ref{def:fiducialdata}
     \\
          $\tilde{\Pi}_s$ & ghost  \pprj{} for fiducial datum  $s$ &      \ref{dfn:CandidateGhostAndSICFiducials}
    \\
       $\Pi_s$ & $r$-SIC \hprj{} for  fiducial datum $s$ & \ref{dfn:CandidateGhostAndSICFiducials}
    \\
    $\normalizedOverlapC{s}{\mbf{p}}$ & candidate \normalizedOverlapSingularText{} for fiducial datum $s$ & \ref{dfn:CandidateGhostAndSICFiducials}
    \\
    $\Cl_{\mm,\rS}(\OO)$ & ray class group & \ref{defn:rayclassgroup}
    \\
    $\Clt_{\mm,\rS}(\OO)$ & flat imprimitive ray class monoid & \ref{defn:rayclassmonoid}
    \\
    $\ZClt_{\mm,\rS}(\OO)$ & submonoid of zero classes & \ref{defn:rayclassmonoid}
    \\
    $\A$ & ray class in $\Cl_{\mm,\rS}(\OO)$ or $\Clt_{\mm,\rS}(\OO)$ & \ref{defn:rayclasspartialzeta}
    \\
    $\zeta_{\mm,\rS}(s,\A)$ & ray class partial zeta function & \ref{defn:rayclasspartialzeta}
    \\
    $Z_{\mm,\rS}(s,\A)$ & differenced ray class partial zeta function & \ref{defn:rayclasspartialzeta}
    \\
    $\su_\A$ & Stark unit & Sec.~\ref{ssc:StarkConjectures}
    \\
    $\psi$ & $\eta$-character & ~\eqref{eq:etatrans}
    \\
    $\theta_\r(\tau)$ & Jacobi theta function with characteristics & ~\eqref{eq:jacobitheta}
    \\
    $\chi_\r$ & $\theta_\r$-character & ~\eqref{eq:thetatrans}
    \\
       $\Cliff(d)$, $\EC(d)$ & Clifford and extended Clifford groups  & \ref{dfn:CliffordGroup}
    \\
        $\PC(d)$, $\PEC(d)$ & projective Clifford and extended Clifford groups  & \ref{dfn:CliffordGroup}
    \\
    $\SLtwo{\mbb{Z}/\bar{d}\mbb{Z}}$  & symplectic group & \ref{dfn:symplectic}
    \\
        $\ESLtwo{\mbb{Z}/\bar{d}\mbb{Z}}$ & extended symplectic group & \ref{dfn:symplectic}
    \\
      $F$ & symplectic and anti-symplectic matrices & \ref{dfn:symplectic}
    \\
      $J$ & canonical anti-symplectic matrix & \ref{dfn:symplectic}
    \\
        $U_F$ & symplectic unitary & \eqref{eq:symplecticUnitary}
    \\
    $k_g$  & integer describing action of Galois conjugation $g$ & \ref{dfn:HgMatrixDefinition}
    \\
    $H_g$ & matrix describing  action of Galois conjugation $g$ & \ref{dfn:HgMatrixDefinition}
    \\
        $U_F$ & anti-symplectic anti-unitary & \eqref{eq:antiSymplecticAntiUnitary}
    \\
        $\ECS(d)$ & (anti-)symplectic subgroup of $\EC(d)$ & \ref{df:antisymplecticSubgroup}
    \\
        $\mcl{S}(\Pi)$ & symmetry group of fiducial $\Pi$ & \ref{dfn:symmetryGroupOfFiducial}
    \\
    -- & canonical order 3 unitary & \ref{df:canonicalOrder3}
    \\
     $F\vpu{'}_z$, $F\vpu{'}_a$, $F'_a$ & Zauner and variant Zauner matrices &
    \ref{dfn:FzFaFpaMatrices}
    \\
      --  & type-$z$, type-$a$, type-$a'$ fiducials & \ref{df:typeztypeatypeaprime}
    \\
    -- & centered fiducial & \ref{dfn:centered}
    \\
    $\StabPiESL{\Pi}$ & symplectic symmetry group of fiducial $\Pi$  & \ref{dfn:sogppi}
    \\
        $\StabPiOverlap{\Pi}$ & \overlapSingularText{} symmetry group of fiducial $\Pi$ & \ref{dfn:sogppi}
    \\
        -- & Galois multiplet & \ref{dfn:GaloisMultiplet}
    \\
        -- & strongly centered fiducial & \ref{dfn:stronglycentered}
    \\
        $\un$ & fundamental unit of $K$ & \ref{dfn:fundamentalUnit}
    \\
        $\Delta_j$ & discriminant at level $j$ & \ref{dfn:discriminantLevelj}
    \\
        $T^{*}_j(x)$, $U^{*}_j(x)$ & variant Chebyshev polynomials & \ref{dfn:variantChebyshev}
    \\
        $\mcl{U}_f$ & unit group of $\mcl{O}_f$ & \ref{df:ofufdef}
    \\
        $\mcl{U}^{+}_f$ & positive norm subgroup of $\mcl{U}_f$ & \ref{df:ofufdef}
     \\
        $j_{\rm{min}}(f)$ & minimum level of conductor $f$ & \ref{df:epsilonfDefinition}
    \\
        $\vn_f$ & smallest unit greater than 1 in $\mcl{U}^{+}_f$ & \ref{df:epsilonfDefinition}
    \\
        $\un_f$ & smallest unit greater than 1 in $\mcl{U}_f$ & \ref{df:ufvf}
    \\
        $\mcl{M}(R)$ & ring of $2\times 2$ matrices over $R$ & \ref{dfn:MatrixRings}
    \\
        $\mcl{M}_{\rm{S}}(R)$ & sub ring of symmetric matrices in $\mcl{M}(R)$ & \ref{dfn:MatrixRings}
    \\
        $\mcl{M}_{0}(R)$ & sub ring of trace-zero matrices in $\mcl{M}(R)$ & \ref{dfn:MatrixRings}
    \\
        $R\la M_1, \dots, M_n\ra$
        & matrix sub-algebra generated by $M_1 \dots, M_n$ & \ref{dfn:subAlgebra}
    \\
        $S$, $T$ & generators of $\SLtwo{\mbb{Z}}$ & \ref{dfn:GeneratorsOfSL2Z}
    \\
        $\canrep$ & canonical representation of field $K$ & \ref{dfn:canonicalRepresentation}
    \\
        $\canrep_Q$ & canonical representation of $K$ associated to $Q$ & \ref{df:etaq}
    \\
        $Q(M)$ & form stabilized by $M$ & Thm.~\ref{tm:ltnqdecomp}
    \\
      $\mcl{H}_{+}$ & $M\in \GLtwo{\mbb{Z}} \setminus \{\pm I\} $ such that $\left(\Tr M\right)^2 -4\Det M > 4 $ & \ref{dfn:EllipticHyperbolic}
    \\
      $\mcl{H}_{-}$ & $M\in \GLtwo{\mbb{Z}} \setminus \{\pm I\} $ such that $\left(\Tr M\right)^2 -4\Det M \le 4 $ & \ref{dfn:EllipticHyperbolic}
    \\
        $\mcl{F}_{+}$ & invariant, irreducible forms &  \ref{dfn:EllipticHyperbolic}
    \\
        $\mcl{F}_{-}$ & forms with discriminant $-4,-3,0,1$, or $4$ &  \ref{dfn:EllipticHyperbolic}
    \\
    $n_t$ & level of admissible tuple $t$ & \ref{dfn:level}
    \\
        $\fn_t$ & function from $\mbb{Z}/(d\mbb{Z})$ to $\mbb{Z}/(\bar{d}\mbb{Z})$ & \ref{dfn:functionht}
    \\
     $E^{(1)}_s$, $E^{(2)}_t$ & subfields of $E_t$ generated by (ghost) overlaps &\ref{defn:tuplefields}
    \\
        $t_M$ & $M$-transformed admissible tuple  
        & \ref{dfn:MTransformedt}
    \\
        $t\sim t'$ & equivalence of admissible tuples $t$ and $t'$ & \ref{dfn:MTransformedt}
    \\
        $s_M$ & $M$-transformed fiducial datum 
        & \ref{dfn:MTransformedt}
     \\
        $\GLMorph_s$ & homomorphism of $\GLtwo{\mbb{Z}}$ onto $\ESLtwo{\mbb{Z}/\db\mbb{Z}}$  & \ref{dfn:GLHomomorphism}
    \\
      $[t]$ & equivalence class of tuples specifying an $\EC(d)$ orbit & \ref{df:EquivalenceClassesTuples}
    \\
        $\dBr{t}$ & equivalence class of tuples specifying a Galois multiplet & 
        \ref{df:EquivalenceClassesTuples}
    \\
      $H_t$, $H_{K,f}$ & ring class field for admissible tuple $t$ & \ref{df:RingClassField}
    \\
        $h_t$, $h_{K,f}$ & class number for admissible tuple $t$ & \ref{df:RingClassField}
     \\
     $H_{\dBr{t}}$  & ring class field for equivalence class $\dBr{t}$ & \ref{df:RingClassField}
     \\
     $h_{\dBr{t}}$ & class number for equivalence class $\dBr{t}$ & \ref{df:RingClassField}
    \\
    $\ESLsym{s}$, $\ESLposSym{s}$, $\ESLzaunSym{s}$ & elements of overlap stabilizer group associated to  $s$ & \ref{df:ESLStabilizers}
    \\
    -- & tuples of unitary/anti-unitary type & \ref{dfn:antiUnitaryType}
    \\
    -- & canonical expansion, length of canonical expansion & \ref{dfn:canonicalExpansion}
    \\
    \bottomrule
\end{longtable}

\section{Shintani--Faddeev cocycles and the Stark conjectures}\label{sec:stark}

This section summarizes some of the algebraic properties of the Shintani--Faddeev modular cocycle established in \cite{Kopp2020d} as well as its relationship to the Stark conjectures, after first providing some necessary background. Proofs of theorems not proven here may be found in \cite{Kopp2020b} and \cite{Kopp2020d}. This section assumes some familiarity with algebraic number theory; for a standard text on the subject, see~\cite{Neukirch1999}, or see \cite{Marcus:1977} for a more elementary exposition.

\subsection{Class field theory (for orders of number fields)}\label{ssec:cft}

For a number field $K$, it is natural to ask for a characterization of the set of abelian Galois extensions
\begin{equation}
    \{H/K : H \mbox{ is a number field and } \Gal(H/K) \mbox{ is abelian}\}.
\end{equation}
Such fields are characterized abstractly by class field theory. Class field theory realizes every abelian extension of $K$ as a subfield of a \textit{ray class field}; ray class fields, proven to exist by Takagi, are parameterized by data intrinsic to the base field $K$.

It suits our purposes to give a broader definition of \textit{ray class field} than is typical. Takagi's ray class fields are attached to the data of a \textit{modulus}, which is a pair $(\mm,\rS)$ such that $\mm$ is a nonzero ideal of the ring of integers $\OO_K$ and $\rS$ is a subset of the set of \emph{real embeddings} of $K$ (that is, injective ring homomorphisms $K \to \R$). More general ray class field are used here in the sense defined by Kopp and Lagarias \cite{Kopp2020b}. Each is attached to a \textit{level datum}, which is a triple $(\OO; \mm, \rS)$ such that $\OO$ is an order\footnote{An order $\OO$ in a number field $K$ is a subring of $K$ having rank $[K : \Q]$ as an abelian group.} in the number field $K$, $\mm$ an ideal of $\OO$, and $\rS$ a subset of the set of real embeddings of $K$.

A level datum is used directly to define the \textit{ray class group}, a finite abelian group, which will by the main theorems of class field theory be isomorphic to the Galois group over $K$ of the corresponding ray class field. 
The definition of the ray class group uses \textit{fractional ideals}, which may be defined for an order $\OO$ in a number field $K$ equivalently as either:
\begin{enumerate}
    \item a \textit{fractional ideal} $\mathfrak{a}$ of $\OO$ is a finitely-generated $\OO$-submodule of $K$;
    \item a \textit{fractional ideal} $\mathfrak{a}$ is an additive subgroup of $K$ with the property that there is some $n \in \N$ such that $n\mathfrak{a}$ is an ideal of $\OO$.
\end{enumerate}
Ideals of $\OO$ will be called \textit{integral ideals} to distinguish them from more general fractional ideals. Fractional ideals may be multiplied together to give new fractional ideals, using the multiplication 
\begin{equation}
    \mathfrak{a} \mathfrak{b}
    = \left\{\sum_{i=1}^k a_i b_i : a_i \in \mathfrak{a}, b_i \in \mathfrak{b}, k \in \N\right\}.
\end{equation}
Nonzero fractional ideals form a group $\rJ^\ast(\OO)$.

The ray class group is defined as a quotient of a subgroup of $\rJ^\ast(\OO)$ by a smaller subgroup, so its elements are cosets consisting of fractional ideals.
The following definition, given as \cite[Defn.~5.4]{Kopp2020b}, generalizes the standard one by introducing a dependence on $\OO$.
\begin{defn}[Ray class group]\label{defn:rayclassgroup}
Let $K$ be a number field and $(\OO; \mm, \rS)$ be a level datum for $K$.
The \textit{ray class group of the order $\OO$ modulo $(\mm, \rS)$} is
\begin{equation}
\Cl_{\mm,\rS}(\OO) = \frac{\rJ_{\mm}^\ast(\OO)}{\rP_{\mm,\rS}(\OO)},
\end{equation}
where
\begin{align}
\rJ_{\mm}^\ast(\OO) &= \{\mbox{invertible fractional ideals of $\OO$ coprime to $\mm$}\}, \mbox{ and} \\
\rP_{\mm,\rS}(\OO) &= \{\alpha\OO \mbox{ such that } \alpha \con 1 \Mod{\mm} \mbox{ and } \rem(\alpha)>0 \mbox{ for } \rem \in \rS\}.
\end{align}
If the real embeddings of $K$ are labelled $\rem_1, \ldots, \rem_r$ and $\rS = \{\rem_{j_1}, \ldots, \rem_{j_k}\}$, the pair $(\mm,\rS)$ may be abbreviated as $\mm\infty_{j_1} \cdots \infty_{j_k}$.
\end{defn}

It is worth highlighting the meanings of the terms ``invertible'' and ``coprime'' in the above definition, as they involve features that do not appear in the maximal order case. A fractional $\OO$-ideal $\fa$ is \textit{invertible} if there is some fractional $\OO$-ideal $\bb$ such that $\fa\bb = \OO$. (The order $\OO = \OO_K$ if and only if all nonzero ideals are invertible.) The fractional ideal $\fa$ is \textit{coprime} to the integral ideal $\mm$ if it can be written as $\fa = \fa_1\fa_2^{-1}$ for an integral $\OO$-ideal $\fa_1$ and an invertible integral $\OO$-ideal $\fa_2$ satisfying $\fa_1+\mm = \fa_2+\mm = \OO$. (Ideals of non-maximal orders do not always have prime factorizations.)

The following theorem defines ray class fields uniquely and asserts their existence. It is stated as \cite[Thm.\ 3.4]{Kopp2020d} and is a summary of \cite[Thm.\ 1.1, Thm.\ 1.2, Thm.\ 1.3]{Kopp2020b}.
\begin{thm}\label{thm:rayclassfield} 
Let $K$ be a number field and $(\OO; \mm, \rS)$ be a level datum for $K$.
Then there exists a unique abelian Galois extension 
$H_{\mm,\rS}^{\OO}/K$ with the property that a prime ideal $\pp$ of $\OO_K$ that is 
coprime to the quotient ideal $\colonideal{\mm}{\OO_K}$ 
splits completely in $H_{\mm,\rS}^{\OO}/K$ if and only if $\pp \cap \OO = \pi\OO$, a principal prime $\OO$-ideal having $\pi \in \OO$ with $\pi \equiv 1 \Mod{\mm}$ and $\rem(\pi)>0$ for $\rem \in \rS$.

Additionally, these fields have the following properties:
\begin{itemize}
\item $H_{\mm\OO_K,\rS}^{\OO_K} \subseteq H_{\mm,\rS}^{\OO} \subseteq H_{\colonideal{\mm}{\OO_K},\rS}^{\OO_K}$.
\item There is a canonical isomorphism $\Art_{\OO} : \Cl_{\mm, \rS}(\OO) \to \Gal\!\left(H_{\mm,\rS}^{\OO}/K\right)$.
\end{itemize}
\end{thm}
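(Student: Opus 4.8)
The plan is to bootstrap from classical class field theory for the maximal order. For any $\OO_K$-ideal $\mathfrak{n}$ and any set $\rS$ of real embeddings of $K$, the theorem of Takagi together with Artin reciprocity supplies a ray class field $H_{\mathfrak{n},\rS}^{\OO_K}/K$ and a canonical isomorphism $\Art_{\OO_K}\colon \Cl_{\mathfrak{n},\rS}(\OO_K) \xrightarrow{\sim} \Gal\!\left(H_{\mathfrak{n},\rS}^{\OO_K}/K\right)$ realizing the asserted splitting law; I take this as known. The idea is to produce $H_{\mm,\rS}^{\OO}$ as an intermediate field of $H_{\mathfrak{n},\rS}^{\OO_K}$ for the specific choice $\mathfrak{n} = \colonideal{\mm}{\OO_K}$.

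First I would set up the ideal-theoretic comparison between $\OO$ and $\OO_K$. The elementary point is that $\colonideal{\mm}{\OO_K}$ is contained both in $\mm$ and in the conductor ideal $\mathfrak{f} = \{x \in K : x\OO_K \subseteq \OO\}$ of $\OO$, so every ideal coprime to $\colonideal{\mm}{\OO_K}$ is automatically coprime to $\mathfrak{f}$, and at primes away from $\mathfrak{f}$ the orders $\OO$ and $\OO_K$ have identical localizations. Using this, contraction $\fa \mapsto \fa \cap \OO$ and extension $\bb \mapsto \bb\OO_K$ are mutually inverse, multiplication-respecting bijections between the invertible fractional $\OO_K$-ideals coprime to $\colonideal{\mm}{\OO_K}$ and the invertible fractional $\OO$-ideals coprime to $\mm$. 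Tracking the defining conditions of the ray class groups through these maps --- if $\alpha \equiv 1 \Mod{\colonideal{\mm}{\OO_K}}$ then $\alpha \in \OO$, $\alpha \equiv 1 \Mod{\mm}$, and the signs of $\rem(\alpha)$ are unchanged --- shows they induce a surjection of finite abelian groups
\[
\pi_{\OO}\colon \Cl_{\colonideal{\mm}{\OO_K},\rS}(\OO_K) \twoheadrightarrow \Cl_{\mm,\rS}(\OO), \qquad [\bb] \longmapsto [\,\bb \cap \OO\,],
\]
whose kernel $N$ consists of the classes of ideals $\bb$ with $\bb \cap \OO = \pi\OO$ for some $\pi \in \OO$ satisfying $\pi \equiv 1 \Mod{\mm}$ and $\rem(\pi) > 0$ for all $\rem \in \rS$. (Finiteness of $\Cl_{\mm,\rS}(\OO)$ falls out of this comparison.)

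Next I would define $H_{\mm,\rS}^{\OO}$ to be the subfield of $H_{\colonideal{\mm}{\OO_K},\rS}^{\OO_K}$ fixed by $\Art_{\OO_K}(N)$. Via the identification $\Cl_{\mm,\rS}(\OO) \cong \Cl_{\colonideal{\mm}{\OO_K},\rS}(\OO_K)/N$ coming from $\pi_{\OO}$, the map $\Art_{\OO_K}$ descends to a canonical isomorphism $\Art_{\OO}\colon \Cl_{\mm,\rS}(\OO) \xrightarrow{\sim} \Gal\!\left(H_{\mm,\rS}^{\OO}/K\right)$, canonical because $\pi_{\OO}$ is. The splitting law follows immediately: for a prime $\pp$ of $\OO_K$ coprime to $\colonideal{\mm}{\OO_K}$, its Frobenius in $\Gal\!\left(H_{\mm,\rS}^{\OO}/K\right)$ equals $\Art_{\OO}([\pp \cap \OO])$, which is trivial precisely when $\pp \cap \OO = \pi\OO$ with $\pi \equiv 1 \Mod{\mm}$ and positive at $\rS$. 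Uniqueness then comes from the standard fact that a finite Galois extension of $K$ is determined by its set of completely split primes, a set the splitting law pins down up to the finitely many exceptional primes. For the sandwich, $H_{\mm,\rS}^{\OO} \subseteq H_{\colonideal{\mm}{\OO_K},\rS}^{\OO_K}$ holds by construction, and $H_{\mm\OO_K,\rS}^{\OO_K} \subseteq H_{\mm,\rS}^{\OO}$ follows by comparing split primes: if $\pp$ splits in $H_{\mm,\rS}^{\OO}$ then $\pp = (\pp \cap \OO)\OO_K = \pi\OO_K$ with $\pi \equiv 1 \Mod{\mm\OO_K}$ and positive at $\rS$, so $\pp$ splits in $H_{\mm\OO_K,\rS}^{\OO_K}$.

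The step I expect to be the real obstacle is the ideal-theoretic comparison of the second paragraph: because $\OO$ need not have unique factorization and carries non-invertible ideals, the dictionary with $\OO_K$-ideals is only clean once one restricts to ideals coprime to the conductor, and one must verify carefully that $\fa \leftrightarrow \fa\OO_K$ is well defined on exactly the right classes of ideals and respects the congruence-and-sign equivalence defining the two ray class groups. This bookkeeping is precisely what is carried out in \cite[Thms.~1.1--1.3]{Kopp2020b}; granting it, the remaining steps are formal consequences of classical class field theory and Chebotarev's theorem.
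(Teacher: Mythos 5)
Your sketch is correct and follows essentially the same route as the source on which the paper relies: the paper gives no proof of \Cref{thm:rayclassfield} at all, deferring entirely to \cite[Thms.~1.1--1.3]{Kopp2020b}, where $H_{\mm,\rS}^{\OO}$ is constructed exactly as you describe --- as the subfield of $H^{\OO_K}_{\colonideal{\mm}{\OO_K},\rS}$ fixed by the Artin image of the kernel of the contraction-induced surjection $\Cl_{\colonideal{\mm}{\OO_K},\rS}(\OO_K)\twoheadrightarrow\Cl_{\mm,\rS}(\OO)$, with the splitting law, uniqueness, and the sandwich of fields then following from classical class field theory and Chebotarev. You also correctly isolate the only genuinely delicate step, namely the ideal-theoretic dictionary away from the conductor (in particular that every class of $\Cl_{\mm,\rS}(\OO)$ is represented by an invertible ideal coprime to the conductor, so that contraction and extension are mutually inverse and compatible with the congruence-and-sign conditions), which is precisely the bookkeeping carried out in the cited reference.
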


Another formal structure, the \textit{\rcmia}, will be needed to define generalized zeta values that ultimately give rise to ghost $r$-SIC overlaps (by way of the $\shin$-function). This finite commutative monoid\footnote{A monoid is a semigroup with identity, that is, a set with a binary operation satisfying associativity and having an identity element.} contains the ray class group as a submonoid and is defined by weakening the ``coprime to $\mm$'' condition in \Cref{defn:rayclassgroup} and making other modifications.
Further discussion and properties are given in \cite[Sec.~4]{Kopp2024} and \cite[Sec.~3.2]{Kopp2020d}.
\begin{defn}\label{defn:rayclassmonoid}
The \textit{\rcmia} is
\begin{equation}
\Clt_{\mm,\rS}(\OO) = \frac{\ol\rJ_{\mm}^\flat(\OO)}{\sim_{\mm,\rS}},
\end{equation}
where
\begin{align}
\ol{\rJ}_{\mm}^\flat(\OO) &= \{\aaF \in \rJ_\OO^\ast(\OO) : \aaF\OO[S_\mm^{-1}] \subseteq \OO[S_\mm^{-1}]\} \mbox{ with} \\
S_\mm &= \{\alpha \in \OO : \alpha\OO + \mm = \OO\},
\end{align}
and the equivalence relation $\sim_{\mm,\rS}$ is defined by
\begin{equation}
\aaF \sim_{\mm,\rS} \bb \iff \begin{array}{c}\exists \ccc \in \ol\rJ_{\mm}^{\flat}(\OO) \mbox{ and } \alpha,\beta \in \OO[S_\mm^{-1}] \mbox{ such that } \aaF = \alpha\ccc, \bb = \beta\ccc, \\ 
\alpha - \beta \in \mm\OO[S_\mm^{-1}],
\sgn(\rem(\alpha)) = \sgn(\rem(\beta)) \mbox{ for all } \rem \in \rS.\end{array}
\end{equation}
The \textit{submonoid of zero classes} is
\begin{equation}
    \ZClt_{\mm,\rS}(\OO) = \{[\dd] \in \Clt_{\mm,\rS}(\OO) : \dd \subseteq \mm\}.
\end{equation}
\end{defn}

\subsection{Partial zeta functions} \label{ssc:ZetaFunctions}

The Stark conjectures relate the value at $s=0$ of certain zeta functions to algebraic units in certain number fields. The zeta functions are \textit{partial zeta functions}, meaning that they are defined by an infinite sum corresponding to ``part'' of a Dirichlet series used to define another zeta function. The Dedekind zeta function is written as a finite sum of partial zeta functions. Those partial zeta functions may be indexed either by ray classes in a ray class group (and more generally a ray class monoid) or by field automorphisms in a finite Galois extension.

In the simplest case, the Dedekind zeta function of $\Q$ is the Riemann zeta function
\begin{align}
    \zeta_\Q(s) = \sum_{n=1}^\infty n^{-s}, && \re(s)>1.
\end{align}
For any positive integer $d$, the Riemann zeta function may be written as a finite sum of Hurwitz zeta functions
\begin{align}\label{eq:Rzetadecomp}
    \zeta_\Q(s) = \sum_{k=1}^{d} d^{-s}\zeta(s,\tfrac{k}{d}),
\end{align}
where the Hurwitz zeta function is defined as
\begin{align}
    \zeta(s,a) = \sum_{n=0}^\infty (n+a)^{-s}, && \re(s)>1.
\end{align}
The Hurwitz zeta function may be understood as a partial zeta function associated to the congruence class of $k \Mod{d}$. 
Such $k$ may be thought of as classes in a \rcmia,
\begin{align}\label{eq:monoidzeg}
    \Clt_{d\infty}(\Z) 
    &= \frac{\{r\Z : r = \tfrac{a}{b} \in \Q^\times, \gcd(b,d)=1\}}{\left(r_1\Z \sim r_2\Z \text{ if } r_i = \pm \tfrac{\gamma_i c}{\delta_i d}, \, \tfrac{\gamma_1}{\delta_1} - \tfrac{\gamma_2}{\delta_2} = \tfrac{\gamma_3}{\delta_3}, \, m|\delta_3, \, \sgn(\tfrac{\gamma_1}{\delta_1})=\sgn(\tfrac{\gamma_2}{\delta_2})\right)} \\
    &\isom (\Z/d\Z, \times), \label{eq:zdztimes}
\end{align}
where we stipulate that all fractions in \eqref{eq:monoidzeg} are in simplest form, and the notation in \eqref{eq:zdztimes} indicates the set $\Z/d\Z$ thought of as a monoid with the binary operation of multiplication.
When $k$ is coprime to $d$, it may be thought of as either a class in the ray class group
\begin{align}
    \Cl_{d\infty}(\Q) 
    &= \frac{\{r\Z : r = \tfrac{a}{b} \in \Q^\times, \gcd(a,d)=\gcd(b,d)=1\}}{\{r\Z : r = \tfrac{a}{b} \in \Q^\times, \, \gcd(a,d)=\gcd(b,d)=1, \, a\equiv b \Mod{d}, \, r>0\}} \\
    &\isom \left(\Z/d\Z\right)^\times
\end{align}
or to an element of the Galois group
\begin{align}
    \Gal(\Q(\rtus_d)/\Q) 
    &= \{\mbox{field automorphisms } g : \Q(\rtus_d) \to \Q(\rtus_d)\} \\
    &\isom \left(\Z/d\Z\right)^\times.
\end{align}
The restriction that $k$ is coprime to $d$ is no great obstacle, as \eqref{eq:Rzetadecomp} may be rewritten as
\begin{align}
    \zeta_\Q(s) = \left(\prod_{\substack{p|d \\ p \text{ prime}}} (1-p^{-s})^{-1}\right)\sum_{\substack{1 \leq k \leq d \\ \gcd(k,d)=1}} d^{-s}\zeta(s,\tfrac{k}{d}).
\end{align}

The Dedekind zeta function
\begin{equation}
    \zeta_K(s) = \sum_{\aaF \subseteq \OO_K} \Nm(\aaF)^{-s}
\end{equation}
which generalizes the Riemann zeta function, can likewise be split up as a sum of finitely many ray class partial zeta functions. 
\begin{defn}[Ray class partial zeta function and differenced ray class partial zeta function]
\label{defn:rayclasspartialzeta}
Let $K$ be a number field and $(\OO; \mm,\rS)$ a level datum for $K$.
Let $\A \in \Clt_{\mm,\rS}(\OO)$, and let $\sR$ be the element of $\Cl_{\mm,\rS}(\OO)$ defined by
\begin{equation}
\sR := \{\alpha\OO : \alpha \equiv -1 \Mod{\mm} \mbox{ and } \rem(\alpha)>0 \mbox{ for all } \rem \in \rS\}.
\end{equation} 
For $\re(s)>1$, define the \textit{ray class partial zeta function} and  the \textit{differenced ray class partial zeta function}, respectively, by
\begin{align}
\zeta_{\mm,\rS}(s,\A) &= \sum_{\substack{\aaF \subseteq \OO \\ \aaF \in \A}} \Nm(\aaF)^{-s}, \mbox{ and} \\
Z_{\mm,\rS}(s,\A) &= \zeta_{\mm,\rS}(s,\A) - \zeta_{\mm,\rS}(s,\sR\A).
\end{align}
\end{defn}
Ray class partial zeta functions are closely related (by Artin reciprocity) to partial zeta functions indexed by elements of a Galois group. We introduce different terminology and notation for Galois-thoeretic partial zeta functions, which are not always identical to ray class partial zeta functions, as they impose stricter coprimality conditions on the ideals indexing the summands.
\begin{defn}[Galois-theoretic partial zeta function]
Let $H/K$ be an abelian Galois extension of number fields. Let $S$ be a finite set of places of $K$ containing all the places that ramify in $H$ as well as all the infinite places of $K$, and let $S=S_{\rm fin} \sqcup S_\infty$ for a set of finite places $S_{\rm fin}$ and a set of infinite places $S_\infty$. For any $g \in \Gal(H/K)$ and $\re(s)>1$, define
\begin{equation}\label{eq:galseries}
    \zeta_S^{\Gal}(g,s) = \sum_{\substack{\aaF \subseteq \OO_K \\ (\forall \pp \in S_{\rm fin}) \aaF+\pp=\OO_K \\ \Art([\aaF])=g}} \Nm(\aaF)^{-s},
\end{equation}
where $\Art = \Art_{\OO_K}$ is the Artin map of class field theory.
\end{defn}
In the case when $\OO=\OO_K$ is the maximal order, each ray class partial zeta functions is equal to some Galois-theoretic partial zeta function times a factor of the form $\Nm(\mathfrak{d})^{-s}$, by \cite[Prop.~6.2 and Thm.~6.7]{Kopp2020d}. See \cite[Sec.\ 6]{Kopp2020d} for further results and discussion.
 
Often considered more fundamental that partial zeta functions are finite-order Hecke $L$-functions (associated to characters of a ray class group) and Artin $L$-functions (associated to characters, or more generally representations, of a Galois group). These $L$-functions have Euler products and are expected to satisfy the Riemann hypothesis. For abelian Galois extensions, Hecke and Artin $L$-functions are equal up to a finite number of Euler factors. One can state the Stark conjectures in terms of Hecke or Artin $L$-functions \cite{Tate:1981}, but the formulas are more complicated. We stick to partial zeta functions here, as they are most closely linked to the Stark units.

\subsection{The Stark conjectures}\label{ssc:StarkConjectures}

We will need a special case of Tate's refinement \cite{Tate:1981} of Stark's order 1 abelian $L$-values conjectures \cite{Stark1, Stark2, Stark3, Starkrealquad, Stark4}. We first state Tate's refinement in general. The following statement is part (II)(a) of \cite[Conj.~4.2]{Tate:1981} and is equivalent to the full statement of that conjecture. Tate notates this conjecture ${\mathrm St}(S,K/k)$, with his $k$ taking the role of our $K$, and his $K$ taking the role of our $H$.
\begin{conj}[Stark--Tate Conjecture ${\rm ST}(H/K,S)$, general case]\label{conj:StarkTateFull}
Let $H/K$ be an abelian extension of number fields, and let $W$ be the number of roots of unity in $H$. Let $S$ be a finite set of places of $K$ containing all the places that ramify in $H$ as well as all the infinite places of $K$, satisfying $\abs{S}\geq 2$. Suppose that $S$ contains a place $\pp$ (finite or infinite) that splits completely in $K$, and let $T = S \setminus \{\pp\}$. Let $U_{S,H}^T$ denote the set of elements $\alpha \in H^\times$ such that its $\QQ$-adic valuations at places $\QQ$ of $H$ satisfy
\begin{align}
& \abs{\alpha}_{\QQ} = 1 \mbox{ for } \QQ | \qq \nin S, &&  \\
& \abs{\alpha}_{\QQ} = 1 \mbox{ for } \QQ | \qq \in T, && \mbox{ if } \abs{T} \geq 2, \mbox{ and}  \\
& \abs{\alpha}_{\QQ} = a \mbox{ for } \QQ | \qq \mbox{ and $a$ constant}, && \mbox{ if } T=\{\qq\}.
\end{align}
Then, there is an element $\su \in U_{S,H}^T$ such that
\begin{equation}
\log\abs{g(\su)}_{\mathfrak{P}} = -W\zeta_S'(g,0) \mbox{ for each } g \in \Gal(H/K) \mbox{ and } \mathfrak{P}|\pp
\end{equation}
and such that $H(\su^{1/W})$ is abelian over $K$.
\end{conj}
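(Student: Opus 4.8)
This statement, ${\rm ST}(H/K,S)$, is in full generality one of the central open problems of algebraic number theory---a sharpening of Stark's order~$1$ conjectures and a close relative of Hilbert's twelfth problem---so I do not expect to prove it unconditionally; what I can offer is the strategy that succeeds in the two classical cases, together with a precise account of why it stalls for the real quadratic fields that are the actual subject of this paper. The conjecture has two parts: (i) produce an element $\su \in U_{S,H}^T$ whose $\mathfrak{P}$-adic (archimedean, when $\pp$ is an infinite place) logarithmic absolute values reproduce the numbers $-W\zeta_S'(g,0)$ as $g$ ranges over $\Gal(H/K)$; and (ii) show that the Kummer extension $H(\su^{1/W})/K$ is abelian. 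In both cases where the conjecture is a theorem the plan is identical: build $\su$ explicitly from special values of a classical complex-analytic function, compute the partial zeta derivatives by a limit formula, match the two, and deduce abelianness from the explicit description of $H$.

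\textbf{The case $K=\Q$.} Any place of $\Q$ splits completely in $\Q$, and by Kronecker--Weber $H \subseteq \Q(\rtus_n)$ for some $n$. One evaluates the relevant partial zeta functions at $s=0$ via Lerch's formula $\zeta'(0,x) = \log\Gamma(x) - \tfrac{1}{2}\log(2\pi)$; summing over the congruence classes lying above a fixed $g \in \Gal(H/K)$ and using the Gauss multiplication (distribution) relations among the $\Gamma$-values, the output collapses to an expression of the shape $\log\abs{1-\rtus_n^{a}}$. The candidate unit is then a cyclotomic unit---a suitable product $\prod_a (1-\rtus_n^{a})$ normalized to lie in $H$ and in $U_{S,H}^T$, treating separately the case $\abs{T}\geq 2$ (yielding a genuine unit) and the case $T=\{\qq\}$ (yielding an $S$-unit supported at $\qq$). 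Since $W$-th roots of cyclotomic units again generate cyclotomic fields, $H(\su^{1/W})/\Q$ is abelian, giving (ii).

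\textbf{The case $K$ imaginary quadratic.} This is Stark's theorem, proved by complex multiplication. Here $H$, being abelian over $K$, lies in a ray class field generated over $K$ by singular values of Siegel functions at a CM point $\tau$. The arithmetic input is the first and second Kronecker limit formulas, which express the Hecke $L$-functions attached to ray class characters---hence, via Artin reciprocity, the derivatives $\zeta_S'(g,0)$---in terms of logarithms of $\abs{\eta(\tau)}$ and of Siegel function values at CM points. These singular values are elliptic units (the Siegel units of \cite{Kubert:1981}): genuine global units in $H$, lying in abelian extensions of $K$, whose $W$-th roots remain abelian over $K$ by explicit CM theory. Matching the limit-formula output to the required $\mathfrak{P}$-adic valuations produces $\su$, and part (ii) follows from the CM description.

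\textbf{The obstacle, and the real quadratic case.} For $K$ real quadratic there is no known analytic function playing the role of the cyclotomic units or the Siegel function---this is exactly Hilbert's twelfth problem for such fields, still open---so the strategy above fails at its very first step, since I have no unconditional source for the unit $\su$. This is precisely the gap the present paper is designed to fill: the Shintani--Faddeev modular cocycle $\sfc{\r}{A}{\qrt}$ at real-multiplication points is the conjectural substitute, and \Cref{conj:mrmvc}, \Cref{conj:frmvc}, and \Cref{conj:grmvc} assert exactly the algebraicity and unit/abelianness properties one would need. Thus a complete proof of ${\rm ST}(H/K,S)$ for real quadratic $K$ would, in this framework, reduce to establishing those special-value properties of the $\shin$-function---which I cannot do here, and which is why the Stark--Tate Conjecture appears in this paper as a hypothesis rather than a theorem. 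If forced to guess, I expect the genuinely hard part is \emph{finding the analytic object at all}; once a candidate unit equipped with a conjectural limit formula is available, matching valuations and proving abelianness of the $W$-th-root extension are, by analogy with the solved cases, the comparatively routine steps.
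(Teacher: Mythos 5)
The statement you were asked about is a conjecture, not a theorem: the paper states it verbatim as part (II)(a) of Tate's Conjecture~4.2 in \cite{Tate:1981} and offers no proof, using it only as a hypothesis for the conditional results. Your response is therefore the right one — you correctly identify it as open, and your sketch of the two proven cases ($K=\Q$ via Lerch's formula and cyclotomic units; $K$ imaginary quadratic via the Kronecker limit formulas and elliptic units) and of why the method has no unconditional starting point for real quadratic $K$ is accurate and consistent with how the paper positions the Shintani--Faddeev cocycle as the conjectural substitute. One small remark: the hypothesis should read that $\pp$ splits completely in $H$ (trivial decomposition group in $\Gal(H/K)$), not ``in $K$''; the paper's phrasing appears to be a typo, and your line ``any place of $\Q$ splits completely in $\Q$'' inherits it — for $K=\Q$ one still needs $\pp$ to split completely in the cyclotomic field $H$.
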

We now state specialized consequences of the above conjecture in the case of interest to this paper. From now on, the field $K$ will be real quadratic, and it will be considered to be a subfield of $\R$. The two real embeddings are $\sigma_1(x) = x$ and $\sigma_2(x) = x'$, where $x'$ is the nontrivial Galois conjugate of $x$. We will also specialize the extension $H$ to be a ray class field and state the conjecture in terms of ray class partial zeta functions. We state two versions, with the first being provable from conjectures in Stark's 1976 paper \cite[Conj.~1 and Conj.~2]{Stark3}, and the second containing the condition on the square root appearing in Tate's work \cite{Tate:1981}.
\begin{conj}[Stark Conjecture $\Stark(K,\mm)$, real quadratic Archimedean ray class field case]\label{conj:stark}
Let $K \subset \R$ be a real quadratic number field embedded in $\R$, and let $\mm$ be a nonzero integral $\OO_K$-ideal such that $\mm \neq \OO_K$.
Let $H = H_{\mm\infty_2}^{\OO_K} \subset \R$.
Then, for all $\A \in \Cl_{\mm\infty_2}(\OO_K)$, there are elements $\su_{\A} \in \OO_H^\times$ such that 
\begin{equation}
\su_{\A} = \exp\!\left(-2\zeta_{\mm\infty_2}'(0,\A)\right),
\label{eq:stc}
\end{equation}
$(\Art(\B))(\su_{\A}) = \su_{\A\B}$ for $\B \in \Cl_{\mm\infty_2}(\OO_K)$, $\abs{g(\su_{\A})}=1$ for any $g \in \Gal(H/\Q) \setminus \Gal(H/K)$.
\end{conj}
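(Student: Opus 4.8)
Since \Cref{conj:stark} is itself a conjecture, the natural proof proposal is the deduction --- already announced in the text preceding it --- of this real-quadratic archimedean ray class field case from the general Stark--Tate Conjecture \Cref{conj:StarkTateFull} (Tate's refinement of Stark's conjectures). The plan is to specialize \Cref{conj:StarkTateFull} to a well-chosen set $S$ of places and to read off each clause of \Cref{conj:stark}, discarding only the abelian-square-root conclusion, which \Cref{conj:stark} does not assert.

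First I would fix the data. Since $K$ is real quadratic, its infinite places are $\infty_1,\infty_2$, attached to $\sigma_1,\sigma_2$; set $H=H_{\mm\infty_2}^{\OO_K}$ and $S=\{\infty_1,\infty_2\}\cup\{\pp:\pp\mid\mm\}$. Because the ray class field $H/K$ is ramified only at places dividing the modulus $\mm\infty_2$, this $S$ contains every ramified place of $K$, it contains all infinite places of $K$, and $\abs{S}\geq 2$. The place $\infty_1$ does not divide the modulus, so it is unramified in $H/K$, and an unramified real place splits completely; hence $\infty_1$ is the distinguished place $\pp$ of \Cref{conj:StarkTateFull} that splits completely. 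Then $T=S\setminus\{\infty_1\}=\{\infty_2\}\cup\{\pp:\pp\mid\mm\}$, and $\abs{T}\geq 2$ precisely because the hypothesis $\mm\neq\OO_K$ forces $T$ to contain a finite prime.

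Next I would invoke \Cref{conj:StarkTateFull}, which yields $\su\in U_{S,H}^T$ with $\log\abs{g(\su)}_{\mathfrak P}=-W\,\zeta_S'(g,0)$ for every $g\in\Gal(H/K)$ and every $\mathfrak P\mid\infty_1$, where $\zeta_S$ denotes the Galois-theoretic partial zeta function $\zeta_S^{\Gal}$. Since $\abs{T}\geq 2$, membership in $U_{S,H}^T$ gives $\abs{\su}_{\mathfrak Q}=1$ at every finite place $\mathfrak Q$ of $H$ (those above $\pp\mid\mm$ lie over $T$; the rest lie over places outside $S$) and also at every $\mathfrak Q\mid\infty_2$, so $\su\in\OO_H^\times$. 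Because $H\subseteq\R$ contains only the roots of unity $\pm1$, $W=2$. I would then match the partial zeta functions: $\zeta_S^{\Gal}(g,s)$ sums $\Nm(\aaF)^{-s}$ over integral $\OO_K$-ideals prime to $\mm$ with $\Art([\aaF])=g$, which by Artin reciprocity (\Cref{thm:rayclassfield}) are exactly the integral ideals in the class $\A=\Art^{-1}(g)\in\Cl_{\mm\infty_2}(\OO_K)$, so $\zeta_S^{\Gal}(g,s)=\zeta_{\mm\infty_2}(s,\A)$ (compare \cite[Prop.~6.2, Thm.~6.7]{Kopp2020d}). Setting $\su_\A:=(\Art(\A))(\su)$, commutativity of $\Gal(H/K)$ gives $(\Art(\B))(\su_\A)=(\Art(\A\B))(\su)=\su_{\A\B}$, and evaluating the displayed identity at the place $\mathfrak P_0\mid\infty_1$ coming from $H\subseteq\R$ gives $\abs{\su_\A}=\exp(-2\zeta_{\mm\infty_2}'(0,\A))$. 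For the last clause, any $g\in\Gal(H/\Q)\setminus\Gal(H/K)$ restricts to $\sigma_2$ on $K$ (reading $\Gal(H/\Q)$ through a Galois closure if $H/\Q$ is not Galois), so $g(\su_\A)$ sits at a place above $\infty_2$; translating by an element of $\Gal(H/K)$, which fixes $\infty_2$, and using $\abs{\su}_{\mathfrak Q}=1$ for $\mathfrak Q\mid\infty_2$, we obtain $\abs{g(\su_\A)}=1$.

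The hard part is not the formal bookkeeping but two genuinely delicate points. First, \Cref{conj:StarkTateFull} only pins down $\abs{\su_\A}$, whereas \Cref{conj:stark} asserts the sharper $\su_\A=\exp(-2\zeta_{\mm\infty_2}'(0,\A))$, which also fixes the \emph{sign} --- equivalently, requires the base unit to be positive at every real place of $H$ above $\infty_1$. Controlling this sign is exactly the extra archimedean precision carried by Stark's original real-quadratic conjectures \cite[Conj.~1 and Conj.~2]{Stark3}, of which \Cref{conj:StarkTateFull} is Tate's refinement; so this is the step that really rests on Stark's formulation rather than on \Cref{conj:StarkTateFull} alone. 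Second, the identification $\zeta_S^{\Gal}(g,s)=\zeta_{\mm\infty_2}(s,\A)$ and, more generally, the claim that adjoining the finite primes dividing $\mm$ to $S$ introduces no spurious Euler factor, relies on the conductor and Euler-factor analysis of \cite[Sec.~6]{Kopp2020d}, using that an ideal representing a ray class modulo $\mm$ is automatically prime to $\mm$.
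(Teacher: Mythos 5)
This statement is a \emph{conjecture}: the paper gives no proof of it, remarking only that it is ``provable from conjectures in Stark's 1976 paper [Conj.~1 and Conj.~2]'' and deferring the analogous specialization for the Stark--Tate version to \Cref{prop:starkimp}, which cites \cite[Prop.~6.10]{Kopp2020d}. Your specialization argument is the standard one and is essentially what those references carry out: the choice $S=\{\infty_1,\infty_2\}\cup\{\pp:\pp\mid\mm\}$ with distinguished split place $\infty_1$, the check $\abs{T}\geq 2$ from $\mm\neq\OO_K$, $W=2$ from $H\subset\R$, the unit and $\infty_2$-absolute-value conditions from membership in $U_{S,H}^T$, and the identification of $\zeta_S^{\Gal}$ with $\zeta_{\mm\infty_2}(\cdot,\A)$ (harmless here since primitive classes of the maximal order carry no extra $\Nm(\mathfrak{d})^{-s}$ factor) are all correct. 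You are also right to flag the one genuinely delicate point: \Cref{conj:StarkTateFull} pins down only $\abs{g(\su)}_{\mathfrak{P}}$, so the exact equality $\su_\A=\exp(-2\zeta_{\mm\infty_2}'(0,\A))$ --- i.e.\ simultaneous positivity of all $\su_\A$ at the embedding $H\subset\R$ --- is additional content supplied by Stark's original real-quadratic formulation rather than by Tate's statement alone; this matches the paper's own attribution of \Cref{conj:stark} to \cite[Conj.~1 and Conj.~2]{Stark3}. So your derivation should be read as ``\Cref{conj:StarkTateFull} plus the positivity clause of Stark's Conjecture 2 implies \Cref{conj:stark},'' which is exactly the dependency the paper intends.
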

\begin{conj}[Stark--Tate Conjecture $\ST(K,\mm)$, real quadratic Archimedean ray class field case]\label{conj:stc}
Let $K \subset \R$ be a real quadratic number field embedded in $\R$, and let $\mm$ be a nonzero integral $\OO_K$-ideal such that $\mm \neq \OO_K$.
Then, $\Stark(K,\mm)$ holds, and for all $\A \in \Cl_{\mm\infty_2}(\OO_K)$, the field $H(\su_\A^{1/2})$ is abelian over $K$.
\end{conj}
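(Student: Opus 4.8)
\Cref{conj:stc} is itself a conjecture, so what I would offer is not a proof but a reduction: the plan is to obtain it as a packaged special case of the general-form Stark--Tate conjecture \Cref{conj:StarkTateFull}, with the non-square-root content (the assertion $\Stark(K,\mm)$) also following, more classically, from Stark's original order-$1$ abelian conjectures \cite[Conj.~1 and Conj.~2]{Stark3}. I would not attempt to prove \Cref{conj:StarkTateFull} or Stark's conjectures; those remain the genuine open problems, and the purpose of \Cref{conj:stc} is precisely to isolate the shape of the Stark conjectures that the rest of the paper consumes.

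\emph{Setting up the data.} Fix $K \subset \R$ real quadratic, $\mm \neq \OO_K$ an integral $\OO_K$-ideal, and $H = H_{\mm\infty_2}^{\OO_K} \subset \R$. Take $S = S_{\mathrm{fin}} \cup \{\infty_1,\infty_2\}$, where $S_{\mathrm{fin}}$ is the set of finite primes of $K$ dividing $\mm$; then $S$ contains every infinite place of $K$ and every place ramifying in $H/K$ (the ramified set being exactly $S_{\mathrm{fin}} \cup \{\infty_2\}$), as \Cref{conj:StarkTateFull} requires. The one substantive input is that $\infty_1$ — the real place \emph{absent} from the modulus — splits completely in $H/K$; this is a standard fact of ray class field theory (a real place outside the defining modulus is unramified, hence, being archimedean, split), and it lets us take $\pp := \infty_1$ as the required completely split place and $T := S \setminus \{\infty_1\} = S_{\mathrm{fin}} \cup \{\infty_2\}$. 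Since $\mm \neq \OO_K$ we have $S_{\mathrm{fin}} \neq \emptyset$, so $\lvert T \rvert \geq 2$ and $\lvert S \rvert \geq 2$, and all hypotheses of \Cref{conj:StarkTateFull} are in force.

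\emph{Unpacking the conclusion.} Because $H \subset \R$, its only roots of unity are $\pm 1$, so the Tate constant is $W = 2$; hence $\su^{1/W} = \su^{1/2}$ and ``$H(\su^{1/W})/K$ abelian'' is exactly the clause of \Cref{conj:stc} that goes beyond $\Stark(K,\mm)$. The conditions defining $U_{S,H}^T$ force $\su$ to be a unit at every finite place (each such place lies outside $S$ or inside $T$), so $\su \in \OO_H^\times$; the condition at $\infty_2 \in T$ gives $\lvert g(\su) \rvert = 1$ for every complex embedding $g$ of $H$ that does not restrict to the identity on $K$, i.e.\ extends $\infty_2$ — this is the ``$g \in \Gal(H/\Q)\setminus\Gal(H/K)$'' clause. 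Taking $\mathfrak{P}$ to be the place of $H$ given by the inclusion $H \subset \R$ (which lies over $\pp = \infty_1$), the evaluation $\log\lvert g(\su)\rvert_{\mathfrak{P}} = -W\,\zeta_S'(g,0)$ becomes an identity of ordinary real logarithms; after normalizing the sign of the unit (replacing $\su$ by $\lvert \su\rvert$, still a unit since $-1 \in H$) and exponentiating, this reads $\su = \exp(-2\,\zeta_S'(\mathrm{id},0))$, and defining $\su_\A := \Art(\A)(\su)$ gives both $(\Art(\B))(\su_\A) = \su_{\A\B}$ and the predicted formula $\su_\A = \exp(-2\,\zeta_{\mm\infty_2}'(0,\A))$. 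The only genuinely technical step in this last line is the identification of the Galois-theoretic partial zeta value $\zeta_S'(g,0)$ with the ray class partial zeta value $\zeta_{\mm\infty_2}'(0,\A)$ for $\A = \Art^{-1}(g)$: here I would invoke the maximal-order comparison of the two flavours of partial zeta function (the analogue of \cite[Prop.~6.2 and Thm.~6.7]{Kopp2020d}), using that $S_{\mathrm{fin}}$ is exactly the support of $\mm$ so that the coprimality conditions coincide and no spurious Euler factor intervenes.

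\emph{The main obstacle.} Beyond the fact that the two input conjectures are open — which makes the result unavoidably conditional — the points that demand real care rather than bookkeeping are (i) the complete splitting of $\infty_1$ in $H_{\mm\infty_2}^{\OO_K}$, for which I would appeal to classical ray class field theory in the form of \cite{Kopp2020b}, and (ii) the partial-zeta comparison above, for which I would appeal to \cite{Kopp2020d}. Everything else is matching notation and choosing the Stark unit in its standard positive, Galois-equivariant normalization, so I expect (ii) to be where the bulk of the actual verification sits.
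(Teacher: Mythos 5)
Your reduction is correct and is essentially what the paper itself does: \Cref{conj:stc} is stated as the specialization of \Cref{conj:StarkTateFull} to $H = H_{\mm\infty_2}^{\OO_K}$ with $S$ the primes dividing $\mm$ together with both infinite places, $\pp = \infty_1$, and $W=2$, and the equivalence $\ST(K,\mm) \Leftrightarrow \ST(H/K,S)$ is recorded as \Cref{prop:starkimp}(1) with the bookkeeping (complete splitting of $\infty_1$, the unit/absolute-value conditions, the sign normalization, and the comparison of Galois-theoretic with ray class partial zeta functions) delegated to \cite[Prop.~6.10]{Kopp2020d}. You have simply unpacked that citation explicitly, and correctly identified the partial-zeta comparison as the one step carrying real content.
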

We also state a Stark-type conjecture for differenced ray class partial zeta functions attached to potentially imprimitive ray classes. This ``Monoid Stark Conjecture'' is not known to follow completely from the Stark (or Stark--Tate) conjectures, but it does follow in the case of the maximal order $\OO = \OO_K$.
\begin{conj}[Monoid Stark Conjecture $\MS(\OO,\mm)$]\label{conj:msc}
Let $K \subset \R$ be a real quadratic field, $\OO$ an order in $K$, and $\mm$ a nonzero $\OO$-ideal such that $\mm \neq \OO$.
Let $\{\infty_1, \infty_2\}$ be the two real places of $K$.
Let $H = H_{\mm\infty_2}^{\OO} \subset \R$.
Then, for all $\A \in \Clt_{\mm\infty_2}(\OO)$, 
there are elements $\su_{\A} \in \OO_H^\times$ such that 
\begin{equation}
\su_{\A} = \exp\!\left(-Z_{\mm\infty_2}'(0,\A)\right),
\label{eq:msc}
\end{equation}
$(\Art(\B))(\su_{\A}) = \su_{\A\B}$ for $\B \in \Cl_{\mm\infty_2}(\OO)$, $\abs{g(\su_{\A})}=1$ for any $g \in \Gal(H/\Q) \setminus \Gal(H/K)$, and $H(\su_\A^{1/2})$ is abelian over $K$.
\end{conj}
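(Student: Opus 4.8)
The statement is a conjecture, so a complete proof is not available; the plan I would pursue is to prove the one case that is within reach---the maximal order $\OO = \OO_K$---by deducing it from \Cref{conj:stc} (the Stark--Tate Conjecture), which we may assume, and to pin down exactly why the non-maximal case resists this. The guiding principle is that a differenced partial zeta function attached to an \emph{imprimitive} class in $\Clt_{\mm\infty_2}(\OO_K)$ should be controlled by differenced partial zeta functions attached to genuine ray classes, so that the Stark units already furnished by \Cref{conj:stark} (the Stark Conjecture, a consequence of \Cref{conj:stc}) carry the construction. Accordingly I would first unwind the structure of $\Clt_{\mm\infty_2}(\OO_K)$ from \Cref{defn:rayclassmonoid}: over the maximal order every fractional ideal factors uniquely into primes and is invertible, so each class $\A$ has a canonical representative $\mathfrak{c}\,\bb$ with $\mathfrak{c}$ supported on the primes dividing $\mm$ (and a divisor of $\mm$) and $\bb$ coprime to $\mm$; passing to classes identifies the ``primitive part'' of $\A$ with a ray class for $\mm\infty_2$ or for a suitable coarser modulus $\mm'\infty_2$, and the zero classes $\ZClt_{\mm\infty_2}(\OO_K)$ form a small subfamily to be dealt with separately.

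From this one gets a factorization $\zeta_{\mm\infty_2}(s,\A) = \Nm(\mathfrak{c})^{-s}\,\big(\text{finite }\Z\text{-combination of ray class partial zeta functions}\big)$. The key analytic input is that, since $H = H_{\mm\infty_2}^{\OO_K}\subset\R$, the real place $\infty_1\notin\rS$ splits completely in $H/K$; hence every abelian $L$-function attached to $\Cl_{\mm'\infty_2}(\OO_K)$ (for any divisor $\mm'\mid\mm$) vanishes at $s=0$, so $\zeta_{\mm'\infty_2}(0,\A')=0$ for every such ray class $\A'$, and therefore $\zeta_{\mm\infty_2}(0,\A)=0$ and $Z_{\mm\infty_2}(0,\A)=0$ as well. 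Differentiating the factorization at $s=0$ and using this vanishing, the contribution $-\log\Nm(\mathfrak{c})\cdot Z_{\mm\infty_2}(0,\A_0)$ drops out, leaving $Z_{\mm\infty_2}'(0,\A)$ as a $\Z$-linear combination of the numbers $Z_{\mm'\infty_2}'(0,\A_0)$ over genuine ray classes $\A_0$. (For a zero class one checks directly, e.g.\ from $\sR\A=\A$ and hence $Z_{\mm\infty_2}(s,\A)\equiv 0$, that the associated unit is a root of unity.)

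It then remains to produce the unit in the primitive case and to verify the refinements. Writing $w_{\A_0} := \exp\!\big(-2\zeta_{\mm'\infty_2}'(0,\A_0)\big)$ for the Stark units of \Cref{conj:stark}, which are units in $\OO_H^\times$ satisfying $w_{\sR\A_0} = (\Art(\sR))(w_{\A_0})$, $(\Art(\B))(w_{\A_0}) = w_{\A_0\B}$, and $\lvert g(w_{\A_0})\rvert = 1$ for $g\in\Gal(H/\Q)\setminus\Gal(H/K)$, one has $\exp\!\big(-Z_{\mm'\infty_2}'(0,\A_0)\big) = \big(w_{\A_0}/w_{\sR\A_0}\big)^{1/2} = \big(w_{\A_0}/(\Art(\sR))(w_{\A_0})\big)^{1/2}$; thus $\su_\A$ is (a product of powers of) square roots of ratios of Stark units, from which the absolute-value condition and the Galois-equivariance $(\Art(\B))(\su_\A) = \su_{\A\B}$ follow directly, the sign in the equivariance being fixed by the analytically distinguished choice of square root. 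To see that $\su_\A\in\OO_H^\times$ and that $H(\su_\A^{1/2})/K$ is abelian I would invoke Tate's refinement: the fields $H(w_{\A_0}^{1/2})/K$ and $H(w_{\sR\A_0}^{1/2})/K$ are abelian, so the two square roots lie in a common abelian extension $L/K$ with $L\supseteq H$; since $\sR^2 = 1$ in $\Cl_{\mm'\infty_2}(\OO_K)$ and $\Gal(L/K)$ is abelian, the action of a lift of $\Art(\sR)$ on $w_{\A_0}^{1/2}$ can be computed explicitly, which shows that $\su_\A = w_{\A_0}^{1/2}/w_{\sR\A_0}^{1/2}$ is fixed by $\Gal(L/H)$ (so lies in $H$), is an algebraic integer and hence a unit, and generates an abelian extension of $K$ after adjoining a further square root.

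The genuinely hard part---and the reason this is a conjecture and not a theorem---is the non-maximal case $\OO\subsetneq\OO_K$. There the decomposition of the first step breaks down: ideals of $\OO$ need not factor uniquely into primes, not all are invertible, and $\Clt_{\mm\infty_2}(\OO)$ is strictly larger and more intricate than anything built from the ray class groups of $\OO_K$, so there is no evident way to express $Z_{\mm\infty_2}(s,\A)$ for the order $\OO$ in terms of partial zeta functions of $\OO_K$ to which classical Stark results apply. This is exactly the obstruction flagged in the discussion above, and it is precisely what forces us to posit \Cref{conj:grmvc} and the present conjecture separately; overcoming it would seem to require either extending the Stark machinery to imprimitive ray classes of non-maximal orders, or an independent argument via the Shintani--Faddeev cocycle and a Shintani-style Kronecker limit formula for the order $\OO$. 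A secondary, milder obstacle, present already for $\OO_K$, is the field-theoretic bookkeeping in the third step: Tate's refinement delivers square roots but not a priori fourth roots of Stark units, so the abelianness of $H(\su_\A^{1/2})/K$ must be extracted carefully from the abelianness of the compositum of the various $H(w^{1/2})$ rather than read off at once.
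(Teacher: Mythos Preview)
Your plan for the maximal-order case is essentially the paper's approach: \Cref{prop:starkimp}(2) records exactly the equivalence $\MS(\OO_K,\mm) \Longleftrightarrow (\forall\,\mm'\!\mid\!\mm)\;\ST(K,\mm')$, with the proof deferred to \cite[Prop.~6.11]{Kopp2020d}. Your decomposition of an imprimitive monoid class into a factor supported on primes dividing $\mm$ times a primitive ray class for a coarser modulus $\mm'\infty_2$ is the right idea, and your diagnosis of the non-maximal obstruction matches the paper's.

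Where you over-complicate matters is the third step. You write $\su_{\A_0} = (w_{\A_0}/w_{\sR\A_0})^{1/2}$ and then worry about extracting integrality and abelianness from square roots of ratios, culminating in a concern about fourth roots. But you are not using the key analytic fact the paper invokes in the proof of \Cref{thm:field0}: by \cite[Prop.~5]{Tangedal:2007}, either $\sR$ is trivial (so $Z'_{\mm\infty_2}(0,\A)=0$ and $\su_\A=1$), or $\zeta'_{\mm\infty_2}(0,\sR\A) = -\zeta'_{\mm\infty_2}(0,\A)$, whence $Z'_{\mm\infty_2}(0,\A) = 2\zeta'_{\mm\infty_2}(0,\A)$ and therefore $\su_\A = \exp(-2\zeta'_{\mm\infty_2}(0,\A)) = w_\A$ \emph{is} the Stark unit itself, not a square root of a ratio. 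All the desired properties (membership in $\OO_H^\times$, Artin equivariance, absolute value $1$ under sign-switching, and abelianness of $H(\su_\A^{1/2})/K$) then follow immediately from \Cref{conj:stc}, with no fourth-root bookkeeping needed. This collapse is what makes the maximal-order case go through cleanly; your argument as written would still close, but only after an unnecessary detour.
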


\begin{prop}\label{prop:starkimp}
    We describe some nontrivial conditional implications between these Stark-type conjectures. Let $K \subset \R$ be a real quadratic field. Let $H = H_{\mm,\rS}^{\OO_K}$ for a modulus $(\mm,\rS)$ for $K$. Let $S = \{\pp \text{ finite prime of $\OO_K$} : \pp|\mm\} \cup \{\infty_1,\infty_2\}$.
    \begin{itemize}
        \item[(1)] $\ST(K,\mm)$ is equivalent to $\ST(H/K,S)$.
        \item[(2)] $\MS(\OO_K,\mm)$ is equivalent to $(\forall \mm'|\mm)\ST(S,\mm')$.
    \end{itemize}
\end{prop}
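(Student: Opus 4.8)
The plan is to prove both equivalences by translating the partial-zeta-function statements (Conjectures~\ref{conj:stark}, \ref{conj:stc}, \ref{conj:msc}) into Tate's general formulation (Conjecture~\ref{conj:StarkTateFull}) and back; the substantive content is the matching of normalizations and of the ``square root is abelian'' clauses. Two structural facts about the real quadratic Archimedean ray class field case will do most of the work. First, for $H = H_{\mm\infty_2}^{\OO_K}$ the real place $\infty_1$ is unramified in $H/K$, so $H$ has no complex place above $\infty_1$ and hence contains no primitive root of unity of order $>2$ (that would force $\infty_1$ to ramify); thus the number $W$ of roots of unity in $H$ equals $2$. Second, since $\infty_1$ is unramified, its local component in every Hecke character of modulus $\mm\infty_2$ is trivial, forcing every such $L$-function to vanish at $s=0$; hence $\zeta_{\mm\infty_2}(0,\A)=0$ for all $\A$, and likewise for every divisor of $\mm$.

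\textbf{Part (1).} I would specialize Conjecture~\ref{conj:StarkTateFull} to $H = H_{\mm\infty_2}^{\OO_K}$ and $S = \{\pp : \pp\mid\mm\}\cup\{\infty_1,\infty_2\}$, taking as the completely split place $\pp = \infty_1$ (split because unramified, with $[H:K]$ real places above it) and $T = S\setminus\{\infty_1\}$. Since $\mm\neq\OO_K$, $T$ contains a finite prime together with $\infty_2$, so $\abs{T}\geq 2$ and $U_{S,H}^T$ is the group of $S$-units of $H$ with trivial absolute value at every place not above $\infty_1$ --- by the product formula exactly the units relevant to $\Stark(K,\mm)$. Artin reciprocity identifies $\zeta_S^{\Gal}(g,s)$ with $\zeta_{\mm\infty_2}(s,\Art^{-1}(g))$, the coprimality-to-$S_{\rm fin}$ condition being coprimality to $\mm$. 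Because $W=2$, the relation $\log\abs{g(\su)}_{\mathfrak{P}} = -W\zeta_S'(g,0)$ becomes, after passing to the real branch at the distinguished place and absorbing a sign into $\OO_H^\times$, the identity $\su_\A = \exp(-2\zeta_{\mm\infty_2}'(0,\A))$; putting $\su_\A := \pm\Art(\A)(\su)$ gives the equivariance $(\Art(\B))(\su_\A) = \su_{\A\B}$ from abelianness of $\Gal(H/K)$, the conditions $\abs{g(\su_\A)}=1$ for $g\notin\Gal(H/K)$ from the behavior of $U_{S,H}^T$ at the places above $\infty_2$, and Tate's clause ``$H(\su^{1/W})/K$ abelian'' is literally ``$H(\su_\A^{1/2})/K$ abelian.'' The converse reverses these steps.

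\textbf{Part (2).} I would first decompose the monoid: a class $\A\in\Clt_{\mm\infty_2}(\OO_K)$ of content ideal $\dd\mid\mm$ consists of the ideals $\dd\bb$ with $\bb$ ranging over a single primitive class $\bar\A$ of $\Cl_{\mm\dd^{-1}\infty_2}(\OO_K)$, so $\zeta_{\mm\infty_2}(s,\A) = \Nm(\dd)^{-s}\zeta_{\mm\dd^{-1}\infty_2}(s,\bar\A)$ (and similarly for $\sR\A$, whose content is again $\dd$); by the vanishing of these partial zeta values at $s=0$, differentiation gives $Z_{\mm\infty_2}'(0,\A) = Z_{\mm\dd^{-1}\infty_2}'(0,\bar\A)$. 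Thus $\MS(\OO_K,\mm)$ is equivalent to the conjunction over $\mm'\mid\mm$ of $\MS(\OO_K,\mm')$ restricted to primitive classes (with $\mm' = \OO_K$ included, read there as Tate's conjecture for $H_{\infty_2}^{\OO_K}/K$ in the $\abs{T}=1$ regime). It then remains to identify the primitive part of $\MS(\OO_K,\mm')$ with $\ST(K,\mm')$. From $\exp(-Z_{\mm'\infty_2}'(0,\A)) = \exp(-\zeta_{\mm'\infty_2}'(0,\A))\exp(\zeta_{\mm'\infty_2}'(0,\sR\A))$ and $\su_\A = \exp(-2\zeta_{\mm'\infty_2}'(0,\A))$ one obtains $\exp(-Z_{\mm'\infty_2}'(0,\A))^2 = \su_\A/(\Art(\sR))(\su_\A)$, where $\Art(\sR)$ is an involution since $\sR^2=1$ in $\Cl_{\mm'\infty_2}(\OO_K)$. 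Assuming $\ST(K,\mm')$, let $L$ be obtained from $H' = H_{\mm'\infty_2}^{\OO_K}$ by adjoining the square roots $\su_\A^{1/2}$; then $L/K$ is abelian, so a lift $\tilde\rho\in\Gal(L/K)$ of $\rho = \Art(\sR)$ commutes with $\Gal(L/H')$, and comparing the characters $\Gal(L/H')\to\mu_2$ attached to $\su_\A^{1/2}$ and to $\tilde\rho(\su_\A^{1/2}) = \pm\su_{\sR\A}^{1/2}$ shows they coincide; hence $\su_\A^{1/2}\su_{\sR\A}^{-1/2}$ is fixed by $\Gal(L/H')$, lies in $\OO_{H'}^\times$, and for the correct sign equals $\exp(-Z_{\mm'\infty_2}'(0,\A))$, which then inherits the required equivariance, reality, and abelian-square-root properties. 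The content reduction upgrades this to $\MS(\OO_K,\mm)$; the converse runs the same identities backwards.

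\textbf{Main obstacle.} I expect the delicate point to be this last identification, and especially the converse direction of (2): reconstructing the individual Stark units $\su_\A = \exp(-2\zeta'(0,\A))$ --- and handling the boundary modulus $\mm' = \OO_K$ --- from the differenced units, which a priori only pin down the ratios $\su_\A/(\Art(\sR))(\su_\A)$, together with the careful tracking of square roots and of $\mu_W$ throughout. This is exactly where the special structure of the case at hand ($\zeta_{\mm\infty_2}(0,\A)=0$, $W=2$, and the order-two involution $\Art(\sR)$) is used essentially; without it --- for non-maximal orders, or over other base fields --- the Monoid Stark Conjecture is genuinely stronger than Stark--Tate.
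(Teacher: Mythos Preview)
Your approach to part (1) is correct and is the standard unpacking of Tate's formulation in the real quadratic Archimedean case; the paper simply cites \cite[Prop.~6.10]{Kopp2020d}, which does essentially this.

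For part (2), your content-ideal decomposition of $\Clt_{\mm\infty_2}(\OO_K)$ and the resulting identity $Z_{\mm\infty_2}'(0,\A)=Z_{\mm\dd^{-1}\infty_2}'(0,\bar\A)$ are correct and match the intended reduction. However, your subsequent Kummer-theoretic argument---adjoining $\su_\A^{1/2}$, lifting $\Art(\sR)$, and comparing characters to force $\su_\A^{1/2}\su_{\sR\A}^{-1/2}\in H'$---is more work than needed for the forward direction and does not close the converse. The missing input is the identity (due to Tangedal, used by the paper in the proof of \Cref{thm:field0}) that for primitive $\A$ either $\sR$ is trivial, in which case $Z'(0,\A)=0$, or $\zeta'_{\mm'\infty_2}(0,\sR\A)=-\zeta'_{\mm'\infty_2}(0,\A)$, in which case $Z'_{\mm'\infty_2}(0,\A)=2\zeta'_{\mm'\infty_2}(0,\A)$ and hence $\exp(-Z'(0,\A))=\su_\A$ on the nose. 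With this in hand, both directions of (2) become immediate: the monoid Stark unit at each primitive level \emph{is} the ordinary Stark unit (or $1$), so $\MS(\OO_K,\mm)$ and $\bigwedge_{\mm'|\mm}\ST(K,\mm')$ assert the same thing class by class.

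Without Tangedal's identity, your converse direction has a genuine gap: from $\exp(-Z'(0,\A))\in\OO_{H'}^\times$ you only recover the ratio $\su_\A/\su_{\sR\A}$, not $\su_\A$ itself, and ``running the identities backwards'' does not separate the two factors. You correctly flag this as the main obstacle and gesture at the order-two involution, but the specific sign relation $\zeta'(0,\sR\A)=-\zeta'(0,\A)$ is what actually resolves it.
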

\begin{proof}
    Claim (1) is shown in \cite[Prop.~6.10]{Kopp2020d}.
    Claim (2) may be seen to follow from the proof of \cite[Prop.~6.11]{Kopp2020d}.
\end{proof}

The units $g(\su)$ in \Cref{conj:StarkTateFull} and the units $\su_\A$ in \Cref{conj:stark}, \Cref{conj:stc}, and \Cref{conj:msc} (at least for $\OO=\OO_K$) are generally called ``Stark units'' and are equal when the conjectures align, except in some trivial cases. Stark's original formulation of his conjecture in the rank 1 totally real case involved differenced ray class partial zeta functions (albeit only for primitive ray classes of the maximal order), denoted as $\zeta(s,\ccc)$ in \cite{Stark3}, whereas most modern references follow Tate and state Stark's conjectures using Galois groups. We call the units $\su_\A$ \textit{Stark units} (for $\OO=\OO_K$) or \textit{generalized Stark units} (for non-maximal orders), without further comment.

\subsection{Eta-multipliers and theta-multipliers}

The relation between zeta functions and the \SFKFull{} modular cocycle involves a nontrivial root of unity factor that is best described as a value of a character $\psi^{-2}\chi_\r^{-1}$ at an element of congruence subgroup of $\SLtwo{\Z}$, with the characters $\psi$ and $\chi$ arising from multipliers of half-integral weight modular forms. We describe these characters here in terms of their relationships to modular forms.

Half-integral weight modular forms are best understood as modular forms for the \textit{metaplectic group}, which is a double cover of $\SL_2$. The real metaplectic group is defined to be
\begin{equation}
    \Mp_2(\R) = \{(M,\ep) : M \in \SL_2(\R), \ep \mbox{ a continuous function on $\HH$ with } \ep(\tau)^2=j_M(\tau)\},
\end{equation}
having multiplication $(M_1,\ep_1)(M_2,\ep_2) = (M_1M_2,\ep_3)$ with $\ep_3(\tau) = \ep_1(M_2\cdot\tau)\ep_2(\tau)$. The integer metaplectic group is defined to be $\Mp_2(\Z) = \{(M,\ep) \in \Mp_2(\R) : M \in \SL_2(\Z)\}$.

The \textit{Dedekind eta function} is the function 
\begin{equation}
\label{eq:DedekindEta}
    \eta(\tau) = \prod_{k=1}^\infty (1-e^{2\pi i k\tau})
\end{equation}
defined for $\tau \in \HH$. For $M \in \SLtwo{\Z}$, it transforms under the fractional linear transformation $\tau \mapsto M\cdot\tau$ according to the equation
\begin{equation}\label{eq:etatrans}
    \eta(M\cdot\tau) = \psi(M,\ep)\ep(\tau)\eta(\tau),
\end{equation}
where $(M,\ep) \in \Mp_2(\Z)$.

An explicit formula for $\psi$ is given by \cite[Thm.~2.4]{Kopp2020d}. Another explicit formula, in terms of the Rademacher function, is given as \Cref{prop:rademacher}.

The \textit{Jacobi theta function with characteristics} $\r = \smcoltwo{r_1}{r_2} \in \Q^2$ is
\begin{equation}\label{eq:jacobitheta}
\theta_\r(\tau) 
= \sum_{n=-\infty}^\infty e^{2\pi i\left(\foh \left(n+r_2+\foh\right)^2\tau + \left(n+r_2+\foh\right)\left(-r_1+\foh\right)\right)}.
\end{equation}
Under the fractional linear transformation action of $(M,\ep) \in \Mp_2(\Z)$ such that $M \in \Gamma_\r$, this theta function transforms by
\begin{equation}\label{eq:thetatrans}
\theta_\r(M\cdot\tau) = \psi\!\left(M,\ep\right)^3\chi_\r(M)\ep(\tau)\theta_\r(\tau).
\end{equation}
The character $\chi_\r : \Gamma_\r \to \C^\times$ is given by the formula
\begin{align}\label{eq:chidefn}
\chi_\r(M) &:= (-1)^{1+\delta_{\!M\r,\r}^{(2)}}e^{-\pi i\symp{M\r}{\r}},
\end{align}
where $\delta_{\!M\r,\r}^{(2)}$ is defined by \Cref{dfn:modulardeltafunction}.
For proofs of \eqref{eq:thetatrans} and \eqref{eq:thetatrans}, see \cite[Thm.~2.14 and Lem.~2.15]{Kopp2020d}.

In the sequel, we will often want to specify a standard choice of square root of $j_M(\tau)$ rather than using the metaplectic group. Define the choice of logarithm $(\log j_{M})(\tau) := \log(j_M(\tau))$ according to the principal branch of the logarithm, with $\log(1)=0$ and a branch cut along the negative real axis, along with the additional values $(\log j_{M})(\tau) = \log{\abs{j_M(\tau)}} + \pi i$ when $j_{M}(\tau)$ is on the negative real axis. Define the principal branch of the square root by $\sqrt{j_{M}(\tau)} := \exp\!\left(\foh (\log j_{M})(\tau)\right)$.

This characters $\psi$ and $\chi_\r$ are closely related to the SF phase $\SFPhase{\p}{t}$, as defined in \Cref{dfn:SFKPhase}. The exact relationship is proven in \Cref{ssec:rademacher} and \Cref{sbsc:phaseprops}.

\subsection{The functional equations of the Shintani--Faddeev modular cocycle}
\label{ssc:ShintaniFaddeevFunctionalEquations}%
We now present several key identities satisfied by the Shintani--Faddeev modular cocycle $\sfc{\r}{M}{\tau}$, as defined in \Cref{def:shin}. Most of these results are proven in \cite{Kopp2020d}.

The function $\sfc{\r}{M}{\tau}$ satisfies a particular symmetry under the involution $\r \mapsto -\r$. This symmetry derives from the modular properties of $\theta_\r(\tau)$ and $\eta(\tau)$ together with the Jacobi triple product identity.
\begin{thm}\label{thm:funchar}
Let $\r \in \Q^2$, $\g \in \Gamma_\r$, and $\tau \in \DD_{\!\g}$.
We have the identity
\begin{align}
\sfc{\r}{\g}{\tau}\sfc{-\r}{\g}{\tau} 
&= \psi^2(M)\chi_\r(M)\,e^{2\pi i\left(\tfrac{r_2^2}{2}+\tfrac{1}{12}\right)(\tau - M\cdot\tau)} \cdot 
\frac{e^{\pi i\left(r_2(M\cdot\tau)-r_1\right)}-e^{\pi i\left(-r_2(M\cdot\tau)+r_1\right)}}{e^{\pi i\left(r_2\tau-r_1\right)}-e^{\pi i\left(-r_2\tau+r_1\right)}}.
\label{eq:sfmcidentity}
\end{align}
\end{thm}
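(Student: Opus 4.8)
The plan is to establish the identity first for $\tau \in \HH$, where the ``coboundary'' formula \eqref{eq:coboundary} is available, and then to propagate it to all of $\DD_{\!\g}$ by meromorphic continuation. Throughout I will assume $\r \notin \Z^2$; the degenerate case $\r \in \Z^2$ (where $\varpi(\pt{\r}{\tau},\tau)$ can vanish and \eqref{eq:coboundary} is unavailable) I would dispatch separately, either by a continuity argument in $\r$ or directly, observing that for $\r \in \Z^2$ the function $\theta_\r$ is an elementary multiple of $\theta_{\zero}$ and the claimed formula collapses to the $\eta$-transformation law \eqref{eq:etatrans}.

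For $\tau \in \HH$, using \eqref{eq:coboundary} together with $\pt{-\r}{\tau} = -\pt{\r}{\tau}$ and $\Gamma_{-\r} = \Gamma_\r$, the left-hand side becomes
\[
\sfc{\r}{\g}{\tau}\,\sfc{-\r}{\g}{\tau}
= \frac{\varpi(\pt{\r}{\g\cdot\tau},\g\cdot\tau)\,\varpi(-\pt{\r}{\g\cdot\tau},\g\cdot\tau)}{\varpi(\pt{\r}{\tau},\tau)\,\varpi(-\pt{\r}{\tau},\tau)},
\]
so everything reduces to evaluating the symmetric product $\varpi(z,\tau)\,\varpi(-z,\tau)$ at $z = \pt{\r}{\tau} = r_2\tau - r_1$. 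Setting $q = e^{2\pi i \tau}$ and $w = e^{2\pi i z}$ and invoking the Jacobi triple product identity in the normalization
\[
\prod_{m \ge 1}(1-q^m)(1-q^{m-1}w)(1-q^m w^{-1}) = \sum_{n \in \Z}(-1)^n q^{n(n-1)/2} w^n
\]
— chosen precisely so that the $w$- and $w^{-1}$-products are $\varpi(z,\tau)$ and $(1-w^{-1})^{-1}\varpi(-z,\tau)$ — and then reindexing $n \mapsto n+1$ to match the series \eqref{eq:jacobitheta} for $\theta_\r$, together with $\prod_{m\ge1}(1-q^m) = e^{-\pi i\tau/12}\eta(\tau)$, yields an identity of the form
\[
\varpi(\pt{\r}{\tau},\tau)\,\varpi(-\pt{\r}{\tau},\tau) = P_\r(\tau)\,\frac{\theta_\r(\tau)}{\eta(\tau)},
\]
where $P_\r(\tau)$ is a completely explicit product of exponentials whose only non-exponential factor is $e^{\pi i(r_2\tau-r_1)} - e^{-\pi i(r_2\tau-r_1)}$, arising from the $1-w^{-1}$.

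Next I would evaluate this shape at $\g\cdot\tau$ and at $\tau$ and take the quotient. The $\theta/\eta$ contribution transforms cleanly: the transformation laws \eqref{eq:etatrans} and \eqref{eq:thetatrans} give
\[
\frac{\theta_\r(\g\cdot\tau)/\eta(\g\cdot\tau)}{\theta_\r(\tau)/\eta(\tau)}
= \frac{\psi(\g,\ep)^3\chi_\r(\g)\,\ep(\tau)}{\psi(\g,\ep)\,\ep(\tau)} = \psi^2(\g)\,\chi_\r(\g),
\]
the metaplectic $\ep$-ambiguity cancelling since only $\psi^2$ survives. The remaining factor $P_\r(\g\cdot\tau)/P_\r(\tau)$ then splits into its non-exponential part, which is exactly the quotient $\bigl(e^{\pi i(r_2(\g\cdot\tau)-r_1)} - e^{\pi i(-r_2(\g\cdot\tau)+r_1)}\bigr)/\bigl(e^{\pi i(r_2\tau-r_1)} - e^{\pi i(-r_2\tau+r_1)}\bigr)$ in the statement, and an exponential part $e^{2\pi i c(\tau - \g\cdot\tau)}$ with $c = \tfrac{r_2}{2} - \tfrac{1}{24} + \tfrac12(r_2 - \tfrac12)^2 = \tfrac{r_2^2}{2} + \tfrac{1}{12}$. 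Multiplying the two pieces gives the asserted formula for $\tau \in \HH$; since both sides are meromorphic on $\DD_{\!\g} \supseteq \HH$, the identity theorem extends it to all of $\DD_{\!\g}$.

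The only delicate part is the bookkeeping in the middle: selecting the triple-product normalization so the half-integral characteristics of $\theta_\r$ in \eqref{eq:jacobitheta} align with $\varpi$, and then tracking $P_\r$ carefully enough that the exponent really does collapse to $\tfrac{r_2^2}{2} + \tfrac{1}{12}$ (and not some neighboring value off by a sign or a power of $q^{1/24}$). The coboundary reduction, the use of the multiplier transformation laws, and the final meromorphic continuation are all routine.
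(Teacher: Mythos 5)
Your proposal is correct, and it follows essentially the route the paper indicates: the paper defers the proof to \cite[Thm.~4.32]{Kopp2020d} but states explicitly that the identity ``derives from the modular properties of $\theta_\r(\tau)$ and $\eta(\tau)$ together with the Jacobi triple product identity,'' which is precisely your argument (coboundary reduction on $\HH$, triple product to convert $\varpi(z,\tau)\varpi(-z,\tau)$ into $\theta_\r/\eta$ times an explicit elementary factor, the multiplier laws \eqref{eq:etatrans} and \eqref{eq:thetatrans} to produce $\psi^2\chi_\r$, and meromorphic continuation to $\DD_{\!\g}$). The exponent bookkeeping checks out: the $\tau$-coefficient $-\pi i r_2 - \pi i\left(r_2-\tfrac12\right)^2 + \tfrac{\pi i}{12} = -2\pi i\left(\tfrac{r_2^2}{2}+\tfrac{1}{12}\right)$ and the $n$-independent constants cancel in the quotient, so only the flagged $\r\in\Z^2$ edge case needs the separate treatment you describe.
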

\begin{proof}
    See \cite[Thm.\ 4.32]{Kopp2020d}.
\end{proof}

An important special case is when $\tau$ is a fixed point of $M$ under the fractional linear transformation, in which case the above transformation identity reduces to the following. 
\begin{cor}\label{cor:funchar}
When $\tau = \qrt$ satisfying $M\cdot\qrt = \qrt$, \eqref{eq:sfmcidentity} reduces to
\begin{align}
\sfc{\r}{\g}{\qrt}\sfc{-\r}{\g}{\qrt} 
&= \psi^2(M)\chi_\r(M).
\end{align}
\end{cor}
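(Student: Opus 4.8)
The plan is to read off \Cref{cor:funchar} as an immediate specialization of \Cref{thm:funchar}. That theorem gives the transformation identity \eqref{eq:sfmcidentity} for all $\tau \in \DD_{\!\g}$, so I would simply set $\tau = \qrt$, invoke the hypothesis $M\cdot\qrt = \qrt$, and check that the two ``correction factors'' on the right-hand side of \eqref{eq:sfmcidentity} --- the exponential $e^{2\pi i(r_2^2/2 + 1/12)(\tau - M\cdot\tau)}$ and the ratio of the two theta-type differences --- both collapse to $1$. No new analytic input is needed; the entire substance sits inside \Cref{thm:funchar}.

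Concretely, the steps are: (i) note that $\qrt$ is a legitimate evaluation point, i.e.\ $\qrt \in \DD_{\!\g}$ and $\qrt$ is not a pole of $\sfc{\pm\r}{\g}{\cdot}$, so that both sides of \eqref{eq:sfmcidentity} are defined there; (ii) observe that $M\cdot\qrt = \qrt$ forces $\tau - M\cdot\tau = 0$, so that $e^{2\pi i(r_2^2/2 + 1/12)(\tau - M\cdot\tau)} = 1$; (iii) observe that, with $M\cdot\tau = \tau = \qrt$, the numerator $e^{\pi i(r_2(M\cdot\tau)-r_1)}-e^{\pi i(-r_2(M\cdot\tau)+r_1)}$ and the denominator $e^{\pi i(r_2\tau-r_1)}-e^{\pi i(-r_2\tau+r_1)}$ of the ratio become the identical expression $2i\sin\!\bigl(\pi(r_2\qrt-r_1)\bigr)$, so the ratio is $1$; and (iv) conclude that \eqref{eq:sfmcidentity} reduces to $\sfc{\r}{\g}{\qrt}\sfc{-\r}{\g}{\qrt} = \psi^2(M)\chi_\r(M)$, as claimed.

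The only point needing a word of care --- and it is the ``hard part'' only in the weakest sense --- is that step (iii) requires the common value $2i\sin(\pi(r_2\qrt-r_1))$ to be nonzero, so that the ratio is genuinely $1$ rather than an indeterminate $0/0$. I would justify this from the fact that $\qrt$, being a fixed point of the hyperbolic $M$, is a quadratic irrational (as it is in all applications): if $r_2 \neq 0$ then $r_2\qrt - r_1$ is irrational, hence $\notin \Z$, and in the regime of interest, where $\r \notin \Z^2$, the remaining case $r_2 = 0$ forces $r_1 \notin \Z$. The sign convention that places $\qrt$ in $\DD_{\!\g}$ rather than in $\DD_{\!\g^{-1}}$ --- the relevant issue when $M$ is one of the stabilizers $A_t$ of \Cref{dfn:AssociatedStabilizers} --- is exactly the content of \Cref{thm:ghostWellDefinedCondition}. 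Beyond this bookkeeping there is no obstacle: all the work is already done in \Cref{thm:funchar}, whose proof in \cite{Kopp2020d} rests on the modular transformation laws for $\theta_\r$ and $\eta$ together with the Jacobi triple product identity.
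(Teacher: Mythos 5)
Your proposal is correct and is exactly the specialization the paper intends: the paper records no separate proof of \Cref{cor:funchar}, treating it as the immediate reduction of \eqref{eq:sfmcidentity} at $\tau=\qrt$ obtained by noting that $\tau-M\cdot\tau=0$ kills the exponential and that the numerator and denominator of the remaining ratio coincide. Your additional check that the common factor $2i\sin\!\bigl(\pi(r_2\qrt-r_1)\bigr)$ is nonzero (using that $\qrt$ is a quadratic irrational and $\r\notin\Z^2$ in the cases of interest) is a point the paper leaves implicit, and it is handled correctly.
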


\begin{lem}\label{lm:sfam1sfaeq1}
    For all $\mbf{r}\in\mbb{Q}^2$, $M\in \Gamma_\r$, and $\tau\in \DD_{M^{-1}}$,
    \eag{\label{eq:cocyclerelinv}
    \sfc{\mbf{r}}{M^{-1}}{\tau} 
    \sfc{\mbf{r}}{M\vpu{-1}}{M^{-1}\tau} &=1.
    }
\end{lem}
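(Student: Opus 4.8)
The plan is to deduce \eqref{eq:cocyclerelinv} directly from the multiplicative group-cohomological cocycle relation $\sfc{\r}{MN}{\tau} = \sfc{\r}{M}{N\cdot\tau}\,\sfc{\r}{N}{\tau}$ (the identity displayed just after \eqref{eq:coboundary}, valid for $\tau$ in the appropriate common domain by meromorphic continuation from $\HH$, see \Cref{sec:SFJCocycleAppendix}), specialized to $N = M^{-1}$, together with the fact that the cocycle is trivial at the identity matrix.

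First I would record the two elementary inputs this rests on. (i) $\Gamma_\r$ is genuinely a group: closure under products follows from the identity $(AB-I)\r = (A-I)\r + (A-I)\bigl((B-I)\r\bigr)$ and closure under inverses from $(A^{-1}-I)\r = -A^{-1}(A-I)\r$, each of which lies in $\Z^2$ whenever $(A-I)\r,\,(B-I)\r \in \Z^2$; hence both $I$ and $M^{-1}$ lie in $\Gamma_\r$, and the cocycle relation may be applied with $N = M^{-1}$. (ii) $\sfc{\r}{I}{\tau} \equiv 1$: from \Cref{df:shinfadjacocycle} one has $\sfjM{I}{z}{\tau} = \qp(z,\tau)/\qp(z,\tau) = 1$ on $\C\times\HH$, so $\sigma_I \equiv 1$ on all of $\C \times \DD_I = \C\times\C$ by meromorphic continuation; and since $(I-I)\r = \zero$, the normalizing denominator in \Cref{def:shin} is $\qp_0\!\bigl(\tfrac{\pt{\r}{\tau}}{j_I(\tau)},\tau\bigr) = 1$, so $\sfc{\r}{I}{\tau} = \sfjM{I}{\pt{\r}{\tau}}{\tau} = 1$.

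Next I would check the domain compatibility needed to invoke the cocycle relation with $N = M^{-1}$ at a given $\tau \in \DD_{M^{-1}}$: one needs $M^{-1}\cdot\tau \in \DD_M$, so that $\sfc{\r}{M}{M^{-1}\cdot\tau}$ is defined (the other two factors, $\sfc{\r}{I}{\tau}$ and $\sfc{\r}{M^{-1}}{\tau}$, being automatically fine since $\DD_I = \C$). This uses the cocycle property of the automorphy factor, $j_{MN}(\tau) = j_M(N\cdot\tau)\,j_N(\tau)$: with $N = M^{-1}$ it gives $j_M(M^{-1}\cdot\tau) = 1/j_{M^{-1}}(\tau)$. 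If $\tau \notin \R$, then $M^{-1}\cdot\tau \notin \R$ and so lies in $\DD_M$ trivially; if $\tau \in \R \cap \DD_{M^{-1}}$, then $j_{M^{-1}}(\tau) > 0$ (as $\det M^{-1} = 1$), hence $j_M(M^{-1}\cdot\tau) = 1/j_{M^{-1}}(\tau) > 0$, and since $M^{-1}\cdot\tau$ is then a finite real number it again lies in $\DD_M$.

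Finally, applying the cocycle relation with $N = M^{-1}$ on the domain $\DD_{M^{-1}}$ yields $1 = \sfc{\r}{I}{\tau} = \sfc{\r}{M}{M^{-1}\cdot\tau}\,\sfc{\r}{M^{-1}}{\tau}$, which (the two factors commuting in the field of meromorphic functions) is exactly \eqref{eq:cocyclerelinv}. I do not expect a genuine obstacle here: the only point requiring care is the domain bookkeeping in the previous paragraph, and the one substantive ingredient — that the cocycle relation persists off $\HH$ by meromorphic continuation — is already established in \Cref{sec:SFJCocycleAppendix}.
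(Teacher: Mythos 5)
Your proof is correct and follows essentially the same route as the paper: apply the cocycle relation to the factorization $I = MM^{-1}$ and use $\sfc{\r}{I}{\tau}=1$. The extra checks you supply (that $\Gamma_\r$ is closed under inverses, that $\sigma_I\equiv 1$, and that $\tau\in\DD_{M^{-1}}$ forces $M^{-1}\cdot\tau\in\DD_M$, cf.\ \Cref{lm:domainproperties}) are details the paper leaves implicit, but the argument is the same.
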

\begin{proof}
    Write the identity matrix $I = M^{-1}M$. The cocycle relation implies that
    \begin{equation}
        \sfc{\r}{I}{\tau} = \sfc{\r}{M^{-1}}{\tau} \sfc{\r}{M}{M^{-1}\cdot\tau}.
    \end{equation}
    Moreover, $\sfc{\r}{I}{\tau} = 1$, proving \eqref{eq:cocyclerelinv}.
\end{proof}

We also give some further properties and identities that are useful. Recall that the function $j_M$ was defined by $j_M(\tau) = \mc\tau+\md$ for $M = \smmattwo{\ma}{\mb}{\mc}{\md}$.

\begin{lem}\label{lm:shinperiodicity}
Let $\r \in \Q^2 \setminus \Z^2$, $M \in \Gamma_\r$, and $\qrt \in \DD_M$ either of the fixed points of $M$.
For all $\mbf{s}\in \mbb{Z}^2$,
\eag{
\sfc{\mbf{r}+\mbf{s}}{M}{\qrt} = \sfc{\mbf{r}}{M}{\qrt}.
}
\end{lem}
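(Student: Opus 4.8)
The plan is to promote the ``coboundary'' formula \eqref{eq:coboundary} to an explicit functional equation relating $\sfc{\mbf{r}+\mbf{s}}{M}{\tau}$ and $\sfc{\mbf{r}}{M}{\tau}$ as meromorphic functions of $\tau$ on all of $\DD_M$, and then to specialize that equation to the fixed point $\qrt$, where the correction factors will cancel. As a preliminary I would note that $\Gamma_{\mbf{r}+\mbf{s}} = \Gamma_{\mbf{r}}$ (because $(A-I)\mbf{s} \in \mbb{Z}^2$ for every $A \in \SLtwo{\mbb{Z}}$), so that $M \in \Gamma_{\mbf{r}+\mbf{s}}$ and $\sfc{\mbf{r}+\mbf{s}}{M}{\cdot}$ is defined, and that $\mbf{r}+\mbf{s} \notin \mbb{Z}^2$.

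First I would work on the upper half plane. Writing $\pt{\mbf{r}+\mbf{s}}{\tau} = \pt{\mbf{r}}{\tau} + s_2\tau - s_1$ (where $s_1,s_2$ are the entries of $\mbf{s}$) and using the elementary (quasi-)periodicity of the variant $q$-Pochhammer symbol---namely $\varpi(z+n,\tau) = \varpi(z,\tau)$ for $n\in\mbb{Z}$ and $\varpi(z+m\tau,\tau) = \qp_m(z,\tau)^{-1}\varpi(z,\tau)$ for $m\in\mbb{Z}$, both immediate from \Cref{dfn:variantqPochhammer}---applied separately to the numerator (with modulus $M\cdot\tau$) and denominator of \eqref{eq:coboundary}, I obtain
\[
 \sfc{\mbf{r}+\mbf{s}}{M}{\tau}\;\qp_{s_2}\!\bigl(\pt{\mbf{r}}{M\cdot\tau},\,M\cdot\tau\bigr) \;=\; \sfc{\mbf{r}}{M}{\tau}\;\qp_{s_2}\!\bigl(\pt{\mbf{r}}{\tau},\,\tau\bigr)
\]
for all $\tau \in \HH$. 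Both sides are meromorphic on the connected domain $\DD_M$ (the cocycles by the continuation established in \Cref{sec:SFJCocycleAppendix}, and the finite products $\qp_{s_2}$ because $M\cdot\tau$ is holomorphic on $\DD_M$), and they agree on the open subset $\HH$, so by the identity theorem they agree on all of $\DD_M$. Finally I evaluate at $\tau = \qrt$: since $M\cdot\qrt = \qrt$ the two correction factors collapse to a single number $c = \qp_{s_2}\!\bigl(\pt{\mbf{r}}{\qrt},\qrt\bigr)$, and $c \neq 0$ because each factor $1 - e^{2\pi i\left((r_2+j)\qrt - r_1\right)}$ is nonzero (the exponent lies outside $\mbb{Z}$: it is irrational when $r_2+j\neq 0$, while $r_2+j = 0$ forces $r_2\in\mbb{Z}$ and hence $r_1\notin\mbb{Z}$ since $\mbf{r}\notin\mbb{Z}^2$). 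Cancelling $c$ yields $\sfc{\mbf{r}+\mbf{s}}{M}{\qrt} = \sfc{\mbf{r}}{M}{\qrt}$.

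The one step requiring care is the extension from $\HH$ to $\DD_M$: I would verify that $\DD_M$ is connected (it is $\C$ with a single real ray deleted) and that $M\cdot\tau$ has at worst a pole at $\tau = -\md/\mc$, which is a point excluded from $\DD_M$, so that both sides of the displayed identity genuinely are meromorphic there and the identity theorem applies; this is exactly the point of defining $\DD_M$ to remove the real ray on which $j_M$ changes sign. Everything else is routine unwinding of the definitions. A fully self-contained alternative, avoiding \eqref{eq:coboundary} entirely, would be to derive the same functional equation directly from \Cref{def:shin} using the quasi-periodicity of the \SFKShort{} Jacobi cocycle $\sigma_M$ in its first variable; I expect this to be somewhat longer, so I would prefer the coboundary route.
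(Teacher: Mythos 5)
Your argument is correct. Note, however, that the paper does not prove this lemma in-house at all: its ``proof'' is a citation to \cite[Prop.~4.35]{Kopp2020d}. What you have written is therefore a genuinely self-contained replacement, and it is a sound one. The chain
\[
\sfc{\mbf{r}+\mbf{s}}{M}{\tau}\,\qp_{s_2}\!\bigl(\pt{\mbf{r}}{M\cdot\tau},M\cdot\tau\bigr)=\sfc{\mbf{r}}{M}{\tau}\,\qp_{s_2}\!\bigl(\pt{\mbf{r}}{\tau},\tau\bigr)
\]
does follow on $\HH$ from \eqref{eq:coboundary} together with $\varpi(z+n,\tau)=\varpi(z,\tau)$ and $\varpi(z+m\tau,\tau)=\qp_m(z,\tau)^{-1}\varpi(z,\tau)$ (both checks against \Cref{dfn:variantqPochhammer} go through for $m$ of either sign), your use of $\Gamma_{\mbf{r}+\mbf{s}}=\Gamma_{\mbf{r}}$ and $\mbf{r}+\mbf{s}\notin\Z^2$ is the right bookkeeping, and the specialization at the fixed point collapses both correction factors to the same finite nonzero constant, which your nonvanishing argument handles. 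What your route buys over the paper's is exactly that it stays inside the machinery the paper has already set up (the coboundary presentation and the meromorphic continuation of \Cref{sec:SFJCocycleAppendix}) rather than importing an external proposition.

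Two small points to tighten, neither fatal. First, your parenthetical ``$\DD_M$ is connected (it is $\C$ with a single real ray deleted)'' is only true when $\mc\neq 0$; when $\mc=0$ and $\md<0$ one has $\DD_M=\C\setminus\R$, which is disconnected and would block the identity-theorem step. You should observe that this case cannot occur under the hypotheses: an element of $\SLtwo{\Z}$ with $\mc=0$ is $\pm T^k$ and has no pair of fixed points (and for $M=\pm I$ the statement is trivial or degenerate), so the $M$ to which the lemma applies always has $\mc\neq 0$. Second, your nonvanishing argument for the factors of $\qp_{s_2}\!\bigl(\pt{\mbf{r}}{\qrt},\qrt\bigr)$ tacitly assumes $\qrt$ is a real irrational; this is automatic for hyperbolic $M\in\SLtwo{\Z}$ (the fixed points are real quadratic irrationals since $(\Tr M)^2-4$ is not a square when $\abs{\Tr M}>2$), and if one insists on also covering elliptic $M$ the same conclusion follows even more easily because then $\abs{e^{2\pi i((r_2+j)\qrt-r_1)}}\neq 1$ unless $r_2+j=0$. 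With those two sentences added, the proof is complete.
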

\begin{proof}
    See \cite[Prop.~4.35]{Kopp2020d}.
\end{proof}

\begin{lem}\label{lm:shinatzero}
    Let $\qrt$ be either of the fixed points of $M\in \Gamma(d)$.
    Then%
\eag{
\sfc{\r}{M}{\qrt} = 
\begin{cases}
\psi(M,\sqrt{j_M})\sqrt{j_M(\qrt)} & \text{ if  $r_2>0$,} \\
\frac{\psi(M,\sqrt{j_M})}{\sqrt{j_M(\qrt)}} & \text{ if  $r_2 \leq 0$,}
\end{cases}
}
with $\sqrt{j_M}$ denoting the standard branch,
for all $\mbf{r} = \smcoltwo{r_1}{r_2} \in \mbb{Z}^2$.
\end{lem}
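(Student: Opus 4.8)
The plan is to evaluate the \SFKShort{} modular cocycle $\sfc{\r}{M}{\qrt}$ for $\mbf{r}\in\mbb{Z}^2$ directly from the defining formula \eqref{eq:shindf}, using the fact that when $\r\in\Z^2$ the Jacobi-cocycle argument $\pt{\r}{\qrt}=r_2\qrt-r_1$ and the $q$-Pochhammer correction factor in \eqref{eq:shindf} both simplify substantially. First I would unwind \Cref{def:shin}: for $\r\in\Z^2$ we have $((I-M)\mbf{r})_2\in\Z$, so the denominator $\qp_{((I-M)\mbf{r})_2}$ is a finite product (or reciprocal of one) of factors $1-e^{2\pi i(\bullet)}$, and the numerator $\sfjM{M}{\pt{\r}{\qrt}}{\qrt}$ is the \SFKShort{} Jacobi cocycle evaluated at a point where $e^{2\pi i z}=e^{2\pi i(r_2\qrt-r_1)}=e^{2\pi i r_2\qrt}$ since $r_1\in\Z$. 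The strategy is then to relate this to the Dedekind eta transformation \eqref{eq:etatrans}: the Jacobi cocycle $\sfjM{M}{z}{\tau}$ is built from ratios of variant $q$-Pochhammer symbols $\qp(z,\tau)=\prod_{j\ge0}(1-e^{2\pi i(z+j\tau)})$, and at $z\equiv 0$ modulo the lattice $\Z+\Z\tau$ these degenerate into (ratios of) the infinite products defining $\eta$, up to elementary exponential factors.

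Concretely, the key steps in order are: (i) reduce to the case $\r=\smcoltwo{0}{r_2}$ with $r_2\in\{0,1,\dots\}$ or $r_2\le 0$ using \Cref{lm:shinperiodicity}, which says $\sfc{\r}{M}{\qrt}$ depends only on $\r\bmod\Z^2$ — wait, for $\r\in\Z^2$ this would make everything equal to $\sfc{\zero}{M}{\qrt}$, so instead I must be careful: \Cref{lm:shinperiodicity} is stated for $\r\in\Q^2\setminus\Z^2$, so for integral $\r$ I work directly and the answer genuinely depends on $\sgn(r_2)$ through the limiting behavior of the product. (ii) Use the product formula \eqref{eq:sfjfrm} together with \eqref{eq:qpochinf}: when $z\to 0$ along integer shifts, $\qp(z,\tau)$ has a zero coming from the $j=0$ factor $1-e^{2\pi i z}$, and this is exactly cancelled or reinforced by the finite correction $\qp_{((I-M)\mbf{r})_2}$ in \eqref{eq:shindf}; tracking which factors survive gives a clean expression in terms of $\prod_{j\ge1}(1-q^j)=q^{-1/24}\eta(\tau)$. (iii) Substitute $\tau=\qrt$, $M\cdot\qrt=\qrt$, and apply \eqref{eq:etatrans} in the form $\eta(M\cdot\qrt)=\psi(M,\ep)\ep(\qrt)\eta(\qrt)$ with $\ep=\sqrt{j_M}$ the standard branch; at the fixed point $\eta(M\cdot\qrt)=\eta(\qrt)$, so the eta factors cancel and one is left with $\psi(M,\sqrt{j_M})\sqrt{j_M(\qrt)}^{\pm1}$, the sign of the exponent being dictated by whether the surviving product came from the ``numerator'' copy (scaled by $j_M(\qrt)$ in the argument) or the ``denominator'' copy. (iv) Verify that the sign $+1$ versus $-1$ in the exponent of $\sqrt{j_M(\qrt)}$ matches the stated dichotomy $r_2>0$ versus $r_2\le 0$ by carefully counting, for each sign of $r_2$, how the finite $q$-Pochhammer factor $\qp_{n}$ with $n=((I-M)\mbf{r})_2$ interacts with the shift $z\mapsto z/j_M(\tau)$ and the $q^{-1/24}$ prefactors of $\eta$.

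The main obstacle I anticipate is step (iv): getting the sign of the exponent of $\sqrt{j_M(\qrt)}$ exactly right, and in particular confirming the boundary case $r_2=0$ lands with the $-1$ exponent rather than being a genuinely degenerate ``$j_M^0$'' case. This requires precise bookkeeping of the powers of $e^{2\pi i\tau}$ (equivalently the $q^{-1/24}$ normalizations) on both sides of \eqref{eq:shindf} under the fractional linear transformation, and careful use of the meromorphic continuation of $\sigma_M$ to $\DD_M$ so that the formal product manipulations are justified at $\tau=\qrt$ (which may or may not lie in $\HH$). A secondary subtlety is ensuring the branch of $\sqrt{j_M(\qrt)}$ used in \eqref{eq:etatrans} is consistently the standard branch throughout, so that the appearance of $\psi(M,\sqrt{j_M})$ in the statement is literally the eta-multiplier evaluated at the metaplectic element $(M,\sqrt{j_M})$; this should follow from the definitions in \Cref{ssc:ShintaniFaddeevFunctionalEquations} but must be checked rather than assumed.
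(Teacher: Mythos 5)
Your plan targets the right identity and correctly locates the crux in step (iv), but as written it would stall there, because step (ii) mischaracterizes the cancellation mechanism. (Note also that the paper does not prove this lemma itself; it cites \cite[Thm.~4.38]{Kopp2020d}, so you are reconstructing an external argument.) For $\r\in\Z^2$ the finite symbol $\qp_{((I-M)\r)_2}$ in \eqref{eq:shindf} does not ``cancel or reinforce'' the zero of $\qp$; its only effect is to realign indices, so that for $\tau\in\HH$ one obtains
\begin{equation*}
\sfc{\r}{M}{\tau}
=\frac{\prod_{j\ge 0}\bigl(1-e^{2\pi i(r_2+j)(M\cdot\tau)}\bigr)}{\prod_{j\ge 0}\bigl(1-e^{2\pi i(r_2+j)\tau}\bigr)},
\end{equation*}
with numerator and denominator running over the \emph{same} exponents $r_2+j$. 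When $r_2>0$ no factor vanishes; the quotient equals $e^{2\pi i(\tau-M\cdot\tau)/24}\,\eta(M\cdot\tau)/\eta(\tau)$ times a finite quotient $\prod_{k=1}^{r_2-1}$ that tends to $1$ at the fixed point, and \eqref{eq:etatrans} gives $\psi(M,\sqrt{j_M})\sqrt{j_M(\qrt)}$. When $r_2\le 0$, however, the factor with $r_2+j=0$ is \emph{identically} zero in $\tau$ in both numerator and denominator, so no limit in $\tau$ resolves the expression; the $0/0$ must be resolved in the $z$-variable, which is exactly what the meromorphic continuation of $\sigma_M(z,\tau)$ in $z$ encodes. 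The offending denominator factor $1-e^{2\pi i(z+j\tau)}$ vanishes to first order in $z$ with derivative $-2\pi i$, while the matching numerator factor $1-e^{2\pi i(z/j_M(\tau)+j'(M\cdot\tau))}$ vanishes with derivative $-2\pi i/j_M(\tau)$, so their ratio contributes precisely $j_M(\qrt)^{-1}$. That single chain-rule factor is the entire content of the dichotomy in the statement, turning $\sqrt{j_M(\qrt)}$ into $1/\sqrt{j_M(\qrt)}$, and it also disposes of your worry about $r_2=0$: that case is not degenerate but behaves exactly like $r_2<0$. Your picture of ``which copy the surviving product came from'' does not produce this factor, so step (iv) has no mechanism to complete.

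The remaining concerns you raise are manageable along the lines you suggest: prove the identity for $\tau\in\HH$, where all products converge, then continue both sides to $\DD_M$, observing that $\qrt\in\DD_M$ (equivalently $\Tr(M)>0$, which the lemma implicitly needs for the left-hand side to be defined) forces $j_M(\qrt)>0$, so the standard branch of $\sqrt{j_M}$ is unambiguous and the multiplier appearing via \eqref{eq:etatrans} is literally $\psi(M,\sqrt{j_M})$. With the first-order $z$-cancellation supplied, the rest of your outline goes through.
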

\begin{proof}
    See \cite[Thm.~4.38]{Kopp2020d}.
\end{proof}

Finally, we give some elementary properties of the function $j_M$ and of the domains $\DD_M$ that we will want to use frequently.
\begin{lem}\label{lm:jMproperties}
    For all $M,N\in \GLtwo{\mbb{Z}}$ and $\tau\in \mbb{C}$,
    \eag{
     j_{MN}(\tau)&= j_M(N\cdot \tau)j_{N}(\tau).
     \label{eq:jMMPrimeCocycle}
    }
    For all $M\in \GLtwo{\mbb{Z}}$ and $\tau\in \mbb{C}$,
    \eag{
     j_M(M^{-1}\cdot \tau) &=\frac{1}{j_{M^{-1}}(\tau)}.
    }
\end{lem}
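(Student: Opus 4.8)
The plan is to obtain both identities from the standard cocycle property of the automorphy factor, exploiting the dictionary between the linear action of $\GLtwo{\mbb{Z}}$ on column vectors in $\mbb{C}^2$ and the fractional linear action on $\mbb{C}$ recorded in \eqref{eq:fractionalineartransform}. The one observation I would isolate at the outset is that, for $M=\smt{\ma & \mb\\ \mc &\md}$, one has $M\smcoltwo{\tau}{1}=\smcoltwo{\ma\tau+\mb}{\mc\tau+\md}=j_M(\tau)\,\smcoltwo{M\cdot\tau}{1}$ whenever $j_M(\tau)\neq 0$; this is immediate from the definitions.

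For the first identity I would compute $MN\smcoltwo{\tau}{1}$ in two ways. On one hand it equals $j_{MN}(\tau)\,\smcoltwo{(MN)\cdot\tau}{1}$; on the other, since $N\smcoltwo{\tau}{1}=j_N(\tau)\,\smcoltwo{N\cdot\tau}{1}$, applying $M$ gives $j_N(\tau)\,j_M(N\cdot\tau)\,\smcoltwo{M\cdot(N\cdot\tau)}{1}$. Comparing bottom entries yields $j_{MN}(\tau)=j_M(N\cdot\tau)\,j_N(\tau)$ (and comparing top entries recovers the familiar composition rule $(MN)\cdot\tau=M\cdot(N\cdot\tau)$, which I can cite separately if needed). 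For a fully elementary write-up I would instead just expand both sides in terms of the entries of $M$ and $N=\smt{\ma' & \mb'\\ \mc' &\md'}$: both reduce to $(\mc\ma'+\md\mc')\tau+(\mc\mb'+\md\md')$ after clearing the denominator $\mc'\tau+\md'$.

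The only subtlety is that $N\cdot\tau$ — and hence the vector identity applied to $N$ — is undefined when $j_N(\tau)=0$, i.e.\ at $\tau=-\md'/\mc'$. I would handle this either by observing that both sides of the asserted equation $j_{MN}(\tau)=j_M(N\cdot\tau)j_N(\tau)$ are honest polynomials in $\tau$, so that agreement away from a single point forces agreement everywhere, or simply by using the entrywise computation, which has no exceptional points. For the second identity I would apply the first with $N=M^{-1}$ — legitimate since $\Det(M)=\pm 1$ forces $M^{-1}\in\GLtwo{\mbb{Z}}$ — together with $MM^{-1}=I$ and $j_I(\tau)=1$; then $1=j_{MM^{-1}}(\tau)=j_M(M^{-1}\cdot\tau)\,j_{M^{-1}}(\tau)$ rearranges to the claim. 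I do not anticipate any genuine obstacle here; the "hardest" point is merely the bookkeeping at the excluded value $\tau=-\md'/\mc'$, which the polynomial-identity observation dispatches cleanly.
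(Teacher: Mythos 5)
Your proof is correct and is exactly the elementary verification the paper intends: the paper's own proof is just the one-line remark that these are ``straightforward consequences of the definition,'' and your entrywise computation (or the equivalent vector identity $M\smcoltwo{\tau}{1}=j_M(\tau)\smcoltwo{M\cdot\tau}{1}$) together with the specialization $N=M^{-1}$, $j_I(\tau)=1$ supplies precisely those details. Your care about the exceptional point $\tau=-\md'/\mc'$, resolved by the polynomial-identity observation or by working entrywise, is a reasonable extra precaution but does not change the substance.
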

\begin{proof}
    Straightfoward consequences of the definition.
\end{proof}
\begin{lem}\label{lm:domainproperties}
    For all $M=\smt{\ma & \mb \\ \mc & \md }\in \GLtwo{\mbb{Z}}$
    \eag{
    \DD_{M^{-1}} &= M\cdot \DD_M,
    \\
     \DD_{JM} &= -\DD_{MJ}=\DD_{M}
    \\
    \DD_M\cup\DD_{-M} &= \begin{cases}
        \mbb{C} \qquad & \mc = 0,
        \\
        \mbb{C}\setminus \{-\md/\mc\} \qquad & \mc \neq 0,
    \end{cases}
    \label{eq:dludml}
    \\
    \DD_M\cap\DD_{-M} &= \mbb{C}\setminus \mbb{R}.
    }
    where 
    \eag{\label{eq:JDef}
    J=\smt{1 & 0 \\ 0 & -1}
    }
\end{lem}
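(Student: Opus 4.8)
The plan is to reduce all four identities to elementary facts about the closed subset $R_M := \{\tau\in\mbb{R}: \det(M)\,j_M(\tau)\le 0\}$ of the real axis, so that by definition $\DD_M = \mbb{C}\setminus R_M$, and then to track how $R_M$ transforms under $M\mapsto -M$, $M\mapsto JM$, $M\mapsto MJ$, and $M\mapsto M^{-1}$. Throughout I will use the trivial facts that $\det(-M)=\det(M)$, $j_{-M}=-j_M$, and, for $M\in\GLtwo{\mbb{Z}}$, $\det(M^{-1})=1/\det(M)=\det(M)$ since $\det(M)=\pm1$.

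For the third and fourth identities: from $\det(-M)\,j_{-M}(\tau)=-\det(M)\,j_M(\tau)$ we get $R_{-M}=\{\tau\in\mbb{R}:\det(M)\,j_M(\tau)\ge 0\}$. Since every real number is $\le 0$ or $\ge 0$, we have $R_M\cup R_{-M}=\mbb{R}$, which gives $\DD_M\cap\DD_{-M}=\mbb{C}\setminus\mbb{R}$. And $R_M\cap R_{-M}=\{\tau\in\mbb{R}:j_M(\tau)=0\}$; when $\mc=0$ this is empty (then $j_M\equiv\md$ and $\md\neq 0$ because $\ma\md=\det M=\pm1$), and when $\mc\neq0$ it is $\{-\md/\mc\}$, so taking complements yields the stated formula for $\DD_M\cup\DD_{-M}$.

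For the second identity, $JM=\smmattwo{\ma}{\mb}{-\mc}{-\md}$ gives $j_{JM}=-j_M$ and $\det(JM)=-\det M$, so $\det(JM)\,j_{JM}(\tau)=(-\det M)(-j_M(\tau))=\det(M)\,j_M(\tau)$ and hence $R_{JM}=R_M$, i.e.\ $\DD_{JM}=\DD_M$. Similarly $MJ=\smmattwo{\ma}{-\mb}{\mc}{-\md}$ gives $j_{MJ}(\tau)=\mc\tau-\md$ and $\det(MJ)=-\det M$, so $\det(MJ)\,j_{MJ}(\tau)=(-\det M)(\mc\tau-\md)=\det(M)(-\mc\tau+\md)=\det(M)\,j_M(-\tau)$; thus $\tau\in R_{MJ}\iff-\tau\in R_M$, i.e.\ $R_{MJ}=-R_M$, and therefore $-\DD_{MJ}=\DD_M$.

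The only part with genuine content is $\DD_{M^{-1}}=M\cdot\DD_M$, and here the plan is to prove the pointwise equivalence: for every $\tau\in\mbb{C}$ that is not the pole of $M$ (i.e.\ $\tau\neq-\md/\mc$ when $\mc\neq 0$), $\tau\in\DD_M\iff M\cdot\tau\in\DD_{M^{-1}}$. For non-real $\tau$ both sides hold automatically, since $M$ has real entries so $M\cdot\tau$ is again non-real. For real $\tau$, the cocycle relation for $j$ from \Cref{lm:jMproperties} applied to $I=M^{-1}M$ gives $j_{M^{-1}}(M\cdot\tau)\,j_M(\tau)=1$, so $\det(M^{-1})\,j_{M^{-1}}(M\cdot\tau)=\det(M)/j_M(\tau)$, which has the same sign as $\det(M)\,j_M(\tau)$ (the two differ by the positive factor $j_M(\tau)^{-2}$), while $M\cdot\tau$ is real; hence $M\cdot\tau\in\DD_{M^{-1}}\iff\det(M)\,j_M(\tau)>0\iff\tau\in\DD_M$. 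To pass from this equivalence to the claimed set equality, one notes that the pole of $M$ lies in $R_M$ (there $j_M=0$), hence is excluded from $\DD_M$, so $M\cdot\tau$ is a well-defined point of $\mbb{C}$ for every $\tau\in\DD_M$, giving $M\cdot\DD_M\subseteq\DD_{M^{-1}}$; applying the same statement with $M^{-1}$ in place of $M$ and using $\sigma=M\cdot(M^{-1}\cdot\sigma)$ gives the reverse inclusion. The main obstacle, modest as it is, is precisely this bookkeeping with the pole and the point at infinity in the first identity; everything else is the sign arithmetic above.
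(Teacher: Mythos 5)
Your proof is correct, and it fills in exactly the elementary sign-tracking that the paper compresses into ``straightforward consequences of the definition'': reducing everything to the excluded real set $R_M=\{\tau\in\mbb{R}:\det(M)j_M(\tau)\le 0\}$, and handling the one nontrivial identity $\DD_{M^{-1}}=M\cdot\DD_M$ via the cocycle relation $j_{M^{-1}}(M\cdot\tau)j_M(\tau)=1$ together with the observation that the pole of $M$ lies in $R_M$. Nothing is missing; the pole/infinity bookkeeping and the two-sided inclusion argument are exactly the points that need care, and you have them.
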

\begin{proof}
Straightforward consequences of the definition.
\end{proof}

\begin{lem}\label{lem:fixedinda}
    Let $\qrt\in\mbb{R}$ be a fixed point of $M\in \SLtwo{\mbb{Z}}$.  Then $\qrt\in \DD_M$ if and only if $\Tr(M)>0$.
\end{lem}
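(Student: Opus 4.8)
The plan is to exploit the standard fact that $j_M(\qrt)$ at a real fixed point $\qrt$ of $M$ is an eigenvalue of $M$. Write $M = \smt{\ma & \mb \\ \mc & \md} \in \SLtwo{\mbb{Z}}$. First I would note that the fixed-point condition $M\cdot\qrt = \qrt$ is equivalent to $\ma\qrt+\mb = (\mc\qrt+\md)\qrt = j_M(\qrt)\,\qrt$, which together with the tautology $\mc\qrt+\md = j_M(\qrt)\cdot 1$ says precisely that $\smcoltwo{\qrt}{1}$ is an eigenvector of $M$ with eigenvalue $\lambda := j_M(\qrt)$. Since $\qrt \in \mbb{R}$ and $\mc,\md \in \mbb{Z}$, the number $\lambda$ is real; since $\det(M) = 1$ it is nonzero, the other eigenvalue is $\lambda^{-1}$, and $\lambda + \lambda^{-1} = \Tr(M)$.

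The second step is the sign observation: $\lambda$ and $\lambda^{-1}$ are nonzero real numbers with product $1$, hence of the same sign, so $\Tr(M) = \lambda + \lambda^{-1}$ has that common sign. In particular $\lambda > 0 \iff \Tr(M) > 0$. (This incidentally forces $\abs{\Tr(M)} \ge 2$, consistent with $\qrt$ not being an elliptic fixed point, but we will not need that.)

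Finally I would unwind \Cref{def:sl2ldmndf}: because $M \in \SLtwo{\mbb{Z}}$ has $\det(M) = 1$, we have $\DD_M = \mbb{C} \setminus \{\tau \in \mbb{R} : j_M(\tau) \le 0\}$. As $\qrt$ is real and $j_M(\qrt) = \lambda \neq 0$, we conclude $\qrt \in \DD_M$ if and only if $j_M(\qrt) = \lambda > 0$, which by the previous step holds if and only if $\Tr(M) > 0$.

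There is essentially no obstacle here; the only things to get right are the eigenvector identification of $j_M(\qrt)$ and the two roles played by $\det(M) = 1$ (it removes the $\det(M)$ prefactor from the description of $\DD_M$, and it makes the product of the eigenvalues equal to $1 > 0$). The argument is uniform: it covers the degenerate cases $\mc = 0$, i.e.\ $M = \pm I$, where $\smcoltwo{\qrt}{1}$ is an eigenvector with eigenvalue $j_{\pm I}(\qrt) = \pm 1$, exactly as well as the generic case $\mc \neq 0$.
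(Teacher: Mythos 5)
Your proof is correct and follows essentially the same route as the paper's: identify $j_M(\qrt)$ as an eigenvalue of $M$ via the eigenvector $\smcoltwo{\qrt}{1}$, use $\det(M)=1$ to conclude the two real eigenvalues share the sign of $\Tr(M)$, and unwind the definition of $\DD_M$. (Only a cosmetic quibble: $\mc=0$ does not force $M=\pm I$, but for $M=\pm T^k$ with $k\neq 0$ there is no real fixed point, so the lemma is vacuous there anyway.)
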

\begin{proof}
    Write $M = \smmattwo{\alpha}{\beta}{\gamma}{\delta}$. We have $M \smcoltwo{\qrt}{1} = (\gamma \qrt + \delta)\smcoltwo{M\cdot\qrt}{1} = (\gamma \qrt + \delta)\smcoltwo{\qrt}{1}$ because $\qrt$ is a fixed point of $M$. Thus, $\gamma \qrt + \delta$ is an eigenvalue of $M$. Hence,
    \eag{
        \qrt \in \DD_M \iff \gamma\qrt+\delta>0
        \iff M \mbox{ has a positive eigenvalue}
        \iff \Tr(M)>0,
    }
    because $\det{M}=1$, so the two eigenvalues must have the same sign.
\end{proof}

\subsection{The relation of the Shintani--Faddeev modular cocycle to Stark units}\label{ssec:qpochmain}

We now present the main theorem of \cite{Kopp2020d}. It expresses generalized Stark units, that is, $\su_{\sR\A} = \su_\A^{-1} = \exp(Z_{\mm\infty_2}'(0,\A))$ for ray classes $\A$ in a \rcmia, in terms of special values $\sfc{\r}{A}{\qrt}^2$ of the Shintani--Faddeev modular cocycle. The special values of interest are \textit{real multiplication (RM) values}, that is, they occur at real quadratic $\rho$ such that $A\cdot\rho = \rho$.

\begin{thm}\label{thm:qpochmain}
Let $\OO$ be an order in a real quadratic field $F$, and let $\mm$ be a nonzero $\OO$-ideal.
Let $\A \in \Clt_{\mm\infty_2}(\OO) \setminus \ZClt_{\mm\infty_2}(\OO)$, let $\A_0$ be the class of $\A$ in $\Cl(\OO)$, choose some $\bb \in \A_0^{-1}$ coprime to $\mm$, and write $\bb\mm = \alpha(\qrt\Z+\Z)$ for some $\alpha,\qrt \in K$ such that $\alpha$ is totally positive and $\qrt > \qrt'$. Choose $\r = \smcoltwo{r_1}{r_2} \in \Q^2$ such that $(\alpha(r_2\qrt-r_1))\bb^{-1} \in \A$ and $r_2\qrt'-r_1 > 0$.
Write
\begin{equation}
\{B \in \Gamma_\r : B \cdot \qrt = \qrt\} = \langle A \rangle \mbox{ or } \langle -I, A \rangle
\end{equation}
such that $A \smcoltwo{\lambda}{1} = \lambda \smcoltwo{\qrt}{1}$ for $\lambda > 1$.
Let $n=\frac{2}{\abs{\phi^{-1}(\A)}}$, where $\phi : \Clt_{\mm\infty_1\infty_2}(\OO) \to \Clt_{\mm\infty_2}(\OO)$ is the natural quotient map.
Then
\begin{align}\label{eq:main}
\exp\!\left(n Z_{\mm\infty_2}'(0,\A)\right)
& = (\psi^{-2}\chi_\r^{-1})(A) \ \sfc{\r}{A}{\qrt}^2.
\end{align}
\end{thm}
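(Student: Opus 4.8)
A full proof is given in \cite{Kopp2020d}; the plan is to run Shintani's method for the leading Taylor coefficient of a partial zeta function at $s=0$, identify the resulting Barnes-type infinite products with the defining products of the Shintani--Faddeev cocycle, and then pin down the root of unity contributed by the eta- and theta-multipliers.

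First I would set up the Shintani cone decomposition. Using the ideal $\bb \in \A_0^{-1}$ coprime to $\mm$ and the $\Z$-basis $\alpha, \alpha\qrt$ of $\bb\mm$ (with $\alpha$ totally positive and $\qrt > \qrt'$), each integral ideal in the class $\A$ arises, up to multiplication by a totally positive unit of $\OO$, as $\alpha(m\qrt+n)\bb^{-1}$ for $(m,n)$ ranging over a translate of a two-dimensional cone, the translate being fixed by the requirement $(\alpha\,\pt{\r}{\qrt})\bb^{-1}\in\A$; the exclusion $\A\notin\ZClt_{\mm\infty_2}(\OO)$ rules out the degenerate cases. The hyperbolic matrix $A$ fixing $\qrt$ with eigenvalue $\lambda>1$ generates the relevant unit action, so a fundamental wedge for this action can be taken with edges along the two eigendirections of $A$; this rewrites $\zeta_{\mm\infty_2}(s,\A)$ --- and hence, after using $\zeta_{\mm\infty_2}(s,\sR\A)$ to absorb the $\infty_1$-sign condition, the differenced function $Z_{\mm\infty_2}(s,\A)$ --- as a finite combination of Shintani--Barnes double zeta functions, which converge for $\re(s)$ large and continue to a neighbourhood of $s=0$, so the sum may be differentiated there term by term.

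Next I would differentiate at $s=0$. Shintani's evaluation of the derivative of the double zeta function in terms of the Barnes double Gamma function (equivalently the double sine function / noncompact quantum dilogarithm), under the substitutions $q = e^{2\pi i\qrt}$ and $a = e^{2\pi i\,\pt{\r}{\qrt}}$, produces exactly the infinite products $\varpi$ of \Cref{dfn:variantqPochhammer}. Assembling the two edges of the wedge --- which are interchanged by $A$, since $A\cdot\qrt = \qrt$ --- yields a ratio of such products, which by \eqref{eq:shindf} together with the meromorphic continuation of $\sigma_A$ from $\HH$ to $\DD_A$ (\Cref{sec:SFJCocycleAppendix}) equals $\sfc{\r}{A}{\qrt}$ up to a power of $j_A(\qrt)$ and an explicit root of unity. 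Carrying out the same computation with $-\r$ in place of $\r$ gives $\sfc{-\r}{A}{\qrt}$, and since the two cone edges enter $\exp(nZ'_{\mm\infty_2}(0,\A))$ symmetrically, that exponential equals the quotient $\sfc{\r}{A}{\qrt}/\sfc{-\r}{A}{\qrt}$. Finally, \Cref{cor:funchar} gives $\sfc{\r}{A}{\qrt}\,\sfc{-\r}{A}{\qrt} = \psi^2(A)\chi_\r(A)$, so this quotient is $(\psi^{-2}\chi_\r^{-1})(A)\,\sfc{\r}{A}{\qrt}^2$, which is the claimed identity. The exponent $n = 2/\abs{\phi^{-1}(\A)}$ enters because a Shintani wedge naturally computes the partial zeta function of the modulus $\mm\infty_1\infty_2$ with both infinite places ramified, whereas $\zeta_{\mm\infty_2}(s,\A)$ is the sum of $\zeta_{\mm\infty_1\infty_2}(s,\cdot)$ over the $\abs{\phi^{-1}(\A)}$ classes lying above $\A$.

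The main obstacle is the root-of-unity bookkeeping: one must show the discrepancy between the honest Shintani--Barnes product and $\sfc{\r}{A}{\qrt}$ is governed exactly by the eta-multiplier $\psi$ and the theta-multiplier $\chi_\r$ with the powers in \eqref{eq:main}. This demands careful control of the branch of $\log j_A$ and of $\sqrt{j_A(\qrt)}$, of the continuation of $\sigma_A$ off the upper half plane, and of the $\r \mapsto -\r$ symmetry and periodicity recorded in \Cref{thm:funchar}, \Cref{lm:shinperiodicity}, and \Cref{lm:shinatzero}. A secondary point is independence of the auxiliary choices $\bb,\alpha,\qrt,\r$: altering them conjugates $A$ within $\Gamma_\r$ and shifts $\r$ within its class modulo $\Z^2$, and one checks both sides are invariant using the cocycle relation \eqref{eq:sfjcocyclerelInt} and \Cref{lm:shinperiodicity}.
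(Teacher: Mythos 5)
Your proposal is correct and takes essentially the same route as the paper: the paper's own ``proof'' of this theorem is simply the citation to \cite[Thm.~1.1]{Kopp2020d}, and your sketch (Shintani cone decomposition, evaluation of the differenced zeta derivative via double-sine/$q$-Pochhammer products, and conversion of $\sfc{\r}{A}{\qrt}/\sfc{-\r}{A}{\qrt}$ into $(\psi^{-2}\chi_\r^{-1})(A)\,\sfc{\r}{A}{\qrt}^2$ via \Cref{cor:funchar}) accurately reflects the argument carried out in that reference. No further commentary is needed.
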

\begin{proof}
    See \cite[Thm.\ 1.1]{Kopp2020d}.
\end{proof}

One may ask whether all real multiplication values of the Shintani--Faddeev cocycle are captured by \eqref{eq:main}. 
This is answered in the affirmative by \cite[Thm.~3.14]{Kopp2020d}, which proves certain properties of a function
\begin{equation}
    \Clt_{\mm\infty_2}(\OO) \xrightarrow{\Upsilon_\mm} \GL_2(\Z)\backslash(\Q^2/\Z^2 \times K_{\rm quad}),
\end{equation}
sending a ray class $\A$ to $\Upsilon_\mm(\A) = \GL_2(\Z) \cdot (\r + \Z^2,\qrt)$ with $\r$ and $\qrt$ chosen in the manner described in \Cref{thm:qpochmain}. 
Here, $K_{\rm quad} = K \setminus \Q$, and the notation $\GL_2(\Z)\backslash(\Q^2/\Z^2 \times K_{\rm quad})$ denotes the set of orbits by a certain left action of $\GL_2(\Z)$; namely, $M \cdot (\r,\qrt) = (\sgn(j_M(\qrt))M\r, M\cdot\qrt)$. In particular, it is proven that every orbit on the right-hand side is in the image of $\Upsilon_\mm$ for some choice of $\mm$. In the case $\mm = d\OO$ for $d \in \N$, it is shown that
\begin{equation}
    \image(\Upsilon_\mm) = \GL_2(\Z)\backslash\!\left(\tfrac{1}{d}\Z^2/\Z^2 \times K_{\OO}\right),
\end{equation}
where $K_\OO = \left\{\rho \in K \, : \, \lambda\rho\Z+\lambda\Z \subseteq \rho\Z+\Z \iff \lambda \in \OO\right\}$.

\subsection{Conditional results on algebraicity of real multiplication values}\label{ssec:conjcond}

We now prove several results on the implication of the Stark conjectures for real multiplication values of the Shintani--Faddeev cocycle. These results are refined versions of \cite[Thm.~1.3]{Kopp2020d} allowing for additional control on the conjectural assumptions and giving some additional conclusions needed in this paper. We will conclude with a proof of \Cref{thm:starkimplications}.

We first examine the implications of our weakest Stark-type conjecture. This proof and the next are related to the proof of \cite[Thm.~1.3]{Kopp2020d} in \cite[Sec.~8.2]{Kopp2020d}.
\begin{thm}\label{thm:field0}
Assume \Cref{conj:stark} (the Stark Conjecture). 
Let $\qrt \in \R$ such that $a\qrt^2+b\qrt+c=0$ with $a,b,c \in \Z$, $\Delta := b^2-4ac$ not a square, and let $\r \in \Q^2 \setminus \Z^2$. Let $A \in \Gamma_\r$ such that $A \cdot \qrt = \qrt$.
\begin{itemize}
    \item[(1)] There exists some $n \in \N$ such that $\sfc{\r}{\!A}{\qrt}^n$ is an algebraic unit in an abelian extension of $K=\Q(\qrt)$.
    \item[(2)] If $g \in \Gal(\ol{\Q}/\Q)$ such that $g(\sqrt{\Delta}) = -\sqrt{\Delta}$, then $\abs{g(\sfc{\r}{\!A}{\qrt})} = 1$.
\end{itemize}
\end{thm}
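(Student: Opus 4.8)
The plan is to deduce the statement from \Cref{thm:qpochmain} and \Cref{conj:stark}, keeping careful track of the roots of unity and the square roots that intervene; this refines the argument behind \cite[Thm.~1.3]{Kopp2020d}. First I would normalize the data. By \Cref{lm:shinperiodicity} the value $\sfc{\r}{\!A}{\qrt}$ depends on $\r$ only modulo $\Z^2$, and under the $\GL_2(\Z)$-action $M\cdot(\r,\qrt)=(\sgn(j_M(\qrt))M\r,\,M\cdot\qrt)$ it transforms by a root of unity: this follows from the functional equations of the Shintani--Faddeev cocycle in \cite{Kopp2020d}, together with the cocycle relation, \Cref{lm:sfam1sfaeq1} (which inverts $\sfc{\r}{\!A}{\qrt}$ upon replacing $A$ by $A^{-1}$, and hence handles the choice of fixed point and of orientation-reversing $M$) and \Cref{cor:funchar} (the $\r\mapsto-\r$ symmetry, which absorbs the sign $\sgn(j_M(\qrt))$). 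Using the surjectivity of the map $\Upsilon_\mm$ recalled in \Cref{ssec:qpochmain}, and noting that $\r\notin\Z^2$ guarantees the associated ray class is not a zero class, I may therefore pass to data $(\r,\qrt,\OO,\mm,\A)$ of exactly the shape required by \Cref{thm:qpochmain} --- with $\OO$ the multiplier ring of $\qrt\Z+\Z$ inside $K=\Q(\qrt)=\Q(\sqrt\Delta)$, with $\mm=d\OO$ (taking $d$ as divisible by the conductor of $\OO$ as convenient), and with $\A\in\Clt_{\mm\infty_2}(\OO)\setminus\ZClt_{\mm\infty_2}(\OO)$ --- at the cost of multiplying $\sfc{\r}{\!A}{\qrt}$ by a root of unity, which affects neither claim.

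Now \Cref{thm:qpochmain} yields $\sfc{\r}{\!A}{\qrt}^{2} = (\psi^{2}\chi_\r)(A)\,\exp\!\bigl(n_0\,Z_{\mm\infty_2}'(0,\A)\bigr)$, with $n_0\in\{1,2\}$ and $(\psi^{2}\chi_\r)(A)$ a root of unity (a value of the eta- and theta-multipliers). Because $Z_{\mm\infty_2}(s,\A)=\zeta_{\mm\infty_2}(s,\A)-\zeta_{\mm\infty_2}(s,\sR\A)$ is a \emph{difference} of ray class partial zeta functions, I would then follow the descent in \cite[Sec.~8.2]{Kopp2020d} to rewrite $n_0 Z_{\mm\infty_2}'(0,\A)$ for the possibly non-maximal order $\OO$ as an integer combination of values $\zeta_{\mm'\infty_2}'(0,\A')$ attached to ray classes $\A'$ of the maximal order $\OO_K$, for a suitable modulus $\mm'$. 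By \Cref{conj:stark}, each $\exp\!\bigl(-2\zeta_{\mm'\infty_2}'(0,\A')\bigr)=\su_{\A'}$ is an algebraic unit of the ray class field $H=H_{\mm'\infty_2}^{\OO_K}$, which is abelian over $K$ by \Cref{thm:rayclassfield}, and satisfies $\abs{h(\su_{\A'})}=1$ for every $h\in\Gal(H/\Q)\setminus\Gal(H/K)$. Combining, $\sfc{\r}{\!A}{\qrt}^{2}$ is a root of unity times a product of half-integral powers of units of $H$.

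Raising to a power that clears the denominators $2$ --- $n=4$ suffices once the combination above is made explicit --- gives $\sfc{\r}{\!A}{\qrt}^{n}$ equal to a root of unity times a product of integral powers of the $\su_{\A'}$, hence an algebraic unit lying in the compositum of $H$ with a cyclotomic field. That compositum is abelian over $K$, being a compositum of abelian extensions of $K$, which proves (1). For (2), take $g\in\Gal(\ol{\Q}/\Q)$ with $g(\sqrt\Delta)=-\sqrt\Delta$. Then $g$ restricts to the nontrivial automorphism of $K$, so $g|_H\in\Gal(H/\Q)\setminus\Gal(H/K)$ and $\abs{g(\su_{\A'})}=1$ for every $\A'$; applying $g$ to the expression for $\sfc{\r}{\!A}{\qrt}^{n}$ and using also $\abs{g(\zeta)}=1$ for roots of unity $\zeta$, one gets $\abs{g(\sfc{\r}{\!A}{\qrt})}^{n}=1$, hence $\abs{g(\sfc{\r}{\!A}{\qrt})}=1$.

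The routine parts are the manipulations above once \Cref{thm:qpochmain} and \Cref{conj:stark} are in hand. The hard part will be the normalization in the first step: verifying that passing to the data $(\r,\qrt,\OO,\mm,\A)$ of \Cref{thm:qpochmain} changes $\sfc{\r}{\!A}{\qrt}$ by at most a root of unity requires the full $\GL_2(\Z)$-equivariance of the Shintani--Faddeev cocycle (including orientation-reversing matrices and the exchange of the two real fixed points of $A$), and the descent from the possibly non-maximal order $\OO$ to $\OO_K$ is where the bulk of the bookkeeping lies. It is precisely this descent, together with the unavoidable square root coming from the case $n_0=1$, that forces the conclusion of (1) to be stated only for \emph{some} power $\sfc{\r}{\!A}{\qrt}^{n}$ rather than for $\sfc{\r}{\!A}{\qrt}$ itself; pinning down $\sfc{\r}{\!A}{\qrt}$ itself (or a fixed small power) and locating it in a specific ray class field is exactly what the refinements \Cref{conj:stc} and \Cref{conj:msc} are needed for.
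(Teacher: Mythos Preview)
Your overall architecture---normalize, apply \Cref{thm:qpochmain}, invoke \Cref{conj:stark}, clear roots of unity and square roots---matches the paper's, but the descent from a non-maximal order to $\OO_K$ is where your argument has a genuine gap. You propose to ``rewrite $n_0 Z_{\mm\infty_2}'(0,\A)$ for the possibly non-maximal order $\OO$ as an integer combination of values $\zeta_{\mm'\infty_2}'(0,\A')$ attached to ray classes $\A'$ of the maximal order $\OO_K$.'' No such relation is available in general; indeed, the fact that it is \emph{not} known is precisely why \Cref{conj:msc} is stated separately from \Cref{conj:stc} and is not known to follow from it (see the remarks preceding \Cref{prop:starkimp}). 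The descent in \cite[Sec.~8.2]{Kopp2020d} that you cite does not work at the level of zeta values.

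The paper's proof handles the non-fundamental case differently: it uses the Hecke-type product identity for the $\shin$-function itself, \cite[Thm.~4.46]{Kopp2020d}, together with \cite[Lem.~4.42]{Kopp2020d}, to write $\sfc{\r}{A}{\qrt}^n$ (for suitable $n$) as a product $\prod_{\mbf{s}}\sfc{\mbf{s}}{C}{\alpha}$ where $\alpha$ has conductor $1$. This reduces both claims to the fundamental-discriminant case \emph{before} any zeta function enters. Only then, with $\OO=\OO_K$, does the paper apply \Cref{thm:qpochmain}; it further chooses $\mm$ to be the \emph{largest} ideal with $(\r,\qrt)\in\M_{\OO,\mm}$, which forces $\A\in\Cl_{\mm\infty_2}(\OO_K)$ (primitive, not merely in the monoid), so that \Cref{conj:stark} applies directly. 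Finally, the passage from the differenced $Z'$ to the undifferenced $\zeta'$ uses \cite[Prop.~5]{Tangedal:2007}, a step your sketch conflates with the order descent. Your $\GL_2(\Z)$ normalization cannot substitute for any of this, since it does not change the conductor of $\qrt$.
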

\begin{proof}
    Let $f$ be the conductor of $\beta$ (that is, $b^2-4ac = f^2\Delta_0$ for a fundamental discriminant $\Delta_0$ and a positive integer $f$).
    By \cite[Lem.~4.42]{Kopp2020d}, there is some $2 \times 2$ integral matrix $B$ of determinant $f$ and some real quadratic number $\alpha$ of conductor $1$ such that $\beta = B\cdot\alpha$. Choose $n \in \N$ so that 
    \begin{equation}
        C := B^{-1}A^nB \in \bigcap_{\substack{\mbf{s} \in \Q^2/\Z^2 \\ B\mbf{s} - \r \in \Z^2}} \Gamma_{\mbf{s}}.
    \end{equation}
    Then, by \cite[Thm.~4.46]{Kopp2020d}, we have
    \begin{equation}\label{eq:shinC}
        \sfc{\r}{A}{\beta}^n
        = \sfc{\r}{A^n}{\beta} 
        = \sfc{\r}{B C B^{-1}}{B \cdot \alpha}
        = \prod_{\substack{\mbf{s} \in \Q^2/\Z^2 \\ B\mbf{s} - \r \in \Z^2}} \sfc{\mbf{s}}{C}{\alpha}.
    \end{equation}
    Note that, if (1) and (2) hold for the factors in the product \eqref{eq:shinC}, then they hold for $\sfc{\r}{A}{\beta}^n$. For (1), any product of algebraic units in abelian extensions of $K$ is an algebraic unit in an abelian extension of $K$ (namely, the compositum of the fields generated by the factors over $K$). For (2), we would obtain
    \begin{equation}
        \abs{\sfc{\r}{A}{\beta}}^n
        = \abs{\sfc{\r}{A}{\beta}^n}
        = \prod_{\substack{\mbf{s} \in \Q^2/\Z^2 \\ B\mbf{s} - \r \in \Z^2}} \abs{\sfc{\mbf{s}}{C}{\alpha}}
        = 1,
    \end{equation}
    and thus, since the absolute value function returns a nonnegative real number, $\abs{\sfc{\r}{A}{\beta}}=1$.
    It thus suffices to prove the theorem when $f=1$, which we henceforth assume.

    As in the statement of \Cref{thm:qpochmain}, write
    \begin{equation}
        \{B \in \Gamma_\r : B \cdot \beta = \beta\} = \langle A_0 \rangle
    \end{equation}
    such that $A_0 \smcoltwo{\lambda}{1} = \lambda \smcoltwo{\beta}{1}$ for $\lambda > 1$. We have $A = A_0^k$ for some $k \in \Z$. 
    Let $\mm$ be the largest $\OO_K$-ideal such that $(\r,\qrt) \in \M_{\OO,\mm}$ in the notation of \cite[Thm.~3.12]{Kopp2020d}. Since $\mm$ in $\OO_K$-invertible (because $\OO_K$ is the maximal order), there is some $\A \in \Clt_{\mm\infty_2}(\OO)$ such that $\Upsilon_\mm(\A) = (\r,\qrt)$ in the notation of \cite[Thm.~3.12]{Kopp2020d} (as described at the end of \Cref{ssec:qpochmain}), and moreover $\A \in \Cl_{\mm\infty_2}(\OO)$ (because otherwise $\mm$ would not be the largest such $\OO_K$-ideal).
    By \Cref{thm:qpochmain}, for some $n \in \{1,2\}$,
    \begin{align}
        \exp\!\left(n Z_{\mm\infty_2}'(0,\A)\right)
        & = (\psi^{-2}\chi_\r^{-1})(A_0) \ \sfc{\r}{A_0}{\qrt}^2.
    \end{align}
    By the cocycle property, $\sfc{\r}{\!A_0^{t+1}}{\qrt} = \sfc{\r}{\!A_0^{t}}{A_0\cdot\qrt}\sfc{\r}{\!A_0}{\qrt} = \sfc{\r}{\!A_0^{t}}{\qrt}\sfc{\r}{\!A_0}{\qrt}$ for any $t \in \Z$, so by induction $\sfc{\r}{\!A_0^{k}}{\qrt} = \sfc{\r}{\!A_0}{\qrt}^k$.
    Also using the fact that $\psi^2, \chi_\r$ are homomorphisms, we obtain
    \begin{align}
        \exp\!\left(kn Z_{\mm\infty_2}'(0,\A)\right)
        & = (\psi^{-2}\chi_\r^{-1})(A_0^k) \ \sfc{\r}{A_0^k}{\qrt}^2
        = (\psi^{-2}\chi_\r^{-1})(A) \ \sfc{\r}{A}{\qrt}^2.
    \end{align}
    Let $\su_\A = \exp\!\left(-Z_{\mm\infty_2}'(0,\A)\right)$, so
    \begin{equation}\label{eq:field0step}
        \sfc{\r}{A}{\qrt}^2 = (\psi^{2}\chi_\r)(A) \ \su_\A^{-kn}.
    \end{equation}
    We have $Z_{\mm\infty_2}'(0,\A) = \zeta_{\mm\infty_2}'(0,\A) - \zeta_{\mm\infty_2}'(0,\sR\A)$.
    By \cite[Prop.~5]{Tangedal:2007}, either $\sR$ is the identity class, in which case $Z_{\mm\infty_2}'(0,\A) = 0$, or $\zeta_{\mm\infty_2}'(0,\sR\A) = -\zeta_{\mm\infty_2}'(0,\sR\A)$, in which case $Z_{\mm\infty_2}'(0,\A) = 2\zeta_{\mm\infty_2}'(0,\A)$. In the former case, $\su_\A=1$, and in the latter case, $\su_\A$ is the Stark unit from \Cref{conj:stark}. In both cases, that conjecture implies that $\su_\A$ is an algebraic unit in an abelian extension of $K$, and thus so is $(\psi^{2}\chi_\r)(A) \ \su_\A^{-kn}$ (since $(\psi^{2}\chi_\r)(A)$ is a root of unity), proving (1). Additionally, in both cases, the conjecture implies that $\abs{g(\su_\A)} = 1$, and moreover, $g\!\left((\psi^{2}\chi_\r)(A)\right)$ must by a root of unity. Applying the Galois automorphism $g$ followed by the absolute value function to \eqref{eq:field0step} gives $\abs{\sfc{\r}{A}{\qrt}}^2 = 1$, and thus $\abs{g(\sfc{\r}{A}{\qrt})} = 1$.
\end{proof}

We now prove another conditional result with stronger assumptions. This time, we assume an appropriate case of the Monoid Stark Conjecture and include stronger conclusions.
\begin{thm}\label{thm:field2}
    Let $\qrt \in \R$ such that $a\qrt^2+b\qrt+c=0$ with $a,b,c \in \Z$, $\Delta := b^2-4ac$ not a square, and let $\r \in \Q^2 \setminus \Z^2$. 
    Let $\OO = \colonideal{\qrt\Z+\Z}{\qrt{\Z}+\Z} = \Z[\tfrac{-b+\sqrt{\Delta}}{2}]$.
    Let $A \in \Gamma_\r$ such that $A \cdot \qrt = \qrt$.
    Suppose that $(\r,\qrt) \in \M_{\OO,\mm}$ in the notation of \cite[Thm.~3.12]{Kopp2020d}.
    Assume $\MS(\OO,\mm)$ from \Cref{conj:msc}. Then:
    \begin{itemize}
        \item[(1)] $\sfc{\r}{\!A}{\qrt}$ is an algebraic unit in an abelian extension of $K = \Q(\qrt)$.
        \item[(2)] $(\psi^{-2}\chi_\r^{-1})(A)\,\sfc{\r}{\!A}{\qrt} \in \OO_H^\times$ for $H=H_{\mm\infty_2}^{\OO}$.
        \item[(3)] In particular, if $d \in \N$ and $\r \in \tfrac{1}{d}\Z$, then $(\psi^{-2}\chi_\r^{-1})(A)\,\sfc{\r}{\!A}{\qrt} \in \OO_H^\times$ for $H=H_{d\infty_2}^{\OO}$.
        \item[(4)] If $g \in \Gal(\ol{\Q}/\Q)$ such that $g(\sqrt{\Delta}) = -\sqrt{\Delta}$, then $\abs{g(\sfc{\r}{\!A}{\qrt})} = 1$.
    \end{itemize}
\end{thm}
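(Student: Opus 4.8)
The plan is to follow the proof of \Cref{thm:field0}, substituting the Monoid Stark Conjecture $\MS(\OO,\mm)$ for the plain Stark Conjecture. This buys two things: $\MS(\OO,\mm)$ applies directly to the (possibly non-maximal) order $\OO = \Z[\tfrac{-b+\sqrt{\Delta}}{2}]$, so there is no need to reduce to conductor $1$ as in \Cref{thm:field0}; and it delivers an honest unit $\su_\A \in \OO_H^\times$ together with abelianity of $H(\su_\A^{1/2})/K$, which is what upgrades ``some power of $\sfc{\r}{A}{\qrt}$'' to $\sfc{\r}{A}{\qrt}$ itself and pins down the field. I would begin with the same reduction as in \Cref{thm:field0}: let $A_0 \in \Gamma_\r$ generate $\{B \in \Gamma_\r : B\cdot\qrt = \qrt\}$ (modulo $\pm I$) with eigenvalue $\lambda>1$ on $\smcoltwo{\qrt}{1}$; since $\sfc{\r}{A}{\qrt}$ is defined we have $\Tr A>0$ by \Cref{lem:fixedinda}, so $A = A_0^k$ for some $k\in\Z$, and $\sfc{\r}{A}{\qrt} = \sfc{\r}{A_0}{\qrt}^k$ by the cocycle relation (and \Cref{lm:sfam1sfaeq1} when $k<0$). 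Hence it suffices to prove everything for $A = A_0$.

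Because $(\r,\qrt)\in\M_{\OO,\mm}$, the description of $\image(\Upsilon_\mm)$ recalled at the end of \Cref{ssec:qpochmain} yields a class $\A\in\Clt_{\mm\infty_2}(\OO)$ with $\Upsilon_\mm(\A) = \GL_2(\Z)\cdot(\r+\Z^2,\qrt)$; as $\r\notin\Z^2$, this $\A$ is not a zero class. Applying \Cref{thm:qpochmain} to $\A$ (with this $\r$ and $\qrt$) gives, for $n = 2/\abs{\phi^{-1}(\A)}\in\{1,2\}$ and $\su_\A := \exp(-Z_{\mm\infty_2}'(0,\A))$,
\begin{equation*}
\sfc{\r}{A}{\qrt}^2 = (\psi^2\chi_\r)(A)\,\su_\A^{-n}.
\end{equation*}
Now $\MS(\OO,\mm)$ (\Cref{conj:msc}) tells us $\su_\A\in\OO_H^\times$ with $H = H_{\mm\infty_2}^{\OO}\subset\R$, that $H(\su_\A^{1/2})/K$ is abelian, and that $\abs{g(\su_\A)} = 1$ for every $\Q$-embedding $g$ of $H$ that does not fix $K$.

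From the displayed identity, (4) and (1) are immediate. For (4): if $g\in\Gal(\ol{\Q}/\Q)$ satisfies $g(\sqrt{\Delta}) = -\sqrt{\Delta}$ then $g$ moves $\sqrt{\Delta}\in K$, so $g|_H$ does not fix $K$; applying $g$ and $\abs{\cdot}$ to the identity gives $\abs{g(\sfc{\r}{A}{\qrt})}^2 = \abs{g((\psi^2\chi_\r)(A))}\,\abs{g(\su_\A)}^{-n} = 1$, since $(\psi^2\chi_\r)(A)$ is a root of unity. For (1): the right-hand side is a root of unity times a unit of $\OO_H$, hence an algebraic unit, so its square root $\sfc{\r}{A}{\qrt}$ is an algebraic unit; writing the identity as $\sfc{\r}{A}{\qrt}^2 = (\psi^2\chi_\r)(A)(\su_\A^{1/2})^{-2n}$ shows that $\sfc{\r}{A}{\qrt}(\su_\A^{1/2})^{n}$ is a square root of a root of unity, hence itself a root of unity $\zeta$, so $\sfc{\r}{A}{\qrt} = \zeta(\su_\A^{1/2})^{-n}\in K(\zeta)\cdot H(\su_\A^{1/2})$, an abelian extension of $K$.

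What remains, and what I expect to be the real work, is (2): replacing ``an abelian extension of $K$'' by the exact ray class field $H$. This is a sharpened form of \cite[Thm.~1.3]{Kopp2020d}. By \Cref{cor:funchar}, $(\psi^{-2}\chi_\r^{-1})(A)\sfc{\r}{A}{\qrt} = \sfc{-\r}{A}{\qrt}^{-1}$, so this quantity is, up to a root of unity, the algebraic unit $\sfc{\r}{A}{\qrt}$ from (1); to locate it inside $H$ I would invoke the Galois-action analysis of \cite{Kopp2020d}: combining the Artin compatibility $\Art(\B)(\su_\A) = \su_{\A\B}$ from $\MS(\OO,\mm)$ with the identification of ray-class with Galois-theoretic partial zeta values in \cite[Prop.~6.2, Thm.~6.7]{Kopp2020d}, one shows the $\Gal(\ol{\Q}/K)$-conjugates of $(\psi^{-2}\chi_\r^{-1})(A)\sfc{\r}{A}{\qrt}$ form a set of algebraic units permuted by $\Gal(H/K)$ along the Artin map, compatibly with the splitting condition defining $H$, whence the value lies in $\OO_H^\times$. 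I would carry this out by rerunning the argument of \cite[\S 8.2]{Kopp2020d} and verifying that $\MS(\OO,\mm)$ is the only conjectural input. Then (3) is the case $\mm = d\OO$ of (2), available whenever $\r\in\tfrac1d\Z^2$ (so that $(\r,\qrt)\in\M_{\OO,d\OO}$, since $\qrt\in K_\OO$ as $\OO$ is the order of $\qrt$), with $H = H_{d\OO\cdot\infty_2}^{\OO}$. The obstacle is exactly this field-pinpointing: in (1) any abelian extension suffices, but excluding a quadratic extension of $H$ needs the full Galois-action description of RM values together with the fact that $(\psi^{-2}\chi_\r^{-1})(A)$ is precisely the multiplier cancelling the root-of-unity ambiguity in the square root — the one step that is not a short computation.
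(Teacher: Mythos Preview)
Your treatment of (1), (3), and (4) matches the paper essentially verbatim: reduce to a generator $A_0$ via the cocycle relation, apply \Cref{thm:qpochmain} to obtain
\[
(\psi^{-2}\chi_\r^{-1})(A)\,\sfc{\r}{A}{\qrt}^2 \;=\; \su_\A^{-kn},
\]
and read off the conclusions from the properties of $\su_\A$ supplied by $\MS(\OO,\mm)$. The paper also disposes of the boundary case $\r\in\tfrac12\Z^2\setminus\Z^2$ at the outset (where $\sfc{\r}{A}{\qrt}=\pm 1$ unconditionally), which you omit, but this is harmless.

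Where you diverge sharply is (2). You flag it as ``the real work'' and outline an argument via Artin compatibility and the Galois-orbit analysis of \cite[\S 8.2]{Kopp2020d}. In the paper, (2) is a one-liner: the displayed identity already says $(\psi^{-2}\chi_\r^{-1})(A)\,\sfc{\r}{A}{\qrt}^2 = \su_\A^{-kn}$, and $\su_\A\in\OO_H^\times$ by hypothesis, so the left side lies in $\OO_H^\times$. That is all. (In fact the paper's proof establishes (2) with $\sfc{\r}{A}{\qrt}^2$ rather than $\sfc{\r}{A}{\qrt}$ as printed in the statement; the squared version is what is proved and what is actually used downstream, e.g.\ in \Cref{thm:RayClassField2plus}.) No Galois-orbit bookkeeping is needed, because the identity from \Cref{thm:qpochmain} already lands you inside $H$: the character value $(\psi^{-2}\chi_\r^{-1})(A)$ is precisely the factor that makes this happen. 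Your elaborate plan would be appropriate for showing that $\sfc{\r}{A}{\qrt}$ itself, without the square and without the correcting character, lies in a specified field---but that is a genuinely harder statement, and it is neither what the theorem asserts (as proved) nor what is needed later.
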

\begin{proof}
    If $\r \in \foh\Z^2 \setminus \Z^2$, then $\sfc{\r}{\!A}{\qrt} = \pm 1$ unconditionally by \cite[Thm.~4.38]{Kopp2020d}.
    Henceforth, we assume $\r \nin \foh\Z^2$, so $-I \nin \Gamma_\r$.
    As in the statement of \Cref{thm:qpochmain}, write
    \begin{equation}
        \{B \in \Gamma_\r : B \cdot \beta = \beta\} = \langle A_0 \rangle
    \end{equation}
    such that $A_0 \smcoltwo{\lambda}{1} = \lambda \smcoltwo{\beta}{1}$ for $\lambda > 1$. We have $A = A_0^k$ for some $k \in \Z$. 
    By \Cref{thm:qpochmain}, for some $n \in \{1,2\}$,
    \begin{align}
        \exp\!\left(n Z_{\mm\infty_2}'(0,\A)\right)
        & = (\psi^{-2}\chi_\r^{-1})(A_0) \ \sfc{\r}{A_0}{\qrt}^2.
    \end{align}
    By the cocycle property, $\sfc{\r}{\!A_0^{t+1}}{\qrt} = \sfc{\r}{\!A_0^{t}}{A_0\cdot\qrt}\sfc{\r}{\!A_0}{\qrt} = \sfc{\r}{\!A_0^{t}}{\qrt}\sfc{\r}{\!A_0}{\qrt}$ for any $t \in \Z$, so by induction $\sfc{\r}{\!A_0^{k}}{\qrt} = \sfc{\r}{\!A_0}{\qrt}^k$.
    Also using the fact that $\psi^2, \chi_\r$ are homomorphisms, we obtain
    \begin{align}
        \exp\!\left(kn Z_{\mm\infty_2}'(0,\A)\right)
        & = (\psi^{-2}\chi_\r^{-1})(A_0^k) \ \sfc{\r}{A_0^k}{\qrt}^2
        = (\psi^{-2}\chi_\r^{-1})(A) \ \sfc{\r}{A}{\qrt}^2.
    \end{align}%
    Let $\su_\A = \exp\!\left(-Z_{\mm\infty_2}'(0,\A)\right)$, so $\su_\A^{-kn} = (\psi^{-2}\chi_\r^{-1})(A) \ \sfc{\r}{A}{\qrt}^2$.%
    By the conjecture $\MS(\OO,\mm)$, we have $\su_\A \in \OO_H^\times$ for $H=H_{\mm\infty_2}^{\OO}$; thus, $(\psi^{-2}\chi_\r^{-1})(A) \ \sfc{\r}{A}{\qrt}^2 \in \OO_H^\times$, giving (2). If $\r \in \frac{1}{d}\Z^2$, then $(\r,\qrt) \in \M_{\OO,d\OO}$ by \cite[Thm.~3.12]{Kopp2020d}, so (3) follows from (2).
    
    Conjecture $\MS(\OO,\mm)$ also says that $\su_\A^{1/2}$ is an algebraic unit in an abelian extension of $K = \Q(\beta)$, and $\sfc{\r}{\!A}{\qrt} = \pm \sqrt{(\psi^2\chi_\r)(A)}\,\su_\A^{-kn/2}$ (with the square root factor being a root of unity), so $\sfc{\r}{\!A}{\qrt}$ is an algebraic unit in an abelian extension of $K$, giving (1).
    
    Finally, $\MS(\OO,\mm)$ says that $\abs{g(\su_\A)} = 1$; since $g\!\left(\su_\A^{1/2}\right)^2 = g(\su_\A)$, it follows that $\abs{g\!\left(\su_\A^{1/2}\right)} = 1$. Since $g$ is a homomorphism,
    \begin{equation}
        g(\sfc{\r}{\!A}{\qrt})
        = \pm g\!\left(\sqrt{(\psi^2\chi_\r)(A)}\right)\,g\!\left(\su_\A^{1/2}\right)^{-kn},
    \end{equation}
    and $g\!\left(\sqrt{(\psi^2\chi_\r)(A)}\right)$ is a root of unity, so $\abs{g(\sfc{\r}{\!A}{\qrt})} = 1$, giving (4).
\end{proof}
\begin{proof}[Proof of \Cref{thm:starkimplications}] %
    \Cref{thm:starkimplications}(1) says that \Cref{conj:stark} implies \Cref{conj:mrmvc}. This follows from \Cref{thm:field0}.
    
    \Cref{thm:starkimplications}(3) says that \Cref{conj:msc} implies \Cref{conj:grmvc}. Assume \Cref{conj:msc}. Then \Cref{conj:grmvc}(1) follows from \Cref{thm:field2}(1), and \Cref{conj:grmvc}(2) follows from \Cref{thm:field2}(4).

    \Cref{thm:starkimplications}(2) says that \Cref{conj:stc} implies \Cref{conj:frmvc}. Assume \Cref{conj:stc}. By \Cref{prop:starkimp}, $\MS(\OO_K,\mm)$ holds for $K = \Q(\qrt)$ and $\mm$ an ideal of the maximal order $\OO_K$. Then \Cref{conj:grmvc}(1) follows from \Cref{thm:field2}(1), and \Cref{conj:grmvc}(2) follows from \Cref{thm:field2}(4).
\end{proof}

\section{Weyl--Heisenberg group, extended Clifford group, and SIC phenomenology}
\label{sec:WHgp}

The purpose of this section is, firstly, to  review some relevant background material concerning the Weyl--Heisenberg group, the extended Clifford  group, and $r$-SICs. 
For more details, see ~\cite{Appleby2005,Appleby:2009}. We then go on to prove Theorems~\ref{thm:rsicbsc} and~\ref{thm:sicfidcond} from the introduction.

\subsection{Weyl--Heisenberg group}

\begin{defn}
  The discrete symplectic form is
\eag{
\la \mbf{p},\mbf{q}\ra &= \p^\top\smmattwo{0}{1}{-1}{0}\q = p_2 q_1-p_1 q_2, & \mbf{p},\mbf{q}\in \mbb{Z}^2.
}
\end{defn}
The WH displacement operators (Definition~\ref{def:WHGroup})  satisfy
\eag{
D_{\mbf{p}}^{\dagger} &= D\vpu{\dagger}_{-\mbf{p}}, & \forall \mbf{p} &\in \mbb{Z}^2
\\
D_{\mbf{p}}D_{\mbf{q}} & = \rtu_d^{\la \mbf{p},\mbf{q}\ra} D_{\mbf{p}+\mbf{q}}, & \forall \mbf{p},\mbf{q} &\in \mbb{Z}^2.
}
The fact $(\rtu_d)^d = (-1)^{d+1}$ means
\eag{
D_{\mbf{p}+d\mbf{q}} &=(-1)^{(d+1)\la \mbf{p},\mbf{q}\ra} D_{\mbf{p}} .
\label{eq:Dpperiodicity}
}
for all $\mbf{p}$, $\mbf{q}$. 
So the displacement operators are $d$-periodic when $d$ is odd, but not when $d$ is even. 
It would be possible to define the displacement operators by $D_{\mbf{p}} = X^{p_1} Z^{p_2}$, so that they were $d$-periodic for all values of $d$. 
Defining them the way we do introduces major simplifications later on, at the cost of some additional complexity at the outset. 
To get an idea of the relative merits of the two definitions, see ~\cite{Bos:2019}.

One has
\eag{
\Tr\!\left(D\vpu{\dagger}_{\mbf{p}}D^{\dagger}_{\mbf{q}}\right) &= d(-1)^{\frac{d+1}{d}\la \mbf{p},\mbf{q}\ra} \delta^{(d)}_{\mbf{p},\mbf{q}}, & \delta^{(d)}_{\mbf{p},\mbf{q}} 
&=\begin{cases}
1 \qquad & \text{if $\mbf{p} \equiv \mbf{q}\Mod{d}$},
\\
0 \qquad & \text{otherwise}.
\end{cases}
}
It follows that the displacement operators are a basis for $\mcl{L}(H_d)$.  In particular, an arbitrary operator $W\in \mcl{L}(H_d)$ can be expanded in terms of the $D_{\mbf{p}}$ using
\eag{
M &= \frac{1}{d} \sum_{\mbf{p}} \Tr\!\left(WD^{\dagger}_{\mbf{p}}\right) D\vpu{\dagger}_{\mbf{p}}
\label{eq:mdopexp}
}
where the summation is over any transversal for the quotient group $\mbb{Z}^2/(d\mbb{Z}^2)$ (note that the product $\Tr\!\left(WD^{\dagger}_{\mbf{p}}\right) D\vpu{\dagger}_{\mbf{p}}$ is $d$-periodic, even though the two factors may not be).

\subsection{Clifford and extended Clifford groups}
\label{ssc:ecdgp}

\begin{defn}[Clifford Group; extended Clifford group; projective Clifford and extended Clifford groups]\label{dfn:CliffordGroup}
    The  Clifford  group in dimension $d$, denoted $\Cliff(d)$, is the set of all unitaries $U$ with the property
    \eag{
        U  D_{\mbf{p}} U^{\dagger} &= e^{i\varphi(\mbf{p})} D_{f(\mbf{p})}
    }
    for all $\mbf{p}$ and some pair of functions $\varphi\colon \mbb{Z}^2 \to \mbb{R}$, $f\colon \mbb{Z}^2\to \mbb{Z}^2$.

    The extended Clifford group in dimension $d$, denoted $\EC(d)$, is the set of all unitaries and anti-unitaries with this property.

    The projective  groups $\PC(d)$ and $\PEC(d)$ are the quotients of   $\Cliff(d)$ and $\EC(d)$ by their centres:
    \eag{
    \PC(d) & = \Cliff(d)/ \la I\ra, & \PEC(d)&= \EC(d)/\la I \ra.
    }
\end{defn}  
The importance of the groups $\Cliff(d)$, $\EC(d)$ for us is that they preserve $r$-SIC fiduciality:  if $\Pi$ is a $r$-SIC fiducial, then so is $U\Pi U^{\dagger}$, for all $U\in \EC(d)$. 
Since the replacement $U\to e^{i\theta} U$ does not change $U\Pi U^{\dagger}$ we only need consider one representative of each coset in $\PEC(d)$.

\begin{defn}[symplectic and extended symplectic groups; symplectic and anti-symplectic matrices]\label{dfn:symplectic}
    We refer to the group $\SLtwo{\mbb{Z}/\bar{d}\mbb{Z}}$ simply as ``the symplectic group.''
    Additionally, the extended symplectic group $\ESLtwo{\mbb{Z}/\bar{d}\mbb{Z}}$ is the set of all matrices in $\GLtwo{\mbb{Z}/\bar{d}\mbb{Z}}$ with determinant equal to $\pm 1$. An element of $\ESLtwo{\mbb{Z}/\bar{d}\mbb{Z}}$ is said to be \textit{symplectic} (respectively \textit{anti-symplectic}) if it has determinant equal to $+1$ (respectively $-1$). The \textit{canonical anti-symplectic} matrix is defined to be
\eag{
J &= \bmt 1 & 0 \\ 0 & -1\emt.
\label{eq:JMatrixDefinition}
}
(Note that the $n \times n$ symplectic group $\Sp_n(R)$ and the $n \times n$ special linear group $\SL_n(R)$ for a ring $R$ are not generally isomorphic, but for $n=2$, they are isomorphic and equal inside $\GL_2(R)$.)
\end{defn}
The significance of the canonical anti-symplectic matrix is that the map $F \mapsto JF$ converts symplectic matrices into anti-symplectic matrices, and conversely.

Trivially, $\Cliff(d)$ contains $\WH(d)$. 
Less trivially, it contains \cite{Appleby2005} a  representation of the symplectic group. 
Specifically, for each $F=\smt{\alpha & \beta \\ \gamma & \delta}\in \SLtwo{\mbb{Z}/\bar{d}\mbb{Z}}$ there exists a unitary $U_F\in\mcl{L}(H_d)$, unique up to multiplication by a number having absolute value equal to $1$, such that
\eag{\label{eq:symplecticUnitary}
U\vpu{\dagger}_FD\vpu{\dagger}_{\mbf{p}}U^{\dagger}_F
&= D\vpu{\dagger}_{F\mbf{p}}
}
for all  $\mbf{p}$. We refer to $U_F$ as a symplectic unitary. One has
\eag{
U\vpu{\dagger}_{F^{-1}} &\dot{=} U^{\dagger}_F & &\forall F \in \SLtwo{\mbb{Z}/\bar{d}\mbb{Z}},
\\
U_{F_1}U_{F_2}&\dot{=} U_{F_1F_2} && \forall F_1,F_2\in \SLtwo{\mbb{Z}/\bar{d}\mbb{Z}},
}
where the symbol $\dot{=}$ signifies ``equal up to multiplication by a number having absolute value equal to 1''.
So the map $F \mapsto U_F$ is a projective representation of $\SLtwo{\mbb{Z}/\bar{d}\mbb{Z}}$.
It can be shown\cite{Appleby2005} that $\Cliff(d)$ consists of all products of the form
\eag{
e^{i\lambda} D_{\mbf{p}} U_F 
\label{eq:eixidpufrepcd}
}
for  $\lambda \in \mbb{R}$, $\mbf{p}\in \mbb{Z}^2$, $F\in \SLtwo{\mbb{Z}/\bar{d}\mbb{Z}}$.

There exist\cite{Appleby2005,Appleby:2009} explicit formulae for the $U_F$. 
We say\cite{Appleby2005} that $F=\smt{\alpha & \beta \\ \gamma & \delta}\in \SLtwo{\mbb{Z}/\bar{d}\mbb{Z}}$ is a \textit{prime matrix} if $\beta$ is coprime to $\bar{d}$. 
It can be shown~\cite{Appleby2005} that every matrix in $\SLtwo{\mbb{Z}/\bar{d}\mbb{Z}}$ is a product of two prime matrices. 
It can also be shown~\cite{Appleby2005} that if $F$ is a prime matrix then
\eag{
U_F &= \frac{e^{i\theta}}{\sqrt{d}}\sum_{j,k=0}^{d-1}\rtu_d^{\beta^{-1}(\delta j^2 -2 jk+\alpha k^2)}|j\ra\la k|
\label{eq:ufppmexpn}
}
where $\beta^{-1}$ is the multiplicative inverse of $\beta$ as an element of $\mbb{Z}/\bar{d}\mbb{Z}$ and $e^{i\theta}$ is an arbitrary phase.  
\begin{defn}[Parity matrix]\label{dfn:parityMatrix}
    We define the parity matrix by 
    \eag{
    P &= \bmt -1 & 0 \\ 0 & -1 \emt.
    }
\end{defn}
Using the decomposition
\eag{
\bmt -1 & 0 \\ 0 & -1 \emt&= \bmt 0 & -1\\ -1 & 0\emt \bmt 0 & 1 \\ 1 & 0\emt
}
together with \eqref{eq:ufppmexpn} one finds, after a certain amount of algebra, that
\eag{\label{eq:ParityMatrixElements}
U_P &= \sum_{j=0}^{d-1}|-j\ra \la j|
}
in agreement with \Cref{def:pProjector}. 

If  we choose the arbitrary phase in \eqref{eq:ufppmexpn} according to
\eag{
e^{i\theta}
&=
\begin{cases}
    1 \qquad & \text{$d\equiv 1\Mod{4}$ }
    \\
    i \qquad & \text{$d\equiv 3\Mod{4}$}
    \\
    e^{\frac{\pi i}{4}} \qquad & \text{$d\equiv 0\Mod{2}$}
\end{cases}
\label{eq:SymplecticUnitaryPhaseChoice}
}
then~\cite{Appleby:2009,Appleby:2018} the components of $U_F$ are all in the cyclotomic field $\mbb{Q}(\rtu_d)$.  In the sequel we will always assume this choice has been made.  On this assumption we have the following description of  the action of a Galois automorphism.

We will need the fact
\begin{thm}\label{thm:symplecticKernel}
    The homomorphism $F\in \SLtwo{\mbb{Z}} \mapsto U_F \la I\ra \in \PC(d)$
    \begin{enumerate}
        \item is injective if $d$ is odd
        \item has kernel $\left< \smt{d+1 & 0 \\ 0 & d+1}\right>$ if $d$ is even.
    \end{enumerate}
\end{thm}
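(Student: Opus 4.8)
The plan is to reduce the statement to a combinatorial condition on displacement operators and then dispatch it by elementary arithmetic modulo $\db$ and modulo $2$. First, the homomorphism factors through $\SLtwo{\mbb{Z}/\db\mbb{Z}}$: by \eqref{eq:Dpperiodicity} the operator $D_{\mbf p}$ is $\db$-periodic in $\mbf p$ (the sign in \eqref{eq:Dpperiodicity} is trivial whenever the shift is a multiple of $\db$, since $d+1$ is even when $\db=d$ and the sign becomes a square when $\db=2d$), so $U_F$ depends only on $F\Mod{\db}$ and $\Gamma(\db)$ lies in the kernel. It therefore suffices to compute the kernel $N$ of the induced homomorphism $\SLtwo{\mbb{Z}/\db\mbb{Z}}\to\PC(d)$. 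An element $F$ lies in $N$ exactly when $U_F$ is a scalar operator; since $U_FD_{\mbf p}U_F^\dagger=D_{F\mbf p}$ by \eqref{eq:symplecticUnitary} and the $D_{\mbf p}$ span $\mcl{L}(\C^d)$, $U_F$ is scalar iff $D_{F\mbf p}=D_{\mbf p}$ for all $\mbf p\in\mbb{Z}^2$.

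The next step is to solve $D_{F\mbf p}=D_{\mbf p}$. The Hilbert--Schmidt inner product $\Tr\!\bigl(D_{\mbf p}D^\dagger_{\mbf q}\bigr)=d(-1)^{\frac{d+1}{d}\la\mbf p,\mbf q\ra}\delta^{(d)}_{\mbf p,\mbf q}$ vanishes unless $\mbf p\equiv\mbf q\Mod d$, so $D_{F\mbf p}=D_{\mbf p}$ (both being unitary, hence nonzero) forces $F\mbf p\equiv\mbf p\Mod d$ for all $\mbf p$, i.e.\ $F\equiv I\Mod d$. Writing $F=I+dG$, we have $D_{F\mbf p}=D_{\mbf p+d(G\mbf p)}$, which by \eqref{eq:Dpperiodicity} equals $D_{\mbf p}$ precisely when $(-1)^{(d+1)\la\mbf p,G\mbf p\ra}=1$ for all $\mbf p$. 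If $d$ is odd, then $d+1$ is even so this condition is vacuous, and $\db=d$ makes $F=I+dG$ equal to $I$ in $\SLtwo{\mbb{Z}/\db\mbb{Z}}$; hence $N$ is trivial, proving (1).

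If $d$ is even, then $\db=2d$, so $G$ is determined only modulo $2$, and the residual condition reads $\la\mbf p,G\mbf p\ra\equiv0\Mod2$ for all $\mbf p\in(\mbb{Z}/2\mbb{Z})^2$. Testing $\mbf p=\smcoltwo{1}{0},\smcoltwo{0}{1},\smcoltwo{1}{1}$ and using $p_i^2\equiv p_i\Mod2$ forces the off-diagonal entries of $G$ to vanish and the diagonal entries to agree mod $2$, i.e.\ $G\equiv0$ or $G\equiv I\Mod2$. One checks $\det\bigl((d+1)I\bigr)=(d+1)^2=d^2+2d+1\equiv1\Mod{2d}$ for even $d$, so both values of $G$ yield elements of $\SLtwo{\mbb{Z}/\db\mbb{Z}}$, and $(d+1)I$ has order $2$; therefore $N=\{I,(d+1)I\}=\la(d+1)I\ra$, proving (2).

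The main obstacle is the phase bookkeeping in the even case: one must determine exactly when $D_{F\mbf p}=D_{\mbf p}$ holds as an operator identity rather than merely up to a root of unity, which is where the periodicity relation \eqref{eq:Dpperiodicity} and the reduction of $G$ from modulus $\db$ to modulus $2$ interact, followed by the brief $\mbb{F}_2$ linear-algebra classification of the admissible $G$. The remaining ingredients---the factoring through $\SLtwo{\mbb{Z}/\db\mbb{Z}}$ and the equivalence ``$U_F$ scalar $\iff$ $U_F$ acts trivially on all $D_{\mbf p}$''---are formal.
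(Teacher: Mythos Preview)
Your argument is correct. The paper does not actually prove this result; it simply cites Theorem~1 of \cite{Appleby2005}. You have supplied a self-contained proof using only material already present in the paper (the conjugation relation \eqref{eq:symplecticUnitary}, the periodicity identity \eqref{eq:Dpperiodicity}, and the fact that the $D_{\mbf p}$ span $\mcl{L}(\C^d)$), so in that sense your approach goes beyond what the paper offers. The key reduction---$U_F$ scalar $\iff$ $D_{F\mbf p}=D_{\mbf p}$ for all $\mbf p$ $\iff$ $F\equiv I\Mod d$ together with the sign condition from \eqref{eq:Dpperiodicity}---is exactly the mechanism underlying the cited result, and your $\mbb{F}_2$ classification of the residual matrix $G$ in the even case is clean and correct. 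One small remark: the theorem as stated in the paper has $\SLtwo{\mbb{Z}}$ as the domain, which (as you implicitly noticed) must be read as $\SLtwo{\mbb{Z}/\db\mbb{Z}}$ for the injectivity claim in part~(1) to make sense; your opening paragraph handles this by passing to the quotient, which is the right move.
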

\begin{proof}
    See Theorem~1 of ~\cite{Appleby2005}.
\end{proof}
\begin{defn}\label{dfn:HgMatrixDefinition}
      Let $g$ be any Galois automorphism of $\mbb{Q}(\rtu_d)/\mbb{Q}$.  Define  $k_g$ to  be the unique integer in the range $0\le k_g < \db$ such that $g(\rtu_d)=\rtu_d^{k_g}$, and define $H_g\in \GLtwo{\mbb{Z}/\db\mbb{Z}}$ to be the matrix
  \eag{\label{eq:HgMatrixDefinition}
  H_g=\bmt 1 & 0 \\ 0 & k_g \emt.
  }
\end{defn}
\begin{theorem}\label{thm:GalActOnClifford}
  Let $g$ be any Galois automorphism of $\mbb{Q}(\rtu_d)/\mbb{Q}$.
  Then
  \eag{
g(D_{\mbf{p}})&= D_{H_g\vpu{-1}\mbf{p}}
\label{eq:GaloisActionDisplacement}
\\
g(U_F) &\; \dot{=} \; U_{H\vpu{-1}_gFH^{-1}_g}
\label{eq:GaloisActionSymplecticUnitary}
}
for all $\mbf{p}\in \mbb{Z}^2$, $F\in \SLtwo{\mbb{Z}/\db\mbb{Z}}$.
\end{theorem}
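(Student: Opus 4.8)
The plan is to establish \eqref{eq:GaloisActionDisplacement} by a direct computation on generators, and then to leverage it through the characterizing property \eqref{eq:symplecticUnitary} to obtain \eqref{eq:GaloisActionSymplecticUnitary}. Throughout I would use that $\rtu_d$ is a primitive $\db$-th root of unity, so $\mbb{Q}(\rtu_d)$ is the $\db$-th cyclotomic field, $\Gal(\mbb{Q}(\rtu_d)/\mbb{Q})\cong(\mbb{Z}/\db\mbb{Z})^\times$ is abelian, and $k_g$ is coprime to $\db$ (so $H_g\in\GLtwo{\mbb{Z}/\db\mbb{Z}}$ is invertible); I would also record at the outset that, by \eqref{eq:Dpperiodicity} and its strengthening to $\db$-periodicity, $D_{\mbf{q}}$ depends only on $\mbf{q}\bmod\db$, so that expressions like $D_{H_g\mbf{p}}$ are unambiguous for $H_g$ a matrix over $\mbb{Z}/\db\mbb{Z}$.

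For \eqref{eq:GaloisActionDisplacement}: the operator $X$ has $0/1$ matrix entries and so is fixed by $g$; since $\rtus_d=\rtu_d^2$ we get $g(\rtus_d)=\rtu_d^{2k_g}=\rtus_d^{k_g}$, whence the diagonal operator $Z$ satisfies $g(Z)=Z^{k_g}$. Because applying $g$ entrywise is a ring homomorphism on matrices over $\mbb{Q}(\rtu_d)$, applying it to $D_{\mbf{p}}=\rtu_d^{p_1p_2}X^{p_1}Z^{p_2}$ yields $g(D_{\mbf{p}})=\rtu_d^{k_g p_1p_2}X^{p_1}Z^{k_g p_2}$, which is exactly $D_{H_g\mbf{p}}$ since $H_g\mbf{p}=\smcoltwo{p_1}{k_g p_2}$.

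For \eqref{eq:GaloisActionSymplecticUnitary}: with the phase normalization \eqref{eq:SymplecticUnitaryPhaseChoice} the entries of $U_F$ lie in $\mbb{Q}(\rtu_d)$, so $g(U_F)$ is defined. Since complex conjugation is an element of the abelian group $\Gal(\mbb{Q}(\rtu_d)/\mbb{Q})$, the automorphism $g$ commutes with it; together with the trivial commutation with transposition this gives $g(M^\dagger)=g(M)^\dagger$ for every matrix $M$ over $\mbb{Q}(\rtu_d)$, so $g(U_F)$ is again unitary. Applying $g$ to \eqref{eq:symplecticUnitary} and using \eqref{eq:GaloisActionDisplacement} then gives
\[
g(U_F)\,D_{H_g\mbf{p}}\,g(U_F)^\dagger \;=\; g\!\left(D_{F\mbf{p}}\right) \;=\; D_{H_g F\mbf{p}}.
\]
Replacing $\mbf{p}$ by $H_g^{-1}\mbf{q}$ (valid as $H_g$ is invertible mod $\db$) shows that $g(U_F)$ conjugates $D_{\mbf{q}}$ to $D_{(H_g F H_g^{-1})\mbf{q}}$ for all $\mbf{q}$; since $\det(H_g F H_g^{-1})=\det(F)=1$, the matrix $H_g F H_g^{-1}$ lies in $\SLtwo{\mbb{Z}/\db\mbb{Z}}$, and the uniqueness-up-to-phase clause in the definition of the symplectic unitary forces $g(U_F)\;\dot{=}\;U_{H_g F H_g^{-1}}$.

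I expect no deep obstacle here; the only points requiring genuine care are verifying that $g$ commutes with Hermitian conjugation — so that both unitarity and the defining conjugation relation survive applying $g$ — which reduces to abelianness of the cyclotomic Galois group, and the index bookkeeping ensuring that $D_{\mbf{q}}$ and $U_F$ depend on their labels only modulo $\db$, so that $H_g$ and $H_g F H_g^{-1}$ may be treated as bona fide elements of $\GLtwo{\mbb{Z}/\db\mbb{Z}}$ and $\SLtwo{\mbb{Z}/\db\mbb{Z}}$.
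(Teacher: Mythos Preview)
Your argument is correct. The paper does not give a proof here but simply cites \cite[Thm.~2]{Appleby2005}; your direct computation on $X,Z,\rtu_d$ for \eqref{eq:GaloisActionDisplacement} and your use of the defining conjugation relation \eqref{eq:symplecticUnitary} together with the uniqueness-up-to-phase clause for \eqref{eq:GaloisActionSymplecticUnitary} is exactly the standard route and is complete as written.
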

\begin{rmkb}
    If $g$ is complex conjugation, then $H_{g} = J$, where $J$ is the matrix defined in ~\eqref{eq:JMatrixDefinition}.
\end{rmkb}
\begin{proof}
   Theorem~2 in ~\cite{Appleby2005}, with the obvious modification to take account of ~\eqref{eq:SymplecticUnitaryPhaseChoice} above. 
\end{proof}

\begin{proof}[Proof of \Cref{lm:OverlapTermsGhostOverlap}]
    It follows from \Cref{dfn:CandidateGhostAndSICFiducials} and \Cref{thm:GalActOnClifford} that
    \eag{
    \Pi_s &= 
        \frac{1}{d}\sum_{\mbf{p}}g\left(\ghostOverlapC{t}{G\mbf{p}}\right)D_{H_g\mbf{p}}.
    }
    In view of ~\eqref{eq:CandidateOverlaps} this means
    \eag{
    \overlapC{s}{\mbf{p}}&= \Tr\left(\Pi\vpu{\dagger}_sD^{\dagger}_{G^{-1}\mbf{p}}\right)
=
    g\left(\ghostOverlapC{t}{GH^{-1}_g G^{-1}\mbf{p}}\right).
    }
\end{proof}

Just as symplectic matrices $F$ are associated to unitaries $U_F$ satisfying ~\eqref{eq:symplecticUnitary}, so anti-symplectic matrices $F$  are associated to anti-unitaries $U_F$ satisfying the  same equation.  Explicitly, such anti-unitaries act according to
\eag{\label{eq:antiSymplecticAntiUnitary}
U_F |\psi\ra &= \sum_{j=0}^{d-1} \la j |\psi\ra^{*} U_{FJ}|j\ra.
}
In particular  $U_J$ acts by complex conjugation in the standard basis:
\eag{
U_J |\psi\ra &= \sum_{j=0}^{d-1} \la j |\psi\ra^{*} |j\ra.
}
 One then finds that $\EC(d)$ consists of all products of the form~\eqref{eq:eixidpufrepcd}, where now $F$ is an arbitrary element of $\ESLtwo{\mbb{Z}/\bar{d}\mbb{Z}}$.

Finally, we define
\begin{defn}[(anti-)symplectic subgroup of $\EC(d)$]\label{df:antisymplecticSubgroup}We define the (anti-)symplectic subgroup of $\EC(d)$, denoted $\ECS(d)$ to consist of all unitaries and anti-unitaries of the form
\eag{
e^{i\theta}U_F
}
with $F\in \ESLtwo{\mbb{Z}/\db\mbb{Z}}$ and $e^{i\theta}$ a 
phase.
\end{defn}
\subsection{SIC phenomenology}\label{subsc:sicphenomenology}
Over the last 25 years, investigation of the large number of known $1$-SICs has resulted in a large number of empirical observations.  To reflect the fact that a lot of this material is unproven, we refer to it as \textit{SIC phenomenology}. One of the aims of this paper is to show that many of these observations are consequences of the Twisted Convolution Conjecture together with the Stark Conjecture and its refinements.  

The purpose of this subsection is to summarize this pre-existing body of empirical observations.
Since it is concerned with previous results, we only discuss $1$-SICs.

\subsubsection{Number of orbits}
\label{ssc:NumberOfOrbits}
The fact that the elements of $\EC(d)$ preserve $r$-SIC fiduciality suggests we group $r$-SICs into $\EC(d)$ orbits. The question then arises:  how many orbits are there in each dimension?  In the case of $1$-SICs this question has been answered by brute-force numerical calculation in many low-lying dimensions (see ~\cite{Scott2010,Scott:2017,Grassl:2021,Grassl:2021a} and references cited therein). One finds that for $d=3$ there are infinitely many orbits.  However, those are examples of  sporadic $1$-SICs~\cite{Stacey:2021}, which are in various ways exceptional.  In particular, $1$-SICs in dimension $3$  typically generate transcendental number fields.  If we confine ourselves to non-sporadic $1$-SICs (i.e. WH covariant $1$-SICs in dimensions greater than 3) then explicit calculation suggests that the number of orbits is always finite.  For the first $20$ dimensions greater than 3 the number of orbits is given in \Cref{tab:numberOfECdSICOrbitsDims4To20}.
\begin{table}[htb]
    \centering
\begin{tabular}{lllllllllllllllllllll}
$d$ & 4 & 5 & 6 & 7 & 8 & 9 & 10 & 11 & 12 & 13 & 14 & 15
& 16 & 17 & 18 & 19 & 20 & 21 & 22 & 23
\\
\midrule
\# orbits & 1 & 1 & 1 & 2  & 2 & 2 & 1 &  3 & 2 & 2 & 2 & 4 & 2 & 3 &2 & 5 & 2 & 5 & 1 & 6
\end{tabular}
    \caption{Number of $\EC(d)$ orbits of $1$-SICs in dimensions 4--20.}
    \label{tab:numberOfECdSICOrbitsDims4To20}
\end{table}
 A long-standing puzzle has been to understand where these numbers are coming from. As will be shown in \Cref{sec:sicphenexplain}, the construction we describe answers that question.

\subsubsection{Symmetry group}\label{ssc:symgp}
\begin{defn}[Symmetry group of a fiducial]\label{dfn:symmetryGroupOfFiducial}
    Let $\Pi$ be a $r$-SIC fiducial in dimension $d$.  Its symmetry group, denoted $\mcl{S}(\Pi)$, is the set of  cosets  $U\la I \ra \in \PEC(d)$ such that 
\eag{
U\Pi U^{ \dagger} &= \Pi.
}
\end{defn}
Brute-force numerical computation~\cite{Appleby2005,Scott2010,Scott:2017} suggests that in every case $\mcl{S}(\Pi)$ 
\begin{enumerate}
    \item is non-trivial cyclic,
    \item \label{it:canonicalOrder3} contains a coset 
    \eag{
        D_{\mbf{p}} U_F \la I\ra
    }
    for which $F\in\SLtwo{\mbb{Z}/\bar{d}\mbb{Z}}$ is such that $\Tr(F) \equiv -1 \Mod{d}$. 
    It can be shown that if $d>3$ then such cosets are necessarily order 3. 
\end{enumerate} 
\begin{thm}
    Let $F\in\SLtwo{\mbb{Z}/\bar{d}\mbb{Z}}$ be such that $\Tr(F) \equiv -1 \Mod{d}$. Assume $d>3$. 
    Then
    \begin{enumerate}
        \item \label{it:dpufIsOrder3} The coset $D_{\mbf{p}} U_F \la I\ra$ is order $3$ for all $\mbf{p}\in \left(\mbb{Z}/\db\mbb{Z}\right)^2$.
    \item If $d$ is odd then $F$ is order 3.
    \item If $d$ is even then $F$ is order 3 if $\Tr(F) \equiv -1 \Mod{\db}$ and order 6 if $\Tr(F) \equiv d-1 \Mod{\db}$.
    \end{enumerate}
\end{thm}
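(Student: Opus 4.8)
The plan is to work entirely inside the matrix group $\SLtwo{\mbb{Z}/\bar{d}\mbb{Z}}$ and its double cover behavior, translating all statements about cosets $D_{\mbf{p}}U_F\langle I\rangle$ into statements about matrices via \Cref{thm:symplecticKernel} and \Cref{dfn:CliffordGroup}. The hypothesis $\Tr(F)\equiv -1\Mod d$ will be used through the Cayley--Hamilton relation: since $F\in\SLtwo{\mbb{Z}/\bar d\mbb{Z}}$, we have $F^2 = \Tr(F)F - I$, so modulo $d$, $F^2 \equiv -F - I$, hence $F^3 \equiv -F^2 - F = (F+I) - F = I \Mod d$. Thus $F^3 \equiv I \Mod d$ always, which is the heart of the order-3 phenomenon; the subtleties are entirely about what happens modulo $\bar d$ when $d$ is even, and about the displacement-operator correction when passing to $F^3$.

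First I would prove part (1), that $D_{\mbf{p}}U_F\langle I\rangle$ has order exactly $3$. Using $U_F D_{\mbf{q}} U_F^{\dagger} = D_{F\mbf{q}}$ (\eqref{eq:symplecticUnitary}) and $D_{\mbf{p}}D_{\mbf{q}} = \rtu_d^{\langle\mbf{p},\mbf{q}\rangle}D_{\mbf{p}+\mbf{q}}$, one computes $(D_{\mbf{p}}U_F)^3 \dot= D_{\mbf{w}} U_{F^3}$ for an explicit vector $\mbf{w} = (I + F^{-1} + F^{-2})\mbf{p}$ (up to a scalar phase, which does not matter projectively). Now $I + F + F^2 \equiv I + (-F-I) + F + \ldots$; more carefully, from $F^2\equiv -F-I\Mod d$ one gets $I + F + F^2 \equiv 0 \Mod d$, so $(I+F^{-1}+F^{-2})\mbf{p} \equiv \mathbf{0}\Mod d$ after multiplying through by $F^2$ (which is invertible); hence $D_{\mbf{w}}\dot= I$ by $d$-periodicity of displacement operators (\eqref{eq:Dpperiodicity}, noting any sign is absorbed projectively). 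Combined with $F^3 \equiv I \Mod d$ and \Cref{thm:symplecticKernel}(2) — whose kernel element $\smt{d+1&0\\0&d+1}$ is itself of order dividing $2$, hence $U_{F^3}\dot= I$ in $\PC(d)$ — this gives $(D_{\mbf{p}}U_F)^3 \dot= I$. Order exactly $3$ (not $1$) follows because $\Tr(F)\equiv -1\not\equiv 2\Mod d$ forces $F\not\equiv I\Mod d$ when $d>3$ (the case $d=3$ is exactly where $-1\equiv 2$, which is why it is excluded), and a displacement-times-symplectic element is trivial in $\PEC(d)$ only if both factors are.

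For parts (2) and (3) I would analyze $F^3$ directly as a matrix over $\mbb{Z}/\bar d\mbb{Z}$. When $d$ is odd, $\bar d = d$, so $F^3\equiv I\Mod{\bar d}$ immediately and, since the kernel in \Cref{thm:symplecticKernel}(1) is trivial, $F$ itself has order dividing $3$; it is not $I$ by the trace condition, so $F$ has order exactly $3$. When $d$ is even, $\bar d = 2d$, and the Cayley--Hamilton computation gives $F^3 = \Tr(F)F^2 - F \equiv -(F^2+F) + (\Tr(F)+1)F^2 = (\Tr(F)+1)(F^2) - \ldots$; the clean way is $F^3 - I = (F-I)(F^2+F+I)$ and to track $F^2+F+I \Mod{2d}$. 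Writing $\Tr(F) = -1 + kd$ with $k\in\{0,1\}$ (the two cases $\Tr(F)\equiv -1$ vs.\ $\Tr(F)\equiv d-1\Mod{2d}$), Cayley--Hamilton gives $F^2 + F + I = (\Tr(F)+1)F = kdF\Mod{2d}$. If $k=0$, then $F^2+F+I\equiv 0\Mod{2d}$, so $F^3\equiv I$ and $F$ has order $3$ (again $\neq I$ by the trace). If $k=1$, then $F^3 - I = (F-I)(dF) = dF^2 - dF = d(F^2 - F)$; iterating, $F^6 - I = (F^3-I)(F^3+I)$ and one checks $F^6\equiv I\Mod{2d}$ while $F^3\not\equiv I$ (the obstruction lives in the order-$2$ kernel, matching $\Tr(F^3)$), so $F$ has order exactly $6$.

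The main obstacle I anticipate is bookkeeping in the even case: carefully establishing that when $\Tr(F)\equiv d-1\Mod{\bar d}$ the matrix $F^3$ is the nontrivial kernel element $\smt{d+1&0\\0&d+1}$ rather than $I$ — equivalently that $F^3\not\equiv I\Mod{2d}$ — requires either a direct $2$-adic computation with the companion matrix of $F$ or invoking that $U_{F^3}$ must be nontrivial for the coset to drop in order, and reconciling this with part (1)'s claim that the coset $D_{\mbf{p}}U_F\langle I\rangle$ is nonetheless order $3$ (the displacement prefactor $D_{\mbf{w}}$ must then exactly cancel the kernel defect of $U_{F^3}$, which is a consistency check one should verify). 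Everything else is a routine application of \eqref{eq:symplecticUnitary}, \eqref{eq:Dpperiodicity}, \Cref{thm:symplecticKernel}, and Cayley--Hamilton.
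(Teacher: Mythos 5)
Your proof is correct and, for parts (2) and (3), is exactly the paper's route: the paper's entire proof of those parts is ``repeated application of $L^2 = \Tr(L)L - I$,'' which is the Cayley--Hamilton computation you carry out (giving $F^2+F+I=(\Tr(F)+1)F$ and hence $F^3 = I$ or $F^3=(d+1)I$ modulo $\bar d$ in the two even cases). For part (1) the paper simply cites Appleby (2005) rather than proving anything, so your direct argument via $(D_{\mbf{p}}U_F)^3 \doteq D_{(I+F+F^2)\mbf{p}}\,U_{F^3}$ is a useful reconstruction; the one step there that needs care is the claim that $U_{F^3}\doteq I$, which does not follow from $F^3\equiv I \Mod{d}$ alone but is supplied by your part-(3) computation showing $F^3$ is $I$ or $(d+1)I$ modulo $2d$, i.e.\ lies in the kernel identified in the symplectic-kernel theorem, exactly as you anticipate in your closing paragraph.
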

\begin{proof}
    The first statement is proved in ~\cite{Appleby2005}. 
    The other two are proved by repeated application of the identity 
    \eag{
    L^2 &= \Tr(L) L -I,
    }
    valid for any $L\in \SLtwo{\mbb{Z}/\bar{d}\mbb{Z}}$.
\end{proof}
Note that we can switch between the two cases $\Tr(F) \equiv -1 \Mod{\db}$ and $\Tr(F) \equiv d-1 \Mod{\db}$ by multiplying by $d+1$: 
If $F' = (d+1)F$, then $\Tr(F') \equiv -1 \Mod{\db}$ if and only if $\Tr(F) \equiv d-1 \Mod{\db}$. 
Note also~\cite{Appleby2005} that if $F'=(d+1)F$ then $D_{\mbf{p}}U_{F'} \la I \ra = D_{\mbf{p}}U_{F} \la I \ra$ for all $\mbf{p}$. 
So if one is only interested in the corresponding elements of $\PC(d)$  there is no loss of generality in focusing on just one of the cases. 
Accordingly, in the following, if $D_{\mbf{p}}U_{F} \la I \ra$ is a canonical order 3 unitary, it will always, unless the   contrary is explicitly stated, be assumed that $\Tr(F) \equiv d-1 \Mod{\db}$. 
We thus define
    \begin{defn}[canonical order 3 unitary]\label{df:canonicalOrder3}
        A coset $D_{\mbf{p}} U_F \la I\ra$ in dimension $d>3$ is said to be \emph{canonical order 3} if $\Det(F) = +1$ and $\Tr(F) \equiv d -1 \Mod{\db}$.
    \end{defn}    

It is convenient to define the following standard trace $d-1$ matrices:
\begin{defn}[$F_z$ (Zauner), $F\vpu{'}_a$, $F'_a$ matrices]
\label{dfn:FzFaFpaMatrices}
For all $d$ define the \emph{Zauner} matrix to be 
    \eag{
    F_z &= \bmt 0 & d-1 \\ d+1 & d-1\emt.
    \\
    \intertext{If $d \equiv 3 \Mod{9}$, also define the variant Zauner matrix}
    F_a &= \bmt 1 & d+3 \\ \frac{4d-3}{3} & d-2 \emt,
    \\
    \intertext{while if $d \equiv 6 \Mod{9}$, define the variant Zauner matrix}
    F'_a &= \bmt 1 & d+3 \\ \frac{2d-3}{3} & d-2 \emt.
    }
\end{defn}
It turns out that every determinant $+1$, trace $d-1$ element of $\ESLtwo{\mbb{Z}/\db\mbb{Z}}$ is conjugate to one of these three matrices.  Specifically:
\begin{thm}\label{thm:CanonicalOrder3ConjugacyClasses}
The set of matrices in $\ESLtwo{\mbb{Z}/\db \mbb{Z}}$ having determinant equal to $+1$ and trace equal to $d-1$ consists of
\begin{enumerate}
    \item The single conjugacy class $[F_z]$ if $d \not\equiv 3,6 \Mod{9}$,
    \item The two disjoint conjugacy classes $[F_z]$, $[F_a]$ if $d\equiv 3 \Mod{9}$,
    \item The two disjoint conjugacy classes $[F\vpu{'}_z]$, $[F'_a]$ if $d\equiv 6 \Mod{9}$,
\end{enumerate}
where the notation ``$[F]$'' means ``conjugacy class of $F$ considered as an element of $\ESLtwo{\mbb{Z}/\db\mbb{Z}}$''.
\end{thm}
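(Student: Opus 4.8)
The plan is to analyze the conjugacy classes of elements of $\ESLtwo{\mbb{Z}/\db\mbb{Z}}$ with determinant $+1$ and trace $d-1$ by reducing modulo prime powers and invoking the Chinese remainder theorem. Write $\db = \prod_\ell \ell^{a_\ell}$ for the prime factorization, so that $\SLtwo{\mbb{Z}/\db\mbb{Z}} \cong \prod_\ell \SLtwo{\mbb{Z}/\ell^{a_\ell}\mbb{Z}}$, and similarly for conjugacy classes. Since conjugacy is detected component-by-component in this product, the classification reduces to understanding, for each prime power $\ell^{a_\ell}$ dividing $\db$, the conjugacy classes in $\SLtwo{\mbb{Z}/\ell^{a_\ell}\mbb{Z}}$ of matrices with trace $\equiv d-1 \pmod{\ell^{a_\ell}}$. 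The characteristic polynomial of such a matrix is $x^2 - (d-1)x + 1$, and its discriminant is $(d-1)^2 - 4 = (d+1)(d-3) = \Delta$ (recall the notation $\Delta = (d+1)(d-3)$). The key observation is that, away from the prime $3$, a matrix in $\SLtwo{\mbb{Z}/\ell^{a}\mbb{Z}}$ is determined up to conjugacy by its characteristic polynomial together with a cohomological obstruction that vanishes when $\ell \nmid \Delta$ or under mild hypotheses; the prime $3$ is exactly where the extra classes $[F_a]$ or $[F'_a]$ can appear, since $3 \mid \Delta$ precisely when $d \equiv 0 \pmod 3$ (as $(d+1)(d-3) \equiv (d+1)\cdot d \pmod 3$ vanishes iff $3\mid d$).

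First I would establish the count. A standard computation (using e.g. the rational canonical form over $\mbb{Z}/\ell^a\mbb{Z}$, or the orbit-counting / Burnside approach via centralizer orders) shows that for $\ell \neq 3$, or for $\ell = 3$ when $3 \nmid d$, there is a unique conjugacy class of trace-$(d-1)$ determinant-$1$ matrices modulo $\ell^{a_\ell}$; whereas for $\ell = 3$ and $3 \mid d$, there are exactly two such classes modulo $3^{a_3}$. This dichotomy is governed by the $3$-adic valuation of $\Delta$: when $3 \mid d$ one has $d-1 \equiv -1 \pmod 3$, so the characteristic polynomial $x^2 + x + 1$ modulo $3$ is $(x-1)^2$, the non-semisimple case, and the centralizer structure splits the matrices into a ``regular'' (nonzero nilpotent part) orbit and further orbits that coalesce appropriately to give two classes; the split between $d \equiv 3$ and $d \equiv 6 \pmod 9$ reflects whether the relevant lift refines further at $3^2$. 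Assembling across all $\ell$ via the isomorphism, the number of classes is $1$ unless $3 \mid d$, in which case it is $2$, which matches the three stated cases once one checks $d\equiv 3,6 \pmod 9$ is exactly the relevant trichotomy (the case $3 \mid d$ but $9 \nmid d$). Note also that $d \equiv 0 \pmod 9$ would a priori need separate treatment, but then $\Tr(F_z) = d-1 \equiv -1$ still works and one checks the class count remains $2$; the theorem's phrasing suggests that in that case the second class is again represented by an appropriate variant, or the statement implicitly restricts—I would verify against the definition of $F_a, F'_a$, which are only defined for $d \equiv 3, 6 \pmod 9$.

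Second I would identify the representatives explicitly. The matrix $F_z = \smt{0 & d-1 \\ d+1 & d-1}$ has determinant $-(d^2-1) \equiv -(-1) = 1 \pmod{\db}$ and trace $d-1$, so it lies in the relevant set for all $d$; it represents the ``generic'' class. For $d \equiv 3 \pmod 9$, one verifies directly that $F_a = \smt{1 & d+3 \\ \frac{4d-3}{3} & d-2}$ has trace $d-1$ and determinant $(d-2) - (d+3)\cdot\frac{4d-3}{3} \equiv 1 \pmod{\db}$ (a short modular arithmetic check), and then shows $F_a$ is \emph{not} conjugate to $F_z$ modulo $3^{a_3}$—equivalently, that they land in the two distinct $3$-local classes—by computing a conjugacy invariant such as the image of $F - I$ (or $F + I$ for the $-1$ eigenvalue case) in $\mbb{Z}^2/3\mbb{Z}^2$, or the order of $F-\lambda I$ as a module endomorphism. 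The case $d \equiv 6 \pmod 9$ with $F'_a = \smt{1 & d+3 \\ \frac{2d-3}{3} & d-2}$ is handled identically. The hardest part will be this last step: carefully extracting the right conjugacy invariant over $\mbb{Z}/3^{a_3}\mbb{Z}$ that separates $[F_z]$ from $[F_a]$ (resp. $[F'_a]$) and no further, and confirming there is no third class—this is where the $9$ versus $3$ distinction is subtle and requires looking one level deeper than modulo $3$. I expect this can be cited from or reduced to the conjugacy-class analysis of $\SLtwo{\mbb{Z}/\ell^a\mbb{Z}}$ already in the SIC literature (the paper references \cite{Appleby2005,Appleby:2009}), but verifying it cleanly in the extended ($\ESLtwo{}$) setting and tracking the sign/trace normalization $\Tr(F) \equiv d-1 \pmod{\db}$ rather than $\equiv -1$ is the main bookkeeping obstacle.
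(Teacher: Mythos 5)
There is a genuine gap, and it shows up in two places. First, your reduction of the problem to prime powers via the Chinese remainder theorem is valid for conjugacy in $\SLtwo{\mbb{Z}/\db\mbb{Z}}$ but not for conjugacy in $\ESLtwo{\mbb{Z}/\db\mbb{Z}}$: the condition $\det = \pm 1$ is a \emph{global} constraint, so $\ESLtwo{\mbb{Z}/\db\mbb{Z}}$ is a proper subgroup of the product of the local groups $\ESLtwo{\mbb{Z}/\ell^{a_\ell}\mbb{Z}}$ (one may not conjugate by a determinant $-1$ element at one prime and a determinant $+1$ element at another). An $\ESLtwo{}$-class is the union of at most two $\SLtwo{}$-classes, namely $[F]_{\SL_2}$ and $[JFJ^{-1}]_{\SL_2}$, and this gluing must be tracked globally, not componentwise. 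Second, and relatedly, your treatment of $d\equiv 0 \Mod{9}$ is wrong: you predict the class count ``remains $2$,'' but the theorem asserts a \emph{single} class $[F_z]$ there. In $\SLtwo{\mbb{Z}/d\mbb{Z}}$ there are indeed two classes, $[F_z]$ and $[F_z^{-1}]$, when $9\mid d$ (this is Bos--Waldron), and the whole point of working in the extended group is that the determinant $-1$ matrix $\smt{0&1\\1&0}$ conjugates $F_z$ to $F_z^{-1}$, merging them. Missing this merging is not a bookkeeping issue; it is the step your local analysis cannot see, and it is why your hedged uncertainty about the $9\mid d$ case cannot be resolved within your framework as set up. Beyond these two points, the central local counting claims (uniqueness of the class away from $3$, exactly two classes at $3$ when $3\mid d$) are asserted rather than proved, and the passage from trace $-1$ to trace $d-1$ modulo $\db=2d$ in the even case is nontrivial (it uses $F\mapsto (d+1)F$ and explicit conjugations) rather than bookkeeping.

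For comparison, the paper's route avoids all local analysis: it quotes the Bos--Waldron classification of trace $-1$ determinant $+1$ classes in $\SLtwo{\mbb{Z}/d\mbb{Z}}$ wholesale, merges $[F_z]$ with $[F_z^{-1}]$ in the extended group via $\smt{0&1\\1&0}$, separates $[F_a]$ (resp.\ $[F'_a]$) from $[F_z]$ by reduction modulo $3$ (since $F_a\equiv I$ but $F_z\not\equiv I$ there), and handles even $d$ by applying the $\SLtwo{\mbb{Z}/2d\mbb{Z}}$ statement together with the trace shift $F\mapsto (d+1)F$. If you want to salvage your approach, you would need to (i) prove the local class counts, (ii) carry out the gluing of $\SLtwo{}$-classes into $\ESLtwo{}$-classes as a separate global step, and (iii) do the even-$d$ trace conversion explicitly; at that point you would essentially have reconstructed the ingredients of the cited classification.
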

\begin{rmkb}
    Although the  matrices of interest are all in $\SLtwo{\mbb{Z}/\db \mbb{Z}}$, we consider conjugacy relative to $\ESLtwo{\mbb{Z}/\db\mbb{Z}}$.  This is essential.  If instead we considered conjugacy relative to $\SLtwo{\mbb{Z}/\db \mbb{Z}}$ the conjugacy classes would be smaller, and the description significantly more complicated.
\end{rmkb}
\begin{proof}
    See \Cref{ap:canord3}.
\end{proof}
If $d \equiv 3 \Mod{9}$ (respectively $d \equiv 6 \Mod{9}$) we can use the following convenient criterion to tell if a given matrix is in $[F\vpu{'}_a]$ or $[F\vpu{'}_z]$ (respectively $[F'_a]$ or $[F\vpu{'}_z]$).
\begin{thm}\label{thm:CriterionForFaFaprime}
Let $F\in \ESLtwo{\mbb{Z}/\db\mbb{Z}}$ have determinant equal to $+1$ and trace equal to $d-1$.  
\begin{enumerate}
    \item If $d \equiv 3 \Mod{9}$ then $F\in [F_a]$ if and only if $F \equiv I \Mod{3}$.
    \item If $d \equiv 6 \Mod{9}$ then $F\in [F'_a]$ if and only if $F \equiv I \Mod{3}$.
\end{enumerate}
\end{thm}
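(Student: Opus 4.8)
The plan is to leverage \Cref{thm:CanonicalOrder3ConjugacyClasses}, which already tells us that under the stated hypotheses the set of determinant-$+1$, trace-$(d-1)$ matrices in $\ESLtwo{\mbb{Z}/\db\mbb{Z}}$ is the disjoint union of exactly two conjugacy classes: $[F_z]$ and $[F_a]$ when $d\equiv 3\Mod 9$, and $[F_z]$ and $[F'_a]$ when $d\equiv 6\Mod 9$. Consequently it is enough to produce a single $\ESLtwo{\mbb{Z}/\db\mbb{Z}}$-conjugation-invariant property that distinguishes the variant Zauner representative from the Zauner representative. I claim the property ``$F\equiv I\Mod 3$'' does the job, and the whole proof reduces to checking this on the three explicit matrices of \Cref{dfn:FzFaFpaMatrices}.

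First I would note that when $d\equiv 3$ or $6\Mod 9$ we have $3\mid d$, hence $3\mid\db$, so reduction of entries modulo $3$ is a well-defined ring homomorphism $\mbb{Z}/\db\mbb{Z}\to\mbb{Z}/3\mbb{Z}$; applying it entrywise gives a group homomorphism $\pi\colon\ESLtwo{\mbb{Z}/\db\mbb{Z}}\to\ESLtwo{\mbb{Z}/3\mbb{Z}}$ (and since our $F$ has $\Det(F)=+1$, in fact $\pi(F)\in\SLtwo{\mbb{Z}/3\mbb{Z}}$). Because $\pi$ is a homomorphism, $\pi(gFg^{-1})=\pi(g)\,\pi(F)\,\pi(g)^{-1}$; in particular if $F$ is conjugate in $\ESLtwo{\mbb{Z}/\db\mbb{Z}}$ to some $F_0$ with $\pi(F_0)=I$ then $\pi(F)=I$, and if $\pi(F)\neq I$ then $F$ cannot be conjugate to any such $F_0$. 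Thus the condition ``$F\equiv I\Mod 3$'' is constant on each $\ESLtwo{\mbb{Z}/\db\mbb{Z}}$-conjugacy class.

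Next I would carry out the two bookkeeping computations. For any $d\equiv 0\Mod 3$ one has $F_z\equiv\bmt 0 & -1\\ 1 & -1\emt\Mod 3$, which is not the identity. Writing $d=9k+3$ and reducing the entries of $F_a$ modulo $3$ (using $d+3\equiv 0$, $\tfrac{4d-3}{3}=12k+3\equiv 0$, and $d-2\equiv 1$) gives $F_a\equiv I\Mod 3$; writing $d=9k+6$ and reducing $F'_a$ (using $d+3\equiv 0$, $\tfrac{2d-3}{3}=6k+3\equiv 0$, and $d-2\equiv 1$) gives $F'_a\equiv I\Mod 3$. Then I would assemble the argument for $d\equiv 3\Mod 9$: if $F\in[F_a]$ then $F\equiv F_a\equiv I\Mod 3$; conversely if $F\equiv I\Mod 3$ then $F\not\equiv F_z\Mod 3$, so $F\notin[F_z]$, and by \Cref{thm:CanonicalOrder3ConjugacyClasses} the only remaining possibility is $F\in[F_a]$. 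The case $d\equiv 6\Mod 9$ is word-for-word identical with $F'_a$ replacing $F_a$.

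There is essentially no deep obstacle here, since the classification \Cref{thm:CanonicalOrder3ConjugacyClasses} does the heavy lifting; the only points requiring any care are verifying that the mod-$3$ reduction is genuinely well-defined on $\mbb{Z}/\db\mbb{Z}$ (this is exactly where the hypothesis $3\mid d$, equivalently $d\equiv 3$ or $6\Mod 9$, is used) and that it maps into $\ESLtwo{\mbb{Z}/3\mbb{Z}}$ so that conjugacy is genuinely transported, together with the two short entrywise reductions above.
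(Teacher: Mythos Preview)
Your proof is correct and follows essentially the same approach as the paper's own proof, which simply records the mod-$3$ reductions $F_z\equiv\smt{0&2\\1&2}\not\equiv I$ and $F_a\equiv I$ (resp.\ $F'_a\equiv I$) and calls the result an ``immediate consequence.'' You have spelled out explicitly the two points the paper leaves implicit---that reduction mod $3$ is well-defined on $\mbb{Z}/\db\mbb{Z}$ and transports conjugacy, and that \Cref{thm:CanonicalOrder3ConjugacyClasses} supplies the dichotomy---but the underlying argument is identical.
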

\begin{rmkb}
    In particular $[F'_a] = [F\vpu{'}_z]$ if $d \equiv 3 \Mod{9}$, and $[F\vpu{'}_a] =  [F\vpu{'}_z]$ if $d \equiv 6 \Mod{9}$.
\end{rmkb}
\begin{proof}
Immediate consequence of the fact
\eag{
d&\equiv 3 \Mod{9} &&\implies & F_z &= \bmt 0 & 2 \\ 1 & 2\emt, \ F_a \equiv I \Mod{3},
\\
d&\equiv 6 \Mod{9} &&\implies & F_z &= \bmt 0 & 2 \\ 1 & 2\emt, \  F'_a \equiv I \Mod{3}.
}
\end{proof}
It is observed empirically that if $d \equiv 3 \Mod{9}$ (respectively $d \equiv 6 \Mod{9}$)  the symmetry group
$\mcl{S}(\Pi)$ never seems to contain two canonical order 3 unitaries $D_{\mbf{p}}U_F $, $D_{\mbf{q}}U_{F'} $ such that $F\in [F\vpu{'}_z]$ and $F'\in [F_a]$ (respectively $F\in [F\vpu{'}_z]$ and $F'\in [F'_a]$).  This suggests that we may classify fiducials by conjugacy class:
\begin{defn}[type-$z$, type-$a$, type-$a'$ fiducials]\label{df:typeztypeatypeaprime}
    A fiducial $\Pi$ is said to be 
\begin{enumerate}
    \item \emph{type-z}  if $\mcl{S}(\Pi)$ contains a canonical order 3 fiducial $D_{\mbf{p}} U_F$ with $F$ conjugate to $F_{\rm{z}}$,
    \item \emph{type- a}  if $d \equiv 3 \Mod{9}$ and $\mcl{S}(\Pi)$ contains a canonical order 3 fiducial $D_{\mbf{p}} U_F$ with $F$ conjugate to $F_{\rm{a}}$,
    \item \emph{type-$a'$}  if $d \equiv 6 \Mod{9}$ and $\mcl{S}(\Pi)$ contains a canonical order 3 fiducial $D_{\mbf{p}} U_F$ with $F$ conjugate to $F_{\rm{a}'}$.
\end{enumerate} 
\end{defn}
This is the source of four long-standing puzzles. 

\emph{Question 1.}  One would like to understand why $\mcl{S}(\Pi)$ always seems to contain a canonical order 3 unitary.

\emph{Question 2.} One would like to understand why classification by conjugacy class works:  why one seems never  to find orbits which are simultaneously type-$z$ and type-$a$ (if $d \equiv 3 \Mod{9}$), or simultaneously type-$z$ and type-$a'$ (if $d \equiv 6 \Mod{9}$).

\emph{Question 3.} It appears that when $d \equiv 3 \Mod{9}$ there exist both type-$z$ and type-$a$ orbits.  Specifically brute-force numerical computation~\cite{Scott:2017} indicates that in the first nine such dimensions the numbers of type-z and type-a $\EC(d)$ orbits are

\spc

\begin{center}
\begin{tabular}{rccccccccc}
$d:$ & 12 & 21 & 30 & 39 & 48 & 57 & 66 & 75 & 84
\\
\cmidrule{2-10}
\# type-z orbits:  & 1 & 4 & 3 & 6 & 5 & 6 & 6 &12 & 6
\\
\# type-a orbits: & 1 & 1 & 1 & 4& 2 & 2 & 3 & 3 & 4
\end{tabular}
\end{center}

\spc

\noindent One would like to know what determines these numbers. 

\emph{Question 4.} When $d \equiv 6 \Mod{9}$, brute-force numerical computation~\cite{Scott:2017} indicates that the number of type-$\rm{a}'$ orbits is  zero, at least when $d\le 87$.  One would like to know the reason.

As we will see, our conjectures provide answers to all of these questions.

\begin{defn}[centered fiducial]\label{dfn:centered}
    A fiducial $\Pi$ is said to be \textit{centered} if $\mcl{S}(\Pi)$  only contains cosets of the form $U_F \la I \ra$, $F\in \ESLtwo{\mbb{Z}/\bar{d}\mbb{Z}}$.
\end{defn}
Numerical investigations  suggest that every $1$-SIC contains at least one centered fiducial.

\begin{defn}[\overlapSingularText{} symmetry group]\label{dfn:sogppi}
Suppose $\Pi$ is centered, and let $\normalizedOverlap_{\mbf{p}}$ be its \normalizedOverlapsText{}. Define the \emph{symplectic symmetry group} by
\eag{
\StabPiESL{\Pi}&= \{F\in \ESLtwo{\mbb{Z}/\db\mbb{Z}}\colon U_F \in \mcl{S}(\Pi)\},
\\
\intertext{and the \emph{\overlapSingularText{} symmetry group} by}
\StabPiOverlap{\Pi} 
&= \left\{F\in \ESLtwo{\mbb{Z}/\bar{d}\mbb{Z}} \colon \normalizedOverlap_{F\mbf{p}}=\normalizedOverlap_{\mbf{p}}\ \forall \mbf{p}\in \mbb{Z}^2\right\}.
}
\end{defn}
It is easily seen\cite{Appleby:2013} that if $\Pi$ is centered then $\StabPiOverlap{\Pi}=\{\Det(F)F \colon F \in \StabPiESL{\Pi}\}$.   

Inspection of the $1$-SIC symmetry groups in low-lying dimensions~\cite{Scott:2017} raises other questions.  In the first place, one finds that in some cases, but not in others, the symmetry group contains anti-unitaries as well as unitaries.  One would like to know why.  One would also like to know, in dimensions where anti-unitary symmetries occur, on exactly how many $\EC(d)$ orbits this happens.  Furthermore, one would like to know what determines the order of the symmetry group.  
As we will see, the Twisted Convolution Conjecture and the Stark Conjecture confirm and explain all the conjectures about the symmetry group in ~\cite{Scott:2017}.  It also provides answers to the question marks in the Table in ~\cite{Scott:2017}.  

\subsubsection{Fields, multiplets, and ghosts}
\label{sssc:FieldsMultipletsGhosts}
The study~\cite{Appleby:2013,Appleby:2017a,Appleby:2018,Appleby:2020,Kopp2019,Appleby:2022,Bengtsson:2024} of known exact $1$-SICs has led to a number of conjectures regarding the field generated by a $1$-SIC, and the associated Galois group. 

Let $\Pi$ be a $1$-SIC fiducial in dimension $d>3$, and  let $E$ be the field\footnote{If our conjectures are correct, and if $\Pi$ is calculated from an admissible tuple $t$ in the manner prescribed by \Cref{dfn:CandidateGhostAndSICFiducials}, then $E$ is the SIC field $E_t$ specified by \Cref{dfn:SICfield}.} generated by the matrix elements of $\Pi$ together with the root of unity $\rtu_d$.  Then it was observed in ~\cite{Appleby:2013} that, in all the cases considered there, $E$ is a finite degree abelian extension of $K=\mbb{Q}(\sqrt{(d-3)(d+1)})$ which is normal over $\mbb{Q}$.  It was also observed that an important role is played by three subfields $H$, $E_1$, $E_2$ related to $E$ and $K$ as shown in Fig.~\ref{fg:ee1e2k}.
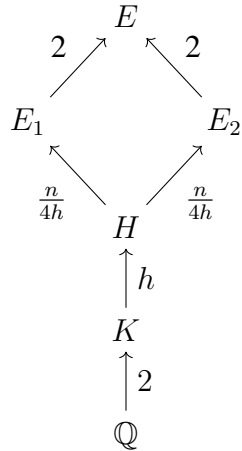
\begin{figure}[htb]
\begin{tikzpicture}
[auto, style={}]
\matrix[row sep=8mm,column sep=6mm] {
 &\node (E)  {$E$};
\\
\node (E1)  {$E_1$}; &  & \node (E2)  {$E_2$};
\\
 &\node (E0)  {$H$};
\\
 &\node (K)  {$K$};
\\
 &\node (Q)  {$\mathbb{Q}$};
\\
};
\draw[->] (Q) to node[swap] {$2$} (K);
\draw[->] (K) to node[swap] {$h$} (E0);
\draw[->] (E0) to node[swap] {$\frac{n}{4h}$} (E2);
\draw[->] (E0) to node{$\frac{n}{4h}$} (E1);
\draw[->] (E1) to node {2} (E);
\draw[->] (E2) to node[swap] {2} (E);
\end{tikzpicture}
\caption{\label{fg:ee1e2k}
Structure of the field $E$.  Arrows show field inclusions, and run from the smaller field to the larger. Numbers besides the arrows are the extension degrees, and $n=[E\colon \mbb{Q}]$.}
\end{figure}

Specifically, let $\bar{g}_1$, $\bar{g}_2$ be the non-trivial automorphisms of the order $2$ groups $\Gal(E/E_1)$, $\Gal(E/E_2)$.  Then it was observed, in every case examined,
\begin{enumerate}
    \item $E_2 \subseteq \mbb{R}$, and $\bar{g}_2$
is complex conjugation
\item Let $\bar{g}_1$, $\bar{g}_2$ be the non-
    \item For all $g\in \Gal(E/\mbb{Q})$,
    \eag{
        g\bar{g}_1 &= \bar{g}_1 g &&\iff & g\bar{g}_2 &= \bar{g}_2 g   &  &\iff & g&\in \Gal(E/K).
    }
    \item\label{en:ge12} For all $g\in \Gal(E/\mbb{Q})$,  
    \eag{
    g(E_1) &= E_1, & g(E_2 &= E_2, &&\text{if $g\in \Gal(E/K$,}
    \\
    g(E_1) &= E_2, & g(E_2) &= E_1,  & &\text{if $g\notin \Gal(E/K)$.}
    }
    \item Let $g\in \Gal(E/\mbb{Q})$ be arbitrary.  Then 
    \begin{enumerate}
        \item $g(\Pi)$ is a $1$-SIC fiducial if and only if $g\in\Gal(E/K)$,
        \item \label{en:galeh} $g(\Pi)$ is a $1$-SIC fiducial on the same $\EC(d)$ orbit as $\Pi$ if and only if $g\in\Gal(E/H)$.
    \end{enumerate}
\end{enumerate}
\begin{defn}[Galois multiplet]\label{dfn:GaloisMultiplet}
    We refer to the set of $\EC(d)$ orbits obtained by acting on a fiducial $\Pi$ by elements of $\Gal(E/K)$ and/or conjugating with an element of $\EC(d)$ as a \textit{Galois multiplet}.
\end{defn}
It follows from \cref{en:galeh} in the above list that the Galois multiplet associated to $\Pi$ has cardinality $h$.

In every case examined, the fields associated to the multiplets in a given dimension appear to form a bounded lattice under set inclusion.  In particular, there is, in every case examined, a unique minimal field, and a unique maximal field.  The dimension 35 fields and associated multiplets\footnote{We are grateful to Markus Grassl for pointing out an error in the version of this diagram which appeared in ~\cite{ Appleby:2018}.} are illustrated in \Cref{fg:multiplet}.
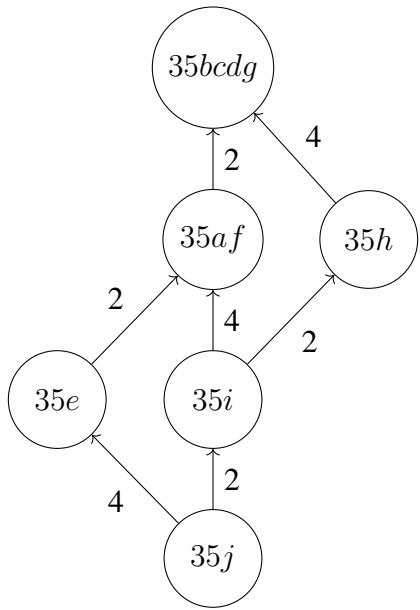
\begin{figure}[htb]
\begin{tikzpicture}
[auto, nd/.style={circle,minimum size = 13 mm,draw}]
\matrix[row sep=8mm,column sep=6mm] {
 & \node (35bcdg) [nd] {$35bcdg$};
\\
 & \node (35af) [nd] {$35af$}; & \node (35h) [nd] {$35h$};
\\
\node  (35e) [nd] {$35e$}; & \node (35i) [nd] {$35i$};
\\
 & \node (35j) [nd] {$35j$};
\\
};
\draw[->] (35j) to node[swap] {2} (35i);
\draw[->] (35i) to node[swap] {2} (35h);
\draw[->] (35j) to node{4} (35e);
\draw[->] (35i) to node[swap] {4} (35af);
\draw[->] (35e) to node{2} (35af);
\draw[->] (35af) to node[swap] {2} (35bcdg);
\draw[->] (35h) to node[swap] {4} (35bcdg);
\end{tikzpicture}
\caption{\label{fg:multiplet}Fields and multiplets in dimension 35.  The arrows indicate the field inclusions, and run from the smaller field to the larger.  So $35j$ is the minimal multiplet and $35bcdg$ is the maximal multiplet. Numbers beside the arrows are the degrees of the extensions. In this diagram we use the Scott--Grassl  convention~\cite{Scott2010,Scott:2017}, in which the $\EC(d)$ orbits for a given dimension are labelled by letters.  For example $35bcdg$ denotes the Galois multiplet consisting of the 4 Scott--Grassl orbits $35b$, $35c$, $35d$, $35g$.}
\end{figure}

Empirical investigation~\cite{Appleby:2020,Appleby:2017a,Appleby:2018,Kopp2019,Appleby:2022,Bengtsson:2024} indicates that the minimal multiplet in a given dimension $d$ generates the ray class field over $\mbb{Q}\bigl(\sqrt{D}\bigr)$ with modulus $\bar{d}$ and ramification at both infinite places. 
Empirical investigation also indicates that in the case of the minimal multiplet the field $H$ is the Hilbert class field, meaning that the multiplicity of the minimal multiplet is equal to the class number of $\mbb{Q}\bigl( \sqrt{D}\bigr)$. 
As we will see, it follows from our conjectures that these statements generalize to arbitrary multiplets of arbitrary rank $r$-SICs.  Specifically, the fields $E$, $E_1$, $E_2$, $H$ for an arbitrary multiplet of arbitrary rank are ray class fields of non-maximal orders, as defined in ~\cite{Kopp2020b,Kopp2020c}.

It follows from \cref{en:galeh} in the above list of properties that, for each  $g\in \Gal(E/H)$ there is an associated $U^{(g)}\in \EC(d)$ such that 
\eag{
g(\Pi) &= U^{(g)} \Pi U^{(g) \dagger}
}
To understand this action in more detail it helps to focus on the \overlapsText{} of strongly-centered fiducials:
\begin{defn}[strongly centered fiducial]\label{dfn:stronglycentered}
    A $r$-SIC fiducial $\Pi$ is said to be strongly centered if it is centered, and if all its \overlapsText{} are in the field $E_1$.
\end{defn}
Empirical investigation indicates that every $1$-SIC contains at least one strongly centered fiducial.

Now let $\Pi$ be a strongly centered fiducial, and let $\overlap_{\mbf{p}}$ be its \overlapsText{}. 
Then $\bar{g}_1(\overlap_{\mbf{p}}) = \overlap_{\mbf{p}}$ for all $\mbf{p}$, so it is enough to consider the action of the subgroup $\Gal(E_1/H)$. 

The empirical studies of $1$-SIC fiducials reported in ~\cite{Appleby:2020,Appleby:2017a,Appleby:2018,Kopp2019,Appleby:2022,Bengtsson:2024} indicate that in the case of a type-z orbit there is a natural isomorphism
\eag{
\Gal(E_1/H) \cong C/\StabPiOverlap{\Pi}
}
where $\StabPiOverlap{\Pi}$ is as defined in Definition~\ref{dfn:sogppi} and $C$ is the centralizer of $\StabPiOverlap{\Pi}$ considered as a subgroup of $\GLtwo{\mbb{Z}/\bar{d}\mbb{Z}}$. 
The isomorphism associates to each $g\in \Gal(E_1/H)$ a coset $F \StabPiOverlap{\Pi}$ with the property 
\eag{
g(\overlap_{\mbf{p}}) &= \overlap_{F'\mbf{p}}
}
for all $\mbf{p}$ and any $F'\in F \StabPiOverlap{\Pi}$. 

A similar statement holds for type a fiducials, except that $C$ has to be replaced by one of three maximal abelian subgroups of $\GLtwo{\mbb{Z}/\bar{d}\mbb{Z}}$ containing  
$\StabPiOverlap{\Pi}$. 
For more details see ~\cite{Appleby:2018}.

It is worth noting that this isomorphism gives us a geometrical interpretation of the Galois group which is very much in the spirit of Hilbert's original formulation of his $12^{\rm{th}}$ problem.

In every case examined, the \normalizedOverlapsText{} $\normalizedOverlap_{\mbf{p}}$ (as defined in~\Cref{dfn:sicovlp}) are units. 
The subgroup of the full unit group which they generate has some interesting properties which are described in ~\cite{Appleby:2020}.

\subsubsection{Dimension towers and \texorpdfstring{$1$}{1}-SIC alignment}
\label{ssec:dimtowsicalign}
As we saw in \Cref{sssc:FieldsMultipletsGhosts}, empirical investigations suggest that a $1$-SIC in dimension $d$ gives rise to an abelian extension of the real quadratic field $\mbb{Q}(\sqrt{(d-3)(d+1)})$. 
It is natural to ask how many dimensions are associated in this way to a given real quadratic field.  The following theorem answers that question.

\begin{thm}\label{thm:dimtowunique}
    Let  $K$ be a real quadratic field. 
    Then $K = \mbb{Q}(\sqrt{(d-3)(d+1)})$ if and only if $d$ is in the dimension tower associated to $K$ (see \Cref{dfn:fjrjmdjm}). 
\end{thm}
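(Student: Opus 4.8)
The plan is to reduce the theorem to the single identity $d_j = \Tr(\vn^j) + 1$, where $\vn$ is the fundamental totally positive unit of $K$ and $\Tr$ is the trace $K \to \mbb{Q}$. To prove it I would first note from \Cref{dfn:fjrjmdjm} that $d_j = d_{j,1} = r_{j,2} + r_{j,1} = f_{2j}/f_j + 1$, and then compute, using \Cref{dfn:sequenceofconductors},
\[
\frac{f_{2j}}{f_j} = \frac{\vn^{2j} - \vn^{-2j}}{\vn^{j} - \vn^{-j}} = \vn^{j} + \vn^{-j} = \vn^j + (\vn^j)' = \Tr(\vn^j),
\]
where we used that $\vn$ is a unit of norm $+1$, so its Galois conjugate $\vn'$ equals $\vn^{-1}$. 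Expanding $(\vn^j - \vn^{-j})^2$ then yields the auxiliary identity $(d_j - 1)^2 - 4 = (\vn^j - \vn^{-j})^2 = f_j^2 \Delta_0$, with $\Delta_0$ the discriminant of $K$.

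For the ``if'' direction I would observe that if $d = d_j$ then $(d-3)(d+1) = (d-1)^2 - 4 = f_j^2 \Delta_0$ by the auxiliary identity, so $\sqrt{(d-3)(d+1)} = f_j\sqrt{\Delta_0}$ and hence $\mbb{Q}(\sqrt{(d-3)(d+1)}) = \mbb{Q}(\sqrt{\Delta_0}) = K$.

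For the ``only if'' direction, suppose $K = \mbb{Q}(\sqrt{(d-3)(d+1)})$ and put $t = d-1$, so $K = \mbb{Q}(\sqrt{t^2-4})$; since $K$ is real quadratic, $t^2 - 4$ is positive and not a square, which forces $t \geq 3$. I would then consider the element $\alpha = \tfrac12(t + \sqrt{t^2-4}) \in K$: it is a root of the monic integer polynomial $X^2 - tX + 1$, hence an algebraic integer, with conjugate $\alpha' = \tfrac12(t - \sqrt{t^2-4})$ satisfying $\alpha\alpha' = 1$, so $\alpha \in \mcl{O}^{\times}_K$; and since $0 < \alpha' < \alpha$ with $\alpha\alpha' = 1$ we get $\alpha > 1 > \alpha' > 0$, so $\alpha$ is a totally positive unit exceeding $1$. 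The totally positive units of $K$ form the cyclic group $\langle\vn\rangle$ (one has $\vn = \un$ or $\un^2$ according as the fundamental unit $\un$ has norm $+1$ or $-1$, and $-\un^k$ is never totally positive), so the totally positive units exceeding $1$ are exactly $\vn, \vn^2, \vn^3, \dots$; hence $\alpha = \vn^j$ for some $j \geq 1$, and therefore $d = t + 1 = \alpha + \alpha' + 1 = \Tr(\vn^j) + 1 = d_j$ lies in the dimension tower of $K$.

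I do not expect a substantial obstacle; the only points that require care are the sign/size checks showing $\alpha$ is totally positive and greater than $1$, and the standard structural fact that $\vn$ generates the group of totally positive units, which is what forces a totally positive unit $>1$ to be a positive power of $\vn$. If one prefers to avoid re-deriving that structural fact, the ``only if'' direction can instead be obtained from \Cref{thm:bijectionofadmissibletuples}: $(d,1)$ is an admissible pair with associated field $\mbb{Q}(\sqrt{(d+1)(d-3)}) = K$ (taking $n = d+1 > 4$), so there is a unique admissible triple $(K_0,j,m)$ with $(d_{j,m},r_{j,m}) = (d,1)$; since $f_{jm} > f_j$ whenever $m > 1$ (the $f_j$ being strictly increasing in $j$), the equation $r_{j,m} = f_{jm}/f_j = 1$ forces $m = 1$, so $d = d_{j,1}$ is in the dimension tower of $K_0$, and the already-proven ``if'' direction gives $K_0 = \mbb{Q}(\sqrt{(d-3)(d+1)}) = K$.
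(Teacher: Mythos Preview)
Your main argument is correct and self-contained: the ``if'' direction follows immediately from the identity $(d_j-1)^2 - 4 = f_j^2\Delta_0$ (this is part of \Cref{lem:towerbasic} in the paper), and the ``only if'' direction correctly identifies $\alpha = \tfrac{1}{2}\bigl(t + \sqrt{t^2-4}\bigr)$ as a totally positive unit of $\mcl{O}_K$ exceeding $1$, hence a positive power of $\vn$, which gives $d = \Tr(\vn^j)+1 = d_j$. The paper itself simply cites an external reference for this theorem, so your explicit argument is in fact more informative than what appears here. One minor point: the claim ``forces $t \geq 3$'' tacitly assumes $d$ is a positive integer (otherwise $t \leq -3$ is also possible); this is the intended reading in context, since $d$ denotes a dimension throughout.

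However, your proposed alternative route via \Cref{thm:bijectionofadmissibletuples} is circular. That theorem is deduced from \Cref{thm:nrddjmrjm}, and the proof of Part~(B) of \Cref{thm:nrddjmrjm} explicitly invokes \Cref{thm:dimtowunique} to conclude that $n = d_j+1$ for some $j$. So you cannot appeal to the bijection of admissible tuples to prove the present theorem. Fortunately your direct argument does not depend on it, so simply drop the alternative.
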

\begin{proof}
    See ~\cite[Lemma 4]{Appleby:2020}.
\end{proof}
\begin{thm}
    Let $j_1, j_2, \dots$ be an increasing sequence of natural numbers such that, for all $n\in \mbb{N}$,
\begin{enumerate}
    \item $j_n$ divides $j_{n+1}$, 
    \item $j_{n+1}/j_n$ is coprime to 3.
\end{enumerate}
Then $d_{j_n}$ divides $d_{j_{n+1}}$ for all $n\in \mbb{N}$.
\end{thm}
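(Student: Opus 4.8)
The plan is to reduce the statement to the single divisibility $d_j \mid d_{jk}$ whenever $\gcd(k,3)=1$, and to prove that via a closed form for the dimension tower together with a divisibility of polynomials over $\mathbb{Z}$. The first step is to record that
\eag{
d_j = \vn^j + \vn^{-j} + 1 = \Tr(\vn^j) + 1 .
}
Indeed, by \Cref{dfn:fundamentalTotallyPositiveUnit} the unit $\vn$ has norm $+1$, so its nontrivial Galois conjugate is $\vn^{-1}$ and hence $\Tr(\vn^j) = \vn^j + \vn^{-j}$ for all $j$. From \Cref{dfn:sequenceofconductors} and a difference-of-squares factorization,
\eag{
\frac{f_{2j}}{f_j} = \frac{\vn^{2j} - \vn^{-2j}}{\vn^j - \vn^{-j}} = \vn^j + \vn^{-j},
}
and since $d_j = d_{j,1} = r_{j,2} + r_{j,1} = \tfrac{f_{2j}}{f_j} + 1$ (using $r_{j,1} = f_j/f_j = 1$ from \Cref{dfn:fjrjmdjm}), the displayed formula follows. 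In particular $d_j - 1 = \Tr(\vn^j)$ is a rational integer.

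Next I would bring in the polynomials $p_k(t) \in \mathbb{Z}[t]$ determined by $p_0(t) = 2$, $p_1(t) = t$, and $p_{k+1}(t) = t\,p_k(t) - p_{k-1}(t)$ — essentially the variant Chebyshev polynomials of \Cref{dfn:variantChebyshev} — which satisfy the identity $p_k(y + y^{-1}) = y^k + y^{-k}$. Applying this with $y = \vn^j$ gives $\Tr(\vn^{jk}) = p_k(\Tr(\vn^j))$, so that $d_{jk} = p_k(d_j - 1) + 1$. It therefore suffices to prove that $t+1$ divides $p_k(t)+1$ in $\mathbb{Z}[t]$ whenever $\gcd(k,3)=1$: substituting the integer $t = d_j - 1$ into the resulting identity $p_k(t)+1 = (t+1)q_k(t)$ with $q_k \in \mathbb{Z}[t]$ then yields $d_j \mid d_{jk}$.

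For the polynomial divisibility, since $t+1$ is monic it divides $p_k(t)+1$ in $\mathbb{Z}[t]$ if and only if $p_k(-1)+1 = 0$. Writing $-1 = y + y^{-1}$ forces $y$ to be a primitive cube root of unity, so $p_k(-1) = y^k + y^{-k}$, which equals $-1$ when $3\nmid k$ and equals $2$ when $3\mid k$; hence $p_k(-1)+1 = 0$ precisely when $3\nmid k$. Applying the divisibility $d_j \mid d_{jk}$ with $j = j_n$ and $k = j_{n+1}/j_n$ — a positive integer since $j_n \mid j_{n+1}$, and coprime to $3$ by hypothesis — gives $d_{j_n} \mid d_{j_{n+1}}$ for every $n$, as claimed. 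I expect no serious obstacle in this argument; the only points requiring care are that the divisibility must be carried out over $\mathbb{Z}[t]$ rather than merely $\mathbb{Q}[t]$ (so that an integer may be substituted for $t$), and the dichotomy $3\mid k$ versus $3\nmid k$ in evaluating $p_k(-1)$, which is exactly where the coprimality hypothesis on $j_{n+1}/j_n$ is used.
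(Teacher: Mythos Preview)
Your proof is correct and follows essentially the same route as the paper: expressing $d_{jk}$ as a polynomial in $d_j$ via (variant) Chebyshev polynomials and checking that the polynomial has the right divisibility when $3\nmid k$. In the paper's notation this is \Cref{thm:dnjfnjchebyexpns} together with \Cref{lem:dfdelprops}(1), where $T^{*}_k(x)$ is divisible by $x$ for $3\nmid k$; your identity $(t+1)\mid (p_k(t)+1)$ is the same statement under the substitution $x=t+1$, and your evaluation at a primitive cube root of unity is a clean alternative to the Taylor-expansion verification in \Cref{lm:tustarprops}.
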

\begin{proof}
    See the proof of ~\cite[Prop.~7]{Appleby:2020}. 
    In ~\cite{Appleby:2020} this result is stated with the unnecessarily strong condition that the individual terms of the sequence $j_1, j_2, \dots$ are coprime to 3. 
    However, the proof is easily seen to imply that the result continues to hold with the weaker condition we have stated here. 
\end{proof}   
It follows from this theorem that, if the various properties of the known $1$-SICs described above generalize  to every $1$-SIC in  every dimension, then for each subsequence $j_1, j_2, \dots$ satisfying the conditions of the theorem, one has the field inclusions
\eag{
E(d_{j_1}) \subseteq E(d_{j_2}) \subseteq \dots
}
where $E(d)$ denotes the field associated to the minimal multiplet in dimension $d$. 
Given this relationship of the fields, it is natural to ask if there is a corresponding relationship of the $1$-SICs themselves. 
This question was addressed in ~\cite{Appleby:2017} for a number of dimension pairs $d_j$, $d_{2j}$ (see also ~\cite{Andersson:2019}). 
Denoting the \normalizedOverlapsText{} in dimensions $d_j$, $d_{2j}$ by $e^{i\theta_{\mbf{p}}}$, $e^{i\Theta_{\mbf{p}}}$ respectively, it was found, in the cases examined, that
\eag{
e^{i\Theta_{d_j\mbf{p}}} &=
\begin{cases}
    1 \qquad &\text{$d_j$ odd},
    \\
    -(-1)^{(p_1+1)(p_2+1)} & \text{$d_j$ even},
\end{cases}
\label{eq:sicalign1}
\\
e^{i\Theta_{(d_j-2)\mbf{p}}} &=
\begin{cases}
    -e^{2i\theta_{F\mbf{p}}} \qquad &\text{$d_j$ odd},
    \\
    (-1)^{(p_1+1)(p_2+1)}e^{2i\theta_{F\mbf{p}}} & \text{$d_j$ even}.
\end{cases}
\label{eq:sicalign2}
}
for some $F\in \ESLtwo{\mbb{Z}/\bar{d}\mbb{Z}}$.
Moreover, it was found that these properties generalize to other $1$-SIC multiplets.  
\begin{defn}\label{dfn:sicalign}
    A pair of $1$-SICs in dimensions $d_j$, $d_{2j}$ whose \normalizedOverlapsText{} satisfy ~\eqref{eq:sicalign1}, \eqref{eq:sicalign2} are said to be \textit{aligned}.
\end{defn}
In \Cref{sec:sicphenexplain} a generalized property of $1$-SIC-alignment will be defined and shown to be a consequence of our conjectures.

\subsection{Proofs of theorems on SIC projectors and overlaps}

We will now prove Theorems~\ref{thm:rsicbsc} and~\ref{thm:sicfidcond} from the introduction.

\begin{proof}[Proof of Theorem~\ref{thm:rsicbsc}]
\label{sssec:proofofrsicoverlap}

The result is well-known.
However, existing discussions~\cite{Appleby:2007a,Casazza:2011,Fickus:2017,King:2021} locate $r$-SICs in a larger context (symmetric POVMs, or fusion frames not assumed to be maximal), which obscures the fact that everything follows from maximality plus the equiangularity condition in \Cref{def:equiangularcond}. 
We therefore give an independent proof.

Let $\mcl{T}_0$ be the $d^2-1$ dimensional subspace of $\mcl{L}(H_d)$ consisting of all operators $A$ such that $\Tr(A) =0$. 
Define operators $B_j\in \mcl{T}_0$ by
\eag{
B_{j}&=\sqrt{\frac{d}{r(d-r)}} \Pi_j -\sqrt{\frac{r}{d(d-r)}} I.
\label{eq:rsicbscbjdf}
}
There must exist at least one set of numbers $c_j$, not all $0$, such that
\eag{
\sum_{j=1}^{d^2} c_j B_j &=0.
}
The $c_j$ must satisfy, for all $k$
\eag{
0 &= \Tr\left(\left( \sum_{j=1}^{d^2} c_j B_j \right) B_k \right)
= -\left(\frac{d(\alpha-r)}{r(d-r)}\right) c_k +
\frac{\alpha d-r^2}{r(d-r)} \sum_{j=1}^{d^2} c_j
\label{eq:rsicbsclicnd}
}
implying $c_1 = \dots = c_{d^2} = \mu$ for some fixed, non-zero constant $\mu$. 
It follows that the orthogonal complement of the $B_j$ is $1$-dimensional, implying that the $B_j$ are a spanning set for $\mcl{T}_0$. 
Substituting $c_j = \mu$ into ~\eqref{eq:rsicbsclicnd} we deduce 
\eag{
\alpha &= \frac{r(rd-1)}{d^2-1},
}
from which ~\eqref{eq:rsicbscgenolp} follows. 
We have incidentally shown that
\eag{
\sum_{j=1}^{d^2} B_j &=0
}
from which ~\eqref{eq:rsicbscresid} follows. 

Finally, let $\mcl{S}$ be the span of the $\Pi_j$. 
It follows from ~\eqref{eq:rsicbscresid} that $I\in \mcl{S}$, which in turn implies the $B_j$ are all in $\mcl{S}$, and consequently that $\mcl{T}_0\subseteq \mcl{S}$. 
Since every element of $\mcl{L}(H_d)$ can be written as a linear combination of $I$ and an element of $\mcl{T}_0$, it follows that the $\Pi_j$ are a basis for $\mcl{L}(H_d)$.
\end{proof}

\begin{proof}[Proof of Theorem~\ref{thm:sicfidcond}]
\label{sssec:proofofmodulusofoverlapphase}

Let $\Pi$ be an \hprj{} in dimension $d$. 
It follows from ~\eqref{eq:mdopexp} that
\eag{
\Pi_{\mbf{p}}&= \frac{1}{d}\sum_{\mbf{k}}\Tr\left(\Pi D^{\dagger}_{\mbf{k}}\right)D\vpu{\dagger}_{\mbf{p}} D\vpu{\dagger}_{\mbf{k}}D^{\dagger}_{\mbf{p}}
=\frac{1}{d}\sum_{\mbf{k}} 
\Tr\left(\Pi D^{\dagger}_{\mbf{k}}\right)\rtus_d^{\la\mbf{p},\mbf{k}\ra} D_{\mbf{k}}.
}
Hence
\eag{
\Tr\left(\Pi_{\mbf{p}}\Pi_{\mbf{q}}\right)
&= \frac{1}{d}\sum_{\mbf{k},\mbf{k}'}
\Tr\left(\Pi D^{\dagger}_{\mbf{k}}\right)\Tr\left(\Pi D^{\dagger}_{\mbf{k}'}\right)\rtus_d^{\la \mbf{p},\mbf{k}\ra+\la \mbf{q},\mbf{k}\ra}(-1)^{\frac{d+1}{d}\la \mbf{k},\mbf{k}'\ra}\delta^{(d)}_{\mbf{k},-\mbf{k}'}
\nn 
&= \frac{1}{d}\sum_{\mbf{k}}
\left|\Tr\left(\Pi D^{\dagger}_{\mbf{k}}\right)\right|^2\rtus_d^{\la \mbf{p}-\mbf{q},\mbf{k}\ra}.
}
So $\Pi$ is an $r$-SIC fiducial if and only if
\begin{alignat}{3}
&& \qquad \frac{1}{d}\sum_{\mbf{k}}
\left|\Tr\left(\Pi D^{\dagger}_{\mbf{k}}\right)\right|^2\rtus_d^{\la \mbf{p},\mbf{k}\ra}
&=
\left(\frac{rd(d-r)}{d^2-1}\right)\delta^{(d)}_{\mbf{p},\zero} +\frac{r(rd-1)}{d^2-1} & \qquad &\forall \mbf{p}
\nn
&\iff& \qquad 
\left|\Tr\left(\Pi D^{\dagger}_{\mbf{k}}\right)\right|^2
&= \frac{r(d-r)}{d^2-1} + \left(\frac{rd(rd-1)}{d^2-1}\right)\delta^{(d)}_{\mbf{k},\zero}
&\qquad &\forall \mbf{k}
\nn
&\iff& \qquad 
\left|\Tr\left(\Pi D^{\dagger}_{\mbf{k}}\right)\right|^2
&= \frac{r(d-r)}{d^2-1} 
&\qquad &\forall \mbf{k}\neq \zero.
\end{alignat}
\end{proof}

\section{Units, dimensions, and binary quadratic forms}\label{sec:units}

This section examines the unit group of a real quadratic field and its representations by matrices in $\GLtwo{\mbb{Z}}$. 
We prove a number of results on these topics that will be needed in the sequel. 
We also prove some facts about the dimension towers and grids defined in \Cref{ssec:admissibletuples}.

In this section $D$ will always be a fixed square free integer greater than 1, $K=\mbb{Q}(\sqrt{D})$  the associated real quadratic field, $\Delta_0$ the  discriminant of $K$, and $\vn$,  $d_j$, $f_j$, $d_{j,m}$,  $r_{j,m}$ the quantities specified in Definitions~\ref{dfn:fundamentalTotallyPositiveUnit}, \ref{dfn:sequenceofconductors}, and \ref{dfn:fjrjmdjm}. We will also use the following definitions.
\begin{defn}[fundamental unit]\label{dfn:fundamentalUnit}
    Define $\un$ to be the smallest unit of $K$ which is greater than 1.
\end{defn}  
\begin{defn}[discriminant at level $j$]\label{dfn:discriminantLevelj}
    The discriminant at level $j$ of the tower is 
    \eag{
        \Delta_j &= (d_j-3)(d_j+1).
    }
\end{defn} 
In terms of $\un$ one has
\eag{
\vn &= 
\begin{cases}
\un^2 \quad & \mbox{if } \Nm(\un) = -1,
\\
\un \quad & \mbox{if } \Nm(\un) = 1,
\end{cases}
}
where $\Nm(\un)$ denotes the norm of $\un$.

\subsection{Dimension towers}
We now derive various useful relations between the between the quantities $d_j$ and $f_j$ as $j$ varies. In particular, for every integer $n$, we will give (in \Cref{thm:dnjfnjchebyexpns}) an expression for the pair $(d_{nj},r_{nj})$ in terms of the pair $(d_j,r_j)$. Our first lemma concerns that case $n=2$. 
\begin{lem}\label{lem:towerbasic}
For every positive integer $j$, the following relations hold.
\eag{
d_j, f_j &\in \mbb{Z}
\\
d_j&= \vn^j+\vn^{-j}+1
\label{eq:towerbasic1}
\\
\Delta_j &=f_j^2\Delta_0
\\
d_{2j}+1 &=  (d_j-1)^2
\label{eq:dSub2jInTermsOfdSubj}
\\
d_{2j}-3 &= f_j^2\Delta_0
\\
\vn^j &= \frac{(d_j-1)+f_j\sqrt{\Delta_0}}{2} 
=
\frac{\sqrt{d_{2j}+1}+\sqrt{d_{2j}-3}}{2}
}
\end{lem}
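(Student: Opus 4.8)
The plan is to derive all the identities in \Cref{lem:towerbasic} from the single defining relation for the sequence of conductors, $f_j = (\vn^j - \vn^{-j})/\sqrt{\Delta_0}$, together with the fact that $\vn$ is a totally positive unit of norm $+1$, so that $\vn \cdot \vn^{-1} = 1$ and $\vn + \vn^{-1} = \Tr(\vn) \in \mbb{Z}$. First I would record the two ``Chebyshev-like'' basic facts: $\vn^j + \vn^{-j} \in \mbb{Z}$ for all $j$ (an easy induction using $\vn^{j+1}+\vn^{-(j+1)} = (\vn+\vn^{-1})(\vn^j+\vn^{-j}) - (\vn^{j-1}+\vn^{-(j-1)})$), and $\vn^j - \vn^{-j} = f_j\sqrt{\Delta_0}$ with $f_j$ a positive integer (this is essentially \Cref{dfn:sequenceofconductors} and its following remark). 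From $r_{j,1} = f_j/f_j = 1$ and $d_j = d_{j,1} = r_{j,2} + r_{j,1} = f_{2j}/f_j + 1$ in \Cref{dfn:fjrjmdjm}, I would compute
\begin{equation*}
\frac{f_{2j}}{f_j} = \frac{\vn^{2j} - \vn^{-2j}}{\vn^j - \vn^{-j}} = \vn^j + \vn^{-j},
\end{equation*}
which immediately gives $d_j = \vn^j + \vn^{-j} + 1$, i.e. \eqref{eq:towerbasic1}, and shows $d_j \in \mbb{Z}$; that $f_j \in \mbb{Z}$ is the cited remark.

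Next I would handle the discriminant identities. Since $\vn^{\pm j}$ are the roots of $x^2 - (d_j-1)x + 1 = 0$ (by \eqref{eq:towerbasic1} and $\vn^j\vn^{-j}=1$), their difference squared is $(\vn^j-\vn^{-j})^2 = (d_j-1)^2 - 4$, so
\begin{equation*}
f_j^2 \Delta_0 = (\vn^j - \vn^{-j})^2 = (d_j-1)^2 - 4 = (d_j-3)(d_j+1) = \Delta_j,
\end{equation*}
using \Cref{dfn:discriminantLevelj}; this is $\Delta_j = f_j^2\Delta_0$. Applying \eqref{eq:towerbasic1} at level $2j$ gives $d_{2j} = \vn^{2j} + \vn^{-2j} + 1 = (\vn^j + \vn^{-j})^2 - 2 + 1 = (d_j - 1)^2 - 1$, which is \eqref{eq:dSub2jInTermsOfdSubj} after rearranging to $d_{2j} + 1 = (d_j-1)^2$. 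Then $d_{2j} - 3 = (d_j-1)^2 - 4 = \Delta_j = f_j^2\Delta_0$ by the computation just done, giving the fourth displayed identity. Finally, the formula for $\vn^j$: from $\vn^j + \vn^{-j} = d_j - 1$ and $\vn^j - \vn^{-j} = f_j\sqrt{\Delta_0}$ (both positive since $\vn > 1$), adding and dividing by $2$ yields $\vn^j = \bigl((d_j-1) + f_j\sqrt{\Delta_0}\bigr)/2$; substituting $d_j - 1 = \sqrt{d_{2j}+1}$ from \eqref{eq:dSub2jInTermsOfdSubj} and $f_j\sqrt{\Delta_0} = \sqrt{f_j^2\Delta_0} = \sqrt{d_{2j}-3}$ gives the second expression $\vn^j = \bigl(\sqrt{d_{2j}+1} + \sqrt{d_{2j}-3}\bigr)/2$.

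I do not anticipate a serious obstacle here — this is a chain of routine algebraic manipulations of the unit $\vn$ — but the one point requiring a little care is the integrality claims ($d_j, f_j \in \mbb{Z}$), which rely on $\vn$ being a norm-$+1$ unit (so that $\vn^{-1} = \vn'$, the Galois conjugate, and hence $\vn^j + \vn^{-j}$ is a rational integer as a sum of conjugate algebraic integers); this is exactly why the ``fundamental totally positive unit'' of \Cref{dfn:fundamentalTotallyPositiveUnit} is defined to have positive norm, and I would make that dependence explicit. A secondary subtlety is keeping the signs of square roots straight when inverting $\vn^j \pm \vn^{-j}$: one must note $\vn > 1 \Rightarrow \vn^j - \vn^{-j} > 0$ so that $f_j\sqrt{\Delta_0}$ is the positive square root of $d_{2j}-3$, which justifies writing $\sqrt{d_{2j}-3}$ rather than $\pm\sqrt{d_{2j}-3}$ in the final formula.
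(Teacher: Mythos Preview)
Your proposal is correct and follows essentially the same approach as the paper's own proof: both derive $d_j = \vn^j + \vn^{-j} + 1$ from the ratio $f_{2j}/f_j$, then obtain the remaining identities by the same chain of elementary manipulations of $\vn^j \pm \vn^{-j}$. The only cosmetic difference is that the paper establishes $f_j \in \mbb{Z}$ by writing $\vn^j$ explicitly in the integral basis of $\mcl{O}_K$, whereas you invoke the remark following \Cref{dfn:sequenceofconductors} and justify $d_j \in \mbb{Z}$ via the trace argument; your additional remarks on signs of square roots are a nice touch not spelled out in the paper.
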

\begin{proof}
 The fact that $\vn$ is an algebraic integer means 
 \eag{
 \vn^j &= n_1 + n_2\left(\frac{\Delta_0+\sqrt{\Delta_0}}{2}\right)
 }
 for some $n_1$, $n_2\in \mbb{Z}$.  Hence $f_j = n_2 \in \mbb{Z}$.
 
    To prove~\eqref{eq:towerbasic1}, observe that it follows from \Cref{dfn:fjrjmdjm} that
\eag{
d_j 
&= \frac{\vn^{2j}-\vn^{-2j}}{\vn^{j}-\vn^{-j}}+1
= \vn^j + \vn^{-j}+1,
}
from which it also follows that $d_j\in\mbb{Z}$.  We then have
\eag{
\frac{\Delta_j}{\Delta_0} &= \frac{(\vn^j+\vn^{-j}-2)(\vn^j+\vn^{-j}+2)}{\Delta_0} = \frac{(\vn^j-\vn^{-j})^2}{\Delta_0} = f_j^2, \\
d_{2j}+1 &= \vn^{2j}+\vn^{-2j}+2 = (\vn^j+\vn^{-j})^2 = (d_j-1)^2, \\
d_{2j}-3 &= (d_j-1)^2 -4 =\Delta_j, \\
\vn^j &= \frac{(\vn^j + \vn^{-j})+(\vn^j - \vn^{-j})}{2}
= \frac{d_j-1+f_j\sqrt{\Delta_0}}{2}
= \frac{\sqrt{d_{2j}+1}+\sqrt{d_{2j}-3}}{2},
}
completing the proof.
\end{proof}
The next result is needed for the proof of \Cref{thm:alignment}.
\begin{lem}\label{lm:dnjPlus1Expression}
    For any pair of integers $j\ge 1$ $n>1$,
    \eag{\label{eq:dnjPlus1Expression}
    d_{nj}+1 &=
    \begin{cases}
        (d_{\frac{nj}{2}}-1)^2 \qquad & n\equiv 0 \Mod{2},
        \\
(d_{j}+1)\left(1+\sum_{r=1}^{\frac{n-1}{2}}(-1)^{r} d_{rj}\right)^2 \qquad & n\equiv 1 \Mod{4},
        \\
(d_{j}+1)\left(2+\sum_{r=1}^{\frac{n-1}{2}}(-1)^{r} d_{rj}\right)^2 \qquad & n\equiv 3 \Mod{4}.
    \end{cases}
    }
\end{lem}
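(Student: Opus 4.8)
\textbf{Proof plan for \Cref{lm:dnjPlus1Expression}.}

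The natural approach is to work with the closed-form expression for $d_{nj}$ coming from \eqref{eq:towerbasic1} of \Cref{lem:towerbasic}, namely $d_{nj} = \vn^{nj} + \vn^{-nj} + 1$, so that $d_{nj}+1 = \vn^{nj} + \vn^{-nj} + 2 = (\vn^{nj/2} + \vn^{-nj/2})^2$. When $n$ is even this immediately gives $d_{nj}+1 = (\vn^{nj/2}+\vn^{-nj/2})^2 = (d_{nj/2}-1)^2$, which is the first case; this is just \eqref{eq:dSub2jInTermsOfdSubj} applied with $j$ replaced by $nj/2$. So the content of the lemma is entirely in the two odd cases, where one must extract a factor of $d_j+1 = \vn^j + \vn^{-j} + 2 = (\vn^{j/2}+\vn^{-j/2})^2$ from $d_{nj}+1$ and identify the remaining square root as the stated alternating sum.

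For $n$ odd, I would set $w = \vn^{j/2}$ (working formally, or equivalently work with $\vn^{j}$ throughout and take a square root at the end), so that $d_{nj}+1 = (w^n + w^{-n})^2$ and $d_j + 1 = (w + w^{-1})^2$. The key algebraic identity is that $\frac{w^n + w^{-n}}{w + w^{-1}}$ is, up to sign, a polynomial in $w^2 + w^{-2}$ — concretely, for $n$ odd one has the factorization
\eag{
w^n + w^{-n} &= (w+w^{-1})\sum_{r=0}^{(n-1)/2} (-1)^r (w^{2r}+w^{-2r})',
}
wait, I need to be careful — let me instead phrase the plan without committing to a possibly-wrong sign. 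The identity to verify is $\dfrac{w^n+w^{-n}}{w+w^{-1}} = \pm\left(c + \sum_{r=1}^{(n-1)/2}(-1)^r(w^{rj\cdot\text{(something)}})\right)$; the cleanest route is induction on odd $n$ using the three-term recurrence $w^{n+2}+w^{-(n+2)} = (w^2+w^{-2})(w^n+w^{-n}) - (w^{n-2}+w^{-(n-2)})$, together with the observation $w^{2r}+w^{-2r} = \vn^{rj}+\vn^{-rj} = d_{rj}-1$. The constant term ($1$ versus $2$) and the overall sign will depend on $n \bmod 4$ because the alternating sum $\sum_{r=1}^{(n-1)/2}(-1)^r(d_{rj}-1)$ contributes $-\tfrac{n-1}{2}$ from the constant terms $-(-1)$'s... more precisely one tracks $\sum_{r=1}^{(n-1)/2}(-1)^r(-1) = -\sum_{r=1}^{(n-1)/2}(-1)^r$, which equals $0$ when $(n-1)/2$ is even (i.e. $n\equiv 1 \bmod 4$) and $1$ when $(n-1)/2$ is odd (i.e. $n\equiv 3\bmod 4$), explaining the $1$ versus $2$ discrepancy after absorbing the $d_{rj}$ terms.

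So the concrete steps are: (1) reduce the even case to \eqref{eq:dSub2jInTermsOfdSubj}; (2) for odd $n$, prove by induction on $n$ (stepping by $2$) the polynomial identity expressing $(w^n+w^{-n})/(w+w^{-1})$ in terms of the quantities $w^{2r}+w^{-2r}$, using the Chebyshev-type recurrence above; (3) substitute $w^{2r}+w^{-2r} = d_{rj}-1$ and collect the constant terms, splitting into $n\equiv 1$ and $n\equiv 3 \bmod 4$ to get the correct additive constant and to confirm the sign becomes a square after squaring; (4) square both sides and multiply by $d_j+1 = (w+w^{-1})^2$. The main obstacle — though it is bookkeeping rather than a genuine difficulty — is getting the sign and the constant term exactly right in step (3); since the final formula squares the bracketed expression, the overall sign is irrelevant, so it suffices to verify the identity up to sign, which considerably lightens the bookkeeping. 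A sanity check against small cases ($n=3$: $d_{3j}+1 = (d_j+1)(d_{j}-2)^2$... one should verify this matches the $n\equiv 3$ branch with the sum $2 - d_j$, i.e. $(2-d_j)^2 = (d_j-2)^2$, which it does) will confirm the formula before writing the induction.
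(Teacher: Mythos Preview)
Your approach is correct and is essentially the same as the paper's: both reduce the odd case to computing the quotient $(\vn^{nj/2}+\vn^{-nj/2})/(\vn^{j/2}+\vn^{-j/2})$, rewrite it as an alternating sum of powers $\vn^{\pm rj}$, and then convert these to $d_{rj}-1$ and collect constants, splitting on the parity of $(n-1)/2$. The only cosmetic difference is that the paper writes down the factorization $\frac{w^n+w^{-n}}{w+w^{-1}} = w^{n-1} - w^{n-3} + \cdots + w^{-(n-1)}$ directly (it is the standard identity for $a^n+b^n$ with $n$ odd) rather than proving it by induction, so you can shorten your step~(2) accordingly.
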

\begin{proof}
    If $n$ is even, the result follows from \eqref{eq:dSub2jInTermsOfdSubj}.  
    If $n=2m+1$, then \eqref{eq:towerbasic1} implies
    \eag{
    \frac{d_{nj}+1}{d_j+1} &= \left(\frac{\vn^{\left(m+\frac{1}{2}\right)j} + \vn^{-\left(m+\frac{1}{2}\right)j}}{\vn^{\frac{j}{2}} + \vn^{-\frac{j}{2}}}\right)^2
    \nn
    &=
    \left(
    \vn^{mj}-\vn^{(m-1)j}+ \dots -\vn^{-(m-1)j} +\vn^{-mj}
    \right)^2
    \nn
    &=
  \left( (-1)^m + \sum_{r=0}^{m-1} (-1)^{r}(d_{(m-r)j}-1)\right)^2
  \nn
  &= 
  \begin{cases}
      \left(1+\sum_{r=0}^{m-1}(-1)^rd_{(m-r)j}\right)^2 \quad & \text{$m$ even,}
      \\
      \left(-2+\sum_{r=0}^{m-1}(-1)^rd_{(m-r)j}\right)^2 \quad & \text{$m$ odd,}
  \end{cases}
  \nn
  &= 
  \begin{cases}
      \left(1+\sum_{r=1}^{m}(-1)^rd_{rj}\right)^2 \quad & \text{$m$ even,}
      \\
      \left(2+\sum_{r=1}^{m}(-1)^rd_{rj}\right)^2 \quad & \text{$m$ odd,}
  \end{cases}
    }
    giving the last two cases of \eqref{eq:dnjPlus1Expression}.
\end{proof}

The next result proves the monotoncity of the sequences of $d_j$ and $f_j$.
\begin{thm}\label{thm:djandfjmonotonic}
The sequences of $d_j$ and $f_j$ satisfy
    \eag{
4 &\le d_1 < d_2 < \cdots,
\\
1 & \le f_1 < f_2 < \cdots.
    }
\end{thm} 
\begin{proof}
    The fact that $\vn>1$ means that, if $x>0$, then
    \eag{
\frac{d}{dx}(\vn^x + \vn^{-x}+1) &= \log \vn (\vn^x-\vn^{-x}) > 0,
}
and
\eag{
\frac{d}{dx}\left(\frac{\vn^x - \vn^{-x}}{\sqrt{\Delta_0}}\right) &= \log \vn 
\left(\frac{\vn^x + \vn^{-x}}{\sqrt{\Delta_0}}\right)>0.
    }
    Thus, the sequences $d_j = \e^j+\e^{-j}+1$ and $f_j = \frac{\e^j-\e^{-j}}{\sqrt{\Delta_0}}$ are monotonically increasing. It is also clear that $d_1 > 3$ and $f_1 > 0$, so since they are integers, $d_1 \geq 4$ and $f_1 \geq 1$.
\end{proof}

We now give some special properties of the sequence of dimensions $d_j$ associated to real quadratic fields with a unit of negative norm.
\begin{thm}\label{thm:negunitcondA}
   The following statements are equivalent:
\begin{enumerate}
    \item \label{it:nufem1} $\Nm(\un)=-1$,
    \item \label{it:drrm3psall}$d_j-3$ is a perfect square for all odd values of $j$,
    \item \label{it:drrm3psone}$d_j-3$ is a perfect square for one odd value of $j$.
\end{enumerate}
In that case 
\eag{
\un^j &= \frac{\sqrt{d_j-3}+\sqrt{d_j+1}}{2}
\label{eq:vpowrnegnrm}
}
for all $j\ge 1$.  One has,
\begin{enumerate}[label = (\alph*)]
    \item If $j$ is odd, then $d_j-3$ and $\frac{d_j+1}{\Delta_0}$ are perfect squares;
    \item If $j$ is even, then $\frac{d_j-3}{\Delta_0}$ and $d_j+1 $ are perfect squares.
\end{enumerate}
\end{thm}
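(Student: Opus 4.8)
The plan is to work with the characterization of $\vn$ in terms of $\un$: since $\Nm(\un)=-1$ is equivalent to $\vn = \un^2$, and $\Nm(\un)=+1$ is equivalent to $\vn = \un$, the chain of equivalences \eqref{it:nufem1}$\Leftrightarrow$\eqref{it:drrm3psall}$\Leftrightarrow$\eqref{it:drrm3psone} should follow by relating $d_j - 3$ to a norm. From \Cref{lem:towerbasic} we have $d_j - 3 = \vn^j + \vn^{-j} - 2 = (\vn^{j/2} - \vn^{-j/2})^2$ where $\vn^{1/2} = \un$ when $\Nm(\un) = -1$; more precisely, when $j$ is odd, $d_j - 3 = (\un^j - \un^{-j})^2$ provided $\un^2 = \vn$, and $\un^j - \un^{-j} = \un^j + \overline{\un}^j \cdot(-1)$... let me instead observe that $\un^{-1} = -\un'$ (the conjugate) when $\Nm(\un)=-1$, so $\un^j - \un^{-j} = \un^j + (\un')^j \in \Z$ when $j$ is odd, giving $d_j - 3 = (\un^j + (\un')^j)^2$, a perfect square. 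This proves \eqref{it:nufem1}$\Rightarrow$\eqref{it:drrm3psall}, and \eqref{it:drrm3psall}$\Rightarrow$\eqref{it:drrm3psone} is trivial.

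For the converse \eqref{it:drrm3psone}$\Rightarrow$\eqref{it:nufem1}, I would argue contrapositively: if $\Nm(\un) = +1$, then $\vn = \un$, and for any $j$ we have $d_j - 3 = (\un^{j/2} - \un^{-j/2})^2$ only formally; the point is that $\un$ has norm $+1$, so $\un^j$ satisfies $\un^j (\un')^j = 1$ and $\un^j + (\un')^j = d_j - 1$ with $\un^j - (\un')^j = f_j \sqrt{\Delta_0}$ irrational. Then $d_j - 3 = (d_j-1)^2/... $ — rather, I should show $d_j - 3 = \vn^j + \vn^{-j} - 2$ cannot be a perfect square for odd $j$ when $\Nm(\un) = +1$. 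Here $\vn^{j/2} = \un^{j/2} \notin K$ for odd $j$, so I need a genuine argument: $d_j - 3 = f_j^2 \Delta_0$ by \Cref{lem:towerbasic}, and $\Delta_0$ is squarefree times possibly $4$; if $\Delta_0 \equiv 1 \Mod 4$ it is squarefree and $> 1$, so $f_j^2 \Delta_0$ is a perfect square iff $\Delta_0$ is, which is false. If $\Delta_0 = 4D$ with $D$ squarefree, then $f_j^2 \Delta_0 = (2f_j)^2 D$ is a perfect square iff $D = 1$, impossible. Hence $d_j - 3$ is never a perfect square for any $j$ (odd or even!) when $\Nm(\un) = +1$, which is more than enough. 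This simultaneously handles both directions cleanly by using $d_j - 3 = f_j^2 \Delta_0$.

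Next, formula \eqref{eq:vpowrnegnrm}: assuming $\Nm(\un) = -1$, we have $\vn = \un^2$ and \Cref{lem:towerbasic} gives $\vn^j = \frac{d_{2j}+1}{\dots}$, more usefully $\vn^{j} = \frac{(d_j - 1) + f_j\sqrt{\Delta_0}}{2}$, so $\un^{2j} = \frac{(d_j-1)+f_j\sqrt{\Delta_0}}{2}$. I want $\un^j = \frac{\sqrt{d_j - 3} + \sqrt{d_j+1}}{2}$. Squaring the right side gives $\frac{(d_j - 3) + (d_j + 1) + 2\sqrt{(d_j-3)(d_j+1)}}{4} = \frac{(d_j - 1) + \sqrt{\Delta_j}}{2} = \frac{(d_j-1) + f_j\sqrt{\Delta_0}}{2} = \vn^j = \un^{2j}$, using $\Delta_j = f_j^2 \Delta_0$. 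Since both $\un^j$ and $\frac{\sqrt{d_j-3}+\sqrt{d_j+1}}{2}$ are positive, they are equal, as their squares agree. This needs $d_j - 3 \ge 0$, which holds since $d_j \ge 4$ by \Cref{thm:djandfjmonotonic}.

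Finally, the parity statements (a) and (b). For $j$ odd, I showed $d_j - 3 = (\un^j + (\un')^j)^2$ is a perfect square; and $d_j + 1 = \vn^j + \vn^{-j} + 2 = (\vn^{j/2} + \vn^{-j/2})^2 = (\un^j + \un^{-j})^2$, but $\un^{-1} = -\un'$ so $\un^j + \un^{-j} = \un^j - (\un')^j = f_j\sqrt{\Delta_0}$ (using the norm $-1$ relation and $j$ odd), whence $d_j + 1 = f_j^2 \Delta_0$, so $\frac{d_j+1}{\Delta_0} = f_j^2$ is a perfect square. For $j$ even, write $j = 2k$; then $d_j - 3 = \Delta_j = f_j^2\Delta_0$ always (\Cref{lem:towerbasic}), giving $\frac{d_j - 3}{\Delta_0} = f_j^2$; and $d_j + 1 = (d_k - 1)^2$ by \eqref{eq:dSub2jInTermsOfdSubj}, a perfect square. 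The main obstacle I anticipate is bookkeeping the sign conventions for $\un^{-1}$ versus $\un'$ when $\Nm(\un) = -1$ (namely $\un \un' = -1$, so $\un^{-1} = -\un'$), and being careful that expressions like $\un^j + (\un')^j$ versus $\un^j - (\un')^j$ land in $\Z$ versus $\sqrt{\Delta_0}\,\Z$ depending on the parity of $j$; everything else is routine substitution into \Cref{lem:towerbasic}.
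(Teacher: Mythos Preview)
Your argument for \eqref{it:nufem1}$\Rightarrow$\eqref{it:drrm3psall} and for the formula \eqref{eq:vpowrnegnrm} (by squaring and taking positive roots) is correct and is essentially the paper's approach, which writes $\un^j$ in the integral basis of $\OO_K$ and solves the resulting system.

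However, there is a recurring misreading of \Cref{lem:towerbasic}. The lemma gives $(d_j-3)(d_j+1)=\Delta_j=f_j^2\Delta_0$ and $d_{2j}-3=f_j^2\Delta_0$; it does \emph{not} say $d_j-3=f_j^2\Delta_0$. This breaks your proof of \eqref{it:drrm3psone}$\Rightarrow$\eqref{it:nufem1}: for odd $j$ there is no identity $d_j-3=f_j^2\Delta_0$, so the ``$\Delta_0$ is not a square'' argument does not apply. The paper itself does not prove this implication directly; it cites Yokoi. A self-contained fix along your lines: if $\Nm(\un)=+1$ (so $\vn=\un$) and $d_j-3=m^2$ for some $j$, set $\alpha=\tfrac{m+\sqrt{d_j+1}}{2}$. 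Since $(d_j-3)(d_j+1)=f_j^2\Delta_0$ one has $\sqrt{d_j+1}=\tfrac{f_j}{m}\sqrt{\Delta_0}\in K$, and $\alpha$ satisfies $x^2-mx-1=0$, hence $\alpha\in\OO_K^\times$ with $\Nm(\alpha)=-1$, contradicting $\Nm(\un)=+1$.

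The same slip recurs in (a) and (b). In (a), you assert $\un^j-(\un')^j=f_j\sqrt{\Delta_0}$, but $f_j\sqrt{\Delta_0}=\vn^j-\vn^{-j}=\un^{2j}-(\un')^{2j}$, not $\un^j-(\un')^j$. (For $K=\Q(\sqrt{5})$, $j=3$: $\un^3-(\un')^3=2\sqrt{5}$ while $f_3\sqrt{5}=8\sqrt{5}$.) The right statement is $\un^j-(\un')^j=m_2\sqrt{\Delta_0}$ for some integer $m_2$, because $\un^j\in\OO_K$; then $d_j+1=m_2^2\Delta_0$ as desired. In (b), for $j=2k$ the lemma gives $d_j-3=d_{2k}-3=f_k^2\Delta_0$, not $f_j^2\Delta_0$ (and certainly not $\Delta_j$), so $\tfrac{d_j-3}{\Delta_0}=f_k^2$.
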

\begin{rmkb}
    As we will see, on the assumption that the Stark--Tate Conjecture and the Twisted Convolution Conjecture are both true, \Cref{thm:negunitcondA} explains the empirical observation~\cite{Scott:2017} that, in every case calculated, the minimal multiplet in dimension $d$ has an anti-unitary symmetry if and only if $d-3$ is a perfect square, and that it contains a real $r$-SIC fiducial if in addition $d-3$ is even.
\end{rmkb}
\begin{proof}
    For the equivalence of statements~\eqref{it:nufem1},~\eqref{it:drrm3psall},~\eqref{it:drrm3psone}, see \cite[Thm.~1]{Yokoi:1968}.  If $j$ is even, \eqref{eq:vpowrnegnrm} is proved in \Cref{lem:towerbasic}.  Suppose $j$ is odd.  Then
    \eag{
        \un^j &= m_1+m_2\left(\frac{\Delta_0+\sqrt{\Delta_0}}{2}\right)
    }
    for some pair of integers $m_1,m_2$ such that $2m_1+m_2\Delta_0$, $m_2$ are both positive.  The fact that $\Nm(\un^j) = -1$ means
    \eag{
        \left(\frac{2m_1+m_2\Delta_0}{2}\right)^2 - \left(\frac{m_2\sqrt{\Delta_0}}{2}\right)^2 = -1
    }
    while the fact that $\un^{2j}+\un^{-2j} +1 = d_j$ means
    \eag{
        2\left(\frac{2m_1+m_2\Delta_0}{2}\right)^2 +2\left(\frac{m_2\sqrt{\Delta_0}}{2}\right)^2 = d_j-1.
    }
    Hence
    \eag{
    \left(2m_1+m_2\Delta_0\right)^2 &=d_j-3;
    \\
    \left(m_2\sqrt{\Delta_0}\right)^2 &= d_j+1.
    }
    Hence
    \eag{
        \un^j &=\frac{\sqrt{d_j-3}+\sqrt{d_j+1}}{2}.
    }
    This also shows that, if $j$ is odd, then $d_j-3$ and $\frac{d_j+1}{\Delta_0}$ are perfect squares.  For the proof that   $\frac{d_j-3}{\Delta_0}$ and $d_j+1$ are perfect squares when $j$ is even, see \Cref{lem:towerbasic}.
\end{proof}

We will now define \textit{variant Chebyshev polynomials} and use them to express the pair $(d_{nj},f_{nj})$ as a function of the pair $(d_j,f_j)$ for any natural number $n$, as promised.
\begin{defn}[variant Chebyshev polynomials]\label{dfn:variantChebyshev}
For all $j\in \mbb{N}$ define
    \eag{
T^{*}_j(x) &= 1 + 2T_j\left(\frac{x-1}{2}\right),
\label{eq:tstardef}
\\
 U^{*}_j(x) &= U_{j-1} \left(\frac{x-1}{2}\right)
\label{eq:ustardef}
}
where the $T_j$ and $U_j$ are respectively Chebyshev polynomials of the first and second kind. %
\end{defn}

\begin{thm}\label{thm:dnjfnjchebyexpns}
For $n$, $j\in \mbb{N}$
    \eag{
d_{nj} &=T^{*}_n(d_j),
\label{eq:dkrCheb}
\\
f_{nj} & = f_j U^{*}_n(d_j) 
\label{eq:fkrCheb}
}
\end{thm}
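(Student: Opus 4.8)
The plan is to recognize that the quantities $d_j = \vn^j + \vn^{-j} + 1$ and $f_j\sqrt{\Delta_0} = \vn^j - \vn^{-j}$ are precisely the kinds of expressions that Chebyshev polynomials are built to handle. Writing $\vn^j = \mu$ and $\vn^{-j} = \mu^{-1}$ (so $\mu + \mu^{-1} = d_j - 1$ by \eqref{eq:towerbasic1}), we have $d_{nj} = \mu^n + \mu^{-n} + 1$ and $f_{nj}\sqrt{\Delta_0} = \mu^n - \mu^{-n}$. The classical identities for Chebyshev polynomials say that if $w + w^{-1} = 2\cos\vartheta$ then $w^n + w^{-n} = 2\cos(n\vartheta) = 2T_n(\cos\vartheta)$ and $\frac{w^n - w^{-n}}{w - w^{-1}} = \frac{\sin(n\vartheta)}{\sin\vartheta} = U_{n-1}(\cos\vartheta)$; these hold as polynomial identities and therefore remain valid for $w = \mu$ real and positive, not just $|w| = 1$. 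Setting $\cos\vartheta = \frac{\mu + \mu^{-1}}{2} = \frac{d_j - 1}{2}$, we get $\mu^n + \mu^{-n} = 2T_n\!\left(\frac{d_j-1}{2}\right)$ and $\frac{\mu^n - \mu^{-n}}{\mu - \mu^{-1}} = U_{n-1}\!\left(\frac{d_j-1}{2}\right)$.

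From here the two claims drop out by the definitions \eqref{eq:tstardef} and \eqref{eq:ustardef}. For \eqref{eq:dkrCheb},
\eag{
d_{nj} = \mu^n + \mu^{-n} + 1 = 1 + 2T_n\!\left(\frac{d_j-1}{2}\right) = T^{*}_n(d_j).
}
For \eqref{eq:fkrCheb}, using $f_{nj}\sqrt{\Delta_0} = \mu^n - \mu^{-n}$ and $f_j\sqrt{\Delta_0} = \mu - \mu^{-1}$,
\eag{
f_{nj} = \frac{\mu^n - \mu^{-n}}{\sqrt{\Delta_0}} = \frac{\mu - \mu^{-1}}{\sqrt{\Delta_0}}\cdot\frac{\mu^n - \mu^{-n}}{\mu - \mu^{-1}} = f_j\, U_{n-1}\!\left(\frac{d_j-1}{2}\right) = f_j\, U^{*}_n(d_j),
}
where I have used that $\mu - \mu^{-1} = \vn^j - \vn^{-j} > 0$ is nonzero (since $\vn > 1$), so the division is legitimate.

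I do not anticipate a serious obstacle here; this is essentially a formal manipulation once the right substitution is made. The only points requiring a word of care are: (i) justifying that the trigonometric Chebyshev identities, normally stated for arguments in $[-1,1]$, extend to the real argument $\frac{d_j-1}{2} \geq \frac{3}{2} > 1$ — this is immediate because both sides of each identity are polynomials in $\cos\vartheta$ (equivalently in $w + w^{-1}$) and agree on an interval, hence everywhere; and (ii) confirming the normalization conventions for $T_n$ and $U_n$ match those used in \Cref{dfn:variantChebyshev}, i.e.\ $2T_n(\cos\vartheta) = 2\cos(n\vartheta)$ and $U_{n-1}(\cos\vartheta) = \sin(n\vartheta)/\sin\vartheta$, which are the standard ones. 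One could alternatively give a self-contained induction on $n$ using the three-term recurrence $T^{*}_{n+1}(x) = (x-1)T^{*}_n(x) - T^{*}_{n-1}(x) + \text{(lower-order correction)}$, but the closed-form route via $\mu \leftrightarrow \mu^{-1}$ is cleaner and is the one I would write up.
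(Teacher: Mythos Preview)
Your proof is correct and is essentially the same argument as the paper's: the paper sets $\theta=\log\vn$ and uses the hyperbolic Chebyshev identity $\sinh(nj\theta)=U_{n-1}(\cosh j\theta)\sinh(j\theta)$, which is exactly your $\mu=\vn^j$ computation rewritten via $\cosh(j\theta)=\tfrac{\mu+\mu^{-1}}{2}$. The only difference is that the paper outsources \eqref{eq:dkrCheb} to a citation, whereas you prove it in-line by the companion identity $\mu^n+\mu^{-n}=2T_n\!\bigl(\tfrac{\mu+\mu^{-1}}{2}\bigr)$; your version is thus slightly more self-contained.
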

\begin{proof}
    For the proof of ~\eqref{eq:dkrCheb}, see ~\cite[eq.~(13)]{Appleby:2020}.
    To prove ~\eqref{eq:fkrCheb}, let $\theta = \log \vn$.  Then
    \eag{
        f_{nj}&= \frac{2\sinh(nj \theta)}{\sqrt{\Delta_0}}
        =\frac{2U_{n-1}(\cosh j\theta)\sinh j\theta}{\sqrt{\Delta_0}}
        = f_j U^{*}_n(d_j),
    }
    which is \eqref{eq:fkrCheb}.
\end{proof}

\begin{lem}\label{lm:tustarprops}
 $T^{*}_j(x)$, $U^{*}_j(x)$ satisfy the recursion relations
\eag{
T^{*}_1(x) &= x, & T^{*}_2(x) &= x(x-2) ,& T^{*}_j (x) &= 3-x +(x-1)T^{*}_{j-1}(x) -T^{*}_{j-2}(x),
\label{eq:tstrec}
\\
U^{*}_{1}(x) &= 1, & U^{*}_2(x) &=x-1, & U^{*}_j(x) &= (x-1)U^{*}_{j-1}(x) -U^{*}_{j-2}(x).
\label{eq:ustrec}
}
For all $j\in \mbb{N}$,
\eag{
T^{*}_j(x) &= 
\begin{cases}3 +x^2\left(- \frac{j^2}{3}  + \Ord(x)\right) \quad & \text{$j \equiv 0 \Mod{3}$,}
\\
x\left(j  + \frac{j(j-1)}{6}x + \Ord(x^2)\right) \quad & \text{$j \equiv 1 \Mod{3}$,}
\\
x\left(-j  + \frac{j(j+1)}{6}x + \Ord(x^2)\right) \quad & \text{$j \equiv 2 \Mod{3}$,}
\end{cases}
\label{eq:tcheb}
\\
U^{*}_j(x)
&=
\begin{cases}
x\left(-\frac{2j}{3} +\frac{j}{3} x +\Ord(x^2)\right) \quad & \text{$j \equiv 0 \Mod{3}$,}
\\
1+x\left(\frac{j-1}{3} -\frac{(j-1)(j+2)}{6}x + \Ord(x^2)\right) \quad & \text{$j\equiv 1 \Mod 3$,}
\\
-1+x\left(\frac{j+1}{3} +\frac{(j+1)(j-2)}{6}x+\Ord(x^2)\right) \quad & \text{$j \equiv 2 \Mod 3$.}
\end{cases}
\label{eq:ucheb}
}
\end{lem}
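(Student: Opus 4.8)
The plan is to treat the two assertions of Lemma~\ref{lm:tustarprops} independently: first the three-term recursions \eqref{eq:tstrec} and \eqref{eq:ustrec}, then the low-order expansions \eqref{eq:tcheb} and \eqref{eq:ucheb}. Both reduce to elementary manipulations of the classical Chebyshev recursions through the substitution $y = \tfrac{x-1}{2}$ appearing in \Cref{dfn:variantChebyshev}.

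For the recursions, I would transport $T_n(y) = 2yT_{n-1}(y) - T_{n-2}(y)$ and $U_n(y) = 2yU_{n-1}(y) - U_{n-2}(y)$ through that substitution. Since $2y = x-1$, and since $2T_j\!\left(\tfrac{x-1}{2}\right) = T^{*}_j(x) - 1$ and $U_{j-1}\!\left(\tfrac{x-1}{2}\right) = U^{*}_j(x)$ by definition, multiplying the $T$-recursion by $2$ and collecting constants gives $T^{*}_j(x) = (x-1)T^{*}_{j-1}(x) - T^{*}_{j-2}(x) + 3 - x$, while the $U$-recursion taken at index $j-1$ gives $U^{*}_j(x) = (x-1)U^{*}_{j-1}(x) - U^{*}_{j-2}(x)$; these are exactly \eqref{eq:tstrec} and \eqref{eq:ustrec}. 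The initial values $T^{*}_1(x) = x$, $T^{*}_2(x) = x(x-2)$, $U^{*}_1(x) = 1$, $U^{*}_2(x) = x-1$ follow immediately from $T_1(y)=y$, $T_2(y)=2y^2-1$, $U_0(y)=1$, $U_1(y)=2y$.

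For the expansions I would argue by induction on $j$, handling the three residue classes modulo $3$ simultaneously. Writing $T^{*}_j(x) = a_j + b_j x + c_j x^2 + \Ord(x^3)$, the recursion \eqref{eq:tstrec} forces the coefficient recursions $a_j = 3 - a_{j-1} - a_{j-2}$, $b_j = -1 + a_{j-1} - b_{j-1} - b_{j-2}$, $c_j = b_{j-1} - c_{j-1} - c_{j-2}$; likewise, writing $U^{*}_j(x) = p_j + q_j x + s_j x^2 + \Ord(x^3)$, the recursion \eqref{eq:ustrec} forces $p_j = -p_{j-1} - p_{j-2}$, $q_j = p_{j-1} - q_{j-1} - q_{j-2}$, $s_j = q_{j-1} - s_{j-1} - s_{j-2}$. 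One then checks that the closed-form triples $(a_j,b_j,c_j)$ and $(p_j,q_j,s_j)$ extracted from \eqref{eq:tcheb} and \eqref{eq:ucheb} satisfy these coefficient recursions in each of the cases $j \equiv 0,1,2 \pmod 3$ --- using that, as $j$ runs over a fixed residue class, $j-1$ and $j-2$ run over the other two --- and that the base cases $j=1,2$ agree with the explicit polynomials just recorded. Since \eqref{eq:tstrec}, \eqref{eq:ustrec} hold for $j \ge 3$, this closes the induction.

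There is no genuine obstacle here; the only care needed is keeping the bookkeeping (especially for the $x^2$-coefficients) straight in the single induction step, and one should also confirm the base cases actually lie in the asserted residue-class formulas. As an independent sanity check on the period-$3$ pattern of the constant terms, one may use the closed forms $T^{*}_j(x) = 1 + w^j + w^{-j}$ and $U^{*}_j(x) = \tfrac{w^j - w^{-j}}{w - w^{-1}}$, valid whenever $w + w^{-1} = x-1$: setting $x=0$ makes $w$ a primitive cube root of unity, giving $T^{*}_j(0) = 1 + 2\cos(2\pi j/3) \in \{0,3\}$ and $U^{*}_j(0) \in \{0,\pm 1\}$ precisely according to $j \bmod 3$, in agreement with \eqref{eq:tcheb} and \eqref{eq:ucheb}. (One could instead Taylor-expand these closed forms directly, but that requires inverting a power series and is messier to present than the induction above.)
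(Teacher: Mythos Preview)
Your proposal is correct and follows essentially the same route as the paper, which simply records the result as a ``straightforward consequence of the recursion relations for the Chebyshev polynomials.'' You have spelled out in detail the two steps that the paper leaves implicit: transporting the classical three-term recursions through the substitution $y=\tfrac{x-1}{2}$, and then running an induction on the low-order coefficients modulo $3$; both are carried out correctly, and the cube-root-of-unity sanity check is a nice touch.
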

\begin{proof}
    Straightfoward consequence of the recursion relations for the Chebyshev polynomials.
\end{proof}

We are now in a position to prove some congruence and divisibility properties of the $d_j$ and $f_j$.
\begin{lem}\label{lem:dfdelprops}
Let $j,n\in \mbb{N}$.
\begin{enumerate}
\item \label{it:dfdelprop1}  $d_j$ is a divisor of $d_{nj}$ if and only if $n\not\equiv 0 \Mod{3}$.

\item  If $d_j$ is odd, then 
\begin{enumerate}
    \item \label{it:dfdelprop2e}   $\Delta_0 \equiv 0 \Mod{4}$ if $f_j$ is odd,
    \item \label{it:dfdelprop2d} $\Delta_{nj} \equiv 0 \Mod{4}$ for all $n$,
    \item \label{it:dfdelprop2a} $d_{nj}$ is odd for all $n$,
    \item \label{it:dfdelprop2b} if $f_j$ is even, then $f_{nj}$ is even for all $n$,
    \item \label{it:dfdelprop2c} if $f_j$ is odd, then $f_{nj}$ is odd if and only if $n$ is odd.
\end{enumerate}

\item \label{it:dfdelprop3} If $d_j$ is even, then
\begin{enumerate}
    \item \label{it:dfdelprop3a} $\Delta_0 \equiv 1 \Mod{4}$,
    \item \label{it:dfdelprop3b} $\Delta_{nj} \equiv 1 \Mod{4}$ if and only if $n\not\equiv 0 \Mod{3}$,
    \item \label{it:dfdelprop3c} $d_{nj}$ is even if and only if $n\not\equiv 0 \Mod{3}$,
    \item \label{it:dfdelprop3d} $f_{nj}$ is odd if and only if $n\not\equiv 0 \Mod{3}$.
\end{enumerate}
\end{enumerate}
\end{lem}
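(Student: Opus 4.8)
The plan is to reduce everything to the explicit formulas $d_j = \vn^j + \vn^{-j} + 1$ and $f_j = (\vn^j - \vn^{-j})/\sqrt{\Delta_0}$ from \Cref{lem:towerbasic}, together with the Chebyshev expressions $d_{nj} = T^*_n(d_j)$ and $f_{nj} = f_j\, U^*_n(d_j)$ from \Cref{thm:dnjfnjchebyexpns} and the recursions and leading-coefficient data in \Cref{lm:tustarprops}. For part \eqref{it:dfdelprop1}, I would work modulo $d_j$: the recursion \eqref{eq:tstrec} gives $T^*_n(d_j) \equiv 3 - d_j + (d_j-1)T^*_{n-1}(d_j) - T^*_{n-2}(d_j) \equiv 3 - T^*_{n-1}(d_j) - T^*_{n-2}(d_j) \pmod{d_j}$, and the observed pattern is that $T^*_n(d_j) \equiv 0, \, \pm n d_j \equiv 0 \pmod{d_j}$ when $3 \mid n$ but $T^*_n(d_j)$ is a unit-type residue otherwise; this matches the $x\to 0$ expansions \eqref{eq:tcheb}, which show $T^*_n(0) = 3$ for $3 \mid n$ and $T^*_n(0) = 0$ otherwise. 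Concretely, reducing mod $d_j$ we get $T^*_n(d_j) \equiv T^*_n(0) \pmod{d_j}$, so $d_j \mid d_{nj}$ iff $d_j \mid T^*_n(0)$, i.e. iff $3 \nmid n$ (using $\gcd(d_j,3)$ considerations only when $d_j \le 3$, which is excluded since $d_j \ge 4$). Actually the cleanest route is: $d_{nj} - 1 = \vn^{nj} + \vn^{-nj}$ and one checks directly that $\vn^j + \vn^{-j} \mid \vn^{nj} + \vn^{-nj}$ in $\mcl{O}_K$ precisely when $n$ is odd, while $\vn^j+\vn^{-j}+1 \mid \vn^{nj}+\vn^{-nj}+1$ can be analyzed by writing $\zeta = $ a primitive cube root of unity and noting $T^*$ is related to the sum formula for $(\vn^{j/1})$ — but the residue computation mod $d_j$ via \eqref{eq:tstrec} and \eqref{eq:tcheb} is the most elementary and is what I would write up.

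For parts (2) and (3), the organizing principle is to track the $2$-adic behavior. From \Cref{lem:towerbasic} we have $d_{2j} - 3 = \Delta_j = f_j^2 \Delta_0$ and $d_{2j}+1 = (d_j-1)^2$, and more generally $\Delta_{nj} = f_{nj}^2 \Delta_0 = f_j^2 U^*_n(d_j)^2 \Delta_0$. First I would settle the parity of $\Delta_0$: if $d_j$ is odd then $\Delta_j = (d_j-3)(d_j+1)$ is a product of two even numbers hence $\equiv 0 \pmod 4$; combined with $\Delta_j = f_j^2 \Delta_0$, if $f_j$ is odd this forces $4 \mid \Delta_0$, giving \eqref{it:dfdelprop2e}. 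If $d_j$ is even, then $\Delta_j = (d_j-3)(d_j+1)$ is a product of two odd numbers, so $\Delta_j \equiv (\text{odd})^2 \equiv 1 \pmod 4$ (an odd square is $1 \pmod 8$, and since $\Delta_j \equiv \Delta_0 \pmod{\square}$... more carefully: $\Delta_0$ is squarefree times... actually $\Delta_0$ is a fundamental discriminant so $\Delta_0 \equiv 0$ or $1 \pmod 4$, and $f_j^2 \Delta_0 = \Delta_j \equiv 1 \pmod 4$ forces $f_j$ odd and $\Delta_0 \equiv 1 \pmod 4$), giving \eqref{it:dfdelprop3a}. Then \eqref{it:dfdelprop2d} and \eqref{it:dfdelprop3b} follow from $\Delta_{nj} = f_j^2 U^*_n(d_j)^2\Delta_0$ once we know the parity of $U^*_n(d_j)$, and \eqref{it:dfdelprop2a}/\eqref{it:dfdelprop3c} follow from $d_{nj} = \Delta_{nj} + 3 + (\text{correction})$ — more precisely from $d_{nj}-3 = \Delta_{nj}$ (valid? no: $d_{nj}-3 = \Delta_{nj}$ only restates $\Delta_{nj} = (d_{nj}-3)(d_{nj}+1)/$... let me use instead $d_{nj} = T^*_n(d_j)$ and reduce mod $2$).

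The key computation, then, is the parities of $T^*_n(d_j)$ and $U^*_n(d_j)$ modulo $2$ as functions of $n$ and of $d_j \bmod 2$, which I would extract from the recursions \eqref{eq:tstrec}, \eqref{eq:ustrec}. When $d_j$ is odd: mod $2$, \eqref{eq:ustrec} becomes $U^*_j \equiv U^*_{j-1} + U^*_{j-2}$ with $U^*_1 = 1, U^*_2 = d_j - 1 \equiv 0$, giving the period-$3$ pattern $1,0,1,1,0,1,\dots$, so $U^*_n(d_j)$ is odd iff $n \not\equiv 2 \pmod 3$; and \eqref{eq:tstrec} mod $2$ gives $T^*_j \equiv 1 + d_j + (d_j+1)T^*_{j-1} + T^*_{j-2} \equiv T^*_{j-2}$ (since $d_j$ odd), with $T^*_1 = d_j \equiv 1$, $T^*_2 = d_j(d_j-2) \equiv 1$, so $T^*_n(d_j)$ is always odd — this is \eqref{it:dfdelprop2a}. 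Combining with $f_{nj} = f_j U^*_n(d_j)$: if $f_j$ even then $f_{nj}$ even always \eqref{it:dfdelprop2b}; if $f_j$ odd then $f_{nj}$ odd iff $U^*_n(d_j)$ odd iff $n \not\equiv 2 \pmod 3$ — wait, I should double-check this against \eqref{it:dfdelprop2c} which claims "iff $n$ odd"; this discrepancy means I need to redo the $U^*$ recursion mod $2$ using $d_j - 1 \equiv $ (even), so actually $U^*_j \equiv -U^*_{j-2} \equiv U^*_{j-2} \pmod 2$, period $2$: $1, 0, 1, 0,\dots$, so $U^*_n(d_j)$ odd iff $n$ odd, matching \eqref{it:dfdelprop2c}. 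When $d_j$ is even: mod $2$, $U^*_j \equiv (d_j-1)U^*_{j-1} - U^*_{j-2} \equiv U^*_{j-1} + U^*_{j-2}$, with $U^*_1 = 1$, $U^*_2 = d_j-1 \equiv 1$, giving $1,1,0,1,1,0,\dots$, so $U^*_n(d_j)$ odd iff $3 \nmid n$, which with $f_j$ odd (from \eqref{it:dfdelprop3a} analysis, since $\Delta_0 \equiv 1 \pmod 4$ and $d_j$ even means $\Delta_j = (d_j-3)(d_j+1)$ odd so $f_j$ odd) yields $f_{nj}$ odd iff $3\nmid n$, which is \eqref{it:dfdelprop3d}; likewise $T^*_j \equiv 1 + 0 + (0+1)T^*_{j-1} + T^*_{j-2} \equiv 1 + T^*_{j-1} + T^*_{j-2} \pmod 2$ — hmm, $3-x \equiv 1$, $x-1 \equiv 1$ when $x$ even — with $T^*_1 = d_j \equiv 0$, $T^*_2 = d_j(d_j-2) \equiv 0$: $T^*_3 \equiv 1 + 0 + 0 = 1$, $T^*_4 \equiv 1 + 1 + 0 = 0$, $T^*_5 \equiv 1 + 0 + 1 = 0$, $T^*_6 \equiv 1 + 0 + 0 = 1$, period $3$: $0,0,1,0,0,1,\dots$, so $T^*_n(d_j)$ even iff $3 \nmid n$, which is \eqref{it:dfdelprop3c}. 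Finally $\Delta_{nj} = f_{nj}^2\Delta_0$: if $f_{nj}$ odd and $\Delta_0 \equiv 1 \pmod 4$ then $\Delta_{nj} \equiv 1$; if $f_{nj}$ even or $\Delta_0 \equiv 0 \pmod 4$ then $\Delta_{nj} \equiv 0 \pmod 4$ — this delivers \eqref{it:dfdelprop2d} and \eqref{it:dfdelprop3b}.

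The main obstacle I anticipate is not any single hard step but bookkeeping: making sure the mod-$2$ reductions of the Chebyshev recursions are done with the correct initial conditions in each of the cases $d_j$ even versus $d_j$ odd (and $f_j$ even versus odd), and correctly chaining "parity of $U^*_n(d_j)$" $\Rightarrow$ "parity of $f_{nj}$" $\Rightarrow$ "class of $\Delta_{nj}$ mod $4$" $\Rightarrow$ "parity of $d_{nj}$" in the orders demanded by each sub-claim. I would present the proof by first proving \eqref{it:dfdelprop1} via reduction mod $d_j$ of \eqref{eq:tstrec} (or equivalently noting $d_{nj} - 1 = \vn^{nj}+\vn^{-nj}$ and computing $\gcd$), then establishing the two lemmas "parity of $T^*_n(d_j)$" and "parity of $U^*_n(d_j)$" by induction from \eqref{eq:tstrec} and \eqref{eq:ustrec} in each parity regime, and finally reading off (2) and (3) mechanically, invoking \Cref{lem:towerbasic} for the relations $\Delta_{nj} = f_{nj}^2\Delta_0$ and the fundamental-discriminant constraint $\Delta_0 \equiv 0,1 \pmod 4$.
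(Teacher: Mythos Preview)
Your proposal is correct and follows essentially the same route as the paper: both reduce item~\eqref{it:dfdelprop1} to the constant term of $T^*_n$ via \eqref{eq:tcheb} (your formulation $T^*_n(d_j)\equiv T^*_n(0)\pmod{d_j}$ is a clean way to say it, with $d_j\ge 4$ ruling out $d_j\mid 3$), and both handle parts (2) and (3) by running the recursions \eqref{eq:tstrec}, \eqref{eq:ustrec} modulo~$2$ together with the discriminant constraint $\Delta_0\equiv 0,1\pmod 4$. The only minor difference is that for \eqref{it:dfdelprop2d} the paper reads off $\Delta_{nj}\equiv 0\pmod 4$ directly from $\Delta_{nj}=(d_{nj}-3)(d_{nj}+1)$ with $d_{nj}$ odd, rather than via $f_{nj}^2\Delta_0$; this avoids the case split on the parity of $f_j$ that your route requires, but both are fine.
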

\begin{proof}
\Cref{it:dfdelprop1} is a consequence of 
~\eqref{eq:tcheb} and the fact that $d_j>3$. 

To prove \cref{it:dfdelprop2e}, observe that the fact that $\Delta_0$ is a discriminant means it is congruent to $0$ or $1$ modulo $4$.  The fact that
\eag{
f_j^2\Delta_0 &= (d_j-3)(d_j+1)
\label{eq:fjSquaredDelta0Expression}
}
is even, together with the fact that $f_j$ is odd, then implies that $\Delta_0 \equiv 0 \Mod{4}$.

To prove \cref{it:dfdelprop2d} and \cref{it:dfdelprop2a}, observe that it follows from \eqref{eq:tstrec} that, if $d_j$ is odd, then
\eag{
T^{*}_1(d_j) &\equiv T^{*}_2(d_j) \equiv 1 \ \Mod{2},
\\
\text{and } \
T^{*}_n(d_j) &\equiv T^{*}_{n-2}(d_j) \ \Mod{2}.
}
Consequently $d_{nj} = T^{*}_n(d_j)$ is odd for all $n$.  It then follows that $\Delta_{nj} = (d_{nj}-3)(d_{nj}+1)$ is even for all $n$.  Since $\Delta_{nj}$ is a discriminant, it follows that we must in fact have $\Delta_{nj} \equiv 0 \Mod{4}$.

\Cref{it:dfdelprop2b} is an immediate consequence of 
~\eqref{eq:fkrCheb}. 

To prove \cref{it:dfdelprop2c} observe that it follows from 
~\eqref{eq:ustrec} that $U^{*}_n(d_j)$ is odd if and only if $n$ is odd.  Since $f_j$ is odd, it follows from 
~\eqref{eq:fkrCheb} that $f_{nj}$ is odd if and only if $n$ is odd.

To prove \cref{it:dfdelprop3a}, observe that if $d_j$ is even, then
 \eag{
 f_j^2 \Delta\vpu{2}_0 &= \Delta\vpu{2}_j = (d_j-3)(d_j+1) \equiv 1 \ \Mod{2},
 }
implying that $f_j$, $\Delta_0$, $\Delta_j$ are all odd. The statement follows from this and the fact that $\Delta_0$ is a discriminant, which means $\Delta_0$ is congruent to $0$ or $1$ modulo $4$.

\Cref{it:dfdelprop3c} is an immediate consequence of \eqref{eq:tcheb}.  It then follows that
\eag{
\Delta_{nj} &= (d_{nj}-3)(d_{nj}+1)
}
is odd if and only if $n \not\equiv 0 \Mod{3}$. 
\Cref{it:dfdelprop3b} is a consequence of this and the fact that $\Delta_{nj}$ is a discriminant (and thus congruent to $0$ or $1$ modulo $4$).
 
Finally, \eqref{eq:fjSquaredDelta0Expression} together with  the fact that $d_j$ is even means $f_j$ is odd. In view of 
~\eqref{eq:ucheb},
\eag{
f_{nj}&= f_j U^{*}_n(d_j)
\equiv
\begin{cases}
    0 \ \Mod{2} \quad & \text{if $n \equiv 0 \Mod{3}$},
    \\
    1 \ \Mod{2} \quad & \text{if $n \not\equiv 0 \Mod{3}$}.
\end{cases}
}
This establishes \cref{it:dfdelprop3d} and completes the proof. 
\end{proof}
\subsection{Unit group of an order}
As we will see, 
under our conjectures,
there is a natural correspondence between $r$-SIC multiplets and orders of the real quadratic field $K$.  
In this subsection we prove some facts about the unit group of an order which will be needed in the sequel.
\begin{defn}[unit group, and positive norm unit group of conductor $f$]\label{df:ofufdef}
Let $\mcl{O}_f$ be the order of conductor $f$ in the real quadratic field $K$ (see \Cref{dfn:orderConductorf}).   
    Define 
    \eag{
        \mcl{U}_f &= \{w\in \mcl{O}_f \colon \Nm(w)=\pm 1\} = \OO_f^\times
    }
    to be the unit group of $\mcl{O}_f$, and 
    \eag{
         \mcl{U}^{+}_f &= \{w\in \mcl{O}_f \colon \Nm(w)= 1\}
    }
   to  be the subgroup of $ \mcl{U}_f $ consisting of all positive norm units.
\end{defn}
\begin{rmkb}
    To avoid cluttering the notation, we do not indicate the field $K$ explicitly. It will always be clear from context.
\end{rmkb}
\begin{thm}\label{thm:Leastj}
    Let $K$ be a real quadratic field ,and let $f_1, f_2, \dots$ be its associated sequence of conductors. Then, for  each  positive integer $f$, there exists a positive integer $j$ such that $f\div f_j$.
\end{thm}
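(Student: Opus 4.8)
The goal is to show that every positive integer $f$ divides some conductor $f_j$ in the sequence $f_j = (\vn^j - \vn^{-j})/\sqrt{\Delta_0}$. The natural approach is to work with the matrix realization of the unit: set $A = \smmattwo{(d_1-1)/2}{\ast}{\ast}{(d_1-1)/2}$ — or more cleanly, recall that multiplication by $\vn$ on the lattice $\OO_K = \Z + \Z\omega$ (where $\omega = (\Delta_0+\sqrt{\Delta_0})/2$) is given by an integer matrix $A_\vn \in \SLtwo{\Z}$ with $\Tr(A_\vn) = \vn + \vn^{-1} = d_1 - 1$ and $\det(A_\vn) = \Nm(\vn) = 1$. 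Then $A_\vn^j$ represents multiplication by $\vn^j$, and by \Cref{lem:towerbasic} (specifically $\vn^j = \tfrac{(d_j-1)+f_j\sqrt{\Delta_0}}{2}$) one computes that the off-diagonal entries of $A_\vn^j$ are $\pm f_j$ and $\pm c f_j$ for the relevant constant, so in particular $A_\vn^j \equiv I \Mod{f}$ if and only if $f \mid f_j$ (and the diagonal congruence $d_j - 1 \equiv d_1 - 1$ or similar is automatic once the off-diagonal vanishes mod $f$, using $\det = 1$). Thus the claim reduces to: \emph{there exists $j \geq 1$ with $A_\vn^j \equiv I \Mod f$.}

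This last statement is immediate from the finiteness of $\SLtwo{\Z/f\Z}$: the image of $A_\vn$ in this finite group has some finite order $j$, whence $A_\vn^j \equiv I \Mod f$. One should double-check that $j \geq 1$ can be taken (it can, since the order is a positive integer) and that the congruence on the matrix entries really does force $f \mid f_j$ — this uses that if $A_\vn^j = \smmattwo{x}{y}{z}{w}$ then, reading off from $\vn^j = \tfrac{(d_j-1)+f_j\sqrt{\Delta_0}}{2}$ acting on the basis $\{1,\omega\}$, the entry $z$ (or $y$) is $\pm f_j$ up to a unit factor independent of $j$, so $z \equiv 0 \Mod f$ gives $f \mid f_j$.

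Alternatively, and perhaps more transparently, one can argue directly in the ring $\OO_f = \Z + f\OO_K$: since $\vn$ is a unit of $\OO_K$, some power $\vn^j$ lies in $\OO_f$ (because $\OO_K^\times / \OO_f^\times$ embeds in the finite group $(\OO_K/f\OO_K)^\times$, so $\vn^j \in \OO_f$ for $j$ the order of the class of $\vn$), and moreover one can arrange $\vn^j \equiv 1 \Mod{f\OO_K}$ by taking $j$ to be the order of $\vn$ in $(\OO_K/f\OO_K)^\times$. Writing $\vn^j = \tfrac{(d_j-1)+f_j\sqrt{\Delta_0}}{2}$ and reducing mod $f\OO_K$, the condition $\vn^j \equiv 1$ forces $f_j\sqrt{\Delta_0} \equiv 0$, i.e. $f \mid f_j$ (using that $\sqrt{\Delta_0}$, or rather $\omega$, is part of a $\Z$-basis of $\OO_K$ and is not a zero divisor mod $f$ in the relevant sense — one spells this out by noting $f_j \omega + (\text{integer}) \in f\OO_K$ implies $f \mid f_j$).

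\textbf{Main obstacle.} There is no deep obstacle here; the only thing requiring care is the bookkeeping that translates "the image of $\vn$ in $(\OO_K/f\OO_K)^\times$ has finite order $j$" into the clean divisibility $f \mid f_j$, i.e. verifying that the $f_j$ appearing in the closed form $\vn^j = \tfrac{(d_j-1)+f_j\sqrt{\Delta_0}}{2}$ is exactly the coordinate that must vanish modulo $f$. This is a short computation with the $\Z$-basis $\{1, \omega\}$ of $\OO_K$ and the relation $\sqrt{\Delta_0} = 2\omega - \Delta_0$; once set up correctly it is routine, so the lemma follows cleanly from the pigeonhole principle applied to a finite group.
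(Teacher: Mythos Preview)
Your proposal is correct and its second approach is essentially the paper's argument: both show that some power $\vn^j$ lies in the order $\OO_f$, then read off $f\mid f_j$ from the coefficient of $\omega = (\Delta_0+\sqrt{\Delta_0})/2$ in the expansion $\vn^j = n_1 + f_j\,\omega$ (which is exactly the computation in the proof of \Cref{lem:towerbasic}). The only difference is in how the existence of such a $j$ is justified: the paper invokes the generalized Dirichlet unit theorem for orders to write $\mcl{U}^+_f = \{\pm w^n\}$ and then identifies $w$ as a power of $\vn$, whereas you use the more elementary observation that $\vn$ has finite order in the finite group $(\OO_K/f\OO_K)^\times$. Your route avoids citing Dirichlet and is slightly more direct; the paper's route has the advantage of simultaneously setting up the structure of $\mcl{U}^+_f$ needed for the next result (\Cref{thm:UnitsConductorsIndicesProperties}). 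The matrix version you sketch first is also fine but is just a repackaging of the same finiteness argument via $\canrep_Q$.
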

\begin{proof}
    It follows from the generalization of Dirichlet's unit theorem to an arbitrary order (see, for example, \cite{Koch2000}) that
    \eag{
    \mcl{U}^{+}_f=\{\pm w^n \colon n\in \mbb{Z}\}
    \label{eq:upftrmsw}
    }
    for some unit $w\neq \pm 1$.  It can be assumed, without loss of generality, that $w>1$.  We then have
    \eag{
    w&=\vn^{j} = \frac{d_j-1-f_j\Delta_0}{2}+f_j\left(\frac{\Delta_0+\sqrt{\Delta_0}}{2}\right)
    \label{eq:wtrmsej}
    }
    for some $j\in \mbb{N}$.  The fact that $w\in \mcl{O}_f$  implies $f\div f_j$.
\end{proof}
This result motivates the following definition.
\begin{defn}[minimum level; fundamental positive norm unit of an order]\label{df:epsilonfDefinition}
    Let $K$ be a real quadratic field, and let $f_1, f_2, \dots$ be its associated sequence of conductors. For each  positive integer $f$, define
    \eag{
    j_{\rm{min}}(f) &= \min \{j\in \mbb{N}\colon f|f_j\},
    \\
    \intertext{and}
    \vn_f &= \vn^{j_{\rm{min}}(f)}.
    }
\end{defn}
\begin{rmkb}
    This definition depends on \Cref{thm:Leastj} for its validity.  Note that $j_{\rm{min}}(f)$ and $\vn_f$ both depend on $K$ as well as $f$, but to avoid cluttering the notation the $K$-dependence is not explicitly indicated.
   The identity of $K$  will always be clear from context. It follows from the next theorem that $\vn_f$ is the smallest unit greater than $1$ in $\mcl{U}^{+}_f$.
\end{rmkb}
\begin{thm}\label{thm:UnitsConductorsIndicesProperties}
Let $K$ be a real quadratic field.  Then:
\begin{enumerate}
    \item \label{it:upftrmsef} For all $f\in \mbb{N}$,
    \eag{
    \mcl{U}^{+}_f &= \{\pm \vn_f^{n}\colon n\in \mbb{Z}\}.
    }
    \item \label{it:jminfkexpn} For all $k\in \mbb{N}$,
        \eag{
    j_{\rm{min}}(f_k) &= k.
    }
    \item \label{it:fdivfjeqjmindivj}  For all $f, j\in \mbb{N}$,
    \eag{
    f
    \div f_j 
    \iff 
    j_{\rm{min}}(f) 
    \div j.
    }
    \item \label{it:jdivkfjdivfk} For all $j, k\in \mbb{N}$,
    \eag{\label{eq:jdivkfjdivfk}
    j
    \div k 
    \iff 
    f_j 
    \div f_k
    }
\end{enumerate}
\end{thm}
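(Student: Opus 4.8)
\textbf{Proof proposal for \Cref{thm:UnitsConductorsIndicesProperties}.} The four claims are linked, so the plan is to prove them in a convenient order, deriving later parts from earlier ones. The starting point is \Cref{thm:Leastj} and its proof, which already established that $\mcl{U}^{+}_f = \{\pm w^n : n \in \Z\}$ for some $w > 1$, and that such $w$ must equal $\vn^j$ for some $j \in \mbb{N}$ with $f \div f_j$. To get claim \eqref{it:upftrmsef}, I would argue that the generator $w$ is in fact $\vn_f = \vn^{j_{\rm{min}}(f)}$. Indeed, $\vn_f \in \mcl{O}_f$ by \Cref{df:epsilonfDefinition} and \eqref{eq:wtrmsej} (since $f \div f_{j_{\rm{min}}(f)}$), and $\vn_f$ has norm $1$ since it is a power of $\vn$, so $\vn_f \in \mcl{U}^{+}_f$, hence $\vn_f = w^k$ for some $k \geq 1$. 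On the other hand $w = \vn^j$ for some $j$ with $f \div f_j$, so $j \geq j_{\rm{min}}(f)$ by definition of the minimum, giving $w = \vn^j \geq \vn^{j_{\rm{min}}(f)} = \vn_f = w^k$; since $w > 1$ this forces $k = 1$, i.e.\ $w = \vn_f$. This simultaneously proves \eqref{it:upftrmsef} and the remark that $\vn_f$ is the smallest unit greater than $1$ in $\mcl{U}^{+}_f$.

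Next I would handle claim \eqref{it:fdivfjeqjmindivj}, as \eqref{it:jminfkexpn} and \eqref{it:jdivkfjdivfk} follow from it quickly. For the forward direction: suppose $f \div f_j$. Then $\vn^j \in \mcl{O}_f$ by \eqref{eq:wtrmsej}, so $\vn^j \in \mcl{U}^{+}_f = \langle \pm \vn_f\rangle$ by \eqref{it:upftrmsef}; comparing with the fact that $\vn_f = \vn^{j_{\rm{min}}(f)}$ and that $\vn$ has infinite order, we get $j_{\rm{min}}(f) \div j$. For the reverse direction: if $j_{\rm{min}}(f) \div j$, write $j = j_{\rm{min}}(f)\cdot m$; then $\vn^j = \vn_f^m \in \mcl{O}_f$, and using \eqref{eq:wtrmsej} again the $\mcl{O}_f$-membership of $\vn^j$ means its ``$\sqrt{\Delta_0}$-coefficient'' $f_j$ is divisible by $f$, i.e.\ $f \div f_j$. (I should double-check the precise form of \eqref{eq:wtrmsej} to confirm the coefficient of $\tfrac{\Delta_0 + \sqrt{\Delta_0}}{2}$ in $\vn^j$ is exactly $f_j$, which is immediate from \Cref{dfn:sequenceofconductors} and \Cref{lem:towerbasic}.)

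Claim \eqref{it:jminfkexpn} is then an easy special case: apply \eqref{it:fdivfjeqjmindivj} with $f = f_k$. Since $f_k \div f_k$ trivially, we get $j_{\rm{min}}(f_k) \div k$, so $j_{\rm{min}}(f_k) \leq k$. Conversely $j_{\rm{min}}(f_k) = j$ means $f_k \div f_j$ with $j = j_{\rm{min}}(f_k) \leq k$; but the sequence $f_j$ is strictly increasing by \Cref{thm:djandfjmonotonic}, so $f_k \div f_j$ with $j \leq k$ forces $j = k$. Hence $j_{\rm{min}}(f_k) = k$. Finally, claim \eqref{it:jdivkfjdivfk}: chaining \eqref{it:fdivfjeqjmindivj} (applied with $f = f_j$) with \eqref{it:jminfkexpn} gives $f_j \div f_k \iff j_{\rm{min}}(f_j) \div k \iff j \div k$.

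\textbf{Main obstacle.} None of the steps is deep; the only place requiring care is the bookkeeping in \eqref{eq:wtrmsej}, namely verifying that membership of $\vn^j$ in the order $\mcl{O}_f$ is \emph{equivalent} to $f \div f_j$ (not merely implied by it). This is where the integrality structure of the orders and the exact identification of the $\sqrt{\Delta_0}$-component of $\vn^j$ with the conductor $f_j$ must be used; everything else is formal manipulation of the cyclic group $\mcl{U}^+_f$ and the monotonicity of $f_j$.
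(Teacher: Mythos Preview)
Your proposal is correct and follows essentially the same route as the paper: establish $\mcl{U}^{+}_f=\langle\pm\vn_f\rangle$ via the generator from \Cref{thm:Leastj}, then use the equivalence $f\div f_j\iff\vn^j\in\mcl{O}_f$ together with monotonicity of $(f_j)$ to deduce the remaining claims. The only cosmetic difference is ordering---you prove \eqref{it:fdivfjeqjmindivj} before \eqref{it:jminfkexpn}, whereas the paper does the reverse and proves \eqref{it:jminfkexpn} directly from monotonicity without invoking \eqref{it:fdivfjeqjmindivj}---but neither order introduces any circularity, and the substance is identical.
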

\begin{proof}
    \textit{Statement \eqref{it:upftrmsef}.} Let $w$, $j$ be as in \eqref{eq:upftrmsw} and \eqref{eq:wtrmsej}.  The fact that $f\div f_j$ implies $j_{\rm{min}}(f)\le j$.  On the other hand, the fact that $f\div f_{j_{\rm{min}}(f)}$ implies 
    \eag{
    \vn^{j_{\rm{min}}(f)} &= \frac{d_{j_{\rm{min}}(f)}-1-f_{j_{\rm{min}}(f)}\Delta_0}{2}+f_{j_{\rm{min}}(f)}\left(\frac{\Delta_0+\sqrt{\Delta_0}}{2}\right)
    }
    is in $\mcl{O}_f$ and consequently $\mcl{U}^{+}_f$.  Since $\vn^{j_{\rm{min}}(f)} >1$ we must have $\vn^{j_{\rm{min}}(f)} = w^t = \vn^{t j}$ for some $t\in \mbb{N}$, implying $j_{\rm{min}}(f) \ge j$.  We conclude $j_{\rm{min}}(f) = j$. 

     \textit{Statement \eqref{it:jminfkexpn}.} It follows from \Cref{thm:djandfjmonotonic} that $j\ge k$ for all $j \in \{j\in \mbb{N}\colon f_k |f_j\} $, implying $j_{\rm{min}}(f_k) \ge k$.  Since $k$ itself is in $\{j\in \mbb{N}\colon f_k |f_j\}$, we must in fact have $j_{\rm{min}}(f_k) = k$.

     \textit{Statement \eqref{it:fdivfjeqjmindivj}.} Observe that $f\div f_j$ if and only if 
     \eag{
     \vn^{j} = \frac{d_j-1-f_j\Delta_0}{2}+f_j\left(\frac{\Delta_0+\sqrt{\Delta_0}}{2}\right)
     }
   is in $\mcl{O}_f$, and consequently in $\mcl{U}^{+}_f$.  In view of Statement~\eqref{it:upftrmsef} this means $f\div f_j$ if and only if $\vn^j = \vn_f^n$ for some $n\in \mbb{N}$, which in turn is true if and only if $j_{\rm{min}}(f)\div j$.

    \textit{Statement \eqref{it:jdivkfjdivfk}.}  It follows from statements \eqref{it:jminfkexpn} and \eqref{it:fdivfjeqjmindivj} that
    \eag{
    j
    \div k 
    \iff 
    j_{\rm{min}}(f_j)
    \div k 
    \iff 
    f_j \div f_k,
    }
    giving \eqref{eq:jdivkfjdivfk}.
\end{proof}

We see from this that the conductor $f$ is naturally associated to the infinite sequence of dimensions $d_{j_{\rm{min}}(f)}, d_{2j_{\rm{min}}(f)}, d_{3j_{\rm{min}}(f)},\dots$.  As we will see, if the Stark Conjecture and Twisted Convolution Conjecture are correct, then the members of the corresponding sequence of $1$-SIC multiplets are  related.  This is a generalization of the phenomenon of $1$-SIC alignment (see \cite{Appleby:2017,Andersson:2019,Bengtsson:2022} and \Cref{dfn:sicalign} above).
As we will see,
\Cref{thm:negunitcondA}, combined with the our conjectures, tells us for which minimal multiplets (multiplets corresponding to $f=1$)  there is an anti-unitary symmetry.  In the sequel we will also address the question of which non-minimal multiplets have such a symmetry.
If $d_1-3$ is not a perfect square the question is easily answered, as it is then automatic that $ \mcl{U}\vpu{+}_f=\mcl{U}^{+}_f$ for all $f$.  The next theorem determines for which values of $f$ the unit group $\mcl{U}_f$ contains a negative norm unit if $d_1-3$ is a perfect square.  
\begin{thm} \label{thm:negunitcondb}
Let $f$ be any positive integer.  Then the following statements are equivalent:
\begin{enumerate}
    \item \label{it:negunitcondb1} $\mcl{U}\vpu{+}_f$ contains a negative norm unit,
    \item \label{it:negunitcondb2} $d_1-3$ is a perfect square,   $j_{\rm{min}}(f)$ is odd, and the integer $\sqrt{\frac{d_{j_{\rm{min}}(f)}+1}{\Delta_0}} = \frac{f_{j_{\rm{min}}(f)}}{\sqrt{d_{j_{\rm{min}}(f)}-3}}$ is divisible by $f$.
\end{enumerate}
In that case,
\eag{
\mcl{U}_{f} &= \{\pm \un^{nj_{\rm{min}}(f)}\colon n\in \mbb{Z}\}.
}
\end{thm}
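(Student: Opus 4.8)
The plan is to work with the fundamental unit $\un>1$ and analyze when negative-norm units appear in the various subgroups $\mcl{U}^{+}_f \subseteq \mcl{U}_f$. The key structural input is Dirichlet's unit theorem for orders (already invoked for \Cref{thm:Leastj}): $\mcl{U}_f = \{\pm \un_f^n : n \in \mbb{Z}\}$ for a unique smallest unit $\un_f>1$, and $\mcl{U}^+_f$ is index $1$ or $2$ in $\mcl{U}_f$ with $\mcl{U}^+_f = \{\pm \vn_f^n : n\in\mbb{Z}\}$ by \Cref{thm:UnitsConductorsIndicesProperties}\eqref{it:upftrmsef}. The statement ``$\mcl{U}^+_f$ contains a negative norm unit'' really means ``$\mcl{U}_f$ contains a negative norm unit,'' i.e.\ $\un_f$ has norm $-1$ (equivalently $\vn_f = \un_f^2$). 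So the task reduces to: characterize when $\un_f$ has norm $-1$, and in that case show $\un_f = \un^{j_{\rm{min}}(f)}$.

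First I would observe that $\un_f$ has norm $-1$ forces $\un$ itself to have norm $-1$ (any power of a norm-$+1$ unit has norm $+1$), which by \Cref{thm:negunitcondA} is equivalent to $d_1-3$ being a perfect square; so assume this from now on and use $\vn = \un^2$, $\vn^j = \un^{2j}$, together with formula \eqref{eq:vpowrnegnrm}:
\begin{equation*}
\un^j = \frac{\sqrt{d_j-3}+\sqrt{d_j+1}}{2}.
\end{equation*}
Next, writing $\un^j = m_1 + m_2\!\left(\tfrac{\Delta_0+\sqrt{\Delta_0}}{2}\right)$ as in the proof of \Cref{thm:negunitcondA}, the coefficient $m_2$ equals $\sqrt{(d_j+1)/\Delta_0}$ when $j$ is odd (a perfect square by that theorem) and equals $f_j$ when $j$ is even. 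Then $\un^j \in \mcl{O}_f$ iff $f \div m_2$. Combining the odd/even cases: a negative-norm unit lies in $\mcl{O}_f$ iff there is an \emph{odd} $j$ with $f \div \sqrt{(d_j+1)/\Delta_0}$. I would then show, using the monotonicity from \Cref{thm:djandfjmonotonic} and the divisibility structure of the $f_j$ (\Cref{thm:UnitsConductorsIndicesProperties}\eqref{it:jdivkfjdivfk}, \Cref{lem:dfdelprops}), that the smallest such $j$ is exactly $j_{\rm{min}}(f)$: indeed if $f \div m_2$ for odd $j$ then in particular $f \div f_j$ (since $m_2 \div f_j$ when $j$ is odd, because $f_j = m_2\sqrt{d_j-3}$ from $\sqrt{(d_j+1)/\Delta_0}\cdot\sqrt{d_j-3} = f_j$), so $j_{\rm{min}}(f) \div j$ and $j_{\rm{min}}(f)$ is itself odd; conversely if $j_{\rm{min}}(f)$ is odd one checks $f \div f_{j_{\rm{min}}(f)}$ already gives $f \div \sqrt{(d_{j_{\rm{min}}(f)}+1)/\Delta_0}$ only under the stated extra hypothesis, which is why it appears explicitly in \eqref{it:negunitcondb2}. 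This establishes the equivalence \eqref{it:negunitcondb1}$\iff$\eqref{it:negunitcondb2}.

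For the final display, once we know $\mcl{U}_f$ contains a negative-norm unit, $\un_f^2 = \vn_f = \vn^{j_{\rm{min}}(f)} = \un^{2j_{\rm{min}}(f)}$, so $\un_f = \un^{j_{\rm{min}}(f)}$ (both positive, same square), and hence $\mcl{U}_f = \{\pm \un^{n j_{\rm{min}}(f)} : n \in \mbb{Z}\}$ by Dirichlet. The step I expect to be the main obstacle is pinning down the precise divisibility condition in \eqref{it:negunitcondb2}: the subtlety is that $f \div f_{j_{\rm{min}}(f)}$ (which is automatic) is \emph{weaker} than $f \div \sqrt{(d_{j_{\rm{min}}(f)}+1)/\Delta_0}$, since $f_j = \sqrt{(d_j+1)/\Delta_0}\cdot\sqrt{d_j-3}$ and the factor $\sqrt{d_j-3}$ can absorb part of $f$. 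Getting the logic exactly right — that a negative-norm unit of conductor $f$ exists iff $j_{\rm{min}}(f)$ is odd \emph{and} the stronger divisibility holds, rather than merely iff $j_{\rm{min}}(f)$ is odd — requires carefully tracking which of the two displayed integers $\sqrt{(d_j+1)/\Delta_0}$ and $f_j/\sqrt{d_j-3}$ governs membership in $\mcl{O}_f$, and verifying these two expressions agree (which follows from $d_j+1 = \Delta_0 \cdot (f_j/\sqrt{d_j-3})^2$, i.e.\ $(d_j-3)(d_j+1) = f_j^2\Delta_0$ from \Cref{lem:towerbasic}).
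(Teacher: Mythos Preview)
Your approach is correct and essentially the same as the paper's. One point of organization deserves attention: in the direction \eqref{it:negunitcondb1}$\Rightarrow$\eqref{it:negunitcondb2}, after you deduce that $j_{\rm min}(f)$ is odd, you still need to establish the divisibility $f\mid m_2(j_{\rm min}(f))$. Your sketch shows only that any odd $j$ with $f\mid m_2(j)$ is a multiple of $j_{\rm min}(f)$; it does not yet show that $j_{\rm min}(f)$ itself satisfies $f\mid m_2(j_{\rm min}(f))$. The paper closes this by the group-structure argument you yourself invoke in the final paragraph: writing $j=(2p+1)j_{\rm min}(f)$, one has $\un^{2p\,j_{\rm min}(f)}=\vn^{p\,j_{\rm min}(f)}\in\mcl{U}_f$, so $\un^{j_{\rm min}(f)}=\un^{j}\cdot\bigl(\un^{2p\,j_{\rm min}(f)}\bigr)^{-1}\in\mcl{U}_f\subseteq\mcl{O}_f$, and then reading off the $\mcl{O}_f$-coordinates gives $f\mid m_2(j_{\rm min}(f))$. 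Equivalently, your identity $\un_f^2=\vn_f=\vn^{j_{\rm min}(f)}$ gives $\un_f=\un^{j_{\rm min}(f)}$ directly, which immediately yields both the oddness of $j_{\rm min}(f)$ (since $\Nm(\un_f)=-1$) and $\un^{j_{\rm min}(f)}\in\mcl{O}_f$. Moving that observation earlier would tighten the argument considerably.
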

\begin{proof}
    \eqref{it:negunitcondb1} $\implies$ \eqref{it:negunitcondb2}. The fact that $\mcl{U}_f$ contains a negative norm unit means $\un$ must be negative norm.   Theorem~\ref{thm:negunitcondA} then implies that  $d_1-3$ is a perfect square. We may assume, without loss of generality, that the negative norm unit is greater than 1, and therefore equal to $\un^j$ for some odd positive integer $j$. Since $\vn^j= \un^{2j} \in \mcl{U}^{+}_f$ we must have $j=\ell j_{\rm{min}}(f)$ for some $\ell \in \mbb{N}$. The fact that $j$ is odd means $\ell$ and $j_{\rm{min}}(f)$ are both odd. In particular $\ell = 2p+1$ for some non-negative integer $p$.   Since 
    \eag{
    \un^{2pj_{\rm{min}}(f)} = \vn^{pj_{\rm{min}}(f)}\in \mcl{U}_f
    }
   this means $\un^{j_{\rm{min}}(f)}\in \mcl{U}_f$.
It follows from Theorem~\ref{thm:negunitcondA} that
\eag{
d_{j_{\rm{min}}(f)}-3 &= m_1^2,
\label{eq:djminfMinus3}
\\
d_{j_{\rm{min}}(f)}+1 &= m_2^2 \Delta_0,
\label{eq:djminfPlus3}
}
\eag{
f_{j_{\rm{min}}(f)}^2 &= \frac{(d_{j_{\rm{min}}(f)}-3)(d_{j_{\rm{min}}(f)}+1)}{\Delta_0} = m_1^2m_2^2,
}
and
\eag{
\un^{j_{\rm{min}}(f)}&= \frac{m_1+m_2\Delta_0}{2} = \frac{m_1-m_2\Delta_0}{2}+m_2\left(\frac{\Delta_0+\sqrt{\Delta_0}}{2}\right),
\label{eq:uPowerjminfExpression}
}
for some pair of positive integers $m_1$, $m_2$.  The fact that $\un^{j_{\rm{min}}(f)}\in \mcl{U}_f\subseteq \mcl{O}_f$ then implies that $f$ is a divisor of $m_2 =f_{j_{\rm{min}}(f)}/m_1$.

\eqref{it:negunitcondb2} $\implies$ \eqref{it:negunitcondb1}.  
The fact that $d_1-3$ is a perfect square means, in view of \Cref{thm:negunitcondA}, that $\un$ is negative norm.  
Since $j_{\rm{min}}(f)$ is odd  $\un^{j_{\rm{min}}(f)}$ is also negative norm.  
It follows from Theorem~\ref{thm:negunitcondA} that $\un^{j_{\rm{min}}(f)}$ can be written in the form of \eqref{eq:uPowerjminfExpression}, with $m_1$, $m_2$ as given by \eqref{eq:djminfMinus3} and \eqref{eq:djminfPlus3}. 
Since $f\div m_2$, it follows that $\un^{j_{\rm{min}}(f)}$ is in $\mcl{O}_f$, and therefore in $\mcl{U}_f$.

    To prove the last statement, suppose $\mcl{U}_f$ contains a negative norm unit.  Then it follows from the argument  above that $\un^{j_{\rm{min}}(f)}\in \mcl{U}_f$.  So
    \eag{
        \{\pm \un^{nj_{\rm{min}}(f)}\colon n\in \mbb{Z}\} \subseteq \mcl{U}_f.
    }
   Conversely, let $w$ be any element of $\mcl{U}_f$.   Without loss of generality we may assume  $w>1$.  Then $w=\un^\ell $ for some $\ell \in \mbb{N}$.  Since $\vn^\ell  = w^2\in \mcl{U}^{+}_f$, we must have $\ell =nj_{\rm{min}}(f)$ for some $n\in \mbb{N}$.  So $w\in \{\pm \un^{nj_{\rm{min}}(f)}\colon n\in \mbb{Z}\}$.
   \end{proof}
   \begin{defn}[fundamental unit of an order]\label{df:ufvf}
        Given a real quadratic field $K$ and positive integer $f$, define 
        \eag{
         \un_f &=\begin{cases}
                    \vn^{j_{\rm{min}}(f)} \quad & \text{if } \mcl{U}_f = \mcl{U}^{+}_f,
                    \\
                    \un^{j_{\rm{min}}(f)} \quad & \text{if } \mcl{U}_f \supsetneq \mcl{U}^{+}_f.
         \end{cases}
        }
   \end{defn}
   \begin{cor}\label{cr:UfGroupGenerator}
       For all $f\in \mbb{N}$
       \eag{
       \mcl{U}_f &= \{\pm \un_f^n\colon n\in \mbb{N}\}.
       }
   \end{cor}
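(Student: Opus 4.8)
The plan is to reduce the claim to two structure results already established---statement \eqref{it:upftrmsef} of \Cref{thm:UnitsConductorsIndicesProperties}, which describes $\mcl{U}^{+}_f$, and the concluding display of \Cref{thm:negunitcondb}, which describes $\mcl{U}_f$ in the case it is strictly larger than $\mcl{U}^{+}_f$---by splitting into precisely the two cases appearing in \Cref{df:ufvf}.

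First I would record the elementary observation that the norm restricts to a group homomorphism $\Nm\colon \mcl{U}_f\to\{\pm 1\}$ with kernel $\mcl{U}^{+}_f$. Consequently either $\mcl{U}_f=\mcl{U}^{+}_f$, or $[\mcl{U}_f:\mcl{U}^{+}_f]=2$ and $\mcl{U}_f$ contains a negative-norm unit; conversely, if $\mcl{U}_f$ contains a negative-norm unit then $\mcl{U}_f\supsetneq\mcl{U}^{+}_f$. In particular the dichotomy ``$\mcl{U}_f=\mcl{U}^{+}_f$ versus $\mcl{U}_f\supsetneq\mcl{U}^{+}_f$'' used in \Cref{df:ufvf} to define $\un_f$ is the same as the dichotomy governing condition \eqref{it:negunitcondb1} of \Cref{thm:negunitcondb}.

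In the case $\mcl{U}_f=\mcl{U}^{+}_f$, \Cref{df:ufvf} gives $\un_f=\vn_f=\vn^{j_{\rm{min}}(f)}$, and statement \eqref{it:upftrmsef} of \Cref{thm:UnitsConductorsIndicesProperties} yields $\mcl{U}_f=\mcl{U}^{+}_f=\{\pm\vn_f^{\,n}\colon n\in\mbb{Z}\}=\{\pm\un_f^{\,n}\colon n\in\mbb{Z}\}$. In the case $\mcl{U}_f\supsetneq\mcl{U}^{+}_f$, the second paragraph shows that $\mcl{U}_f$ contains a negative-norm unit, so the hypothesis of \Cref{thm:negunitcondb} is satisfied and its concluding display gives $\mcl{U}_f=\{\pm\un^{\,nj_{\rm{min}}(f)}\colon n\in\mbb{Z}\}$; since here $\un_f=\un^{j_{\rm{min}}(f)}$ by \Cref{df:ufvf}, this is again $\{\pm\un_f^{\,n}\colon n\in\mbb{Z}\}$. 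Either way the asserted description of $\mcl{U}_f$ follows.

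I expect no real obstacle: this is a bookkeeping consequence of \Cref{thm:UnitsConductorsIndicesProperties} and \Cref{thm:negunitcondb}, and the only step worth spelling out is the equivalence, noted in the second paragraph, between $\mcl{U}_f\supsetneq\mcl{U}^{+}_f$ and the hypothesis of \Cref{thm:negunitcondb}, which is immediate from the fact that $\mcl{U}^{+}_f$ is the kernel of the norm homomorphism $\Nm\colon\mcl{U}_f\to\{\pm1\}$.
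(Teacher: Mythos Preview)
Your argument is correct and is essentially the same as the paper's, just more fully spelled out: the paper's proof is the one-liner ``Immediate consequence of Theorem~\ref{thm:negunitcondb},'' whereas you correctly make explicit the two-case split and also invoke \Cref{thm:UnitsConductorsIndicesProperties}\eqref{it:upftrmsef} for the case $\mcl{U}_f=\mcl{U}^{+}_f$. Your use of $n\in\mbb{Z}$ rather than $n\in\mbb{N}$ matches the earlier displays in Theorems~\ref{thm:UnitsConductorsIndicesProperties} and~\ref{thm:negunitcondb} and is clearly what is intended.
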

   \begin{proof}
       Immediate consequence of Theorem~\ref{thm:negunitcondb}.
   \end{proof}
   
\subsection{Dimension grid}
So far we have been focusing on the dimension tower $d_1, d_2, \dots$. 
In the sequel we will show that, if the Stark Conjecture and Twisted Convolution Conjecture are correct, these are the dimensions in which there exist $1$-SICs with base field $K$. 
By contrast, it will turn out that $r$-SICs with base field $K$ and $r>1$ are found in dimensions $d_{j,m}$, with $m>1$. 
The purpose of this subsection is to establish some properties of these dimensions which will be needed in the sequel.

\begin{prop}\label{prop:rdtrmstheta}
    Let $\theta=\log \vn$.  Then the following equalities hold for all $j,m\in \mbb{N}$.
    \eag{
        r_{j,m}&=\frac{\sinh mj\theta}{\sinh j\theta}
        \\
        d_{j,m} &= \frac{\sinh \frac{(2m+1)j\theta}{2}}{\sinh\frac{j\theta}{2}}
        \\
        d_j &= 1+2\cosh j \theta
    }
\end{prop}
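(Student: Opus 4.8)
The plan is to establish all three identities directly from the definitions in \Cref{dfn:sequenceofconductors} and \Cref{dfn:fjrjmdjm}, together with the hyperbolic trigonometric identities. Write $\theta = \log\vn$, so that $\vn^{j} = e^{j\theta}$ and $\vn^{-j} = e^{-j\theta}$; then $\vn^{j}-\vn^{-j} = 2\sinh(j\theta)$ and $\vn^{j}+\vn^{-j} = 2\cosh(j\theta)$. The third identity is immediate: by \eqref{eq:towerbasic1} in \Cref{lem:towerbasic}, $d_j = \vn^{j}+\vn^{-j}+1 = 1+2\cosh(j\theta)$.

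For the first identity, recall from \Cref{dfn:sequenceofconductors} that $f_k = (\vn^{k}-\vn^{-k})/\sqrt{\Delta_0} = 2\sinh(k\theta)/\sqrt{\Delta_0}$, and from \Cref{dfn:fjrjmdjm} that $r_{j,m} = f_{jm}/f_j$. Hence
\eag{
r_{j,m} = \frac{f_{jm}}{f_j} = \frac{2\sinh(jm\theta)/\sqrt{\Delta_0}}{2\sinh(j\theta)/\sqrt{\Delta_0}} = \frac{\sinh(jm\theta)}{\sinh(j\theta)}\,,
}
as claimed. (Here I would note that $\sinh(j\theta)\neq 0$ since $\vn>1$, so $\theta>0$.)

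For the second identity, I would start from the definition $d_{j,m} = r_{j,m+1} + r_{j,m}$ in \Cref{dfn:fjrjmdjm}, substitute the formula for $r_{j,m}$ just proven, and combine over a common denominator:
\eag{
d_{j,m} = \frac{\sinh((m+1)j\theta) + \sinh(mj\theta)}{\sinh(j\theta)}\,.
}
The numerator is handled by the sum-to-product identity $\sinh A + \sinh B = 2\sinh\!\big(\tfrac{A+B}{2}\big)\cosh\!\big(\tfrac{A-B}{2}\big)$ with $A = (m+1)j\theta$ and $B = mj\theta$, giving numerator $2\sinh\!\big(\tfrac{(2m+1)j\theta}{2}\big)\cosh\!\big(\tfrac{j\theta}{2}\big)$. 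For the denominator I would use the double-angle identity $\sinh(j\theta) = 2\sinh\!\big(\tfrac{j\theta}{2}\big)\cosh\!\big(\tfrac{j\theta}{2}\big)$. The factor $\cosh\!\big(\tfrac{j\theta}{2}\big)$ cancels (it is nonzero), leaving $d_{j,m} = \sinh\!\big(\tfrac{(2m+1)j\theta}{2}\big)/\sinh\!\big(\tfrac{j\theta}{2}\big)$, completing the proof.

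There is no real obstacle here: the proposition is a routine bookkeeping exercise once one passes to the exponential/hyperbolic parametrization, and every ingredient ($f_k$ in terms of $\sinh$, the relation $d_{j,m} = r_{j,m+1}+r_{j,m}$, and standard hyperbolic identities) is already available. The only point requiring the smallest care is ensuring the relevant $\sinh$ and $\cosh$ factors are nonzero so the cancellations are legitimate, which follows from $\vn>1$.
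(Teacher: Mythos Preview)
Your proof is correct and is exactly the approach the paper intends: the paper's proof reads in full ``Straightforward consequence of the definitions,'' and you have simply written out those straightforward steps explicitly.
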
   
\begin{proof}
    Straightforward consequence of the definitions.
\end{proof}

\begin{prop}\label{prop:rjmdjmord}
    Let $K$ be a real quadratic field. 
    Then the following holds for all $j\in \mbb{N}$.
    \eag{
         1&= r_{j,1}< r_{j,2} < \cdots  
         \\
         3&< d_{j,1} < d_{j,2} < \cdots
         \label{eq:djmincreasing}
    }
\end{prop}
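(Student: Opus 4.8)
The plan is to deduce everything from the hyperbolic-function formulas of \Cref{prop:rdtrmstheta} together with the elementary fact that $\sinh$ and $\cosh$ are strictly increasing on $[0,\infty)$. The one input that must be pinned down first is the sign of $\theta = \log\vn$: since $\vn$ is by \Cref{dfn:fundamentalTotallyPositiveUnit} a unit greater than $1$, we have $\theta > 0$, hence $j\theta > 0$ and $\tfrac{j\theta}{2} > 0$ for every $j \in \mbb{N}$.

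For the rank grid, I would write $r_{j,m} = \dfrac{\sinh(mj\theta)}{\sinh(j\theta)}$ using \Cref{prop:rdtrmstheta}. The denominator $\sinh(j\theta)$ is positive. Setting $m=1$ gives $r_{j,1} = 1$. For strict monotonicity, note that as $m$ ranges over $\mbb{N}$ the argument $mj\theta$ strictly increases through positive values, so $\sinh(mj\theta)$ strictly increases; dividing by the fixed positive constant $\sinh(j\theta)$ preserves this, giving $1 = r_{j,1} < r_{j,2} < \cdots$.

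For the dimension grid, \Cref{prop:rdtrmstheta} gives $d_{j,m} = \dfrac{\sinh\bigl(\tfrac{(2m+1)j\theta}{2}\bigr)}{\sinh\bigl(\tfrac{j\theta}{2}\bigr)}$, again with positive denominator; as $m$ increases the numerator's argument $\tfrac{(2m+1)j\theta}{2}$ strictly increases through positive values, so $d_{j,1} < d_{j,2} < \cdots$ by the same argument. For the lower bound, I would use $d_{j,1} = d_j = 1 + 2\cosh(j\theta)$ together with $\cosh(j\theta) > \cosh 0 = 1$ (since $\cosh$ is strictly increasing on $[0,\infty)$ and $j\theta > 0$), which yields $d_{j,1} > 3$. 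Alternatively one may simply cite $d_j \geq 4$ from \Cref{thm:djandfjmonotonic}.

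There is no real obstacle here: the content is entirely carried by \Cref{prop:rdtrmstheta}, and the remaining work is just the observation that the hyperbolic sine of a strictly increasing sequence of positive reals is strictly increasing. The only places a careless argument could go wrong are forgetting that both formulas have a fixed \emph{positive} denominator (so that dividing does not reverse the inequality) and forgetting to justify $\theta > 0$.
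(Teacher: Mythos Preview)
Your argument is correct and is exactly what the paper has in mind: its proof reads in full ``Straightforward consequence of Proposition~\ref{prop:rdtrmstheta},'' and you have supplied precisely those straightforward details using monotonicity of $\sinh$ and $\cosh$ on $[0,\infty)$ together with $\theta=\log\vn>0$.
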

\begin{proof}
    Straightforward consequence of Proposition~\ref{prop:rdtrmstheta}.
\end{proof}

We now characterize admissible pairs $(d,r)$ (see \Cref{dfn:admissiblePair}), which come from integer solutions $(d,n,r)$ to the three-variable Diophantine equation $nr(d-r) = d^2-1$, in terms of rank and dimension grids.
\begin{thm}\label{thm:nrddjmrjm}
This theorem has two parts.
\begin{itemize}
    \item[(A)]
    Let $K$ be a real quadratic field, and let  $r_{j,m}$, $d_{j,m}$ be the associated rank and dimension grids. 
Then for all $j,m \in \N$,
\begin{enumerate}
    \item \label{it:rjminequality} $0<r_{j,m}<\frac{d_{j,m}-1}{2}$, and
    \item \label{it:rjmdjmequality} $(d_j+1)r_{j,m}(d_{j,m}-r_{j,m}) = d^2_{j,m}-1$.
\end{enumerate}
    \item[(B)]
    Conversely, let $r$, $d$ be any pair of integers such that
\begin{enumerate}
    \item $0<r<\frac{d-1}{2}$, and
    \item $nr(d-r) = d^2-1$ for some integer $n>4$.
\end{enumerate}
Then there exists a unique real quadratic field $K$ and unique natural numbers $j,m$ such that $r=r_{j,m}$, $d=d_{j,m}$, and $n=d_j+1$ where $r_{j,m}$, $d_{j,m}$ are the rank and dimension grids associated to $K$.
\end{itemize}
\end{thm}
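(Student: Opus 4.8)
\textbf{Proof proposal for \Cref{thm:nrddjmrjm}.}

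The plan is to treat the two parts as essentially inverse computations and to pivot through the explicit hyperbolic-function formulas of \Cref{prop:rdtrmstheta}. For part (A), write $\theta = \log\vn$ so that $r_{j,m} = \frac{\sinh mj\theta}{\sinh j\theta}$, $d_{j,m} = \frac{\sinh\frac{(2m+1)j\theta}{2}}{\sinh\frac{j\theta}{2}}$, and $d_j = 1 + 2\cosh j\theta$. Claim \eqref{it:rjminequality} then follows from \Cref{prop:rjmdjmord}: the inequality $0 < r_{j,m}$ is immediate since $\theta>0$, and $r_{j,m} < \frac{d_{j,m}-1}{2}$ amounts to $2r_{j,m} + 1 < d_{j,m}$, i.e. $2\frac{\sinh mj\theta}{\sinh j\theta} + 1 < \frac{\sinh\frac{(2m+1)j\theta}{2}}{\sinh\frac{j\theta}{2}}$, which I would verify by the product-to-sum identity $d_{j,m} = r_{j,m} + r_{j,m+1}$ (this is \Cref{dfn:fjrjmdjm}) together with $r_{j,m+1} > r_{j,m}$ from \Cref{prop:rjmdjmord} and $r_{j,1} = 1 \le r_{j,m}$; indeed $d_{j,m} = r_{j,m} + r_{j,m+1} > 2r_{j,m} \ge 2r_{j,m} + 1 - r_{j,m} $, and a short check with $r_{j,m}\ge 1$ closes it. For claim \eqref{it:rjmdjmequality}, substitute the hyperbolic expressions and reduce $(d_j+1)r_{j,m}(d_{j,m}-r_{j,m}) = d_{j,m}^2 - 1$ to an identity in $\sinh$ and $\cosh$ of integer multiples of $\frac{j\theta}{2}$. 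Using $d_j + 1 = 2 + 2\cosh j\theta = (2\cosh\frac{j\theta}{2})^2$ and $d_{j,m} - r_{j,m} = r_{j,m+1}$, the left side becomes $\bigl(2\cosh\tfrac{j\theta}{2}\bigr)^2 \frac{\sinh mj\theta \, \sinh(m+1)j\theta}{\sinh^2 j\theta}$, while $d_{j,m}^2 - 1 = \frac{\sinh^2\frac{(2m+1)j\theta}{2} - \sinh^2\frac{j\theta}{2}}{\sinh^2\frac{j\theta}{2}}$; the numerator of the right side factors as $\sinh(mj\theta)\sinh((m+1)j\theta)$ by the identity $\sinh^2 A - \sinh^2 B = \sinh(A+B)\sinh(A-B)$, and since $\sinh j\theta = 2\sinh\frac{j\theta}{2}\cosh\frac{j\theta}{2}$ the two sides match. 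This is the routine algebraic core and I would not belabor it.

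For part (B), I would run the construction backwards. Given $(d,r,n)$ with $0 < r < \frac{d-1}{2}$ and $nr(d-r) = d^2-1$ and $n > 4$, set $K = \Q\!\left(\sqrt{n(n-4)}\right)$, which is real quadratic precisely because $n(n-4) > 0$ is not a perfect square (it lies strictly between $(n-3)^2$ and $(n-2)^2$ for $n\ge 5$, so it can never be a square), matching \Cref{dfn:admissiblePair}. The key is to recover $j$ and $m$. The identity $d_j + 1 = n$ should pin down $j$: since $d_j = 1 + 2\cosh j\theta = \vn^j + \vn^{-j} + 1$ by \eqref{eq:towerbasic1}, the equation $\vn^j + \vn^{-j} = n - 2$ has at most one positive-integer solution $j$ (the sequence $d_j$ is strictly increasing by \Cref{thm:djandfjmonotonic}), and existence of such a $j$ requires showing $n-2 \in \{\vn^k + \vn^{-k} : k \in \N\}$ for the field $K = \Q(\sqrt{n(n-4)})$ — this is where I expect the real work to be. One shows $\frac{n-2+\sqrt{n(n-4)}}{2}$ is a totally positive unit in $\OO_K$ (its norm is $\frac{(n-2)^2 - n(n-4)}{4} = 1$), hence a positive power $\vn^j$ of the fundamental totally positive unit, and that $d_j = n-1$; one must also confirm minimality is not an issue, i.e. that whatever power it is, call it $j$, indeed gives $d_j = n-1$, which is automatic once $\vn^j + \vn^{-j} = n-2$. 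Then, with $j$ fixed and $f_j$ determined, solve $r_{j,m} = r$ for $m$: since $r_{j,\bullet}$ is strictly increasing with $r_{j,1} = 1$, there is at most one such $m$, and I would use part (A)'s identity $(d_j+1)r_{j,m}(d_{j,m} - r_{j,m}) = d_{j,m}^2 - 1$ in reverse — given $n = d_j+1$ and $r = r_{j,m}$, the quadratic $n r (d - r) = d^2 - 1$ in $d$ has the two roots $d_{j,m}$ and its ``reflection,'' and the constraint $0 < r < \frac{d-1}{2}$ selects $d = d_{j,m}$, forcing $d = d_{j,m}$ for the corresponding $m$. Uniqueness of the field then follows because $n(n-4)$ and $n = d_j+1$ are both recovered from $(d,r)$.

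\textbf{Main obstacle.} The step I expect to be genuinely delicate is the surjectivity claim in part (B): that the specific unit $\frac{n-2+\sqrt{n(n-4)}}{2}$ is a \emph{power of the fundamental totally positive unit} $\vn$ with the right exponent, and — more subtly — that the chosen $r$ actually equals $r_{j,m}$ for some $m$ rather than merely satisfying the Diophantine constraint. I would handle the first part by a direct unit-norm computation plus \Cref{cr:UfGroupGenerator} / the structure of $\OO_K^\times$, and the second by leveraging the bijection encoded in \Cref{thm:bijectionofadmissibletuples} and the fact, provable from \Cref{prop:rdtrmstheta}, that every pair $(d_j+1, r)$ with $nr(d-r) = d^2-1$ and $0 < r < (d-1)/2$ forces $r$ into the geometric-type sequence $r_{j,m} = \frac{\vn^{mj}-\vn^{-mj}}{\vn^j - \vn^{-j}}$. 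Concretely, from $d = d_{j,m}$ one reads off $r_{j,m} = d - r_{j,m+1}$ and checks $r$ solves the same recursion; the alternative root of the quadratic in $d$ is negative or too small, eliminated by the inequality hypothesis. Everything else is bookkeeping with Chebyshev/hyperbolic identities already available via \Cref{thm:dnjfnjchebyexpns} and \Cref{lm:tustarprops}.
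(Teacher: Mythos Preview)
Your approach to (A)(2) and your identification of $j$ in (B) via the unit $\frac{n-2+\sqrt{n(n-4)}}{2}$ match the paper. The gap is precisely where you flagged it: pinning down $m$. Your proposed resolution is circular---you write ``from $d=d_{j,m}$ one reads off $r_{j,m} = d - r_{j,m+1}$ and checks $r$ solves the same recursion,'' but you don't yet know that $d = d_{j,m}$ for \emph{any} $m$, nor that $r$ appears in the sequence $r_{j,\bullet}$. Monotonicity gives uniqueness, not existence.

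The paper's move is the one you are missing: solve the quadratic $d^2 - nrd + (nr^2 - 1) = 0$ for $d$ to get $d = \frac{nr \pm \sqrt{r^2 n(n-4)+4}}{2}$. Integrality of $d$ forces $r^2 n(n-4) + 4 = p^2$ for some positive integer $p$. Since $n(n-4) = f_j^2\Delta_0$, the element $w_+ = \frac{p + r f_j \sqrt{\Delta_0}}{2}$ has norm $1$ and lies in $\OO_K$, so $w_+ = \vn^k$ for some $k\ge 1$. Equating irrational parts gives $r f_j = f_k$, hence $f_j \mid f_k$, hence $j \mid k$ by \Cref{thm:UnitsConductorsIndicesProperties}; writing $k=jm$ yields $r = f_{jm}/f_j = r_{j,m}$. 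Only then does one compute $d$ in terms of $m$ and use the inequality $r < \tfrac{d-1}{2}$ to select the correct sign, giving $d = r_{j,m}+r_{j,m+1} = d_{j,m}$. So the second unit $w_+$, not a recursion argument, is what places $r$ in the rank grid.

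A smaller issue in (A)(1): your chain $d_{j,m} = r_{j,m}+r_{j,m+1} > 2r_{j,m} \ge 2r_{j,m}+1-r_{j,m}$ only yields $d_{j,m} > r_{j,m}+1$, not the required $d_{j,m} > 2r_{j,m}+1$. You need $r_{j,m+1} > r_{j,m}+1$, which does not follow from strict monotonicity of integers alone. The paper instead expands $\tfrac{d_{j,m}-1}{2} - r_{j,m}$ directly in hyperbolic functions and shows it is a sum of positive terms.
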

\begin{proof}
    \emph{Part (A).} To prove \cref{it:rjminequality}, let $\theta = \log \vn$. Then it follows from \Cref{prop:rdtrmstheta} that 
    \eag{
    \frac{d_{j,m}-1}{2}-r_{j,m}&=\frac{\sinh \frac{(2m+1)j\theta}{2}}{2\sinh\frac{j\theta}{2}}-\frac{1}{2} - \frac{\sinh m j\theta}{\sinh j\theta}
    \nn
    &=\frac{ \sinh mj\theta \cosh\frac{j\theta}{2} +\cosh mj\theta \sinh\frac{j\theta}{2}}{2\sinh \frac{j\theta}{2}}-\frac{1}{2}- \frac{\sinh m j\theta}{\sinh j\theta}
    \nn
    &= \frac{\sinh m j\theta}{\sinh j\theta}\left(\cosh^2\frac{j\theta}{2}-1\right) + \frac{1}{2}\left(\cosh mj\theta-1\right)
    \nn
    &>0.
    }
    The fact that $0<r_{j,m}$ is immediate.  To prove \cref{it:rjmdjmequality}, observe \Cref{prop:rdtrmstheta} also implies
    \eag{
    (d_j+1)r_{j,m}(d_{j,m}-1) 
&=(d_j+1)r_{j,m}r_{j,m+1}
\nn
&= \frac{2(1+\cosh j\theta)\sinh m j\theta \sinh (m+1)j\theta}{\sinh^2j\theta}
\nn
&= \frac{\sinh m j\theta \sinh (m+1)j\theta}{\sinh^2\frac{j\theta}{2}}
\nn
&= \frac{\cosh(2m+1)j\theta - \cosh j\theta}{2\sinh^2\frac{j\theta}{2}}
\nn
&=\frac{\sinh^2\frac{(2m+1)j\theta}{2} - \sinh^2\frac{j\theta}{2}}{\sinh^2\frac{j\theta}{2}}
\nn
&=d^2_{j,m}-1.
    }

\emph{Part (B).} The fact that $nr(d-r) = d^2-1$ implies
\eag{
d &= \frac{nr\pm \sqrt{r^2n(n-4)+4}}{2}
}
The fact that $n>4$ means $K = \mbb{Q}\!\left(\sqrt{n(n-4)}\right)$ is a real quadratic field. Let $\Delta_0$ be its discriminant, and $f_j$, $d_j$, $d_{j,m}$,  $r_{j,m}$ the associated conductors, dimensions, and ranks. 
 It then follows from Theorem~\ref{thm:dimtowunique} that $n=d_j+1$ for some $j$.  So
\eag{
d&=\frac{r(d_j+1)\pm \sqrt{r^2 f_j^2\Delta_0 +4}}{2}
}
Since $d$ is an integer we must have
\eag{
r^2 f_j^2\Delta_0 +4 &= p^2
}
for some $p\in \mbb{N}$.  Let 
\eag{
w_{\pm} &= \frac{p\pm r f_j\sqrt{\Delta_0}}{2}.
}
Then $w_{\pm}$ are  positive norm units in $K$, with $0< w_{-} < 1 < w_{+}$. So $w_{\pm} = \vn^{\pm k}$ for some positive integer $k$.  We then have
\eag{
p &= \vn^k + \vn^{-k} = d_k -1
}
and
\begin{alignat}{2}
&& \qquad rf_j\sqrt{\Delta_0} &= \vn^k-\vn^{-k}
\nn
&\implies & \qquad rf_j &= f_k
\end{alignat}
In view of \Cref{thm:UnitsConductorsIndicesProperties}, this means $k = jm$ for some $m\in \mbb{N}$.  Consequently,
\eag{
r&= r_{j,m}
}
and
\eag{
d &= \frac{r_{j,m}(d_j+1)\pm (d_{jm}-1)}{2}.
}
Setting $\theta = \log \vn$ and using Proposition~\ref{prop:rdtrmstheta} the second expression becomes
\eag{
d&=\frac{\sinh mj\theta\left(1+\cosh j\theta\right)}{\sinh j\theta} \pm \cosh mj\theta
\nn
&= \frac{\sinh mj\theta + \sinh (m\pm 1) j\theta}{\sinh j\theta} 
\nn
&=r_{j,m} + r_{j,m\pm 1}.
}
The fact that $d\ge 2r$, together with Lemma~\ref{prop:rjmdjmord}, means we must in fact have
\eag{
d&=r_{j,m}+r_{j,m+1} = d_{j,m}.
}

It remains to prove uniqueness.  Suppose $r=r'_{j',m'}$ and $d=d'_{j',m'}$, where $r'_{j',m'}$ and $d'_{j',m'}$ are in the rank and dimension towers associated to some other real quadratic field $K'$. Then it follows from Part (A) that
\eag{
(d'_{j'}+1)r (d-r) &=(d'_j+1)r'_{j',m'}(d'_{j',m'}-r'_{j',m'}) 
\nn
&=d^{'2}_{j',m'}-1
\nn
&=d^2_{j,m}-1
\nn
&= (d_j+1)r_{j,m}(d_{j,m}-r_{j,m}) 
\nn
&= (d_j+1)r(d-r),
}
implying $d'_{j'}=d_j$.  Hence
\eag{
K'&=\mbb{Q}\left(\sqrt{(d'_{j'}+1)(d'_{j'}-3)}\right)=\mbb{Q}\left(\sqrt{(d_j+1)(d_j-3)}\right) = K
}
and $j'=j$.  Consequently $d_{j,m'} = d_{j,m}$ which, together with the fact that $d_{j,1},d_{j,2},\dots$ is a monotonically increasing sequence (see \Cref{prop:rjmdjmord}), implies $m'=m$.
\end{proof}
This result implies the following expression for $\ghostOverlapC{t}{\mbf{p}}$, as an alternative to \eqref{eq:CandidateGhostOverlapDefinition}.
\begin{cor}\label{cor:AlternativeDefinitionOfCandidateGhostOverlaps}
    Let $t=(d,r,Q)\sim(K,j,m,Q)$ be an admissible tuple.  Then
    \eag{
\ghostOverlapC{t}{\mbf{p}}
&= \frac{1}{\sqrt{d_j+1}} \, \normalizedGhostOverlapC{t}{\mbf{p}},
&
  \overlapC{s}{\mbf{p}}&= \frac{1}{\sqrt{d_j+1}}\normalizedOverlapC{s}{\mbf{p}}
    }
for all $\mbf{p}\not\equiv \zero\Mod{d}$.
\end{cor}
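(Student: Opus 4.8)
The plan is to reduce the claimed identity to the Diophantine relation already established in \Cref{thm:nrddjmrjm}. First I would recall from \Cref{dfn:GhostOverlaps} that for $\mbf{p}\not\equiv\zero\Mod{d}$ one has $\ghostOverlapC{t}{\mbf{p}} = \sqrt{\tfrac{r(d-r)}{d^2-1}}\,\normalizedGhostOverlapC{t}{\mbf{p}}$, and similarly from \Cref{dfn:CandidateGhostAndSICFiducials} that $\normalizedOverlapC{s}{\mbf{p}} = \sqrt{\tfrac{d^2-1}{r(d-r)}}\,\overlapC{s}{\mbf{p}}$, hence $\overlapC{s}{\mbf{p}} = \sqrt{\tfrac{r(d-r)}{d^2-1}}\,\normalizedOverlapC{s}{\mbf{p}}$. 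So in both cases it suffices to verify that the scalar $\sqrt{\tfrac{r(d-r)}{d^2-1}}$ equals $\tfrac{1}{\sqrt{d_j+1}}$, equivalently that $(d_j+1)\,r(d-r) = d^2-1$.

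Next I would invoke the equivalence $(d,r)\sim(K,j,m)$, which by \Cref{def:tupleequiv} means $d = d_{j,m}$ and $r = r_{j,m}$. Then \Cref{thm:nrddjmrjm}(A)(\ref{it:rjmdjmequality}) gives exactly
\eag{
(d_j+1)\,r_{j,m}(d_{j,m}-r_{j,m}) = d_{j,m}^2-1,
}
i.e. $(d_j+1)\,r(d-r) = d^2-1$. Taking positive square roots (all quantities here are positive since $0<r<\tfrac{d-1}{2}$ and $d_j \geq 4$ by \Cref{thm:djandfjmonotonic}), we get $\sqrt{\tfrac{r(d-r)}{d^2-1}} = \tfrac{1}{\sqrt{d_j+1}}$, and substituting this back into the two displayed expressions for $\ghostOverlapC{t}{\mbf{p}}$ and $\overlapC{s}{\mbf{p}}$ completes the proof.

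There is essentially no obstacle here: the corollary is a direct algebraic consequence of \Cref{thm:nrddjmrjm}, and the only thing to be careful about is that we are taking the positive branch of the square root consistently, which is unambiguous because $d_j+1>0$ and $r(d-r)>0$. The entire argument is two or three lines once \Cref{thm:nrddjmrjm} is in hand.
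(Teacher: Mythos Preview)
Your proposal is correct and matches the paper's own proof exactly: both reduce the claim to the identity $(d_j+1)\,r_{j,m}(d_{j,m}-r_{j,m}) = d_{j,m}^2-1$ from \Cref{thm:nrddjmrjm}(A)(\ref{it:rjmdjmequality}), with the paper simply stating it as an ``immediate consequence'' of that relation.
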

\begin{proof}
    Immediate consequence of
        \eag{
      (d_j+1)r_{j,m}(d_{j,m}-r_{j,m})& = d^2_{j,m}-1
      \label{eq:djrjmdjmid}
    }
    from \Cref{thm:nrddjmrjm}(A)(2).
\end{proof}
\begin{cor}\label{cor:djmcprjm}
    $d_{j,m}$ is coprime to $r_{j,m}$ for  all $j,m\in\mbb{N}$.
\end{cor}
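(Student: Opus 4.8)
The plan is to read this off immediately from the arithmetic identity relating $d_{j,m}$ and $r_{j,m}$ just proved in \Cref{thm:nrddjmrjm}. That theorem, part (A)(\ref{it:rjmdjmequality}), gives
\begin{equation}
(d_j+1)\,r_{j,m}\,(d_{j,m}-r_{j,m}) \;=\; d_{j,m}^2 - 1
\end{equation}
for all $j,m\in\mbb{N}$ (this is also recorded as \eqref{eq:djrjmdjmid}). Note first that all quantities in sight are genuine positive integers: $f_j,f_{jm}\in\mbb{N}$ by \Cref{dfn:sequenceofconductors}, and $r_{j,m}=f_{jm}/f_j\in\mbb{N}$ because $f_j\mid f_{jm}$ by \Cref{thm:UnitsConductorsIndicesProperties}(\ref{it:jdivkfjdivfk}), while $d_{j,m}=r_{j,m+1}+r_{j,m}\in\mbb{N}$; so divisibility statements make sense.

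The one step of the argument is a contradiction. Suppose some prime $p$ divides both $d_{j,m}$ and $r_{j,m}$. Since $p\mid r_{j,m}$, the prime $p$ divides the left-hand side of the displayed identity, hence $p\mid d_{j,m}^2-1=(d_{j,m}-1)(d_{j,m}+1)$. On the other hand $p\mid d_{j,m}$, and $d_{j,m}$ is coprime to each of the consecutive integers $d_{j,m}-1$ and $d_{j,m}+1$, so $p\nmid d_{j,m}^2-1$. This contradiction shows no prime divides both $d_{j,m}$ and $r_{j,m}$, i.e.\ $\gcd(d_{j,m},r_{j,m})=1$. There is no real obstacle here; the result is an immediate consequence of \Cref{thm:nrddjmrjm}, and the only thing worth stating explicitly is the integrality of $r_{j,m}$ and $d_{j,m}$.
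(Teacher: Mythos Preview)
Your proof is correct and takes essentially the same approach as the paper, which simply says the result is an immediate consequence of \eqref{eq:djrjmdjmid}. You have just spelled out the one-line divisibility argument and the integrality of $r_{j,m}$ and $d_{j,m}$ that the paper leaves implicit.
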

\begin{proof}
    Immediate consequence of \eqref{eq:djrjmdjmid}.
\end{proof}
The next lemma collects together various technical results which will be needed in the sequel.
\begin{lem} \label{lem:dimgridtechres}
    The following inequalities and equations hold for all $j, m \in \mbb{N}$.
    \eag{
       r_{j,m} & < r_{j,m+1}-1
    \\
       r_{j,m}&< \frac{d_{j,m}-1}{2}
    \\
       r_{j,m+1}-r_{j,m}&= \sqrt{\frac{d_{(2m+1)j}+1}{d_j+1}}
    \\
        r^2_{j,m+1}-r^2_{j,m}&=r\vpu{2}_{j,2m+1}
    \\
     \vn^{(2m+1)j}-1 &= d_{j,m} \vn^{mj} (\vn^j-1) \label{eq:epowerminusone}
     \\
     d_{j,m+2}&= (d_j-1)d_{j,m+1}-d_{j,m}
    }
    If $d_j$ is even, then
    \begin{enumerate}
        \item $r_{j,m}$ is even if and only if $m \equiv 0 \Mod{3}$;
        \item $d_{j,m}$ is even if and only if $m \equiv 1 \Mod{3}$.
    \end{enumerate}
    If $d_j$ is odd, then
    \begin{enumerate}
        \item $r_{j,m}$ is odd if and only if $m$ is odd;
        \item $d_{j,m}$ is odd for all $m$.
    \end{enumerate}
    If $d_{j,m}$ is even, then the following congruences hold.
    \eag{
      d_{(m+1)j}-d_{mj} &\equiv d_j \ \Mod{4}
      \\
      d_{j,m}&\equiv d_j \ \Mod{4}
      \\
      r_{j,m+1}-r_{j,m}&\equiv d_j+2 \ \Mod{4}
      \\
      f_{j(m+1)}-f_{jm}&\equiv d_j+2 \ \Mod{4}
    }
\end{lem}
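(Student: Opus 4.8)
The plan is to dispatch the items of \Cref{lem:dimgridtechres} in the order listed, working throughout with the hyperbolic parametrizations of \Cref{prop:rdtrmstheta}: set $\theta=\log\vn>0$, so that $r_{j,m}=\sinh(mj\theta)/\sinh(j\theta)$, $d_{j,m}=\sinh((2m+1)j\theta/2)/\sinh(j\theta/2)$ and $d_j-1=2\cosh(j\theta)$. The first step is to record the two three-term recursions these formulas force. From the identity $\sinh(u+v)+\sinh(u-v)=2\cosh(v)\sinh(u)$ one gets $r_{j,m+1}+r_{j,m-1}=(d_j-1)r_{j,m}$ with $r_{j,0}=0$, $r_{j,1}=1$, and likewise $d_{j,m+1}+d_{j,m-1}=(d_j-1)d_{j,m}$ with $d_{j,0}=1$, $d_{j,1}=d_j$; the last displayed equation of the lemma, $d_{j,m+2}=(d_j-1)d_{j,m+1}-d_{j,m}$, is the second recursion reindexed.

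Next come the inequalities and the closed-form identities. The bound $r_{j,m}<(d_{j,m}-1)/2$ is exactly part (A)(1) of \Cref{thm:nrddjmrjm}; substituting $d_{j,m}=r_{j,m+1}+r_{j,m}$ and clearing the $2$ yields $r_{j,m}<r_{j,m+1}-1$. For the three identities: sum-to-product gives $\sinh((m+1)j\theta)-\sinh(mj\theta)=2\cosh((2m+1)j\theta/2)\sinh(j\theta/2)$, so $r_{j,m+1}-r_{j,m}=\cosh((2m+1)j\theta/2)/\cosh(j\theta/2)$, and since $d_{nj}+1=\vn^{nj}+\vn^{-nj}+2=4\cosh^2(nj\theta/2)$, taking positive square roots proves $r_{j,m+1}-r_{j,m}=\sqrt{(d_{(2m+1)j}+1)/(d_j+1)}$; the formula $\sinh^2A-\sinh^2B=\sinh(A+B)\sinh(A-B)$ gives $r_{j,m+1}^2-r_{j,m}^2=r_{j,2m+1}$; and writing $d_{j,m}=(\vn^{(2m+1)j/2}-\vn^{-(2m+1)j/2})/(\vn^{j/2}-\vn^{-j/2})$ and multiplying out proves $\vn^{(2m+1)j}-1=d_{j,m}\vn^{mj}(\vn^j-1)$.

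The parity statements follow from reducing the $r$-recursion modulo $2$. If $d_j$ is odd then $d_j-1$ is even, so $r_{j,m+1}\equiv r_{j,m-1}\pmod 2$, whence $r_{j,m}$ is odd iff $m$ is odd, and $d_{j,m}=r_{j,m+1}+r_{j,m}$ is a sum of consecutive terms of opposite parity, hence odd for every $m$. If $d_j$ is even then $d_j-1$ is odd, so $r_{j,m+1}\equiv r_{j,m}+r_{j,m-1}\pmod 2$, a Fibonacci-type recurrence with period-$3$ pattern $0,1,1$; thus $r_{j,m}$ is even iff $3\mid m$, and inspecting the three residue classes of $m$ modulo $3$ shows $d_{j,m}$ is even iff $m\equiv 1\pmod 3$.

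Finally, each of the four congruences modulo $4$ is under the hypothesis that $d_{j,m}$ is even, which by the previous paragraph means $m\equiv 1\pmod 3$; this forces $d_j$ even, hence $f_j$ odd (by \Cref{lem:dfdelprops}), and also $r_{j,m}$ odd. The engine here is the auxiliary identity $d_{(m+1)j}-d_{mj}=d_{j,m}(d_j-3)$, which comes from $d_{nj}=\vn^{nj}+\vn^{-nj}+1$ together with $\vn^{(2m+1)j}-1=d_{j,m}\vn^{mj}(\vn^j-1)$. I would then prove $d_{j,m}\equiv d_j\pmod 4$ whenever $m\equiv 1\pmod 3$ by induction in steps of $3$: the $d$-recursion gives $d_{j,m+3}\equiv -(d_j-1)d_{j,m}\pmod 4$ (using $(d_j-1)^2\equiv 1\pmod 4$ since $d_j-1$ is odd), and $-(d_j-1)d_j=-(d_j^2-d_j)\equiv d_j\pmod 4$ for $d_j$ even, so the hypothesis propagates. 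Granting this, $d_{(m+1)j}-d_{mj}=d_{j,m}(d_j-3)\equiv d_j(d_j+1)\equiv d_j\pmod 4$; $r_{j,m+1}-r_{j,m}=d_{j,m}-2r_{j,m}\equiv d_j-2\equiv d_j+2\pmod 4$ since $r_{j,m}$ is odd; and $f_{j(m+1)}-f_{jm}=(r_{j,m+1}-r_{j,m})f_j\equiv(d_j+2)f_j\equiv d_j+2\pmod 4$, the last step because multiplying an even integer by an odd integer does not change it modulo $4$. The \textbf{main obstacle} is purely organizational: tracking which parities of $d_j$, $f_j$, $r_{j,m}$ are implied by the hypothesis ``$d_{j,m}$ even,'' and anchoring the mod-$4$ induction on the subsequence $m\equiv 1\pmod 3$ rather than on all $m$.
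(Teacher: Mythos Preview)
Your argument is correct. The main difference from the paper's proof is in the parity and mod-$4$ sections. The paper routes these through the variant Chebyshev polynomials: it uses the recursions and the low-order Taylor coefficients of $T^*_n(d_j)$ and $U^*_n(d_j)$ from \Cref{lm:tustarprops}, reading off each congruence first modulo $d_j^2$ and then modulo $4$ (so, for instance, $d_{(m+1)j}-d_{mj}=T^*_{m+1}(d_j)-T^*_m(d_j)\equiv -(2m+1)d_j\pmod{d_j^2}$ directly). You instead derive the clean auxiliary identity $d_{(m+1)j}-d_{mj}=d_{j,m}(d_j-3)$ and run a step-$3$ induction to get $d_{j,m}\equiv d_j\pmod 4$, from which the remaining three congruences fall out. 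Your route is more self-contained --- it never needs the Chebyshev machinery --- while the paper's route amortizes \Cref{lm:tustarprops} across this lemma and several others. For the sixth displayed equation and the parity statements the two proofs are essentially the same argument in different notation: the paper writes $r_{j,m}=U^*_m(d_j)$ and uses the $U^*$ recursion, whereas you reduce $r_{j,m+1}+r_{j,m-1}=(d_j-1)r_{j,m}$ directly.

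One small point: for the first two inequalities you invoke \Cref{thm:nrddjmrjm}(A)(1) to get the second and then deduce the first, whereas the paper proves the first directly from $r_{j,m+1}=r_{j,m}\cosh(j\theta)+\cosh(mj\theta)>r_{j,m}+1$ and then deduces the second. Either order is fine since \Cref{thm:nrddjmrjm} precedes this lemma.
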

\begin{proof}
    It follows from Proposition~\ref{prop:rdtrmstheta} that
    \eag{
      r_{j,m+1} &=\frac{\sinh j(m+1)\theta}{\sinh j\theta} 
      =\frac{\sinh jm\theta\cosh j\theta}{\sinh j\theta} + \cosh jm\theta >r_{j,m}+1,
      }
      \eag{
      \frac{d_{j,m}-1}{2} & = \frac{r_{j,m+1}+r_{j,m}-1}{2}>r_{j,m},
\\
      r_{j,m+1}-r_{j,m} &= \frac{\sinh j(m+1)\theta - \sinh j m\theta}{\sinh j\theta} 
      \nn
      & =
      \frac{\cosh \frac{(2m+1)j\theta}{2} }{\cosh \frac{j\theta}{2}}
      \nn
      & =\sqrt{\frac{\cosh (2m+1)j\theta+1}{\cosh j\theta +1}}
      \nn
      &=\sqrt{\frac{d_{(2m+1)j}+1}{d_j+1}},
\\
        r^2_{j,m+1}-r^2_{j,m}&=
        \frac{\sinh^2(m+1)j\theta-\sinh^2mj\theta}{\sinh^2j\theta}
        \nn
        &=\frac{\cosh2(m+1)j\theta-\cosh 2mj\theta}{2\sinh^2j\theta}
        \nn
        &=\frac{\cosh2mj\theta\cosh 2j\theta+\sinh 2mj\theta \sinh 2j\theta-\cosh 2mj\theta}{2\sinh^2j\theta}
        \nn
        &=\frac{\cosh2mj\theta\sinh^2 j\theta+\sinh2mj\theta\sinh j\theta \cosh j\theta}{\sinh^2j\theta}
        \nn
        &= \frac{\sinh(2m+1)j\theta}{\sinh j\theta}
        \nn
        &=r_{j,2m+1},
\\
        d_{j,m} \vn^{mj}(\vn^j-1) &= \frac{\sinh\frac{(2m+1)j\theta}{2} e^{mj\theta} \left(e^{j\theta}-1\right)}{\sinh\frac{j\theta}{2}}
        \nn
        &= e^{(2m+1)j\theta}-1
        \nn
        &=  \vn^{(2m+1)j}-1.
    }

It follows from Theorem~\ref{thm:dnjfnjchebyexpns} and Lemma~\ref{lm:tustarprops} that
\eag{
d_{j,m+2}&= r_{j,m+3}+r_{j,m+2}
\nn
&= U^{*}_{m+3}(d_j)+U^{*}_{m+2}(d_j)
\nn
&=(d_j-1)\left(U^{*}_{m+2}(d_j)+U^{*}_{m+1}(d_j)\right) - \left(U^{*}_{m+1}(d_j)+U^{*}_m(d_j)\right)
\nn
&= (d_j-1)d_{j,m+1}-d_{j,m}.
}
Suppose $d_j$ is even. Then ~\eqref{eq:fkrCheb} and~\eqref{eq:ucheb} imply 
\eag{
r_{j,m}&= 
U^{*}_m(d_j) = \begin{cases} 
                0 \ & \text{if } m \equiv 0 \Mod{3},
                \\
                1 \ & \text{otherwise},
                \end{cases}
}
from which it follows that $d_{j,m}=r_{j,m}+r_{j,m+1}$ is even if and only if $m \equiv 1 \Mod{3}$.

Suppose $d_j$ is odd.  Then 
~\eqref{eq:ustrec} implies $U^{*}_{j,1}$ is odd, $U^{*}_{j,2}$ is even, and $U^{*}_{j,m} \equiv U^{*}_{j,m-2} \Mod{2}$ for all $m>2$.  Consequently $r_{j,m}=U^{*}_{m}(d_j)$ is odd if and only $m$ is odd, and $d_{j,m}=U^{*}_{m+1}(d_j)+U^{*}_{m}(d_j)$ is odd for all $m$.

Suppose $d_{j,m}$ is even.  Then it follows from results just proved that $d_j$ is even and $m \equiv 1 \Mod{3}$. Equation ~\eqref{eq:tcheb} then implies
\eag{
d_{(m+1)j}-d_{mj} = T^{*}_{m+1}(d_j) - T^{*}_{m}(d_j)
&\equiv 
-(2m+1)d_j \ \Mod{d_j^2} \\
&\equiv 
d_j \ \Mod{4},
}
while 
~\eqref{eq:ucheb} implies
\eag{
d_{j,m}&=r_{j,m+1}+r_{j,m}
\nn
&= 
U^{*}_{m+1}(d_j)+U^{*}_{m}(d_j)
\nn
&\equiv \frac{(2m+1)d_j}{3} \Mod{d_j^2}
\nn
&\equiv d_j \ \Mod{4},
\\
r_{j,m+1}-r_{j,m} &= U^{*}_{m+1}(d_j)-U^{*}_{m}(d_j) 
\nn
&\equiv -2 + d_j \Mod{d_j^2}
\nn
&\equiv
d_j+2 \ \Mod{4}.
}
Finally, it follows from  Lemma~\ref{lem:dfdelprops} that $f_j$ is odd, and thus
\eag{
f_{(m+1)j}-f_{mj} &= f_j \left(r_{j,m+1}-r_{j,m}\right) \equiv d_j+2 \Mod{4},
}
completing the proof.
\end{proof}
\begin{lem}\label{lem:rjmm1expn}
    Let $r^{-1}_{j,m}$ be the multiplicative inverse of $r_{j,m}$ modulo $\db_{j,m}$.  Then
    \eag{
       r^{-1}_{j,m}
       &\equiv 
       r\vpu{-1}_{j,m}\left(1+d_j+d_{j,m}\right) \Mod{\db_{j,m}}.
    }
\end{lem}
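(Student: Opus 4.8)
I want to prove that $r_{j,m}^{-1} \equiv r_{j,m}(1 + d_j + d_{j,m}) \pmod{\db_{j,m}}$, where $\db_{j,m} = d_{j,m}$ if $d_{j,m}$ is odd and $\db_{j,m} = 2 d_{j,m}$ if $d_{j,m}$ is even. The natural route is to verify directly that $r_{j,m}^2(1 + d_j + d_{j,m}) \equiv 1 \pmod{\db_{j,m}}$; since $r_{j,m}$ is coprime to $d_{j,m}$ by \Cref{cor:djmcprjm} (and we will need to check coprimality to $\db_{j,m}$ as well, including the factor of $2$ in the even case), this congruence uniquely identifies the inverse. The key input is the identity $(d_j+1) r_{j,m}(d_{j,m} - r_{j,m}) = d_{j,m}^2 - 1$ from \Cref{thm:nrddjmrjm}(A)(2), equivalently \eqref{eq:djrjmdjmid}.

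\textbf{Step 1: the mod $d_{j,m}$ congruence.} Expanding \eqref{eq:djrjmdjmid} and reducing modulo $d_{j,m}$ gives $(d_j+1) r_{j,m} \cdot (-r_{j,m}) \equiv -1 \pmod{d_{j,m}}$, i.e.
\eag{
(d_j+1) r_{j,m}^2 \equiv 1 \pmod{d_{j,m}}.
}
On the other hand, modulo $d_{j,m}$ we have $r_{j,m}^2(1 + d_j + d_{j,m}) \equiv r_{j,m}^2(1+d_j)$, so the desired congruence holds modulo $d_{j,m}$. This already settles the odd case $\db_{j,m} = d_{j,m}$, provided we note $r_{j,m}$ is a unit mod $d_{j,m}$.

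\textbf{Step 2: the extra factor of $2$ in the even case.} Suppose $d_{j,m}$ is even. By \Cref{lem:dimgridtechres} this forces $d_j$ even and $m \equiv 1 \pmod 3$, and moreover $r_{j,m}$ is odd (since $m \equiv 1 \pmod 3$ is not $\equiv 0 \pmod 3$, so $r_{j,m}$ is odd by the parity statements in \Cref{lem:dimgridtechres}), hence $r_{j,m}$ is a unit mod $2$ and thus mod $\db_{j,m} = 2d_{j,m}$. It remains to check $r_{j,m}^2(1 + d_j + d_{j,m}) \equiv 1 \pmod{2d_{j,m}}$. Write $N := r_{j,m}^2(1 + d_j + d_{j,m}) - 1$. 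From Step 1, $d_{j,m} \mid N$; I must show $2d_{j,m} \mid N$, equivalently that $N/d_{j,m}$ is even, equivalently that $N$ is even (since $d_{j,m}$ is even, $d_{j,m} \mid N$ already, and $2 d_{j,m}\mid N \iff N \equiv 0 \bmod 2 d_{j,m}$; more carefully, one checks the $2$-adic valuation). The cleanest approach: use \eqref{eq:djrjmdjmid} to write $N \cdot (\text{something})$ in a form where divisibility by $2 d_{j,m}$ is visible. Concretely, multiply through: from \eqref{eq:djrjmdjmid}, $(d_j+1) r_{j,m}^2 = d_{j,m}^2 - 1 + (d_j+1) r_{j,m} d_{j,m}$, so
\eag{
r_{j,m}^2(1 + d_j + d_{j,m}) = d_{j,m}^2 - 1 + (d_j+1) r_{j,m} d_{j,m} + r_{j,m}^2 d_{j,m}.
}
Hence $N = d_{j,m}\bigl(d_{j,m} + (d_j+1) r_{j,m} + r_{j,m}^2\bigr)$, and I need the bracketed factor to be even. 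Since $d_{j,m}$ is even and $r_{j,m}$ is odd, $(d_j+1) r_{j,m} \equiv d_j + 1 \pmod 2$; with $d_j$ even this is odd, and $r_{j,m}^2$ is odd, so the bracket is $\text{even} + \text{odd} + \text{odd} \equiv 0 \pmod 2$. Therefore $2 d_{j,m} \mid N$, completing the even case.

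\textbf{Step 3: assembling.} Combining, $r_{j,m}^2(1 + d_j + d_{j,m}) \equiv 1 \pmod{\db_{j,m}}$ in both parities, and $r_{j,m}$ is a unit mod $\db_{j,m}$, so multiplying by $r_{j,m}^{-1}$ gives $r_{j,m}(1 + d_j + d_{j,m}) \equiv r_{j,m}^{-1} \pmod{\db_{j,m}}$, as claimed.

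\textbf{Main obstacle.} The only subtlety is the even case: one must be careful that $d_{j,m} \mid N$ from Step~1 does not by itself give $2d_{j,m}\mid N$, so the explicit factorization $N = d_{j,m}\bigl(d_{j,m} + (d_j+1)r_{j,m} + r_{j,m}^2\bigr)$ together with the parity facts from \Cref{lem:dimgridtechres} (that $d_j$ is even, $m\equiv 1\bmod 3$, and $r_{j,m}$ odd whenever $d_{j,m}$ is even) is essential. Everything else is a one-line manipulation of \eqref{eq:djrjmdjmid}.
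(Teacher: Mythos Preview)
Your proof is correct and follows essentially the same route as the paper: both reduce the identity $(d_j+1)r_{j,m}(d_{j,m}-r_{j,m})=d_{j,m}^2-1$ modulo $d_{j,m}$ to get $(d_j+1)r_{j,m}^2\equiv 1$, then handle the even case by invoking the parity facts ($d_j$ even, $r_{j,m}$ odd) from \Cref{lem:dimgridtechres} to lift to $\bmod\ 2d_{j,m}$. One small slip: your explicit factorization in Step~2 has a sign error---rearranging \eqref{eq:djrjmdjmid} gives $(d_j+1)r_{j,m}^2 = 1 - d_{j,m}^2 + (d_j+1)r_{j,m}d_{j,m}$, so the bracketed factor should be $-d_{j,m}+(d_j+1)r_{j,m}+r_{j,m}^2$ rather than $+d_{j,m}$---but since $d_{j,m}$ is even this does not affect your parity check.
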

\begin{proof}
It follows from Theorem~\ref{thm:nrddjmrjm} that
\eag{
(d_{j}+1)r_{j,m}(r_{j,m}-d_{j,m}) \equiv 1-d_{j,m}^2 &\equiv 1 \ \Mod{\db_{j,m}}.
\label{eq:rjmm1inter1}
}
    If $d_{j,m}$ is even, then it follows from Corollary~\ref{cor:djmcprjm} and Lemma~\ref{lem:dimgridtechres} that $d_j+1$ and $r_{j,m}$ are both odd.  So \eqref{eq:rjmm1inter1} implies
    \eag{\label{eq:rSquaredIdentity}
    (d_j+1)r_{j,m}^2+d_{j,m} \equiv 1 \ \Mod{\db_{j,m}}.
    }
    Consequently
    \eag{
    r_{j,m}^{-1}&\equiv (d_j+1)r_{j,m}+ d_{j,m} r^{-1}_{j,m}= (d_j+1)r_{j,m}+ d_{j,m} \ \Mod{\db_{j,m}}.
    }
    In view of \Cref{cor:djmcprjm} we can alternatively write
    \eag{
     r_{j,m}^{-1}&\equiv r_{j,m}(1+d_j+ d_{j,m} ) \ \Mod{\db_{j,m}}.
    }
\end{proof}
\begin{lem}\label{lem:djpm1cprimedjm}
    $d_j\pm 1$ are coprime to $d_{j,m}$ for  all $j,m\in\mbb{N}$.
\end{lem}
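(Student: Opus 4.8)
The goal is to show $\gcd(d_j\pm 1, d_{j,m}) = 1$ for all $j,m \in \mathbb{N}$. The plan is to reduce this to the identity $(d_j+1)r_{j,m}(d_{j,m}-r_{j,m}) = d_{j,m}^2 - 1$ from \Cref{thm:nrddjmrjm}(A)(2), together with the expression $d_{j,m} = r_{j,m+1}+r_{j,m}$ and the relation $r_{j,m+1}^2 - r_{j,m}^2 = r_{j,2m+1}$ from \Cref{lem:dimgridtechres}. First I would handle $d_j - 1$: working modulo any prime $p \mid d_{j,m}$, the displayed identity gives $d_{j,m}^2 - 1 \equiv -1 \Mod{p}$, i.e. $(d_j+1)r_{j,m}(d_{j,m}-r_{j,m}) \equiv -1 \Mod{p}$, so $d_j + 1$ is a unit mod $p$; hence $\gcd(d_j+1, d_{j,m}) = 1$. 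This settles the ``$+1$'' case immediately and cleanly, since $d_j+1$ appears as an explicit factor.

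The ``$-1$'' case requires slightly more. The plan is again to fix a prime $p \mid d_{j,m}$ and show $d_j - 1 \not\equiv 0 \Mod p$. Since $d_{j,m} = r_{j,m} + r_{j,m+1} \equiv 0 \Mod p$, we have $r_{j,m+1} \equiv -r_{j,m} \Mod p$. From the recursion $d_{j,m+2} = (d_j-1)d_{j,m+1} - d_{j,m}$ (also in \Cref{lem:dimgridtechres}), or more directly from the three-term recurrence satisfied by $r_{j,m}$ (namely $r_{j,m+1} = (d_j-1)r_{j,m} - r_{j,m-1}$, which follows from $r_{j,m} = U^*_m(d_j)$ and \eqref{eq:ustrec}), one gets $r_{j,m+1} \equiv (d_j-1)r_{j,m} - r_{j,m-1} \Mod p$. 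If $d_j - 1 \equiv 0 \Mod p$, this forces $r_{j,m+1} \equiv -r_{j,m-1} \Mod p$, and combined with $r_{j,m+1} \equiv -r_{j,m} \Mod p$ we would get $r_{j,m-1} \equiv r_{j,m} \Mod p$; iterating the recurrence backwards under the assumption $d_j \equiv 1 \Mod p$ shows all $r_{j,i}$ are congruent mod $p$ to $\pm r_{j,1} = \pm 1$, so in particular $p \nmid r_{j,m}$. But then $(d_j+1)r_{j,m}(d_{j,m}-r_{j,m}) \equiv -1 \Mod p$ with $d_j + 1 \equiv 2$, $d_{j,m} \equiv 0$, giving $2 r_{j,m}(-r_{j,m}) \equiv -1 \Mod p$, i.e. $2 r_{j,m}^2 \equiv 1 \Mod p$; meanwhile from $d_j \equiv 1$ we can also compute $r_{j,m}^2 - r_{j,m-1}^2 = r_{j,2m-1}$ and related quantities to reach a contradiction unless $p = 2$, and the $p=2$ subcase is dispatched separately using the explicit parity statements in \Cref{lem:dimgridtechres} (when $d_{j,m}$ is even, $d_j$ is even, so $d_j - 1$ is odd and coprime to $2$).

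The main obstacle I anticipate is organizing the ``$-1$'' case so that it does not devolve into a messy induction: the cleanest route is probably to avoid the backwards iteration entirely and instead use a closed-form consequence of $\mathbf{\Delta_j = (d_j-3)(d_j+1)}$ together with $r_{j,m+1} - r_{j,m} = \sqrt{(d_{(2m+1)j}+1)/(d_j+1)}$ and $r_{j,m+1}+r_{j,m} = d_{j,m}$, which multiply to give $r_{j,m+1}^2 - r_{j,m}^2 = r_{j,2m+1}$, hence $d_{j,m}(r_{j,m+1}-r_{j,m}) = r_{j,2m+1}$. Reducing mod $p \mid d_{j,m}$ yields $r_{j,2m+1} \equiv 0 \Mod p$; on the other hand, from the product identity with $d_j - 1 \equiv 0$ one derives a congruence forcing $r_{j,m} \equiv \pm 1 \Mod p$ — and then checking that $r_{j,2m+1}$, expressed via $r_{j,m}$ and $d_j$ using $r_{j,2m+1} = r_{j,m+1}^2 - r_{j,m}^2$ with $r_{j,m+1} = (d_j-1)r_{j,m} - r_{j,m-1} \equiv -r_{j,m-1}$, collapses to something nonzero mod $p$, the desired contradiction. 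I expect assembling this last contradiction to be the step requiring the most care; everything else is a direct application of the identities already established in \Cref{thm:nrddjmrjm} and \Cref{lem:dimgridtechres}.
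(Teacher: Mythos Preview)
Your $d_j+1$ case is correct and is exactly the paper's argument: the identity $(d_j+1)r_{j,m}(d_{j,m}-r_{j,m}) = d_{j,m}^2-1$ makes $d_j+1$ a unit modulo any divisor of $d_{j,m}$.

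Your $d_j-1$ case, however, is both overcomplicated and contains a concrete error. The claim that ``iterating the recurrence backwards shows all $r_{j,i}\equiv \pm 1\pmod p$'' is false: under the hypothesis $d_j\equiv 1\pmod p$, the recurrence $r_{j,k+1}\equiv -r_{j,k-1}$ gives $r_{j,2}=d_j-1\equiv 0$, so the even-indexed $r$'s vanish. In fact the contradiction is already in your hands at that point and you miss it: your chain $r_{j,m}\equiv r_{j,m-1}\equiv -r_{j,m-2}\equiv -r_{j,m-3}\equiv\cdots$ runs through every index down to $1$, so it forces $r_{j,1}\equiv \pm r_{j,2}$, i.e.\ $1\equiv 0\pmod p$. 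There is no need for the detour through $2r_{j,m}^2\equiv 1$, the identity $r_{j,2m+1}=r_{j,m+1}^2-r_{j,m}^2$, or a separate treatment of $p=2$.

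The paper's argument avoids the $r$'s and primes entirely. The recursion $d_{j,m+2}=(d_j-1)d_{j,m+1}-d_{j,m}$ from \Cref{lem:dimgridtechres}, reduced modulo $d_j-1$, gives $d_{j,m}\equiv -d_{j,m-2}$, hence $d_{j,m}\equiv \pm d_{j,1}$ or $\pm d_{j,2}\pmod{d_j-1}$ according to the parity of $m$. Since $d_{j,1}=d_j\equiv 1$ and $d_{j,2}=(d_j-1)^2+(d_j-1)-1\equiv -1\pmod{d_j-1}$, this yields $d_{j,m}\equiv\pm 1\pmod{d_j-1}$ directly. This is the same recurrence idea you were reaching for, but applied to $d_{j,m}$ itself rather than to $r_{j,m}$, which eliminates all the bookkeeping.
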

\begin{proof}
    The fact that $d_j+1$ is coprime to $d_{j,m}$ is an immediate consequence of the relation
    \eag{
     (d_j+1)r_{j,m}(d_{j,m}-r_{j,m}) &= d^2_{j,m}-1
    }
    proved in Theorem~\ref{thm:nrddjmrjm}.

    To prove that $d_j-1$ is coprime to $d_{j,m}$ observe that it follows from  Lemma~\ref{lem:dimgridtechres} that 
    \eag{
    d_{j,m} \equiv -d_{j,m-2} \ \Mod{d_j-1}
    }
    for all $m>2$.  Consequently
    \eag{
     d_{j,m} \equiv \begin{cases} 
     \pm d_{j,1} \ \Mod{(d_j-1)} \quad & \text{if $m$ odd}
     \\
     \pm d_{j,2} \ \Mod{(d_j-1)} \quad & \text{if $m$ even}
     \end{cases}
    }
    Since
    \eag{
     d_{j,1}&\equiv d_j \equiv 1 \ \Mod{d_j-1}
    }
    and
    \eag{
     d_{j,2} &= (d_j-1)^2+(d_j-1)-1 \equiv -1 \ \Mod{d_j-1},
    }
    it follows that $d_{j,m} \equiv \pm 1 \Mod{d_j-1}$, and $d_{j,m}$ is thus coprime to $d_j-1$, for all $j,m \in \N$.
\end{proof}

\subsection{Representations}

An important role in the following is played by what we refer to as \textit{canonical representations} of the real quadratic base field $K$. 
These are faithful $\Q$-algebra representations of $K$ by $2 \times 2$ matrices. All such representations are isomorphic, justifying the term \textit{canonical}. However, up to equality, there are multiple canonical representations, and they are in natural bijective correspondence with the set of forms associated to $K$ (recall the definition of \textit{form} in \Cref{ssec:quadraticfieldsforms}). 
This correspondence in turn leads to a natural correspondence between extended Clifford group orbits of $r$-SICs and equivalence classes of forms.

The purpose of this subsection is to derive the results on canonical representations which will be needed in the sequel. 
We begin with some preliminary definitions and lemmas.
\begin{defn}[matrices; symmetric matrices; trace-zero matrices]\label{dfn:MatrixRings}
    Let $R$ be a commutative ring with identity.  Then
    \begin{enumerate}
        \item $\mcl{M}(R)$ is the ring of $2\times 2$ matrices over $R$,
        \item $\mcl{M}_{\rm{S}}(R)\subseteq \mcl{M}(R)$ is the subring of symmetric matrices,
        \item $\mcl{M}_0(R)\subseteq \mcl{M}(R)$ is the additive subgroup of trace-zero matrices.
    \end{enumerate} 
\end{defn}
\begin{defn}[matrix sub-algebra]\label{dfn:subAlgebra}
    If $R$ is a field and $M_1, \dots, M_n\in \mcl{M}(R)$, then $R\la M_1, \dots, M_n\ra$ denotes the $R$-subalgebra of $\mcl{M}(R)$ generated by the matrices $M_j$.
\end{defn}
\begin{defn}[generators of $\SLtwo{\mbb{Z}}$]\label{dfn:GeneratorsOfSL2Z}
    Let
    \eag{
        S &= \bmt 0 &-1\\ 1 & 0 \emt, & T&= \bmt 1 & 1 \\ 0 & 1\emt
    }
    be the usual generators of $\SLtwo{\mbb{Z}}$.
\end{defn}
\begin{lem}\label{lm:mttchlm}
    Let $R$ be a commutative ring with identity.
    \begin{enumerate}
        \item \label{it:mttchlma} If $M\in \mcl{M}_0(R)$, then
            $M^2 = - \Det(M) I$.
        \item \label{it:mttchlmb} The map $M\mapsto S M$ is a bijection of $\mcl{M}_{\rm{S}}(R)$ onto $\mcl{M}_{0}(R)$.
    \end{enumerate}
    Suppose further that $R$ is a field.
    \begin{enumerate}
        \setcounter{enumi}{2}
        \item \label{it:mttchlmc} If $M,M'\in \mcl{M}_0(R)$ are both non-zero, then $MM' = M'M$ if and only if $M' =\lambda M$ for some non-zero $\lambda \in R$.
        \item \label{it:mttchlmd} If $M\in \mcl{M}_0(R)$, then
        \eag{
            R\la I, M\ra & = \{xI + yM\colon x, y\in R\}.
        }
    \end{enumerate}
\end{lem}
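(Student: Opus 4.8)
The statement to prove is \Cref{lm:mttchlm}, which collects four elementary facts about $2\times 2$ matrices over a commutative ring (and, in parts (3)--(4), over a field). Since the lemma is labeled as a straightforward consequence of definitions, I want to give clean direct computations rather than invoke any machinery.

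\textbf{Plan.} For part (1), the plan is to write $M = \smmattwo{a}{b}{c}{-a} \in \mcl{M}_0(R)$ and compute $M^2 = \smmattwo{a^2+bc}{0}{0}{bc+a^2} = (a^2+bc)I$, then observe $\Det(M) = -a^2 - bc$, so $M^2 = -\Det(M)I$. (This is the Cayley--Hamilton identity specialized to trace zero, but the direct computation is shortest.) For part (2), I would note that $S = \smmattwo{0}{-1}{1}{0}$ and for $N = \smmattwo{a}{b}{b}{c} \in \mcl{M}_{\rm S}(R)$ we get $SN = \smmattwo{-b}{-c}{a}{b}$, which has trace $0$, so the map lands in $\mcl{M}_0(R)$; conversely $S^{-1} = \smmattwo{0}{1}{-1}{0} = -S$, and for $M = \smmattwo{a}{b}{c}{-a} \in \mcl{M}_0(R)$ we have $S^{-1}M = \smmattwo{c}{-a}{-a}{-b}$, which is symmetric, so $M \mapsto S^{-1}M$ is a two-sided inverse to $N \mapsto SN$. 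Hence the map is a bijection.

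For part (3), assume $R$ is a field and $M, M' \in \mcl{M}_0(R)$ are nonzero with $MM' = M'M$. The ``if'' direction is trivial. For ``only if'', write $M = \smmattwo{a}{b}{c}{-a}$, $M' = \smmattwo{a'}{b'}{c'}{-a'}$; the commutator $MM' - M'M$ has entries that are $2\times 2$ minors of the matrix $\smmattwo{a}{b}{c}{a'}\cdots$ — concretely, computing directly, the off-diagonal and diagonal entries of $MM'-M'M$ all vanish iff $ab' - a'b = 0$, $ac' - a'c = 0$, and $bc' - b'c = 0$; equivalently the vectors $(a,b,c)$ and $(a',b',c')$ in $R^3$ are linearly dependent. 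Since $R$ is a field and $(a',b',c') \neq 0$ (as $M'\neq 0$), this forces $(a,b,c) = \lambda(a',b',c')$ for some $\lambda \in R$, and $\lambda \neq 0$ because $M \neq 0$; hence $M = \lambda M'$. I should be a little careful to phrase the linear-dependence step so it works: $(a,b,c)$ and $(a',b',c')$ pairwise-proportional in all three coordinates over a field means one is a scalar multiple of the other. For part (4), the containment $\{xI + yM : x,y \in R\} \subseteq R\la I, M\ra$ is clear; for the reverse, it suffices to show the right side is closed under multiplication, and by part (1), $M^2 = -\Det(M)I \in \{xI + yM\}$, so any product of generators reduces to an $R$-linear combination of $I$ and $M$; since it also obviously contains $I$ and $M$ and is closed under addition and $R$-scaling, it equals $\{xI+yM : x,y\in R\}$.

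\textbf{Main obstacle.} There is essentially no obstacle here — every part is a short explicit matrix computation. The only point requiring a small amount of care is part (3): I must make sure the commutator-vanishing condition is correctly translated into linear dependence of the coefficient triples, and that the field hypothesis is used exactly where proportionality in every coordinate is upgraded to a global scalar $\lambda$ (this can fail over a general commutative ring, which is why parts (3)--(4) are stated only for fields). I would present the computations compactly, perhaps suppressing the most routine entry-by-entry expansions.
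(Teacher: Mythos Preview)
Your proposal is correct and follows essentially the same approach as the paper. The paper is even terser for parts (1) and (2) (calling them ``immediate consequences of the definitions''), and for (3) it writes out exactly the commutator you describe and says ``from which the statement follows,'' while for (4) it gives precisely your closure-under-multiplication argument via part (1).
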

\begin{proof}
Statements~\eqref{it:mttchlma}, \eqref{it:mttchlmb} are immediate consequences of the definitions.  To prove~\eqref{it:mttchlmc} let $M=\smt{\ma & \mb \\ \mc & -\ma}$, $M'=\smt{\ma' & \mb' \\ \mc' & -\ma'}$.  Then
\eag{
MM'-M'M &= \bmt (\mb \mc' - \mc \mb') & 2(\ma \mb'-\mb \ma') \\ 2(\mc \ma' -\ma \mc') & -(\mb\mc'-\mc \mb')\emt,
}
from which the statement  follows.  To prove~\eqref{it:mttchlmd}, let $\mcl{A}= \{ xI + yM\colon x, y\in R\}$.  In view of~\eqref{it:mttchlma}, $\mcl{A}$ is closed under addition and multiplication, and is therefore a sub-algebra which contains $I,M$, and is contained in $R\langle I, M\rangle$.
\end{proof}

We now define canonical representations of $K$ and prove a proposition giving their basic properties. The following definition and proposition would be exactly the same (with minor modifications to the proof) if $K$ were replaced by an arbitrary degree $n$ number field and $\mcl{M}(\Q)$ were replaced by the $\Q$-algebra of $n \times n$ matrices (where the $n$ is the same on both sides).
\begin{defn}[canonical representation]\label{dfn:canonicalRepresentation}
    A \textit{canonical representation} of $K$ is a map $\canrep\colon K \to \mcl{M}(\mbb{Q})$ that is a $\Q$-algebra isomorphism of $K$ onto a sub-algebra of $\mcl{M}(\mbb{Q})$.
\end{defn}

\begin{prop}
    Let $\chi : K \to \mcl{M}(\Q)$ be a canonical representation. Then $\chi$ enjoys the following properties.
    \begin{enumerate}
        \item The map $\chi$ is isomorphic to the ``multiplication map'' representation $\chi_0 : K \to \mcl{L}_\Q(K)$ given by $(\chi_0(\kappa))(\kappa') = \kappa\kappa'$, where $\mcl{L}_\Q(K)$ is the $\Q$-algebra of $\Q$-linear maps from $K$ to $K$.
        \item The map $\chi$ turns norms into determinants, so that $\Nm(\kappa) = \Det(\canrep(\kappa))$ for all $\kappa \in K$.
        \item The map $\chi$ preserves traces in the sense that $\Tr(\kappa)$, the number field trace of $\kappa$, is equal to $\Tr(\canrep(\kappa))$, the matrix trace of $\canrep(\kappa)$, for all $\kappa \in K$.
    \end{enumerate}
\end{prop}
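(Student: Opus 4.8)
The plan is to prove all three properties at once by first establishing (1), since (2) and (3) will then follow from well-known properties of the multiplication representation $\chi_0$. First I would recall that a canonical representation $\chi$ gives an injective $\Q$-algebra homomorphism $K \to \mcl{M}(\Q)$, and that $\mcl{M}(\Q) = \mcl{L}_\Q(\Q^2) \cong \mcl{L}_\Q(V)$ for any $2$-dimensional $\Q$-vector space $V$; in particular, $\mcl{M}(\Q) \cong \mcl{L}_\Q(K)$ as $\Q$-algebras once we fix a $\Q$-basis of $K$ (note $[K:\Q]=2$). Under such an identification, $\chi$ becomes a $\Q$-algebra homomorphism $\lambda : K \to \mcl{L}_\Q(K)$. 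The key algebraic point is that any such $\lambda$ must coincide with $\chi_0$: for $\kappa \in K$, the map $\lambda(\kappa)$ is determined by its value on $1 \in K$, because $\lambda(\kappa)(\kappa') = \lambda(\kappa)(\kappa' \cdot 1) = \lambda(\kappa)\lambda(\kappa')(1) = \lambda(\kappa\kappa')(1)$, using that $\kappa'$ acts on $1$ via $\lambda(\kappa')$ if we can show $\lambda(\kappa')(1)$ recovers $\kappa'$. So the crux is to show $\lambda(\kappa)(1) = \kappa$ for all $\kappa$; equivalently, that $v \mapsto \lambda(\kappa)(v)$, evaluated at the distinguished vector $1$, is the identity-like embedding. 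This follows because $\lambda(1) = \mathrm{id}$ (homomorphisms send $1$ to $1$), so $\lambda(\kappa)(1) = \lambda(\kappa)(\lambda(1)(1))$, and one checks that the $\Q$-linear map $\kappa \mapsto \lambda(\kappa)(1)$ is a $\Q$-algebra homomorphism $K \to K$ fixing $1$, hence (since $K/\Q$ has no proper subfields and the map is nonzero) equals the identity on $K$. Therefore $\lambda(\kappa)(\kappa') = \lambda(\kappa\kappa')(1) = \kappa\kappa' = \chi_0(\kappa)(\kappa')$, i.e., $\lambda = \chi_0$, which is exactly the statement that $\chi$ is isomorphic to $\chi_0$ (the isomorphism being the change of basis used to identify $\mcl{M}(\Q)$ with $\mcl{L}_\Q(K)$).

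Once (1) is in hand, properties (2) and (3) are standard facts about the regular representation of a finite field extension, which I would either cite or prove in a line. For (2): the norm $\Nm_{K/\Q}(\kappa)$ is by definition the determinant of the $\Q$-linear multiplication-by-$\kappa$ map on $K$, which is $\Det(\chi_0(\kappa))$; since $\chi \cong \chi_0$ and determinant is conjugation-invariant, $\Det(\chi(\kappa)) = \Det(\chi_0(\kappa)) = \Nm(\kappa)$. For (3): similarly the field trace $\Tr_{K/\Q}(\kappa)$ is by definition the trace of multiplication-by-$\kappa$, i.e. $\Tr(\chi_0(\kappa))$, and matrix trace is also conjugation-invariant, so $\Tr(\chi(\kappa)) = \Tr(\chi_0(\kappa)) = \Tr(\kappa)$.

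The main obstacle, such as it is, lies entirely in part (1): one must argue carefully that a $\Q$-algebra embedding of a field $K$ of degree $2$ into a matrix algebra $\mcl{M}(\Q)$ of the ``matching'' size is forced to be conjugate to the regular representation, and in particular that the image is a simple module realization. The cleanest route is the one sketched above, viewing $\mcl{M}(\Q) \cong \mcl{L}_\Q(W)$ for a $2$-dimensional $\Q$-space $W$, making $W$ into a $K$-vector space via $\chi$, observing $\dim_\Q W = 2 = [K:\Q]$ forces $\dim_K W = 1$, hence $W \cong K$ as $K$-modules, and this $K$-module isomorphism conjugates $\chi$ to $\chi_0$. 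I would present it in that order. One minor subtlety to flag is that $\chi$ is assumed to be an isomorphism onto a subalgebra, so in particular injective, which is what guarantees $W$ is a faithful (hence, being $1$-dimensional over $K$, free rank one) $K$-module; this should be mentioned explicitly.
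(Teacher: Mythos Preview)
Your final-paragraph approach—making $W = \Q^2$ into a $K$-module via $\chi$, using $\dim_\Q W = [K:\Q]$ to force $\dim_K W = 1$, and taking the resulting $K$-isomorphism $W \cong K$ to conjugate $\chi$ to $\chi_0$—is correct and is essentially the paper's argument. The paper picks a vector $v_0 \in \Q^2$ such that $\{\chi(\kappa_1)v_0, \chi(\kappa_2)v_0\}$ is a basis (a cyclic vector, equivalently a $K$-module generator of $W$), then checks directly that $\chi(\kappa)$ has the same matrix as $\chi_0(\kappa)$ in this basis. Your module formulation is in fact slightly cleaner, since it explains \emph{why} such a $v_0$ exists, which the paper simply asserts.

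However, the argument in your first paragraph does not work as written. After identifying $\mcl{M}(\Q) \cong \mcl{L}_\Q(K)$ via an \emph{arbitrary} $\Q$-basis of $K$ and calling the resulting map $\lambda$, you try to prove $\lambda = \chi_0$ exactly by showing that $\phi: \kappa \mapsto \lambda(\kappa)(1)$ is a $\Q$-algebra homomorphism $K \to K$. But multiplicativity of $\phi$ would require $\lambda(\kappa)\bigl(\lambda(\kappa')(1)\bigr) = \lambda(\kappa)(1)\cdot\lambda(\kappa')(1)$, i.e., that $\lambda(\kappa)$ already acts as multiplication by $\phi(\kappa)$ on the image of $\phi$—which is precisely what you are trying to prove. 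And the conclusion $\lambda = \chi_0$ is in any case too strong: for a generic basis identification you only get $\lambda$ \emph{conjugate} to $\chi_0$, not equal to it. Drop the first-paragraph sketch and present only the $K$-module argument; parts (2) and (3) then follow exactly as you say.
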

\begin{proof}
    Write $K = \kappa_1\Q + \kappa_2\Q$. Identify $\mcl{L}_\Q(K)$ with $\MM(\Q)$, so that for any $\kappa \in K$, the linear transformation $\chi_0(\kappa)$ is represented by a matrix $M = \smmattwo{m_{11}}{m_{12}}{m_{21}}{m_{22}} \in \MM(\Q)$ such that $\kappa\kappa_i = m_{i1}\kappa_1 + m_{i2}\kappa_2$ for $i \in \{1,2\}$.
    
    Fix some $v_0 \in \MM(\Q)$ such that $v_1 := \chi(\kappa_1)v_0$ and $v_2 := \chi(\kappa_2)v_0$ are linearly independent.
    Then, for any $\kappa \in K$,
    \begin{equation}
        \chi(\kappa)v_i 
        = \chi(\kappa)\chi(\kappa_i)v_0
        = \chi(\kappa\kappa_i)v_0
        = \chi(m_{i1}\kappa_1 + m_{i2}\kappa_2)v_0
        = m_{i1}v_1 + m_{i2}v_2.
    \end{equation}
    Thus, $\chi(\kappa)$ is represented by $M$ in the basis $\{v_1,v_2\}$ of $\Q^2$. So $\chi$ is isomorphic to $\chi_0$ as a $\Q$-algebra representation of $K$; that is, property (1) holds.

    Property (2) and (3) follow, because the norm $\Nm(\kappa)$ is defined to be the determinant of $\chi_0(\kappa)$ (or, equivalently, the determinant of $M$), and the number field trace $\Tr(\kappa)$ is defined to be the trace of $\chi_0(\kappa)$ (or, equivalently, the trace of $M$).
\end{proof}

We will now use the fact that $K$ is a quadratic field to parametrize all canonical representations up to equality by (binary quadratic) forms $Q$.
\begin{defn}[canonical representation associated to a form]\label{df:etaq}
    Given a form $Q$ associated to $K$ with conductor $f$, define $\canrep_Q\colon K\to \mcl{M}(\mbb{Q})$ to be the map specified by
    \eag{
        \canrep_{Q}\!\left(x+y\sqrt{\Delta_0}\right) &=x I + \frac{2y}{f} SQ
    }
    for all $x, y\in \mbb{Q}$.
\end{defn}  
\begin{thm}\label{thm:canrep}
    For every form $Q$ associated to $K$, the map $\canrep_Q$ is a canonical representation of $K$.  Conversely, if $\canrep$ is a canonical representation of $K$, then there exists a form $Q$ associated to $K$ such that $\canrep=\canrep_Q$.
\end{thm}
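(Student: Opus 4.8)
The plan is to verify the two halves of the statement separately: first that $\canrep_Q$ is always a canonical representation, then the converse parametrization. For the forward direction, recall from \Cref{df:etaq} that $\canrep_Q(x + y\sqrt{\Delta_0}) = xI + \tfrac{2y}{f}SQ$, where $Q$ is the scaled Hessian matrix, $f$ is the conductor, and $S = \smt{0 & -1 \\ 1 & 0}$. The key computational fact is that $SQ$ is a trace-zero matrix (since $Q$ is symmetric, \Cref{lm:mttchlm}\eqref{it:mttchlmb} applies), so by \Cref{lm:mttchlm}\eqref{it:mttchlma} we have $(SQ)^2 = -\Det(SQ)I = -\Det(Q)I = \tfrac{\Delta}{4}I$, where $\Delta = b^2 - 4ac$ is the discriminant of $Q$. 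Since $\Delta = f^2\Delta_0$, this gives $\bigl(\tfrac{2}{f}SQ\bigr)^2 = \tfrac{4}{f^2}\cdot\tfrac{f^2\Delta_0}{4}I = \Delta_0 I$. Thus $\canrep_Q$ sends $\sqrt{\Delta_0}$ to a matrix squaring to $\Delta_0 I$; checking additivity and multiplicativity then reduces to this one identity together with the obvious $\Q$-linearity, so $\canrep_Q$ is a well-defined $\Q$-algebra homomorphism $K \to \mcl{M}(\Q)$. Injectivity is automatic because $K$ is a field (any nonzero ring homomorphism from a field is injective), so $\canrep_Q$ is an isomorphism onto its image, i.e.\ a canonical representation.

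For the converse, suppose $\canrep$ is an arbitrary canonical representation. Let $M = \canrep(\sqrt{\Delta_0})$. Since $\Tr$ of a number-field element is preserved (by the preceding proposition) and $\Tr(\sqrt{\Delta_0}) = 0$, we get $M \in \mcl{M}_0(\Q)$; likewise $\Det(M) = \Nm(\sqrt{\Delta_0}) = -\Delta_0$. By \Cref{lm:mttchlm}\eqref{it:mttchlmb}, $M = SN$ for a unique symmetric $N \in \mcl{M}_{\mrm{S}}(\Q)$, with $\Det(N) = \Det(M) = -\Delta_0$ (using $\Det(S) = 1$). Write $N = \smt{\alpha & \beta/2 \\ \beta/2 & \gamma}$; then $\beta^2 - 4\alpha\gamma = -4\Det(N) = 4\Delta_0$. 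The remaining task is to rescale $N$ to an \emph{integral primitive} form $Q$ with the correct conductor. The first step is to clear denominators: multiply $N$ by a suitable positive rational so that the resulting matrix has integer entries with $\gcd$ of the three coefficients equal to $1$; this produces a primitive integral binary quadratic form $Q$. The scaling factor is exactly (the reciprocal of) the conductor $f$, because if $Q = \la a,b,c\ra$ has discriminant $\Delta = b^2-4ac$ and fundamental discriminant $\Delta_0$ (which it will, since $N$ already had discriminant $4\Delta_0$, forcing the primitive rescaling to have discriminant $f^2\Delta_0$ for the appropriate $f$), then $N = \tfrac{1}{f}Q$ as matrices, so $M = \tfrac{1}{f}SQ$ and hence $\canrep(\sqrt{\Delta_0}) = \tfrac{2}{f}SQ \cdot \tfrac{1}{2}$... one needs to track the factor of $2$ carefully against \Cref{df:etaq}; the definition has $\canrep_Q(y\sqrt{\Delta_0}) = \tfrac{2y}{f}SQ$, so I want $M = \tfrac{2}{f}SQ$, meaning the primitive form satisfies $N = \tfrac{2}{f}Q$, i.e.\ $\Det(N) = \tfrac{4}{f^2}\Det(Q) = -\tfrac{\Delta}{f^2} = -\Delta_0$ as required. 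Finally one checks indefiniteness and irreducibility of $Q$: irreducibility is equivalent to $\Delta$ not being a perfect square, which holds because $K$ is a field, and indefiniteness follows since $\Delta_0 > 0$. Since $\canrep$ agrees with $\canrep_Q$ on $\sqrt{\Delta_0}$ and on $\Q$ (both restrict to the identity embedding of $\Q$ into scalar matrices), and these generate $K$ as a $\Q$-algebra, we conclude $\canrep = \canrep_Q$.

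The main obstacle I anticipate is bookkeeping around the factor of $\tfrac{2}{f}$ versus $\tfrac{1}{f}$ and the precise interplay between the discriminant of the scaled-Hessian matrix $Q$ (which is $-4\Det Q = \Delta$) and the conductor normalization; getting the rescaling constant exactly right so that the reconstructed $\canrep_Q$ matches \Cref{df:etaq} on the nose requires care, but it is not conceptually deep. A secondary subtlety is confirming that the primitive integral rescaling of $N$ genuinely lands on a form \emph{associated to $K$} in the sense of \Cref{ssec:quadraticfieldsforms}, i.e.\ that its fundamental discriminant equals $\Delta_0$; this follows from the fact that $N$ has discriminant $4\Delta_0$ with $\Delta_0$ already a fundamental discriminant, so the primitive rescaling can only introduce a square conductor factor, never change the fundamental discriminant.
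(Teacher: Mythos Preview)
Your proposal is correct and follows essentially the same route as the paper: compute $(SQ)^2 = \tfrac{f^2\Delta_0}{4}I$ via \Cref{lm:mttchlm} for the forward direction, and for the converse write $\canrep(\sqrt{\Delta_0}) = SN$ with $N$ symmetric, rescale to a primitive integral $Q$, and pin down the scalar as $2/f$ via the determinant/discriminant. The only cosmetic differences are that the paper verifies $\canrep(1)=I$ by an explicit trace-and-idempotent argument rather than invoking unitality, and it phrases the rescaling as $M = qQ$ with $q \in \Q$ (not insisting on positivity), which sidesteps the sign issue you'd otherwise absorb by replacing $Q$ with $-Q$.
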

\begin{proof}
   Let $Q$ be a form associated to $K$ with conductor $f$.  It is immediate that $\canrep_Q$ is a $\mbb{Q}$-linear monomorphism of $K$ into $\mcl{M}(\mbb{Q})$.  To show that  it is a $\mathbb{Q}$-algebra monomorphism, observe that it follows from Lemma~\ref{lm:mttchlm} that
   \eag{
        (SQ)^2 &= -\Det(SQ) I = \frac{1}{4} f^2\Delta_0 I.
   }
   Hence
   \eag{
        \canrep_Q\!\left(x+y\sqrt{\Delta_0}\right)
        \canrep_Q\!\left(x'+y'\sqrt{\Delta_0}\right)
        &= x x'I  + \frac{4yy'}{f^2}(SQ)^2 + \frac{2(x y'+x'y)}{f} SQ
        \nn
        &= \canrep_Q\!\left( \left(x+y\sqrt{\Delta_0}\right)\left(x'+y'\sqrt{\Delta_0}\right)\right)
   }
   for all $x,y,x',y'\in\mbb{Q}$.
So $\canrep_Q$ is a canonical representation of $K$.

Conversely, suppose $\canrep$ is an arbitrary canonical representation of $K$.
Define
\eag{
L_1 &= \canrep(1), & L_2 &= \canrep(\sqrt{\Delta_0}).
}
We have $\Tr(L_1) = 2$ and $L_1^2=L_1$.  Consequently $I-L_1 \in \mcl{M}_0$ and $(I-L_1)^2 = I-L_1$.  In view of Lemma~\ref{lm:mttchlm} this means
\eag{
I-L_1 &= -\Det(I-L_1) I.
}
So $L_1 = k I$ for some $k$.  The fact that $\Tr(L_1) =2$ means $k=1$, implying $L_1 = I$.

Turning to $L_2$, the fact that $\Tr(L_2) =\Tr(\Delta_0)$ implies, by another application of Lemma~\ref{lm:mttchlm}, that
\eag{
L_2 &= SM
}
for some  $M\in \mcl{M}_{\rm{S}}(\mbb{Q})$.  We can write $M$ in the form 
\eag{
M &= q Q, & Q&= \bmt a & \frac{b}{2} \\ \frac{b}{2} & c\emt
}
with $q\in \mbb{Q}$ and $a, b, c$ coprime integers. The fact that $\Det(L_2) = \Nm(\sqrt{\Delta_0})$ means
\eag{
\Delta_0 &= \frac{q^2 \Delta}{4}
}
where $\Delta=b^2-4ac$ is the discriminant of $Q$.
Let $D, D'$ be the square-free parts of $\Delta_0, \Delta$ respectively.  Then $n^2 D = m^2 D'$ for some $n,m\in \mbb{Z}$, implying $D' = D$.  So $Q$ is associated to $K$, and  $\Delta=f^2\Delta_0$ where $f$ is the conductor of $Q$. It follows that $q = 2/f$ implying
\eag{
L_2 &= \frac{2}{f} SQ.
}
Hence $\canrep=\canrep_Q$.
\end{proof}
\begin{thm}\label{tm:canisointun}
    Let $Q$ be a form associated to $K$ and  let $f$ be its conductor.  Let $\mcl{O}_f$, $\mcl{U}_f$ be the  order and unit group with conductor $f$  (see  Definition~\ref{df:ofufdef}).  Then $\canrep_Q$ restricts to
    \begin{enumerate}
        \item \label{it:canisointuna} a ring isomorphism of $\mcl{O}_f$ onto $\Z\la I, SQ\ra \cap \mcl{M}(\mbb{Z})$,
        \item \label{it:canisointunb} a group isomorphism of $\mcl{U}_f$ onto 
        $\Z\la I, SQ\ra \cap \GLtwo{\mbb{Z}}$.
    \end{enumerate}
\end{thm}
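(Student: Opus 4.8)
The plan is to deduce everything from the fact, already established in \Cref{thm:canrep} and the proposition preceding it, that $\canrep_Q$ is a $\Q$-algebra monomorphism $K \to \mcl{M}(\Q)$ carrying norms to determinants. Once this is known, it suffices to identify the \emph{images} of $\mcl{O}_f$ and $\mcl{U}_f$; the ring (resp.\ group) isomorphism assertions then follow at once by restricting $\canrep_Q$. Throughout, $\Z\la I, SQ\ra$ is read as the $\Z$-subalgebra of $\mcl{M}(\Q)$ generated by $I$ and $SQ$, extending the notation of \Cref{dfn:subAlgebra}. Put $\nu = -\Det(SQ) = \tfrac14\Delta$, where $\Delta = b^2 - 4ac = f^2\Delta_0$ is the discriminant of $Q$. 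Since $SQ$ has zero trace, \Cref{lm:mttchlm}\eqref{it:mttchlma} gives $(SQ)^2 = \nu I$, so by iterating $\Z\la I, SQ\ra = \Z[\nu]I + \Z[\nu]SQ$, where $\Z[\nu] \subseteq \Q$ denotes the subring generated by $\nu$. Note $\Z[\nu] = \Z$ unless $\Delta \equiv 1 \Mod 4$, in which case $\Z[\nu] = \Z[\tfrac14]$; in particular $\tfrac12 \in \Z[\nu]$ whenever $b$ is odd, since then $\Delta$ is odd.

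The first main step is to pin down the integer-matrix sublattice, and I claim $\Z\la I, SQ\ra \cap \mcl{M}(\Z) = \Z I + \Z\bigl(\tfrac b2 I + SQ\bigr)$. The matrix $\tfrac b2 I + SQ = \smmattwo{0}{-c}{a}{b}$ has integer entries and lies in $\Z\la I, SQ\ra$, since $\tfrac b2 \in \Z[\nu]$ (immediate if $b$ is even, and by the remark above if $b$ is odd). Conversely, write a general element as $xI + ySQ$ with $x, y \in \Z[\nu]$; demanding integer entries, the off-diagonal entries give $ya, yc \in \Z$ and the difference of the diagonal entries gives $yb \in \Z$, so primitivity $\gcd(a,b,c) = 1$ forces $y \in \Z$, and then integrality of the diagonal forces $x \in \tfrac{yb}{2} + \Z$; hence $xI + ySQ = (x - \tfrac{yb}{2})I + y(\tfrac b2 I + SQ) \in \Z I + \Z(\tfrac b2 I + SQ)$.

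Next I would compute $\canrep_Q(\mcl{O}_f)$. Writing $\mcl{O}_f = \Z + \Z\, f\omega$ with $\omega = \tfrac12(\Delta_0 + \sqrt{\Delta_0})$ (\Cref{dfn:orderConductorf}), \Cref{df:etaq} gives $\canrep_Q(1) = I$ and $\canrep_Q(f\omega) = \tfrac{f\Delta_0}{2}I + SQ$. Since $b \equiv b^2 = \Delta + 4ac \equiv \Delta = f^2\Delta_0 \equiv f\Delta_0 \Mod 2$, the quantity $\tfrac12(f\Delta_0 - b)$ is an integer, so $\canrep_Q(f\omega) = \tfrac12(f\Delta_0 - b)I + \bigl(\tfrac b2 I + SQ\bigr)$ differs from $\tfrac b2 I + SQ$ by an integer multiple of $I$. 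Hence $\canrep_Q(\mcl{O}_f) = \Z I + \Z(\tfrac b2 I + SQ) = \Z\la I, SQ\ra \cap \mcl{M}(\Z)$, and restricting the ring monomorphism $\canrep_Q$ to the subring $\mcl{O}_f$ gives a ring isomorphism onto this image, which is \eqref{it:canisointuna}. For \eqref{it:canisointunb}, recall $\mcl{U}_f = \{w \in \mcl{O}_f : \Nm(w) = \pm 1\}$ (\Cref{df:ofufdef}); since $\canrep_Q$ turns norms into determinants, $w \in \mcl{U}_f$ iff $\canrep_Q(w) \in \canrep_Q(\mcl{O}_f)$ has determinant $\pm 1$, i.e.\ iff $\canrep_Q(w) \in \GLtwo{\Z}$. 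Therefore $\canrep_Q(\mcl{U}_f) = \{M \in \Z\la I, SQ\ra \cap \mcl{M}(\Z) : \Det M = \pm 1\} = \Z\la I, SQ\ra \cap \GLtwo{\Z}$, and restricting the isomorphism of \eqref{it:canisointuna} to unit groups gives the group isomorphism.

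The main obstacle is the lattice computation in the second paragraph, which requires care precisely because $SQ$ itself need not be an integer matrix (indeed $SQ \notin \mcl{M}(\Z)$ exactly when $b$ is odd). This forces one to carry the ring $\Z[\nu]$ rather than $\Z$, to track the parity congruence $f\Delta_0 \equiv b \Mod 2$, and to invoke primitivity of $Q$ at the right moment; everything else is formal once $\canrep_Q$ is known to be a norm-preserving ring monomorphism.
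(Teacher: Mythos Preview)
Your proof is correct and follows essentially the same approach as the paper. The paper argues the two inclusions $\canrep_Q(\mcl{O}_f)\subseteq \Z\la I,SQ\ra\cap\mcl{M}(\Z)$ and the reverse directly, while you identify both sides with the explicit lattice $\Z I+\Z\bigl(\tfrac{b}{2}I+SQ\bigr)$; the underlying computations (primitivity of $Q$ forcing $y\in\Z$, and the parity congruence $f\Delta_0\equiv b\Mod 2$) are identical, and part~\eqref{it:canisointunb} is deduced from norm-equals-determinant in both.
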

\begin{proof}
    \emph{Statement~\eqref{it:canisointuna}.}  Let $\kappa \in \mcl{O}_f$ be arbitrary.  Then
    \eag{
        \kappa &= x + yf\left(\frac{\Delta_0+\sqrt{\Delta_0}}{2}\right)
    }
    for some $x,y\in \mbb{Z}$. Setting $Q=\smt{a & \frac{b}{2} \\ \frac{b}{2} & c}$, we have
    \eag{
        \canrep_Q(\kappa) &= \left(x+\frac{yf\Delta_0}{2}\right)I + y SQ = \bmt x+\frac{y\left(f\Delta_0 - b\right)}{2} & -yc \\ y a & x+\frac{y\left(f\Delta_0 +b\right)}{2} \emt.
    }
    The fact that $f^2\Delta_0 = b^2-4ac$ means $f\Delta_0 \equiv b \Mod{2}$.  So $\canrep_Q(\kappa) \in \la I, SQ\ra \cap \mcl{M}(\mbb{Z})$.  

    Conversely, suppose $L \in \Z\la I, SQ\ra\cap \mcl{M}(\mbb{Z}) $.  Then it follows from Lemma~\ref{lm:mttchlm} that
\eag{
L &= x I + y SQ 
}
for some $x$, $y\in \mbb{Q}$. So
\eag{
L&= \canrep_Q(\kappa), & \kappa &= \left(x-\frac{yf\Delta_0}{2}\right) + yf\left(\frac{\Delta_0+\sqrt{\Delta_0}}{2}\right).
}
Moreover, the fact that
\eag{
L = \bmt x-\frac{yb}{2} & -yc \\ ya & x+\frac{yb}{2} \emt \in \mcl{M}(\mbb{Z})
}
means $ ya, yc$ and $yb = (x+yb/2)-(x-yb/2)$ are all in $\mbb{Z}$.  Since $a, b, c$ are coprime, it follows that $y\in \mbb{Z}$.  Also the fact that $x-yb/2 \in \mbb{Z}$, together with the fact that $f\Delta_0 \equiv b \Mod{2}$, means
\eag{
x-\frac{yf\Delta_0}{2} &= x-\frac{yb}{2} -\frac{y\left(f\Delta_0-b\right)}{2} \in \mbb{Z}.
}
So $\kappa \in \mcl{O}_f$.

\emph{Statement~\eqref{it:canisointunb}.}  It follows from Statement~\eqref{it:canisointuna} that $\canrep_Q$ maps $\mcl{U}_f$ into a multiplicative subgroup of $\Z\la I , SQ\ra \cap \mcl{M}(\mbb{Z})$.  The fact that $\Det(\canrep_Q(\kappa)) = \Nm(\kappa)$ for all $\kappa$ means that, if $\kappa \in \mcl{U}_f$, then $\Det(\canrep_Q(\kappa)) = \pm 1$ implying $\canrep_Q(\kappa) \in \Z\la I , SQ\ra \cap \GLtwo{\mbb{Z}}$.  It also means that, if $\canrep_Q(\kappa) \in \Z\la I, SQ\ra \cap \GLtwo{\mbb{Z}}$, then $\Nm(\kappa) = \pm 1$.
Since Statement~\eqref{it:canisointuna} implies $\kappa \in \mcl{O}_f$ we must have $\kappa \in \mcl{U}_f$. 
\end{proof}

\begin{cor}\label{cr:etaqmodn}
    Let $Q$ be a form associated to $K$, and let $f$ be its conductor.  Let $\kappa \in \mcl{O}_f$ and $n\in \mbb{N}$ be arbitrary.  Then $\kappa \in n\mcl{O}_f$ if and only if $\canrep_Q(\kappa) \in n\mcl{M}(\mbb{Z})$.
\end{cor}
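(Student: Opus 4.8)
The plan is to reduce the statement to an explicit computation of $\canrep_Q(\kappa)$ in integral coordinates and then to invoke the primitivity of $Q$. First I would use the $\mbb{Z}$-basis $\{1,\, f(\tfrac{\Delta_0+\sqrt{\Delta_0}}{2})\}$ of $\mcl{O}_f$ from \Cref{dfn:orderConductorf} to write $\kappa = x + yf(\tfrac{\Delta_0+\sqrt{\Delta_0}}{2})$ with $x,y\in\mbb{Z}$; then $\kappa\in n\mcl{O}_f$ if and only if $n\mid x$ and $n\mid y$. Next, writing $Q=\la a,b,c\ra$ and recalling from the proof of \Cref{tm:canisointun} that $f\Delta_0\equiv b\Mod 2$ (so that $m_\pm:=\tfrac{f\Delta_0\pm b}{2}\in\mbb{Z}$), I would record the formula already computed there,
\eag{
\canrep_Q(\kappa) = \bmt x+\tfrac{y(f\Delta_0-b)}{2} & -yc \\ ya & x+\tfrac{y(f\Delta_0+b)}{2} \emt = \bmt x+ym_- & -yc \\ ya & x+ym_+ \emt .
}

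The forward implication is then immediate: if $n\mid x$ and $n\mid y$, all four matrix entries are divisible by $n$, so $\canrep_Q(\kappa)\in n\mcl{M}(\mbb{Z})$. For the converse, suppose $\canrep_Q(\kappa)\in n\mcl{M}(\mbb{Z})$. Then $n$ divides $ya$, $yc$, and the difference of the two diagonal entries, which is $y(m_+-m_-)=yb$; hence $n\mid ya$, $n\mid yb$, $n\mid yc$. Because $Q$ is primitive, $\gcd(a,b,c)=1$, so some $\mbb{Z}$-combination of $a,b,c$ equals $1$, and therefore $n\mid y$. Then $n\mid ym_-$, and since $n\mid x+ym_-$ we conclude $n\mid x$, i.e. $\kappa\in n\mcl{O}_f$.

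There is no real obstacle here beyond bookkeeping; the one genuinely essential input is the primitivity of $Q$, which is exactly what turns ``$n$ divides $ya$, $yb$, $yc$'' into ``$n$ divides $y$'' — without primitivity the corollary would fail. Conceptually, the argument is just the observation that \Cref{tm:canisointun}(1) realizes $\canrep_Q$ as an isomorphism of $\mbb{Z}$-modules $\mcl{O}_f\xrightarrow{\sim}\Z\la I,SQ\ra\cap\mcl{M}(\mbb{Z})$ and that this lattice is saturated inside $\mcl{M}(\mbb{Z})$ (a matrix $M$ in the lattice with $M\in n\mcl{M}(\mbb{Z})$ forces $M/n$ back into the lattice), which is again precisely the primitivity statement.
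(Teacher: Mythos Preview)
Your proof is correct. The paper takes a more abstract route: for the converse, it writes $\canrep_Q(\kappa)=nL$ with $L\in\mcl{M}(\mbb{Z})$, observes that $nL\in\mbb{Q}\la I,SQ\ra$ forces $L\in\mbb{Q}\la I,SQ\ra\cap\mcl{M}(\mbb{Z})$, and then invokes \Cref{tm:canisointun} a second time to conclude $L=\canrep_Q(a)$ for some $a\in\mcl{O}_f$, whence $\kappa=na$. Your argument instead unpacks the explicit matrix formula and uses primitivity of $Q$ directly; this is exactly the computation hidden inside the proof of \Cref{tm:canisointun}, so the two arguments are really the same content packaged differently. The paper's version is shorter because it treats the theorem as a black box, while yours has the virtue of making the role of primitivity visible at the point where it is actually needed --- as you correctly note in your final paragraph, the saturation of the lattice $\canrep_Q(\mcl{O}_f)$ inside $\mcl{M}(\mbb{Z})$ is precisely the primitivity of $Q$.
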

\begin{proof}
    Necessity is immediate. Suppose, on the other hand, that $\canrep_Q(\kappa) = nL$ for $L\in \mcl{M}(\mbb{Z})$.  It follows from Theorem~\ref{tm:canisointun} that $nL \in \Z\la I, SQ\ra$.  So $L$ is also in $\Z\la I, Q\ra$.  By another application of Theorem~\ref{tm:canisointun}, $L=\canrep_Q(a)$ for some $a \in \mcl{O}_f$.  Hence $\kappa  = n a \in n\mcl{O}_f$.
\end{proof}

\subsection{\texorpdfstring{Stability groups and maximal abelian subgroups of $\GLtwo{\mbb{Z}}$}{Stability groups and maximal abelian subgroups of GL(2,Z)}}
The purpose of this subsection is to prove some results concerning the stability group of a form, which, as we will see, is  the same thing as a  maximal abelian subgroup of $\GLtwo{\mbb{Z}}$. 

In order to treat arbitrary maximal abelian subgroups of $\GLtwo{\mbb{Z}}$ we need to temporarily relax the restrictions in \Cref{ssec:quadraticfieldsforms} on the definition of \textit{form}. 
We continue to assume without comment that the forms with which we deal are binary, quadratic, integral, and primitive. 
However, in this subsection (and nowhere else) we drop the assumption that they are irreducible and indefinite.

We now investigate the properties of $\mcl{S}(Q)$, the stability group of $Q$ as specified in Definition~\ref{dfn:stbgpqdf}.
\begin{lem}\label{lm:slm1sdllt}
    For all $M\in\GLtwo{\mbb{Z}}$,
    \eag{
       \Det(M) M^{\rm{T}} &= - SM^{-1}S.
    }
\end{lem}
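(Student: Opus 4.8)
This is a routine $2\times 2$ matrix identity, so the plan is simply to verify it by direct computation and then observe it is basis-independent enough to not require more. First I would write $M = \smmattwo{\ma}{\mb}{\mc}{\md}$ with $\Det(M) = \ma\md - \mb\mc =: \epsilon$, recalling $S = \smmattwo{0}{-1}{1}{0}$ so that $S^{-1} = \smmattwo{0}{1}{-1}{0} = -S$ and hence $SM^{-1}S = -S M^{-1} S^{-1} \cdot (-1)$—but it is cleaner to just compute $SM^{-1}S$ outright. Since $M^{-1} = \epsilon^{-1}\smmattwo{\md}{-\mb}{-\mc}{\ma}$, I would compute $S M^{-1} = \epsilon^{-1}\smmattwo{\mc}{-\ma}{\md}{-\mb}$ and then $S M^{-1} S = \epsilon^{-1}\smmattwo{-\ma}{-\mc}{-\mb}{-\md} = -\epsilon^{-1}\smmattwo{\ma}{\mc}{\mb}{\md} = -\epsilon^{-1} M^{\mathrm{T}}$.

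On the other hand, $\Det(M) M^{\mathrm{T}} = \epsilon M^{\mathrm{T}}$. Comparing, I want $\epsilon M^{\mathrm{T}} = -S M^{-1} S$, i.e. $\epsilon M^{\mathrm{T}} = \epsilon^{-1} M^{\mathrm{T}}$ after moving the sign. Since $M \in \GLtwo{\mbb{Z}}$, we have $\epsilon = \Det(M) = \pm 1$, so $\epsilon = \epsilon^{-1}$, and the identity follows immediately. The whole argument is: expand $M^{-1}$ via the adjugate, multiply out $S M^{-1} S$, recognize the result as $-\epsilon^{-1} M^{\mathrm{T}}$, and use $\epsilon^2 = 1$.

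There is essentially no obstacle here; the only thing to be careful about is sign bookkeeping in $S M^{-1} S$ versus $S M^{-1} S^{-1}$ (note $S^{-1} = -S \neq S$), and the use of $\Det(M)^2 = 1$, which holds precisely because we are in $\GLtwo{\mbb{Z}}$ (equivalently, because $\Det(M) \in \{+1,-1\}$). I would present the computation in a single short displayed chain of equalities, something like
\begin{align}
    -S M^{-1} S
    &= -\Det(M)^{-1} \bmt 0 & -1 \\ 1 & 0 \emt \bmt \md & -\mb \\ -\mc & \ma \emt \bmt 0 & -1 \\ 1 & 0 \emt
    = -\Det(M)^{-1}\bmt \ma & \mc \\ \mb & \md \emt
    = \Det(M) M^{\mathrm{T}},
\end{align}
where the last equality uses $\Det(M)^{-1} = \Det(M)$. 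This completes the proof.
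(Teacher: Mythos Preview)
Your approach is correct and essentially identical to the paper's: write out $M^{-1}$ via the adjugate, multiply through by $S$ on both sides, and use $\Det(M)^2=1$. One small sign slip in your final displayed chain: the product $S\,\mathrm{adj}(M)\,S$ equals $-M^{\mathrm T}$ (as your earlier text correctly computes), so the middle term should be $+\Det(M)^{-1}M^{\mathrm T}$, not $-\Det(M)^{-1}M^{\mathrm T}$; the final equality then follows directly from $\Det(M)^{-1}=\Det(M)$.
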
  
\begin{proof}
    Write $M=\smt{\ma &\mb \\ \mc & \md}$. Then
    \eag{
        SM^{-1}S &=\frac{1}{\Det M} \bmt 0 & -1 \\ 1 & 0\emt \bmt \md &-\mb \\ -\mc & \ma \emt 
        \bmt 0 & -1 \\ 1 & 0\emt 
        =
       -\frac{1}{\Det M} \bmt \ma & \mc \\ \mb & \md \emt,
    }
    as desired.
\end{proof}
\begin{thm}\label{tm:sqchar}
   Let $Q$ be a form and $\mcl{S}(Q)$ its stability group (as in \Cref{dfn:stbgpqdf}).  Then:
    \begin{enumerate}
        \item \label{it:sqchara} $\mcl{S}(Q)$ is the centralizer of $SQ$ in $\GLtwo{\mbb{Z}}$.
        \item \label{it:sqcharb} $\mcl{S}(Q)= \Z\la I , SQ\ra \cap \GLtwo{\mbb{Z}}$.
        \item \label{it:sqchard} For all $M\in \mcl{S}(Q)$, there exist unique integers $t, n$ such that
        \eag{
                M &= \frac{t}{2}I + n SQ.
        }
        \item \label{it:sqcharc} $\mcl{S}(Q)$ is abelian.
    \end{enumerate}
\end{thm}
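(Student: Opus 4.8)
The four claims about $\mcl{S}(Q)$ are tightly linked, and the plan is to prove them roughly in the order \eqref{it:sqchara} $\Rightarrow$ \eqref{it:sqcharb} $\Rightarrow$ \eqref{it:sqchard} $\Rightarrow$ \eqref{it:sqcharc}, drawing on \Cref{lm:mttchlm}, \Cref{lm:slm1sdllt}, and \Cref{thm:canrep}/\Cref{tm:canisointun}. For \eqref{it:sqchara}, I would start from the defining relation $Q_M = \Det(M)M^{\rm T}QM = Q$ and rewrite it using \Cref{lm:slm1sdllt}: since $\Det(M)M^{\rm T} = -SM^{-1}S$ and $S^2 = -I$, the condition $\Det(M)M^{\rm T}QM = Q$ becomes $-SM^{-1}SQM = Q$, i.e.\ $M^{-1}(SQ)M = SQ$ (using $S^{-1} = -S$ to move $S$ to the other side), which says exactly that $M$ commutes with $SQ$. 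So $\mcl{S}(Q)$ is the centralizer of $SQ$ in $\GLtwo{\mbb{Z}}$.

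For \eqref{it:sqcharb}: the matrix $SQ$ has trace zero (as $SQ = \smmattwo{b/2}{c}{-a}{-b/2}$), so $SQ \in \mcl{M}_0(\Q)$ and is nonzero because $Q$ is nonzero. By \Cref{lm:mttchlm}\eqref{it:mttchlmc}, a nonzero trace-zero matrix $M_0$ commutes with $SQ$ iff $M_0 = \lambda SQ$ for some $\lambda \neq 0$; together with the fact that scalars $I$ commute with everything, the full centralizer of $SQ$ in $\mcl{M}(\Q)$ is $\{xI + y\,SQ : x,y \in \Q\} = \Q\la I, SQ\ra$ (using \Cref{lm:mttchlm}\eqref{it:mttchlmd}, decomposing an arbitrary centralizing matrix into its scalar and trace-zero parts and noting the trace-zero part must itself centralize $SQ$). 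Intersecting with $\GLtwo{\mbb{Z}}$ and invoking \eqref{it:sqchara} gives $\mcl{S}(Q) = \Q\la I, SQ\ra \cap \GLtwo{\mbb{Z}}$; but any element of $\Q\la I, SQ\ra$ with integer entries already lies in $\Z\la I, SQ\ra$ by the coprimality of $a,b,c$ (this is exactly the argument already carried out inside the proof of \Cref{tm:canisointun}, and I would cite that rather than repeat it), so $\mcl{S}(Q) = \Z\la I, SQ\ra \cap \GLtwo{\mbb{Z}}$.

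Claim \eqref{it:sqchard} is then essentially bookkeeping: by \eqref{it:sqcharb} any $M \in \mcl{S}(Q)$ has the form $M = xI + n(SQ)$ with $x \in \Q$, $n \in \Z$; comparing traces, $\Tr(M) = 2x$ since $SQ$ is trace-zero, so $x = \tfrac{t}{2}$ with $t = \Tr(M) \in \Z$, and uniqueness of $t,n$ follows from $\{I, SQ\}$ being $\Q$-linearly independent (again because $SQ \neq \lambda I$). Finally \eqref{it:sqcharc} is immediate: $\mcl{S}(Q)$ sits inside the commutative ring $\Q\la I, SQ\ra$ (or one can simply note it is a centralizer of a single element, hence $\mcl{S}(Q) \subseteq \Z\la I, SQ\ra$, and that ring is commutative since $(SQ)^2 = -\Det(SQ)I$ is scalar by \Cref{lm:mttchlm}\eqref{it:mttchlma}), so any two of its elements commute. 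I do not anticipate a serious obstacle here; the only mild subtlety is the passage from $\Q\la I, SQ\ra$ to $\Z\la I, SQ\ra$ in \eqref{it:sqcharb}, which requires the primitivity of $Q$, but this is already handled in the proof of \Cref{tm:canisointun} and can be reused verbatim.
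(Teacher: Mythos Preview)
Your proposal is correct and follows essentially the same route as the paper: part \eqref{it:sqchara} via \Cref{lm:slm1sdllt}, part \eqref{it:sqcharb} by splitting off the trace-zero part and applying \Cref{lm:mttchlm}\eqref{it:mttchlmc}, part \eqref{it:sqchard} by reading off $t=\Tr(M)$ and using primitivity of $Q$ for integrality of $n$, and part \eqref{it:sqcharc} as an immediate consequence. The only organizational difference is that the paper defers the coprimality/integrality argument to the proof of \eqref{it:sqchard} rather than folding it into \eqref{it:sqcharb}; note also that \Cref{tm:canisointun} is stated only for forms associated to a real quadratic $K$, whereas this subsection has dropped the irreducible-indefinite hypothesis, so you should invoke the coprimality argument directly (as the paper does) rather than cite that theorem.
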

\begin{proof}
    \emph{Statement~\eqref{it:sqchara}.} 
    For all $M\in \GLtwo{\mbb{Z}}$,
    \begin{alignat}{2}
     & & \qquad \lOnQ{M}{Q} &= Q
     \\
       &\iff& \qquad \Det(M) M^{\rm{T}} Q M &= Q
       \\
       &\iff& \qquad -SM^{-1}SQM &=Q
       \\
       &\iff & \qquad  SQM&= MSQ,
    \end{alignat}
    where $Q_M$ is as defined in \eqref{eq:afrmtrans} and where we used Lemma~\ref{lm:slm1sdllt} in the second step.
    
    \emph{Statement~\eqref{it:sqcharb}.} Let $M$ be any element of the centralizer of $SQ$, and let
    \eag{
        M_0&=M -  \frac{1}{2}\Tr(M) I.
    }
    Then $M_0$ commutes with $SQ$. Since $M_0$ and $SQ$ are both trace-zero, it follows from Lemma~\ref{lm:mttchlm} that 
    \eag{
        M_0 &= \lambda S Q
    }
    for some non-zero $\lambda \in \mbb{Q}$.  So $M$ belongs to $\Z\la I, SQ\ra$ and therefore to $\Z\la I, SQ\ra \cap \GLtwo{\mbb{Z}}$.  

    Conversely, if $M\in \Z\la I, SQ\ra \cap \GLtwo{\mbb{Z}}$ then it follows from Lemma~\ref{lm:mttchlm} that $M$ is a linear combination of $I$ and $SQ$ and is therefore in the centralizer of $SQ$.

    \emph{Statement~\eqref{it:sqchard}.} Let $Q=\la a, b, c\ra$, and let $M\in \mcl{S}(Q)$. It follows from Statement~\eqref{it:sqcharb} that
    \eag{
      M &= \bmt \frac{t-nb}{2} & -n c \\ n a & \frac{t+nb}{2} \emt
    }
    for some $t, n \in \mbb{Q}$.  We must have $t, na, nb, nc\in\mbb{Z}$.  Since $a, b, c$ are coprime, it follows that $n\in \mbb{Z}$.  Suppose that $t', n'$ are any other pair of integers such that $M = (t'/2)I +n' SQ$.  Then
    \eag{
        t' &= \Tr(M) = t
    }
   and, consequently, $n' Q = n Q$, implying $n' = n$.
   
   \emph{Statement~\eqref{it:sqcharc}.}  Immediate consequence of Statement~\eqref{it:sqcharb}.
\end{proof}

\begin{thm}\label{tm:ltnqdecomp}
For each $M=\smt{\ma & \mb \\ \mc & \md} \in \GLtwo{\mbb{Z}}\setminus \{\pm I\}$ there exists a  unique   form $\qOfL{Q}{M}$ such that $M\in\mcl{S}(\qOfL{Q}{M})$ and $\sgn\left(M\right) = \sgn\left(\qOfL{Q}{M}\right)$. Explicitly,
\eag{
\qOfL{Q}{M} &= 
\frac{1}{n_M}
\bmt \mc &  \frac{\md-\ma}{2} \\ \frac{\md-\ma}{2} & -\mb\emt
\label{eq:QLdf}
}
where
\eag{
n_M &= 
\gcd(\mb,\mc,\ma-\md). %
\label{eq:nLdf}
}
Setting
\eag{
t_M &= \Tr(M),
\label{eq:tLdf}
}
one then has
\eag{
M &= \frac{t_M}{2}I + n_M S\qOfL{Q}{M}.
}
\end{thm}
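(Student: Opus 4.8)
The strategy is to prove existence and uniqueness together by a direct argument. For uniqueness, suppose $M \in \mcl{S}(Q)$ for some form $Q$; by \Cref{tm:sqchar}(\ref{it:sqchard}), there are unique integers $t,n$ with $M = \tfrac{t}{2}I + nSQ$, and since $M \neq \pm I$ we have $n \neq 0$. Writing $Q = \la a,b,c\ra$ and $SQ = \smt{-b/2 & -c \\ a & b/2}$, comparing entries of $M = \smt{\ma & \mb \\ \mc & \md}$ gives $t = \ma + \md$, $na = \mc$, $nc = -\mb$, and $nb = \md - \ma$. Hence $n$ is a common divisor of $\mc$, $-\mb$, and $\md - \ma$, and since $a,b,c$ are coprime (as $Q$ is primitive), $|n| = \gcd(\mb,\mc,\ma-\md) = n_M$. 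So $|n|$ is forced, and the sign of $n$ is then pinned down by the requirement $\sgn(M) = \sgn(Q)$: recall $\sgn(Q) = \sgn(a)$ and (for $\mc \neq 0$) $\sgn(M) = \sgn(\mc) = \sgn(na) = \sgn(n)\sgn(a)$, so $\sgn(M) = \sgn(Q)$ forces $n > 0$, i.e.\ $n = n_M$. (The case $\mc = 0$ needs a brief separate check using the convention $\sgn(M) = \sgn(\md)$; here $\mb$ and $\md - \ma$ are not both zero since $M \neq \pm I$, and one verifies the sign convention picks out $n = n_M$ consistently.) Once $n = n_M$ is fixed, $Q = \tfrac{1}{n_M}(M - \tfrac{t_M}{2}I)S^{-1}$ is determined, giving the explicit formula \eqref{eq:QLdf} and uniqueness.

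\textbf{Existence.} Conversely, I would simply \emph{define} $\qOfL{Q}{M}$ by the formula \eqref{eq:QLdf} and verify it has all the required properties. First, its entries are integers: $n_M = \gcd(\mb,\mc,\ma-\md)$ divides each of $\mc$, $\ma - \md$, and $\mb$ (note $\md - \ma$ is even iff $\ma - \md$ is, and one checks $(\md-\ma)/2 \cdot (2/n_M)$ and the halving work out using $\ma\md - \mb\mc = \Det(M) = \pm 1$, which forces parity compatibility between $\ma - \md$ and the other entries). Actually the cleanest route is: $n_M \mid \ma - \md$, and since $\det M = \pm 1$ one shows $\ma - \md \equiv 0 \pmod 2$ is \emph{not} automatic, so $\qOfL{Q}{M}$ should be read via its scaled Hessian matrix; I will follow the paper's convention that the matrix $\tfrac{1}{n_M}\smt{\mc & (\md-\ma)/2 \\ (\md-\ma)/2 & -\mb}$ is the Hessian/2, so $\qOfL{Q}{M} = \la \mc/n_M, (\md-\ma)/n_M, -\mb/n_M\ra$, and the entries $\mc/n_M$, $(\md-\ma)/n_M$, $-\mb/n_M$ are coprime integers by definition of $n_M$. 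Primitivity is then immediate. Next, $SQ = S \cdot \tfrac{1}{n_M}\smt{\mc & (\md-\ma)/2 \\ (\md-\ma)/2 & -\mb} = \tfrac{1}{n_M}\smt{-(\md-\ma)/2 & \mb \\ \mc & (\md-\ma)/2} = \tfrac{1}{n_M}(M - \tfrac{t_M}{2}I)$, which gives $M = \tfrac{t_M}{2}I + n_M S\qOfL{Q}{M}$ directly, and hence $M \in \Z\la I, SQ\ra \cap \GLtwo{\mbb{Z}} = \mcl{S}(\qOfL{Q}{M})$ by \Cref{tm:sqchar}(\ref{it:sqcharb}). The sign condition $\sgn(M) = \sgn(\qOfL{Q}{M})$ follows from $\sgn(\qOfL{Q}{M}) = \sgn(\mc/n_M) = \sgn(\mc)$ (when $\mc \neq 0$) $= \sgn(M)$, with the $\mc = 0$ case handled by the convention as above.

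\textbf{Main obstacle.} The only real subtlety is the integrality and primitivity bookkeeping: checking that dividing by $n_M$ genuinely lands in $\mbb{Z}$ for \emph{all} three coefficients simultaneously, which rests on the definition $n_M = \gcd(\mb,\mc,\ma-\md)$ and requires being careful that the form in \Cref{tm:sqchar}(\ref{it:sqchard}) has an \emph{integer} coefficient $n$ (so that $n_M$ really is the full gcd and not a proper divisor). This is essentially already supplied by \Cref{tm:sqchar}(\ref{it:sqchard}), whose proof shows $n \in \mbb{Z}$ using primitivity of $Q$; the present theorem runs that argument in reverse. I also need the edge-case analysis when $\mc = 0$ to confirm the sign conventions in \Cref{dfn:sign} make the form unique there too — this is routine but must be done explicitly since $\GLtwo{\mbb{Z}} \setminus \{\pm I\}$ includes e.g.\ upper-triangular matrices like $T^k$. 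Everything else is a short computation.
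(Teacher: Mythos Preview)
Your approach is essentially the same as the paper's: define $Q(M)$ by the explicit formula, verify $M = \tfrac{t_M}{2}I + n_M S Q(M)$ directly (hence $M \in \mcl{S}(Q(M))$ via \Cref{tm:sqchar}), and for uniqueness use \Cref{tm:sqchar}(\ref{it:sqchard}) together with primitivity to force $|n| = n_M$, then the sign condition to force $n > 0$. The paper does existence first and uniqueness second, you reverse the order, but the content is identical.

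Two small remarks. First, your detour about parity and ``halving'' is unnecessary: the displayed matrix is the scaled Hessian, so the form is $\langle \mc/n_M, (\md-\ma)/n_M, -\mb/n_M\rangle$, and these are coprime integers by the very definition of $n_M$ --- you recover this yourself, but the preceding paragraph can be cut. Second, your claim that the $\mc = 0$ case is routine is optimistic: when $\mc = 0$ the leading coefficient of $Q(M)$ is $0$, so $\sgn(Q(M)) = 0$ while $\sgn(M) = \sgn(\md) = \pm 1$, and the sign condition as literally stated cannot hold. The paper's own proof glosses over this (it writes ``$\sgn(M) = \sgn(\mc) = \sgn(Q(M))$'' without qualification), so you are in good company, but do not promise more than the paper delivers.
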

\begin{rmkb}
    When reading \Cref{tm:ltnqdecomp}, one should keep in mind that, in this subsection, forms are not assumed to be irreducible or indefinite.
\end{rmkb}
\begin{proof}
Let $\qOfL{Q}{M}, n_M,t_M$ be as defined in ~\eqref{eq:QLdf}--\eqref{eq:tLdf}. The fact that $M\neq \pm I$ means $n_M$ is non-zero, and $\qOfL{Q}{M}$ is a well-defined  form. We have $\sgn(M) = \sgn(\gamma) = \sgn\left(\qOfL{Q}{M}\right)$. Moreover
\eag{
\frac{t_M}{2} I + n_M S\qOfL{Q}{M}
&=
\frac{\ma+\md}{2}I + \bmt 0 & -1 \\ 1 & 0 \emt \bmt \mc & \frac{\md-\ma}{2} \\ \frac{\md-\ma}{2} & -\mb \emt
=
\bmt \ma & \mb \\ \mc & \md \emt.
}
In view of Theorem~\ref{tm:sqchar}, this means $M\in\mcl{S}\left(\qOfL{Q}{M}\right)$.

Suppose $Q$ is any other form such that $M\in \mcl{S}(Q)$ and $\sgn(Q) = \sgn(M)$. In view of Theorem~\ref{tm:sqchar}, this means
\eag{
M &= \frac{t}{2}I+n SQ
}
for some pair of integers $t,n$ such that $n>0$.
It is immediate that $t = \Tr(M) = t_M$.  Hence  
\eag{
nQ &= n_M \qOfL{Q}{M}.
}
Consequently
\eag{
n a &= n_Ma_M & n b &= n_M b_M & n c &= n_M c_M
}  
where we have set $\qOfL{Q}{M}=\la a_M, b_M, c_M\ra$, $Q=\la a, b, c\ra$.
Since $Q$, $\qOfL{Q}{M}$ are primitive and $n$, $n_M$ are both positive, it follows that $Q=\qOfL{Q}{M}$.
\end{proof}
\begin{cor}
For any pair of distinct forms $Q_1, Q_2$,
\eag{
\mcl{S}(Q_1)\cap \mcl{S}(Q_2) &= \{\pm I\}.
}
\end{cor}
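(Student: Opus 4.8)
The plan is to deduce this directly from the uniqueness clause of \Cref{tm:ltnqdecomp}, so the ``proof'' is really just a short piece of sign bookkeeping. First I would record that $\{\pm I\}\subseteq\mcl{S}(Q)$ for every form $Q$, since $\lOnQ{\pm I}{Q}=Q$; thus the inclusion $\supseteq$ in the claimed identity is free, and everything reduces to showing that any $M\in\mcl{S}(Q_1)\cap\mcl{S}(Q_2)$ with $M\neq\pm I$ forces $Q_1$ and $Q_2$ to coincide up to an overall sign.

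Next I would set up two elementary facts about stability groups. (i) $\mcl{S}(-Q)=\mcl{S}(Q)$, immediate from $\lOnQ{M}{(-Q)}=-\lOnQ{M}{Q}$. (ii) $N\in\mcl{S}(Q)\iff -N\in\mcl{S}(Q)$, since $\lOnQ{-N}{Q}=\Det(-N)(-N)^{\mathrm T}Q(-N)=\lOnQ{N}{Q}$; moreover $\sgn(-N)=-\sgn(N)$ straight from \Cref{dfn:sign}, and correspondingly $\qOfL{Q}{(-M)}=-\qOfL{Q}{M}$, which one reads off the explicit formula \eqref{eq:QLdf} (the numerator negates and $n_{-M}=n_M$). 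With these in hand, given $M\in\mcl{S}(Q_1)\cap\mcl{S}(Q_2)$, $M\neq\pm I$, I would choose $\epsilon_i\in\{\pm1\}$ so that $\sgn(\epsilon_i M)=\sgn(Q_i)$; then $\epsilon_i M\in\mcl{S}(Q_i)$ by (ii), $\epsilon_i M\neq\pm I$, and the uniqueness in \Cref{tm:ltnqdecomp} gives $Q_i=\qOfL{Q}{(\epsilon_i M)}=\epsilon_i\,\qOfL{Q}{M}$. Hence $Q_2=\epsilon_1\epsilon_2\,Q_1=\pm Q_1$, contradicting distinctness of $Q_1,Q_2$; so $\mcl{S}(Q_1)\cap\mcl{S}(Q_2)=\{\pm I\}$.

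There is no genuinely hard step here; the one thing to be careful about is the sign convention. Because a form and its negative always have the same stability group (fact (i)), the corollary can only hold with ``distinct'' understood as ``distinct up to overall sign,'' and the bookkeeping with $\epsilon_1,\epsilon_2$ above is arranged precisely so this $\pm$ ambiguity surfaces cleanly instead of causing trouble. If one prefers not to invoke formula \eqref{eq:QLdf}, the same conclusion follows by noting that $\mcl{S}(Q)$ contains elements of both signs whenever it is nontrivial, and then applying the uniqueness statement of \Cref{tm:ltnqdecomp} to whichever of $M,-M$ carries the sign of $Q_1$ and, separately, to whichever carries the sign of $Q_2$ — the same computation repackaged.
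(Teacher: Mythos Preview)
Your approach is the same as the paper's: the paper simply says the corollary is an ``immediate consequence of Theorem~\ref{tm:ltnqdecomp} and the fact that $\pm I\in\mcl{S}(Q)$ for all $Q$.'' You have unpacked this in more detail, and your sign bookkeeping with $\epsilon_1,\epsilon_2$ is correct.

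Your observation about $Q$ versus $-Q$ is a genuine one: since $\mcl{S}(-Q)=\mcl{S}(Q)$ while $Q$ and $-Q$ are distinct primitive forms, the corollary as literally stated fails for the pair $(Q,-Q)$, and the uniqueness in Theorem~\ref{tm:ltnqdecomp} only pins down the form once the sign is fixed. The paper's one-line proof does not address this point, so your more careful argument, which concludes $Q_2=\pm Q_1$ and then notes that ``distinct'' must be read as ``distinct up to overall sign,'' is a legitimate sharpening rather than an unnecessary complication.
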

\begin{proof}
Immediate consequence of Theorem~\ref{tm:ltnqdecomp} and the fact that $\pm I\in \mcl{S}(Q)$ for all $Q$.
\end{proof}
\begin{lem}\label{lm:llpcommqcond}
Let $M$, $M'$ be any pair of elements of $\GLtwo{\mbb{Z}}\setminus \{\pm I\}$.  Then $M$ commutes with $M'$ if and only if $\qOfL{Q}{M}=\qOfL{Q}{M'}$, where $\qOfL{Q}{M}, \qOfL{Q}{M'}$ are as defined in \eqref{eq:QLdf}.
\end{lem}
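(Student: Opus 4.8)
\textbf{Proof proposal for \Cref{lm:llpcommqcond}.}

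The plan is to reduce the statement to \Cref{lm:mttchlm}\eqref{it:mttchlmc}, the criterion for commutativity of nonzero trace-zero matrices. First I would observe that, by \Cref{tm:ltnqdecomp}, every $M \in \GLtwo{\mbb{Z}} \setminus \{\pm I\}$ can be written uniquely as $M = \tfrac{t_M}{2}I + n_M S\qOfL{Q}{M}$ with $n_M = \gcd(\mb,\mc,\ma-\md) > 0$, so that $M - \tfrac{t_M}{2}I = n_M S\qOfL{Q}{M}$ is a nonzero trace-zero matrix (nonzero precisely because $M \neq \pm I$). Writing similarly $M' - \tfrac{t_{M'}}{2}I = n_{M'}S\qOfL{Q}{M'}$, the key elementary fact is that $M$ commutes with $M'$ if and only if $M - \tfrac{t_M}{2}I$ commutes with $M' - \tfrac{t_{M'}}{2}I$, since scalar multiples of the identity are central and commutativity is unchanged by adding central elements.

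Then I would apply \Cref{lm:mttchlm}\eqref{it:mttchlmc} with $R = \Q$: the two nonzero trace-zero matrices $n_M S\qOfL{Q}{M}$ and $n_{M'}S\qOfL{Q}{M'}$ commute if and only if one is a nonzero rational scalar multiple of the other, i.e.\ $n_{M'}S\qOfL{Q}{M'} = \lambda\, n_M S\qOfL{Q}{M}$ for some $\lambda \in \Q^\times$. Cancelling the invertible matrix $S$ and the nonzero integers $n_M, n_{M'}$, this says $\qOfL{Q}{M'} = \mu\, \qOfL{Q}{M}$ as matrices (equivalently as forms) for some $\mu \in \Q^\times$. Since both $\qOfL{Q}{M}$ and $\qOfL{Q}{M'}$ are primitive integral forms, a rational scalar relating them must be $\pm 1$; and since both have the same sign convention built into \Cref{tm:ltnqdecomp} only up to the sign of $M$ versus $M'$, I need to rule out $\mu = -1$. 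The cleanest way: if $\qOfL{Q}{M'} = -\qOfL{Q}{M}$, then $M$ and $-M'$ both stabilize $\qOfL{Q}{M}$ (using $\mcl{S}(Q) = \mcl{S}(-Q)$ trivially and \Cref{tm:sqchar}\eqref{it:sqchara}), so in fact both forms equal $\qOfL{Q}{M}$ up to this harmless sign — but the statement of the lemma only claims equality of the canonically-chosen forms $\qOfL{Q}{M}, \qOfL{Q}{M'}$, which are pinned down by requiring $\sgn(\qOfL{Q}{M}) = \sgn(M)$. So I should instead argue directly: $M, M'$ commute $\iff$ both lie in a common maximal abelian subgroup $\iff$ (by \Cref{tm:sqchar}\eqref{it:sqcharb} and the uniqueness in \Cref{tm:ltnqdecomp}) they stabilize a common form, and the unique sign-normalized form stabilized by $M$ is $\qOfL{Q}{M}$ while that stabilized by $M'$ is $\qOfL{Q}{M'}$, forcing $\qOfL{Q}{M} = \qOfL{Q}{M'}$.

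The main obstacle I anticipate is precisely this sign bookkeeping: \Cref{lm:mttchlm}\eqref{it:mttchlmc} gives proportionality only up to an arbitrary rational scalar, and while primitivity collapses $\Q^\times$ to $\{\pm 1\}$, one must be careful that the $\qOfL{Q}{(-)}$ operation is sign-normalized (via $\sgn(\qOfL{Q}{M}) = \sgn(M)$, \Cref{tm:ltnqdecomp}), so $M$ and $-M$ give the \emph{same} form even though $M, -M$ trivially commute. The resolution is to phrase the forward direction through the centralizer characterization of \Cref{tm:sqchar} rather than through raw proportionality: $M \in \mcl{S}(\qOfL{Q}{M})$ and $M' \in \mcl{S}(\qOfL{Q}{M'})$, each an abelian group by \Cref{tm:sqchar}\eqref{it:sqcharc}, and if $M$ commutes with $M'$ then $M'$ lies in the centralizer of $M$, which by \Cref{tm:sqchar}\eqref{it:sqchara}--\eqref{it:sqcharb} is contained in $\Z\la I, S\qOfL{Q}{M}\ra$; the uniqueness clause of \Cref{tm:ltnqdecomp} applied to $M' \neq \pm I$ then yields $\qOfL{Q}{M'} = \qOfL{Q}{M}$. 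The converse is immediate since $\mcl{S}(\qOfL{Q}{M})$ is abelian and contains both $M$ and $M'$ when the associated forms coincide.
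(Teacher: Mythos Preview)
Your forward-direction argument---reduce to the commuting trace-zero parts and apply \Cref{lm:mttchlm}\eqref{it:mttchlmc}---is exactly the paper's proof, and your converse is handled the same way (immediate from abelianness of $\mcl{S}(Q)$). So on approach you match the paper. The sign obstacle you flag is genuine: from $n_{M'}\,S\qOfL{Q}{M'}$ being a nonzero rational multiple of $n_M\,S\qOfL{Q}{M}$, primitivity only gives $\qOfL{Q}{M'} = \pm\qOfL{Q}{M}$. The paper's proof asserts equality at precisely this point without further argument, so it shares the gap you identified.

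Your centralizer fix does not close it. You correctly deduce $M' \in \mcl{S}(\qOfL{Q}{M})$, but the uniqueness in \Cref{tm:ltnqdecomp} characterizes $\qOfL{Q}{M'}$ as the unique form $Q$ with $M' \in \mcl{S}(Q)$ \emph{and} $\sgn(Q) = \sgn(M')$; since $\sgn(\qOfL{Q}{M}) = \sgn(M)$, this forces $\qOfL{Q}{M} = \qOfL{Q}{M'}$ only when $\sgn(M) = \sgn(M')$. Indeed the lemma as stated is false: for any $M \in \GLtwo{\mbb{Z}}\setminus\{\pm I\}$ take $M' = -M$; these commute, yet \eqref{eq:QLdf} gives $\qOfL{Q}{-M} = -\qOfL{Q}{M}$ (since $n_{-M} = n_M$ while the matrix in \eqref{eq:QLdf} changes sign). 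What survives, and what the downstream applications actually use, is $\qOfL{Q}{M'} = \pm\qOfL{Q}{M}$, equivalently $\mcl{S}(\qOfL{Q}{M}) = \mcl{S}(\qOfL{Q}{M'})$.
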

\begin{proof}
Suppose $M$, $M'$ commute.  Then $S\qOfL{Q}{M'} = n^{-1}_{M'} \left(M'-(t_{M'}/2)I\right)$ commutes with $S\qOfL{Q}{M} =n^{-1}_M\left( M-(t_M/2)I\right)$.  In view of Lemma~\ref{lm:mttchlm}, and the fact that $S\qOfL{Q}{M'}$, $S\qOfL{Q}{M}$ are both trace-zero, this means
\eag{
S\qOfL{Q}{M'} &= \lambda S \qOfL{Q}{M}
}
for some non-zero $\lambda \in \mbb{Q}$, implying
\eag{
m' \qOfL{Q}{M'} &= m \qOfL{Q}{M}
}
for some non-zero $m, m' \in \mbb{Z}$. Because the forms are primitive, it follows that $\qOfL{Q}{M'} = \qOfL{Q}{M}$.

The converse  is immediate.
\end{proof}
\begin{defn}\label{dfn:EllipticHyperbolic}Let
\begin{enumerate}
\item $\mcl{H}_{+}$  be the set of all $M\in \GLtwo{\mbb{Z}} \setminus \{\pm I\} $ such that $\left(\Tr M\right)^2 -4\Det M > 4 $,
\item $\mcl{H}_{-}$  be the set of all $M\in \GLtwo{\mbb{Z}} $ such that $\left(\Tr M\right)^2 -4\Det M \le 4 $,
\end{enumerate}
and let
\begin{enumerate}
    \item $\mcl{F}_{+}$  be the set of all  forms which are indefinite and irreducible,
        \item $\mcl{F}_{-}$  be the set of all  forms  whose discriminant equals $-4, -3,  0, 1, $ or $4$.
\end{enumerate}
\end{defn}
\begin{thm}\label{tm:hpmfpmrel}
$M\in \mcl{H}_{\pm}$  if and only if $\qOfL{Q}{M} \in \mcl{F}_{\pm}$.
\end{thm}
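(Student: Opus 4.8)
The plan is to reduce the whole equivalence to a single algebraic identity and then do a short amount of Diophantine bookkeeping at the boundary. By \Cref{tm:ltnqdecomp}, for $M=\smt{\ma & \mb \\ \mc & \md}\in\GLtwo{\mbb{Z}}\setminus\{\pm I\}$ the form $\qOfL{Q}{M}=\la a_M,b_M,c_M\ra$ has $a_M=\mc/n_M$, $b_M=(\md-\ma)/n_M$, $c_M=-\mb/n_M$, where $n_M=\gcd(\mb,\mc,\ma-\md)\ge 1$, as in \eqref{eq:QLdf}--\eqref{eq:nLdf}. First I would compute its discriminant directly and record
\[ \disc(\qOfL{Q}{M}) = b_M^2-4a_Mc_M = \frac{(\md-\ma)^2+4\mb\mc}{n_M^2} = \frac{(\Tr M)^2-4\Det M}{n_M^2}, \]
where the last equality is the elementary identity $(\ma+\md)^2-4(\ma\md-\mb\mc)=(\ma-\md)^2+4\mb\mc$ together with $\Det M=\ma\md-\mb\mc$. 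Thus $(\Tr M)^2-4\Det M = n_M^2\,\disc(\qOfL{Q}{M})$ with $n_M$ a positive integer and $\Det M\in\{\pm 1\}$. (Since $\qOfL{Q}{M}$ is only defined for $M\neq\pm I$, the statement is read with that restriction, which costs nothing: $\mcl{H}_+\subseteq\GLtwo{\mbb{Z}}\setminus\{\pm I\}$ already, and one simply excludes $\pm I$ from $\mcl{H}_-$.)

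Next I would establish the $+$ equivalence. Here $\qOfL{Q}{M}\in\mcl{F}_+$ means $\disc(\qOfL{Q}{M})>0$ and is not a perfect square. If $M\in\mcl{H}_+$, then $n_M^2\,\disc(\qOfL{Q}{M})=(\Tr M)^2-4\Det M>4$ forces $\disc(\qOfL{Q}{M})>0$; and if it equalled $k^2$ with $k\ge 1$, then $(\Tr M-n_Mk)(\Tr M+n_Mk)=4\Det M=\pm 4$ with both factors of equal (hence even) parity, which forces $n_Mk=0$ or $(\Tr M=0,\ n_Mk=2)$, contradicting $(n_Mk)^2>4$. Conversely, $\qOfL{Q}{M}\in\mcl{F}_+$ gives $\disc(\qOfL{Q}{M})\ge 2$, so $(\Tr M)^2-4\Det M=n_M^2\,\disc(\qOfL{Q}{M})\ge 2$, which exceeds $4$ unless $n_M=1$ and $\disc(\qOfL{Q}{M})\in\{2,3\}$; but then $(\Tr M)^2=4\Det M+\disc(\qOfL{Q}{M})$ lies in $\{6,7\}$ or $\{-2,-1\}$ according to $\sgn(\Det M)$, none a square. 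The $-$ equivalence then follows by the same bookkeeping: when $\disc(\qOfL{Q}{M})\le 0$ it lies in $\mcl{F}_-$ and $(\Tr M)^2-4\Det M\le 4$ automatically (and, conversely, $\Det M=1$ with $|\Tr M|\le 1$ plus the divisibility $n_M^2\mid(\Tr M)^2-4$ pins $\disc(\qOfL{Q}{M})$ down to $\{-4,-3,0\}$, the value $-1$ being excluded because a form discriminant is $\equiv 0,1\Mod 4$); when $\disc(\qOfL{Q}{M})>0$, combining $n_M^2\,\disc(\qOfL{Q}{M})\le 4$ with the exclusion of $\disc(\qOfL{Q}{M})\in\{2,3\}$ for $n_M=1$ leaves only $\disc(\qOfL{Q}{M})\in\{1,4\}\subseteq\{-4,-3,0,1,4\}$, and conversely one checks that $(\Tr M)^2-(n_M\sqrt{\disc(\qOfL{Q}{M})})^2=\pm 4$ forces $\Tr M=0$ and $n_M^2\,\disc(\qOfL{Q}{M})=4\le 4$.

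The routine part is the algebra behind the displayed identity. The genuinely delicate point — and the thing to get right — is the bookkeeping of the small discriminants $1,2,3,4$ and the negative value $-4$: one must exploit the constraint forced by $\Det M=\pm 1$, namely that $x^2-m^2=\pm 4$ has no solution with $m\ge 1$ other than $(x,m)=(0,2)$, to conclude that a form of the shape $\qOfL{Q}{M}$ can never have discriminant $2$, $3$, a perfect square exceeding $4$, or any negative value besides $-3$ and $-4$. Once this is in place, both equivalences read off immediately from the sign and size of $n_M^2\,\disc(\qOfL{Q}{M})=(\Tr M)^2-4\Det M$.
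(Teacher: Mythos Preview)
Your proof is correct and follows the same approach as the paper: establish the identity $(\Tr M)^2 - 4\Det M = n_M^2\,\disc(\qOfL{Q}{M})$ and then carry out the Diophantine bookkeeping at small values. The only notable differences are presentational: the paper handles the converse for $\mcl{F}_+$ more directly by observing that a positive non-square discriminant is automatically $>4$ (so there is no need to rule out $\disc\in\{2,3\}$ separately), and for $\mcl{H}_-$ it enumerates the finitely many $(\Tr M,\Det M)$ pairs explicitly rather than splitting on the sign of $\disc(\qOfL{Q}{M})$.
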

\begin{proof}
Write $M = \smt{\ma & \mb \\ \mc & \md}$. Then
\eag{
\left(\Tr M\right)^2-4 \Det M &= (\ma+\md)^2 - 4\ma \md +4\mb \mc
=(\ma-\md)^2 + 4\mb \mc = n_M^2\Delta\big(\qOfL{Q}{M}\big).
\label{eq:trl4dleqn2disq}
}

Now suppose $M\in \mcl{H}_{+}$.  Then  \eqref{eq:trl4dleqn2disq} means $n_M^2\Delta(\qOfL{Q}{M})>4$.  It follows that $\qOfL{Q}{M}$ is indefinite.  It also follows that $\qOfL{Q}{M}$ is irreducible.  Indeed, suppose that were not the case. Then $\Delta(\qOfL{Q}{M}) = m^2$ for some $m\in \mbb{N}$, implying
\eag{
\bigl| \abs{\Tr M} - m n_M\bigr|\big(\abs{\Tr M} + m n_M\big) &= 4
}
Since $\abs{\Tr M} - m n_M \equiv \abs{\Tr M} +m n_M \Mod{2}$, this is only possible if 
\eag{
\bigl| \abs{\Tr M} - m n_M \bigr| = \abs{\Tr M} + m n_M = 2.
}
Since $m n_M >0$, this in turn implies $\Tr M = 0$, which is inconsistent with the assumption $M\in \mcl{H}_{+}$.  Conversely, if $\qOfL{Q}{M}\in \mcl{F}_{+}$, then $\Delta(\qOfL{Q}{M}) >4$, implying $\left(\Tr M\right)^2-4 \Det M>4$. 

Suppose, on the other hand, that $M\in \mcl{H}_{-}$.  Then it follows from \eqref{eq:trl4dleqn2disq} that  
\eag{
n_M^2\Delta(\qOfL{Q}{M}) &=
\begin{cases}
-4 \quad & \text{if } \Tr M = 0,\, \Det M = +1,
\\
-3 \quad & \text{if } \Tr M =\pm 1,\, \Det M = +1, 
\\
0 \quad & \text{if } \Tr M = \pm 2,\, \Det M = +1,
\\
4 \quad & \text{if } \Tr M = 0,\, \Det M = -1.
\end{cases}
}
Since a discriminant must be $0$ or $1$ modulo $4$, it follows that
 $\Delta(\qOfL{Q}{M}) = -4, -3,  0, 1, $ or $4$.  Conversely, if $\qOfL{Q}{M} \in \mcl{F}_{-}$, then it follows from the first part of the proof that $M$ cannot belong $\mcl{H}_{+}$, and so $M$ must belong to $\mcl{H}_{-}$.
\end{proof}
\begin{thm}\label{tm:qfpmst}
Let $Q$ be any form. 
\begin{enumerate}
    \item If $Q\notin \mcl{F}_{+}\cup\mcl{F}_{-}$, then 
    \eag{
\mcl{S}(Q) &= \{\pm I\}.
    }
\item If $Q\in \mcl{F}_{+}$, then
\eag{
\mcl{S}(Q) &= \canrep_Q\left(\mcl{U}_Q\right) \cong \left(\mbb{Z}/2\mbb{Z}\right)\times \mbb{Z}.
}
\item If $Q\in \mcl{F}_{-}$, then 
\eag{
\mcl{S}(Q)
 &= 
\begin{cases}
\la SQ \ra \cong \mbb{Z}/4\mbb{Z} \qquad & \text{if $\Delta(Q) =-4$},
\\
\left< \frac{1}{2}I + SQ \right> \cong\mbb{Z}/6\mbb{Z} \qquad & \text{if $\Delta(Q) = -3$},
\\
\la -I, I+SQ\ra \cong
\left(\mbb{Z}/2\mbb{Z}\right)\times \mbb{Z} \qquad & 
 \text{if $\Delta(Q) = 0$},
\\
\la -I, 2SQ \ra \cong \left(\mbb{Z}/2\mbb{Z}\right)\times \left(\mbb{Z}/2\mbb{Z}\right)  \qquad & \text{if $\Delta(Q) =1$},
\\
\la -I, SQ \ra \cong \left(\mbb{Z}/2\mbb{Z}\right)\times \left(\mbb{Z}/2\mbb{Z}\right)  \qquad & \text{if $\Delta(Q) =4$}.
\end{cases}
}
\end{enumerate}
\end{thm}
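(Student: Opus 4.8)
The plan is to dispatch the three cases separately, each time leaning on the structural results already established. For the case $Q \notin \mcl{F}_+ \cup \mcl{F}_-$, I would argue by contradiction: suppose $M \in \mcl{S}(Q)$ with $M \ne \pm I$. Since $-I \in \mcl{S}(Q)$, also $-M \in \mcl{S}(Q)$, so after replacing $M$ by $-M$ if necessary we may assume $\sgn(M) = \sgn(Q)$; the uniqueness clause of \Cref{tm:ltnqdecomp} then gives $\qOfL{Q}{M} = Q$. Because the sets $\mcl{H}_+$ and $\mcl{H}_-$ together exhaust $\GLtwo{\mbb{Z}}$ (an element with $(\Tr M)^2 - 4\Det M > 4$ lies in $\mcl{H}_+$, otherwise in $\mcl{H}_-$, and $\pm I \in \mcl{H}_-$), we have $M \in \mcl{H}_+ \cup \mcl{H}_-$, so \Cref{tm:hpmfpmrel} forces $Q = \qOfL{Q}{M} \in \mcl{F}_+ \cup \mcl{F}_-$, contradicting the hypothesis. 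Hence $\mcl{S}(Q) = \{\pm I\}$.

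For $Q \in \mcl{F}_+$, the form is indefinite and irreducible, hence has positive non-square discriminant; writing $D$ for its square-free part, $Q$ is associated to $K = \mbb{Q}(\sqrt{D})$ with some conductor $f$, and $\mcl{U}_Q$ is the unit group $\mcl{U}_f = \OO_f^\times$. By \Cref{tm:sqchar}\eqref{it:sqcharb} we have $\mcl{S}(Q) = \mbb{Z}\la I, SQ\ra \cap \GLtwo{\mbb{Z}}$, and \Cref{tm:canisointun}\eqref{it:canisointunb} identifies this group with $\canrep_Q(\mcl{U}_f)$. Finally $\mcl{U}_f \cong (\mbb{Z}/2\mbb{Z}) \times \mbb{Z}$ by the unit theorem for orders (equivalently, by \Cref{cr:UfGroupGenerator}, $\mcl{U}_f = \{\pm \un_f^n : n \in \mbb{Z}\}$), which gives the stated isomorphism.

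For $Q \in \mcl{F}_-$ the argument is a finite computation. By \Cref{tm:sqchar}\eqref{it:sqchard}, every $M \in \mcl{S}(Q)$ is uniquely $M = \tfrac{t}{2}I + nSQ$ with $t,n \in \mbb{Z}$; since $SQ$ has trace zero and $(SQ)^2 = \tfrac{\Delta(Q)}{4}I$ by \Cref{lm:mttchlm}, one computes $\Det(M) = \tfrac{t^2 - n^2\Delta(Q)}{4}$, so $M \in \GLtwo{\mbb{Z}}$ exactly when $t^2 - n^2\Delta(Q) = \pm 4$. Solving this for each $\Delta(Q) \in \{-4,-3,0,1,4\}$: $\Delta = -4$ forces the right-hand side to be $+4$ and yields $(t,n) \in \{(\pm 2,0),(0,\pm 1)\}$, so $\mcl{S}(Q) = \la SQ\ra$ with $(SQ)^2 = -I$ (order $4$); $\Delta = -3$ yields $(t,n) \in \{(\pm 2,0),(\pm 1,\pm 1)\}$, so $\mcl{S}(Q) = \la \tfrac12 I + SQ\ra$, and $(\tfrac12 I + SQ)^3 = -I$ shows this has order $6$; $\Delta = 0$ yields $t = \pm 2$ with $n$ arbitrary, so $\mcl{S}(Q) = \la -I, I + SQ\ra$ with $I + SQ$ of infinite order (using $(SQ)^2 = 0$); $\Delta = 1$ forces $t$ even and yields $(t,n) \in \{(\pm 2, 0),(0,\pm 2)\}$, so $\mcl{S}(Q) = \la -I, 2SQ\ra$ with $(2SQ)^2 = \Delta(Q)I = I$; and $\Delta = 4$ likewise forces $t$ even and yields $(t,n) \in \{(\pm 2,0),(0,\pm 1)\}$, so $\mcl{S}(Q) = \la -I, SQ\ra$ with $(SQ)^2 = I$. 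In each of the last three cases one then reads off $(\mbb{Z}/2\mbb{Z}) \times \mbb{Z}$ or $(\mbb{Z}/2\mbb{Z}) \times (\mbb{Z}/2\mbb{Z})$ from the orders of the two commuting generators.

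There is no deep obstacle here; the content is bookkeeping built on \Cref{tm:sqchar}, \Cref{tm:ltnqdecomp}, \Cref{tm:hpmfpmrel}, and \Cref{tm:canisointun}. The step demanding the most care is the $\mcl{F}_-$ case: one must check that the listed pairs $(t,n)$ are the \emph{complete} integer solution set of $t^2 - n^2\Delta(Q) = \pm 4$ — in particular ruling out the $-4$ option when $\Delta(Q) \le 0$, and invoking the parity constraint that $t$ must be even when $\Delta(Q) \in \{1,4\}$ — and then correctly extract the abelian-group isomorphism type from the generators and their orders.
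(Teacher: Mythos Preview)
Your proposal is correct and follows essentially the same route as the paper: Case~1 via \Cref{tm:ltnqdecomp} and \Cref{tm:hpmfpmrel}, Case~2 via \Cref{tm:sqchar} and \Cref{tm:canisointun}, and Case~3 via the Pell-type equation $t^2 - n^2\Delta(Q) = \pm 4$ coming from $\Det M = \pm 1$. Your handling of Case~1 is in fact slightly more careful than the paper's, which asserts $Q = \qOfL{Q}{M}$ directly without first adjusting the sign of $M$ to match $\sgn(Q)$.
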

\begin{proof}
Suppose $Q\notin \mcl{F}_{+}\cup \mcl{F}_{-}$, and let $M$ be any element of $\mcl{S}(Q)$.  To show that $M=\pm I$, assume the contrary.  It would then follow from Theorem~\ref{tm:ltnqdecomp} that $Q=\qOfL{Q}{M}$,  which is a contradiction since we know from Theorem~\ref{tm:hpmfpmrel} that $\qOfL{Q}{M} \in \mcl{F}_{+}\cup \mcl{F}_{-}$.

 Suppose  $Q\in \mcl{F}_{+}$.  Then it follows from Theorems~\ref{tm:canisointun} and~\ref{tm:sqchar} that 
 \eag{
\mcl{S}(Q) &= \Z\la I, SQ\ra \cap \GLtwo{\mbb{Z}} = \canrep_Q\left(\mcl{U}_Q\right)\cong \left(\mbb{Z}/2\mbb{Z}\right)\times \mbb{Z}.
 }

Suppose  $Q\in \mcl{F}_{-}$, and let  $\pm M \in \mcl{S}(Q)$. Write $Q=\la a, b, c\ra$.  It follows from Theorem~\ref{tm:sqchar} that there exist unique integers $t,n$ such that
\eag{
M &= \frac{t}{2}I + n SQ = \bmt \frac{t-nb}{2} & -nc \\ na 
&\frac{t+nb}{2} \emt,
}
implying
\eag{
t^2 -n^2\Delta(Q) &= 4 \Det M.
\label{eq:tndel4dtlcond}
}

If $\Delta(Q)=0$, then it follows from \eqref{eq:tndel4dtlcond} that $\Det M = +1$ and $t=\pm 2$. It follows from Lemma~\ref{lm:mttchlm} that $(SQ)^2=0$.
Hence
\eag{
\left(I + SQ\right)^n &= I + n SQ
}
for all $n\in \mbb{Z}$.  Multiplying by $-I$ gives the elements with negative trace.  We conclude that $I+SQ$ is infinite order and $\mcl{S}(Q) = \la -I, I+SQ\ra$.

If $\Delta(Q) =-3$, then it follows from \eqref{eq:tndel4dtlcond} that $\Det M = +1$ and that either $M = \pm I$ or $\abs{t}=\abs{n}=1$, implying $\mcl{S}(Q)$ is order 6.  It follows from Lemma~\ref{lm:mttchlm} that $(SQ)^2 =-3/4 I$.
Putting these facts together, we deduce that $(1/2)I + SQ$ is order $6$ and consequently that $\mcl{S}(Q) = \la (1/2)I + SQ \ra$.

If $\Delta(Q) =-4$, then it follows from \eqref{eq:tndel4dtlcond} that $\Det M = +1$ and that either $M=\pm I$ or $t=0, n=\pm 1$, implying $\mcl{S}(Q)$ is order 4. It follows from Lemma~\ref{lm:mttchlm} that $(SQ)^2 =- I$.  Putting these facts together, we deduce that $SQ$ is order $4$ and consequently that $\mcl{S}(Q) = \la SQ\ra$.

If $\Delta(Q) =1$ (respectively $\Delta(Q) = 4$), then it follows from \eqref{eq:tndel4dtlcond} that either $M=\pm I$ or $\Det M = -1$, $t=0$ and  $n=\pm 2$ (respectively $n=\pm 1$), implying $\mcl{S}(Q)$ is order $4$.  It follows from Lemma~\ref{lm:mttchlm} that $(SQ)^2 =(1/4) I$ (respectively $(SQ)^2 = I$).  Putting these facts together, we deduce that $2SQ$ (respectively $SQ$) is order $2$ and consequently that $\mcl{S}(Q) = \la -I, 2SQ\ra$ (respectively $\mcl{S}(Q) = \la -I, SQ\ra$).
\end{proof}

\begin{thm}
Let $\mcl{G}$ be any subgroup of $\GLtwo{\mbb{Z}}$.  Then $\mcl{G}$ is maximal abelian if and only if $\mcl{G}=\mcl{S}(Q)$ for some  $Q\in \mcl{F}_{+}\cup \mcl{F}_{-}$. 
\end{thm}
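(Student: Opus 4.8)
The plan is to reduce both implications to the dictionary between abelian subgroups of $\GLtwo{\mbb{Z}}$ and binary quadratic forms developed in this subsection. The crucial inputs will be: (i) \Cref{lm:llpcommqcond}, which turns commutation of two non-central matrices into equality of their associated forms $\qOfL{Q}{M}$; (ii) \Cref{tm:ltnqdecomp}, which places every non-central $M$ inside $\mcl{S}(\qOfL{Q}{M})$; (iii) the corollary to \Cref{tm:ltnqdecomp}, stating $\mcl{S}(Q_1)\cap\mcl{S}(Q_2)=\{\pm I\}$ for distinct forms; (iv) \Cref{tm:hpmfpmrel} together with the partition $\GLtwo{\mbb{Z}}=\mcl{H}_+\cup\mcl{H}_-$, which guarantees $\qOfL{Q}{M}\in\mcl{F}_+\cup\mcl{F}_-$ for every non-central $M$; and (v) the classification in \Cref{tm:qfpmst}, from which I will read off that $\mcl{S}(Q)\supsetneq\{\pm I\}$ whenever $Q\in\mcl{F}_+\cup\mcl{F}_-$. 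I will also use that $\mcl{S}(Q)$ is always abelian (\Cref{tm:sqchar}).

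For the ``$\Leftarrow$'' direction I would argue as follows. Given $Q\in\mcl{F}_+\cup\mcl{F}_-$, the group $\mcl{S}(Q)$ is abelian and, by \Cref{tm:qfpmst}, properly contains $\{\pm I\}$, so I can pick $M_0\in\mcl{S}(Q)\setminus\{\pm I\}$. Since $M_0$ lies in both $\mcl{S}(Q)$ and $\mcl{S}(\qOfL{Q}{M_0})$ and is non-central, the corollary to \Cref{tm:ltnqdecomp} forces $Q=\qOfL{Q}{M_0}$. Then, for any abelian subgroup $\mcl{G}'$ of $\GLtwo{\mbb{Z}}$ with $\mcl{S}(Q)\subseteq\mcl{G}'$ and any $M'\in\mcl{G}'$: either $M'=\pm I\in\mcl{S}(Q)$, or $M'$ commutes with $M_0$, so $\qOfL{Q}{M'}=\qOfL{Q}{M_0}=Q$ by \Cref{lm:llpcommqcond}, whence $M'\in\mcl{S}(\qOfL{Q}{M'})=\mcl{S}(Q)$ by \Cref{tm:ltnqdecomp}. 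Thus $\mcl{G}'=\mcl{S}(Q)$, proving that $\mcl{S}(Q)$ is maximal abelian.

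For the ``$\Rightarrow$'' direction, let $\mcl{G}$ be maximal abelian. First I note $\mcl{G}\neq\{\pm I\}$: since $\{\pm I\}$ is properly contained in the abelian group $\mcl{S}(Q)$ for any $Q\in\mcl{F}_+$, it is not maximal abelian. Hence there is some $M_0\in\mcl{G}\setminus\{\pm I\}$; set $Q=\qOfL{Q}{M_0}$, which lies in $\mcl{F}_+\cup\mcl{F}_-$ by \Cref{tm:hpmfpmrel}, because $M_0\in\GLtwo{\mbb{Z}}\setminus\{\pm I\}\subseteq\mcl{H}_+\cup\mcl{H}_-$. For any $M\in\mcl{G}$: either $M=\pm I\in\mcl{S}(Q)$, or $M$ commutes with $M_0$ (as $\mcl{G}$ is abelian) and therefore $\qOfL{Q}{M}=\qOfL{Q}{M_0}=Q$ by \Cref{lm:llpcommqcond}, giving $M\in\mcl{S}(\qOfL{Q}{M})=\mcl{S}(Q)$. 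So $\mcl{G}\subseteq\mcl{S}(Q)$, and since $\mcl{S}(Q)$ is abelian while $\mcl{G}$ is maximal abelian, $\mcl{G}=\mcl{S}(Q)$.

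The argument is almost entirely formal once the preceding results are in hand, so I do not expect a serious obstacle; the only points requiring any care are the two appeals to \Cref{tm:qfpmst} — that $\mcl{S}(Q)\setminus\{\pm I\}$ is nonempty for $Q\in\mcl{F}_+\cup\mcl{F}_-$, and that $\{\pm I\}$ itself is not maximal abelian — both of which are immediate from the explicit group structures tabulated there. (Alternatively one could exhibit a non-central element of $\mcl{S}(Q)$ directly from the parametrization $M=\tfrac{t}{2}I+nSQ$ of \Cref{tm:sqchar} by solving $t^2-n^2\Delta(Q)=4\Det(M)$ in integers, but invoking \Cref{tm:qfpmst} is cleaner.)
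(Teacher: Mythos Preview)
Your proof is correct and follows essentially the same route as the paper's: both directions hinge on picking a non-central element $M_0$, identifying $Q=\qOfL{Q}{M_0}\in\mcl{F}_+\cup\mcl{F}_-$ via \Cref{tm:hpmfpmrel}, and then using \Cref{lm:llpcommqcond} to show that any element commuting with $M_0$ must lie in $\mcl{S}(Q)$. Your write-up is slightly more explicit in justifying $Q=\qOfL{Q}{M_0}$ via the corollary to \Cref{tm:ltnqdecomp}, a step the paper leaves implicit, but the argument is otherwise identical.
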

\begin{proof}
Let $\mcl{G}$ be a maximal abelian subgroup of $\GLtwo{\mbb{Z}}$.  The fact that $\{\pm I\}$ is the centre of $\GLtwo{\mbb{Z}}$ means that $\{\pm I\}$ is properly contained in  $\mcl{G}$.  Choose $M \in \mcl{G}\setminus\{\pm I\}$, and set $Q=\qOfL{Q}{M}$.  It follows from Theorem~\ref{tm:hpmfpmrel} that $Q\in \mcl{F}_{+}\cup \mcl{F}_{-}$. If $M'$ is any other element of $\mcl{G}\setminus\{\pm I\}$, then it follows from Lemma~\ref{lm:llpcommqcond} that $\qsb{M'}=Q$.  So $\mcl{G}\subseteq \mcl{S}(Q)$. Since $\mcl{S}(Q)$ is abelian, it follows that $\mcl{G}= \mcl{S}(Q)$.

Conversely, let $Q\in \mcl{F}_{+}\cup\mcl{F}_{-}$. Then $\mcl{S}(Q)$ is an abelian group.  It follows from Theorem~\ref{tm:qfpmst} that  we can choose $M \in \mcl{S}(Q)$ such that $M\neq \pm I$.   It then follows from Lemma~\ref{lm:llpcommqcond} that if $M' \notin \mcl{S}(Q)$, then $M'$ does not commute with $M$.  So $\mcl{S}(Q)$ is maximal abelian.
\end{proof}

\begin{thm}
Let $M\in \mcl{H}_{+}$ and let 
\eag{
w_M &= \frac{\Tr(M) + \sqrt{\left(\Tr M\right)^2 - 4\Det M}}{2}.
}
Then $w_M$ is a unit in $\mcl{U}_{\qOfL{Q}{M}}$ and 
\eag{
M &= \canrep_{\qOfL{Q}{M}}(w_M).
}
\end{thm}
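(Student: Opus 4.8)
The plan is to make $w_M$ explicit using the decomposition $M = \tfrac{t_M}{2}I + n_M S\qOfL{Q}{M}$ from \Cref{tm:ltnqdecomp}, to recognize the right-hand side as $\canrep_{\qOfL{Q}{M}}$ applied to $w_M$, and then to deduce unit membership from the isomorphism of \Cref{tm:canisointun}.

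First I would invoke \Cref{tm:hpmfpmrel}: since $M \in \mcl{H}_{+}$, the form $\qOfL{Q}{M}$ lies in $\mcl{F}_{+}$, hence is an indefinite irreducible integral form, associated to a real quadratic field $K$. Let $f$ denote its conductor and $\Delta_0$ the discriminant of $K$, so that $\Delta(\qOfL{Q}{M}) = f^2\Delta_0 > 0$ by \eqref{eq:ConductorOfQ}. The identity \eqref{eq:trl4dleqn2disq} gives $(\Tr M)^2 - 4\Det M = n_M^2\,\Delta(\qOfL{Q}{M}) = (n_M f)^2\Delta_0$, and because $n_M>0$ (it is a gcd) and $f>0$, the positive square root is $\sqrt{(\Tr M)^2 - 4\Det M} = n_M f\sqrt{\Delta_0}$. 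Therefore
\[
w_M \;=\; \frac{\Tr M}{2} + \frac{n_M f}{2}\sqrt{\Delta_0} \;\in\; K .
\]

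Next I would apply the canonical representation attached to $\qOfL{Q}{M}$. By \Cref{df:etaq},
\[
\canrep_{\qOfL{Q}{M}}(w_M) \;=\; \frac{\Tr M}{2}\,I + \frac{2}{f}\cdot\frac{n_M f}{2}\, S\qOfL{Q}{M} \;=\; \frac{t_M}{2}\,I + n_M\, S\qOfL{Q}{M} \;=\; M,
\]
where $t_M = \Tr M$ and the last equality is precisely the decomposition stated in \Cref{tm:ltnqdecomp}. This establishes $M = \canrep_{\qOfL{Q}{M}}(w_M)$. To conclude $w_M \in \mcl{U}_{\qOfL{Q}{M}}$, note that $M \in \mcl{S}(\qOfL{Q}{M})$ by \Cref{tm:ltnqdecomp}, and $\mcl{S}(\qOfL{Q}{M}) = \Z\la I, S\qOfL{Q}{M}\ra \cap \GLtwo{\mbb{Z}}$ by \Cref{tm:sqchar}; meanwhile \Cref{tm:canisointun} says $\canrep_{\qOfL{Q}{M}}$ restricts to a group isomorphism of $\mcl{U}_f = \mcl{U}_{\qOfL{Q}{M}}$ onto $\Z\la I, S\qOfL{Q}{M}\ra \cap \GLtwo{\mbb{Z}}$. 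Since $\canrep_{\qOfL{Q}{M}}$ is injective and sends $w_M$ to $M$, which lies in that group, $w_M$ is the unique preimage of $M$ under the restricted isomorphism, hence $w_M \in \mcl{U}_{\qOfL{Q}{M}}$. As a consistency check, the fact that $\canrep_{\qOfL{Q}{M}}$ turns norms into determinants gives $\Nm(w_M) = \Det M = \pm 1$.

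There is no serious obstacle here; the only point requiring attention is the normalization of the square root—verifying $\Delta(\qOfL{Q}{M}) > 0$ so the root is real, and fixing the branch via $n_M>0$, $f>0$—so that the scalar coefficients $\tfrac{\Tr M}{2}$ and $n_M$ produced by \Cref{tm:ltnqdecomp} match exactly the output of $\canrep_{\qOfL{Q}{M}}$ on $\tfrac{\Tr M}{2} + \tfrac{n_M f}{2}\sqrt{\Delta_0}$.
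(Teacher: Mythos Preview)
Your proof is correct and follows essentially the same route as the paper: both use the decomposition $M = \tfrac{t_M}{2}I + n_M S\qOfL{Q}{M}$ from \Cref{tm:ltnqdecomp}, compute $(\Tr M)^2 - 4\Det M = (n_M f)^2\Delta_0$, and verify $\canrep_{\qOfL{Q}{M}}(w_M) = M$ directly from \Cref{df:etaq}. The only stylistic difference is in the unit step: the paper writes $w_M$ explicitly in the form $m_1 + m_2 f\bigl(\tfrac{\Delta_0+\sqrt{\Delta_0}}{2}\bigr)$ to check $w_M \in \mcl{O}_f$ and then invokes $\Nm(w_M) = \Det M$, whereas you pull $w_M$ back through the isomorphism of \Cref{tm:canisointun}---a slightly cleaner way to reach the same conclusion.
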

\begin{proof}
Write $M = \smt{\ma & \mb \\ \mc & \md}$.  Then we have from Theorem~\ref{tm:ltnqdecomp} the formula
\eag{
\qOfL{Q}{M} &= \frac{1}{n_M}\bmt \mc & \frac{\md-\ma}{2} \\ \frac{\md - \ma}{2} & -\mb\emt,
}
implying 
\eag{
\Delta(\qOfL{Q}{M}) & = \frac{\left(\Tr M\right)^2 - 4 \Det M}{n_M^2}.
}
Let $\Delta_0$ and $f$ be the fundamental discriminant and conductor of $Q$.  Then it follows that
\eag{
w_M &= \frac{\Tr M - fn_M\Delta_0}{2}+ fn_M \left(\frac{\Delta_0+ \sqrt{\Delta_0}}{2}\right).
}
The fact that $f^2n^2_M \Delta_0 = \left(\Tr M\right)^2 -4 \Det M$ means $\left(\Tr M-fn_M \Delta_0\right)/2 \in \mbb{Z}$.  Since $\Nm(w_M) = \Det M$, it follows that $w_M$ is a unit in $\mcl{U}_{\qOfL{Q}{M}}$. The fact that $\canrep_{\qOfL{Q}{M}}(w_M) = M$ is immediate.
\end{proof}

\subsection{Additional results}
\label{ssec:additionalresults}
We now revert to the convention explained in \Cref{ssec:quadraticfieldsforms}, according to which a form is always understood to be irreducible and indefinite unless the contrary is explicitly stated.

\begin{lem}
    Let $Q$, $Q'$ be any pair of forms.  Then
    \eag{
        \qrt_{Q,\pm} 
        = \qrt_{Q',\pm} 
        \,\iff\, 
        Q 
        = Q'
    }
\end{lem}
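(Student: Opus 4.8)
The statement asserts that the unordered pair of roots $\{\qrt_{Q,+},\qrt_{Q,-}\}$, or more precisely the specific root $\qrt_{Q,\pm}$ as defined in \eqref{dfn:qrtdf}, determines the form $Q=\la a,b,c\ra$ uniquely. Recall $\qrt_{Q,\pm} = \frac{-b\pm\sqrt{\Delta}}{2a}$ with $\Delta = b^2-4ac$. The forward direction ($Q=Q' \implies \qrt_{Q,\pm}=\qrt_{Q',\pm}$) is trivial, so the content is the reverse implication. I would argue as follows.

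First I would recover the discriminant $\Delta$ from $\qrt := \qrt_{Q,+}$. Observe that $\qrt$ and $\qrt_{Q,-}$ are the two roots of $Q(x,1) = ax^2+bx+c$, so $\qrt + \qrt_{Q,-} = -b/a$ and $\qrt \cdot \qrt_{Q,-} = c/a$, while $\qrt - \qrt_{Q,-} = \sqrt{\Delta}/a > 0$ (using that $\qrt_{Q,+} > \qrt_{Q,-}$ when $a>0$ and the analogous statement when $a<0$; the sign convention built into $\sgn(Q)$ needs to be tracked here). The key point is that $\qrt \notin \Q$ (since $Q$ is irreducible) and $\Q(\qrt) = \Q(\sqrt{\Delta})$, so $\qrt$ determines the real quadratic field $K = \Q(\sqrt{\Delta_0})$ where $\Delta_0$ is the fundamental discriminant. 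Then $\qrt$ lies in a unique order $\OO_f$ of $K$ — namely $f$ is determined as the conductor of the $\Z$-module $\qrt\Z + \Z$, equivalently $\OO_f = \colonideal{\qrt\Z+\Z}{\qrt\Z+\Z}$ as in \Cref{thm:field2} — and this pins down $\Delta = f^2\Delta_0$.

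Next, with $\Delta$ in hand, I would recover $a$ (up to its sign, which is then fixed by the $\pm$ convention and $\sgn(Q)$). Writing $\qrt = \frac{-b+\sqrt{\Delta}}{2a}$, the denominator $2a$ is constrained by primitivity: among all rational numbers $a$ for which $2a\qrt + (\text{something}) $ has integer coefficients, the primitive one is unique up to sign. Concretely, $\qrt = \frac{-b}{2a} + \frac{\sqrt{\Delta}}{2a}$, so $a$ is (up to sign) the unique positive rational such that $\frac{1}{2a}\sqrt{\Delta}$ generates the same $\Z$-module structure forcing $a, b, c \in \Z$ with $\gcd(a,b,c)=1$; equivalently, $b = -a(\qrt + \qrt') $ and $c = a\qrt\qrt'$ where $\qrt' = \frac{-b-\sqrt{\Delta}}{2a}$ is the Galois conjugate, and one chooses the smallest such $|a|$ making all three an integer triple. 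The sign of $a$ is then forced by $\sgn(Q)=\sgn(a)$ together with the requirement that $\qrt = \qrt_{Q,+}$ rather than $\qrt_{Q,-}$: flipping the sign of $(a,b,c)$ swaps $\qrt_{Q,+} \leftrightarrow \qrt_{Q,-}$, so exactly one sign choice is consistent with the given root being the ``$+$'' root. Hence $(a,b,c)$ is determined, i.e.\ $Q = Q'$.

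\textbf{Main obstacle.} The only subtle point is the bookkeeping around signs and which of $\qrt_{Q,\pm}$ is meant: one must be careful that the assignment $Q \mapsto \qrt_{Q,+}$ really is injective and not just $Q \mapsto \{\qrt_{Q,+},\qrt_{Q,-}\}$. Two forms $Q$ and $-Q := \la -a,-b,-c\ra$ have the same root \emph{set} but (as primitive forms) are distinct; what saves injectivity of $Q \mapsto \qrt_{Q,+}$ is precisely that $\qrt_{Q,+} = \frac{-b+\sqrt\Delta}{2a}$ depends on the sign of $a$, so $\qrt_{-Q,+} = \frac{b+\sqrt\Delta}{-2a} = \frac{-b-\sqrt\Delta}{2a} = \qrt_{Q,-} \neq \qrt_{Q,+}$. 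I would make this the crux of the written proof: reconstruct $\Delta$ and the $\Z$-module $\qrt\Z+\Z$ from $\qrt$, extract $(|a|,|b|,|c|)$ up to simultaneous sign from primitivity, and then observe that the sign is uniquely pinned by the location of $\qrt$ relative to its conjugate together with the $\pm$ label.
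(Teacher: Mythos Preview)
Your proposal is correct but takes a significantly more circuitous route than the paper. The paper proceeds by elementary Vieta-style identities: from $\qrt_{Q,\pm} = \qrt_{Q',\pm}$ (both roots agreeing, which as you observe is equivalent to one root agreeing since the other is its Galois conjugate), one immediately reads off $b/a = b'/a'$ from the sum of roots, $c/a = c'/a'$ from the product, and $\sgn(a) = \sgn(a')$ from the difference $\qrt_{Q,+} - \qrt_{Q,-} = \sqrt{\Delta}/a$ (the square roots being positive by convention). This gives $nQ = n'Q'$ for positive integers $n,n'$, and primitivity forces $n=n'=1$. Your detour through the order $\colonideal{\qrt\Z+\Z}{\qrt\Z+\Z}$ to first pin down the conductor $f$ and hence $\Delta$ is valid and rests on a standard fact the paper uses elsewhere, but it is unnecessary here: you yourself note that $\qrt' = \qrt_{Q,-}$ is determined by $\qrt$, and once both roots are in hand the paper's three-line argument applies. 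Likewise, your closing discussion of the sign ambiguity between $Q$ and $-Q$ is more involved than needed --- in the paper's approach the sign of $a$ falls out for free from the positivity of $\sqrt{\Delta}$ in the root-difference identity, with no separate case analysis.
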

\begin{proof}
    Sufficiency is immediate.  To prove necessity
    write $Q=\la a, b, c\ra$, $Q'=\la a', b', c'\ra$.  Then 
    $\qrt_{Q,\pm} = \qrt_{Q',\pm}$ implies
    \eag{
    \frac{b}{a}&= -(\qrt_{Q,+}+\qrt_{Q,-}) = -(\qrt_{Q',+}+\qrt_{Q',-}) = \frac{b'}{a'},
    }
    \eag{
    \frac{c}{a}&= \qrt_{Q,+}\qrt_{Q,-} = \qrt_{Q',+}\qrt_{Q',-} = \frac{c'}{a'}.
    }
    Also
    \eag{
    \frac{\sqrt{b^2-4ac}}{a} &=\qrt_{Q,+}-\qrt_{Q,-} =\qrt_{Q',+}-\qrt_{Q',-}
    = \frac{\sqrt{b^{\prime 2}-4a'c'}}{a'},
    }
    implying $\sgn(a) = \sgn(a')$. It follows that
    \eag{
        n Q &= n' Q'
    }
    for some pair of positive integers $n$, $n'$.  Since $Q$, $Q'$ are primitive, we must in fact have $n=n'=1$ and $Q=Q'$.
\end{proof}
\begin{lem}\label{lm:fxdpt}
Let $Q$ be an arbitrary form and let $M\in \mcl{S}(Q)$ be such that $M\neq \pm I$. Then for all $\tau\in \mbb{C}$
\eag{
M\cdot \tau 
=\tau 
\,\iff\, 
\tau
=\qrt_{Q,\pm}.
}
\end{lem}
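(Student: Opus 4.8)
The plan is to reduce the fixed-point equation of $M$ to the defining quadratic of $Q$, exploiting the explicit description of the stability group from \Cref{tm:sqchar}. Since $M \in \mcl{S}(Q)$, part~\eqref{it:sqchard} of \Cref{tm:sqchar} provides unique integers $t,n$ with $M = \tfrac{t}{2}I + n\,SQ$, and the hypothesis $M \neq \pm I$ forces $n \neq 0$. Writing $Q = \la a,b,c\ra$, so that $SQ = \smt{-b/2 & -c \\ a & b/2}$, this gives
\eag{
M &= \bmt \frac{t-nb}{2} & -nc \\ na & \frac{t+nb}{2}\emt =: \bmt \ma & \mb \\ \mc & \md\emt.
}

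Next I would write out the fixed-point condition. For $\tau \in \mbb{C}$, the equation $M\cdot\tau = \tau$, with $M\cdot\tau = (\ma\tau+\mb)/(\mc\tau+\md)$ as in \eqref{eq:fractionalineartransform}, is equivalent — after multiplying through by $j_M(\tau) = \mc\tau+\md$ — to $\mc\tau^2 + (\md-\ma)\tau - \mb = 0$. Substituting the entries above, namely $\mc = na$, $\md-\ma = nb$, and $-\mb = nc$, this becomes $n(a\tau^2 + b\tau + c) = 0$, and since $n \neq 0$ it reads exactly $Q(\tau,1) = a\tau^2+b\tau+c = 0$, whose roots are $\qrt_{Q,\pm} = \tfrac{-b\pm\sqrt{\Delta}}{2a}$ (see \eqref{dfn:qrtdf}). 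This yields both implications simultaneously, modulo the caveat about clearing denominators addressed next.

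Finally I would check that multiplying by $j_M(\tau)$ introduces no spurious solutions and loses none. Under the convention of \Cref{ssec:additionalresults}, $Q$ is irreducible and indefinite, so $a \neq 0$ (else $Q(x,y) = y(bx+cy)$ is reducible) and $\Delta > 0$; in particular $\mc = na \neq 0$, so $M$ does not fix $\infty$, and the roots $\qrt_{Q,\pm}$ are real and irrational, whence $j_M(\qrt_{Q,\pm}) = na\,\qrt_{Q,\pm} + \md \neq 0$ (an $\mbb{Z}$-combination of an irrational with $1$ having nonzero leading coefficient), so $M\cdot\qrt_{Q,\pm}$ is a well-defined finite value and the polynomial identity genuinely says $M\cdot\qrt_{Q,\pm} = \qrt_{Q,\pm}$. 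I do not anticipate any real obstacle; the only point needing a word of care is this bookkeeping around the point at infinity and the well-definedness of $M\cdot\tau$, which the irreducibility hypothesis settles cleanly.
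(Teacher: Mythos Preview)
Your proof is correct and is essentially the argument the paper has in mind: the paper's own proof is simply the one-line remark ``Straightforward consequence of the definitions,'' and what you have written is precisely that straightforward unpacking---using \Cref{tm:sqchar}\eqref{it:sqchard} to write $M=\tfrac{t}{2}I+nSQ$ with $n\neq 0$, reducing the fixed-point equation to $nQ(\tau,1)=0$, and noting irreducibility forces $a\neq 0$ and the roots irrational. Nothing further is needed.
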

\begin{rmkb}
    It can be shown that, if $M$ and $Q$ have the same (resp. opposite) sign, then $\qrt_{Q,+}$ is the attractive (resp. repulsive) fixed point and $\qrt_{Q,-}$ is the repulsive (resp. attractive) fixed point of $M$.  We omit the proof, since the result is not actually needed for the purposes of this paper.
\end{rmkb}
\begin{proof}
Straightforward consequence of the definitions.
\end{proof}
\begin{lem}\label{lm:lactqrt}
For any form $Q$ and matrix $M\in \GLtwo{\mbb{Z}}$
\eag{\label{eq:lactqrt1}
M^{-1}\cdot \qrt_{Q,\pm} &= \qrt_{\lOnQ{M}{Q},\pm}.
}
In particular 
\eag{\label{eq:lactqrt2}
M\cdot \qrt_{Q,\pm} 
= \qrt_{Q,\pm} 
\,\iff\, 
M
\in \mcl{S}(Q).
}
\end{lem}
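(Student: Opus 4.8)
The plan is to prove \eqref{eq:lactqrt1} by a direct computation using the definition of the roots $\qrt_{Q,\pm} = \frac{-b \pm \sqrt{\Delta}}{2a}$ and the transformation law $\lOnQ{M}{Q} = \Det(M) M^{\rm T} Q M$, and then to deduce \eqref{eq:lactqrt2} as a corollary by combining \eqref{eq:lactqrt1} with the previous lemma (\Cref{lm:fxdpt}) characterizing fixed points.

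First I would establish \eqref{eq:lactqrt1}. The cleanest route is to observe that the roots of a form are exactly the points where the associated quadratic vanishes: $\qrt_{Q,\pm}$ are the two solutions $\tau$ of $Q(\tau,1) = a\tau^2 + b\tau + c = 0$, equivalently the two points $\tau$ such that $\smcoltwo{\tau}{1}^{\rm T} Q \smcoltwo{\tau}{1} = 0$ (up to the scaling inherent in passing between the polynomial and the Hessian matrix, which does not affect the vanishing locus). Now compute: $\tau$ is a root of $\lOnQ{M}{Q}$ iff $\smcoltwo{\tau}{1}^{\rm T} \lOnQ{M}{Q} \smcoltwo{\tau}{1} = 0$ iff $\Det(M)\smcoltwo{\tau}{1}^{\rm T} M^{\rm T} Q M \smcoltwo{\tau}{1} = 0$ iff $(M\smcoltwo{\tau}{1})^{\rm T} Q (M\smcoltwo{\tau}{1}) = 0$. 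Writing $M\smcoltwo{\tau}{1} = j_M(\tau) \smcoltwo{M\cdot\tau}{1}$ from \eqref{eq:fractionalineartransform}, this is $j_M(\tau)^2 \smcoltwo{M\cdot\tau}{1}^{\rm T} Q \smcoltwo{M\cdot\tau}{1} = 0$, which (since $j_M(\tau) \neq 0$ at a root, as $\tau \notin \Q$ while $-\md/\mc \in \Q$) holds iff $M\cdot\tau$ is a root of $Q$, i.e. $M\cdot\tau \in \{\qrt_{Q,+},\qrt_{Q,-}\}$. So $M$ maps $\{\qrt_{\lOnQ{M}{Q},+},\qrt_{\lOnQ{M}{Q},-}\}$ bijectively onto $\{\qrt_{Q,+},\qrt_{Q,-}\}$; equivalently $M^{-1}$ maps the roots of $Q$ to the roots of $\lOnQ{M}{Q}$. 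To pin down the signs — that $M^{-1}\cdot\qrt_{Q,+} = \qrt_{\lOnQ{M}{Q},+}$ rather than $\qrt_{\lOnQ{M}{Q},-}$ — I would track the sign of $\sqrt{\Delta}$ through the computation: the discriminant of $\lOnQ{M}{Q}$ equals $-4\det(\lOnQ{M}{Q}) = -4\Det(M)^2\det(Q) = -4\det(Q) = \Delta$ (using $\Det(M)^2 = 1$), so the square roots match up, and then a short direct substitution into $\qrt_{\lOnQ{M}{Q},\pm} = \frac{-b_M \pm \sqrt{\Delta}}{2a_M}$ with the explicit entries $a_M, b_M$ of $\lOnQ{M}{Q}$ confirms the $+$ goes to $+$. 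Alternatively, and more slickly, one notes that $M^{-1}\cdot\tau$ is a continuous (in fact Möbius, hence orientation-respecting on $\mbb{R} \cup \{\infty\}$ when $\Det M = 1$, and orientation-reversing when $\Det M = -1$) map, and $\qrt_{Q,+} > \qrt_{Q,-}$ when $a > 0$ — but the $\Det M = -1$ case then needs the sign bookkeeping anyway, so I would just do the algebra.

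For \eqref{eq:lactqrt2}: if $M \in \mcl{S}(Q)$ then $\lOnQ{M}{Q} = Q$ by definition, so \eqref{eq:lactqrt1} gives $M^{-1}\cdot\qrt_{Q,\pm} = \qrt_{Q,\pm}$; applying $M$ to both sides (and using that fractional linear transformations compose, $M\cdot(M^{-1}\cdot\tau) = \tau$) yields $M\cdot\qrt_{Q,\pm} = \qrt_{Q,\pm}$. Conversely, if $M\cdot\qrt_{Q,\pm} = \qrt_{Q,\pm}$, then $\qrt_{Q,\pm}$ is a fixed point of $M$; if $M = \pm I$ this is trivially in $\mcl{S}(Q)$, and if $M \neq \pm I$ then $M$ lies in $\mcl{S}(\qOfL{Q}{M})$ by \Cref{tm:ltnqdecomp}, so by \Cref{lm:fxdpt} the fixed points of $M$ are exactly $\qrt_{\qOfL{Q}{M},\pm}$; hence $\qrt_{Q,\pm} = \qrt_{\qOfL{Q}{M},\pm}$, and by the preceding lemma (roots determine the form) $Q = \qOfL{Q}{M}$, so $M \in \mcl{S}(Q)$.

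I do not anticipate a serious obstacle here — this is a routine verification — but the one place demanding care is the sign matching in \eqref{eq:lactqrt1}, i.e. confirming that $M^{-1}$ sends $\qrt_{Q,+}$ to $\qrt_{\lOnQ{M}{Q},+}$ and not to $\qrt_{\lOnQ{M}{Q},-}$, especially in the anti-symplectic case $\Det M = -1$ where the Möbius map reverses orientation on the real line; I would handle this by the explicit substitution of the entries of $\lOnQ{M}{Q}$ from \eqref{eq:afrmtrans} into \eqref{dfn:qrtdf}, keeping the chosen branch of $\sqrt{\Delta}$ fixed throughout.
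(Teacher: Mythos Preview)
Your proposal is correct and close in spirit to the paper's approach. Both you and the paper verify \eqref{eq:lactqrt1} by direct computation: the paper writes out the entries $a',b',c'$ of $Q_M$ explicitly, checks $b'^2-4a'c' = b^2-4ac$, and then computes $M^{-1}\cdot\qrt_{Q,\pm}$ in closed form to land on $\tfrac{-b'\pm\sqrt{\Delta}}{2a'}$. Your ``zeros of the quadratic'' argument cleanly identifies the unordered pair $\{M^{-1}\cdot\qrt_{Q,+},M^{-1}\cdot\qrt_{Q,-}\}$ with $\{\qrt_{Q_M,+},\qrt_{Q_M,-}\}$, but as you rightly note, the sign-matching still requires the explicit substitution---so the two approaches converge on the same calculation. (The $\Det M$ factor in the definition of $Q_M$ is precisely what makes the $+$ go to $+$ in both the $\Det M = \pm 1$ cases; one clean way to see this is to compute $M^{-1}\cdot\qrt_{Q,+} - M^{-1}\cdot\qrt_{Q,-} = \tfrac{(\Det M)\sqrt{\Delta}}{a\alpha^2+b\alpha\gamma+c\gamma^2} = \qrt_{Q_M,+}-\qrt_{Q_M,-}$.)

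For \eqref{eq:lactqrt2}, your converse takes a detour through \Cref{tm:ltnqdecomp} and \Cref{lm:fxdpt} that the paper avoids. The paper observes that the equivalence follows immediately from \eqref{eq:lactqrt1}: $M\cdot\qrt_{Q,\pm} = \qrt_{Q,\pm}$ holds iff $M^{-1}\cdot\qrt_{Q,\pm} = \qrt_{Q,\pm}$, which by \eqref{eq:lactqrt1} says $\qrt_{Q_M,\pm} = \qrt_{Q,\pm}$, and the unnamed lemma just before \Cref{lm:fxdpt} (roots determine the form) gives $Q_M = Q$, i.e.\ $M\in\mcl{S}(Q)$. Your route is valid but needlessly indirect, and requires the extra observation that $Q(M)\in\mcl{F}_+$ (since $M$ fixes real irrationals) for \Cref{lm:fxdpt} to apply.
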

\begin{proof}
Write $M=\smt{\ma & \mb \\ \mc & \md}$, $Q=\la a, b, c\ra$, $\lOnQ{M}{Q} = \la a', b', c'\ra$.
Then
\eag{
a'&= (\Det M)\left(a \ma^2+b\ma\mc + c\mc^2\right),
\\
b'&= 2(\Det M) \left(a \ma \mb+b\left(\frac{\ma\md+\mb\mc}{2}\right) + c \mc \md\right),
\\
c'&=(\Det M)\left(a\mb^2+b \mb \md +c \md^2\right).
}
Hence
\eag{
b^{\prime 2}-4a'c' &= b^2 - 4 a c
}
and
\eag{
M^{-1}\cdot \qrt_{Q,\pm} &= \frac{-(2a\mb  + b\md )\pm \md\sqrt{b^2-4ac}}{(2a\ma + b \mc)\mp \mc \sqrt{b^2-4ac}}
=\frac{-b'\pm \sqrt{b^2-4ac}}{2a'} = \qrt_{\lOnQ{M}{Q},\pm},
}
which is \eqref{eq:lactqrt1}. Equation ~\eqref{eq:lactqrt2} follows immediately.
\end{proof}

\begin{defn}[Level of an admissible tuple]\label{dfn:level}
    We define the \emph{level} of the admissible tuple $t=(d,r,Q)\sim(K,j,m,Q)$ to be the integer 
    \eag{
    n_t &= \frac{j}{j_{\rm{min}}(f)},
    }
    where $f$ is the conductor of $Q$.
\end{defn}
\begin{thm}\label{tm:symgp}Let $t=(d,r,Q)\sim(K,j,m,Q)$ be an admissible tuple, and let $f$ be the conductor of $Q$. Then the stabilizers $\stabQGen{t}$, $\posGen{t}$, $A_t$ (see \Cref{dfn:AssociatedStabilizers}) are given by
\eag{
    \stabQGen{t} &= \canrep_Q(\un_f),
    \label{eq:LStabilizerTermsUnit}
    \\
    \posGen{t}&= \canrep_Q(\vn_f),
    \\
    \zaunerGen{t}&=\posGen{t}^{n_t} =\canrep_Q(\vn^j),
    \label{eq:LzStabilizerTermsUnit}
    \\
    A_t &= \posGen{t}^{n_t(2m+1)}=\canrep_Q(\vn^{j(2m+1)}).
    \label{eq:AStabilizerTermsUnit}
}

The stability groups $\mcl{S}(Q)$, $\mcl{S}_d(Q)$ (see \Cref{dfn:stbgpqdf}) are then given by
\eag{
\mcl{S}(Q) &= \la - I, \stabQGen{t}\rangle,
\label{eq:QStabilityGroupGenerators1}
\\
\mcl{S}_d(Q) &= \la A_t\ra.
\label{eq:QStabilityGroupGenerators2}
}
In particular, $\mcl{S}_{d}(Q)$ is a cyclic group, and every element has positive determinant and trace. Moreover, if $d$ is even, then
    \eag{
      A_t &\equiv \left(d+1\right)I \ \Mod{2d}.
      \label{eq:AtMod2d}
    }
\end{thm}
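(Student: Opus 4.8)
\textbf{Proof proposal for \Cref{tm:symgp}.} The plan is to first establish the formulas \eqref{eq:LStabilizerTermsUnit}--\eqref{eq:AStabilizerTermsUnit} for the associated stabilizers as images under $\canrep_Q$ of specific units, then deduce the structure of $\mcl{S}(Q)$ and $\mcl{S}_d(Q)$, and finally treat the congruence \eqref{eq:AtMod2d} by an explicit matrix computation. For the main identities, recall from \Cref{tm:qfpmst} that since $Q\in\mcl{F}_+$ (being irreducible and indefinite), $\mcl{S}(Q)=\canrep_Q(\mcl{U}_Q)\cong(\mbb{Z}/2\mbb{Z})\times\mbb{Z}$, and $\canrep_Q$ is a group isomorphism of $\mcl{U}_f$ onto $\Z\la I, SQ\ra\cap\GLtwo{\mbb{Z}}=\mcl{S}(Q)$ by \Cref{tm:canisointun} and \Cref{tm:sqchar}\eqref{it:sqcharb}. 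So $\mcl{S}(Q)=\la -I, \canrep_Q(\un_f)\ra$ follows from \Cref{cr:UfGroupGenerator}, giving \eqref{eq:QStabilityGroupGenerators1}; I then need to check that $\canrep_Q(\un_f)$ is the \emph{positive-trace} generator with $\sgn=\sgn(Q)$, which is how $\stabQGen{t}$ is characterized in \Cref{dfn:AssociatedStabilizers}. Positivity of the trace holds because $\un_f>1$ and $\un_f>\un_f'$ forces $\Tr(\un_f)=\un_f+\un_f'$; the case $\un_f'<0$ (negative norm) still gives positive trace since $\un_f>|\un_f'|$. The sign condition is arranged by the choice of $Q$ versus $-Q$ built into the definition of $\canrep_Q$ (note $\canrep_Q$ uses $SQ$, and $\sgn(SQ)$ relative to $\sgn(Q)$ needs to be tracked). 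This establishes \eqref{eq:LStabilizerTermsUnit}.

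Next, \eqref{eq:LzStabilizerTermsUnit} and \eqref{eq:AStabilizerTermsUnit}: by \Cref{df:ufvf} and \Cref{df:epsilonfDefinition}, $\vn_f=\vn^{j_{\rm min}(f)}$ and $\un_f$ is either $\vn^{j_{\rm min}(f)}$ or $\un^{j_{\rm min}(f)}$; in either case $\posGen{t}=\canrep_Q(\vn_f)$ follows from \Cref{dfn:AssociatedStabilizers} (the determinant-$+1$ alternative) and the multiplicativity of $\canrep_Q$, since $\Det\canrep_Q(w)=\Nm(w)$. The level $n_t=j/j_{\rm min}(f)$ from \Cref{dfn:level} gives $\vn^j=\vn_f^{n_t}$, hence $\canrep_Q(\vn^j)=\posGen{t}^{n_t}$. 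To identify this with $\zaunerGen{t}=\tfrac{d_j-1}{2}I+\tfrac{f_j}{f}SQ$, I would compute $\canrep_Q(\vn^j)$ directly using \Cref{df:etaq}: writing $\vn^j=\tfrac{d_j-1}{2}+\tfrac{f_j\sqrt{\Delta_0}}{2}$ from \Cref{lem:towerbasic}, we get $\canrep_Q(\vn^j)=\tfrac{d_j-1}{2}I+\tfrac{f_j}{f}SQ$ (the conductor of $Q$ being $f$), matching $\zaunerGen{t}$ on the nose. Then $A_t=\posGen{t}^{n_t(2m+1)}=\canrep_Q(\vn^{j(2m+1)})$; that this is the generator of $\mcl{S}_d(Q)$ with the correct sign should follow from: (i) $\mcl{S}_d(Q)=\mcl{S}(Q)\cap\Gamma(d)$ corresponds under $\canrep_Q^{-1}$ to units $w$ with $\canrep_Q(w)\equiv I\Mod d$, which by \Cref{cr:etaqmodn}-type reasoning (applied to $w-1$) means $w\equiv 1\Mod{d\mcl{O}_f}$; (ii) the smallest such power of $\vn_f$ is $\vn^{j(2m+1)}$, which requires showing $jm(2+1)=j(2m+1)$ is exactly the minimal exponent — this uses the admissibility relation $nr(d-r)=d^2-1$ together with \eqref{eq:epowerminusone} from \Cref{lem:dimgridtechres}, namely $\vn^{(2m+1)j}-1=d_{j,m}\vn^{mj}(\vn^j-1)$, which shows $\canrep_Q(\vn^{(2m+1)j}-1)$ has all entries divisible by $d_{j,m}=d$. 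That $\mcl{S}_d(Q)=\la A_t\ra$ is cyclic with all elements of positive trace and determinant then follows since it is an infinite-index-free subgroup of $\mcl{S}(Q)/\{\pm I\}\cong\mbb{Z}$ and $-I\notin\Gamma(d)$ for $d>2$.

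For the congruence \eqref{eq:AtMod2d} when $d$ is even: I would use the explicit form $A_t=\canrep_Q(\vn^{j(2m+1)})$ and write $\vn^{j(2m+1)}=\tfrac{d_{(2m+1)j}-1}{2}+\tfrac{f_{(2m+1)j}\sqrt{\Delta_0}}{2}$, so $A_t=\tfrac{d_{(2m+1)j}-1}{2}I+\tfrac{f_{(2m+1)j}}{f}SQ$. The claim $A_t\equiv(d+1)I\Mod{2d}$ amounts to two congruences: $\tfrac{d_{(2m+1)j}-1}{2}\equiv d+1\Mod{2d}$ and $\tfrac{f_{(2m+1)j}}{f}\equiv 0\Mod{2d}$ (as this coefficient multiplies the off-diagonal-containing matrix $SQ$, and $SQ$ has integer entries only up to a factor — care needed: actually $\tfrac{f_j}{f}SQ$ is integral since $f_j/f=f_j/f_{j_{\rm min}(f)}\cdot(f_{j_{\rm min}(f)}/f)$ and $f|f_{j_{\rm min}(f)}$, wait, more carefully $f$ is the conductor of $Q$ and $\vn^j\in\mcl{O}_f$ so $f|f_j$). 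Since $d=d_{j,m}$ is even, I can invoke the parity results in \Cref{lem:dimgridtechres}: $d_{j,m}\equiv d_j\Mod 4$, $f_{j(m+1)}-f_{jm}\equiv d_j+2\Mod 4$, and the relation $f_{(2m+1)j}=f_{m+1,j}^2-f_{m,j}^2$-type identities, plus \eqref{eq:dSub2jInTermsOfdSubj}. The identity $d_{(2m+1)j}+1=(d_j+1)(r_{j,m+1}-r_{j,m})^2$ from \Cref{lem:dimgridtechres} combined with $d=d_{j,m}$ being even (so $d_j$ even, $f_j$ odd) should pin down $d_{(2m+1)j}-1\Mod{4d}$ and $f_{(2m+1)j}/f\Mod{2d}$. \textbf{The main obstacle} I anticipate is precisely this final congruence computation \eqref{eq:AtMod2d}: bookkeeping the $2$-adic and $d$-adic valuations of $d_{(2m+1)j}$ and $f_{(2m+1)j}$ in terms of $d_{j,m}$ requires carefully chaining the Chebyshev recursions \eqref{eq:tstrec}--\eqref{eq:ustrec} with the parity lemmas, and it is easy to be off by a sign or a factor of $2$; everything else reduces cleanly to the already-established isomorphism $\canrep_Q:\mcl{U}_f\xrightarrow{\sim}\mcl{S}(Q)$ and the unit-power identities.
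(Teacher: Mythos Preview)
Your overall strategy matches the paper's: use the isomorphism $\canrep_Q:\mcl{U}_f\xrightarrow{\sim}\mcl{S}(Q)$ from \Cref{tm:canisointun} and \Cref{tm:qfpmst} to identify the stabilizers, and then analyze $\mcl{S}_d(Q)$ via congruences. The identifications \eqref{eq:LStabilizerTermsUnit}--\eqref{eq:LzStabilizerTermsUnit} and \eqref{eq:QStabilityGroupGenerators1} go through as you describe.

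However, there is a genuine gap in your treatment of \eqref{eq:QStabilityGroupGenerators2}. You correctly use \eqref{eq:epowerminusone} to show $\canrep_Q(\vn^{(2m+1)j})\equiv I\Mod d$, so $A_t\in\mcl{S}_d(Q)$; but you do not prove that $A_t$ \emph{generates} $\mcl{S}_d(Q)$. Your sentence ``this uses the admissibility relation $nr(d-r)=d^2-1$'' does not lead to a minimality proof. What is needed is to show that if $(\posGen{t})^q\equiv I\Mod d$ then $(2m+1)n_t\mid q$. The paper does this by a norm argument: assuming $q\le(2m+1)n_t/2$, one writes $\vn_f^q-1=z'd$ and computes
\[
|\Nm(z')|=\frac{4\sinh^2(\theta/2)\sinh^2(q\theta/(2n_t))}{\sinh^2((2m+1)\theta/2)}\le\tanh^2\frac{(2m+1)\theta}{4}<1
\]
(with $\theta=\log\vn_f^{n_t}$), contradicting $z'\in\mcl{O}_f\setminus\{0\}$. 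A second, similar norm bound is needed to rule out $(\posGen{t})^s\equiv -I\Mod d$; your remark that $-I\notin\Gamma(d)$ for $d>2$ only excludes $-I$ itself, not $-(\posGen{t})^s$ for $s>0$. Without these two arguments you cannot conclude $\mcl{S}_d(Q)=\la A_t\ra$ rather than $\la(\posGen{t})^a\ra$ or $\la-(\posGen{t})^a\ra$ for some smaller $a$.

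For \eqref{eq:AtMod2d}, your direct matrix approach can be made to work, but the paper's route is cleaner: translate via \Cref{cr:etaqmodn} to the statement that $z=(\vn^{(2m+1)j}-d-1)/(2d)\in\mcl{O}_f$, then use \eqref{eq:epowerminusone} to write $z=(\vn^{mj}(\vn^j-1)-1)/2$ and check integrality of the coefficients $\alpha_1,\alpha_2$ in the $\mcl{O}_f$-basis directly from \Cref{lem:dimgridtechres}. This avoids the entry-by-entry bookkeeping with $SQ$ that you correctly flag as delicate.
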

\begin{rmkb}
    The fact that $\mcl{S}_{d}(Q)$ contains no negative determinant matrices might at first sight seem to be an artifact of our decision to define $\Gamma(d)$ to be a subgroup of $\SLtwo{\mbb{Z}}$.  However, it is easily seen that, if $d>2$, then there are no matrices in $\GL_2(\Z)$ but outside $\SLtwo{\mbb{Z}}$ that are congruent to $I$ modulo $d$.  Indeed, suppose $L=I+d\smt{\ma & \mb \\ \mc & \md} $ were such a matrix.  Then we would have $-1 = \det L = 1+d(\ma+\md + d(\ma\md-\mb\mc))$ implying $d\div 2$).
\end{rmkb}
\begin{proof}
To prove ~\eqref{eq:LStabilizerTermsUnit} and~\eqref{eq:QStabilityGroupGenerators1}, observe that it follows from \Cref{tm:qfpmst} and \Cref{cr:UfGroupGenerator} that
\eag{
\mcl{S}(Q) &= \canrep_Q\!\left(\mcl{U}_Q\right) 
= \la -I,\canrep_Q(\un_f)\ra.
}
Consequently, $\stabQGen{t}$ must be one of the four matrices $\pm \canrep_Q(\un_f)$, $\pm \canrep_Q(\un^{-1}_f)$.  The requirement that $\Tr(\stabQGen{t})$ is positive and $\sgn(\stabQGen{t}) = \sgn(Q)$ means we must in fact have $\stabQGen{t} = \canrep_Q(\un_f)$.

If $\Nm(\un_f) = 1$, then $\vn_f = \un_f$ and $\Det(\stabQGen{t}) = +1$, implying  $\posGen{t} = \stabQGen{t} = \canrep_Q(\vn_f)$. If, on the other hand, $\Nm(\un_f) =-1$, then $\vn_f = \un_f^2$ and $\Det(\stabQGen{t}) = -1$, implying $\posGen{t} = L^2_t = \canrep_Q(\un^2_f)$, so that we again have $\posGen{t} = \canrep(\vn_f)$.

\Cref{eq:LzStabilizerTermsUnit} is an immediate consequence of the definition of $\zaunerGen{t}$.

Now consider ~\eqref{eq:AStabilizerTermsUnit} and~\eqref{eq:QStabilityGroupGenerators2}.  The fact that $\mcl{S}_{d}(Q)$ is a subgroup of $\SLtwo{\mbb{Z}}$ means every element is of the form $\pm \posGen{t}^{l}$, where $l\in \mbb{Z}$.
Let $q$ be the multiplicative order of $\posGen{t}$ modulo $d$. We claim that $q = (2m+1)n$. Indeed, Lemma~\ref{lem:dimgridtechres} implies     
\eag{
      \vn_f^{(2m+1)n}-1 &= \vn^{(2m+1)j} -1 = z d
      }
      where $n = j/j_{\rm{min}}(f)$  and $z=\vn^{mj}(\vn^j-1) =\vn_f^{mn}(\vn_f^{n}-1)\in \mcl{O}_f$.      It then follows from Corollary~\ref{cr:etaqmodn} that $\posGen{t}^{(2m+1)n} \equiv I \Mod{d}$.  So $q \div (2m+1)n$.  To show that in fact $q=(2m+1)n$ assume the contrary.  Then $q\le (2m+1)n/2$.  The fact that $\posGen{t}^q \equiv I \Mod{d}$ means,  in view of Corollary~\ref{cr:etaqmodn}, that
     \eag{
       \vn_f^{q}-1 &= z' d
     }
     for some  $z'\in \mcl{O}_f$. 
       It follows from Lemma~\ref{lem:dimgridtechres} 
     \eag{
      d&= \frac{\vn_f^{(2m+1)n}-1}{\vn_f^{mn}(\vn_f^{n}-1)}.
     }
     So
     \eag{
     z' &= \frac{\vn_f^{mn}\left(\vn_f^n-1\right)\left(\vn_f^q-1\right)}{\vn_f^{(2m+1)n}-1}.
     }
     Taking norms on both sides, we deduce
     \eag{
     \Nm(z') &= \frac{\left(2-\vn_f^n-\vn_f^{-n}\right)\left(2-\vn_f^{q}-\vn_f^{-q}\right)}{2-\vn_f^{(2m+1)n}-\vn_f^{-(2m+1)n}}.
     }
     Setting $\vn_f^n = e^{\theta}$ this becomes
     \eag{
     \Nm(z') &= 
     -\frac{4\sinh^2\frac{\theta}{2}\sinh^2\frac{q\theta}{2n}}{\sinh^2\frac{(2m+1)\theta}{2}}
     }
     In view of the assumption that $q\le(2m+1)n/2$, this means
     \eag{
     \abs{\Nm(z')} &\le \frac{4\sinh^4\frac{(2m+1)\theta}{4}}{\sinh^2\frac{(2m+1)\theta}{2}} 
     = \tanh^2\frac{(2m+1)\theta}{4} < 1,
     \label{eq:Nzprimebound}
     }
     contradicting the fact that $\abs{\Nm(z')}$ is a positive integer.  

     We have thus shown that $\la \posGen{t}^{(2m+1)n}\ra \subseteq \mcl{S}_d (Q)$. To show that in fact
     $\la \posGen{t}^{(2m+1)n}\ra =\mcl{S}_d(Q)$, assume the contrary.  Then there would exist a positive integer $s$ such that
     \eag{
        \posGen{t}^{s} &\equiv -I \ \Mod{d}
     }
     It would follow that $\posGen{t}^{2s} \equiv I \Mod{d}$, implying $2s$ is a multiple of $(2m+1) n$.  If it were an even multiple, it would follow that $s$ is a multiple of $(2m+1)n$, implying $\posGen{t}^s \equiv I \Mod{d}$, contrary the assumption.  So $2s = (2k+1)(2m+1)n$ for some non-negative integer $k$.  In particular $n$ would be even. We would then have
     \eag{
      \posGen{t}^{\frac{(2m+1)n}{2}} &= \posGen{t}^{k(2m+1)n} \posGen{t}^{\frac{(2m+1)n}{2}} = \posGen{t}^{\frac{(2k+1)(2m+1)n}{2}} = \posGen{t}^{s} = -I \ \Mod{d}.
     }
     In view of Corollary~\ref{cr:etaqmodn}, this would mean
     \eag{
            \vn_f^{\frac{(2m+1)n}{2}}+1 &= z''d
     }
     for some $z''\in \mcl{O}_f$. 
     By a suitably modified version of the argument leading to inequality~\eqref{eq:Nzprimebound}, it would follow that
     \eag{
     \abs{\Nm(z'')} &= \frac{\sinh^2\frac{\theta}{2}}{\sinh^2\frac{(2m+1)\theta}{4}}
     <1,
     }
     contradicting the fact that $\Nm(z'')$ is a non-zero integer.
We have thus shown
$\mcl{S}_d(Q) = \la \posGen{t}^{(2m+1)n}\ra $, implying $A_t = \posGen{t}^{\pm(2m+1)n}$. The requirement that $\sgn(A_t) = \sgn(Q)$ means we must in fact have $A_t = \posGen{t}^{(2m+1)n}$.  ~\eqref{eq:AStabilizerTermsUnit}, \eqref{eq:QStabilityGroupGenerators2} now follow.

     The fact that the elements of $\mcl{S}_{d}(Q)$ all have positive determinant  follows from
     \eag{
     \Det\left(\posGen{t}^k\right) &= \Nm(\vn_f^k) = 1.
     }
     The fact that they all have positive trace  follows from
     \eag{
     \Tr\left(\posGen{t}^{k}\right)&= \Tr(\vn_f^{k}) =
     \begin{cases}
         d_{|k|j_{\rm{min}(f)}}-1 \quad & \text{ if} k\neq 0,
         \\
         2 \quad & \text{ if} k=0.
     \end{cases}
     }

     It remains to prove ~\eqref{eq:AtMod2d}.  The fact that $A_t = \canrep_Q\left(\vn_f^{(2m+1)n}\right)$ together with Corollary~\ref{cr:etaqmodn} means
     \eag{
     A_t-(d+1)I &\equiv 0 \ \Mod{2d}
     }
     if and only if 
     \eag{
     z &= \frac{\vn_f^{(2m+1)n}-d-1}{2d} \in \mcl{O}_f.
     }
     The fact that $d$ is even means, in view of Lemmas~\ref{lem:dfdelprops} and~\ref{lem:dimgridtechres}, that $\Delta_0 \equiv 1 \Mod{4}$.  So we need to show
     \eag{
            z & = n_1+n_2f\left(\frac{1+\sqrt{\Delta_0}}{2}\right)
     }
     for some $n_1,n_2\in \mbb{Z}$.  It follows from Lemma~\ref{lem:dimgridtechres} that
     \eag{
      z &= \frac{\vn^{mj}(\vn^j-1)-1}{2}
      = \frac{d_{(m+1)j}-d_{mj}-2+(f_{(m+1)j}-f_{mj})\sqrt{\Delta_0}}{4}.
     }
     That is,
     \eag{
         z &= \alpha_1+\alpha_2 f\left(\frac{1+\sqrt{\Delta_0}}{2}\right)
     }
     where
     \eag{
        \alpha_1 &= \frac{\left(d_{(m+1)j}-d_{mj}-2\right)-\left(f_{(m+1)j}-f_{mj}\right)}{4}, & \alpha_2&= \frac{f_{(m+1)j}-f_{mj}}{2f}.
     }
     We need to show that $\alpha_1$, $\alpha_2$ are both in $\mbb{Z}$.  The fact that $\alpha_1\in \mbb{Z}$ is an immediate consequence of Lemma~\ref{lem:dimgridtechres}. Moreover,
     \eag{
       \alpha_2 &= \left(\frac{f_j}{f}\right) \left(\frac{r_{j,m+1}-r_{j,m}}{2}\right).
     }
     It follows from Lemma~\ref{lem:dimgridtechres} that $r_{j,m+1}-r_{j,m}$ is even.  Since $f\div f_j$, this means $\alpha_2\in \mbb{Z}$.
\end{proof}

\begin{proof}[Proof of Theorem~\ref{thm:ghostWellDefinedCondition}]\label{ssc:ghostWellDefinedProof}
It follows from \Cref{tm:symgp} that $\Tr(A\vpu{-1}_t)$ and $\Tr(A^{-1}_t)$ are both positive which, in view of Lemmas \ref{lem:fixedinda} and \ref{lm:lactqrt}, means $\qrt_{Q,\pm}\in \DD_{A\vpu{-1}_t}\cap\DD_{A^{-1}_t}$.
\end{proof}

\section{Proof of main theorems (1): Existence}\label{sec:existence}

In this section, we show that the correctness of our construction of $r$-SICs follows from several number-theoretic conjectures. Specifically, we prove \Cref{thm:ghstExist}, which asserts that for every fiducial datum $s$, the matrix $\GP_s$ defined in \Cref{dfn:CandidateGhostAndSICFiducials} is a ghost $r$-SIC fiducial under the assumption of the Twisted Convolution Conjecture (\Cref{cnj:tci}). Furthermore, we show \Cref{thm:rayclassfieldrsicgen}, which asserts that the matrix $\SP_s$ defined in \Cref{dfn:CandidateGhostAndSICFiducials} is an $r$-SICs fiducial under the assumption of the Twisted Convolution Conjecture and the Stark Conjecture (\Cref{conj:stark}). 

This section first establishes the relationship of the Shintani--Faddeev modular cocycle $\shin^{d^{-1}\p}$ to the Shintani--Faddeev phase $\phi_\p(t)$, then uses that relationship in conjunction with the conjectures to prove the theorems on ghost $r$-SIC and ``live'' $r$-SIC existence.
\Cref{ssec:rademacher} and \Cref{sbsc:phaseprops} proves some properties of the $\SFPhase{t}{\p}$, in order to connect it to the eta-multiplier character $\psi$ and the theta-multiplier character $\chi_{d^{-1}\p}$. In \Cref{sbsc:ghostprops}, we use the properties of the SF phase to establish properties of our candidate ghost overlaps, including the crucial property that they are real numbers. The latter relies of results of \cite{Kopp2020d} relating the SF cocycle to zeta values. In \Cref{sbsc:proofofghosttheorem}, we complete the proof of \Cref{thm:ghstExist}. In \Cref{sbsc:ProofMainTheoremsFurtherRemarks}, we give some remarks on the ``shift'' appearing in our construction. Finally, in \Cref{sbsc:proofofghosttorsic}, we complete the proof of \Cref{thm:rayclassfieldrsicgen}.

\subsection{Properties of the Rademacher invariant}\label{ssec:rademacher}

For the convenience of the reader the following proposition pulls together some properties of the Rademacher class invariant that we require. Aside from a couple of minor additions they are all well-known.
\begin{prop}\label{lem:RademacherProperties}
Let $\rade$ be the Rademacher class invariant as defined in \Cref{df:meyinv}, and let $\eta(\tau)$ be the Dedekind $\eta$-function.
Then for all $M = \smt{\ma & \mb \\ \mc & \md}\in\SLtwo{\mbb{Z}}$, $N\in \GLtwo{\mbb{Z}}$ and $\tau\in \mbb{H}$ we have
\eag{
    \rade(-M) &= \rade(M),
    \label{eq:myml}
    \\
    \rade(M^{-1})& = -\rade(M),
    \label{eq:mypm1} 
    \\
    \rade(NMN^{-1}) &= (\Det N) \rade(M)
    \label{eq:meyjlj}
    }
    \eag{
        \eta(M\cdot\tau)
    &=
    \begin{cases}
        e^{\frac{\pi i}{12}\rade(M)} \sqrt{\sgn(\Tr(M))j_M(\tau)} \eta(\tau) \qquad & \mc \neq 0, \, \Tr(M)\neq 0\,,
        \\
        e^{\frac{\pi i}{12}\rade(M)} \sqrt{-i\sgn(\mc) j_M(\tau)} \eta(\tau) \qquad & \mc \neq 0, \, \Tr(M)=0\,,
        \\
        e^{\frac{\pi i}{12}\rade(M)}  \eta(\tau)  \qquad &\gamma = 0\,.
    \end{cases}
    \label{eq:RademacherInvariantInTermseta}
}
where in \eqref{eq:RademacherInvariantInTermseta} the principal branch of the square root is taken.
If $\Tr(M)\neq \pm 1$, then we also have
\eag{
\psi(M^n) &= n\psi(M)
\label{eq:PsiMPower}
}
for all $n\in \mbb{Z}$.
\end{prop}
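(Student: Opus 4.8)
\textbf{Plan for the proof of \Cref{lem:RademacherProperties}.}

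The proposition is a compilation of known facts, so the plan is to prove each claim in turn, leaning on standard references for the classical parts and filling in the pieces that are specific to our normalization conventions. I would organize the proof around the transformation law \eqref{eq:RademacherInvariantInTermseta} for $\eta$, since the three identities \eqref{eq:myml}--\eqref{eq:meyjlj} can all be extracted from it by comparing the transformation of $\eta$ under products and inverses of matrices.

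First I would establish \eqref{eq:RademacherInvariantInTermseta}. The classical transformation formula for $\eta$ under $M = \smt{\ma & \mb \\ \mc & \md} \in \SLtwo{\mbb{Z}}$ with $\mc > 0$ is $\eta(M\cdot\tau) = \varepsilon(M)\, (-i(\mc\tau+\md))^{1/2}\,\eta(\tau)$, where $\varepsilon(M) = \exp\!\left(\pi i\left(\frac{\ma+\md}{12\mc} - \mathfrak{s}(\md,\mc) - \frac14\right)\right)$ (Dedekind's formula; see e.g.\ Apostol or Rademacher--Grosswald). One then rewrites $\varepsilon(M)$ in terms of $\rade(M)$ using the definition in \Cref{df:meyinv} together with the reciprocity and symmetry properties of the Dedekind sum $\mathfrak{s}$ (in particular $\mathfrak{s}(\md,\mc) = \mathfrak{s}(\ma,\mc)$ when $\ma\md \equiv 1 \Mod{\mc}$, which holds since $\det M = 1$). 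The bookkeeping of the branch of the square root and the sign $\sgn(\Tr M)$ versus the $\Tr(M) = 0$ exceptional case is exactly the content of \cite[Thm.~2.4]{Kopp2020d} or the Rademacher $\Phi$-function literature \cite{Rademacher:1972}; I would cite that and verify that our $\rade$ matches their $\Phi$. For $\mc < 0$ one reduces to $\mc > 0$ by replacing $M$ with $-M$ (noting $M\cdot\tau = (-M)\cdot\tau$), and the $\mc = 0$ case ($M = \pm T^{\mb}$) is immediate from $\eta(\tau+1) = e^{\pi i/12}\eta(\tau)$.

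Next, \eqref{eq:myml}, \eqref{eq:mypm1}, and \eqref{eq:meyjlj}. For \eqref{eq:myml}: from the definition, when $\mc \neq 0$ the quantity $\frac{\Tr M}{\mc}$ is odd in $M \mapsto -M$ only through sign flips that cancel, and $\mathfrak{s}(-\ma,-\mc) = \mathfrak{s}(\ma,\mc)$, so $\rade(-M) = \rade(M)$ follows by inspection; the $\mc = 0$ case is $\mb/\md$, invariant under negation of both. For \eqref{eq:mypm1}: apply \eqref{eq:RademacherInvariantInTermseta} twice, $\eta(\tau) = \eta(M^{-1}M\cdot\tau)$, and use \Cref{lm:jMproperties} to see $j_{M^{-1}}(M\cdot\tau)\,j_M(\tau) = 1$; the square-root factors multiply to $1$ and one reads off $\rade(M^{-1}) = -\rade(M)$ (taking care that $\Tr(M^{-1}) = \Tr(M)$ so the sign conventions match; the $\Tr(M) = 0$ case where $M^{-1} = -M$ needs \eqref{eq:myml} as well). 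For \eqref{eq:meyjlj}: if $\det N = 1$, write $\eta((NMN^{-1})\cdot\tau)$ and $\eta(M\cdot(N^{-1}\cdot\tau))$ and compare using the cocycle property of $j$; the $\eta$-multiplier is a homomorphism up to the automorphy factor, which gives $\rade(NMN^{-1}) = \rade(M)$. If $\det N = -1$, reduce to $N = J = \smt{1&0\\0&-1}$ times a determinant-$1$ matrix; conjugation by $J$ sends $M = \smt{\ma&\mb\\\mc&\md}$ to $\smt{\ma&-\mb\\-\mc&\md}$, under which $\frac{\Tr M}{\mc} \mapsto -\frac{\Tr M}{\mc}$, $\mathfrak{s}(\ma,\mc) \mapsto \mathfrak{s}(\ma,-\mc) = -\mathfrak{s}(\ma,\mc)$, and $\sgn(\mc) \mapsto -\sgn(\mc)$, so every term of $\rade$ flips sign, giving $\rade(JMJ^{-1}) = -\rade(M)$, hence the factor $\det N$ in general.

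Finally, \eqref{eq:PsiMPower}. Here I would use the metaplectic picture: $\psi$ is the multiplier of $\eta$ viewed as a character-valued function on $\Mp_2(\Z)$, defined by $\eta(M\cdot\tau) = \psi(M,\ep)\ep(\tau)\eta(\tau)$ as in \eqref{eq:etatrans}. The point is that for a fixed choice of lift $(M,\ep)$ the powers $(M,\ep)^n$ have metaplectic factor $\ep_n(\tau) = \prod_{k=0}^{n-1}\ep(M^k\cdot\tau)$, and comparing $\eta((M^n)\cdot\tau)$ computed directly against the $n$-fold application of the transformation law gives $\psi(M^n) = n\,\psi(M)$ provided the automorphy factors compose without an extra root of unity — which is where the hypothesis $\Tr(M) \neq \pm 1$ enters, since that excludes the elliptic elements of order $3, 4, 6$ whose square-root-of-$j$ normalization introduces anomalous phases. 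Concretely one invokes \cite[Thm.~2.4]{Kopp2020d} for the explicit formula for $\psi$ and the relation $\psi \leftrightarrow \rade$ (essentially $\psi(M) \equiv -\tfrac{1}{12}\rade(M)$ on the relevant subgroup, up to the half-integer-weight subtleties), and then \eqref{eq:PsiMPower} is a restatement of the additivity of $\rade$ along a cyclic subgroup, which follows from \eqref{eq:mypm1} together with the cocycle relation: $\rade(M^{n+1}) - \rade(M^n) = \rade(M)$ because $M^n$ and $M$ commute and have the same fixed points, so the square-root branch factors in \eqref{eq:RademacherInvariantInTermseta} multiply consistently. I expect the main obstacle to be precisely the branch-of-square-root / hyperbolic-versus-elliptic case analysis in \eqref{eq:RademacherInvariantInTermseta} and in the additivity argument for \eqref{eq:PsiMPower} — getting the $\sgn(\Tr M)$ and $\Tr(M) = 0$ cases to interlock correctly with the principal branch, and verifying that excluding $\Tr(M) = \pm 1$ really does remove all the obstructions to additivity. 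The cleanest route is probably to do the hyperbolic case $(\Tr M)^2 > 4$ (which is all we actually need for $A_t$, by \Cref{tm:symgp}) carefully and then cite \cite{Kopp2020d,Rademacher:1972} for the remaining cases.
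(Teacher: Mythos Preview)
Your plan is broadly sound but takes a substantially more labor-intensive route than the paper. The paper's proof is almost entirely by citation: \eqref{eq:myml}, \eqref{eq:mypm1}, and the $\det N = 1$ case of \eqref{eq:meyjlj} are taken directly from \cite[Satz~7]{Rademacher1955} and \cite[Ch.~4, Sec.~C]{Rademacher:1972}; \eqref{eq:RademacherInvariantInTermseta} is quoted as a consequence of \cite[Ch.~9]{Rademacher:1973}; and the $\abs{\Tr M} > 1$ case of \eqref{eq:PsiMPower} is \cite[Satz~9]{Rademacher1955}. The only two pieces the paper actually argues are (i) the $\det N = -1$ case of \eqref{eq:meyjlj}, done exactly as you propose via reduction to $J$, and (ii) the $\Tr(M) = 0$ case of \eqref{eq:PsiMPower}, done by observing $M^2 = -I$ so that \eqref{eq:myml} and \eqref{eq:mypm1} force $\rade(M) = -\rade(-M^{-1}) = -\rade(M)$, hence $\rade(M) = 0$. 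Your derivation-from-first-principles approach would work but is unnecessary given the existing literature; the paper's strategy buys brevity, yours buys self-containedness.

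Two specific issues to flag in your treatment of \eqref{eq:PsiMPower}. First, the equation as written uses $\psi$ but is clearly meant to be $\rade$ (the proof is entirely about $\rade$); your attempt to interpret it literally via the metaplectic $\psi$ led you into unnecessary contortions before you landed on the right reading. Second, your claim that $\Tr(M) \neq \pm 1$ ``excludes the elliptic elements of order $3, 4, 6$'' is incorrect: order-$4$ elliptic elements have $\Tr(M) = 0$ and are \emph{not} excluded, which is precisely why the paper treats that case separately. Your proposed additivity argument ``$\rade(M^{n+1}) - \rade(M^n) = \rade(M)$ because $M^n$ and $M$ commute'' is also not a proof as stated: $\rade$ is only a quasi-homomorphism on $\SLtwo{\Z}$, and commuting alone does not imply the defect vanishes. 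You correctly identify that the branch-of-square-root bookkeeping is where the real content lies, but rather than carry that out it is simpler to do as the paper does and invoke \cite[Satz~9]{Rademacher1955} for the hyperbolic case.
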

\begin{proof}
~\eqref{eq:myml},~\eqref{eq:mypm1} are proved in \cite[Satz 7]{Rademacher1955}
 and \cite[Chapter 4, Section C]{Rademacher:1972}, as is ~\eqref{eq:meyjlj} if $\det N = 1$.  To show that ~\eqref{eq:meyjlj} also holds if $\det N = -1$, observe that it follows from ~\eqref{eq:phiMdf} that $\rade(JMJ^{-1}) = -\rade(M)$, where $J=\smt{1 & 0 \\ 0 & -1}$. 
If $N$ is any other element of $\GLtwo{\mbb{Z}}$ such that $\Det(N) =-1$, let $N' = NJ$. Then applying ~\eqref{eq:meyjlj} to $N'$ we find $\rade(NMN^{-1}) = \rade(N'JMJ^{-1}N^{\prime -1}) = \rade(JMJ^{-1}) = -\rade(M)$. \Cref{eq:RademacherInvariantInTermseta} is a straightforward consequence of results proved in 
~\cite[Chapter 9]{Rademacher:1973}.
Finally, if $|\Tr(M)|> 1$ then ~\eqref{eq:PsiMPower} is proved in \cite[Satz 9]{Rademacher1955}.  It remains to show that it also holds if $\Tr(M)=0$.  To see this observe that if $\Tr(M) =0$ then
$M^2=-I$, implying
\eag{
M^n&= 
\begin{cases}
(-1)^{\frac{n}{2}}I \qquad & \text{$n$ even,}
\\
(-1)^{\frac{n-1}{2}}M \qquad & \text{$n$ odd.}
\end{cases}
}
In particular $M^{-1}=-M$.  In view of  ~\eqref{eq:myml}~\eqref{eq:mypm1} this means $\rade(M) = -\rade(-M^{-1}) = -\rade(M)$, implying $\rade(M) = 0$.  It is immediate that $\rade(I) = 0$. The result now follows.
\end{proof}

We now explicitly relate the metaplectic character $\psi$ to the Rademacher invariant $\Psi$.
\begin{prop}\label{prop:rademacher}
    Let $M 
    = \smmattwo{\ma}{\mb}{\mc}{\md} 
    \in \SL_2(\Z)$ such that $\mc \neq 0$ and $\Tr(M)>0$.
    Then, taking $\sqrt{j_M(\tau)}$ to be the principal branch,
    \begin{equation}
        \psi(M,\sqrt{j_M}) = e^{\frac{\pi i}{12}\Psi(M)}.
    \end{equation}
\end{prop}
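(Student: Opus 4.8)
The plan is to match two descriptions of the eta-multiplier: the transformation law for $\eta$ under $M$ in terms of the Rademacher invariant, namely \eqref{eq:RademacherInvariantInTermseta} of \Cref{lem:RademacherProperties}, against the defining transformation law \eqref{eq:etatrans} that introduces $\psi$. Since we are in the case $\mc \neq 0$ and $\Tr(M) > 0$, the hypothesis $\Tr(M) > 0$ in particular rules out $\Tr(M) = 0$, so the relevant branch of \eqref{eq:RademacherInvariantInTermseta} is the first one:
\begin{equation}
    \eta(M\cdot\tau) = e^{\frac{\pi i}{12}\Psi(M)} \sqrt{\sgn(\Tr(M))\,j_M(\tau)}\,\eta(\tau) = e^{\frac{\pi i}{12}\Psi(M)}\sqrt{j_M(\tau)}\,\eta(\tau),
\end{equation}
using $\sgn(\Tr(M)) = 1$ and the principal branch of the square root.

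\textbf{Key steps.} First I would fix the metaplectic element $(M,\ep)$ with $\ep(\tau) = \sqrt{j_M(\tau)}$ the principal branch; this is a legitimate element of $\Mp_2(\Z)$ since $\ep(\tau)^2 = j_M(\tau)$, and $\psi(M,\sqrt{j_M})$ is then well-defined. Next I would invoke \eqref{eq:etatrans} with this choice of $\ep$, giving $\eta(M\cdot\tau) = \psi(M,\sqrt{j_M})\,\sqrt{j_M(\tau)}\,\eta(\tau)$. Comparing with the displayed consequence of \eqref{eq:RademacherInvariantInTermseta} above, and using that $\eta(\tau) \neq 0$ and $\sqrt{j_M(\tau)} \neq 0$ for $\tau \in \HH$ (since $j_M(\tau) \neq 0$ there), I can cancel the common nonzero factors $\sqrt{j_M(\tau)}\,\eta(\tau)$ to conclude
\begin{equation}
    \psi(M,\sqrt{j_M}) = e^{\frac{\pi i}{12}\Psi(M)}.
\end{equation}
One small point to verify carefully: the two laws \eqref{eq:etatrans} and \eqref{eq:RademacherInvariantInTermseta} must genuinely use the \emph{same} branch of the square root of $j_M(\tau)$; \eqref{eq:RademacherInvariantInTermseta} explicitly says the principal branch is taken, and we have chosen $\ep = \sqrt{j_M}$ to be the principal branch in \eqref{eq:etatrans}, so the comparison is valid as an identity of functions on $\HH$, hence as an identity of the constants $\psi(M,\sqrt{j_M})$ and $e^{\frac{\pi i}{12}\Psi(M)}$.

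\textbf{Main obstacle.} There is no serious obstacle here; the result is essentially a bookkeeping identity once one is careful about branches and about which case of \eqref{eq:RademacherInvariantInTermseta} applies. The only thing requiring attention is confirming that $\Tr(M) > 0$ together with $\mc \neq 0$ lands us in the first case of \eqref{eq:RademacherInvariantInTermseta} (not the $\Tr(M) = 0$ case), and that the sign factor $\sqrt{\sgn(\Tr(M))\,j_M(\tau)}$ reduces to $\sqrt{j_M(\tau)}$ — this is where the positivity of the trace, rather than merely $\mc \neq 0$, is used. I would also note in passing that \eqref{eq:RademacherInvariantInTermseta} is stated for $\tau \in \HH$, which is exactly the domain on which \eqref{eq:etatrans} holds, so no analytic continuation is needed.
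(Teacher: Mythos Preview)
Your proposal is correct and follows essentially the same approach as the paper's proof: both invoke \eqref{eq:etatrans} with $\ep = \sqrt{j_M}$ and the first case of \eqref{eq:RademacherInvariantInTermseta}, then compare and cancel the common factor $\sqrt{j_M(\tau)}\,\eta(\tau)$. Your version is slightly more explicit about why the hypotheses $\mc \neq 0$ and $\Tr(M) > 0$ select the first case and ensure the branches match, but the argument is the same.
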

\begin{proof}
    By \eqref{eq:etatrans}, for any choice of $\tau \in \HH$, we have
    \begin{equation}
        \eta(M\cdot\tau) = \psi(M,\sqrt{j_M}) \sqrt{j_M(\tau)} \eta(\tau).
    \end{equation}
    By \eqref{eq:RademacherInvariantInTermseta} in \Cref{lem:RademacherProperties}, we also have
    \begin{equation}
        \eta(M\cdot\tau) = e^{\frac{\pi i}{12}\Psi(M)} \sqrt{j_M(\tau)} \eta(\tau).
    \end{equation}
    Therefore,
    \begin{equation}
        \psi(M,\sqrt{j_M})
        = \frac{\eta(M\cdot\tau)}{\sqrt{j_M(\tau)}\eta(\tau)}
        = e^{\frac{\pi i}{12}\Psi(M)},
    \end{equation}
    completing the proof.
\end{proof}

\subsection{Properties of the Shintani--Faddeev phase}\label{sbsc:phaseprops}

The phase $(\psi^{-2}\chi_\r^{-1})(A)$ involves only $A$, whereas the phase $\SFPhase{t}{\p}$ involves both $A_t$ and $Q$. In order the establish a relation, we require some technical lemmas about the quadratic form $Q$.
\begin{lem}\label{lem:fjfqcmpsoddeven}
Let $(K,j,m,Q)$ be an admissible tuple, let $f$ be the conductor of $Q$, and define
    \eag{
     \la a, b, c\ra &= \frac{f_{j}}{f} Q.
    }
\begin{enumerate}
\item If $d_j$ is even, then $a$, $b$, $c$ are all odd.
\item If $d_j\equiv 1 \Mod{4}$, then 
\begin{alignat}{2}
&\text{either} \qquad & b&\equiv 0 \Mod{4}, \ ac \equiv 1  \Mod{2}, 
\nn
&\text{or} \qquad &
b& \equiv 2 \Mod{4}, \ ac \equiv 0 \Mod{2}.
\end{alignat}
\item If $d_j \equiv 3 \Mod{4}$, then 
\begin{alignat}{2}
&\text{either} \qquad & b &\equiv 0 \Mod{4}, \ ac \equiv 0  \Mod{2}, 
\nn
&\text{or} \qquad &
b& \equiv 2  \Mod{4}, \ ac \equiv 1  \Mod{2}.
\end{alignat}
\end{enumerate}
\end{lem}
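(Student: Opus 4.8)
The plan is to work directly with the form $\la a,b,c\ra = \tfrac{f_j}{f}Q$ and exploit the relation between its discriminant and the level-$j$ discriminant $\Delta_j = (d_j-3)(d_j+1)$. Since $Q$ has conductor $f$ and fundamental discriminant $\Delta_0$, its discriminant is $f^2\Delta_0$, so the scaled form $\tfrac{f_j}{f}Q$ has discriminant
\eag{
b^2 - 4ac = \left(\frac{f_j}{f}\right)^2 f^2 \Delta_0 = f_j^2 \Delta_0 = \Delta_j = (d_j-3)(d_j+1),
}
using \Cref{lem:towerbasic}. (I should first check that $\tfrac{f_j}{f}Q$ really is an integral form, i.e.\ that $f \div f_j$; this is exactly the admissibility condition on $(K,j,m,Q)$, via \Cref{def:admissibleform} and \Cref{thm:UnitsConductorsIndicesProperties}\eqref{it:fdivfjeqjmindivj}.) The primitivity of $Q$ does \emph{not} immediately give primitivity of $\tfrac{f_j}{f}Q$, so instead I will reason about the $2$-adic valuations of $a$, $b$, $c$ directly from the value of $b^2-4ac \bmod$ powers of $2$ together with parity information about $d_j$.

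For part (1): if $d_j$ is even then by \Cref{lem:dfdelprops}\eqref{it:dfdelprop3a} we have $\Delta_0 \equiv 1 \Mod 4$, and by \Cref{lem:dfdelprops} (the statements about $f_j$ when $d_j$ is even, combined with \eqref{eq:fjSquaredDelta0Expression}) we have $f_j$ odd, hence $f \div f_j$ forces $f$ odd and $\tfrac{f_j}{f}$ odd. Now $b^2 - 4ac = (d_j-3)(d_j+1) \equiv (\text{odd})(\text{odd}) \equiv 1 \Mod 8$ since $d_j$ even means both factors are odd and in fact $d_j-3$, $d_j+1$ differ by $4$, so one is $\equiv 1$ and the other $\equiv 5 \pmod 8$ or both... — here I must do the short computation $(d_j-3)(d_j+1) = d_j^2 - 2d_j - 3$; with $d_j$ even this is $\equiv -3 \equiv 5 \Mod 8$ when $d_j \equiv 0 \Mod 4$ and $\equiv 4-4-3 = -3 \equiv 5 \Mod 8$ when $d_j\equiv 2\Mod 4$, so in all cases $b^2-4ac \equiv 5 \Mod 8$, forcing $b$ odd (so $b^2 \equiv 1 \Mod 8$) and then $4ac \equiv 1 - 5 = -4 \Mod 8$, i.e.\ $ac$ odd, so both $a$ and $c$ are odd. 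That establishes (1).

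For parts (2) and (3): when $d_j$ is odd, write $d_j = 2e+1$; then $b^2 - 4ac = (2e-2)(2e+2) = 4(e-1)(e+1) = 4(e^2-1)$, so $b$ is even, say $b = 2b_0$, and $b_0^2 - ac = e^2 - 1$. Now I split on $d_j \bmod 4$: $d_j \equiv 1 \Mod 4 \iff e$ even $\iff e^2 - 1 \equiv -1 \equiv 3 \Mod 4$, whereas $d_j \equiv 3 \Mod 4 \iff e$ odd $\iff e^2-1 \equiv 0 \Mod 4$. In the first case $b_0^2 - ac \equiv 3 \Mod 4$: if $b_0$ is even then $b \equiv 0 \Mod 4$ and $ac \equiv -3 \equiv 1 \Mod 4$ (so $ac$ odd), while if $b_0$ is odd then $b \equiv 2 \Mod 4$ and $ac \equiv b_0^2 - 3 \equiv 1 - 3 \equiv 2 \Mod 4$ (so $ac$ even) — this is exactly the dichotomy claimed in (2). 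In the second case $b_0^2 - ac \equiv 0 \Mod 4$: $b_0$ even gives $b \equiv 0 \Mod 4$ and $ac \equiv b_0^2 \equiv 0 \Mod 4$ (so $ac$ even), and $b_0$ odd gives $b \equiv 2 \Mod 4$ and $ac \equiv b_0^2 \equiv 1 \Mod 4$ (so $ac$ odd) — exactly the dichotomy in (3).

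\textbf{Main obstacle.} The only genuinely nonroutine point is making sure the bookkeeping on parities of $f$, $f_j$, and $\tfrac{f_j}{f}$ is airtight — in particular verifying that when $d_j$ is odd the discriminant identity $b^2 - 4ac = (d_j-3)(d_j+1)$ holds with $\la a,b,c\ra$ a genuine integral form (which needs $f\div f_j$, hence the admissibility hypothesis), and that I have not secretly assumed primitivity of $\tfrac{f_j}{f}Q$ anywhere. Everything else is a finite case check modulo $8$ and modulo $4$, so the proof is short once the discriminant identity and the parity of $f_j$ (from \Cref{lem:dfdelprops}) are in hand.
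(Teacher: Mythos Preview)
Your proof is correct and follows essentially the same approach as the paper: both rely on the discriminant identity $b^2-4ac=(d_j-3)(d_j+1)=(d_j-1)^2-4$ and then do a short mod-$8$ (resp.\ mod-$4$) case analysis according to the parity class of $d_j$. Your parameterization $d_j=2e+1$ in the odd case is equivalent to the paper's $d_j=4n+1$ or $d_j=4n+3$, and your explicit check that $(d_j-3)(d_j+1)\equiv 5\Mod 8$ when $d_j$ is even is just a minor rephrasing of the paper's contradiction argument; the extra remarks you make about $f_j$ being odd and $f\mid f_j$ are harmless but not actually needed for the argument.
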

\begin{proof}
We have
\eag{
  b^2-4ac &= \Delta_j = (d_j-1)^2 -4.
}
It follows that if $d_j$ is even, then
$b$ is odd.  Suppose $ac$ is not also odd.  Then
\eag{
b^2 &\equiv (d_j-1)^2 + 4 \Mod{8},
}
which is impossible, because $n^2 \equiv 1 \Mod{8}$ for every odd integer $n$.

Next, suppose $d_j \equiv 1 \Mod{4}$. 
Then $d_j =4 n+1$ for some integer $n$. 
Consequently
\eag{
b^2-4ac &= 16n^2-4.
}
It follows that $b$ is even, and
\eag{
\left(b/2\right)^2 - a c &= 4n^2 -1.
}
implying one of the pair $(b/2, ac)$ is even and the other odd.

Finally, suppose $d_j \equiv 3 \Mod{4}$. 
Then $d_j = 4n+3$ for some integer $n$. 
Consequently
\eag{
b^2-4ac &= 16n(n+1).
}
It follows that $b$ is even and
\eag{
\left(b/2\right)^2 -ac &= 4n(n+1),
}
implying that the numbers $b/2$, $ac$ are either both even or both odd.
\end{proof}
\begin{cor}\label{cor:fjmfqcmpsoddeven}
    Let $(K,j,m,Q)$ be an admissible tuple, let $f$ be the conductor of $Q$, and let
    \eag{
     \la a, b, c\ra &= \frac{f_{jm}}{f} Q.
    }
    Suppose $d_{j,m}$ is even.  
    Then $a, b, c$ are all odd.
\end{cor}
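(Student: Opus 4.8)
The plan is to reduce the statement to \Cref{lem:fjfqcmpsoddeven} by factoring out $r_{j,m}$. Since $(K,j,m,Q)$ is admissible we have $f\div f_j$, and because $r_{j,m}=f_{jm}/f_j$ it follows that $\frac{f_{jm}}{f}=r_{j,m}\cdot\frac{f_j}{f}$. Hence, writing $\la a',b',c'\ra:=\frac{f_j}{f}Q$, we get $\la a,b,c\ra=\la r_{j,m}a',\,r_{j,m}b',\,r_{j,m}c'\ra$. So it suffices to show that $r_{j,m}$ is odd and that $a'$, $b'$, $c'$ are all odd, since a product of odd integers is odd.

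First I would pin down the parity of $d_j$. If $d_j$ were odd, then \Cref{lem:dimgridtechres} (the ``$d_j$ odd'' case) would give that $d_{j,m}$ is odd for every $m$, contradicting the hypothesis that $d_{j,m}$ is even. Therefore $d_j$ is even, and \Cref{lem:fjfqcmpsoddeven}(1) immediately yields that $a'$, $b'$, $c'$ are all odd.

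It remains to check that $r_{j,m}$ is odd. With $d_j$ even, \Cref{lem:dimgridtechres} (the ``$d_j$ even'' case) says that $d_{j,m}$ is even if and only if $m\equiv 1\Mod 3$, and that $r_{j,m}$ is even if and only if $m\equiv 0\Mod 3$. Since $d_{j,m}$ is even we have $m\equiv 1\Mod 3$, hence in particular $m\not\equiv 0\Mod 3$, so $r_{j,m}$ is odd. Combining, $a=r_{j,m}a'$, $b=r_{j,m}b'$, $c=r_{j,m}c'$ are each a product of two odd integers, hence odd.

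There is no substantive obstacle here beyond the parity bookkeeping; the only mild subtlety is that $\frac{f_{jm}}{f}Q$ need not be a primitive form, but that is irrelevant because the statement concerns only the parities of its coefficients, exactly as in \Cref{lem:fjfqcmpsoddeven}.
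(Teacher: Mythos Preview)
Your proof is correct and follows essentially the same approach as the paper: factor $\frac{f_{jm}}{f}=r_{j,m}\cdot\frac{f_j}{f}$, use \Cref{lem:dimgridtechres} to deduce that $d_j$ is even and $r_{j,m}$ is odd from the hypothesis that $d_{j,m}$ is even, and then apply \Cref{lem:fjfqcmpsoddeven}(1) to conclude that the coefficients of $\frac{f_j}{f}Q$ are all odd. The paper's version is slightly terser, but the logic is identical.
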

\begin{proof}
    It follows from \Cref{lem:dimgridtechres} that $d_j$ is even, $m \equiv 1 \Mod{3}$, and $r_{j,m}$ is odd. We have
    \eag{
        \tfrac{f_{jm}}{f} Q &= r_{j,m} \tfrac{f_j}{f}Q
    }
    It follows from
    \Cref{lem:fjfqcmpsoddeven} that the coefficients of $\frac{f_j}{f} Q$ are all odd.  Since $r_{j,m}$ is odd, the same must be true of the coefficients of $\frac{f_{jm}}{f} Q$.
\end{proof}

The next result is the main technical lemma needed to relate the two phases by showing an agreement of signs.
\begin{lem}\label{lem:drafjfqp}
    Let $t=(d,r,Q)\sim (K,j,m,Q)$ be an admissible tuple and let $f$ be the conductor of $Q$.  Then
    \eag{
    (-1)^{\frac{f_j}{f}Q(\mbf{p})}&= (-1)^{1+\delta^{(2d)}_{A_t\mbf{p},\mbf{p}}}%
    }
    for all $\mbf{p}\in \mbb{Z}^2$. (See~\Cref{dfn:AssociatedStabilizers} for the definition of $A_t$).%
\end{lem}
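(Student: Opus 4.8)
The plan is to verify the claimed congruence $(-1)^{\frac{f_j}{f}Q(\mathbf{p})} = (-1)^{1+\delta^{(2d)}_{A_t\mathbf{p},\mathbf{p}}}$ by working modulo $2$ in the exponent, distinguishing cases according to the parity of $d=d_{j,m}$. Note first that $\delta^{(2d)}_{A_t\mathbf{p},\mathbf{p}}=1$ precisely when $(A_t-I)\mathbf{p}\in 2d\mathbb{Z}^2$, and $(A_t-I)\mathbf{p}\in d\mathbb{Z}^2$ always holds because $A_t\in\mathcal{S}_d(Q)\subseteq\Gamma(d)$. So the question is really whether $(A_t-I)\mathbf{p}\equiv\mathbf{0}\Mod{2d}$ or only $\Mod d$ but not $\Mod{2d}$. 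When $d$ is odd this is governed by the behavior mod $2$; when $d$ is even the information mod $d$ already pins down a lot and one needs the refinement $A_t\equiv(d+1)I\Mod{2d}$ from \Cref{tm:symgp}, equation \eqref{eq:AtMod2d}.

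First I would handle the case $d$ odd. Then $\frac{d}{2}(3+(-1)^d)=d$ so $\db=d$ (wait—here we need $2d$, so I will just track mod $2$ directly), and the key point is that when $d$ is odd, $(A_t-I)\mathbf{p}\in 2d\mathbb{Z}^2$ iff $(A_t-I)\mathbf{p}\in d\mathbb{Z}^2$ AND $(A_t-I)\mathbf{p}\in 2\mathbb{Z}^2$; the former is automatic, so $\delta^{(2d)}_{A_t\mathbf{p},\mathbf{p}}=1\iff (A_t-I)\mathbf{p}\equiv\mathbf{0}\Mod 2$. Using $A_t=\canrep_Q(\vn^{j(2m+1)})$ from \eqref{eq:AStabilizerTermsUnit}, and writing $A_t=\frac{t_{A_t}}{2}I+n_{A_t}SQ$ via \Cref{tm:sqchar}(3) (or more concretely the Zauner-type decomposition), one has $A_t-I\equiv (\text{something})SQ$ plus a scalar mod $2$; I would reduce $A_t-I \Mod 2$ to an explicit $2\times 2$ matrix over $\mathbb{F}_2$ whose entries involve $a,b,c$ of $\frac{f_j}{f}Q$ and the trace/conductor data, then compute $(A_t-I)\mathbf{p}\Mod 2$ and compare with the quadratic form $Q(\mathbf{p})=ap_1^2+bp_1p_2+cp_2^2\Mod 2$. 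Here \Cref{lem:fjfqcmpsoddeven} parts (2) and (3) supply exactly the parity patterns of $(a,b,c)$ needed (splitting further according to $d_j\Mod 4$), and one checks the identity $Q(\mathbf{p})\equiv 1+\delta\Mod 2$ holds in each subcase by a finite check over $\mathbf{p}\in\mathbb{F}_2^2$.

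Next I would handle the case $d=d_{j,m}$ even. By \Cref{lem:dimgridtechres} this forces $d_j$ even and $m\equiv 1\Mod 3$, and by \Cref{cor:fjmfqcmpsoddeven} the coefficients of $\frac{f_{jm}}{f}Q$ are all odd; but I actually want $\frac{f_j}{f}Q$, whose coefficients are all odd by \Cref{lem:fjfqcmpsoddeven}(1). Hence $Q(\mathbf{p})\equiv p_1^2+p_1p_2+p_2^2\equiv p_1+p_1p_2+p_2 = (p_1+1)(p_2+1)+1\Mod 2$ (using $p_i^2\equiv p_i$), so the left side is $(-1)^{1+(p_1+1)(p_2+1)}$. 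On the other side, \eqref{eq:AtMod2d} gives $A_t\equiv (d+1)I\Mod{2d}$, so $(A_t-I)\mathbf{p}\equiv d\,\mathbf{p}\Mod{2d}$, which lies in $2d\mathbb{Z}^2$ iff $d\mathbf{p}\in 2d\mathbb{Z}^2$ iff $\mathbf{p}\in 2\mathbb{Z}^2$, i.e. $p_1\equiv p_2\equiv 0\Mod 2$. Thus $\delta^{(2d)}_{A_t\mathbf{p},\mathbf{p}}=1\iff p_1,p_2$ both even, and then $(p_1+1)(p_2+1)\equiv 1\Mod 2$, while if either is odd $\delta=0$ and $(p_1+1)(p_2+1)\equiv 0$. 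In both cases $(p_1+1)(p_2+1)\equiv\delta^{(2d)}_{A_t\mathbf{p},\mathbf{p}}\Mod 2$, so the two sides agree. This finishes the even case cleanly.

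The main obstacle I anticipate is the odd-$d$ case: unlike the even case, there is no slick global formula like \eqref{eq:AtMod2d} for $A_t\Mod 2$, so I must compute $A_t-I$ modulo $2$ from its description as $\canrep_Q$ of a unit power, keeping careful track of how the conductor $f$, the level $n_t$, and the trace enter, and then match against the somewhat delicate two-subcase parity analysis of $(a,b,c)$ from \Cref{lem:fjfqcmpsoddeven}. The bookkeeping—especially ensuring that the coefficient $n_{A_t}$ appearing in $A_t=\frac{\Tr(A_t)}{2}I+n_{A_t}S(\frac{f_{jm}}{f}Q)/(\cdots)$ is odd (or handling the case it is even) and that the scalar part $\frac{\Tr(A_t)}{2}-\frac{1}{2}$ has the right parity—is where an error could creep in, so I would organize it as: (i) reduce everything to $\frac{f_j}{f}Q\Mod 2$; (ii) express $A_t-I$ in the $\{I,S(\frac{f_j}{f}Q)\}$-basis with explicit integer coefficients using \Cref{tm:symgp}; (iii) reduce mod $2$; (iv) enumerate $\mathbf{p}\in\{0,1\}^2$ against the $(a,b,c)$-parity subcases. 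Each individual step is routine; it is the combination that requires care.
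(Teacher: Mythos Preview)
Your proposal is correct and follows essentially the same route as the paper: the even-$d$ case matches the paper's Case~1, and your plan for odd $d$---reducing $A_t-I$ modulo $2$ in the $\{I,S\bar Q\}$-basis and checking against the parity patterns of \Cref{lem:fjfqcmpsoddeven}---is precisely the content of the paper's Cases~2--4. The one organizational device the paper uses that you do not state explicitly is the factorization $A_t-I=dH$ with $H=d\tfrac{d_j-3}{2}I+(r_{j,m+1}-r_{j,m})S\bar Q$, which treats both parities of $d$ uniformly (reducing the question to $H\mathbf{p}\equiv\mathbf 0\Mod 2$) and gives cleaner coefficients for the mod-$2$ bookkeeping you anticipate as the main obstacle.
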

\begin{proof}
    We begin by finding an expression for $A_t-I$. 
 Using \Cref{tm:symgp} together with \Cref{df:etaq},
 \Cref{lem:towerbasic}, and \Cref{dfn:fjrjmdjm},
 we have
    \eag{
     A_t -I &= \canrep_Q(\vn^{j(2m+1)})-I
     \nn
     &=\left(\frac{d_{j(2m+1)}-3}{2}\right)I + \frac{f_{j(2m+1)}}{f} SQ
     \nn
     &=\left(\frac{d_{j(2m+1)}-3}{2}\right)I + r_{j,2m+1} S\bar{Q}
    }
    where
    \eag{\label{eq:barQDefinition}
    \bar{Q}&= \tfrac{f_j}{f}Q.
    }
    It follows from \Cref{prop:rdtrmstheta} that
    \eag{
        \frac{d_{j(2m+1)}-3}{2}&= \cosh(2m+1)j\theta-1
        \nn
        &= 2\sinh^2\frac{(2m+1)j\theta}{2}
        \nn
        &= 2d^2\sinh^2 \frac{j\theta}{2}
        \nn
        &= d^2(\cosh j\theta-1)
        \nn
        &= d^2\left(\frac{d_j-3}{2}\right),
    }
    while Lemma~\ref{lem:dimgridtechres} implies $r_{j,2m+1} = r^2_{j,m+1}-r^2_{j,m}=(r_{j,m+1}-r_{j,m})d$.  Hence
    \eag{
        A_t-I &= dH,
        \label{eq:amidjmh}
    }
    where
    \eag{
    H &= d\left(\frac{d_j-3}{2}\right)I + (r_{j,m+1}-r_{j,m})S\bar{Q}.
    \label{eq:hdjmrjmrjmsbq}
    }
    So the problem is to show that, for all $\mbf{p}$, \eag{
    \bar{Q}(\mbf{p})&\equiv 0 \Mod{2} &&\iff & H\mbf{p} &\equiv \zero \Mod{2}
    }
    Set $\bar{Q}=\la a, b, c\ra$.
    There are four cases to consider.

    \sbhd{Case 1.} $d_{j}$ even, $m \equiv 1 \Mod{3}$. 
    It follows from Lemma~\ref{lem:fjfqcmpsoddeven} that $a$, $b$, $c$ are all odd. 
    So 
    \eag{
     \bar{Q}(\mbf{p}) &\equiv p_1+p_1p_2 + p_2 \Mod{2},
    }
    implying $Q(\mbf{p})  \equiv 0\Mod{2}$ if and only if $p_1\equiv p_2\equiv 0  \Mod{2}$. 
    On the other hand it follows from Lemma~\ref{lem:dimgridtechres} that $d$ is even, which in view of Theorem~\ref{tm:symgp}, means $H \equiv I \Mod{2}$. 
    So $H \mbf{p} \equiv \zero \Mod{2}$ if and only if $p_1=p_2 \equiv 0 \Mod{2}$.

    \sbhd{Case 2.} $d_{j}$ even, $m \not\equiv 1 \Mod{3}$. 
    As before $a, b, c$ all odd, implying $\bar{Q}(\mbf{p}) \equiv 0 \Mod{2}$ if and only if $p_1 \equiv p_2 \equiv 0 \Mod{2}$. 
  On the other hand it follows from Lemma~\ref{lem:dimgridtechres} that $d$ and $r_{j,m+1}-r_{j,m}=d-2r_{j,m}$ are odd. 
  So
  \eag{
  H &\equiv \bmt \frac{d(d_j-3)-(r_{j,m+1}-r_{j,m})b}{2} & 1
  \\
  1 & 
  \frac{d(d_j-3)+(r_{j,m+1}-r_{j,m})b}{2}
  \emt
  \quad \Mod{2}.
  }
  Since 
  \eag{
  \frac{d(d_j-3)-(r_{j,m+1}-r_{j,m})b}{2} + \frac{d(d_j-3)+(r_{j,m+1}-r_{j,m})b}{2} \equiv 1 \quad \Mod{2},
  }
  $H \equiv \smt{1 & 1 \\ 1 & 0}$ or $\smt{0&1 \\ 1 & 1} \Mod{2}$. Consequently $H\mbf{p} \equiv \zero \Mod{2}$ if and only if $p_1 \equiv p_2 \equiv 0\Mod{2}$.

  \sbhd{Case 3.} $d_j \equiv 1 \Mod{4}$. Then $(d_j-3)/2$ is odd.  It follows from Lemma~\ref{lem:dimgridtechres} that $d$ and  $r_{j,m+1}-r_{j,m}=d-2r_{j,m}$ are  also odd.
 So
  \eag{
  H &\equiv I + S\bar{Q} \quad \Mod{2}.
  }
In view of Lemma~\ref{lem:fjfqcmpsoddeven} there are four possibilities:
\begin{enumerate}
    \item $b \equiv 0 \Mod{4}$ and $a \equiv c \equiv 1 \Mod{2}$.  We have
    \eag{
     Q(\mbf{p}) &\equiv p_1 + p_2 \quad \Mod{2}, & H &\equiv \bmt 1 & 1 \\ 1 & 1
     \emt \quad \Mod{2}\,.
    }
    So $\bar{Q}(\mbf{p}) \equiv 0 \Mod{2} \iff p_1 + p_2 \equiv 0 \Mod{2}\iff H\mbf{p} \equiv \zero\Mod{2}$.
   \item $b \equiv 2 \Mod{4}$,  $a \equiv 1 \Mod{2}$, $c \equiv 0\Mod{2}$. 
   \eag{
        \bar{Q}(\mbf{p}) &\equiv p_1 \quad \Mod{2}, & H &= \bmt 0 & 0 \\ 1 & 0
     \emt \quad \Mod{2}.
   }
   So $\bar{Q}(\mbf{p}) \equiv 0 \Mod{2} \iff p_1 \equiv 0 \Mod{2} \iff H\mbf{p} \equiv \zero \Mod{2}$.
   \item $b \equiv 2 \Mod{4}$, $a \equiv 0 \Mod{2}$, $c \equiv 1\Mod{2}$.
   \eag{
        \bar{Q}(\mbf{p}) &\equiv p_2 \quad\Mod{2}, & H &\equiv \bmt 0 & 1 \\ 0 & 0
     \emt \quad \Mod{2}\,.
   }
   So $\bar{Q}(\mbf{p}) \equiv 0 \Mod{2}\iff p_2 \equiv 0 \Mod{2} \iff H\mbf{p} \equiv \zero \Mod{2}$.
   \item $b \equiv 2 \Mod{4}$ and $a \equiv c \equiv 0\Mod{2}$.  We have
       \eag{
     \bar{Q}(\mbf{p}) &\equiv 0 \quad \Mod{2}, & H &\equiv \bmt 0 & 0 \\ 0 & 0
     \emt \quad \Mod{2}.
    }
    So $\bar{Q}(\mbf{p}) \equiv 0 \Mod{2}$ and $H\mbf{p} \equiv \zero\Mod{2}$ for all $\mbf{p}$.
\end{enumerate}

  \sbhd{Case 4.} $d_j \equiv 3 \Mod{4}$. Then $(d_j-3)/2$ is even.  It follows from Lemma~\ref{lem:dimgridtechres} that $d$ and $r_{j,m+1}-r_{j,m}=d-2r_{j,m}$ are odd. 
  So
  \eag{
  H &\equiv S\bar{Q} \quad \Mod{2}\,.
  }
In view of Lemma~\ref{lem:fjfqcmpsoddeven} there are four possibilities:
\begin{enumerate}
    \item $b \equiv 0 \Mod{4}$, $a \equiv 1 \Mod{2}$, $c \equiv 0 \Mod{2}$.  We have
    \eag{
     \bar{Q}(\mbf{p}) &\equiv p_1 \quad \Mod{2}, & H &\equiv \bmt 0 & 0 \\ 1 & 0
     \emt \quad \Mod{2}.
    }
    So $ \bar{Q}(\mbf{p}) \equiv 0 \Mod{2} \iff p_1  = 0 \Mod{2} \iff H\mbf{p} = \zero \Mod{2}$.
   \item $b \equiv 0 \Mod{4}$, $a \equiv 0 \Mod{2}$, $c \equiv 1 \Mod{2}$.
   \eag{
        \bar{Q}(\mbf{p}) &\equiv p_2 \quad \Mod{2}, & H &\equiv \bmt 0 & 1 \\ 0 & 0
     \emt \quad \Mod{2}.
   }
   So $ \bar{Q}(\mbf{p}) \equiv 0 \Mod{2}\iff p_2 \equiv 0 \Mod{2}\iff H\mbf{p} \equiv \zero \Mod{2}$.
   \item $b \equiv 0 \Mod{4}$, $a \equiv c \equiv 0\Mod{2}$.
   \eag{
        \bar{Q}(\mbf{p}) &\equiv 0 \quad \Mod{2}, & H &\equiv \bmt 0 & 0 \\ 0 & 0
     \emt \quad \Mod{2}.
   }
   So $\bar{Q}(\mbf{p}) \equiv 0 \Mod{2}$ and $ H\mbf{p} \equiv \zero \Mod{2}$ for all $\mbf{p}$.
   \item $b \equiv 2 \Mod{4}$ and $a \equiv c \equiv 1 \Mod{2}$. We have
   \eag{
     \bar{Q}(\mbf{p}) &\equiv p_1+p_2  \quad \Mod{2}, & H &\equiv \bmt 1 & 1 \\ 1 & 1
     \emt \quad \Mod{2}.
    }
 So $\bar{Q}(\mbf{p}) \equiv 0 \Mod{2} \iff p_1 + p_2  \equiv 0 \Mod{2}\iff H\mbf{p} \equiv \zero \Mod{2}$.
\end{enumerate}
This completes the proof of \Cref{eq:hdjmrjmrjmsbq} in all four cases and thus proves the lemma.
\end{proof}

The sign calculation we have just finished allows us to establish an equality between the square of the SF phase and a product of eta-multiplier and theta-multiplier values.
\begin{thm}[Phase relation]\label{thm:phaserelation}
    Let $t = (d,r,Q)$ be an admissible tuple, and let $\p \in \Z^2/d\Z^2$. Then,
    \begin{equation}
        \SFPhase{t}{\p}^2 = (\psi^{-2}\chi_{d^{-1}\p}^{-1})(A_t). 
    \end{equation}
\end{thm}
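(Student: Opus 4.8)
The plan is to expand both sides of the claimed identity explicitly and reduce to two separate verifications: a sign-parity statement and a quadratic-exponent (root of unity) statement. First I would unwind the left-hand side from \Cref{dfn:SFKPhase}. Squaring the SF phase gives
\begin{equation}
\SFPhase{t}{\p}^2 = (-1)^{2s_d(\p)} e^{-\tfrac{\pi i}{6}\rade(A_t)} \rtu_d^{-\tfrac{2f_{jm}}{f}Q(\p)} = e^{-\tfrac{\pi i}{6}\rade(A_t)} \rtu_d^{-\tfrac{2f_{jm}}{f}Q(\p)},
\end{equation}
since $(-1)^{2s_d(\p)}=1$. So the explicit sign $(-1)^{s_d(\p)}$ drops out entirely upon squaring, which is convenient. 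On the right-hand side, I would use the formula \eqref{eq:chidefn} for $\chi_\r$ with $\r = d^{-1}\p$, giving $\chi_{d^{-1}\p}^{-1}(A_t) = (-1)^{1+\delta^{(2)}_{A_t d^{-1}\p, d^{-1}\p}} e^{\pi i \symp{A_t d^{-1}\p}{d^{-1}\p}}$, and \Cref{prop:rademacher} together with \Cref{tm:symgp} (which guarantees $A_t$ has positive trace and, when $d>2$, nonzero lower-left entry—the degenerate cases can be handled separately or are excluded by admissibility) to write $\psi(A_t)^{-2} = e^{-\tfrac{\pi i}{6}\rade(A_t)}$ via $\psi(A_t,\sqrt{j_{A_t}}) = e^{\tfrac{\pi i}{12}\rade(A_t)}$.

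After these substitutions, the Rademacher/eta factor $e^{-\tfrac{\pi i}{6}\rade(A_t)}$ matches on both sides and cancels. What remains is to prove the identity of roots of unity
\begin{equation}
\rtu_d^{-\tfrac{2f_{jm}}{f}Q(\p)} = (-1)^{1+\delta^{(2)}_{A_t d^{-1}\p, d^{-1}\p}} e^{\pi i \symp{A_t d^{-1}\p}{d^{-1}\p}}.
\end{equation}
Here I would split the problem into its ``real'' (sign) part and the genuinely $d$-dependent root of unity part. For the sign part, note $\delta^{(2)}_{A_t d^{-1}\p, d^{-1}\p} = \delta^{(2d)}_{A_t\p,\p}$ since $A_t d^{-1}\p - d^{-1}\p \in \Z^2$ iff $A_t\p - \p \in d\Z^2$ and reduction mod $2$ of the $\tfrac{1}{d}\Z^2$ coordinates corresponds to reduction mod $2d$ of the $\Z^2$ coordinates. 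This is exactly the quantity appearing in \Cref{lem:drafjfqp}, which gives $(-1)^{\tfrac{f_j}{f}Q(\p)} = (-1)^{1+\delta^{(2d)}_{A_t\p,\p}}$. Since $f_{jm} = r_{j,m} f_j$ and I will need to track the parity of $r_{j,m}$ (using \Cref{lem:dimgridtechres}), the factor $(-1)^{\tfrac{f_{jm}}{f}Q(\p)} = (-1)^{r_{j,m}\tfrac{f_j}{f}Q(\p)}$ equals $(-1)^{1+\delta^{(2d)}_{A_t\p,\p}}$ when $r_{j,m}$ is odd, and is $+1$ when $r_{j,m}$ is even—so one must check the even case is compatible, i.e. that $\delta^{(2d)}_{A_t\p,\p}$ is forced to a value making both sides agree; this uses \eqref{eq:AtMod2d}, $A_t \equiv (d+1)I \Mod{2d}$ when $d$ is even, combined with the parity analysis.

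For the remaining root of unity part, I would compute $\symp{A_t d^{-1}\p}{d^{-1}\p} = d^{-2}\symp{A_t\p}{\p}$ directly using $A_t - I = dH$ from \eqref{eq:amidjmh}, so $A_t\p = \p + dH\p$ and hence $\symp{A_t\p}{\p} = \symp{\p + dH\p}{\p} = d\symp{H\p}{\p}$, giving $\symp{A_t d^{-1}\p}{d^{-1}\p} = d^{-1}\symp{H\p}{\p}$. Then with $H = d\bigl(\tfrac{d_j-3}{2}\bigr)I + (r_{j,m+1}-r_{j,m})S\bar Q$ from \eqref{eq:hdjmrjmrjmsbq} and the identity $\symp{SM\p}{\p} = -2(M\p)\cdot\p$-type computation (expanding $\bar Q$ as its scaled Hessian matrix), one gets $\symp{H\p}{\p} = (r_{j,m+1}-r_{j,m}) \cdot 2 \bar Q(\p) = 2(r_{j,m+1}-r_{j,m})\tfrac{f_j}{f}Q(\p)$ modulo the $I$-term which contributes $0$ to the symplectic form. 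So $e^{\pi i \symp{A_t d^{-1}\p}{d^{-1}\p}} = e^{\tfrac{2\pi i}{d}(r_{j,m+1}-r_{j,m})\tfrac{f_j}{f}Q(\p)} = \rtus_d^{(r_{j,m+1}-r_{j,m})\tfrac{f_j}{f}Q(\p)}$. Matching this against $\rtu_d^{-\tfrac{2f_{jm}}{f}Q(\p)}$: since $\rtu_d = -e^{\pi i/d}$ and $\rtu_d^2 = \rtus_d \cdot (\text{sign})$... actually $\rtu_d^{2k} = e^{2\pi i k/d} = \rtus_d^k$, so $\rtu_d^{-2f_{jm}Q(\p)/f} = \rtus_d^{-f_{jm}Q(\p)/f} = \rtus_d^{-r_{j,m}f_j Q(\p)/f}$, and I need $-r_{j,m} \equiv r_{j,m+1} - r_{j,m} \pmod d$ (as multipliers of $\tfrac{f_j}{f}Q(\p)$ inside the exponent mod $d$), i.e. $r_{j,m+1} \equiv 0 \pmod d$—but $d = d_{j,m} = r_{j,m+1}+r_{j,m}$, so $r_{j,m+1} \equiv -r_{j,m} \pmod d$ and thus $-r_{j,m} \equiv r_{j,m+1} \equiv r_{j,m+1}-r_{j,m}+r_{j,m} $, giving exactly the needed congruence after rearrangement; one must be careful that $\tfrac{f_j}{f}Q(\p)$ is an integer and that the congruence holds with the correct periodicity. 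I expect the main obstacle to be bookkeeping the $2d$ vs $d$ distinctions (the even-dimensional case where $\rtu_d$ has order $2d$ rather than $d$) and ensuring the sign factors from \Cref{lem:drafjfqp} and from the $\rtu_d \leftrightarrow \rtus_d$ conversion are consistently tracked; the even/odd parity of $r_{j,m}$ and the congruence \eqref{eq:AtMod2d} will be the key tools for closing that gap.
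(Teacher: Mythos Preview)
Your overall architecture matches the paper's: square the SF phase, identify the $\psi^{-2}$ factor via \Cref{prop:rademacher}, and reduce to computing $\chi_{d^{-1}\p}^{-1}(A_t)$ using $A_t - I = dH$ and \Cref{lem:drafjfqp}. However, the execution has two linked errors that derail the sign bookkeeping.

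First, the symplectic-form computation is off by a factor of $2$: for a symmetric $M$ (the scaled Hessian), one has $\symp{SM\p}{\p} = \p^{\rm T} M\p = M(\p)$, not $2M(\p)$. So $\symp{H\p}{\p} = (r_{j,m+1}-r_{j,m})\bar Q(\p)$, and hence
\[
e^{\pi i \symp{A_t d^{-1}\p}{d^{-1}\p}} = e^{\frac{\pi i}{d}(r_{j,m+1}-r_{j,m})\frac{f_j}{f}Q(\p)},
\]
which is \emph{not} directly a power of $\rtus_d$. The clean way to proceed (and what the paper does) is to write $r_{j,m+1}-r_{j,m} = d - 2r_{j,m}$, splitting the exponential as
\[
e^{\pi i \frac{f_j}{f}Q(\p)}\cdot e^{-\frac{2\pi i}{d}r_{j,m}\frac{f_j}{f}Q(\p)} = (-1)^{\frac{f_j}{f}Q(\p)}\,\rtus_d^{-\frac{f_{jm}}{f}Q(\p)}.
\]
The sign $(-1)^{f_j Q(\p)/f}$ now cancels $(-1)^{1+\delta^{(2d)}_{A_t\p,\p}}$ directly by \Cref{lem:drafjfqp}, and the $\rtus_d$ factor matches the squared SF phase.

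Second, your attempt to locate the sign in $(-1)^{f_{jm}Q(\p)/f}$ and then case-split on the parity of $r_{j,m}$ is on the wrong track: the relevant sign is $(-1)^{f_j Q(\p)/f}$ (with $f_j$, not $f_{jm}$), and it arises naturally from the $d$ in $d-2r_{j,m}$, not from any parity of $r_{j,m}$. Once you use that decomposition, no case analysis on $r_{j,m}$ is needed and the even-$d$ vs.\ odd-$d$ bookkeeping becomes trivial.
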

\begin{proof}
    Let $f$ be the conductor of $Q$.
    By \Cref{dfn:SFKPhase}, the square of the SF-phase is
    \begin{equation}\label{eq:sfphasesquare}
        \SFPhase{t}{\p}^2
        = \left((-1)^{s_d(\p)}e^{-\frac{\pi i}{12}\Psi(A_t)}\rtu_d^{-\frac{f_{jm}}{f}Q(\p)}\right)^2
        = e^{-\frac{\pi i}{6}\Psi(A_t)}\rtus_d^{-\frac{f_{jm}}{f}Q(\p)}.
    \end{equation}
    The matrix $A_t = \smmattwo{\ma}{\mb}{\mc}{\md}$ satisfies the conditions
    $\gamma \neq 0$ and $\Tr(A_t)>0$.
    By \Cref{prop:rademacher}, we have
    \begin{equation}\label{eq:gphasesubcalc}
        \psi^{-2}(A_t)
        = \left(e^{\frac{\pi i}{12}\Psi(A_t)}\right)^{-2}
        = e^{-\frac{\pi i}{6}\Psi(A_t)}
    \end{equation}
    It follows from \eqref{eq:amidjmh}, \eqref{eq:hdjmrjmrjmsbq}, and \eqref{eq:barQDefinition} that
    \begin{align}
        \la A_t\mbf{p}, \mbf{p}\ra 
        = -\la \mbf{p}, A_t\mbf{p}\ra 
        &= -\frac{df_j(r_{j,m+1}-r_{j,m})}{f} \la \mbf{p},SQ\mbf{p}\ra 
        \nn
        &= \frac{df_j(d-2r_{j,m})}{f}Q(\mbf{p})
        \nn
        &= \left(\frac{d^2f_j}{f}-\frac{2df_{jm}}{f}\right) Q(\mbf{p}).
    \end{align}
    Hence the character value $\chi_{d^{-1}\p}(A_t)$ can therefore be written as
    \begin{align}
        \chi_{d^{-1}\p}^{-1}(A_t)
        &= (-1)^{1+\delta_{A_t(d^{-1}\p),d^{-1}\p}^{(2)}} e^{\frac{\pi i}{d^2}\langle A_t\p, \p \rangle} \nn
        &= (-1)^{1+\delta_{A_t\p,\p}^{(2d)}} e^{\frac{\pi i}{d^2}\left(\frac{d^2f_j}{f}-\frac{2df_{jm}}{f}\right) Q(\mbf{p})} \nn
        &= (-1)^{1+\delta_{A_t\p,\p}^{(2d)}+\frac{f_j}{f}}\rtus_d^{-\frac{f_{jm}}{f}Q(\p)} \nn
        &= \rtus_d^{-\frac{f_{jm}}{f}Q(\p)}, \label{eq:lphasesubcalc}
    \end{align}
    where we have used \Cref{lem:drafjfqp} in the last step.
    Thus, plugging \eqref{eq:gphasesubcalc} and \eqref{eq:lphasesubcalc} into \eqref{eq:sfphasesquare},
    \begin{equation}
        \phi_\p(t)^2
        = \psi^{-2}(A_t) \chi_{d^{-1}\p}^{-1}(A_t)
        = (\psi^{-2}\chi_{d^{-1}\p}^{-1})(A_t),
    \end{equation}
    completing the proof.
\end{proof}

\subsection{Properties of the ghost overlaps}\label{sbsc:ghostprops}

We now apply the results on the SF phase, which appears in the definition of the candidate ghost overlaps $\normalizedGhostOverlapC{t}{\mbf{p}}$, to establish some relations satisfied by the $\normalizedGhostOverlapC{t}{\mbf{p}}$. 

\begin{lem}\label{lem:nupperiodicity}
Let $t=(d,r,Q)$ be an admissible tuple. Then
  for all $\mbf{p},\mbf{p}'\in \mbb{Z}^2$ such that $\mbf{p}' \equiv \mbf{p} \Mod{d}$ and $\mbf{p}',
  \mbf{p} \not\equiv \zero \Mod{d}$,
    \eag{
    \normalizedGhostOverlapC{t}{\mbf{p}'}
     &= \rtu_d^{\la \mbf{p}',\mbf{p}\ra} \normalizedGhostOverlapC{t}{\mbf{p}}.
    }
\end{lem}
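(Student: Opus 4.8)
The claim is a quasi-periodicity property of the candidate normalized ghost overlaps $\normalizedGhostOverlapC{t}{\mbf{p}}$ under translation of $\mbf{p}$ by a multiple of $d$. Recall from \Cref{dfn:GhostOverlaps} that
\[
\normalizedGhostOverlapC{t}{\mbf{p}} = \SFPhase{t}{\mbf{p}}\,\sfc{d^{-1}\mbf{p}}{A_t}{\qrt_t},
\]
so the statement will follow once we understand how each of the two factors changes when $\mbf{p}\mapsto \mbf{p}'$ with $\mbf{p}'\equiv\mbf{p}\Mod d$. Write $\mbf{p}'=\mbf{p}+d\mbf{s}$ with $\mbf{s}\in\mbb{Z}^2$. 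The plan is to compute the ratio $\normalizedGhostOverlapC{t}{\mbf{p}'}/\normalizedGhostOverlapC{t}{\mbf{p}}$ as a product of two contributions and show it equals $\rtu_d^{\la\mbf{p}',\mbf{p}\ra}$.

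First I would handle the cocycle factor. Since $d^{-1}\mbf{p}' = d^{-1}\mbf{p} + \mbf{s}$ differs from $d^{-1}\mbf{p}$ by an integer vector, and $A_t\in\mcl{S}_d(Q)\subseteq\Gamma(d)\subseteq\Gamma_{d^{-1}\mbf{p}}$ fixes $\qrt_t$ (by \Cref{tm:symgp} and \Cref{lm:lactqrt}, with $\qrt_t\in\DD_{A_t}$ by \Cref{thm:ghostWellDefinedCondition}), \Cref{lm:shinperiodicity} gives immediately
\[
\sfc{d^{-1}\mbf{p}'}{A_t}{\qrt_t} = \sfc{d^{-1}\mbf{p}+\mbf{s}}{A_t}{\qrt_t} = \sfc{d^{-1}\mbf{p}}{A_t}{\qrt_t}.
\]
So the cocycle factor is \emph{unchanged}, and the entire ratio comes from the SF phase. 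Next I would compute $\SFPhase{t}{\mbf{p}'}/\SFPhase{t}{\mbf{p}}$ directly from \Cref{dfn:SFKPhase}. The factor $e^{-\frac{\pi i}{12}\rade(A_t)}$ is independent of $\mbf{p}$, so it cancels. The remaining pieces are the sign $(-1)^{s_d(\mbf{p})}$ with $s_d(\mbf{p}) = d + (1+d)(1+p_1)(1+p_2)$, and the root-of-unity factor $\rtu_d^{-\frac{f_{jm}}{f}Q(\mbf{p})}$. For the latter, expand $Q(\mbf{p}+d\mbf{s})$ using bilinearity of the Hessian: $Q(\mbf{p}+d\mbf{s}) = Q(\mbf{p}) + 2d\,\mbf{s}^{\top}Q\mbf{p} + d^2 Q(\mbf{s})$, so that
\[
\rtu_d^{-\frac{f_{jm}}{f}\left(Q(\mbf{p}')-Q(\mbf{p})\right)} = \rtu_d^{-\frac{f_{jm}}{f}\left(2d\,\mbf{s}^{\top}Q\mbf{p} + d^2Q(\mbf{s})\right)}.
\]
Here I would use that $\rtu_d$ is a $\db$-th root of unity together with $\frac{f_{jm}}{f}Q\mbf{p}\in\mbb{Z}^2$ (since $\frac{f_{jm}}{f}Q = r_{j,m}\frac{f_j}{f}Q$ has integer entries, the form $\frac{f_j}{f}Q$ being integral), reducing the exponent modulo $\db$; the $d^2 Q(\mbf{s})$ term and the parity of $d$ in $2d$ versus $\db$ require care, and I would track them using $\rtu_d^{d} = (-1)^{d+1}$ together with the sign factor $(-1)^{s_d(\mbf{p}')-s_d(\mbf{p})}$ computed from the change $(1+d)\bigl((1+p_1')(1+p_2') - (1+p_1)(1+p_2)\bigr)$ modulo $2$. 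The goal is to show the combined change in phase equals $\rtu_d^{\la\mbf{p}',\mbf{p}\ra}$, using $\la\mbf{p}',\mbf{p}\ra = \la\mbf{p}+d\mbf{s},\mbf{p}\ra = d\la\mbf{s},\mbf{p}\ra$ and again $\rtu_d^d=(-1)^{d+1}$, so that $\rtu_d^{\la\mbf{p}',\mbf{p}\ra} = (-1)^{(d+1)\la\mbf{s},\mbf{p}\ra}$ --- note in particular that when $d$ is odd this is $1$ and the statement reduces to genuine $d$-periodicity. For comparison, \eqref{eq:Dpperiodicity} shows $D_{\mbf{p}'} = (-1)^{(d+1)\la\mbf{p},\mbf{s}\ra}D_{\mbf{p}} = (-1)^{(d+1)\la\mbf{s},\mbf{p}\ra}D_{\mbf{p}}$, which is consistent with \Cref{lem:GhostFiducialIndependenceOfTransversal} asserting $\normalizedGhostOverlapC{t}{\mbf{p}'}D_{\mbf{p}'} = \normalizedGhostOverlapC{t}{\mbf{p}}D_{\mbf{p}}$, and this consistency check is a useful guide for the sign bookkeeping.

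The main obstacle I anticipate is the parity bookkeeping in the even-dimensional case: carefully reconciling the $2d$-periodicity inherent in $\rtu_d$ (which has order $\db = 2d$), the $d^2 Q(\mbf{s})$ cross-term which is only guaranteed divisible by $d$ not $2d$, and the quadratic sign factor $(-1)^{s_d(\mbf{p})}$ --- all of which conspire to produce exactly the factor $(-1)^{(d+1)\la\mbf{s},\mbf{p}\ra}$. I would organize this by first reducing $\frac{f_{jm}}{f}\bigl(2d\,\mbf{s}^{\top}Q\mbf{p} + d^2 Q(\mbf{s})\bigr)$ modulo $\db = 2d$, splitting into the $d$ odd case (where everything is clean and $\rtu_d^{2d\text{-periodic}}$ collapses) and the $d$ even case, where I would invoke \Cref{cor:fjmfqcmpsoddeven} (telling us the coefficients of $\frac{f_{jm}}{f}Q$ are all odd when $d_{j,m}=d$ is even) to control $Q(\mbf{s})\bmod 2$ in terms of the parities of $s_1,s_2$, then match this against the change in $(1+d)(1+p_1)(1+p_2)$. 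This is exactly the style of case analysis carried out in the proof of \Cref{lem:drafjfqp}, and I expect a similar four-case (or two-case) split keyed to $d_j \bmod 4$ will close the argument.
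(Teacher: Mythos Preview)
Your proposal is correct and follows essentially the same route as the paper: split $\normalizedGhostOverlapC{t}{\mbf{p}}$ into the cocycle factor (unchanged by \Cref{lm:shinperiodicity}) and the SF phase, then compute the latter's change directly, splitting into $d$ odd (trivial) and $d$ even. One minor point: you anticipate needing a case split keyed to $d_j\bmod 4$, but the paper's argument is simpler than that --- once $d$ is even, $\mbf{p}'\equiv\mbf{p}\Mod 2$ kills the sign factor outright, the $2ad$, $2cd$, and $d^2$ contributions to $Q(\mbf{p}')-Q(\mbf{p})$ all vanish modulo $\db=2d$, and only the cross term $bd(p_1s_2+p_2s_1)$ survives, where the single fact that $b$ is odd (from \Cref{cor:fjmfqcmpsoddeven}) gives $\rtu_d^{-bd(p_1s_2+p_2s_1)} = (-1)^{p_1s_2+p_2s_1} = \rtu_d^{\la\mbf{p}',\mbf{p}\ra}$.
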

\begin{proof}
We have
\eag{
\SFPhase{t}{\mbf{p}'}&= (-1)^{s_d(\mbf{p}')-s_d(\mbf{p})}\rtu_d^{-\frac{f_{jm}}{f}(Q(\mbf{p}')-Q(\mbf{p}))} 
\SFPhase{t}{\mbf{p}}
\nn
&= (-1)^{s_d(\mbf{p}')-s_d(\mbf{p})} \rtu_d^{-a(p^{\prime 2}_1-p_1^2)-b(p'_1p'_2-p_1p_2)-c(p^{\prime 2}_2-p_2^2)}
\SFPhase{t}{\mbf{p}}
}
where we have set $\frac{f_{jm}}{f}Q=\la a, b, c\ra$. If $d$ is odd, then
\eag{
(-1)^{s_d(\mbf{p}')-s_d(\mbf{p})} \rtu_d^{-a(p^{\prime 2}_1-p_1^2)-b(p'_1p'_2-p_1p_2)-c(p^{\prime 2}_2-p_2^2)} = 1 = \rtu_d^{\la \mbf{p}',\mbf{p}\ra}.
}
Suppose, on the other hand, that $d$ is even.  Then $\mbf{p}' \equiv \mbf{p} \Mod{2}$, implying
\eag{
(-1)^{s_d(\mbf{p}')-s_d(\mbf{p})}  &= 1
}
Also, it follows from Lemma~\ref{cor:fjmfqcmpsoddeven} that $b$ is odd.  So, setting $\mbf{p}' = \mbf{p} + d\mbf{q}$,
\eag{
\rtu_d^{-a(p^{\prime 2}_1-p_1^2)-b(p'_1p'_2-p_1p_2)-c(p^{\prime 2}_2-p_2^2)}
&= \rtu_d^{-ad(p'_1+p_1)q_1-((p_1+dq_1)(p_2+dq_2)-p_1p_2)-cd(p'_2+p_2)q_2}
\nn
&= (-1)^{q_1p_2+q_2p_1}
\nn
&= \rtu_d^{\la \mbf{p}',\mbf{p}\ra}.
}
We conclude that
\eag{
\SFPhase{t}{\mbf{p}'}&= \rtu_d^{\la \mbf{p}',\mbf{p}\ra} 
\SFPhase{t}{\mbf{p}}
}
irrespective of the value of $d$.  Lastly, it follows from \Cref{lm:shinperiodicity} that
\eag{
\sfc{d^{-1} \mbf{p}'}{A_t}{\qrt_{Q,+}} &= 
\sfc{d^{-1} \mbf{p}}{A_t}{\qrt_{Q,+}}.
}
Hence
$
 \normalizedGhostOverlapC{t}{\mbf{p}'} = \rtu_d^{\la \mbf{p}',\mbf{p}\ra} \normalizedGhostOverlapC{t}{\mbf{p}}.
$
\end{proof}
\begin{proof}[Proof of \Cref{lem:GhostFiducialIndependenceOfTransversal}]
    Let 
    \eag{
    f(\mbf{p}) &= 
    \normalizedGhostOverlapC{t}{G\mbf{p}}
    D\vpu{t}_{\mbf{p}},
    }
    and let $\mbf{p},\mbf{p}'\in \mbb{Z}^2$ be  such that $\mbf{p}'\equiv\mbf{p}\Mod{d}$  and $\mbf{p}',
  \mbf{p}\not\equiv \zero\Mod{d}$.  Setting $\mbf{p}'\equiv\mbf{p} + d\mbf{q}$, it follows from \eqref{eq:Dpperiodicity} and \Cref{lem:nupperiodicity} that
  \eag{
  f(\mbf{p}') &= (-1)^{(d+1)\la \mbf{p},\mbf{q}\ra} \rtu_d^{\la G\mbf{p}',G\mbf{p}\ra}f(\mbf{p})
  \nn
    &= (-1)^{(d+1)\la \mbf{p},\mbf{q}\ra}\rtu_d^{d\det G\la \mbf{p}, \mbf{q}\ra}f(\mbf{p})
  \nn
    &= (-1)^{(d+1)(1+\det G)\la \mbf{p},\mbf{q}\ra}f(\mbf{p})
  \nn
  &= f(\mbf{p}),
  }
  where in the last step we used the fact that $\det G$ is coprime to $d$, as follows from ~\eqref{eq:TwistCondition}.  The second statement is an immediate consequence of this.
\end{proof}

\begin{thm}
\label{thm:nupnumpeq1}
Let $t=(d,r,Q)\sim (K,j,m,Q)$ be an admissible tuple, and let $\mbf{p}\in \mbb{Z}^2$.  
  If $\mbf{p} \not\equiv \zero \Mod{d}$, then the numbers $\normalizedGhostOverlapC{t}{\mbf{p}}$ are real and satisfy
    \eag{ \normalizedGhostOverlapC{t}{\mbf{p}}\normalizedGhostOverlapC{t}{-\mbf{p}} = 1.
    }
\end{thm}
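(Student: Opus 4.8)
The statement to prove is that for an admissible tuple $t=(d,r,Q)$ and $\mbf{p}\not\equiv\zero\Mod{d}$, the candidate ghost overlaps satisfy $\normalizedGhostOverlapC{t}{\mbf{p}}\in\mbb{R}$ and $\normalizedGhostOverlapC{t}{\mbf{p}}\normalizedGhostOverlapC{t}{-\mbf{p}}=1$. The plan is to derive the product relation first and then use it to deduce reality. By \Cref{dfn:GhostOverlaps}, $\normalizedGhostOverlapC{t}{\mbf{p}}=\SFPhase{t}{\mbf{p}}\,\sfc{d^{-1}\mbf{p}}{A_t}{\qrt_t}$, so
\begin{equation*}
\normalizedGhostOverlapC{t}{\mbf{p}}\normalizedGhostOverlapC{t}{-\mbf{p}}
= \SFPhase{t}{\mbf{p}}\SFPhase{t}{-\mbf{p}}\,\sfc{d^{-1}\mbf{p}}{A_t}{\qrt_t}\sfc{-d^{-1}\mbf{p}}{A_t}{\qrt_t}.
\end{equation*}
For the $\shin$-factor, I would invoke \Cref{cor:funchar}, the specialization of \Cref{thm:funchar} to a fixed point: since $A_t\cdot\qrt_t=\qrt_t$ by \Cref{tm:symgp} and \Cref{lm:lactqrt}, we get $\sfc{d^{-1}\mbf{p}}{A_t}{\qrt_t}\sfc{-d^{-1}\mbf{p}}{A_t}{\qrt_t}=\psi^2(A_t)\chi_{d^{-1}\mbf{p}}(A_t)$. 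For the phase factor, note $\SFPhase{t}{-\mbf{p}}$: from \Cref{dfn:SFKPhase} we have $s_d(-\mbf{p})=s_d(\mbf{p})$ (since $(1+(-p_i))$ and $(1+p_i)$ have the same parity when... actually one should check $s_d(-\mbf{p})\equiv s_d(\mbf{p})\Mod 2$ directly from the definition $s_d(\mbf{p})=d+(1+d)(1+p_1)(1+p_2)$ using $-p_i\equiv p_i\Mod 2$ when $d$ even, and when $d$ odd the $(1+d)$ factor kills it) and $Q(-\mbf{p})=Q(\mbf{p})$, so $\SFPhase{t}{-\mbf{p}}=\SFPhase{t}{\mbf{p}}$, whence $\SFPhase{t}{\mbf{p}}\SFPhase{t}{-\mbf{p}}=\SFPhase{t}{\mbf{p}}^2$. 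Now apply \Cref{thm:phaserelation} (the Phase relation), which gives $\SFPhase{t}{\mbf{p}}^2=(\psi^{-2}\chi_{d^{-1}\mbf{p}}^{-1})(A_t)$. Multiplying the two contributions:
\begin{equation*}
\normalizedGhostOverlapC{t}{\mbf{p}}\normalizedGhostOverlapC{t}{-\mbf{p}}
= (\psi^{-2}\chi_{d^{-1}\mbf{p}}^{-1})(A_t)\cdot\psi^2(A_t)\chi_{d^{-1}\mbf{p}}(A_t)=1,
\end{equation*}
which is exactly the claimed product identity.

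**Reality.** Having the product relation, I would next establish that $\normalizedGhostOverlapC{t}{\mbf{p}}$ is real by showing it equals its own complex conjugate. The key input here is \Cref{conj:mrmvc}(2) — wait, that is conjectural — so instead reality must come from an unconditional source. The natural route: the complex conjugate of $\sfc{d^{-1}\mbf{p}}{A_t}{\qrt_t}$ can be computed using the fact that $\qrt_t\in\mbb{R}$ together with the functional/symmetry properties of the $\shin$-function on $\DD_{A_t}$ (which contains $\qrt_t$ by \Cref{thm:ghostWellDefinedCondition}). Concretely, complex conjugation of the defining product $\qp(z,\tau)$ relates $\overline{\sfc{\r}{M}{\tau}}$ for real $\tau$ and real parameters to $\sfc{-\r}{M}{\tau}$ up to an explicit root of unity; combining this with the already-established product relation $\normalizedGhostOverlapC{t}{\mbf{p}}\normalizedGhostOverlapC{t}{-\mbf{p}}=1$ and with $\lvert\normalizedGhostOverlapC{t}{\mbf{p}}\rvert\cdot\lvert\normalizedGhostOverlapC{t}{-\mbf{p}}\rvert$ information will force $\normalizedGhostOverlapC{t}{\mbf{p}}$ to be real. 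More cleanly: I expect that $\overline{\normalizedGhostOverlapC{t}{\mbf{p}}}=\normalizedGhostOverlapC{t}{-\mbf{p}}$ holds (an analogue of Hermiticity, \eqref{eq:normalizedOverlapIdentity}), which together with $\normalizedGhostOverlapC{t}{\mbf{p}}\normalizedGhostOverlapC{t}{-\mbf{p}}=1$ gives $\lvert\normalizedGhostOverlapC{t}{\mbf{p}}\rvert=1$; but that alone does not give reality. So the reality genuinely requires showing $\overline{\normalizedGhostOverlapC{t}{\mbf{p}}}=\normalizedGhostOverlapC{t}{\mbf{p}}$, i.e. that the phase $\SFPhase{t}{\mbf{p}}$ precisely conjugates the argument of $\sfc{d^{-1}\mbf{p}}{A_t}{\qrt_t}$. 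The mechanism for this is the identity for $\overline{\sfc{\r}{M}{\qrt}}$ at a real fixed point $\qrt$, which by the integral/product representation of the double-sine function equals $\sfc{\r}{M}{\qrt}^{-1}$ times an explicit root of unity depending quadratically on $\r$; this explicit root of unity is designed to be cancelled by $\overline{\SFPhase{t}{\mbf{p}}}\SFPhase{t}{\mbf{p}}^{-1}$, precisely because $\SFPhase{t}{\mbf{p}}$ was built from the Rademacher invariant and a quadratic-in-$\mbf{p}$ term $\rtu_d^{-\frac{f_{jm}}{f}Q(\mbf{p})}$.

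**Main obstacle.** The product identity is routine given \Cref{cor:funchar} and \Cref{thm:phaserelation}; the real work is the reality claim. The hard part will be pinning down the exact root-of-unity discrepancy in the complex-conjugation functional equation for $\sfc{\r}{A}{\qrt}$ at a real quadratic fixed point $\qrt$ — in other words, proving $\overline{\sfc{d^{-1}\mbf{p}}{A_t}{\qrt_t}}=\SFPhase{t}{\mbf{p}}^2\,\sfc{d^{-1}\mbf{p}}{A_t}{\qrt_t}$ (equivalently $\overline{\SFPhase{t}{\mbf{p}}\sfc{d^{-1}\mbf{p}}{A_t}{\qrt_t}}=\SFPhase{t}{\mbf{p}}\sfc{d^{-1}\mbf{p}}{A_t}{\qrt_t}$) — and verifying that the sign factor $(-1)^{s_d(\mbf{p})}$ in \Cref{dfn:SFKPhase} gives the correct branch. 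I anticipate that this is where the technical content of \Cref{thm:phaserelation} and \Cref{cor:funchar} gets combined with the behavior of $\sfc{\r}{M}{\tau}$ under $\tau\mapsto\bar\tau$ (using $\overline{\qp(z,\tau)}=\qp(\bar z,\bar\tau)$ on $\mbb{H}$, extended to $\DD_M$ via the meromorphic continuation of \Cref{sec:SFJCocycleAppendix}), and the identity $\sfc{\r}{A_t}{\qrt_t}\sfc{\r}{A_t^{-1}}{\qrt_t}=$ (product of $\psi$, $\chi$ factors) from \Cref{lm:sfam1sfaeq1} combined with \Cref{cor:funchar}. Once the branch is confirmed, reality follows immediately, and together with the product identity this completes the proof of \Cref{thm:nupnumpeq1}.
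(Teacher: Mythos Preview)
Your derivation of the product identity $\normalizedGhostOverlapC{t}{\mbf{p}}\normalizedGhostOverlapC{t}{-\mbf{p}}=1$ is correct and is exactly what the paper does: combine $\SFPhase{t}{-\mbf{p}}=\SFPhase{t}{\mbf{p}}$, \Cref{cor:funchar}, and \Cref{thm:phaserelation}.

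The reality argument, however, has a genuine gap, and the paper takes a completely different route that you have missed. Your proposed approach---computing $\overline{\sfc{d^{-1}\mbf{p}}{A_t}{\qrt_t}}$ directly via functional equations and matching the resulting root of unity to $\SFPhase{t}{\mbf{p}}^{-2}$---runs into trouble. For instance, your intermediate guess $\overline{\normalizedGhostOverlapC{t}{\mbf{p}}}=\normalizedGhostOverlapC{t}{-\mbf{p}}$ would, together with the product identity, force $\lvert\normalizedGhostOverlapC{t}{\mbf{p}}\rvert=1$, which is false for ghost overlaps (they are real but not of modulus~$1$). More fundamentally, the complex-conjugation behavior of $\sfc{\r}{A}{\qrt}$ at a real fixed point is not among the functional equations established in the paper, and proving the precise phase you need is essentially equivalent to the reality statement itself.

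The paper's argument is much cleaner and uses an ingredient you did not invoke: \Cref{thm:qpochmain}, the main theorem relating RM values of the Shintani--Faddeev cocycle to partial zeta values. Combining \Cref{thm:qpochmain} with \Cref{thm:phaserelation} gives
\[
\bigl(\normalizedGhostOverlapC{t}{\mbf{p}}\bigr)^2
=\SFPhase{t}{\mbf{p}}^2\,\sfc{d^{-1}\mbf{p}}{A_t}{\qrt_t}^2
=(\psi^{-2}\chi_{d^{-1}\mbf{p}}^{-1})(A_t)\,\sfc{d^{-1}\mbf{p}}{A_t}{\qrt_t}^2
=\exp\!\bigl(n\,Z'_{d\infty_2}(0,\A)\bigr)
\]
for a suitable ray class $\A$. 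The differenced partial zeta function $Z_{d\infty_2}(s,\A)$ is defined by a Dirichlet series with real coefficients, so (by analytic continuation of $\overline{Z(s)}=Z(\bar s)$) its derivative at $s=0$ is real. Hence $\bigl(\normalizedGhostOverlapC{t}{\mbf{p}}\bigr)^2$ is a positive real number, and $\normalizedGhostOverlapC{t}{\mbf{p}}$ is real. This is the key idea you were missing: reality comes from the zeta-value interpretation, not from a direct symmetry of the $\shin$-function.
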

\begin{rmkb}
    This theorem establishes two of the requirements which must be satisfied if the expression on the right-hand side of \eqref{eq:ghostProjectorDef} in \Cref{dfn:CandidateGhostAndSICFiducials} is to be a ghost fiducial.
\end{rmkb} 
\begin{proof}
Let $\A$ be the unique class in $\Clt_{d\infty_2}(\OO_f)$ that maps to the $\SL_2(\Z)$-orbit of $(d^{-1}\p,\qrt_t)$ under the map $\Upsilon_{d\OO_f}$ described in \cite[Thm.\ 3.12]{Kopp2020d} and at the end of \Cref{ssec:qpochmain}.
\Cref{thm:qpochmain} then gives the formula
\begin{equation}
    (\psi^{-2}\chi_{d^{-1}\p}^{-1})(A_t) \ \sfc{d^{-1}\p}{A}{\qrt_t}^2 = \exp\!\left(n Z_{d\infty_2}'(0,\A)\right),
\end{equation}
where $Z_{d\infty_2}(s,\A)$ is the differenced ray class partial zeta function defined in \Cref{defn:rayclasspartialzeta}.
By \Cref{dfn:GhostOverlaps} and \Cref{thm:phaserelation}, we have 
\begin{equation}\label{eq:olsquares}
    (\normalizedGhostOverlapC{t}{\p})^2
    = \SFPhase{t}{\p}^2 \ \sfc{d^{-1}\p}{A}{\qrt_t}^2
    = (\psi^{-2}\chi_{d^{-1}\p}^{-1})(A_t) \ \sfc{d^{-1}\p}{A}{\qrt_t}^2,
\end{equation}
and thus
\begin{equation}\label{eq:nusquared}
    (\normalizedGhostOverlapC{t}{\p})^2 = \exp\!\left(n Z_{d\infty_2}'(0,\A)\right).
\end{equation}
The partial zeta function $Z_{d\infty_2}(s,\A)$ is a complex analytic function defined by a Dirichet series with real coefficients for $\Re(s)>1$ and by analytic continuation to all $s \in \C$. The equation $(Z_{d\infty_2}(s,\A))^\ast = Z_{d\infty_2}(s^\ast,\A)$ holds for $\Re(s)>1$ and thus for all $s \in \C$ (where ${}^\ast$ denotes complex conjugation). Therefore, $Z_{d\infty_2}'(0,\A)$ is real, so $\exp\!\left(n Z_{d\infty_2}'(0,\A)\right)$ is real and positive, and thus 
$\normalizedGhostOverlapC{t}{\mbf{p}}$ is real.

We now compute
$\normalizedGhostOverlapC{t}{\p}\normalizedGhostOverlapC{t}{-\p}$. Using the definition of $\normalizedGhostOverlapC{t}{\pm\p}$ (\Cref{dfn:GhostOverlaps}), we have
\begin{align}
    \normalizedGhostOverlapC{t}{\p}\normalizedGhostOverlapC{t}{-\p}
    = \left(\phi_\p(t)\phi_{-\p}(t)\right) \left(\sfc{d^{-1}\p}{A_t}{\qrt_t}\sfc{-d^{-1}\p}{A_t}{\qrt_t}\right).
\end{align}
Using \Cref{dfn:SFKPhase}, the SF phase satisfies
    \begin{equation}\label{eq:sfphasenegative}
        \SFPhase{t}{-\p}
        = (-1)^{s_d(-\p)}e^{-\frac{\pi i}{12}\Psi(A_t)}\rtu_d^{-\frac{f_{jm}}{f}Q(-\p)}
        = (-1)^{s_d(\p)}e^{-\frac{\pi i}{12}\Psi(A_t)}\rtu_d^{-\frac{f_{jm}}{f}Q(\p)}
        = \SFPhase{t}{\p}.
    \end{equation}
\Cref{cor:funchar} and \Cref{thm:phaserelation} imply
\begin{align}
    \sfc{d^{-1}\mbf{p}}{A_t}{\qrt_t}\sfc{-d^{-1}\mbf{p}}{A_t}{\qrt_t} 
    &= (\psi^2\chi_\r)(A_t) = \phi_\p(t)^{-2}.
\end{align}
Consequently
$\normalizedGhostOverlapC{t}{\p}\normalizedGhostOverlapC{t}{-\p} 
= \phi_\p(t)^{2} \phi_\p(t)^{-2} = 1$.
\end{proof}

\begin{lem}\label{lem:nu01overnu0val}
   Let $t=(d,r,Q)\sim(K,j,m,Q)$ be an admissible tuple.  Then
    \eag{
    \normalizedGhostOverlapC{t}{\zero}
+\frac{1}
{\normalizedGhostOverlapC{t}{\zero}}&=-(d-2r)\sqrt{d_j+1}.
    }
\end{lem}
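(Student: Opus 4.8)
The goal is to evaluate the quantity $\normalizedGhostOverlapC{t}{\zero}+\normalizedGhostOverlapC{t}{\zero}^{-1}$, where by \Cref{dfn:GhostOverlaps} we have $\normalizedGhostOverlapC{t}{\zero}=\SFPhase{t}{\zero}\,\sfc{\zero}{A_t}{\qrt_t}$. The key observation is that $\zero \in \Z^2$ (not merely $\Z^2 \setminus \Z^2$), so we can apply \Cref{lm:shinatzero} with $\r = \smcoltwo{0}{0}$, which has $r_2 = 0 \le 0$, to get $\sfc{\zero}{A_t}{\qrt_t} = \psi(A_t,\sqrt{j_{A_t}})/\sqrt{j_{A_t}(\qrt_t)}$. (One must first check $\qrt_t \in \DD_{A_t}$, which is \Cref{thm:ghostWellDefinedCondition}, and that $A_t \in \Gamma(d)$, which holds by \Cref{tm:symgp}.) By \Cref{prop:rademacher}, since $A_t$ has nonzero lower-left entry and positive trace (again by \Cref{tm:symgp}), $\psi(A_t,\sqrt{j_{A_t}}) = e^{\frac{\pi i}{12}\Psi(A_t)}$. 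Meanwhile $\SFPhase{t}{\zero} = (-1)^{s_d(\zero)} e^{-\frac{\pi i}{12}\Psi(A_t)} \rtu_d^{-\frac{f_{jm}}{f}Q(\zero)} = (-1)^{s_d(\zero)} e^{-\frac{\pi i}{12}\Psi(A_t)}$, since $Q(\zero) = 0$. The $e^{\pm\frac{\pi i}{12}\Psi(A_t)}$ factors cancel, leaving
\[
\normalizedGhostOverlapC{t}{\zero} = \frac{(-1)^{s_d(\zero)}}{\sqrt{j_{A_t}(\qrt_t)}}.
\]

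\textbf{Computing $j_{A_t}(\qrt_t)$ and the sign.} Next I would identify $j_{A_t}(\qrt_t) = \gamma \qrt_t + \delta$ where $A_t = \smmattwo{\alpha}{\beta}{\gamma}{\delta}$. As in the proof of \Cref{lem:fixedinda}, since $\qrt_t$ is a fixed point of $A_t$, the number $\gamma\qrt_t+\delta$ is the eigenvalue of $A_t$ corresponding to the eigenvector $\smcoltwo{\qrt_t}{1}$. By \Cref{tm:symgp}, $A_t = \canrep_Q(\vn^{j(2m+1)})$, and one of its eigenvalues is $\vn^{j(2m+1)}$ (the larger eigenvalue, since the eigenvector $\smcoltwo{\qrt_t}{1} = \smcoltwo{\qrt_{Q,+}}{1}$ corresponds to the larger root). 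Thus $j_{A_t}(\qrt_t) = \vn^{j(2m+1)}$, which is a positive real number, so $\sqrt{j_{A_t}(\qrt_t)} = \vn^{j(2m+1)/2}$ with the principal branch. Therefore
\[
\normalizedGhostOverlapC{t}{\zero} + \normalizedGhostOverlapC{t}{\zero}^{-1} = (-1)^{s_d(\zero)}\left(\vn^{-j(2m+1)/2} + \vn^{j(2m+1)/2}\right),
\]
using that $\left((-1)^{s_d(\zero)}\right)^{-1} = (-1)^{s_d(\zero)}$.

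\textbf{Finishing via the dimension-grid identities.} It remains to show $\vn^{j(2m+1)/2} + \vn^{-j(2m+1)/2} = -(-1)^{s_d(\zero)}(d-2r)\sqrt{d_j+1}$, i.e. that $(d-2r)\sqrt{d_j+1} = -(-1)^{s_d(\zero)}\cdot 2\cosh\!\big(\tfrac{(2m+1)j\theta}{2}\big)$ where $\theta = \log\vn$. From \Cref{prop:rdtrmstheta}, $d_{j,m} = \sinh\tfrac{(2m+1)j\theta}{2}/\sinh\tfrac{j\theta}{2}$ and $d_j = 1 + 2\cosh j\theta = 1 + 4\sinh^2\tfrac{j\theta}{2}$, so $\sqrt{d_j+1}$ relates to $\cosh\tfrac{j\theta}{2}$. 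Also $d - 2r = d_{j,m+1} + d_{j,m} - 2(\text{stuff})$... more carefully, $d - 2r = d_{j,m} - 2r_{j,m} = r_{j,m+1} - r_{j,m}$, and by \Cref{lem:dimgridtechres}, $r_{j,m+1} - r_{j,m} = \cosh\tfrac{(2m+1)j\theta}{2}/\cosh\tfrac{j\theta}{2}$. Meanwhile $\sqrt{d_j+1} = \sqrt{2 + 2\cosh j\theta} = 2|\cosh\tfrac{j\theta}{2}| = 2\cosh\tfrac{j\theta}{2}$ (as $j\theta > 0$). Hence $(d-2r)\sqrt{d_j+1} = 2\cosh\tfrac{(2m+1)j\theta}{2} = \vn^{(2m+1)j/2} + \vn^{-(2m+1)j/2}$. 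This matches the needed identity \emph{provided} $(-1)^{s_d(\zero)} = -1$. The main obstacle — really the only subtle point — is verifying this sign: one must check from \Cref{dfn:SFKPhase} that $s_d(\zero) = d + (1+d)(1+0)(1+0) = 2d+1$, which is odd, so indeed $(-1)^{s_d(\zero)} = -1$. Putting everything together yields $\normalizedGhostOverlapC{t}{\zero} + \normalizedGhostOverlapC{t}{\zero}^{-1} = -(d-2r)\sqrt{d_j+1}$, as claimed. I expect the bookkeeping in the last paragraph (correctly tracking which root/eigenvalue corresponds to $\qrt_t = \qrt_{Q,+}$, and the hyperbolic-function manipulations) to be the most error-prone part, but all the needed identities are already available in \Cref{prop:rdtrmstheta}, \Cref{lem:dimgridtechres}, and \Cref{lm:shinatzero}.
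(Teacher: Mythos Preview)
Your proposal is correct and follows essentially the same approach as the paper: both apply \Cref{lm:shinatzero} to reduce $\normalizedGhostOverlapC{t}{\zero}$ to $-1/\sqrt{j_{A_t}(\qrt_t)}$, then identify $j_{A_t}(\qrt_t)$ as an eigenvalue of $A_t=\canrep_Q(\vn^{j(2m+1)})$ and finish with the dimension-grid identities. One small remark: your parenthetical claim that $\qrt_{Q,+}$ corresponds specifically to the \emph{larger} eigenvalue $\vn^{j(2m+1)}$ is not fully justified, but it is also unnecessary---since the two eigenvalues $\vn^{\pm j(2m+1)}$ are positive reciprocals, $\sqrt{j}+1/\sqrt{j}$ is the same for either (the paper sidesteps this by squaring first and taking $\sqrt{\Tr(A_t)+2}$).
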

\begin{proof}
    It follows from Lemma~\ref{lm:shinatzero} that
    \eag{\label{eq:nu01overnu0val}
      \normalizedGhostOverlapC{t}{\zero}&=
      \SFPhase{t}{\zero}
      \sfc{\zero}{A_t}{\qrt_{Q,+}}=-\frac{1}{\sqrt{j_{A_t}(\qrt_{Q,+})}}
    }
    Write $A_t = \smt{\ma &\mb \\ \mc & \md}$. It follows from \Cref{lm:fxdpt} that $A_t\qrt_{Q,+} = \qrt_{Q,+}$.  Hence
    \eag{
         \mc \qrt_{Q,+}^2+(\md - \ma)\qrt_{Q,+} - \mb &= 0.
        }
        Consequently
        \eag{
         \qrt_{Q,+} &= \frac{\ma - \md \pm \sqrt{(\ma+\md)^2-4}}{2\mc}
        &&\implies&j_A(\qrt_{Q,+}) &= \frac{\Tr(A)\pm \sqrt{\Tr(A)^2-4}}{2}.
    }
    It follows from Theorem~\ref{tm:symgp} that $\Tr(A_t)$ and consequently $j_{A_t}(\qrt_{Q,+})$ are positive.  So%
    \eag{
    \left(\sqrt{j_{A_t}(\qrt_{Q,+})}+\frac{1}{\sqrt{j_{A_t}(\qrt_{Q,+})}}\right)^2 
    &= \Tr(A_t) +2
    }
    implying
    \eag{\label{eq:nu01overnu0val1}
    \normalizedGhostOverlapC{t}{\zero}
+\frac{1}{\normalizedGhostOverlapC{t}{\zero}} &= -\sqrt{\Tr(A_t)+2}.
    }
    It follows from Theorem~\ref{tm:symgp} that $A_t=\canrep_Q(\vn^{j(2m+1)})$, which in view of \Cref{lem:towerbasic}  means
    $\Tr(A) = \Tr(\vn^{j(2m+1)}) = d_{j(2m+1)}-1$.  Lemma~\ref{lem:dimgridtechres} then implies
    \eag{\label{eq:nu01overnu0val2}
        \sqrt{\Tr(A)+2} &= (r_{j,m+1}-r_{j,m}) \sqrt{d_j+1} = (d-2r_{j,m}) \sqrt{d_j+1}.
    }
    Together, \eqref{eq:nu01overnu0val1} and \eqref{eq:nu01overnu0val2} imply \eqref{eq:nu01overnu0val}.
\end{proof}
\begin{defn}[function $\fn_t$]\label{dfn:functionht}
    Given an admissible tuple $t=(d,r,Q)\sim(K,j,m,Q)$, define $\fn_t\colon \mbb{Z}/d\mbb{Z} \to \mbb{Z}/\db\mbb{Z}$ by 
    \eag{
    \fn_t(x) &= r(2x+d+d_j-1).
      \label{eq:fexpn}
    }
\end{defn}
\begin{lem}\label{lem:fdf} 
    Let $t=(d,r,Q)\sim(K,j,m,Q)$ be an admissible tuple. 
    \begin{enumerate}
        \item If $d$ is odd, then $\fn_t$ is a bijection of $\mbb{Z}/d\mbb{Z}$ onto itself.
        \item If $d$ is even, then $\fn_t$ is a bijection of $\mbb{Z}/d\mbb{Z}$ onto the set of odd elements in $\mbb{Z}/\db\mbb{Z}$.
    \end{enumerate}
    
    The inverse function is given by
    \eag{\label{eq:htInverse}
      \fn_t^{-1}(x) &=\frac{(d+1)\big((d_j+1)r x-(d_j-1)\big)}{2}.
    }

    For all $x\in \mbb{Z}$, $\fn_t(x)$ is coprime to $\db$ if and only if $2x+d_j-1$ is coprime to $d$.
\end{lem}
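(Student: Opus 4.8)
\textbf{Proof plan for \Cref{lem:fdf}.}

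The plan is to treat the three assertions in order, since each builds on the previous. First I would prove the bijectivity statements (parts (1) and (2)). Recall from \Cref{dfn:functionht} that $\fn_t(x) = r(2x+d+d_j-1)$, which is an affine function of $x$. When $d$ is odd, $\db = d$, and bijectivity of $\fn_t$ on $\Z/d\Z$ reduces to showing that $2r$ is a unit modulo $d$: since $\fn_t(x) - \fn_t(x') = 2r(x-x')$, injectivity (hence bijectivity, as the domain is finite) holds iff $\gcd(2r,d)=1$. Now $\gcd(r,d)=1$ by \Cref{cor:djmcprjm} (using $(d,r)\sim(K,j,m)$, so $d = d_{j,m}$ and $r = r_{j,m}$), and $d$ is odd, so $\gcd(2r,d)=1$. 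When $d$ is even, $\db = 2d$, and the image of $\fn_t$ lands in the odd residues because $\gcd(d+d_j-1, 2)$: here $d$ even forces $d_j$ odd (by \Cref{lem:dimgridtechres}, since $d_{j,m}$ even implies $d_j$ even is false---actually I must be careful, let me instead note that $d_{j,m}$ even with $(d,r)\sim(K,j,m)$ means $d_j$ is even by \Cref{lem:dimgridtechres}, so $d + d_j - 1$ is odd) and $r$ is odd (again \Cref{lem:dimgridtechres}), making $\fn_t(x) = r(2x + d + d_j - 1)$ a product of two odd numbers, hence odd. Injectivity on $\Z/d\Z$ into $\Z/2d\Z$: if $\fn_t(x) \equiv \fn_t(x') \Mod{2d}$ then $2r(x-x') \equiv 0 \Mod{2d}$, i.e. $r(x-x') \equiv 0 \Mod d$, and since $\gcd(r,d)=1$ this gives $x \equiv x' \Mod d$. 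A counting argument then shows the image is exactly the set of odd residues mod $2d$ (there are $d$ of them, matching $|\Z/d\Z|$).

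Next I would verify the inverse formula \eqref{eq:htInverse}. This is a direct computation: set $y = \fn_t^{-1}(x) = \tfrac{(d+1)((d_j+1)rx - (d_j-1))}{2}$ and plug into $\fn_t(y) = r(2y + d + d_j - 1)$, then simplify modulo $\db$. The key algebraic identities are $(d+1)(d_j+1) \cdot r^2 \equiv 1 \Mod{\db}$ or a close variant of it, which should follow from \Cref{thm:nrddjmrjm}(A)(2) (the relation $(d_j+1)r_{j,m}(d_{j,m} - r_{j,m}) = d_{j,m}^2 - 1 \equiv -1 \Mod{\db}$) together with \Cref{lem:rjmm1expn}, which gives $r^{-1}_{j,m} \equiv r_{j,m}(1 + d_j + d_{j,m}) \Mod{\db_{j,m}}$. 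I would also need that $y$ is actually an integer, which requires checking that $(d+1)((d_j+1)rx - (d_j-1))$ is even; when $d$ is odd, $d+1$ is even and this is automatic, while when $d$ is even, $d_j$ is odd (as above), so $d_j + 1$ is even, $(d_j+1)rx$ is even, and $d_j - 1$ is even, so the bracket is even. The bookkeeping of which parity case applies is where care is needed but it is routine.

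Finally, for the coprimality criterion, I would argue as follows. Write $\fn_t(x) = r(2x + d + d_j - 1)$. Since $\gcd(r, \db)$: when $d$ is odd $\gcd(r,d)=1$ so $\gcd(r,\db)=1$; when $d$ is even, $r$ is odd and $\gcd(r,d)=1$ so again $\gcd(r,\db)=\gcd(r,2d)=1$. Therefore $\gcd(\fn_t(x), \db) = \gcd(2x + d + d_j - 1, \db)$. When $d$ is odd, $\db = d$ and $2x + d + d_j - 1 \equiv 2x + d_j - 1 \Mod d$, so $\gcd(\fn_t(x),\db) = 1 \iff \gcd(2x+d_j-1, d) = 1$. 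When $d$ is even, $\db = 2d$; here $2x + d + d_j - 1$ is odd (since $d_j$ is odd), so $\gcd(\cdot, 2d) = \gcd(\cdot, d)$, and $2x + d + d_j - 1 \equiv 2x + d_j - 1 \Mod d$ (as $d \equiv 0 \Mod d$), giving the same conclusion.

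The main obstacle I anticipate is the inverse-formula verification in the even case: keeping track of the modulus $\db = 2d$ versus $d$ and correctly invoking the right congruences from \Cref{thm:nrddjmrjm} and \Cref{lem:rjmm1expn} requires attention, especially since \Cref{lem:rjmm1expn} is stated modulo $\db_{j,m}$ and uses the even/odd split of $d_{j,m}$. Everything else is bookkeeping with affine functions and $\gcd$ computations, leaning on \Cref{cor:djmcprjm} and \Cref{lem:dimgridtechres} for the parity facts.
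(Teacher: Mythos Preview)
Your approach matches the paper's: bijectivity via the affine structure and $\gcd(2r,d)=1$ from \Cref{cor:djmcprjm}, the inverse via \Cref{lem:rjmm1expn}, and the coprimality reduction by stripping off the unit factor $r$.

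There is one internal inconsistency you should fix. In the bijectivity discussion you correctly work out (after your self-correction) that $d$ even forces $d_j$ even, via \Cref{lem:dimgridtechres}. But in the inverse-formula integrality check and again in the coprimality argument you revert to ``$d_j$ is odd (as above)''. This is not just cosmetic: with $d_j$ even, both $d_j+1$ and $d_j-1$ are odd, so your claimed argument that ``$(d_j+1)rx$ is even and $d_j-1$ is even, hence the bracket is even'' fails. The correct reason the bracket $(d_j+1)rx - (d_j-1)$ is even in the even-$d$ case is that $x$ lies in the image of $\fn_t$, hence is odd, so $(d_j+1)rx$ is a product of three odd numbers and the difference of two odd numbers is even. (The paper sidesteps this by deriving the inverse formula forward from $y=\fn_t(x)$ rather than verifying it backward, so integrality is automatic.) In the coprimality step your conclusion that $2x+d+d_j-1$ is odd is still correct, but the reason is that $d$ and $d_j$ are both even, not that $d_j$ is odd.
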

\begin{proof}
    Suppose $d$ is odd.  If $\fn_t(x)=\fn_t(x')$, then $2r x \equiv 2 r x' \Mod{d}$, which in view of \Cref{cor:djmcprjm} means $x \equiv x' \Mod{\db}$.  So $\fn_t$ is injective.  Since the domain and codomain have the same cardinality, it follows that it is also surjective.  To calculate the inverse, observe that it follows from Lemma~\ref{lem:rjmm1expn} that, if $y = \fn_t(x)$, then
    \eag{
    x &= 2^{-1}\left(r^{-1}y-(d_j-1)\right)
    = \tfrac{d+1}{2}\left((d_j+1)ry-(d_j-1)\right),
    }
    where $2^{-1}$ is the multiplicative inverse of $2 \Mod{d}$.

    Suppose, on the other hand, that $d$ is even. Then it follows from \Cref{cor:djmcprjm}  that $r$ is odd  and from \Cref{lem:dimgridtechres}   that $d_j$ is even.  So $\fn_t(x)$ is odd for all $x\in \mbb{Z}/d\mbb{Z}$. If $\fn_t(x) \equiv \fn_t(x') \Mod{\db}$, then $2r x \equiv 2r x' \Mod{2d}$.  Since $r$ is odd, it follows that $x \equiv x' \Mod{d}$.  So $\fn_t$ is injective.  Since the domain and the set of odd elements in $\mbb{Z}/\db \mbb{Z}$ have the same cardinality, $\fn_t$ must in fact be a bijection onto the set of odd elements in $\mbb{Z}/\db\mbb{Z}$. To calculate the inverse, suppose $y=\fn_t(x)$ for some $x\in \mbb{Z}/d\mbb{Z}$.  Then it follows from Lemma~\ref{lem:rjmm1expn} that
    \eag{
    2 x &\equiv yr^{-1} -(d_j-1)-d \Mod{2d}
    \nn
    & \equiv
    yr(1+d_j+d)-(d_j-1)-d \Mod{2d}
    \nn
    &\equiv (d_j+1)ry-(d_j-1) + d(yr-1) \Mod{2d}.
    }
    Since $yr-1$ is even, this means
    \eag{
    2x &=(d_j+1)ry + 1-d_j \Mod{2d}.
    }
    The fact that $yr$ and $d_{j}\pm 1$ are odd means the right-hand side is even. So
    \eag{
    x &\equiv \frac{(d_j+1)r_{j,m}y -(d_j-1)}{2} \Mod{d}
    }
    or, equivalently,
    \eag{
    x &\equiv  (d+1)\left(\frac{(d_j+1)ry -(d_j-1)}{2}\right) \Mod{d}.
    }
    
    Finally, $\fn_t(x)$ is coprime to $\db$ if and only if it is coprime to $d$. 
    Since $r$ is coprime to $d$, this is true if and only if $2x-1+d_j$ is coprime to $d$.
\end{proof}

\subsection{Ghost existence under the Twisted Convolution Conjecture}
\label{sbsc:proofofghosttheorem}
We now turn to the proof of Theorem~\ref{thm:ghstExist}, asserting that, under the assumption of \Cref{cnj:tci}, the $d \times d$ matrix $\tilde{\Pi}_s$ constructed from a fiducial datum is a ghost $r$-SIC fiducial projector. 
\begin{proof}[Proof of Theorem~\ref{thm:ghstExist}]
To prove that $\tilde{\Pi}_s$ is a ghost projector, we need to show:
\begin{enumerate}
    \item The expression on the RHS of \eqref{eq:ghostProjectorDef} is well-defined, in that the sum is independent of the set of coset representatives chosen,
    \item $\normalizedGhostOverlapC{t}{\mbf{p}}$ is real for all $\mbf{p}\not\equiv \zero \Mod{d}$,
    \item $\normalizedGhostOverlapC{t}{\mbf{p}}\normalizedGhostOverlapC{t}{-\mbf{p}} = 1$ for all $\mbf{p}\not\equiv \zero \Mod{d}$,
    \item $\tilde{\Pi}_s^2 = \tilde{\Pi}\vpu{2}_s$.
\end{enumerate}
The first  proposition is proved in \Cref{lem:GhostFiducialIndependenceOfTransversal},
and the second and third in \Cref{thm:nupnumpeq1}.  It remains to prove the last.
We have
\eag{
\tilde{\Pi}_s &= \frac{r}{d} I + \frac{1}{d\sqrt{d_{j}+1}}\sum_{\mbf{p}\notin d\mbb{Z}^2}\normalizedGhostOverlapC{t}{\mbf{p}} D\vpu{t}_{G^{-1}\mbf{p}}
}
where the sum is over any set of coset representatives of $\mbb{Z}^2/d\mbb{Z}^2$ with the representative of $d\mbb{Z}^2$  excluded.
Hence
\eag{
\tilde{\Pi}_s^2 - \tilde{\Pi}\vpu{2}_s &=
\frac{r(r - d)}{d^2}I
+\frac{2r-d}{d^2\sqrt{d_j+1}}\sum_{\mbf{p}\notin d\mbb{Z}^2}\normalizedGhostOverlapC{t}{\mbf{p}} D\vpu{t}_{G^{-1}\mbf{p}}
\nn
&\hspace{2.3 cm}
+\frac{1}{d^2(d_j+1)}
\sum_{\mbf{p},\mbf{q}\notin d\mbb{Z}^2}\rtu_d^{\la G^{-1}\mbf{p},G^{-1}\mbf{q}\ra}\normalizedGhostOverlapC{t}{\mbf{p}}\normalizedGhostOverlapC{t}{\mbf{q}}D\vpu{t}_{G^{-1}(\mbf{p}+\mbf{q})}
\nn
&=\frac{r(r - d)}{d^2}I
+\frac{2r-d}{d^2\sqrt{d_j+1}}\sum_{\mbf{p}\notin d\mbb{Z}^2}\normalizedGhostOverlapC{t}{\mbf{p}} D\vpu{t}_{G^{-1}\mbf{p}}
\nn
&\hspace{2.3 cm}
+\frac{1}{d^2(d_j+1)}
\sum_{\mbf{p}}\left(\sum_{\substack{\mbf{q}\notin d\mbb{Z}^2, \\ \mbf{q}\notin \mbf{p}+d\mbb{Z}^2}}\rtu_d^{\Det(G^{-1})\la \mbf{p},\mbf{q}\ra}
\normalizedGhostOverlapC{t}{\mbf{p}-\mbf{q}}
\normalizedGhostOverlapC{t}{\mbf{q}}
\right) D\vpu{t}_{G^{-1}\mbf{p}},
\label{eq:Pi2MinusPia}
}
where $\mbf{p}$ is summed over a complete set of cosets for $\mbb{Z}^2/d\mbb{Z}^2$, and $\mbf{q}$ is summed over such a set with $d\mbb{Z}^2$ and $\mbf{p}+d\mbb{Z}^2$ removed. Comparing 
~\eqref{eq:TwistCondition} to ~\eqref{eq:fexpn}, we see that
\eag{
\Det(G^{-1}) &= \left(\Det(G)\right)^{-1} = \fn_t(\shift)
}
for some $\shift\in\mcl{Z}_t$, 
while it follows from Theorem~\ref{thm:nrddjmrjm} that
\eag{
\frac{r(r-d)}{d^2}&= -\frac{(d^2-1)}{d^2(d_j+1)}.
}
Inserting these expressions into \Cref{eq:Pi2MinusPia} gives
\eag{
\tilde{\Pi}_s^2 - \tilde{\Pi}\vpu{2}_s
&=\frac{1}{d^2(d_j+1)}\left(-(d^2-1)I
-(d-2r)\sqrt{d_j+1}\sum_{\mbf{p}\notin d\mbb{Z}^2}\normalizedGhostOverlapC{t}{\mbf{p}}
D\vpu{t}_{G^{-1}\mbf{p}}\right.
\nn
&\hspace{3.2 cm}
+\left. 
\sum_{\mbf{p}}\left(\sum_{\substack{\mbf{q}\notin d\mbb{Z}^2, \\ \mbf{q}\notin \mbf{p}+d\mbb{Z}^2}}\rtu_d^{\fn_t(\shift)\la \mbf{p},\mbf{q}\ra}
\normalizedGhostOverlapC{t}{\mbf{p}-\mbf{q}}
\normalizedGhostOverlapC{t}{\mbf{q}}
\right) D\vpu{t}_{G^{-1}\mbf{p}}\right).
}
\Cref{thm:nupnumpeq1} and~\Cref{lem:nu01overnu0val} imply
\eag{
&\sum_{\mbf{p}}\left(\sum_{\substack{\mbf{q}\notin d\mbb{Z}^2, \\ \mbf{q}\notin \mbf{p}+d\mbb{Z}^2}}\rtu_d^{\fn_t(\shift)\la \mbf{p},\mbf{q}\ra}
\normalizedGhostOverlapC{t}{\mbf{p}-\mbf{q}}
\normalizedGhostOverlapC{t}{\mbf{q}}
\right) D\vpu{t}_{G^{-1}\mbf{p}}
\nn
&\hspace{1 cm} = (d^2-1)I +
\sum_{\mbf{p}\notin d\mbb{Z}^2}\left(\sum_{\substack{\mbf{q}\notin d\mbb{Z}^2, \\ \mbf{q}\notin \mbf{p}+d\mbb{Z}^2}}\rtu_d^{\fn_t(\shift)\la \mbf{p},\mbf{q}\ra}
\normalizedGhostOverlapC{t}{\mbf{p}-\mbf{q}}
\normalizedGhostOverlapC{t}{\mbf{q}}
\right) D\vpu{t}_{G^{-1}\mbf{p}}
\nn
&\hspace{1 cm}
= (d^2-1)I +
\sum_{\mbf{p}\notin d\mbb{Z}^2}\left(\sum_{\substack{\mbf{q}\notin d\mbb{Z}^2, \\ \mbf{q}\notin \mbf{p}+d\mbb{Z}^2}}\rtu_d^{\fn_t(\shift)\la \mbf{p},\mbf{q}\ra}
\frac{\normalizedGhostOverlapC{t}{\mbf{q}}}{\normalizedGhostOverlapC{t}{\mbf{q}-\mbf{p}}}
\right) D\vpu{t}_{G^{-1}\mbf{p}}
\nn &\hspace{1 cm}
= (d^2-1)I +(d-2r)\sqrt{d_j+1}\sum_{\mbf{p}\notin d_{j,m}\mbb{Z}^2}\normalizedGhostOverlapC{t}{\mbf{p}} D\vpu{t}_{G^{-1}\mbf{p}}
\nn
&\hspace{4 cm}
+\sum_{\mbf{p}\notin d\mbb{Z}^2}\left(\sum_{\mbf{q}\in \mcl{I}_{\mbf{p}}}\rtu_d^{\fn_t(\shift)\la \mbf{p},\mbf{q}\ra}
\frac{
\normalizedGhostOverlapC{t}{\mbf{q}}
}{
\normalizedGhostOverlapC{t}{\mbf{q}-\mbf{p}}
}
\right) D\vpu{t}_{G^{-1}\mbf{p}}
}
where $\mcl{I}_{\mbf{p}}$ is a complete set of coset representatives for $\mbb{Z}^2/d_{j,m}\mbb{Z}^2$ which includes $\zero$ and $\mbf{p}$.  Hence
\eag{
\tilde{\Pi}_s^2 - \tilde{\Pi}\vpu{2}_s 
&=
\frac{1}{d^2(d_j+1)}
\sum_{\mbf{p}\notin d\mbb{Z}^2}\left(\sum_{\mbf{q}\in \mcl{I}_{\mbf{p}}}\rtu_d^{\fn_t(\shift)\la \mbf{p},\mbf{q}\ra}
\frac{
\normalizedGhostOverlapC{t}{\mbf{q}}
}{
\normalizedGhostOverlapC{t}{\mbf{q}-\mbf{p}}
}
\right) D\vpu{t}_{G^{-1}\mbf{p}}
\nn
&= \frac{1}{d^2(d_j+1)}
\sum_{\mbf{p}\notin d\mbb{Z}^2}\left(\sum_{\mbf{q}\in \mcl{I}_{\mbf{p}}}\rtu_d^{\fn_t(\shift)\la \mbf{p},\mbf{q}\ra}\left(
\frac{
\SFPhase{t}{\mbf{q}}
}{
\SFPhase{t}{\mbf{q}-\mbf{p}}
}\right)\left(\frac{\sfc{d^{-1}\mbf{q}}{A_t}{\qrt_{Q,+}}}{\sfc{d^{-1}(\mbf{q}-\mbf{p})}{A_t}{\qrt_{Q,+}}}\right)\right) D\vpu{t}_{G^{-1}\mbf{p}}
\label{eq:GhostProjSquaredMinusGhostProj}
}
where we used \Cref{dfn:GhostOverlaps} in the second line.  It follows from \Cref{dfn:SFKPhase} that
\eag{
\frac{
\SFPhase{t}{\mbf{q}}
}{
\SFPhase{t}{\mbf{q}-\mbf{p}}
}
&=(-1)^{s_d(\mbf{q})+s_d(\mbf{q}-\mbf{p})}\rtu_d^{\frac{f_{jm}}{f}(Q(\mbf{q}-\mbf{p})-Q(\mbf{q}))}.
}
We have
\eag{
(-1)^{s_d(\mbf{q})+s_d(\mbf{q}-\mbf{p})} 
&=
(-1)^{(1+d)((1+q_1)(1+q_2) + (1+q_1+p_1)(1+q_2+p_2)}
\nn
&=(-1)^{(1+d)(p_1(1+q_2)+p_2(1+q_1)+p_1p_2}
\nn
&= (-1)^{(1+d)(p_1+p_2+p_1p_2) +(1+d)\la \mbf{p},\mbf{q}\ra}
\nn
&= (-1)^{1+d+(1+d)(1+p_1)(1+p_2)}\rtu_d^{d\la \mbf{p},\mbf{q}\ra}
\nn
&= (-1)^{1+s_d(\mbf{p})}\rtu_d^{d\la \mbf{p},\mbf{q}\ra}.
}
Also, referring to \Cref{dfn:AssociatedStabilizers}, we see that
\eag{
\frac{2f_j}{f}\mbf{p}^{\rm{T}}Q\mbf{q} &=- \frac{2f_j}{f}\la \mbf{p}, SQ\mbf{q}\ra 
=\la \mbf{p}, (d_j-1-2\zaunerGen{t})\mbf{q}\ra,
}
implying
\eag{
\frac{f_{jm}}{f}\left(Q(\mbf{q}-\mbf{p})-Q(\mbf{q})\right) 
&=\frac{rf_j}{f}\left(Q(\mbf{p})-2\mbf{p}^{\rm{T}}Q\mbf{q}\right)
\nn
&=
\frac{rf_j}{f}Q(\mbf{p})-r(d_j-1)\la\mbf{p},\mbf{q}\ra +2r\la\mbf{p},\zaunerGen{t}\mbf{q}\ra.
}
Hence
\eag{
\frac{
\SFPhase{t}{\mbf{q}}
}{
\SFPhase{t}{\mbf{q}-\mbf{p}}
}
&=(-1)^{1+s_d(\mbf{p})}\rtu_d^{\frac{rf_j}{f}Q(\mbf{p})}\rtu_d^{(d-r(d_j-1))\la\mbf{p},\mbf{q}\ra}\rtus_d^{r\la\mbf{p},\zaunerGen{t}\mbf{q}\ra}.
\label{eq:RatioSFPhases}
}
It follows from \Cref{lm:sfam1sfaeq1} that
\eag{
\frac{\sfc{d^{-1}\mbf{q}}{A_t}{\qrt_{Q,+}}}{\sfc{d^{-1}(\mbf{q}-\mbf{p})}{A_t}{\qrt_{Q,+}}}
&=
\sfc{d^{-1}\mbf{q}}{A\vpu{-1}_t}{\qrt_{Q,+}}
\sfc{d^{-1}(\mbf{q}-\mbf{p})}{A^{-1}_t}{\qrt_{Q,+}}.
\label{eq:RatioSFModularCocycles}
}
Substituting 
~\eqref{eq:RatioSFPhases} and~\eqref{eq:RatioSFModularCocycles} into \eqref{eq:GhostProjSquaredMinusGhostProj} gives
\eag{
\tilde{\Pi}_s^2-\tilde{\Pi}\vpu{2}_s &= \frac{1}{d^2(d_j+1)}\sum_{\mbf{p}\notin d\mbb{Z}^2}
(-1)^{1+s_d(\mbf{p})}\rtu_d^{\frac{rf_j}{f}Q(\mbf{p})}\left( 
\sum_{\mbf{q}\in \mcl{I}_{\mbf{p}}}\rtu_d^{\left(\fn_t(\shift) + d-r(d_j-1)\right)\la \mbf{p},\mbf{q}\ra}\right.
\nn
&\hspace{4 cm} \times \left.
\rtus_d^{r\la\mbf{p},\zaunerGen{t}\mbf{q}\ra} 
\sfc{d^{-1}\mbf{q}}{A\vpu{-1}_t}{\qrt_{Q,+}}
\sfc{d^{-1}(\mbf{q}-\mbf{p})}{A^{-1}_t}{\qrt_{Q,+}}
\right)D_{G^{-1}\mbf{p}}.
\label{eq:GhostProjSquaredMinusGhostProjB}
}
It follows from \Cref{dfn:shift} that $2\shift+d_j-1$ is coprime to $d$.  In view of \Cref{lem:fdf} this means $\fn_t(\shift)$ is coprime to $\db$.  Consequently
\eag{
\fn_t(\shift)+d &\equiv (d+1)\fn_t(\shift) \quad \Mod{\db}.
}
Also, it follows from \Cref{lem:dimgridtechres} that $d_j$ is even if $d$ is even.  So
\eag{
dd_j &\equiv 0 \quad \Mod{\db}
}
irrespective of whether $d$ is odd or even.  Putting these two facts together we conclude 
\eag{
\rtu_d^{\left(\fn_t(\shift) + d-r(d_j-1)\right)\la \mbf{p},\mbf{q}\ra}
&=
\rtu_d^{\left((d+1)\fn_t(\shift)-r(d_j-1)\right)\la \mbf{p},\mbf{q}\ra}
\nn
&=\rtu_d^{\left((d+1)r(2z+d+d_j-1)-r(d_j-1)\right)\la \mbf{p},\mbf{q}\ra}
\nn
&=\rtu_d^{r\left(2(d+1)\shift+d(d+1) +d(d_j-1))\right)\la\mbf{p},\mbf{q}\ra}
\nn
&=\rtus_d^{rz\la\mbf{p},\mbf{q}\ra}.
}
Substituting back into \eqref{eq:GhostProjSquaredMinusGhostProjB} we deduce
\eag{
&\tilde{\Pi}_s^2 - \tilde{\Pi}_s 
\nn
& \hspace{0.2 cm}=
\sum_{\mbf{p}\notin d\mbb{Z}^2}\frac{(-1)^{1+s_d(\mbf{p})}\rtu_d^{\frac{rf_j}{f}Q(\mbf{p})}}{d^2(d_j+1)}
\left( \sum_{\mbf{q}\in \mcl{I}_{\mbf{p}}}
\rtus_d^{r\la \mbf{p},(zI +\zaunerGen{t})\mbf{q}\ra}
\sfc{d^{-1}\mbf{q}}{A\vpu{-1}_t}{\qrt_{Q,+}}
\sfc{d^{-1}(\mbf{q}-\mbf{p})}{A^{-1}_t}{\qrt_{Q,+}}
\right)D_{G^{-1}\mbf{p}}.
}
It follows that $\tilde{\Pi}_s^2 = \tilde{\Pi}_s$ if and only if 
\eag{
\sum_{\mbf{q}\in \mcl{I}_{\mbf{p}}}
\rtus_d^{r\la \mbf{p},(zI +\zaunerGen{t})\mbf{q}\ra}
\sfc{d^{-1}\mbf{q}}{A\vpu{-1}_t}{\qrt_{Q,+}}
\sfc{d^{-1}(\mbf{q}-\mbf{p})}{A^{-1}_t}{\qrt_{Q,+}} = 0
}
for all $\mbf{p} \not\equiv \zero \Mod{d}$.  If $\mbf{p} \equiv \zero \Mod{d}$, it follows from \Cref{lm:sfam1sfaeq1} that  
\eag{
\sum_{\mbf{q}\in \mcl{I}_{\mbf{p}}}
\rtus_d^{r\la \mbf{p},(zI +\zaunerGen{t})\mbf{q}\ra}
\sfc{d^{-1}\mbf{q}}{A\vpu{-1}_t}{\qrt_{Q,+}}
\sfc{d^{-1}(\mbf{q}-\mbf{p})}{A^{-1}_t}{\qrt_{Q,+}} = 
\sum_{\mbf{q}\in \mcl{I}_{\zero}}
\sfc{d^{-1}\mbf{q}}{A\vpu{-1}_t}{\qrt_{Q,+}}
\sfc{d^{-1}(\mbf{q})}{A^{-1}_t}{\qrt_{Q,+}} = d^2
}
irrespective of whether $\tilde{\Pi}_s$ is a ghost fiducial.
So it is in fact the case that $\tilde{\Pi}_s^2 = \tilde{\Pi}_s$ if and only if 
\eag{
\sum_{\mbf{q}\in \mcl{I}_{\mbf{p}}}
\rtus_d^{r\la \mbf{p},(zI +\zaunerGen{t})\mbf{q}\ra}
\sfc{d^{-1}\mbf{q}}{A\vpu{-1}_t}{\qrt_{Q,+}}
\sfc{d^{-1}(\mbf{q}-\mbf{p})}{A^{-1}_t}{\qrt_{Q,+}} = d^2\delta^{(d)}_{\mbf{p},\zero}
}
for all $\mbf{p}$.
\end{proof}

\subsection{Remarks concerning the set of shifts}\label{sbsc:ProofMainTheoremsFurtherRemarks}
In this subsection we make some observations concerning the set of shifts $\mcl{Z}_t$ for a given admissible tuple $t$.

The argument in \Cref{sbsc:proofofghosttheorem} actually shows a little more than was required  for the proof of \Cref{thm:ghstExist}.  Specifically, let $t=(d,r,Q)\sim(K,j,m,Q)$ be an admissible tuple, and let $\shift$ be an element of $\mcl{Z}_t$.  Then $2z+d_j-1$ is coprime to $d$, which in view of \Cref{lem:fdf} means $\fn_t(\shift)$ is coprime to $\db$.  So if we define $G = \smt{1 & 0 \\ 0 & \fn_t(\shift)^{-1}}$ (where $\fn_t(\shift)^{-1}$ is the inverse of $\fn_t(\shift)$ modulo $\db$), then $G\in \GLtwo{\mbb{Z}/\db\mbb{Z}}$, and $s=(d,r,Q,G,g)\sim(K,j,m,Q,G,g)$ is a fiducial datum for any appropriate choice of $g$.  Consequently
\eag{
\tilde{\Pi}_s &= \frac{r}{d}I + \frac{1}{d\sqrt{d_j+1}}\sum_{\mbf{p}\notin d\mbb{Z}^2} 
\normalizedGhostOverlapC{t}{G\mbf{p}} 
D_{\mbf{p}}
}
is a ghost fiducial.  Consider its complex conjugate 
\eag{
\tilde{\Pi}_s^{*} & = \frac{r}{d}I + \frac{1}{d\sqrt{d_j+1}}\sum_{\mbf{p}\notin d\mbb{Z}^2} 
\normalizedGhostOverlapC{t}{G\mbf{p}}
D_{J\mbf{p}}
\nn
&= \frac{r}{d}I + \frac{1}{d\sqrt{d_j+1}}\sum_{\mbf{p}\notin d\mbb{Z}^2} 
\normalizedGhostOverlapC{t}{JG\mbf{p}}
D_{\mbf{p}}
}
(where $J$ is defined in \Cref{dfn:symplectic}).  It is easily seen that $\tilde{\Pi}_s^{*}$ is also a ghost fiducial.   The fact that $\Det(JG) = -\Det(G)$ means that $\Det(JG)$ is coprime to $\db$, implying that $\bar{\shift} = \fn_t^{-1}(\Det (JG)^{-1})$ is well-defined. Since $\fn_t(\bar{\shift})  = \Det(JG)^{-1}$ is coprime to $\db$ it follows from \Cref{lem:fdf} that $2\bar{\shift}+d_j-1$. The argument in \Cref{sbsc:proofofghosttheorem} shows that the fact that $\tilde{\Pi}_s^{*2}=\tilde{\Pi}_s^{*}$ implies that $\bar{\shift}$ satisfies \eqref{eq:tcc}.  It follows that $\bar{\shift}\in \mcl{Z}_t$.

We have
\eag{
\bar{\shift} &= \fn_t^{-1}(-\Det G^{-1}) = \fn_t^{-1}(-\fn_t(\shift)).
}
This expression leads to a simple, fully explicit expression for $\bar{\shift}$ in terms of $\shift$:
\begin{lem}\label{lem:shiftslambdabar}
    Let $t=(d,r,Q)\sim(K,j,m,Q)$ be an admissible tuple, let  $\shift\in \mcl{Z}_t$, and let $\bar{\shift} = \fn_t^{-1}(-\fn_t(\shift))$.  Then
    \eag{
    \bar{\shift} &= -(\shift+d_j-1)
    } 
\end{lem}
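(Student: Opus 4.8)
The plan is to reduce the statement to the explicit formula $\fn_t(x) = r(2x + d + d_j - 1)$ of \Cref{dfn:functionht} together with a one-line computation modulo $\db$. First I would record that $\bar{\shift}$ is well-defined: since $\shift \in \mcl{Z}_t$, \Cref{dfn:shift} gives that $2\shift + d_j - 1$ is coprime to $d$, so \Cref{lem:fdf} shows $\fn_t(\shift)$ is coprime to $\db$; hence $-\fn_t(\shift)$ is also coprime to $\db$ (and in particular odd when $d$ is even), so it lies in the image of $\fn_t$ and $\fn_t^{-1}(-\fn_t(\shift))$ genuinely makes sense as an element of $\mathbb{Z}/d\mathbb{Z}$.

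The heart of the argument is the functional identity $\fn_t\bigl(-(\shift + d_j - 1)\bigr) = -\fn_t(\shift)$ in $\mathbb{Z}/\db\mathbb{Z}$. Expanding the left-hand side with the formula for $\fn_t$ gives $r\bigl(-2\shift - d_j + 1 + d\bigr)$, which differs from $-\fn_t(\shift) = -r(2\shift + d + d_j - 1)$ by precisely $2rd$. Since $\db$ equals $d$ or $2d$ by definition, one always has $\db \mid 2rd$, so the two sides coincide modulo $\db$. Because $\fn_t$ is injective on $\mathbb{Z}/d\mathbb{Z}$ (again by \Cref{lem:fdf}), applying $\fn_t^{-1}$ to both sides of this identity yields $\bar{\shift} = \fn_t^{-1}(-\fn_t(\shift)) = -(\shift + d_j - 1)$, which is the claim.

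I do not expect a genuine obstacle here; the only points requiring a little care are the bookkeeping between the two moduli $d$ and $\db$ — handled by the trivial observation $\db \mid 2d$ — and the well-definedness check, which is where the hypothesis $\shift \in \mcl{Z}_t$ is actually used. As an alternative to the injectivity step, one could instead substitute $x = -\fn_t(\shift)$ directly into the explicit inverse formula \eqref{eq:htInverse} and simplify, but routing through the identity $\fn_t\bigl(-(\shift + d_j - 1)\bigr) = -\fn_t(\shift)$ keeps the computation shortest.
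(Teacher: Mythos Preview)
Your proof is correct and considerably cleaner than the paper's. The paper takes exactly the alternative route you mention at the end: it substitutes $-\fn_t(\shift)$ directly into the explicit inverse formula \eqref{eq:htInverse}, then invokes the identity $-(d_j+1)r^2 \equiv d-1 \pmod{\db}$ from \eqref{eq:rSquaredIdentity} and carries out a page-long simplification split into the cases $d$ odd and $d$ even. Your approach via the functional identity $\fn_t\bigl(-(\shift+d_j-1)\bigr) \equiv -\fn_t(\shift) \pmod{\db}$ sidesteps all of this: the single observation $\db \mid 2d$ replaces both the appeal to \eqref{eq:rSquaredIdentity} and the parity case split, and injectivity of $\fn_t$ from \Cref{lem:fdf} finishes immediately. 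The paper's route has the minor advantage of not needing to verify that $-\fn_t(\shift)$ lies in the image of $\fn_t$ (since it just computes with the inverse formula), but you handle this cleanly with the coprimality check, so nothing is lost.
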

\begin{proof}
\Cref{dfn:functionht} and \Cref{lem:fdf} imply
\eag{
\bar{\shift}&= \fn_t^{-1}(-\Det G^{-1})
\nn
&= \fn_t^{-1}(-\fn_t(\shift))
\nn
&=\frac{(d+1)\left(-(d_j+1)r^2(2z+d+d_j-1)-(d_j-1)  \right)}{2}.
}
It follows from \eqref{eq:rSquaredIdentity} that 
\eag{
-(d_j+1)r^2 =d-1+k\db
}
for some integer $k$.  So
\eag{
\bar{\shift}&=\frac{(d+1)\left((d-1+k\db)(2z+d+d_j-1)-(d_j-1) \right)}{2}
\nn
&= (d+1)(d-1+k\bar{d})\shift+ \frac{(d+1)\left((d-1+k\db)(d+d_j-1)-(d_j-1)\right)}{2}
\nn
&= -\shift+ \frac{(d+1)\left((d-1+k\db)(d+d_j-1)-(d_j-1)\right)}{2}.
}
Suppose $d$ is odd.  Then $d+1=2m$ for some integer $m$, and
\eag{
\bar{\shift}&= -\shift+ m\left((d-1+k\db)(d+d_j-1) - (d_j-1) \right) 
\nn
&= -\shift-2m(d_j-1)
\nn
&= -\shift -(d+1)(d_j-1) 
\nn
&= -\shift - (d_j-1).
}
Suppose, on the other hand, $d$ is even.  Then $d=2m$ for some integer $m$, and
\eag{
\bar{\shift} &=-\shift  + \frac{(d+1)\left((2m+4km-1)(2m+d_j-1)-(d_j-1)\right)}{2}
\nn
&= -\shift + \frac{(d+1)\left(2m(1+2k)(2m+d_j-1)-2m-2(d_j-1)  \right)}{2}  
\nn
&= -\shift+(d+1)\left( m(1+2k)(2m+d_j-1) - m -(d_j-1)\right) 
\nn
&= -\shift+m(1+2k)(d_j-1) -m -(d_j-1)
\nn
&= -\shift + md_j -(d_j-1).
}
The fact that $d$ is even means, in view of \Cref{lem:dimgridtechres}, that $d_j$ is even.  Consequently $md_j \equiv 0 \Mod{d}$, and
$
\bar{\shift} = -\shift-(d_j-1)
$
in this case too.
\end{proof}

It follows from \Cref{lem:shiftslambdabar} that possible shifts come in pairs $(\shift,\bar{\shift})$.  The question then arises how many such pairs there are.  Observe that it follows from \Cref{lem:djpm1cprimedjm} that $\shift=0$ and $\shift=1$ both satisfy the requirement that $2z+d_j-1$ be coprime to $d$.  In every case examined it appears, as a matter of empirical observation, that they also satisfy \eqref{eq:tcc}.  The corresponding values of $\bar{\shift}$ are $\bar{\shift} = d-d_j+1$ and $\bar{\shift} = d-d_j$, respectively.  The values of $\Det G $ are as given in the table
\begin{center}
    \begin{tabular}{c|c}
         \toprule
         $\shift$ & $\Det G$
         \\
         \midrule
         $0$ & $r^{-1}(d+d_j-1)^{-1}$ 
         \\
         $1$ & $r^{-1}(d+d_j+1)^{-1} = r$
         \\
         $d-d_j$ & $-r^{-1}(d+d_j+1)^{-1} = -r$
         \\
         $d-d_j + 1$ & $-r^{-1}(d+d_j-1)^{-1}$
         \\
         \bottomrule
    \end{tabular}
\end{center}
where the symbols $r^{-1}$, $(d+d_j\pm 1)^{-1}$ denote multiplicative inverses modulo $\db$, and where the alternative expressions for $\Det G$ in the second and third rows are obtained using \eqref{eq:rSquaredIdentity}.
Note that if $m=1$ these four values for $\shift$ reduce to the two values $0$, $1$ with the corresponding values of $\Det G$ being $-1$, $+1$ respectively.  

It appears (again as a matter of empirical observation) that, if $m\le 2$,  these are the only elements of $\mcl{Z}_t$.  However, if $m>2$, there seem to be others.  Specifically, in the cases we examined we found  three pairs for $m=3$ and $m=4$, and five pairs for $m=5$. However, we were not able to discern any obvious pattern to the additional $\shift$-values. This is a question requiring further investigation.

\subsection{Conditional SIC existence}\label{sbsc:proofofghosttorsic}

We now turn to the proof of \Cref{thm:rayclassfieldrsicgen}, which asserts that Zauner's conjecture follows from the conditional assumptions of the Twisted Convolution Conjectural and the Stark Conjecture, and more precisely, that those conditional assumptions imply that the $d \times d$ matrix $\Pi_s$ constructed from a fiducial datum $s$ is an $r$-SIC fiducial projector.

\begin{proof}[Proof of \Cref{thm:rayclassfieldrsicgen}]
We have
\begin{align}
    \tilde{\Pi}_s &= \frac{r}{d} I + \frac{1}{d}\sum_{\mbf{p}\notin d\mbb{Z}^2}\tilde\mu_\p(t) D\vpu{t}_{G^{-1}\mbf{p}}.
\end{align}
Set $\mu_\p(t) = g(\tilde\mu_p(t))$, and apply the Galois automorphism $g$ to obtain
\begin{align}
    \Pi_s
    = g(\tilde{\Pi}_s) 
    = \frac{r}{d} I + \frac{1}{d}\sum_{\mbf{p}\notin d\mbb{Z}^2} \mu_\p(t) g(D\vpu{t}_{G^{-1}\mbf{p}})
    = \frac{r}{d} I + \frac{1}{d}\sum_{\mbf{p}\notin d\mbb{Z}^2} \mu_\p(t) D_{H_g\p},
\end{align}
where $H_g = \smmattwo{1}{0}{0}{k_g}$ and we have used \Cref{thm:GalActOnClifford} in the last line. To show that $\Pi_s$ is a Weyl--Heisenberg $r$-SIC fiducial projector, we must show that:
\begin{itemize}
    \item[(1)] $\Pi_s^2 = \Pi_s$,
    \item[(2)] $\mu_p(t)\mu_{-p}(t) = \frac{1}{d_j+1}$ for $\p \nin d\Z^2$, and
    \item[(3)] $\abs{\mu_p(t)} = \frac{1}{\sqrt{d_j+1}}$ for $\p \nin d\Z^2$.
\end{itemize}
    By \Cref{thm:ghstExist} (using the assumption of \Cref{cnj:tci}), $\tilde{\Pi}_s$ is a ghost $r$-SIC fiducial, so $\tilde\Pi_s^2 = \tilde\Pi_s$. Applying the Galois automorphism $g$ (which commutes with matrix multiplication) shows that $\Pi_s^2 = \Pi_s$, giving (1).
    Now suppose $\p \nin d\Z^2$, and rewrite \Cref{thm:nupnumpeq1} in terms of the unnormalized overlaps:
    \begin{equation}
        \tilde\mu_p(t)\tilde\mu_{-p}(t) = \tfrac{1}{d_j+1}.
    \end{equation}
    Applying $g$ gives $\mu_p(t)\mu_{-p}(t) = \frac{1}{d_j+1}$, which is (2).
    As in \eqref{eq:mushin}, we have
    \begin{equation}
        \tilde\mu_\p(t)
        = \tfrac{1}{\sqrt{d_j+1}}\, \phi_\p(t) \sfc{d^{-1}\p}{A}{\qrt_t} \text{ for } \p \not\equiv \zero \Mod{d}.
    \end{equation}
    By the assumption of \Cref{conj:mrmvc}, we have
    \begin{equation}
        \abs{g(\sfc{d^{-1}\p}{\!A_t}{\qrt_t})}=1.
    \end{equation}
    Therefore, writing $g(\sqrt{d_j+1}) = \pm \sqrt{d_j+1}$ and using the fact that $\phi_\p(t)$ is a root of unity and indeed a power of $\xi_d$, we have
    \begin{equation}
        \mu_\p(t)
        = \pm \tfrac{1}{\sqrt{d_j+1}}\, \phi_\p(t)^{k_g} g(\sfc{d^{-1}\p}{A}{\qrt_t}).
    \end{equation}
    Taking absolute values, we obtain $\abs{\mu_\p(t)} = \frac{1}{\sqrt{d_j+1}}$, which is (3).
\end{proof}

\section{Proof of main theorems (2): Class fields attained}
\label{sec:classfieldsattained}

We now prove results about which abelian extensions are generated by $r$-SICs according to our conjectural framework.

In \Cref{ssec:sicasclass}, we prove results about realizing the field $E_t$ associated to an admissible tuple $t$ as a particular abelian extension of the associated field $K$. We prove \Cref{thm:RayClassField}, showing that $E_t$ is an abelian extension of $K$. We also prove \Cref{thm:RayClassField2}, relating $E_t$ to a particular ray class field. \Cref{thm:RayClassField2plus} provides a strengthening of \Cref{thm:RayClassField2}, and \Cref{conj:RayClassField3plus} provides a strengthening of \Cref{conj:RayClassField3}.

In \Cref{ssec:cofinal}, we prove results on obtaining arbitrary abelian extensions of a real quadratic field $K$ from $r$-SICs. \Cref{thm:cofinalplus} is an unconditional number-theoretic result on the containment of certain abelian extensions in certain ray class fields, based primarily on a technical elementary number theory result given as \Cref{thm:cofinprelim}. As a corollary, we obtain \Cref{thm:cofinal}, asserting under Stark--Tate that, when the trace of the fundamental unit of $K$ is odd, the fields $E_t$ are cofinal in the set of all abelian extensions.

We need some preliminary material to set the stage for the proofs of our main theorems on SIC-generated class fields. \Cref{ssec:fieldsdiscussion} defines three important fields, $E_s^{(1)}$, $E_t^{(2)}$, and $E_t$, attached to a fiducial datum $s = (t,g,G)$. \Cref{ssec:cflemmas} proves some key lemmas showing nontriviality of certain extensions of ray class fields.

\subsection{Discussion of SIC fields}\label{ssec:fieldsdiscussion}

Recall that for an $r$-SIC projector $\Pi$, the \textit{SIC field} $E_\Pi$ was defined in \Cref{defn:sicfield} to be the field generated over $\Q$ by the overlaps $\Tr(\Pi D_\p)$ and the $\db$-th root of unity $\xi_d$. This field is called the \textit{extended projector SIC field} in \cite[Defn.~3.1]{Kopp2020c}; in that paper, it is compared to several other fields that can be associated to a SIC. The literature on SICs contains multiple definitions of fields associated to a SIC, many of which are conjecturally equal but not proven to be so; see \cite[Sec.~3]{Kopp2020c} for further discussion. In this paper, the \textit{SIC field} is always $E_\Pi$ as defined in \Cref{defn:sicfield}.

We also describe fields associated to an admissible tuple $t$ or a fiducial datum $s$. Unlike the definition of the SIC field, the formal definition of these fields is in terms of (quantities defined from) special values of the Shintani--Faddeev modular cocycle, not in terms of a SIC.

For an admissible tuple $t$, an associated field $E_t$ was defined in \Cref{dfn:SICfield}. We expand that definition to give three fields associated to a fiducial datum $s$ or an admissible tuple $t$.
\begin{defn}\label{defn:tuplefields}
    Let $s=(d,r,Q,g,G)\sim(K,j,m,Q,g,G)$ be a fiducial datum, and let $t=(d,r,Q)\sim(K,j,m,Q)$ be the corresponding admissible tuple. Define the following fields.
    \begin{itemize}
        \item[(1)]
        $E_s^{(1)}$ is the field generated over $\Q$ by the numbers $\{\overlapC{t}{\mbf{p}}\colon 0\le p_1, p_2 < d, \, \mbf{p}\neq \zero\}$.
        \item[(2)]
        $E_t^{(2)}$ is the field generated over $\Q$ by the numbers $\{\ghostOverlapC{t}{\mbf{p}}\colon 0\le p_1, p_2 < d, \, \mbf{p}\neq \zero\}$.
        \item[(3)]
        $E_t$ is the field generated over $\Q$ by the numbers $\{\ghostOverlapC{t}{\mbf{p}}\colon 0\le p_1, p_2 < d, \, \mbf{p}\neq \zero\}$ together with $\rtu_d$.
        \item[(4)]
        $\hat\sicField_t$ is the Galois closure (within $\C$) of the compositum of $K$ and $E_t$.
    \end{itemize}
\end{defn}
Recall that for $\p \not\equiv \zero \Mod{d}$ we have by \Cref{lm:OverlapTermsGhostOverlap} the formula
\begin{equation}
    \overlapC{s}{\mbf{p}}
    = g\!\left(\ghostOverlapC{t}{GH_g^{-1}G^{-1}\mbf{p}}\right)
\end{equation}
relating the live and ghost overlaps, with $GH_g^{-1}G^{-1} \in \GLtwo{\Z/\db\Z}$. It follows that $E_s^{(1)} = g(E_t^{(2)})$.

We have defined three types of fields---SIC fields, fields associated to fiducial data, and ray class fields of orders (the latter in \Cref{ssec:cft})---with no unconditional relationship between them. The remainder of this section concerns the strong conditional relationships and the strong empirical relationships between these three types of fields.

\subsection{Lemmas about class fields}\label{ssec:cflemmas}

We now turn our attention to ray class fields of orders.
For comparable level data $\mcl{L} = (\OO; \mm, \rS)$ and $\mcl{L}' = (\OO'; \mm', \rS')$, with $\OO \subseteq \OO'$, $\mm\OO' \subseteq \mm'$, and $\rS \supseteq \rS'$, there is a quotient map $\phi : \Cl_{\mm,\rS}(\OO) \surj \Cl_{\mm',\rS'}(\OO')$ and a corresponding field extension $H_{\mm,\rS}^\OO/H_{\mm',\rS'}^{\OO'}$ with $\Gal(H_{\mm,\rS}^\OO/H_{\mm',\rS'}^{\OO'}) \isom \ker(\phi)$. In order to describe the structure of this kernel, we introduce the $\U$-group notation from \cite{Kopp2020b} for unit groups with congruence and ``ray'' restrictions.

\begin{defn}\label{defn:ugroups}
For a commutative ring with unity $R$ and an ideal $I$ of $R$, define the group
\begin{equation}
\U_I(R) := \{\alpha \in R^\times : \alpha \equiv 1 \Mod{I}\} = (1+I) \cap R^\times.
\end{equation}

If $R$ has real embeddings, and $\rS$ is a subset of the real embeddings of $R$, define
\begin{equation}
\U_{I,\rS}(R) := \{\alpha \in R^\times : \alpha \equiv 1 \Mod{I} \mbox{ and } \rho(\alpha)>0 \mbox{ for } \rho \in \rS\}.
\end{equation}
\end{defn}

The following exact sequence resolves the map $\phi : \Cl_{\mm,\rS}(\OO) \surj \Cl_{\mm',\rS'}(\OO')$, describing $\ker(\phi) \isom \coker(\lambda)$ for a reduction map $\lambda$ from a certain global unit group to a certain $\Mod{\mm'}$ unit group.
\begin{thm}\label{thm:exseq}
Let $K$ be a number field, and consider level data $\mcl{L} = (\OO; \mm, \rS)$ and $\mcl{L}' = (\OO'; \mm', \rS')$ for $K$ such that $\OO \subseteq \OO'$, $\mm\OO' \subseteq \mm'$, and $\rS \supseteq \rS'$.
Let $\dd$ be any $\OO'$-ideal such that $\dd \subseteq \colonideal{\mm}{\OO'}$.
We have the following exact sequence.
\begin{equation}\label{eq:exseq}
1 \to \frac{\U_{\mm',\rS'}\!\left(\OO'\right)}{\U_{\mm,\rS}\!\left(\OO\right)} \xrightarrow{\lambda} \frac{\U_{\mm'}\!\left(\OO'/\dd\right)}{\U_{\mm}\!\left(\OO/\dd\right)} \times \{\pm 1\}^{|\rS \setminus \rS'|} \xrightarrow{\psi} \Cl_{\mm,\rS}(\OO) \xrightarrow{\phi} \Cl_{\mm',\rS'}(\OO') \to 1.
\end{equation}
\end{thm}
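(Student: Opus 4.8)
\textbf{Proof plan for \Cref{thm:exseq}.}

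The plan is to construct the four maps in \eqref{eq:exseq} explicitly and then verify exactness at each of the four nontrivial spots, in the style of the standard exact sequences of class field theory (compare the sequences relating ray class groups to ideal class groups in Neukirch or to the treatment of non-maximal orders in \cite{Kopp2020b}). First I would fix the $\OO'$-ideal $\dd \subseteq \colonideal{\mm}{\OO'}$; since $\dd \subseteq \mm$ and $\dd\OO' \subseteq \mm'$, reduction modulo $\dd$ makes sense simultaneously on $\OO$, $\OO'$, $\mm$, $\mm'$, and the Chinese-remainder-type identifications $\OO/\dd \cong \prod_\pp \OO_\pp/\dd_\pp$ let one pass freely between the ideal-theoretic and the ideles/completions pictures. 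The map $\lambda$ is induced by the inclusion $\U_{\mm',\rS'}(\OO') \hookrightarrow \U_{\mm'}(\OO'/\dd) \times \{\pm 1\}^{\rS\setminus\rS'}$ sending a global unit $u$ to its class modulo $\dd$ together with the tuple of signs $(\sgn(\rho(u)))_{\rho \in \rS\setminus\rS'}$; this is well-defined on the quotient because $\U_{\mm,\rS}(\OO)$ maps into $\U_\mm(\OO/\dd) \times \{+1\}^{\rS\setminus\rS'}$. Injectivity of $\lambda$ is the statement that a unit of $\OO'$ that is $\con 1 \Mod{\mm'}$, positive at $\rS'$, positive at $\rS\setminus\rS'$, and $\con 1 \Mod\dd$ lies in $\U_{\mm,\rS}(\OO)$ — this is where one uses $\dd \subseteq \colonideal{\mm}{\OO'}$: congruence to $1$ mod $\dd$ forces the unit into $\OO$ (since $1 + \dd \subseteq 1 + \colonideal{\mm}{\OO'} \subseteq \OO$ appropriately) and then into $\U_{\mm,\rS}(\OO)$.

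Next I would define $\psi$ on a class $(\bar\alpha, \epsilon)$ by lifting $\bar\alpha \in \U_{\mm'}(\OO'/\dd)$ to some $\alpha \in \OO'$ with $\alpha \con 1 \Mod{\mm'}$, adjusting $\alpha$ by a totally positive element $\con 1 \Mod{\mm'\dd}$ so that its signs at $\rS\setminus\rS'$ realize $\epsilon$ (possible by weak approximation / the independence of the sign conditions from the finite congruence conditions once $\dd$ is in the mix), and setting $\psi(\bar\alpha,\epsilon) = [\alpha\OO] \in \Cl_{\mm,\rS}(\OO)$. One checks this is independent of the lift: two lifts differ by an element of $\U_\mm(\OO/\dd)$ lifted to $\OO$, hence by a principal ideal trivial in $\Cl_{\mm,\rS}(\OO)$. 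The map $\phi$ is the natural quotient $[\fa] \mapsto [\fa\OO']$, which is surjective because every invertible $\OO'$-ideal class coprime to $\mm'$ has a representative that is also an invertible $\OO$-ideal coprime to $\mm$ (here one uses that $\OO$ and $\OO'$ agree away from the conductor and that coprimality to $\mm$ can be arranged, cf.\ the coprimality discussion after \Cref{defn:rayclassgroup}).

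The exactness verifications then run as follows. Surjectivity of $\phi$ and the fact that $\phi\circ\psi$ is trivial are immediate from the definitions. Exactness at $\Cl_{\mm',\rS'}(\OO')$ amounts to: if $\fa\OO'$ is principal generated by $\beta \con 1 \Mod{\mm'}$ with the right signs at $\rS'$, then $\fa$ is equivalent in $\Cl_{\mm,\rS}(\OO)$ to something of the form $\alpha\OO$ with $\bar\alpha$ in the displayed group — this is exactly what the chosen $\dd$ lets one control, by reading off the class of $\beta$ in $\OO'/\dd$ and its signs at $\rS\setminus\rS'$. Exactness at the middle term $\U_{\mm'}(\OO'/\dd)/\U_\mm(\OO/\dd) \times \{\pm1\}^{|\rS\setminus\rS'|}$ is the heart of the matter: $\psi(\bar\alpha,\epsilon)$ is trivial in $\Cl_{\mm,\rS}(\OO)$ iff $\alpha\OO = \gamma\OO$ for $\gamma \con 1 \Mod{\mm}$ positive at $\rS$, iff $\alpha = u\gamma$ for a unit $u$ of $\OO'$; tracking congruences and signs shows $u \in \U_{\mm',\rS'}(\OO')$ and $(\bar\alpha,\epsilon) = \lambda(u)$. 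Injectivity of $\lambda$ was handled above.

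\textbf{The main obstacle.} I expect the delicate point to be the careful bookkeeping of sign conditions at the places in $\rS\setminus\rS'$ versus the finite congruence conditions: one must show the sign data and the $\Mod\dd$ data are jointly realizable and jointly detect triviality, i.e.\ that there is no hidden interaction. This is a weak-approximation / independence argument, but it requires choosing $\dd$ (or rather its support) carefully — in particular $\dd$ must be divisible by all the relevant primes so that adjusting an element $\Mod{\mm'\dd}$ to fix signs does not disturb its class $\Mod\dd$. The non-maximality of $\OO$ adds a second layer of care: one cannot freely factor ideals, so every step involving ``a representative coprime to $\mm$'' must invoke the invertibility and coprimality machinery of \cite{Kopp2020b} rather than unique factorization. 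Once those two points are pinned down, the rest is a diagram chase.
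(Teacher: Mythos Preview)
The paper does not give its own proof; it cites \cite[Thm.~6.5]{Kopp2020b}. Your outline is the standard diagram-chase approach and correctly identifies the two genuinely delicate points (independence of sign and congruence data via weak approximation, and the non-maximal-order invertibility bookkeeping).

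Two small corrections to your sketch. In the injectivity of $\lambda$: triviality in the quotient $\U_{\mm'}(\OO'/\dd)/\U_\mm(\OO/\dd)$ does not mean $u \equiv 1 \Mod\dd$, only that $u \equiv v \Mod\dd$ for some $v \in \OO$ with $v \equiv 1 \Mod\mm$; from $u - v \in \dd \subseteq \colonideal{\mm}{\OO'} \subseteq \mm \subseteq \OO$ you get $u \in \OO$ and $u \equiv 1 \Mod\mm$, and then $u \in \OO^\times$ because an element of an order with field-norm $\pm 1$ is already a unit of that order. In the definition of $\psi$: for a lift $\alpha \in \OO'$, the fractional $\OO$-ideal $\alpha\OO$ need not be invertible or coprime to $\mm$ in the sense of \Cref{defn:rayclassgroup} when $\OO$ is non-maximal, so the lift must be chosen coprime to the conductor; this is a second essential use of the hypothesis on $\dd$, not just a convenience for the sign adjustment.
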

\begin{proof}
    See \cite[Thm.~6.5]{Kopp2020b}.
\end{proof}

The exact sequence \eqref{eq:exseq} gives us the most concrete description of $\Gal(H_{\mm,\rS}^\OO/H_{\mm',\rS'}^{\OO'})$ when the ``global units'' term $\frac{\U_{\mm',\rS'}\!\left(\OO'\right)}{\U_{\mm,\rS}\!\left(\OO\right)}$ is the trivial group. More generally, we need to know something about the global units to use \eqref{eq:exseq} effectively.

We now turn our attention to real quadratic fields and connect the $\U$-groups to stability groups of $2 \times 2$ matrices by way of the canonical representations $\chi_Q$ from \Cref{sec:units}.
\begin{lem}\label{lem:ueqstab}
    Fix a real quadratic field $K$ of discriminant $\Delta_0$, and consider any integers $d \geq 3$ and $f \geq 1$.
    Let $Q$ be any primitive binary quadratic form of discriminant $f^2\Delta_0$, and consider the canonical ring homomorphism $\chi_Q$ defined by \Cref{df:etaq}. 
    Then
    \begin{equation}\label{eq:ueqstab0}
        \chi_Q(d\OO_f) = \chi_Q(\OO_f) \cap d\MM(\Z), %
    \end{equation}
    where, following \Cref{dfn:MatrixRings}, $\MM(\Z)$ is the ring of $2 \times 2$ matrices with integer entries.
    Moreover, we have the following equalities of subgroups of $\GL_2(\Z)$.
    \begin{align}
        \chi_Q(\OO_f^\times) &= \mcl{S}(Q). \label{eq:ueqstab1} 
        \\
        \chi_Q(\U_{d\OO_f,\emptyset}(\OO_f)) &= \mcl{S}_d(Q). \label{eq:ueqstab2}
    \end{align}
\end{lem}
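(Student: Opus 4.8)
\textbf{Proof plan for \Cref{lem:ueqstab}.}

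The plan is to deduce all three claims from \Cref{thm:canrep}, \Cref{tm:canisointun}, \Cref{cr:etaqmodn}, and \Cref{tm:sqchar}, which together provide the dictionary between the order $\OO_f$ (and its ideals) and the matrix subalgebra $\Z\la I, SQ\ra$. First I would record that, since $Q$ is a primitive form of discriminant $f^2\Delta_0$, its fundamental discriminant is $\Delta_0$ and its conductor is $f$, so $Q$ is associated to $K$ in the sense of \Cref{ssec:quadraticfieldsforms}, and hence \Cref{thm:canrep} applies: $\chi_Q$ is a canonical representation of $K$. (I should note that irreducibility and indefiniteness of $Q$ follow from $\Delta_0 > 1$ and $\Delta_0$ being a fundamental discriminant, so $Q \in \mcl{F}_+$ and the hypotheses of \Cref{tm:qfpmst}, \Cref{tm:sqchar} are met.)

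For \eqref{eq:ueqstab0}, the inclusion $\chi_Q(d\OO_f) \subseteq \chi_Q(\OO_f) \cap d\MM(\Z)$ is immediate since $\chi_Q$ is a $\Q$-algebra homomorphism and $\chi_Q(d\kappa) = d\,\chi_Q(\kappa)$. For the reverse inclusion, suppose $M \in \chi_Q(\OO_f) \cap d\MM(\Z)$, so $M = \chi_Q(\kappa)$ for some $\kappa \in \OO_f$ and $M = dL$ with $L \in \MM(\Z)$; then $\chi_Q(\kappa) \in d\MM(\Z)$, and \Cref{cr:etaqmodn} gives $\kappa \in d\OO_f$, hence $M \in \chi_Q(d\OO_f)$. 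For \eqref{eq:ueqstab1}, \Cref{tm:canisointun}\eqref{it:canisointunb} says $\chi_Q$ restricts to a group isomorphism of $\mcl{U}_f = \OO_f^\times$ onto $\Z\la I, SQ\ra \cap \GLtwo{\mbb{Z}}$, and \Cref{tm:sqchar}\eqref{it:sqcharb} identifies the latter with $\mcl{S}(Q)$; composing the two gives $\chi_Q(\OO_f^\times) = \mcl{S}(Q)$.

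For \eqref{eq:ueqstab2}, recall that $\U_{d\OO_f,\emptyset}(\OO_f) = \{\alpha \in \OO_f^\times : \alpha \equiv 1 \Mod{d\OO_f}\}$ by \Cref{defn:ugroups} (the empty ramification set imposes no sign condition), and $\mcl{S}_d(Q) = \mcl{S}(Q) \cap \Gamma(d)$ by \Cref{dfn:stbgpqdf}. Given $\alpha \in \U_{d\OO_f,\emptyset}(\OO_f)$, write $\alpha = 1 + d\beta$ with $\beta \in \OO_f$; then $\chi_Q(\alpha) = I + d\,\chi_Q(\beta)$, and since $\chi_Q(\beta) \in \MM(\Z)$ by \Cref{tm:canisointun}\eqref{it:canisointuna}, we get $\chi_Q(\alpha) \equiv I \Mod{d}$, i.e. $\chi_Q(\alpha) \in \Gamma(d)$; combined with $\chi_Q(\alpha) \in \mcl{S}(Q)$ from \eqref{eq:ueqstab1} this shows $\chi_Q(\U_{d\OO_f,\emptyset}(\OO_f)) \subseteq \mcl{S}_d(Q)$. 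Conversely, if $M \in \mcl{S}_d(Q)$, then by \eqref{eq:ueqstab1} we have $M = \chi_Q(\alpha)$ for some $\alpha \in \OO_f^\times$, and $M \equiv I \Mod{d}$ means $M - I \in d\MM(\Z)$; since $M - I = \chi_Q(\alpha - 1)$ with $\alpha - 1 \in \OO_f$, \Cref{cr:etaqmodn} gives $\alpha - 1 \in d\OO_f$, i.e. $\alpha \equiv 1 \Mod{d\OO_f}$, so $\alpha \in \U_{d\OO_f,\emptyset}(\OO_f)$. This establishes \eqref{eq:ueqstab2}. I do not anticipate a genuine obstacle here; the only point requiring care is bookkeeping the identification of conductor and discriminant data for $Q$ so that the cited structural theorems (especially \Cref{cr:etaqmodn} and \Cref{tm:canisointun}) apply verbatim, and making sure the sign conventions in \Cref{defn:ugroups} match the $\rS = \emptyset$ case.
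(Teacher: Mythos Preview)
Your argument is correct in substance and in fact somewhat cleaner than the paper's for \eqref{eq:ueqstab0}: the paper re-derives the reverse inclusion by an explicit case-split on the parity of $f^2\Delta_0$, whereas you simply invoke \Cref{cr:etaqmodn}, which already encapsulates that computation. For \eqref{eq:ueqstab1} both arguments are the same.

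There is one small gap in your treatment of \eqref{eq:ueqstab2}, and it is exactly where the hypothesis $d \ge 3$ enters. You write ``$\chi_Q(\alpha) \equiv I \Mod{d}$, i.e.\ $\chi_Q(\alpha) \in \Gamma(d)$'', but $\Gamma(d)$ is by definition a subgroup of $\SLtwo{\Z}$, so you also need $\det \chi_Q(\alpha) = +1$. This does follow: from $\chi_Q(\alpha) \in \mcl{S}(Q) \subseteq \GLtwo{\Z}$ and $\chi_Q(\alpha) \equiv I \Mod{d}$ we get $\det \chi_Q(\alpha) \equiv 1 \Mod{d}$, and since $\det \chi_Q(\alpha) = \Nm(\alpha) \in \{\pm 1\}$ and $d \ge 3$, the determinant must be $+1$. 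The paper makes this explicit by arguing directly that $\Nm(\eta) = 1$ for $\eta \in \U_{d\OO_f,\emptyset}(\OO_f)$ whenever $d \ge 3$; you should insert the analogous one-line justification so that the role of $d \ge 3$ is visible.
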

\begin{proof}
    We first prove \eqref{eq:ueqstab0}.
    The inclusion $\chi_Q(\OO_f) \subseteq \MM(\Z)$ holds by \Cref{tm:canisointun}; thus, $\chi_Q(d\OO_f) \subseteq \chi_Q(\OO_f) \cap d\MM(\Z)$. To prove the reverse inclusion, suppose $M \in \chi_Q(\OO_f) \cap d\MM(\Z)$, write $Q = \la a,b,c \ra$, and write 
    \begin{equation}\label{eq:Mdivbymeq}
        M 
        = \chi_Q(x+y\sqrt{\Delta_0})
        = xI + \frac{2y}{f}SQ
        = \mattwo{x-\frac{by}{f}}{-\frac{2cy}{f}}{\frac{2ay}{f}}{x+\frac{by}{f}}
    \end{equation}
    for $x,y \in \Q$.
    It follows that $\frac{2ay}{f} \in d\Z$, $\frac{2cy}{f} \in d\Z$, and $\frac{2by}{f} = (x+\frac{by}{f}) - (x-\frac{by}{f}) \in d\Z$. Because $Q$ is primitive, $\gcd(a,b,c)=1$, so $\frac{2y}{f} \in d\Z$; that is, $y \in \frac{df}{2}\Z$.
    We complete the proof of \eqref{eq:ueqstab0} by cases.

    \textit{Case 1:} Suppose $2 \div f^2\Delta_0$.
    Then, since $b^2 - 4ac = f^2\Delta_0$, it follows that $b$ is even. Thus, $\frac{by}{f} \in d\Z$. It follows from \eqref{eq:Mdivbymeq} that $x-\frac{by}{f} \in d\Z$, and thus, $x = (x-\frac{by}{f}) + b\frac{y}{f} \in d\Z$. In this case,
    \begin{equation}
        d\OO_f = \left\{x+y\sqrt{\Delta_0} : x \in d\Z, y \in \tfrac{df}{2}\Z\right\},
    \end{equation}
    and so $x+y\sqrt{\Delta_0} \in d\OO_f$.

    \textit{Case 2:} Suppose $2 \ndiv f^2\Delta_0$.
    Since $b^2 - 4ac = f^2\Delta_0$, it follows that $b$ is odd; also, $f$ is odd. It follows from \eqref{eq:Mdivbymeq} that $x-\frac{by}{f} \in d\Z$.
    Thus, 
    $x = (x-\frac{by}{f}) + b\frac{y}{f} \in \frac{d}{2}\Z$, 
    and $x-y = (x-\frac{by}{f}) + (f-b)\frac{y}{f} \in d\Z$. In this case,
    \begin{equation}
        d\OO_f = \left\{x+y\sqrt{\Delta_0} : x \in \tfrac{d}{2}\Z, y \in \tfrac{df}{2}\Z, x-y \in d\Z\right\},
    \end{equation}
    and so $x+y\sqrt{\Delta_0} \in d\OO_f$.

    Both case being proven, the proof of \eqref{eq:ueqstab0} is complete.
    Equation ~\eqref{eq:ueqstab1} is a restatement of \Cref{tm:qfpmst}(2), which we have already proven. It remains to prove \eqref{eq:ueqstab2}.%

    We first show that $\chi_Q(\U_{d\OO_f,\emptyset}(\OO_f))$ is a subgroup of $\SL_2(\Z)$.
    Consider any $\eta \in \U_{d\OO_{f},\emptyset}(\OO_{f})$.
    If $\eta'$ is the nontrivial Galois conjugate of $\eta$, then $\eta \equiv 1 \Mod{d\OO_1}$ and $\eta' \equiv 1 \Mod{d\OO_1}$, so $\Nm(\eta) = \eta\eta' \equiv 1 \Mod{d\OO_1}$. But $\Nm(\eta) = \pm 1$ because $\eta$ is a unit, and $d \geq 3$, so $\Nm(\eta) = 1$.
    Thus, $\det(\chi_Q(\eta)) = \Nm(\eta) = 1$. So $\chi_Q(\U_{d\OO_f,\emptyset}(\OO_f))$ is a subgroup of $\SL_2(\Z)$.

    We have $\U_{d\OO_f,\emptyset}(\OO_f) = \OO_f^\times \cap (1 + d\OO_f)$.
    Because $\chi_Q$ is an injective homomorphism, it follows that $\chi_Q(\U_{d\OO_f,\emptyset}(\OO_f)) = \chi_Q(\OO_f^\times) \cap (I + \chi_Q(d\OO_f))$. Thus, by \eqref{eq:ueqstab0} and \eqref{eq:ueqstab1},  $\chi_Q(\U_{d\OO_f,\emptyset}(\OO_f)) = \mcl{S}(Q) \cap d\MM(\Z)$. But since $\chi_Q(\U_{d\OO_f,\emptyset}(\OO_f))$ is a subgroup of $\SL_2(\Z)$, in fact $\chi_Q(\U_{d\OO_f,\emptyset}(\OO_f)) = \mcl{S}(Q) \cap \Gamma(d) = \mcl{S}_d(Q)$.
\end{proof}

The focus now narrows to the ray class fields of interest for the our construction of $r$-SICs. We will fix a real quadratic field $K$ and associate to it the sequence of conductors $f_j$ from \Cref{dfn:sequenceofconductors} and the dimension grid $d_{j,m}$ and rank grid $r_{j,m}$ from \Cref{dfn:fjrjmdjm}.

The following lemma is the key technical result on global units that will allow us some control over the behavior of the ray class groups and ray class fields of interest for $r$-SICs. It generalizes \cite[Lem.~5.3]{Kopp2019} and \cite[Lem.~B.2]{Kopp2020c} by using \Cref{lem:ueqstab} together with results proven in \Cref{sec:units}.

\begin{lem}\label{lem:ugroupgen}
    Fix a real quadratic field $K$ of discriminant $\Delta_0$.
    Let $j,m \in \N$.
    Let $d = d_{j,m}$ and $f \div f_{j}$.
    Let $\rS$ be a subset of the real embeddings of $K$.
    Then
    \begin{equation}
        \U_{d\OO_f,\rS}(\OO_f) = \langle \vn_{d_j}^{2m+1} \rangle.
    \end{equation}
\end{lem}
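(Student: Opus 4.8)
The goal is to compute the group $\U_{d\OO_f,\rS}(\OO_f) = \{\eta \in \OO_f^\times : \eta \equiv 1 \Mod{d\OO_f},\ \rho(\eta)>0 \text{ for } \rho \in \rS\}$ when $d = d_{j,m}$ and $f \div f_j$, and show it equals $\langle \vn_{d_j}^{2m+1}\rangle$ --- wait, I should be careful about notation: the statement says $\langle \vn_{d_j}^{2m+1}\rangle$, but given the earlier development (\Cref{tm:symgp}, \Cref{df:epsilonfDefinition}) the intended generator is surely $\vn_f^{(2m+1)n_t}$ where $n_t = j/j_{\min}(f)$, i.e. the element $\vn^{j(2m+1)}$ whose image under $\canrep_Q$ is $A_t$. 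The plan is to transport the whole question into $\GL_2(\Z)$ via a canonical representation $\canrep_Q$ attached to any primitive form $Q$ of discriminant $f^2\Delta_0$, using \Cref{lem:ueqstab}. First I would note that by \eqref{eq:ueqstab2} of \Cref{lem:ueqstab}, $\canrep_Q\bigl(\U_{d\OO_f,\emptyset}(\OO_f)\bigr) = \mcl{S}_d(Q)$, and by \Cref{tm:symgp}, $\mcl{S}_d(Q) = \langle A_t \rangle$ where $A_t = \canrep_Q(\vn^{j(2m+1)})$. Since $\canrep_Q$ is an injective homomorphism, this immediately gives $\U_{d\OO_f,\emptyset}(\OO_f) = \langle \vn^{j(2m+1)}\rangle$, settling the case $\rS = \emptyset$.

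Second, I would handle the sign conditions at the infinite places. The subgroup $\U_{d\OO_f,\rS}(\OO_f)$ is the subgroup of $\U_{d\OO_f,\emptyset}(\OO_f) = \langle \vn^{j(2m+1)}\rangle \times \{\pm 1\}$ --- actually $\U_{d\OO_f,\emptyset}(\OO_f)$ contains $-1$ only if $-1 \equiv 1 \Mod{d\OO_f}$, which fails for $d \geq 3$, so in fact $\U_{d\OO_f,\emptyset}(\OO_f) = \langle \vn^{j(2m+1)} \rangle$ is already torsion-free cyclic and every element is a power of $\vn^{j(2m+1)}$. So I must check that $\vn^{j(2m+1)}$ itself is totally positive, hence satisfies the sign condition for \emph{any} $\rS$. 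This is exactly the point of the ``fundamental totally positive unit'' terminology: $\vn$ is totally positive by \Cref{dfn:fundamentalTotallyPositiveUnit}, so $\vn^{j(2m+1)}$ is totally positive, so $\rho(\vn^{j(2m+1)}) > 0$ for both real embeddings $\rho$. Therefore $\vn^{j(2m+1)} \in \U_{d\OO_f,\rS}(\OO_f)$ for every $\rS$, and since $\U_{d\OO_f,\rS}(\OO_f) \subseteq \U_{d\OO_f,\emptyset}(\OO_f) = \langle \vn^{j(2m+1)}\rangle$, the two groups coincide: $\U_{d\OO_f,\rS}(\OO_f) = \langle \vn^{j(2m+1)}\rangle$.

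Third, I would reconcile this with the statement's notation $\vn_{d_j}^{2m+1}$: using \Cref{df:epsilonfDefinition} and \Cref{tm:symgp}\eqref{eq:AStabilizerTermsUnit}, we have $\vn_f^{n_t} = \vn^{j}$ where $n_t = j/j_{\min}(f)$, and hence $A_t = \canrep_Q(\vn_f^{n_t(2m+1)}) = \canrep_Q((\vn_f^{n_t})^{2m+1})$. The subscript in the statement should be read as ``the generator of $\U^+_f$ raised to the power $n_t$,'' i.e. $\vn^j$, so $\vn_{d_j}^{2m+1}$ is shorthand for $(\vn^j)^{2m+1} = \vn^{j(2m+1)}$. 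I would either fix the notation inline or cite \Cref{tm:symgp} to make the identification transparent.

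\textbf{Main obstacle.} There is essentially no deep obstacle here: the work has all been done in \Cref{lem:ueqstab} and \Cref{tm:symgp}. The only thing requiring genuine care is the bookkeeping of which unit is the generator --- tracking the relationship between $\vn$, $\vn_f$, $\vn_f^{n_t} = \vn^j$, and the associated stabilizer $A_t$ --- together with verifying that no spurious $-1$ or non-totally-positive power sneaks in when $\rS \neq \emptyset$; the latter is handled cleanly by the total positivity of $\vn$. I would also double-check the edge hypothesis $d = d_{j,m} \geq 3$ (which holds since $d_{j,m} > 3$ by \Cref{prop:rjmdjmord}), as this is what kills the torsion and is implicitly used in the proof of \eqref{eq:ueqstab2}.
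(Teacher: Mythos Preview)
Your proposal is correct and takes essentially the same approach as the paper: both reduce to \Cref{lem:ueqstab}\eqref{eq:ueqstab2} and \Cref{tm:symgp} to identify $\U_{d\OO_f,\emptyset}(\OO_f)$ with $\canrep_Q^{-1}(\mcl{S}_d(Q)) = \langle \vn^{j(2m+1)}\rangle$, then observe that total positivity of $\vn$ makes the sign conditions at $\rS$ vacuous. The only cosmetic difference is that the paper applies these lemmas at conductor $1$ and uses the sandwich $\langle \vn^{j(2m+1)}\rangle \subseteq \U_{d\OO_f,\rS}(\OO_f) \subseteq \U_{d\OO_1,\emptyset}(\OO_1) = \langle \vn^{j(2m+1)}\rangle$, whereas you apply them directly at conductor $f$; both are valid, and your reading of the notation $\vn_{d_j} = \vn^j$ is exactly what the paper intends.
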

\begin{proof}
    Write $\vn_{d_j} = \vn^{j}$, where $\vn$ is the smallest totally positive unit of $K$ with $\vn > 1$.
    By \eqref{eq:epowerminusone} of \Cref{lem:dimgridtechres},
    \begin{equation}
        \vn_{d_j}^{2m+1} - 1 = \vn^{(2m+1)j}-1 = d_{j,m} \vn^{mj} (\vn^j-1).
    \end{equation}
    Also, $\vn_{d_j} \in \OO_{f_j}$. Thus, $\vn_{d_j}^{2m+1} \equiv 1 \Mod{d_{j,m}\OO_{f_j}}$, so
    \begin{equation}
        \vn_{d_j}^{2m+1} \in \U_{d\OO_f,\{\infty_1,\infty_2\}}(\OO_{f}).
    \end{equation}
    It follows that $\vn_{d_j}^{2m+1} \in \U_{d\OO_{f_j},\rS}(\OO_{f_j})$, because $\U_{d\OO_f,\{\infty_1,\infty_2\}}(\OO_{f}) \subseteq \U_{d\OO_{f_j},\rS}(\OO_{f_j})$.
    
    Since $\U_{d\OO_{f_j},\rS}(\OO_{f_j}) \subseteq \U_{d\OO_{1},\emptyset}(\OO_{1})$, It suffices to show that $\vn_{d_j}$ generates $\U_{d\OO_{1},\emptyset}(\OO_{1})$, which we will now do. 

    Let $Q$ be any primitive binary quadratic form of discriminant $\Delta_0$, and consider the canonical ring homomorphism $\chi_Q$ defined by \Cref{df:etaq}. By \eqref{eq:ueqstab2} of \Cref{lem:ueqstab},
    \begin{equation}\label{eq:chiQSd}
        \chi_Q(\U_{d\OO_1,\emptyset}(\OO_1)) = \mcl{S}_d(Q).
    \end{equation}
    By \eqref{eq:QStabilityGroupGenerators2} of \Cref{tm:symgp}, 
    \begin{equation}\label{eq:SdchiQ}
        \mcl{S}_d(Q) 
        = \la A_t \ra 
        = \left\la \chi_Q(\vn_{d_j}^{2m+1}) \right\ra
        = \chi_Q\!\left(\la\vn_{d_j}^{2m+1}\ra\right).
    \end{equation}
    It follows from \eqref{eq:chiQSd} and \eqref{eq:SdchiQ}, and the fact that $\chi_Q$ is an injective homomorphism, that $\U_{d\OO_1,\emptyset}(\OO_1) = \la\vn_{d_j}^{2m+1}\ra$.
\end{proof}

As a consequence, we deduce the following lemma showing that varying the set of infinite primes in the ray class modulus produces distinct ray class groups and ray class fields.

\begin{lem}\label{lem:diamond}
    Fix a real quadratic field $K$.
    Let $j,m \in \N$.
    Let $d = d_{j,m}$, $f \div f_{j}$, and $d' \in \{d,\db\}$. 
    The all the group homomorphism shown in the following diagram are $2$-to-$1$ surjections.
    \begin{equation}\label{eq:classdiamond}
    \begin{tikzcd}[column sep=tiny]
    & \Cl_{d'\infty_1\infty_2}\!(\OO_f)\ar[dr] \ar[dl] & \\
    \Cl_{d'\infty_1}\!(\OO_f) \ar[dr] & & \Cl_{d'\infty_2}\!(\OO_f) \ar[dl] \\
    & \Cl_{d'}\!(\OO_f) &
    \end{tikzcd}
    \end{equation}
    In the following field diagram, all of the extensions have degree $2$.
    \begin{equation}\label{eq:fielddiamond}
    \begin{tikzcd}[column sep=tiny]
    & H_{d'\infty_1,\infty_2}^{\OO_f}\ar[dr,dash] \ar[dl,dash] & \\
    H_{d'\infty_1}^{\OO_f} \ar[dr,dash] & & H_{d'\infty_2}^{\OO_f} \ar[dl,dash] \\
    & H_{d'\OO}^{\OO_f} &
    \end{tikzcd}
    \end{equation}
\end{lem}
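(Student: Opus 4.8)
The plan is to deduce both diagrams from the exact sequence of \Cref{thm:exseq} combined with the computation of the relevant global unit groups in \Cref{lem:ugroupgen}. For the class group diagram \eqref{eq:classdiamond}, I would apply \eqref{eq:exseq} to each of the three edges. For the edge $\Cl_{d'\infty_1\infty_2}(\OO_f) \to \Cl_{d'\infty_2}(\OO_f)$, take $\mcl{L} = (\OO_f; d'\OO_f, \{\infty_1,\infty_2\})$ and $\mcl{L}' = (\OO_f; d'\OO_f, \{\infty_2\})$, so $\OO = \OO'$, $\mm = \mm'$, $\rS \setminus \rS' = \{\infty_1\}$, and one may take $\dd = d'\OO_f$. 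The exact sequence becomes
\begin{equation}
1 \to \frac{\U_{d'\OO_f,\{\infty_2\}}(\OO_f)}{\U_{d'\OO_f,\{\infty_1,\infty_2\}}(\OO_f)} \xrightarrow{\lambda} \{\pm 1\} \xrightarrow{\psi} \Cl_{d'\infty_1\infty_2}(\OO_f) \xrightarrow{\phi} \Cl_{d'\infty_2}(\OO_f) \to 1,
\end{equation}
so $\ker(\phi) \isom \coker(\lambda) \isom \{\pm 1\}/\operatorname{im}(\lambda)$. By \Cref{lem:ugroupgen} (applied with $\rS = \{\infty_2\}$ and with $\rS = \{\infty_1,\infty_2\}$ in turn, and noting $d = d_{j,m}$, $f \div f_j$), both $\U$-groups equal $\langle \vn_{d_j}^{2m+1}\rangle$, and this unit is totally positive, so $\lambda$ is the trivial map and $\ker(\phi) \isom \{\pm 1\}$. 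The other two surjections in \eqref{eq:classdiamond} are handled identically: swapping $\infty_1 \leftrightarrow \infty_2$ gives $\Cl_{d'\infty_1\infty_2}(\OO_f) \to \Cl_{d'\infty_1}(\OO_f)$, and using $\mcl{L} = (\OO_f; d'\OO_f, \{\infty_i\})$, $\mcl{L}' = (\OO_f; d'\OO_f, \emptyset)$ gives $\Cl_{d'\infty_i}(\OO_f) \to \Cl_{d'}(\OO_f)$ — in each case the global units term is trivial because $\U_{d'\OO_f,\emptyset}(\OO_f) = \langle \vn_{d_j}^{2m+1}\rangle$ is totally positive, so $\lambda$ kills all of $\{\pm 1\}^{|\rS \setminus \rS'|}$ and $\ker(\phi)$ has order $2$.

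For the field diagram \eqref{eq:fielddiamond}, I would invoke the canonical isomorphisms $\Art_{\OO_f} : \Cl_{\mm,\rS}(\OO_f) \xrightarrow{\sim} \Gal(H_{\mm,\rS}^{\OO_f}/K)$ from \Cref{thm:rayclassfield}. These isomorphisms are functorial: if $(\mm,\rS) \geq (\mm',\rS')$ then $H_{\mm,\rS}^{\OO_f} \supseteq H_{\mm',\rS'}^{\OO_f}$ and the diagram relating the Artin maps, the quotient $\phi$ on class groups, and the restriction map on Galois groups commutes, so $\Gal(H_{\mm,\rS}^{\OO_f}/H_{\mm',\rS'}^{\OO_f}) \isom \ker(\phi)$. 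Each of the three inclusions in \eqref{eq:fielddiamond} thus has degree equal to the order of the kernel of the corresponding surjection in \eqref{eq:classdiamond}, namely $2$. (One small bookkeeping point: the diagram \eqref{eq:fielddiamond} as typeset has a subscript ``$\infty_2$'' written as ``${d'\infty_1,\infty_2}$'' in the top node and ``$d'\OO$'' in the bottom node; these should be read as $H_{d'\infty_1\infty_2}^{\OO_f}$ and $H_{d'}^{\OO_f}$ respectively, matching the class group labels, and the argument applies verbatim.)

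The only mild subtlety — and what I would regard as the one point requiring care rather than routine verification — is confirming the hypothesis ``$d' \in \{d,\db\}$'' causes no trouble: when $d' = \db$ we need $f \div f_j$ to still guarantee that \Cref{lem:ugroupgen} applies with $d$ replaced by $\db$. But \Cref{lem:ugroupgen} is stated for $d = d_{j,m}$, so strictly it gives $\U_{d_{j,m}\OO_f,\rS}(\OO_f) = \langle \vn_{d_j}^{2m+1}\rangle$; for $d' = \db = 2d_{j,m}$ we instead need $\U_{2d_{j,m}\OO_f,\rS}(\OO_f)$. Since $\U_{\db\OO_f,\rS}(\OO_f) \subseteq \U_{d\OO_f,\rS}(\OO_f) = \langle \vn_{d_j}^{2m+1}\rangle$, every element of the smaller group is a power $\vn_{d_j}^{(2m+1)k}$; such a power lies in $1 + \db\OO_f$ iff $\db \div (2m+1)k$ in the relevant sense, and in any case the group is cyclic, generated by some power of $\vn_{d_j}^{2m+1}$, hence still totally positive. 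Thus $\lambda$ remains trivial and the kernels still have order $2$. I would spell this reduction out explicitly (it amounts to observing that any subgroup of $\langle \vn_{d_j}^{2m+1}\rangle$ of finite index is generated by a power of a totally positive unit, hence consists of totally positive units), after which both diagrams follow.
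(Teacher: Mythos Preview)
Your proposal is correct and follows essentially the same approach as the paper: apply the exact sequence of \Cref{thm:exseq} with $\mm = \mm' = d'\OO_f$ and $\dd = d'\OO_f$, use \Cref{lem:ugroupgen} to see that the global units quotient is trivial (since all units involved are totally positive), and then transfer to the field diagram via the Artin isomorphisms of \Cref{thm:rayclassfield}. The paper's handling of $d' = \db$ is marginally slicker---it just observes that $\U_{\db\OO_f,\rS}(\OO_f) = \U_{d\OO_f,\rS}(\OO_f) \cap (1+\db\OO_f)$, so equality of the $d$-groups for varying $\rS$ immediately gives equality of the $\db$-groups---but your argument via total positivity reaches the same conclusion.
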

\begin{proof}
    We first prove the claim in the case $d'=d$.
    Let $\OO = \OO_f$.
    Let $\rS', \rS$ be subsets of the set of real embeddings of $K$.
    We apply the exact sequence of \cite[Thm.~6.5]{Kopp2020b} with level data $\mcl{L} = (\OO; d'\OO,\rS)$ and $\mcl{L'} = (\OO; d'\OO,\emptyset)$ and with $\dd = d\OO$:
    \begin{equation}\label{eq:sesdiam}
        1 \to
        \frac{\U_{d\OO,\rS'}(\OO)}{\U_{d\OO,\rS}(\OO)} \to
        \frac{\U_{d\OO}(\OO/d\OO)}{\U_{d\OO}(\OO/d\OO)} \times \{\pm 1\}^{\abs{\rS \setminus \rS'}} \to
        \Cl_{d\OO,\rS}(\OO) \to
        \Cl_{d\OO,\rS'}(\OO) \to
        1.
    \end{equation}
    By \Cref{lem:ugroupgen}, the ``global units'' term is
    \begin{equation}
        \frac{\U_{d\OO,\rS'}(\OO)}{\U_{d\OO,\rS}(\OO)} 
        = \frac{\langle \e_{d_j}^{2m+1} \rangle}{\langle \e_{d_j}^{2m+1} \rangle}
        = 1.
    \end{equation}
    We may thus rewrite \Cref{eq:sesdiam} as the short exact sequence
    \begin{equation}
        1 \to
        \{\pm 1\}^{\abs{\rS \setminus \rS'}} \to
        \Cl_{d\OO,\rS}(\OO) \to
        \Cl_{d\OO,\rS'}(\OO) \to
        1.
    \end{equation}
    It follows that the ray class groups 
    $\Cl_{d}(\OO) = \Cl_{d\OO,\emptyset}(\OO)$,
    $\Cl_{d\infty_1}(\OO) = \Cl_{d\OO,\{\infty_1\}}(\OO)$,
    $\Cl_{d\infty_2}(\OO) = \Cl_{d\OO,\{\infty_2\}}(\OO)$,
    $\Cl_{d\infty_1\infty_2}(\OO) = \Cl_{d\OO,\{\infty_1,\infty_2\}}(\OO)$
    are all distinct and have the $2$-to-$1$ surjections shown in \eqref{eq:classdiamond}.
    The fact that the field extensions in the ray class field diamond \eqref{eq:fielddiamond} have degree $2$ follows by \Cref{thm:rayclassfield}.

    In the case $d' = \db$, note that since $\U_{d\OO,\rS'}(\OO) = \U_{d\OO,\rS}(\OO)$, it follows that $\U_{\db\OO,\rS'}(\OO) = \U_{\db\OO,\rS}(\OO)$, because the latter two groups are subgroups of the former obtained by imposing the condition $u \equiv 1 \Mod{\db}$. Therefore, the same proof applies.
\end{proof}

\subsection{SIC fields as class fields}\label{ssec:sicasclass}
\label{sbsc:rayclassproof}

\Cref{thm:RayClassField} is a straightforward consequence of our conjectures about the RM values of the Shintani--Faddeev cocycle. 

\begin{proof}[Proof of \Cref{thm:RayClassField}]
    Let $t=(d,r,Q)\sim(K,j,m,Q)$ be an admissible tuple. We wish to show that $E_t$ is an abelian extension of $K$.

    The field $E_t$ is defined to be the field generated over $\Q$ by the candidate ghost overlaps $\tilde\mu_\p(t)$ together with $\xi_d$. The candidate ghost overlaps are $\tilde\mu_\zero(t) = 1$ and
    \begin{equation}\label{eq:mushin}
        \tilde\mu_\p(t)
        = \frac{1}{\sqrt{d_j+1}}\, \tilde\nu_\p(t)
        = \frac{1}{\sqrt{d_j+1}}\, \phi_\p(t) \sfc{d^{-1}\p}{A}{\qrt_t} \text{ for } \p \not\equiv \zero \Mod{d}.
    \end{equation}
    Note that $\sqrt{d_j+1}$ and $\phi_\p(t)$ (a root of unity) are both contained in abelian extensions of $\Q$, so in particular they are both are contained in an abelian extension of $K$. By the conditional assumption (of \Cref{conj:frmvc} when $Q$ is fundamental and \Cref{conj:grmvc} when $Q$ is not fundamental), $\sfc{d^{-1}\p}{A_t}{\qrt_t}$ is also in an abelian extension of $K$. We have
    \begin{equation}
        E_t \subseteq K\!\left(\sqrt{d_j+1}, \xi_d, \phi_\p(t), \sfc{d^{-1}\p}{A_t}{\qrt_t} : 0 \leq p_1,p_2 < d\right),
    \end{equation}
    and the right-hand field is abelian over $K$, so $E_t$ is abelian over $K$.
\end{proof}

We prove the following theorem, which implies \Cref{thm:RayClassField2}.
\begin{thm}\label{thm:RayClassField2plus}
    Assume \Cref{conj:stc} (the Stark--Tate Conjecture).
    Let $s=(d,r,Q,G,g)\sim(K,j,m,Q,G,g)$ be a fiducial datum, 
    let $t=(d,r,Q)\sim(K,j,m,Q)$,
    and let $d=d_{j,m}$.
    Suppose that $\disc(Q)$ is fundamental.
    Choose $\p_1 = \smcoltwo{p_{11}}{p_{12}}$ such that $(p_{12}\qrt_t-p_{11})\OO_1$ is coprime to $d\OO_1$ as $\OO_1$-ideals. (For example, one may take $\p_1 = \smcoltwo{-1}{0}$.)
    \begin{itemize}
        \item[(1)]
        The ray class field $H^{\OO_1}_{d\infty_1}$ 
        is equal to the field extension of $K$ generated by the numbers $\{\overlapC{t}{\mbf{p}}\colon 0\le p_1, p_2 < d, \, \mbf{p}\neq \zero\}$ and is also equal to $\Q(\overlapC{t}{\mbf{p_1}}^2)$.
        The field $E_s^{(1)} \supseteq H^{\OO_1}_{d\infty_1} \supseteq K$, the extension $E_s^{(1)}/K$ is ramified at $\infty_1$ and unramified $\infty_2$, and field $E_s^{(1)}$ depends only on the pair $(d,r)$.
        \item[(2)]
        The ray class field $H^{\OO_1}_{d\infty_2}$ 
        is equal to the field extension of $K$ generated by the numbers $\{\ghostOverlapC{t}{\mbf{p}}^2\colon 0\le p_1, p_2 < d, \, \mbf{p}\neq \zero\}$ and is also equal to $\Q(\ghostOverlapC{t}{\mbf{p_1}}^2)$.
        The field $E_t^{(2)} \supseteq H^{\OO_1}_{d\infty_2} \supseteq K$, the extension $E_t^{(2)}/K$ is unramified at $\infty_1$ and ramified $\infty_2$, and field $E_t^{(2)}$ depends only on the pair $(d,r)$.
        \item[(3)]
        The ray class field $H^{\OO_1}_{\db\infty_1\infty_2}$ 
        is equal to the field extension of $K$ generated by the numbers $\{\ghostOverlapC{t}{\mbf{p}}^2\colon 0\le p_1, p_2 < d, \, \mbf{p}\neq \zero\}$ together with $\rtu_d$, and it is also equal to $K(\ghostOverlapC{t}{\mbf{p_1}}^2, \rtu_d)$.
        The field $E_t \supseteq H^{\OO_1}_{\db\infty_1\infty_2} \supseteq K$, the extension $E_t/K$ is ramified at both infinite places of $K$, and field $E_t$ depends only on the pair $(d,r)$.
        \item[(4)]
        Assume \Cref{cnj:tci}. Then the SIC field $E_{\Pi_s} = E_s^{(1)}(\xi_d) \supseteq H^{\OO_1}_{\db\infty_1\infty_2} \supseteq K$.
    \end{itemize}
\end{thm}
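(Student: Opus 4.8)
The strategy is to establish part~(2) first --- the identification of the field generated by the squared ghost overlaps with the ray class field $H^{\OO_1}_{d\infty_2}$ --- and then to obtain parts~(1), (3), and~(4) by applying the Galois automorphism $g$, by adjoining $\rtu_d$, and by tracking archimedean ramification. Since $\disc(Q)$ is fundamental, $\OO_1=\OO_K$ is the maximal order, so by \Cref{prop:starkimp}(2) the assumed Stark--Tate Conjecture $\ST(K,d\OO_K)$ upgrades to $\MS(\OO_K,d\OO_K)$, which is what the argument actually uses. For the inclusion ``$\subseteq$'' in part~(2): combining \eqref{eq:nusquared} from the proof of \Cref{thm:nupnumpeq1} with \Cref{cor:AlternativeDefinitionOfCandidateGhostOverlaps} gives, for $\mbf{p}\not\equiv\zero\Mod d$, the identity $\ghostOverlapC{t}{\mbf{p}}^2=\tfrac{1}{d_j+1}\exp(n_{\mbf{p}}Z_{d\infty_2}'(0,\A_{\mbf{p}}))=\tfrac{1}{d_j+1}\su_{\A_{\mbf{p}}}^{-n_{\mbf{p}}}$, where $\A_{\mbf{p}}\in\Clt_{d\infty_2}(\OO_1)$ is the ray class attached to $(d^{-1}\mbf{p},\qrt_t)$ by $\Upsilon_{d\OO_1}$ (as in \Cref{thm:qpochmain} and the ensuing discussion), $n_{\mbf{p}}=2/\lvert\phi^{-1}(\A_{\mbf{p}})\rvert$, and $\su_{\A_{\mbf{p}}}$ is the corresponding generalized Stark unit; by $\MS(\OO_K,d\OO_K)$ we have $\su_{\A_{\mbf{p}}}\in\OO_H^\times$ with $H=H^{\OO_1}_{d\infty_2}$, hence $\ghostOverlapC{t}{\mbf{p}}^2\in H$, so $K(\{\ghostOverlapC{t}{\mbf{p}}^2\})\subseteq H$ and $\Q(\ghostOverlapC{t}{\mbf{p}_1}^2)\subseteq H$.

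For the reverse inclusion it suffices to prove $\Q(\ghostOverlapC{t}{\mbf{p}_1}^2)=H$. The coprimality hypothesis on $\mbf{p}_1$ makes $\A_{\mbf{p}_1}$ a genuine invertible class in $\Cl_{d\infty_2}(\OO_1)$, so that an automorphism of $H/\Q$ fixes $\ghostOverlapC{t}{\mbf{p}_1}^2$ iff it fixes $\su_{\A_{\mbf{p}_1}}^{n_{\mbf{p}_1}}$. By \Cref{lem:ugroupgen}, $\U_{d\OO_1,\emptyset}(\OO_1)=\langle u\rangle$ with $u\equiv 1\Mod{d\OO_1}$; hence the units $\equiv-1\Mod{d\OO_1}$ are precisely $\{-u^n\}$, all negative at $\sigma_2$ (as $u$ is totally positive), so the class $\sR$ of \Cref{defn:rayclasspartialzeta} is nontrivial in $\Cl_{d\infty_2}(\OO_1)$. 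Because $Z_{d\infty_2}(s,\cdot)$ has Fourier support on the characters $\chi$ with $\chi(\sR)=-1$, and such characters separate points of $\Cl_{d\infty_2}(\OO_1)$ once $\sR\neq\mathrm{id}$, the map $\A\mapsto Z_{d\infty_2}'(0,\A)$ is injective; in particular $Z_{d\infty_2}'(0,\A)\neq 0$ for every $\A$ (otherwise $\A=\sR\A$, forcing $\sR=\mathrm{id}$), so the $\su_\A$ are positive reals $\neq 1$ and the Galois action $\Art(\B)\colon\su_\A\mapsto\su_{\A\B}$ is free. Thus if $g_0\in\Gal(H/\Q)\setminus\Gal(H/K)$ then $\lvert g_0(\su_{\A_{\mbf{p}_1}})\rvert=1$ by \Cref{conj:stark} while $\su_{\A_{\mbf{p}_1}}^{n_{\mbf{p}_1}}\neq 1$ is real, and if $g_0=\Art(\B)\in\Gal(H/K)$ fixes $\su_{\A_{\mbf{p}_1}}^{n_{\mbf{p}_1}}$ then (comparing positive reals) $\su_{\A_{\mbf{p}_1}\B}=\su_{\A_{\mbf{p}_1}}$, so $\B$ is trivial; hence $g_0=\mathrm{id}$ and $\Q(\ghostOverlapC{t}{\mbf{p}_1}^2)=H=H^{\OO_1}_{d\infty_2}$. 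Therefore $K(\{\ghostOverlapC{t}{\mbf{p}}^2\})=H$; the unsquared ghost overlaps being real (\Cref{thm:nupnumpeq1}), $E_t^{(2)}\subseteq\R$ via $\sigma_1$, so $E_t^{(2)}/K$ is unramified at $\infty_1$, while $\infty_2$ ramifies since $E_t^{(2)}\supseteq H$; and $E_t^{(2)}$ depends only on $(d,r)$ because a valid $\mbf{p}_1$ exists for each form $Q$ of discriminant $\Delta_0$ (so $H^{\OO_1}_{d\infty_2}\subseteq E_t^{(2)}$ in all cases) together with the $\GL_2(\Z)$-equivariance of the construction under $t\mapsto t_M$.

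Parts~(1), (3), and~(4) follow by transport. By \Cref{lm:OverlapTermsGhostOverlap}, $\overlapC{s}{\mbf{p}}=g(\ghostOverlapC{t}{GH_g^{-1}G^{-1}\mbf{p}})$, so part~(2) gives $K(\{\overlapC{s}{\mbf{p}}^2\})=g(H^{\OO_1}_{d\infty_2})=H^{\OO_1}_{d\infty_1}$ --- the modulus $d\OO_1$ being $g$-stable while $g\rvert_K$ interchanges $\infty_1$ and $\infty_2$ --- and the remaining claims of part~(1) follow by transporting those of part~(2), with $E_s^{(1)}=g(E_t^{(2)})$ independent of $g$ because $E_t^{(2)}$ lies in an abelian extension of $K$ by \Cref{conj:frmvc} and is therefore $\Gal(\hat\sicField_t/K)$-stable. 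For part~(3), adjoining $\rtu_d$ to $K(\{\ghostOverlapC{t}{\mbf{p}}^2\})=H^{\OO_1}_{d\infty_2}$ yields $H^{\OO_1}_{d\infty_2}(\rtu_d)$; since $\Q(\rtu_d)$ is the $\db$-th cyclotomic field, $K(\rtu_d)$ is abelian over $K$, ramified at both infinite places and only at primes over $\db$, hence contained in $H^{\OO_1}_{\db\infty_1\infty_2}$, which (together with $H^{\OO_1}_{d\infty_2}\subseteq H^{\OO_1}_{\db\infty_1\infty_2}$) gives ``$\subseteq$'', while the reverse inclusion is a degree count using the exact sequences of \Cref{thm:exseq} with \Cref{lem:ugroupgen} and \Cref{lem:diamond} to show $\Cl_{\db\infty_1\infty_2}(\OO_1)$ injects into $\Cl_{d\infty_2}(\OO_1)\times\Gal(K(\rtu_d)/K)$. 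Finally $E_{\Pi_s}$ is generated over $E_s^{(1)}$ by $\rtu_d$ (\Cref{defn:sicfield}), and applying $g$ to part~(3) --- the full modulus $\db\infty_1\infty_2$ being $g$-invariant --- gives $H^{\OO_1}_{d\infty_1}(\rtu_d)=H^{\OO_1}_{\db\infty_1\infty_2}$, so $E_{\Pi_s}\supseteq K(\{\overlapC{s}{\mbf{p}}^2\},\rtu_d)=H^{\OO_1}_{\db\infty_1\infty_2}\supseteq K$.

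The main obstacle is the reverse inclusion in part~(2): showing that a single squared ghost overlap \emph{generates} $H^{\OO_1}_{d\infty_2}$ over $\Q$, not merely that it lies in $H^{\OO_1}_{d\infty_2}$. This is precisely where the unit-group computation \Cref{lem:ugroupgen} and the ray class field diamond \Cref{lem:diamond} are indispensable --- they force $\sR\neq\mathrm{id}$, hence the nonvanishing of $Z_{d\infty_2}'(0,\A)$ and the freeness of the Galois action on Stark units --- while the remainder is routine bookkeeping with ray class fields, cyclotomic extensions, and the Galois conjugation $g$.
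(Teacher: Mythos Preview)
Your overall architecture --- prove part~(2) first and then transport to (1), (3), (4) via $g$ and adjunction of $\rtu_d$ --- is exactly the paper's. The inclusion $K(\{\ghostOverlapC{t}{\mbf{p}}^2\})\subseteq H^{\OO_1}_{d\infty_2}$ is also handled the same way, via \eqref{eq:nusquared} and $\MS(\OO_K,d\OO_K)$.

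The gap is in your reverse inclusion. You claim that because the differenced zeta function has Fourier support on the characters $\chi$ with $\chi(\sR)=-1$, and those characters separate points of $\Cl_{d\infty_2}(\OO_1)$, the map $\A\mapsto Z_{d\infty_2}'(0,\A)$ is injective. This inference is false: a function on a finite abelian group whose Fourier transform is supported on a point-separating set of characters need not itself be injective. For a concrete counterexample take $G=\Z/4\Z$, $\sR=2$; the characters with $\chi(\sR)=-1$ are $\chi_1,\chi_3$ (sending $1\mapsto \pm i$), and they do separate points, yet $f=\chi_1+\chi_3$ takes the values $2,0,-2,0$ on $0,1,2,3$. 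What injectivity of $\A\mapsto \su_\A$ would actually require is an algebraic-independence or transcendence statement about the values $L'(0,\chi)$ for the odd characters --- precisely the kind of input that Stark's own argument supplies. Everything downstream in your part~(2) (nonvanishing of $Z'(0,\A)$, freeness of the Galois action, hence generation of $H$ over $\Q$) rests on this injectivity, so the argument as written does not close.

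The paper proceeds differently and avoids this issue entirely. It first uses \Cref{lem:diamond} to compute $n_{\mbf{p}_1}=2/\lvert\ker(\Cl_{d\infty_1\infty_2}\to\Cl_{d\infty_2})\rvert=1$, so that $(\normalizedGhostOverlapC{t}{\mbf{p}_1})^2$ is literally $\su_{\A_{\mbf{p}_1}}^{-1}$, a genuine Stark unit in the sense of \cite{Stark3}. It then invokes \cite[Thm.~1]{Stark3} directly: that theorem (conditional on Stark) asserts $\Q(\su)=H^{\OO_1}_{d\infty_2}$, once one verifies the nonvanishing hypothesis --- which is again exactly what \Cref{lem:diamond} provides. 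In other words, the generation statement you are trying to reprove from scratch is Stark's theorem, and the role of \Cref{lem:diamond} in the paper is to check its hypotheses, not to feed a Fourier argument.

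A secondary point: your argument that $E_t^{(2)}$ depends only on $(d,r)$ via ``$\GL_2(\Z)$-equivariance under $t\mapsto t_M$'' only handles forms $Q'$ in the same $\GL_2(\Z)$-class as $Q$; when the class number exceeds~$1$ there are inequivalent forms of discriminant $\Delta_0$. The paper instead uses the Artin-map part of \Cref{conj:msc} to show that $\Gal(H^{\OO_1}_{d\infty_2}/K)$ permutes the squared ghost overlaps among \emph{all} fundamental $Q'$, which is what makes $E_t^{(2)}$ independent of the form.
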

\begin{proof}
    We will prove (2) first, and the others will follow.

    Consider $\p \in \left(\Z/d\Z\right)^2$. As in the proof of \Cref{thm:nupnumpeq1}, let $\A_\p$ be the unique class in $\Clt_{d\infty_2}(\OO_1)$ that maps to the $\SL_2(\Z)$-orbit of $(d^{-1}\p,\qrt_t)$ under the map $\Upsilon_{d\OO_1}$ described in \cite[Thm.\ 3.12]{Kopp2020d} and at the end of \Cref{ssec:qpochmain}.
    By \Cref{eq:nusquared}, we have for
    \begin{equation}\label{eq:nuepplus}
    (d_j+1) \ (\ghostOverlapC{t}{\p})^2  
    = (\normalizedGhostOverlapC{t}{\p})^2 
    = \exp\!\left(n Z_{d\infty_2}'(0,\A_\p)\right) 
    = \su_{\A_\p}^{-n_\p}
    \end{equation}
    for some $n_\p \in \{1,2\}$ and $\su_{\A_\p} = \exp\!\left(- Z_{d\infty_2}'(0,\A_\p)\right)$.
    By \Cref{prop:starkimp}, our conditional assumptions imply $\MS(\OO_1,\mm)$ (\Cref{conj:msc}) for all nonzero $\OO_1$-ideals $\mm$ such that $\mm \neq \OO_1$. In particular, they imply that $\su_{\A_\p} \in H^{\OO_1}_{d\infty_2}$. It follows that $(\normalizedGhostOverlapC{t}{\p})^2 \in H^{\OO_1}_{d\infty_2}$, or equivalently $(\ghostOverlapC{t}{\p})^2 \in H^{\OO_1}_{d\infty_2}$.

    Now restrict to the special case of $\p = \p_1 := \smcoltwo{-1}{0}$. Let $\su := \su_{\A_{\p_1}}$.
    By definition, $n_{\p_1} = \frac{2}{\abs{\phi^{-1}(\A_{\p_1})}}$, where $\phi : \Clt_{\mm\infty_1\infty_2}(\OO_1) \to \Clt_{\mm\infty_2}(\OO_1)$ is the natural quotient map.
    The class $\A_{\p_1}$ is primitive, so $\abs{\phi^{-1}(\A_{\p_1})}$ is equal to the cardinality of the kernel of the map $\Cl_{\mm\infty_1\infty_2}(\OO_1) \to \Cl_{\mm\infty_2}(\OO_1)$, which by \Cref{lem:diamond} is $2$. Thus $n_{\p_1} = \frac{2}{2} = 1$, and \eqref{eq:nuepplus} becomes $(\normalizedGhostOverlapC{t}{\p_1})^2 = \e^{-1}$.
    Because $f=1$ and $\A_{\p_1}$ is primitive, the number $\e$ is a Stark unit in the original sense of \cite{Stark3}. The nontriviality of the maps between the ray class groups with different ramification at infinite places shown in \Cref{lem:diamond} implies the non-vanishing condition in the hypotheses of \cite[Thm.~1]{Stark3}. Applying that theorem, which says that the Stark unit generates the ray class field over the rational numbers, we obtain $\Q((\normalizedGhostOverlapC{t}{\p_1})^2) = \Q(\e) = H^{\OO_1}_{d\infty_2}$.

    The claim that $E_t^{(2)} \supseteq H^{\OO_1}_{d\infty_2}$ follows by the definition of $E_t^{(2)}$. The field $H^{\OO_1}_{d\infty_2}$ is always unramified at $\infty_1$, and it is ramified at $\infty_2$ if and only if it is a nontrivial extension of $H^{\OO_1}_{d}$, which is true by \Cref{lem:diamond}. The field $E_t^{(2)}$ is obtained from $H^{\OO_1}_{d\infty_2}$ by adjoining square roots of numbers $(\normalizedGhostOverlapC{t}{\p})^2$ that are positive in the first real embedding, so it remains unramified at $\infty_1$. It also follows from the ``Artin map action'' part of \Cref{conj:msc}, together with \eqref{eq:nuepplus}, that $\Gal(H^{\OO_1}_{d\infty_2}/K)$ permutes the $(\ghostOverlapC{t}{\p})^2$ with the ghost overlaps $(\ghostOverlapC{t'}{\p})^2$ of all $t' = (d,r,Q')$ such that $\disc(Q')$ is fundamental. Thus $E_t^{(2)}$ depends only on the pair $(d,r)$.

    Claim (1) follows from (2), because $H_{d\infty_1}^{\OO_1} = g(H_{d\infty_2}^{\OO_1})$ and $E_s^{(1)} = g(E_t^{(2)})$, and using \Cref{lm:OverlapTermsGhostOverlap}; independence from $g$ follows from the fact (given by \Cref{thm:RayClassField}) that $E_t^{(2)}/K$ is abelian, so $E_t^{(2)}$ can have no more that two conjugate fields over $\Q$, those being $E_s^{(1)}$ and $E_t^{(2))}$. Claim (3) also follows, because $H_{\db\infty_1\infty_2}^{\OO_1} = H_{d\infty_1}^{\OO_1}\!(\xi_d) = H_{d\infty_2}^{\OO_1}\!(\xi_d)$ and $E_t = E_t^{(2)}(\xi_d)$.

     Additionally, if one assumes \Cref{cnj:tci}, then $\Pi_s$ is a SIC fiducial projector by \Cref{thm:rayclassfieldrsicgen}, so $E_{\Pi_s}$ is well-defined. Then it follows from the definition of each that $E_{\Pi_s} = E_s^{(1)}(\xi_d)$, and so claim (4) follows from (1).
\end{proof}

\begin{proof}[Proof of \Cref{thm:RayClassField2}]
    This is \Cref{thm:RayClassField2plus}(3).
\end{proof}

\Cref{thm:RayClassField2plus} does not pin down the fields $E_s^{(1)}$, $E_t^{(2)}$, and $E_t$ precisely but only says that they are abelian extensions containing certain ray class fields. This is because the Stark--Tate Conjecture does not provide a precise description of the field generated by the square root of a Stark unit as a particular subfield of a ray class field. \Cref{thm:RayClassField2plus} also applies only in the case of fundamental discriminants, because the theory of ray class partial zeta functions for non-maximal orders is insufficiently developed at present to provide the analogue of \cite[Thm.~1]{Stark3}.

Nevertheless, the numerical data (discussed further in \Cref{sec:sicphenexplain,sec:nec} as well as in \cite{Kopp2020c}) supports a precise prediction about the structure of the fields $E_s^{(1)}$, $E_t^{(2)}$, and $E_t$; this prediction includes the forms of non-fundamental discriminant. We presented the prediction as a conjecture. We note that this conjecture implies \Cref{conj:RayClassField3} and also predicts the shape of the fields $\hat{E}_t$ and $E_{\Pi_s}$.

\begin{conj}\label{conj:RayClassField3plus}
    Let $s=(d,r,Q,G,g)\sim(K,j,m,Q,G,g)$ be a fiducial datum, 
    let $t=(d,r,Q)\sim(K,j,m,Q)$,
    let $d=d_{j,m}$, 
    and let $f$ be the conductor of $Q$. Then:
    \begin{itemize}
        \item[(1)]
        $E_s^{(1)} = H^{\OO_f}_{\db\infty_1}$.
        \item[(2)]
        $E_t^{(2)} = H^{\OO_f}_{\db\infty_2}$.
        \item[(3)]
        $E_t = H^{\OO_f}_{\db\infty_1\infty_2}$.
    \end{itemize}
\end{conj}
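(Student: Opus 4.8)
The plan is to reduce all three parts to part (2), the identity $E_t^{(2)} = H^{\OO_f}_{\db\infty_2}$. Indeed, exactly as in the proof of \Cref{thm:RayClassField2plus}, applying the Galois automorphism $g$ carries $E_t^{(2)}$ to $E_s^{(1)}$ (by \Cref{lm:OverlapTermsGhostOverlap}) and $H^{\OO_f}_{\db\infty_2}$ to its complex-conjugate field $H^{\OO_f}_{\db\infty_1}$, giving (1) from (2); and since $E_t = E_t^{(2)}(\rtu_d)$ and $H^{\OO_f}_{\db\infty_1\infty_2} = H^{\OO_f}_{\db\infty_2}(\rtu_d)$ --- the latter because a ray class field of modulus $\db$ over a real quadratic field contains $\Q(\rtu_d)$ --- part (3) follows from (2) as well. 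So from here on I would work only with $E_t^{(2)}$.

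The containment $E_t^{(2)} \supseteq H^{\OO_f}_{\db\infty_2}$ should be provable by upgrading \Cref{thm:RayClassField2plus}(2) from $f=1$ to general $f$. When $f=1$ that containment rests on Stark's classical theorem \cite[Thm.~1]{Stark3} that a Stark unit generates a ray class field over $\Q$. First I would prove the non-maximal analogue: assuming $\MS(\OO_f,\mm)$ (\Cref{conj:msc}), the generalized Stark unit $\su_\A = \exp(-Z'_{\mm\infty_2}(0,\A))$ generates $H^{\OO_f}_{\mm\infty_2}$ over $K$ for primitive $\A$. The structural ingredients are already in place: \Cref{lem:ugroupgen} controls the relevant global unit groups and \Cref{lem:diamond} shows that the surjections between the ray class groups with differing infinite ramification are genuinely $2$-to-$1$. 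What remains is the analytic non-vanishing input playing the role of the hypotheses of \cite[Thm.~1]{Stark3} --- that $\B \mapsto \Art(\B)(\su_\A)$ does not factor through a proper quotient of $\Cl_{\mm\infty_2}(\OO_f)$ --- and this should follow by transporting Stark's contour-integral argument to orders using the Kopp--Lagarias framework \cite{Kopp2020b}. Granting this, \eqref{eq:nuepplus} gives $(\ghostOverlapC{t}{\p})^2 \in H^{\OO_f}_{d\infty_2}$ for all $\p$, the special value $\p = \smcoltwo{-1}{0}$ together with $n_\p = 1$ (via \Cref{lem:diamond}) yields generation of all of $H^{\OO_f}_{d\infty_2}$, and adjoining $\rtu_d$ raises the modulus from $d$ to $\db$.

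The hard direction --- the one that makes this a conjecture rather than a corollary of \Cref{conj:msc} --- is the reverse containment, equivalently the assertion that adjoining the \emph{square roots} $\ghostOverlapC{t}{\p} = \pm\tfrac{1}{\sqrt{d_j+1}}\normalizedGhostOverlapC{t}{\p}$ to $H^{\OO_f}_{d\infty_2}$ produces exactly the ray class field of modulus $\db$ and nothing larger. The Stark--Tate refinement only asserts that $H^{\OO_f}_{d\infty_2}(\su_\A^{1/2})$ is \emph{some} abelian extension of $K$; it does not pin down which one. The approach I would pursue is to show that the Shintani--Faddeev phase $\SFPhase{t}{\p}$, a power of $\rtu_d$, is precisely what promotes the modulus: since $\normalizedGhostOverlapC{t}{\p}^2 = (\psi^{-2}\chi_{d^{-1}\p}^{-1})(A_t)\,\sfc{d^{-1}\p}{A_t}{\qrt_t}^2$ and the character $\psi^{-2}\chi_{d^{-1}\p}^{-1}$ has level dividing $\db$, one expects $\normalizedGhostOverlapC{t}{\p}$ itself to be a theta-type special value whose field of definition is governed by the theta-multiplier $\chi$, accounting for exactly the jump from $d$ to $\db$. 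Making this precise amounts to proving a real-multiplication analogue of Shimura reciprocity for values of the $\shin$-function --- equivalently, a strengthening of \Cref{conj:grmvc} that names the ray class field rather than an unspecified abelian extension --- and this appears to require genuinely new input beyond the published Stark conjectures; I expect it to be the principal obstacle. The remaining bookkeeping is routine: one must check that $\sqrt{d_j+1}$ and the roots of unity occurring in \eqref{eq:mushin} already lie in $H^{\OO_f}_{\db\infty_1\infty_2}$, which follows from class field theory over $\Q$ together with \Cref{lem:towerbasic}, which writes $d_j+1$ as a perfect square times a divisor of $\Delta_0$.
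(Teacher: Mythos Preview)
The statement you are attempting to prove is \emph{not} proved in the paper: it is \Cref{conj:RayClassField3plus}, an explicit conjecture. The paper states directly, just before it, that ``we do not know how to prove this from any form of the Stark conjectures in the literature, [so] we state it as a separate conjecture,'' and just after \Cref{thm:RayClassField2plus} the paper identifies the two obstructions you also found: (i) the Stark--Tate Conjecture ``does not provide a precise description of the field generated by the square root of a Stark unit,'' and (ii) for non-maximal orders ``the theory of ray class partial zeta functions \ldots is insufficiently developed at present to provide the analogue of \cite[Thm.~1]{Stark3}.'' There is therefore no paper proof to compare against.

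Your proposal is an honest roadmap rather than a proof, and you correctly flag both obstructions yourself. The reduction of (1) and (3) to (2) is sound and mirrors the arguments in the proof of \Cref{thm:RayClassField2plus}. But two of your steps are not proofs, and you should not present them as such. First, ``transporting Stark's contour-integral argument to orders using the Kopp--Lagarias framework'' is precisely the missing analytic theory the paper says does not yet exist; you have named the gap, not filled it. Second, your proposed mechanism for the reverse containment --- that the theta-multiplier $\chi_{d^{-1}\p}$ accounts exactly for the jump from modulus $d$ to $\db$ via a ``real-multiplication analogue of Shimura reciprocity for values of the $\shin$-function'' --- is a plausible heuristic, but as you yourself say it ``requires genuinely new input beyond the published Stark conjectures.'' That is the content of the conjecture, not a proof of it. (A minor point: your parenthetical that \Cref{lem:towerbasic} writes $d_j+1$ as a perfect square times a divisor of $\Delta_0$ is not what that lemma says; it gives $d_{2j}+1 = (d_j-1)^2$ and $d_{2j}-3 = f_j^2\Delta_0$, which is not the same decomposition.)
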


\begin{prop}\label{prop:RayClassField3plus}
Let $s=(d,r,Q,G,g)\sim(K,j,m,Q,G,g)$ be a fiducial datum, and let $t=(d,r,Q)\sim(K,j,m,Q)$.
\begin{itemize}
    \item[(1)]
    Assuming \Cref{conj:RayClassField3plus}, one has $\hat{E}_t = E_t$, and \Cref{conj:RayClassField3} follows.
    \item[(2)]
    Assuming \Cref{conj:RayClassField3plus} and \Cref{cnj:tci}, one has $E_{\Pi_s} = E_t$.
\end{itemize}
\end{prop}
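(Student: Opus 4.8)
The plan is to derive both parts from \Cref{conj:RayClassField3plus}(3)---the identification $E_t = H^{\OO_f}_{\db\infty_1\infty_2}$---together with the general theory of ray class fields (\Cref{thm:rayclassfield}), \Cref{lm:OverlapTermsGhostOverlap}, and (for part (2)) \Cref{thm:rayclassfieldrsicgen}.

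For part (1), the key point is that $H^{\OO_f}_{\db\infty_1\infty_2}$ is Galois over $\Q$. Its level datum $(\OO_f;\db\OO_f,\{\infty_1,\infty_2\})$ is stable under the nontrivial $c_0\in\Gal(K/\Q)$: the order $\OO_f$ and the ideal $\db\OO_f$ are $c_0$-stable because $f$ and $\db$ are rational integers, and $c_0$ only permutes $\infty_1,\infty_2$. By the uniqueness clause of \Cref{thm:rayclassfield}, $c_0$ (extended to $\C$) maps $H^{\OO_f}_{\db\infty_1\infty_2}$ to itself, and since $H^{\OO_f}_{\db\infty_1\infty_2}/K$ is abelian and $K/\Q$ is Galois, $H^{\OO_f}_{\db\infty_1\infty_2}/\Q$ is Galois. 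Now assume \Cref{conj:RayClassField3plus}(3). Then $E_t = H^{\OO_f}_{\db\infty_1\infty_2}\supseteq K$ is Galois over $\Q$, so the compositum of $K$ and $E_t$ is $E_t$, and its Galois closure in $\C$ is $E_t$ itself; hence $\hat{E}_t = E_t$. Combined with $E_t = H^{\OO_f}_{\db\infty_1\infty_2} = E$, this is exactly \Cref{conj:RayClassField3}.

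For part (2), I would first check that $\Pi_s$ is an $r$-SIC fiducial, so that $E_{\Pi_s}$ is defined in the sense of \Cref{defn:sicfield}. By \Cref{cnj:tci} and \Cref{thm:ghstExist}, $\tilde{\Pi}_s$ is a ghost $r$-SIC fiducial; by \Cref{thm:rayclassfieldrsicgen} it then suffices to verify the finitely many relevant cases of \Cref{conj:mrmvc}, namely for the RM values $\sfc{d^{-1}\mbf{p}}{A_t}{\qrt_t}$. Algebraicity is immediate from \Cref{conj:RayClassField3plus}(2), which makes $E^{(2)}_t$ a number field. For the unit-circle condition \Cref{conj:mrmvc}(2), I would combine the unconditional identity \eqref{eq:nusquared}, $(\normalizedGhostOverlapC{t}{\mbf{p}})^2 = \exp(nZ_{d\infty_2}'(0,\A))$, with \Cref{conj:RayClassField3plus}(2) and \Cref{lem:diamond}: the numbers $\ghostOverlapC{t}{\mbf{p}}^2$ then lie in $H^{\OO_f}_{d\infty_2}$, a quadratic extension of $H^{\OO_f}_{d}$ whose nontrivial automorphism is $\Art(\sR)$ with $\sR$ the ``$-1$'' ray class of \Cref{defn:rayclasspartialzeta}; since $Z_{d\infty_2}(s,\sR\A) = -Z_{d\infty_2}(s,\A)$ and $\normalizedGhostOverlapC{t}{\mbf{p}}\normalizedGhostOverlapC{t}{-\mbf{p}} = 1$ by \Cref{thm:nupnumpeq1}, tracking $\Art(\sR)$ and complex conjugation forces $\abs{g(\sfc{d^{-1}\mbf{p}}{A_t}{\qrt_t})} = 1$. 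With $\Pi_s$ an $r$-SIC fiducial, the rest is a short field computation: by \Cref{thm:GalActOnClifford} the entries of $\Pi_s = g(\tilde{\Pi}_s)$ lie in $g(E_t)$, and $g(E_t) = E_t$ by part (1) and the Galois-ness just established, while $\rtu_d\in E_t$; hence $E_{\Pi_s} = \Q(\rtu_d,(\Pi_s)_{ij})\subseteq E_t$. Conversely $g(\rtu_d) = \rtu_d^{k_g}$ generates $\Q(\rtu_d)$ since $k_g$ is coprime to $\db$, and by \Cref{lm:OverlapTermsGhostOverlap} each $g(\ghostOverlapC{t}{\mbf{p}})$ equals an overlap $\overlapC{s}{\mbf{q}}$, a $\Q$-linear combination of entries of $\Pi_s$ and powers of $\rtu_d$, hence lies in $E_{\Pi_s}$; so $E_t = g(E_t)\subseteq E_{\Pi_s}$, and $E_{\Pi_s} = E_t$.

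The main obstacle is the verification that $\Pi_s$ is genuinely an $r$-SIC fiducial, i.e.\ that \Cref{conj:RayClassField3plus} yields the needed instances of \Cref{conj:mrmvc}. The algebraicity half is free, but the absolute-value half rests on identifying complex conjugation at $\infty_2$ with $\Art(\sR)$ on the ray class field---this is precisely where \Cref{conj:RayClassField3plus}(2) and the nontriviality of the ray-class diamond (\Cref{lem:diamond}) are used---after which the relation $\su_{\sR\A} = \su_\A^{-1}$ and the reality of the ghost overlaps close the argument. Everything else (the Galois-ness of $H^{\OO_f}_{\db\infty_1\infty_2}/\Q$ and the two field inclusions) is routine.
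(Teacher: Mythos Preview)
Your argument for part (1) matches the paper's, with more detail on why $H^{\OO_f}_{\db\infty_1\infty_2}$ is Galois over $\Q$.

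For part (2), your route differs from the paper's and is over-engineered. The paper does not attempt to derive the relevant instances of \Cref{conj:mrmvc} from \Cref{conj:RayClassField3plus}; it simply invokes \Cref{thm:rayclassfieldrsicgen} to conclude that $\Pi_s$ is an $r$-SIC fiducial, then uses the short chain
\[
E_{\Pi_s} \;=\; E_s^{(1)}(\xi_d) \;=\; H^{\OO_f}_{\db\infty_1}(\xi_d) \;=\; H^{\OO_f}_{\db\infty_1\infty_2} \;=\; E_t,
\]
where the first equality is definitional, the second is \Cref{conj:RayClassField3plus}(1), and the last is \Cref{conj:RayClassField3plus}(3). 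Your two-inclusion argument for the field equality is correct but longer than needed once one has \Cref{conj:RayClassField3plus}(1) available; you only use part (3), whereas the paper exploits (1).

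Your detour through \eqref{eq:nusquared} and $\Art(\sR)$ to recover $\abs{g(\sfc{d^{-1}\mbf{p}}{A_t}{\qrt_t})}=1$ is also problematic as written: \eqref{eq:nusquared} is established in the paper for $\OO_1$, not for general $\OO_f$, and \Cref{lem:diamond} gives the degree-$2$ structure but does not by itself identify $\Art(\sR)$ with complex conjugation on the given embedding in $\C$. If one insists on deducing \Cref{conj:mrmvc}(2) from \Cref{conj:RayClassField3plus}, a cleaner route is to observe that $\normalizedGhostOverlapC{t}{\mbf{p}}\normalizedGhostOverlapC{t}{-\mbf{p}}=1$ (\Cref{thm:nupnumpeq1}) forces $g(\normalizedGhostOverlapC{t}{\mbf{p}})\,g(\normalizedGhostOverlapC{t}{-\mbf{p}})=1$, so it suffices to show that complex conjugation on $E_s^{(1)}$ sends $g(\normalizedGhostOverlapC{t}{\mbf{p}})$ to $g(\normalizedGhostOverlapC{t}{-\mbf{p}})$; this is where the ramification structure of $H^{\OO_f}_{\db\infty_1}$ enters, but that argument needs to be made carefully, and the paper does not attempt it here.
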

\begin{proof}
    The field $H^{\OO_f}_{\db\infty_1\infty_2}$ contains $K$ and is Galois over $\Q$; thus, by \Cref{conj:RayClassField3plus}, $\hat{E}_t = E_t$.
    It is then clear that \Cref{conj:RayClassField3} follows by \Cref{conj:RayClassField3plus}(3).
    Additionally, if one assumes \Cref{cnj:tci}, then $\Pi_s$ is a SIC fiducial projector by \Cref{thm:rayclassfieldrsicgen}, so $E_{\Pi_s}$ is well-defined. Then it follows from the definition of each that $E_{\Pi_s} = E_s^{(1)}(\xi_d)$, and so by \Cref{conj:RayClassField3plus}, the SIC field $E_{\Pi_s} = H^{\OO_f}_{\db\infty_1}(\xi_d) = H^{\OO_f}_{\db\infty_1\infty_2} = E_t$.
\end{proof}

\subsection{The set of SIC-generated abelian extensions}\label{ssec:cofinal}

We now examine the implications of our conjectural framework for the generation of arbitrary abelian extensions of real quadratic fields.
We begin with two preliminary results.

\begin{defn}
For any prime number $p$ and any $r \in \Q^\times$, denote by $v_p(r)$ the \textit{$p$-adic valuation} of $r$; that is, if $r=p^e\frac{a}{b}$ with $p\ndiv a$ and $p\ndiv b$, then $v_p(r)=e$.
\end{defn}

\begin{lem}\label{lm:binpdiv}
    Let $p,r,e\in \mbb{N}$ be such that $p$ is prime and $1\le r \le p^e$. Then the following statements of about the $p$-adic valuations of binomial coefficients hold:
    \eag{
      v_p\!\left(\binom{p^e}{r}\right) &= e-v_p(r),
      \label{eq:binpdiva}
      \\
      v_p\!\left(p^r \binom{p^e}{r}\right) &\ge e+1.
      \label{eq:binpdivb}
    }
\end{lem}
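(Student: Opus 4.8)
The plan is to prove both valuation identities by analyzing the standard formula $\binom{p^e}{r} = \frac{p^e}{r}\binom{p^e-1}{r-1}$, which immediately isolates the factor of $p^e$ in the numerator and the factor $r$ in the denominator. The key claim to establish first is that the remaining binomial coefficient $\binom{p^e-1}{r-1}$ is coprime to $p$ for all $r$ in the range $1 \le r \le p^e$; granting this, we get $v_p\!\left(\binom{p^e}{r}\right) = v_p(p^e) - v_p(r) + v_p\!\left(\binom{p^e-1}{r-1}\right) = e - v_p(r)$, which is \eqref{eq:binpdiva}.

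To prove that $p \nmid \binom{p^e-1}{r-1}$, I would use Kummer's theorem (or equivalently Lucas' theorem applied to the base-$p$ digits): $v_p\!\left(\binom{m+n}{m}\right)$ equals the number of carries when adding $m$ and $n$ in base $p$. Here we are looking at $\binom{p^e-1}{r-1}$, i.e.\ $m = r-1$ and $n = p^e - r$, with $m + n = p^e - 1$. Since $p^e - 1$ has base-$p$ expansion $(p-1, p-1, \dots, p-1)$ (all digits maximal), adding $r-1$ and $p^e-r$ to get $p^e-1$ produces no carries: in each digit position the two digits must sum to exactly $p-1$ with no carry in and no carry out, since any carry would force a digit of $p^e-1$ to be less than $p-1$. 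Hence the number of carries is zero and $v_p\!\left(\binom{p^e-1}{r-1}\right) = 0$. (Alternatively, one can cite Lucas directly: $\binom{p^e-1}{r-1} \equiv \prod_i \binom{p-1}{a_i} \pmod p$ where $a_i$ are the base-$p$ digits of $r-1$, and $\binom{p-1}{a} \not\equiv 0 \pmod p$ for every $0 \le a \le p-1$.)

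For \eqref{eq:binpdivb}, I would simply combine \eqref{eq:binpdiva} with the elementary bound $v_p(r) \le \log_p r \le \log_p(p^e) = e$, but more usefully with the sharper observation that $p^{v_p(r)} \le r$, hence $v_p(r) \le r - 1$ (with equality only in degenerate small cases, but the inequality $p^{v_p(r)} \le r$ together with $p \ge 2$ gives $v_p(r) < r$ whenever $r \ge 1$, so $v_p(r) \le r-1$). Then
\eag{
v_p\!\left(p^r \binom{p^e}{r}\right) = r + v_p\!\left(\binom{p^e}{r}\right) = r + e - v_p(r) \ge r + e - (r-1) = e+1,
}
as desired.

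The main obstacle is purely expository rather than mathematical: making sure the carry-counting / Lucas argument for $p \nmid \binom{p^e-1}{r-1}$ is stated cleanly, since that is the one non-formal input. Everything else is bookkeeping with $p$-adic valuations. I would present the carry argument in one or two sentences, cite Kummer's theorem, and then let the two displayed computations finish the proof.
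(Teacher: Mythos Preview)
Your proof is correct but takes a different route from the paper for \eqref{eq:binpdiva}. The paper proceeds by induction on $r$, using the recursion $\binom{p^e}{r+1} = \frac{p^e-r}{r+1}\binom{p^e}{r}$ together with the elementary observation $v_p(p^e-r) = v_p(r)$ (valid since $v_p(r) < e$ for $1 \le r < p^e$). Your approach instead invokes Kummer/Lucas to show directly that $\binom{p^e-1}{r-1}$ is prime to $p$, which avoids the induction but imports a standard external result. The paper's argument is more self-contained; yours is arguably cleaner once Kummer is granted. For \eqref{eq:binpdivb} both proofs are identical: combine \eqref{eq:binpdiva} with $r - v_p(r) \ge 1$.
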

\begin{proof}
    We prove \eqref{eq:binpdiva} by induction.  The statement is immediate if $r=1$.  Suppose it is true for arbitrary $1\le r < p^e$.  Then
    \eag{
     v_p\!\left(\binom{p^e}{r+1}\right) &=
     v_p\!\left( \frac{p^e-r}{r+1}\binom{p^e}{r}\right)
     = e-v_p(r) + v_p(p^e-r)-v_p(r+1).
    }
    Since $v_p(p^e-r) = v_p(r)$, it follows that
    \eag{
      v_p\!\left(\binom{p^e}{r+1}\right) &=
      e-v_p(r+1).
    }
    Equation ~\eqref{eq:binpdivb} follows from \eqref{eq:binpdiva} and the fact that $r-v_p(r) \ge 1$.
\end{proof}
\begin{lem}\label{lm:ordv2zk}
    Let $\vn$ be the totally positive fundamental unit of the quadratic field $K$ and $d_1 = \frac{\vn+\vn^{-1}}{2}+1$ the associated root dimension.
    The order of $\vn+2\mcl{O}_K$ as an element of $\left(\mcl{O}_K/2\mcl{O}_K\right)^{\times}$ is
    \eag{
     \#\!\left\langle \vn+2\mcl{O}_K \right\rangle
     &=
     \begin{cases}
         1, \quad & \text{if $d_1 \equiv 3 \Mod{4}$},
         \\
         2, \quad & \text{if $d_1 \equiv 1 \Mod{4}$},
         \\
         3, \quad & \text{if $d_1$ is even}.
     \end{cases}
    }
\end{lem}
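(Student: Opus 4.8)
The plan is to work in the ring $\mcl{O}_K/2\mcl{O}_K$ and use the case analysis on $d_1 \bmod 4$ (and parity) that is already encoded in the earlier results. First I would recall that $\vn + \vn^{-1} = 2(d_1-1)$, so $\Tr(\vn) = 2(d_1-1)$ and $\Nm(\vn) = 1$, meaning $\vn$ satisfies $\vn^2 - 2(d_1-1)\vn + 1 = 0$. Reducing modulo $2$, the image $\bar\vn := \vn + 2\mcl{O}_K$ satisfies $\bar\vn^2 + 1 = 0$ in $\mcl{O}_K/2\mcl{O}_K$, i.e. $\bar\vn^2 = 1$ (since $-1 = 1$ in characteristic $2$)\,---\,wait, more carefully: $\bar\vn^2 - 2(d_1-1)\bar\vn + 1 = \bar\vn^2 + \overline{2(d_1-1)}\,\bar\vn + 1$, and $2(d_1-1) \equiv 0 \Mod 2$, so indeed $\bar\vn^2 = -1 = 1$ in $\mcl{O}_K/2\mcl{O}_K$. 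That already shows the order divides $2$ whenever $\bar\vn \ne 1$; so the even-$d_1$ case (order $3$) needs a different argument and must correspond to $2$ being inert or behaving differently.

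Next I would split by the splitting type of $2$ in $K$, which is governed by $\Delta_0 \bmod 8$, and relate this to $d_1 \bmod 4$ using \Cref{lem:dfdelprops}: when $d_1$ is even, $\Delta_0 \equiv 1 \Mod 4$ and in fact (from $f_1^2\Delta_0 = (d_1-3)(d_1+1)$ with $d_1$ even) $\Delta_0 \equiv 5 \Mod 8$, so $2$ is inert and $\mcl{O}_K/2\mcl{O}_K \cong \mbb{F}_4$, whose unit group is cyclic of order $3$; there $\bar\vn \ne 1$ (else $\vn \equiv 1$, but then $\Tr(\vn) = 2(d_1-1)$ forces $d_1$ odd via a mod-$4$ check) and $\bar\vn \ne $ the nontrivial order-$2$ element because $\mbb{F}_4^\times$ has no order-$2$ element, giving order $3$. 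When $d_1$ is odd, $\Delta_0 \equiv 0 \Mod 4$ (again \Cref{lem:dfdelprops}), so $2$ is ramified, $\mcl{O}_K/2\mcl{O}_K \cong \mbb{F}_2[\epsilon]/(\epsilon^2)$, and its unit group is $\{1, 1+\epsilon\} \cong \mbb{Z}/2\mbb{Z}$. I then need to decide whether $\bar\vn = 1$ or $\bar\vn = 1+\epsilon$: write $\vn = u + v\left(\tfrac{\Delta_0 + \sqrt{\Delta_0}}{2}\right)$ with $v = f_1$, so $\bar\vn = 1$ iff $u$ is odd and $v = f_1$ is even. By \Cref{lem:dfdelprops}\eqref{it:dfdelprop2e}, if $d_1$ is odd and $f_1$ is odd then $\Delta_0 \equiv 0 \Mod 4$ (consistent), and the parity of $f_1$ together with $d_1 \bmod 4$ should be pinned down by $f_1^2 \Delta_0 = (d_1-3)(d_1+1)$ and $\Delta_0 \equiv 0 \Mod 4$: when $d_1 \equiv 3 \Mod 4$, $(d_1-3)(d_1+1) \equiv 0 \Mod{16}$ forcing more $2$-divisibility so that $\bar\vn = 1$ (order $1$); when $d_1 \equiv 1 \Mod 4$, $(d_1-3)(d_1+1) \equiv 8 \Mod{16}$, forcing $\bar\vn = 1+\epsilon$ (order $2$).

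So the key steps in order: (i) derive the characteristic-$2$ relation $\bar\vn^2 = 1$ from the minimal polynomial of $\vn$; (ii) use \Cref{lem:dfdelprops} to determine $\Delta_0 \bmod 8$ from $d_1 \bmod 4$ and parity of $d_1$, hence the structure of $\mcl{O}_K/2\mcl{O}_K$ ($\mbb{F}_4$ when $d_1$ even, $\mbb{F}_2[\epsilon]/\epsilon^2$ when $d_1$ odd); (iii) in the $\mbb{F}_4$ case, rule out $\bar\vn = 1$ and conclude order $3$; (iv) in the ramified case, compute the $2$-adic valuation of the coefficients of $\vn$ in the standard $\mbb{Z}$-basis via $f_1^2\Delta_0 = (d_1-3)(d_1+1)$ to decide between $\bar\vn = 1$ and $\bar\vn = 1+\epsilon$ according to $d_1 \bmod 4$. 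I expect the main obstacle to be step (iv): carefully tracking the $2$-adic valuations of $u$ and $f_1$ (equivalently, showing $\vn \equiv 1 \Mod{2\mcl{O}_K}$ precisely when $d_1 \equiv 3 \Mod 4$) requires a clean handling of the case $\Delta_0 \equiv 0 \Mod 4$ versus $\Delta_0 \equiv 4 \Mod 8$ versus higher $2$-power, and making sure the basis element $\tfrac{\Delta_0+\sqrt{\Delta_0}}{2}$ reduces correctly mod $2$; \Cref{lm:binpdiv} is presumably set up precisely to feed a later computation of $\vn^{d_1\text{-related power}}$, but for this lemma the valuation bookkeeping is the crux, and I would want to cross-check it against the small-discriminant examples ($\mbb{Q}(\sqrt5)$ with $d_1 = 4$, $\mbb{Q}(\sqrt2)$, $\mbb{Q}(\sqrt3)$) before finalizing.
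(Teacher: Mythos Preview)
Your first step contains an error that derails the whole argument. From \Cref{lem:towerbasic} one has $d_1 = \vn + \vn^{-1} + 1$ (the formula $d_1 = \tfrac{\vn+\vn^{-1}}{2}+1$ in the lemma's hypothesis is a typo), so $\Tr(\vn) = d_1 - 1$, not $2(d_1-1)$. Reducing the minimal polynomial $\vn^2 - (d_1-1)\vn + 1 = 0$ modulo $2$ therefore gives $\bar\vn^2 + \bar\vn + 1 = 0$ when $d_1$ is even, and $\bar\vn^2 = 1$ only when $d_1$ is odd. Your universal claim $\bar\vn^2 = 1$ is in fact inconsistent with your own rescue of the even case: in $\mbb{F}_4^\times$, which has odd order, the relation $\bar\vn^2 = 1$ would force $\bar\vn = 1$, not order~$3$.

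Once the trace is corrected, your minimal-polynomial approach is arguably cleaner than the paper's direct computation: for $d_1$ even, $\bar\vn^2 + \bar\vn + 1 = 0$ gives $\bar\vn^3 = 1$ immediately, with $\bar\vn \neq 1$ since $\Tr(\vn)$ is odd. For $d_1$ odd you are right that the question reduces to whether $\vn \equiv 1 \pmod{2\mcl{O}_K}$, but your structural analysis has a second gap: \Cref{lem:dfdelprops} only yields $\Delta_0 \equiv 0 \pmod 4$ under the extra hypothesis that $f_1$ is odd, and the split case $\Delta_0 \equiv 1 \pmod 8$ does occur with $d_1$ odd (e.g.\ $\Delta_0 = 17$, $f_1 = 16$, $d_1 = 67$). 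The paper sidesteps all splitting-type casework by writing $\vn = \tfrac{1}{2}\bigl((d_1-1) + \sqrt{(d_1-3)(d_1+1)}\bigr)$ and reducing modulo $2\mcl{O}_K$ directly in each residue class of $d_1$ mod~$4$; for $d_1$ even it instead checks $\vn^j \equiv 1 \pmod{2\mcl{O}_K}$ for $j = 1,2,3$ using the explicit formulae for $d_j$ and $f_j$.
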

\begin{proof}
    Suppose $d_1 = 4n+3$ for $n\in \mbb{N}$.  Then
    \eag{
     \vn+2\mcl{O}_K&= \frac{4n+2+4\sqrt{n(n+1)}}{2}+2\mcl{O}_K= 
    1+ 2\mcl{O}_K.
    }

    Suppose $d_1 = 4n+1$ for $n\in \mbb{N}$. Then
    \begin{alignat}{2}
    && \qquad \vn+2\mcl{O}_K&= \frac{4n+\sqrt{(4n-2)(4n+2)}}{2}+2\mcl{O}_K
    \nn
    &&\qquad & = \sqrt{4n^2-1} +2\mcl{O}_K
    \nn
    &\implies & \qquad \vn^2 + 2\mcl{O}_K &= 4n^2-1+2\mcl{O}_K
    \nn
    &&\qquad & = 1 + 2\mcl{O}_K.
    \end{alignat}

    Finally, suppose $d_1$ is even.  Then
    \eag{
        \vn^j -1 &= \frac{d_j-3-f_j\Delta_0}{2} + 
        f_j \left(\frac{\Delta_0+\sqrt{\Delta_0}}{2}\right).
    }
    So $\vn^j \equiv 1 \Mod{2\mcl{O}_K}$ if and only if $(d_j-3-f_j\Delta_0)/2$ and $f_j$ are both even.  It follows from Lemma~\ref{lem:dfdelprops} that $f_1$, $f_2$ are both odd, implying that the order is greater than 2.  It also follows that $f_3$ is even and $\Delta_0 \equiv  1 \Mod{4}$.  Using Lemma~\ref{lm:tustarprops} we  have
    \eag{
        d_3-3-f_3\Delta_0 &= d_1^2(d_1-3)+f_1d_1(d_1-2)\Delta_0 \equiv 0 \Mod{4}.
    }
    Hence $\vn^3 \in 1+2\mcl{O}_K$.
\end{proof}
\begin{thm}\label{thm:cofinprelim}
Let $d$ be any positive integer, and let $r$ be the order of $\vn+d\mcl{O}_K$ as an element of $\left(\mcl{O}_K/d\mcl{O}_K\right)^{\times}$, where $\vn$ is the totally positive fundamental unit of the quadratic field $K$.  Write $d$, $r$ in the form  
\eag{
d&= 2^{\ell_1}(2a+1), & r &= 2^{\ell_2}(2b+1),
}
for suitable non-negative integers $\ell_1$, $\ell_2$, $a$, $b$.  Let $\ell = \max\{\ell_1,\ell_2\}$, and let $j, m\in \mbb{N}$ be such that
\eag{
2^{\ell} & \div j, & (2a+1)(2b+1)&\div (2m+1).
}
If $d$ is even assume in addition that $j$ is coprime to 3 and $2m+1$ is a multiple of 3.  Then
\begin{enumerate}
    \item \label{it:cofinprelima} If $d_1$ is even, then $d\div d_{j,m}$,
    \item \label{it:cofinprelimb} If $d_1$ is odd, then $d\div d_{j,m}$ if and only if $d$ is odd.
\end{enumerate}
\end{thm}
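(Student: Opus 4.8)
I would argue entirely inside $\mcl{O}_K/d\mcl{O}_K$. Since $d_{j,m}$ is a rational integer and $\mbb{Z}\cap d\mcl{O}_K=d\mbb{Z}$ (because $\mcl{O}_K$ is free over $\mbb{Z}$ on a basis containing $1$), one has $d\mid d_{j,m}$ in $\mbb{Z}$ if and only if $d_{j,m}\equiv 0\pmod{d\mcl{O}_K}$. The starting point is \eqref{eq:epowerminusone}, i.e.\ $d_{j,m}\vn^{mj}(\vn^j-1)=\vn^{(2m+1)j}-1$; factoring the right-hand side as a geometric sum and cancelling the unit $\vn^{mj}$ yields the identity $d_{j,m}=\sum_{k=-m}^{m}\vn^{jk}$ in $\mcl{O}_K$, so everything reduces to evaluating this sum modulo $d\mcl{O}_K$.

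Next I would determine the multiplicative order of $\vn^j$ in $(\mcl{O}_K/d\mcl{O}_K)^{\times}$. Writing $r=2^{\ell_2}(2b+1)$ for the order of $\vn$ and using $2^{\ell}\mid j$ with $\ell\ge\ell_2$, the $2$-adic valuation of $\gcd(r,j)$ is exactly $\ell_2$, so $s:=r/\gcd(r,j)$ is odd and divides $2b+1$; hence $s\mid 2m+1$, because $(2b+1)\mid(2a+1)(2b+1)\mid 2m+1$. Writing $2m+1=st$, the cofactor $t$ is odd and divisible by the odd part $2a+1$ of $d$ (since $s\mid 2b+1$ while $(2a+1)(2b+1)\mid 2m+1$). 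Using $\vn^{js}\equiv 1\pmod{d\mcl{O}_K}$ to collect the sum into $t$ complete periods gives $d_{j,m}\equiv t\,\vn^{-jm}\,\Sigma\pmod{d\mcl{O}_K}$ with $\Sigma:=\sum_{k=0}^{s-1}\vn^{jk}$, so $d\mid d_{j,m}$ becomes equivalent to $t\Sigma\equiv 0\pmod{d\mcl{O}_K}$.

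Now split $d=2^{\ell_1}(2a+1)$ via the Chinese Remainder Theorem on $\mcl{O}_K/d\mcl{O}_K$. The odd part is immediate: $2a+1\mid t$ forces $t\Sigma\equiv 0\pmod{(2a+1)\mcl{O}_K}$, and this already settles the case $d$ odd, hence part (1) when $d$ is odd and the ``if'' direction of part (2). For part (2) when $d$ is even, the ``only if'' is unconditional: if $d_1$ is odd, then Lemma~\ref{lem:dfdelprops} gives $d_j$ odd for all $j$, and then Lemma~\ref{lem:dimgridtechres} gives $d_{j,m}$ odd for all $j,m$, so no even $d$ can divide it. It remains to treat the $2$-part in part (1) with $d$ even, i.e.\ $d_1$ even and $\ell_1\ge 1$, where I must show $\Sigma\equiv 0\pmod{2^{\ell_1}\mcl{O}_K}$ (note $t$ is odd, hence a unit mod $2^{\ell_1}$). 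By Lemma~\ref{lm:ordv2zk} together with the structure of $(\mcl{O}_K/2\mcl{O}_K)^{\times}$, the evenness of $d_1$ forces $2$ to be inert in $K$ and $\vn$ to have order $3$ modulo $2\mcl{O}_K$; a standard $2$-adic lifting argument (cf.\ Lemma~\ref{lm:binpdiv}) then shows $\vn$ has order $3\cdot 2^{h}$ modulo $2^{\ell_1}\mcl{O}_K$ with $h\le\ell_1-1$, and since $3\nmid j$ and $v_2(j)\ge\ell\ge\ell_1>h$, the order of $\vn^j$ modulo $2^{\ell_1}\mcl{O}_K$ is exactly $3$. As $3\mid s$, regrouping $\Sigma$ into periods of length $3$ modulo $2^{\ell_1}\mcl{O}_K$ gives $\Sigma\equiv(s/3)(1+\vn^j+\vn^{2j})$, and $1+\vn^j+\vn^{2j}=(\vn^j-1)^{-1}(\vn^{3j}-1)\equiv 0\pmod{2^{\ell_1}\mcl{O}_K}$ because $\vn^{3j}\equiv 1$ there and $\vn^j-1$ is a unit modulo $2^{\ell_1}\mcl{O}_K$ (as $2$ is inert and $\vn^j\not\equiv 1\pmod{2\mcl{O}_K}$). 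Together with the odd part this gives $d\mid d_{j,m}$.

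The main obstacle is the bookkeeping in the last paragraph: controlling the order of $\vn^j$ modulo prime-power divisors of $d$, and in particular checking that $\vn^j-1$ becomes invertible modulo powers of $2$ precisely when $d_1$ is even (which is where the hypotheses $3\nmid j$, $3\mid(2m+1)$ and the parity of $d_1$ are genuinely used, and where one must verify that $2$ is inert—not split or ramified—so that $\mcl{O}_K/2^{\ell_1}\mcl{O}_K$ is local). Everything else is routine manipulation of the geometric sum once the order of $\vn^j$ is pinned down.
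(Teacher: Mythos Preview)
Your argument is correct and takes a genuinely different route from the paper's. Both proofs start from the identity $\vn^{(2m+1)j}-1=d_{j,m}\,\vn^{mj}(\vn^j-1)$ of Lemma~\ref{lem:dimgridtechres}, and both dispose of part~(2) for even $d$ exactly as you do, via Lemmas~\ref{lem:dfdelprops} and~\ref{lem:dimgridtechres}. The difference lies in how part~(1) is established.

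The paper proceeds by a $p$-adic \emph{lifting} argument, one prime at a time. For an odd prime power $p^e\mid(2a+1)$ it begins with $\vn^{j(2b+1)}\equiv 1\pmod{d\mcl{O}_K}$ (which holds because $r\mid j(2b+1)$), raises to the $p^e$-th power, and invokes Lemma~\ref{lm:binpdiv} to get $\vn^{jp^e(2b+1)}-1\in p^e(\vn^{j(2b+1)}-1)\mcl{O}_K$; two applications of the $d_{j,m}$-identity then yield $\vn^{j(2m+1)}-1\in p^e(\vn^j-1)\mcl{O}_K$, whence $p^e\mid d_{j,m}$ after cancelling the nonzero factor $\vn^j-1$ in $K$. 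For $p=2$ the paper starts instead from $\vn^{2m+1}\equiv 1\pmod{2\mcl{O}_K}$ (Lemma~\ref{lm:ordv2zk}, using $3\mid 2m+1$), lifts to modulus $2^{\ell+1}$ by the same binomial trick, and then uses that $\vn^j-1$ has an odd coordinate in the $\mbb{Z}$-basis of $\mcl{O}_f$ (since $3\nmid j$) to conclude $2^{\ell}\mid d_{j,m}$.

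Your approach instead rewrites $d_{j,m}=\sum_{k=-m}^{m}\vn^{jk}$ and counts periods of this geometric sum modulo $d$. This makes the odd part essentially a one-liner (it reduces to the divisibility $(2a+1)\mid t$, which follows from $s\mid 2b+1$ and $(2a+1)(2b+1)\mid 2m+1$ by comparing $p$-adic valuations), and it recasts the $2$-part as the clean identity $1+\vn^j+\vn^{2j}\equiv 0\pmod{2^{\ell_1}\mcl{O}_K}$. Your route avoids the repeated binomial bookkeeping at odd primes; the paper's route is more uniform in that the same lifting mechanism is reused for $p=2$. Both ultimately need the same facts about the prime $2$ when $d_1$ is even: that the order of $\vn$ modulo $2\mcl{O}_K$ is $3$ (Lemma~\ref{lm:ordv2zk}), and that $\vn^j-1$ is a unit modulo powers of $2$ because $3\nmid j$. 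One small remark: locality of $\mcl{O}_K/2^{\ell_1}\mcl{O}_K$ would also hold if $2$ were ramified, but your conclusion that $2$ is actually inert is correct, since an element of order $3$ in $(\mcl{O}_K/2\mcl{O}_K)^\times$ forces that group to have order $3$.
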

\begin{proof}
   Let $p^e$ be any element in the prime decomposition of $2a+1$.  Since $j(2b+1) \div r$
   \eag{
        \vn^{j(2b+1)} &= 1 + d z
   }
   for some $z\in \mcl{O}_K$.  It then follows from Lemma~\ref{lm:binpdiv} that 
   \eag{
        \vn^{jp^e(2b+1)}&= 1+\sum_{t=1}^{p^e}(dz)^t\binom{p^e}{t}
        =1+p^e dz z'
        =1+p^e(\vn^{j(2b+1)}-1) z'
   }
   for some $z'\in \mcl{O}_K$.  In view of Lemma~\ref{lem:dimgridtechres} this means
   \eag{
         \vn^{jp^e(2b+1)}&= 1 + p^e d_{b,j} \vn^{bj}(\vn^j-1) z'
         =1+p^e (\vn^j-1) z''
   }
    for some $z''\in \mcl{O}_K$.  Since $p^e(2b+1)\div (2m+1)$ it follows that
    \eag{
         \vn^{j(2m+1)}&=1 + p^e(\vn^j-1) z'''
    }
    for some $z'''\in \mcl{O}_K$.  By another application of Lemma~\ref{lem:dimgridtechres} we deduce
    \begin{alignat}{2}
     && \qquad   d_{j,m} \vn^{mj}(\vn^j-1)&= p^e(\vn^j-1) z'''
     \nn
     &\implies & \qquad d_{j,m} &= p^e w
    \end{alignat}
    for some $w\in \mcl{O}_K$. We conclude that $p^e$, and consequently $(2a+1)$ divides $d_{j,m}$.  If $d$ is odd this proves $d\div d_{j,m}$.  
    
    Suppose, on the other hand, that $d$ is even. Suppose, first, that $d_1$ is also even.  By assumption $3\div (2m+1)$, so it follows from Lemma~\ref{lm:ordv2zk} that
    \eag{
        \vn^{2m+1}&= 1+ 2 z
    }
    for some $z\in \mcl{O}_K$.  In view of Lemma~\ref{lm:binpdiv} this means 
    \eag{
        \vn^{(2m+1)2^{\ell}} &= 
        1+\sum_{t=1}^{2^{\ell}} (2z)^t\binom{2^{\ell}}{t} = 1+2^{\ell+1}z z'
        = 1+2^{\ell}(\vn^{2m+1}-1)z'
    }
    for some $z'\in \mcl{O}_K$. Since $2^{\ell} \div j$ it follows that
    \eag{
        \vn^{(2m+1)j}&= 1+2^{\ell}(\vn^{2m+1}-1)z''
    }
    for some $z''\in \mcl{O}_K$.
    Using Lemma~\ref{lem:dimgridtechres} we deduce
    \begin{alignat}{2}
      &&  \qquad  d_{j,m} \vn^{mj} (\vn^j-1) &= 2^{\ell}d_{1,m}\vn^m (\vn-1) z''
      \nn
      &\implies &\qquad d_{j,m} (\vn^j-1) &= 2^{\ell} w
    \end{alignat}
      for some $w\in \mcl{O}_K$.  Since $j$ is coprime to $3$,
      \eag{
        \vn^j -1 &= c_1 + c_2\left(\frac{\Delta_0 + \sqrt{\Delta_0}}{2}\right)
      }
      for $c_1, c_2 \in \mbb{Z}$ not both even.  Since $2^{\ell}$ divides both $c_1 d_{j,m}$ and $c_2 d_{j,m}$ it follows that it must divide $d_{j,m}$, which proves statement~\eqref{it:cofinprelima}.

      Suppose, on the other hand, that $d_1$ is odd. Then it follows from Lemmas~\ref{lem:dfdelprops} and~\ref{lem:dimgridtechres} that $d_{j,m}$ is odd.  So $d\ndiv d_{j,m}$, which proves statement~\eqref{it:cofinprelimb}.
\end{proof}

\begin{thm}\label{thm:cofinalplus}
    Let $K$ be a real quadratic field of discriminant $\Delta_0$, let $\vn$ be a fundamental totally positive unit in $K$ (as in \Cref{dfn:fundamentalTotallyPositiveUnit}), and let $f_j$ be as defined in \Cref{dfn:sequenceofconductors}. Then
    \begin{enumerate}
        \item[(1)] If $\Tr(\vn)$ is odd, then every abelian extension of $K$ is contained in $H_{d_{j,m}\infty_1\infty_2}^{\OO_1}$ for some $j, m \in \N$.
        \item[(2)] If $\Tr(\vn)$ is even, $d$ is a positive odd integer, and $f|f_{j_0}$ for some positive integer $j_0$ such that $3 \ndiv j_0$, then there exists some $j, m \in \N$ such that $H_{d\infty_1\infty_2}^{\OO_f} \subseteq H_{d_{j,m}\infty_1\infty_2}^{\OO_f}$.
        \item[(2')] If $\Tr(\vn)$ is even, then every abelian extension of $K$ that is unramified at the primes of $K$ lying over $2$ is contained in $H_{d_{j,m}\infty_1\infty_2}^{\OO_1}$ for some $j, m \in \N$.
    \end{enumerate}
\end{thm}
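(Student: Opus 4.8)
The plan is to reduce all three parts to the single divisibility assertion furnished by \Cref{thm:cofinprelim} --- that one can choose $j,m \in \mbb{N}$ with $d \div d_{j,m}$ --- and then repackage this using the inclusions of ray class fields recalled at the beginning of \Cref{ssec:cflemmas} (together with the existence statement \Cref{thm:rayclassfield}). Three routine preliminary observations are needed. First, since $\vn$ has norm $+1$ its nontrivial Galois conjugate is $\vn^{-1}$, so $\Tr(\vn) = \vn + \vn^{-1} = d_1 - 1$ by \Cref{lem:towerbasic}; hence $\Tr(\vn)$ is odd iff $d_1$ is even. Second, by class field theory (the maximal-order ray class fields being cofinal among abelian extensions, as discussed in \Cref{ssec:cft}) every abelian extension of $K$ is contained in $H^{\OO_1}_{n\infty_1\infty_2}$ for some positive integer $n$ --- namely an $n$ lying in the conductor ideal, with ramification at both infinite places allowed --- and if the extension is unramified at the primes above $2$ then $n$ may be taken odd. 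Third, whenever $d \div d'$ one has $H^{\OO_f}_{d\infty_1\infty_2} \subseteq H^{\OO_f}_{d'\infty_1\infty_2}$, since the level datum $(\OO_f; d'\OO_f, \{\infty_1,\infty_2\})$ dominates $(\OO_f; d\OO_f, \{\infty_1,\infty_2\})$ in the sense of \Cref{ssec:cflemmas}.

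For part (1), assume $\Tr(\vn)$ is odd, so $d_1$ is even, and let $n \in \mbb{N}$ be arbitrary. I will apply \Cref{thm:cofinprelim}(1): let $r$ be the order of $\vn + n\OO_K$ in $(\OO_K/n\OO_K)^\times$, write $n = 2^{\ell_1}(2a+1)$, $r = 2^{\ell_2}(2b+1)$, and $\ell = \max\{\ell_1,\ell_2\}$, and choose $j = 2^{\ell}$ (coprime to $3$) and $m$ with $2m+1$ a positive multiple of $3(2a+1)(2b+1)$, which is odd and at least $3$, so $m \in \mbb{N}$. These choices meet the hypotheses of \Cref{thm:cofinprelim} (the extra conditions, relevant only when $n$ is even, hold by construction), so $n \div d_{j,m}$. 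By the preliminary observations, every abelian extension of $K$ lies in $H^{\OO_1}_{n\infty_1\infty_2} \subseteq H^{\OO_1}_{d_{j,m}\infty_1\infty_2}$.

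For part (2), assume $\Tr(\vn)$ is even, so $d_1$ is odd, let $d$ be a positive odd integer, and let $f \div f_{j_0}$ with $3 \ndiv j_0$. Now \Cref{thm:cofinprelim}(2) applies, and since $d$ is odd it yields $d \div d_{j,m}$ for any $j,m$ satisfying the recipe; moreover $j$ may additionally be taken divisible by $j_0$ (e.g.\ $j = 2^{\ell}j_0$, which still has $2^{\ell}\div j$, no coprimality-to-$3$ condition being needed as $d$ is odd), so that $f \div f_j$ by \Cref{thm:UnitsConductorsIndicesProperties}. With $m$ chosen as before, we conclude $H^{\OO_f}_{d\infty_1\infty_2} \subseteq H^{\OO_f}_{d_{j,m}\infty_1\infty_2}$. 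Part (2') then follows by combining part (2) --- applied with $f=1$, which is legitimate since $1 \div f_1$ and $3 \ndiv 1$ --- with the second preliminary observation: an abelian extension of $K$ unramified above $2$ is contained in $H^{\OO_1}_{n\infty_1\infty_2}$ for some odd $n$, and applying part (2) with $d = n$ places it inside $H^{\OO_1}_{d_{j,m}\infty_1\infty_2}$.

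The argument is bookkeeping once \Cref{thm:cofinprelim} is in hand; the only point requiring genuine care is verifying that the divisibility conditions of the recipe in \Cref{thm:cofinprelim} --- in particular the parity and coprimality-to-$3$ constraints appearing in the even case, as well as any auxiliary divisibility $j_0 \div j$ imposed to control the conductor --- can always be satisfied simultaneously by natural numbers $j,m \ge 1$. This is the main (and modest) obstacle; the class field theory inputs are entirely standard.
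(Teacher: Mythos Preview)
Your proof is correct and follows essentially the same approach as the paper: reduce to the divisibility statement of \Cref{thm:cofinprelim}, then use Takagi existence and the standard inclusion of ray class fields under divisibility of moduli. Your explicit choices of $j,m$ (with the uniform factor of $3$ in $2m+1$) and your remark that $j_0 \div j$ forces $f \div f_j$ are slightly more detailed than the paper's version, but the argument is the same.
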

\begin{proof}
    We prove (1) first. If $\Tr(\vn)$ is odd, then the root dimension $d_1$ is even. Let $E$ be any abelian extension of $K$. By the Takagi Existence Theorem, there is some $d \in \N$ such that $E \subseteq H^{\OO_1}_{d\infty_1\infty_2}$. It follows from \Cref{thm:cofinprelim}(1) that there are some $j,m \in \N$ such that $d \div d_{j,m}$. Thus, $E \subseteq H^{\OO_1}_{d_{j,m}\infty_1\infty_2}$.

    Now we prove (2). If $\Tr(\vn)$ is even, then the root dimension $d_1$ is odd. Let $r, \ell_1,\ell_2,\ell,a,b\in \mbb{N}$ be as in the statement of \Cref{thm:cofinprelim}. Set $j := 2^\ell j_0$ and $2m+1 := (2a+1)(2b+1)$. Then by \Cref{thm:cofinprelim}(2) we have $d \div d_{j,m}$. Thus, $H_{d\infty_1\infty_2}^{\OO_f} \subseteq H_{d_{j,m}\infty_1\infty_2}^{\OO_f}$.

    Finally, we prove (2'). Let $E$ be any abelian extension of $K$ that is unramified at the primes of $K$ lying over $2$. By the Takagi Existence Theorem, there is some odd $d \in \N$ such that $E \subseteq H^{\OO_1}_{d\infty_1\infty_2}$. Choosing $f=1$ in (2), the condition $f \div f_{j_0}$ holds for any choice of $j_0$, so we obtain $E \subseteq H^{\OO_1}_{d\infty_1\infty_2} \subseteq H_{d_{j,m}\infty_1\infty_2}^{\OO_1}$ for some $j,m \in \N$.
\end{proof}

\begin{proof}[Proof of \Cref{thm:cofinal}]
By \Cref{thm:RayClassField2}, our conditional assumptions imply that, for any admissible tuple $t = (d,r,Q) \sim (K,j,m,Q)$, one has the field containments
\begin{equation}
    E_t \supseteq H_{\db_{j,m}\infty_1\infty_2}^{\OO_1} \supseteq H_{d_{j,m}\infty_1\infty_2}^{\OO_1}.
\end{equation}
\Cref{thm:cofinal} thus follows from \Cref{thm:cofinalplus}.
\end{proof}

For real quadratic fields $K$ satisfying the hypothesis of \Cref{thm:cofinalplus}(1), namely that $\Tr(\e)$ is odd, $r$-SICs have the potential to provide a full solution to Hilbert's twelfth problem. The fields $H^{\OO_1}_{d_{j,m}\infty_1\infty_2}$ contain every abelian extension of $K$. By \Cref{thm:RayClassField2plus}(4), these fields ``SIC-generated'' in the sense that they are contained in the SIC field $E_{\Pi_s}$ of a SIC fiducial $\Pi_s$, under the assumptions of \Cref{conj:stc} and \Cref{cnj:tci}. Proofs of \Cref{conj:stc} and \Cref{cnj:tci} could thus be considered a geometric, complex-analytic solution to Hilbert's twelfth problem for real quadratic fields with a unit of odd trace.

We provide a result of the natural density of such fields in the family of all real quadratic fields ordered by discriminant.

\begin{thm}\label{thm:oddtracecount}
When ordered by discriminant, at least 7.4\% and at most 33.4\% of real quadratic fields $K$ have a fundamental unit $\e_K$ of odd trace (or equivalently, have a totally positive fundamental unit of odd trace, or have any unit of odd trace). Specifically, as $X \to \infty$, there is an asymptotic inequality
\begin{equation}
    \frac{2}{27} + o(1) \leq \frac{\#\{K : [K:\Q]=2, 0<\Delta_K<X, 2\ndiv\Tr(\e_K)\}}{\#\{K : [K:\Q]=2, 0<\Delta_K<X\}} \leq \frac{1}{3} + O(X^{-1/2}),
\end{equation}
where $\Delta_K := \disc K$.
\end{thm}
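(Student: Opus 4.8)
The plan is to reduce the odd-trace condition to a purely $2$-adic condition on the fundamental unit, to obtain the upper bound from a squarefree-counting argument, and to isolate the lower bound as an (essentially analytic) statement about the parity of the fundamental Pell solution. First I would verify the three equivalent formulations in the statement and rewrite them as the parity of the root dimension $d_1$. Since $\vn$ has norm $1$, \eqref{eq:towerbasic1} gives $\Tr(\vn)=\vn+\vn^{-1}=d_1-1$, so $\Tr(\vn)$ is odd iff $d_1$ is even. For $\e_K$: if $\Nm(\e_K)=1$ then $\vn=\e_K$, and if $\Nm(\e_K)=-1$ then $\vn=\e_K^2$ with $\Tr(\vn)=\Tr(\e_K)^2+2$, so in both cases $\Tr(\e_K)$ and $\Tr(\vn)$ have the same parity; and since every unit is $\pm\e_K^n$, a short computation of the trace form on $(\OO_K/2\OO_K)$ shows that some unit has odd trace iff $\e_K$ does. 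By \Cref{lm:ordv2zk}, $d_1$ is even iff $\vn$ has order $3$ in $(\OO_K/2\OO_K)^\times$, and an element of order $3$ forces $(\OO_K/2\OO_K)^\times$ to have order divisible by $3$, i.e.\ $2$ is inert, i.e.\ $\Delta_0\equiv 5\Mod 8$.

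Next I would refine this to a Pell-solution statement. Writing $\vn=\tfrac12(t+u\sqrt{\Delta_0})$ with $t^2-\Delta_0 u^2=4$, one has $t\equiv u\Mod 2$ automatically, and reducing $\vn$ modulo $2\OO_K$ in the inert case shows that $\vn$ has order $3$ precisely when $u$ (equivalently $t$) is odd. Thus
\[
\Tr(\e_K)\ \text{odd}\iff \Delta_0\equiv 5\Mod 8\ \text{and}\ x^2-\Delta_0 y^2=4\ \text{has a solution with $y$ odd},
\]
equivalently $\vn\notin\Z[\sqrt{\Delta_0}]=\OO_2$. Recording the complementary description will be useful for the lower bound: within the inert case, the even-trace fields are exactly those for which $\vn$ lies in the conductor-$2$ order, i.e.\ where the maximal order and $\OO_2$ share their totally positive unit group (so that, by the class-number-of-orders formula, $h(\OO_2)=3\,h(\OO_K)$ rather than $h(\OO_2)=h(\OO_K)$).

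For the upper bound I would use only the necessary condition $\Delta_0\equiv 5\Mod 8$. The fundamental discriminants $\Delta_0\in(0,X)$ with $\Delta_0\equiv 5\Mod 8$ are precisely the squarefree integers $\equiv 5\Mod 8$, counted with main term $X/\pi^2$ by standard estimates for squarefrees in an arithmetic progression, with error $O(\sqrt X)$; the total number of fundamental discriminants in $(0,X)$ is $3X/\pi^2+O(\sqrt X)$. Since the odd-trace fields form a subset of the former, dividing the two counts yields the ratio $\tfrac13+O(X^{-1/2})$, which is the claimed upper bound.

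The lower bound is the crux and the main obstacle. One must produce a set of inert $\Delta_0$ of density at least $\tfrac{2}{27}$ (that is, relative density $\tfrac29$ among the inert fields) for which the fundamental solution of $x^2-\Delta_0 y^2=4$ has $y$ odd. The difficulty is that no congruence condition on $\Delta_0$ alone can force this: the reduction $\vn\bmod 2$ is governed by the (exponentially large) fundamental Pell solution and is not a function of $\Delta_0$ modulo any fixed integer, so the naive ``small-$u$'' families (e.g.\ $\Delta_0=\operatorname{sqfree}(t^2-4)$) are too sparse to contribute positive density. I would therefore input an analytic lower bound on the density of inert $\Delta_0$ whose fundamental solution has odd $y$, of the type supplied by Fouvry--Klüners-style results on the parity of invariants of fundamental units (or an unconditional sieve/character-sum lower bound), combined with the $\tfrac13$ inert density; the explicit constant $\tfrac{2}{27}=\tfrac13\cdot\tfrac29$ should emerge from the provable conditional bound, and the gap to the conjectural value $\tfrac29=\tfrac13\cdot\tfrac23$ reflects exactly the gap between the provable and the heuristically expected equidistribution of $\vn\bmod 2$ in $(\OO_K/2\OO_K)^\times$. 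Because this argument does not give power savings, the lower-bound error is only $o(1)$, matching the statement.
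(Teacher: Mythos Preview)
Your upper bound argument and the preliminary reductions are correct and essentially match the paper: the necessary condition $\Delta_0\equiv 5\pmod 8$ follows as you say, and the squarefree count gives the ratio $\tfrac13+O(X^{-1/2})$. You also correctly pin down the even-trace condition (within $\Delta_0\equiv 5\pmod 8$) as equivalent to $\vn\in\OO_2$, i.e.\ to the index-$3$ jump $|\Cl_2(\OO_K)|=3h(\OO_K)$ rather than $h(\OO_K)$.

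The genuine gap is in the lower bound. You correctly isolate a $3$-torsion obstruction, but then reach for the wrong analytic tool: Fouvry--Kl\"uners treats the $2$-part of class groups (governing negative Pell via $4$-ranks), and their character-sum methods do not transfer to the $3$-part needed here. The paper instead translates the even-trace obstruction into the \emph{existence of cubic fields}: using the exact sequence for $\Cl_2(\OO_K)$, one shows that $K$ has even trace iff there is a surjection $\Cl_2(\OO_K)\to\Z/3\Z$ not factoring through $\Cl(\OO_K)$, which by the Galois correspondence is equivalent to the existence of a cubic field $L$ with $\disc L=4\Delta_K$. In particular, if no cubic $L$ has $\disc L\in\{\Delta_K,4\Delta_K\}$, then $K$ has odd trace. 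One then bounds the number of bad $K$ by counting cubic fields with $\disc L\equiv 5\pmod 8$ or $\disc L\equiv 20\pmod{32}$ and fundamental discriminant; Taniguchi--Thorne's local-condition refinement of Davenport--Heilbronn gives these counts with explicit constants, yielding at most $\tfrac{7}{54\zeta(2)}X+o(X)$ bad $K$, hence at least $\tfrac{1}{27\zeta(2)}X+o(X)$ odd-trace $K$ and the ratio $\tfrac{2}{27}+o(1)$. The constant $\tfrac{2}{27}$ emerges from the cubic-field density weights, not from any equidistribution of $\vn\bmod 2$; your decomposition $\tfrac{2}{27}=\tfrac13\cdot\tfrac29$ is numerically correct but does not reflect the actual mechanism.
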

\begin{proof}
    See \Cref{ap:oddtrace}.
\end{proof}

\section{SIC phenomenology}
\label{sec:sicphenexplain}
The purpose of this section is to show how the conjectures and results presented in previous sections explain many of the features of the calculated SIC fiducials that were described in \Cref{subsc:sicphenomenology}. 

We begin, in \Cref{ssc:TransformFormsFiducials}, by showing how  the action of $\GLtwo{\mbb{Z}}$ on quadratic forms translates into an action of $\ECS(d)$ on the corresponding fiducials.

In \Cref{ssc:Classification} we discuss the classification of $r$-SIC fiducials.  We restrict our attention to what we refer to as \emph{standard fiducials}, by which we mean fiducials corresponding to fiducial data sets $(d,r,Q,G,g)$ for which $\Det (G) = \pm 1$. We show that if Conjectures~\ref{cnj:tci},~\ref{conj:mrmvc}, and~\ref{conj:RayClassField3} are all true,  then one gets the complete set of standard fiducials if one restricts to data sets having some fixed, but arbitrary choice of $G$ and $g$.  Given two such data sets $(d,r,Q,G,g)$, $(d,r,Q',G,g)$ we show that the corresponding fiducials are
\begin{enumerate}
    \item in the same $\EC(d)$ orbit if $Q$ and $Q'$ are equivalent,
    \item in the same Galois multiplet if $Q$ and $Q'$ have the same discriminant.
\end{enumerate}

In \Cref{sc:ClassificationECdOrbits} we describe some illustrative examples.  We also describe how, on the assumption that Conjectures~\ref{cnj:tci},~\ref{conj:mrmvc}, and~\ref{conj:RayClassField3} are all true, the number of $\EC(d)$  orbits, and the number of Galois multiplets of standard $1$-SIC fiducials varies with dimension $d$.  Finally, we describe how the number of standard $r$-SICs with $r>1$ varies with dimension.

In \Cref{sc:SICSymmetries} we investigate the symmetry group for an $r$-SIC fiducial.  We show that if $\Pi_s$ is the fiducial corresponding to the datum $s=(d,r,Q,G,g)$,  then $\mcl{S}(Q)$, the stabilizer group for  $Q$ (see \Cref{dfn:stbgpqdf}), gives rise to a cyclic subgroup of $\StabPiESL{\Pi_s}$.  In every case where it has been checked, one finds in fact that the cyclic group corresponding to $\mcl{S}(Q)$ coincides with $\StabPiESL{\Pi_s}$.  If that is generally true then, given a fiducial data set $s=(d,r,Q,G,g)$,
\begin{enumerate}
    \item we have a criterion for when the symmetry group $\mcl{S}(\Pi_s)$ has an anti-unitary symmetry;
    \item we have an expression for the order of  $\mcl{S}(\Pi_s)$.
\end{enumerate}
In the rank-1 case we can also 
\begin{enumerate}
    \item explain why every fiducial has a canonical order 3 symmetry (see \Cref{df:canonicalOrder3}),
    \item explain why one only gets type $z$ $\EC(d)$ orbits when $d\not\equiv 3\Mod{9}$ (see \Cref{df:typeztypeatypeaprime}),
    \item explain why one gets both type $z$ and type $a$ orbits when $d\equiv 3\Mod{9}$,
    \item give a criterion for identifying the type $a$ orbits when $d\equiv 3\Mod{9}$.
\end{enumerate}

Finally, in \Cref{ssc:alignment}, we consider the phenomenon of SIC alignment.  As discussed in \Cref{ssec:dimtowsicalign}, it is observed in the empirically calculated solutions\cite{Appleby:2017,Andersson:2019} that, up to a sign, the squares of the \normalizedOverlapsText{} for a $1$-SIC at position $d_j$ in a dimension tower reappear among the \normalizedOverlapsText{} at position $d_{2j}$.  We show that this phenomenon is a consequence of our results.  We also show that the phenomenon generalizes to a relation between the  \normalizedOverlapsText{} for a $1$-SIC at positions $d_j$ and $d_{nj}$ in a tower, for any positive integer $n$ coprime to 3.

\subsection{Transformations of forms and fiducials}
\label{ssc:TransformFormsFiducials}
Consider the map $s \mapsto \Pi_s$, where $s$ is a fiducial datum and $\Pi_s$ is an $r$-SIC fiducial. 
We have
\begin{enumerate}
    \item a natural action of $\GLtwo{\mbb{Z}}$ on the domain of this map, in which $M\in \GLtwo{\mbb{Z}}$ takes $s=(d,r,Q,G,g)$ to $s_M=(d,r,Q_M,G,g)$,
    \item a natural action
    of $\ECS(d)$ on the image of the map, in which $U\in \ECS(d)$ takes $\Pi_s$ to $U\Pi_sU^{\dagger}$.
\end{enumerate}
The purpose of this subsection is to show how these two actions are related.  Its importance, among other things, is that it leads to the classification of $r$-SICs described in \Cref{sc:ClassificationECdOrbits}. 
In particular, it explains the numbers in \Cref{tab:numberOfECdSICOrbitsDims4To20} of \Cref{ssc:NumberOfOrbits}.

Although we do not prove it in this paper,
it appears that live fiducials of the form $\Pi_s$, with $s$ a fiducial datum, are always strongly centered (see \Cref{dfn:stronglycentered}). 
It also appears that every $r$-SIC contains at least one strongly centered fiducial. 
Conjugating with elements of $\ECS(d)$ takes strongly centered fiducials to strongly centered fiducials. 
To obtain the full set of $r$-SICs we then conjugate the strongly centered fiducials with elements of $\WH(d)$.  

We will now state the main results of this subsection. 

It is an immediate consequence of the definition that if $(d,r,Q)$ is an admissible tuple, then $(d,r,Q_M)$ is another  admissible tuple, for all $M\in \GLtwo{\mbb{Z}}$.  Our first result says that the corresponding fields are the same, and that a similar statement holds for a fiducial datum.
\begin{thm}  \label{thm:MtransformedTuples}
  Assume \Cref{cnj:tci}.  Let  $(d,r,Q,G,g)$ be a fiducial datum and  $M$  any element of $\GLtwo{\mbb{Z}}$. 
 Define $t=(d,r,Q)$, $t_M=(d,r,Q_M)$.  Then
 \begin{enumerate}
     \item $E_{t_M} = E_{t}$
     \item $(d,r,Q_M,G,g)$ is another fiducial datum.
 \end{enumerate}
\end{thm}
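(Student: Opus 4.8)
\textbf{Proof plan for \Cref{thm:MtransformedTuples}.}

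The plan is to prove the two claims essentially independently, using the $\GLtwo{\mbb{Z}}$-equivariance of the Shintani--Faddeev modular cocycle together with the structural results of \Cref{sec:units}. For claim (1), the key observation is that the generating sets of $E_{t_M}$ and $E_t$ are related by a $\GLtwo{\mbb{Z}/\db\mbb{Z}}$-permutation of the overlap index $\p$ together with an action on the roots of the quadratic form. First I would record the action of $M$ on the relevant data: by \Cref{lm:lactqrt} we have $M^{-1}\cdot\qrt_{Q,\pm} = \qrt_{Q_M,\pm}$, and the associated stabilizers transform by conjugation, $A_{t_M} = M^{-1} A_t M$ (this follows from \Cref{tm:sqchar} since $\mcl{S}(Q_M) = M^{-1}\mcl{S}(Q)M$ and from the defining properties of $A_t$ in \Cref{dfn:AssociatedStabilizers}, using that conjugation preserves sign, trace, and membership in $\Gamma(d)$). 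Then I would invoke the cocycle transformation property of $\shin$ under $\GLtwo{\mbb{Z}}$-conjugation — the functional equation in \cite[Thm.~4.46]{Kopp2020d} or its consequences as used in the proof of \Cref{thm:field0} — to express $\sfc{d^{-1}\p}{A_{t_M}}{\qrt_{t_M}}$ in terms of $\sfc{d^{-1}M\p}{A_t}{\qrt_t}$ up to a root-of-unity factor lying in $\Q(\rtu_d)$. Combined with the explicit form of the SF phase $\SFPhase{t}{\p}$ (\Cref{dfn:SFKPhase}), which transforms in a controlled way since $Q_M(\p) = Q(M\p)$ and $\rade(A_{t_M}) = \rade(M^{-1}A_tM) = \rade(A_t)$ by \eqref{eq:meyjlj}, this shows $\ghostOverlapC{t_M}{\p}$ equals $\ghostOverlapC{t}{M\p}$ times an element of $\Q(\rtu_d) \subseteq E_t$. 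Since $M$ acting on $(\Z/d\Z)^2$ is a bijection, the generating sets agree modulo $\Q(\rtu_d)$, and both fields contain $\rtu_d$, so $E_{t_M} = E_t$.

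For claim (2), I would check the three defining conditions of \Cref{def:fiducialdata} for the tuple $(d,r,Q_M,G,g)$. The admissibility of $(d,r,Q_M)$ is immediate since $Q_M$ has the same fundamental discriminant and conductor as $Q$ (the conductor is $\GLtwo{\mbb{Z}}$-invariant because $\Delta(Q_M) = \Delta(Q)$ by a direct computation, or by \Cref{lm:lactqrt} together with the relation between discriminants), so $(d,r,Q_M) \sim (K,j,m,Q_M)$ with the same $(K,j,m)$. The condition on $g$ is automatic: the requirement is merely that $g(\sqrt{\Delta_0}) = -\sqrt{\Delta_0}$ with $\Delta_0$ the fundamental discriminant of $Q_M$, which equals that of $Q$, and $g \in \Gal(\hat{E}_{t_M}/\Q) = \Gal(\hat{E}_t/\Q)$ by claim (1). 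The substantive point is the twist condition \eqref{eq:TwistCondition}: we need $\Det(G)\,r(2\shift+d_j-1+d) \equiv 1 \Mod{\db}$ for some $\shift \in \mcl{Z}_{t_M}$. Here I would use the second sentence of the Twisted Convolution Conjecture (\Cref{cnj:tci}), which asserts $\mcl{Z}_{t} = \mcl{Z}_{t'}$ whenever $t = (d,r,Q)$ and $t' = (d,r,Q')$ have forms of the same discriminant; since $Q$ and $Q_M$ have the same discriminant, $\mcl{Z}_{t_M} = \mcl{Z}_t$, and the twist condition that held for $(d,r,Q,G,g)$ transfers verbatim.

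The main obstacle will be claim (1), specifically getting the cocycle transformation identity into a usable form. The issue is that $M \in \GLtwo{\mbb{Z}}$ need not lie in $\Gamma_{d^{-1}\p}$, so one cannot directly apply the group-cohomological cocycle relation; instead one must use the full $\GLtwo{\mbb{Z}}$-equivariance of $\shin$ (the Jacobi-cocycle functional equations of \Cref{ssc:ShintaniFaddeevFunctionalEquations} together with \cite[Thm.~4.46]{Kopp2020d}), which introduces auxiliary $q$-Pochhammer and theta-multiplier factors. One must verify that all such correction factors are roots of unity in $\Q(\rtu_d)$ (or at worst in an abelian extension already contained in $E_t$) and that they combine correctly with the transformation of the SF phase. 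A clean way to organize this is to first handle $M = S$ and $M = T$ (the generators from \Cref{dfn:GeneratorsOfSL2Z}) together with $M = J$, verify the identity for each, and then bootstrap to general $M$ by the multiplicativity already built into the cocycle structure; the determinant-$-1$ case is where the branch-of-square-root bookkeeping from \Cref{ssc:ShintaniFaddeevFunctionalEquations} and the sign conventions in the definition of $\rade$ must be tracked most carefully.
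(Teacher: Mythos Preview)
Your plan is essentially correct and for claim (1) follows the same route as the paper: relate $A_{t_M}$, $\qrt_{t_M}$, and the SF phase to their counterparts for $t$, then use the $\GLtwo{\mbb{Z}}$-equivariance of $\shin$ (the paper cites \cite[Thm.~4.37]{Kopp2020d} rather than Thm.~4.46) to conclude that the ghost overlaps for $t_M$ are a permutation of those for $t$ up to sign. Two small points: your identity $\rade(A_{t_M}) = \rade(A_t)$ is only correct when $\Det M = +1$; by \eqref{eq:meyjlj} one picks up a factor $\Det M$, and in the paper this is exactly what makes $\SFPhase{t_M}{\p}$ the \emph{complex conjugate} of $\SFPhase{t}{lM\p}$ when $\Det M = -1$ --- the reality of the ghost overlaps then absorbs the conjugation. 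Also, the ``auxiliary $q$-Pochhammer and theta-multiplier factors'' you worry about in fact cancel cleanly: the paper's \Cref{thm:MTransformNormalizedGhostOverlap} gives the exact identity $\normalizedGhostOverlapC{t_M}{\p} = \normalizedGhostOverlapC{t}{lM\p}$ with $l = \sgn(j_{M^{-1}}(\qrt_t)) = \pm 1$, no stray roots of unity.

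For claim (2) you take a genuinely different and shorter route than the paper. You invoke the second sentence of \Cref{cnj:tci} (equal discriminants $\Rightarrow \mcl{Z}_t = \mcl{Z}_{t'}$), which immediately gives $\mcl{Z}_{t_M} = \mcl{Z}_t$ and hence the twist condition. The paper does \emph{not} use this clause; instead it verifies directly that the particular $\shift \in \mcl{Z}_t$ also lies in $\mcl{Z}_{t_M}$ by rewriting the TCC sum \eqref{eq:tcc} for $t_M$ as the TCC sum for $t$ via a change of variables $\q \mapsto lM\q$, using \Cref{thm:AtmrhotmExpressions} and \Cref{lem:MTransformOfSFmodular}. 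Your route is cleaner given the full hypothesis; the paper's computation has the advantage of showing that this particular consequence follows from the $\shin$-equivariance alone and does not actually require the ``Moreover'' clause of the conjecture.
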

\Cref{thm:MtransformedTuples} motivates the following definition.
\begin{defn}[$M$-transformed tuple and fiducial datum, equivalent tuples and data sets]\label{dfn:MTransformedt}
    Let $t=(d,r,Q)$ be an admissible tuple, $s=(d,r,Q,G,g)$ a fiducial datum, and $M$ an element of $\GLtwo{\mbb{Z}}$.  We define $t_M$ to be the \textit{$M$-transformed admissible tuple} $(d,r,Q_M)$, and we define $s_M$ to be the \textit{$M$-transformed fiducial datum} $(d,r,Q_M,G,g)$.
    
    We say that two admissible tuples $t=(d,r,Q)$, $t'=(d,r,Q')$ are \emph{equivalent}, and write $t\sim t'$, if and only if the forms $Q$, $Q'$ are equivalent.

 Similarly, we say two fiducial datums $s=(d,r,Q,G,g)$, $s'=(d,r,Q',G,g)$ are equivalent, and write $s\sim s'$, if and only if the forms $Q$, $Q'$ are equivalent.
\end{defn}
The following homomorphism describes the relation between transformations of $s$, and transformations of $\Pi_s$ which is the focus of this subsection:
\begin{defn}\label{dfn:GLHomomorphism}
    Let  $s=(d,r,Q,G,g)$ a fiducial datum.  We define $\GLMorph_s$ to be the map of $\GLtwo{\mbb{Z}}$ to $\ESLtwo{\mbb{Z}/\db \mbb{Z}}$ defined by
    \eag{
     \GLMorph_s\colon M \mapsto \sgn\!\left(j_{M^{-1}}(\qrt_t)\right) H\vpu{-1}_gG^{-1}[M]_{\db} G H_g^{-1}
    }
    where $t$ is the admissible tuple $(d,r,Q)$ and $[M]_{\db}$ is the image of $M$ under the canonical homomorphism of $\GLtwo{\mbb{Z}}$ to $\ESLtwo{\mbb{Z}/\db\mbb{Z}}$.
\end{defn}
\begin{rmkb}
    The factor of $\sgn\!\left(j_{M^{-1}}(\qrt_t)\right)$ means $\GLMorph_s$ is not a homomorphism.  It is, however, ``almost'' a homomorphism, in the sense that $\GLMorph_s(M_1M_2) = \pm\GLMorph_s (M_1) \GLMorph_s (M_2) $ for all $M_1, M_2\in \GLtwo{\mbb{Z}}$.
\end{rmkb}
We need the following fact.
\begin{thm}\label{thm:GLHomomorphismSurjective}
    The map $\GLMorph_s$ is  surjective  for every fiducial datum $s$.
\end{thm}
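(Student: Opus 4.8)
The plan is to show that the image of $\GLMorph_s$ is all of $\ESLtwo{\mbb{Z}/\db\mbb{Z}}$ by reducing to a statement about the group $\GLtwo{\mbb{Z}}$ and its canonical reduction map to $\ESLtwo{\mbb{Z}/\db\mbb{Z}}$. Since $H_g$, $G$, and $H_g^{-1}G^{-1}$ are fixed invertible matrices over $\mbb{Z}/\db\mbb{Z}$, and since the factor $\sgn(j_{M^{-1}}(\qrt_t)) \in \{\pm 1\}$ is absorbed into the leading sign (it always equals $\pm I$ times something, and $-I$ is in the image whenever $I$ is, because $-I$ lifts to $-I \in \GLtwo{\mbb{Z}}$), the surjectivity of $\GLMorph_s$ is equivalent to the surjectivity of the composite $M \mapsto H_g G^{-1}[M]_{\db}GH_g^{-1}$, which in turn is equivalent to surjectivity of the canonical reduction map $\GLtwo{\mbb{Z}} \to \ESLtwo{\mbb{Z}/\db\mbb{Z}}$, since conjugation by a fixed invertible matrix is an automorphism of $\ESLtwo{\mbb{Z}/\db\mbb{Z}}$. (One must check that $GH_g^{-1} \in \GLtwo{\mbb{Z}/\db\mbb{Z}}$: this holds because $\det(G)$ is coprime to $\db$ by \eqref{eq:TwistCondition} and $\det(H_g) = k_g$ is coprime to $\db$ since $g$ is a Galois automorphism, so $k_g$ is a unit modulo $\db$.)

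First I would make precise the claim about the leading sign. Writing $\GLMorph_s(M) = \epsilon_M \cdot H_gG^{-1}[M]_{\db}GH_g^{-1}$ with $\epsilon_M = \sgn(j_{M^{-1}}(\qrt_t)) \in \{\pm1\}$, I observe that replacing $M$ by $-M$ changes $[M]_{\db}$ to $-[M]_{\db}$ (hence conjugates to $-H_gG^{-1}[M]_{\db}GH_g^{-1}$) while leaving $j_{M^{-1}}(\qrt_t)$ unchanged up to sign — in fact $j_{(-M)^{-1}}(\qrt_t) = j_{M^{-1}}(\qrt_t)$ since $(-M)^{-1} = -M^{-1}$ and $j_{-N}(\tau) = -j_N(\tau)$ are both scaled. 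More carefully: $j_{-N}(\tau) = -j_N(\tau)$, so $\epsilon_{-M} = -\epsilon_M$, and therefore $\GLMorph_s(-M) = (-\epsilon_M)(-H_gG^{-1}[M]_{\db}GH_g^{-1}) = \GLMorph_s(M)$. This is consistent; what matters is that for any target $F \in \ESLtwo{\mbb{Z}/\db\mbb{Z}}$, once I find $M$ with $H_gG^{-1}[M]_{\db}GH_g^{-1} = \pm F$, I can adjust by passing to $-M$ if the sign is wrong — but passing to $-M$ flips \emph{both} the conjugate factor and $\epsilon$, so it does not help directly. Instead, I should fix $\epsilon_M$ by a separate choice. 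The cleanest route: given $F$, set $F' = H_g^{-1}GFG^{-1}H_g \in \ESLtwo{\mbb{Z}/\db\mbb{Z}}$, lift to some $M_0 \in \GLtwo{\mbb{Z}}$ with $[M_0]_{\db} = F'$ (using surjectivity of $\GLtwo{\mbb{Z}} \to \ESLtwo{\mbb{Z}/\db\mbb{Z}}$), then $\GLMorph_s(M_0) = \epsilon_{M_0} F$; if $\epsilon_{M_0} = -1$, I need a matrix $M_1 \in \GLtwo{\mbb{Z}}$ with $[M_1]_{\db} = F'$ but $\epsilon_{M_1} = +1$. For this I would multiply $M_0$ on the right by an element $A$ of the stabilizer $\mcl{S}_{\db}(Q) \subseteq \Gamma(\db)$ — such $A$ fixes $\qrt_t$ by \Cref{lm:lactqrt} and has $[A]_{\db} = I$, and by \Cref{tm:symgp} we may take $A = A_t$ or a power with positive trace, which by \Cref{lem:fixedinda} has $\qrt_t \in \DD_{A^{-1}}$, i.e., $j_{A^{-1}}(\qrt_t) > 0$; then $j_{(M_0A)^{-1}}(\qrt_t) = j_{A^{-1}M_0^{-1}}(\qrt_t) = j_{A^{-1}}(M_0^{-1}\cdot\qrt_t)\,j_{M_0^{-1}}(\qrt_t)$, and a short computation using $A^{-1}\cdot(M_0^{-1}\cdot\qrt_t) = M_0^{-1}\cdot\qrt_t$ when $M_0^{-1}\cdot\qrt_t$ is the appropriate fixed point — or more simply, choosing $A$ to be a stabilizer of the \emph{form} $Q_{M_0} = \lOnQ{M_0^{-1}}{Q}$ so that $M_0^{-1}\cdot\qrt_t$ is its fixed point — shows the sign can be adjusted.

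The cleanest version of this last step uses \Cref{tm:symgp} applied to the admissible tuple $t_{M_0^{-1}} = (d,r,Q_{M_0^{-1}})$: pick $B \in \mcl{S}(Q_{M_0^{-1}})$ with positive trace, positive determinant, and $[B]_{\db} = I$ (possible since $\mcl{S}_{\db}(Q_{M_0^{-1}}) = \la A_{t'} \ra$ for the relevant tuple $t'$, all of whose elements have positive determinant and trace); then $M_0^{-1}B^{-1}$ has $M_0^{-1}B^{-1}\cdot\qrt_t = M_0^{-1}\cdot\qrt_t$ (a fixed point of $B^{-1}$) and $j_{B^{-1}M_0^{-1}}(\qrt_t) = j_{B^{-1}}(M_0^{-1}\cdot\qrt_t)j_{M_0^{-1}}(\qrt_t)$ with $j_{B^{-1}}(M_0^{-1}\cdot\qrt_t) > 0$ by \Cref{lem:fixedinda}, so $\epsilon_{M_0B}$ has the same sign as $\epsilon_{M_0}$ — which is the \emph{wrong} direction. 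To flip the sign I instead use that both fixed points $\qrt_{Q_{M_0^{-1}},\pm}$ are available, and there exists an element of $\GLtwo{\mbb{Z}}$ of negative determinant stabilizing the form with the opposite sign behavior; or, most simply, I invoke that $\SLtwo{\mbb{Z}}$ contains elements congruent to $I$ mod $\db$ with trace $< 2$ in absolute value only trivially, so I instead exhibit directly for each residue class a representative with $\epsilon = +1$ by an explicit perturbation. \textbf{The main obstacle} I anticipate is precisely this sign-matching: proving that for every $F' \in \ESLtwo{\mbb{Z}/\db\mbb{Z}}$ the fiber $[\,\cdot\,]_{\db}^{-1}(F')$ contains a matrix $M$ with $j_{M^{-1}}(\qrt_t) > 0$. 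This should follow from the fact that each such fiber is a coset of $\Gamma(\db)$ in $\GLtwo{\mbb{Z}}$, that $\Gamma(\db)$ contains hyperbolic elements $C$ fixing any prescribed real quadratic irrationality with $j_{C^{-1}}(\qrt_t)$ of either sign (the two fixed points give $j$-values that are reciprocal positive reals for the positive-trace generator, but composing with $-C$ or using the other fixed point supplies both signs), so left-multiplying a given fiber representative by a suitable such $C$ adjusts $\epsilon$ without changing the reduction; assembling these observations, together with \Cref{thm:symplecticKernel} to handle the even-$d$ kernel subtlety, completes the proof. The remaining steps — verifying $GH_g^{-1} \in \GLtwo{\mbb{Z}/\db\mbb{Z}}$ and that conjugation by it is an automorphism — are routine.
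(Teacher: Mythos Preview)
Your overall strategy matches the paper's: reduce to surjectivity of the canonical map $\GLtwo{\mbb{Z}} \to \ESLtwo{\mbb{Z}/\db\mbb{Z}}$ (standard), then show that within each $\Gamma(\db)$-coset one can choose a representative $M$ with $j_{M^{-1}}(\qrt_t) > 0$. The gap is in this second step. All of your attempts route through elements of $\mcl{S}_{\db}(Q)$ or of the stabilizer of some $\GLtwo{\mbb{Z}}$-translate of $Q$, and you correctly diagnose that these cannot flip the sign: by \Cref{tm:symgp} every element of $\mcl{S}_d(Q')$ has positive trace, hence by \Cref{lem:fixedinda} positive $j$-value at its fixed points. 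Your parenthetical escape hatches do not work either: ``composing with $-C$'' fails because $-I \notin \Gamma(\db)$ for $\db \ge 3$, so $-C \notin \Gamma(\db)$; and ``using the other fixed point'' does nothing since both fixed points of a positive-trace element lie in $\DD_C$. So the stabilizer idea is a dead end throughout, and your final sentence does not actually supply a working mechanism.

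The paper handles the sign via a short explicit lemma (\Cref{lm:MReducedModdbarsuchthatjMxpositive}): given $M \in \GLtwo{\mbb{Z}}$ and any irrational $x$, one of the two matrices
\[
F_{\pm 1} = \begin{pmatrix} -1-\db & \pm\db \\ \mp\db & -1+\db \end{pmatrix} \equiv -I \pmod{\db}
\]
satisfies $j_{F_\theta}(-M\cdot x) > 0$ (their sum over $\theta$ is $2(\db-1) > 0$), and then $M' := -F_\theta M$ has $M' \equiv M \pmod{\db}$ and $\sgn(j_{M'}(x)) = -\sgn(j_M(x))$. The key point is that $-F_\theta \in \Gamma(\db)$ but is \emph{not} a stabilizer of any form related to $Q$. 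An even simpler patch in the spirit of your coset idea: right-multiply $M$ by $C = \left(\begin{smallmatrix}1 & 0 \\ N\db & 1\end{smallmatrix}\right) \in \Gamma(\db)$, so that $j_{(MC)^{-1}}(\qrt_t) = j_{C^{-1}}(M^{-1}\!\cdot\qrt_t)\,j_{M^{-1}}(\qrt_t)$ with $j_{C^{-1}}(y) = -N\db\,y + 1$, which takes either sign for suitable $N \in \mbb{Z}$.
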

\begin{proof}
    We defer the proof to page~\pageref{proof:GLHomomorphismSurjective}, at the end of this subsection.
\end{proof}
We are now able to state the central result of this subsection:
\begin{thm} \label{thm:MTransformedFiducials}
    Let $s=(d,r,Q,G,g)$ be a fiducial datum,  
and let  $M\in \GLtwo{\mbb{Z}}$ be arbitrary.  Then
        \eag{
         \Pi\vpu{\dagger}_{s_M} &= U^{\dagger}_F\Pi\vpu{\dagger}_s U\vpu{\dagger}_F,
         \label{eq:MtransformToFTransform}
        }
        where $F=\GLMorph_s(M)$.
\end{thm}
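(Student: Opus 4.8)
\textbf{Proof plan for \Cref{thm:MTransformedFiducials}.}

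The plan is to reduce the identity \eqref{eq:MtransformToFTransform} to a purely group-theoretic statement about displacement operators and symplectic unitaries, and then to verify that statement by tracking how the quadratic form $Q$, the twist $G$, and the Galois automorphism $g$ each transform under $M$. The starting point is the explicit formula $\tilde{\Pi}_{s_M} = \frac{1}{d}\sum_{\mbf{p}} \ghostOverlapC{t_M}{G\mbf{p}} D_{\mbf{p}}$ from \Cref{dfn:CandidateGhostAndSICFiducials}, together with $\Pi_{s_M} = g(\tilde{\Pi}_{s_M})$. So the first key step is to understand how the candidate ghost overlap $\ghostOverlapC{t_M}{\mbf{p}}$ for the $M$-transformed tuple relates to $\ghostOverlapC{t}{\mbf{p}}$ for the original tuple. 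Using \Cref{dfn:GhostOverlaps}, this comes down to two ingredients: how $\qrt_{Q_M,+}$ relates to $\qrt_{Q,+}$ (by \Cref{lm:lactqrt}, $\qrt_{Q_M,\pm} = M^{-1}\cdot\qrt_{Q,\pm}$, up to the sign subtlety recorded via $\sgn(j_{M^{-1}}(\qrt_t))$), how $A_{t_M}$ relates to $A_t$ (conjugation by $M^{-1}$, by the characterization of $A_t$ as the positive-sign generator of $\mcl{S}_d(Q)$ in \Cref{tm:symgp}), and how the SF phase $\SFPhase{t_M}{\mbf{p}}$ relates to $\SFPhase{t}{\mbf{p}}$ (which involves $\rade(A_{t_M})$, controlled via \eqref{eq:meyjlj} of \Cref{lem:RademacherProperties}, and the form value $Q_M(\mbf{p}) = \frac{1}{\Det M}Q(M^{-1}\cdot\ldots)$; more precisely $Q_M(\mbf{p}) = (\Det M)\,\mbf{p}^{\rm T} M^{\rm T} Q M\,\mbf{p}$ per \eqref{eq:afrmtrans}). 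The SF modular cocycle itself transforms by the key identity $\sfc{\r}{MBM^{-1}}{M\cdot\tau} = \sfc{M^{-1}\r}{B}{\tau}$ (in the appropriately normalized form; this is \cite[Thm.~4.46]{Kopp2020d}, used in the proof of \Cref{thm:field0}), which is exactly what converts a change of base point and conjugation of the stabilizer into a change of the rational characteristic vector.

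The second key step is to combine all of these transformation rules and show that, after substituting $\mbf{p} \mapsto [M]_{\db}\mbf{p}$ (or its inverse) in the summation index, the expression for $\tilde{\Pi}_{s_M}$ becomes $U_{F_0}^{\dagger} \tilde{\Pi}_s U_{F_0}$ for the appropriate symplectic/anti-symplectic matrix $F_0 = \sgn(j_{M^{-1}}(\qrt_t))\,G^{-1}[M]_{\db}G$ acting \emph{before} the Galois twist $H_g$; here one uses \eqref{eq:symplecticUnitary}, namely $U_{F_0} D_{\mbf{p}} U_{F_0}^{\dagger} = D_{F_0\mbf{p}}$, and its anti-unitary analogue \eqref{eq:antiSymplecticAntiUnitary}. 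Then applying the Galois automorphism $g$ to both sides and using \Cref{thm:GalActOnClifford} — specifically $g(D_{\mbf{p}}) = D_{H_g\mbf{p}}$ and $g(U_{F_0}) \dot{=} U_{H_g F_0 H_g^{-1}}$ — conjugates $F_0$ by $H_g$, and one is left with $\Pi_{s_M} = U_F^{\dagger} \Pi_s U_F$ with $F = H_g F_0 H_g^{-1} = \sgn(j_{M^{-1}}(\qrt_t))\, H_g G^{-1}[M]_{\db} G H_g^{-1}$, which is precisely $\GLMorph_s(M)$ by \Cref{dfn:GLHomomorphism}. The fact that $U_F$ is only defined up to a phase is harmless since conjugation by $U_F$ is phase-independent, and one should remark that $F$ has determinant $\pm 1$ so that $U_F \in \ECS(d)$ makes sense (the anti-symplectic case $\Det M = -1$ corresponds to an anti-unitary $U_F$, where the complex conjugation built into \eqref{eq:antiSymplecticAntiUnitary} must be carefully reconciled with the complex conjugation implicit in the relation between live and ghost overlaps — this is where \Cref{lm:OverlapTermsGhostOverlap} and the fact that $\overlapC{s}{\mbf{p}}$ already encodes a $g$-twist is relevant).

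The main obstacle I expect is the bookkeeping of signs and roots of unity: the SF phase $\SFPhase{t}{\mbf{p}}$ carries a factor $e^{-\frac{\pi i}{12}\rade(A_t)}$, and under $t \mapsto t_M$ with $\Det M = -1$ this Rademacher term flips sign (by \eqref{eq:meyjlj}), which must be compensated by a corresponding change in the square-root branch $\sqrt{j_{A_t}}$ appearing implicitly through \Cref{lm:shinatzero} and in the cocycle transformation, as well as by the $(-1)^{s_d(\mbf{p})}$ sign and the quadratic $\rtu_d^{-\frac{f_{jm}}{f}Q(\mbf{p})}$ factor. Verifying that all these phase discrepancies cancel — so that the final identity holds on the nose rather than merely up to a phase that could spoil the projector identity — is the delicate part. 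I would handle this by first proving the statement for $\Det M = +1$ (where the signs are cleanest, and $\GLMorph_s$ restricted to $\SLtwo{\mbb{Z}}$ is an honest homomorphism up to phase), then deducing the $\Det M = -1$ case by composing with a single fixed anti-symplectic element such as $J = \smt{1 & 0 \\ 0 & -1}$, for which the transformation of every quantity can be computed explicitly using the known behavior of $\DD_M$ under $J$ (\Cref{lm:domainproperties}), the behavior of $\rade$ under conjugation by $J$ (noted in the proof of \Cref{lem:RademacherProperties}), and the $\r \mapsto -\r$ symmetry of the SF cocycle (\Cref{thm:funchar} and \Cref{cor:funchar}). Finally, the surjectivity claim \Cref{thm:GLHomomorphismSurjective} is needed only to guarantee that \emph{every} element of $\ECS(d)$ arises this way; its proof (deferred in the excerpt) presumably amounts to surjectivity of the reduction map $\GLtwo{\mbb{Z}} \to \ESLtwo{\mbb{Z}/\db\mbb{Z}}$ combined with the fact that conjugation by $H_g$ and $G$ are bijections, so I would cite it as given.
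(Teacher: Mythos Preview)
Your plan is correct and follows the paper's architecture exactly: first establish $\normalizedGhostOverlapC{t_M}{\mbf{p}} = \normalizedGhostOverlapC{t}{lM\mbf{p}}$ with $l = \sgn(j_{M^{-1}}(\qrt_t))$ (the paper's \Cref{thm:MTransformNormalizedGhostOverlap}), lift this to the ghost projector identity $\tilde{\Pi}_{s_M} = U_{\tilde F}^{\dagger}\tilde{\Pi}_s U_{\tilde F}$ with $\tilde F = l\,G^{-1}[M]_{\db}G$ (the paper's \Cref{thm:MTransformedGhostFiducials}), and then apply $g$ using \Cref{thm:GalActOnClifford} to conjugate $\tilde F$ by $H_g$.

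The one place you overcomplicate matters is the $\Det M = -1$ case. The paper does \emph{not} reduce to $J$ or invoke the $\r \mapsto -\r$ symmetry of \Cref{cor:funchar}. Instead it observes that when $\Det M = -1$, both the SF phase and the SF cocycle value pick up a complex conjugation (the former via $\rade(A_{t_M}) = -\rade(A_t)$ and $Q_M(\mbf{p}) = -Q(M\mbf{p})$, the latter via \Cref{lem:MTransformOfSFmodular}, which is \cite[Thm.~4.37]{Kopp2020d} rather than Thm.~4.46), so that $\normalizedGhostOverlapC{t_M}{\mbf{p}} = \bigl(\normalizedGhostOverlapC{t}{lM\mbf{p}}\bigr)^{*}$. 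But $\normalizedGhostOverlapC{t}{lM\mbf{p}}$ is \emph{real} by \Cref{thm:nupnumpeq1}, so the conjugation is vacuous and both determinant cases collapse to the same formula. This same realness is what makes the anti-unitary conjugation $U_{\tilde F}^{\dagger}\tilde{\Pi}_s U_{\tilde F}$ work cleanly at the ghost level: the coefficients $\normalizedGhostOverlapC{t}{\mbf{p}}/\sqrt{d_j+1}$ are real, so the anti-linearity of $U_{\tilde F}$ does not introduce extra conjugates. Your proposed route via $J$ would work, but the realness shortcut is what actually drives the paper's argument and is the cleanest way through the sign bookkeeping you were worried about.
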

\begin{rmkb}
The fact that $\GLMorph_s$ is surjective means that the converse is also true: Given arbitrary $F\in \ESLtwo{\mbb{Z}/\db\mbb{Z}}$, there exists $M\in \GLtwo{\mbb{Z}}$ for which \eqref{eq:MtransformToFTransform} holds.

    Note, however, that this correspondence between $\GLtwo{\mbb{Z}}$-transformations of $s$ and (anti-)symplectic transformations of $\Pi_s$ cannot be a function in either direction. 
    Indeed, $\GLtwo{\mbb{Z}}$ is an infinite group, whereas $\ESLtwo{\mbb{Z}/\db\mbb{Z}}$ is finite. So there must be infinitely many matrices $M$ corresponding to a given matrix $F$.  Conversely, the existence of the symmetries described in \Cref{ssc:symgp} and \Cref{sc:SICSymmetries} below means that there is more than one matrix $F$ corresponding to a given matrix $M$.  This incidentally means that infinitely many different sets of fiducial data $s$ must give rise to the same fiducial $\Pi_s$.
\end{rmkb}
\begin{proof}
    We defer the proof to page~\pageref{proof:MTransformedFiducials}, at the end of this subsection.
\end{proof}
Before proving Theorems
~\ref{thm:GLHomomorphismSurjective}, and \ref{thm:MTransformedFiducials} we need to establish some preliminary results.
\begin{thm}\label{thm:AtmrhotmExpressions}
    Let $t$ be an admissible tuple and $M$ an element of $\GLtwo{\mbb{Z}}$.  Then
    \eag{
    \stabQGen{t_M} &= M^{-1} \stabQGen{t} M,
    \label{eq:LmatrixMTransform}
    \\
    \posGen{t_M} &= M^{-1} \posGen{t} M,
    \\
    \zaunerGen{t_M}&= M^{-1} \zaunerGen{t} M,
    \label{eq:zaunerGenMTransform}
    \\
    A_{t_M} &= M^{-1} A_t M,
    \label{eq:AmatrixMTransform}
    \\
    \intertext{and}
    \qrt_{t_M} &= M^{-1} \qrt_t
    }
    (see Definitions \ref{dfn:AssociatedStabilizers}, \ref{dfn:GhostOverlaps} for definitions of $\stabQGen{t}, \posGen{t},\zaunerGen{t},A_t, \qrt_t$).
\end{thm}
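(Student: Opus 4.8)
\textbf{Proof proposal for \Cref{thm:AtmrhotmExpressions}.}

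The plan is to reduce all five identities to two basic facts: first, the behavior of the stability group under the $M$-transform of a form, namely $\mcl{S}(Q_M) = M^{-1}\mcl{S}(Q)M$; and second, the compatibility of the canonical representation $\canrep_Q$ with this conjugation, in the form $\canrep_{Q_M}(\kappa) = M^{-1}\canrep_Q(\kappa)M$ for all $\kappa \in K$. Once these are in place, each stated equation follows by expressing the relevant stabilizer as a value of a canonical representation at a suitable unit (using \Cref{tm:symgp}) and checking that the sign/trace normalizations in \Cref{dfn:AssociatedStabilizers} are preserved by conjugation.

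First I would establish $\mcl{S}(Q_M) = M^{-1}\mcl{S}(Q)M$. By \Cref{tm:sqchar}, $\mcl{S}(Q)$ is the centralizer of $SQ$ in $\GLtwo{\mbb{Z}}$, and it is $\Z\la I, SQ\ra \cap \GLtwo{\mbb{Z}}$; conjugating by $M$ and using \Cref{lm:slm1sdllt} to rewrite $M^{-1}(SQ)M$ in terms of $S Q_M$ gives the claim (the point is that $M$-conjugation sends the line $\Q SQ$ to the line $\Q SQ_M$). Next I would prove the canonical-representation compatibility: from \Cref{df:etaq}, $\canrep_Q(x + y\sqrt{\Delta_0}) = xI + \tfrac{2y}{f}SQ$, and conjugating by $M$ shows $M^{-1}\canrep_Q(\kappa)M \in \Z\la I, SQ_M\ra$; since $M$-conjugation preserves both trace and determinant, the resulting $\Q$-algebra map $K \to \mcl{M}(\Q)$ must be a canonical representation, and by the uniqueness part of \Cref{thm:canrep} it equals $\canrep_{Q_M}$. (One checks the conductor of $Q_M$ equals that of $Q$, since $\Det(M)M^{\rm T}QM$ has the same discriminant as $Q$ up to sign, hence the same fundamental discriminant and conductor.)

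With these tools, the individual identities follow. For \eqref{eq:LmatrixMTransform}: by \Cref{tm:symgp}, $\stabQGen{t} = \canrep_Q(\un_f)$ where $f$ is the conductor of $Q$ (which equals that of $Q_M$), so $\stabQGen{t_M} = \canrep_{Q_M}(\un_f) = M^{-1}\canrep_Q(\un_f)M = M^{-1}\stabQGen{t}M$ — provided the sign normalization matches. Here I must check that $\sgn(M^{-1}\stabQGen{t}M) = \sgn(Q_M)$ and that the trace stays positive; trace is conjugation-invariant, and for the sign one uses that $\sgn(Q_M) = \sgn\!\big(\lOnQ{M}{Q}\big)$ together with \Cref{lm:llpcommqcond}/\Cref{tm:ltnqdecomp} to see that $M^{-1}\stabQGen{t}M$ stabilizes $Q_M$ with the correct sign (this is the one genuinely fiddly bookkeeping point). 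The formulas for $\posGen{t_M}$ and $A_{t_M}$ then follow since they are fixed positive powers of $\stabQGen{t}$ (resp.\ of $\posGen{t}$), and $M^{-1}(\cdot)^k M = (M^{-1}(\cdot)M)^k$; moreover $\Det$ and $\Tr$ being conjugation-invariant means the case distinctions in \Cref{dfn:AssociatedStabilizers} are unaffected. For \eqref{eq:zaunerGenMTransform}: from \Cref{dfn:AssociatedStabilizers}, $\zaunerGen{t} = \tfrac{d_j-1}{2}I + \tfrac{f_j}{f}SQ$, so conjugating by $M$ and again using \Cref{lm:slm1sdllt} to identify $M^{-1}SQM$ with $S Q_M$ (up to the determinant factor that gets absorbed into $Q_M = \Det(M)M^{\rm T}QM$) gives $M^{-1}\zaunerGen{t}M = \tfrac{d_j-1}{2}I + \tfrac{f_j}{f}SQ_M = \zaunerGen{t_M}$; note $d_j$, $f_j$, $f$ depend only on $(K,j,m)$ and the conductor, all unchanged by $M$. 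Finally, $\qrt_{t_M} = \qrt_{Q_M,+} = M^{-1}\cdot\qrt_{Q,+} = M^{-1}\qrt_t$ is exactly \eqref{eq:lactqrt1} of \Cref{lm:lactqrt}.

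\textbf{Main obstacle.} The routine-looking but error-prone part is the sign/orientation bookkeeping: verifying that $M$-conjugation of $\stabQGen{t}$ (and of $A_t$) lands on the generator of $\mcl{S}(Q_M)$ (resp.\ $\mcl{S}_d(Q_M)$) with $\sgn = \sgn(Q_M)$ rather than its inverse or its negative, and reconciling the convention $Q_M = \Det(M)M^{\rm T}QM$ with the appearance of $M^{-1}(\cdot)M$ (rather than $M(\cdot)M^{-1}$) on the right-hand sides. Once the identity $M^{-1}(SQ)M = \sgn(\Det M)\,S(Q_M)$ — equivalently the content of \Cref{lm:slm1sdllt} — is pinned down, everything else is formal; I would state that identity as a short preliminary lemma and then the five equations fall out in half a page.
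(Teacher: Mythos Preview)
Your approach is essentially the paper's: both hinge on the identity $SQ_M = M^{-1}(SQ)M$ (a direct consequence of \Cref{lm:slm1sdllt} and the definition $Q_M = \Det(M)M^{\rm T}QM$, with \emph{no} extra $\sgn(\Det M)$ factor --- your stated version is slightly off here), together with the expression of the stabilizers as values of $\canrep_Q$ at specific units via \Cref{tm:symgp}. The paper computes $\canrep_{Q_M}(\kappa) = M^{-1}\canrep_Q(\kappa)M$ directly from the explicit formula in \Cref{df:etaq}, whereas you propose to get it from the uniqueness part of \Cref{thm:canrep}; either works.

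Your ``main obstacle'' is not actually an obstacle. Since \Cref{tm:symgp} applies equally to $t_M$ and yields $\stabQGen{t_M} = \canrep_{Q_M}(\un_f)$ (and likewise $\posGen{t_M} = \canrep_{Q_M}(\vn_f)$, $A_{t_M} = \canrep_{Q_M}(\vn^{j(2m+1)})$), the sign and trace normalizations are already absorbed into that theorem --- you never need to verify by hand that $M^{-1}\stabQGen{t}M$ has the correct sign relative to $Q_M$. The equation $\stabQGen{t_M} = M^{-1}\stabQGen{t}M$ follows immediately from $\canrep_{Q_M}(\un_f) = M^{-1}\canrep_Q(\un_f)M$, and similarly for the others. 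So you can drop the sign-bookkeeping discussion entirely and the proof becomes the half-page you predicted.
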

\begin{proof}
    Let $t=(d,r,Q) \sim (K,j,m,Q)$.  It follows from \Cref{tm:symgp} that
    \eag{
    \posGen{t}&= \canrep_Q(\vn^{j_{\rm{min}}(f)}) = \left( \frac{d_{j_{\rm{min}}(f)}-1}{2}\right)I+\frac{f_{j_{\rm{min}}}(f)}{f} SQ
    \\
    \intertext{and}
        \posGen{t_M}&= \canrep_Q(\vn^{j_{\rm{min}}(f)}) = \left( \frac{d_{j_{\rm{min}}(f)}-1}{2}\right)I+\frac{f_{j_{\rm{min}}}(f)}{f} SQ_M.
    }
    It follows from \eqref{eq:afrmtrans} and \Cref{lm:slm1sdllt} that
    \eag{\label{eq:SQMTermsQandM}
        SQ_M &= \Det(M)SM^{\rm{T}}QM = M^{-1} S Q M,
    }
    implying
    \eag{
    \posGen{t_M}&= M^{-1} \posGen{t} M
    }
    \Cref{eq:zaunerGenMTransform} and \eqref{eq:AmatrixMTransform} follow from this and the fact that 
    \eag{
    \zaunerGen{t}&=\posGen{t}^{n}, & \zaunerGen{t_M}&=\posGen{t_M}^{n},
    \\
    A_t&=\posGen{t}^{n(2m+1)} , & A_{t_M}&=\posGen{t_M}^{n(2m+1)},
    }
    where $n = j/j_{\rm{min}}(f)$. 
    If $\un_f = \vn_f$ then $\stabQGen{t} = \posGen{t}$, $L_{t_M} = \posGen{t_M}$, from which \eqref{eq:LmatrixMTransform} follows. 
    Otherwise it follows from Theorems~\ref{thm:negunitcondA} and~\ref{thm:negunitcondb} that $j_{\rm{min}}(f)$ is odd, $d_{j_{\rm{min}}(f)}-3$ is a perfect square, $f_{j_{\rm{min}}(f)}$ is divisible by $f\sqrt{d_{j_{\rm{min}}(f)}-3}$, and
    \eag{
    \stabQGen{t} &= \canrep_Q(\un^{j_{\rm{min}}(f)}) = 
    \frac{\sqrt{d_{j_{\rm{min}}(f)}-3}}{2} I+ \frac{f_{j_{\rm{min}}(f)}}{f\sqrt{d_{j_{\rm{min}}(f)}-3}}SQ,
    \\
    L_{t_M} &= \canrep_Q(\un^{j_{\rm{min}}(f)}) = 
    \frac{\sqrt{d_{j_{\rm{min}}(f)}-3}}{2} I+ \frac{f_{j_{\rm{min}}(f)}}{f\sqrt{d_{j_{\rm{min}}(f)}-3}}SQ_M.
    }
    \Cref{eq:LmatrixMTransform} is then a consequence of this together with \eqref{eq:SQMTermsQandM}.
    
    Finally, it follows from \Cref{dfn:GhostOverlaps} and \Cref{lm:lactqrt} that 
    \eag{
     \qrt_{t_M} &= \qrt_{Q_M,+} = M^{-1}\qrt_{Q,+} = M^{-1}\qrt_t.
    }
\end{proof}

\begin{thm}\label{lem:MTransformOfSFmodular}
    Let $t=(d,r,Q)$ be an admissible tuple,  and let $M\in \GLtwo{\mbb{Z}}$.  Then  
    \eag{
\sfc{d^{-1}\mbf{p}}{B_{t_M}}{\qrt_{t_M}} 
&=
\begin{cases}
    \sfc{d^{-1}l M\mbf{p}}{B_t}{\qrt_t} \qquad & \Det M=1,
    \\
    \left(\sfc{d^{-1}l M\mbf{p}}{B_t}{\qrt_t} \right)^{*} \qquad & \Det M=-1,
\end{cases}
}
   where either $\mbf{p}\in \mbb{Z}^2\setminus d\mbb{Z}^2$ or $\mbf{p} = \zero$, where either $B_t = A_t$, $B_{t_M} = A_{t_M}$ or $B_t=A^{-1}_t$, $B_{t_M} = A^{-1}_{t_M}$, and where $l = \sgn\left(j_{M^{-1}}(\qrt_t)\right)$.
\end{thm}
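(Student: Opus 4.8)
\textbf{Proof plan for \Cref{lem:MTransformOfSFmodular}.}
The plan is to reduce everything to the cocycle property of the Shintani--Faddeev modular cocycle (\eqref{eq:shindf} and the cocycle condition that follows it, together with the transformation behavior of $\shin$ under $\GLtwo{\mbb{Z}}$ recorded in \cite{Kopp2020d}). The essential input is the transformation law $\sfc{\r}{A}{\tau} \mapsto \sfc{M'\r}{MAM^{-1}}{M\cdot\tau}$ type identity, controlled by the sign $\sgn(j_M(\tau))$; this is exactly \cite[Thm.~4.46]{Kopp2020d} in the determinant $+1$ case, and for determinant $-1$ there is a complex-conjugation version (consistent with \eqref{eq:coboundary}, which shows $\shin^\r_M(\tau)$ is a ratio of $\varpi$-values, and with the fact that $\varpi(z,\tau)^\ast = \varpi(-\bar z, -\bar \tau)$). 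First I would set up the dictionary: by \Cref{thm:AtmrhotmExpressions} we already know $B_{t_M} = M^{-1}B_t M$ for $B_t \in \{A_t, A_t^{-1}\}$ and $\qrt_{t_M} = M^{-1}\cdot\qrt_t$, so the left-hand side $\sfc{d^{-1}\mbf p}{M^{-1}B_t M}{M^{-1}\cdot\qrt_t}$ is manifestly of the form to which the $\GLtwo{\mbb{Z}}$-transformation law applies, with the conjugating matrix being $M^{-1}$.

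The key steps, in order, would be: (1) Record the precise transformation identity from \cite{Kopp2020d} for $\sfc{\r}{N^{-1}AN}{N^{-1}\cdot\tau}$ in terms of $\sfc{\ell N\r}{A}{\tau}$ with $\ell = \sgn(j_{N^{-1}}(\tau))$ or $\ell=\sgn(j_N(N^{-1}\cdot\tau))$ (these agree up to the relation in \Cref{lm:jMproperties}); for $\det N = -1$ the right side acquires a complex conjugate. Here one takes $N = M^{-1}$, so $N^{-1}\cdot\tau = M\cdot\tau$ evaluated at $\tau = \qrt_t$... wait---one takes $\tau$ to be the fixed point so that $N^{-1}\cdot\qrt_{t_M} = M\cdot(M^{-1}\cdot\qrt_t) = \qrt_t$, and the conjugating matrix sends the characteristic $d^{-1}\mbf p$ to $\ell\, M\, d^{-1}\mbf p = d^{-1}\ell M\mbf p$. (2) Verify the hypotheses of that transformation identity: we need $d^{-1}\mbf p \in \Q^2$ (immediate), we need $B_{t_M} \in \Gamma_{d^{-1}\mbf p}$ and $B_t \in \Gamma_{d^{-1}\ell M\mbf p}$ (both follow because $A_t \in \mcl{S}_d(Q) \subseteq \Gamma(d)$ by \Cref{tm:symgp}, and $\Gamma(d) \subseteq \Gamma_\r$ for $\r \in \frac1d\Z^2$), and we need $\qrt_t \in \DD_{A_t}\cap\DD_{A_t^{-1}}$ together with $\qrt_{t_M}$ in the corresponding domains---this is precisely \Cref{thm:ghostWellDefinedCondition} applied to $t$ and to $t_M$. (3) Confirm that $d^{-1}\ell M\mbf p$ reduces modulo $\Z^2$ to $d^{-1}$ times an integer vector, so that by \Cref{lm:shinperiodicity} the dependence is really only on $\ell M\mbf p \bmod d$, matching the statement; handle the two allowed cases $\mbf p \in \Z^2\setminus d\Z^2$ and $\mbf p = \zero$ uniformly (the $\mbf p = \zero$ case also follows directly from \Cref{lm:shinatzero}, giving a useful cross-check). (4) Track the sign $\ell = \sgn(j_{M^{-1}}(\qrt_t))$ carefully and reconcile it with whatever normalization of the sign appears in the cited transformation law, using \Cref{lem:fixedinda} and \Cref{lm:lactqrt} to control signs of $j$-factors at fixed points.

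The main obstacle I anticipate is \emph{bookkeeping of the sign $\ell$ and of the complex conjugation in the $\det M = -1$ case}, rather than any conceptual difficulty. The transformation identity in \cite{Kopp2020d} is stated for $\SLtwo{\mbb{Z}}$ (determinant $+1$), and extending it to $\GLtwo{\mbb{Z}}$ requires factoring an orientation-reversing $M$ as $M = M_0 J$ (or $JM_0$) with $M_0 \in \SLtwo{\mbb{Z}}$ and $J = \smt{1&0\\0&-1}$, then using $\DD_{JM} = \DD_M$ from \Cref{lm:domainproperties} together with the fact that $z \mapsto \shin$ restricted to the lower half-plane is the complex conjugate of its restriction to the upper half-plane (the remark after \eqref{eq:coboundary}). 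One must check that $J$ acts on the characteristic by $\r \mapsto J\r$ and on $\tau$ by $\tau \mapsto -\bar\tau$ (i.e., reflection), and that the fixed point $\qrt_t \in \R$ is sent appropriately; since $\qrt_t$ is real, the meromorphic continuation to $\DD_{A_t}$ is what makes the conjugation statement meaningful there, so I would lean on the continuation results of \Cref{sec:SFJCocycleAppendix} rather than the upper-half-plane formula \eqref{eq:sfjfrm}. Once the $\det M = -1$ case is pinned down, the $\det M = +1$ case is a direct citation of \cite[Thm.~4.46]{Kopp2020d} with the substitutions from \Cref{thm:AtmrhotmExpressions}.
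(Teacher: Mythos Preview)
Your plan is sound, but you should know that the paper does not prove this result at all: its entire proof is the one-line citation ``Proved in \cite[Thm.~4.37]{Kopp2020d}.'' So there is nothing to compare your argument against except the external reference itself.

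Your sketch is essentially a reconstruction of how that external result would be proved, and the ingredients you list are the right ones: the dictionary $B_{t_M}=M^{-1}B_tM$, $\qrt_{t_M}=M^{-1}\cdot\qrt_t$ from \Cref{thm:AtmrhotmExpressions}; the $\GLtwo{\mbb{Z}}$-equivariance of $\shin$ from \cite{Kopp2020d}; the domain checks via \Cref{thm:ghostWellDefinedCondition}; and handling $\det M=-1$ by factoring through $J$ and using the conjugation behavior on the lower half-plane discussed after \eqref{eq:coboundary} and in \Cref{sec:SFJCocycleAppendix}. One small caution: you cite \cite[Thm.~4.46]{Kopp2020d}, but in this paper that theorem is invoked (in the proof of \Cref{thm:field0}) for the Hecke-type \emph{product} identity $\sfc{\r}{BCB^{-1}}{B\cdot\alpha}=\prod_{\mbf s}\sfc{\mbf s}{C}{\alpha}$ with $\det B>1$, not for the unimodular conjugation law you want. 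The paper's own citation is to \cite[Thm.~4.37]{Kopp2020d}, which is presumably the precise transformation identity; double-check the numbering so you are invoking the right statement.
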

\begin{proof}
Proved in \cite[Thm.~4.37]{Kopp2020d}.

\end{proof}

\Cref{thm:MTransformNormalizedGhostOverlap} tells us how the \candidateNormalizedGhostOverlapsText{} transform under the action of an element of $\GLtwo{\mbb{Z}}$:

\begin{thm}\label{thm:MTransformNormalizedGhostOverlap}
     Let $t=(d,r,Q)\sim(K,j,m,Q)$ be an admissible tuple,  and let $M\in \GLtwo{\mbb{Z}}$. Then
     \eag{
      \normalizedGhostOverlapC{t_M}{\mbf{p}} &= \normalizedGhostOverlapC{t}{l M\mbf{p}}
     }
     for all $\mbf{p}\notin d\mbb{Z}^2$, where  $l = \sgn\left(j_{M^{-1}}(\qrt_t)\right)$
\end{thm}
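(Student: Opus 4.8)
The plan is to unwind the definition of $\normalizedGhostOverlapC{t}{\mbf{p}}$ from \Cref{dfn:GhostOverlaps} and track how each factor transforms under $M \in \GLtwo{\mbb{Z}}$, using the transformation laws already established in \Cref{thm:AtmrhotmExpressions} and \Cref{lem:MTransformOfSFmodular}. Recall that $\normalizedGhostOverlapC{t}{\mbf{p}} = \SFPhase{t}{\mbf{p}} \, \sfc{d^{-1}\mbf{p}}{A_t}{\qrt_t}$, so I need to understand the $M$-transform of both the \SFKShort{} phase $\SFPhase{t}{\mbf{p}}$ and the cocycle value $\sfc{d^{-1}\mbf{p}}{A_t}{\qrt_t}$ separately, then combine. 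For the cocycle value, \Cref{lem:MTransformOfSFmodular} (with $B_t = A_t$) directly gives $\sfc{d^{-1}\mbf{p}}{A_{t_M}}{\qrt_{t_M}} = \sfc{d^{-1} l M\mbf{p}}{A_t}{\qrt_t}$ when $\Det M = 1$, and its complex conjugate when $\Det M = -1$, where $l = \sgn(j_{M^{-1}}(\qrt_t))$. Since \Cref{thm:nupnumpeq1} tells us $\normalizedGhostOverlapC{t}{\mbf{p}}$ is real for $\mbf{p} \not\equiv \zero$, the complex conjugation in the $\Det M = -1$ case will be harmless provided I can likewise show the phase transforms compatibly (i.e., the phase is also real after multiplication, or the conjugation cancels) — this is the only place the two determinant cases need separate bookkeeping.

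Next I would handle the \SFKShort{} phase. By \Cref{dfn:SFKPhase}, $\SFPhase{t}{\mbf{p}} = (-1)^{s_d(\mbf{p})} e^{-\frac{\pi i}{12}\rade(A_t)} \rtu_d^{-\frac{f_{jm}}{f} Q(\mbf{p})}$. The middle factor $e^{-\frac{\pi i}{12}\rade(A_t)}$ transforms via \eqref{eq:meyjlj} of \Cref{lem:RademacherProperties}: since $A_{t_M} = M^{-1} A_t M$ by \eqref{eq:AmatrixMTransform}, we get $\rade(A_{t_M}) = \Det(M)\,\rade(A_t)$, so this factor is either unchanged ($\Det M = 1$) or replaced by its complex conjugate ($\Det M = -1$) — matching exactly the behavior of the cocycle factor. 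For the quadratic character factor, I use $Q_M(\mbf{p}) = (\Det M) \mbf{p}^{\rm T} M^{\rm T} Q M \mbf{p} = (\Det M)\, Q(M\mbf{p})$ from \eqref{eq:afrmtrans}, and I need to reconcile this with the fact that the conductor of $Q_M$ equals that of $Q$ (so $f_{jm}/f$ is the same). The sign $(-1)^{s_d(\mbf{p})}$ requires checking that $s_d(l M \mbf{p}) \equiv s_d(\mbf{p})$ modulo $2$ together with whatever correction comes from the $\rtu_d$-exponent shift; in the odd-$d$ case $s_d$ is a constant mod $2$ so this is trivial, and in the even-$d$ case $lM\mbf{p} \equiv M\mbf{p} \Mod 2$ and one must verify that $M\mbf{p}$ and $\mbf{p}$ give the same parity structure — this is a short mod-$2$ computation but is the kind of bookkeeping where sign errors creep in.

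Putting the pieces together, for $\Det M = +1$ I would get $\SFPhase{t_M}{\mbf{p}} \sfc{d^{-1}\mbf{p}}{A_{t_M}}{\qrt_{t_M}} = \SFPhase{t}{lM\mbf{p}} \sfc{d^{-1} lM\mbf{p}}{A_t}{\qrt_t} = \normalizedGhostOverlapC{t}{lM\mbf{p}}$ once I check that $\rtu_d^{-\frac{f_{jm}}{f}Q_M(\mbf{p})} = \rtu_d^{-\frac{f_{jm}}{f}Q(M\mbf{p})}$ agrees with $\rtu_d^{-\frac{f_{jm}}{f}Q(lM\mbf{p})}$, which follows because $Q(l M \mbf{p}) = l^2 Q(M\mbf{p}) = Q(M\mbf{p})$ since $l = \pm 1$. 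For $\Det M = -1$, both the Rademacher exponential factor and the cocycle value get conjugated, and $Q_M(\mbf{p}) = -Q(M\mbf{p})$ introduces a sign flip in the $\rtu_d$-exponent — but $\rtu_d^{k}$ conjugates to $\rtu_d^{-k}$, so conjugating the whole product $\SFPhase{t}{lM\mbf{p}}\sfc{d^{-1}lM\mbf{p}}{A_t}{\qrt_t}$ (which is the real number $\normalizedGhostOverlapC{t}{lM\mbf{p}}$, hence fixed by conjugation) exactly produces the right-hand side, once the sign $(-1)^{s_d(\cdot)}$ is confirmed to be conjugation-invariant (it is, being $\pm 1$). The main obstacle I anticipate is not any single hard step but rather the careful mod-$2$ and mod-$\db$ accounting needed to show the sign factor $(-1)^{s_d(\mbf{p})}$ and the exponent of $\rtu_d$ line up correctly across the two determinant cases, particularly in even dimension where $s_d(\mbf{p}) = d + (1+d)(1+p_1)(1+p_2)$ genuinely depends on $\mbf{p} \bmod 2$; everything else is a direct substitution of results cited above. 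I would conclude by noting the special case $\mbf{p} = \zero$ is covered by \Cref{lem:MTransformOfSFmodular} as well but is not needed here since the claim is only for $\mbf{p} \notin d\mbb{Z}^2$.
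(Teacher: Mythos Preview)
Your proposal is correct and follows essentially the same approach as the paper: decompose $\normalizedGhostOverlapC{t}{\mbf{p}}$ into its \SFKShort{} phase and cocycle factors, transform each separately via \eqref{eq:meyjlj}, \eqref{eq:afrmtrans}, and \Cref{lem:MTransformOfSFmodular}, then use the reality of $\normalizedGhostOverlapC{t}{lM\mbf{p}}$ from \Cref{thm:nupnumpeq1} to dispose of the complex conjugation in the $\Det M = -1$ case. The one place the paper is more explicit is the mod-$2$ check that $s_d(lM\mbf{p}) \equiv s_d(\mbf{p})$ for even $d$: writing $M = \smt{\ma & \mb \\ \mc & \md}$, the key observation is that $\ma,\mc$ are coprime and $\mb,\md$ are coprime (since $\Det M = \pm 1$), so $(1+\ma)(1+\mc)$ and $(1+\mb)(1+\md)$ are both even, which makes the expansion of $(1+(M\mbf{p})_1)(1+(M\mbf{p})_2)$ reduce to $(1+p_1)(1+p_2)$ modulo $2$.
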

\begin{proof}
   Let $f$ be the  conductor of  $Q$.  Then $f$ is also the conductor of $Q_M$.  It follows from \Cref{dfn:SFKPhase} and \Cref{lem:RademacherProperties} that
    \eag{
     \SFPhase{t_M}{\mbf{p}}&=(-1)^{s_d(\mbf{p})}e^{-\frac{\pi i}{12}\rade(A_{t_M})}\rtu_d^{-\frac{f_{jm}}{f}Q_M(\mbf{p})}
     \nn
     &= (-1)^{s_d(\mbf{p})}e^{-\frac{\pi i}{12}(\Det M)\rade(A_{t})}\rtu_d^{-\frac{f_{jm}}{f}(\Det M)Q(M\mbf{p})}
    }
    Let $M=\smt{\ma & \mb \\ \mc & \md}$.  Then 
    \eag{
    &s_d(l M\mbf{p})
    \nn
    &\hspace{0.2 cm} = d + (1+d) (1+l(\ma p_1 + \mb p_2))(1+l(\mc p_1 + \md p_2)) &&
    \nn
    & \hspace{0.2 cm}  \equiv d + (1+d)(1+(\ma + \mc + \ma \mc) p_1 + p_1p_2 +(\mb + \md +  \mb\md) p_2) & &\Mod 2
    \nn
    & \hspace{0.2 cm}  \equiv d+(1+d)\left(((1+p_1)(1+p_2) + (1+\ma)(1+\mc)p_1 +(1+\mb)(1+\md)p_2\right) &&  \Mod 2
    }
    The fact that $\ma$ is coprime to $\mc$ and $\mb$ is coprime to $\md$ means $(1+\ma)(1+\mc)$ and $(1+\mb)(1+\md)$ are both even. So
    \eag{
    s_d(l M\mbf{p}) &\equiv s_d(\mbf{p}) \,\Mod{2}.
    }
    Also
    \eag{
    Q(l M\mbf{p}) &= Q(M\mbf{p}).
    }
    Hence
    \eag{
    \SFPhase{t_M}{\mbf{p}}
    &=
    \begin{cases}
        \SFPhase{t}{l M\mbf{p}} \qquad & \Det M = +1,
        \\
        \left( \SFPhase{t}{l M\mbf{p}} \right)^{*} & \Det M = -1.
    \end{cases}
    }
It follows from \Cref{lem:MTransformOfSFmodular} that 
\eag{
\sfc{d^{-1}\mbf{p}}{A_{t_M}}{\qrt_{t_M}}&=
\begin{cases}
    \sfc{d^{-1}l M\mbf{p}}{A_t}{\qrt_t} \qquad &\Det M = +1,
    \\
    \left(\sfc{d^{-1}l M\mbf{p}}{A_t}{\qrt_t}\right)^{*} \qquad &\Det M = -1.
\end{cases}
}
Combining these results gives
\eag{
\normalizedGhostOverlapC{t_M}{\mbf{p}}
&= \SFPhase{t_M}{\mbf{p}}\sfc{d^{-1}\mbf{p}}{A_{t_M}}{\qrt_{t_M}}
= \begin{cases}
    \normalizedGhostOverlapC{t}{l M \mbf{p}} \qquad & \Det M = +1,
    \\
    \left(\normalizedGhostOverlapC{t}{l M \mbf{p}}\right)^{*}\qquad & \Det M = -1.
\end{cases}
}
Taking account of the fact that $\normalizedGhostOverlapC{t}{l M \mbf{p}}$ is real we conclude
\eag{
\normalizedGhostOverlapC{t_M}{\mbf{p}} &= \normalizedGhostOverlapC{t}{l M \mbf{p}}
}
irrespective of the sign of $\Det M$.
\end{proof}
We are now able to prove the first of our main results:
\begin{proof}[Proof of \Cref{thm:MtransformedTuples}]
It follows from \Cref{thm:MTransformNormalizedGhostOverlap} that the elements of the sets $\{\ghostOverlapC{t}{\mbf{p}}\colon 0\le p_1, p_2 < d, \, \mbf{p}\neq \zero\}$ and $\{\ghostOverlapC{t_M}{\mbf{p}}\colon 0\le p_1, p_2 < d, \, \mbf{p}\neq \zero\}$ are equal up to a sign, implying $E_{t} = E_{t_M}$.

Turning to the second statement, by assumption
\eag{
    \Det(G) r(2\shift+d_j-1+d) &\equiv 1 \Mod{\db}
}
for some for some $\shift\in \mcl{Z}_t$.  To show $\shift\in \mcl{Z}_{t_M}$, let $\mbf{p}\in \mbb{Z}^2$ be arbitrary, and let $\mcl{I}_{\mbf{p}}$ be any complete set of coset representatives for $\mbb{Z}^2/d\mbb{Z}^2$ containing  $\zero$ and $\mbf{p}$.   Suppose, first of all, that $\det M = +1$.  Then it follows from \Cref{thm:AtmrhotmExpressions} and \Cref{lem:MTransformOfSFmodular} that
\eag{
 &       \sum_{\mbf{q} \in \mcl{I}_{\mbf{p}}} \rtus_d^{ r\la \mbf{p},(\shift I + \zaunerGen{t_M})\mbf{q}\ra} \sfc{d^{-1}\q}{A\vpu{-1}_{t_M}}{\qrt_{t_M} }\sfc{d^{-1}(\mbf{q}-\mbf{p})}{A^{-1}_{t_M}}{\qrt_{t_M}} 
\nn
&\hspace{2 cm} = 
                \sum_{\mbf{q} \in \mcl{I}_{\mbf{p}}} \rtus_d^{ r\la \mbf{p},M^{-1}(\shift I + \zaunerGen{t})M\mbf{q}\ra} \sfc{d^{-1}lM\q}{A\vpu{-1}_{t}}{\qrt_{t} }\sfc{d^{-1}lM(\mbf{q}-\mbf{p})}{A^{-1}_{t}}{\qrt_{t}}
\nn
&\hspace{2 cm} = 
                \sum_{\mbf{q} \in \mcl{I}'_{lM\mbf{p}}} \rtus_d^{ r\la M\mbf{p},(\shift I + \zaunerGen{t})\mbf{q}\ra} \sfc{d^{-1}\q}{A\vpu{-1}_{t}}{\qrt_{t} }\sfc{d^{-1}(\mbf{q}-lM\mbf{p})}{A^{-1}_{t}}{\qrt_{t}}
\nn
&\hspace{2 cm} = d^2\delta^{(d)}_{\mbf{p},\zero}
}
where $l = \sgn\left(j_{M^{-1}}(\qrt_t)\right)$ and $\mcl{I}'_{lM\mbf{p}} = lM\mcl{I}_{\mbf{p}}$.  It follows that $\shift\in \mcl{Z}_{t_M}$ if $\Det M = +1$. The fact that this is also true of $\Det M = -1$ is proved similarly.  It follows from the first statement that $\hat\sicField_{t_M} = \hat\sicField_{t}$.  So $(d,r,Q_M,G,g)$ is a fiducial datum.
\end{proof}
We next  prove the following analogue of \Cref{thm:MTransformedFiducials}, applying to ghost fiducials.
\begin{lem} \label{thm:MTransformedGhostFiducials}
    Let $s=(t,G,g)$ be a fiducial datum containing the admissible tuple $t=(d,r,Q)$, and let $M$ be any element of $\GLtwo{\mbb{Z}}$.  Let
    \eag{
     F &= \sgn(j_{M^{-1}}(\qrt_t) G^{-1}[M]_{\db} G
    }
    where $[M]_{\db}$ is the image of  of $M$  under the canonical homomorphism of $\GLtwo{\mbb{Z}}$ into $\ESLtwo{\mbb{Z}/\db\mbb{Z}}$.  Then $F\in \ESLtwo{\mbb{Z}/\db\mbb{Z}}$ and 
        \eag{
         \tilde{\Pi}\vpu{\dagger}_{s_M} &= U^{\dagger}_F\tilde{\Pi}\vpu{\dagger}_s U\vpu{\dagger}_F.
        }
\end{lem}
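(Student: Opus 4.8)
The plan is to prove both assertions by direct computation, the two non-routine inputs being the transformation law for candidate ghost overlaps (\Cref{thm:MTransformNormalizedGhostOverlap}) and the reality of those overlaps (\Cref{thm:nupnumpeq1}). First I would check $F\in\ESLtwo{\mbb{Z}/\db\mbb{Z}}$. Since $\Det(G)$ is coprime to $\db$ by \eqref{eq:TwistCondition}, the reduction of $G$ modulo $\db$ is invertible, as is $[M]_{\db}$ because $\Det(M)=\pm1$; hence $F=\sgn(j_{M^{-1}}(\qrt_t))\,G^{-1}[M]_{\db}G\in\GLtwo{\mbb{Z}/\db\mbb{Z}}$, and $\Det(F)=\sgn(j_{M^{-1}}(\qrt_t))^2\,\Det(M)=\Det(M)=\pm1$, so $F$ is (anti-)symplectic.

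For the conjugation identity, set $l=\sgn(j_{M^{-1}}(\qrt_t))$ and expand $\tilde{\Pi}_s=\tfrac1d\sum_{\mbf{p}}\ghostOverlapC{t}{G\mbf{p}}D_{\mbf{p}}$ over a transversal of $\mbb{Z}^2/d\mbb{Z}^2$, as in \eqref{eq:ghostProjectorDef}. The map $X\mapsto U_F^{\dagger}XU_F$ is a ring homomorphism even when $F$ is anti-symplectic and $U_F$ is anti-unitary, since $U_F^{\dagger}U_F=I$; and it carries $D_{\mbf{p}}$ to $D_{F^{-1}\mbf{p}}$ by the defining relation of (anti-)symplectic (anti-)unitaries (\eqref{eq:symplecticUnitary} together with \Cref{ssc:ecdgp}). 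When $U_F$ is anti-unitary this homomorphism is conjugate-linear, so it multiplies each coefficient $\ghostOverlapC{t}{G\mbf{p}}$ by its complex conjugate; but those coefficients are real by \Cref{thm:nupnumpeq1}, so in all cases $U_F^{\dagger}\tilde{\Pi}_sU_F=\tfrac1d\sum_{\mbf{p}}\ghostOverlapC{t}{G\mbf{p}}D_{F^{-1}\mbf{p}}$. Reindexing $\mbf{p}\mapsto F\mbf{p}$, which permutes transversals of $\mbb{Z}^2/d\mbb{Z}^2$ because $F$ is invertible modulo $\db$ (hence modulo $d$), and using that the displacement operators are $\db$-periodic (from \eqref{eq:Dpperiodicity}), one obtains $U_F^{\dagger}\tilde{\Pi}_sU_F=\tfrac1d\sum_{\mbf{p}}\ghostOverlapC{t}{GF\mbf{p}}D_{\mbf{p}}$.

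It then remains to match this against $\tilde{\Pi}_{s_M}=\tfrac1d\sum_{\mbf{p}}\ghostOverlapC{t_M}{G\mbf{p}}D_{\mbf{p}}$ by showing $\ghostOverlapC{t_M}{G\mbf{p}}=\ghostOverlapC{t}{GF\mbf{p}}$ for each $\mbf{p}$. From the definition of $F$ one has $GF\equiv l\,[M]_{\db}\,G\equiv l\,M\,G\pmod{\db}$, so $GF\mbf{p}\equiv lMG\mbf{p}\pmod{\db}$. The quantity $\ghostOverlapC{t}{\cdot}$ depends on its index only modulo $\db$: the $\SFKShort$ phase $\SFPhase{t}{\cdot}$ does so because $\frac{f_{jm}}{f}\in\mbb{Z}$ (as $f\mid f_j\mid f_{jm}$) and $\rtu_d$ is a $\db$-th root of unity, while $\sfc{d^{-1}\cdot}{A_t}{\qrt_t}$ does so by \Cref{lm:shinperiodicity}; hence $\ghostOverlapC{t}{GF\mbf{p}}=\ghostOverlapC{t}{lMG\mbf{p}}$. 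On the other hand \Cref{thm:MTransformNormalizedGhostOverlap} gives $\normalizedGhostOverlapC{t_M}{\mbf{q}}=\normalizedGhostOverlapC{t}{lM\mbf{q}}$ for $\mbf{q}\notin d\mbb{Z}^2$; since $\ghostOverlapC{}{}$ is obtained from $\normalizedGhostOverlapC{}{}$ by the same scaling factor in both cases (and equals $r$ on $d\mbb{Z}^2$, a set preserved by $lM$ because $M$ is integral and invertible modulo $d$), this upgrades to $\ghostOverlapC{t_M}{\mbf{q}}=\ghostOverlapC{t}{lM\mbf{q}}$ for all $\mbf{q}$; taking $\mbf{q}=G\mbf{p}$ gives $\ghostOverlapC{t_M}{G\mbf{p}}=\ghostOverlapC{t}{lMG\mbf{p}}=\ghostOverlapC{t}{GF\mbf{p}}$. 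The two displacement expansions therefore coincide, so $\tilde{\Pi}_{s_M}=U_F^{\dagger}\tilde{\Pi}_sU_F$. (That $s_M=(d,r,Q_M,G,g)$ is itself a fiducial datum, so that $\tilde{\Pi}_{s_M}$ is defined, is \Cref{thm:MtransformedTuples}.)

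The step I expect to be the main obstacle is the modulus bookkeeping: keeping straight which quantities are periodic modulo $d$ and which modulo $\db$, carrying the sign $l=\sgn(j_{M^{-1}}(\qrt_t))$ consistently through the reindexing, and—above all—making the anti-unitary case airtight, i.e.\ verifying that the complex conjugation introduced by conjugation with an anti-unitary $U_F$ is exactly neutralized by the reality of the ghost overlaps, and that the relation $U_FD_{\mbf{p}}U_F^{\dagger}=D_{F\mbf{p}}$ and the ring-homomorphism property genuinely persist for anti-symplectic $F$. Everything else is routine linear algebra with displacement operators.
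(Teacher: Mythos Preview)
Your proposal is correct and takes essentially the same approach as the paper: both proofs rely on \Cref{thm:MTransformNormalizedGhostOverlap} for the overlap transformation law, reindex the displacement-operator expansion, and identify the result using $U_F D_{\mbf{p}}U_F^{\dagger}=D_{F\mbf{p}}$. The only differences are cosmetic: the paper computes in the opposite direction (starting from $\tilde{\Pi}_{s_M}$ and pulling out $U_F^{\dagger}\,\cdot\,U_F$ at the end), and is terser about the anti-unitary case—you are right that the reality of the ghost overlaps (\Cref{thm:nupnumpeq1}) is what makes the last step work when $\Det(M)=-1$, even though the paper does not cite it explicitly.
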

\begin{proof}
  The fact that $F\in \ESLtwo{\mbb{Z}/\bar{d}\mbb{Z}}$ is immediate. It follows from \Cref{dfn:CandidateGhostAndSICFiducials}, \Cref{cor:AlternativeDefinitionOfCandidateGhostOverlaps} and \Cref{thm:MTransformNormalizedGhostOverlap} that
  \eag{
  \tilde{\Pi}_{s_M} &= \frac{r}{d}I +\frac{1}{d\sqrt{d_j+1}}\sum_{\mbf{p}\notin d\mbb{Z}^2} \normalizedGhostOverlapC{t_M}{G\mbf{p}}D_{\mbf{p}}
\nn
&= \frac{r}{d}I +\frac{1}{d\sqrt{d_j+1}}\sum_{\mbf{p}\notin d\mbb{Z}^2} \normalizedGhostOverlapC{t}{\ell M G\mbf{p}}D_{\mbf{p}}
\nn
&= \frac{r}{d}I +\frac{1}{d\sqrt{d_j+1}}\sum_{\mbf{p}\notin d\mbb{Z}^2} \normalizedGhostOverlapC{t}{\mbf{p}}D_{\ell G^{-1} [M]_{\db}^{-1}\mbf{p}}
\nn
&= \frac{r}{d}I +\frac{1}{d\sqrt{d_j+1}}\sum_{\mbf{p}\notin d\mbb{Z}^2} \normalizedGhostOverlapC{t}{\mbf{p}}U^{\dagger}_FD\vpu{\dagger}_{ G^{-1}\mbf{p}}U\vpu{\dagger}_F
\nn
&= U^{\dagger}_F\tilde{\Pi}_sU\vpu{\dagger}_F,
  }
where $\ell=\sgn(j_{M^{-1}}(\qrt_t)$.
\end{proof}
Before applying this result to the proof of the main theorem of this subsection, we need to establish two technical results.
\begin{lem}\label{lm:MReducedModdbarsuchthatjMxpositive}
    Let $M$ be any element of $\GLtwo{\mbb{Z}}$, $x$ any irrational element of $\mbb{R}$ and $d$ any dimension greater than 1.  Then there exists a matrix $M'\in \GLtwo{\mbb{Z}}$ such that
    \begin{enumerate}
        \item $M' \equiv M \Mod{\db}$,
        \item $j_M(x)j_{M'}(x) < 0$.
    \end{enumerate}
\end{lem}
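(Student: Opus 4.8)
The plan is to exhibit $M'$ explicitly by multiplying $M$ on the left by a carefully chosen element of the principal congruence subgroup $\Gamma(\db)$, so that property (1) is automatic and property (2) can be arranged by a sign computation. Write $M = \smmattwo{\ma}{\mb}{\mc}{\md}$, so that $j_M(x) = \mc x + \md$. First I would observe that, since $x$ is irrational, $j_M(x) \neq 0$; we want to produce $M'$ with $j_{M'}(x)$ of the opposite sign. A natural candidate is $M' = NM$ for $N \in \Gamma(\db)$ of the form $N = \smmattwo{1}{0}{t\db}{1}$ with $t \in \Z$ to be chosen, since then $M' \equiv M \Mod{\db}$ trivially. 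Using $j_{NM}(x) = j_N(M\cdot x)\,j_M(x)$ from \Cref{lm:jMproperties} (or directly computing the bottom row of $NM$), one finds $j_{M'}(x) = (t\db\,(M\cdot x) + 1)\, j_M(x)$ when $\mc x + \md \neq 0$; more carefully, the bottom row of $NM$ is $(\mc + t\db\,\ma,\ \md + t\db\,\mb)$, so $j_{M'}(x) = (\mc + t\db\,\ma)x + (\md + t\db\,\mb) = j_M(x) + t\db\,(\ma x + \mb) = j_M(x) + t\db\, j_{M^{\mathrm{T}}\!,\text{top-ish}}$; cleaner is to write $j_{M'}(x) = j_M(x)\bigl(1 + t\db\,(M\cdot x)\bigr)$ using $M \cdot x = \frac{\ma x + \mb}{\mc x + \md}$, valid because $j_M(x) \neq 0$.

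The key step is then to choose $t$ so that $1 + t\db\,(M\cdot x)$ is negative. If $M\cdot x \neq 0$ (which holds unless $\mb + \ma x = 0$, impossible for irrational $x$ unless $\ma = \mb = 0$, which cannot happen in $\GLtwo{\Z}$), then $M \cdot x$ is a fixed nonzero real number, and we may pick $t$ to be a large integer of sign $-\operatorname{sgn}(M\cdot x)$, so that $|t\db\,(M\cdot x)| > 1$ and $1 + t\db\,(M\cdot x) < 0$. With this choice $j_{M'}(x)$ and $j_M(x)$ have opposite signs, giving $j_M(x)\,j_{M'}(x) < 0$, which is property (2). Property (1) holds since $N \equiv I \Mod{\db}$.

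The main obstacle, such as it is, is purely bookkeeping: handling the degenerate possibilities for $M \cdot x$ and making sure the bottom-row computation of $NM$ is recorded correctly. One should note that $M\cdot x$ is irrational (hence nonzero) whenever $x$ is irrational, because $M$ acts on $\R \cup \{\infty\}$ by a fractional linear transformation with integer coefficients and determinant $\pm 1$, so it maps $\Q \cup \{\infty\}$ bijectively to itself and therefore maps irrationals to irrationals; in particular $M\cdot x \notin \Q$, so $M \cdot x \neq 0$ and the argument above applies without exception. This takes care of all cases, so no separate degenerate case analysis is actually needed, and the proof reduces to the one-line choice of $t$ together with the identity $j_{NM}(x) = j_N(M\cdot x)\,j_M(x)$ from \Cref{lm:jMproperties}.
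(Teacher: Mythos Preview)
Your proof is correct and takes a somewhat different route from the paper's. Both arguments work by left-multiplying $M$ by an element of $\GL_2(\Z)$ congruent to $I$ modulo $\db$, but the choices differ. The paper uses the two fixed matrices
\[
F_{\pm 1} = \begin{pmatrix} -1-\db & \pm \db \\ \mp \db & -1+\db \end{pmatrix} \equiv -I \Mod{\db},
\]
observes that $j_{F_{+}}(M\cdot x) + j_{F_{-}}(M\cdot x) = 2(\db-1) > 0$, so at least one of $j_{F_{\pm}}(M\cdot x)$ is positive, and then sets $M' = -F_{\theta}M$; the cocycle relation \eqref{eq:jMMPrimeCocycle} together with $j_{-M}(x) = -j_M(x)$ gives the sign flip. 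Your approach instead uses the one-parameter family $N_t = \smmattwo{1}{0}{t\db}{1} \in \Gamma(\db)$ and chooses $t$ so that $1 + t\db\,(M\cdot x) < 0$, which is possible because $M\cdot x$ is irrational and hence nonzero. This is arguably more elementary, avoids the auxiliary sum-of-two-$j$'s trick, and makes the dependence on the irrationality hypothesis completely transparent. The paper's version, on the other hand, is slightly more explicit in that one needs to check only two candidate matrices rather than search for a large enough $t$, and it never actually needs to evaluate $M\cdot x$ or invoke its irrationality beyond $j_M(x)\neq 0$.
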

\begin{proof}
    The fact that $x\notin \mbb{Q}$ means $j_M(x) \neq 0$.  Define
    \eag{
      F_{\pm 1} &= 
      \bmt 
         -1 - \bar{d} & \pm \bar{d}
         \\
         \mp \bar{d} & -1 +\bar{d}
    \emt  \in \SLtwo{\mbb{Z}}.
    }
    We have 
    \eag{
      j_{F_{+}}(-Lx) + j_{F_{-}}(-Lx) &=2(\bar{d}-1) >0.
    }
    Consequently we can choose $\theta=\pm 1$ such that $j_{F_{\theta}}(-Lx)$ is positive.  Define $M' = -F_{\theta}L$.  Then $M' \equiv  M \Mod{\db}$.   Moreover, it follows from \Cref{lm:jMproperties} that
    \eag{
    j_{M'}(x) &= j_{F_{\theta}}(-Lx)j_{-L}(x)=-j_{F_{\theta}}(-Lx) j_L(x),
    }
    implying that
    $
    \sgn(j_{M'}(x)) = -\sgn(j_{M}(x)).
    $ 
\end{proof}
We next use this result to show that the map $\GLMorph_s$ is surjective: 
\begin{proof}[Proof of \Cref{thm:GLHomomorphismSurjective}]
\label{proof:GLHomomorphismSurjective}
Let $F \in \ESLtwo{\mbb{Z}/\db/\mbb{Z}}$ be arbitrary. The fact that the canonical homomorphism $\GLtwo{\mbb{Z}}\to \ESLtwo{\mbb{Z}/\db\mbb{Z}}$ is surjective (see, e.g., \cite[Exer.~1.2.2]{Diamond:2005} for the surjectivity of $\SLtwo{\mbb{Z}} \to \SLtwo{\Z/\db\Z}$, from which this follows easily by consideration of the image of $\smmattwo{-1}{0}{0}{1}$) means that there exists $M\in \GLtwo{\mbb{Z}}$ such that
\eag{
[M]_{\db} &= GH^{-1}_g F H\vpu{-1}_g G^{-1}
}
In view of \Cref{lm:MReducedModdbarsuchthatjMxpositive} we may assume, without loss of generality, that $j_{M^{-1}}(\qrt_t)$ is positive.  We then have $\GLMorph_s(M) = F$.
\end{proof}
We are now ready to prove the main result of this subsection.
\begin{proof}[Proof of \Cref{thm:MTransformedFiducials}.]
\label{proof:MTransformedFiducials}
Let $M\in \GLtwo{\mbb{Z}}$.  It follows from  \Cref{thm:MTransformedGhostFiducials} that 
\eag{
\tilde{\Pi}_{s_M} &= U^{\dagger}_{\tilde{F}}\tilde{\Pi}_sU\vpu{\dagger}_{\tilde{F}}
}
where
\eag{
\tilde{F} &= \sgn\left(j_{M^{-1}}(\qrt_t)\right) G^{-1}[M]_{\db}G.
}
Applying $g$ to both sides and using Definitions~\ref{dfn:CandidateGhostAndSICFiducials},~\ref{dfn:HgMatrixDefinition} and \Cref{thm:GalActOnClifford} we find
\eag{
\Pi_{s_M} &= U^{\dagger}_F \Pi\vpu{\dagger}_s U\vpu{\dagger}_F
\\
\intertext{where}
F &= H\vpu{-1}_g \tilde{F} H^{-1}_g =\GLMorph_s(M),
}
completing the proof.
\end{proof}

\subsection{Classification}
\label{ssc:Classification}
In this subsection we show that, subject to certain assumptions, $1$-SICs can be classified in terms of equivalence classes of quadratic forms. We will need the following definition.
\begin{defn}[equivalence classes of admissible tuples]\label{df:EquivalenceClassesTuples} For each admissible tuple $t=(d,r,Q)$ define
    \begin{enumerate}
        \item $[t]=[d,r,Q]$ to be set of all admissible tuples $(d,r,Q')$, where $Q'$ is equivalent to $Q$,
        \item $\dBr{t}=\dBr{d,r,Q}$ to be the set of all tuples $(d,r,Q')$, where $Q'$ has the same conductor as $Q$.
    \end{enumerate}
   We will sometimes write $\dBr{d,r,f}$ in place of $\dBr{d,r,Q}$, where $f$ is the conductor of $Q$.
\end{defn}
We will show that in the rank 1 case, subject to Conjectures~\ref{cnj:tci},~\ref{conj:RayClassField3} and~\ref{conj:msc}, and the three assumptions listed below, that the equivalence classes $[t]$ are in bijective correspondence with the set of all $\EC(d)$ orbits of $1$-SICs, and that the equivalence classes $\dBr{t}$ are in bijective correspondence with the set of all Galois multiplets of $1$-SICs.  In \Cref{ap:sicdata} we illustrate this statement by listing the classes $[t]$, $\dBr{t}$ along with salient details for the corresponding $1$-SICs for dimensions $4$--$100$.

\begin{assum}\label{assum:1}
    Let $\Pi$, $\Pi'$ be any pair of  $1$-SIC fiducials in dimension $d$.  Then 
    \eag{
    \Tr(\Pi D_{\mbf{p}}) &= \pm \Tr(\Pi' D_{\mbf{p}}) \qquad \forall \mbf{p}
    }
     if and only if $\Pi=\Pi'$.
\end{assum}   

\begin{assum}\label{assum:2}
    Let $s=(d,1,Q,G,g)$, $s'=(d,1,Q',G',g')$ be admissible data such that $Q$ and $Q'$ have different discriminants. 
    Then $\Pi_s$ and $\Pi_{s'}$ are $\EC(d)$-inequivalent.
\end{assum}

\begin{assum}\label{assum:3}
    In the rank 1 case, the only shifts are $0$ and $1$.
\end{assum}

These statements have a different status from the ones we label  ``conjectures'' in that they are only needed for the classification problem.  Moreover, even if one or more of them were to fail, it would not necessarily mean that $1$-SICs could not be classified using quadratic forms, only that the classification would be more complicated.

Assumption~\ref{assum:2} is worth singling out for special mention, in that it is shown in \cite[Thm.~8.2]{Kopp2020c} that equivalence classes $\dBr{d,1,f}$, $\dBr{d,1,f'}$ can give rise to the same field, even though $f\neq f'$. This happens, for instance, with the pairs $\dBr{47,1,1}$, $\dBr{47,1,2}$; $\dBr{67,1,1}$, $\dBr{67,1,2}$; and $\dBr{83,1,1}$, $\dBr{83,1,2}$.  Although there is no known instance, it is natural to wonder if there are cases where the corresponding SICs are $\EC(d)$-equivalent. 

We confine our analysis to the rank 1 case due to  uncertainties concerning the set of shifts when $r>1$, as discussed in \Cref{sbsc:ProofMainTheoremsFurtherRemarks}.  In the following it will accordingly be assumed, without comment, that $r=1$.  In accordance with assumption~3 it will also be assumed without comment that $\det G = \pm 1$ for every twist $G$.

It is immediate that $\Pi_s$, $\Pi_{s'}$ cannot be be equal unless the dimensions   are the same.  So the classification problem reduces to the question of what can be said of the $1$-SICs corresponding to two sets of fiducial data $s=(d,1,Q,G,g)$, $s'=(d,1,Q',G',g')$ when $(Q,G,g)$ and $(Q',G',g')$ are not assumed to be the same.  We begin with the following result.
\begin{thm}\label{thm:QGChangeInFiducialDatum}
    Let $s=(d,1,Q,G,g)$, $s'=(d,1,Q',G',g)$ be a pair of fiducial datums for which  $Q'\sim Q$.  If Assumption~3 is true there exists $F\in \ESLtwo{\mbb{Z}/\bar{d}\mbb{Z}}$ such that 
    \eag{
      \Pi_{s'} &= U\vpu{\dagger}_F \Pi_s U^{\dagger}_F
    }
\end{thm}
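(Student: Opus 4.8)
The plan is to reduce the statement to an application of \Cref{thm:MTransformedFiducials} together with the freedom, under Assumption~3, to modify the twist $G$ without changing the $\EC(d)$-equivalence class of the fiducial. First, since $Q' \sim Q$, there is by definition some $M \in \GLtwo{\mbb{Z}}$ such that $Q' = Q_M$, so $t' = (d,1,Q')$ equals $t_M$ where $t = (d,1,Q)$. By \Cref{thm:MtransformedTuples}, the datum $s_M = (d,1,Q_M,G,g)$ is again a fiducial datum, and by \Cref{thm:MTransformedFiducials} we have $\Pi_{s_M} = U^{\dagger}_{F_0}\Pi_s U^{\vphantom{\dagger}}_{F_0}$ with $F_0 = \GLMorph_s(M) \in \ESLtwo{\mbb{Z}/\db\mbb{Z}}$. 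This already handles the case $G' = G$: we may take $F = F_0^{\dagger}$ (equivalently, $F = F_0^{-1}$ in $\ESLtwo{\mbb{Z}/\db\mbb{Z}}$), noting $U_{F_0^{-1}} \dot{=} U_{F_0}^{\dagger}$.

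The remaining work is to show that changing the twist from $G$ to $G'$ (both having determinant $\pm 1$ by Assumption~3, since in the rank~$1$ case the only shifts are $0$ and $1$ and the corresponding twist determinants are $\mp 1$) also corresponds to conjugation by a fixed $\ECS(d)$ element. Concretely, I would compare $\Pi_{(d,1,Q',G,g)}$ with $\Pi_{(d,1,Q',G',g)}$ directly from the defining formula \eqref{eq:ghostProjectorDef}: both are built from the same candidate ghost overlaps $\ghostOverlapC{t'}{\cdot}$ but with $D_{\mbf{p}}$-index twisted by $G$ versus $G'$. Writing $\Pi_{(t',G',g)}$ as a sum over $\mbf{p}$ of $\ghostOverlapC{t'}{G'\mbf{p}}D_{\mbf{p}} = \ghostOverlapC{t'}{G(G^{-1}G')\mbf{p}}D_{\mbf{p}}$ and reindexing via $\mbf{p} \mapsto (G^{-1}G')^{-1}\mbf{p}$ (legitimate by \Cref{lem:GhostFiducialIndependenceOfTransversal}), one recognizes the right-hand side as $U^{\dagger}_{F_1}\tilde\Pi_{(t',G,g)}U^{\vphantom{\dagger}}_{F_1}$ conjugated appropriately, where $F_1 \in \ESLtwo{\mbb{Z}/\db\mbb{Z}}$ is (a sign correction of) $G^{-1}G'$ — here one uses \eqref{eq:symplecticUnitary} and, for the determinant $-1$ case, the antiunitary version \eqref{eq:antiSymplecticAntiUnitary}. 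Applying the Galois automorphism $g$ and \Cref{thm:GalActOnClifford} transfers this to the live fiducials: $\Pi_{(t',G',g)} = U^{\dagger}_{F_2}\Pi_{(t',G,g)}U^{\vphantom{\dagger}}_{F_2}$ for $F_2 = H_g F_1 H_g^{-1}$.

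Combining the two steps, $\Pi_{s'} = U^{\dagger}_{F_2}\Pi_{(t',G,g)}U^{\vphantom{\dagger}}_{F_2} = U^{\dagger}_{F_2}U^{\dagger}_{F_0}\Pi_s U^{\vphantom{\dagger}}_{F_0}U^{\vphantom{\dagger}}_{F_2} \dot{=} U^{\dagger}_{F}\Pi_s U^{\vphantom{\dagger}}_{F}$ with $F = F_0 F_2 \in \ESLtwo{\mbb{Z}/\db\mbb{Z}}$ (the projective ambiguity $\dot{=}$ is harmless since conjugation by $U_F$ is insensitive to a scalar phase), which is the claim. I expect the main obstacle to be the bookkeeping in the second step: getting the signs $\sgn(j_{M^{-1}}(\qrt_t))$ and the parity/determinant corrections exactly right so that the reindexed sum is genuinely of the form $U_{F_1}^\dagger(\cdot)U_{F_1}$ rather than off by a displacement operator, and correctly invoking the antiunitary case of \eqref{eq:antiSymplecticAntiUnitary} when $\det(G^{-1}G') = -1$. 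The periodicity input \Cref{lem:GhostFiducialIndependenceOfTransversal} and the fact that $\det G$, $\det G'$ are coprime to $d$ (from \eqref{eq:TwistCondition}) are exactly what make the reindexing valid, so the argument should go through cleanly once those ingredients are assembled.
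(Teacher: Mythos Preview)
Your proposal is correct and follows essentially the same route as the paper: relate the two data with common twist $G$ via the $M$-transformation, then handle the change $G\to G'$ by reindexing and conjugating by $U_{G^{-1}G'}$, and finally transport through $g$ via \Cref{thm:GalActOnClifford}. The only cosmetic differences are that the paper carries out both steps at the ghost level (invoking \Cref{thm:MTransformedGhostFiducials}) and applies $g$ once at the end, whereas you use the live-level \Cref{thm:MTransformedFiducials} for the first step and apply $g$ separately in the second; also, where you write ``$\Pi_{(t',G',g)}$ as a sum over $\mbf{p}$ of $\ghostOverlapC{t'}{G'\mbf{p}}D_{\mbf{p}}$'' you mean the ghost $\tilde\Pi$, not the live $\Pi$.
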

\begin{proof}
    It follows from \Cref{thm:MTransformedGhostFiducials} that there exists $F'\in \ESLtwo{\mbb{Z}/\bar{d}\mbb{Z}}$ such that 
    \eag{
      \tilde{\Pi}_{s''} &= U\vpu{\dagger}_{F'}\tilde{\Pi}_sU^{\dagger}_{F'}
    }
    where $s'' = (d,1,Q',G,g)$.  Let  $F''$ be the image of $G^{-1} G'$ under the canonical homomorphism $\GLtwo{\mbb{Z}} \to \ESLtwo{\mbb{Z}/\bar{d}\mbb{Z}}$.  Writing $t'=(d,1,Q')$ and referring to \Cref{dfn:CandidateGhostAndSICFiducials} we see that 
    \eag{
    \tilde{\Pi}_{s''}&= \frac{1}{d}I + \frac{1}{d\sqrt{d+1}}\sum_{\mbf{p}\notin d\mbb{Z}^2}\normalizedGhostOverlapC{t'}{\mbf{p}}D_{G^{-1}\mbf{p}}
    \nn
    &= \frac{1}{d}I + \frac{1}{d\sqrt{d+1}}\sum_{\mbf{p}\notin d\mbb{Z}^2}\normalizedGhostOverlapC{t'}{\mbf{p}}U\vpu{\dagger}_{F''}D\vpu{\dagger}_{G^{\prime -1}\mbf{p}}U^{\dagger}_{F''}
    \nn
    &= U\vpu{\dagger}_{F''}\tilde{\Pi}_{s'}U^{\dagger}_{F''}.
    }
    Hence
    \eag{
    \tilde{\Pi}_{s'} &= U^{\dagger}_{F''}U\vpu{\dagger}_{F'}\tilde{\Pi}_s U^{\dagger}_{F'}U\vpu{\dagger}_{F''}.
    }
    Applying $g$ to both sides we find, in view of Definitions~\ref{dfn:CandidateGhostAndSICFiducials},~\ref{dfn:HgMatrixDefinition} and Theorem~\ref{thm:GalActOnClifford}, 
    \eag{
     \Pi_{s'} &= U\vpu{\dagger}_F \Pi_sU^{\dagger}_F
    }
    with $F = H_g F^{''-1} F' H^{-1}_g$.
\end{proof}
We also need to consider the effect of varying the Galois conjugation $g$. Let $t=(d,r,Q)$ be an admissible tuple,  let $f$ be the conductor of $Q$, and let $E_t$ be the field associated to $t$ (as specified in \Cref{dfn:SICfield}). Then it is easily seen that for every fiducial datum  $s=(t,G,g)$ extending $t$ the matrix elements of $\tilde{\Pi}_s$, $\Pi_s$ are in $E_t$.
Also, if \Cref{conj:RayClassField3} is true, then $E_t$  is the ray class field (in the generalized sense of Kopp and Lagarias~\cite{Kopp2020b}) with datum $(\mcl{O}_f, \bar{d}\mcl{O}_f, (\infty_1,\infty_2))$.  In particular, if \Cref{conj:RayClassField3} is true, and  if  $t'=(d,r,Q')$ is any other element of $\dBr{t}$, then $E_{t'} = E_{t}$.  We accordingly define $E_{\dBr{t}} = E_t$.  

We also need the following subfield of $E_t$:
\begin{defn}[ring class field, class number]\label{df:RingClassField}
    Let $t=(d,r,Q)\sim(K,j,m,Q)$ be an admissible tuple, and let $f$ be the conductor of $Q$.  We define the \emph{ring class field} for $t$, denoted $H_t$, to be the ray class field (in the generalized sense of Kopp and Lagarias~\cite{Kopp2020b}) with datum $(\mcl{O}_f,\mcl{O}_f,\emptyset)$.  We define the \emph{class number} for $t$, denoted $h_t$, to be the class number of $\mcl{O}_f$ (or, equivalently, the degree of the extension $H_t/K$).

    Since $H_t$ and $h_t$ only depend on the equivalence class $\dBr{t}$, we may  define $H_{\dBr{t}} = H_t$, $h_{\dBr{t}} = h_t$.
\end{defn}
\begin{rmkb}
    Note that if $f=1$, so that $\mcl{O}_f$ is the maximal order, then $H_t$ is the Hilbert class field.
\end{rmkb}
Also define a fixed, non-canonical choice of a sign-switching automorphism.
\begin{defn}
    On the assumption that \Cref{conj:RayClassField3} is true, for each equivalence class $\dBr{d,1,Q}$ make a once-and-for-all  choice of automorphism $g_{\dBr{d,1,Q}}\in \Gal(E_{\dBr{d,1,Q}}/\mbb{Q})$ which does not fix $K$.
\end{defn}
We will also need the following result.
\begin{thm}\label{thm:GaloisActionOnFiducial}
    Assume \Cref{conj:RayClassField3}.
    Let  $s=(t,G,g)$ be a fiducial datum containing the admissible tuple $t=(d,1,Q)$, and let $f$ be the conductor of $Q$. Let $Q_1, \dots, Q_{h_t}$ be a set of representatives of the $h_t$ distinct equivalence classes of forms having the same discriminant as $Q$.  Then for each $g'\in \Gal(E_t/K)$ there exists a form $Q(g')$ having the same discriminant as $Q$ and such that
    \eag{
      g'(\Pi_s) &= \Pi_{s(g')}, & s(g')& = (d,1,Q(g'),G,g).
    }
    Moreover
    \begin{enumerate}
        \item For each integer $n=1,\dots, h_t$, there exists  $g'\in \Gal(E_t/K)$ such that $Q(g')\sim Q_n$.  
        \item $Q(g')\sim Q$ if and only if $g'\in \Gal(E_t/H_t)$.
    \end{enumerate}
\end{thm}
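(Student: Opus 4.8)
The plan is to realize the Galois action on $\Pi_s$ through the Galois action on the ghost overlaps $\ghostOverlapC{t}{\p}$, which by \Cref{thm:RayClassField2plus} (and \Cref{conj:msc} in the non-fundamental case) are, after squaring, generalized Stark units of the order $\OO_f$. Under the conventions and standing assumptions of this subsection (in particular \Cref{cnj:tci}, \Cref{conj:msc}, and \Cref{assum:1}, together with \Cref{conj:RayClassField3}), $E_t/\Q$ is a finite Galois extension abelian over $K$, $\Pi_s = g(\tilde\Pi_s)$ is a $1$-SIC fiducial by \Cref{thm:rayclassfieldrsicgen}, and $E_t = H^{\OO_f}_{\db\infty_1\infty_2}$. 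First I would note that complex conjugation lies in $\Gal(E_t/K)$ (as $\Delta_0>0$) and hence commutes with every $g' \in \Gal(E_t/K)$; therefore $g'(\Pi_s)$ is again a rank-$1$ Hermitian idempotent of trace $1$ with $\lvert\Tr(g'(\Pi_s)D_\p^\dagger)\rvert$ constant for $\p\neq\zero$, i.e.\ a $1$-SIC fiducial by \Cref{thm:sicfidcond}. By \Cref{assum:1} it then suffices to produce a form $Q(g')$ of discriminant $\disc(Q)$ such that $g'(\overlapC{s}{\p})^2 = \overlapC{s(g')}{\p}^2$ for all $\p$, where $s(g') = (d,1,Q(g'),G,g)$, since this forces the overlaps to agree up to sign and hence (again by \Cref{assum:1}) $g'(\Pi_s)=\Pi_{s(g')}$. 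Finally, observe that $g'(\Pi_s) = g(g''(\tilde\Pi_s))$ where $g'' := g^{-1}g'g \in \Gal(E_t/K)$ (it fixes $\sqrt{\Delta_0}$ because $g$ negates it and $g'$ fixes it), so by \Cref{lm:OverlapTermsGhostOverlap} everything reduces to computing $g''$ on the ghost overlaps of $t$.

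Next I would make $Q(g')$ explicit. By \Cref{thm:rayclassfield} write $g''|_{E_t} = \Art_{\OO_f}(\B)$ for a unique $\B \in \Cl_{\db\infty_1\infty_2}(\OO_f)$, let $\bar\B$ be its image in the class group $\Cl(\OO_f)$, and, using the form/ideal dictionary for orders (\Cref{thm:canrep}, \Cref{tm:canisointun}), choose $M\in\GLtwo{\Z}$ with $Q_M$ representing the class $[Q]\,\bar\B$; set $Q(g'):=Q_M$, so $t_M=(d,1,Q_M)$. The core computation is then: for each $\p$, the proof of \Cref{thm:nupnumpeq1} together with \Cref{thm:RayClassField2plus} gives $(d_j+1)\,\ghostOverlapC{t}{\p}^2 = \su_{\A_\p}^{-n_\p}$, where $\A_\p$ is the ray class of $\OO_f$ attached to $(d^{-1}\p,\qrt_t)$ by the map $\Upsilon$ of \cite{Kopp2020d}; applying $g''=\Art(\B)$ and the law $\Art(\B)(\su_\A)=\su_{\A\B}$ of \Cref{conj:msc} yields $(d_j+1)\,g''(\ghostOverlapC{t}{\p})^2 = \su_{\A_\p\B}^{-n_\p}$ (the integer $n_\p$ is unchanged, being the fibre size of a quotient of groups). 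It remains to identify $\su_{\A_\p\B}$: from the properties of $\Upsilon$ in \cite{Kopp2020d}, together with \Cref{thm:AtmrhotmExpressions} and \Cref{lm:lactqrt} (which give $\qrt_{t_M}=M^{-1}\qrt_t$ and $A_{t_M}=M^{-1}A_tM$), one checks that $\A_\p\B$ is the ray class attached to $(d^{-1}N'\p,\qrt_{t_M})$ for an explicit $N'\in\GLtwo{\Z/\db\Z}$ built from $M$ and the image of $\B$ in $(\OO_f/d\OO_f)^\times$, so that $g''(\ghostOverlapC{t}{\p})^2 = \ghostOverlapC{t_M}{N'\p}^2$.

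The main obstacle is the bookkeeping that converts this $N'$-permutation of $\shin$-indices into the plain equality $g'(\overlapC{s}{\p})^2 = \overlapC{s(g')}{\p}^2$. Here I would combine three ingredients: the transformation law $\ghostOverlapC{t_M}{\cdot} = \ghostOverlapC{t}{lM\,\cdot}$ of \Cref{thm:MTransformNormalizedGhostOverlap}; the fact (\Cref{thm:GalActOnClifford}) that $g''$ also acts on displacement operators by $g''(D_\p)=D_{H_{g''}\p}$ with $H_{g''}=\mathrm{diag}(1,k_{g''})$; and the compatibility between $k_{g''}$ and the mod-$\db$ part of $\B$ furnished by Artin reciprocity for the cyclotomic subfield $\Q(\rtu_d)\cap E_t$. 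Feeding these into $\tilde\Pi_s=\tfrac1d\sum_\p\ghostOverlapC{t}{G\p}D_\p$ and into the formula of \Cref{lm:OverlapTermsGhostOverlap}, the $N'$-permutation is conjugated by $G$ and cancels against the $H_{g''}$-shift of the $D_\p$, so that $g''(\tilde\Pi_s)$ has the same squared overlaps as $\tilde\Pi_{(d,1,Q_M,G,g)}$; applying $g$ gives the required equality and \Cref{assum:1} yields $g'(\Pi_s)=\Pi_{s(g')}$. (Alternatively, one may bypass \Cref{assum:1} for the sign: $g''(\tilde\Pi_s)$ is idempotent with the prescribed squared overlaps and satisfies $\ghostOverlapC{}{\p}\ghostOverlapC{}{-\p}=(d_j+1)^{-1}$ by \Cref{thm:nupnumpeq1} and \Cref{cor:AlternativeDefinitionOfCandidateGhostOverlaps}, which one can show forces it to equal $\tilde\Pi_{(d,1,Q_M,G,g)}$ exactly.)

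The two ``moreover'' assertions are then formal. The equivalence class of $Q(g')$ is $[Q]\,\bar\B$ with $\B = \Art_{\OO_f}^{-1}(g'')$; as $g'$ (equivalently $g''$) runs over $\Gal(E_t/K)$, $\B$ runs over all of $\Cl_{\db\infty_1\infty_2}(\OO_f)$, and since the ring class field $H_t = H^{\OO_f}_{\OO_f,\emptyset}$ (\Cref{df:RingClassField}) lies in $E_t$ the natural map $\Cl_{\db\infty_1\infty_2}(\OO_f) \twoheadrightarrow \Cl(\OO_f)$ is onto, so $\bar\B$, hence $[Q]\bar\B$, runs over every class; in particular each $Q_n$ is equivalent to $Q(g')$ for a suitable $g'$, which is (1). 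For (2), $Q(g')\sim Q$ iff $[Q]\bar\B=[Q]$ iff $\bar\B$ is trivial in $\Cl(\OO_f)$ iff $\B\in\ker\!\big(\Cl_{\db\infty_1\infty_2}(\OO_f)\to\Cl(\OO_f)\big)$, which under $\Art_{\OO_f}$ corresponds exactly to $\Gal(E_t/H_t)$; hence $Q(g')\sim Q \iff g'\in\Gal(E_t/H_t)$, proving (2).
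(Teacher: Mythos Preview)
The paper does not actually prove this theorem: its proof reads, in full, ``To appear in a subsequent publication.'' So there is no argument in the paper against which to compare yours, and your sketch is attempting something the authors deliberately deferred.

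Your outline is broadly the right shape --- pass from $\Pi_s$ back to $\tilde\Pi_s$ via the sign-switching $g$, use the Artin action on generalized Stark units $\su_{\A_\p}$ from \Cref{conj:msc} to compute the effect of $g''=g^{-1}g'g$ on squared ghost overlaps, then match the resulting ray class $\A_\p\B$ with the $\Upsilon$-map data of a new form $Q(g')$ in the class $[Q]\cdot\bar\B$. The ``moreover'' clauses do indeed follow formally from surjectivity of $\Cl_{\db\infty_1\infty_2}(\OO_f)\twoheadrightarrow\Cl(\OO_f)$ once the first part is established.

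That said, two points deserve more care than you give them. First, the theorem as stated assumes only \Cref{conj:RayClassField3}, whereas your argument leans on \Cref{conj:msc} (for the Artin-equivariance of $\su_\A$, and for non-maximal orders generally) and on \Cref{assum:1}; you should be explicit that you are working under the full standing hypotheses of the subsection, not just the one named in the statement. Second, and more substantively, your ``bookkeeping'' paragraph is the crux and is not yet a proof. You assert that $\A_\p\B$ corresponds under $\Upsilon$ to $(d^{-1}N'\p,\qrt_{t_M})$ for an explicit $N'$, and that the $N'$-permutation ``cancels against the $H_{g''}$-shift'' after conjugation by $G$. Neither claim is obvious: the first requires a careful computation with the map $\Upsilon_{d\OO_f}$ of \cite{Kopp2020d} (in particular, how multiplication by an invertible ideal class acts on the pair $(\r,\qrt)$), and the second requires matching not just the $\shin$-values but also the SF phase $\SFPhase{t}{\p}$, which depends explicitly on $Q$ through $Q(\p)$ and $\rade(A_t)$. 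Until those identities are written down and checked, what you have is a strategy rather than a proof --- which, to be fair, is also all the paper currently offers.
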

\begin{proof}
    To appear in a subsequent publication~\cite{Appleby2022c}.
\end{proof}

We can then prove the following theorem.
\begin{thm}\label{thm:ReducedNotation}
Assume Conjectures~\ref{cnj:tci},~\ref{conj:RayClassField3} and Assumptions~1, 2, 3 are true.
    Let $s=(d,1,Q,G,g)$ be a fiducial datum.  There exists a form $Q'$, having the same discriminant as $Q$,  such that 
    \eag{
    \Pi_s& = \Pi_{s'}, & s' =(d,1,Q',I,g_{\dBr{d,1,Q}}).
    }
\end{thm}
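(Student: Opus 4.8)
The plan is to start from an arbitrary fiducial datum $s=(d,1,Q,G,g)$ and successively replace $G$ by $I$ and $g$ by the canonical sign-switching automorphism $g_{\dBr{d,1,Q}}$, adjusting the quadratic form to $Q'$ within the same discriminant class at each stage. First I would normalize the twist: since we are assuming Assumption~\ref{assum:3}, the only shifts are $0$ and $1$, so $\Det(G)=\pm 1$, and $s_0=(d,1,Q,I,g)$ is again a fiducial datum (the twist condition \eqref{eq:TwistCondition} holds for one of $\shift=0,1$). By \Cref{thm:QGChangeInFiducialDatum}, there exists $F\in\ESLtwo{\mbb{Z}/\db\mbb{Z}}$ with $\Pi_{s_0}=U\vpu{\dagger}_F\Pi_s U^{\dagger}_F$. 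This does not immediately give $\Pi_{s_0}=\Pi_s$, so I would instead use \Cref{thm:MTransformedFiducials} together with \Cref{thm:GLHomomorphismSurjective}: because $\GLMorph_{s_0}$ is surjective, there is $M\in\GLtwo{\mbb{Z}}$ with $\GLMorph_{s_0}(M)=F^{-1}$, hence $\Pi_{(s_0)_M}=U\vpu{\dagger}_F\Pi_{s_0}U^{\dagger}_F=\Pi_s$. Now $(s_0)_M=(d,1,Q_M,I,g)$, and $Q_M$ is equivalent to $Q$ (so in particular has the same discriminant). Thus after this step we may assume the twist is $I$, at the cost of replacing $Q$ by an equivalent form.

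Next I would handle the Galois automorphism. Write $\dBr{t}=\dBr{d,1,Q}$ (which equals $\dBr{d,1,Q_M}$ since conductors agree), so both $g$ and $g_{\dBr{t}}$ lie in $\Gal(E_{\dBr{t}}/\mbb{Q})$ and neither fixes $K$; hence $g_{\dBr{t}}^{-1}g\in\Gal(E_t/K)$. Here I invoke \Cref{thm:GaloisActionOnFiducial} (granting \Cref{conj:RayClassField3}): applying the automorphism $g' := g_{\dBr{t}}^{-1}g$ to $\Pi_{s_0'}$ where $s_0'=(d,1,Q_M,I,g_{\dBr{t}})$, we get $g'(\Pi_{s_0'})=\Pi_{s_0'(g')}$ with $s_0'(g')=(d,1,Q_M(g'),I,g_{\dBr{t}})$ and $Q_M(g')$ having the same discriminant as $Q_M$, hence as $Q$. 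On the other hand, applying $g'$ to $\Pi_{s_0'}$ should reproduce $\Pi_{(s_0)_M}=\Pi_s$: this is because $\Pi_{s_0'}=g_{\dBr{t}}(\tilde\Pi_{(s_0)_M \text{ without } g})$ — more precisely, $\tilde\Pi$ depends only on $t$ and $G$, so $\Pi_{s_0'}$ and $\Pi_{(s_0)_M}$ are images of the \emph{same} ghost projector $\tilde\Pi$ under $g_{\dBr{t}}$ and $g$ respectively, whence $g'(\Pi_{s_0'})=g_{\dBr{t}}^{-1}g(g_{\dBr{t}}(\tilde\Pi))$. Since $\tilde\Pi$ has entries in $E_t$ on which $g$ and $g_{\dBr{t}}g'$ agree, this equals $g(\tilde\Pi)=\Pi_{(s_0)_M}=\Pi_s$. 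Setting $Q' := Q_M(g')$ and $s' := (d,1,Q',I,g_{\dBr{d,1,Q}})=s_0'(g')$ then gives $\Pi_s=\Pi_{s'}$ with $Q'$ in the discriminant class of $Q$, as required. (Assumptions~\ref{assum:1} and \ref{assum:2} are not strictly needed for the existence statement but guarantee that this $Q'$ is well-defined up to equivalence, which is what makes the classification in the following theorems work; I would remark on this.)

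The main obstacle I anticipate is the bookkeeping in the second step: carefully tracking that $\Pi_{s_0'}$ and $\Pi_{(s_0)_M}$ arise from literally the same ghost projector $\tilde\Pi_{(d,1,Q_M,I)}$, and that $g'=g_{\dBr{t}}^{-1}g$ fixes $K$ and restricts correctly to $E_t$ so that \Cref{thm:GaloisActionOnFiducial} applies and the composition $g'\circ g_{\dBr{t}}$ acts on $\tilde\Pi$ as $g$ does. This requires knowing that all entries of $\tilde\Pi_{(d,1,Q_M,I)}$ lie in $E_t=E_{\dBr{t}}$ (which follows from \Cref{dfn:GhostOverlaps} and the discussion preceding \Cref{df:RingClassField}, using \Cref{conj:RayClassField3} to identify $E_t$ as the relevant ray class field), and that both $g$ and $g_{\dBr{t}}$ are defined on this field. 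A secondary subtlety is verifying that $(s_0)_M$, $s_0'$, and $s_0'(g')$ are genuinely fiducial data — this follows from \Cref{thm:MtransformedTuples} for the $M$-transform and from \Cref{thm:GaloisActionOnFiducial} (or its proof) for the Galois step, since those theorems are stated precisely so as to preserve the fiducial datum structure.
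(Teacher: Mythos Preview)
Your overall strategy is sound and mirrors the paper's proof, just in the opposite order: you normalize the twist $G\to I$ first (via \Cref{thm:QGChangeInFiducialDatum}, then absorb the resulting $U_F$ using surjectivity of $\GLMorph$), then normalize the Galois automorphism $g\to g_{\dBr{d,1,Q}}$ (via \Cref{thm:GaloisActionOnFiducial}). The paper does Galois first, then the twist. Either order works.

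However, there is a genuine composition-order error in your Galois step. You set $g' := g_{\dBr{t}}^{-1}g$ and then need $g'(\Pi_{s_0'}) = \Pi_s$. Since $\Pi_{s_0'}=g_{\dBr{t}}(\tilde\Pi)$ and $\Pi_s=g(\tilde\Pi)$ for the same ghost $\tilde\Pi$, this requires $(g'\circ g_{\dBr{t}})(\tilde\Pi)=g(\tilde\Pi)$, i.e.\ $g_{\dBr{t}}^{-1}\,g\,g_{\dBr{t}}=g$ on $E_t$. Your justification (``$g$ and $g_{\dBr{t}}g'$ agree'') instead establishes $g_{\dBr{t}}\circ g'=g$, which is the wrong side; you would need $g$ and $g_{\dBr{t}}$ to commute, and in general elements of $\Gal(E_t/\mbb{Q})\setminus\Gal(E_t/K)$ do not commute with each other. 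The fix is immediate: take $g':=g\,g_{\dBr{t}}^{-1}$ (still in $\Gal(E_t/K)$ since it fixes $\sqrt{\Delta_0}$), so that $g'\circ g_{\dBr{t}}=g$ exactly. This is what the paper does, writing $\Pi_s = g\,g_{\dBr{d,1,Q}}^{-1}(\Pi_{s''})$.
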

\begin{proof}
Let $s''=(d,1,Q,G,g_{\dBr{d,1,Q}})$.  
  It follows from \Cref{thm:GaloisActionOnFiducial}  that
\eag{
  \Pi_s &= gg^{-1}_{\dBr{d,1,Q}}\left(\Pi_{s''}\right) = \Pi_{s'''}, & s'''&=(d,1,Q'',G,g_{\dBr{d,1,Q}})
\\
\intertext{for some $Q''$ having the same discriminant as $Q$.  It then follows from \Cref{thm:QGChangeInFiducialDatum} that}
\Pi_{s'''}&=U\vpu{\dagger}_F \Pi_{s''''}U^{\dagger}_F, & s''''&=(d,1,Q'',I,g_{\dBr{d,1,Q}})
\\
\intertext{for some $F\in \ESLtwo{\mbb{Z}/\bar{d}\mbb{Z}}$.  In view of Theorems~\ref{thm:GLHomomorphismSurjective} and~\ref{thm:MTransformedFiducials} this means}
\Pi_{s'''} &= \Pi_{s''''_M}, & s''''_M &= (d,1,Q''_M,I,g_{\dBr{d,1,Q}})
}
for some $M\in \GLtwo{\mbb{Z}}$.  Setting $Q''_M=Q'$, $s''''_M=s'$ the claim now follows.
\end{proof}
It follows from this result together with  \Cref{dfn:GaloisMultiplet} and Theorems~\ref{thm:MTransformedFiducials},~\ref{thm:GaloisActionOnFiducial}  that,  if we assume Conjectures~\ref{cnj:tci},~\ref{conj:RayClassField3} and~\ref{conj:msc} and Assumptions~1, 2, 3, , and if we confine ourselves to $1$-SICs of the kind specified by \Cref{dfn:CandidateGhostAndSICFiducials}, then there are bijective maps associating 
\begin{enumerate}
    \item to each equivalence class $[t]$, a corresponding $\EC(d)$ orbit of $1$-SICs,
    \item to each equivalence class $\dBr{t}$, a corresponding Galois multiplet of $1$-SICs.
\end{enumerate}
The procedure for finding all the $1$-SICs corresponding to a given admissible $(d,1)\sim(K,j,m)$ is then as follows:
\begin{enumerate}
    \item[Step 1] Find the set of  divisors of $f_j$.  Each such divisor corresponds to a distinct Galois multiplet of the specified rank and dimension.
    \item[Step 2] For each $f\div f_j$ calculate the corresponding class number $h_{\dBr{d,1,f}}$ (see Definitions~\ref{df:EquivalenceClassesTuples},~\ref{df:RingClassField}).
    This is the number of distinct $\EC(d)$ orbits in the multiplet $\dBr{d,1,f}$.
    \item[Step 3] Find $h_{\dBr{d,1,f}}$ inequivalent quadratic forms $Q_j$ having discriminant $f^2 \Delta_0$, where $\Delta_0$ is the discriminant of $K$.  The $1$-SICs corresponding to the admissible tuples $t_j = (d,1,Q_j)$ give us a full set of representatives for the  distinct $\EC(d)$ orbits in $\dBr{d,1,f}$.
    \item[Step 4] Conjugate the  fiducials corresponding to the  tuples $t_1, \dots, t_{h_{\dBr{d,1,f}}}$ with the elements of $\EC(d)$ to obtain the full set of $1$-SICs in the Galois multiplet $\dBr{d,1,f}$.
\end{enumerate}
Notice that equivalence of forms is defined relative to the group $\GLtwo{\mbb{Z}}$ rather than  $\SLtwo{\mbb{Z}}$.  So it is the wide class number that is relevant here.

This construction is illustrated in the  data tables in \Cref{ap:sicdata}, which gives a complete listing of $1$-SICs and corresponding equivalence classes $[t]$, $\dBr{t}$ for $d\le 100$.

\subsection{Illustrative examples}
\label{sc:ClassificationECdOrbits}
In this subsection we illustrate the discussion in \Cref{ssc:Classification} with some examples, on the assumption that Conjectures~\ref{cnj:tci},~\ref{conj:mrmvc}, and~\ref{conj:RayClassField3} and Assumptions~\ref{assum:1}, \ref{assum:2}, and \ref{assum:3} are all true.

Consider the dimension grid corresponding to the field $K=\mbb{Q}(\sqrt{5})$, illustrated in \eqref{eq:exampledimgrid}.  Consider first the admissible tuple $(4,1)\sim(K,1,1)$.  We have $f_1 =1$, so there is only one Galois multiplet.  The class number is 1, so the multiplet consists of a single $\EC(4)$ orbit.  A choice of form which is calculationally optimal is $Q=\la 1,-3,1\ra$. Comparing with the tables in \cite{Scott2010,Scott:2017}, we see that this agrees with what was previously found by a brute force computational approach, and that Scott--Grassl orbit $4a$ is, in our notation, the orbit $[4,1,\la 1, -3, 1\ra]$.

Moving up the left-hand column of the grid, we come to the admissible tuple $(8,1)\sim(K,2,1)$.  We have $f_2 = 3$, so there are two Galois multiplets corresponding to the choices $f=1,3$.  The class numbers are both 1, so each multiplet consists of a single $\EC(8)$ orbit.  Calculationally optimal choices of $Q$ are $\la 1,-3,1\ra$, $\la 1, -7 , 1\ra $ respectively.  Again, this is consistent with what was previously found.  Comparing with the tables in \cite{Scott2010,Scott:2017}, one finds that the Scott--Grassl orbit $8a$ is $[8,3,\la 1, -7, 1\ra]$ in our notation, and orbit $8b$ is $[8,1,\la 1, -3, 1\ra]$.  Finally, the fact that $1 \div 3$ implies $E_{\dBr{8,1,1}} \subseteq E_{\dBr{8,1,3}}$, while  the fact that $3\ndiv 2 \times 1$ and $\Delta_0 \not\equiv 1 \Mod{8}$ means that $E_{\dBr{8,1,3}} \nsubseteq E_{\dBr{8,1,1}}$.  So $E_{\dBr{8,1,1}}$ is a proper subfield of $E_{\dBr{8,1,3}}$

Since they only depend on the divisors of $f_j$ and the class numbers $h_{K,f}$, and since these quantities are constant along the rows of the dimension grid, it follows that the number of Galois multiplets, and the number of $\EC(d)$ orbits within each multiplet, are constant along each row.  Thus, moving along the bottom row  one finds that each of the  sequence of admissible pairs $(4,1), (11,3), (29,8), \dots $ gives rise to one Galois multiplet, comprising one $\EC(d)$ orbit.  Similarly, moving along the next-to-bottom row one finds that each of the sequence of admissble pairs $(8,1), (55,7), (377, 48), \dots$ gives rise to two Galois multiplets, each comprising one $\EC(d)$ orbit.

In \Cref{ap:sicdata} we give,
on the assumption that Conjectures~\ref{cnj:tci},~\ref{conj:mrmvc}, and~\ref{conj:RayClassField3} 
and Assumptions~\ref{assum:1}, \ref{assum:2}, and \ref{assum:3} are all true, 
a complete  listing of Galois multiplets and $\EC(d)$ orbits of $1$-SICs for $d\le 100$.  Using this table one can also find the number of Galois multiplets and $\EC(d)$ orbits  for any admissible tuple $(d,r)\sim(K,j,m)$ such that $d_j\le 100$.

In this way, under our conjectures and assumptions, one can quickly compute the number of multiplets and orbits for  much larger dimensions.
For instance when $d=10^6$ one finds  that there is a single Galois multiplet with class number $14\, 800$.  By contrast, when $d=10^6+3$ one finds that there are $40$ Galois multiplets  with conductors and class numbers as in \Cref{tab:ConductorsAndClassNumbers}.
\begin{table}[htb]
    \centering
    \begin{tabular}{ccc  ccc  ccc ccc cc}
        \toprule
        $f$ & $h_{K,f}$ & &  
        $f$ & $h_{K,f}$ & &
        $f$ & $h_{K,f}$ & &
        $f$ & $h_{K,f}$ & &
        $f$ & $h_{K,f}$ 
        \\
        \cmidrule(lr){1-2}
        \cmidrule(lr){4-5}
        \cmidrule(lr){7-8}
        \cmidrule(lr){10-11}
        \cmidrule(lr){13-14}
       1 & 1 &&  2 & 1 &&  4 & 1 &&  5 & 2 &&  8 & 2
        \\
        10 & 2 &&  16 & 4 && 20 & 4 &&  25 & 10 && 40 & 8
        \\
        50 & 10 &&  53 & 52 &&  80 & 16 &&  100 & 20 &&  106 &  52
        \\
        125 & 50 && 200 & 40 &&  212 & 52 &&  250 & 50 &&  265 & 104
        \\
        400 & 80 &&  424 & 104 &&  500 & 100 &&  530 & 104 &&  848 & 208
        \\
        1\,000 & 200 && 1\,060 & 208 &&  1\,325 & 520  &&  2\,000 & 400 &&  2\,120 & 416
        \\
        2\,650 & 520 &&  4\,240 & 832 &&  5\,300 & 1\,040 && 6\,625 & 2\,600 &&  10\,600 & 2\,080
        \\
        13\,250 & 2\,600 &&  21\,200 & 4\,160 &&  26\,500 & 5\,200 &&  53\,000 & 10\,400 &&  106\,000 & 20\,800
        \\
        \bottomrule
    \end{tabular}
    \caption{Conductors and class numbers for $1$-SICs in dimension $10^6+3$, assuming Conjectures~\ref{cnj:tci},~\ref{conj:mrmvc}, and~\ref{conj:RayClassField3}  and Assumptions~\ref{assum:1}, \ref{assum:2}, and \ref{assum:3} are true.  \label{tab:ConductorsAndClassNumbers}}
\end{table}
The  number of multiplets as a function of dimension is plotted in \Cref{fig:nummultiplets}, while the number of $\EC(d)$ orbits is plotted in \Cref{fig:numorbs}.
\begin{figure}[htb]
\begin{center}
\includegraphics[width=15 cm]{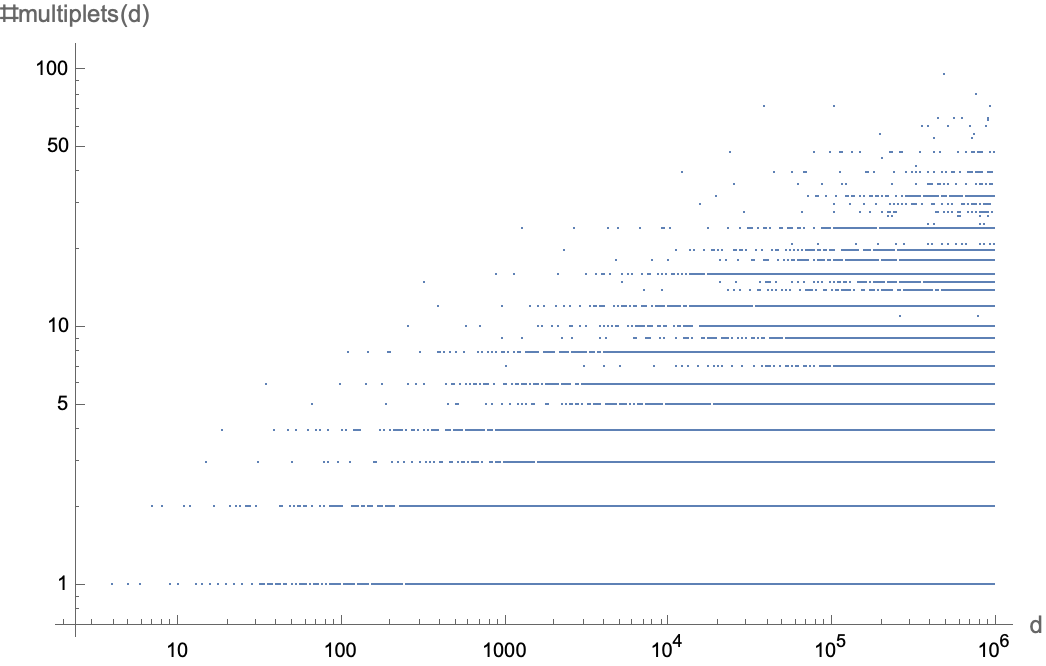}
\end{center}
\caption{Number of Galois multiplets of $1$-SICs as a function of dimension, assuming Conjectures~\ref{cnj:tci},~\ref{conj:mrmvc}, and~\ref{conj:RayClassField3} and Assumptions~\ref{assum:1}, \ref{assum:2}, and \ref{assum:3} are true.\label{fig:nummultiplets}}
\end{figure}
\begin{figure}[htb]
\begin{center}
\includegraphics[width=15 cm]{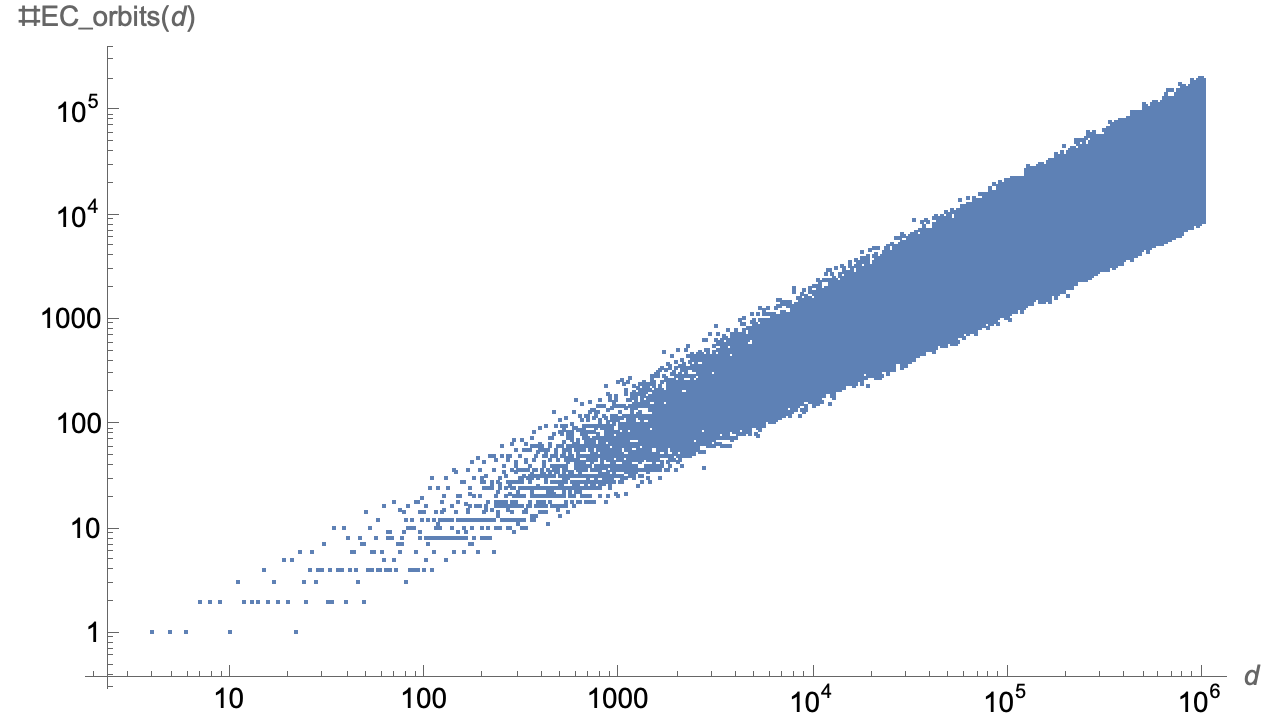}
\end{center}
\caption{Number of $\EC(d)$-orbits of $1$-SICs as a function of dimension, assuming Conjectures~\ref{cnj:tci},~\ref{conj:mrmvc}, and~\ref{conj:RayClassField3} and Assumptions~\ref{assum:1}, \ref{assum:2}, and \ref{assum:3} are true.\label{fig:numorbs}}
\end{figure}

We conclude with a few observations concerning the distribution of $r$-SICs with $r>1$.  A pair $(d,r)$ is admissible if and only if $0<r<(d-1)/2$ and
\eag{
nr(d-r) &= d^2-1
\label{eq:nrdrd2m1}
}
for some integer $n>4$ (see \Cref{dfn:admissiblePair} and discussion following).  If $r=1$, then solutions to \eqref{eq:nrdrd2m1} exist for every $d>3$.  This is far from being the case when $r>1$.
Thus, one finds that there are only $1\,153$ dimensions $d$ less than $10^6$ for which there exist admissible pairs $(d,r)$ with $r>1$.  Moreover, in almost of all of these cases there is only one pair with $r>1$, the only exceptions for $d\le 10^6$ being the five dimensions 29, 71, 239, 3\,191, 60\,761 where there are exactly two pairs.  See Figures~\ref{fig:meffdist1},~\ref{fig:meffdist2} for the distribution of dimensions up to $d=10^6$ and Table~\ref{tab:my_label} for the first 30 solutions to ~\eqref{eq:nrdrd2m1} with $r>1$.
\begin{figure}[htb]
\begin{center}
\includegraphics[width=15 cm]{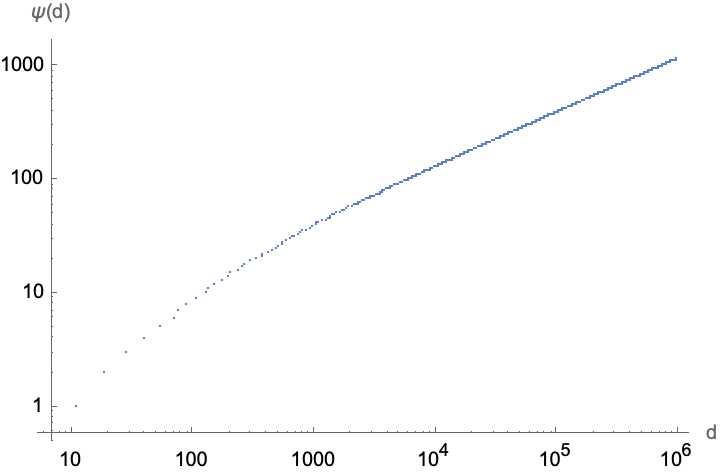}
\end{center}
\caption{Plot of $\psi(d)$ against $d$, where $\psi(d)$ is the number of dimensions less than or equal to $d$ in which there occur  $r$-SICs with $r>1$, assuming Conjectures~\ref{cnj:tci} and~\ref{conj:mrmvc} are true.\label{fig:meffdist1}}
\end{figure}
\begin{figure}[htb]
\begin{center}
\includegraphics[width=15 cm]{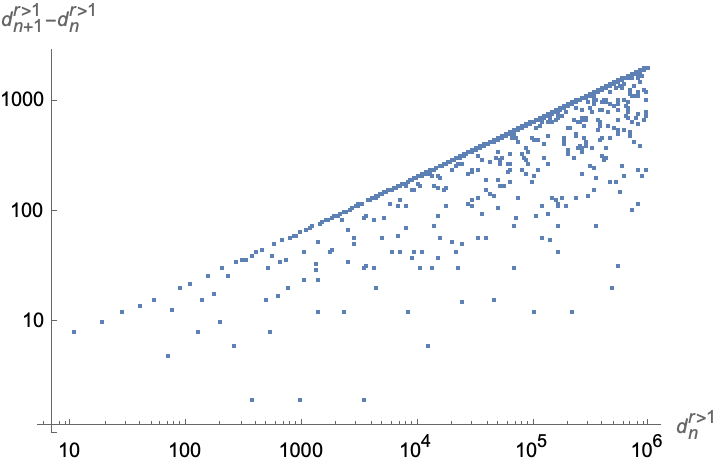}
\end{center}
\caption{Plot of $d^{r>1}_{n+1}-d^{r>1}_n$ against $d^{r>1}_n$, where $d^{r>1}_1, d^{r>1}_2, \dots$ is the increasing sequence of   dimensions in which there occur $r$-SICs with $r>1$, assuming Conjectures~\ref{cnj:tci}, and~\ref{conj:mrmvc} are true.\label{fig:meffdist2}}
\end{figure}
\begin{table}[htb]
    \centering
    \begin{tabular}{ccccc  c c c c c c
    c c c c c c}
        \toprule
        $d$ & $r$ & $K$ &  $j$ & $m$
        & &
                $d$ & $r$ & $K$ &  $j$ & $m$
        &&  $d$ & $r$ & $K$ & $j$ & $m$
        \\
        \cmidrule(lr){1-5} \cmidrule(lr){7-11} \cmidrule(lr){13-17}
        11 & 3 & $\mbb{Q}(\sqrt{5})$  & 1 & 2 
        &&  
        109 & 10 & $\mbb{Q}(\sqrt{6})$ & 1 & 2
        & & 
        271 & 16 &$\mbb{Q}(\sqrt{7})$ &  $1$ & $2$
        \\
        19 & 4 & $\mbb{Q}(\sqrt{3})$  & 1 & 2 
        &&  
        131 & 11 & $\mbb{Q}(\sqrt{13})$  & 1 & 2
        & &
        305 & 17 &$\mbb{Q}(\sqrt{285})$  & $1$ & $2$
        \\
        29 & 5 & $\mbb{Q}(\sqrt{21})$ & 1 & 2
        &&  
        139 & 24 & $\mbb{Q}(\sqrt{21})$  & 1 & 3
        & &
        341 & 18 &$\mbb{Q}(\sqrt{5})$  & $3$ & $2$
        \\
         & 8 & $\mbb{Q}(\sqrt{5})$  & 1 & 3
        &&  
        155 & 12 & $\mbb{Q}(\sqrt{35})$  & 1 & 2
        & &
        377 & 48 &$\mbb{Q}(\sqrt{5})$  & $2$ & $3$
        \\
        41 & 6 & $\mbb{Q}(\sqrt{2})$ & 1 & 2
        &&  
        181 & 13 & $\mbb{Q}(\sqrt{165})$  & 1 & 2
        & &
        379 & 19 &$\mbb{Q}(\sqrt{357})$  & $1$ & $2$
        \\
        55 & 7 & $\mbb{Q}(\sqrt{5})$  & 2 & 2
        & & 
        199 & 55 &$\mbb{Q}(\sqrt{5})$  & $1$ & $5$
        & &
        419 & 20 &$\mbb{Q}(\sqrt{11})$  & $1$ & $2$
        \\
        71 & 8 & $\mbb{Q}(\sqrt{15})$  & 1 & 2
        & &
        209 & 14 &$\mbb{Q}(\sqrt{3})$  & $2$ & $2$
        & &
        461 & 21 &$\mbb{Q}(\sqrt{437})$  & $1$ & $2$
        \\
         & 15 & $\mbb{Q}(\sqrt{3})$  & 1 & 3
        & &
        239 & 15 &$\mbb{Q}(\sqrt{221})$  & $1$ & $2$
        & &
        505 & 22 &$\mbb{Q}(\sqrt{30})$  & $1$ & $2$
        \\
        76 & 21 & $\mbb{Q}(\sqrt{5})$  & 1 & 4
        & &
         & 35 &$\mbb{Q}(\sqrt{2})$  & $1$ & $3$
        & &
        521 & 144 &$\mbb{Q}(\sqrt{5})$  & $1$ & $6$
        \\
        89 & 9 & $\mbb{Q}(\sqrt{77})$  & 1 & 2
        & &
        265 & 56 &$\mbb{Q}(\sqrt{3})$  & $1$ & $4$
        & &
        551 & 23 &$\mbb{Q}(\sqrt{21})$  & $2$ & $2$
        \\
        \bottomrule
    \end{tabular}
    \caption{The first 30 solutions to ~\eqref{eq:nrdrd2m1} with $r>1$.  \label{tab:my_label}}
\end{table}

\subsection{Symmetries}
\label{sc:SICSymmetries}
The purpose of this subsection is to prove some of the empirical observations in \Cref{ssc:symgp}, regarding the symmetry group of a $1$-SIC, and to generalize them to an arbitrary $r$-SIC.  We begin with a summary of our main results.
\begin{defn}[$\ESLsym{s}$, $\ESLposSym{s}$, $\ESLzaunSym{s}$]\label{df:ESLStabilizers}
    Let $s=(t,G,g)$ be a fiducial datum containing the admissible tuple $t=(d,r,Q) \sim  (K,j,m,Q)$.  We define
    \eag{
    \ESLsym{s}&= \GLMorph_s(\stabQGen{t}),
    \\
    \ESLposSym{s}&= \GLMorph_s(\posGen{t}),
    \\
    \ESLzaunSym{s}&= \GLMorph_s(\zaunerGen{t})
    }
(see Definitions~\ref{dfn:AssociatedStabilizers},~\ref{dfn:GLHomomorphism}
for definitions of $\stabQGen{t}$, $\posGen{t}$, $\zaunerGen{t}$, $\GLMorph_s$).
\end{defn}
\begin{defn}[unitary/anti-unitary type]\label{dfn:antiUnitaryType}
   Let $t=(d,r,Q)\sim (K,j,m,Q)$ be an admissible tuple. Let  $f$ the conductor of $Q$.  We say the tuple $t$ is of \emph{anti-unitary type} if  all three of the following conditions are satisfied:
   \begin{enumerate}
       \item $d_1-3$ is a perfect square,
       \item $j_{\rm{min}}(f)$ is odd,
       \item $f\sqrt{d_{j_{\rm{min}}}(f)-3}$ divides $f_{j_{\rm{min}}(f)}$.
   \end{enumerate}
   Otherwise, we say it is of \emph{unitary type}.
\end{defn}
\begin{rmkb}
    Note that it follows from \Cref{thm:negunitcondA} that if $d_1-3$ is a perfect square and $j_{\rm{min}}(f)$ is odd then $d_{j_{\rm{min}}}(f)-3$ is also a perfect square.
 \end{rmkb}
Our first result says that  $\GLMorph_s$ maps stabilizers of forms into stabilizers of fiducials:
\begin{thm}\label{thm:GLMorphHomOfQStabilizerGroup}
    Let $s=(t,G,g)$ be a fiducial datum containing the admissible tuple $t=(d,r,Q)\sim(K,j,m,Q)$.  The restriction of $\GLMorph_s$ to $\mcl{S}(Q)$ is a homomorphism of $\mcl{S}(Q)$ onto $\la \ESLsym{s} \ra$.  Moreover,  $\la \ESLsym{s} \ra \subseteq \StabPiESL{\Pi_s}$.
\end{thm}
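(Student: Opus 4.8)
The plan is to verify the two assertions separately: first that $\GLMorph_s$ restricted to $\mcl{S}(Q)$ is a group homomorphism onto $\la \ESLsym{s}\ra$, and second that this cyclic image lies inside $\StabPiESL{\Pi_s}$. For the homomorphism claim, recall from \Cref{dfn:GLHomomorphism} that $\GLMorph_s(M) = \sgn(j_{M^{-1}}(\qrt_t))\,H_g G^{-1}[M]_{\db} G H_g^{-1}$, and the only obstruction to multiplicativity is the sign factor $\sgn(j_{M^{-1}}(\qrt_t))$. But every $M \in \mcl{S}(Q)$ fixes $\qrt_{Q,\pm}$ by \Cref{lm:lactqrt} (more precisely \eqref{eq:lactqrt2}), so $M \cdot \qrt_t = \qrt_t$; hence $j_{M^{-1}}(\qrt_t)$ is an eigenvalue of $M^{-1}$ (as in the proof of \Cref{lem:fixedinda}), and the two eigenvalues of $M^{-1}$ have product $\det(M^{-1}) = \pm 1$. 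First I would treat the case $\det M = +1$: here the eigenvalues have the same sign, and by \Cref{tm:symgp} every element of $\mcl{S}(Q) \cap \SLtwo{\mbb{Z}}$ is $\pm \posGen{t}^k$ with $\posGen{t}$ of positive trace, so the relevant eigenvalue is positive and $\sgn(j_{M^{-1}}(\qrt_t)) = +1$. For $\det M = -1$ one checks the sign is constant on the nontrivial coset as well (using that $\stabQGen{t}$ has positive trace and the structure of $\mcl{S}(Q) = \la -I, \stabQGen{t}\ra$). With the sign factor identically $\pm 1$ in a controlled way, multiplicativity of $M \mapsto H_g G^{-1}[M]_{\db}GH_g^{-1}$ is immediate since $[\,\cdot\,]_{\db}$ and conjugation are homomorphisms. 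That $\mcl{S}(Q)$ is generated by $-I$ and $\stabQGen{t}$ (\Cref{tm:symgp}) and that $\GLMorph_s(-I) = \pm I$ acts trivially then gives that the image is generated by $\ESLsym{s} = \GLMorph_s(\stabQGen{t})$, i.e. equals $\la \ESLsym{s}\ra$.

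For the containment $\la \ESLsym{s}\ra \subseteq \StabPiESL{\Pi_s}$, it suffices to show $\ESLsym{s} = \GLMorph_s(\stabQGen{t}) \in \StabPiESL{\Pi_s}$, i.e. that $U_{\ESLsym{s}} \in \mcl{S}(\Pi_s)$ up to phase. Here I would apply \Cref{thm:MTransformedFiducials} with $M = \stabQGen{t}$: it gives $\Pi_{s_M} = U_{\GLMorph_s(M)}^\dagger \Pi_s U_{\GLMorph_s(M)}$. But $s_M = (d,r,Q_M,G,g)$ and $Q_M = Q$ because $M = \stabQGen{t} \in \mcl{S}(Q)$ stabilizes $Q$ by definition of the stability group; hence $\Pi_{s_M} = \Pi_s$, and therefore $U_{\ESLsym{s}}^\dagger \Pi_s U_{\ESLsym{s}} = \Pi_s$, which is exactly the statement that $\ESLsym{s} \in \StabPiESL{\Pi_s}$ (recall $\StabPiESL{\Pi_s}$ is defined in \Cref{dfn:sogppi} as the set of $F$ with $U_F \in \mcl{S}(\Pi_s)$, and $\mcl{S}(\Pi_s)$ consists of cosets modulo phases). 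Since $\StabPiESL{\Pi_s}$ is a subgroup of $\ESLtwo{\mbb{Z}/\db\mbb{Z}}$, it then contains all powers of $\ESLsym{s}$, giving $\la \ESLsym{s}\ra \subseteq \StabPiESL{\Pi_s}$.

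The main obstacle I anticipate is the bookkeeping around the sign factor $\sgn(j_{M^{-1}}(\qrt_t))$ in the case $\det M = -1$, and relatedly ensuring that $\GLMorph_s$ restricted to $\mcl{S}(Q)$ is genuinely a homomorphism and not merely a projective one (the remark after \Cref{dfn:GLHomomorphism} warns that $\GLMorph_s$ is only "almost" a homomorphism in general, with a $\pm$ ambiguity coming precisely from this sign). The resolution is that on $\mcl{S}(Q)$ the argument $\qrt_t$ is a common fixed point, so the eigenvalue interpretation forces the sign to behave well; but I would need to state this carefully, perhaps isolating it as a short lemma that $\sgn(j_{M^{-1}}(\qrt_t)) = \sgn$ of the appropriate eigenvalue of $M$, and then reading off the sign from the positive-trace normalization of $\stabQGen{t}$ and $\posGen{t}$ in \Cref{dfn:AssociatedStabilizers} together with \Cref{tm:symgp}. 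The rest — multiplicativity of conjugation and $[\,\cdot\,]_{\db}$, and the application of \Cref{thm:MTransformedFiducials} with $Q_M = Q$ — is routine.
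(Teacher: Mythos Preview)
Your argument for the containment $\la \ESLsym{s}\ra \subseteq \StabPiESL{\Pi_s}$ is correct and matches the paper's exactly: apply \Cref{thm:MTransformedFiducials} with $M = \stabQGen{t}$ and use $Q_{\stabQGen{t}} = Q$.

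For the homomorphism claim, however, your explicit sign computation contains an error. You assert that for $\det M = +1$ the sign $\sgn(j_{M^{-1}}(\qrt_t))$ is always $+1$, but this is false: the determinant-$1$ elements of $\mcl{S}(Q)$ are $\pm \posGen{t}^k$, and for $M = -\posGen{t}^k$ the eigenvalues of $M^{-1}$ are both \emph{negative}, so $\sgn(j_{M^{-1}}(\qrt_t)) = -1$. (This is compensated by $[-M]_{\db} = -[M]_{\db}$, so $\GLMorph_s(-M) = \GLMorph_s(M)$, but your stated reasoning does not see this.) More generally, the sign is not constant on the cosets you describe, and trying to compute it case-by-case on $\det M = \pm 1$ and then assemble multiplicativity is awkward.

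The paper bypasses all of this with a single line. Using the cocycle relation for $j$ from \Cref{lm:jMproperties} together with the fixed-point property from \Cref{lm:lactqrt}, for $M_1, M_2 \in \mcl{S}(Q)$ one has
\[
j_{(M_1M_2)^{-1}}(\qrt_t) = j_{M_2^{-1}}(M_1^{-1}\cdot\qrt_t)\, j_{M_1^{-1}}(\qrt_t) = j_{M_2^{-1}}(\qrt_t)\, j_{M_1^{-1}}(\qrt_t),
\]
since $M_1^{-1}\cdot\qrt_t = \qrt_t$. Taking signs gives multiplicativity of the sign factor on $\mcl{S}(Q)$ directly, and hence $\GLMorph_s(M_1M_2) = \GLMorph_s(M_1)\GLMorph_s(M_2)$ with no case analysis. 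The paper then notes $j_{-I}(\qrt_t) = -1$, so $\GLMorph_s(-I) = I$ (not merely $\pm I$), and since $\mcl{S}(Q) = \la -I, \stabQGen{t}\ra$ by \Cref{tm:symgp}, the image is $\la \ESLsym{s}\ra$. This is the clean resolution of exactly the obstacle you anticipated.
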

\begin{rmkb}
 See  \Cref{dfn:sogppi} for definition of $\StabPiESL{\Pi_s}$.
    In every case where the  groups have been calculated one finds  in fact   $\la \ESLsym{s} \ra = \StabPiESL{\Pi_s}$.  
\end{rmkb}
\begin{proof}
    See below.
\end{proof}
Our second result establishes some basic properties of the matrices $\ESLsym{s}$, $\ESLzaunSym{s}$.
\begin{thm}\label{thm:RsRzsProperties}
    Let $s=(t,G,g)$ be a fiducial datum containing the admissible tuple $t=(d,r,Q)\sim(K,j,m,Q)$. Then  
    \begin{enumerate}
        \item \label{it:RsProperties}$\ESLsym{s}$ has the properties
        \begin{center}
        \begin{tabular}{
        >{\centering} p{2.1 cm}
        >{\centering} p{1 cm}
        >{\centering} p{2 cm} 
        >{\centering} p{2.4 cm} 
        >{\centering} p{2.6 cm}
        >{\centering\arraybackslash} p{3 cm}}
        \toprule
        Type of $t$ & $d$ & $\Det(\ESLsym{s})$ & $\Tr(\ESLsym{s})$ & order of $\ESLsym{s}$ & order of $U_{\ESLsym{s}}\!\la I \ra$
        \\
        \midrule
        unitary & odd & $1$&$d_{j_{\rm{min}}(f)}-1$&$n_t(2m+1)$ &  $n_t(2m+1)$
        \\
        unitary & even & $1$&$d_{j_{\rm{min}}(f)}-1$& $2n_t(2m+1)$ & $n_t(2m+1)$
        \\
        anti-unitary & odd &$-1$&$-\sqrt{d_{j_{\rm{min}}(f)}-3}$& $2n_t(2m+1)$ &  $2n_t(2m+1)$
        \\
        anti-unitary & even & $-1$&
        $-\sqrt{d_{j_{\rm{min}}(f)}-3}$
        &$4n_t(2m+1)$ & $2n_t(2m+1)$
        \\
        \bottomrule
        \end{tabular}
    \end{center}

    \vspace{0.2 cm}

    \item \label{it:RzsProperties} $\ESLzaunSym{s}$ has the properties
     \begin{center}
        \begin{tabular}{
        >{\centering} p{2.1 cm}
        >{\centering} p{0.8 cm}
        >{\centering} p{0.8 cm} 
        >{\centering} p{1 cm} 
        >{\centering} p{2.5 cm} 
        >{\centering} p{2.5 cm} 
        >{\centering\arraybackslash} p{3 cm}}
        \toprule
        Type of $t$ 
        & $d$ 
        & $\ESLzaunSym{s}$
        & $\Det(\ESLzaunSym{s})$ 
        & $\Tr(\ESLzaunSym{s})$ 
        & order of $\ESLzaunSym{s}$ 
        & order of $U_{\ESLzaunSym{s}}\!\la I \ra$
        \\
        \midrule
        unitary & odd & $\ESLsym{s}^{n_t}$ &$1$&$d_j-1$&$2m+1$ &  $2m+1$
        \\
        unitary & even & $\ESLsym{s}^{n_t}$ & $1$&$d_j-1$ & $2(2m+1)$ & $2m+1$
        \\
        anti-unitary & odd & $\ESLsym{s}^{2n_t}$ &$1$&$d_j-1$& $2m+1$ &  $2m+1$
        \\
        anti-unitary & even & $\ESLsym{s}^{2n_t}$ & $1$&
        $d_j-1$
        &$2(2m+1)$ & $2m+1$
        \\
        \bottomrule
        \end{tabular}
    \end{center}
    \end{enumerate}
(see \Cref{dfn:level} for the level, $n_t$).
\end{thm}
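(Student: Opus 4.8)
The plan is to reduce everything to a computation of determinants, traces, and multiplicative orders of the matrices $[\stabQGen{t}]_{\db}$ and $[\zaunerGen{t}]_{\db}$ inside $\ESLtwo{\mbb{Z}/\db\mbb{Z}}$, corrected by the scalar prefactor $\sgn\!\left(j_{M^{-1}}(\qrt_t)\right)$ appearing in the definition of $\GLMorph_s$. By \Cref{tm:symgp} one has $\stabQGen{t}=\canrep_Q(\un_f)$, $\zaunerGen{t}=\posGen{t}^{n_t}=\canrep_Q(\vn^j)$, and $A_t=\posGen{t}^{n_t(2m+1)}=\canrep_Q(\vn^{j(2m+1)})$, with $\posGen{t}=\stabQGen{t}$ in the unitary case and $\posGen{t}=\stabQGen{t}^2$ in the anti-unitary case (since then $\un_f$ has norm $-1$). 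Because a canonical representation turns norms into determinants and preserves traces, all the un-signed determinant and trace data is immediate, and conjugation by $H_gG^{-1}$ changes neither a trace nor a $2\times2$ determinant; so only the scalar prefactor can flip a sign.

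First I would settle the determinant and trace columns. For $\ESLsym{s}$, $\Det(\ESLsym{s})=\Nm(\un_f)$, which is $+1$ for a unitary-type tuple and $-1$ for an anti-unitary-type tuple, directly from \Cref{dfn:antiUnitaryType} and \Cref{thm:negunitcondb}; likewise $\Det(\ESLzaunSym{s})=\Nm(\vn^j)=1$. For the traces, $\Tr(\zaunerGen{t})=\Tr(\vn^j)=\vn^j+\vn^{-j}=d_j-1$, while in the unitary case $\un_f=\vn_f=\vn^{j_{\rm{min}}(f)}$ gives $\Tr(\stabQGen{t})=d_{j_{\rm{min}}(f)}-1$, and in the anti-unitary case \Cref{thm:negunitcondA} gives $\un_f=\tfrac12\bigl(\sqrt{d_{j_{\rm{min}}(f)}-3}+\sqrt{d_{j_{\rm{min}}(f)}+1}\bigr)$, so $\Tr(\stabQGen{t})=\sqrt{d_{j_{\rm{min}}(f)}-3}$. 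The prefactor I would evaluate by observing that $\qrt_t$ is a fixed point of the relevant matrix $M\in\mcl{S}(Q)$ (\Cref{lm:lactqrt}), so $j_{M^{-1}}(\qrt_t)=j_M(\qrt_t)^{-1}$ is the eigenvalue of $M^{-1}$ on $\smcoltwo{\qrt_t}{1}$; when $\Det M=1$ and $\Tr M>0$ (always for $\zaunerGen{t}$, and for $\stabQGen{t}$ in the unitary case) both eigenvalues are positive by the argument of \Cref{lem:fixedinda}, so the prefactor is $+1$, while the $\Det(\stabQGen{t})=-1$ case is handled by a short explicit computation of the fixed-point eigenvalue. This also shows $\ESLzaunSym{s}=\ESLsym{s}^{n_t}$ (unitary) or $\ESLsym{s}^{2n_t}$ (anti-unitary), because the prefactor of $\stabQGen{t}^k$ is the $k$-th power of that of $\stabQGen{t}$ and conjugation commutes with taking powers, so $\GLMorph_s(\zaunerGen{t})=\GLMorph_s(\posGen{t}^{n_t})=\GLMorph_s(\stabQGen{t})^{n_t}$ or $\GLMorph_s(\stabQGen{t})^{2n_t}$.

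Then I would compute the orders. When the prefactor is $+1$ (the $\zaunerGen{t}$ rows and the unitary $\stabQGen{t}$ rows), $\ESLsym{s}$ resp.\ $\ESLzaunSym{s}$ is a $\GLtwo{\mbb{Z}/\db\mbb{Z}}$-conjugate of $[\canrep_Q(\kappa)]_{\db}$ with $\kappa=\un_f$ resp.\ $\vn^j$, so its order is the least $k$ with $\canrep_Q(\kappa^k)\equiv I\pmod{\db}$; by \Cref{cr:etaqmodn} this is the least $k$ with $\kappa^k\in 1+\db\mcl{O}_f$. Reducing modulo $d$ first, $\canrep_Q(\kappa^k)\in\Gamma(d)$ iff $\canrep_Q(\kappa^k)\in\mcl{S}_d(Q)=\la A_t\ra$ by \Cref{tm:symgp}, giving the ``$\bmod\ d$'' order as a divisibility condition in $n_t$ and $2m+1$; for even $d$ one lifts from $\bmod\ d$ to $\bmod\ \db=2d$ using $A_t\equiv(d+1)I\pmod{2d}$ from \eqref{eq:AtMod2d} together with $(d+1)^2\equiv1\pmod{2d}$, which doubles the order. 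The order of $U_{\ESLsym{s}}\la I\ra$ (resp.\ $U_{\ESLzaunSym{s}}\la I\ra$) is the same computation with ``$\equiv I$'' relaxed to ``lies in the kernel of $\SLtwo{\mbb{Z}}\to\PC(d)$'', which is trivial for odd $d$ and $\la(d+1)I\ra$ for even $d>3$ by \Cref{thm:symplecticKernel}; this halves the even-$d$ matrix order, reproducing the last two columns. For the anti-unitary $\stabQGen{t}$ rows the prefactor is $\pm1$, so $\ESLsym{s}^k=(\pm1)^k\bigl([\stabQGen{t}^k]_{\db}\bigr)'$; using $\stabQGen{t}^2=\posGen{t}$ and $\posGen{t}^{n_t(2m+1)}=A_t$, and a norm argument ruling out $\stabQGen{t}^k\equiv-I\pmod d$ for odd $k$ (since $\Nm(\un_f^k)=-1\not\equiv1\pmod d$ for $d>2$, whereas $\Nm(-I)=1$), only even exponents can give the identity, producing the extra factor of $2$ in the anti-unitary rows and their projective counterparts.

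The main obstacle will be the bookkeeping of the order computations: tracking the three independent places a factor of $2$ can enter or disappear --- the scalar prefactor $\pm1$ in the anti-unitary case, the passage from $\bmod\ d$ to $\bmod\ \db$ when $d$ is even (via \eqref{eq:AtMod2d}), and the passage from $\ESLtwo{\mbb{Z}/\db\mbb{Z}}$ to $\PEC(d)$ (via \Cref{thm:symplecticKernel}) --- and verifying that they combine exactly as in the two tables in each of the eight rows. Correctly pinning down the sign of the fixed-point eigenvalue $j_{M^{-1}}(\qrt_t)$ when $\Det(\stabQGen{t})=-1$ is the other delicate point, and it is the step I expect to require the most care.
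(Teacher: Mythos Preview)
Your proposal is correct and follows essentially the same approach as the paper's proof. The paper packages the sign–prefactor computation into a separate preliminary lemma (Lemma~\ref{lm:RsTermsLt}), deducing $\sgn\!\bigl(j_{\stabQGen{t}^{-1}}(\qrt_t)\bigr)=\Det(\stabQGen{t}^{-1})$ from the definition of $\DD_M$, and your multiplicativity of the prefactor on powers of $\stabQGen{t}$ is exactly the content of Theorem~\ref{thm:GLMorphHomOfQStabilizerGroup}; the order computation via $\mcl{S}_d(Q)=\langle A_t\rangle$, the lift from $d$ to $\db$ through $A_t\equiv(d+1)I\pmod{2d}$, and the passage to the projective order via Theorem~\ref{thm:symplecticKernel} are all organized by the paper into four explicit cases but are otherwise identical to what you outline. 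One small remark: your determinant/norm argument excluding odd $k$ in the anti-unitary rows (that $\det(\stabQGen{t}^k)=-1$ cannot be congruent to $\det(\pm I)=1$ modulo $d>2$) is slightly cleaner than the paper's version and, as you note, works independently of which sign the prefactor takes.
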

\begin{rmkb}
    If it is true that $\la U_{\ESLsym{s}}\!\la I \ra \ra = \StabPiESL{\Pi_s}$, as the empirical observations suggest, then this result means:
\begin{enumerate}
    \item $\mcl{S}(\Pi_s)$ contains a projective anti-unitary if and only if $t$ is of anti-unitary type;
    \item $\mcl{S}(\Pi_s)$ is cyclic order $n_t(2m+1)$ if $t$ is of unitary type, and cyclic order $2n_t(2m+1)$ if $t$ is of anti-unitary type.
\end{enumerate}
\end{rmkb}
It follows from the above that, if $m=1$, then $U_{\ESLzaunSym{s}}\!\la I \ra$ is a canonical order 3 projective unitary (see \Cref{df:canonicalOrder3}). 
Our third main result establishes criteria for type-$z$ and type-$a$ orbits.
\begin{thm}\label{thm:FzAndFaSymmetries}
    Let $s=(t,G,g)$ be a fiducial datum containing the admissible tuple $t=(d,1,Q)\sim(K,j,1,Q)$.
    \begin{enumerate}
        \item if $d\not\equiv 3 \Mod{9}$, then $\ESLzaunSym{s}$ is conjugate to $F_{\rm{z}}$;
        \item if $d\equiv 3 \Mod{9}$, then there exist both type-$a$ and type-$z$ orbits.  Specifically:
        \begin{enumerate}
            \item if $f_j/f \equiv 0 \Mod{3}$, then $\ESLzaunSym{s}$ is conjugate to $F_{\rm{a}}$;
            \item if $f_j/f \not\equiv 0 \Mod{3}$, then $\ESLzaunSym{s}$ is conjugate to $F_{\rm{z}}$.
        \end{enumerate}
    \end{enumerate}
\end{thm}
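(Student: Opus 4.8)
The plan is to reduce the theorem to \Cref{thm:CanonicalOrder3ConjugacyClasses} and \Cref{thm:CriterionForFaFaprime}, so that the only real work is to show $\ESLzaunSym{s}$ is a canonical order $3$ matrix and to decide when $\ESLzaunSym{s}\equiv I\Mod 3$. First, since $t\sim(K,j,1,Q)$ we have $d=d_{j,1}=d_j$, and by \Cref{tm:symgp} the matrix $\zaunerGen{t}=\canrep_Q(\vn^j)$ lies in $\SLtwo{\mbb{Z}}$ with number-field trace $\vn^j+\vn^{-j}=d_j-1$ and norm $1$. I would then check that the sign factor $\sgn\!\left(j_{\zaunerGen{t}^{-1}}(\qrt_t)\right)$ in \Cref{dfn:GLHomomorphism} equals $+1$: since $\zaunerGen{t}\in\mcl{S}(Q)$ it fixes $\qrt_t=\qrt_{Q,+}$ (by \Cref{lm:lactqrt}), and $\Tr(\zaunerGen{t}^{-1})=d_j-1>0$, so $\qrt_t\in\DD_{\zaunerGen{t}^{-1}}$ by \Cref{lem:fixedinda}, i.e.\ $j_{\zaunerGen{t}^{-1}}(\qrt_t)>0$. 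Hence $\ESLzaunSym{s}=H_gG^{-1}[\zaunerGen{t}]_{\db}GH_g^{-1}$ is an $\ESLtwo{\mbb{Z}/\db\mbb{Z}}$-conjugate of $[\zaunerGen{t}]_{\db}$; in particular $\Det(\ESLzaunSym{s})=1$ and $\Tr(\ESLzaunSym{s})\equiv d-1\Mod{\db}$, so $\ESLzaunSym{s}$ is canonical order $3$ (equivalently, invoke \Cref{thm:RsRzsProperties}(2) with $m=1$). When $d\not\equiv 3,6\Mod 9$, \Cref{thm:CanonicalOrder3ConjugacyClasses} immediately gives $\ESLzaunSym{s}\in[F_z]$.

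The crucial claim is that when $3\mid d$ one has $\ESLzaunSym{s}\equiv I\Mod 3$ if and only if $3\mid\tfrac{f_j}{f}$. Here $3\mid\db$, and $\Det(G),k_g$ are coprime to $\db$ hence to $3$, so $H_gG^{-1}$ reduces to an invertible matrix mod $3$; thus $\ESLzaunSym{s}\equiv I\Mod 3$ iff $\zaunerGen{t}\equiv I\Mod 3$. Applying \Cref{cr:etaqmodn} to $\zaunerGen{t}-I=\canrep_Q(\vn^j-1)$, this holds iff $\vn^j-1\in 3\mcl{O}_f$. Expanding $\vn^j-1=\tfrac{d_j-3+f_j\sqrt{\Delta_0}}{2}$ in the $\mbb{Z}$-basis $\{1,f\omega\}$ of $\mcl{O}_f$ with $\omega=\tfrac{\Delta_0+\sqrt{\Delta_0}}{2}$ yields $\vn^j-1=\tfrac{d_j-3-f_j\Delta_0}{2}\cdot 1+\tfrac{f_j}{f}\cdot(f\omega)$, so $3\mid\tfrac{f_j}{f}$ is necessary; and when it holds, $3\mid f_j$ together with $3\mid d_j$ forces $\tfrac{d_j-3-f_j\Delta_0}{2}$ to be divisible by $3$ as well, so it is also sufficient. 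Combining with \Cref{thm:CriterionForFaFaprime} then gives parts (2a) and (2b) directly.

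For the remaining case $d\equiv 6\Mod 9$ of part (1), I would run a $3$-adic valuation count on $(d_j-3)(d_j+1)=f_j^2\Delta_0$: here $v_3(d_j-3)=1$, $v_3(d_j+1)=0$, and $v_3(\Delta_0)\le 1$ since $\Delta_0$ is a fundamental discriminant; as $2v_3(f_j)$ is even this forces $v_3(f_j)=0$, hence $3\nmid\tfrac{f_j}{f}$, hence $\ESLzaunSym{s}\not\equiv I\Mod 3$, hence $\ESLzaunSym{s}\notin[F'_a]$ and so $\ESLzaunSym{s}\in[F_z]$ by \Cref{thm:CanonicalOrder3ConjugacyClasses}. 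For the existence assertion in (2): when $d\equiv 3\Mod 9$ one instead has $v_3(d_j-3)\ge 2$, which forces $v_3(f_j)\ge 1$; taking $Q$ of conductor $f=1$ then gives $3\mid\tfrac{f_j}{f}=f_j$ and, by the previous paragraph, $\ESLzaunSym{s}\in[F_a]$, while taking $Q$ of conductor $f=f_j$ gives $\tfrac{f_j}{f}=1$ and $\ESLzaunSym{s}\in[F_z]$ — and in both cases $U_{\ESLzaunSym{s}}\la I\ra$ is a (canonical order $3$) symmetry of $\Pi_s$ by $\la\ESLsym{s}\ra\subseteq\StabPiESL{\Pi_s}$ from \Cref{thm:GLMorphHomOfQStabilizerGroup}, so both a type-$a$ and a type-$z$ orbit occur. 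I expect the only steps needing genuine care to be the correct evaluation of the sign factor in $\GLMorph_s$ in Step 1 and the $3$-adic bookkeeping for $f_j$ under the fundamental-discriminant constraint; the rest is a direct application of results already established.
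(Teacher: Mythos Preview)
Your proposal is correct and follows essentially the same route as the paper's proof: establish that $\ESLzaunSym{s}$ has determinant $1$ and trace $d-1$, invoke \Cref{thm:CanonicalOrder3ConjugacyClasses} and \Cref{thm:CriterionForFaFaprime}, and reduce the congruence $\ESLzaunSym{s}\equiv I\Mod 3$ via \Cref{cr:etaqmodn} to $\vn^j-1\in 3\mcl{O}_f$, i.e.\ to $3\mid f_j/f$. The only cosmetic differences are that the paper obtains the explicit formula $\ESLzaunSym{s}=H_gG^{-1}[\canrep_Q(\vn^j)]_{\db}GH_g^{-1}$ by invoking \Cref{lm:RsTermsLt} and \Cref{thm:RsRzsProperties} rather than checking the sign of $j_{\zaunerGen{t}^{-1}}(\qrt_t)$ directly, and that the paper packages your $3$-adic valuation argument for $f_j$ into a separate lemma (\Cref{lm:fjCoprimeTod}) computing $\gcd(f_j,d_j)$.
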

\begin{rmkb}
    This result explains why one gets both type-$z$ and type-$a$ orbits when $d\equiv 3\Mod{9}$.  
    If one makes the additional assumption that $\la \ESLsym{s}\ra = \StabPiOverlap{\Pi}$ (as is the case in every instance where the groups have been explicitly calculated), then it also explains why one never finds type-$a'$ orbits when $d\equiv 6\Mod{9}$.  
\end{rmkb}
\begin{proof}
    See below.
\end{proof}
\begin{proof}[Proof of \Cref{thm:GLMorphHomOfQStabilizerGroup}.]
    Suppose $M_1$, $M_2\in \mcl{S}(Q)$. 
    Then it follows from Lemmas~\ref{lm:jMproperties} and~\ref{lm:lactqrt} that
    \eag{
     j_{(M_1M_2)^{-1}}(\qrt_t) &= j_{M^{-1}_2}(M^{-1}_1\qrt_t) j_{M^{-1}_1}(\qrt_t) = j_{M^{-1}_2}(\qrt_t) j_{M^{-1}_1}(\qrt_t),
    }
    implying
    \eag{
    \GLMorph_s(M_1M_2) &= \sgn\left(j_{(M_1M_2)^{-1}}(\qrt_t)\right) H\vpu{-1}_g G^{-1} [M_1M_2]\vpu{-1}_{\db} G H^{-1}_g
    =\GLMorph_s(M_1)\GLMorph_s(M_2).
    }
    So the restriction of $\GLMorph_s$ to $\mcl{S}(Q)$ is a homomorphism.
    It follows from \Cref{tm:symgp} that $\mcl{S}(Q)  = \la -I, \stabQGen{t}\ra$. 
    The fact that $j_{-I}(\qrt_t) = -1$ implies $\GLMorph_s(-I) = I$.  
    So $\GLMorph_s(\mcl{S}(Q)) = \la \ESLsym{s}\ra$.  Finally, it follows from \Cref{thm:MTransformedFiducials} that
    \eag{
       U^{\dagger}_{\ESLsym{s}}\Pi\vpu{\dagger}_{s\vpu{-1}} U\vpu{\dagger}_{\ESLsym{s}}&=\Pi_{s\vpu{-1}_{L_t}} = \Pi\vpu{\dagger}_s
    }
    implying $\ESLsym{s}\in \StabPiESL{\Pi_s}$.
\end{proof}
Before proving \Cref{thm:RsRzsProperties}, we need the following lemma.
\begin{lem}\label{lm:RsTermsLt}
Let $s=(t,G,g)$ be a fiducial datum containing the admissible tuple $t=(d,r,Q)\sim(K,j,m,Q)$.  Then
    \eag{
\ESLsym{s} &= \GLMorph_s(\stabQGen{t}) = 
\begin{cases}
    H\vpu{-1}_g G^{-1}[\stabQGen{t}]\vpu{-1}_{\db} G H^{-1}_g
    \qquad & \text{if $t$ is of unitary type,}
    \\
   - H\vpu{-1}_g G^{-1}[\stabQGen{t}]\vpu{-1}_{\db} G H^{-1}_g
    \qquad & \text{if $t$ is of anti-unitary type.}
    \label{eq:ESLsymsExpn}
\end{cases}
}
\end{lem}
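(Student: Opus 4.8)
The plan is to unwind \Cref{dfn:GLHomomorphism} and reduce \Cref{lm:RsTermsLt} to the computation of a single sign. By definition, $\GLMorph_s(M) = \sgn\!\left(j_{M^{-1}}(\qrt_t)\right) H_g G^{-1}[M]_{\db}GH_g^{-1}$ for every $M\in\GLtwo{\mbb{Z}}$, so taking $M = \stabQGen{t}$ the lemma is equivalent to the claim
\[
\sgn\!\left(j_{\stabQGen{t}^{-1}}(\qrt_t)\right) =
\begin{cases}
+1 & \text{if $t$ is of unitary type},\\
-1 & \text{if $t$ is of anti-unitary type}.
\end{cases}
\]
All remaining work lies in pinning down this sign.

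To do so I would use that $\stabQGen{t}\in\mcl{S}(Q)$ fixes $\qrt_t = \qrt_{Q,+}$ (by \Cref{lm:lactqrt}, or \Cref{lm:fxdpt}), so that $\stabQGen{t}\smcoltwo{\qrt_t}{1} = j_{\stabQGen{t}}(\qrt_t)\smcoltwo{\qrt_t}{1}$; thus $j_{\stabQGen{t}}(\qrt_t)$ is the eigenvalue of $\stabQGen{t}$ on the eigenvector $\smcoltwo{\qrt_t}{1}$, and since $\stabQGen{t}^{-1}$ fixes $\qrt_t$ too, \Cref{lm:jMproperties} gives $j_{\stabQGen{t}^{-1}}(\qrt_t) = j_{\stabQGen{t}}(\qrt_t)^{-1}$, which has the same sign. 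By \Cref{tm:symgp}, $\stabQGen{t} = \canrep_Q(\un_f)$ with $f$ the conductor of $Q$; comparing with \Cref{df:etaq} and the explicit eigenvector of $SQ$ determined by $\qrt_{Q,+} = \tfrac{-b+\sqrt{\Delta}}{2a}$ (positive branch of the square root), one identifies $j_{\stabQGen{t}}(\qrt_t)$ with a definite real embedding of $\un_f$.

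It then remains to split into the two cases of \Cref{dfn:antiUnitaryType}. In the unitary-type case $\un_f = \vn^{j_{\rm{min}}(f)}$ is totally positive and greater than $1$ (\Cref{df:ofufdef}), so both embeddings of $\un_f$ are positive and the sign is $+1$; one can also see this directly from \Cref{lem:fixedinda}, since $\stabQGen{t},\stabQGen{t}^{-1}\in\SLtwo{\mbb{Z}}$ have equal positive trace, hence $\qrt_t\in\DD_{\stabQGen{t}^{-1}}$, which by definition of $\DD_{\stabQGen{t}^{-1}}$ forces $\det(\stabQGen{t}^{-1})\,j_{\stabQGen{t}^{-1}}(\qrt_t)>0$. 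In the anti-unitary-type case \Cref{thm:negunitcondb} gives $\un_f = \un^{j_{\rm{min}}(f)}$ with $\Nm(\un_f) = -1$, so the two embeddings of $\un_f$ have opposite signs; the remaining point is to show that the embedding appearing as $j_{\stabQGen{t}}(\qrt_t)$ is the negative one. I expect this last sign bookkeeping to be the main obstacle: one must correctly correlate the positive branch of $\sqrt{\Delta_0}$ built into $\canrep_Q$ and into the choice $\qrt_t = \qrt_{Q,+}$ with the fixed-point geometry of the determinant-$(-1)$ matrix $\stabQGen{t}$, and it is precisely here that the unitary/anti-unitary dichotomy genuinely enters. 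A useful consistency check is \Cref{thm:RsRzsProperties}(1), whose trace values force $\Tr(\ESLsym{s}) \equiv -\Tr(\stabQGen{t}) \Mod{\db}$ in the anti-unitary case and $\Tr(\ESLsym{s}) \equiv \Tr(\stabQGen{t}) \Mod{\db}$ in the unitary case, matching exactly the sign dichotomy above.
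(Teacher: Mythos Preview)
Your reduction to the single sign $\sgn(j_{L_t^{-1}}(\qrt_t))$ is exactly what the paper does. The paper's argument is, however, considerably shorter than your outline: rather than computing eigenvalues via $\canrep_Q$ and identifying embeddings of $\un_f$, it asserts $\Tr(L_t^{-1})>0$ (citing the positive-trace condition in \Cref{dfn:AssociatedStabilizers}), applies \Cref{lem:fixedinda} to obtain $\qrt_t\in\DD_{L_t^{-1}}$, and then reads off directly from \Cref{def:sl2ldmndf} that $\sgn(j_{L_t^{-1}}(\qrt_t))=\det(L_t^{-1})$, which is $+1$ in the unitary case and $-1$ in the anti-unitary case. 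Both cases fall out in one stroke.

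That said, your caution about the anti-unitary case is well-founded. \Cref{lem:fixedinda} is stated only for $M\in\SL_2(\Z)$, and when $\det L_t=-1$ one has $\Tr(L_t^{-1})=-\Tr(L_t)<0$, so the paper's invocation of \Cref{lem:fixedinda} does not literally apply there. Your proposed route---computing $j_{L_t}(\qrt_t)$ as the eigenvalue of $L_t=\canrep_Q(\un_f)$ on the eigenvector $\smcoltwo{\qrt_{Q,+}}{1}$---is a concrete way to address this directly. If you carry it through, note that $SQ\smcoltwo{\qrt_{Q,+}}{1}=\tfrac{f\sqrt{\Delta_0}}{2}\smcoltwo{\qrt_{Q,+}}{1}$, so the eigenvalue you obtain is the \emph{first} embedding $\un_f>1$ rather than its negative conjugate; reconciling that with the lemma's stated sign dichotomy is precisely the bookkeeping you flagged, and it deserves careful checking against the downstream use in \Cref{thm:RsRzsProperties}.
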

\begin{proof}
    Let $f$ be the conductor of $Q$.
It follows from Theorems~\ref{thm:negunitcondb},~\ref{thm:canrep} and~\ref{tm:symgp} that $\Det(\stabQGen{t}) = \Nm(\un_f)= -1$ if and only if the tuple $t$ is of anti-unitary type.
 Also, by assumption, $\Tr(\stabQGen{t}^{-1}) >0$ 
 (see \Cref{dfn:AssociatedStabilizers}).  In view of \Cref{lem:fixedinda}, this means $\qrt_t \in \DD_{\stabQGen{t}^{-1}}$.  Referring to \Cref{def:sl2ldmndf}, we deduce
that $\sgn(j_{\stabQGen{t}^{-1}}(\qrt_t)) =\Det( \stabQGen{t}^{-1})$. \Cref{eq:ESLsymsExpn} follows from this and Definitions~\ref{dfn:GLHomomorphism},~\ref{df:ESLStabilizers}.
\end{proof}
\begin{proof}[Proof of \Cref{thm:RsRzsProperties}.]
We will first prove statement~\eqref{it:RsProperties}. 
It follows from \Cref{lm:RsTermsLt} that
\eag{
\Det(\ESLsym{s}) &= \Det\left([\stabQGen{t}]_{\db}\right)
=\begin{cases} 
1 \qquad & \text{if $t$ is of unitary type,}
\\
-1 \qquad & \text{if $t$ is of anti-unitary type.}
\end{cases}
}
Taking account of \Cref{lem:towerbasic} and Theorems~\ref{thm:negunitcondA}, ~\ref{thm:canrep} and~\ref{tm:symgp}, it also follows that
\eag{
\Tr(\ESLsym{s}) &=
\begin{cases}
\Tr\left(\stabQGen{t}\right) \qquad & \text{if $t$ is of unitary type,}
\\
-\Tr\left(\stabQGen{t}\right) \qquad & \text{if $t$ is of anti-unitary type,}
\end{cases}
\nn
&=
\begin{cases}
\Tr\left(\vn_f\right) \qquad & \text{if $t$ is of unitary type,}
\\
-\Tr\left(\un_f\right) \qquad & \text{if $t$ is of anti-unitary type,}
\end{cases}
\nn
&= \begin{cases}
d_{j_{\rm{min}}(f)}-1 \qquad & \text{if $t$ is of unitary type,}
\\
-\sqrt{d_{j_{\rm{min}}(f)}-3} \qquad & \text{if $t$ is of anti-unitary type.}
\end{cases}
}
It remains to calculate the orders of $\ESLsym{s}$, $U_{\ESLsym{s}}\!\la I \ra$. We consider the four cases separately.

\textit{Case 1:  $t$ is of unitary type and $d$ is odd.}  It follows from \Cref{lm:RsTermsLt} and \Cref{tm:symgp} that  
\begin{alignat}{3}
\ESLsym{s}^{\ell} &= I & \qquad &\iff & \qquad &\stabQGen{t}^{\ell} \equiv  I \Mod{d}
\nn
&&\qquad &\iff&\qquad & \stabQGen{t}^{\ell} \in \mcl{S}_d(Q)
\nn
&&\qquad &\iff&\qquad & \stabQGen{t}^{\ell} \in \la A_t\ra
\nn
&&\qquad &\iff&\qquad & \ell  n_t (2m+1) \mid \ell.
\end{alignat}
So $\ESLsym{s}$ is order $n_t(2m+1)$.  In view of  \Cref{thm:symplecticKernel}, this is also the order of $U_{\ESLsym{s}}\!\la I \ra$.

\textit{Case 2: $t$ is of unitary type and $d$ is even.} Let $\ell$ be the order of $\ESLsym{s}$.
It follows from \eqref{eq:ESLsymsExpn} and \Cref{tm:symgp} that
\begin{alignat}{3}
\ESLsym{s}^{\ell} &= I & \qquad &\iff & \qquad &\stabQGen{t}^{\ell} \equiv I \Mod{\db}
\nn
&&\qquad &\implies &\qquad & \stabQGen{t}^{\ell} \equiv  I \Mod{d}
\nn
&&\qquad &\iff &\qquad & \stabQGen{t}^\ell \in \la A_t\ra
\end{alignat}
implying $\stabQGen{t}^\ell$ is a power of $A_t$.  Since $A_t \equiv  (d+1)I \Mod{\db}$, we must in fact have 
$\stabQGen{t}^\ell=A_t^2 =\stabQGen{t}^{2n_t(2m+1)}$.  So $\ell=2n_t(2m+1)$.  Since $\stabQGen{t}^{n_t(2m+1)}=(d+1)I$, it follows from \Cref{thm:symplecticKernel} that the order of $U_{\ESLsym{s}}\!\la I \ra$ is $n_t(2m+1)$.

\textit{Case 3: $t$ is of anti-unitary type and $d$ is odd.}  As in Case 1, we have $\ESLsym{s}^\ell = I \iff \stabQGen{t}^\ell \in \la A_t\ra $. The fact that $t$ is of anti-unitary type means $\vn = \un^2$.  In view of \Cref{tm:symgp}, this means $A_t = \stabQGen{t}^{2n_t(2m+1)}$.  So $\ESLsym{s}$ is  order $2n_t(2m+1)$.   In view of  \Cref{thm:symplecticKernel}, this is also the order of $U_{\ESLsym{s}}\!\la I \ra$.

\textit{Case 4: $t$ is of anti-unitary type and $d$ is even.} Let $\ell$ be the order of $\ESLsym{s}$.  As in Case 2, we must have $\stabQGen{t}^\ell=A_t^2$.  As in Case 3, we have  $A_t = \stabQGen{t}^{2n_t(2m+1)}$.  So $\stabQGen{s}$ is order $4n_t(2m+1)$.  Since $\stabQGen{t}^{n_t(2m+1)}=(d+1)I$, it follows from \Cref{thm:symplecticKernel} that the order of $U_{\ESLsym{s}}\!\la I \ra$ is $2n_t(2m+1)$.

We will now prove statement~\eqref{it:RzsProperties}.  It follows from \Cref{lm:RsTermsLt} and \Cref{tm:symgp} that
\eag{
\ESLzaunSym{s}&= \GLMorph_s(\zaunerGen{t})
\nn
&=
\begin{cases}
    \GLMorph_s(\stabQGen{t}^{n_t})  \qquad & \text{$t$ is of unitary type,}
    \\
    \GLMorph_s(\stabQGen{t}^{2n_t})  \qquad & \text{$t$ is of anti-unitary type,}
\end{cases}
\nn
&= 
\begin{cases}
    H\vpu{-1}_g G^{-1}[L^{n_t}_t]\vpu{-1}_{\db}G H^{-1}_g \qquad & \text{$t$ is of unitary type,}
    \\
    H\vpu{-1}_g G^{-1}[L^{2n_t}_t]\vpu{-1}_{\db}G H^{-1}_g \qquad & \text{$t$ is of anti-unitary type.}
\end{cases}
}
It follows that 
\eag{
\ESLzaunSym{s}&=
\begin{cases}
    \ESLsym{s}^{n_t} \qquad & \text{$t$ is of unitary type,}
    \\
        \ESLsym{s}^{2n_t} \qquad & \text{$t$ is of anti-unitary type,}
\end{cases}
\label{eq:ESLzaunSymTermsESLsym}
}
and that $\Det(\ESLsym{s}) = 1$ irrespective of whether $t$ is of unitary or anti-unitary type.  It also follows that
\eag{
\Tr(\ESLsym{s}) &=
\begin{cases}
    \Tr(L^{n_t}_t) \qquad & \text{$t$ is of unitary type,}
    \\
    \Tr(L^{2n_t}_t) \qquad & \text{$t$ is of anti-unitary type,}
\end{cases}
\nn
&=\Tr(\vn_f^{n_t}) 
\nn
&= \Tr(\vn^j)
\nn
&= d_j-1.
}
Finally, \Cref{eq:ESLzaunSymTermsESLsym} in conjunction with statement~\ref{it:RsProperties} implies, firstly, that the order of $\ESLzaunSym{s}$ is $2m+1$ if $d$ is odd and $2(2m+1)$ if $d$ is even; and, secondly, that the order of $U_{\ESLzaunSym{s}}$ is $2m+1$ irrespective of the values of $d$, $t$.
\end{proof}
Before proving \Cref{thm:FzAndFaSymmetries}, we need the following lemma.
\begin{lem}\label{lm:fjCoprimeTod}
    Let $t=(d,r,Q)\sim(K,j,m,Q)$ be an admissible tuple. Then
    \eag{
      \gcd(f_j,d_j) &\equiv 
      \begin{cases}
          1 \qquad & d_j \not\equiv 3 \Mod{9},
          \\
          3 \qquad & d_j \equiv 3 \Mod{9}.
      \end{cases}
    }
\end{lem}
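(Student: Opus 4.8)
The plan is to compute $\gcd(f_j, d_j)$ directly from the Chebyshev-polynomial expressions established earlier. Recall from \Cref{thm:dnjfnjchebyexpns} that $f_j = f_1 U^{*}_j(d_1)$ and $d_j = T^{*}_j(d_1)$, where $U^{*}_j, T^{*}_j$ are the variant Chebyshev polynomials of \Cref{dfn:variantChebyshev}. However, this route introduces an unwanted dependence on $d_1$; it is cleaner to work with the identity $f_j^2 \Delta_0 = \Delta_j = (d_j-3)(d_j+1)$ from \Cref{lem:towerbasic}, which relates $f_j$ and $d_j$ without reference to the tower index. A prime $p$ dividing both $f_j$ and $d_j$ must divide $f_j^2\Delta_0 = (d_j-3)(d_j+1)$; since $p \mid d_j$, we get $p \mid (d_j - 3)(d_j+1) \equiv (-3)(1) = -3 \pmod{p}$, so $p = 3$. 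This shows $\gcd(f_j, d_j)$ is a power of $3$, and in particular $\gcd(f_j,d_j) = 1$ whenever $3 \nmid d_j$.

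Next I would pin down the exact power of $3$. The congruence behavior of $d_j \bmod 9$ is governed by the recursion \eqref{eq:tstrec} for $T^{*}_j$ together with the divisibility fact \Cref{it:dfdelprop1} of \Cref{lem:dfdelprops}: $d_j \mid d_{nj}$ iff $n \not\equiv 0 \bmod 3$, and from \eqref{eq:tcheb}, $d_{nj} \equiv 3 \pmod{d_j^2}$ when $3 \mid n$. The first case is immediate: if $d_j \not\equiv 3 \pmod 9$ then either $3 \nmid d_j$ (so $\gcd = 1$ by the paragraph above) or $9 \mid d_j$ but $3 \nmid d_j - 3$... wait, $9 \mid d_j$ forces $d_j \equiv 0 \pmod 9$, which is $\not\equiv 3$, and then $3 \mid d_j$ so we must check $f_j$. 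Actually the clean statement is: $3 \mid \gcd(f_j,d_j)$ iff $3 \mid d_j$ and $3 \mid f_j$. Using $f_j^2 \Delta_0 = (d_j-3)(d_j+1)$ with $3 \mid d_j$: then $d_j + 1 \equiv 1$ and $d_j - 3 \equiv 0 \pmod 3$, so $3 \mid f_j^2 \Delta_0$; if $3 \nmid \Delta_0$ then $3 \mid f_j$, and if $3 \mid \Delta_0$ one needs a finer $3$-adic valuation argument. I would handle this by tracking $v_3(d_j - 3)$: when $d_j \equiv 3 \pmod 9$, $v_3(d_j-3) \geq 2$, forcing $v_3(f_j^2\Delta_0) \geq 2$; combined with the fact (provable from the $2$-variable recursions, or from $\e^j$ being a unit) that $v_3(\Delta_0) \leq 1$, one gets $3 \mid f_j$, hence $3 \mid \gcd(f_j,d_j)$. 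Conversely one shows $9 \nmid \gcd(f_j, d_j)$, so $\gcd(f_j,d_j) = 3$ exactly in this case. For $d_j \equiv 0, 6 \pmod 9$ (the remaining cases with $3 \mid d_j$), $v_3(d_j - 3) = v_3(d_j+1) = 0$ would force $v_3(f_j^2\Delta_0) = 0$—but wait, $d_j \equiv 0$ gives $v_3(d_j - 3) = 1$; here I'd need $3 \mid \Delta_0$ to fail or push the valuation argument, concluding $3 \nmid f_j$ so $\gcd = 1$.

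The main obstacle I anticipate is the bookkeeping around $v_3(\Delta_0)$ and the case $d_j \equiv 0 \pmod 9$: one must rule out $3 \mid f_j$ there, which is slightly delicate because $v_3((d_j-3)(d_j+1)) = 1$ is not on its face incompatible with $3 \mid f_j$ unless $3 \nmid \Delta_0$. The resolution is that $\Delta_0$ is the fundamental discriminant, so $v_3(\Delta_0) \in \{0, 1\}$, and when $v_3(\Delta_0) = 1$ the equation $f_j^2 \Delta_0 = (d_j-3)(d_j+1)$ forces $v_3(f_j^2) = v_3((d_j-3)(d_j+1)) - 1$, which is even, so one tracks parity of valuations carefully. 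Alternatively—and this may be the simplest path—I would simply invoke the Chebyshev congruences \eqref{eq:tcheb}–\eqref{eq:ucheb} of \Cref{lm:tustarprops} directly: these give $T^{*}_j(x)$ and $U^{*}_j(x)$ modulo $x^2$ as explicit polynomials in $x$ depending only on $j \bmod 3$, and substituting $x = d_1$ and reducing modulo $3$ and modulo $9$ should yield all the congruence classes of $d_j$ and the $3$-divisibility of $f_j$ mechanically, as was done in the proof of \Cref{lem:dfdelprops}. I would model the write-up on that proof.
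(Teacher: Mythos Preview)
Your approach is essentially the paper's: use $f_j^2\Delta_0=(d_j-3)(d_j+1)$ to show every common prime divisor is $3$, then analyze $3$-adic valuations using $v_3(\Delta_0)\le 1$. The paper streamlines your case analysis by writing $d_j=3t$ once $3\mid d_j$, giving $f_j^2\Delta_0=3(t-1)(3t+1)$, from which (using $9\nmid\Delta_0$ and parity of $v_3(f_j^2)$) one reads off $3\mid f_j \iff 3\mid(t-1) \iff d_j\equiv 3\pmod 9$; this handles all three subcases $d_j\equiv 0,3,6\pmod 9$ at once and makes your Chebyshev fallback unnecessary.
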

\begin{proof}
        Suppose $p$ is a prime divisor of $\gcd(f_j,d_j)$.  Then $p$ divides both $d_j$ and $(d_j-3)(d_j+1) = f_j^2\Delta_0$ where $\Delta_0$ is the discriminant of $K$.  Since $d_j$ is coprime to $d_j+1$, it must in fact be the case that $p$ divides both $d_j$ and $d_j+3$.  It follows that $p=3$ and $d_j$ is a multiple of 3.  We have thus shown that $\gcd(f_j,d_j)$ is a power of 3.  In particular, if $d_j$ is not a multiple of 3 then $f_j$ is coprime to $d$.

    Suppose $d_j$ is a multiple of 3. 
 We can write $d_j=3t$ for some integer $t>1$. Then $f_j^2\Delta_0 = 3(t-1)(3t+1)$, from which it can be seen that $f_j$ is a multiple of $3$ if and only if $t-1$ is a multiple of 3.  Equivalently, $f_j$ is a multiple of $3$ if and only if $d_j \equiv 3 \Mod{9}$. Combined with the result proved in the last paragraph this means: (a) if $d_j \not\equiv 3 \Mod{9}$, then $\gcd(f_j,d_j) = 1$, and (b) if $d_j \equiv 3 \Mod{9}$, then $\gcd(f_j,d_j) = 3$. 
\end{proof}
\begin{proof}[Proof of \Cref{thm:FzAndFaSymmetries}.]
    It follows from \Cref{thm:RsRzsProperties} that $\Det(\ESLzaunSym{s}) = 1$ and $\Tr(\ESLzaunSym{s}) = d-1$. In view of \Cref{thm:CanonicalOrder3ConjugacyClasses}, this means that $\ESLzaunSym{s}$ is conjugate to $F_z$ if $d\not\equiv 3$ or $6\Mod{9}$.  
    
    Suppose, on the other hand, that $d\equiv 3\Mod{9}$ (respectively, $d\equiv 6\Mod{9}$). Then it follows from Theorems \ref{thm:CanonicalOrder3ConjugacyClasses} and~\ref{thm:CriterionForFaFaprime} that $\ESLzaunSym{s}$ is conjugate to $F_a$ (respectively $F'_a$) if  $\ESLzaunSym{s} \equiv  I\Mod{3}$, and to $F_z$ otherwise.  To find the condition for this to be so, observe that Theorems~\ref{tm:symgp},~\ref{thm:RsRzsProperties}, and \Cref{lm:RsTermsLt} imply
    \eag{
        \ESLzaunSym{s}&= \begin{cases}
            \ESLsym{s}^{n_t} \qquad & \text{$t$ is of unitary type,}
            \\
            \ESLsym{s}^{2n_t}  & \text{$t$ is of anti-unitary type,}
        \end{cases}
        \nn
        &=\begin{cases}
            H\vpu{-1}_g G^{-1}[\stabQGen{t}^{n_t}]_{\db}G H^{-1}_g \qquad & \text{$t$ is of unitary type,}
            \\
             H\vpu{-1}_g G^{-1}[\stabQGen{t}^{2n_t}]_{\db}G H^{-1}_g  & \text{$t$ is of anti-unitary type,}
        \end{cases}
\nn
    &= \begin{cases}
            H\vpu{-1}_g G^{-1}[\canrep_Q(\un_f^{n_t}]_{\db}G H^{-1}_g \qquad & \text{$t$ is of unitary type,}
            \\
             H\vpu{-1}_g G^{-1}[\canrep_Q(\un_f^{2n_t})]_{\db}G H^{-1}_g  & \text{$t$ is of anti-unitary type,}
        \end{cases}
\nn
        &= H\vpu{-1}_g G^{-1}[\canrep_Q(\vn_f^{n_t}]_{\db}G H^{-1}_g
\nn
        &= H\vpu{-1}_g G^{-1}[\canrep_Q(\vn^j]_{\db}G H^{-1}_g.
    }
Taking account of \Cref{cr:etaqmodn} and the fact that $\db$ is divisible by $3$, this means
\begin{alignat}{3}
    && \qquad \ESLzaunSym{s}&\equiv  I \Mod{3}
    \nn
    &\iff& \qquad [\canrep_Q(\vn^j)]_{\db} &\equiv  I \Mod{3}
    \nn
    &\iff& \qquad \canrep_Q(\vn^j) &\equiv  I \Mod{3}
    \nn
    &\iff& \qquad \vn^j-1 &\in 3\mcl{O}_f
    \nn
    &\iff &\qquad \frac{d-3-f_j\Delta_0}{2} +f_j\left(\frac{\Delta_0+\sqrt{\Delta_0}}{2}\right)&= 3\left(p_1 + p_2f\left(\frac{\Delta_0+\sqrt{\Delta_0}}{2}\right)\right)
\end{alignat}
for some $p_1, p_2\in \mbb{Z}$. The fact that  $f_j^2\Delta_0 = (d-3)(d+1)$ means $f_j\Delta_0 \equiv  d-3 \Mod{6}$.  We conclude that $\ESLzaunSym{s} \equiv  I\Mod{3}$ if and only if $f_j/f \equiv 0 \Mod{3}$.  Since $d \equiv  0 \Mod{3}$, it follows from \Cref{lm:fjCoprimeTod} that $f_j/f$
 is coprime to $3$ if $d\equiv 6 \Mod{9}$ but divisible by $3$ if $d\equiv 3 \Mod{9}$.  So $\ESLzaunSym{s}$ is necessarily conjugate to $F_z$ if $d\equiv 6 \Mod{9}$.  On the other hand, if $d\equiv 3\Mod{9}$, then there are values of $f$ such that $f_j/f\equiv0\Mod{9}$, implying $\ESLzaunSym{s}$ is conjugate to $F_a$, and others such that $f_j/f \not\equiv 0\Mod{9}$, implying  $\ESLzaunSym{s}$ is conjugate to $F_z$.
 \end{proof}

 \subsection{Alignment}\label{ssc:alignment}
We now come to the phenomenon of \emph{SIC alignment}\cite{Appleby:2017,Andersson:2019}.  In the rank 1 case it is empirically observed, in every case examined, that, up to a sign, the squares of the \normalizedOverlapsText{} at position $d_j$ in a dimension tower reappear among the \normalizedOverlapsText{} at position $d_{2j}$. We will show that this phenomenon is a consequence of our results within our conjectural framework. 
Moreover, we will show that it generalizes to a relation between the \normalizedOverlapsText{} at positions $d_j$ and $d_{nj}$ in the tower, for any integer $n$ coprime to $3$.  

We first state the  main result of this subsection.
\begin{thm}\label{thm:alignment}
    Let $s=(t,G,g)$ be a fiducial datum containing the rank 1 admissible tuple $t=(d_j,1,Q)\sim (K,j,1,Q)$.  Let $n$ an a positive integer coprime to $3$.  Define  $t'=(d_{nj},1,Q)\sim (K,nj,1,Q)$,  $s'=(t',G,g)$,  and $\kappa =d_{nj}/d_j$.  Then $t'$ is an admissible tuple and $\kappa$ is an integer.  If $s'$ is also a fiducial datum, then
    \eag{
    \normalizedOverlapC{s'}{\kappa \mbf{p}} &= 
        (-1)^{\ell(\mbf{p})} \normalizedOverlapC{s}{\mbf{p}}^n
    }
     for all $\mbf{p}\not\equiv \zero \Mod{d_j}$,
    where
    \eag{
    \ell(\mbf{p}) &=
    \begin{cases}
        n+(1+p_1)(1+p_2) \qquad & \text{if $d_j$ is even and $n\equiv \pm 2$ or $\pm 5 \Mod{12}$}\, ,
        \\
        n+1 \qquad & \text{otherwise}\, .
    \end{cases}
    }
\end{thm}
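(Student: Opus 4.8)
The plan is to compare the two candidate (normalized) ghost overlaps attached to the tuples $t=(d_j,1,Q)$ and $t'=(d_{nj},1,Q)$ directly, and then transport the resulting identity to the live overlaps via the Galois automorphism $g$, using \Cref{lm:OverlapTermsGhostOverlap}. First I would observe that $t'$ is admissible by \Cref{thm:bijectionofadmissibletuples} (take $(K,nj,1)$), and that $\kappa = d_{nj}/d_j$ is an integer by \Cref{lem:dfdelprops}\cref{it:dfdelprop1} (since $n$ is coprime to $3$). The stabilizer $A_{t'}$ satisfies $A_{t'} = \chi_Q(\vn^{3nj})$ by \Cref{eq:AStabilizerTermsUnit}, while $A_t = \chi_Q(\vn^{3j})$; thus $A_{t'}$ is a power of $A_t$ (indeed $A_{t'} = A_t^n$ up to the level bookkeeping in \Cref{dfn:level}), and both fix the same real quadratic point $\qrt_t = \qrt_{Q,+}$. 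The key analytic input is the cocycle property of the Shintani--Faddeev modular cocycle (the displayed relation after \Cref{def:shin}): at a common fixed point $\qrt$ one has $\sfc{\r}{A^n}{\qrt} = \sfc{\r}{A}{\qrt}^n$ for all $\r$, which was already used in the proofs of \Cref{thm:field0} and \Cref{thm:field2}. Applying this with $\r = d_{nj}^{-1}(\kappa\p) = d_j^{-1}\p$ gives
\begin{equation}
    \sfc{d_{nj}^{-1}(\kappa\p)}{A_{t'}}{\qrt_t} = \sfc{d_j^{-1}\p}{A_t}{\qrt_t}^n,
\end{equation}
modulo checking that the $\r$-vectors agree as elements of $\Q^2/\Z^2$, which they do since $\kappa\p/d_{nj} = \p/d_j$ exactly.

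Next I would handle the Shintani--Faddeev phase prefactors. By \Cref{dfn:SFKPhase}, $\SFPhase{t}{\p}$ is a product of a sign $(-1)^{s_{d_j}(\p)}$, the root of unity $e^{-\frac{\pi i}{12}\rade(A_t)}$, and a power $\rtu_{d_j}^{-\frac{f_{jm}}{f}Q(\p)}$. For $m=1$ we have $\frac{f_{jm}}{f} = \frac{f_j}{f} = r_{j,1} \cdot \tfrac{f_j}{f}$, and the relevant comparison is between $\rtu_{d_{nj}}^{-\frac{f_{nj}}{f}Q(\kappa\p)}$ and $\rtu_{d_j}^{-n\frac{f_j}{f}Q(\p)}$; here I would use \Cref{thm:dnjfnjchebyexpns} ($f_{nj} = f_j U_n^\ast(d_j)$) together with $\rtu_{d_{nj}}^{\kappa^2} $ versus $\rtu_{d_j}$ and the divisibility $d_j \mid d_{nj}$, reducing the discrepancy to a controlled sign. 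The Rademacher factor is handled by \Cref{lem:RademacherProperties}\eqref{eq:PsiMPower}: since $\Tr(A_t) = d_{3j}-1 \neq \pm 1$ (as $d_{3j} > 3$), we get $\rade(A_t^n) = n\,\rade(A_t)$, so $e^{-\frac{\pi i}{12}\rade(A_{t'})} = e^{-\frac{\pi i}{12}\rade(A_t)\cdot n}$ matches the $n$-th power cleanly. The $s_d(\p)$-sign comparison is the source of the dimension-dependent $(-1)^{\ell(\p)}$: in odd dimension $s_{d_j}(\p)\equiv 0$ and the sign collapses to $(-1)^{n+1}$ coming from the $(-1)$ in the $\zero$-overlap normalization via \Cref{lm:shinatzero} (and the factor $\tfrac{1}{\sqrt{d_j+1}}$ vs.\ $\tfrac{1}{\sqrt{d_{nj}+1}}$, related by \Cref{lm:dnjPlus1Expression}), while in even dimension the $(1+p_1)(1+p_2)$-dependence of $s_{d_j}$ survives and is detected exactly when $n \equiv \pm 2, \pm 5 \Mod{12}$ — this is precisely the case analysis encoded in \Cref{lm:dnjPlus1Expression}, whose three branches (even $n$, $n\equiv 1$, $n\equiv 3 \Mod 4$) together with the parity of $\frac{d_{nj}+1}{d_j+1}$ govern whether a square root of $d_j+1$ is taken an even or odd number of times.

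Assembling these pieces, for $\p\not\equiv\zero\Mod{d_j}$ I would obtain
\begin{equation}
    \normalizedGhostOverlapC{t'}{\kappa\p} = (-1)^{\ell(\p)}\,\big(\normalizedGhostOverlapC{t}{\p}\big)^n
\end{equation}
at the level of ghost overlaps, or equivalently the unnormalized version $\ghostOverlapC{t'}{\kappa\p} = (-1)^{\ell(\p)}\ghostOverlapC{t}{\p}^n \cdot \sqrt{d_j+1}^{\,n-1}/\sqrt{d_{nj}+1}$, with the scaling constant absorbed via \Cref{cor:AlternativeDefinitionOfCandidateGhostOverlaps}. Applying $g$ and invoking \Cref{lm:OverlapTermsGhostOverlap} (noting that $g$, $G$ are the same for $s$ and $s'$, and that $GH_g^{-1}G^{-1}$ acts compatibly with the scaling $\p \mapsto \kappa\p$ since $\kappa$ is a scalar) converts this into the asserted relation $\normalizedOverlapC{s'}{\kappa\p} = (-1)^{\ell(\p)}\normalizedOverlapC{s}{\p}^n$; here one uses that $g$ commutes with $n$-th powers and sends the root of unity $\SFPhase{t}{\p}$ to a root of unity, exactly as in the proof of \Cref{thm:rayclassfieldrsicgen}. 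The main obstacle I anticipate is the even-dimensional sign bookkeeping: tracking how the interacting factors $(-1)^{s_d(\cdot)}$, the parity of $f_{nj}/f_j = U_n^\ast(d_j)$, and the number of implicit square roots $\sqrt{d_j+1}$ combine requires the full force of \Cref{lm:dnjPlus1Expression} and \Cref{lem:dfdelprops}, and getting the congruence condition on $n$ modulo $12$ to come out correctly (rather than modulo $4$ or modulo $6$) will demand care — this is where I would spend most of the detailed computation.
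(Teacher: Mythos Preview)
Your overall architecture matches the paper's: prove the identity for the ghost overlaps via the cocycle relation $\sfc{\r}{A^n}{\qrt}=\sfc{\r}{A}{\qrt}^n$ and the Rademacher power formula $\rade(A_t^n)=n\,\rade(A_t)$, then push through $g$ using \Cref{lm:OverlapTermsGhostOverlap}. That is exactly what the paper does.

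However, your account of where the sign $(-1)^{\ell(\p)}$ originates is miswired, and following it would send you in the wrong direction. In the paper's proof the three factors of $\SFPhase{t'}{\kappa\p}$ are compared to the $n$-th powers of the corresponding factors of $\SFPhase{t}{\p}$ \emph{separately}: (i) the $s_d$-sign comparison yields $(-1)^{s_{d_{nj}}(\kappa\p)}=(-1)^{n+1}(-1)^{n\,s_{d_j}(\p)}$ in \emph{all} parities (so your claim that $s_{d_j}(\p)\equiv 0$ in odd dimension is incorrect; it is $\equiv 1$, but the identity still holds); (ii) the Rademacher factor matches exactly; (iii) the \emph{entire} $\p$-dependent sign $(-1)^{\ell(\p)}$, together with a second $(-1)^{n+1}$, comes from the $\rtu_{d}^{-\frac{f_{jm}}{f}Q(\p)}$ factor, via an ad hoc technical lemma (Lemma~\ref{lem:PreliminaryToAlignmentLemma}) computing $\frac{f_{nj}d_{nj}(1+d_{nj})}{f_j d_j}\bmod 2d_j$. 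It is \emph{that} lemma, not \Cref{lm:dnjPlus1Expression}, which produces the $\bmod\ 12$ dichotomy; \Cref{lm:dnjPlus1Expression} is used later only to show that the ratio $q=\sqrt{(d_j+1)^n/(d_{nj}+1)}$ is rational (hence $g$-fixed), not to generate any sign. The two $(-1)^{n+1}$ factors from (i) and (iii) then cancel, leaving $(-1)^{\ell(\p)}$. There is no contribution from $\sqrt{d_j+1}$ or from \Cref{lm:shinatzero}.

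You are also missing a genuinely nontrivial final step. After applying $g$, \Cref{lm:OverlapTermsGhostOverlap} gives you overlaps indexed by $M\p$ with $M=GH_g^{-1}G^{-1}$, so the ghost identity yields $\normalizedOverlapC{s'}{\kappa\p}=(-1)^{\ell(M\p)}\normalizedOverlapC{s}{\p}^n$. You must therefore check that $(-1)^{\ell(M\p)}=(-1)^{\ell(\p)}$ for every $M\in\GLtwo{\Z/\db\Z}$; in even dimension this requires verifying $(1+(M\p)_1)(1+(M\p)_2)\equiv(1+p_1)(1+p_2)\bmod 2$, which the paper does by a direct parity argument using that $\det M$ is odd. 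Your remark that ``$\kappa$ is a scalar'' does not address this: the issue is the $\p$-dependence of $\ell$, not its compatibility with scaling.
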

\begin{rmkb}
    In the rank $1$ case the, empirical observations suggest that, if $t$ is an admissible tuple, then $(t,G,g)$ is a fiducial datum if and only if $\Det(G) = \pm 1$ and $g(\sqrt{\Delta_0}) = -\sqrt{\Delta_0}$.  If that is true, then $s'$ is automatically a fiducial datum, so this requirement does not need to be imposed as an additional assumption. 
\end{rmkb}
Before proving \Cref{thm:alignment}, we need to establish the following technical result:
\begin{lem}\label{lem:PreliminaryToAlignmentLemma}
    Let $t=(d,1,Q)\sim(K,j,1,Q)$ be an admissible tuple, and let $n$ be a positive integer coprime to 3.  Then $d_{nj}$ is divisible by $d_j = d$ and
    \eag{
    \frac{f_{nj}d_{nj}(1+d_{nj})}{f_j d_j}
    &\equiv
    \begin{cases}
        n(1+d_j) + d_j  \Mod{2d_j}
              \qquad & \text{if $d_j$ is even and $n\equiv \pm 2$ or $\pm 5\Mod{12}$,}\,
    \\
       n(1+d_j)\Mod{2d_j}
              \qquad & \text{otherwise.}\,
    \end{cases}
    }
\end{lem}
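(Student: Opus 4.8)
The plan is to reduce the statement to elementary properties of the variant Chebyshev polynomials $T^*_n$, $U^*_n$ of \Cref{dfn:variantChebyshev} together with their Maclaurin expansions in \Cref{lm:tustarprops}. Since $\gcd(n,3)=1$, \Cref{lem:dfdelprops}\eqref{it:dfdelprop1} gives $d_j \mid d_{nj}$, and since $j \mid nj$, \Cref{thm:UnitsConductorsIndicesProperties}\eqref{it:jdivkfjdivfk} gives $f_j \mid f_{nj}$; so $u := f_{nj}/f_j$ and $\kappa := d_{nj}/d_j$ are positive integers. By \Cref{thm:dnjfnjchebyexpns} we have $u = U^*_n(d_j)$ and $\kappa = T^*_n(d_j)/d_j$, where the latter is a genuine polynomial identity because \eqref{eq:tcheb} shows that $3 \nmid n$ forces $T^*_n$ to have zero constant term. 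Consequently $u\kappa = U^*_n(d_j)\cdot\bigl(T^*_n(d_j)/d_j\bigr)$ is the value at $x = d_j$ of a polynomial $P(x) \in \mbb{Z}[x]$.

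First I would reduce the left-hand side modulo $2d_j$. Writing $d_{nj} = \kappa d_j$,
\[
\frac{f_{nj}\,d_{nj}(1+d_{nj})}{f_j\,d_j} \;=\; u\kappa\,(1+\kappa d_j) \;=\; u\kappa + u\kappa^2 d_j \;\equiv\; u\kappa + u\kappa\, d_j \;=\; u\kappa(1+d_j) \pmod{2d_j},
\]
using $\kappa^2 \equiv \kappa \pmod 2$ in the middle congruence. Reducing $P(x)$ modulo $x$ and reading off the constant terms from \eqref{eq:tcheb} and \eqref{eq:ucheb} gives $P(0) = n$, hence $u\kappa \equiv n \pmod{d_j}$; write $u\kappa - n = d_j w$ with $w \in \mbb{Z}$. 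Then $u\kappa(1+d_j) - n(1+d_j) = d_j w(1+d_j)$. If $d_j$ is odd then $1+d_j$ is even, so this is divisible by $2d_j$ and the expression is $\equiv n(1+d_j) \pmod{2d_j}$ (the ``otherwise'' case). If $d_j$ is even then $1+d_j$ is odd, so modulo $2d_j$ the quantity $d_j w(1+d_j)$ equals $(w \bmod 2)\,d_j$, and the expression becomes $n(1+d_j) + (w \bmod 2)\,d_j$. Thus everything hinges on the parity of $w$ in the even case.

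To compute that parity, note that $w$ is the value at $x = d_j$ of the integer polynomial $\bigl(P(x)-n\bigr)/x$, so for $d_j$ even we get $w \equiv (\text{coefficient of } x \text{ in } P(x)) \pmod 2$. From the degree-$\le 1$ parts of $U^*_n$ and $T^*_n/x$ in \Cref{lm:tustarprops} this coefficient equals $\tfrac{n(n-1)}{6} + n\cdot\tfrac{n-1}{3} = \tfrac{n(n-1)}{2}$ when $n \equiv 1 \pmod 3$, and $-\bigl(\tfrac{n(n+1)}{6} + n\cdot\tfrac{n+1}{3}\bigr) = -\tfrac{n(n+1)}{2}$ when $n \equiv 2 \pmod 3$. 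A direct check of $\tfrac{n(n\mp 1)}{2} \bmod 2$ against $n \bmod 4$, combined with the value of $n \bmod 3$ via the Chinese Remainder Theorem, shows that $w$ is odd precisely when $n \equiv 2,5,7,10 \pmod{12}$, that is, $n \equiv \pm 2$ or $\pm 5 \pmod{12}$. This is exactly the stated dichotomy. The main obstacle is this last step: one must justify cleanly that the parity of $w$ is read off from the linear Maclaurin coefficient of $P(x) = U^*_n(x)T^*_n(x)/x$ — which relies both on $3\nmid n$ (so that $T^*_n(x)/x \in \mbb{Z}[x]$) and on the integrality of the relevant coefficients $\tfrac{n(n-1)}{6}$, $\tfrac{n-1}{3}$, and their analogues — and then carry out the elementary but fiddly reduction of ``that coefficient is odd'' to the congruence condition on $n$ modulo $12$.
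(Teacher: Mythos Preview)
Your proof is correct and is a genuinely cleaner route than the paper's.  The paper computes the two factors $\kappa=d_{nj}/d_j$ and $u=f_{nj}/f_j$ separately modulo $2d_j$, splitting into cases by $n\bmod 6$ (for $d_j$ odd) and $n\bmod 12$ (for $d_j$ even), and then multiplies through with $(1+d_{nj})$ case by case --- eight subcases in the even situation.  You instead first collapse the whole left-hand side to $u\kappa(1+d_j)\bmod 2d_j$ via the simple observation $\kappa^2\equiv\kappa\pmod 2$, and then work with the single integer polynomial $P(x)=U^*_n(x)\,T^*_n(x)/x$: the constant term $P(0)=n$ handles the odd-$d_j$ case outright, and in the even case the entire question reduces to the parity of the linear coefficient of $P$, which is a triangular number.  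Your worry about that last step is unfounded: since $U^*_n,\,T^*_n\in\mathbb{Z}[x]$ and $T^*_n(0)=0$ (because $3\nmid n$), one has $P\in\mathbb{Z}[x]$, and then $w=a_1+a_2d_j+\cdots\equiv a_1\pmod 2$ is immediate.  The individual quantities $\tfrac{n(n\mp1)}{6}$ and $\tfrac{n\mp1}{3}$ happen to be integers as well, but you do not need this --- only that their combination $a_1=\tfrac{n(n\mp1)}{2}$ is.  What your approach buys is a uniform argument with a single parity computation in place of an explicit case tabulation; the paper's approach has the advantage of giving $u$ and $\kappa$ individually modulo $2d_j$, which might be of independent use.
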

\begin{proof}
    The fact that $d_{nj}$ is divisible by $d_j$ follows from \Cref{thm:dnjfnjchebyexpns} and \Cref{lm:tustarprops}.

    Suppose $d_j$ is odd.  It follows from \Cref{lem:dfdelprops} that $d_{nj}$ is odd. So $d_{nj}/d_j$ is odd.  \Cref{thm:dnjfnjchebyexpns} and \Cref{lm:tustarprops} imply
    \eag{
      \frac{d_{nj}}{d_j} &= \frac{T^{*}_n(d_j)}{d_j}\equiv
      \begin{cases}
          n \Mod{d_j} \qquad &\text{if $n \equiv 1\Mod{3}$,}
          \\
         - n \Mod{d_j}  \qquad &\text{if $n\equiv 2 \Mod{3}$.}
      \end{cases}
    }
    The fact that $d_{nj}/d_j$ is odd means that, if $n$ is also odd, we must have $d_{nj}/d_j \equiv \pm n\Mod{2d_j}$, while if $n$ is even, we must have $d_{nj}/d_j \equiv\pm n + d_j\Mod{2d_j}$.  In other words:
    \eag{
         \frac{d_{nj}}{d_j} &\equiv 
      \begin{cases}
          n \Mod{2d_j}   \qquad &\text{if $n \equiv 1 \Mod{6}$,}
          \\
          -n + d_j  \Mod{2d_j} \qquad &\text{if $n \equiv2\Mod{6}$,}
          \\
          n + d_j  \Mod{2d_j} \qquad &\text{if $n \equiv4 \Mod{6}$,}
          \\
          -n   \Mod{2d_j}  \qquad &\text{if $n \equiv5\Mod{6}$.}
      \end{cases}
    }
    Turning to the ratio $f_{nj}/f_j$, 
    \Cref{thm:dnjfnjchebyexpns} and \Cref{lm:tustarprops}  imply
    \eag{
      \frac{f_{nj}}{f_j}
      =
      U^{*}_n(d_j)
      &\equiv
      \begin{cases}
          1 \Mod{d_j} \qquad &\text{if $n\equiv1\Mod{3}$,}
          \\
          -1 \Mod{d_j} \qquad &\text{if $n\equiv2 \Mod{3}$.} 
      \end{cases}
      \label{eq:UstardjOddExpn}
    }
    It follows from \Cref{lm:tustarprops} that $U^{*}_1(d_j)$ is odd, $U^{*}_2(d_j)$ is even, and $U^{*}_n(d_j) \equiv U^{*}_{n-2}(d_j)\Mod{2}$ for all $n>2$.  Consequently $U^{*}_n(d_j) \equiv n\Mod{2}$  for every positive integer $n$.  In conjunction with \eqref{eq:UstardjOddExpn}, this means that, if $n$ is odd, then $f_{nj}/f_j \equiv \pm 1\Mod{2d_j}$, while if $n$ is even, then $f_{nj}/f_j \equiv \pm 1 + d_j\Mod{2d_j}$.  In other words:
    \eag{
    \frac{f_{nj}}{f_j} &\equiv 
    \begin{cases}
        1 \Mod{2d_j} \qquad & \text{if $n\equiv1\Mod{6}$,}
        \\
        -1+d_j \Mod{2d_j} \qquad & \text{if $n\equiv2\Mod{6}$, }
        \\
        1 + d_j \Mod{2d_j} \qquad & \text{if $n\equiv4\Mod{6}$,}
        \\
        -1 \Mod{2d_j} \qquad & \text{if $n\equiv5\Mod{6}$.}
        \\
    \end{cases}
    }
    Putting these results together, we conclude
    \eag{
    \frac{f_{nj}d_{nj}(1+d_{nj})}{f_j d_j}
    &\equiv
    \begin{cases}
        n(1+nd_j)\Mod{2d_j}  \qquad & \text{if $n\equiv 1 \Mod{6}$,}
        \\
        (-1+d_j)(-n+d_j)(1+d_j(-n+d_j)) \Mod{2d_j} \qquad & \text{if $n\equiv 2 \Mod{6}$,}
        \\
        (1+d_j)(n+d_j)(1+d_j(n+d_j))\Mod{2d_j}  \qquad & \text{if $n\equiv 4 \Mod{6}$,}
        \\
        n(1-nd_j)) \Mod{2d_j} \qquad & \text{if $n\equiv 5 \Mod{6}$.}
    \end{cases}
    \nn
    &\equiv n(1+d_j)\Mod{2d_j}.
    }

    Suppose, on the other hand, that $d_j$ is even.  Then it follows from \Cref{thm:dnjfnjchebyexpns} and \Cref{lm:tustarprops} that
    \eag{
        \frac{d_{nj}}{d_j} = \frac{T^{*}_n(d_j)}{d_j}
       & \equiv \begin{cases}
            n+\left(\frac{n(n-1)}{6} \right)d_j \Mod{2d_j} \qquad & \text{if $n\equiv 1 \Mod{3}$,}
            \\
            -n+\left(\frac{n(n+1)}{6} \right)d_j \Mod{2d_j} \qquad & \text{if $n\equiv2\Mod{3}$,}
        \end{cases}
        \nn
        &\equiv\begin{cases}
            n \Mod{2d_j} \qquad & \text{if $n\equiv 1, 4\Mod{12}$,}
            \\
            -n+d_j \Mod{2d_j}  \qquad & \text{if $n\equiv 2, 5\Mod{12}$,}
            \\
            n+d_j \Mod{2d_j}  \qquad & \text{if $n\equiv 7,10\Mod{12}$,}
            \\
           - n \Mod{2d_j}  \qquad & \text{if $n\equiv 8, 11\Mod{12}$,}
        \end{cases}
    }
   and
    \eag{
     \frac{f_{nj}}{f_j} = U^{*}_n(d_j) 
            & \equiv \begin{cases}
            1+\left(\frac{n-1}{3} \right)d_j \Mod{2d_j} \qquad & \text{if $n\equiv 1\Mod{3}$,}
            \\
            -1+\left(\frac{n+1}{3} \right)d_j \Mod{2d_j} \qquad & \text{if $n\equiv 2\Mod{3}$,}
        \end{cases}
        \nn
        &\equiv \begin{cases}
            1 \Mod{2d_j} \qquad & \text{if $n\equiv 1, 7\Mod{12}$,}
            \\
            -1+d_j \Mod{2d_j}\qquad & \text{if $n\equiv2, 8\Mod{12}$,}
            \\
            1+d_j \Mod{2d_j} \qquad & \text{if $n\equiv4, 10\Mod{12}$,}
            \\
           - 1 \Mod{2d_j} \qquad & \text{if $n\equiv5, 11\Mod{12}$.}
        \end{cases}
    }
    Combining these results, we find
    \eag{
        \frac{f_{nj}d_{nj}(1+d_{nj})}{f_j d_j}
    &\equiv
    \begin{cases}
         n(1+nd_j) \Mod{2d_j} \qquad & \text{if $n\equiv 1 \Mod{12}$,}
        \\
        (-n+d_j)(-1+d_j)(1+d_j(-n+d_j))\Mod{2d_j} \qquad & \text{if $n\equiv 2  \Mod{12}$,}
        \\
        n(1+d_j)(1+nd_j) \Mod{2d_j} \qquad & \text{if $n\equiv 4  \Mod{12}$,}
        \\
        (n-d_j)(1+d_j(-n+d_j))\Mod{2d_j} \qquad & \text{if $n\equiv 5  \Mod{12}$,}
        \\
        (n+d_j)(1+d_j(n+d_j)) \Mod{2d_j}\qquad & \text{if $n\equiv 7  \Mod{12}$,}
        \\
         n(1-d_j) (1-nd_j)\Mod{2d_j}\qquad & \text{if $n\equiv 8  \Mod{12}$,}
        \\
        (n+d_j)(1+d_j)(1+d_j(n+d_j)) \Mod{2d_j}\qquad & \text{if $n\equiv 10  \Mod{12}$,}
        \\
        n(1-nd_j) \Mod{2d_j} \qquad & \text{if $n\equiv 11  \Mod{12}$,}
    \end{cases}
    \nn
      &\equiv  \begin{cases}
         n(1+d_j)\Mod{2d_j}  \qquad & \text{if $n\equiv\pm 1, \pm 4 \Mod{12}$,}
        \\
         n(1+d_j) + d_j \Mod{2d_j} \qquad & \text{if $n\equiv\pm 2, \pm 5\Mod{12}$,}
        \end{cases}
    }
    completing the proof.
\end{proof}
\begin{proof}[Proof of \Cref{thm:alignment}] The fact that $t'$ is admissible is immediate.  Referring to \Cref{dfn:SFKPhase}, we see
\eag{\label{eq:SFPhaseOftPrimeKappapExpression}
\SFPhase{t'}{\kappa\mbf{p}} &= (-1)^{s_{d_{nj}}(\kappa \mbf{p})}e^{-\frac{\pi i}{12}\rade(A_{t'})}\rtu_{d_{nj}}^{-\frac{f_{nj}}{f}Q(\kappa\mbf{p})}
}
where $f$ is the conductor of $Q$.  We first show
\eag{\label{eq:sPowerdSubnjExpression}
(-1)^{s_{d_{nj}}(\kappa \mbf{p})} &= (-1)^{n+1}(-1)^{ns_{d_j}(\mbf{p})}.
}
Indeed, if $d_j$ is odd, then it follows from \Cref{lem:dfdelprops} that $d_{nj}$ is odd, implying
\eag{
(-1)^{s_{d_{nj}}(\kappa \mbf{p})}&=-1 =(-1)^{n+1}(-1)^{ns_{d_j}(\mbf{p})}.
}
On the other hand, if $d_j$ is even, then it follows from Lemmas \ref{lm:tustarprops} and~\ref{lem:dfdelprops} that $d_{nj}$ is even and $\kappa= T^{*}_n(d_j)/d_j \equiv n\Mod{2}$, implying
\eag{
(-1)^{s_{d_{nj}}(\kappa \mbf{p})}&= (-1)^{(1+np_1)(1+np_2)} = (-1)^{n+1}(-1)^{n(1+p_1)(1+p_2)} = (-1)^{n+1}(-1)^{ns_{d_j}(\mbf{p})}.
}
Turning to the second factor on the right hand side of \eqref{eq:SFPhaseOftPrimeKappapExpression}, observe that it follows from \Cref{tm:symgp} that 
\eag{\label{eq:AtPrimePowerOfAt}
A\vpu{n}_{t'}& =\canrep_Q(\vn^{3nj}) =  A_t^n
}
and
\eag{
\Tr(A_{t})&= \Tr(\vn^{3j}) = d_{3j}-1 > 1.
}
\Cref{lem:RademacherProperties} consequently implies
\eag{
e^{-\frac{\pi i}{12}\rade(A_{t'})} &= 
e^{-\frac{\pi i}{12}\rade(A_{t}^n)}
=e^{-\frac{n\pi i}{12}\rade(A_{t})}.
}
Finally, using \Cref{lem:PreliminaryToAlignmentLemma}, the last
 factor on the right hand side of \eqref{eq:SFPhaseOftPrimeKappapExpression} becomes
 \eag{
 \rtu_{d_{nj}}^{-\frac{f_{nj}}{f}Q(\kappa \mbf{p})} &= e^{-\left(\frac{\pi i}{d_j}\right)\left(\frac{f_{nj}d_{nj}(1+d_{nj})}{d_{j}f_j}\right)\left(\frac{f_j}{f}Q(\mbf{p})\right)}
 \nn
 &=\begin{cases}
     e^{-\left(\frac{\pi i}{d_j}\right)\left(n(1+d_j)+d_j\right)\left(\frac{f_j}{f}Q(\mbf{p})\right)} \qquad & \text{if $d_j$ is even and $n\equiv \pm 2$ or  $\pm 5\Mod{12}$,}
     \\
     e^{-\left(\frac{\pi i}{d_j}\right)\left(n(1+d_j)\right)\left(\frac{f_j}{f}Q(\mbf{p})\right)}\qquad & \text{otherwise,}
 \end{cases}
 \nn
 &=\begin{cases}
     (-1)^{\frac{f_j}{f}Q(\mbf{p})}\rtu_{d_j}^{-\frac{nf_j}{f}Q(\mbf{p}}\qquad & \text{if $d_j$ is even and $n\equiv \pm 2$ or  $\pm 5\Mod{12}$,}
     \\
     \rtu_{d_j}^{-\frac{nf_j}{f}Q(\mbf{p}}\qquad & \text{otherwise.}
 \end{cases}
 }
      It follows from \Cref{lem:fjfqcmpsoddeven} that, if $d_j$ is even, then  
     \eag{
     (-1)^{\frac{f_j}{f}Q(\mbf{p})} &= (-1)^{p_1^2+p_1p_2+p_2^2} = (-1)^{n+1}(-1)^{n+(1+p_1)(1+p_2)}.
     }
      Hence
      \eag{
       \rtu_{d_{nj}}^{-\frac{f_{nj}}{f}Q(\kappa \mbf{p})} &= (-1)^{n+1}(-1)^{\ell(\mbf{p})}
        \rtu_{d_j}^{-\frac{nf_j}{f}Q(\mbf{p})}.
      }
      Combining these results, we deduce
\eag{
\SFPhase{t'}{\kappa\mbf{p}} &= 
(-1)^{\ell(\mbf{p})}(-1)^{ns_d(\mbf{p})}
e^{-\frac{n\pi i}{12}\rade(A_{t})}
        \rtu_{d_j}^{-\frac{nf_j}{f}Q(\mbf{p})}
= (-1)^{\ell(\mbf{p})}\left(\SFPhase{t}{\mbf{p}}\right)^n.
}
Using \eqref{eq:AtPrimePowerOfAt}, together with the fact that $\qrt_{t'}=\qrt_{Q,+}=\qrt_t$ (see \Cref{dfn:GhostOverlaps}), we also find
\eag{
\sfc{d_{nj}^{-1} \kappa\mbf{p}}{A_{t'}}{\qrt_{t'}} &= \sfc{d_j^{-1}\mbf{p}}{A^n_t}{\qrt_t}.
}
It follows from~\eqref{eq:coboundary} that, for all $\tau\in \mbb{H}$,
\eag{
    \sfc{d^{-1}_j \mbf{p}}{A^n_t}{\tau}
    &=\frac{\varpi\left(\pt{d^{-1}_j \mbf{p}}{A^n_{t}\tau},A^n_t \tau\right)}{\varpi\left(\pt{d^{-1}_j \mbf{p}}{\tau},\tau\right)}
    \nn
    &= \frac{\varpi\left(\pt{d^{-1}_j \mbf{p}}{A^n_{t}\tau},A^n_t \tau\right)}{\varpi\left(\pt{d^{-1}_j \mbf{p}}{A^{n-1}_t\tau},A^{n-1}_t\tau\right)}
    \times
    \dots
    \times
    \frac{\varpi\left(\pt{d^{-1}_j \mbf{p}}{A_{t}\tau},A_t \tau\right)}{\varpi\left(\pt{d^{-1}_j \mbf{p}}{\tau},\tau\right)}
     }
     Taking the limit as $\tau \to \qrt_t$ and using the fact that $\qrt_t$ is a fixed point of $A_t$ we deduce
     \eag{
     \sfc{d_{nj}^{-1} \kappa\mbf{p}}{A_{t'}}{\qrt_{t'}}
     &= \left(\sfc{d_j^{-1}\mbf{p}}{A_t}{\qrt_{t}}\right)^n.
     }
     Hence
     \eag{
    \normalizedGhostOverlapC{t'}{\kappa\mbf{p}} = 
    \SFPhase{t'}{\kappa\mbf{p}}
    \sfc{d_{nj}^{-1}\kappa \mbf{p}}{A_{t'}}{\qrt_{t'}}=(-1)^{\ell(\mbf{p})}\left(\SFPhase{t}{\mbf{p}}\right)^n \left(\sfc{d_j^{-1}\mbf{p}}{A_t}{\qrt_{t}}\right)^n = (-1)^{\ell(\mbf{p})}
    \left(\normalizedGhostOverlapC{t}{\mbf{p}}\right)^n
\label{eq:NormalizedGhostPowerFormula}
     }
     It  follows from \Cref{cor:AlternativeDefinitionOfCandidateGhostOverlaps} and \Cref{lm:dnjPlus1Expression} that
     \eag{
     \frac{\ghostOverlapC{t'}{\kappa \mbf{p}}}{\left(\ghostOverlapC{t}{\mbf{p}}\right)^n}
     &= 
     q \left(\frac{\normalizedGhostOverlapC{t'}{\kappa\mbf{p}}}{\left(\normalizedGhostOverlapC{t}{\mbf{p}}\right)^n}\right),
&
     \frac{\overlapC{s'}{\kappa \mbf{p}}}{\left(\overlapC{s}{\mbf{p}}\right)^n}
     &=
          q \left(\frac{\normalizedOverlapC{s'}{\kappa\mbf{p}}}{\left(\normalizedOverlapC{s}{\mbf{p}}\right)^n}\right),
          \label{eq:NormalizedOverlapRatios}
     }
where
\eag{
q &= 
\begin{cases}
\frac{(d_j+1)^{\frac{n}{2}}}{d_{\frac{nj}{2}}-1}
\qquad & n \equiv 0\Mod{2},
\\
\frac{(d_j+1)^{\frac{n-1}{2}}}{1+\sum_{r=1}^{\frac{n-1}{2}}(-1)^r d_{rj}} \qquad & n\equiv 1\Mod{4},
\\
\frac{(d_j+1)^{\frac{n-1}{2}}}{2+\sum_{r=1}^{\frac{n-1}{2}}(-1)^r d_{rj}} \qquad & n\equiv 1\Mod{4}.
\end{cases}
}
The fact that $q\in \mbb{Q}$ means we can use \Cref{lm:OverlapTermsGhostOverlap} in conjunction with ~\eqref{eq:NormalizedGhostPowerFormula} and~\eqref{eq:NormalizedOverlapRatios} to deduce
\eag{
\frac{\normalizedOverlapC{s'}{\kappa\mbf{p}}}{\left(\normalizedOverlapC{s}{\mbf{p}}\right)^n}
&= q^{-1}g\left(\frac{\ghostOverlapC{t'}{\kappa GH_g^{-1}G^{-1} \mbf{p}}}{\left(\ghostOverlapC{t}{ GH_g^{-1}G^{-1}\mbf{p}}\right)^n}\right)
=g\left(\frac{\normalizedGhostOverlapC{t'}{\kappa GH_g^{-1}G^{-1} \mbf{p}}}{\left(\normalizedGhostOverlapC{t}{ GH_g^{-1}G^{-1}\mbf{p}}\right)^n}\right)
=(-1)^{\ell(GH_g^{-1}G^{-1}\mbf{p})}.
}
     
     It remains to show that
     \eag{\label{eq:lOfpTransform}
     (-1)^{\ell(GH_g^{-1}G^{-1}\mbf{p})} &= (-1)^{\ell(\mbf{p})}.
     }
     The statement is immediate if $d_j$ is odd, since then $\ell(\mbf{p}) = n+1$ independently of $\mbf{p}$.  Suppose, on the other hand, that $d_j$ is even.  Let 
     \eag{
     M & = GH_g^{-1} G^{-1} = \bmt \ma & \mb \\ \mc &\md \emt.
     }
     The fact that $M \in \GLtwo{\mbb{Z}/2d_j\mbb{Z}}$ means $\Det M$ is odd.  It also means that at least one of $\ma, \mc$, and at least one of $\mb, \md$ must be odd,  implying  
     $(1+\ma)(1+\mc)$ and $(1+\mb)(1+\md)$ are both even.  Hence
     \eag{
        (-1)^{(1+(M\mbf{p})_1)(1+(M\mbf{p})_2)}
        &= 
        (-1)^{1+(\ma p_1 + \mb p_2) + (\mc p_1 + \md p_2) + (\ma p_1 + \mb p_2)  (\mc p_1 + \md p_2) }
        \nn
        &= (-1)^{1+(\ma + \mc + \ma \mc)p_1 + (\mb + \md + \mb \md) p_2 + (\ma \md + \mb \mc) p_1 p_2}
        \nn
        &= (-1)^{1+p_1+p_2 + (1+\ma)(1+\mc)p_1 + (1+\mb)(1+\md) p_2 + \Det(M) p_1p_2}
        \nn
        &= (-1)^{(1+p_1)(1+p_2)}.
     }
     \Cref{eq:lOfpTransform} now follows.  Hence
     $\normalizedOverlapC{s'}{\kappa \mbf{p}}
     =(-1)^{\ell(\mbf{p})}\left(\normalizedOverlapC{s}{\mbf{p}}\right)^n$.
\end{proof}

\section{Necromancy and numerical computation}
\label{sec:nec}

Suppose one wishes to use the preceding conjectures for constructing ghosts to compute an \textit{explicit} $r$-SIC fiducial, either exactly or just a numerical approximation. 
We call any procedure for doing so \textit{necromancy}, so-named because it ``reanimates'' the ghost fiducial as a $r$-SIC fiducial. 
In this section, we describe a method for necromancy specialized to $1$-SICs. 

There is a straightforward brute-force algorithm to achieve necromancy. 
Using \eqref{eq:ghostoverlapformula}, we first compute a ghost fiducial to arbitrary precision. 
We wish to round this numerical approximation into the closest point in a candidate number field to identify an exact representation of the ghost fiducial. 
Powerful tools with polynomial runtimes such as the Lenstra--Lenstra--Lovász (LLL) lattice basis reduction algorithm or other integer relation algorithms can be employed here, though we note that finding a closest vector in a lattice is not believed to be efficient in general. 
A conjecture for the specific number field is provided by the existing conjectures~\cite{Appleby:2020,Kopp2020b,Kopp2020c}. 
Then a Galois automorphism that flips the sign of $\sqrt{\Delta_0}$ can be applied to compute the $1$-SIC fiducial. 
If the conjectures are correct, then with enough starting precision and patience for finding the exact representation, the result should be an exact expression for a $1$-SIC fiducial. 
This exact expression can then be evaluated to any precision that one likes for numerical approximation. 

Unfortunately this approach is impractical for two reasons.  
The first difficulty is that the relevant number field typically has very high degree, and the precision required to round into this field would therefore be impractically large for even storing a $1$-SIC fiducial for modestly large $d$.  
Second, the convergence of LLL or similar algorithms is either far too slow in practice with such high-degree number fields or yields too weak a guarantee on accuracy with the ``true'' closest point to allow a direct computation of a ghost in this fashion. 

To circumvent these difficulties, we now describe an alternative heuristic approach to necromancy that avoids the complexity bottleneck by working entirely in a (typically much lower degree) ring class field. 
One still requires very high precision already for moderately large dimensions. 
For example, we required $\sim 10^5$ digit precision in dimension $d = 100$ to implement this approach in detail. 
However, this is not impractically large for a modern laptop. 
The price for this reduced complexity is that the method itself is more involved. 

We stress that the procedure for necromancy discussed in this section is at present only a heuristic, even assuming our conjectures. 
However, we believe it should be possible to specify a complete algorithm which provably converges to a $1$-SIC assuming only our conjectures. 
It is likely that the methods here extend to $r$-SICs for $r>1$, but we have not attempted to systematically approach numerical computation for $r>1$, so we leave this case to future work and focus on $1$-SICs for the rest of this section. 

Let us first provide a high-level overview of our method of necromancy. 
The first step is to calculate a numerical estimate of a ghost fiducial associated to an admissible tuple $t = (d,1,Q)$. 
In \Cref{ssc:CalculatingShinFunction} we show how this can be done using the integral representation of the double sine function~\cite{Shintani1977,Kopp2020d}. 
It is not practical to use numerical integration to achieve the high precision required by the subsequent steps. 
We therefore use it to calculate an initial, low precision  fiducial, and then amplify its precision using Newton's method, as described in \Cref{subsec:precisionbump}. 
We then describe in \Cref{ssc:ghostinvs} how to (numerically) compute a set of invariants that we call \emph{ghost invariants}. 
The exact versions of these ghost invariants conjecturally live in the ring class field $H_t$ associated to $t$ (recall \Cref{df:RingClassField}). 
Since this is a field extension with substantially lower degree than the field containing the \ghostOverlapsText{}, we can find an exact representation of the ghost invariants without too much difficulty using an integer relation algorithm. 
Importantly, the ghost invariants contain enough information about the original \ghostOverlapsText{} to reconstruct them up to an action of the Galois group. 
Thus, as we show in \Cref{ssc:reconstructSICoverlaps}, we can find exact Galois conjugates for the ghost invariants and from them obtain the subsequent ``SIC invariants''. 
The SIC invariants are at this point specified as exact numbers in a number field, but to make the remaining steps computationally tractable, we again resort to numerical approximations after the conversion step of passing from ghost to SIC. 
Notably, we no longer need the ultra-high precision required in the initial rounding step.
From the SIC invariants, we can numerically reconstruct the \SICoverlapsText{}, again up to an action of the Galois group. 
While in principle we can do this on each separate Galois orbit and then combine the results, we can apply an additional heuristic to avoid having to match Galois actions across multiple orbits. 
In \Cref{ssc:convexoptim} we describe how a convex optimization on a single maximal orbit, in every case tried, avoids the need to search for the unknown Galois action across multiple orbits. 
The output of this procedure is a $1$-SIC fiducial vector of moderate precision; to gain confidence that this is a true $1$-SIC fiducial vector, one can again use Newton's method to enhance the precision to any desired level. 

\subsection{Numerical calculations}

We have implemented the necromancy method described here in a free open-source Julia package called \texttt{Zauner.jl}~\cite{SteveGitHub}, which makes essential use of \texttt{Hecke.jl}~\cite{Hecke}. 
Using this implementation, we have calculated numerical approximations to all of the $1$-SICs in every dimension up to $d = 20$, with the exception of the $1$-SIC in $d=12$, which has $F_a$ symmetry. 
We have not yet implemented necromancy for the $F_a$-symmetric case because of how the Galois group structure is presently computed in our software. 
However, including $F_a$ orbits should be a relatively straightforward extension that we hope to implement soon. 

To see if we could achieve a moderately large dimension with our approach, we also used this algorithm to compute four numerical $1$-SICs in dimension $d=100$, three of which are new. 
This was somewhat challenging computationally, and it seems likely that our current ideas will need refinement to allow computation of a $1$-SIC in, say, $d=1000$. 

The only required input to the \texttt{necromancy} function is an admissible tuple $(d,1,Q)$, and in principle this function requires no fine-tuning to output a $1$-SIC. 
As a practical matter however, the bottleneck in extending our current implementation to more dimensions is the convergence of the integer relation algorithm. 
The efficacy of the method hinges on convergence to the correct element of the ring class field for every ghost invariant. 
We find in practice that this is challenging to achieve within our current heuristics. 
We hope to improve the speed, convergence, and generality of this code (in particular to $F_a$ orbits and $r > 1$) in future versions.

Throughout the remainder of this section, we are implicitly working with a fixed admissible tuple $(d,1,Q)$. We further assume that the twist $G=I$, so that the fiducial datum is   $(d,1,Q,I,g)$ for some  $g$.
To ease notation, nowhere in this section do we explicitly label the dependence on the tuple or datum.  In particular the generators of $\mcl{S}(Q)$, $\mcl{S}_d(Q)$ specified in \Cref{dfn:AssociatedStabilizers} will simply be denoted $L$, $A$ respectively, and the root of $Q$ appearing in \Cref{dfn:GhostOverlaps} will simply be denoted $\rho$.

\subsection{Calculating the Shintani--Faddeev modular cocycle}\label{ssc:CalculatingShinFunction}

The first step in our necromancy method is to compute a ghost fiducial vector. 
In Definitions~\ref{dfn:GhostOverlaps},~\ref{dfn:CandidateGhostAndSICFiducials}, 
a ghost fiducial corresponding to the admissible tuple $(d,r,Q)$ is expressed in terms of the \SFKShort{} modular cocycle  $\sfc{d^{-1}\mbf{p}}{A}{\qrt}$. 
This in turn is expressed in terms of the \SFKShort{} Jacobi cocycle $\sfj{A}{z}{\tau}$ via \eqref{eq:shindf}. 
For $\tau\in \mbb{H}$ the latter is given by a ratio of $q$-Pochhammer symbols via \eqref{eq:sfjfrm}. 
However, we need its values for $\tau \in \mbb{R}$. 
Although these can be obtained by taking a limit, it is numerically more efficient to calculate them directly, via an integral representation, using a procedure we now describe.

Let $L=\smt{\ma & \mb \\ \mc & \md}\in \SL_2(\mbb{Z})$ be arbitrary. Recall (\Cref{dfn:GeneratorsOfSL2Z}) that $\SL_2(\mbb{Z})$ is generated by $T=\smt{1 & 1 \\ 0 & 1}$, $S=\smt{0 & -1\\ 1 & 0}$.
If $\mc =0$ then either $L=T^k$, in which case $\sfj{L}{z}{\tau}=1$ for all $z$, $\tau$, or $L=-T^k$, in which case the function is singular for all $\tau \in \mbb{R}$. 
If $\mc<0$ we can express $\sigma_L$ in terms of $\sigma_{L^{-1}}$. 
It is therefore sufficient to give a procedure for calculating the function when $\mc>0$. 
For this we use Theorem~\ref{thm:tsaltexpn} to deduce the existence of a sequence of integers $r_1,\dots , r_{n+1}$ such that 
\eag{
L&= T^{r_1}S \dots ST^{r_{n+1}}
\label{eq:lexpn}
}
where $r_j\ge 2$ for $1 < j < n+1$. 
The theorem also shows that if $L_j = T^{r_j}S \dots S T^{r_{n+1}}$ then $\DD_L = \DD_{L_1} \subset \DD_{L_2} \dots \subset \DD_{L_{n+1}}$. 
Consequently
\eag{
\sfj{L}{z}{\tau}&= \sfj{T^{r_1}S}{\frac{z}{j_{L_2}(\tau)}}{L_2\cdot\tau} \dots 
 \sfj{T^{r_n}S}{\frac{z}{j_{L_{n+1}}(\tau)}}{L_{n+1}\cdot\tau}\sfj{L_{n+1}}{z}{\tau}
}
for all $\tau \in \DD_L$. 
Using
\eag{
\sfj{L_{n+1}}{\tau}{z}& = \sfj{T^{r_{n+1}}}{z}{\tau} = 1
}
and
\eag{
\sfj{T^mS}{z}{\tau} &= \sfj{T^m}{\frac{z}{j_S(\tau)}}{S\cdot\tau} \sfj{S}{z}{\tau} =\sfj{S}{z}{\tau}, 
}
this becomes
\eag{
\sfj{L}{z}{\tau}&= \sfj{S}{\frac{z}{j_{L_2}(\tau)}}{L_2\cdot\tau} \dots 
 \sfj{S}{\frac{z}{j_{L_{n+1}}(\tau)}}{L_{n+1}\cdot\tau}.
\label{eq:sfjldecom}
}
for all $\tau \in \DD_L$.
The problem thus reduces to calculating $\sfj{S}{z}{\tau}$.  This can be done using the formula~\cite{Shintani1977,Kopp2020d}
\eag{
\sfj{S}{z}{\tau}&= \frac{e^{\frac{\pi i}{12\tau}\left(6z^2+6(1-\tau)z+\tau^2-3\tau+1\right)}}{\dbs(z+1,\tau)}
\label{eq:SFJacobiCocycleTermsDoubleSine}
}
where $\dbs(z+1,\tau)$ is the double sine function.
Note that the double sine function as usually defined has three arguments; we are employing the shorthand~\cite{Kopp2020d} $\dbs(z,\tau,1) = \dbs(z,\tau)$.  Note also that we are using the definition of Shintani~\cite{Shintani1977} and Kurokawa and Koyama~\cite{Kurokawa:2003} which is prevalent in the mathematics literature, as opposed to the definition of Ponsot~\cite{Ponsot:2004} which is more prevalent in the physics literature.
We can calculate $\dbs(z+1,\tau)$ explicitly using the integral representation\cite{Ponsot:2004,Kopp2020d},
\eag{
\dbs(z+1,\tau)&=\exp\left(-\int_0^{\infty}\left(\frac{\sinh\left(\frac{\tau-1-2z}{2}\right)t}{2\sinh\left(\frac{t}{2}\right)\sinh\left(\frac{\tau t}{2}\right)}-\frac{\tau-1-2z}{\tau t}\right)\frac{dt}{t}\right)
\label{eq:dsintrep}
}
valid for $\re(\tau)>0$ and $-1< \re(z) < \re(\tau)$. 
To use this integral representation in ~\eqref{eq:sfjldecom}, one needs, firstly, that $\re(L_r\cdot\tau) >0$ for $r=2$, \dots, $n+1$. 
A sufficient condition for that to be true is that $\re(j_L(\tau))>0$. 
In particular, it is true for the case that interests us, $\tau \in \DD_L \cap \mbb{R}$. 
Indeed, let $\tau = x+i y$ with $x$, $y\in \mbb{R}$. 
Then, in the notation of Theorem~\ref{thm:tsaltexpn}, $\re(j_L(\tau))>0$ implies $x+\md_r/\mc_r >0$ for $r=2, \ldots, n+1$ and, consequently,
\eag{
\re\left(L_r\cdot\tau\right)&=
\begin{cases}
\frac{\mc_r}{\mc_{r+1}}\left(\frac{\left(x+\frac{\md_r}{\mc_r}\right)\left(x+\frac{\md_{r+1}}{\mc_{r+1}}\right)+y^2}{\left(x+\frac{\md_{r+1}}{\mc_{r+1}}\right)^2+y^2}\right) \qquad &r=2,\dots , n
\\
\mc_{n+1}\left(x+\frac{\md_{n+1}}{\mc_{n+1}}\right)\qquad & r = n+1
\end{cases}
\nn
&>0
}
To deal with the problem that $\re\left(z/j_{L_r}(\tau) \right)$ may not be in the required interval we may use the fact 
\eag{
\sfj{S}{z+m_1\tau+m_2}{\tau} &=\frac{\qp_{m_1}(z,\tau)}{\qp_{-m_2}\left(\frac{z}{\tau},-\frac{1}{\tau}\right)}\sfj{S}{z}{\tau}
}
for all $m_1$, $m_2\in \mbb{Z}$ and all $\tau\in \DD_S$.

If we can calculate one $1$-SIC on a given extended Clifford group orbit, then we can easily calculate all the others by applying the appropriate unitary or anti-unitary transformation. 
To make the most efficient use of available resources, one should choose the quadratic form $Q$ appearing in the admissible tuple $t=(d,1,Q)$ in such a way as to minimize the length of the expansion in ~\eqref{eq:lexpn}. 
It follows from Theorem~\ref{thm:effexpn} that to do this we need to choose $Q$ to be HJ-reduced and such that the HJ-continued fraction expansion of $\beta_{Q,+}$ has minimal period. 
The HJ-reduced continued fraction then tells us the integers $r_1$, \dots, $r_{n+1}$ in the expansion in ~\eqref{eq:lexpn} (see Appendix~\ref{ap:hjcont} for the definitions and a summary of the relevant properties of HJ-reduced forms and HJ-continued fractions). 
Appropriate choices of $Q$ are tabulated in Appendix~\ref{ap:sicdata} for $d=4$ to $100$.

\subsection{Precision enhancement with Newton's method}
\label{subsec:precisionbump}
The method described below does not require us to calculate the full set of $d^2$ \ghostOverlapsText{}. 
It does, however, require us to calculate a subset of size $\gg d \log(d)^c$ for some absolute constant $c$. 
To convert this subset into the corresponding subset of \SICoverlapsText{} requires the \ghostOverlapsText{} to be calculated to very high precision ($10^5$ digit precision in dimension 100). 
Doing this using ~\eqref{eq:dsintrep} alone would be prohibitively slow. 
We therefore use the alternative method we now describe.
\begin{lem}
    Let $\tilde{\Pi}$ be a ghost $1$-SIC fiducial.  Then there exists $|\psi\ra \in \mcl{L}(\mbb{C}^d)$ such that
    \eag{
    \tilde{\Pi} &=\lambda |\psi\ra \la \psi | U_P
    }
    where $\lambda = \la \psi |U_P|\psi\ra  = \pm 1$.
\end{lem}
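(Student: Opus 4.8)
The plan is to use only the two structural features of a ghost $1$-SIC fiducial: that $\tilde\Pi$ is a rank-$1$ idempotent and that it is parity-Hermitian, $\tilde\Pi^\dagger = U_P\tilde\Pi U_P^\dagger$. First I would record the elementary properties of $U_P$ that the argument needs. Since $U_P|j\rangle = |-j\rangle$ on the standard basis, we have $U_P^2 = I$; combined with unitarity this gives $U_P^\dagger = U_P^{-1} = U_P$, so $U_P$ is a Hermitian involution. Next, because $\tilde\Pi$ has rank $1$ and satisfies $\tilde\Pi^2 = \tilde\Pi$, it admits a factorization $\tilde\Pi = |\psi_0\rangle\langle\phi_0|$ with $|\psi_0\rangle,|\phi_0\rangle \neq 0$ and $\langle\phi_0|\psi_0\rangle = 1$ (the normalization being forced by idempotency together with $\tilde\Pi \neq 0$).

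The key step is to substitute this factorization into the parity-Hermiticity relation. The left-hand side is $\tilde\Pi^\dagger = |\phi_0\rangle\langle\psi_0|$, while the right-hand side is $U_P\tilde\Pi U_P^\dagger = (U_P|\psi_0\rangle)(\langle\phi_0|U_P)$, using $U_P^\dagger = U_P$. These are two rank-$1$ operators that agree, so comparing their (one-dimensional) ranges gives a scalar $\mu \neq 0$ with $U_P|\psi_0\rangle = \mu|\phi_0\rangle$, and then matching the remaining factor gives $\langle\psi_0| = \mu\langle\phi_0|U_P$, equivalently $|\psi_0\rangle = \overline\mu\, U_P|\phi_0\rangle$. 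Applying $U_P$ to the relation $U_P|\psi_0\rangle = \mu|\phi_0\rangle$ and using $U_P^2 = I$ gives $|\psi_0\rangle = \mu\,U_P|\phi_0\rangle$; comparing with $|\psi_0\rangle = \overline\mu\,U_P|\phi_0\rangle$ and noting $U_P|\phi_0\rangle \neq 0$ forces $\mu = \overline\mu$, so $\mu \in \mathbb{R}$. Since $U_P$ is invertible and $|\psi_0\rangle \neq 0$, also $\mu \neq 0$. Substituting $\langle\phi_0| = \mu^{-1}\langle\psi_0|U_P$ (valid since $\mu^{-1}$ is real and $U_P^\dagger = U_P$) into $\tilde\Pi = |\psi_0\rangle\langle\phi_0|$ yields $\tilde\Pi = \mu^{-1}|\psi_0\rangle\langle\psi_0|U_P$, and pairing $\langle\phi_0|$ with $|\psi_0\rangle$ gives $\langle\psi_0|U_P|\psi_0\rangle = \mu$.

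Finally I would normalize. Setting $|\psi\rangle = |\mu|^{-1/2}|\psi_0\rangle$ (and correspondingly rescaling $\langle\phi_0|$ by $|\mu|^{1/2}$, which does not change $\tilde\Pi$) rescales $\langle\psi_0|U_P|\psi_0\rangle = \mu$ to $\lambda := \langle\psi|U_P|\psi\rangle$, which equals $+1$ when $\mu > 0$ and $-1$ when $\mu < 0$. The identity of the previous paragraph then reads $\tilde\Pi = \lambda^{-1}|\psi\rangle\langle\psi|U_P = \lambda|\psi\rangle\langle\psi|U_P$, since $\lambda^{-1} = \lambda$ for $\lambda = \pm 1$. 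I do not expect a genuine obstacle: the whole argument is elementary rank-one linear algebra, and the only points demanding care are the reality of $\mu$ — which is precisely where the involutivity $U_P^2 = I$ is used — and the non-vanishing of $\mu$, which is needed to make the rescaling and the substitution $\langle\phi_0| = \mu^{-1}\langle\psi_0|U_P$ legitimate.
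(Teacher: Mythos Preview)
Your argument is correct and follows essentially the same route as the paper: factor the rank-$1$ idempotent as $|\psi\rangle\langle\phi|$, use parity-Hermiticity to force $|\phi\rangle \propto U_P|\psi\rangle$, then rescale. The only cosmetic difference is that you obtain reality of the scalar directly from the involutivity $U_P^2 = I$, whereas the paper leaves it implicit via $\Tr(\tilde\Pi)=1$ together with $\langle\psi|U_P|\psi\rangle\in\mathbb{R}$ (since $U_P$ is Hermitian); both routes are equally valid.
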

\begin{rmkb}
    We will refer to $|\psi\ra$ as the ghost SIC fiducial vector.
\end{rmkb}
\begin{proof}
    The fact that $\tilde{\Pi}$ is rank 1 means we can write it in the form 
\eag{
\tilde{\Pi} &= |\psi\ra \la \phi|
}
for some pair of vectors satisfying $\la \phi|\psi\ra = 1$.  The fact that $\tilde{\Pi}_s$ is a \pprj{} (see \Cref{def:pProjector} and discussion following) means
\eag{
|\phi\ra \la \psi| &= \left(|\psi\ra \la \phi|\right)^{\dagger} = U_P|\psi\ra \la \phi|U_P &&\implies & |\phi\ra &= \left(\frac{\la \phi |U_P | \psi\ra}{\la \psi|\psi\ra}\right) U_P|\psi\ra.
}
So
\eag{
\tilde{\Pi} &= \lambda |\psi\ra \la \psi | U_P
}
for some $\lambda$.  The fact that $\Tr(\tilde{\Pi}) =1$ means $\lambda \la \psi|U_P|\psi\ra=1$.  Our freedom to make  the replacements $|\psi\ra \to \sqrt{|\lambda|} | \psi\ra$ and $\lambda\to \lambda/|\lambda|$ means we can choose $\lambda$, $|\psi\ra$ so that $\lambda = \la \psi |U_P|\psi\ra  = \pm 1$.
\end{proof}
Expressed in terms of the ghost SIC fiducial vector,
~\eqref{eq:ghostProjectorDef} becomes
\eag{
\lambda |\psi\ra \la \psi | U_P &= \frac{1}{d}\sum_{\mbf{p}}
\ghostOverlap_{\mbf{p}}D_{\mbf{p}}.
}
(setting $G=I$, and dropping the $t$-label).
The simplest case is when none of the components of $|\psi\rangle$ is zero. 
In that case, in order to calculate the $2d-2$ real numbers determining the complex vector $|\psi\rangle$ (up to an overall constant) it suffices to know $2d$ of the numbers $\ghostOverlap_{\mbf{p}}$. 
Specifically, let
\eag{
 \chi_{0,j} &= \frac{1}{d}\sum_{k=0}^{d-1}\rtus_d^{jk}\ghostOverlap_{0,k},
 &
  \chi_{1,j} &= \frac{1}{d}\sum_{k=0}^{d-1}\rtu_d^k\rtus_d^{jk}\ghostOverlap_{1,k}.
}
The vector $|\psi\rangle$ is only determined up to an arbitrary phase. 
We may therefore assume, without loss of generality, that $\la 0 |\psi\rangle$ is positive real. 
We then have
 \eag{
 \la j |\psi\rangle
 &=
 \sqrt{|\chi_{0,0}|}
 \prod_{k=0}^{j-1}\left(\frac{\chi_{1,k}}{\chi_{0,k}}\right),
 }
with the convention that $\prod_{k=0}^{-1} f(k) = 1$. 
Our strategy is therefore to calculate low precision approximations to the $2d$ numbers $\ghostOverlap_{0,k}$, $\ghostOverlap_{1,k}$ using the integral representation and then use these to calculate a low precision approximation to the vector $|\psi\rangle$. 
We then apply Newton's method to the system of equations
\eag{
\la \psi |U_P D_{\mbf{p}} |\psi\ra \la \psi |U_P D_{-\mbf{p}} |\psi\ra 
&=\frac{d\delta^{(d)}_{\mbf{p},\boldsymbol{0}}+1}{d+1}
}
to calculate a high precision approximation to $|\psi\rangle$ which in turn can be used to calculate high precision approximations to the numbers $\ghostOverlap_{\rm{p}}$. 
If it should happen that one or more of the components of $|\psi\rangle$ is zero then it may be necessary to calculate more than $2d$ low precision \ghostOverlapsText{}. 
However in no case is it necessary to calculate the numerical integral to high precision.

\subsection{Ghost invariants}
\label{ssc:ghostinvs}
We next describe our method for constructing a set of numbers in the ring class field $H$ which fully specify the \ghostOverlapsText{} defined from the admissible tuple $(d,1,Q)$ on a maximal Galois orbit. 

The method relies on the following empirical observation\footnote{While we do not have a proof of this observation at present, we believe it can be proven from our broader framework of conjectures and hope to show this in an upcoming paper.}:
\begin{empirical}
    If $\GE$ is the field generated by the \ghostOverlapsText{}, and if $H$ is the ring class field, then there is an isomorphism of $\Gal(\GE/H)$ onto $\mcl{M}/\mcl{S}$, where $\mcl{S}$ is the symmetry group (i.e.,\ the set of $G\in \GL_2(\mbb{Z}/\bar{d}\mbb{Z})$ such that $\ghostOverlap_{G\mbf{p}} = \ghostOverlap_{\mbf{p}}$ for all $\mbf{p}$) and $\mcl{M}$ is a maximal abelian subgroup of $\GL_2(\mbb{Z}/\bar{d}\mbb{Z})$ containing $\mcl{S}$. 
If $h\in \Gal(\GE/H)$ and $F_h\mcl{S}$ is the corresponding element of $\GL_2(\mbb{Z}/\bar{d}\mbb{Z})$, then
\eag{
h(\ghostOverlap_{\mbf{p}})& = \ghostOverlap_{F_h\mbf{p}}.
}
for all $\mbf{p}$. 
\end{empirical}
\begin{rmkb}
This isomorphism was originally noted empirically by studying the known examples of $1$-SICs~\cite{Appleby:2013,Appleby:2018}. 
For a type $z$ orbit, there is only one maximal abelian subgroup containing $\mcl{S}$ (namely, the centralizer of $\mcl{S}$). 
The characterization of $\mcl{M}$ in the case of a type $a$ orbits will appear in future work.
\end{rmkb}

The key to our method is that, using this isomorphism, one can calculate the action of $\Gal(\GE/H)$ on the \ghostOverlapsText{} without knowing them exactly.

Let $\ghostOverlap_{\mbf{p}_1},\dots, \ghostOverlap_{\mbf{p}_{n}}$ be a maximal orbit of \ghostOverlapsText{} under the action of $\Gal(\GE/H)$.
Suppose that each element of the maximal orbit generates the full field $\GE$, and therefore is not stabilized by a non-identity element of $\Gal(\GE/H)$. 
In the case of a maximal order, this provably follows from the Stark Conjectures (see Subsection~\ref{ssc:StarkConjectures}); for non-maximal orders we heuristically assume it holds. 
Choose
 $L_1$, \dots, $L_m\in \mcl{M}$ such that
 \begin{enumerate}
     \item each  $L_j \mcl{S}$ is order $n_j = \prm_j^{r_j}$ in $\mcl{M}/\mcl{S}$ with $\prm_j$ prime and $r_j$ a positive integer, and
     \item $\mcl{M}/\mcl{S}$ is isomorphic to the direct product $\left< L_1\mcl{S}\right> \times \dots \times \left< L_m\mcl{S}\right>$.
 \end{enumerate}
So $n_1 \dots n_m = n$, where $n$ is  the order of $\Gal(\GE/H)$. 
Let $\Gg_j$ be the element of $\Gal(\GE/H)$ such that 
\eag{
\Gg_j(\ghostOverlap_{\mbf{p}}) = \ghostOverlap_{L_j\mbf{p}}
\label{eq:gjAction}
}
for all $\mbf{p}$, and let $\GE_j=\{c\in \GE\colon \Gg_k(c) = c \text{ for all } k\neq j\}$.  
Finally choose some fixed element of the orbit, say $\ghostOverlap_{\mbf{p}_1}$, and define
\eag{
\ghostOverlap_{s_1,\dots, s_m} &= \ghostOverlap_{L_1^{s_1}\dots L_m^{s_m}\mbf{p}_1}.
\label{eq:nus1smdf}
}
Then the map $(s_1,\dots, s_m) \to \ghostOverlap_{s_1,\dots, s_m}$ is a bijective correspondence between $\mbb{Z}/n_1\mbb{Z} \times \dots \times \mbb{Z}/n_m\mbb{Z}$ and the orbit. 
It follows that if we define sets
\eag{
\GK_{j,t} &= \{\ghostOverlap_{s_1, \dots, s_m}\colon s_j = t\}
}
then their intersection is the single-element set
\eag{
\bigcap_{j=1}^{m}\GK_{j,t_j} &= \{\ghostOverlap_{t_1,\dots, t_m}\}.
\label{eq:Kintersect}
}
Let
\eag{
\GPly_{j,t}(x) &= \prod_{\ghostOverlap\in \GK_{j,t}} (x-\ghostOverlap) = \sum_{l=0}^{n/n_j} \Gc_{j,t,l}x^l.
\label{eq:PjtDefinition}
}
Then the coefficients $\Gc_{j,t,l}$ are all in $\GE_j$.%
\footnote{In what follows we define $\Gc_{j,t,l}$ to be (essentially) the elementary symmetric polynomials of the \ghostOverlapsText{}. 
However, any basis of symmetric polynomials would suffice, and in our numerical calculations we have used power sums instead. 
Using power sums offers some computational advantages, but adds extra steps of changing bases to an already lengthy procedure, so we omit these details.} 
So one approach to the problem of calculating the \SICoverlapsText{} would be to calculate exact versions of the coefficients $\Gc_{j,t,l}$ using an integer-relation algorithm, transform them using a $\sqrt{\Delta_0}$--sign switching automorphism, find the roots of the transformed polynomial, and then find the \SICoverlapsText{} using the transformed version of ~\eqref{eq:Kintersect}. 
Provided $m>1$, this method would be more efficient than using an integer relation algorithm to calculate exact expressions for the $\ghostOverlap_{\mbf{p}}$ since $\GE_j$ is then lower degree than $\GE$. 
However, we can do better than that, by defining a set of numbers which are all in $H$, instead of $\GE_j$, and which are therefore easier to calculate from their numerical counterparts using an integer relation algorithm. 
We refer to these numbers as \emph{ghost invariants}.

Our method relies on the following fact.
\begin{lem}\label{lm:l0jexist}
For each $j$ there exists at least one index $\ell$ such that the  numbers $\{\Gc_{j,t,\ell}\colon t=0,1, \dots, n_j-1\}$  are distinct non-zero. 
If $\ell$ is such an index, then $\GE_j = H(\Gc_{j,0,\ell})$.
\end{lem}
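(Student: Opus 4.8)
The plan is to reduce the statement to elementary Galois theory of the extension $\GE/H$, exploiting that $n_j=\prm_j^{r_j}$ is a prime power. Granting the Empirical observation together with the choice of the $L_k$, the group $\Gal(\GE/H)$ is the internal direct product $\langle\Gg_1\rangle\times\cdots\times\langle\Gg_m\rangle$, with $\langle\Gg_k\rangle$ cyclic of order $n_k$. First I would pin down how $\Gg_j$ acts on the parametrised orbit: combining \eqref{eq:gjAction} with \eqref{eq:nus1smdf} and using that $\mcl{M}$ is abelian with $L_j^{n_j}\in\mcl{S}$ gives $\Gg_j(\ghostOverlap_{s_1,\dots,s_m})=\ghostOverlap_{s_1,\dots,s_j+1,\dots,s_m}$, so $\Gg_j$ carries $\GK_{j,t}$ bijectively onto $\GK_{j,t+1}$. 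Hence $\Gg_j(\GPly_{j,t})=\GPly_{j,t+1}$ and $\Gg_j(\Gc_{j,t,\ell})=\Gc_{j,t+1,\ell}$; in particular $\Gg_j^{t}(\Gc_{j,0,\ell})=\Gc_{j,t,\ell}$. Since $\GE_j=\GE^{\langle\Gg_k : k\neq j\rangle}$ and this subgroup is normal in the abelian group $\Gal(\GE/H)$, the extension $\GE_j/H$ is Galois with cyclic Galois group $\langle\Gg_j|_{\GE_j}\rangle$ of order $n_j$, and $\Gc_{j,0,\ell}\in\GE_j$ by construction.

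With this dictionary in place the second assertion is essentially immediate: if the $n_j$ numbers $\Gc_{j,0,\ell},\dots,\Gc_{j,n_j-1,\ell}$ are distinct, then $\Gc_{j,0,\ell}$ is moved by every nontrivial power of $\Gg_j$, so its stabiliser in $\Gal(\GE_j/H)$ is trivial, whence $H(\Gc_{j,0,\ell})=\GE_j$ by the Galois correspondence. Non-vanishing then comes for free: a field automorphism fixes $0$, so a vanishing $\Gc_{j,t_0,\ell}=\Gg_j^{t_0}(\Gc_{j,0,\ell})$ would force all $\Gc_{j,t,\ell}=0$, contradicting distinctness since $n_j\ge 2$.

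The existence of a working index $\ell$ is the step where the prime-power hypothesis genuinely enters, and it is the part I would handle most carefully. I would argue by contradiction: suppose that for every $\ell\in\{0,1,\dots,n/n_j\}$ the tuple $(\Gc_{j,0,\ell},\dots,\Gc_{j,n_j-1,\ell})$ has a repetition. Then for each $\ell$ the stabiliser of $\Gc_{j,0,\ell}$ in $\langle\Gg_j\rangle\cong\mbb{Z}/\prm_j^{r_j}\mbb{Z}$ is a nontrivial subgroup; since the subgroups of a cyclic $\prm_j$-group form a chain whose unique minimal nontrivial member is $\langle\Gg_j^{\prm_j^{r_j-1}}\rangle$, the automorphism $\Gg_j^{\prm_j^{r_j-1}}$ fixes every coefficient $\Gc_{j,0,\ell}$, hence fixes $\GPly_{j,0}$. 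But applying $\Gg_j^{\prm_j^{r_j-1}}$ to $\GPly_{j,0}$ permutes its linear factors and yields $\GPly_{j,\prm_j^{r_j-1}}$, so $\GPly_{j,0}=\GPly_{j,\prm_j^{r_j-1}}$; comparing roots and invoking the bijectivity of $(s_1,\dots,s_m)\mapsto\ghostOverlap_{s_1,\dots,s_m}$ forces $\GK_{j,0}=\GK_{j,\prm_j^{r_j-1}}$, which is absurd because these sets are disjoint ($0\neq\prm_j^{r_j-1}\bmod\prm_j^{r_j}$). This contradiction furnishes the desired $\ell$. Beyond this bookkeeping I do not anticipate a serious obstacle; the only genuinely non-elementary input is the Empirical observation identifying $\Gal(\GE/H)$ with $\mcl{M}/\mcl{S}$, which is assumed throughout this section.
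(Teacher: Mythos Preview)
Your proof is correct and follows essentially the same approach as the paper: both argue by contradiction, using that in a cyclic $\prm_j$-group every nontrivial stabiliser contains a common nontrivial element, and then derive the impossible equality $\GK_{j,0}=\GK_{j,s}$ for some $0<s<n_j$. Your version is slightly tidier in two places: you invoke the unique minimal nontrivial subgroup $\langle\Gg_j^{\prm_j^{r_j-1}}\rangle$ directly (the paper instead introduces indices $r(\ell)$ and takes their maximum), and you make explicit the small observation---left implicit in the paper---that distinctness automatically forces non-vanishing, since a Galois orbit containing $0$ is identically $0$.
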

\begin{proof}
Suppose there were no such index $\ell$. 
Since $\Gal(\GE_j/H) = \la \Gg_j\ra$ is cyclic order $n_j$, where $n_j$ is a power of a prime, and since $\Gc_{j,t,\ell}=\Gg_j^t(\Gc_{j,0,\ell})$, it would follow that for each $\ell$ there existed a positive integer $r(\ell)<r_j $ such that
\eag{
\Gg_j^{q_j^{r(\ell)}}(\Gc_{j,t,\ell}) = \Gc_{j,t,\ell}
}
for all $t$.  Defining $r = \max_\ell r(\ell)$, this would mean that
\eag{
\Gg_j^{q_j^{r}}(\Gc_{j,t,\ell}) = \Gc_{j,t,\ell}
}
for all $t$, $\ell$,
This in turn would mean $\GK_{j,q_j^r}=\GK_{j,0}$, contradicting the fact that the sets $\GK_{j,0\vphantom{q_j^{r_j}}}, \ldots, \GK_{j,q_j^{r_j}-1}$ are disjoint.

To prove the second statement, suppose $H(\Gc_{j,0,\ell})$ were a proper subfield of $\GE_j$. 
Then it would be fixed by $\Gg_j^{q_j^{r}}$, for some positive integer $r<r_j$. 
But that would mean $\Gc_{j,q_j^r,\ell} = \Gc_{j,0\vphantom{q_j^r},\ell}$, contradicting the fact that the  coefficients are distinct.
\end{proof}
In view of Lemma~\ref{lm:l0jexist}, we can choose for  each $j$ an index  $\ell_{j}$ such that $\GE_j=H(\Gc_{j,0,\ell_{j}})$. 
We then have, for each $\ell$, a set of numbers $\Ga_{j,r,\ell}\in H$ such that
\eag{
\Gc_{j,0,\ell}&= \sum_{r=0}^{n_j-1} \Ga_{j,r,\ell} \Gc_{j,0,\ell_{j}}^r.
}
Repeatedly applying $\Gg_j$ to both sides, it follows that
\eag{
\bmt \Gc_{j,0,\ell} \\ \vdots \\ \Gc_{j,n_j-1,\ell}\emt
&=
\GV_j \bmt \Ga_{j,0,\ell} \\ \vdots \\ \Ga_{j,n_j-1,\ell}\emt
\label{eq:cvaeqns}
}
where $\GV_j$ is the Vandermonde matrix
\eag{
\GV_j &= \bmt 1 &  \Gc_{j,0,\ell_{j}} & \dots & \Gc^{n_j-1}_{j,0,\ell_{j}}
\\
\vdots & \vdots & & \vdots
\\
1 &  \Gc_{j,n_j-1,\ell_{j}} & \dots & \Gc^{n_j-1}_{j,n_j-1,\ell_{j}}
\emt.
}
In the case $\ell=\ell_{j}$ we also have
\eag{
\Gc_{j,1,\ell_{j}}&= \sum_{r=0}^{n_j-1} \Gb_{j,r} \Gc_{j,0,\ell_{j}}^r
}
for some $\Gb_{j,r}\in H$, from which it follows that
\eag{
\bmt \Gc_{j,1,\ell_{j}} \\ \vdots \\ \Gc_{j,n_j-1,\ell_{j}} \\ \Gc_{j,0,\ell_{j}}\emt
&=
\GV_j \bmt \Gb_{j,0} \\ \vdots \\ \Gb_{j,n_j-1}\emt.
\label{eq:cvbeqns}
}
Equation ~\eqref{eq:cvbeqns} implies
\eag{
\Gg_j\big( \Gc_{j,t,\ell_{j}} \big) = \GQ_j\big(\Gc_{j,t,\ell_{j}}\big)
}
for all $t$, where 
\eag{
\GQ_j (x) &= \sum_{u=0}^{n_j-1} \Gb_{j,u}x^u
\label{eq:galply}
}
Following the terminology of ~\cite{Appleby:2022}, we refer to the $\GQ_j(x)$ as \emph{Galois polynomials}.

Numerical approximations to the numbers $\Ga_{j,t,\ell}$, $\Gb_{j,t}$ can be obtained from ~\eqref{eq:cvaeqns} and ~\eqref{eq:cvbeqns} by solving the linear system. 
Stable solution of Vandermonde linear systems can be done with only $O(n_j^2)$ operations~\cite{Bjorck1970, Higham1987}. 

Finally, define numbers $\Ge_{j,t}$ by
\eag{
\Ge_{j,0}+\Ge_{j,1}x + \dots + \Ge_{j,n_j-1} x^{n_j-1} +x^{n_j}
=\prod_{t=0}^{n_j-1}(x-\Gc_{j,t,\ell_{j}})
}
The numbers $\Ga_{j,t,\ell}$, $\Gb_{j,t}$, $\Ge_{j,t}$ are the ghost invariants we introduced earlier. 
They are all in the ring class field $H$, and exact versions can be calculated using an integer relation algorithm starting from high precision numerical approximations. 
It will be seen that there are $n_1 + \dots + n_m$ invariants $\Ge_{j,t}$, $n_1 + \dots + n_m$ invariants $\Gb_{j,t}$, and $nm-(n_1 + \dots + n_m)$ invariants $\Ga_{j,t,\ell}$ with $\ell\neq \ell_j$, or $nm + n_1 + \dots n_m$ invariants in total. By contrast there are only $n$ ghost overlaps $\ghostOverlap_{\mbf{p}_1},\dots, \ghostOverlap_{\mbf{p}_{n}}$ on the orbit.  However, the numbers $\ghostOverlap_{\mbf{p}_k}$ are in the much larger field $\GE$, and exact values are correspondingly harder to calculate.  Note that we lose some information in going from the ghost overlaps to the ghost invariants.  However, from a knowledge of the Galois invariants it is possible to narrow down the corresponding ghost overlaps to a manageable set of alternatives, from which the correct set can be extracted by a process of trial and error.  The procedure is described in the next section, when we describe the process of constructing a SIC from the appropriate Galois conjugates of the invariants.

\subsection{Constructing the SIC overlaps} 
\label{ssc:reconstructSICoverlaps}
Now let $\SE$ be the field generated by the \SICoverlapsText{}, and let $\gsw$ be any automorphism which switches the sign of $\sqrt{\Delta_0}$. 
For the sake of simplicity, assume $\gsw(\rtus_d) = \rtus_d$ (although it is straightforward to construct a modified version of the argument which works when this condition is not satisfied). 
Under our conjectures and assumptions (see also \cite{Appleby:2022b}), the automorphism $g$ maps $\GE$ onto $\SE$, and the map $h\mapsto \gsw h \gsw^{-1}$ is an isomorphism of $\Gal(\GE/H)$ onto $\Gal(\SE/H)$. 
Define
\eag{
\SP &= g(\GP), & 
\overlap_{\mbf{p}} &= \gsw(\ghostOverlap_{\mbf{p}}), &
\overlap_{s_1,\dots, s_m} &= \gsw(\ghostOverlap_{s_1,\dots, s_m}), & 
\Sc_{j,t,l}&=\gsw(\Gc_{j,t,l}), & 
\Sg_j&= \gsw \Gg_j \gsw^{-1},
}
where $\GP$ is the ghost projector with which we started. Then $\SP$ is a $1$-SIC projector, and $\overlap_{\mbf{p}} =  \Tr(\SP D^{\dagger}_{\mbf{p}})$ are the corresponding \SICoverlapsText{}.  Furthermore, it follows from ~\eqref{eq:gjAction}, \eqref{eq:nus1smdf} that
\eag{
\Sg_j(\overlap_{\mbf{p}})&= \overlap_{L_j\mbf{p}}
}
for all $j$, $\mbf{p}$, and
\eag{
\overlap_{s_1,\dots , s_m} &= \Sg_1^{s_1}\dots \Sg_m^{s_m}(\overlap_{\mbf{p}_1}) = \overlap_{L_1^{s_1} \dots L_m^{s_m}\mbf{p}_1}
\label{eq:sops1tosmDefinition}
}
for all $s_1, \dots, s_m$.
Also define $\SK_{j,t}$ to be the set of roots of the equation
\eag{
\sum_{\ell=0}^{n/n_j} \Sc_{j,t,\ell} x^\ell = 0.
}
Then it follows from ~\eqref{eq:Kintersect} and \eqref{eq:PjtDefinition} that
\eag{
\bigcap_{j=1}^{m}\SK_{j,t_j} &= \{\overlap_{t_1,\dots, t_m}\}.
\label{eq:intersectionKtildejtj}
}
Of course, the fact that we only have numerical approximations for $\GP$, $\ghostOverlap_{\mbf{p}}$, and $c_{j,t,\ell}$ means that we cannot calculate $\SP$, $\overlap_{\mbf{p}}$, or $\Sc_{j,t,\ell}$ directly. 
However, we do have exact expressions for the ghost invariants.  Moreover, the fact that the ghost invariants are all in $H$ means   it is straightforward to calculate the corresponding \emph{SIC invariants}
\eag{
\Sa_{j,t,\ell}& = \gsw(\Ga_{j,t,\ell}), & \Sb_{j,t}&=\gsw(\Gb_{j,t}), & \Se_{j,t} &= \gsw(\Ge_{j,t}).
}
The fact that one loses some information in going from the ghost fiducial to the ghost invariants  means the SIC invariants do not specify $\SP$ unambiguously. 
However, they do contain enough information to enable us to construct a set of candidate operators, from which a set of $1$-SIC fiducial projectors which includes $\SP$ can be extracted without too much difficulty, using the method we now describe. 

Let
\eag{
\SQ_j (x) &= \sum_{u=0}^{n_j-1} \Sb_{j,u}x^u.
\label{eq:tgalply}
}
Then
\eag{
\Sg_j\big(\Sc_{j,t,\ell_{j}}\big) &= \SQ_j\big(\Sc_{j,t,\ell_{j}}\big).
}
We now find the  $\Sc_{j,t,\ell_j}$ using the fact that they are the roots of the equation 
\eag{
\sum_{t=0}^{n_j-1} \Se_{j,t}x^t &=0.
}
The problem is, of course,  that we do not \emph{ab initio} know  which particular root is equal to which particular coefficient $\Sc_{j,t,\ell_j}$.  To deal with this problem, we choose a root at random and label it  $\Scp_{j,0,\ell_{j}}$.  We then define 
 $\Scp_{j,t,\ell_{j}}$ recursively by
\eag{
\Scp_{j,t+1,\ell_{j}}&= \SQ_j\big(\Scp_{j,t,\ell_{j}}\big)
}
so that
\eag{
\Sc_{j,t,\ell_{j}}&=\Scp_{j,t+r_j,\ell_{j}}
}
for all $t$, $j$ and  some unknown $j$-dependent constant $r_j$.   We then extend  the definition of $\Scp_{j,t,\ell}$ to arbitrary values of $\ell$ by setting
\eag{
\Scp_{j,t,\ell}&=\Sc_{j,t-r_j,\ell}
}
for all $j, t, \ell$. We then have
\eag{
\bmt \Scp_{j,0,\ell} \\ \vdots \\ \Scp_{j,n_j-1,\ell}\emt
&=
\SVp_j \bmt \Sa_{j,0,\ell} \\ \vdots \\ \Sa_{j,n_j-1,\ell}\emt
\label{eq:ctildeprimejtlTermsatildejtl}
}
for all $j, \ell$, where 
\eag{
\SVp_j&=\bmt 1 &  \Scp_{j,0,\ell_{j}} & \dots & \Scp^{n_j-1}_{j,0,\ell_{j}}
\\
\vdots & \vdots & & \vdots
\\
1 &  \Scp_{j,n_j-1,\ell_{j}} & \dots & \Scp^{n_j-1}_{j,n_j-1,\ell_{j}}
\emt
}
\Cref{eq:ctildeprimejtlTermsatildejtl} together with our knowledge of the $\Scp_{j,t,\ell_j}$ and $\Sa_{j,t,\ell}$ enables us to calculate  $\Scp_{j,t,\ell}$ for all triples $(j,t,\ell)$.

Now let  $\SKp_{j,t}$ be the roots of the equation
\eag{
\sum_{\ell=0}^{n/n_j} \Scp_{j,t,\ell} x^\ell = 0.
}
Then $\SK_{j,t}=\SKp_{j,t+r_j}$. In view of \eqref{eq:intersectionKtildejtj}, this means that if we define $\overlapPrime_{t_1,\dots,t_m}$ to be the unique member of the set
\eag{
\bigcap_{j=1}^m\SKp_{j,t_j},
}
then $\overlap_{t_1,\dots,t_m} = \overlapPrime_{t_1+r_1,\dots, t_m + r_m}$ for all $t_1, \dots, t_m$.  

Using the values of the $\Scp_{j,t,\ell}$, we can calculate the values of the quantities $\overlapPrime_{t_1,\dots, t_m}$, which in turn fixes the quantities $\overlap_{t_1,\dots,t_m}$ up  to the unknown shifts $r_1, \dots, r_m$.
It is convenient to rephrase this slightly. Given an arbitrary element  $\mbf{p}$   of the $\mcl{M}/\mcl{S}$-orbit of $\mbf{p}_1$, choose $t_1, \dots, t_m$ and $M\in\mcl{M}$ such that $\mbf{p} = L_1^{t_1} \dots L_m^{t_m} M \mbf{p}_1$ and define
\eag{
\overlapPrime_{\mbf{p}} &= \overlapPrime_{t_1, \dots, t_m}.
}
In view of \eqref{eq:sops1tosmDefinition} we then have
\eag{
\overlap_{\mbf{p}}&= \overlap_{L_1^{t_1} \dots L_m^{t_m}M \mbf{p}_1} = \overlap_{L_1^{t_1} \dots L_m^{t_m} \mbf{p}_1} = \overlap_{t_1, \dots, t_m} = \overlapPrime_{t_1+r_1, \ldots, t_m +r_m} = \overlapPrime_{L\mbf{p}}
}
where 
\eag{
L=L^{r_1}\dots L^{r_m}
}
is an  element of $\mcl{M}/\mcl{S}$, which we determine by trial-and-error.  

There are two ways in which we can reduce the size of the search space. In the first place, if we only want \emph{some} $1$-SIC on the same $\EC(d)$ orbit as $\Pi$, not necessarily $\Pi$ itself, we can use the fact that if $L'L^{-1} \in \ESL_2(\mbb{Z}/\bar{d}\mbb{Z})$ then $\{\overlapPrime_{L'\mbf{p}}\}$ is a set of \SICoverlapsText{} if and only if $\{\overlapPrime_{L\mbf{p}}\}$ is.  It is consequently only necessary to test one element from each coset  $\mcl{M}/\bigl(\mcl{M}\cap\ESL_2(\mbb{Z}/\bar{d}\mbb{Z})\bigr)$. 

The search space can be further reduced using the following condition. Let $\mcl{P}$ be the set $\{M\mbf{p}_1\colon  M\in \mcl{M}\}$, and define
\eag{
B &=\frac{1}{d}\sum_{\mbf{p}\in\mcl{P}}\overlap_{\mbf{p}} D_{\mbf{p}}.
}
The fact that $\mcl{M}$ is maximal abelian means $-I \in \mcl{M}$, which in turn implies $B$ is Hermitian, and $\Tr(B(\Pi-B))=0$.  Let $\lambda_{\rm{max}}$ be  the largest eigenvalue of $B$.  Then
\eag{
\lambda_{\rm{max}} &\ge \Tr(B\Pi) = \Tr(B^2) = \frac{|\mcl{P}|}{d(d+1)}
}
where $|\mcl{P}|$ is the cardinality of $\mcl{P}$.  We may therefore remove from consideration any set of candidate \overlapsText{} which do not satisfy this requirement.

\subsection{Convex optimization}
\label{ssc:convexoptim}
At this point, we have managed to construct an orbit of $\mcl{M}$ acting on $\p \in (\Z/\db\Z)^2$ and for which we know numerical approximations of the associated $\overlap_\p$ up to a shift by the action of an unknown element $M \in \mcl{M}$. 
In general, the group action of $\mcl{M}$ will split the $\p$ into more than one orbit. 
In that case, the $k$th orbit can be learned in the above fashion. 
However, the unknown matrix shift will in general be different for each orbit, so it must be guessed \emph{simultaneously} across all orbits if one wishes to reconstruct all $d^2-1$ nontrivial values of $\overlap_\p$. 
The total number of possibilities will in general grow exponentially in the number of orbits, which is an undesirably large search space. 
It might be that there are efficient ways to search this space, but we are not aware of any. 
We will instead describe an alternative method based on convex optimization that requires only knowing a single (correctly shifted) orbit of sufficient size. 

Our idea is to use \emph{low-rank matrix recovery}, which is a matrix analog of the better-known method of compressive sensing. 
For simplicity, consider a square $d\times d$ matrix $X \in \mathcal{L}(\mathbb{C}^d)$. 
In the low-rank matrix recovery problem, one is given a collection of \emph{measurements} $A_k \in \mathcal{L}(\mathbb{C}^d)$ and one learns the value of the \emph{observations} $b_k$, which are linear functions $b_k = \Tr(A_k X)$ obtained by acting on an unknown matrix $X$. 
Written in a vector notation, we have $\boldsymbol{b} = \boldsymbol{A}(X)$ where the measurements are now a linear map $\boldsymbol{A}: \mathcal{L}(\mathbb{C}^d) \to \mathbb{C}^m$ and the observations are $\boldsymbol{b}\in \mathbb{C}^m$. 
One wishes to recover $X$ from knowledge of measurements $\boldsymbol{A}$ and observations $\boldsymbol{b}$. 
If $m = d^2 = \mathrm{rank}(\boldsymbol{A})$, then the solution is trivially $X = \boldsymbol{A}^{-1}\boldsymbol{b}$. 

Suppose that our measurements are expensive to implement, and we wish to recover $X$ from $m \ll d^2$ observations, and hence $\mathrm{rank}(\boldsymbol{A}) \le m$. 
In this case, it is clear that there is no unique solution since $X$ and $X+Y$ are indistinguishable for nonzero $Y$ in the null space of $\boldsymbol{A}$. 

In many cases of interest, one has the additional knowledge that the unknown matrix $X$ has rank $r \ll d$. 
One might hope that this fact would allow efficiently recovering $X$ from far fewer measurements, despite the original problem being ill-posed. 
After all, information-theoretically there are only $O(r d)$ parameters specifying $X$. 
This intuition turns out to be correct: It suffices to make $m = O\bigl(r d (\log d)^c\bigr)$ observations from a typical set of certain randomized measurements~\cite{Gross2010, Gross2011}, where $c$ is an absolute constant. 
The algorithm which efficiently reconstructs $X$ from only $m$ observations is a convex relaxation of the naive algorithm that finds a minimal-rank matrix among those consistent with the observations. 
The convex relaxation (which can be cast as a semidefinite program) returns the answer:
\begin{align}
    X^\star = \arg\min_Y \| Y \|_1 \mbox{ s.t.\ } \boldsymbol{A}(Y) = \boldsymbol{b},
\end{align}
where $\|Y\|_1$ is the Schatten 1-norm, or the sum of the singular values of $Y$. 

The final step in our necromancy procedure is now clear. 
Choose a specific large orbit $\mcl{P}$ and reconstruct the $\overlap_\p$ along that orbit, up to an unknown matrix $M \in \mcl{M}$, using the methods above. 
In our examples, we have always chosen a maximal orbit, and in every case we've observed this orbit was unique and had a size many times larger than $d$. 
If our prior computations and conjectures are correct, then our unknown $1$-SIC fiducial projector $\Pi$ satisfies the constraints
\begin{align}
\label{eq:SDPcons}
    \overlap_{M\p} =  \Tr\bigl(D^\dagger_\p \Pi\bigr), \  \p \in \mcl{P}.
\end{align}
We also know that $\Pi = \Pi^\dagger$ and $\Pi \succeq 0$. 
Since $\Pi$ is positive semidefinite, the Schatten 1-norm becomes simply the trace, so we do not enforce the constraint $\Tr(\Pi) = 1$ to avoid trivializing the objective function. 
We then output the matrix that minimizes $\Tr(\Pi)$ subject to the constraints (\ref{eq:SDPcons}) as well as the Hermitian and positive semidefinite constraints. 
If the result is not (numerically) a rank-1 matrix, we simply try another candidate matrix $M$ and run the semidefinite program again. 
One can gain further confidence in a numerical solution by applying Newton's method, similar to \Cref{subsec:precisionbump}, to enhance the precision to any desired level.

The number of candidate solutions that must be checked is always at most the order of $\abs{\mcl{M}}$, which is at most polynomial in $d$. 
The semidefinite program also runs in polynomial time in $d$. 
Therefore, at least in this formal sense, this procedure is efficient with respect to the dimension. 

\appendix

\section{Alternative fiducial data}
\label{ap:alternativeFiducialData}
In \Cref{dfn:SFKPhase}, we defined the \SFKShort{} phase corresponding to the admissible tuple $t=(d,r,Q)\sim(K,j,m,Q)$ by
\eag{
\SFPhase{t}{\mbf{p}}= (-1)^{s_d(\mbf{p})} e^{-\tfrac{\pi i}{12} \rade(A_t)} \rtu_d^{- \tfrac{f_{jm}}{f} Q(\mbf{p})}
}
with
\eag{
s_d(\mbf{p}) = d+(1+d)(1+p_1)(1+p_2).
}
However, as we will see below, it would have been possible to have replaced $s_d(\mbf{p})$ with the more general expression
\eag{\label{eq:signMoreGeneralExpression}
s_d(\svc,\mbf{p}) &= d+(1+d)(1+p_1)(1+p_2) +(1+d) \la \svc,\mbf{p}\ra
}
for arbitrary $\svc\in \mbb{Z}^2$.  
Similarly, in \Cref{dfn:GhostOverlaps}, we defined the \candidateNormalizedGhostOverlapsText{} corresponding to the admissible tuple $t=(d,r,Q)\sim(K,j,m,Q)$ by
\eag{
    \normalizedGhostOverlapC{t}{\mbf{p}} = 
    \SFPhase{t}{\mbf{p}}
    \sfc{d^{-1} \mbf{p}}{A_t}{\qrt_{Q,+}}.
}
However, as we will see below, it would have been possible to replace $\qrt_{Q,+}$ with $\qrt_{Q,-}$.  The reason we did not  make either of these choices in the main text is because they do not lead to new $r$-SICs, as we now show.

In this appendix, we modify the notation used elsewhere in the paper, so as to include an explicit dependence on $\svc$ and   root $\qrt_{Q,\pm}$.   Let $s=(t,G,g)$ be a fiducial datum containing the admissible tuple $t=(d,r,Q)\sim(K,j,m,Q)$, let $f$ be the conductor of $Q$, let $\qrt$ be either of the two roots of $Q$, and let $\qrt, \svc\in \mbb{Z}^2$.  We define 
\eag{\label{eq:SFPhaseExtended}
\SFPhase{t,\svc}{\mbf{p}}&= (-1)^{s_d(\svc,\mbf{p})} e^{-\tfrac{\pi i}{12} \rade(A_t)} \rtu_d^{- \tfrac{f_{jm}}{f} Q(\mbf{p})}
\\\label{eq:ghostOverlapExtended}
\intertext{where $s_d(\svc,\mbf{p})$ is as given by \eqref{eq:signMoreGeneralExpression},}
\normalizedGhostOverlapC{t,\svc,\qrt}{\mbf{p}}& = 
    \SFPhase{t,\svc}{\mbf{p}}
    \sfc{d^{-1} \mbf{p}}{A_t}{\qrt},
\\\label{eq:ghostProjectorExtended}
\tilde{\Pi}_s(\svc,\qrt) &= \frac{r}{d} I + \frac{1}{d\sqrt{d_j+1}}\sum_{\mbf{p}\notin d\mbb{Z}^2}\normalizedGhostOverlapC{t,\svc,\qrt}{G\mbf{p}} D\vpu{t}_{\mbf{p}}
\\\label{eq:PisDefAlternativeData}
\Pi_s(\svc,\qrt) &= g\left((\tilde{\Pi}_s(\svc,\qrt)\right).
}
The purpose of this appendix is to prove
\begin{thm}\label{thm:AlternativeData} Assume Conjectures~\ref{cnj:tci} and~\ref{conj:RayClassField3} are true.
Let $s=(d,r,Q,G,g)$ be a fiducial datum, let $\qrt$ be either of the two roots of $Q$, and let $\svc$ be any element of $\mbb{Z}^2$.
    Then there exists a form $Q'$ such that
    \eag{
    \Pi_s(\svc,\qrt) &= \Pi_{s'}(\zero,\qrt_{Q',+})
    }
where $s'=(d,r,Q',G,g)$.
\end{thm}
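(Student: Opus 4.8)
The plan is to reduce both modifications—changing the sign prefactor via $\svc$ and switching from $\qrt_{Q,+}$ to $\qrt_{Q,-}$—to the already-established machinery of $\GLtwo{\mbb{Z}}$-transformations of fiducial data (\Cref{thm:MTransformedFiducials}, \Cref{thm:MtransformedTuples}, \Cref{thm:MTransformNormalizedGhostOverlap}). First I would handle the two modifications separately and then combine. For the $\svc$-modification with the root fixed at $\qrt_{Q,+}$: comparing \eqref{eq:SFPhaseExtended} with \Cref{dfn:SFKPhase}, the only change is the extra factor $(-1)^{(1+d)\la\svc,\mbf{p}\ra}$. When $d$ is odd this is $+1$, so $\SFPhase{t,\svc}{\mbf{p}} = \SFPhase{t}{\mbf{p}}$ and there is nothing to prove. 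When $d$ is even, the extra factor is $(-1)^{\la\svc,\mbf{p}\ra} = \rtu_d^{d\la\svc,\mbf{p}\ra}$, and I would absorb this into a shift $\mbf{p}\mapsto\mbf{p}+d\,J\svc$ (or similar) using the displacement-operator periodicity \eqref{eq:Dpperiodicity} together with the periodicity relation \Cref{lem:nupperiodicity}; the goal is to show that $\tilde\Pi_s(\svc,\qrt_{Q,+})$ equals $\tilde\Pi_s(\zero,\qrt_{Q,+})$ exactly (or differs by conjugation by some $D_{\mbf q}$, which is harmless since $\WH(d)$ preserves the $r$-SIC property and—more to the point—since we may also need to recognize that the resulting change is itself realized by a form transformation).

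For the root-switching modification: I would use \Cref{lm:lactqrt}, specifically that $M^{-1}\cdot\qrt_{Q,\pm} = \qrt_{\lOnQ{M}{Q},\pm}$, to find an explicit $M\in\GLtwo{\mbb{Z}}$ with $\Det M = -1$ (for instance a reflection-type matrix stabilizing the pair of roots setwise) such that $M^{-1}\cdot\qrt_{Q,+} = \qrt_{Q',-}$ for $Q' = \lOnQ{M}{Q}$, or equivalently such that $\qrt_{Q,-} = \qrt_{Q'',+}$ for an appropriate $Q''$. Then \Cref{thm:MTransformNormalizedGhostOverlap} gives $\normalizedGhostOverlapC{t_M}{\mbf{p}} = \normalizedGhostOverlapC{t}{\ell M\mbf{p}}$ with $\ell = \sgn(j_{M^{-1}}(\qrt_t))$, and \Cref{thm:MTransformedGhostFiducials} packages this as $\tilde\Pi_{s_M} = U^\dagger_F\tilde\Pi_s U_F$ for $F = \ell\, G^{-1}[M]_{\db}G$. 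Applying $g$ and using \Cref{thm:GalActOnClifford} transfers this to the live fiducials, yielding $\Pi_s(\zero,\qrt_{Q,-}) = \Pi_{s'}(\zero,\qrt_{Q',+})$ up to an $(\text{anti-})$symplectic conjugation that is itself realized by a form transformation, so it can be absorbed into the choice of $Q'$.

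Combining the two, the general $(\svc,\qrt)$ case factors as: first replace $\qrt$ by $\qrt_{Q',+}$ for a suitable $Q'$ (root-switching step), reducing to $\Pi_{s'}(\svc,\qrt_{Q',+})$; then apply the $\svc$-elimination step to $Q'$. I expect the main obstacle to be bookkeeping in the even-dimensional $\svc$-case: one must check that the sign factor $(-1)^{(1+d)\la\svc,\mbf p\ra}$, after absorbing it as a lattice shift, does not spoil the Hermiticity/idempotency structure and genuinely corresponds either to the identity or to a form transformation, and one must verify the interaction of $s_d(\svc,\mbf p)$ with the twist $G$ (note \Cref{thm:MTransformNormalizedGhostOverlap} is stated for $\svc = \zero$, so I would either extend its proof or, more cleanly, show directly that the $\svc$-term is a coboundary that cancels against the displacement-operator periodicity). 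A secondary subtlety is that \Cref{thm:MTransformedFiducials} produces equality of fiducials only up to the homomorphism $\GLMorph_s$ being applied, and since $\GLMorph_s$ is surjective (\Cref{thm:GLHomomorphismSurjective}) one may always realize the needed $F$, but one must confirm the resulting $Q'$ is still a valid form (irreducible, indefinite, primitive) with the same discriminant—which follows from \eqref{eq:afrmtrans} and the invariance of $b^2-4ac$ under $\GLtwo{\mbb{Z}}$ shown in \Cref{lm:lactqrt}.
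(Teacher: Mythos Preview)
Your plan for the $\svc$-modification is essentially what the paper does: the extra sign $(-1)^{(1+d)\la\svc,\mbf p\ra}$ is realized (in even dimension) as conjugation by a specific Clifford element $U_{M_{G^{-1}\svc}}$ with $M_{\svc'}=\smt{1 & w'_1 d\\ w'_2 d & 1}$, which up to a phase equals a Weyl--Heisenberg displacement $X^{-d\ell_1/2}Z^{-d\ell_2/2}$; since its matrix entries are integers, the conjugation survives application of $g$, and the resulting symplectic conjugation is then absorbed into a form transformation via the surjectivity of $\GLMorph_s$.

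The root-switching step, however, has a genuine gap. You propose finding $M\in\GLtwo{\mbb Z}$ with $\qrt_{Q,-}=\qrt_{Q'',+}$ for $Q''=Q_M$, and then invoking \Cref{thm:MTransformNormalizedGhostOverlap}. But \Cref{lm:lactqrt} says $M^{-1}\cdot\qrt_{Q,\pm}=\qrt_{Q_M,\pm}$: the $\pm$ label is \emph{preserved} under any $\GLtwo{\mbb Z}$ action. The unique form with $\qrt_{Q'',+}=\qrt_{Q,-}$ is $Q''=-Q$, and $-Q$ is not obtained from $Q$ by any $Q\mapsto Q_M$. Consequently \Cref{thm:MTransformNormalizedGhostOverlap} and \Cref{thm:MTransformedFiducials} give you no direct handle on $\qrt_{Q,-}$. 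The paper's missing ingredient is a separate identity: passing to the form $-Q$ inverts $A_t$, inverts the \SFKShort{} phase, and (via \Cref{lm:sfam1sfaeq1}) inverts the cocycle value, so that $\normalizedGhostOverlapC{t',\svc,\qrt_{-Q,+}}{\mbf p}=(\normalizedGhostOverlapC{t,\svc,\qrt_{Q,-}}{\mbf p})^{-1}=\normalizedGhostOverlapC{t,\svc,\qrt_{Q,-}}{-\mbf p}$ by the reciprocity relation. This yields $\tilde\Pi_{s'}(\svc,\qrt_{-Q,+})=U_P\,\tilde\Pi_s(\svc,\qrt_{Q,-})\,U_P^\dagger$, i.e.\ parity conjugation, which (being symplectic with $P=-I$) \emph{can} then be absorbed by the $\GLMorph_s$ machinery. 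Without this $Q\mapsto -Q$ step and the accompanying overlap-inversion computation, your argument cannot reach $\qrt_{Q,-}$ at all.
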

It follows from this result that there is no loss of generality if, as in the rest of the paper, we confine ourselves to the case $\svc=\zero$, $\qrt=\qrt_{Q,+}$.

In order to prove \Cref{thm:AlternativeData} we need first to prove the following lemma.
\begin{lem}\label{lem:UpUMWactonPi} Assume Conjectures~\ref{cnj:tci} and~\ref{conj:RayClassField3} are true.
Let $s=(d,r,Q,G,g)$ be a fiducial datum, and let  $\mbf{w}, \mbf{w}' \in \mbb{Z}^2$.    Then $s'=(d,r,-Q,G,g)$ is also a fiducial datum, and
    \eag{
U\vpu{\dagger}_{P}\Pi_s(\svc,\qrt_{Q,\pm})U^{\dagger}_{P} &= \Pi_{s'}(\svc,\qrt_{-Q,\mp})
\label{eq:PiNegativeQResult}
\\
U\vpu{\dagger}_{M_{G^{-1}\svc'}}\Pi_s(\svc,\qrt_{Q,\pm})U^{\dagger}_{M_{G^{-1}\svc'} }
&= \Pi_s(\svc+\svc',\qrt_{Q,\pm})
\label{eq:ChangesvcInPisExpression}
    }
     where
 $M_{\svc'} =\smt{1 & w'_1 d \\ w'_2 d & 1}$ and $P$ is the parity matrix (see \Cref{dfn:parityMatrix}).
\end{lem}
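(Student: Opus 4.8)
\textbf{Proof proposal for \Cref{lem:UpUMWactonPi}.}

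The plan is to establish the two identities \eqref{eq:PiNegativeQResult} and \eqref{eq:ChangesvcInPisExpression} by working first at the level of ghost fiducials and then transferring the conclusion to live fiducials by applying the Galois automorphism $g$, exactly as in the proof of \Cref{thm:MTransformedFiducials}. Throughout I will write $t=(d,r,Q)$ and $t'=(d,r,-Q)$, and I will use that $-Q$ is itself an admissible form with the same discriminant as $Q$ (so that \Cref{thm:MtransformedTuples} and \Cref{conj:RayClassField3} give $E_{t'}=E_t$ and hence that $s'=(d,r,-Q,G,g)$ is a fiducial datum, using that $g$ still switches the sign of $\sqrt{\Delta_0}$, which depends only on the discriminant). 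The key inputs will be the explicit action of the parity operator and of the ``shear'' symplectic unitaries $U_{M_{\mathbf{w}}}$ on displacement operators via \eqref{eq:symplecticUnitary}, together with the transformation behavior of the Shintani--Faddeev phase and modular cocycle established in \Cref{sbsc:phaseprops} and \Cref{ssc:TransformFormsFiducials}.

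For \eqref{eq:PiNegativeQResult}, first I would note that $-Q = Q_P$ where $P = -I$ is the parity matrix, since $Q_{-I} = \Det(-I)(-I)^{\mathrm{T}}Q(-I) = Q$ --- wait, that gives $Q$, not $-Q$; so I must instead track the sign through the defining data directly. The correct route: compute $\normalizedGhostOverlapC{t',\svc,\qrt_{-Q,\mp}}{\mathbf{p}}$ from \eqref{eq:ghostOverlapExtended} and compare to $\normalizedGhostOverlapC{t,\svc,\qrt_{Q,\pm}}{\mathbf{p}}$. Here $\qrt_{-Q,\mp} = \qrt_{Q,\pm}$ (the roots of $-Q$ are the roots of $Q$, with $\sgn(-Q) = -\sgn(Q)$ swapping the $\pm$ labelling), and $A_{t'}$ is obtained from $A_t$ by the relation in \Cref{tm:symgp} applied to the form $-Q$; one checks $A_{t'} = A_t$ because the totally positive unit $\vn_f$ and the conductor are unchanged and $S(-Q) = -SQ$ gets squared away in the relevant powers. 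The phase $\SFPhase{t',\svc}{\mathbf{p}}$ of \eqref{eq:SFPhaseExtended} differs from $\SFPhase{t,\svc}{\mathbf{p}}$ by $\rtu_d^{2\frac{f_{jm}}{f}Q(\mathbf{p})}$, and $\sfc{d^{-1}\mathbf{p}}{A_{t'}}{\qrt_{-Q,\mp}}$ matches $\sfc{d^{-1}\mathbf{p}}{A_t}{\qrt_{Q,\pm}}$; combining these with the identity $D_{\mathbf{p}}^\dagger = U_P D_{\mathbf{p}} U_P^\dagger$ and the fact that $U_P$ acts on the expansion \eqref{eq:ghostProjectorExtended} by sending $D_{\mathbf{p}} \mapsto D_{-\mathbf{p}}$ should collapse to the claimed equality after reindexing the sum. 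The residual quadratic-in-$\mathbf{p}$ phase discrepancy must be absorbed by the parity reindexing $\mathbf{p}\mapsto -\mathbf{p}$, which flips nothing in $Q(\mathbf{p})$ but changes $\la\svc,\mathbf{p}\ra\mapsto -\la\svc,\mathbf{p}\ra$, and this is where I expect the bookkeeping to be most delicate. I would then apply $g$ to both sides, using \Cref{thm:GalActOnClifford} (with $g(U_P)\dot{=}U_P$ since $H_g$ commutes with $P=-I$) to conclude \eqref{eq:PiNegativeQResult}.

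For \eqref{eq:ChangesvcInPisExpression}, the matrix $M_{\svc'} = \smmattwo{1}{w_1'd}{w_2'd}{1}$ is congruent to $I$ modulo $d$, so $U_{M_{\svc'}}$ acts on displacement operators by $D_{\mathbf{p}}\mapsto D_{M_{\svc'}\mathbf{p}}$, and since $M_{\svc'}\mathbf{p}\equiv\mathbf{p}\Mod d$ the net effect on $\ghostOverlapC{t}{\cdot}D_{\cdot}$ (which is $d$-periodic by \Cref{lem:GhostFiducialIndependenceOfTransversal}) is only a change of phase governed by the symplectic form; the key computation is that $\la M_{G^{-1}\svc'}G^{-1}\mathbf{p}, G^{-1}\mathbf{q}\ra$ picks up exactly the term $d\la\svc',\mathbf{p}\ra$-type contribution needed to shift the sign exponent $s_d(\svc,\mathbf{p})$ in \eqref{eq:signMoreGeneralExpression} to $s_d(\svc+\svc',\mathbf{p})$. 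I would verify this by direct substitution into \eqref{eq:ghostProjectorExtended}, conjugating $\tilde\Pi_s(\svc,\qrt_{Q,\pm})$ by $U_{M_{G^{-1}\svc'}}$, using \eqref{eq:symplecticUnitary}, and matching against the definition of $\tilde\Pi_s(\svc+\svc',\qrt_{Q,\pm})$; then apply $g$ once more via \Cref{thm:GalActOnClifford}. The main obstacle throughout will be the even-dimensional sign combinatorics --- keeping straight the various factors of $(-1)$ coming from $\rtu_d^d = (-1)^{d+1}$, the non-$d$-periodicity of bare displacement operators \eqref{eq:Dpperiodicity}, and the $(1+d)$ prefactors in $s_d$ --- rather than any conceptual difficulty, since the structural statement is a routine ``symplectic conjugation moves the index'' computation.
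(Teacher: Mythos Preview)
There is a genuine error in your treatment of \eqref{eq:PiNegativeQResult}. You claim that $A_{t'}=A_t$ on the grounds that ``$S(-Q)=-SQ$ gets squared away in the relevant powers''. This is wrong: by \Cref{tm:symgp} one has $A_t=\canrep_Q(\vn^{j(2m+1)})$, which is \emph{linear} in $SQ$, not quadratic. Since $\canrep_{-Q}(x+y\sqrt{\Delta_0})=xI-\tfrac{2y}{f}SQ=\canrep_Q(x-y\sqrt{\Delta_0})$ and $\Nm(\vn)=1$, one gets $A_{t'}=\canrep_Q(\vn^{-j(2m+1)})=A_t^{-1}$. This is exactly what the paper uses, and the distinction is decisive. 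With the correct identification, \Cref{lem:RademacherProperties} gives $\rade(A_{t'})=-\rade(A_t)$, so $\SFPhase{t',\svc}{\mathbf{p}}=\bigl(\SFPhase{t,\svc}{\mathbf{p}}\bigr)^{-1}$; and \Cref{lm:sfam1sfaeq1} gives $\sfc{d^{-1}\mathbf{p}}{A_t^{-1}}{\qrt_{Q,\pm}}=\bigl(\sfc{d^{-1}\mathbf{p}}{A_t}{\qrt_{Q,\pm}}\bigr)^{-1}$. Hence $\normalizedGhostOverlapC{t',\svc,\qrt_{-Q,\mp}}{\mathbf{p}}=\bigl(\normalizedGhostOverlapC{t,\svc,\qrt_{Q,\pm}}{\mathbf{p}}\bigr)^{-1}$, which by \Cref{thm:nupnumpeq1} equals $\normalizedGhostOverlapC{t,\svc,\qrt_{Q,\pm}}{-\mathbf{p}}$ --- precisely what is needed for the $U_P$ conjugation (sending $D_{\mathbf{p}}\mapsto D_{-\mathbf{p}}$). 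By contrast, your route produces a residual factor $\rtu_d^{2\frac{f_{jm}}{f}Q(\mathbf{p})}$ that is even in $\mathbf{p}$ and therefore \emph{cannot} be absorbed by the reindexing $\mathbf{p}\mapsto -\mathbf{p}$; your own parenthetical worry (``this is where I expect the bookkeeping to be most delicate'') is in fact a symptom of the argument failing, not merely being fiddly.

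For \eqref{eq:ChangesvcInPisExpression} your outline is in the right spirit, but two ingredients the paper relies on are missing from your sketch. First, the heart of the computation is not a symplectic-form identity but the parity of the coefficients of $\tfrac{f_{jm}}{f}Q$ when $d$ is even (\Cref{cor:fjmfqcmpsoddeven}): one must compute $\rtu_d^{-\frac{f_{jm}}{f}Q(GM^{-1}_{G^{-1}\svc'}\mathbf{p})}$ directly and show the cross-terms produce exactly $(-1)^{\la\svc',G\mathbf{p}\ra}$, turning $s_d(\svc,G\mathbf{p})$ into $s_d(\svc+\svc',G\mathbf{p})$. Second, for the passage from ghost to live, applying \Cref{thm:GalActOnClifford} as you propose gives $g(U_{M_{G^{-1}\svc'}})\dot{=}U_{H_gM_{G^{-1}\svc'}H_g^{-1}}$, which is \emph{not} $U_{M_{G^{-1}\svc'}}$ in general. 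The paper instead observes that $U_{M_{G^{-1}\svc'}}\dot{=}X^{-d\ell_1/2}Z^{-d\ell_2/2}$ has integer matrix entries (the phase cancelling in conjugation), so $g$ commutes through it directly.
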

\begin{proof}
Let $t=(d,r,Q)$, $t'=(d,r,-Q)$. Assuming \Cref{cnj:tci} is true, the fact that $Q$ and $-Q$ have the same discriminant means $\mcl{Z}_t=\mcl{Z}_{t'}$ (see \Cref{dfn:shift}).  Also it follows from \Cref{conj:RayClassField3} that $\sicField_t=\sicField_{t'}$.  So $G$ satisfies \eqref{eq:TwistCondition} for some $\shift\in \mcl{Z}_{t'}$ and $g\in \Gal(\sicField_t/\mbb{Q})$.  It follows that $s'$ is a fiducial datum.

 It is easily seen that $A\vpu{-1}_{t'} = A^{-1}_t$ and $\qrt_{-Q,\pm}= \qrt_{Q,\mp}$.  In view of \Cref{lem:RademacherProperties} this means
\eag{
\SFPhase{t',\svc}{\mbf{p}}&=(-1)^{s_d(\svc,\mbf{p})}e^{-\frac{\pi i}{12}\rade(A^{-1}_{t})}\rtu_d^{\frac{f_{jm}}{f}Q}(\mbf{p})
=\left( \SFPhase{t,\svc}{\mbf{p}} \right)^{-1},
}
for all $\mbf{p}\in \mbb{Z}^2$, while it follows from \Cref{lm:sfam1sfaeq1} that
\eag{
\sfc{d^{-1} \mbf{p}}{A_{t'}}{\qrt_{-Q,\mp}} 
&=
\left(\sfc{d^{-1} \mbf{p}}{A_{t}}{\qrt_{Q,\pm}} \right)^{-1}
}
for all $\mbf{p}\in \mbb{Z}^2$.  Hence
\eag{
\normalizedGhostOverlapC{t',\svc,\qrt_{-Q,\mp}}{\mbf{p}}& =
\SFPhase{t',\svc}{\mbf{p}}
    \sfc{d^{-1} \mbf{p}}{A_{t'}}{\qrt_{-Q,\mp}}
    =
    \left(
    \normalizedGhostOverlapC{t,\svc,\qrt_{Q,\pm}}{\mbf{p}}
    \right)^{-1}.
}
Taking account of \Cref{thm:nupnumpeq1} we deduce
\eag{
\normalizedGhostOverlapC{t',\svc,\qrt_{-Q,\mp}}{\mbf{p}}& =
\normalizedGhostOverlapC{t,\svc,\qrt_{Q,\pm}}{-\mbf{p}}
}
for all $\mbf{p}\in \mbb{Z}^2$ such that $\mbf{p}\notin d\mbb{Z}^2$.  Consequently
\eag{
\tilde{\Pi}_{s'}(\svc,\qrt_{-Q,\mp})&= \frac{r}{d}I + \frac{1}{d\sqrt{d_j+1}}\sum_{\mbf{p}\notin d\mbb{Z}^2}\normalizedGhostOverlapC{t',\svc,\qrt_{-Q,\mp}}{G\mbf{p}}D_{\mbf{p}}
\nn
&=\frac{r}{d}I + \frac{1}{d\sqrt{d_j+1}}\sum_{\mbf{p}\notin d\mbb{Z}^2}\normalizedGhostOverlapC{t,\svc,\qrt_{Q,\pm}}{-G\mbf{p}}D_{\mbf{p}}
\nn
&=
U\vpu{\dagger}_P\tilde{\Pi}_{s}(\svc,\qrt_{Q,\pm})U^{\dagger}_P.
\label{eq:GhostProjectorNegativeQ}
}
It follows from \eqref{eq:ParityMatrixElements} that the matrix elements of $U_P$ are all in $\mbb{Z}$.  So ~\eqref{eq:GhostProjectorNegativeQ} and 
\eqref{eq:PisDefAlternativeData}  imply
\eag{
\Pi_{s'}(\svc,\qrt_{-Q,\mp})&=
g\left(  \tilde{\Pi}_{s'}(\svc,\qrt_{-Q,\mp})\right)
= U\vpu{\dagger}_P 
\Pi_{s}(\svc,\qrt_{Q,\pm})U^{\dagger}_P,
}
thereby proving \eqref{eq:PiNegativeQResult}.  Turning to  \eqref{eq:ChangesvcInPisExpression}, observe that the statement is trivial if $d$ is odd.  Suppose, on the other hand, that $d$ is even.  Then it follows from ~\eqref{eq:ghostProjectorExtended}  that
\eag{
U\vpu{\dagger}_{M_{G^{-1}\svc'}}\tilde{\Pi}_s(\svc,\qrt_{Q,\pm})U^{\dagger}_{M_{G^{-1}\svc'}}
&=
\frac{r}{d}I +\frac{1}{d\sqrt{d_j+1}}\sum_{\mbf{p}\notin d\mbb{Z}^2}
\normalizedGhostOverlapC{t,\svc,\qrt_{Q,\pm}}{GM^{-1}_{G^{-1}\svc'}\mbf{p}} D_{\mbf{p}}.
}
Observe that $M^{-1}_{G^{-1}\svc'} = M\vpu{-1}_{G^{-1}\svc'}$ as an element of $\SLtwo{\mbb{Z}/2d\mbb{Z}}$.  Since $M_{G^{-1}\svc'} \equiv I \Mod{2}$, it follows that 
\eag{
(-1)^{s_d(\svc,GM^{-1}_{G^{-1}\svc'} \mbf{p})} &= 
(-1)^{s_d(\svc,G\mbf{p})}.
}
It follows from \Cref{cor:fjmfqcmpsoddeven} that
\eag{
\rtu_d^{-\frac{f_{jm}}{f}Q\left(GM^{-1}_{G^{-1}\svc'}\mbf{p}\right)}
&=\rtu_d^{-a(GM_{G^{-1}\svc'}\mbf{p})_1^2 - b (GM_{G^{-1}\svc'}\mbf{p})_1(GM_{G^{-1}\svc'}\mbf{p})_2 - c(GM_{G^{-1}\svc'}\mbf{p})_2^2}
}
where $a$, $b$, $c$ are  odd. Setting $G=\smt{\ma & \mb \\ \mc & \md}$, we have
\eag{
\rtu_d^{-a(GM_{G^{-1}\svc'}\mbf{p})_1^2}
&= 
\rtu_d^{-a\left((G\mbf{p})_1+d(\mb (G^{-1}\svc')_2 p_1+\ma (G^{-1}\svc')_1p_2)\right)^2} = \rtu_d^{-a(G\mbf{p})_1^2},
\\
\rtu_d^{-c(GM_{G^{-1}\svc'}\mbf{p})_2^2}
&= 
\rtu_d^{-c\left((G\mbf{p})_2 + d(\md (G^{-1}\svc')_2p_1+\mc (G^{-1}\svc')_1 p_2)\right)^2}= \rtu_d^{-c(G\mbf{p})_2^2},
}
and
\eag{
&\rtu_d^{- b (GM_{G^{-1}\svc'}\mbf{p})_1(GM_{G^{-1}\svc'}\mbf{p})_2}
\nn
&\hspace{0.5 cm}=
\rtu_d^{
-b\left((G\mbf{p})_1+d(\mb (G^{-1}\svc')_2 p_1+\ma (G^{-1}\svc')_1p_2)\right)\left((G^{-1}\mbf{p})_2 + d(\md (G^{-1}\svc')_2p_1+\mc (G^{-1}\svc')_1 p_2)\right)}
\nn
&\hspace{0.5 cm}=
(-1)^{(G\mbf{p})_1(\md (G^{-1}\svc')_2 p_1+\mc (G^{-1}\svc')_1 p_2)+(G\mbf{p})_2(\mb (G^{-1}\svc')_2 p_1 +\ma (G^{-1}\svc')_1 p_2) }
\rtu_d^{
-b(G\mbf{p})_1(G\mbf{p})_2
}
\nn
&\hspace{0.5 cm}=(-1)^{(\ma p_1+\mb p_2)(\md (G^{-1}\svc')_2 p_1+\mc (G^{-1}\svc')_1 p_2)+(\mc p_1 + \md p_2)(\mb (G^{-1}\svc')_2 p_1 +\ma (G^{-1}\svc')_1 p_2) }
\rtu_d^{
-b(G\mbf{p})_1(G\mbf{p})_2
}
\nn
&\hspace{0.5 cm}= (-1)^{(\ma \md + \mb \mc)( (G^{-1}\svc')_2p_1^2+ (G^{-1}\svc')_1p_2^2)}
\rtu_d^{
-b(G\mbf{p})_1(G\mbf{p})_2
}
\nn
&\hspace{0.5 cm}= (-1)^{\Det (G) ((G^{-1}\svc')_2p\vpu{'}_1 + (G^{-1}\svc')_1p\vpu{'}_2)}
\rtu_d^{
-b(G\mbf{p})_1(G\mbf{p})_2
}
\nn
&\hspace{0.5 cm}= (-1)^{\Det (G) \la G^{-1}\svc',\mbf{p}\ra}\rtu_d^{
-b(G\mbf{p})_1(G\mbf{p})_2}
\nn
&\hspace{0.5 cm}= (-1)^{\la \svc',G\mbf{p}\ra}\rtu_d^{
-b(G\mbf{p})_1(G\mbf{p})_2}.
}
Putting all this together, we conclude
\eag{
\rtu_d^{-\frac{f_{jm}}{f}Q(GM^{-1}_{G^{-1}\svc'}\mbf{p})}
&=(-1)^{\la \svc',G\mbf{p}\ra}\rtu_d^{-\frac{f_{jm}}{f}Q(G\mbf{p})},
}
and, consequently,
\eag{
\SFPhase{t,\svc}{GM^{-1}_{G^{-1}\svc'}\mbf{p}}&=(-1)^{s_d\left(\svc,GM^{-1}_{G^{-1}\svc'}\mbf{p}\right)}e^{-\frac{\pi i}{12}\rade(A_t)}\rtu_d^{-\frac{f_{jm}}{f}Q\left(GM^{-1}_{G^{-1}\svc'}\mbf{p}\right)}
\nn
&=(-1)^{s_d(\svc,G\mbf{p})+\la \svc',G\mbf{p}\ra}e^{-\frac{\pi i}{12}\rade(A_t)} \rtu_d^{-\frac{f_{jm}}{f}Q\left(G\mbf{p}\right)}
\nn
&= (-1)^{s_d(\svc+\svc',G\mbf{p})}e^{-\frac{\pi i}{12}\rade(A_t)} \rtu_d^{-\frac{f_{jm}}{f}Q\left(G\mbf{p}\right)}
\nn
&= \SFPhase{t,\svc+\svc'}{G\mbf{p}}.
}
The fact that $M^{-1}_{G^{-1}\svc'} \equiv  I \Mod{d}$ implies, in view of \Cref{lm:shinperiodicity}, that
\eag{
\sfc{d^{-1}GM^{-1}_{G^{-1}\svc'} \mbf{p}}{A_t}{\qrt_{Q,\pm}} &= \sfc{d^{-1}G \mbf{p}}{A_t}{\qrt_{Q,\pm}}
}
for all $\mbf{p}\not\equiv\zero \Mod{d}$.
Hence
\eag{
\normalizedGhostOverlapC{t,\svc,\qrt_{Q,\pm}}{GM^{-1}_{G^{-1}\svc'}\mbf{p}} &=
\SFPhase{t,\svc}{GM^{-1}_{G^{-1}\svc'}\mbf{p}}\sfc{d^{-1}GM^{-1}_{G^{-1}\svc'} \mbf{p}}{A_t}{\qrt_{Q,\pm}}
\nn
&= \SFPhase{t,\svc+\svc'}{G\mbf{p}}\sfc{d^{-1}G \mbf{p}}{A_t}{\qrt_{Q,\pm}}
\nn
&= \normalizedGhostOverlapC{t,\svc+\svc',\qrt_{Q,\pm}}{G\mbf{p}}.
}
Consequently
\eag{
U\vpu{\dagger}_{M_{G^{-1}\svc'}}\tilde{\Pi}_s(\svc,\qrt_{Q,\pm})U^{\dagger}_{M_{G^{-1}\svc'}}
&=\frac{r}{d}I +\frac{1}{d\sqrt{d_j+1}}\sum_{\mbf{p}\notin d\mbb{Z}^2}
\normalizedGhostOverlapC{t,\svc+\svc',\qrt_{Q,\pm}}{G\mbf{p}} D_{\mbf{p}}
\nn
&= \tilde{\Pi}_s(\svc+\svc',\qrt_{Q,\pm}).
}
Write
\eag{
G^{-1}\svc' &= \bmt \ell_1 \\ \ell_2 \emt
}
for $\ell_1, \ell_2\in \mbb{Z}/\db\mbb{Z}$.  It follows from ~\cite[Thm.~1]{Appleby2005} that
\eag{
U_{M_{G^{-1}\svc'}} &= e^{i\theta} X^{-\frac{d\ell_1}{2}}Z^{-\frac{d\ell_2}{2}}
}
for some $\theta\in \mbb{R}$ such that $e^{i\theta}\in \mbb{Q}(\rtu_d)$ (see \Cref{def:WHGroup} for $X$, $Z$).  We have 
\eag{
X^{-\frac{d\ell_1}{2}}Z^{-\frac{d\ell_2}{2}} &= \sum_{j=0}^{d} (-1)^{\ell_2 j}\left| j+\tfrac{\ell_1d}{2}\right>\left< \vphantom{\tfrac{\ell_1d}{2}} j \right|.
}
So the matrix elements  of $X^{-\frac{d\ell_1}{2}}Z^{-\frac{d\ell_2}{2}}$ are all in $\mbb{Z}$.
Hence
\eag{
\Pi_s(\svc+\svc',\qrt_{Q,\pm}) &= 
g\left( \tilde{\Pi}_s(\svc+\svc',\qrt_{Q,\pm})
\right)
\nn
&= g\left( U\vpu{\dagger}_{M_{G^{-1}\svc'}}\tilde{\Pi}_s(\svc,\qrt_{Q,\pm})U^{\dagger}_{M_{G^{-1}\svc'}}
\right)
\nn
&= g\left(\left( X^{-\frac{d\ell_1}{2}}Z^{-\frac{d\ell_2}{2}}\right)\tilde{\Pi}_s(\svc,\qrt_{Q,\pm})\left(X^{-\frac{d\ell_1}{2}}Z^{-\frac{d\ell_2}{2}}\right)^{\dagger}
\right)
\nn
&=\left( X^{-\frac{d\ell_1}{2}}Z^{-\frac{d\ell_2}{2}}\right) g\left(\tilde{\Pi}_s(\svc,\qrt_{Q,\pm})\right)\left(X^{-\frac{d\ell_1}{2}}Z^{-\frac{d\ell_2}{2}}\right)^{\dagger}
\nn
&= U\vpu{\dagger}_{M_{G^{-1}\svc'}}\Pi_s(\svc,\qrt_{Q,\pm})U^{\dagger}_{M_{G^{-1}\svc'}},
}
completing the proof.
\end{proof}
We are now ready to prove the main result of this Appendix.
\begin{proof}[Proof of \Cref{thm:AlternativeData}]
    We first show that, given an arbitrary fiducial datum $s=(d,r,Q,G,g)$ there exists a datum $s'=(d,r,Q',G,g)$  such that 
    \eag{\label{eq:FiducialWithOtherRoot}
     \Pi_s(\zero,\qrt_{Q,-}) &= \Pi_{s'} (\zero, \qrt_{Q',+}).
    }
    Indeed, \Cref{lem:UpUMWactonPi} implies
    \eag{
     \Pi_s(\zero,\qrt_{Q,-}) &=  U^{\dagger}_P \Pi_{s''}(\zero,\qrt_{-Q,+})U\vpu{\dagger}_P
    }
    where $s''$ is the datum $(d,r,-Q,G,g)$. It then follows from Theorems~\ref{thm:GLHomomorphismSurjective} and~\ref{thm:MTransformedFiducials} that there exists $R\in \GLtwo{\mbb{Z}}$ such that
    \eag{
    U^{\dagger}_P \Pi_{s''}(\zero,\qrt_{-Q,+})U\vpu{\dagger}_P &= \Pi_{s''_R}(\zero,\qrt_{-Q_R,+}).
    }
    \Cref{eq:FiducialWithOtherRoot} then follows by combining these statements and setting $Q'=-Q_R$, $s'=s''_R$. 

    Now consider the general case. Given an arbitrary datum $s=(d,r,Q,G,g)$, an arbitrary root $\qrt_{Q,\pm}$, and arbitrary $\svc\in \mbb{Z}^2$, it follows from \Cref{lem:UpUMWactonPi} and the result just proved that
    \eag{
    \Pi_s(\svc,\qrt_{Q,\pm}) &=U\vpu{\dagger}_{M_{G^{-1}\svc}}\Pi_s(\zero,\qrt_{Q,\pm}) U^{\dagger}_{M_{G^{-1}\svc}}
    \nn
    &= U\vpu{\dagger}_{M_{G^{-1}\svc}}\Pi_{s''}(\zero,\qrt_{Q'',+}) U^{\dagger}_{M_{G^{-1}\svc}}
    }
    for some fiducial datum $s''=(d,r,Q'',G,g)$.  It then follows from Theorems~\ref{thm:GLHomomorphismSurjective} and~\ref{thm:MTransformedFiducials} that there exists $R\in \GLtwo{\mbb{Z}}$ such that
    \eag{
    U\vpu{\dagger}_{M_{G^{-1}\svc}}\Pi_{s''}(\zero,\qrt_{Q'',+}) U^{\dagger}_{M_{G^{-1}\svc}}
    &= \Pi_{s''_R}(\zero,\qrt_{Q''_R,+}).
    }
    Setting $s'=s''_R$, $Q'=Q''_R$ the result follows.
\end{proof}

\section{Canonical order \texorpdfstring{$3$}{3} unitaries}\label{ap:canord3}
The purpose of this appendix is to prove \Cref{thm:CanonicalOrder3ConjugacyClasses}, characterizing the conjugacy classes of the elements of $\ESLtwo{\mbb{Z}/\db\mbb{Z}}$ having trace equal to $d-1$. 
Our starting point is Lemma 9.2 in Bos and Waldron~\cite{Bos:2019}, which describes the conjugacy classes of the elements of $\SLtwo{\mbb{Z}/d\mbb{Z}}$ having trace equal to $-1$. 
We proceed in two steps: 
\begin{enumerate}
    \item \label{it:ExtendBosWaldronToESL} We first use Bos and Waldron's result to prove an analogous result for the  elements of $\ESLtwo{\mbb{Z}/d\mbb{Z}}$ having trace equal to $-1$.
    \item We then use this to prove the result for the elements of $\ESLtwo{\mbb{Z}/\db\mbb{Z}}$ having trace equal to $d-1$.
\end{enumerate}

Let $F_z$, $F_a$, $F'_a$ be as specified in \Cref{dfn:FzFaFpaMatrices}, and let $\bar{F}_z$, $\bar{F}_a$, $\bar{F}'_a$ be their reductions modulo $d$.  
Thus
\eag{
\bar{F}_z&= \bmt 0 &  -1 \\ 1 & -1\emt,
&
\bar{F}_a &= \bmt 1 & 3 \\ \frac{d-3}{3} & -2\emt,
&
\bar{F}'_a &= \bmt 1 & 3 \\ \frac{2d-3}{3} & -2\emt.
}

We begin by stating the result of Bos and Waldron on which we rely. 
As in the rest of this paper we restrict ourselves to the case $d\ge 4$.

\begin{thmb}[Lemma 9.2 in Bos and Waldron~\cite{Bos:2019}]
    The set of matrices in $\SLtwo{\mbb{Z}/d\mbb{Z}}$ having trace equal to $-1$ consists of
    \begin{enumerate}[itemsep=0.1 cm]
        \item The single conjugacy class $[\bar{F}_z]$ if $d\not\equiv 0 \Mod{3}$,
        \item The two disjoint conjugacy classes $[\bar{F}\vpu{-1}_z]$, $[\bar{F}^{-1}_z]$ if $d \equiv 0 \Mod{9}$,
        \item The three disjoint conjugacy classes $[\bar{F}\vpu{-1}_z]$, $[\bar{F}^{-1}_z]$, $[\bar{F}\vpu{-1}_a]$ if $d \equiv 3 \Mod{9}$,
        \item The three disjoint conjugacy classes $[\bar{F}\vpu{-1}_z]$, $[\bar{F}^{-1}_z]$, $[\bar{F}'_a]$ if $d \equiv 6 \Mod{9}$,
    \end{enumerate}
    where the notation $[G]$ means ``conjugacy class of $G$ considered as an element of $\SLtwo{\mbb{Z}/d\mbb{Z}}$''.
\end{thmb}
\begin{rmkb}
    Note that Bos and Waldron state a  more general version of the lemma applicable to all $d\ge 2$; in particular to $d=3$, which requires special treatment.
\end{rmkb}
The next result says that extending to $\ESLtwo{\mbb{Z}/d\mbb{Z}}$ reduces the number of conjugacy classes. 

\begin{lem}\label{lm:canord3pre}
    The set of matrices in $\ESLtwo{\mbb{Z}/d\mbb{Z}}$ having determinant equal to $+1$ and  trace equal to $-1$ consists of
    \begin{enumerate}[itemsep=0.1 cm]
        \item 
        \label{it:eslfzconjugacydneq36}The single conjugacy class $[\bar{F}_z]$ if $d\not\equiv 3,6 \Mod{9}$.
        \item 
        \label{it:eslfzconjugacydeq6}The two disjoint conjugacy classes $[\bar{F}\vpu{-1}_z]$,  $[\bar{F}\vpu{-1}_a]$ if $d \equiv 3 \Mod{9}$,
        \item The two disjoint conjugacy classes $[\bar{F}\vpu{-1}_z]$, $[\bar{F}'_a]$ if $d \equiv 6 \Mod{9}$,
    \end{enumerate}
    where the notation $[G]$ means ``conjugacy class of $G$ considered as an element of $\ESLtwo{\mbb{Z}/d\mbb{Z}}$''.
\end{lem}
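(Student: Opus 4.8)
The plan is to derive Lemma~\ref{lm:canord3pre} from the Bos--Waldron lemma by analyzing how passing from $\SLtwo{\mbb{Z}/d\mbb{Z}}$ to the larger group $\ESLtwo{\mbb{Z}/d\mbb{Z}}$ (with $\det = +1$) merges conjugacy classes. Concretely, let $N \in \ESLtwo{\mbb{Z}/d\mbb{Z}}$ with $\det(N) = -1$; conjugation by $N$ is an outer automorphism of $\SLtwo{\mbb{Z}/d\mbb{Z}}$, and the $\ESLtwo{\mbb{Z}/d\mbb{Z}}$-conjugacy classes inside the trace-$(-1)$, determinant-$(+1)$ locus are obtained from the $\SLtwo{\mbb{Z}/d\mbb{Z}}$-conjugacy classes by identifying each class $[G]_{\SL}$ with $[NGN^{-1}]_{\SL}$. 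So the whole problem reduces to computing, for a single convenient choice of $N$ (I would take $N = J = \smmattwo{1}{0}{0}{-1}$), the effect of the map $G \mapsto JGJ^{-1}$ on the list of $\SLtwo{\mbb{Z}/d\mbb{Z}}$-classes supplied by Bos--Waldron.

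The key computation is that $J \bar F_z J^{-1} = J \smmattwo{0}{-1}{1}{-1} J^{-1} = \smmattwo{0}{1}{-1}{-1}$, and one checks directly that $\smmattwo{0}{1}{-1}{-1} = \bar F_z^{-1}$ (since $\bar F_z$ has order $3$ and determinant $1$, $\bar F_z^{-1} = \bar F_z^2 = \Tr(\bar F_z)\bar F_z - I = -\bar F_z - I$, which equals $\smmattwo{0}{1}{-1}{-1}$). Hence $J$ swaps the two $\SLtwo{\mbb{Z}/d\mbb{Z}}$-classes $[\bar F_z]$ and $[\bar F_z^{-1}]$, so they fuse into a single $\ESLtwo{\mbb{Z}/d\mbb{Z}}$-class $[\bar F_z]$. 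For the exceptional classes in the cases $d \equiv 3, 6 \Mod 9$, I would show that $[\bar F_a]$ (resp.\ $[\bar F'_a]$) is fixed under the outer automorphism — i.e.\ $J \bar F_a J^{-1}$ is $\SLtwo{\mbb{Z}/d\mbb{Z}}$-conjugate to $\bar F_a$ — so that class does not fuse with anything and survives as a distinct $\ESLtwo{\mbb{Z}/d\mbb{Z}}$-class. One fast way to see the latter: by \Cref{thm:CriterionForFaFaprime}-type reasoning (or directly, since $3 \div d$ here), $\bar F_a \equiv I \Mod 3$ and more importantly $\bar F_a$ is characterized among trace-$(-1)$ determinant-$(+1)$ matrices by a mod-$3$ congruence class that is preserved by all of $\GLtwo{\mbb{Z}/d\mbb{Z}}$-conjugation; I would spell this out by checking that the residue $\bar F_a \bmod 3$ lies in its own $\GLtwo{\mbb{Z}/3\mbb{Z}}$-conjugacy class and that no element of $[\bar F_z]_{\ESL}$ reduces to that residue mod $3$.

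Then I would assemble the four cases. When $d \not\equiv 0 \Mod 3$, Bos--Waldron gives only $[\bar F_z]_{\SL}$, and fusion leaves $[\bar F_z]_{\ESL}$ — this covers $d \not\equiv 3, 6 \Mod 9$ together with the $d \equiv 0 \Mod 9$ sub-case, where Bos--Waldron gives $[\bar F_z]_{\SL}, [\bar F_z^{-1}]_{\SL}$ which fuse into one. When $d \equiv 3 \Mod 9$: the three $\SL$-classes $[\bar F_z], [\bar F_z^{-1}], [\bar F_a]$ become $[\bar F_z]_{\ESL}$ and $[\bar F_a]_{\ESL}$, two classes; similarly for $d \equiv 6 \Mod 9$ with $\bar F'_a$. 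I would also note that disjointness of $[\bar F_z]_{\ESL}$ from $[\bar F_a]_{\ESL}$ (resp.\ $[\bar F'_a]_{\ESL}$) follows because $\ESLtwo{\mbb{Z}/d\mbb{Z}}$-conjugacy classes are unions of $\SLtwo{\mbb{Z}/d\mbb{Z}}$-conjugacy classes, and these were already disjoint in Bos--Waldron.

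The main obstacle I anticipate is the bookkeeping around the exceptional classes in the $3\div d$ cases: one must verify cleanly that the outer automorphism $G \mapsto JGJ^{-1}$ fixes (up to $\SL$-conjugacy) the class $[\bar F_a]$ or $[\bar F'_a]$ rather than merging it with $[\bar F_z^{\pm 1}]$, and that these exceptional classes genuinely survive as separate $\ESL$-classes. This is where I'd want to be careful about the precise form of $\bar F_a, \bar F'_a$ given in \Cref{dfn:FzFaFpaMatrices} and do the explicit mod-$3$ conjugacy check; everything else is a short direct computation or a formal consequence of how conjugacy classes behave under a degree-$2$ group extension.
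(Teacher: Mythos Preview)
Your strategy is the paper's: pass from the Bos--Waldron $\SLtwo{\mbb{Z}/d\mbb{Z}}$-classification to $\ESLtwo{\mbb{Z}/d\mbb{Z}}$ by analysing how a single determinant-$(-1)$ element merges classes, then use the mod-$3$ residue to separate $[\bar F_a]$ (resp.\ $[\bar F'_a]$) from $[\bar F_z]$.

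However, your ``key computation'' contains an error: you claim $J\bar F_z J^{-1} = \bar F_z^{-1}$, but in fact
\[
J\bar F_z J^{-1} = \smmattwo{0}{1}{-1}{-1}, \qquad \bar F_z^{-1} = -\bar F_z - I = \smmattwo{-1}{1}{-1}{0},
\]
and these are not equal. (They are $\SLtwo{\mbb{Z}/d\mbb{Z}}$-conjugate, via $S$, so your conclusion that conjugation by $J$ swaps the two $\SLtwo{\mbb{Z}/d\mbb{Z}}$-classes is ultimately correct---but not for the stated reason.) The paper sidesteps this by choosing the swap matrix $M = \smmattwo{0}{1}{1}{0}$ instead of $J$: then $M\bar F_z M^{-1} = \bar F_z^{-1}$ holds on the nose.

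Your closing sentence on disjointness is also not a valid inference as stated: that $\ESLtwo{\mbb{Z}/d\mbb{Z}}$-classes are unions of $\SLtwo{\mbb{Z}/d\mbb{Z}}$-classes, and that the latter were disjoint in Bos--Waldron, does not by itself prevent the $\ESLtwo{\mbb{Z}/d\mbb{Z}}$-classes $[\bar F_z]$ and $[\bar F_a]$ from coinciding. The paper handles this directly with the mod-$3$ argument you mention earlier: if $\bar F_z = G\bar F_a G^{-1}$ for some $G \in \ESLtwo{\mbb{Z}/d\mbb{Z}}$, then reducing mod $3$ would give $\bar F_z \equiv I \pmod 3$, a contradiction. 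This one-line invariant argument also makes your proposed separate verification that conjugation by $J$ fixes the $\SLtwo{\mbb{Z}/d\mbb{Z}}$-class of $\bar F_a$ unnecessary.
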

\begin{proof}
 Let
 \eag{
 M &= \bmt 0 & 1 \\ 1 & 0 \emt 
 }
    Then 
    \eag{
    M\bar{F}\vpu{-1}_z M^{-1} &= \bar{F}^{-1}_z,
    }
   implying $[\bar{F}^{-1}_z] = [\bar{F}\vpu{-1}_z]$ for all $d$.  
   
   To see that $[\bar{F}_z]$ and $[\bar{F}_a]$ are disjoint when $d \equiv 3 \Mod{9}$, assume on the contrary that
   \eag{
    \bar{F}_z & = G \bar{F}_a G^{-1}
   }
   for some $G\in \ESLtwo{\mbb{Z}/d\mbb{Z}}$.  Since $\bar{F}_a \equiv I \Mod{3}$, it would follow that $\bar{F}_z \equiv I \Mod{3}$, which is a contradiction.

   The fact that $[\bar{F}\vpu{'}_z]$ and $[\bar{F}'_a]$ are disjoint when when $d \equiv 6 \Mod{9}$ is proved similarly.
\end{proof}
We  now use this to prove \Cref{thm:CanonicalOrder3ConjugacyClasses}.
If $d$ is odd, then $\bar{F}\vpu{'}_z = F\vpu{'}_z$, $\bar{F}\vpu{'}_a = F\vpu{'}_a$, $\bar{F}'_a = F'_a$, and the theorem is an immediate consequence of \Cref{lm:canord3pre}.  

Suppose, on the other hand, that $d$ is even.  Making the replacement $d \mapsto 2d$ in  \Cref{lm:canord3pre}, and using the fact that $d \equiv 3 \Mod{9} \iff 2d \equiv 6 \Mod{9}$ and $d \equiv 6 \Mod{9} \iff 2d\equiv 3 \Mod{9}$, we find
that the set of matrices in $\ESLtwo{\mbb{Z}/\db\mbb{Z}}$
having determinant equal to  $+1$ and trace equal to $-1$ consists of:
\begin{enumerate}
    \item The single conjugacy class
    \eag{
    \left[\bmt 0 & -1 \\ 1 & -1 \emt\right]
    }
    if $d \not\equiv 3,6 \Mod{9}$;
    \item The two disjoint conjugacy classes 
    \eag{
    &\left[\bmt 0 & -1 \\ 1 & -1 \emt \right],  & \left[\bmt 1 & 3 \\ \frac{4d-3}{3} & -2\emt\right]
    }
    if $d \equiv 3 \Mod{9}$;
    \item The two disjoint conjugacy classes 
    \eag{
    &\left[\bmt 0 & -1 \\ 1 & -1 \emt \right],  & \left[\bmt 1 & 3 \\ \frac{2d-3}{3} & -2\emt\right]
    }
    if $d \equiv 6 \Mod{9}$.
\end{enumerate}
Now suppose that $F\in \ESLtwo{\mbb{Z}/\db\mbb{Z}}$ has determinant $+1$ and trace equal to $d-1$.  Then $G=(d+1)F$ has trace equal to $-1$, so the result just proved implies that 
\eag{
F\in [(d+1)G] &= \left[\bmt 0 & d-1 \\ d+1 & d-1\emt\right]
\\
\intertext{if $d\not\equiv 3,6 \Mod{9}$,}
F\in [(d+1)G] &= \left[\bmt 0 & d-1 \\ d+1 & d-1\emt\right] &&\text{or} & & \left[\bmt d+1 & d+3 \\ \frac{d-3}{3} & -2\emt\right]
\\
\intertext{if $d \equiv 3 \Mod{9}$, and}
F\in [(d+1)G] &= \left[\bmt 0 & d-1 \\ d+1 & d-1\emt\right] &&\text{or} & & \left[\bmt d+1 & d+3 \\ \frac{5d-3}{3} & -2\emt\right]
}
if $d \equiv 6 \Mod{9}$.  Using
\eag{
\bmt 1 & 0 \\ d & 1 \emt \bmt d+1 & d+3 \\ \frac{d-3}{3} & -2\emt
\bmt 1 & 0 \\ d & 1 \emt^{-1} &=
\bmt 1 & d+3 \\ \frac{4d-3}{3} & d-2\emt = 
F_a,
\\
\bmt 1 & 0 \\ d & 1 \emt \bmt d+1 & d+3 \\ \frac{5d-3}{3} & -2\emt
\bmt 1 & 0 \\ d & 1 \emt^{-1} &=
\bmt 1 & d+3 \\ \frac{2d-3}{3} & d-2\emt
=F'_a,
}
we deduce
\eag{
F &\in [F_z]  & & \text{if $d\not\equiv 3,6 \Mod{9}$},
\\
F&\in [F_z] \text{ or } [F_a] && \text{if $d\equiv 3 \Mod{9}$},
\\
F&\in [F_z] \text{ or }  [F'_a] && \text{if $d\equiv 6 \Mod{9}$}.
}
The fact that $[F_z]$, $[F_a]$ are disjoint when $d\equiv 3 \Mod{9}$ follows from the fact that $F_a=I\not\equiv F_z \Mod{3}$.  Similarly, if $d\equiv 6 \Mod{9}$, then $F'_a=I\not\equiv F\vpu{'}_z \Mod{3}$, implying $[F\vpu{'}_z]$, $[F'_a]$ are disjoint.
 \Cref{thm:CanonicalOrder3ConjugacyClasses} now follows.

\section{Hirzebruch--Jung continued fractions} \label{ap:hjcont}
The purpose of this appendix is to show how continued fraction expansions can be used to choose a quadratic form which minimizes the length of the expansion on the right hand side of \eqref{eq:lexpn}
Expansions of the form
\eag{
[k_1,k_2,k_3,k_4,\dots]_{+} &=k_1 +\cfrac{1}{k_2 + \cfrac{1}{k_3+\cfrac{1}{k_4+\cdots}}}
}
are extremely well-known and are described in considerable detail in standard texts such as ~\cite{Hardy2009,Davenport2008}. Following Popescu-Pampu~\cite{Popescu-Pampu:2007} we refer to them as \emph{Euclidean (E-) continued fractions}. 
In this paper we need a different kind of expansion, which Popescu-Pampu refers to as a \emph{Hirzebruch--Jung (HJ-) continued fraction},  of the form
\eag{
[k_1,k_2,k_3,k_4,\dots]_{-} &=k_1 -\cfrac{1}{k_2 - \cfrac{1}{k_3-\cfrac{1}{k_4-\cdots}}}\,.
}
In the literature\cite{Jung:1908,Hirzebruch:1953,Hirzebruch:1971,Hirzebruch:1973,Shintani1976,Adler:1984,Myerson:1987,Finkelshtein:1993,Katok:1996,Katok:2003,Popescu-Pampu:2007,Luzzi:2007,Lopez:2015,Bjorklund:2020} such fractions are also described as backwards, negative-regular, minus, reduced regular, and by-excess continued fractions. 
For the convenience of the reader in this appendix we collect their essential properties.
Since we are only concerned with HJ-continued fractions in this paper, we will drop the subscript, and simply denote them $[k_1,k_2,k_3,k_4,\dots]$. 
We also review the related concept of a \emph{Hirzebruch--Jung (HJ-) reduced form}.

For all $x\in \mbb{Q}$ (respectively $x\in \mbb{R}\setminus\mbb{Q}$), there exists a unique finite (respectively infinite) sequence of integers $k_j$ such that $x=[k_1,k_2\dots ]$, the $k_j \ge 2$ for all $j\ge 2$, and (in case the sequence is infinite) there is no integer $m$ such that $k_j = 2$ for all $j\ge m$.

Define = $(k_1,k_2,\dots, k_n)$ recursively by
\eag{
(k_1,k_2, \dots, k_n) &=\begin{cases}
k_1 \qquad & n=1,
\\
k_1k_2-1\qquad & n=2,
\\
(k_1,k_2,\dots, k_{n-1})k_n -(k_1,k_2\dots, k_{n-2}) \qquad &n>2.
\end{cases}
}
We refer to these quantities as \textit{HJ-convergents}. They can be calculated using the following modified version of Euler's rule: First take the product of all $n$ numbers $k_1,\dots, k_n$, then subtract all products obtained by omitting a pair of adjacent numbers, then add all products  obtained by omitting two different pairs of adjacent numbers, and so on.
In particular, they are symmetric under reversal: 
\eag{
(k_1,k_2,\dots ,k_{n-1}, k_n)& = (k_n, k_{n-1},\dots , k_2, k_1).
}
One has
\eag{
[k_1,k_2, \dots, k_n]&=
\begin{cases}
(k_1) \qquad & n=1,
\\
\frac{(k_1,k_2, \dots, k_n)}{(k_2,\dots, k_n)} \qquad & n\ge 2,
\end{cases}
}
for finite HJ-expansions, 
\eag{
[k_1,k_2, \dots, k_{n}, \dots]&=
\begin{cases}
\frac{(k_1) [k_{n+1}, k_{n+2}, \dots]-1}{[k_{n+1}, k_{n+2}, \dots]} \vphantom{\Big( }\qquad & n=1,
\\ 
    \frac{(k_1, k_2) [k_{n+1}, k_{n+2}, \dots]-(k_1)}{(k_2)[k_{n+1}, k_{n+2}, \dots] - 1}\vphantom{\Big( }\qquad & n=2,
\\
    \frac{(k_1, \dots, k_n) [k_{n+1}, k_{n+2}, \dots]-(k_1, \dots, k_{n-1})}{(k_2,\dots, k_n)[k_{n+1}, k_{n+2}, \dots] - (k_2, \dots, k_{n-1})}\vphantom{\Big( }\qquad & n>2,
\end{cases}
}
for infinite HJ-expansions, and
\eag{
T^{k_1}S T^{k_2}S \cdots T^{k_n}S &=
 \begin{cases}
 \bmt (k_1) & -1 \\ 1 & 0\emt \qquad &n=1,
 \\
 \bmt (k_1,k_2) & -(k_1) \\ (k_2) & -1\emt
 \qquad & n=2,
 \\
 \bmt(k_1,\dots,k_n) & -(k_1,\dots, k_{n-1}) 
 \\ (k_2,\dots, k_{n}) & -(k_2,\dots, k_{n-1})\emt \qquad & n\ge 3.
 \end{cases}
\label{eq:tk1sdtstknsexpn}
}
In particular
\eag{
T^{k_1}S T^{k_2}S \cdots T^{k_n}S.\left([k_{n+1},k_{n+2},\dots]\right) &= [k_1, k_2, \dots]
\label{eq:TSActionOnHJContinuedFraction}
}
for all $n$.
This last relation is the reason HJ-continued fractions are relevant to this paper.

A continued fraction is said to be \emph{periodic} if it is infinite and of the form
\eag{
 [j_1,j_2,\dots,j_m,\overline{k_1,k_2,\dots,k_n}]
 &=
 [j_1,j_2,\dots,j_m,k_1,\dots,k_n,k_1,\dots,k_n,k_1,\dots,k_n,\dots]
}
It is said to be \emph{purely periodic}  if it is of the form $[\overline{k_1,\dots,k_n}]$. If $k_1, \dots, k_n$ doesn't break into two or more identical subsequences, then we say that $n$ is the \emph{period} of $[\overline{k_1,\dots,k_n}]$.

The HJ-continued fraction expansion of a real number is periodic if and only if it is  an irrational element of a real quadratic field.    Let $\qrt$ be such a number. Then its HJ-continued fraction expansion is purely periodic if and only if  $\qrt > 1>\qrt' >0$, where $\qrt'$ is its Galois conjugate.

There is a close connection between purely periodic HJ-continued fractions and a class of quadratic forms that we now define.  A form $Q=\la a, b, c\ra$ with discriminant $\Delta = b^2-4ac$ is reduced in the ordinary sense~\cite{Buchmann:2007,Buell1989}, or (as we will say) \emph{Euclidean (E-) reduced}, if 
\eag{
0 < \sqrt{\Delta}-b < 2|a| < \sqrt{\Delta}+b.
}
Forms of this type have a connection with Euclidean continued fractions. Specifically, the number $\tfrac{b+\sqrt{\Delta}}{2|a|}$ has a purely periodic E-continued fraction expansion if and only if $Q$ is E-reduced.  Correspondingly we say $Q$ is \emph{Hirzebruch--Jung (HJ-) reduced} if
\eag{
0 < -\sqrt{\Delta}-b < 2|a| < \sqrt{\Delta}-b.
}
The number $\tfrac{-b+\sqrt{\Delta}}{2|a|}$ has a purely periodic HJ-continued fraction expansion if and only if $Q$ is HJ-reduced.

Let $Q=\la a,b, c\ra$ and $J=\smt{1 & 0 \\ 0 & -1}$. 
Then $Q_J =\la -a, b, -c\ra$ (see \Cref{ssec:quadraticfieldsforms}). 
It follows that $Q_J$ is E-reduced (respectively HJ-reduced) if and only if $Q$ is E-reduced (respectively HJ-reduced), in which case they define the same purely periodic E-continued (respectively HJ-continued) fraction. We may therefore, without loss of generality, confine  ourselves to E-reduced (respectively HJ-reduced) forms $\la a, b, c\ra$ for which $a>0$. 
In the following this restriction will be assumed without comment. 
The map taking $Q$ to $-\qrt_{Q,-}$ (respectively $\qrt_{Q,+}$) is then a bijective correspondence of the set of E-reduced (respectively HJ-reduced) forms onto the set of purely periodic E-continued (respectively HJ-continued) fractions.

There is a well-known algorithm~\cite{Buchmann:2007,Buell1989} for calculating the complete set of E-reduced forms on a given $\GLtwo{\mbb{Z}}$ orbit. 
We now show how this can be used to construct the complete set of HJ-reduced forms on the same orbit. 
Let $W$ be the forms $\la a, b,c\ra$ on a $\GLtwo{\mbb{Z}}$ orbit for which $a>0$, and let 
\eag{
W_{\rm{E}} &= \{Q\in W\colon \qrt_{Q,-}< -1 < 0< \qrt_{Q,+} <1 \}
\\
W_{\rm{HJ}}&=\{Q\in W \colon 0 < \qrt_{Q,-} < 1 < \qrt_{Q,+}\}
}
(where $\qrt_{Q,\pm}$ are given by \eqref{dfn:qrtdf}).
Then $W_{\rm{E}}$ (respectively $W_{\rm{HJ}}$) is precisely the set of E-reduced (respectively HJ-reduced) forms in $W$. Also define, for $n=0,1,\dots$,
\eag{
W_{\rm{E}}^{(n)} &= \left\{Q\in W\colon \qrt_{Q,-}< -1 < 0< \qrt_{Q,+} <\frac{1}{n+1}\right\},
\\
W_{\rm{HJ}}^{(n)} &= \left\{Q\in W\colon \frac{n}{n+1}<\qrt_{Q,-}<\frac{n+1}{n+2} < 1 < \qrt_{Q,+}\right\}.
}
Then
\eag{
W_{\rm{E}} = W_{\rm{E}}^{(0)} \supseteq W_{\rm{E}}^{(1)} \supseteq \dots,  &&& \bigcap_{n=0}^{\infty} W_{\rm{E}}^{(n)} = \emptyset,
\\
W_{\rm{HJ}}^{(n)}\cap W_{\rm{HJ}}^{(n')} = \emptyset \qquad \text{if $n\neq n'$}, &&& \bigcup_{n=0}^{\infty} W_{\rm{HJ}}^{(n)}  = W_{\rm{HJ}}.
}
Since $W_{\rm{E}}$ is finite, non-empty there must exist $n_0 \in \mbb{N}$ such that $W_{\rm{E}}^{(n)} = \emptyset$ if and only if $n\ge n_0$.
Let
\eag{
L_n&=\bmt n & -1 \\  n+1 & -1\emt. 
}
Then $x<-1$ if and only if $n/(n+1) < L_n \cdot x < (n+1)/(n+2)$,  and $0<x <1/(n+1)$ if and only if $1 < L_n \cdot x$. In view of \Cref{{lm:lactqrt}}, this means the map $Q\to Q_{L^{-1}_n}$ is a bijection of $W_{\rm{E}}^{(n)}$ onto $W_{\rm{HJ}}^{(n)}$.  It follows that the cardinality of $W_{\rm{HJ}}$ is at most $n_0$ times the cardinality of $W_{\rm{E}}$.  It also provides an algorithm for calculating the set $W_{\rm{HJ}}$, given the set $W_{\rm{E}}$.

\begin{lem}\label{eq:convineq}
Suppose $k_j\ge 2$ for all $j$.  Then
\eag{
(k_1,\dots, k_n) > (k_1, \dots, k_{n-1}) > \dots >(k_1,k_2) > (k_1) >1
}
\end{lem}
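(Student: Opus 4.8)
The statement to prove is \Cref{eq:convineq}: if $k_j \geq 2$ for all $j$, then the HJ-convergents satisfy the strict chain of inequalities $(k_1,\dots,k_n) > (k_1,\dots,k_{n-1}) > \cdots > (k_1,k_2) > (k_1) > 1$. The plan is to prove this by induction on $n$, using the recursion $(k_1,\dots,k_n) = (k_1,\dots,k_{n-1})k_n - (k_1,\dots,k_{n-2})$ for $n > 2$, together with the base relations $(k_1) = k_1$ and $(k_1,k_2) = k_1 k_2 - 1$. Since $k_1 \geq 2 > 1$ and $(k_1,k_2) = k_1 k_2 - 1 \geq 2k_1 - 1 > k_1 = (k_1)$, the first two links of the chain hold. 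It will actually be cleaner to prove by strong induction the slightly stronger statement that, for all $n \geq 1$, one has $(k_1,\dots,k_n) > (k_1,\dots,k_{n-1}) \geq 1$ (with the convention that the empty convergent equals $1$, so that the $n=1$ case reads $(k_1) = k_1 \geq 2 > 1$), because this packages together both the monotonicity and the positivity needed to feed the recursion.

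For the inductive step with $n > 2$: assume $(k_1,\dots,k_{n-1}) > (k_1,\dots,k_{n-2}) > 0$ (all strictly positive integers, hence $\geq 1$). Then
\begin{align*}
(k_1,\dots,k_n) - (k_1,\dots,k_{n-1})
&= (k_1,\dots,k_{n-1})k_n - (k_1,\dots,k_{n-2}) - (k_1,\dots,k_{n-1}) \\
&= (k_1,\dots,k_{n-1})(k_n - 1) - (k_1,\dots,k_{n-2}).
\end{align*}
Since $k_n \geq 2$, we have $k_n - 1 \geq 1$, so $(k_1,\dots,k_{n-1})(k_n-1) \geq (k_1,\dots,k_{n-1}) > (k_1,\dots,k_{n-2})$, and therefore the displayed difference is strictly positive. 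This gives $(k_1,\dots,k_n) > (k_1,\dots,k_{n-1})$, completing the induction; chaining the inequalities over all indices from $1$ to $n$ yields the full statement, and the final link $(k_1) > 1$ is just $k_1 \geq 2$.

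I do not anticipate a genuine obstacle here — the only mild subtlety is getting the bookkeeping of the recursion's index ranges right (the recursion for $(k_1,\dots,k_n)$ is only stated for $n > 2$, so the cases $n = 1$ and $n = 2$ must be checked by hand against the explicit formulas $(k_1) = k_1$ and $(k_1,k_2) = k_1k_2 - 1$), and making sure the inductive hypothesis is stated strongly enough (carrying both the strict inequality and a lower bound of $1$, or equivalently positivity of the convergents, so that the term $-(k_1,\dots,k_{n-2})$ can be absorbed). Everything else is a one-line manipulation of the recursion. If one wanted to be even more economical, one could observe that the $k_n - 1 \geq 1$ rearrangement shows $(k_1,\dots,k_n) \geq (k_1,\dots,k_{n-1}) + \big((k_1,\dots,k_{n-1}) - (k_1,\dots,k_{n-2})\big)$, i.e. the sequence of convergents is not just increasing but has non-decreasing increments, which immediately gives strict monotonicity once the first increment $(k_1,k_2) - (k_1) = k_1(k_2-1) - 1 \geq k_1 - 1 \geq 1$ is positive; but the plain induction above is the most direct route and is what I would write up.
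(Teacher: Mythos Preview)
Your proof is correct and is exactly the routine induction on the recursion that the paper has in mind; the paper's own proof simply reads ``Straightforward consequence of the definition'' without spelling out the details you have written.
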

\begin{proof}
Straightforward consequence of the definition.
\end{proof}
\begin{lem}\label{lm:tksprdprops}
Suppose
\eag{
T^{k_1}S T^{k_2} S \cdots T^{k_n}S & = L^m
\label{eq:tkseqlm}
}
for some sequence of integers $k_1, k_2, \dots, k_n$ all greater than 1 and some positive integer $m$.  Then $n= \ell m$ for some positive integer $\ell$,   $k_{j+\ell}=k_j$ for $j=1,\dots , n-\ell$, and 
\eag{
L &=
\begin{cases}
T^{k_1} S \cdots T^{k_{\ell}}S \qquad & \text{if $m$ is odd},
\\
\pm T^{k_1} S \cdots T^{k_{\ell}}S \qquad & \text{if $m$ is even}.
\end{cases}
\label{eq:ltrmstk1skr}
}
\end{lem}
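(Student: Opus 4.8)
The statement to prove is \Cref{lm:tksprdprops}: given that $T^{k_1}S T^{k_2} S \cdots T^{k_n}S = L^m$ with all $k_j \ge 2$ and $m$ a positive integer, we must conclude that $n = \ell m$ for a positive integer $\ell$, that the sequence $(k_1,\dots,k_n)$ is $\ell$-periodic, and that $L$ has the stated form.

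\medskip

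\noindent\textbf{Approach.} The plan is to exploit the uniqueness of HJ-continued fraction expansions together with the action \eqref{eq:TSActionOnHJContinuedFraction}. First I would observe that the matrix $L$, being a power root of a product $T^{k_1}S\cdots T^{k_n}S$ with all $k_j\ge 2$, is hyperbolic with positive trace, and in particular $L \ne \pm I$ and $L$ has an attracting real fixed point $\qrt$; write $\qrt = [\,\overline{k_1,\dots,k_n}\,]$, which is purely periodic by the material reviewed in \Cref{ap:hjcont} (since \eqref{eq:TSActionOnHJContinuedFraction} gives $T^{k_1}S\cdots T^{k_n}S \cdot \qrt = \qrt$, so $\qrt$ is the purely-periodic HJ-continued fraction with the indicated period block, possibly a proper multiple of the minimal period). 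Next, $\qrt$ is also a fixed point of $L$ (since $L^m \cdot \qrt = \qrt$ and $L$ is orientation preserving with a unique attracting fixed point, which must be $\qrt$), so by \Cref{lm:fxdpt} or directly, $L \in \mcl{S}(Q)$ where $Q$ is the form with root $\qrt$; hence $L$ itself, up to sign, is a product $T^{j_1}S\cdots T^{j_p}S$ whose associated HJ-expansion $[\,\overline{j_1,\dots,j_p}\,]$ equals $\qrt$ (this uses that the generator of $\mcl{S}(Q)\cap\SLtwo{\mbb{Z}}$ up to $\pm I$ is realized by the minimal-period HJ block, a standard fact, or one can derive it from \Cref{tm:symgp} and \Cref{thm:canrep}). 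Then $L^m = (\pm I)^{?}\, T^{j_1}S\cdots T^{j_p}S\, T^{j_1}S\cdots\, T^{j_p}S$ ($m$ blocks), which equals $T^{k_1}S\cdots T^{k_n}S$; comparing via \eqref{eq:tk1sdtstknsexpn}, the matrix entries are the HJ-convergents of the two sequences, and by the uniqueness of HJ-expansions the sequences $(k_1,\dots,k_n)$ and the $m$-fold repetition of $(j_1,\dots,j_p)$ must coincide (matching the minimal periods). This forces $n = pm$, so $\ell = p$, and $k_{j+\ell} = k_j$. Finally, the sign in \eqref{eq:ltrmstk1skr}: $\Det(L) = 1$ forces $\det$ considerations; since $\det(T^{k_1}S\cdots T^{k_\ell}S) = (-1)^\ell$ — wait, each $S$ contributes $+1$ and each $T$ contributes $+1$, so actually the product has determinant $+1$, and the sign ambiguity comes only from whether $L = +(T^{k_1}S\cdots)$ or $L = -(T^{k_1}S\cdots)$; I would pin this down by tracking $L^m$ versus $(-1)^m(T^{k_1}S\cdots T^{k_\ell}S)^m$ and using that $(-1)^m = 1$ iff $m$ even, which is exactly why the $\pm$ appears only in the even case.

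\medskip

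\noindent\textbf{Key steps, in order.} (1) Show $L$ is hyperbolic with $\Tr(L) > 0$ and identify its attracting fixed point $\qrt$ with the purely periodic HJ-continued fraction $[\,\overline{k_1,\dots,k_n}\,]$, using \eqref{eq:TSActionOnHJContinuedFraction} and \eqref{eq:tk1sdtstknsexpn}. (2) Deduce $L \in \mcl{S}(Q)$ for the form $Q$ with $\qrt_{Q,+} = \qrt$ (via \Cref{lm:lactqrt}), hence $L = \pm T^{j_1}S\cdots T^{j_\ell}S$ where $[\,\overline{j_1,\dots,j_\ell}\,]$ is the minimal-period HJ expansion of $\qrt$; here I'd invoke \Cref{tm:symgp}/\Cref{thm:canrep} to know $\mcl{S}(Q)\cap\SLtwo{\mbb{Z}} = \langle -I, \posGen{t}\rangle$ is generated up to $\pm I$ by the matrix corresponding to the minimal HJ period. (3) Raise to the $m$-th power, expand both $L^m$ and the given product using \eqref{eq:tk1sdtstknsexpn}, and invoke uniqueness of HJ-expansions (stated in \Cref{ap:hjcont}) to conclude $(k_1,\dots,k_n)$ is the $m$-fold concatenation of $(j_1,\dots,j_\ell)$, giving $n = \ell m$ and periodicity. (4) Resolve the sign by comparing $L^m$ to $(\pm 1)^m (T^{k_1}S\cdots T^{k_\ell}S)^m$.

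\medskip

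\noindent\textbf{Main obstacle.} I expect the crux to be step (2): cleanly justifying that $L$, as an element of $\mcl{S}(Q)$ fixing $\qrt$, must equal $\pm$ a ``single-period'' word $T^{j_1}S\cdots T^{j_\ell}S$ whose HJ block is the \emph{minimal} period of $\qrt$, rather than some other element of the stability group. This requires knowing that the word $T^{j_1}S\cdots T^{j_\ell}S$ associated to the minimal HJ period generates $\mcl{S}(Q)\cap\SLtwo{\mbb{Z}}$ modulo $\pm I$ — a classical fact about the relation between reduced indefinite binary quadratic forms, HJ-continued fractions, and the cyclic structure of the automorphism group. While this is standard (and consistent with \Cref{tm:symgp}), stating and citing it precisely, and matching up ``minimal period of the continued fraction'' with ``$\posGen{t}$'' in the paper's notation, is the part that needs care; everything else is bookkeeping with \eqref{eq:tk1sdtstknsexpn} and the uniqueness of HJ-expansions.
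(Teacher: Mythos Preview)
Your approach is genuinely different from the paper's, and you correctly identify step (2) as the crux.  Unfortunately, your proposed resolution of that step is circular.  Knowing $L\in\mcl{S}(Q)\cap\SLtwo{\mbb{Z}}$ via \Cref{tm:symgp} only tells you $L=\pm\posGen{t}^{a}$ for some integer $a$; it does \emph{not} identify $\posGen{t}$ with the matrix $T^{j_1}S\cdots T^{j_p}S$ attached to the minimal HJ period of $\qrt$.  That identification is exactly the content of \Cref{thm:hjfrmlqexpn}, and the proof of \Cref{thm:hjfrmlqexpn} invokes \Cref{lm:tksprdprops} at the key step (to force $\lambda=1$).  So you cannot cite it here.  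You could try to establish independently that the minimal-period block generates the stabilizer, but doing so cleanly requires precisely the word-uniqueness you would then use again in step (3) --- so you would be proving the lemma twice over rather than once.  (A secondary issue: the parabolic case $k_1=\cdots=k_n=2$, where $\qrt$ degenerates to the rational $1$, falls outside your fixed-point framework and would need separate treatment.)

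The paper avoids all of this by working directly with the free-product structure of $\PSLtwo{\mbb{Z}}\cong(\mbb{Z}/2\mbb{Z})*(\mbb{Z}/3\mbb{Z})$: setting $\bar{R}=\bar{T}\bar{S}$, every nontrivial element has a unique alternating expansion in $\bar{S}$ and powers $\bar{R},\bar{R}^2$.  Rewriting $\bar{T}^{k_j}\bar{S}=\bar{R}\,\bar{S}_{k_j-1}\,\bar{R}$ (with $\bar{S}_i=\bar{S}\bar{R}\bar{S}\cdots\bar{S}$) converts $\bar{L}^m$ into its normal form, and one reads off directly that the normal form of $\bar{L}$ must be an initial segment of length $n/m$, which forces $m\mid n$, the periodicity $k_{j+\ell}=k_j$, and the shape of $L$ up to sign.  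This is a two-line combinatorial argument once the normal form is in hand, with no fixed-point theory, no quadratic forms, and no case analysis.
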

\begin{proof}
Let $\bar{T}$, $\bar{S}$ be the images of $T$, $S$ under the canonical projection $h\colon \SLtwo{\mbb{Z}} \to \PSLtwo{\mbb{Z}}$, and let $\bar{R} = \bar{T}\bar{S}$.  Then (see for example ~\cite{Alperin:1993}) $\bar{S}$ is order $2$, $\bar{R}$ is order $3$, and every element of $\PSLtwo{\mbb{Z}}$ is either the identity or else has a unique alternating expansion of the form $\bar{M}_1 \cdots \bar{M}_n$ where: (1) each $\bar{M}_j$ is either $\bar{S}$ or $\bar{R}^k$ for $k=1$ or $2$, and (2) terms equal to $\bar{S}$ alternate with terms equal to a non-zero power of $\bar{R}$. Also define $\bar{S}_1=\bar{S}$, $\bar{S}_2=\bar{S}\bar{R}\bar{S}$, $\bar{S}_2=\bar{S}\bar{R}\bar{S}\bar{R}\bar{S}$, \dots.
Now let  $\bar{L} = h(L)$ and let $\bar{L} = \bar{M}_1 \cdots \bar{M}_q$ be its expansion in terms of alternating powers of $\bar{S}$ and $\bar{R}$. It follows from ~\eqref{eq:tkseqlm} that
\eag{
\bar{L}^m = \bar{R}\bar{S}_{k_1-1}\bar{R}^2 \bar{S}_{k_2-1} \bar{R}^2\cdots \bar{R}^2\bar{S}_{k_n-1} \bar{R}.
\label{eq:lmexpn}
}
So $\bar{M}_1 = \bar{M}_q= \bar{R}$, and
\eag{
L&= \bar{R} \bar{S}_{\kappa_1-1} \bar{R}^2 \cdots \bar{S}_{\kappa_{\ell}-1}\bar{R}
}
for some sequence of integers $\kappa_1, \dots, \kappa_{\ell}$ all greater than one.  Equation ~\eqref{eq:lmexpn} and the uniqueness of the alternating expansion then imply $n=m\ell$, $k_j=\kappa_j $ for $j=1, \dots , \ell$, and $k_{j+\ell} = k_j$ for $j=1,\dots, n-\ell$. Equation ~\eqref{eq:ltrmstk1skr} then follows.
\end{proof}

\begin{thm}\label{thm:hjfrmlqexpn}
Let $Q=\la a, b, c\ra$ be a form with $a>0$, let $f$ be its conductor, and let
\eag{
\qrt_{Q,+} &= [j_1,\dots, j_{q},\overline{k_1, \dots, k_{p}}]
}
(where we set $q=0$ if $\qrt_{Q,+}$ has a purely periodic expansion equal to $[\overline{k_1,\dots, k_{p}}]$).
Assume the sequences $j_1, \dots, j_{q}$ and $k_1, \dots, k_{p}$ are as short as possible (i.e., $k_1, \dots , k_{p}$ is not the conjunction of two or more identical subsequences, and $j_{q}\neq k_{p}$). Then
\eag{
\canrep_Q(\vn_f) &= (T^{j_1}S \cdots T^{j_{q}}S )
(T^{k_1}S \cdots T^{k_{p}}S)(T^{j_1}S \cdots T^{j_{q}}S )^{-1}.
}
\end{thm}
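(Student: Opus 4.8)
The plan is to express the canonical representation $\canrep_Q(\vn_f)$ in terms of the associated stabilizer $\posGen{t}$ and then relate the latter to a product of generators $T, S$ via the Hirzebruch--Jung continued fraction expansion of the fixed point $\qrt_{Q,+}$. First I would reduce to the purely periodic case. By \Cref{tm:symgp}, $\canrep_Q(\vn_f) = \posGen{t}$ is the positive-trace, sign-$\sgn(Q)$ generator of the positive-norm part of $\mcl{S}(Q)$, and it fixes $\qrt_{Q,+}$ by \Cref{lm:fxdpt} (or \Cref{lm:lactqrt}). Setting $M = T^{j_1}S\cdots T^{j_q}S$, the relation \eqref{eq:TSActionOnHJContinuedFraction} gives $M \cdot [\overline{k_1,\dots,k_p}] = \qrt_{Q,+}$, so $M^{-1}\cdot\qrt_{Q,+} = [\overline{k_1,\dots,k_p}] =: \qrt'$ has a purely periodic expansion, meaning $\qrt' > 1 > \qrt'' > 0$ for its Galois conjugate $\qrt''$. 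By \Cref{lm:lactqrt}, $\qrt'$ is a root of the $M^{-1}$-transformed form $Q' := \lOnQ{M^{-1}}{Q}$, which is therefore HJ-reduced; and by \Cref{thm:AtmrhotmExpressions} (specifically \eqref{eq:LmatrixMTransform}, or directly from \eqref{eq:SQMTermsQandM} and the structure of $\mcl{S}$), $\canrep_{Q'}(\vn_f) = M^{-1}\posGen{t}M$. Thus it suffices to prove the theorem in the purely periodic case $q=0$, $Q = Q'$ HJ-reduced, $\qrt_{Q,+} = [\overline{k_1,\dots,k_p}]$, where the claim becomes $\canrep_Q(\vn_f) = T^{k_1}S\cdots T^{k_p}S$.

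For the purely periodic case, I would argue as follows. Let $N := T^{k_1}S\cdots T^{k_p}S$. By iterating \eqref{eq:TSActionOnHJContinuedFraction} over one full period, $N\cdot\qrt_{Q,+} = [\overline{k_1,\dots,k_p}] = \qrt_{Q,+}$, so $N$ fixes $\qrt_{Q,+}$; by \Cref{lm:lactqrt} this gives $N \in \mcl{S}(Q)$. Using the explicit formula \eqref{eq:tk1sdtstknsexpn} for $N$ in terms of HJ-convergents together with \Cref{eq:convineq}, one checks $\Det(N) = 1$ (the product telescopes to $(k_1,\dots,k_p)(k_2,\dots,k_{p-1}) - (k_1,\dots,k_{p-1})(k_2,\dots,k_p)$, a determinant identity for continuants) and $\Tr(N) = (k_1,\dots,k_p) - (k_2,\dots,k_{p-1}) > 0$ since all $k_j \geq 2$; also the lower-left entry $(k_2,\dots,k_p) > 0$, so $\sgn(N) = +1 = \sgn(a) = \sgn(Q)$ (the HJ-reduced normalization $a > 0$). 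Hence $N$ is a positive-trace, positive-determinant, sign-$\sgn(Q)$ element of $\mcl{S}(Q)$, so $N = \posGen{t}^{\ell}$ for some $\ell \geq 1$ by \Cref{tm:symgp} (noting $\Det(N) = +1$ forces a power of $\posGen{t} = \canrep_Q(\vn_f)$, not of $\stabQGen{t}$, and positivity of trace rules out $-\posGen{t}^\ell$). It remains to show $\ell = 1$.

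To pin down $\ell = 1$ I would invoke \Cref{lm:tksprdprops}: if $N = T^{k_1}S\cdots T^{k_p}S = L^m$ with all $k_j > 1$ and $L := \canrep_Q(\vn_f)$ taken (up to sign) as $T^{k_1}S\cdots T^{k_{\ell}}S$ where $p = \ell m$, then the lemma forces $k_{j+\ell} = k_j$ periodically, i.e. $k_1,\dots,k_p$ is the conjunction of $m$ copies of $k_1,\dots,k_\ell$. But the theorem's hypothesis states the period $k_1,\dots,k_p$ is chosen as short as possible --- it is not the conjunction of two or more identical subsequences --- so $m = 1$, hence $\ell = 1$ and $N = \canrep_Q(\vn_f)$. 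One small point to verify here: \Cref{lm:tksprdprops} as stated gives $L$ or $\pm L$ equal to $T^{k_1}S\cdots T^{k_\ell}S$, and I should confirm that the sign ambiguity (for even $m$) is irrelevant once $m=1$. The main obstacle I anticipate is the bookkeeping in the purely-periodic step: matching the trace/sign/determinant computations from the continuant formula \eqref{eq:tk1sdtstknsexpn} precisely with the normalization conventions in \Cref{dfn:AssociatedStabilizers} (positive trace, $\sgn(\posGen{t}) = \sgn(Q)$) and with \Cref{tm:symgp}'s description of $\mcl{S}(Q)$, so that the conclusion $N = \posGen{t}^\ell$ (rather than some other element like $-\posGen{t}^{\ell}$ or a power of $\stabQGen{t}$) is airtight. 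Everything else is a routine assembly of \Cref{lm:lactqrt}, \Cref{tm:symgp}, \Cref{lm:tksprdprops}, and the continued-fraction identities collected in this appendix.
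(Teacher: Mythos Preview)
Your approach is correct and essentially identical to the paper's: show $T^{k_1}S\cdots T^{k_p}S$ lies in $\mcl{S}(Q)$, identify it as a positive power of $\canrep_Q(\vn_f)$ via trace and sign (the paper does this by writing the element as $s_1\canrep_Q(\vn_f^{s_2\lambda})$ and reading off $s_1=s_2=+1$ from the explicit continuant entries and \Cref{eq:convineq}), invoke \Cref{lm:tksprdprops} together with the minimal-period hypothesis to force the exponent to be $1$, and then handle the general case by conjugating by the pre-period word. One small indexing slip: by the convention in \Cref{lm:lactqrt} one has $M^{-1}\cdot\qrt_{Q,+}=\qrt_{Q_M,+}$, so the transformed form should be $Q'=Q_M$ rather than $Q_{M^{-1}}$, which is also exactly what \Cref{thm:AtmrhotmExpressions} needs for the conjugation relation $\canrep_{Q'}(\vn_f)=M^{-1}\canrep_Q(\vn_f)M$.
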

\begin{proof}
Assume, to begin with, that $q=0$ and $\qrt_{Q,+}= [\overline{k_1, \dots, k_{p}}]$.  Let
\eag{
M &= T^{k_1}S \cdots T^{k_{p}}S.
}
Then it follows from ~\eqref{eq:tk1sdtstknsexpn} that
\eag{
M\qrt_{Q,+}& = \qrt_{Q,+}.
}
In view of \Cref{lm:lactqrt}, this means
$M \in \mcl{S}(Q)$. 
It then follows from \Cref{tm:symgp} that
\eag{
M &= s_1\canrep_Q(\vn_f^{s_2\lambda}) =
s_1\left(\frac{d_{\lambda j_{\rm{min}}(f)} -1}{2}I + \frac{s_2 f_{\lambda j_{\rm{min}}(f)}}{f}S Q\right),
}
where $\lambda$ is a positive integer, and $s_1, s_2$ are signs. 
Comparing this expression with ~\eqref{eq:tk1sdtstknsexpn},
one sees:
\eag{
s_1(d_{\lambda j_{\rm{min}}(f)}-1) = \Tr(M) &= 
\begin{cases}
(k_1) \qquad & p=1,
\\
(k_1,k_2)-1 \qquad & p=2,
\\
(k_1,\dots, k_{p})-(k_2,\dots, k_{p-1}) \qquad & p\ge 3;
\end{cases}
\\
\frac{s_1s_2f_{\lambda j_{\rm{min}}(f)}}{f} = M_{21}
&=
\begin{cases}
1 \qquad & p=1,
\\
(k_2,\dots, k_{p}) \qquad & p\ge 2.
\end{cases}
}
In view of Lemma~\ref{eq:convineq}, this means $s_1=s_2 = +1$.  So $M = \left(\canrep_Q(\vn_f)\right)^{\lambda}$. 
It then follows from Lemma~\ref{lm:tksprdprops} and the assumption that the sequence $k_1, \dots, k_{p}$ is not the conjunction of two or more identical subsequences that $\lambda=1$ and $M = \canrep_Q(\vn_f)$.

Now suppose $q\ge 1$. 
Let 
\eag{
M &= T^{k_1}S \cdots T^{k_{p}}S,
\\
N&=T^{j_1}S \cdots T^{j_{q}}S,
}
and let $Q'=\la a', b', c'\ra$ be the unique form such that $[\overline{k_1,\dots, k_{p}}] = \qrt_{Q',+}$ and $a'>0$. 
It follows from the result just proved that $M = \canrep_{Q'}(\vn_f)$, while it follows from ~\eqref{eq:TSActionOnHJContinuedFraction} that
\eag{
N \qrt_{Q',+} & = \qrt_{Q,+}.
}
In view of \Cref{{lm:lactqrt}}. this means  $Q = Q'_{N^{-1}}$.  Hence
\eag{
\canrep_Q(\vn_f) &= \frac{d_{r_f} -1}{2}I +\frac{f_{r_f}}{f} S(N^{-1})^{\rm{T}}Q' N^{-1} 
= NM N^{-1}, 
}
completing the proof.
\end{proof}

\begin{thm}\label{thm:tsaltexpn}
Let $L=\smt{\mc_1 & \md_1 \\ \mc_2 & \md_2}$ be any element of $\SLtwo{\mbb{Z}}$.  Then the following statements are equivalent:
\begin{enumerate}
    \item $\mc_2 >0$.
    \item There exists an integer $n\ge 1$ and  sequence of integers $r_1, r_2, \dots, r_{n+1}$ such that $r_i\ge 2$ unless $i=1$ or $n+1$,  and 
    \eag{
    L &= T^{r_1} S T^{r_2} S \cdots S T^{r_{n}} S T^{r_{n+1}}.
    \label{eq:ltsdecom}
    }
\end{enumerate}
If these conditions are satisfied, the integer $n$ and sequence $r_1, \dots , r_{n+1}$ are unique. 

Continue to assume the conditions are satisfied.  Define 
\eag{
L_i &= \bmt \mc_i & \md_i \\ \mc_{i+1} & \md_{i+1} \emt = T^{r_i}S\cdots T^{r_{n+1}}.
\label{eq:limidf}
} 
Then
\eag{
\mc_2 > \mc_3 > \dots  > \mc_{n+1} ,
}
\eag{
\frac{\md_2}{\mc_2}  < \frac{\md_3}{\mc_3} < \dots < \frac{\md_{n+1}}{\mc_{n+1}},
}
\eag{
\DD_{L} = \DD_{L_1} \subset \DD_{L_2} \subset \dots \subset \DD_{L_{n+1}} = \mbb{C}.
}
\end{thm}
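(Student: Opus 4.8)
\textbf{Proof proposal for Theorem~\ref{thm:tsaltexpn}.}

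The plan is to prove the equivalence of (1) and (2) by explicit construction, essentially running a division algorithm, and then to read off the monotonicity and nesting statements from the formulas \eqref{eq:limidf}. First I would address the implication (2)$\implies$(1) and the final three displayed relations together, since they follow from \eqref{eq:limidf} by downward induction on $i$. From $L_{n+1} = T^{r_{n+1}} = \smt{1 & r_{n+1} \\ 0 & 1}$ we get $\mc_{n+2}=0$ (interpreting the index range appropriately) and $\mc_{n+1}=1>0$; and the recursion $L_i = T^{r_i} S L_{i+1}$ gives $\mc_i = r_i \mc_{i+1} - \mc_{i+2}$ (with the obvious reading at the endpoints) together with $\md_i = r_i \md_{i+1} - \md_{i+2}$. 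Using $r_i \ge 2$ for $1 < i \le n$ and an induction showing $\mc_{i+1} > \mc_{i+2} \ge 0$, one obtains $\mc_i = r_i\mc_{i+1}-\mc_{i+2} \ge 2\mc_{i+1}-\mc_{i+2} > \mc_{i+1}$, giving the strictly decreasing chain $\mc_2 > \mc_3 > \cdots > \mc_{n+1} = 1 > 0$, hence in particular $\mc_2>0$. The inequality on the ratios $\md_i/\mc_i$ comes from $\det L_i = 1$, i.e.\ $\mc_i \md_{i+1} - \md_i \mc_{i+1} = 1$, which rearranges to $\frac{\md_{i+1}}{\mc_{i+1}} - \frac{\md_i}{\mc_i} = \frac{1}{\mc_i\mc_{i+1}} > 0$. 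For the nesting of the domains, recall from Definition~\ref{def:sl2ldmndf} that $\DD_{L_i} = \mbb{C} \setminus \{\tau \in \mbb{R} : j_{L_i}(\tau) \le 0\} = \mbb{C}\setminus(-\infty, -\md_{i+1}/\mc_{i+1}]$ when $\mc_{i+1}>0$ (and all of $\mbb{C}$ when $\mc_{i+1}=0$); since $-\md_2/\mc_2 \le -\md_3/\mc_3 \le \cdots$, the excluded rays shrink, giving $\DD_{L_1}\subseteq \DD_{L_2}\subseteq\cdots\subseteq\DD_{L_{n+1}}=\mbb{C}$, with the strictness coming from the strict inequalities on the ratios. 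I would double-check the precise indexing convention relating $j_{L_i}$ to $(\mc_{i+1},\md_{i+1})$ against \eqref{eq:fractionalineartransform} to make sure the sign of the excluded ray is correct.

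For the substantive direction (1)$\implies$(2), given $L = \smt{\mc_1 & \md_1 \\ \mc_2 & \md_2}$ with $\mc_2 > 0$, I would argue by strong induction on $\mc_2$. If $\mc_2 = 0$ is excluded; the base of the induction is when the algorithm terminates. In general, I would left-multiply $L$ by an appropriate power of $T$: there is a unique integer $r_1$ such that $T^{-r_1} L = S L'$ for some $L' = \smt{\mc_2' & \md_2' \\ \mc_3' & \md_3'} \in \SLtwo{\mbb{Z}}$ with $0 \le \mc_3' < \mc_2' = \mc_2$; concretely, $S^{-1}T^{-r_1}L = \smt{-\mc_2 & -\md_2 \\ \mc_1 - r_1 \mc_2 & \md_1 - r_1 \md_2}$, and one chooses $r_1 = \lceil \mc_1/\mc_2 \rceil$ so that $0 \le r_1\mc_2 - \mc_1 < \mc_2$, making the new lower-left entry $\mc_1 - r_1\mc_2$ lie in $(-\mc_2, 0]$; replacing $S$ by $S^{-1} = -S$ as needed (using $(-S)(-I) = S$ and absorbing the sign) one normalizes to $\mc_3' \ge 0$. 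If $\mc_3' = 0$ then $L' = \pm T^{r_2}$ for some $r_2$, and $\det L' = 1$ forces $L' = T^{r_2}$, terminating; otherwise $0 < \mc_3' < \mc_2$ and we apply the inductive hypothesis to $L'$ to write $L' = T^{r_2} S \cdots S T^{r_{n+1}}$ with the interior exponents $\ge 2$, then prepend $T^{r_1} S$. The one point needing care is the claim that the interior exponents are $\ge 2$: this is where the normalization $0 \le \mc_{i+1} < \mc_i$ at each stage (i.e.\ strict reduction of the lower-left entry) is used, since $r_i = \lceil \mc_{i-1}/\mc_i\rceil$ with $\mc_{i-1} > \mc_i > 0$ forces $r_i \ge 2$; I would verify that the reduction is indeed strict and not merely $\le$, possibly needing a separate micro-argument when $\mc_i \mid \mc_{i-1}$, where one may need to shift a $T^2$ factor. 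Uniqueness then follows because the pair $(r_1, L')$ is uniquely determined by the requirement $0 \le \mc_3' < \mc_2$, so the full expansion is unique by induction.

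The main obstacle I anticipate is bookkeeping the signs and the boundary cases at the two ends of the word: the exponents $r_1$ and $r_{n+1}$ are allowed to be arbitrary integers (including $\le 1$ or negative), whereas the interior ones must be $\ge 2$, and the reduction algorithm produces $S$ versus $S^{-1} = -S$ depending on the sign of the quotient, which must be absorbed using the relations $S^2 = -I$, $(-I)$ central, and $T^k(-I) = (-I)T^k$. Getting the normalization exactly right — so that at every intermediate step the lower-left entry strictly decreases and stays positive until the algorithm terminates, forcing interior exponents $\ge 2$ — is the delicate part; the rest is routine manipulation of $2\times 2$ integer matrices and the determinant-one condition. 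Everything else (monotonicity of $\mc_i$, of $\md_i/\mc_i$, and the domain nesting) is then a direct computation from \eqref{eq:limidf} using $\det L_i = 1$ and Definition~\ref{def:sl2ldmndf}.
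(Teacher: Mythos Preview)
Your approach is sound and essentially complete, but it differs from the paper's. The paper does not prove (1)$\Leftrightarrow$(2) or the monotonicity/nesting statements at all---it simply cites \cite{Kopp2020d} for those and only proves uniqueness. For uniqueness, the paper takes a structural route: it passes to $\PSLtwo{\mbb{Z}} \cong (\mbb{Z}/2\mbb{Z}) * (\mbb{Z}/3\mbb{Z})$, rewrites each side of a hypothetical equality $T^{r_1}S\cdots T^{r_{n+1}} = T^{r'_1}S\cdots T^{r'_{n'+1}}$ as an alternating word in $\bar{S}$ and nonzero powers of $\bar{R} = \bar{T}\bar{S}$, and invokes the uniqueness of such words in a free product. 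This avoids any appeal to monotonicity.

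Your argument instead proves everything directly via the Hirzebruch--Jung division algorithm, and deduces uniqueness from the fact that the reduction step is determined: given any valid expansion, the monotonicity you proved forces $0 \le \mc_3 < \mc_2$, hence $r_1 = \lceil \mc_1/\mc_2 \rceil$ is forced, and induction finishes. This is correct and more self-contained, at the cost of needing the monotonicity as input to uniqueness. One small correction: your displayed formula for $S^{-1}T^{-r_1}L$ actually computes $ST^{-r_1}L$; with $S^{-1} = \smt{0 & 1 \\ -1 & 0}$ you get $\smt{\mc_2 & \md_2 \\ r_1\mc_2 - \mc_1 & r_1\md_2 - \md_1}$ directly, so no sign-absorption gymnastics with $-I$ are needed---the lower-left entry is already $r_1\mc_2 - \mc_1 \in [0,\mc_2)$ for $r_1 = \lceil \mc_1/\mc_2\rceil$. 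This also cleanly gives $r_i \ge 2$ at interior steps, since there $\mc_{i-1} > \mc_i > 0$ forces $\lceil \mc_{i-1}/\mc_i \rceil \ge 2$.
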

\begin{proof}
Aside from uniqueness this is proved in ~\cite{Kopp2020d}.  To prove uniqueness suppose
\eag{
T^{r_1} S T^{r_2} S \cdots S T^{r_{n}} S T^{r_{n+1}}
&= 
T^{r'_1} S T^{r'_2} S \cdots S T^{r'_{n}} S T^{r'_{n'+1}}.
\label{eq:tspweqtsppw}
}
Assume to begin with that $r_i$, $r'_i\ge 2$ for all $i$, including $i=1$, $n+1$.
Let $h\colon \SLtwo{\mbb{Z}}\to \PSLtwo{\mbb{Z}}$ and $\bar{S}$, $\bar{T}$, $\bar{R}$ and $\bar{S}_i$ be as in the proof of Lemma~\ref{lm:tksprdprops}.  Then
\eag{
\bar{R}\bar{S}_{r_1-1} \bar{R}^2 \cdots \bar{R}^2 \bar{S}_{r_{n+1}-1} \bar{R}\bar{S}_1
&= 
\bar{R}\bar{S}_{r'_1-1} \bar{R}^2 \cdots \bar{R}^2 \bar{S}_{r'_{n'+1}-1} \bar{R}\bar{S}_1,
}
which, in view of the uniqueness of the alternating expansion~\cite{Alperin:1993}, means $n=n'$ and $r_i = r'_i$ for all $i$.  In the general case, choose integers $m$, $\ell$ such that $r_1+m$, $r'_1 + m$, $r_{n+1}+\ell$, $r'_{n'+1}+\ell\ge 2$. 
Then multiplying both sides of ~\eqref{eq:tspweqtsppw} by $T^{m}$ on the left and $T^{\ell}$ on the right gives
\eag{
T^{r_1+m} S T^{r_2} S \cdots S T^{r_{n}} S T^{r_{n+1}+\ell}
&= 
T^{r'_1+m} S T^{r'_2} S \cdots S T^{r'_{n}} S T^{r'_{n'+1}+\ell}.
}
Applying the result just proved, the claim follows.
\end{proof}
\begin{defn}[canonical expansion; length]\label{dfn:canonicalExpansion}
We refer to the expression on the right hand side of ~\eqref{eq:ltsdecom} as the \textit{canonical expansion} of $L$, and we refer to the integer $n$ as its \textit{length}.
\end{defn}
\begin{lem}\label{lm:rtsrels}
Let $R=TS$.  Then
\eag{
T &= -RS,
\\
T^{-1} &= -SR^2,
\\
\intertext{and, for every positive integer $m$,}
ST^{-m}S &= -(TST)^m
\label{eq:STnegativepowerSIdentity}
}
\end{lem}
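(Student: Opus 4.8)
The plan is to verify each of the three identities by an elementary computation in $\SLtwo{\mbb{Z}}$, using only the explicit matrix forms $S = \bmt 0 & -1 \\ 1 & 0 \emt$ and $T = \bmt 1 & 1 \\ 0 & 1 \emt$ from \Cref{dfn:GeneratorsOfSL2Z} together with the relation $S^2 = -I$ (equivalently $S^{-1} = -S$), which is immediate. No auxiliary machinery is needed.

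First I would dispatch the two short identities. Since $R = TS$, we have $RS = TS^2 = -T$, so $T = -RS$. Rewriting this as $T = RS^{-1}$ and taking inverses gives $T^{-1} = SR^{-1}$; and $R^3 = (TS)^3 = -I$ (a single matrix product) yields $R^{-1} = -R^2$, hence $T^{-1} = -SR^2$.

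For the third identity I would first record the explicit value $TST = \bmt 1 & 0 \\ 1 & 1 \emt$, so that $(TST)^m = \bmt 1 & 0 \\ m & 1 \emt$ for every positive integer $m$ (immediate by induction, or from the rule that powers of a lower-triangular unipotent matrix add the off-diagonal entry). Then either of two routes finishes the proof. The direct route is the one-line computation
\eag{
ST^{-m}S = S\bmt 1 & -m \\ 0 & 1 \emt S = \bmt -1 & 0 \\ -m & -1 \emt = -\bmt 1 & 0 \\ m & 1 \emt = -(TST)^m.
}
Alternatively, one may compute the two base identities $ST^{-1}S = -TST$ and $ST^{-1}S^{-1} = TST$ (each a single matrix product, the second following from $S^{-1}=-S$) and then telescope: inserting $S^{-1}S = I$ between the $m$ factors of $T^{-1}$ in $ST^{-m}S = S\,T^{-1}\cdots T^{-1}\,S$ gives $(ST^{-1}S^{-1})^{m-1}(ST^{-1}S) = (TST)^{m-1}\bigl(-(TST)\bigr) = -(TST)^m$.

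There is no genuine obstacle here: everything reduces to routine $2 \times 2$ matrix arithmetic, and the only point requiring mild attention is keeping track of the signs introduced by $S^2 = -I$.
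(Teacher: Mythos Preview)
Your proof is correct and takes essentially the same approach as the paper, which simply records that the identities are ``straightforward consequences of the relations $R^3 = S^2 = -I$''; you have supplied the details of those straightforward consequences, with an extra explicit matrix computation for the third identity.
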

\begin{proof}
Straightforward consequences of the relations $R^3=S^2 = -I$.
\end{proof}
\begin{thm}\label{thm:effexpn}
Let $\mcl{F}$ be a  $\GLtwo{\mbb{Z}}$ orbit of forms,  let $\mcl{F}_{+}$ be the subset consisting of $\la a, b,c\ra\in \mcl{F}$ for which $a>0$, and let $\mcl{F}_{\rm{HJ}}$ be the set of HJ-reduced forms in $\mcl{F}_{+}$. Let $p_{\rm{min}}$ be the minimum value of the period $p$ of  $\qrt_{Q,+}=[\overline{k_1,\dots, k_p}]$ as $Q$ ranges over the  set $\mcl{F}_{\rm{HJ}}$.   
Then for all $Q\in \mcl{F}_{+}$, the length of $\canrep_{Q}(\vn_f)$ is greater than or equal to $p_{\rm{min}}$.  
\end{thm}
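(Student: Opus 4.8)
The plan is to reduce the statement to the structure theory of $\mcl{S}(Q)$ developed in \Cref{tm:symgp}, together with the continued-fraction factorization of $\canrep_Q(\vn_f)$ from \Cref{thm:hjfrmlqexpn} and the uniqueness of the canonical expansion from \Cref{thm:tsaltexpn}. Fix $Q \in \mcl{F}_{+}$ with conductor $f$, and let $M = \canrep_Q(\vn_f) \in \mcl{S}(Q) \cap \SLtwo{\mbb{Z}}$; by \Cref{tm:symgp} this is the positive-trace generator of $\mcl{S}(Q) \cap \SLtwo{\mbb{Z}}$ modulo $\{\pm I\}$, and in particular $\Tr(M) > 2$, so $M \in \mcl{H}_{+}$. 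The length of $\canrep_Q(\vn_f)$ referenced in the statement is the length $n$ of the canonical expansion $M = T^{r_1}S T^{r_2} S \cdots S T^{r_n} S T^{r_{n+1}}$ from \Cref{dfn:canonicalExpansion}, which exists because $M$ has positive lower-left entry (this last point needs a small check: $\mcl{S}(Q) \cap \SLtwo{\mbb{Z}}$ is generated by $\posGen{t}$ and $-I$, and the sign conventions in \Cref{dfn:AssociatedStabilizers} together with \eqref{eq:QLdf} force the lower-left entry of $\posGen{t} = \canrep_Q(\vn_f)$ to have the sign of $\sgn(Q) = \sgn(a) = +1$; if instead it is negative one replaces $M$ by $M^{-1}$, which has the same length).

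\textbf{Main argument.} First I would establish the lower bound for HJ-reduced forms. If $Q \in \mcl{F}_{\mathrm{HJ}}$ then $\qrt_{Q,+} = [\overline{k_1,\dots,k_p}]$ is purely periodic (by the characterization of HJ-reduced forms recalled in \Cref{ap:hjcont}), with $k_i \ge 2$ for all $i$, and \Cref{thm:hjfrmlqexpn} with $q=0$ gives $\canrep_Q(\vn_f) = T^{k_1}S \cdots T^{k_p}S$. This is \emph{almost} a canonical expansion, except that the last block is $T^{k_p}S$ rather than $T^{r_{n+1}}$ with a trailing $S$ absent; but one can rewrite $T^{k_p}S = T^{k_p}S T^0$, and then the expansion $T^{k_1}S\cdots T^{k_{p-1}}S T^{k_p} S T^0$ has all interior exponents $k_2,\dots,k_p \ge 2$ and $r_{n+1} = 0$, so by the uniqueness clause of \Cref{thm:tsaltexpn} (after absorbing a $T^m$ on the left to make $r_1 \ge 2$, as in the uniqueness proof there) the canonical length of $\canrep_Q(\vn_f)$ is exactly $p$. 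Minimizing over $\mcl{F}_{\mathrm{HJ}}$ gives that the minimal canonical length over HJ-reduced forms equals $p_{\mathrm{min}}$.

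\textbf{Extending to all of $\mcl{F}_{+}$.} The remaining step is to show that for a general $Q \in \mcl{F}_{+}$, the canonical length of $\canrep_Q(\vn_f)$ is at least $p_{\mathrm{min}}$. Here I would use that $\qrt_{Q,+}$ for general $Q$ is eventually periodic, $\qrt_{Q,+} = [j_1,\dots,j_q,\overline{k_1,\dots,k_p}]$, and that \Cref{thm:hjfrmlqexpn} gives $\canrep_Q(\vn_f) = N M_0 N^{-1}$ with $N = T^{j_1}S\cdots T^{j_q}S$ and $M_0 = T^{k_1}S\cdots T^{k_p}S = \canrep_{Q'}(\vn_f)$ for the HJ-reduced form $Q' \in \mcl{F}_{\mathrm{HJ}}$ determined by the periodic tail; hence the canonical length of $\canrep_{Q'}(\vn_f)$ is $p \ge p_{\mathrm{min}}$ by the previous paragraph. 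The conjugation $N(\cdot)N^{-1}$ can only lengthen the canonical expansion: writing out $N M_0 N^{-1}$ and reducing it to canonical form using the relations of \Cref{lm:rtsrels} (in particular \eqref{eq:STnegativepowerSIdentity} to clear negative $T$-powers arising from $N^{-1}$) and the uniqueness in \Cref{thm:tsaltexpn}, one sees that all the $p$ syllables of $M_0$ survive, possibly with extra syllables contributed by $N$ and $N^{-1}$ at the ends. Making this ``conjugation does not decrease length'' claim precise — i.e., showing no cancellation can collapse the $p$ interior blocks of $M_0$ — is the main obstacle; I expect it to follow cleanly from the uniqueness of the alternating word in $\PSLtwo{\mbb{Z}}$ (the $\bar R,\bar S$ normal form used in the proofs of \Cref{lm:tksprdprops} and \Cref{thm:tsaltexpn}), since conjugating a cyclically reduced positive word $\bar R \bar S_{k_1-1}\bar R^2 \cdots \bar S_{k_p - 1}\bar R$ by any word in $\PSLtwo{\mbb{Z}}$ and re-expanding into alternating form retains a subword of length at least the original cyclically reduced length. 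I would therefore phrase the key lemma purely group-theoretically: if $w \in \SLtwo{\mbb{Z}}$ has canonical expansion of length $n$ and $v \in \SLtwo{\mbb{Z}}$ is arbitrary with $vwv^{-1}$ having positive lower-left entry, then the canonical length of $vwv^{-1}$ is $\ge n$. Granting that lemma, the theorem follows immediately.
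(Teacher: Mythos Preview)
Your high-level strategy matches the paper's: reduce to the HJ-reduced case via \Cref{thm:hjfrmlqexpn}, then show that conjugating $M_0 = \canrep_{Q'}(\vn_f)$ by $N$ cannot decrease the canonical length. Where you leave a gap (and say so) is exactly where the paper supplies a trick you are missing.

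Rather than prove a general ``conjugation does not decrease canonical length'' lemma, the paper rewrites $L = NM_0N^{-1}$ as $LN = NM_0$ and compares the canonical expansions of the two \emph{products}. The right-hand side
\[
NM_0 = T^{j_1}S\cdots T^{j_q}S\,T^{k_1}S\cdots T^{k_p}S
\]
is \emph{already} canonical of length $p+q$, since all interior exponents are $\ge 2$ (the $j_i$ for $i\ge 2$ and all the $k_i$ come from an HJ expansion). On the left-hand side, writing $L = T^{r_1}S\cdots T^{r_n}ST^{r_{n+1}}$ gives
\[
LN = T^{r_1}S\cdots T^{r_n}S\,T^{m}S\,T^{j_2}S\cdots T^{j_q}S,\qquad m = r_{n+1}+j_1.
\]
If $m\ge 2$ this is canonical of length $n+q$, and uniqueness (\Cref{thm:tsaltexpn}) forces $n=p$. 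If $m=1$, one application of \eqref{eq:STnegativepowerSIdentity} strictly shortens the word, so the canonical length of $LN$ is $< n+q$, whence $n > p$. The cases $m\le 0$ are ruled out by a sign check on the lower-left entry of $LN = NM_0$. In every case $n \ge p \ge p_{\min}$.

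The point is that $N^{-1}$ never enters the comparison, so the negative $T$-powers you were worried about clearing with \Cref{lm:rtsrels} never arise. Your proposed group-theoretic lemma is plausibly true (it is essentially the statement that translation length in $\PSLtwo{\mbb{Z}}$ is realized by cyclically reduced words), but making it precise for the specific ``canonical length'' of \Cref{dfn:canonicalExpansion} would be more work than the paper's two-line product comparison.
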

\begin{rmkb}
For a given admissible tuple $t$ we can use this result together with
\cref{tm:symgp} to find a matrix $M\in \GLtwo{\mbb{Z}}$ for which $\posGen{t_M}$ has minimum length.
\end{rmkb}
\begin{proof}
Let $Q\in \mcl{F}_{+}$ be arbitrary. 
The statement is immediate if $Q$ is HJ-reduced, so assume not.  
Let  $L=\canrep_Q(\vn_f)$, and  $\qrt_{Q,+} = [j_1, \dots, j_{q},\overline{k_1, \dots, k_{p}}]$ where the sequences  $j_1,\dots , j_{q}$ and $k_1, \dots, k_{p}$ are chosen as short as possible. 
It follows from Theorem~\ref{thm:hjfrmlqexpn} that
\eag{
L &= N M N^{-1}
}
where
\eag{
N&= T^{j_1}S \cdots T^{j_{q}}S 
\\ M&=T^{k_1}S \cdots T^{k_{p}}S
\\
\intertext{and from Theorem~\ref{thm:tsaltexpn} that}
L&= T^{r_1} S \cdots T^{r_n}ST^{r_{n+1}}
}
where  $r_i\ge 2$ for $1 < i \le n$. 
We have
\eag{
L N & = N M,
}
\eag{
LN &=T^{r_1}S \cdots T^{r_n} S T^{m} S T^{j_2} S \cdots T^{j_q}S
\label{eq:lneqnm}
\\
\intertext{where $m=r_{n+1} + j_1$, and}
NM&= T^{j_1}S \cdots T^{j_q}S T^{k_1}S \cdots T^{k_p}S
\label{eq:nmeqnm}
}
The expression on the right hand side of ~\ref{eq:nmeqnm} is the canonical expansion of $NM$ with length equal to $\length(N)+\length(M)$. 
If $m\ge 2$, then the expression on the right hand side of ~\eqref{eq:lneqnm} is the canonical expansion of $LN=NM$, which, in view of Theorem~\ref{thm:tsaltexpn}, means $\length(L)+\length(N) = \length(N)+\length(M)$, implying $\length(L) = \length(M) \ge p_{\rm{min}}$.
Suppose $m<2$.  
There are three cases to consider:
 
\spc
 
\noindent \textbf{Case 1.} $m=1 $. Then 
\eag{
LN
 &=
 T^{r_1}S \cdots T^{r_n} S T S T^{j_2} S \cdots T^{j_q}S.
}
Using ~\eqref{eq:STnegativepowerSIdentity} this becomes
\eag{
LN &= 
T^{r_1}S \cdots T^{r_n-1} S T^{j_2-1} S \cdots T^{j_q}S.
}
One goes on in this way, making repeated applications of ~\eqref{eq:STnegativepowerSIdentity}, until one obtains an expansion in canonical form.  
Each application of ~\eqref{eq:STnegativepowerSIdentity} reduces the number of $S$ operators, so the length of the resulting expansion will be less than $n+q$. 
It follows that $\length(L)+\length(N)> \length(LN) =\length(NM) = \length(N)+\length(M)$, implying  $\length(L)> \length(M) \ge p_{\rm{min}}$.
 
\spc

\noindent \textbf{Case 2.} $m=0 $. Then  
\eag{
LN
 &=- T^{r_1}S \cdots  T^{r_{n-1}}S T^{r_n+j_2}  ST^{j_3}S \cdots T^{j_q}S.
 \label{eq:LNExpansion}
}
 Writing $LN=NM = \smt{\ma & \mb \\ \mc & \md}$, Theorem~\ref{thm:tsaltexpn} applied to the expansion on the right hand side of \eqref{eq:nmeqnm}  implies $\mc>0$, while the same theorem applied to the expansion on the right hand side of \eqref{eq:LNExpansion} implies $\mc<0$.  It follows that this case is not possible.
 
\spc
 
 \noindent \textbf{Case 3.} $m<0$.  It follows from Lemma~\ref{lm:rtsrels} that 
\eag{
LN
 &=- T^{r_1}S \cdots T^{r_n+1} S (T^{2}
 S)^{|m|-1} T^{j_2+1} S \cdots T^{j_q}S,
}
which is not possible for the same reason that Case 2 is not possible.
\end{proof}

\section{Shintani--Faddeev Jacobi cocycle} 
\label{sec:SFJCocycleAppendix}

The domain of the Shintani--Faddeev Jacobi cocycle given in \Cref{df:shinfadjacocycle} is a slight extension of that given in ~\cite{Kopp2020d}.  The purpose of this Appendix is, firstly, to explain that extension; secondly, to explain how $\sigma_{M}(z,\tau)$ is defined when $\tau\notin\mbb{H}$; and thirdly, to prove the cocycle condition ~\eqref{eq:sfjcocyclerelInt}.

Let $M=\smt{\ma & \mb \\ \mc & \md }\in \SLtwo{\mbb{Z}}$.  In ~\cite{Kopp2020d} the domain of $\sfjM{M}{z}{\tau}$ is denoted $\mbb{C}\times \mathbb{D}_M$.  If $\mc \neq 0$, or if $\mc = 0$ and $\md>0$, then $\mathbb{D}_M = \DD_M$.  If, on the other hand, $\mc=0$ and $\md<0$, then $\mathbb{D}_M = \mbb{H}$, whereas $\DD_M = \mbb{C}\setminus\mbb{R} = \HH \cup (-\HH)$.
This discrepancy is due to the fact that, in ~\cite{Kopp2020d}, the Shintani--Faddeev Jacobi cocycle was defined by analytic continuation in the $\tau$ variable from the upper half-plane, so it is defined for $\tau$ in a connected subset of $\C$. 

We may define $\sfjM{M}{z}{\tau}$ on $\mbb{C}\times \DD_M$ by first extending the definition of the infinite variant $q$-Pochhammer symbol (\Cref{dfn:variantqPochhammer}) from $\tau \in \HH$ to $\tau \HH \cup (-\HH)$. As it turns out, sending $n \to -\infty$ in the definition of the finite $q$-Pochhammer symbol \Cref{eq:finiteqpoch} naturally produces a convergent series on the lower half-plane, just as sending $n \to \infty$ produces a convergent series on the upper half-plane.

\begin{defn}\label{defn:pochupperlower}
    Let $z \in \C$ and $\tau \in \DD_{-I} = \HH \cup (-\HH)$. Set
    \begin{equation}
    \varpi(z,\tau) = 
    \begin{cases}
        \ds\prod_{j=0}^\infty \left(1 - e^{2\pi i (z+j\tau)}\right) & \mbox{ if } \tau \in \HH, \\
        \ds\prod_{j=1}^\infty \left(1 - e^{2\pi i (z-j\tau)}\right)^{-1} & \mbox{ if } \tau \in -\HH.
    \end{cases}
    \end{equation}
\end{defn}

We now define the Shintani--Faddeev Jacobi cocycle in two cases, where the definition in the first case is the same as in \cite[Defn.~4.17]{Kopp2020d}. The main theorems of this appendix, \Cref{thm:coboundaryall} and \Cref{thm:cocycleall}, will then establish the compatibility of the two cases in this definition.

\begin{defn}\label{defn:sfjall}
    Let $M \in \SL_2(\Z)$, $z \in \C$, and $\tau \in \DD_M$. 
    \begin{itemize}
        \item[(1)]
    If $M \neq -T^k$ for any $k \in \Z$, we define $\sfjM{M}{z}{\tau}$ to be the meromorphic continuation of ~\eqref{eq:sfjfrm} from $\HH$ to $\DD_M$, which exists by \cite[Thm.~4.29]{Kopp2020d}.
        \item[(2)]
    If $M = -T^k$ for some $k \in \Z$, we define it by
    \begin{equation}
        \sfjM{M}{z}{\tau}
        = \frac{\varpi\!\left(\frac{z}{j_M(\tau)},M\cdot\tau\right)}{\varpi(z,\tau)}
        = \frac{\varpi(-z,\tau)}{\varpi(z,\tau)}.
    \end{equation}
    \end{itemize}
\end{defn}

The next lemma establishes a basic functional equation relating the behavior of the function $\varpi$ on the upper and lower half-planes under the transformation $\tau \mapsto -\tau$.

\begin{lem}\label{lem:pochupperlower}
    The identity of meromorphic functions
    \begin{equation}
        \varpi(z,\tau) = \varpi(z-\tau,-\tau)^{-1}
    \end{equation}
    holds for $z \in \C$ and $\tau \in \DD_{-I} = \HH \cup (-\HH)$.
\end{lem}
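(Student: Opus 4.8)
The plan is a direct computation from \Cref{defn:pochupperlower}, splitting into the cases $\tau \in \HH$ and $\tau \in -\HH$ and reindexing the defining infinite products by a shift of one unit. Before doing so I would record the convergence facts that make ``identity of meromorphic functions'' meaningful: for $\tau \in \HH$ one has $|e^{2\pi i\tau}| < 1$, so $\prod_{j \geq 0}\bigl(1 - e^{2\pi i(z+j\tau)}\bigr)$ converges locally uniformly in $z$ to an entire function; for $\tau \in -\HH$ one has $|e^{-2\pi i\tau}| < 1$, so $\prod_{j \geq 1}\bigl(1 - e^{2\pi i(z - j\tau)}\bigr)^{-1}$ converges locally uniformly away from its poles. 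Hence for each fixed $\tau \in \DD_{-I}$ both sides of the asserted identity are meromorphic in $z$ on all of $\C$, and it suffices to verify the identity pointwise.

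For $\tau \in \HH$ one has $-\tau \in -\HH$, so I would apply the lower-half-plane branch of \Cref{defn:pochupperlower} to $\varpi(z-\tau,-\tau)$; simplifying the exponent via $(z-\tau) - j(-\tau) = z + (j-1)\tau$ and substituting $k = j-1$ turns the product into $\prod_{k \geq 0}\bigl(1 - e^{2\pi i(z+k\tau)}\bigr)^{-1} = \varpi(z,\tau)^{-1}$. The case $\tau \in -\HH$ is the mirror image: now $-\tau \in \HH$, and applying the upper-half-plane branch together with $(z-\tau) + j(-\tau) = z - (j+1)\tau$ and the substitution $k = j+1$ gives $\prod_{k \geq 1}\bigl(1 - e^{2\pi i(z-k\tau)}\bigr) = \varpi(z,\tau)^{-1}$, using the lower-half-plane definition of $\varpi(z,\tau)$ on the right. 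In both cases taking reciprocals yields the claimed identity.

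I do not expect a genuine obstacle here. The only points requiring care are keeping track of the sign of $\im(\tau)$ in order to decide which branch of \Cref{defn:pochupperlower} applies to $-\tau$, and shifting the product index by exactly one unit in the correct direction in each of the two cases. One could additionally remark that this identity is precisely the quasi-periodicity of $\varpi$ under $\tau \mapsto -\tau$ on which \Cref{defn:sfjall}(2) relies for consistency with the meromorphic continuation used in \Cref{defn:sfjall}(1), but that observation is not needed to establish the lemma.
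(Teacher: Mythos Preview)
Your proof is correct and is precisely the direct computation from \Cref{defn:pochupperlower} that the paper has in mind; the paper's own proof consists of the single sentence ``This is an immediate consequence of \Cref{defn:pochupperlower}.'' Your write-up simply unpacks that immediacy with the two case-by-case index shifts, and there is nothing to add.
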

\begin{proof}
    This is an immediate consequence of \Cref{defn:pochupperlower}.
\end{proof}

We also establish a functional equation that relates the behavior of $\sigma_S$ on the the upper and lower half-planes, this time under the transformation $\tau \mapsto \frac{1}{\tau}$.

\begin{lem}\label{lem:SFJcocycleSzOvertau}
    The identity of meromorphic functions
    \begin{equation}\label{eq:SFJcocycleSzOvertau}
        \sfjM{S}{\frac{z}{\tau}}{\frac{1}{\tau}}
        = \frac{e^{2\pi i z}-1}{e^{2\pi i z/\tau}-1} \cdot \sfjM{S}{z}{\tau}
    \end{equation}
    holds for all $z \in \C$ and $\tau \in \DD_S$.
\end{lem}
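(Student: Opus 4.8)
The plan is to reduce the identity to a connected open subset of $\DD_S$ on which the double--sine representation \eqref{eq:SFJacobiCocycleTermsDoubleSine} of $\sfjM{S}{z}{\tau}$ is available simultaneously at $(z,\tau)$ and at $(z/\tau,1/\tau)$ --- for instance $\Omega=\{\tau:\re(\tau)>0\}$, on which $\re(1/\tau)>0$ as well --- prove the identity there by direct computation, and then extend to all of $\DD_S$ by meromorphic continuation. The continuation step is legitimate because, for fixed generic $z$, both sides of \eqref{eq:SFJcocycleSzOvertau} are meromorphic in $\tau$ on the connected domain $\DD_S$: the composite $\tau\mapsto\sfjM{S}{z/\tau}{1/\tau}$ makes sense since $\tau\in\DD_S$ implies $1/\tau\in\DD_S$, and the prefactor $\frac{e^{2\pi iz}-1}{e^{2\pi iz/\tau}-1}$ is meromorphic on $\DD_S$; agreement on the open set $\Omega$ then forces agreement everywhere by the identity theorem.

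On $\Omega$, I would substitute $(z,\tau)\mapsto(z/\tau,1/\tau)$ into \eqref{eq:SFJacobiCocycleTermsDoubleSine} and divide by \eqref{eq:SFJacobiCocycleTermsDoubleSine} itself. A short simplification of the two quadratic exponents shows the ratio of the Gaussian prefactors equals $e^{\pi i(z-z/\tau)}$. Using $e^{2\pi iw}-1=2i\,e^{\pi iw}\sin(\pi w)$, the claim then reduces to the purely function-theoretic identity
\[
\frac{\dbs(z+1,\tau)}{\dbs\!\left(\tfrac{z}{\tau}+1,\tfrac1\tau\right)}=\frac{\sin(\pi z)}{\sin(\pi z/\tau)}.
\]
This I would establish by first using the degree-zero homogeneity and the symmetry of the double sine in its two periods to rewrite $\dbs(z+1,\tau)=\dbs\!\left(\tfrac{z+1}{\tau},\tfrac1\tau\right)=\dbs\!\left(\tfrac z\tau+\tfrac1\tau,\tfrac1\tau\right)$, and then applying the two first-order difference equations of $\dbs(\cdot,\tfrac1\tau)$ --- one for a shift by the period $\tfrac1\tau$, one for a shift by the period $1$ --- to express both $\dbs\!\left(\tfrac z\tau+\tfrac1\tau,\tfrac1\tau\right)$ and $\dbs\!\left(\tfrac z\tau+1,\tfrac1\tau\right)$ as explicit multiples of $\dbs\!\left(\tfrac z\tau,\tfrac1\tau\right)$; the common factor cancels, leaving exactly $\sin(\pi z)/\sin(\pi z/\tau)$.

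The main point requiring care is pinning down the normalization of the double--sine difference equations in the Shintani / Kurokawa--Koyama convention used here, so that the $\sin$-factors and signs combine correctly with the Gaussian prefactor. I would fix these by cross-checking against the already-available $z$-translation relation $\sfjM{S}{z+\tau}{\tau}=(1-e^{2\pi iz})\sfjM{S}{z}{\tau}$, which together with \eqref{eq:SFJacobiCocycleTermsDoubleSine} forces $\dbs(w+\tau,\tau)=\dbs(w,\tau)/(2\sin\pi w)$ and hence, by symmetry of $\dbs$ in its periods, $\dbs(w+1,\tau)=\dbs(w,\tau)/(2\sin(\pi w/\tau))$. An alternative route avoiding the double sine altogether would be to first determine the meromorphic continuation of $\sigma_S$ to the lower half-plane directly in terms of the lower-half-plane variant $q$-Pochhammer symbol of \Cref{defn:pochupperlower} and then invoke \Cref{lem:pochupperlower}; but that essentially anticipates the content of \Cref{thm:coboundaryall}, so the double--sine computation above is cleaner as a stand-alone argument.
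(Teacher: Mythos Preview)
Your proposal is correct and takes essentially the same approach as the paper: substitute $(z,\tau)\mapsto(z/\tau,1/\tau)$ into the double-sine representation \eqref{eq:SFJacobiCocycleTermsDoubleSine}, compute the ratio of the Gaussian prefactors (which is indeed $e^{\pi i(z-z/\tau)}$), and reduce to the double-sine ratio $\dbs(z+1,\tau)/\dbs(z/\tau+1,1/\tau)=\sin(\pi z)/\sin(\pi z/\tau)$ via the homogeneity identity $\dbs(z/\tau,1/\tau)=\dbs(z,\tau)$ and the two quasi-periodicity relations. The paper simply quotes these three double-sine identities directly (they appear as \eqref{eq:doubleSineQuasiPeriodicity1}, \eqref{eq:doubleSineQuasiPeriodicity2}, and the unnumbered line below them) and computes in a slightly different order---converting $\dbs(z/\tau+1,1/\tau)$ to $\dbs(z+\tau,\tau)$ first rather than working entirely at modulus $1/\tau$---but the content is identical. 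Your extra care about restricting to $\Omega=\{\re\tau>0\}$ and then continuing is not needed in the paper's write-up because \eqref{eq:SFJacobiCocycleTermsDoubleSine} is taken to hold on all of $\DD_S$, not just where the integral \eqref{eq:dsintrep} converges.
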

\begin{proof}
    Using the identities (given in \cite[Sec.~4.8]{Kopp2020d})
\eag{
\dbs(z+1,\tau)& = \frac{\dbs(z,\tau)}{2 \sin\left(\frac{\pi z}{\tau}\right)}
\label{eq:doubleSineQuasiPeriodicity1}
\\
\dbs(z+\tau,\tau)& = \frac{\dbs(z,\tau)}{2 \sin\left(\pi z\right)}
\label{eq:doubleSineQuasiPeriodicity2}
\\
\dbs\!\left(\frac{z}{\tau},\frac{1}{\tau}\right) &= \dbs(z,\tau)
}  
and the fact that $\tau \in \DD_S \iff \tau^{-1} \in \DD_S$ in ~\eqref{eq:SFJacobiCocycleTermsDoubleSine}, we find
\eag{
\sfjM{S}{\frac{z}{\tau}}{\frac{1}{\tau}} &= \frac{e^{\frac{\pi i \tau}{12}\left(\frac{6z^2}{\tau^2}+6\left(1-\frac{1}{\tau}\right)\frac{z}{\tau} + \frac{1}{\tau^2}-\frac{3}{\tau}+1\right)}}{\dbs\left(\frac{z}{\tau}+1,\frac{1}{\tau}\right)}
\nn
&= \frac{e^{\frac{\pi i}{12\tau}\left(6z^2 -6(1-\tau)z+\tau^2-3\tau+1\right)}}{\dbs(z+\tau,\tau)}
\nn
&= \frac{2\sin (\pi z) e^{\frac{\pi i (\tau-1)z}{\tau}} e^{\frac{\pi i}{12\tau}\left(6z^2+6(1-\tau)z+\tau^2-3\tau+1\right)}}{\dbs(z,\tau)}
\nn
&= \frac{\sin(\pi z)}{\sin\!\left(\frac{\pi z}{\tau}\right)} e^{\frac{\pi i (\tau-1)z}{\tau}}\sfjM{S}{z}{\tau} 
\nn
&= \frac{e^{2\pi i z}-1}{e^{2\pi i z/\tau}-1} \cdot \sfjM{S}{z}{\tau}
}
for all $z\in \mbb{C}$, $\tau \in \DD_S$, which establishes ~\eqref{eq:SFJcocycleSzOvertau}. 
\end{proof}

Using the above lemmas, we will now establish that the condition in \Cref{defn:sfjall}(2) holds true for all $M \in \SL_2(\Z)$, not just those of the form $M = -T^k$, giving a simple expression for $\sfjM{M}{z}{\tau}$ on $\C \times (\HH \cup -\HH)$.

\begin{thm}\label{thm:coboundaryall}
    Let $M \in \SL_2(\Z)$.
    The identity of meromorphic functions
    \begin{equation}\label{eq:coboundaryuppperlower}
        \sfjM{M}{z}{\tau}
        = \frac{\varpi\!\left(\frac{z}{j_M(\tau)},M\cdot\tau\right)}{\varpi(z,\tau)}
    \end{equation}
    holds for $z \in \C$ and $\tau \in \DD_{-I} = \HH \cup (-\HH)$.
\end{thm}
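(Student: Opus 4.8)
The plan is to reduce the identity to one base case and then propagate it by a cocycle argument. On $\HH$ there is nothing to prove: for $M\neq -T^k$ the right-hand side of \eqref{eq:coboundaryuppperlower} is exactly the formula \eqref{eq:sfjfrm} defining $\sigma_M$ there, and for $M=-T^k$ the identity is \Cref{defn:sfjall}(2). For $M=T^k$ the identity holds on all of $\HH\cup(-\HH)$ because $j_{T^k}(\tau)=1$, $T^k\cdot\tau=\tau+k$, and $\varpi(z,\tau+k)=\varpi(z,\tau)$ directly from \Cref{dfn:variantqPochhammer} and \Cref{defn:pochupperlower}. So the real task is to prove \eqref{eq:coboundaryuppperlower} on $-\HH$ for $M\neq -T^k$.

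Write $F_M(z,\tau):=\varpi\!\left(z/j_M(\tau),M\cdot\tau\right)/\varpi(z,\tau)$ for $\tau\in\HH\cup(-\HH)$, with each $\varpi$ read as in \Cref{defn:pochupperlower}; the goal is $\sigma_M=F_M$ on $-\HH$. A direct telescoping computation — using $j_{MN}(\tau)=j_M(N\cdot\tau)j_N(\tau)$ from \Cref{lm:jMproperties}, associativity of the fractional-linear action, and the fact that $\SL_2(\R)$ preserves each of $\HH$ and $-\HH$, so every $\varpi$ that appears is read with one fixed convention — shows that $F$ satisfies the cocycle relation $F_{MN}(z,\tau)=F_M\!\left(z/j_N(\tau),N\cdot\tau\right)F_N(z,\tau)$ on $\C\times(\HH\cup(-\HH))$. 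Since $\sigma$ satisfies the same cocycle relation \eqref{eq:sfjcocyclerelInt} wherever both sides are defined — in particular on $-\HH$, where $N\cdot\tau\in-\HH\subseteq\DD_M$ because $\DD_M\supseteq\C\setminus\R$ — and since $\SL_2(\Z)$ is generated by $S$ and the powers of $T$, an induction on the number of such factors in a word for $M$ reduces everything to the single identity $\sigma_S(z,\tau)=F_S(z,\tau)$ for $z\in\C$, $\tau\in-\HH$.

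This base case is the heart of the argument. For $\tau\in-\HH$ we have $1/\tau\in\HH$, so $\sigma_S(z/\tau,1/\tau)$ is given by the defining formula \eqref{eq:sfjfrm}, namely $\sigma_S(z/\tau,1/\tau)=\varpi(z,-\tau)/\varpi(z/\tau,1/\tau)$ (using $j_S(\sigma)=\sigma$ and $S\cdot\sigma=-1/\sigma$); combining this with \Cref{lem:SFJcocycleSzOvertau}, which applies because $\tau,1/\tau\in\DD_S$, gives
\begin{equation*}
\sigma_S(z,\tau)=\frac{1-e^{2\pi i z/\tau}}{1-e^{2\pi i z}}\cdot\frac{\varpi(z,-\tau)}{\varpi(z/\tau,1/\tau)},
\end{equation*}
with both Pochhammer symbols now at nomes $-\tau,1/\tau\in\HH$. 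On the other side one applies \Cref{lem:pochupperlower} to each of the two lower-half-plane symbols $\varpi(z/\tau,-1/\tau)$ and $\varpi(z,\tau)$ in $F_S(z,\tau)$, and then the elementary upper-half-plane identities $\varpi(w+\sigma,\sigma)=\varpi(w,\sigma)/(1-e^{2\pi i w})$ and $\varpi(w+1,\sigma)=\varpi(w,\sigma)$; this rewrites $F_S(z,\tau)$ as precisely the right-hand side displayed above. Hence $\sigma_S=F_S$ on $-\HH$, and the induction closes.

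The only delicate point, and the place where a hasty argument would break, is the bookkeeping of which half-plane each Pochhammer nome lies in, so that \Cref{defn:pochupperlower} and \Cref{lem:pochupperlower} are invoked with the correct branch at every step (e.g. $\tau\in-\HH$ forces $1/\tau\in\HH$ and $S\cdot\tau=-1/\tau\in-\HH$, etc.); once this is tracked consistently, all the remaining manipulations are formal identities of meromorphic functions. I expect no other obstacle — in particular, the cocycle relation \eqref{eq:sfjcocyclerelInt} used for $\sigma$ is available on $\HH\cup(-\HH)$ by analytic continuation from $\HH$ along the connected domain $\DD_M$, so invoking it here is not circular with respect to the present theorem.
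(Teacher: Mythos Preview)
Your base case ($\sigma_S=F_S$ on $-\HH$) is correct and matches the paper's computation. The overall strategy — settle $S$ first, then propagate via the cocycle relation — is also the paper's. The gap is in the inductive step.

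You invoke the cocycle relation for $\sigma$ on $-\HH$, justifying it as ``analytic continuation from $\HH$ along the connected domain $\DD_M$.'' But the relation $\sigma_{MN}(z,\tau)=\sigma_M(z/j_N(\tau),N\cdot\tau)\,\sigma_N(z,\tau)$ lives not on $\DD_M$ but on the triple intersection $\DD_{MN}\cap N^{-1}\!\cdot\DD_M\cap\DD_N$, and this set need not connect $\HH$ to $-\HH$. Concretely, take $M=N=S$: then $\DD_{S^2}=\DD_{-I}=\C\setminus\R$, $\DD_S=\C\setminus(-\infty,0]$, and $S^{-1}\!\cdot\DD_S=\C\setminus[0,\infty)$, so the intersection is exactly $\HH\cup(-\HH)$, disconnected. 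Analytic continuation from $\HH$ tells you nothing about $-\HH$ here. In a generic word for $M$ in $S,T^{\pm1}$, partial products can land in configurations where the same disconnection occurs (and can even equal $-T^k$, where $\sigma$ is defined only on the disconnected set $\HH\cup(-\HH)$). So ``induction on word length'' without further control does not close. This is also why your closing remark about non-circularity fails: in the paper, \eqref{eq:sfjcocyclerelInt} on $-\HH$ is \emph{derived from} the present theorem (it is \Cref{thm:cocycleall}), not available beforehand.

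The paper fixes exactly this point by choosing a specific decomposition: for $\gamma>0$ write $M=T^kSM'$ with $0\le\gamma'<\gamma$ (Euclidean/HJ step), and then \emph{verify} the inclusions $\DD_M\subseteq\DD_{M'}$ and $\DD_M\subseteq(M')^{-1}\!\cdot\DD_S$ via the inequality $-\delta'/\gamma'>-\delta/\gamma$. This forces the triple intersection to equal $\DD_M$, which is connected, so the cocycle identity continues from $\HH$ to all of $\DD_M\supseteq-\HH$ and the induction on $\gamma$ goes through. The case $\gamma<0$ is then handled by inverting. Your argument becomes correct once you replace the unspecified word with this controlled decomposition and insert the domain check.
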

\begin{proof}
    The identity \eqref{eq:coboundaryuppperlower} holds for $\tau \in \HH$ by \Cref{df:shinfadjacocycle}, so it suffices to prove this identity for $\tau \in -\HH$.
    In the case $M = S$, we have
    \begin{align}
        \sfjM{S}{z}{\tau}
        &= \frac{e^{2\pi i z/\tau}-1}{e^{2\pi i z}-1} \cdot \sfjM{S}{\frac{z}{\tau}}{\frac{1}{\tau}} && \mbox{(by \Cref{lem:SFJcocycleSzOvertau})} 
        \nn
        &= \frac{e^{2\pi i z/\tau}-1}{e^{2\pi i z}-1} \cdot \frac{\varpi(z,-\tau)}{\varpi\!\left(\frac{z}{\tau},\frac{1}{\tau}\right)} && \mbox{(by \eqref{eq:sfjfrm})}
        \nn
        &= \frac{e^{2\pi i z/\tau}-1}{e^{2\pi i z}-1} \cdot \frac{\varpi(z+\tau,\tau)^{-1}}{\varpi\!\left(\frac{z}{\tau}-\frac{1}{\tau},-\frac{1}{\tau}\right)^{-1}} && \mbox{(by \Cref{lem:pochupperlower})}
        \nn
        &= \frac{e^{2\pi i z/\tau}-1}{e^{2\pi i z}-1} \cdot \frac{\varpi\!\left(\frac{z}{\tau}-\frac{1}{\tau},-\frac{1}{\tau}\right)}{\varpi(z+\tau,\tau)}
        \nn
        &= \frac{\varpi\!\left(\frac{z}{\tau},-\frac{1}{\tau}\right)}{\varpi(z,\tau)}, \label{eq:coboundaryS}
    \end{align}
    proving \eqref{eq:coboundaryuppperlower} in the case $M=S$.
    We now prove \eqref{eq:coboundaryuppperlower} for arbitrary $M = \smmattwo{\ma}{\mb}{\mc}{\md}$ by splitting into cases based on the sign of $\mc$.

    \spc \noindent \textbf{Case 1.} Suppose $\mc = 0$.
    Then either $M = T^k$ or $M = -T^k$ for some $k \in \Z$.
    The identity \eqref{eq:coboundaryuppperlower} holds trivially in the case $M = T^k$, because
    \begin{align}
        \sfjM{T^k}{z}{\tau} = 1 = \frac{\varpi(z,\tau)}{\varpi(z,\tau)} = \frac{\varpi\!\left(\frac{z}{1},\tau+k\right)}{\varpi(z,\tau)} = \frac{\varpi\!\left(\frac{z}{j_{T^k}(\tau)},T^k\cdot\tau\right)}{\varpi(z,\tau)}.
    \end{align}
    It also holds for $M = -T^k$ by \Cref{defn:sfjall}(2).

    \spc \noindent \textbf{Case 2.} Suppose $\mc > 0$.
    We proceed as in the proof of \cite[Thm.~4.29]{Kopp2020d}.
    Divide $\mc$ by $\ma$ with negative remainder to obtain $\ma = \mc k-\mc'$ for some $k\in \Z$ and some $\mc' \in \Z$ with $0 \leq \mc' < \mc$. Set
    \begin{equation}
        M' = \mattwo{\ma'}{\mb'}{\mc'}{\md'} = S^{-1}T^{-k}M = \mattwo{\mc}{\md}{\mc k-\ma}{\md k-\mb}.
    \end{equation}
    Thus $M = T^k S M'$, and the cocycle condition known to be valid for $\tau \in \HH$ tells us that
    \begin{align}
        \sfjM{M}{z}{\tau}
        &= 
        \sfjM{T^k}{\frac{z}{j_{SM'}(\tau)}}{SM'\cdot\tau}
        \sfjM{S}{\frac{z}{j_{S}(\tau)}}{M'\cdot\tau}
        \sfjM{M'}{z}{\tau}
        \nn
        &= 
        \sfjM{S}{\frac{z}{j_{M'}(\tau)}}{M'\cdot\tau}
        \sfjM{M'}{z}{\tau}. \label{eq:cocyclestep}
    \end{align}
    By analytic continuation, this relation holds for $\tau \in \DD_M \cap (M')^{-1}\cdot\DD_S \cap \DD_M$. We have $\mc\md'-\md\mc' = \mc(\md k-\mb)-\md(\mc k-\ma) = \ma\md-\mb\mc = 1$, so (unless $\mc' = 0$) $\frac{\md'}{\mc'} - \frac{\md}{\mc} = \frac{1}{\mc \mc'} > 0$, and thus $\frac{\md'}{\mc'} > \frac{\md}{\mc}$. Therefore
    \begin{align}
        \DD_M &= \C \setminus \left(-\infty, -\frac{\md}{\mc}\right], \\
        (M')^{-1}\cdot\DD_S &= \C \setminus \left(-\frac{\md'}{\mc'}, -\frac{\md}{\mc}\right] \supseteq \DD_M, \\ 
        \DD_{M'} &= \C \setminus \left(-\infty, -\frac{\md'}{\mc'}\right] \supseteq \DD_M,
    \end{align}
    so in fact \eqref{eq:cocyclestep} holds for $\tau \in \DD_M$. (In the omitted case $\mc' =0$, we must have $M' = T^k$ for some $k \in \Z$, and again the relation holds for $\tau \in \DD_M$.) In particular, it holds on $-\HH$, so it may be used to prove \eqref{eq:coboundaryuppperlower} by induction on $\mc$, using Case 1 as the base case and \eqref{eq:coboundaryS} to prove the inductive step. To be explicit, applying the inductive hypothesis and \eqref{eq:coboundaryS} to \eqref{eq:cocyclestep} gives
    \begin{align}
        \sfjM{M}{z}{\tau}
        &=
        \frac{\varpi\!\left(\frac{z}{j_{SM'}(\tau)},SM'\cdot\tau\right)}{\varpi\!\left(\frac{z}{j_{M'}(\tau)},M'\cdot\tau\right)}
        \cdot
        \frac{\varpi\!\left(\frac{z}{j_{M'}(\tau)},M'\cdot\tau\right)}{\varpi\!\left(z,\tau\right)}
        =
        \frac{\varpi\!\left(\frac{z}{j_{SM'}(\tau)},SM'\cdot\tau\right)}{\varpi\!\left(z,\tau\right)}
        \nn
        &=
        \frac{\varpi\!\left(\frac{z}{j_{T^k SM'}(\tau)},T^k SM'\cdot\tau\right)}{\varpi\!\left(z,\tau\right)}
        = 
        \frac{\varpi\!\left(\frac{z}{j_{M}(\tau)},M\cdot\tau\right)}{\varpi\!\left(z,\tau\right)}.
    \end{align}
    
    \spc \noindent \textbf{Case 3.} Suppose $\mc < 0$.
    The relation
    \begin{equation}
        1 = \sfjM{M^{-1}M}{z}{\tau} = \sfjM{M^{-1}}{\frac{z}{j_M(\tau)}}{M\cdot\tau} \sfjM{M}{z}{\tau}
    \end{equation}
    is valid for $\tau \in M^{-1}\cdot\DD_{M^{-1}} \cap \DD_M = \DD_M$. Also, $M^{-1} = \smmattwo{\md}{-\mb}{-\mc}{\ma}$, so Case 2 applies to $M^{-1}$. Thus,
    \begin{equation}
        \sfjM{M}{z}{\tau}
        = \sfjM{M^{-1}}{\frac{z}{j_M(\tau)}}{M\cdot\tau}^{-1}
        = \frac{\varpi\!\left(z,\tau\right)^{-1}}{\varpi\!\left(\frac{z}{j_{M}(\tau)},M\cdot\tau\right)^{-1}}
        = \frac{\varpi\!\left(\frac{z}{j_{M}(\tau)},M\cdot\tau\right)}{\varpi\!\left(z,\tau\right)}
    \end{equation}
    for $\tau \in -\HH$.
\end{proof}

Finally, we prove that the cocycle relation for the Shintani--Faddeev Jacobi cocycle holds true everywhere, including in those cases not covered in \cite{Kopp2020d}.

\begin{thm}\label{thm:cocycleall}
    Let $M, M' \in \SL_2(\Z)$. The identity of meromorphic functions
    \begin{equation}
        \sfjM{MM'}{z}{\tau}
        = \sfjM{M}{\frac{z}{j_{M'}}}{M'\cdot\tau} \sfjM{M'}{z}{\tau}
    \end{equation}
    holds for $z \in \C$ and $\tau \in \DD_{M,M'} := \DD_{MM'} \cap (M')^{-1}\cdot\DD_M \cap \DD_{M'}$.
\end{thm}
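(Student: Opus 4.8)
The plan is to reduce the identity to the case $\tau\in\mbb{C}\setminus\mbb{R}=\mbb{H}\cup(-\mbb{H})$, where the coboundary representation of \Cref{thm:coboundaryall} is available, and then recover the full statement on $\DD_{M,M'}$ by analytic continuation. First I note that, for fixed $M,M'$, both sides of the claimed identity are meromorphic functions of $(z,\tau)$ on $\mbb{C}\times\DD_{M,M'}$: this follows from \Cref{defn:sfjall} together with \cite[Thm.~4.29]{Kopp2020d} (which gives that each $\sigma_N$ is meromorphic on $\mbb{C}\times\DD_N$ for $N\in\SLtwo{\mbb{Z}}$), since $\tau\in\DD_{M'}$, $M'\cdot\tau\in\DD_M$, and $\tau\in\DD_{MM'}$ precisely when $\tau\in\DD_{M,M'}$, and $j_{M'}(\tau)\neq 0$ there (as $\tau\notin\mbb{R}$ forces $\mc'\tau+\md'\neq 0$ on $\DD_{M,M'}\cap(\mbb{C}\setminus\mbb{R})$, and generally the zero of $j_{M'}$ is real).

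The core of the argument is a telescoping of the coboundary formula. Fix $\tau\in\mbb{C}\setminus\mbb{R}$; then $\tau\in\mbb{H}\cup(-\mbb{H})$, $j_{M'}(\tau)\neq 0$, and $M'\cdot\tau\in\mbb{H}\cup(-\mbb{H})$ as well (as $\det M'=1>0$). Applying \Cref{thm:coboundaryall} to each of $\sigma_{M}$, $\sigma_{M'}$, and $\sigma_{MM'}$, I compute
\begin{align*}
\sfjM{M}{\tfrac{z}{j_{M'}(\tau)}}{M'\cdot\tau}\,\sfjM{M'}{z}{\tau}
&= \frac{\varpi\!\left(\dfrac{z/j_{M'}(\tau)}{j_M(M'\cdot\tau)},\,M\cdot(M'\cdot\tau)\right)}{\varpi\!\left(\dfrac{z}{j_{M'}(\tau)},\,M'\cdot\tau\right)}\cdot\frac{\varpi\!\left(\dfrac{z}{j_{M'}(\tau)},\,M'\cdot\tau\right)}{\varpi(z,\tau)}\\
&= \frac{\varpi\!\left(\dfrac{z}{j_M(M'\cdot\tau)\,j_{M'}(\tau)},\,M\cdot(M'\cdot\tau)\right)}{\varpi(z,\tau)}.
\end{align*}
By \eqref{eq:jMMPrimeCocycle} we have $j_{MM'}(\tau)=j_M(M'\cdot\tau)\,j_{M'}(\tau)$, and the M\"obius action is a left action, so $M\cdot(M'\cdot\tau)=(MM')\cdot\tau$. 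Hence the last expression equals $\varpi\!\left(z/j_{MM'}(\tau),(MM')\cdot\tau\right)/\varpi(z,\tau)$, which is $\sfjM{MM'}{z}{\tau}$ by \Cref{thm:coboundaryall}. Thus the cocycle identity holds as an identity of meromorphic functions on $\mbb{C}\times(\mbb{C}\setminus\mbb{R})$.

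It remains to pass from $\mbb{C}\times(\mbb{C}\setminus\mbb{R})$ to $\mbb{C}\times\DD_{M,M'}$. By \Cref{def:sl2ldmndf}, the complement of each of $\DD_{MM'}$ and $\DD_{M'}$ in $\mbb{C}$ lies in $\mbb{R}$; and since $M'$ maps $\mbb{C}\setminus\mbb{R}$ into $\DD_M$, the complement of $(M')^{-1}\cdot\DD_M$ also lies in $\mbb{R}$. Therefore $\DD_{M,M'}$ is an open subset of $\mbb{C}$ whose complement in $\mbb{C}$ lies in $\mbb{R}$; in particular every connected component of $\DD_{M,M'}$ is open and nonempty, hence contains an open disk and so meets $\mbb{C}\setminus\mbb{R}$. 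Two meromorphic functions on $\mbb{C}$ times a connected open set that agree on a nonempty open subset agree everywhere on it (the identity theorem, applied to the ratio of the two sides); applying this on each component of $\DD_{M,M'}$ yields the identity on all of $\mbb{C}\times\DD_{M,M'}$.

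The substantive work of this theorem has already been absorbed into \Cref{thm:coboundaryall}, whose proof established the coboundary formula on the lower half-plane; granting that, the present argument is essentially bookkeeping. The only point needing care is the domain analysis in the last step---checking that the complement of $\DD_{M,M'}$ is confined to $\mbb{R}$, so that no component of $\DD_{M,M'}$ is disjoint from $\mbb{C}\setminus\mbb{R}$---and this reduces to the elementary description of each $\DD_N$ for $N\in\SLtwo{\mbb{Z}}$ plus the fact that a real M\"obius transformation maps $\mbb{C}\setminus\mbb{R}$ into itself. I do not anticipate any obstruction beyond this.
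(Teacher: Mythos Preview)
Your proof is correct and follows essentially the same approach as the paper: establish the cocycle identity on $\mbb{H}\cup(-\mbb{H})$ via the coboundary expression of \Cref{thm:coboundaryall}, then extend to $\DD_{M,M'}$ by analytic continuation. The paper's proof is two sentences and omits the domain analysis; your version supplies the details (the telescoping calculation and the observation that each component of $\DD_{M,M'}$ meets $\mbb{C}\setminus\mbb{R}$), which is a reasonable expansion but not a different method.
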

\begin{proof}
    The cocycle relation holds for $\tau \in \HH \cup (-\HH)$ as an immediate consequence of \Cref{thm:coboundaryall}. It then holds for $\tau \in \DD_{MM'}$ by analytic continuation.
\end{proof}

\section{Real quadratic fields with an odd-trace unit}\label{ap:oddtrace}

\Cref{thm:cofinal} splits the characterization of abelian extensions of real quadratic fields (conjecturally) generated by $r$-SICs into two cases, and it would be nice to know how often each case occurs. We give a partial result, showing that the trace of the fundamental unit is odd (the case when the full maximal abelian extension is conjecturally attained) a positive proportion of the time. 
Thus, for a positive proportion of real quadratic fields, $r$-SICs may be viewed as a geometric solution to Hilbert's twelfth problem (albeit a conjectural one, depending on the Stark--Tate Conjecture and the Twisted Convolution Conjecture).

If a quadratic field contains an odd trace unit, then elementary methods show that its discriminant must obey a congruence restriction modulo $8$. 
\begin{lem}\label{lem:fivemodeight}
If $K$ is a real quadratic field containing a unit of odd trace, then $\disc K \con 5 \Mod{8}$.
\end{lem}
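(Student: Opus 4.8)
The plan is to argue entirely by elementary congruences. First I would record that a unit of odd trace forces $\disc K$ to be odd. Writing $K = \mbb{Q}(\sqrt{D})$ with $D$ squarefree and $\Delta_0 = \disc K$, recall (as in \Cref{dfn:orderConductorf} with $f=1$) that every element of $\OO_K$ has the form $u = m + n\bigl(\tfrac{\Delta_0 + \sqrt{\Delta_0}}{2}\bigr)$ with $m, n \in \mbb{Z}$, so $\Tr(u) = 2m + n\Delta_0$. If $\Tr(u)$ is odd then $n\Delta_0$ is odd, hence $n$ is odd and $\Delta_0$ is odd; since a discriminant is $\equiv 0$ or $1 \pmod 4$, this gives $\Delta_0 \equiv 1 \pmod 4$, $D \equiv 1 \pmod 4$, and $\Delta_0 = D$.

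Next I would rewrite the same unit as $u = \tfrac{a + b\sqrt{\Delta_0}}{2}$ with $a = 2m + n\Delta_0 = \Tr(u)$ and $b = n$. By the previous paragraph both $a$ and $b$ are odd. Since $u$ is a unit, $\Nm(u) = \tfrac{a^2 - b^2\Delta_0}{4} = \pm 1$, so $a^2 - b^2\Delta_0 = \pm 4$. Now I would invoke the fact that the square of any odd integer is $\equiv 1 \pmod 8$: both $a^2 \equiv 1$ and $b^2 \equiv 1 \pmod 8$, so $a^2 - b^2\Delta_0 \equiv 1 - \Delta_0 \pmod 8$. On the other hand $\pm 4 \equiv 4 \pmod 8$, so $1 - \Delta_0 \equiv 4 \pmod 8$, i.e. $\Delta_0 \equiv -3 \equiv 5 \pmod 8$, which is the claim.

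There is essentially no obstacle here: the statement is a one-line congruence chase once the two normalizations (writing $u$ against the standard integral basis, then against $\tfrac{a + b\sqrt{\Delta_0}}{2}$-coordinates with both coefficients odd) are set up correctly. The only point requiring a little care is checking that ``odd trace'' simultaneously forces $\Delta_0$ odd and both coordinates $a, b$ odd; once that is in place, the identity $4\Nm(u) = a^2 - b^2\Delta_0$ together with $k^2 \equiv 1 \pmod 8$ for odd $k$ finishes the argument. Accordingly I would present this as a direct proof rather than a sketch, since it is so short.
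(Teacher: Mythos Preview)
Your proof is correct and follows essentially the same idea as the paper's: both reduce the norm equation for the odd-trace unit modulo a small power of $2$ to extract the congruence on $\Delta_0$. The only difference is packaging: the paper works in the quotient ring $\OO_K/2\OO_K$ and argues that $\Nm(\e) \equiv -\tfrac{1-\Delta}{4} \pmod{2\OO_K}$ would be divisible by $2$ if $\Delta \equiv 1 \pmod 8$, whereas you write out $4\Nm(u) = a^2 - b^2\Delta_0$ in explicit $\Z$-coordinates and reduce modulo $8$ using $a^2 \equiv b^2 \equiv 1$. Your version is marginally more elementary since it never leaves $\Z/8\Z$, while the paper's version makes the role of the residue ring $\OO_K/2\OO_K$ (and hence the splitting behaviour of $2$) a bit more visible, which is natural given that the next lemma in the paper counts cubic fields via the ray class group of modulus $2$.
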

\begin{proof}
Let $\Delta = \disc{K}$. If $\Delta$ is even, then $\OO_K = \Z + \frac{\sqrt{\Delta}}{2}\Z$, so $\Tr(\alpha)$ is even for all $\alpha \in \OO_K$. 
Thus, $\Delta$ must be odd; indeed, $\Delta \con 1 \Mod{4}$ because $\Delta$ is a discriminant.

The ring of integers of $K$ is therefore
\begin{equation}
    \OO_K = \Z + \frac{1+\sqrt{\Delta}}{2}\Z.
\end{equation}
The quotient ring $\OO_K/2\OO_K$ is represented by the congruence classes of $0, 1, \frac{-1+\sqrt{\Delta}}{2},$ and $\frac{1+\sqrt{\Delta}}{2}$. If $\e$ is a unit of odd trace in $\OO_K^\times$, then
\begin{align}
    \e &\con \frac{\pm 1+\sqrt{\Delta}}{2} \Mod{2\OO_K}; \\
    \e' &\con \frac{\pm 1-\sqrt{\Delta}}{2} \Mod{2\OO_K}.
\end{align}
Thus, $\Nm(\e)=\e\e' \con -\frac{1-\Delta}{4} \Mod{2\OO_K}$.
If $\Delta \con 1 \Mod{8}$, then it follows that $2|\Nm(\e)$, which contradicts the fact that $\e$ is a unit. Thus, $\Delta \con 5 \Mod{8}$.
\end{proof}

\begin{lem}\label{lem:cubicfields}
Let $K$ be a real quadratic field such that $\disc K \con 5 \Mod{8}$. Consider the following conditions:
\begin{itemize}
    \item[(1)] There exists no cubic number field $L$ such that $\disc L = \disc K$ or $\disc L = 4\disc K$.
    \item[(2)] The field $K$ has a unit of odd trace.
    \item[(3)] There exists no cubic number field $L$ such that $\disc L = 4\disc K$.
\end{itemize}
Then (1) implies (2), and (2) implies (3).
\end{lem}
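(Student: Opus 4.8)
The plan is to reduce everything to class field theory over $K$ at the prime $2$, which is inert in $K$ since $\disc K \equiv 5 \pmod 8$ (this is where \Cref{lem:fivemodeight}'s congruence is used in the forward direction). The two actors are the ray class group $\Cl_{2\OO_K}(\OO_K) = \Cl_{2\OO_K,\emptyset}(\OO_K)$ and the wide ideal class group $\Cl(\OO_K)$, linked by the exact sequence of \Cref{thm:exseq}, together with the classical dictionary (Hasse) between non-cyclic cubic fields with quadratic resolvent $K$ and index-$3$ subgroups of ray class groups of $K$.

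\textbf{Step 1: arithmetic at $2$.} Since $2$ is inert, $\OO_K/2\OO_K \cong \mathbb{F}_4$, so $(\OO_K/2\OO_K)^\times \cong \mathbb{Z}/3\mathbb{Z}$, and for $u \in \OO_K^\times$ one computes $2 \mid \Tr(u)$ iff $u \equiv 1 \pmod{2\OO_K}$ (reduce mod $2$ and use $\Tr_{\mathbb{F}_4/\mathbb{F}_2}(\zeta) = 1$ for $\zeta$ a primitive cube root of unity). Hence condition (2) is exactly the surjectivity of the reduction map $\OO_K^\times \to (\OO_K/2\OO_K)^\times$. Applying \Cref{thm:exseq} to the level data $(\OO_K;2\OO_K,\emptyset)$ and $(\OO_K;\OO_K,\emptyset)$ with $\dd = 2\OO_K$ yields the exact sequence $1 \to \OO_K^\times/\U_{2\OO_K}(\OO_K) \to (\OO_K/2\OO_K)^\times \to \Cl_{2\OO_K}(\OO_K) \to \Cl(\OO_K) \to 1$, in which the first map is the reduction of units. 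Therefore: if (2) holds, the map $\Cl_{2\OO_K}(\OO_K) \to \Cl(\OO_K)$ is an isomorphism; if (2) fails, the reduction map on units is trivial, so $(\OO_K/2\OO_K)^\times \cong \mathbb{Z}/3\mathbb{Z}$ embeds into $\Cl_{2\OO_K}(\OO_K)$ with quotient $\Cl(\OO_K)$, giving in particular a subgroup of order $3$ not pulled back from $\Cl(\OO_K)$.

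\textbf{Step 2: cubic fields and the Galois action.} I will use the standard correspondence: a non-cyclic cubic field $L/\mathbb{Q}$ with quadratic resolvent $K$ has Galois closure $\widetilde L$ an $S_3$-extension with $\widetilde L/K$ cyclic of degree $3$, and $\disc L = \disc K \cdot \Nm_{K/\mathbb{Q}}(\mathfrak f_{\widetilde L/K})$; since $2\OO_K$ is prime and coprime to $3$, when $L$ is ramified only at $2$ the conductor $\mathfrak f_{\widetilde L/K}$ is either $(1)$ or $2\OO_K$, giving $\disc L = \disc K$ or $\disc L = 4\disc K$ respectively. (When $\disc K>0$, $4\disc K>0$, so these $L$ are totally real and $\widetilde L/K$ is unramified at the infinite places, matching the modulus $\emptyset$ in $\Cl_{2\OO_K}(\OO_K)$; such $\widetilde L/K$ correspond to surjections $\Cl_{2\OO_K}(\OO_K)\twoheadrightarrow\mathbb{Z}/3\mathbb{Z}$, and $\disc L = 4\disc K$ iff the kernel of $\Cl_{2\OO_K}(\OO_K)\to\Cl(\OO_K)$ is not contained in the corresponding index-$3$ subgroup.) The remaining input is that the nontrivial element $\sigma \in \Gal(K/\mathbb{Q})$ acts by $-1$ on the $3$-part of $\Cl_{2\OO_K}(\OO_K)$: this is classical on $\Cl(\OO_K)$ because $\mathfrak a\,\sigma(\mathfrak a)$ is principal; it holds on $(\OO_K/2\OO_K)^\times = \mathbb{F}_4^\times$ because Frobenius at $2$ is $x\mapsto x^2 = x^{-1}$; and it passes to the extension because $2 \in \mathbb{Z}_3^\times$ splits the group ring $\mathbb{Z}_3[\sigma]$, so an extension of two ``$\sigma=-1$'' modules is again of that type. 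Consequently every surjection $\Cl_{2\OO_K}(\OO_K)\twoheadrightarrow\mathbb{Z}/3\mathbb{Z}$ cuts out a cyclic cubic $\widetilde L/K$ for which $\widetilde L/\mathbb{Q}$ is $S_3$, hence a genuine cubic field $L$ of the kind above.

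\textbf{Step 3: conclusion.} If (2) fails, the order-$3$ subgroup from Step 1 gives a surjection $\Cl_{2\OO_K}(\OO_K)\twoheadrightarrow\mathbb{Z}/3\mathbb{Z}$, hence a cubic field $L$ with $\disc L \in \{\disc K, 4\disc K\}$, so (1) fails; this proves $(1)\Rightarrow(2)$. If (2) holds, $\Cl_{2\OO_K}(\OO_K)\to\Cl(\OO_K)$ is an isomorphism, so its kernel is trivial and is contained in every index-$3$ subgroup; by Step 2 every cubic field with resolvent $K$ unramified outside $2$ then has $\mathfrak f_{\widetilde L/K}=(1)$ and $\disc L = \disc K$, so none has $\disc L = 4\disc K$, which is (3).

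\textbf{Main obstacle.} I expect the delicate point to be pinning down the Hasse correspondence with the correct normalizations: that the relevant ray class group is $\Cl_{2\OO_K}(\OO_K)$ with \emph{empty} set of infinite places (forced by $4\disc K>0$, hence totally real cubics and $\widetilde L/K$ unramified at $\infty$), that we use the \emph{wide} class group throughout, and that the conductor–discriminant bookkeeping really does give $\disc L = \disc K \cdot \Nm_{K/\mathbb{Q}}(\mathfrak f_{\widetilde L/K})$ with $2$ the only possible contributing prime (using tameness of $\widetilde L/K$ at $2$). Once that scaffolding is in place, the class-field-theoretic input (\Cref{thm:exseq}, \Cref{thm:rayclassfield}) and the $\sigma = -1$ computation are routine.
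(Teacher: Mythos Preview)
Your proof is correct and follows essentially the same approach as the paper's: both use the exact sequence from \Cref{thm:exseq} comparing $\Cl_{2\OO_K}(\OO_K)$ to $\Cl(\OO_K)$, identify condition (2) with surjectivity of the unit reduction map, and then invoke the Hasse correspondence between cubic fields with quadratic resolvent $K$ and index-$3$ subgroups of the ray class group. The conductor--discriminant bookkeeping and the dichotomy between the two cases are the same. Your treatment is in fact slightly more thorough in one respect: you explicitly justify why the degree-$3$ extension $\widetilde L/K$ gives an $S_3$-extension of $\mathbb{Q}$ (via the $\sigma=-1$ action on the $3$-part), whereas the paper simply asserts ``$\Gal(M/\mathbb{Q}) \cong S_3$'' and leaves this to the reader.
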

\begin{proof}
Consider the exact sequence of ray class groups
\begin{equation}
1 \to \frac{\OO_K^\times}{\U_{2}(\OO_K)} \xrightarrow{\iota} (\OO_K/2\OO_K)^\times \to \Cl_2(\OO_K) \xrightarrow{\pi} \Cl(\OO_K) \to 1.
\end{equation}
This is the exact sequence given in \cite[Thm.\ 5.4]{Kopp2020b}, specialized to the case when $\OO=\OO'=\mm'=\OO_K$, $\mm=2\OO_K$, and $\rS=\rS'=\{\}$. The group $(\OO_K/2\OO_K)^\times \isom \Z/3\Z$ because $\disc K \equiv 5 \Mod{8}$. The unit group $\OO_K^\times$ has an element of odd trace if and only if the the map $\iota$ is an isomorphism, that is, if and only if the map $\pi$ is an isomorphism. Setting $h_K = \abs{\Cl(\OO_K)}$, then exactly one of the following is true:
\begin{itemize}
    \item[(A)] $\abs{\Cl_2(\OO_K)} = h_K$, and $K$ has a unit of odd trace; or
    \item[(B)] $\abs{\Cl_2(\OO_K)} = 3h_K$, and $K$ does not have a unit of odd trace.
\end{itemize}

Let $\phi$ be any nontrivial group homomorphism $\phi : \Cl_2(\OO_K) \to \Z/3\Z$. 
By the existence theorem of class field theory and the Galois correspondence, there exists a cubic subextension $M/K$ of the ray class field $H_2/K$ corresponding to $\ker(\phi)$ under the Galois correspondence. The field $M$ is sextic over $\Q$, with $\Gal(M/\Q) \isom S_3$. Pick a cubic subfield $L$ of $M$; $L$ is unique up to isomorphism. Using the conductor-discriminant formula, one can show that $\disc L = \disc K$ if $\phi$ factors through the map $\pi$, and $\disc L = 4\disc K$ otherwise. Moreover, this correspondence defines bijections
\begin{align}
    \{\phi : \Cl(\OO_K) \surj \Z/3\Z\} &\longleftrightarrow \{\mbox{cubic } L/\Q : \disc(L) = \disc(K)\}, \\
    \{\phi : \Cl_2(\OO_K) \surj \Z/3\Z, \phi \neq \pi \circ \phi'\} &\longleftrightarrow \{\mbox{cubic } L/\Q : \disc(L) = 4\disc(K)\}. \label{eq:cubicfieldunfactor}
\end{align}

We now prove that (1) implies (2). Suppose that there is no cubic number field $L$ with $\disc L = \disc K$ or $\disc L = 3\disc K$.  Thus, there is no nontrivial group homomorphism $\phi : \Cl_2(\OO_K) \to \Z/3\Z$. Since $\Cl_2(\OO_K)$ is abelian, this means that $3 \ndiv \abs{\Cl_2(\OO_K)}$. It then follows that we are in case (A), and $K$ has a unit of odd trace.

Finally, we prove that (2) implies (3). Suppose that $K$ has a unit of odd trace, so we are in case (A): $\abs{\Cl_2(\OO_K)} = h_K$. Thus, every nontrivial group homomorphism $\phi : \Cl_2(\OO_K) \to \Z/3\Z$ factors through $\Cl(\OO_K)$. By \eqref{eq:cubicfieldunfactor}, there is no cubic number field $L$ with $\disc(L) = 4\disc(K)$.
\end{proof}

In the authors' understanding, analytic number theory and arithmetic statistics have not yet produced techniques capable of finding the exact asymptotic density of the number of real quadratic fields with a unit of odd trace. 
However, techniques for counting quadratic and cubic fields may be used to give upper and lover bounds using \Cref{lem:fivemodeight} and \Cref{lem:cubicfields}. 
The following proof is based on ideas suggested by Frank Thorne \cite{ThorneCorrespondence} and Jiuya Wang \cite{WangCorrespondence}.
\begin{proof}[Proof of \Cref{thm:oddtracecount}]
In the proof, we will use the notation $\Delta_K = \disc K$ for the field discriminant and 
\begin{align}
    N_2(X;\text{[conditions]}) &= \#\{K : [K : \Q]=2, 0 < \Delta_K < X, \text{ [conditions]}\}, \\
    N_3(X;\text{[conditions]}) &= \#\{L : [L : \Q]=3, 0 < \Delta_L < X, \text{ [conditions]}\}
\end{align}
for counts of quadratic and cubic fields of positive discriminant satisfying certain conditions. We write $N_2(X)$ and $N_3(X)$ if there are no conditions.

An integer $\Delta>1$ is the discriminant of a quadratic field if and only if it satisfies the congruence conditions $\Delta \con 1,5,8,9,12,13 \Mod{16}$ and $p^2\ndiv\Delta$ for all odd primes $p$. 
By the following standard sieve-theoretical calculation, taking $\mu(d)$ to be the M\"{o}bius function and taking $r_d$ to be the smallest positive solution to $r_d \equiv n_0 \Mod{16}$ and $r_d \equiv 0 \Mod{d^2}$,
\begin{align}
    \{n \leq X : n \equiv n_0 \Mod{16}, p^2\ndiv n \mbox{ for } p \mbox{ odd}\}
    &= \sum_{\substack{1\leq d \leq X^{1/2} \\ 2\,\ndiv\,d}} \mu(d)\floor{\frac{X-r_d}{16d^2}} \\
    &= \sum_{\substack{1\leq d \leq X^{1/2} \\ 2\,\ndiv\,d}} \mu(d)\frac{X}{16d^2} + O(X^{1/2}) \\
    &= \left(\sum_{\substack{1\leq d \\ 2\,\ndiv\,d}} \frac{\mu(d)}{16d^2} + O(X^{-1/2})\right)X + O(X^{1/2}) \\
    &= \left(\frac{1}{16}\prod_{p \neq 2} \left(1-\frac{1}{p^2}\right)\right)X + O(X^{1/2}) \\
    &= \frac{1}{12\zeta(2)}X + O(X^{1/2}).
\end{align}
Thus, taking $n_0 \in \{1,5,8,9,12,13\}$ and $n_0 \in \{5,13\}$, respectively, we have
\begin{align}
    N_2(X)
    &= \frac{1}{2\zeta(2)}X + O(X^{1/2}), \mbox{ and} \label{eq:allquadcount} \\
    N_2(X; \Delta_K \equiv 5 \Mod{8})
    &= \frac{1}{6\zeta(2)}X + O(X^{1/2}). \label{eq:fivemodeightcount}
\end{align}
Thus, by using \Cref{lem:fivemodeight} and dividing these two asymptotic equalities \eqref{eq:fivemodeightcount} and \eqref{eq:allquadcount}, we obtain the upper bound
\begin{equation}
    \frac{N_2(X; 2\ndiv\Tr(\e_K))}{N_2(X)}
    \leq \frac{N_2(X; \Delta_K \equiv 5 \Mod{8})}{N_2(X)}
    = \frac{1}{3} + O(X^{-1/2}). \label{eq:upperbound}
\end{equation}

To obtain the lower bound, we appeal to the results of Taniguchi and Thorne \cite{Taniguchi:2013} on counting cubic fields with local restrictions. If $L$ is a cubic field, then $\OO_L \tensor \Z_2$ a ``maximal cubic ring over $\Z_2$,'' that is, a product of valuation rings of finite extensions of $\Q_2$ whose degrees sum to $3$. There are exactly $10$ maximal cubic rings over $\Z_2$, shown in the following table, which is based on the tables given in \cite[p.\ 2487--2488]{Taniguchi:2013} and on the database of Jones and Roberts \cite{jr}. In the table, $\omega$ is a root of $x^2+x+1=0$, and $\alpha$ is a root of $x^3-x-1=0$.

\begin{center}
\bgroup
\def\arraystretch{1.2}
\begin{tabular}{p{4 cm} p{5.5 cm} p{2 cm}}
     \toprule
     $\OO_L \tensor \Z_2$ & forced congruence conds.\ & density wt.\ \\
     \midrule
     $\Z_2 \times \Z_2 \times \Z_2$ & $\Delta_L \equiv 1 \Mod{8}$ & $1/6$ \\
     $\Z_2 \times \Z_2[\omega]$ & $\Delta_L \equiv 5 \Mod{8}$ & $1/2$ \\
     $\Z_2[\alpha]$ & $\Delta_L \equiv 1 \Mod{8}$ & $1/3$ \\
     $\Z_2[\sqrt[3]{2}]$ & $\Delta_L \equiv 20 \Mod{32}$ & $1/4$ \\   
     $\Z_2 \times \Z_2[\sqrt{-1}]$ & $\Delta_L \equiv 28 \Mod{32}$ & $1/4$ \\   
     $\Z_2 \times \Z_2[\sqrt{3}]$ & $\Delta_L \equiv 12 \Mod{32}$ & $1/4$ \\  
     $\Z_2 \times \Z_2[\sqrt{2}]$ & $\Delta_L \equiv 8 \Mod{64}$ & $1/8$ \\   
     $\Z_2 \times \Z_2[\sqrt{-2}]$ & $\Delta_L \equiv 56 \Mod{64}$ & $1/8$ \\  
     $\Z_2 \times \Z_2[\sqrt{6}]$ & $\Delta_L \equiv 24 \Mod{64}$ & $1/8$ \\  
     $\Z_2 \times \Z_2[\sqrt{-6}]$ & $\Delta_L \equiv 40 \Mod{64}$ & $1/8$\\
     \bottomrule
\end{tabular}
\egroup
\end{center}

In particular, the congruence condition $\Delta_L \equiv 5 \Mod{8}$ is equivalent to the local restriction $\OO_L \tensor \Z_2 = \Z_2 \times \Z_2[\omega]$, and the congruence condition $\Delta_L \equiv 20 \Mod{32}$ is equivalent to the local restriction $\OO_L \tensor \Z_2 = \Z_2[\sqrt[3]{2}]$. The following asymptotics are thus special cases of \cite[Thm.\ 1.3]{Taniguchi:2013}\footnote{Note that the more general result \cite[Thm.\ 6.2]{Taniguchi:2013} also allows one to impose additional congruence restrictions, but we don't need to do this.}, with the secondary term absorbed into the error term:
\begin{align}
    N_3(X; \Delta_L \equiv 5 \Mod{8}) \label{eq:n3dummy1}
    &= \frac{C^+(\SSS_{\Z_2 \times \Z_2[\omega]})}{12\zeta(3)}X + O(X^{5/6}), \\ 
    N_3(X; \Delta_L \equiv 20 \Mod{32}) \label{eq:n3dummy2}
    &= \frac{C^+(\SSS_{\Z_2[\sqrt[3]{2}]})}{12\zeta(3)}X + O(X^{5/6}). 
\end{align}
The constants $C^+(\SSS_{\Z_2 \times \Z_2[\omega]})$ and $C^+(\SSS_{\Z_2[\sqrt[3]{2}]})$ are computed from the ``density weights'' in the table:
\begin{align}
    C^+(\SSS_{\Z_2 \times \Z_2[\omega]})
    &= \frac{1/2}{1/6+1/2+1/3+1/4+1/4+1/4+1/8+1/8+1/8+1/8} = \frac{2}{9},
    \\
    C^+(\SSS_{\Z_2[\sqrt[3]{2}]})
    &= \frac{1/4}{1/6+1/2+1/3+1/4+1/4+1/4+1/8+1/8+1/8+1/8} = \frac{1}{9}.
\end{align}
However, these are not actually the asymptotics we want---we should also be 
removing
non-fundamental discriminants by imposing the condition that $p^2 \ndiv \Delta_L$ for odd primes $p$.
The condition that $p^2 \ndiv \Delta_L$ is equivalent to the condition that $\OO_L \tensor \Z_p$ is not totally ramified at $p$; see \cite[Sec.~6.1]{Taniguchi:2013}.
Let $C^+_{\rm ntr}(p)$ denote the local density of non-totally ramified $\OO_L \tensor \Z_p$.
The following asymptotic formulas are also special cases of \cite[Thm.\ 1.3]{Taniguchi:2013}, treating $Y$ as a constant.
\begin{align}
    N_3\!\left(X; \begin{array}{c} \Delta_L \equiv 5 \Mod{8}, \\ p^2 \ndiv \Delta_L \mbox{ for odd } p \leq Y \end{array}\!\right)
    &= \frac{C^+(\SSS_{\Z_2 \times \Z_2[\omega]})}{12\zeta(3)}\left(\prod_{\substack{p \leq Y \\ 2 \,\ndiv\, p}} C^+_{\rm ntr}(p)\right)X + O_Y(X^{5/6}), \\
    N_3\!\left(X; \begin{array}{c} \Delta_L \equiv 20 \Mod{32}, \\ p^2 \ndiv \Delta_L \mbox{ for odd } p \leq Y \end{array}\!\right)
    &= \frac{C^+(\SSS_{\Z_2[\sqrt[3]{2}]})}{12\zeta(3)}\left(\prod_{\substack{p \leq Y \\ 2 \,\ndiv\, p}} C^+_{\rm ntr}(p)\right)X + O_Y(X^{5/6}).
\end{align}
The $C^+_{\rm ntr}(p)$ are calculated using the table in \cite[Sec.~6.2]{Taniguchi:2013} to be
\begin{equation}
    C^+_{\rm ntr}(p) 
    = \frac{1/6+1/2+1/3+1/p}{1/6+1/2+1/3+1/p+1/p^2}
    = \frac{1+p^{-1}}{1+p^{-1}+p^{-2}}
    = \frac{(1-p^{-3})^{-1}}{(1-p^{-2})^{-1}},
\end{equation}
and hence
\begin{equation}
    \prod_{\substack{p \leq Y \\ 2 \,\ndiv\, p}} C^+_{\rm ntr}(p)
    = \frac{1-2^{-3}}{1-2^{-2}} \cdot \frac{\zeta(3)}{\zeta(2)} + O(Y^{-1})
    = \frac{7\zeta(3)}{6\zeta(2)} + O(Y^{-1}).
\end{equation}
Thus, we obtain the asymptotic formulas
\begin{align}
    N_3\!\left(X; \begin{array}{c} \Delta_L \equiv 5 \Mod{8}, \\ p^2 \ndiv \Delta_L \mbox{ for odd } p \leq Y \end{array}\!\right) 
    &= \frac{7}{162\zeta(2)}X + O(X/Y) + O_Y(X^{5/6}), \\
    N_3\!\left(X; \begin{array}{c} \Delta_L \equiv 20 \Mod{32}, \\ p^2 \ndiv \Delta_L \mbox{ for odd } p \leq Y \end{array}\!\right) 
    &= \frac{7}{324\zeta(2)}X + O(X/Y) + O_Y(X^{5/6}).
\end{align}
By sending $Y \to \infty$ (sufficiently slowly compared to $X$), we have
\begin{align}
    N_3\!\left(X; \begin{array}{c} \Delta_L \equiv 5 \Mod{8}, \\ \Delta_L \mbox{ fundamental} \end{array}\!\right) 
    &= \frac{7}{162\zeta(2)}X + o(X), \\
    N_3\!\left(X; \begin{array}{c} \Delta_L \equiv 20 \Mod{32}, \\ \Delta_L \mbox{ fundamental} \end{array}\!\right) 
    &= \frac{7}{324\zeta(2)}X + o(X).
\end{align}
By \Cref{lem:cubicfields} (specifically the fact that (1) implies (2)), we have the bound
\begin{align}
    N_2(X; & 2|\Tr(\e_K), \Delta_K \equiv 5 \Mod{8}) \\
    &\leq N_3\!\left(X; \begin{array}{c} \Delta_L \equiv 5 \Mod{8}, \\ \Delta_L \mbox{ fundamental} \end{array}\!\right) + N_3\!\left(X; \begin{array}{c} \Delta_L \equiv 20 \Mod{32}, \\ \Delta_L \mbox{ fundamental} \end{array}\!\right) \\
    &= \frac{7}{162\zeta(2)}X + \frac{7}{324\zeta(2)}(4X) + o(X) \\
    &= \frac{7}{54\zeta(2)}X + o(X).
\end{align}
Thus, using \Cref{lem:fivemodeight},
\begin{align}
    N_2(X; 2\ndiv\Tr(\e_K))
    &= N_2(X, \Delta_K \equiv 5 \Mod{8}) - N_2(X; 2\div\Tr(\e_K), \Delta_K \equiv 5 \Mod{8}) \\
    &\geq \frac{1}{6\zeta(2)}X - \frac{7}{54\zeta(2)}X + o(X) \\
    &= \frac{1}{27\zeta(2)}X + o(X). \label{eq:oddtracelowercount}
\end{align}
By dividing the asymptotic formulas \eqref{eq:oddtracelowercount} and \eqref{eq:allquadcount}, we obtain the lower bound
\begin{align}
    \frac{N_2(X;2\ndiv\Tr(\e_K))}{N_2(X)} \geq \frac{2}{27} + o(1). \label{eq:lowerbound}
\end{align}
Combining \eqref{eq:upperbound} and \eqref{eq:lowerbound} completes the proof.
\end{proof}

The upper bound in \Cref{thm:oddtracecount} is fairly trivial, as it only uses the congruence restriction $\disc K \equiv 5 \Mod{8}$; one might hope to get a better upper bound using the fact that ``(2) implies (3)'' from \Cref{lem:cubicfields}, which we did not use at all! It is not clear how to so do at present, as we would need some additional result to tell us that the cubic fields with $\disc L \equiv 20 \Mod{32}$ hit enough \textit{distinct} discriminants.

Numerical evidence suggests that the true asymptotic density of real quadratic fields with a unit of odd trace is about $2/9$ (or $22.2\%$), that is, about $2/3$ (or $66.7\%$) of the real quadratic fields with discriminant congruent to $5$ modulo $8$. According to a calculation performed in Mathematica, among real quadratic fundamental discriminants $\Delta = 8k-3$ for integers $k \in [10^{10}, 10^{10}+10^{6}]$, about $66.9\%$ have a unit of odd trace. Calculations involving smaller discriminants suggest a positive bias in the count of such discriminants up to $X$ that is going to zero slower than $X^{1-\delta}$ for any $\delta>0$.

Finally, we give a brief comparison to the problem of solubility of the negative ``Pell'' equation. 
The existence of a unit of negative norm in the real quadratic field $\Q(\sqrt{D})$ is equivalent to the existence of an integer solution to the equation $x^2-Dy^2=-1$, whereas the existence of a unit of odd trace in the real quadratic field $\Q(\sqrt{D})$ with $D \equiv 1 \Mod{4}$ is equivalent to the existence of an integer solution to the equation $x^2+xy+\frac{1-D}{4}y^2=1$ with $y$ odd. 
In the former case, however, the asymptotic density of such $D$ is zero, and this leads to additional complications. 
Nonetheless, as in our problem, upper and lower bounds of the same order of magnitude can be given on the number of such $D$ up to $X$; this was done by Fouvry and Kl\"{u}ners \cite[Thm.\ 1]{Fouvry:2010}. 
The narrow class group of $\OO_K$ plays a similar role in their work as does the ray class group modulo $2$ in our \Cref{lem:cubicfields}.

\section{\texorpdfstring{$1$}{1}-SIC data tables}\label{ap:sicdata}

In this appendix, we collect tables containing the algebraic data canonically specifying ghost $1$-SICs, and non-canonically specifying $1$-SICs, in dimensions $d = 4$--$100$. 
This list is conjecturally complete for all Weyl--Heisenberg covariant $1$-SICs; there is exactly one row corresponding to each predicted $\EC(d)$-orbit. 

We have numerically computed an approximate ghost SIC using the \SFKFull{} modular cocycle and checked that the \ghostOverlapsText{} satisfy 
\eag{
    \biggl|\Tr(\tilde{\Pi}_\p \tilde{\Pi}_{\zero}) - \frac{(1-\delta^d_{\p,\zero}) + \delta^d_{\p,\zero}(d+1)}{d+1}\biggr| < 10^{-66}
}
in all of the following cases:
\begin{enumerate}
    \item For the 251 rows corresponding to $d \le 60$; 
    \item For the 39 rows with $60 < d < 100$ where $Q = \langle1,1-d,1\rangle$;
    \item For the 4 rows with $d = 100$. 
\end{enumerate}
For $d\le 20$ and for $d = 100$ we have also used our necromancy procedure to numerically compute the set of associated $1$-SICs. 
See \cref{sec:nec} for more details about our numerical calculations. 

Each ghost fiducial is specified by an admissible tuple $t = (d,1,Q)$ with dimension $d$ and integer binary quadratic form $Q$. 
The relations between $t$ and the remaining data in the table are as follows. 
First factorize $(d+1)(d-3)=f_d^2 \Delta_0$ where $\Delta_0$ is a fundamental discriminant. 
Then $Q$ (conjecturally) yields a valid $1$-SIC if $\disc(Q) = f^2 \Delta_0$ where $f\div f_d$.
While $d$ and $Q$ determine all other data needed for constructing a ghost, the additional columns are included for convenience, since they contain additional algebraic data that may be ``difficult'' to compute, for example requiring integer factoring or finding a fundamental unit. 
The column $\Delta_0$ contains the fundamental discriminant of $Q$ and the column $f$ the conductor of $Q$. 
The columns $h$ and $\Cl(\mathcal{O}_f)$ contain the class number and class group respectively. 
The column $\Gal(E_s^{(1)}/H)$ is the Galois group of the candidate overlap field ramified at the first infinite place. 
(This is isomorphic to $\Gal(E_t^{(2)}/H)$, the Galois group of the candidate \textit{ghost} overlap field ramified at the \textit{second} infinite place.) 
The column $L^n$ is of the form where $L$ is the positive-trace generator of the stability group of $Q$ with the same sign as $Q$, as defined in \Cref{dfn:AssociatedStabilizers}, and $A = L^n$. 
The column `$\text{a.u.}$' is marked `$\text{Y}$' if there is an anti-unitary symmetry.
Finally, $\ell(A)$ is the length of the word expansion of $A$ using the Hirzebruch--Jung (negative regular) reduction into the standard ($S$ and $T$) generators of $\SLtwo{\Z}$. 
This is one measure of the complexity of constructing the actual ghost fiducial vector for that input. 
The forms $Q$ in this list were selected among class representatives to minimize this complexity, although this choice is not generally unique. 
In order to write down a ghost $1$-SIC explicitly, one must also choose a twist $G$, which may canonically be taken to be the identity matrix $G = \smmattwo1001$; the choice of twist does not affect the $\EC(d)$-orbit.
The data in each row are sufficient to compute a ghost fiducial, but to fully specify a $1$-SIC, one must additionally make an arbitrary, non-canonical choice of a sign-switching Galois automorphism $g$. %

\renewcommand{\arraystretch}{1.5}
\begin{center}
\begin{longtable}{tttttttttt}
\toprule
d & \Delta_0 & f & h & \Cl(\mathcal{O}_f) & \Gal\bigl(E_s^{(1)}/H\bigr) & Q & L^n & \text{a.u.} & \ell \\
\midrule
4&5&1& 1 &  C_{1}  & C_{2}^{2} &\langle1,-3,1\rangle&\smt{2&-1\\1&-1}^{6}&\text{Y}&4\\
\midrule
5&12&1& 1 &  C_{1}  & C_{8} &\langle1,-4,1\rangle&\smt{4&-1\\1&0}^{3}&&4\\
\midrule
6&21&1& 1 &  C_{1}  & C_{2}\times{}C_{6} &\langle1,-5,1\rangle&\smt{5&-1\\1&0}^{3}&&4\\
\midrule
7&8&1& 1 &  C_{1}  & C_{6} &\langle2,-4,1\rangle&\smt{3&-1\\2&-1}^{6}&\text{Y}&7\\
&&2& 1 &  C_{1}  & C_{2}\times{}C_{6} &\langle1,-6,1\rangle&\smt{6&-1\\1&0}^{3}&&4\\
\midrule
8&5&1& 1 &  C_{1}  & C_{2}\times{}C_{4} &\langle1,-3,1\rangle&\smt{2&-1\\1&-1}^{12}&\text{Y}&7\\
&&3& 1 &  C_{1}  & C_{2}\times{}C_{4}^{2} &\langle1,-7,1\rangle&\smt{7&-1\\1&0}^{3}&&4\\
\midrule
9&60&1& 2 & C_{2}  & C_{3}\times{}C_{6} &\langle1,-8,1\rangle&\smt{8&-1\\1&0}^{3}&&4\\
&&& & & &\langle5,-10,2\rangle&\smt{9&-2\\5&-1}^{3}&&7\\
\midrule
10&77&1& 1 &  C_{1}  & C_{2}\times{}C_{24} &\langle1,-9,1\rangle&\smt{9&-1\\1&0}^{3}&&4\\
\midrule
11&24&1& 1 &  C_{1}  & C_{40} &\langle3,-6,1\rangle&\smt{11&-2\\6&-1}^{3}&&7\\
&&2& 2 & C_{2}  & C_{40} &\langle1,-10,1\rangle&\smt{10&-1\\1&0}^{3}&&4\\
&&& & & &\langle3,-12,4\rangle&\smt{11&-4\\3&-1}^{3}&&7\\
\midrule
12&13&1& 1 &  C_{1}  &  C_{2}^{4} &\langle3,-5,1\rangle&\smt{4&-1\\3&-1}^{6}&\text{Y}&10\\
&&3& 1 &  C_{1}  & C_{2}^{3}\times{}C_{6} &\langle1,-11,1\rangle&\smt{11&-1\\1&0}^{3}&&4\\
\midrule
13&140&1& 2 & C_{2}  & C_{4}\times{}C_{12} &\langle1,-12,1\rangle&\smt{12&-1\\1&0}^{3}&&4\\
&&& & & &\langle7,-14,2\rangle&\smt{13&-2\\7&-1}^{3}&&7\\
\midrule
14&165&1& 2 & C_{2}  & C_{2}\times{}C_{6}^{2} &\langle1,-13,1\rangle&\smt{13&-1\\1&0}^{3}&&4\\
&&& & & &\langle5,-15,3\rangle&\smt{14&-3\\5&-1}^{3}&&7\\
\midrule
15&12&1& 1 &  C_{1}  & C_{24} &\langle1,-4,1\rangle&\smt{4&-1\\1&0}^{6}&&7\\
&&2& 1 &  C_{1}  & C_{2}\times{}C_{24} &\langle4,-8,1\rangle&\smt{15&-2\\8&-1}^{3}&&7\\
&&4& 2 & C_{2}  & C_{2}\times{}C_{24} &\langle1,-14,1\rangle&\smt{14&-1\\1&0}^{3}&&4\\
&&& & & &\langle11,-18,3\rangle&\smt{16&-3\\11&-2}^{3}&&10\\
\midrule
16&221&1& 2 & C_{2}  & C_{2}\times{}C_{8}^{2} &\langle1,-15,1\rangle&\smt{15&-1\\1&0}^{3}&&4\\
&&& & & &\langle7,-19,5\rangle&\smt{17&-5\\7&-2}^{3}&&10\\
\midrule
17&28&1& 1 &  C_{1}  & C_{96} &\langle2,-6,1\rangle&\smt{17&-3\\6&-1}^{3}&&7\\
&&3& 2 & C_{2}  & C_{96} &\langle1,-16,1\rangle&\smt{16&-1\\1&0}^{3}&&4\\
&&& & & &\langle9,-18,2\rangle&\smt{17&-2\\9&-1}^{3}&&7\\
\midrule
18&285&1& 2 & C_{2}  & C_{3}\times{}C_{6}^{2} &\langle1,-17,1\rangle&\smt{17&-1\\1&0}^{3}&&4\\
&&& & & &\langle13,-21,3\rangle&\smt{19&-3\\13&-2}^{3}&&10\\
\midrule
19&5&1& 1 &  C_{1}  & C_{18} &\langle1,-3,1\rangle&\smt{2&-1\\1&-1}^{18}&\text{Y}&10\\
&&2& 1 &  C_{1}  & C_{3}\times{}C_{18} &\langle4,-6,1\rangle&\smt{5&-1\\4&-1}^{6}&\text{Y}&13\\
&&4& 1 &  C_{1}  & C_{6}\times{}C_{18} &\langle5,-10,1\rangle&\smt{19&-2\\10&-1}^{3}&&7\\
&&8& 2 & C_{2}  & C_{6}\times{}C_{18} &\langle1,-18,1\rangle&\smt{18&-1\\1&0}^{3}&&4\\
&&& & & &\langle5,-20,4\rangle&\smt{19&-4\\5&-1}^{3}&&7\\
\midrule
20&357&1& 2 & C_{2}  & C_{2}^{3}\times{}C_{24} &\langle1,-19,1\rangle&\smt{19&-1\\1&0}^{3}&&4\\
&&& & & &\langle7,-21,3\rangle&\smt{20&-3\\7&-1}^{3}&&7\\
\midrule
21&44&1& 1 &  C_{1}  &  C_{2}^{2}\times{}C_{24} &\langle5,-8,1\rangle&\smt{22&-3\\15&-2}^{3}&&10\\
&&3& 4 & C_{4}  & C_{2}\times{}C_{6}^{2} &\langle1,-20,1\rangle&\smt{20&-1\\1&0}^{3}&&4\\
&&& & & &\langle5,-24,9\rangle&\smt{22&-9\\5&-2}^{3}&&10\\
&&& & & &\langle11,-22,2\rangle&\smt{21&-2\\11&-1}^{3}&&7\\
&&& & & &\langle9,-24,5\rangle&\smt{22&-5\\9&-2}^{3}&&10\\
\midrule
22&437&1& 1 &  C_{1}  & C_{2}\times{}C_{120} &\langle1,-21,1\rangle&\smt{21&-1\\1&0}^{3}&&4\\
\midrule
23&120&1& 2 & C_{2}  & C_{176} &\langle6,-12,1\rangle&\smt{23&-2\\12&-1}^{3}&&7\\
&&& & & &\langle3,-12,2\rangle&\smt{23&-4\\6&-1}^{3}&&7\\
&&2& 4 & C_{2}^{2}  & C_{176} &\langle1,-22,1\rangle&\smt{22&-1\\1&0}^{3}&&4\\
&&& & & &\langle19,-10,-5\rangle&\smt{16&5\\19&6}^{3}&&13\\
&&& & & &\langle8,-24,3\rangle&\smt{23&-3\\8&-1}^{3}&&7\\
&&& & & &\langle15,0,-8\rangle&\smt{11&8\\15&11}^{3}&&10\\
\midrule
24&21&1& 1 &  C_{1}  & C_{2}\times{}C_{4}\times{}C_{12} &\langle1,-5,1\rangle&\smt{5&-1\\1&0}^{6}&&7\\
&&5& 2 & C_{2}  & C_{2}^{2}\times{}C_{4}\times{}C_{12} &\langle1,-23,1\rangle&\smt{23&-1\\1&0}^{3}&&4\\
&&& & & &\langle17,-27,3\rangle&\smt{25&-3\\17&-2}^{3}&&10\\
\midrule
25&572&1& 2 & C_{2}  & C_{5}\times{}C_{40} &\langle1,-24,1\rangle&\smt{24&-1\\1&0}^{3}&&4\\
&&& & & &\langle13,-26,2\rangle&\smt{25&-2\\13&-1}^{3}&&7\\
\midrule
26&69&1& 1 &  C_{1}  & C_{2}\times{}C_{12}^{2} &\langle3,-9,1\rangle&\smt{26&-3\\9&-1}^{3}&&7\\
&&3& 3 & C_{3}  & C_{2}\times{}C_{12}^{2} &\langle1,-25,1\rangle&\smt{25&-1\\1&0}^{3}&&4\\
&&& & & &\langle17,-3,-9\rangle&\smt{14&9\\17&11}^{3}&&10\\
&&& & & &\langle5,-29,11\rangle&\smt{27&-11\\5&-2}^{3}&&10\\
\midrule
27&168&1& 2 & C_{2}  & C_{9}\times{}C_{18} &\langle7,-14,1\rangle&\smt{27&-2\\14&-1}^{3}&&7\\
&&& & & &\langle11,-16,2\rangle&\smt{29&-4\\22&-3}^{3}&&13\\
&&2& 4 & C_{2}^{2}  & C_{9}\times{}C_{18} &\langle1,-26,1\rangle&\smt{26&-1\\1&0}^{3}&&4\\
&&& & & &\langle7,14,-17\rangle&\smt{6&17\\7&20}^{3}&&7\\
&&& & & &\langle19,-8,-8\rangle&\smt{17&8\\19&9}^{3}&&10\\
&&& & & &\langle11,-32,8\rangle&\smt{29&-8\\11&-3}^{3}&&10\\
\midrule
28&29&1& 1 &  C_{1}  & C_{2}^{2}\times{}C_{6}^{2} &\langle5,-7,1\rangle&\smt{6&-1\\5&-1}^{6}&\text{Y}&16\\
&&5& 2 & C_{2}  & C_{2}^{3}\times{}C_{6}^{2} &\langle1,-27,1\rangle&\smt{27&-1\\1&0}^{3}&&4\\
&&& & & &\langle13,-33,7\rangle&\smt{30&-7\\13&-3}^{3}&&13\\
\midrule
29&780&1& 4 & C_{2}^{2}  & C_{280} &\langle1,-28,1\rangle&\smt{28&-1\\1&0}^{3}&&4\\
&&& & & &\langle15,-30,2\rangle&\smt{29&-2\\15&-1}^{3}&&7\\
&&& & & &\langle10,-30,3\rangle&\smt{29&-3\\10&-1}^{3}&&7\\
&&& & & &\langle6,-30,5\rangle&\smt{29&-5\\6&-1}^{3}&&7\\
\midrule
30&93&1& 1 &  C_{1}  &  C_{2}\times{}C_{6}\times{}C_{24} &\langle7,-11,1\rangle&\smt{31&-3\\21&-2}^{3}&&10\\
&&3& 3 & C_{3}  & C_{2}\times{}C_{6}\times{}C_{24} &\langle1,-29,1\rangle&\smt{29&-1\\1&0}^{3}&&4\\
&&& & & &\langle19,-1,-11\rangle&\smt{15&11\\19&14}^{3}&&10\\
&&& & & &\langle7,-33,9\rangle&\smt{31&-9\\7&-2}^{3}&&10\\
\midrule
31&56&1& 1 &  C_{1}  & C_{10}\times{}C_{30} &\langle2,-8,1\rangle&\smt{31&-4\\8&-1}^{3}&&7\\
&&2& 2 & C_{2}  & C_{10}\times{}C_{30} &\langle8,-16,1\rangle&\smt{31&-2\\16&-1}^{3}&&7\\
&&& & & &\langle11,2,-5\rangle&\smt{13&10\\22&17}^{3}&&13\\
&&4& 4 & C_{4}  & C_{10}\times{}C_{30} &\langle1,-30,1\rangle&\smt{30&-1\\1&0}^{3}&&4\\
&&& & & &\langle13,-34,5\rangle&\smt{32&-5\\13&-2}^{3}&&10\\
&&& & & &\langle25,-36,4\rangle&\smt{33&-4\\25&-3}^{3}&&13\\
&&& & & &\langle5,-34,13\rangle&\smt{32&-13\\5&-2}^{3}&&10\\
\midrule
32&957&1& 2 & C_{2}  & C_{2}\times{}C_{16}^{2} &\langle1,-31,1\rangle&\smt{31&-1\\1&0}^{3}&&4\\
&&& & & &\langle11,-33,3\rangle&\smt{32&-3\\11&-1}^{3}&&7\\
\midrule
33&1020&1& 4 & C_{2}^{2}  & C_{2}\times{}C_{120} &\langle1,-32,1\rangle&\smt{32&-1\\1&0}^{3}&&4\\
&&& & & &\langle17,-34,2\rangle&\smt{33&-2\\17&-1}^{3}&&7\\
&&& & & &\langle23,-36,3\rangle&\smt{34&-3\\23&-2}^{3}&&10\\
&&& & & &\langle29,-40,5\rangle&\smt{36&-5\\29&-4}^{3}&&16\\
\midrule
34&1085&1& 2 & C_{2}  & C_{2}\times{}C_{288} &\langle1,-33,1\rangle&\smt{33&-1\\1&0}^{3}&&4\\
&&& & & &\langle7,-35,5\rangle&\smt{34&-5\\7&-1}^{3}&&7\\
\midrule
35&8&1& 1 &  C_{1}  & C_{6}\times{}C_{12} &\langle2,-4,1\rangle&\smt{3&-1\\2&-1}^{12}&\text{Y}&13\\
&&2& 1 &  C_{1}  & C_{2}\times{}C_{6}\times{}C_{12} &\langle1,-6,1\rangle&\smt{6&-1\\1&0}^{6}&&7\\
&&3& 1 &  C_{1}  & C_{2}\times{}C_{6}\times{}C_{24} &\langle7,-10,1\rangle&\smt{37&-4\\28&-3}^{3}&&13\\
&&4& 1 &  C_{1}  & C_{2}\times{}C_{6}\times{}C_{24} &\langle4,-12,1\rangle&\smt{35&-3\\12&-1}^{3}&&7\\
&&6& 2 & C_{2}  & C_{2}\times{}C_{6}\times{}C_{24} &\langle9,-18,1\rangle&\smt{35&-2\\18&-1}^{3}&&7\\
&&& & & &\langle4,-20,7\rangle&\smt{37&-14\\8&-3}^{3}&&10\\
&&12& 4 & C_{4}  & C_{2}\times{}C_{6}\times{}C_{24} &\langle1,-34,1\rangle&\smt{34&-1\\1&0}^{3}&&4\\
&&& & & &\langle16,8,-17\rangle&\smt{13&17\\16&21}^{3}&&13\\
&&& & & &\langle4,-36,9\rangle&\smt{35&-9\\4&-1}^{3}&&7\\
&&& & & &\langle28,-12,-9\rangle&\smt{23&9\\28&11}^{3}&&13\\
\midrule
36&1221&1& 4 & C_{4}  & C_{2}\times{}C_{6}^{3} &\langle1,-35,1\rangle&\smt{35&-1\\1&0}^{3}&&4\\
&&& & & &\langle15,-39,5\rangle&\smt{37&-5\\15&-2}^{3}&&10\\
&&& & & &\langle25,-11,-11\rangle&\smt{23&11\\25&12}^{3}&&10\\
&&& & & &\langle5,-39,15\rangle&\smt{37&-15\\5&-2}^{3}&&10\\
\midrule
37&1292&1& 4 & C_{4}  & C_{12}\times{}C_{36} &\langle1,-36,1\rangle&\smt{36&-1\\1&0}^{3}&&4\\
&&& & & &\langle11,-40,7\rangle&\smt{38&-7\\11&-2}^{3}&&10\\
&&& & & &\langle19,0,-17\rangle&\smt{18&17\\19&18}^{3}&&7\\
&&& & & &\langle23,2,-14\rangle&\smt{17&14\\23&19}^{3}&&10\\
\midrule
38&1365&1& 4 & C_{2}^{2}  & C_{2}\times{}C_{18}^{2} &\langle1,-37,1\rangle&\smt{37&-1\\1&0}^{3}&&4\\
&&& & & &\langle13,-39,3\rangle&\smt{38&-3\\13&-1}^{3}&&7\\
&&& & & &\langle33,-45,5\rangle&\smt{41&-5\\33&-4}^{3}&&16\\
&&& & & &\langle11,-43,11\rangle&\smt{40&-11\\11&-3}^{3}&&10\\
\midrule
39&40&1& 2 & C_{2}  &  C_{2}\times{}C_{4}\times{}C_{12} &\langle6,-8,1\rangle&\smt{7&-1\\6&-1}^{6}&\text{Y}&19\\
&&& & & &\langle3,-8,2\rangle&\smt{7&-2\\3&-1}^{6}&\text{Y}&13\\
&&2& 2 & C_{2}  &  C_{2}^{2}\times{}C_{4}\times{}C_{12} &\langle9,-14,1\rangle&\smt{40&-3\\27&-2}^{3}&&10\\
&&& & & &\langle3,-14,3\rangle&\smt{40&-9\\9&-2}^{3}&&10\\
&&3& 2 & C_{2}  & C_{2}\times{}C_{12}^{2} &\langle10,-20,1\rangle&\smt{39&-2\\20&-1}^{3}&&7\\
&&& & & &\langle2,-20,5\rangle&\smt{39&-10\\4&-1}^{3}&&7\\
&&6& 4 & C_{2}^{2}  & C_{2}\times{}C_{12}^{2} &\langle1,-38,1\rangle&\smt{38&-1\\1&0}^{3}&&4\\
&&& & & &\langle31,-18,-9\rangle&\smt{28&9\\31&10}^{3}&&13\\
&&& & & &\langle8,-48,27\rangle&\smt{43&-27\\8&-5}^{3}&&13\\
&&& & & &\langle8,24,-27\rangle&\smt{7&27\\8&31}^{3}&&7\\
\midrule
40&1517&1& 2 & C_{2}  & C_{2}\times{}C_{4}^{2}\times{}C_{24} &\langle1,-39,1\rangle&\smt{39&-1\\1&0}^{3}&&4\\
&&& & & &\langle17,-49,13\rangle&\smt{44&-13\\17&-5}^{3}&&13\\
\midrule
41&1596&1& 8 & C_{2}\times{}C_{4}  & C_{560} &\langle1,-40,1\rangle&\smt{40&-1\\1&0}^{3}&&4\\
&&& & & &\langle5,-44,17\rangle&\smt{42&-17\\5&-2}^{3}&&10\\
&&& & & &\langle14,-42,3\rangle&\smt{41&-3\\14&-1}^{3}&&7\\
&&& & & &\langle17,-44,5\rangle&\smt{42&-5\\17&-2}^{3}&&10\\
&&& & & &\langle6,-42,7\rangle&\smt{41&-7\\6&-1}^{3}&&7\\
&&& & & &\langle34,-10,-11\rangle&\smt{25&11\\34&15}^{3}&&13\\
&&& & & &\langle21,0,-19\rangle&\smt{20&19\\21&20}^{3}&&7\\
&&& & & &\langle13,-46,10\rangle&\smt{43&-10\\13&-3}^{3}&&13\\
\midrule
42&1677&1& 4 & C_{4}  & C_{2}\times{}C_{6}^{3} &\langle1,-41,1\rangle&\smt{41&-1\\1&0}^{3}&&4\\
&&& & & &\langle19,-47,7\rangle&\smt{44&-7\\19&-3}^{3}&&13\\
&&& & & &\langle29,-13,-13\rangle&\smt{27&13\\29&14}^{3}&&10\\
&&& & & &\langle7,-47,19\rangle&\smt{44&-19\\7&-3}^{3}&&13\\
\midrule
43&440&1& 2 & C_{2}  & C_{14}\times{}C_{42} &\langle11,-22,1\rangle&\smt{43&-2\\22&-1}^{3}&&7\\
&&& & & &\langle17,-24,2\rangle&\smt{45&-4\\34&-3}^{3}&&13\\
&&2& 4 & C_{2}^{2}  & C_{14}\times{}C_{42} &\langle1,-42,1\rangle&\smt{42&-1\\1&0}^{3}&&4\\
&&& & & &\langle11,22,-29\rangle&\smt{10&29\\11&32}^{3}&&7\\
&&& & & &\langle37,-24,-8\rangle&\smt{33&8\\37&9}^{3}&&16\\
&&& & & &\langle17,-48,8\rangle&\smt{45&-8\\17&-3}^{3}&&10\\
\midrule
44&205&1& 2 & C_{2}  & C_{2}^{3}\times{}C_{120} &\langle5,-15,1\rangle&\smt{44&-3\\15&-1}^{3}&&7\\
&&& & & &\langle7,-17,3\rangle&\smt{47&-9\\21&-4}^{3}&&16\\
&&3& 4 & C_{4}  & C_{2}^{3}\times{}C_{120} &\langle1,-43,1\rangle&\smt{43&-1\\1&0}^{3}&&4\\
&&& & & &\langle7,-47,13\rangle&\smt{45&-13\\7&-2}^{3}&&10\\
&&& & & &\langle9,27,-31\rangle&\smt{8&31\\9&35}^{3}&&7\\
&&& & & &\langle27,-3,-17\rangle&\smt{23&17\\27&20}^{3}&&10\\
\midrule
45&1932&1& 4 & C_{2}^{2}  & C_{3}\times{}C_{6}\times{}C_{24} &\langle1,-44,1\rangle&\smt{44&-1\\1&0}^{3}&&4\\
&&& & & &\langle23,-46,2\rangle&\smt{45&-2\\23&-1}^{3}&&7\\
&&& & & &\langle31,-48,3\rangle&\smt{46&-3\\31&-2}^{3}&&10\\
&&& & & &\langle41,-54,6\rangle&\smt{49&-6\\41&-5}^{3}&&19\\
\midrule
46&2021&1& 3 & C_{3}  & C_{2}\times{}C_{528} &\langle1,-45,1\rangle&\smt{45&-1\\1&0}^{3}&&4\\
&&& & & &\langle19,-49,5\rangle&\smt{47&-5\\19&-2}^{3}&&10\\
&&& & & &\langle5,-49,19\rangle&\smt{47&-19\\5&-2}^{3}&&10\\
\midrule
47&33&1& 1 &  C_{1}  & C_{736} &\langle4,-7,1\rangle&\smt{51&-8\\32&-5}^{3}&&13\\
&&2& 1 &  C_{1}  & C_{736} &\langle3,-12,1\rangle&\smt{47&-4\\12&-1}^{3}&&7\\
&&4& 2 & C_{2}  & C_{736} &\langle12,-24,1\rangle&\smt{47&-2\\24&-1}^{3}&&7\\
&&& & & &\langle4,16,-17\rangle&\smt{7&34\\8&39}^{3}&&7\\
&&8& 4 & C_{2}^{2}  & C_{736} &\langle1,-46,1\rangle&\smt{46&-1\\1&0}^{3}&&4\\
&&& & & &\langle37,-22,-11\rangle&\smt{34&11\\37&12}^{3}&&13\\
&&& & & &\langle3,-48,16\rangle&\smt{47&-16\\3&-1}^{3}&&7\\
&&& & & &\langle31,2,-17\rangle&\smt{22&17\\31&24}^{3}&&13\\
\midrule
48&5&1& 1 &  C_{1}  &  C_{2}\times{}C_{8}^{2} &\langle1,-3,1\rangle&\smt{2&-1\\1&-1}^{24}&\text{Y}&13\\
&&3& 1 &  C_{1}  & C_{2}\times{}C_{8}\times{}C_{24} &\langle1,-7,1\rangle&\smt{7&-1\\1&0}^{6}&&7\\
&&7& 1 &  C_{1}  &  C_{2}\times{}C_{8}^{2} &\langle11,-17,1\rangle&\smt{49&-3\\33&-2}^{3}&&10\\
&&21& 4 & C_{4}  & C_{2}^{2}\times{}C_{8}\times{}C_{24} &\langle1,-47,1\rangle&\smt{47&-1\\1&0}^{3}&&4\\
&&& & & &\langle11,-51,9\rangle&\smt{49&-9\\11&-2}^{3}&&10\\
&&& & & &\langle41,-55,5\rangle&\smt{51&-5\\41&-4}^{3}&&16\\
&&& & & &\langle9,-51,11\rangle&\smt{49&-11\\9&-2}^{3}&&10\\
\midrule
49&92&1& 1 &  C_{1}  & C_{14}\times{}C_{42} &\langle2,-10,1\rangle&\smt{49&-5\\10&-1}^{3}&&7\\
&&5& 6 & C_{6}  & C_{14}\times{}C_{42} &\langle1,-48,1\rangle&\smt{48&-1\\1&0}^{3}&&4\\
&&& & & &\langle7,-54,22\rangle&\smt{51&-22\\7&-3}^{3}&&13\\
&&& & & &\langle11,-54,14\rangle&\smt{51&-14\\11&-3}^{3}&&10\\
&&& & & &\langle25,-50,2\rangle&\smt{49&-2\\25&-1}^{3}&&7\\
&&& & & &\langle14,-54,11\rangle&\smt{51&-11\\14&-3}^{3}&&10\\
&&& & & &\langle22,-54,7\rangle&\smt{51&-7\\22&-3}^{3}&&13\\
\midrule
50&2397&1& 2 & C_{2}  & C_{10}\times{}C_{120} &\langle1,-49,1\rangle&\smt{49&-1\\1&0}^{3}&&4\\
&&& & & &\langle17,-51,3\rangle&\smt{50&-3\\17&-1}^{3}&&7\\
\midrule
51&156&1& 2 & C_{2}  & C_{2}\times{}C_{288} &\langle10,-14,1\rangle&\smt{53&-4\\40&-3}^{3}&&13\\
&&& & & &\langle5,-14,2\rangle&\smt{53&-8\\20&-3}^{3}&&10\\
&&2& 4 & C_{4}  & C_{2}\times{}C_{288} &\langle13,-26,1\rangle&\smt{51&-2\\26&-1}^{3}&&7\\
&&& & & &\langle5,-28,8\rangle&\smt{53&-16\\10&-3}^{3}&&13\\
&&& & & &\langle23,-16,-4\rangle&\smt{41&8\\46&9}^{3}&&19\\
&&& & & &\langle20,-8,-7\rangle&\smt{33&14\\40&17}^{3}&&13\\
&&4& 8 & C_{2}\times{}C_{4}  & C_{2}\times{}C_{288} &\langle1,-50,1\rangle&\smt{50&-1\\1&0}^{3}&&4\\
&&& & & &\langle15,-54,7\rangle&\smt{52&-7\\15&-2}^{3}&&10\\
&&& & & &\langle12,-60,23\rangle&\smt{55&-23\\12&-5}^{3}&&13\\
&&& & & &\langle7,-54,15\rangle&\smt{52&-15\\7&-2}^{3}&&10\\
&&& & & &\langle13,26,-35\rangle&\smt{12&35\\13&38}^{3}&&7\\
&&& & & &\langle21,-54,5\rangle&\smt{52&-5\\21&-2}^{3}&&10\\
&&& & & &\langle35,-16,-16\rangle&\smt{33&16\\35&17}^{3}&&10\\
&&& & & &\langle5,-54,21\rangle&\smt{52&-21\\5&-2}^{3}&&10\\
\midrule
52&53&1& 1 &  C_{1}  & C_{2}^{2}\times{}C_{12}^{2} &\langle7,-9,1\rangle&\smt{8&-1\\7&-1}^{6}&\text{Y}&22\\
&&7& 3 & C_{3}  & C_{2}^{3}\times{}C_{12}^{2} &\langle1,-51,1\rangle&\smt{51&-1\\1&0}^{3}&&4\\
&&& & & &\langle17,-59,13\rangle&\smt{55&-13\\17&-4}^{3}&&16\\
&&& & & &\langle13,-59,17\rangle&\smt{55&-17\\13&-4}^{3}&&16\\
\midrule
53&12&1& 1 &  C_{1}  & C_{312} &\langle1,-4,1\rangle&\smt{4&-1\\1&0}^{9}&&10\\
&&3& 1 &  C_{1}  & C_{936} &\langle9,-12,1\rangle&\smt{56&-5\\45&-4}^{3}&&16\\
&&5& 2 & C_{2}  & C_{936} &\langle6,-18,1\rangle&\smt{53&-3\\18&-1}^{3}&&7\\
&&& & & &\langle3,-18,2\rangle&\smt{53&-6\\9&-1}^{3}&&7\\
&&15& 6 & C_{6}  & C_{936} &\langle1,-52,1\rangle&\smt{52&-1\\1&0}^{3}&&4\\
&&& & & &\langle22,-62,13\rangle&\smt{57&-13\\22&-5}^{3}&&13\\
&&& & & &\langle9,-60,25\rangle&\smt{56&-25\\9&-4}^{3}&&16\\
&&& & & &\langle27,-54,2\rangle&\smt{53&-2\\27&-1}^{3}&&7\\
&&& & & &\langle25,-60,9\rangle&\smt{56&-9\\25&-4}^{3}&&16\\
&&& & & &\langle13,-62,22\rangle&\smt{57&-22\\13&-5}^{3}&&13\\
\midrule
54&2805&1& 4 & C_{2}^{2}  & C_{3}\times{}C_{18}^{2} &\langle1,-53,1\rangle&\smt{53&-1\\1&0}^{3}&&4\\
&&& & & &\langle37,-57,3\rangle&\smt{55&-3\\37&-2}^{3}&&10\\
&&& & & &\langle11,-55,5\rangle&\smt{54&-5\\11&-1}^{3}&&7\\
&&& & & &\langle13,-59,13\rangle&\smt{56&-13\\13&-3}^{3}&&13\\
\midrule
55&728&1& 2 & C_{2}  & C_{8}\times{}C_{120} &\langle14,-28,1\rangle&\smt{55&-2\\28&-1}^{3}&&7\\
&&& & & &\langle7,-28,2\rangle&\smt{55&-4\\14&-1}^{3}&&7\\
&&2& 4 & C_{2}^{2}  & C_{8}\times{}C_{120} &\langle1,-54,1\rangle&\smt{54&-1\\1&0}^{3}&&4\\
&&& & & &\langle43,-26,-13\rangle&\smt{40&13\\43&14}^{3}&&13\\
&&& & & &\langle8,-64,37\rangle&\smt{59&-37\\8&-5}^{3}&&13\\
&&& & & &\langle8,-56,7\rangle&\smt{55&-7\\8&-1}^{3}&&7\\
\midrule
56&3021&1& 6 & C_{6}  & C_{2}^{3}\times{}C_{12}^{2} &\langle1,-55,1\rangle&\smt{55&-1\\1&0}^{3}&&4\\
&&& & & &\langle23,-59,5\rangle&\smt{57&-5\\23&-2}^{3}&&10\\
&&& & & &\langle25,-61,7\rangle&\smt{58&-7\\25&-3}^{3}&&13\\
&&& & & &\langle19,19,-35\rangle&\smt{18&35\\19&37}^{3}&&7\\
&&& & & &\langle7,-61,25\rangle&\smt{58&-25\\7&-3}^{3}&&13\\
&&& & & &\langle5,-59,23\rangle&\smt{57&-23\\5&-2}^{3}&&10\\
\midrule
57&348&1& 2 & C_{2}  &  C_{6}^2\times{}C_{18} &\langle13,-20,1\rangle&\smt{58&-3\\39&-2}^{3}&&10\\
&&& & & &\langle17,-22,2\rangle&\smt{61&-6\\51&-5}^{3}&&19\\
&&3& 6 & C_{6}  & C_{2}\times{}C_{18}^{2} &\langle1,-56,1\rangle&\smt{56&-1\\1&0}^{3}&&4\\
&&& & & &\langle18,-66,17\rangle&\smt{61&-17\\18&-5}^{3}&&13\\
&&& & & &\langle9,-60,13\rangle&\smt{58&-13\\9&-2}^{3}&&10\\
&&& & & &\langle29,-58,2\rangle&\smt{57&-2\\29&-1}^{3}&&7\\
&&& & & &\langle13,-60,9\rangle&\smt{58&-9\\13&-2}^{3}&&10\\
&&& & & &\langle17,-66,18\rangle&\smt{61&-18\\17&-5}^{3}&&13\\
\midrule
58&3245&1& 4 & C_{4}  & C_{2}\times{}C_{840} &\langle1,-57,1\rangle&\smt{57&-1\\1&0}^{3}&&4\\
&&& & & &\langle7,-61,17\rangle&\smt{59&-17\\7&-2}^{3}&&10\\
&&& & & &\langle49,-33,-11\rangle&\smt{45&11\\49&12}^{3}&&16\\
&&& & & &\langle17,-61,7\rangle&\smt{59&-7\\17&-2}^{3}&&10\\
\midrule
59&840&1& 4 & C_{2}^{2}  & C_{1160} &\langle15,-30,1\rangle&\smt{59&-2\\30&-1}^{3}&&7\\
&&& & & &\langle23,-32,2\rangle&\smt{61&-4\\46&-3}^{3}&&13\\
&&& & & &\langle5,-30,3\rangle&\smt{59&-6\\10&-1}^{3}&&7\\
&&& & & &\langle19,2,-11\rangle&\smt{27&22\\38&31}^{3}&&13\\
&&2& 8 & C_{2}^{3}  & C_{1160} &\langle1,-58,1\rangle&\smt{58&-1\\1&0}^{3}&&4\\
&&& & & &\langle15,30,-41\rangle&\smt{14&41\\15&44}^{3}&&7\\
&&& & & &\langle23,18,-33\rangle&\smt{20&33\\23&38}^{3}&&10\\
&&& & & &\langle55,-70,7\rangle&\smt{64&-7\\55&-6}^{3}&&22\\
&&& & & &\langle20,20,-37\rangle&\smt{19&37\\20&39}^{3}&&7\\
&&& & & &\langle12,-60,5\rangle&\smt{59&-5\\12&-1}^{3}&&7\\
&&& & & &\langle11,-62,11\rangle&\smt{60&-11\\11&-2}^{3}&&10\\
&&& & & &\langle40,0,-21\rangle&\smt{29&21\\40&29}^{3}&&13\\
\midrule
60&3477&1& 4 & C_{4}  & C_{2}^{3}\times{}C_{6}\times{}C_{24} &\langle1,-59,1\rangle&\smt{59&-1\\1&0}^{3}&&4\\
&&& & & &\langle17,-65,11\rangle&\smt{62&-11\\17&-3}^{3}&&10\\
&&& & & &\langle41,-19,-19\rangle&\smt{39&19\\41&20}^{3}&&10\\
&&& & & &\langle11,-65,17\rangle&\smt{62&-17\\11&-3}^{3}&&10\\
\midrule
61&3596&1& 6 & C_{6}  & C_{20}\times{}C_{60} &\langle1,-60,1\rangle&\smt{60&-1\\1&0}^{3}&&4\\
&&& & & &\langle19,-66,10\rangle&\smt{63&-10\\19&-3}^{3}&&13\\
&&& & & &\langle25,-64,5\rangle&\smt{62&-5\\25&-2}^{3}&&10\\
&&& & & &\langle31,-62,2\rangle&\smt{61&-2\\31&-1}^{3}&&7\\
&&& & & &\langle5,-64,25\rangle&\smt{62&-25\\5&-2}^{3}&&10\\
&&& & & &\langle50,-14,-17\rangle&\smt{37&17\\50&23}^{3}&&13\\
\midrule
62&413&1& 1 &  C_{1}  & C_{2}\times{}C_{30}^{2} &\langle7,-21,1\rangle&\smt{62&-3\\21&-1}^{3}&&7\\
&&3& 4 & C_{4}  & C_{2}\times{}C_{30}^{2} &\langle1,-61,1\rangle&\smt{61&-1\\1&0}^{3}&&4\\
&&& & & &\langle9,51,-31\rangle&\smt{5&31\\9&56}^{3}&&16\\
&&& & & &\langle7,-63,9\rangle&\smt{62&-9\\7&-1}^{3}&&7\\
&&& & & &\langle29,11,-31\rangle&\smt{25&31\\29&36}^{3}&&16\\
\midrule
63&60&1& 2 & C_{2}  & C_{3}^{2}\times{}C_{6}^{2} &\langle1,-8,1\rangle&\smt{8&-1\\1&0}^{6}&&7\\
&&& & & &\langle5,-10,2\rangle&\smt{9&-2\\5&-1}^{6}&&13\\
&&2& 2 & C_{2}  & C_{3}\times{}C_{6}^{3} &\langle4,-16,1\rangle&\smt{63&-4\\16&-1}^{3}&&7\\
&&& & & &\langle3,-18,7\rangle&\smt{67&-28\\12&-5}^{3}&&13\\
&&4& 4 & C_{2}^{2}  & C_{3}\times{}C_{6}^{3} &\langle16,-32,1\rangle&\smt{63&-2\\32&-1}^{3}&&7\\
&&& & & &\langle28,-36,3\rangle&\smt{67&-6\\56&-5}^{3}&&19\\
&&& & & &\langle21,6,-11\rangle&\smt{25&22\\42&37}^{3}&&13\\
&&& & & &\langle12,12,-17\rangle&\smt{19&34\\24&43}^{3}&&10\\
&&8& 8 & C_{2}\times{}C_{4}  & C_{3}\times{}C_{6}^{3} &\langle1,-62,1\rangle&\smt{62&-1\\1&0}^{3}&&4\\
&&& & & &\langle44,-76,11\rangle&\smt{69&-11\\44&-7}^{3}&&16\\
&&& & & &\langle49,-30,-15\rangle&\smt{46&15\\49&16}^{3}&&13\\
&&& & & &\langle11,-76,44\rangle&\smt{69&-44\\11&-7}^{3}&&16\\
&&& & & &\langle43,-66,3\rangle&\smt{64&-3\\43&-2}^{3}&&10\\
&&& & & &\langle28,12,-33\rangle&\smt{25&33\\28&37}^{3}&&13\\
&&& & & &\langle53,-36,-12\rangle&\smt{49&12\\53&13}^{3}&&16\\
&&& & & &\langle28,-68,7\rangle&\smt{65&-7\\28&-3}^{3}&&13\\
\midrule
64&3965&1& 4 & C_{2}^{2}  & C_{2}\times{}C_{32}^{2} &\langle1,-63,1\rangle&\smt{63&-1\\1&0}^{3}&&4\\
&&& & & &\langle31,-73,11\rangle&\smt{68&-11\\31&-5}^{3}&&19\\
&&& & & &\langle13,-65,5\rangle&\smt{64&-5\\13&-1}^{3}&&7\\
&&& & & &\langle43,3,-23\rangle&\smt{30&23\\43&33}^{3}&&16\\
\midrule
65&4092&1& 8 & C_{2}\times{}C_{4}  & C_{4}\times{}C_{12}\times{}C_{24} &\langle1,-64,1\rangle&\smt{64&-1\\1&0}^{3}&&4\\
&&& & & &\langle21,-72,13\rangle&\smt{68&-13\\21&-4}^{3}&&16\\
&&& & & &\langle11,44,-49\rangle&\smt{10&49\\11&54}^{3}&&7\\
&&& & & &\langle13,-72,21\rangle&\smt{68&-21\\13&-4}^{3}&&16\\
&&& & & &\langle22,-66,3\rangle&\smt{65&-3\\22&-1}^{3}&&7\\
&&& & & &\langle39,-6,-26\rangle&\smt{35&26\\39&29}^{3}&&10\\
&&& & & &\langle33,0,-31\rangle&\smt{32&31\\33&32}^{3}&&7\\
&&& & & &\langle7,-68,19\rangle&\smt{66&-19\\7&-2}^{3}&&10\\
\midrule
66&469&1& 3 & C_{3}  &  C_{2}^{3}\times{}C_{120} &\langle15,-23,1\rangle&\smt{67&-3\\45&-2}^{3}&&10\\
&&& & & &\langle3,-23,5\rangle&\smt{67&-15\\9&-2}^{3}&&10\\
&&& & & &\langle5,-23,3\rangle&\smt{67&-9\\15&-2}^{3}&&10\\
&&3& 6 & C_{6}  & C_{2}\times{}C_{6}\times{}C_{120} &\langle1,-65,1\rangle&\smt{65&-1\\1&0}^{3}&&4\\
&&& & & &\langle27,-75,13\rangle&\smt{70&-13\\27&-5}^{3}&&13\\
&&& & & &\langle41,-11,-25\rangle&\smt{38&25\\41&27}^{3}&&10\\
&&& & & &\langle61,-77,7\rangle&\smt{71&-7\\61&-6}^{3}&&22\\
&&& & & &\langle5,-69,27\rangle&\smt{67&-27\\5&-2}^{3}&&10\\
&&& & & &\langle13,-75,27\rangle&\smt{70&-27\\13&-5}^{3}&&13\\
\midrule
67&17&1& 1 &  C_{1}  & C_{11}\times{}C_{66} &\langle2,-5,1\rangle&\smt{9&-2\\4&-1}^{6}&\text{Y}&16\\
&&2& 1 &  C_{1}  & C_{11}\times{}C_{66} &\langle8,-10,1\rangle&\smt{9&-1\\8&-1}^{6}&\text{Y}&25\\
&&4& 1 &  C_{1}  & C_{22}\times{}C_{66} &\langle13,-18,1\rangle&\smt{69&-4\\52&-3}^{3}&&13\\
&&8& 2 & C_{2}  & C_{22}\times{}C_{66} &\langle17,-34,1\rangle&\smt{67&-2\\34&-1}^{3}&&7\\
&&& & & &\langle13,10,-19\rangle&\smt{23&38\\26&43}^{3}&&10\\
&&16& 4 & C_{4}  & C_{22}\times{}C_{66} &\langle1,-66,1\rangle&\smt{66&-1\\1&0}^{3}&&4\\
&&& & & &\langle53,-18,-19\rangle&\smt{42&19\\53&24}^{3}&&13\\
&&& & & &\langle17,-68,4\rangle&\smt{67&-4\\17&-1}^{3}&&7\\
&&& & & &\langle52,-20,-19\rangle&\smt{43&19\\52&23}^{3}&&13\\
\midrule
68&4485&1& 4 & C_{2}^{2}  & C_{2}^{3}\times{}C_{288} &\langle1,-67,1\rangle&\smt{67&-1\\1&0}^{3}&&4\\
&&& & & &\langle23,-69,3\rangle&\smt{68&-3\\23&-1}^{3}&&7\\
&&& & & &\langle57,-75,5\rangle&\smt{71&-5\\57&-4}^{3}&&16\\
&&& & & &\langle15,-75,19\rangle&\smt{71&-19\\15&-4}^{3}&&10\\
\midrule
69&4620&1& 8 & C_{2}^{3}  & C_{2}\times{}C_{528} &\langle1,-68,1\rangle&\smt{68&-1\\1&0}^{3}&&4\\
&&& & & &\langle35,-70,2\rangle&\smt{69&-2\\35&-1}^{3}&&7\\
&&& & & &\langle47,-72,3\rangle&\smt{70&-3\\47&-2}^{3}&&10\\
&&& & & &\langle61,-78,6\rangle&\smt{73&-6\\61&-5}^{3}&&19\\
&&& & & &\langle14,-70,5\rangle&\smt{69&-5\\14&-1}^{3}&&7\\
&&& & & &\langle10,-70,7\rangle&\smt{69&-7\\10&-1}^{3}&&7\\
&&& & & &\langle17,-76,17\rangle&\smt{72&-17\\17&-4}^{3}&&16\\
&&& & & &\langle21,-84,29\rangle&\smt{76&-29\\21&-8}^{3}&&13\\
\midrule
70&4757&1& 5 & C_{5}  & C_{2}\times{}C_{6}^{2}\times{}C_{24} &\langle1,-69,1\rangle&\smt{69&-1\\1&0}^{3}&&4\\
&&& & & &\langle31,-75,7\rangle&\smt{72&-7\\31&-3}^{3}&&13\\
&&& & & &\langle41,-1,-29\rangle&\smt{35&29\\41&34}^{3}&&10\\
&&& & & &\langle11,-73,13\rangle&\smt{71&-13\\11&-2}^{3}&&10\\
&&& & & &\langle31,13,-37\rangle&\smt{28&37\\31&41}^{3}&&13\\
\midrule
71&136&1& 2 & C_{2}  & C_{1680} &\langle2,-12,1\rangle&\smt{71&-6\\12&-1}^{3}&&7\\
&&& & & &\langle5,-14,3\rangle&\smt{77&-18\\30&-7}^{3}&&16\\
&&2& 4 & C_{4}  & C_{1680} &\langle8,-24,1\rangle&\smt{71&-3\\24&-1}^{3}&&7\\
&&& & & &\langle3,-26,11\rangle&\smt{74&-33\\9&-4}^{3}&&16\\
&&& & & &\langle15,-28,4\rangle&\smt{77&-12\\45&-7}^{3}&&13\\
&&& & & &\langle11,-26,3\rangle&\smt{74&-9\\33&-4}^{3}&&16\\
&&3& 4 & C_{4}  & C_{1680} &\langle18,-36,1\rangle&\smt{71&-2\\36&-1}^{3}&&7\\
&&& & & &\langle11,-38,5\rangle&\smt{73&-10\\22&-3}^{3}&&13\\
&&& & & &\langle2,-36,9\rangle&\smt{71&-18\\4&-1}^{3}&&7\\
&&& & & &\langle5,-38,11\rangle&\smt{73&-22\\10&-3}^{3}&&13\\
&&6& 8 & C_{2}\times{}C_{4}  & C_{1680} &\langle1,-70,1\rangle&\smt{70&-1\\1&0}^{3}&&4\\
&&& & & &\langle5,-74,29\rangle&\smt{72&-29\\5&-2}^{3}&&10\\
&&& & & &\langle47,14,-25\rangle&\smt{28&25\\47&42}^{3}&&13\\
&&& & & &\langle44,-12,-27\rangle&\smt{41&27\\44&29}^{3}&&10\\
&&& & & &\langle55,-34,-17\rangle&\smt{52&17\\55&18}^{3}&&13\\
&&& & & &\langle11,-76,20\rangle&\smt{73&-20\\11&-3}^{3}&&10\\
&&& & & &\langle9,-72,8\rangle&\smt{71&-8\\9&-1}^{3}&&7\\
&&& & & &\langle20,-76,11\rangle&\smt{73&-11\\20&-3}^{3}&&10\\
\midrule
72&5037&1& 4 & C_{4}  & C_{2}\times{}C_{6}\times{}C_{12}^{2} &\langle1,-71,1\rangle&\smt{71&-1\\1&0}^{3}&&4\\
&&& & & &\langle21,-75,7\rangle&\smt{73&-7\\21&-2}^{3}&&10\\
&&& & & &\langle49,-23,-23\rangle&\smt{47&23\\49&24}^{3}&&10\\
&&& & & &\langle7,-75,21\rangle&\smt{73&-21\\7&-2}^{3}&&10\\
\midrule
73&5180&1& 4 & C_{2}^{2}  & C_{24}\times{}C_{72} &\langle1,-72,1\rangle&\smt{72&-1\\1&0}^{3}&&4\\
&&& & & &\langle37,-74,2\rangle&\smt{73&-2\\37&-1}^{3}&&7\\
&&& & & &\langle61,-80,5\rangle&\smt{76&-5\\61&-4}^{3}&&16\\
&&& & & &\langle67,-84,7\rangle&\smt{78&-7\\67&-6}^{3}&&22\\
\midrule
74&213&1& 1 &  C_{1}  & C_{2}\times{}C_{36}^{2} &\langle3,-15,1\rangle&\smt{74&-5\\15&-1}^{3}&&7\\
&&5& 6 & C_{6}  & C_{2}\times{}C_{36}^{2} &\langle1,-73,1\rangle&\smt{73&-1\\1&0}^{3}&&4\\
&&& & & &\langle53,-5,-25\rangle&\smt{39&25\\53&34}^{3}&&13\\
&&& & & &\langle51,-87,11\rangle&\smt{80&-11\\51&-7}^{3}&&16\\
&&& & & &\langle25,25,-47\rangle&\smt{24&47\\25&49}^{3}&&7\\
&&& & & &\langle11,-87,51\rangle&\smt{80&-51\\11&-7}^{3}&&16\\
&&& & & &\langle53,5,-25\rangle&\smt{34&25\\53&39}^{3}&&13\\
\midrule
75&152&1& 1 &  C_{1}  &  C_{40}^{2} &\langle11,-14,1\rangle&\smt{79&-6\\66&-5}^{3}&&19\\
&&2& 2 & C_{2}  &  C_{40}^{2} &\langle17,-26,1\rangle&\smt{76&-3\\51&-2}^{3}&&10\\
&&& & & &\langle19,0,-8\rangle&\smt{37&24\\57&37}^{3}&&13\\
&&3& 4 & C_{4}  & C_{10}\times{}C_{120} &\langle19,-38,1\rangle&\smt{75&-2\\38&-1}^{3}&&7\\
&&& & & &\langle26,4,-13\rangle&\smt{33&26\\52&41}^{3}&&13\\
&&& & & &\langle29,-40,2\rangle&\smt{77&-4\\58&-3}^{3}&&13\\
&&& & & &\langle26,-4,-13\rangle&\smt{41&26\\52&33}^{3}&&13\\
&&6& 8 & C_{2}\times{}C_{4}  & C_{10}\times{}C_{120} &\langle1,-74,1\rangle&\smt{74&-1\\1&0}^{3}&&4\\
&&& & & &\langle44,-4,-31\rangle&\smt{39&31\\44&35}^{3}&&10\\
&&& & & &\langle71,-88,8\rangle&\smt{81&-8\\71&-7}^{3}&&25\\
&&& & & &\langle44,4,-31\rangle&\smt{35&31\\44&39}^{3}&&10\\
&&& & & &\langle19,38,-53\rangle&\smt{18&53\\19&56}^{3}&&7\\
&&& & & &\langle11,62,-37\rangle&\smt{6&37\\11&68}^{3}&&19\\
&&& & & &\langle29,22,-43\rangle&\smt{26&43\\29&48}^{3}&&10\\
&&& & & &\langle11,-84,36\rangle&\smt{79&-36\\11&-5}^{3}&&19\\
\midrule
76&5621&1& 6 & C_{6}  & C_{2}^{3}\times{}C_{18}^{2} &\langle1,-75,1\rangle&\smt{75&-1\\1&0}^{3}&&4\\
&&& & & &\langle31,-79,5\rangle&\smt{77&-5\\31&-2}^{3}&&10\\
&&& & & &\langle23,-89,25\rangle&\smt{82&-25\\23&-7}^{3}&&16\\
&&& & & &\langle11,55,-59\rangle&\smt{10&59\\11&65}^{3}&&7\\
&&& & & &\langle61,3,-23\rangle&\smt{36&23\\61&39}^{3}&&16\\
&&& & & &\langle5,-79,31\rangle&\smt{77&-31\\5&-2}^{3}&&10\\
\midrule
77&5772&1& 8 & C_{2}\times{}C_{4}  & C_{2}\times{}C_{6}\times{}C_{120} &\langle1,-76,1\rangle&\smt{76&-1\\1&0}^{3}&&4\\
&&& & & &\langle7,-82,34\rangle&\smt{79&-34\\7&-3}^{3}&&13\\
&&& & & &\langle26,-78,3\rangle&\smt{77&-3\\26&-1}^{3}&&7\\
&&& & & &\langle34,-82,7\rangle&\smt{79&-7\\34&-3}^{3}&&13\\
&&& & & &\langle6,-78,13\rangle&\smt{77&-13\\6&-1}^{3}&&7\\
&&& & & &\langle29,28,-43\rangle&\smt{24&43\\29&52}^{3}&&10\\
&&& & & &\langle39,0,-37\rangle&\smt{38&37\\39&38}^{3}&&7\\
&&& & & &\langle14,-82,17\rangle&\smt{79&-17\\14&-3}^{3}&&10\\
\midrule
78&237&1& 1 &  C_{1}  & C_{2}\times{}C_{6}\times{}C_{12}^{2} &\langle13,-17,1\rangle&\smt{81&-5\\65&-4}^{3}&&16\\
&&5& 6 & C_{6}  & C_{2}\times{}C_{6}\times{}C_{12}^{2} &\langle1,-77,1\rangle&\smt{77&-1\\1&0}^{3}&&4\\
&&& & & &\langle13,59,-47\rangle&\smt{9&47\\13&68}^{3}&&16\\
&&& & & &\langle31,-91,19\rangle&\smt{84&-19\\31&-7}^{3}&&13\\
&&& & & &\langle53,-25,-25\rangle&\smt{51&25\\53&26}^{3}&&10\\
&&& & & &\langle19,-91,31\rangle&\smt{84&-31\\19&-7}^{3}&&13\\
&&& & & &\langle25,35,-47\rangle&\smt{21&47\\25&56}^{3}&&16\\
\midrule
79&380&1& 2 & C_{2}  & C_{26}\times{}C_{78} &\langle5,-20,1\rangle&\smt{79&-4\\20&-1}^{3}&&7\\
&&& & & &\langle13,-22,2\rangle&\smt{83&-8\\52&-5}^{3}&&13\\
&&2& 4 & C_{4}  & C_{26}\times{}C_{78} &\langle20,-40,1\rangle&\smt{79&-2\\40&-1}^{3}&&7\\
&&& & & &\langle8,-44,13\rangle&\smt{83&-26\\16&-5}^{3}&&19\\
&&& & & &\langle5,-40,4\rangle&\smt{79&-8\\10&-1}^{3}&&7\\
&&& & & &\langle13,-44,8\rangle&\smt{83&-16\\26&-5}^{3}&&19\\
&&4& 8 & C_{2}\times{}C_{4}  & C_{26}\times{}C_{78} &\langle1,-78,1\rangle&\smt{78&-1\\1&0}^{3}&&4\\
&&& & & &\langle13,-88,32\rangle&\smt{83&-32\\13&-5}^{3}&&13\\
&&& & & &\langle16,-96,49\rangle&\smt{87&-49\\16&-9}^{3}&&13\\
&&& & & &\langle32,-88,13\rangle&\smt{83&-13\\32&-5}^{3}&&13\\
&&& & & &\langle61,-38,-19\rangle&\smt{58&19\\61&20}^{3}&&13\\
&&& & & &\langle7,-82,23\rangle&\smt{80&-23\\7&-2}^{3}&&10\\
&&& & & &\langle16,48,-59\rangle&\smt{15&59\\16&63}^{3}&&7\\
&&& & & &\langle23,-82,7\rangle&\smt{80&-7\\23&-2}^{3}&&10\\
\midrule
80&77&1& 1 &  C_{1}  & C_{8}^{2}\times{}C_{24} &\langle1,-9,1\rangle&\smt{9&-1\\1&0}^{6}&&7\\
&&3& 2 & C_{2}  & C_{2}\times{}C_{8}^{2}\times{}C_{24} &\langle9,-27,1\rangle&\smt{80&-3\\27&-1}^{3}&&7\\
&&& & & &\langle19,3,-9\rangle&\smt{35&27\\57&44}^{3}&&16\\
&&9& 6 & C_{6}  & C_{2}\times{}C_{8}^{2}\times{}C_{24} &\langle1,-79,1\rangle&\smt{79&-1\\1&0}^{3}&&4\\
&&& & & &\langle19,-85,13\rangle&\smt{82&-13\\19&-3}^{3}&&13\\
&&& & & &\langle9,69,-41\rangle&\smt{5&41\\9&74}^{3}&&16\\
&&& & & &\langle73,-55,-11\rangle&\smt{67&11\\73&12}^{3}&&22\\
&&& & & &\langle9,-87,37\rangle&\smt{83&-37\\9&-4}^{3}&&16\\
&&& & & &\langle63,-21,-23\rangle&\smt{50&23\\63&29}^{3}&&13\\
\midrule
81&6396&1& 12 & C_{2}\times{}C_{6}  & C_{27}\times{}C_{54} &\langle1,-80,1\rangle&\smt{80&-1\\1&0}^{3}&&4\\
&&& & & &\langle33,-84,5\rangle&\smt{82&-5\\33&-2}^{3}&&10\\
&&& & & &\langle25,-86,10\rangle&\smt{83&-10\\25&-3}^{3}&&13\\
&&& & & &\langle41,-82,2\rangle&\smt{81&-2\\41&-1}^{3}&&7\\
&&& & & &\langle66,-18,-23\rangle&\smt{49&23\\66&31}^{3}&&13\\
&&& & & &\langle5,-84,33\rangle&\smt{82&-33\\5&-2}^{3}&&10\\
&&& & & &\langle55,-84,3\rangle&\smt{82&-3\\55&-2}^{3}&&10\\
&&& & & &\langle11,-84,15\rangle&\smt{82&-15\\11&-2}^{3}&&10\\
&&& & & &\langle53,-6,-30\rangle&\smt{43&30\\53&37}^{3}&&16\\
&&& & & &\langle71,-90,6\rangle&\smt{85&-6\\71&-5}^{3}&&19\\
&&& & & &\langle53,6,-30\rangle&\smt{37&30\\53&43}^{3}&&16\\
&&& & & &\langle15,-84,11\rangle&\smt{82&-11\\15&-2}^{3}&&10\\
\midrule
82&6557&1& 3 & C_{3}  & C_{2}\times{}C_{1680} &\langle1,-81,1\rangle&\smt{81&-1\\1&0}^{3}&&4\\
&&& & & &\langle11,-87,23\rangle&\smt{84&-23\\11&-3}^{3}&&10\\
&&& & & &\langle23,-87,11\rangle&\smt{84&-11\\23&-3}^{3}&&10\\
\midrule
83&105&1& 2 & C_{2}  & C_{2296} &\langle4,-11,1\rangle&\smt{85&-8\\32&-3}^{3}&&10\\
&&& & & &\langle2,-11,2\rangle&\smt{85&-16\\16&-3}^{3}&&13\\
&&2& 2 & C_{2}  & C_{2296} &\langle16,-22,1\rangle&\smt{85&-4\\64&-3}^{3}&&13\\
&&& & & &\langle3,-24,13\rangle&\smt{89&-52\\12&-7}^{3}&&13\\
&&4& 4 & C_{2}^{2}  & C_{2296} &\langle21,-42,1\rangle&\smt{83&-2\\42&-1}^{3}&&7\\
&&& & & &\langle39,-30,-5\rangle&\smt{71&10\\78&11}^{3}&&25\\
&&& & & &\langle3,-42,7\rangle&\smt{83&-14\\6&-1}^{3}&&7\\
&&& & & &\langle28,0,-15\rangle&\smt{41&30\\56&41}^{3}&&13\\
&&8& 8 & C_{2}^{3}  & C_{2296} &\langle1,-82,1\rangle&\smt{82&-1\\1&0}^{3}&&4\\
&&& & & &\langle21,42,-59\rangle&\smt{20&59\\21&62}^{3}&&7\\
&&& & & &\langle69,-48,-16\rangle&\smt{65&16\\69&17}^{3}&&16\\
&&& & & &\langle39,-96,16\rangle&\smt{89&-16\\39&-7}^{3}&&16\\
&&& & & &\langle28,-84,3\rangle&\smt{83&-3\\28&-1}^{3}&&7\\
&&& & & &\langle12,-84,7\rangle&\smt{83&-7\\12&-1}^{3}&&7\\
&&& & & &\langle23,48,-48\rangle&\smt{17&48\\23&65}^{3}&&10\\
&&& & & &\langle48,0,-35\rangle&\smt{41&35\\48&41}^{3}&&10\\
\midrule
84&85&1& 2 & C_{2}  &  C_{2}^{4}\times{}C_{6}^{2} &\langle9,-11,1\rangle&\smt{10&-1\\9&-1}^{6}&\text{Y}&28\\
&&& & & &\langle3,-11,3\rangle&\smt{10&-3\\3&-1}^{6}&\text{Y}&16\\
&&3& 2 & C_{2}  &  C_{2}^{3}\times{}C_{6}^{3} &\langle19,-29,1\rangle&\smt{85&-3\\57&-2}^{3}&&10\\
&&& & & &\langle27,-3,-7\rangle&\smt{46&21\\81&37}^{3}&&19\\
&&9& 6 & C_{6}  & C_{2}^{3}\times{}C_{6}^{3} &\langle1,-83,1\rangle&\smt{83&-1\\1&0}^{3}&&4\\
&&& & & &\langle37,-89,7\rangle&\smt{86&-7\\37&-3}^{3}&&13\\
&&& & & &\langle9,-87,19\rangle&\smt{85&-19\\9&-2}^{3}&&10\\
&&& & & &\langle17,51,-63\rangle&\smt{16&63\\17&67}^{3}&&7\\
&&& & & &\langle19,-87,9\rangle&\smt{85&-9\\19&-2}^{3}&&10\\
&&& & & &\langle7,-89,37\rangle&\smt{86&-37\\7&-3}^{3}&&13\\
\midrule
85&7052&1& 4 & C_{4}  & C_{8}\times{}C_{288} &\langle1,-84,1\rangle&\smt{84&-1\\1&0}^{3}&&4\\
&&& & & &\langle58,-98,11\rangle&\smt{91&-11\\58&-7}^{3}&&16\\
&&& & & &\langle43,0,-41\rangle&\smt{42&41\\43&42}^{3}&&7\\
&&& & & &\langle11,-98,58\rangle&\smt{91&-58\\11&-7}^{3}&&16\\
\midrule
86&7221&1& 10 & C_{10}  & C_{2}\times{}C_{42}^{2} &\langle1,-85,1\rangle&\smt{85&-1\\1&0}^{3}&&4\\
&&& & & &\langle21,51,-55\rangle&\smt{17&55\\21&68}^{3}&&16\\
&&& & & &\langle53,15,-33\rangle&\smt{35&33\\53&50}^{3}&&10\\
&&& & & &\langle41,13,-43\rangle&\smt{36&43\\41&49}^{3}&&19\\
&&& & & &\langle25,-89,7\rangle&\smt{87&-7\\25&-2}^{3}&&10\\
&&& & & &\langle29,-87,3\rangle&\smt{86&-3\\29&-1}^{3}&&7\\
&&& & & &\langle7,-89,25\rangle&\smt{87&-25\\7&-2}^{3}&&10\\
&&& & & &\langle41,-95,11\rangle&\smt{90&-11\\41&-5}^{3}&&19\\
&&& & & &\langle35,-89,5\rangle&\smt{87&-5\\35&-2}^{3}&&10\\
&&& & & &\langle17,59,-55\rangle&\smt{13&55\\17&72}^{3}&&16\\
\midrule
87&1848&1& 4 & C_{2}^{2}  & C_{2}\times{}C_{840} &\langle22,-44,1\rangle&\smt{87&-2\\44&-1}^{3}&&7\\
&&& & & &\langle11,-44,2\rangle&\smt{87&-4\\22&-1}^{3}&&7\\
&&& & & &\langle38,-48,3\rangle&\smt{91&-6\\76&-5}^{3}&&19\\
&&& & & &\langle19,-48,6\rangle&\smt{91&-12\\38&-5}^{3}&&13\\
&&2& 8 & C_{2}^{3}  & C_{2}\times{}C_{840} &\langle1,-86,1\rangle&\smt{86&-1\\1&0}^{3}&&4\\
&&& & & &\langle67,-42,-21\rangle&\smt{64&21\\67&22}^{3}&&13\\
&&& & & &\langle11,66,-69\rangle&\smt{10&69\\11&76}^{3}&&7\\
&&& & & &\langle57,-96,8\rangle&\smt{91&-8\\57&-5}^{3}&&13\\
&&& & & &\langle79,-60,-12\rangle&\smt{73&12\\79&13}^{3}&&22\\
&&& & & &\langle59,-90,3\rangle&\smt{88&-3\\59&-2}^{3}&&10\\
&&& & & &\langle19,-94,19\rangle&\smt{90&-19\\19&-4}^{3}&&10\\
&&& & & &\langle56,0,-33\rangle&\smt{43&33\\56&43}^{3}&&16\\
\midrule
88&7565&1& 4 & C_{2}^{2}  & C_{2}\times{}C_{4}^{2}\times{}C_{120} &\langle1,-87,1\rangle&\smt{87&-1\\1&0}^{3}&&4\\
&&& & & &\langle43,-99,13\rangle&\smt{93&-13\\43&-6}^{3}&&22\\
&&& & & &\langle73,-95,5\rangle&\smt{91&-5\\73&-4}^{3}&&16\\
&&& & & &\langle61,1,-31\rangle&\smt{43&31\\61&44}^{3}&&16\\
\midrule
89&860&1& 2 & C_{2}  & C_{2640} &\langle10,-30,1\rangle&\smt{89&-3\\30&-1}^{3}&&7\\
&&& & & &\langle5,-30,2\rangle&\smt{89&-6\\15&-1}^{3}&&7\\
&&3& 8 & C_{2}\times{}C_{4}  & C_{2640} &\langle1,-88,1\rangle&\smt{88&-1\\1&0}^{3}&&4\\
&&& & & &\langle9,78,-46\rangle&\smt{5&46\\9&83}^{3}&&16\\
&&& & & &\langle10,-90,9\rangle&\smt{89&-9\\10&-1}^{3}&&7\\
&&& & & &\langle41,14,-46\rangle&\smt{37&46\\41&51}^{3}&&16\\
&&& & & &\langle18,54,-67\rangle&\smt{17&67\\18&71}^{3}&&7\\
&&& & & &\langle37,-102,18\rangle&\smt{95&-18\\37&-7}^{3}&&16\\
&&& & & &\langle45,0,-43\rangle&\smt{44&43\\45&44}^{3}&&7\\
&&& & & &\langle18,-102,37\rangle&\smt{95&-37\\18&-7}^{3}&&16\\
\midrule
90&7917&1& 4 & C_{2}^{2}  & C_{3}\times{}C_{6}^{2}\times{}C_{24} &\langle1,-89,1\rangle&\smt{89&-1\\1&0}^{3}&&4\\
&&& & & &\langle61,-93,3\rangle&\smt{91&-3\\61&-2}^{3}&&10\\
&&& & & &\langle13,-91,7\rangle&\smt{90&-7\\13&-1}^{3}&&7\\
&&& & & &\langle37,-105,21\rangle&\smt{97&-21\\37&-8}^{3}&&13\\
\midrule
91&2024&1& 6 & C_{6}  & C_{2}\times{}C_{6}\times{}C_{12}^{2} &\langle23,-46,1\rangle&\smt{91&-2\\46&-1}^{3}&&7\\
&&& & & &\langle5,-48,14\rangle&\smt{93&-28\\10&-3}^{3}&&13\\
&&& & & &\langle7,-48,10\rangle&\smt{93&-20\\14&-3}^{3}&&10\\
&&& & & &\langle35,-48,2\rangle&\smt{93&-4\\70&-3}^{3}&&13\\
&&& & & &\langle10,-48,7\rangle&\smt{93&-14\\20&-3}^{3}&&10\\
&&& & & &\langle14,-48,5\rangle&\smt{93&-10\\28&-3}^{3}&&13\\
&&2& 12 & C_{2}\times{}C_{6}  & C_{2}\times{}C_{6}\times{}C_{12}^{2} &\langle1,-90,1\rangle&\smt{90&-1\\1&0}^{3}&&4\\
&&& & & &\langle5,-94,37\rangle&\smt{92&-37\\5&-2}^{3}&&10\\
&&& & & &\langle40,-96,7\rangle&\smt{93&-7\\40&-3}^{3}&&13\\
&&& & & &\langle8,-96,35\rangle&\smt{93&-35\\8&-3}^{3}&&10\\
&&& & & &\langle40,16,-49\rangle&\smt{37&49\\40&53}^{3}&&13\\
&&& & & &\langle56,-16,-35\rangle&\smt{53&35\\56&37}^{3}&&10\\
&&& & & &\langle85,-104,8\rangle&\smt{97&-8\\85&-7}^{3}&&25\\
&&& & & &\langle17,-100,28\rangle&\smt{95&-28\\17&-5}^{3}&&13\\
&&& & & &\langle29,-98,13\rangle&\smt{94&-13\\29&-4}^{3}&&16\\
&&& & & &\langle4,-92,23\rangle&\smt{91&-23\\4&-1}^{3}&&7\\
&&& & & &\langle29,40,-56\rangle&\smt{25&56\\29&65}^{3}&&16\\
&&& & & &\langle28,-100,17\rangle&\smt{95&-17\\28&-5}^{3}&&13\\
\midrule
92&8277&1& 6 & C_{6}  & C_{2}^{3}\times{}C_{528} &\langle1,-91,1\rangle&\smt{91&-1\\1&0}^{3}&&4\\
&&& & & &\langle13,-101,37\rangle&\smt{96&-37\\13&-5}^{3}&&13\\
&&& & & &\langle53,3,-39\rangle&\smt{44&39\\53&47}^{3}&&10\\
&&& & & &\langle31,31,-59\rangle&\smt{30&59\\31&61}^{3}&&7\\
&&& & & &\langle53,-3,-39\rangle&\smt{47&39\\53&44}^{3}&&10\\
&&& & & &\langle61,-15,-33\rangle&\smt{53&33\\61&38}^{3}&&13\\
\midrule
93&940&1& 6 & C_{6}  &  C_{2}^{2}\times{}C_{10}\times{}C_{30} &\langle21,-32,1\rangle&\smt{94&-3\\63&-2}^{3}&&10\\
&&& & & &\langle7,-32,3\rangle&\smt{94&-9\\21&-2}^{3}&&10\\
&&& & & &\langle9,-34,6\rangle&\smt{97&-18\\27&-5}^{3}&&13\\
&&& & & &\langle27,-20,-5\rangle&\smt{76&15\\81&16}^{3}&&19\\
&&& & & &\langle21,4,-11\rangle&\smt{40&33\\63&52}^{3}&&13\\
&&& & & &\langle3,-32,7\rangle&\smt{94&-21\\9&-2}^{3}&&10\\
&&3& 12 & C_{2}\times{}C_{6}  & C_{2}\times{}C_{30}^{2} &\langle1,-92,1\rangle&\smt{92&-1\\1&0}^{3}&&4\\
&&& & & &\langle55,-10,-38\rangle&\smt{51&38\\55&41}^{3}&&10\\
&&& & & &\langle26,-98,11\rangle&\smt{95&-11\\26&-3}^{3}&&10\\
&&& & & &\langle77,-100,5\rangle&\smt{96&-5\\77&-4}^{3}&&16\\
&&& & & &\langle11,-98,26\rangle&\smt{95&-26\\11&-3}^{3}&&10\\
&&& & & &\langle55,10,-38\rangle&\smt{41&38\\55&51}^{3}&&10\\
&&& & & &\langle47,-94,2\rangle&\smt{93&-2\\47&-1}^{3}&&7\\
&&& & & &\langle65,20,-31\rangle&\smt{36&31\\65&56}^{3}&&19\\
&&& & & &\langle73,-24,-27\rangle&\smt{58&27\\73&34}^{3}&&13\\
&&& & & &\langle89,-108,9\rangle&\smt{100&-9\\89&-8}^{3}&&28\\
&&& & & &\langle22,-98,13\rangle&\smt{95&-13\\22&-3}^{3}&&13\\
&&& & & &\langle14,-110,65\rangle&\smt{101&-65\\14&-9}^{3}&&19\\
\midrule
94&8645&1& 4 & C_{2}^{2}  & C_{2}\times{}C_{2208} &\langle1,-93,1\rangle&\smt{93&-1\\1&0}^{3}&&4\\
&&& & & &\langle85,-105,7\rangle&\smt{99&-7\\85&-6}^{3}&&22\\
&&& & & &\langle19,-95,5\rangle&\smt{94&-5\\19&-1}^{3}&&7\\
&&& & & &\langle17,-99,17\rangle&\smt{96&-17\\17&-3}^{3}&&10\\
\midrule
95&552&1& 2 & C_{2}  & C_{2}\times{}C_{18}\times{}C_{72} &\langle6,-24,1\rangle&\smt{95&-4\\24&-1}^{3}&&7\\
&&& & & &\langle3,-24,2\rangle&\smt{95&-8\\12&-1}^{3}&&7\\
&&2& 4 & C_{2}^{2}  & C_{2}\times{}C_{18}\times{}C_{72} &\langle24,-48,1\rangle&\smt{95&-2\\48&-1}^{3}&&7\\
&&& & & &\langle31,10,-17\rangle&\smt{37&34\\62&57}^{3}&&13\\
&&& & & &\langle29,2,-19\rangle&\smt{45&38\\58&49}^{3}&&13\\
&&& & & &\langle3,-48,8\rangle&\smt{95&-16\\6&-1}^{3}&&7\\
&&4& 8 & C_{2}\times{}C_{4}  & C_{2}\times{}C_{18}\times{}C_{72} &\langle1,-94,1\rangle&\smt{94&-1\\1&0}^{3}&&4\\
&&& & & &\langle16,72,-57\rangle&\smt{11&57\\16&83}^{3}&&19\\
&&& & & &\langle73,-100,4\rangle&\smt{97&-4\\73&-3}^{3}&&13\\
&&& & & &\langle31,42,-57\rangle&\smt{26&57\\31&68}^{3}&&19\\
&&& & & &\langle3,-96,32\rangle&\smt{95&-32\\3&-1}^{3}&&7\\
&&& & & &\langle76,-4,-29\rangle&\smt{49&29\\76&45}^{3}&&16\\
&&& & & &\langle12,-108,59\rangle&\smt{101&-59\\12&-7}^{3}&&13\\
&&& & & &\langle76,4,-29\rangle&\smt{45&29\\76&49}^{3}&&16\\
\midrule
96&9021&1& 8 & C_{8}  & C_{2}^{2}\times{}C_{16}\times{}C_{48} &\langle1,-95,1\rangle&\smt{95&-1\\1&0}^{3}&&4\\
&&& & & &\langle39,-99,5\rangle&\smt{97&-5\\39&-2}^{3}&&10\\
&&& & & &\langle65,-109,11\rangle&\smt{102&-11\\65&-7}^{3}&&16\\
&&& & & &\langle15,-99,13\rangle&\smt{97&-13\\15&-2}^{3}&&10\\
&&& & & &\langle65,-31,-31\rangle&\smt{63&31\\65&32}^{3}&&10\\
&&& & & &\langle55,1,-41\rangle&\smt{47&41\\55&48}^{3}&&10\\
&&& & & &\langle11,-109,65\rangle&\smt{102&-65\\11&-7}^{3}&&16\\
&&& & & &\langle5,-99,39\rangle&\smt{97&-39\\5&-2}^{3}&&10\\
\midrule
97&188&1& 1 &  C_{1}  & C_{32}\times{}C_{96} &\langle2,-14,1\rangle&\smt{97&-7\\14&-1}^{3}&&7\\
&&7& 8 & C_{8}  & C_{32}\times{}C_{96} &\langle1,-96,1\rangle&\smt{96&-1\\1&0}^{3}&&4\\
&&& & & &\langle11,-106,46\rangle&\smt{101&-46\\11&-5}^{3}&&19\\
&&& & & &\langle38,-110,19\rangle&\smt{103&-19\\38&-7}^{3}&&13\\
&&& & & &\langle23,-106,22\rangle&\smt{101&-22\\23&-5}^{3}&&13\\
&&& & & &\langle49,-98,2\rangle&\smt{97&-2\\49&-1}^{3}&&7\\
&&& & & &\langle22,-106,23\rangle&\smt{101&-23\\22&-5}^{3}&&13\\
&&& & & &\langle62,6,-37\rangle&\smt{45&37\\62&51}^{3}&&13\\
&&& & & &\langle11,84,-49\rangle&\smt{6&49\\11&90}^{3}&&19\\
\midrule
98&1045&1& 4 & C_{4}  & C_{2}\times{}C_{42}^{2} &\langle11,-33,1\rangle&\smt{98&-3\\33&-1}^{3}&&7\\
&&& & & &\langle3,-35,15\rangle&\smt{101&-45\\9&-4}^{3}&&16\\
&&& & & &\langle5,-35,9\rangle&\smt{101&-27\\15&-4}^{3}&&10\\
&&& & & &\langle3,29,-17\rangle&\smt{5&51\\9&92}^{3}&&16\\
&&3& 8 & C_{2}\times{}C_{4}  & C_{2}\times{}C_{42}^{2} &\langle1,-97,1\rangle&\smt{97&-1\\1&0}^{3}&&4\\
&&& & & &\langle43,-103,7\rangle&\smt{100&-7\\43&-3}^{3}&&13\\
&&& & & &\langle29,-113,29\rangle&\smt{105&-29\\29&-8}^{3}&&13\\
&&& & & &\langle43,17,-53\rangle&\smt{40&53\\43&57}^{3}&&13\\
&&& & & &\langle81,-57,-19\rangle&\smt{77&19\\81&20}^{3}&&16\\
&&& & & &\langle63,9,-37\rangle&\smt{44&37\\63&53}^{3}&&16\\
&&& & & &\langle11,77,-79\rangle&\smt{10&79\\11&87}^{3}&&7\\
&&& & & &\langle63,-9,-37\rangle&\smt{53&37\\63&44}^{3}&&16\\
\midrule
99&24&1& 1 &  C_{1}  & C_{3}^{2}\times{}C_{120} &\langle3,-6,1\rangle&\smt{11&-2\\6&-1}^{6}&&13\\
&&2& 2 & C_{2}  & C_{3}^{2}\times{}C_{120} &\langle1,-10,1\rangle&\smt{10&-1\\1&0}^{6}&&7\\
&&& & & &\langle3,-12,4\rangle&\smt{11&-4\\3&-1}^{6}&&13\\
&&4& 2 & C_{2}  & C_{3}\times{}C_{6}\times{}C_{120} &\langle4,-20,1\rangle&\smt{99&-5\\20&-1}^{3}&&7\\
&&& & & &\langle3,-24,16\rangle&\smt{109&-80\\15&-11}^{3}&&19\\
&&5& 2 & C_{2}  & C_{3}\times{}C_{6}\times{}C_{120} &\langle19,-26,1\rangle&\smt{101&-4\\76&-3}^{3}&&13\\
&&& & & &\langle23,-28,2\rangle&\smt{105&-8\\92&-7}^{3}&&25\\
&&10& 4 & C_{2}^{2}  & C_{3}\times{}C_{6}\times{}C_{120} &\langle25,-50,1\rangle&\smt{99&-2\\50&-1}^{3}&&7\\
&&& & & &\langle8,-56,23\rangle&\smt{105&-46\\16&-7}^{3}&&16\\
&&& & & &\langle19,14,-29\rangle&\smt{35&58\\38&63}^{3}&&10\\
&&& & & &\langle43,-54,3\rangle&\smt{103&-6\\86&-5}^{3}&&19\\
&&20& 8 & C_{2}\times{}C_{4}  & C_{3}\times{}C_{6}\times{}C_{120} &\langle1,-98,1\rangle&\smt{98&-1\\1&0}^{3}&&4\\
&&& & & &\langle47,24,-48\rangle&\smt{37&48\\47&61}^{3}&&16\\
&&& & & &\langle4,-100,25\rangle&\smt{99&-25\\4&-1}^{3}&&7\\
&&& & & &\langle32,48,-57\rangle&\smt{25&57\\32&73}^{3}&&16\\
&&& & & &\langle67,-102,3\rangle&\smt{100&-3\\67&-2}^{3}&&10\\
&&& & & &\langle19,-104,16\rangle&\smt{101&-16\\19&-3}^{3}&&13\\
&&& & & &\langle75,0,-32\rangle&\smt{49&32\\75&49}^{3}&&13\\
&&& & & &\langle16,-104,19\rangle&\smt{101&-19\\16&-3}^{3}&&13\\
\midrule
100&9797&1& 4 & C_{4}  & C_{2}^{2}\times{}C_{10}\times{}C_{120} &\langle1,-99,1\rangle&\smt{99&-1\\1&0}^{3}&&4\\
&&& & & &\langle29,-103,7\rangle&\smt{101&-7\\29&-2}^{3}&&10\\
&&& & & &\langle43,-123,31\rangle&\smt{111&-31\\43&-12}^{3}&&16\\
&&& & & &\langle7,-103,29\rangle&\smt{101&-29\\7&-2}^{3}&&10\\
\bottomrule
\end{longtable}
\end{center}

\clearpage
\bibliographystyle{habbrv}
\bibliography{refs}
\end{document}